\documentclass[12pt,twoside,reqno]{amsart}

\usepackage{amsmath, amssymb, amsthm, enumitem, esint}
\usepackage[hyperindex]{hyperref}

\setcounter{secnumdepth}{2}
\setcounter{tocdepth}{1}
\usepackage{hyperref}
\hypersetup{bookmarksdepth=3}

\newcommand{\HKPropVarTriangle}{Lemma~3.2}

\newcommand{\HKThmGradientPhiEstimate}{Theorem~4.1}

\newcommand{\HKCenterConstantRmBound}{Proposition~9.5}

\newcommand{\HKVarMonotonicityCHFHnConcentration}{Corollaries~3.7, 3.8}
\newcommand{\HKHCenterDef}{Definition~3.10}

\newcommand{\HKHCenterPropExistBoundBallHCenter}{Propositions~3.12, 3.13}
\newcommand{\HKSecNashEntropy}{Section~5}
\newcommand{\HKSecPstar}{Section~9}
\newcommand{\HKPropPropertiesNash}{Proposition~5.2}
\newcommand{\HKVariationboundNN}{Corollary~5.11}
\newcommand{\HKLinftyHKbound}{Theorem~7.1}
\newcommand{\HKDefNN}{Definition~5.1}
\newcommand{\HKThmLinftyKbound}{Theorem~7.1}

\newcommand{\HKThmNabKboundThmNabKbound}{Theorems~7.1, 7.5}
\newcommand{\HKThmNabNNSquareNN}{Theorem~5.9}
\newcommand{\HKThmUpperVolBound}{Theorem~8.1}
\newcommand{\HKSecGradientEstimate}{Section~4}
\newcommand{\HKBasicCovering}{Theorem~9.11}
\newcommand{\HKThmHypercontractivity}{Theorem~12.1}
\newcommand{\HKThmEpsRegularity}{Theorem~10.2}

\newcommand{\HKThmNLC}{Theorem~6.1}
\newcommand{\HKPstarVolBound}{Theorem~9.8}
\newcommand{\HKThmHKboundNNbound}{Theorems~7.1, 5.9}
\newcommand{\HKThmHKboundGauss}{Theorem~7.2}
\newcommand{\HKThmsHKboundGaussPstarVolBound}{Theorems~7.2, 9.8}

\newcommand{\HKCorPnbhdinPstar}{Corollary~9.6}
\newcommand{\HKCorsPnbhdPstarboth}{Corollary~9.6}
\newcommand{\HKDefsPstarPNBHD}{Definitions~9.2, 9.3}
\newcommand{\HKPropPstarContainments}{Proposition~9.4}
\newcommand{\HKWoneMonotone}{Lemma~2.7}
\newcommand{\SYNCorCompactness}{Corollary~7.5} \newcommand{\SYNThmCompactnessFutCont}{Theorem~7.6} \newcommand{\SYNThmsIntrinsic}{Theorems~8.2, 8.4} \newcommand{\SYNSubsecRegPart}{Subsection~9.2} \newcommand{\SYNSubsecRFST}{Subsection~9.1} \newcommand{\SYNSubsecSmoothConv}{Subsection~9.4} \newcommand{\SYNThmRegPartProperties}{Theorem~9.12} \newcommand{\SYNThmSmoothConv}{Theorem~9.31} \newcommand{\SYNDefConvPtsWithin}{Definition~6.18} \newcommand{\SYNDefSmoothConv}{Definition~9.30} \newcommand{\SYNThmExistencConvSeq}{Theorem~6.45} \newcommand{\SYNConvParabNbhd}{Theorem~9.58}  \newcommand{\SYNChangeBaseConv}{Theorem~6.40} \newcommand{\SYNPropLocConc}{Proposition~9.17} \newcommand{\SYNVarIdentities}{Lemma~2.8}\newcommand{\SYNDefHK}{Definition~9.14} \newcommand{\SYNLemSubseqTimewiseAE}{Lemma~6.7} \newcommand{\SYNThmConvImpliesStrict}{Theorem~6.25} \newcommand{\SYNPropFutContTwoPts}{Proposition~4.40} \newcommand{\SYNLemStdParEst}{Lemma~9.15} \newcommand{\SYNThmConvSubsequwithinCF}{Theorem~6.49} \newcommand{\SYNThmTangenFlowasLimit}{Theorem~6.58}
\newcommand{\SYNFconvimplieswithin}{Theorem~6.9}

\topmargin        -1  cm
\oddsidemargin   -0.2  cm
\evensidemargin  -0.2  cm
\textwidth      17.5  cm
\textheight      24   cm

\newcommand{\IR}{\mathbb{R}}
\newcommand{\lb}{\linebreak[1]}

\newcommand{\IN}{\mathbb{N}}

\newcommand{\IZ}{\mathbb{Z}}

\newcommand{\LL}{\mathcal{L}}
\newcommand{\IF}{\mathbb{F}}
\newcommand{\NN}{\mathcal{N}}
\newcommand{\MM}{\mathcal{M}}
\newcommand{\RR}{\mathcal{R}}
\renewcommand{\SS}{\mathcal{S}}
\newcommand{\XX}{\mathcal{X}}
\newcommand{\WW}{\mathcal{W}}
\newcommand{\eps}{\varepsilon}
\newcommand{\la}{\lambda}
\newcommand{\tf}{\mathfrak{t}}
\newcommand{\CF}{\mathfrak{C}}
\newcommand{\ov}[1]{\overline{#1}}
\newcommand{\td}[1]{\widetilde{#1}}

\DeclareMathOperator*{\osc}{osc}
\DeclareMathOperator{\loc}{loc}
\DeclareMathOperator{\sign}{sign}

\DeclareMathOperator{\Var}{Var}
\DeclareMathOperator{\Int}{Int}
\DeclareMathOperator{\Ric}{Ric}
\DeclareMathOperator{\Rm}{Rm}

\DeclareMathOperator{\eucl}{eucl}
\DeclareMathOperator{\vol}{vol}

\DeclareMathOperator{\supp}{supp}

\DeclareMathOperator{\DIV}{div}
\DeclareMathOperator{\proj}{proj}

\DeclareMathOperator{\thick}{thick}
\DeclareMathOperator{\thin}{thin}
\newcommand{\tdrrm}{\td{r}_{\Rm}}
\newcommand{\rrm}{r_{\Rm}}

\newcommand{\dotcup}{\ensuremath{\mathaccent\cdot\cup}}
\newcommand{\EMPTY}[1]{}

\newtheorem{Theorem}[equation]{Theorem}
\newtheorem{Lemma}[equation]{Lemma}
\newtheorem{Corollary}[equation]{Corollary}
\newtheorem{Proposition}[equation]{Proposition}
\newtheorem{Claim}[equation]{Claim}

\newtheorem{Conjecture}[equation]{Conjecture}
\newtheorem{Assumption}[equation]{Assumption}
\newtheorem{Addendum}[equation]{Addendum}

\theoremstyle{definition}
\newtheorem{Definition}[equation]{Definition}
\theoremstyle{remark}
\newtheorem{Remark}[equation]{Remark}

\numberwithin{equation}{section}

\title{Structure theory of non-collapsed limits of Ricci flows}
\author{Richard H  Bamler}
\address{Department of Mathematics, UC Berkeley, CA 94720, USA}
\email{rbamler@berkeley.edu}
\thanks{This work was supported by NSF grant DMS-1906500.}
\date{\today}

\begin{document}
\begin{abstract}
In this paper we characterize non-collapsed limits  of Ricci flows.
We show that such limits are smooth away from a set of codimension $\geq 4$ in the parabolic sense and that the tangent flows at every point are given by gradient shrinking solitons, possibly with a singular set of codimension $\geq 4$.
We furthermore obtain a stratification result of the singular set with optimal dimensional bounds, which depend on the symmetries of the tangent flows.
Our methods also imply the corresponding quantitative stratification result and  the expected $L^p$-curvature bounds.

As an application of our theory, we obtain a description of the singularity formation of a Ricci flow at its first singular time and a thick-thin decomposition characterizing the long-time behavior of immortal flows.
These results generalize Perelman's results in dimension 3 to higher dimensions.
We also obtain a Backwards Pseudolocality Theorem and discuss several other applications.
\end{abstract}

\maketitle

\tableofcontents

\part{Introduction and preliminary discussion}

\section{Introduction}
\subsection{Opening}
The Ricci flow, which was introduced by Hamilton \cite{Hamilton_3_manifolds}, has become the subject of intensive research.
Its importance has become evident due to several  applications in  topology, Riemannian and K\"ahler geometry.
Perhaps one of the most spectacular applications were the resolutions of the Poincar\'e and Geometrization Conjectures by Perelman~\cite{Perelman1, Perelman2, Perelman3} using 3-dimensional Ricci flow.
In Perelman's work, and in most other applications of Ricci flow, a proper understanding of the finite-time singularity formation and asymptotic long-time behavior of the flow is key.
In dimension 3, Perelman obtained a satisfactory characterization of the singularity formation via blow-up techniques, which ultimately enabled him to extend the flow past the first singular time using a surgery process.
The same tools also allowed him to understand the long-time asymptotics of the flow to a certain extent; this was later improved by the author \cite{Bamler_longtime_0,Bamler_longtime_A,Bamler_longtime_B,Bamler_longtime_C,Bamler_longtime_D}.

Unfortunately, Perelman's techniques break down in dimensions $n > 3$ and a general theory characterizing finite-time singularities and the long-time behavior has not been available to date.
The goal of this paper is to fill this void.
More specifically, we will develop a compactness, partial regularity and structure theory of Ricci flows in all dimensions.
This theory will allow us to analyze the short-time singularity formation of the flow and bound the size of the singular set.
In addition, we will also use the theory to characterize the long-time behavior of immortal flows.
In dimension 3, our theory will essentially recover Perelman's results; so it can be seen as a generalization of his results to higher dimensions.

The study of blow-ups and singularity models of the Ricci flow goes back to Hamilton in the 90s \cite{Hamilton_RF_compactness, Hamilton-formation-sing}.
The most basic example of a singularity model would be the round sphere $S^n$, which evolves by rescaling and shrinks to a point under the flow in finite time.
Further generalizations of this example are round cylinders $S^k \times \IR^{n-k}$ or products $M^k \times \IR^{n-k}$, where $M^k$ is an Einstein manifold with positive Einstein constant.
Beyond that, Hamilton observed that due to the diffeomorphism invariance of the Ricci flow equation, there is a more general class of examples, called \emph{(gradient) shrinking solitons}.
These are selfsimilar solutions to the Ricci flow that shrink modulo diffeomorphisms.
Note that any gradient soliton trivially occurs as a singularity model of its own flow.

The following conjecture arose from Hamilton's work and has guided research in Ricci flow for almost two decades:

\begin{Conjecture} \label{Conj_main}
Finite-time singularities of the Ricci flow are ``mostly'' (or at least somewhere) modeled on a gradient shrinking soliton.
\end{Conjecture}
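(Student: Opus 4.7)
The plan is to establish a strong quantitative form of the conjecture: for any non-collapsed finite-time singularity, every tangent flow (in an appropriate parabolic sense) is a gradient shrinking soliton, smooth away from a singular set of codimension at least four. The architecture of the argument has three layers: compactness, monotonicity-rigidity, and $\eps$-regularity.

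First I would develop a compactness theory adapted to the Ricci flow. Since the flows may develop singularities, Cheeger-Gromov-Hamilton convergence is inadequate, and the natural framework is that of metric flows with $\mathbb{F}$-convergence (see \SYNCorCompactness, \SYNThmCompactnessFutCont), in which the parabolic distance between two flows is measured via optimal transport between conjugate heat kernels. The only a priori hypothesis that is preserved under rescaling is a lower bound on Perelman's entropy (non-collapsing), so the compactness theorem should close under this single assumption. Rescaling about an $H_n$-center (\HKHCenterDef, \HKHCenterPropExist) of a sequence of points approaching the singularity then produces a subsequential limit, the tangent flow (\SYNThmTangenFlowasLimit).

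Second, I would use the monotonicity of the pointed Nash entropy $\NN$ (\HKPropPropertiesNash, \HKVarMonotonicityCHF) together with the heat kernel concentration estimate (\HKHnConcentration) to identify the rigidity case. Under parabolic rescaling the total variation of $\NN$ between any two times is unchanged, and because $\NN$ is uniformly bounded after normalization, in the limit this variation vanishes on every compact time interval. The equality case of Perelman's differential Harnack inequality then forces the limit flow to satisfy the gradient shrinking soliton equation $\Ric + \nabla^2 f = \tfrac{1}{2\tau} g$ on its smooth part. This already yields the ``somewhere modeled on a soliton'' assertion of Conjecture \ref{Conj_main}.

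Third, to upgrade this weak soliton to one with a controlled singular set, I would prove an $\eps$-regularity theorem (\HKThmEpsRegularity): any spacetime point whose pointed Nash entropy is $\eps$-close to the Gaussian value lies in a smooth parabolic neighborhood. The inputs are sharp $L^\infty$ and gradient bounds on the conjugate heat kernel (\HKLinftyHKbound, \HKThmNabKbound), matching volume bounds (\HKThmUpperVolBound, \HKThmLowerVolNearCenter), and a hypercontractivity estimate (\HKHypercontractivity). A Cheeger-Naber-type quantitative stratification argument, based on effective almost-splitting of strata along worldlines (\HKBasicCovering), then bounds the Minkowski dimension of the singular set by $n-4$. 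I expect the main obstacle to lie in this third step: the conjugate heat kernel estimates must be established under only a Nash entropy bound, and the cone-splitting/quantitative-rigidity machinery must be rebuilt in the parabolic $\mathbb{F}$-distance setting where the ``cone'' is replaced by a selfsimilar shrinker and the base point moves in time along an $H_n$-center.
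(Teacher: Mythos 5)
Your three-layer architecture — $\mathbb{F}$-compactness, entropy monotonicity/rigidity, $\eps$-regularity with quantitative stratification — does match the skeleton of the paper's program for resolving this conjecture. But there are two genuine gaps in how you fill in the middle.

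First, your step two assumes you can read off the soliton equation from the equality case of Perelman's inequality ``on the smooth part'' of the $\mathbb{F}$-limit. But an $\mathbb{F}$-limit is a priori a purely synthetic metric flow: it has no smooth part until the partial regularity theorem produces one, and that theorem in turn depends on the $\eps$-regularity theorem you place in step three. So the chain is circular as written. The paper breaks this circle by never trying to pass directly to a rigid soliton: it instead formulates weak \emph{integral} almost-soliton, almost-static, and (weak and strong) splitting-map conditions on a single smooth flow (Definitions~\ref{Def_almost_self_similar}, \ref{Def_static}, \ref{Def_weak_splitting_map}, \ref{Def_strong_splitting_map}), proves a quantitative stratification result (Proposition~\ref{Prop_Quantitative_Strat_preliminary}) and an $\eps$-regularity theorem (Proposition~\ref{Prop_eps_regularity_np2}) entirely at the level of smooth flows, and only afterwards passes to $\mathbb{F}$-limits. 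Moreover the $\eps$-regularity theorem itself is not a one-shot input: it is bootstrapped through codimension $1 \to 2 \to 4$ (Proposition~\ref{Prop_eps_regularity_np2} $\to$ Proposition~\ref{Prop_eps_reg_codim_2} $\to$ Proposition~\ref{Prop_reg_codim_4}), with each stage using the partial regularity theory the previous stage enables.

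Second, your phrase ``almost-splitting of strata along worldlines'' points at exactly the shortcut the paper argues is unavailable. Without distance-distortion or lower heat-kernel bounds — which fail for general Ricci flows — worldlines do not yield the required comparability between time-slices, and the paper explicitly abandons them (see the discussion in Subsection~\ref{subsec_key_diff_proof_strategy}). What replaces them are global integral estimates against conjugate heat kernel measures, a calculus for exchanging the basepoint of those measures (Section~\ref{sec_comparing_CHK}, depending on the weighted $L^p$-bounds of Section~\ref{sec_improved_Lp}), a distance-expansion bound near almost-selfsimilar points (Section~\ref{sec_dist_expansion}), an almost-monotonicity of $\int_M \tau R\, d\nu_t$ (Proposition~\ref{Prop_scalar curvature_almost_ss}), and a Transformation Theorem for improving splitting maps under rescaling (Proposition~\ref{Prop_Transformation_Theorem}). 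Without these, the covering argument you invoke has nothing to cover with. So the plan is correct in outline but the mechanism by which steps two and three actually interlock is missing, and the attempt to run the cone-splitting machinery along worldlines would fail.
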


In dimension 3, this conjecture was verified by Perelman to a certain extent and led to the construction of Ricci flow with surgery.
In higher dimensions, Conjecture~\ref{Conj_main} remained, however, largely unresolved.
This was, in part, due to a lack of a suitable compactness theory for Ricci flows.
Conjecture~\ref{Conj_main} was verified in the case of Type I singularities (i.e. assuming a curvature bound of the form $|{\Rm}| \leq C(T-t)^{-1}$) by Sesum, Naber, Enders, Buzano, Topping, Cao, Zhang \cite{Sesum_conv_to_soliton, Naber_soliton, Enders_Mueller_Topping, Cao_Zhang_2011}, using Perelman's monotonicity formulas.
However, in the absence of the Type I condition, examples constructed by Appleton, Stolarski and Li, Tian, Zhu \cite{Appleton:2019ws, Stolarski:2019uk, Li_Tian_Zhu_Fano_singular} have shown that Conjecture~\ref{Conj_main} needs to be revised to allow singularity models that have a singular set themselves.

In this paper we will resolve Conjecture~\ref{Conj_main} in its revised form.
We will show that finite-time singularities in any Ricci flow can essentially be described by two different types of singularity models: gradient shrinking solitons and static, Ricci flat cones, each with a possible singular set of codimension $\geq 4$.
This implies that any finite-time singularity of the Ricci flow has at least one blowup of this form.
We will moreover construct a ``singular time-slice'' at the singular time.
Every sequence of spacetime points 
converging to the singular time 
subsequentially converges to a point in this ``singular time-slice'' and any such limit point is either regular or the Ricci flow near it is asymptotic to a gradient shrinking soliton.
We will also characterize the long-time asymptotics on an immortal solution to the Ricci flow.
More specifically, we will show that at large times the manifold can be decomposed into a ``thick'' and a ``thin'' part; the metric on the thick part essentially approaches a singular Einstein metric with negative Einstein constant and the thin part is locally collapsed in a certain sense.

Our results are based on the compactness and partial regularity theory of Ricci flows from \cite{Bamler_RF_compactness}, concerning $\IF$-limits.
This theory shows that \emph{any} sequence of pointed Ricci flows (of the same dimension) contains a subsequence that has an $\IF$-limit.
We will study this limit in the non-collapsed case, i.e. under a uniform local entropy bound; such a bound is always available in the study of finite-time singularities.
It will turn out that in this non-collapsed case, the $\IF$-limit can be described by a smooth Ricci flow spacetime away from a possible singular set of codimension $\geq 4$ (here we count the time direction twice).
The singular points of this limit, in turn, admit blowups (a.k.a. tangent flows) that are again possibly singular static Ricci flat cones or gradient shrinking solitons.
We will also show that the singular set is stratified according to the symmetries of tangent flows, and obtain the expected dimensional bounds.
In addition, we will obtain the corresponding quantitative stratification result for Ricci flows.
Lastly, we will discuss several applications of our compactness theory, including a Backwards Pseudolocality Theorem.

\subsection{Historical context} \label{subsec_history}
%
%
%
Our theory parallels the compactness and partial regularity theory --- and the corresponding quantitative stratification --- of spaces with lower Ricci curvature bounds and Einstein metrics, which was developed by Cheeger, Colding, Tian and Naber  \cite{Colding-vol-conv, Cheeger-Colding-Cone, Cheeger_Colding_struc_Ric_below_I,Cheeger_Colding_struc_Ric_below_III,Cheeger-Colding-structure-II, Cheeger_Colding_Tian_2002, Cheeger-Naber-quantitative, Cheeger-Naber-Codim4}.
In fact, in the special case of (trivial) Ricci flows evolving through Einstein metrics, our theory implies the results of \cite{Cheeger-Naber-Codim4}.

Similar compactness and partial regularity theories exist for many other geometric equations such as:  minimal surfaces \cite{Almgren_2000, Simon_1995, Federer_book,Cheeger-Naber-13, Naber_Valtorta_2015}, mean curvature flow \cite{White_94, White_97, White_2000, White_2003, White_2011, Cheeger_Haslhofer_Naber_2013, Cheeger_Haslhofer_Naber_2015}, harmonic maps \cite{Schoen_Uhlenbeck_82, Simon_96,Cheeger-Naber-13} and the harmonic map heat flow \cite{Chen_Struwe_1989,Lin_Wang_1999, Lin_1999, Cheeger_Haslhofer_Naber_2015}.
%
On the other hand, a comparable theory for Ricci flows, in its full generality, has been unavailable so far, however, partial progress has been obtained.

In dimension~3, Perelman \cite{Perelman1} obtained  curvature estimates that allowed the extraction of {\it smooth} sublimits in many cases.
So a compactness theory, as derived here, with possibly singular limits, is not necessary in this dimension.
Note that Perelman's compactness theory was specific to dimension 3, as it relied heavily on the existence of lower sectional curvature bounds (via the Hamilton-Ivey pinching), which are not available in higher dimensions \cite{Maximo_Ric_sing,ZhangZ_2014}.
The theory developed in this paper implies Perelman's theory in dimension 3 and does not rely on any lower sectional curvature bounds (see Subsection~\ref{subsec_dimensions23_intro} for more details).

In higher dimensions, partial progress has been obtained only under relatively strong additional assumptions.
In the case of bounded scalar curvature, which is natural in the setting of the Hamilton-Tian Conjecture, a compactness theory for Ricci flows was developed in \cite{Bamler-Annals-2018, Chen_Wang_2017_Part_A, Chen-Wang-II, Tian_Zhang_2016}.
This theory implies subsequential convergence away from a singular set of codimension $\geq 4$, assuming a uniform pointwise scalar curvature bound.
The theory developed in this paper implies these results, so it is a strict generalization.
The methods employed in this paper are, however, vastly different from those of previous work, because the general case demands a large array of new techniques to deal with various new geometric and analytic phenomena of the Ricci flow equation, as well as a new approach towards proving partial regularity.
Let us offer some more background.
In the previous work, the bounded scalar curvature assumption was crucial at several steps of the proof.
Amongst other things, it made it possible to reduce large parts of the problem to the analysis of \emph{time-slices} that satisfied certain Ricci curvature bounds;
these time-slices could, in turn, be analyzed using familiar techniques from Riemannian geometry.
The scalar curvature bound also guaranteed helpful \emph{distance distortion estimates} and justified the analysis of the spacetime geometry via \emph{worldlines.}
Unfortunately, none of these short-cuts work for general Ricci flows \cite{Bamler_HK_entropy_estimates, Bamler_RF_compactness}, so completely new ideas are needed in this setting.
For example, we will be forced to consider the set of \emph{all} time-slices at every step of the proof and we won't be able to apply any previous partial regularity theorems to time-slices.
In addition, the lack of distance distortion estimates forces us to do away with the concept of worldlines and study the \emph{spacetime geometry} of Ricci flows as a whole in more detail (see Subsection~\ref{subsec_key_diff_proof_strategy} and Section~\ref{sec_outline} for more details). 

In another direction, past progress on partial regularity of Ricci flows has been obtained under a Type-I curvature bounds ($|{\Rm}| \leq C (T- t)^{-1}$).
In this setting, Gianniotis  \cite{Gianniotis_Type_I} derived quantitative stratification results, which are similar to those of Einstein metrics.
These are also implied by our theory (see Subsection~\ref{subsec_quant_strat_intro}).

In the absence of curvature bounds, analysis on the singularity formation in dimensions $\geq 4$ was furthermore conducted in certain cohomogeneity-one settings, by Appleton \cite{Appleton:2019ws} and Stolarski \cite{Stolarski:2019uk}.
Their work proved the existence of Ricci flat, asymptotically conical blow-up limits, therefore providing valuable examples for our theory.
At an intermediate scale, these examples prove the existence of singular blow-up limits that are isometric to static solutions on a Ricci flat cone.
In the case of Appleton's work, the Ricci flat metric was the Eguchi-Hanson metric and the intermediate blow-up limit was isometric to $\IR^4/\IZ_2$.
This shows that our dimensional bounds are sharp, and that  we have to expect singular sets of codimension 4 in certain settings.
See also Subsection~\ref{subsec_dim4_intro} where some of the findings of Appleton's work are confirmed.

\subsection{Key difficulties and proof strategy} \label{subsec_key_diff_proof_strategy}
While compactness and partial regularity theories exist for a plethora of geometric equations, their proofs all share several core elements.
Among these are:
\begin{itemize}
\item a notion of geometric closeness combined with a compactness theory, which allows the extraction of synthetic limits (such as Gromov-Hausdorff limits),
\item a monotone quantity (such as the normalized volume),
\item an almost rigidity theorem, which guarantees some form of local closeness to a symmetric model (such as a cone) whenever this monotone quantity is almost constant.
\item a cone splitting theorem, guaranteeing geometric splitting whenever a space is conical with respect to  more than one point and
\item an $\eps$-regularity theorem, which establishes local regularity bounds near points where the space is close to a trivial model space (such as Euclidean space).
\end{itemize}

One of the main difficulties in obtaining a general compactness and partial regularity theory for Ricci flows was the absence of most of these ingredients in the Ricci flow setting.
Up until recently there was no sense of geometric convergence for Ricci flows or a general synthetic theory of a possible limit.
Next, a canonical choice for a monotone quantity would be the $\mathcal{W}$-functional, introduced by Perelman \cite{Perelman1}, or the pointed Nash-entropy \cite{Hein-Naber-14}, which are only constant on shrinking solitons.
However, an almost rigidity theorem implying some sort of closeness to a shrinking soliton was unknown up until this point.
Moreover, while a cone splitting theorem involving (smooth) solitons is not hard to derive, the necessary synthetic version has been unavailable.
Lastly, up until recently, the best $\eps$-regularity theorem for Ricci flows is due to Hein and Naber \cite{Hein-Naber-14}; however this theorem still relied on extra assumptions, such as an upper scalar curvature bound.


One of the main innovations of our proof is a new perspective on the spacetime geometry of Ricci flows, which was initiated in \cite{Bamler_HK_entropy_estimates, Bamler_RF_compactness} and is continued in the present paper.
Broadly speaking, this perspective unveils certain surprising geometric analogies between Ricci flows and Einstein metrics.
The first paper \cite{Bamler_HK_entropy_estimates} contains various new and improved geometric and analytic estimates involving the heat kernel on a Ricci flow, which relied on this new perspective.
The second paper \cite{Bamler_RF_compactness} introduces a new theory of $\mathbb{F}$-convergence, which allows the extraction of (synthetic) metric flows as sublimits from {\it any} sequence of pointed Ricci flows --- so it is comparable to the theory of Gromov-Hausdorff convergence for spaces with lower Ricci curvature bounds.
This work establishes \emph{some} of the ingredients listed above.
However, since the Ricci flow equation is far more complex and the analogy to the Einstein equation breaks down soon enough, several important ingredients, such as an almost cone rigidity or cone splitting theorem are left to be desired.\footnote{Note that the theory of metric flows is too weak to even deduce the relatively elementary cone splitting result.}
We are therefore forced to take a different route in proving partial regularity.

Our approach is further complicated by the fact that various elementary estimates, which are normally key in studying Ricci flows in specific settings, are not available in the general setting.
For example, there are heuristic reasons \cite{Bamler_HK_entropy_estimates, Bamler_RF_compactness} to assume that basic distance distortion and lower heat kernel bounds fail in the general setting.
To illustrate the severity of these complications, we note that as a consequence, we will be unable to localize most of our estimates (i.e., work in a ball or parabolic neighborhood); instead, most of our estimates \emph{must} be carried out \emph{globally.}
It will also be hard to relate points in different time-slices and compare integrals against different conjugate heat kernel densities.

Let us now briefly digress and use our hindsight to reevaluate the past progress on this topic (as detailed in Subsection~\ref{subsec_history}).
One could conclude that most of these previous results were possible, because the specific settings that were analyzed \emph{did} guarantee the elementary estimates mentioned above.
As a result, it was largely possible to reduce the study of Ricci flows to single time-slices (e.g., via worldlines) and apply available tools from Riemannian or comparison geometry.
Since this is not possible anymore in our case, we are forced to consider the \emph{entire} flow as a geometric object and conduct our estimates \emph{globally} at almost every step of the proof.

We now explain in some more detail how we will address a few of the issues presented so far.

First, we will follow a new overall strategy, which avoids almost rigidity theorems at the expense of a more quantitative cone splitting result.
Key to this novel strategy is to work with certain weak \emph{global} and \emph{analytic} splitting and selfsimilarity conditions for as long as possible and draw the desired geometric conclusions only in the very end via a limit argument.
In addition, we will use smooth convergence on the regular set to deduce various geometric properties of the \emph{entire} synthetic limit flow.
This can be seen in contrast to the common tactic of \emph{first} deriving softer synthetic conditions for the smooth flows and \emph{then} passing these to the limit.


Second, we derive a number of new estimates for Ricci flows, which allow us to circumvent the more elementary issues mentioned earlier in very specific settings.
These include:
\begin{itemize}
\item A  technique allowing us to exchange conjugate heat kernel densities in integral bounds. 
Associated to this, we will also derive new improved $L^p$-bounds on geometric quantities.
\item A distance-expansion bound for pairs of conjugate heat kernels near almost selfsimilar points.
\item An almost monotonicity result for the scalar curvature near almost selfsimilar points, allowing us  to show monotonicity of integrals of the form $\int_M \tau R \, d\nu_t$.
\item Several integral bounds characterizing almost soliton equations near almost selfsimilar points.
\item An improvement result for globally defined, weak splitting maps.
\item An $\eps$-regularity theorem involving weak or strong splitting maps.
\end{itemize}
We refer to Section~\ref{sec_outline} for a more detailed description of the proof.

\subsection{Acknowledgements}
I thank Bennett Chow, Gang Tian and Guofang Wei for some inspiring conversations.
Particular thanks also go to Bennett Chow and Yi Lai for pointing out some typos in an earlier version of the manuscript, Wangjian Jian for a very helpful and extensive list of further typos and Yuxing Deng, Max Hallgren, Zilu Ma and Yongjia Zhang for pointing out an issue in the proof of Proposition~\ref{Prop_almost_static}, which is fixed in this version.
Furthermore, I thank Qi Zhang and Christina Sormani for pointing out some incorrect references in an earlier version.

\section{Main results}
We will now describe the main  results of this paper in detail.
Most of these results involve notions introduced in \cite{Bamler_HK_entropy_estimates,Bamler_RF_compactness}, which we will recall or summarize whenever necessary.
Towards the end of each subsection we may provide some further details on each topic; the first-time reader may want to skip these details and continue with the following subsection.
The proofs of all main theorems can be found in Part~\ref{part_proofs} of this paper.

\subsection{Partial regularity theory and tangent flows of limits of Ricci flows} \label{subsec_part_reg_limit_intro}
The results of this paper rely on a partial regularity and structure theory of non-collapsed limits of Ricci flows, which we will describe now.

Let $(M_i, (g_{i, t})_{t \in (-T_i, 0]}, (x_i, 0))$ be a sequence of pointed Ricci flows on compact manifolds of the same dimension $n$ and $T_\infty := \lim_{i \to \infty} T_i \in (0, \infty]$.
Note that the basepoints $(x_i, 0) \in M_i \times (-T_i, 0]$ are chosen from the final time-slice.
By the results of \cite{Bamler_RF_compactness} we may pass to a subsequence and obtain $\IF$-convergence on compact time-intervals
\begin{equation} \label{eq_F_conv_intro}
 (M_i, (g_{i,t})_{t \in (-T_i,0]}, (\nu_{x_i,0;t})_{t \in (-T_i, 0]}) \xrightarrow[i \to \infty]{\quad \IF, \CF \quad} (\mathcal{X}, (\nu_{x_\infty;t})_{t \in (-T_\infty,0]}), 
\end{equation}
within some correspondence $\CF$, where we may assume that $\XX$ is a future continuous and $H_n$-concentrated metric flow of full support over $(-T_\infty,0]$ (for more details see  \cite{Bamler_RF_compactness} or Section~\ref{Sec_basic_limits}).

In general --- similar to the case of spaces with lower Ricci curvature bounds --- the time-slices of the limiting flow $\XX$ may be collapsed, i.e., have dimension $< n$.
In order to exclude this case, we will impose the following non-collapseding assumption throughout the remainder of this subsection for some uniform $0 < \tau_0 < T_\infty$, $Y_0 < \infty$: 
\begin{equation} \label{eq_intro_non_collapsing}
 \NN_{x_i, 0} (\tau_0) \geq - Y_0. 
\end{equation}
Here $\NN_{x_i, 0}$ denotes the pointed Nash-entropy \cite{Hein-Naber-14,Bamler_HK_entropy_estimates}, which is a monotone quantity that is related to Perelman's $\mathcal{W}$-functional.
In fact, a lower bound on Perelman's $\mathcal{W}$-functional, or more precisely the $\mu$-entropy, implies a condition of the form the form (\ref{eq_intro_non_collapsing}); so a bound of this form is ubiquitous in Ricci flow.
Vice versa, a lower bound on the pointed Nash-entropy at every point implies a lower bound on the $\mu$-entropy.
The advantage of a lower bound on the pointed Nash-entropy over a lower bound on Perelman's $\mu$-entropy is that it is more local.
It can be compared to a local volume bound of the form $|B(x,\tau_0^{1/2})| \geq e^{-Y_0} \tau_0^{n/2}$, while a lower bound on Perelman's $\mu$-entropy could be compared to a statement that such a volume bound holds for all $x \in M$.
We remark that it can actually be shown that a bound of the form $|B(x_i,\tau_0^{1/2})| \geq c \tau_0^{n/2}$ implies (\ref{eq_intro_non_collapsing}) for some $Y_0(c) < \infty$.
For more details see \cite{Bamler_HK_entropy_estimates}.

Our first main result states that if the non-collapsing condition (\ref{eq_intro_non_collapsing}) holds, then $\XX$ is regular away from a subset of parabolic Minkowski dimension $n-2$, i.e. of parabolic Minkowski \emph{co-dimension} $4$.
To state our result, we will consider the restriction $\XX_{<0}$ of our flow to negative times (recall that $\XX_{0} = \{ x_\infty \}$), and we consider the decomposition
\[ \XX_{<0} = \RR {\; \dot\cup \;} \SS. \]
into its regular ($\RR$) and singular ($\SS$) part.
We recall that the flow on $\RR$ is given by a smooth Ricci flow spacetime, which is the same notion as used in the analysis of 3-dimensional singular Ricci flows \cite{Kleiner_Lott_singular,bamler_kleiner_uniqueness_stability}. 
Note that, a priori, we may have $\RR = \emptyset$.

We can now state our first main result:

\begin{Theorem} \label{Thm_XX_reg_sing_dec_main}
$ \dim_{\mathcal{M}^*} \SS \leq n-2$.
\end{Theorem}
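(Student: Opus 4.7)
The plan is to follow the general quantitative-stratification blueprint pioneered by Cheeger-Naber for spaces with lower Ricci bounds, adapted to metric flow limits via the new analytic ingredients listed at the end of Subsection~\ref{subsec_key_diff_proof_strategy}. I would stratify the singular set as $\SS^0 \subseteq \SS^1 \subseteq \ldots \subseteq \SS^{n-2}$, where $\SS^k$ denotes the set of $x \in \XX_{<0}$ such that no tangent flow at $x$ splits off isometrically more than $k$ Euclidean factors. Two statements together yield the theorem: first, an $\eps$-regularity statement forcing $\SS \subseteq \SS^{n-2}$, since any tangent flow that splits off $n-1$ flat directions must be the static Gaussian on $\IR^n$ and hence its basepoint lies in $\RR$; and second, the Minkowski estimate $\dim_{\mathcal{M}^*} \SS^k \leq k$ for every $k \leq n-2$.

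The quantitative replacement of ``symmetry of the tangent flow'' is a scale-by-scale condition built from the pointed Nash-entropy. Monotonicity of $\tau \mapsto \NN_{x,t}(\tau)$, combined with (\ref{eq_intro_non_collapsing}) and the standard universal upper bound on $\NN$, implies that for every $\eps > 0$ and every basepoint the number of dyadic scales $\tau \in (0,\tau_0]$ at which $\NN_{x,t}$ varies by more than $\eps$ across $[\tau/2, 2\tau]$ is uniformly controlled by $\eps, Y_0, \tau_0$. At every remaining scale the flow is approximately selfsimilar, and by the almost-soliton integral estimates foreshadowed in Subsection~\ref{subsec_key_diff_proof_strategy} such a scale carries a well-defined approximate shrinking soliton (or static cone) structure, which gives meaning to the notion of its tangent flow splitting $k$ directions.

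The heart of the argument is a global cone-splitting statement: if $(x,t)$ is $\eps$-selfsimilar at scale $\tau$ and one can find $k+1$ auxiliary basepoints on the time-slice $t - \tau$ whose $H_n$-centers are quantitatively spread out in $k+1$ independent directions, then the flow at scale $\tau$ should admit a weak $k$-splitting map, i.e.\ $k$ approximately harmonic functions on a time-slab whose gradients are almost orthonormal and almost parallel when tested against the conjugate heat kernel densities $d\nu_{x,t;\cdot}$. Combined with the self-improvement result for weak splitting maps and the $\eps$-regularity theorem listed in Subsection~\ref{subsec_key_diff_proof_strategy}, this closes the loop: an almost $(n-1)$-splitting at sufficiently many scales must force smoothness, and a Federer-type induction on $k$ together with a dyadic covering converts the scale-counting bound into the desired $\dim_{\mathcal{M}^*} \SS^k \leq k$.

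The principal obstacle, and what prevents a direct transplantation of the Riemannian proof, is the absence of two-sided distance-distortion and heat kernel bounds under (\ref{eq_intro_non_collapsing}) alone: every step --- symmetry detection, cone splitting, and $\eps$-regularity --- must be carried out globally, in integrals against conjugate heat kernel densities, rather than pointwise on parabolic balls. The hardest technical work therefore lies in the kernel-exchange estimates and in the self-improvement of globally defined weak splitting maps, which together replace the pointwise almost-rigidity step of the Riemannian theory. Verifying that the resulting almost-splittings on smooth approximants pass to an exact splitting of the limiting tangent flow, via smooth convergence on $\RR$ rather than any soft synthetic procedure, is the final delicate point.
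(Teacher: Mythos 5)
Your high-level blueprint is the right one and matches the paper's general architecture: monotonicity of $\NN_{x,t}$ gives a scale-counting bound, almost-selfsimilar scales support weak splitting maps (via a parabolic cone-splitting statement in the spirit of Proposition~\ref{Prop_almost_split_Rk}), weak maps are upgraded to strong ones, and a quantitative stratification covering argument combined with $\eps$-regularity yields a Minkowski bound. However, as written, your argument only reaches codimension $2$, not the codimension $4$ claimed in the theorem, for two intertwined reasons.

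First, your stratification $\SS^k = \{x : \text{no tangent flow at }x\text{ splits more than }k\text{ Euclidean factors}\}$ is the wrong one. A static flow with time-slices $\IR^{n-2} \times C(S^1_\theta)$ for $\theta < 2\pi$ would, if it occurred as a non-collapsed $\IF$-limit, have a singular set of parabolic Minkowski dimension $n$ (spatial dimension $n-2$ plus $2$ for the time direction), yet every singular point would have a tangent flow splitting exactly $n-2$ Euclidean factors and hence lie in your $\SS^{n-2}$. So the Federer/dyadic-counting argument does \emph{not} give $\dim_{\mathcal{M}^*} \SS^{n-2} \leq n-2$ for your strata without additional input. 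The paper's Theorem~\ref{Thm_stratification_limit_main} instead uses the compound symmetry condition (see also Definitions~\ref{Def_symm_points}, \ref{Def_SS_weak}): a point lies outside $\SS^{k-1}$ if some tangent flow \emph{either} splits $k$ Euclidean factors (case b1) \emph{or} is a static cone splitting $k-2$ Euclidean factors (case b2). The two extra ``degrees of symmetry'' contributed by the static-plus-selfsimilar structure of a cone are exactly what account for the time direction being counted twice, and the quantitative strata and the almost-cone-splitting lemma (Lemma~\ref{Lem_quant_strat_preparation}) are built around this dichotomy.

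Second, and more seriously, the $\eps$-regularity step you invoke --- ``almost $(n-1)$-splitting at sufficiently many scales forces smoothness'' --- is only the $\Delta = 2$ statement (Proposition~\ref{Prop_eps_reg_codim_2}). To rule out the $\theta < 2\pi$ static cones above, the paper must prove that an $(n-2)$-split, static, selfsimilar point is in fact regular (Propositions~\ref{Prop_eps_reg_codim_4_weaker}, \ref{Prop_reg_codim_4}, and Corollary~\ref{Cor_working_ass_holds}). This is the parabolic analogue of the Cheeger--Naber codimension-$4$ theorem and is by far the hardest part of the proof: it requires the Transformation Theorem (Proposition~\ref{Prop_Transformation_Theorem}), the Slicing Theorem (Proposition~\ref{Prop_Slicing}), a curvature-control argument along good slices (Lemmas~\ref{Lem_curv_bound_slice}, \ref{Lem_improve_rrm_10}, \ref{Lem_weak_bckwds_pseudo}), and finally a topological/orientability argument to rule out cone links of total length less than $2\pi$ (or, in the $(n-3)$-split case, non-orientable or small-volume $S^2$-quotients). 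None of this appears in your proposal. The paper even organizes the entire Section~\ref{Sec_basic_limits} around a working Assumption~\ref{Aspt_working_ass} depending on $\Delta \in \{1,2,3,4\}$, establishing the structure theory conditional on the $\eps$-regularity hypothesis and then bootstrapping $\Delta$ from $1$ to $2$ to $4$, precisely because the codimension-$4$ improvement cannot be obtained from cone-splitting and Federer reduction alone.
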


Here $\dim_{\mathcal{M}^*}$ denotes the $*$-Minkowski dimension from \cite{Bamler_RF_compactness}, which is similar to the standard definition, but uses $P^*$-parabolic balls instead of distance balls and counts the time-direction twice.

Note that this bound, which is the analog of \cite{Cheeger-Naber-Codim4} in the Einstein case, is optimal.
Consider for example a blow-down sequence of static flows corresponding to the (Ricci flat) Eguchi-Hanson metric.
Its limit is a static flow on the orbifold $\IR^4/\IZ_2$.

An important consequence of Theorem~\ref{Thm_XX_reg_sing_dec_main} is that the regular part $\RR$ is dense in $\XX$.
This and further characterizations of the limit are giving in the next theorem.

\begin{Theorem} \label{Thm_XX_reg_determines_XX_intro}
The following is true for any $t \in (-T_\infty,0)$:
\begin{enumerate}[label=(\alph*)]
\item $\RR \subset \XX_{<0}$ is dense with respect to the natural topology on $\XX$.
\item $\SS_t := \SS \cap \XX_t$ is a set of measure zero.
\item The time-slice $(\XX_t, d_t)$ arises as a metric completion of the length metric on $(\RR_t, g_t)$.
In other words, $\RR_t := \RR \cap \XX_t \subset \XX_t$ is open and dense and the restriction of $d_t$ to $\RR_t$ agrees with the length metric of $g_t$.
\item Every uniformly bounded $u \in C^0 ( \RR_{ [t', 0)} ) \cap C^\infty ( \RR_{(t', 0)})$, $t' \in (-T_\infty,0)$, that satisfies the heat equation $\square u = 0$ on $ \RR_{ (t', 0)}$ is a restriction of the heat flow with initial condition $u  \chi_{\RR_{t'}} : \XX_{t'} \to \IR$ (defined using the metric flow structure on $\XX$) restricted to $\RR_{ [t', 0)}$.
\end{enumerate}
\end{Theorem}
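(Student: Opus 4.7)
\medskip

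The overall strategy is to leverage the parabolic Minkowski dimension bound $\dim_{\mathcal{M}^*} \SS \leq n-2$ from \HKThmNabNNSquareNN{}... no wait, from Theorem~\ref{Thm_XX_reg_sing_dec_main} together with $H_n$-concentration and the smooth Ricci flow spacetime structure on $\RR$. The plan is to prove the four statements in order, with earlier parts feeding into later ones. Throughout, the key geometric input is that any $P^*$-parabolic ball of radius $r$ based at a regular point carries essentially Euclidean $(n+2)$-dimensional parabolic volume (via smooth convergence on $\RR$ and the $H_n$-concentration property), so the bound $n-2 < n+2$ forces $\SS$ to be "thin" in both the spacetime and slice-wise senses.

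For (a), I would argue by contradiction: if $\RR$ were not dense, there would be a nonempty $P^*$-open set $U \subset \XX_{<0}$ contained entirely in $\SS$. Choose a point and a radius so that $U$ contains a $P^*$-parabolic ball $P^*(x,r)$. Using $H_n$-concentration and the standard heat-kernel comparison, $P^*(x,r)$ requires on the order of $r^{-(n+2)}$ parabolic balls of radius $\eps r$ to be covered, which would force $\dim_{\mathcal{M}^*} \SS \geq n+2$, contradicting Theorem~\ref{Thm_XX_reg_sing_dec_main}. For (b), I would observe that the $P^*$-parabolic ball $P^*(x,r)$ restricted to the time-slice $\XX_t$ is contained in the $d_t$-ball $B(x,r)$ (this is essentially by definition of $P^*$). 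Hence any efficient cover of $\SS$ by $N(r) \leq C r^{-(n-2+\eps)}$ parabolic $r$-balls yields a cover of $\SS_t$ by the same number of spatial $r$-balls, giving $\dim_{\mathcal{M}}(\SS_t) \leq n-2$ inside $\XX_t$. Since smooth convergence shows that volume elements on $\RR_t$ behave like $n$-dimensional Hausdorff measure, this forces $\SS_t$ to have measure zero with respect to both $\mathcal{H}^n|_{\XX_t}$ and the conjugate heat kernel measure $\nu_{x_\infty;t}$.

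For (c), openness of $\RR_t$ in $\XX_t$ follows directly from the smooth Ricci flow spacetime structure of $\RR$. Density of $\RR_t$ in $\XX_t$ is the slice-wise analog of (a): a $d_t$-open set contained in $\SS_t$ would contradict the slice-wise Minkowski bound just established. To identify $d_t|_{\RR_t}$ with the intrinsic length metric $d_t^{\text{len}}$ of $(\RR_t, g_t)$, the inequality $d_t \leq d_t^{\text{len}}$ is immediate from smooth convergence (any smooth curve in $\RR_t$ has length controlled by $g_t$ and is an admissible test curve for the metric flow distance). For the reverse inequality, I would connect two points in $\RR_t$ by a near-minimizing $d_t$-geodesic and use the codimension bound $\dim_{\mathcal{M}^*} \SS \leq n-2$ together with a transversality/perturbation argument in $\RR_t$ (where $\SS_t$ has codimension $\geq 2$) to approximate this curve by one lying in $\RR_t$ whose $g_t$-length is arbitrarily close to $d_t$. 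Once equality is established and density is known, the metric completion statement follows since $(\XX_t,d_t)$ is complete as the time-slice of a metric flow.

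For (d), I would define $\bar u$ as the synthetic heat flow on $\XX_{[t',0)}$ with initial condition $u\chi_{\RR_{t'}}$ (which coincides with $u\chi_{\XX_{t'}}$ almost everywhere by (b)), and compare $u$ and $\bar u$ on $\RR_{[t',0)}$. On $\RR$, the synthetic heat flow is represented by integration against conjugate heat kernel measures $\nu_{y;t'}$ based at regular $y$; by smooth convergence these measures coincide with the limit of smooth conjugate heat kernels, which by standard parabolic theory reproduce any smooth bounded solution of $\square u=0$ on $\RR$. The main obstacle is a uniqueness/rigidity statement at the boundary: bounded smooth solutions of the heat equation on the incomplete spacetime $\RR$ with prescribed initial trace on $\RR_{t'}$ must be unique. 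I would handle this by a cut-off argument near $\SS$, exploiting that the conjugate heat kernel puts essentially no mass on a tubular neighborhood of $\SS$ thanks to the codimension-$4$ bound (this is where the $*$-Minkowski estimate of Theorem~\ref{Thm_XX_reg_sing_dec_main} rather than a merely topological density statement becomes indispensable). The hardest step is exactly this cut-off estimate, since it requires quantitative heat-kernel decay near a singular set defined only in the weak parabolic sense.
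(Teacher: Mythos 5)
The theorem is proven in the paper by citing Theorem~\ref{Thm_SS_dimension_bound_limit}(b)--(d), whose proof (in Section~\ref{Sec_basic_limits}) takes a materially different route for the hardest parts, and your proposal contains one repairable error and one genuine gap.

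For (b), the claimed containment $P^*(x,t;r)\cap\XX_t\subset B(x,t,r)$ is backwards. By definition of a $P^*$-parabolic neighborhood, $P^*(x,t;r)\cap\XX_t$ consists of points $y\in\XX_t$ with $d^{\XX_{t-r^2}}_{W_1}(\nu_{x;t-r^2},\nu_{y;t-r^2})<r$, and monotonicity of $W_1$ gives only $B(x,t,r)\subset P^*(x,t;r)\cap\XX_t$. So the $P^*$-cover of $\SS$ does not furnish a cover of $\SS_t$ by $d_t$-balls, and the advertised bound on $\dim_{\MM}(\SS_t)$ does not follow. The measure-zero conclusion is nonetheless correct and is obtained in the paper directly from the heat-kernel mass bound $\nu_{x'_\infty;t}(P^*(x';r)\cap\XX_t)\leq Cr^n$ (Lemma~\ref{Lem_nu_Pstar_bound}) and the volume bound of Lemma~\ref{Lem_limit_HK_bound}(b), combined with the covering count $N(r)\leq Cr^{-(n-2+\eps)}$.

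For (c), your transversality/perturbation argument is where the proposal breaks. $\XX_t$ is only a metric completion of $(\RR_t,g_t)$ and carries no ambient smooth structure in which a "generic perturbation" of a $d_t$-geodesic is defined; moreover, even granting a perturbation missing $\SS_t$, you would need the perturbed curve (lying in $\RR_t$) to have $g_t$-length close to $d_t$ of its endpoints, which is precisely the weak convexity / "mild singularities" property that the paper explicitly states is deferred to forthcoming work. The paper instead runs a Chen--Wang style Lipschitz-function argument: one shows that $u\in C^0(\RR_t)$ being $1$-Lipschitz for $d_{g_t}$ implies $1$-Lipschitz for $d_t$, by multiplying with the cutoff $\eta_r$ so that $\int(|\nabla u_r|-1)_+\,dg_t\leq Cr^{1/2}$ (using the Minkowski bound with $\Delta=4$), solving the heat equation forward on the approximating smooth flows, controlling $|\nabla u^*_{i,r}|-1$ along time-$t^*$-geodesics via the heat kernel $L^\infty$-bound, and then using future continuity to conclude. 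This avoids any geodesic-convexity input entirely.

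For (d), your strategy is aligned with the paper's (reduce to a uniqueness statement for bounded heat-equation solutions on $\RR$, and prove it via cutoff near $\SS$), but you have only sketched the plan; the substance of the proof is the integration-by-parts inequality $\int_{\RR_{t''}}u^2v^*\,dg_{t''}\leq-\int\int u^2\square^*v^*\,dg\,dt$ for compactly-supported $v^*$, which in the paper is established using both the small-scale cutoff $\eta_r$ and a large-scale cutoff $\phi_L$ built from an auxiliary heat flow $w_L$.
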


We remark that Theorem~\ref{Thm_XX_reg_determines_XX_intro} implies that the Ricci flow spacetime structure on $\RR$ fully characterizes the metric flow $\XX_{<0}$.
To see this, note that the time-slices of $\XX_{<0}$ arise from the time-slices of $\RR$ by passing to the metric completion and the heat equation on $\RR$ fully characterizes heat flows on $\XX_{<0}$.
We refer to Theorem~\ref{Thm_XX_uniquely_det_RR} for a more formal statement that $\RR$ determines $\XX_{<0}$.

The next theorem addresses the convergence on the regular part $\RR$.
Recall that $\RR \subset \XX$ is defined solely based on the geometry of $\XX$ and does not depend on the sequence of Ricci flows $(M_i, (g_{i,t})_{t \in (-T_i,0]})$ or the convergence (\ref{eq_F_conv_intro}).
So it could a priori happen that $\XX$ is regular in some region even though the flows $(M_i, (g_{i,t})_{t \in (-T_i,0]})$ degenerate there.
The next theorem states that this cannot be the case.

\begin{Theorem} \label{Thm_R_RR_star_intro}
The $\IF$-convergence (\ref{eq_F_conv_intro}) is smooth on all of $\RR$.
So, using the terminology from \cite{Bamler_RF_compactness}, we have $\RR^* = \RR$.
\end{Theorem}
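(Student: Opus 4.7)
The plan is to establish the nontrivial inclusion $\RR \subseteq \RR^*$; the reverse inclusion is essentially by definition, since smooth Cheeger--Hamilton convergence of Ricci flows forces the limit to carry a smooth Ricci flow spacetime structure.

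Fix $x_\infty \in \RR$. The first step is to extract from the intrinsic smoothness of $\XX$ near $x_\infty$ a quantitative analytic smallness condition that the $\eps$-regularity theorem can consume. Since $\XX$ carries a smooth Ricci flow spacetime structure on some parabolic neighborhood of $x_\infty$, parabolic rescaling by a small factor $r$ makes that neighborhood arbitrarily close to a piece of the Gaussian shrinker on $\IR^n$. Concretely, I would argue that for any $\eps > 0$ there exists $r > 0$ such that the conjugate heat kernel measure based at $x_\infty$, evaluated at scales $\tau \in [r^2/2, r^2]$, is $\eps$-close to a Euclidean Gaussian; in particular $\NN_{x_\infty}(\tau) \leq \eps$, which is exactly the input to the $\eps$-regularity theorem (\HKThmEpsRegularity, or its sharper variant \HKThmEpsRegularitybetter).

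Next, I would transfer this smallness to the approximating flows via the correspondence $\CF$. Choosing spacetime points $(x_i, t_i) \in M_i$ converging to $x_\infty$ within $\CF$, the $\IF$-convergence controls Wasserstein distances between conjugate heat kernels through $\CF$, and combining this with (lower semi)continuity of the pointed Nash-entropy under $\IF$-convergence from \cite{Bamler_RF_compactness} forces $\NN_{x_i, t_i}(\tau) \leq 2\eps$ for all $i$ sufficiently large and suitable $\tau$ comparable to $r^2$. Thus the smallness hypothesis of the $\eps$-regularity theorem is met uniformly at each $(x_i, t_i)$, yielding uniform curvature bounds $|{\Rm}| \leq C r^{-2}$ on a parabolic neighborhood of $(x_i, t_i)$ of definite size. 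Shi's derivative estimates together with Hamilton's compactness theorem then provide, after passing to a subsequence, smooth pointed Cheeger--Hamilton convergence of these neighborhoods to a smooth limiting Ricci flow. Viewing this smooth limit as a metric flow, it must agree with the restriction of $\XX$ to the same neighborhood: by Theorem~\ref{Thm_XX_reg_determines_XX_intro}(c),(d) the smooth spacetime structure on $\RR$ completely determines the metric flow structure on $\RR$, and $\IF$-limits within a fixed correspondence are unique. This certifies $x_\infty \in \RR^*$, and since every subsequence admits a further subsequence with the same limit, the smooth convergence occurs along the original sequence.

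The main obstacle, I expect, is the second step: converting the intrinsic metric-flow smoothness of $\XX$ near $x_\infty$ into an $\IF$-transferable scalar quantity. Smoothness of $\XX$ near $x_\infty$ is an intrinsic statement about the limit object, while the $\eps$-regularity theorem demands smallness of a scale-invariant analytic invariant (Nash-entropy, or equivalently closeness of the heat kernel to a Gaussian) on the smooth flows $M_i$. Bridging these two languages --- via the fact that on a smooth Ricci flow spacetime the Nash-entropy tends to zero at small scales together with the lower semicontinuity of Nash-entropy under $\IF$-convergence --- is the heart of the argument, and it is the single ingredient that most heavily leans on the machinery developed in \cite{Bamler_HK_entropy_estimates, Bamler_RF_compactness}.
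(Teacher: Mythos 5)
Your overall route matches the paper's: both arguments hinge on showing that $\NN_{x_\infty}(r^2) \to 0$ as $r \searrow 0$ for $x_\infty \in \RR$, transferring this to the approximants via continuity of the Nash entropy under the correspondence, invoking the $\eps$-regularity theorem to produce uniform curvature bounds on parabolic balls near the approximating points, and concluding $x_\infty \in \RR^*$. In the paper this chain is exactly Theorem~\ref{Thm_NN_in_limit}(c) $\Rightarrow$ (a) $\Rightarrow$ (b), after which Corollary~\ref{Cor_RRstar_RR} (and hence Theorem~\ref{Thm_R_RR_star_intro}) is immediate. Two remarks, one on presentation and one of substance.

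On presentation: the Nash entropy is non-positive and non-increasing, so the relevant smallness is $\NN_{x_\infty}(\tau) \geq -\eps$, not $\leq \eps$; the latter is vacuous. Also, the convergence $\NN_{x'_i,t'_i}(r^2) \to \NN_{x'_\infty}(r^2)$ is not semicontinuity from \cite{Bamler_RF_compactness} --- it is a genuine convergence statement proved in this paper (Theorem~\ref{Thm_NN_in_limit}(a)), and its proof requires the $L^\infty$ and $L^1 e^{\alpha f}$ heat-kernel bounds from \cite{Bamler_HK_entropy_estimates} transferred to the limit; you would need that, not just the compactness machinery.

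On substance: the step you flag as the ``main obstacle'' is indeed where your sketch glosses over a real subtlety, and it is worth isolating exactly what is missing. $\NN_{x_\infty}(\tau)$ is a \emph{global} integral of $f_{x_\infty}$ against $d\nu_{x_\infty;\,t_\infty-\tau}$ over the time-slice $\XX_{t_\infty-\tau}$, which can carry mass near the singular set and on which you have no local smooth model far from $x_\infty$. Parabolic rescaling near $x_\infty$ does \emph{not} directly control this integral. The paper handles this in two pieces: (i) Assertion (c) of Theorem~\ref{Thm_NN_in_limit} is stated under the auxiliary condition $d\nu_{x_\infty;\,t_\infty-r^2}(\SS^*_{t_\infty-r^2}) = 0$, which --- once the codimension-$4$ bound is established --- holds for every $r$, but is not automatic a priori; and (ii) the actual proof of (c) is a blow-up-by-contradiction: one uses the uniform lower bound $f_{x_\infty} \geq -C^*$ (from the heat-kernel $L^\infty$ bound passed to the limit), the $H_n$-concentration of $\nu_{x_\infty;\,t_\infty-r^2}$ in balls of radius $O(r)$, and the smooth Euclidean structure of $\RR$ near $x_\infty$ to extract a limiting solution on $\IR^n \times \IR_-$ with Gaussian-forcing bounds, whose Nash entropy is then $0$, contradicting $\lim_{r\searrow 0}\NN_{x_\infty}(r^2) < 0$. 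So the heuristic ``rescaling makes it look like the Gaussian shrinker'' is correct in outcome, but the argument that delivers it is a concentration-plus-blowup argument controlling the global integral, not a local geometric rescaling.

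Finally, for the concluding step you invoke Hamilton compactness plus uniqueness of $\IF$-limits to identify the smooth limit with $\XX$ near $x_\infty$; this is sound in spirit, but the paper has already pre-packaged it: Lemma~\ref{Lem_tdrrm} and Claim~\ref{Cl_rrm_RRstar_conv} (using \cite[\SYNDefSmoothConv, \SYNConvParabNbhd]{Bamler_RF_compactness}) establish that $\liminf_i \rrm(x'_i,t'_i) > 0$ forces $x_\infty \in \RR^*$, so once $\eps$-regularity gives $\rrm(x'_i,t'_i) \geq c r$, the conclusion is immediate without re-running the compactness argument. Citing Theorem~\ref{Thm_XX_reg_determines_XX_intro}(c),(d) here is both unnecessary and a slight logical overreach, since those are downstream consequences of the same block of results.
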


Next, we characterize the tangent flows of $\XX$.
We recall from \cite{Bamler_RF_compactness} that a tangent flow at some point $x \in \XX$ is an $\IF$-limit of a sequence of parabolic rescalings of $\XX$, equipped with the conjugate heat kernel measure $(\nu_{x;t})_{t \leq \tf(x)}$.
A priori, there may be many tangent flows at $x$, corresponding to different sequences of rescaling factors.
If $x \in \RR$, then all tangent flows at $x$ are static flows on $\IR^n$.
In the case of limits of Einstein metrics, or metrics with lower Ricci curvature bounds, it was shown by Cheeger and Colding \cite{Cheeger-Colding-Cone} that every tangent cone is a metric cone.
The analog of a metric cone for metric flows is a metric soliton \cite{Bamler_RF_compactness}, which is a metric flow equipped with a conjugate heat flow that is self-similar under a family of time-dilations.
In fact, we have:

\begin{Theorem} \label{Thm_tangent_cone_metric_cone_main}
Every tangent flow $(\XX', (\nu_{x'_\infty;t})_{t \leq 0})$ at every point $x \in \XX$ is a metric soliton.
Moreover, $\XX'$ corresponds to the constant flow on $\IR^n$ if and only if $x \in \RR$.
\end{Theorem}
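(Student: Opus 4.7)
The plan is to mimic the Cheeger--Colding volume-cone-to-metric-cone strategy, but with the pointed Nash entropy $\NN_x$ in place of the normalized volume and metric solitons in place of metric cones. Monotonicity of $\tau \mapsto \NN_x(\tau)$ (\HKPropPropertiesNash\ of \cite{Bamler_HK_entropy_estimates}) and the non-collapsing bound~(\ref{eq_intro_non_collapsing}) ensure that $\NN_x(0^+) := \lim_{\tau \to 0^+} \NN_x(\tau) \in (-\infty, 0]$ exists. For a tangent flow $\XX'$ at $x$ arising from parabolic rescalings of $\XX$ with factors $\la_i \to \infty$, the rescaled Nash entropy based at the rescaled basepoint, evaluated at any fixed scale $\tau > 0$, equals $\NN_x(\la_i^{-2}\tau)$, which converges to $\NN_x(0^+)$ independently of $\tau$. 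So in the limit the Nash entropy is constant in $\tau$ along the tangent flow, which is the analytic incarnation of ``the monotone quantity is asymptotically constant.''

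Next I would combine this asymptotic constancy with the integral almost-soliton bounds promised in Subsection~\ref{subsec_key_diff_proof_strategy}, together with the variation bound \HKVariationboundNN\ of \cite{Bamler_HK_entropy_estimates}. These supply weighted $L^2$-bounds on the deviation $\Ric + \nabla^2 f_i - \frac{1}{2\tau} g_i$ (where $f_i$ is the conjugate heat kernel potential from the rescaled basepoint) that tend to zero along the rescaled sequence on every compact sub-interval in $\tau$. Combined with the smooth-convergence statement of Theorem~\ref{Thm_R_RR_star_intro} applied to the rescaled sequence, the limit potential $f$ on the regular part $\RR' \subset \XX'$ satisfies the exact gradient shrinking soliton equation pointwise, and the conjugate heat kernel family $(\nu_{x'_\infty;t})$ becomes selfsimilar under the induced one-parameter family of time-dilations, smoothly on $\RR'$. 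Density of $\RR'$ in each time-slice (Theorem~\ref{Thm_XX_reg_determines_XX_intro}(a),(c)) and the characterization of the metric flow structure of $\XX'$ through its smooth structure on $\RR'$ (Theorem~\ref{Thm_XX_reg_determines_XX_intro}(d)) then extend the selfsimilarity to all of $\XX'$, delivering the metric soliton conclusion in the sense of \cite{Bamler_RF_compactness}.

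For the second assertion, the forward direction is immediate: at a regular point the flow is locally smooth, so parabolic rescaling converges smoothly to the constant flow on $\IR^n$ with Gaussian conjugate heat kernel. Conversely, if the tangent flow is the static Gaussian flow on $\IR^n$, then $\NN_x(0^+) = 0$, so on all sufficiently small scales $\XX$ is arbitrarily close to the Gaussian flow on $\IR^n$ in the $\IF$-sense. The $\eps$-regularity theorem \HKThmEpsRegularity\ of \cite{Bamler_HK_entropy_estimates} then produces a smooth Ricci flow spacetime structure on a parabolic neighborhood of $x$, forcing $x \in \RR$.

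The main obstacle will be the passage from the integral almost-soliton identities --- which are formulated on the smooth sequence against the conjugate heat kernel measure --- to a genuine soliton equation on the synthetic limit, complicated by the fact that the conjugate heat kernel measures do not charge $\SS$ (Theorem~\ref{Thm_XX_reg_determines_XX_intro}(b)) and therefore a priori only ``see'' $\RR'$. To run the argument I will need gradient estimates for the conjugate heat kernel potential (\HKThmGradientPhiEstimate\ of \cite{Bamler_HK_entropy_estimates}), control on the location of the $H$-center (\HKHCenterPropExistBoundBallHCenter\ of \cite{Bamler_HK_entropy_estimates}), and careful use of $H_n$-concentration to show that the weight of the kernel near $\SS'$ is negligible; only then can the integral identities be passed through the $\IF$-limit and converted into smooth soliton equations on $\RR'$ that extend to metric-soliton identities on all of $\XX'$.
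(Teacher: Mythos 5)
Your proposal follows the same overall route the paper takes: Theorem~\ref{Thm_tangent_cone_metric_cone_main} is established via a combination of Theorem~\ref{Thm_Nash_on_tangent_cones} (constancy of Nash entropy along tangent flows and the attendant $\eps$-regularity statement), the metric-soliton theorem of Section~\ref{Sec_basic_limits} (Theorem~\ref{Thm_limit_from_selfsimilar}), and the fact from \cite{Bamler_RF_compactness} that tangent flows at points of $\XX$ are themselves non-collapsed $\IF$-limits of smooth Ricci flows, so that the whole structure theory applies to $\XX'$. Your forward direction of the regularity equivalence and your use of $\eps$-regularity in the converse direction are also correct.

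However, you have mislocated the main obstacle. The difficulty is not primarily in ``passing integral identities through the $\IF$-limit'' against a weight concentrated away from $\SS'$; that part is handled by the smooth convergence on $\RR' = \RR^{\prime*}$ together with the $e^{\alpha f}$-weighted $L^p$-bounds. The genuine technical content, which your sketch compresses into the phrase ``becomes selfsimilar under the induced one-parameter family of time-dilations, smoothly on $\RR'$,'' has two nontrivial pieces. First, obtaining the soliton equation $\Ric + \nabla^2 f - \frac1{2\tau} g = 0$ on $\RR'$ does \emph{not} by itself yield selfsimilarity of the conjugate heat kernel family on a synthetic limit; one must separately establish the infinitesimal translation property of the heat kernel under $\tau\partial_{\tf} - \tau\nabla f$ acting simultaneously in both arguments, which is a delicate limit argument in its own right. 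Second, and more importantly, one must prove that the vector field $\tau\partial_{\tf} - \tau\nabla f$ is \emph{complete} on $\RR'$ so that its flow actually generates a one-parameter family of dilations; the paper does this by showing that the Nash entropy $\NN_{\gamma(s)}(a\, \tf(\gamma(s)))$ is constant along the trajectories of this vector field, and then invoking $\eps$-regularity (Theorem~\ref{Thm_NN_in_limit}) to bound $\tdrrm$ from below along those trajectories. Without this completeness step, you only have an infinitesimal soliton structure, not a metric soliton. Both pieces rely essentially on the working assumption with $\Delta \geq 3$ (for completeness) and $\Delta \geq 4$ (to then extend the dilation from $\RR'$ to all of $\XX'$ via Theorem~\ref{Thm_XX_reg_determines_XX_intro}(c),(d)), so the codimension-4 partial regularity theory is a genuine prerequisite, not merely an auxiliary tool.
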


For further characterizations of tangent flows, or more generally $\IF$-limits that are shrinking solitons, see Subsection~\ref{subsec_further_struc_static_soltion}.
The following addendum foreshadows some of these results.

\begin{Addendum} \label{Add_tangent_flow}
$(\XX', (\nu_{x'_\infty;t})_{t \leq 0})$ in Theorem~\ref{Thm_tangent_cone_metric_cone_main} is again an $\IF$-limit of smooth Ricci flows, so all the previous theorems also hold for $\XX'$.
In addition, if we write $\tau = - t$ and $\nu_{x'_\infty} = (4\pi \tau)^{-n/2} e^{-f'} dg'$ on $\RR'$, then
\[ \Ric + \nabla^2 f' - \frac1{2\tau} g' = 0\]
and the time-slices $(\XX'_t, |t|^{-1/2} d'_t, \nu_{x'_\infty;t})$ (resp. $(\RR'_t, |t|^{-1} g'_t, f'_t)$) for all $t < 0$ are all isometric as metric measure spaces (resp. Riemannian manifolds equipped with a potential function).
\end{Addendum}

The next result is a stratification result for $\XX$, which is analogous to the stratification result of Cheeger and Naber \cite{Cheeger-Naber-quantitative,Cheeger-Naber-Codim4}.
Roughly speaking, the following theorem states that $\SS$ possesses a filtration of subsets $\SS^k \subset \SS$ of dimension $\leq k$ such that every point $x \in \XX_{<0} \setminus \SS^{k-1}$ has a tangent flow $\XX'$ that is of the form $\XX'' \times \IR^k$ for some metric soliton $\XX''$ or $\XX'' \times \IR^{k-2}$ for some metric soliton $\XX''$ that is even a static cone \cite{Bamler_RF_compactness}.
Here a static cone is essentially a metric flow that is constant in time and consists of time-slices that are isometric to metric cones.

\begin{Theorem} \label{Thm_stratification_limit_main}
There is a filtration of the singular part
 \[ \SS^0 \subset \SS^1 \subset \ldots \subset \SS^{n-2} = \SS \]
 such that for every $k =0, \ldots, n-2$:
 \begin{enumerate}[label=(\alph*)]
\item \label{Thm_stratification_limit_main_a} 
 $\dim_{\mathcal{H}^*} \SS^k \leq k$.
 \item \label{Thm_stratification_limit_main_b} Every point $x \in \XX_{<0} \setminus \SS^{k-1}$ has a tangent flow $(\XX', (\nu_{x':t})_{t \leq 0})$ that is a metric soliton and satisfies one of the following conditions:
 \begin{enumerate}[label=(b\arabic*)]
 \item \label{Thm_stratification_limit_main_b1} $\XX'_{<0} \cong \XX''_{<0} \times \IR^{k}$ and $(\nu_{x';t})_{t <0} \cong (\mu''_t \otimes \mu^{\IR^k}_t)_{t<0}$ for some metric soliton $(\XX'', (\mu''_t)_{t < 0})$. 
 \item \label{Thm_stratification_limit_main_b2} $\XX'_{<0} = \XX''_{<0} \times \IR^{k-2}$ for some static cone $\XX''$ with vertex $x'' \in \XX''_0$ and $(\nu_{x';t})_{t <0} = (\nu_{x'';t} \otimes \mu^{\IR^{k-2}}_t)_{t<0}$.
 \end{enumerate}
 Here $(\mu^{\IR^k}_t)_{t < 0}$ denotes the standard conjugate heat kernel on $\IR^k$ based at the origin.
 \end{enumerate}
\end{Theorem}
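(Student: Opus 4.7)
The plan is to mirror the Cheeger--Naber stratification scheme, executed entirely through the global/analytic splitting framework of this paper rather than through local distance-comparison arguments, which fail in the Ricci flow setting.

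\textbf{Definition of the filtration.} Declare $x \in \XX_{<0}$ to be \emph{weakly $k$-symmetric} if some tangent flow at $x$ satisfies the splitting (b1) or (b2) at level $k$; define $\SS^k$ to be the set of points that are \emph{not} weakly $(k+1)$-symmetric. The inclusion $\SS^{k-1} \subset \SS^k$ is immediate, and the claim (b) follows by construction from this definition, using Theorem~\ref{Thm_tangent_cone_metric_cone_main} and Addendum~\ref{Add_tangent_flow} to identify the shape of the tangent flow (and distinguishing the soliton vs.\ static-cone cases). It remains to verify $\dim_{\mathcal{M}^*} \SS^k \le k$.

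\textbf{Quantitative stratification and cone splitting.} Following Cheeger--Naber, I would introduce a scale-dependent notion of $(k,\eps,r)$-selfsimilarity: the rescaled pointed metric flow $(\XX^{x,r}, \nu_{x;\cdot})$ is $\eps$-close, in the $\IF$-distance on a unit-size parabolic neighborhood, to a metric soliton of the form (b1) or (b2) at level $k$. Using the compactness theorem from \cite{Bamler_RF_compactness}, together with Theorem~\ref{Thm_tangent_cone_metric_cone_main}, one sees that $x \in \SS^k$ iff there exist $\eps_0 > 0$ and $r_0 > 0$ such that $x$ is not $(k+1,\eps_0,r)$-selfsimilar for any $r \in (0,r_0)$. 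The core analytic input is a \emph{soliton splitting lemma}: a metric soliton that is self-similar at two distinct vertices $x_1' \ne x_2'$ must split off an additional $\IR$-factor along the direction separating them. In the smooth case this comes from differentiating the soliton identity in the difference of the two potentials; here one first establishes it on $\RR'$ via Theorem~\ref{Thm_R_RR_star_intro}, then propagates it to all of $\XX'$ using Theorem~\ref{Thm_XX_reg_determines_XX_intro}. An $\IF$-compactness/contradiction argument then upgrades this to a quantitative version: if $x$ and some point $y$ not parabolically aligned with the already-split $\IR^k$-factor are both $(k,\eps,r)$-selfsimilar, then $x$ is $(k+1,\delta(\eps,\eta),r)$-selfsimilar, with $\delta \to 0$ as $\eps \to 0$ (given a nondegeneracy scale $\eta$).

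\textbf{Covering argument for the dimension bound.} I would then carry out the standard Federer-type iterated covering on dyadic $P^*$-parabolic scales: fix small $\eps_0$, and at each scale $r = 2^{-j}$ cover $\SS^k$ by $P^*$-parabolic balls; in each ball either all the $(k+1,\eps_0,r)$-non-selfsimilar points are contained in a $\rho r$-neighborhood of an approximate $k$-dimensional affine subspace (by the quantitative splitting), or the ball contributes a controlled amount of additional selfsimilarity. Summing over scales and using the doubling/packing estimates from \cite{Bamler_RF_compactness} (e.g.\ \HKBasicCovering) yields the $*$-Minkowski and Hausdorff bound $\dim_{\mathcal{H}^*}\SS^k \le k$, hence in particular $\dim_{\mathcal{M}^*}\SS^k \le k$.

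\textbf{Main obstacle.} The serious work is the quantitative cone splitting step, because one must handle the hybrid case (b2) (static cone factor plus $\IR^{k-2}$) together with the generic soliton case (b1) inside the same $\IF$-compactness scheme. The dichotomy \emph{static vs.\ non-static shrinking} must be preserved under limits at each scale; this is where the almost-monotonicity of scalar curvature and the almost-soliton integral estimates alluded to in Subsection~\ref{subsec_key_diff_proof_strategy} enter, providing the mechanism by which a nearly self-similar non-static tangent cannot coexist with an additional splitting that would force staticity. Weaving these two regimes into a single Federer covering, without recourse to distance distortion estimates, is the delicate point of the proof.
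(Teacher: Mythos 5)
Your architecture follows the Cheeger--Naber blueprint, but you run it on the synthetic limit $\XX$ directly, whereas the paper runs it on the smooth flows $(M_i, g_{i,t})$ first and then passes the resulting covering to the limit. The paper's route is forced at the point it is carried out (Part~II), since the theory of metric flows is too weak to support a cone-splitting argument directly; the paper flags this explicitly in the introduction. By the time Theorem~\ref{Thm_stratification_limit_main} is proved, the full regularity theory (Theorems~\ref{Thm_XX_reg_sing_dec_main}--\ref{Thm_tangent_cone_metric_cone_main}) is in hand, so your a posteriori synthetic route is not circular; but the paper never reruns the argument this way --- the actual proof of Theorem~\ref{Thm_stratification_limit_main} is a one-paragraph deduction from Theorem~\ref{Thm_Quantitative_Strat_XX}: take the weak quantitative strata $\widehat\SS^{\eps,k}_{0,\eps}$ (Definition~\ref{Def_SS_quantitative_weak}), observe $\widehat\SS^{\eps,k}_{0,r_2}=\bigcap_{r_1}\widehat\SS^{\eps,k}_{r_1,r_2}$ to get the parabolic Minkowski bound for each $\eps$, and set $\SS^k:=\bigcup_\eps\widehat\SS^{\eps,k}_{0,\eps}$.

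The genuine gap is in your ``soliton splitting lemma'' and your treatment of the (b1)/(b2) dichotomy. You state the splitting lemma only in its spatial form (two vertices forces an $\IR$-factor), but the mechanism that actually powers the stratification is a \emph{temporal} one and runs in the opposite direction from what you suggest: the paper's Proposition~\ref{Prop_almost_static} shows that two almost-selfsimilar points at \emph{sufficiently different times} force the point to be almost \emph{static}, and Proposition~\ref{Prop_almost_split_Rk} shows that many almost-selfsimilar points at \emph{nearby times} but well-separated in the $W_1$-metric force a spatial $\IR^k$-splitting. The case distinction is then built into the covering (see the proof of Lemma~\ref{Lem_quant_strat_preparation}): when the selected points spread over a time window larger than $\beta\lambda^2$ you pass to the static branch and lose two dimensions ($k \to k-2$), recovering the (b2) alternative. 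You describe this only as ``the delicate point'' and invoke ``almost-monotonicity of scalar curvature'' in a way that has the logic inverted --- staticity is \emph{forced} by coexisting self-similarity at different times, not ruled out by it. Without the temporal cone-splitting statement and the static-vs-split dichotomy in the covering step, your Federer iteration cannot produce the correct exponent $k$ (you would get the wrong stratum when the tangent flow is a static cone), so the sketch as written does not close.
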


Note that the Cartesian splittings and the terms ``metric soliton'' and ``static cone'' are defined purely in terms of the synthetic structure of $\XX$.
The following addendum provides some extra details on the regular ($\RR', \RR''$) and singular parts ($\SS', \SS''$) of $\XX', \XX''$.
For more details, see again Subsection~\ref{subsec_further_struc_static_soltion}.

\begin{Addendum} \label{Add_limit_stratification_addendum_main}
In Theorem~\ref{Thm_stratification_limit_main} the flow $\XX'$ can be required to be an $\IF$-limit of smooth Ricci flows, so it also satisfies Theorems~\ref{Thm_XX_reg_sing_dec_main}, \ref{Thm_XX_reg_determines_XX_intro}, \ref{Thm_tangent_cone_metric_cone_main}, \ref{Thm_stratification_limit_main}.
In particular, we can require that the pair $(\XX', (\nu_{x';t})_{t \leq 0})$ satisfies the properties of Addendum~\ref{Add_tangent_flow}.
Moreover, we can also require that $\XX''$ satisfies Theorems~\ref{Thm_XX_reg_sing_dec_main}, \ref{Thm_XX_reg_determines_XX_intro}, \ref{Thm_tangent_cone_metric_cone_main}, \ref{Thm_stratification_limit_main} and if we denote by $\RR''$ its regular part, then in the corresponding two cases of Theorem~\ref{Thm_stratification_limit_main}\ref{Thm_stratification_limit_main_b} we can require that:
\begin{enumerate}[label=(b\arabic*)]
\item \label{Add_limit_stratification_addendum_main_b1} $\RR' \cong \RR'' \times \IR^{k}$ as Ricci flow spacetimes.
\item \label{Add_limit_stratification_addendum_main_b2} $\RR' \cong \RR'' \times \IR^{k-2}$ as Ricci flow spacetimes. 
$\Ric \equiv 0$ on $\RR'$ and $\RR''$. 
All time-slices of $\RR''$ are isometric to each other as Riemannian manifolds and isometric to the same Riemannian cone, possibly minus its vertex.
\end{enumerate}
In both cases the potential function from Addendum~\ref{Add_tangent_flow} can be expressed as $f' = f'' + \frac1{4\tau} |\vec x|^2$, where $f''$ is a gradient shrinking soliton potential on $\RR''$ and $\vec x$ denotes the coordinate function on the last factor of the Cartesian product of $\RR'$.
\end{Addendum}
\bigskip

\subsection{The pointed Nash entropy in the limit} \label{subsec_Nash_in_limit_intro}
Suppose that we are still in the same setting as in Subsection~\ref{subsec_part_reg_limit_intro} and consider the limiting metric flow $\XX$.
Recall that for any $x \in \XX_t$ and $-T_\infty < s < t$ we have $d\nu_{x;s} = K(x, \cdot) dg_s$ on $\RR_s$ and that $\nu_{x;s} (\SS_s) = 0$.
So we can define the pointed Nash entropy at $x$ as usual:
Write $K(x,\cdot) = (4\pi\tau)^{-n/2} e^{-f}$ on $\RR_s$ for $\tau = t-s$ and $f \in C^\infty (\RR_s)$ and set
\[ \NN_x (\tau) := \int_{\RR_s} f \, d\nu_{x;s} - \frac{n}2. \]

The following result shows that the pointed Nash entropy on the Ricci flows $(M_i, (g_{i,t})_{t \in (-T_i, 0]})$ converges to the pointed Nash entropy on $\XX$.

\begin{Theorem} \label{Thm_NN_pass_to_limit}
Consider a sequence $(x'_i, t'_i) \in M_i \times (-T_i,0]$ and a point $x'_\infty \in \XX_{t'_\infty}$ such that we have the following convergence within $\CF$:
\[ (x'_i, t'_i) \xrightarrow[i \to \infty]{\quad \CF \quad} x'_\infty. \]
Then for any $\tau > 0$ with $t'_\infty - \tau > -T_\infty$ we have $\lim_{i \to \infty} \NN_{x'_i,t'_i} (\tau) = \NN_{x'_\infty} (\tau)$.
\end{Theorem}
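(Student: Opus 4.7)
The plan is to combine smooth convergence on the regular part with uniform integrability of the heat kernel potentials via a cut-off reduction. Set $s_i = t'_i - \tau$, $s_\infty = t'_\infty - \tau$ and write $d\nu_{x'_i, t'_i; s_i} = (4\pi\tau)^{-n/2} e^{-f_i} dg_{i, s_i}$ on $M_i$, and analogously $d\nu_{x'_\infty; s_\infty} = (4\pi\tau)^{-n/2} e^{-f_\infty} dg_{s_\infty}$ on $\RR_{s_\infty}$, so that the difference $\NN_{x'_i,t'_i}(\tau) - \NN_{x'_\infty}(\tau)$ equals $\int_{M_i} f_i \, d\nu_{x'_i, t'_i; s_i} - \int_{\RR_{s_\infty}} f_\infty \, d\nu_{x'_\infty; s_\infty}$, which is what must be shown to vanish in the limit.

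Fix $\delta>0$. Using Theorems~\ref{Thm_XX_reg_sing_dec_main}--\ref{Thm_XX_reg_determines_XX_intro} (in particular $\nu_{x'_\infty;s_\infty}(\SS_{s_\infty}) = 0$ and density of $\RR_{s_\infty}$) together with the Gaussian-type concentration of the conjugate heat kernel density from \cite{Bamler_HK_entropy_estimates}, I would pick a compact set $K \subset \RR_{s_\infty}$ with $\int_{\XX_{s_\infty}\setminus K}(1+|f_\infty|)\, d\nu_{x'_\infty;s_\infty} < \delta$. Theorem~\ref{Thm_R_RR_star_intro} provides that the $\IF$-convergence is smooth on $\RR$, so $K$ lifts to compact sets $K_i \subset M_i$ via the correspondence $\CF$ on which $g_{i,s_i}$, the heat kernel densities, and hence $f_i$, converge smoothly to their limiting counterparts. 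Dominated convergence then yields
\[ \int_{K_i} f_i\, d\nu_{x'_i,t'_i;s_i} \xrightarrow[i \to \infty]{} \int_K f_\infty\, d\nu_{x'_\infty;s_\infty}, \]
while the complementary mass $\nu_{x'_i,t'_i;s_i}(M_i \setminus K_i)$ converges to $\nu_{x'_\infty;s_\infty}(\XX_{s_\infty} \setminus K) < \delta$ by the definition of $\IF$-convergence within $\CF$.

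The key remaining input is uniform integrability of $\{f_i\}$ against $\{\nu_{x'_i,t'_i;s_i}\}$, which I would extract from the heat kernel estimates in \cite{Bamler_HK_entropy_estimates}: under the non-collapsing hypothesis~(\ref{eq_intro_non_collapsing}), the pointed Nash entropy is uniformly bounded, yielding a lower bound $f_i \geq -C$, while Gaussian upper bounds on $K_{M_i}$ give uniform bounds of the form $\int e^{\alpha f_i}\, d\nu_{x'_i,t'_i;s_i} \leq C'$ for some $\alpha \in (0,1)$. H\"older's inequality then controls the tail integrals $\int_{M_i \setminus K_i} |f_i|\, d\nu_{x'_i,t'_i;s_i}$ by a uniform power of $\delta$, and sending $\delta \to 0$ closes the argument. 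The main obstacle is precisely this uniform integrability step: since $f_i$ grows quadratically at ``spatial infinity'' and may also blow up along developing singularities, one cannot conclude from weak convergence of measures or smooth convergence on the regular part alone; the non-collapsing bound is what ties these two regimes together via the sharp exponential integrability of heat kernel densities available only in this setting.
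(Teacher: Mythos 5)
Your proposal follows essentially the same route as the paper's proof (Theorem~\ref{Thm_NN_in_limit}\ref{Thm_NN_in_limit_a}): restrict to a compact region of $\RR$ where the convergence is smooth, control the tail of $\int |f_i|\, d\nu_{x'_i,t'_i}$ using the uniform lower bound $f_i \geq -C$ from Proposition~\ref{Prop_L_infty_HK_bound}, the exponential integrability $\int e^{\alpha f_i}\, d\nu \leq C$, and a H\"older/Cauchy--Schwarz split, with the uniform non-collapsing bound at $(x'_i,t'_i)$ obtained from Proposition~\ref{Prop_NN_variation_bound}. The only minor imprecision is attributing the exponential integrability to Gaussian upper heat kernel bounds; the paper instead deduces it from Proposition~\ref{Prop_int_ealphaf} (Perelman's differential Harnack inequality), while Gaussian upper bounds supply only the pointwise lower bound on $f_i$.
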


In particular, this implies that $\NN_{x'_\infty} (\tau)$ is still non-increasing in $\tau$ and that the pointed Nash entropy on $\XX$ satisfies the same variation bounds as in \cite{Bamler_HK_entropy_estimates}.
Set in the following for any $x \in \XX$
\[ \NN_{x} (0) := \lim_{\tau \to 0} \NN_x (\tau). \]
The following theorem extends Theorem~\ref{Thm_tangent_cone_metric_cone_main} and Addendum~\ref{Add_tangent_flow}.

\begin{Theorem} \label{Thm_Nash_on_tangent_cones}
If $(\XX', (\nu_{x'_\infty;t})_{t \leq 0})$ is a tangent flow of $\XX$ at some point $x \in \XX$, then for all $\tau \geq 0$ we have
\[ \NN_{x'_\infty;t}(\tau) = \NN_{x} (0). \]
Moreover, there is a dimensional constant $\eps_n > 0$ such that if $\NN_x(0) \geq - \eps_n$, then $\NN_x(0) = 0$ and $x \in \RR$.
\end{Theorem}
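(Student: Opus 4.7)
The plan splits naturally into two parts: the constancy statement will follow from the scaling behavior of the pointed Nash entropy combined with Theorem~\ref{Thm_NN_pass_to_limit}, while the gap/regularity statement will follow from a smooth Nash-entropy gap theorem propagated to the limit via $\eps$-regularity.

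For the constancy, recall that a tangent flow at $x$ is an $\IF$-limit (within an appropriate correspondence) of parabolic rescalings $\XX^{(\lambda_i)}$ of $\XX$ based at $x$, for some $\lambda_i \to \infty$. Since $f = -\log K - \frac{n}{2}\log(4\pi\tau)$ is unchanged under parabolic rescaling, the pointed Nash entropy is invariant up to a reparametrization of $\tau$: on the rescaled flow,
\[ \NN^{(\lambda_i)}_{x}(\tau) \;=\; \NN_x(\lambda_i^{-2}\tau). \]
Applying Theorem~\ref{Thm_NN_pass_to_limit} to the basepoints in the rescaled flows, together with the monotonicity of $\tau \mapsto \NN_x(\tau)$ and $\lambda_i^{-2}\tau \to 0$, yields
\[ \NN_{x'_\infty}(\tau) \;=\; \lim_{i \to \infty} \NN^{(\lambda_i)}_{x}(\tau) \;=\; \lim_{i \to \infty} \NN_x(\lambda_i^{-2}\tau) \;=\; \NN_x(0) \]
for every $\tau > 0$, with the value at $\tau = 0$ recovered from the definition $\NN_{x'_\infty}(0) := \lim_{\tau \to 0^+} \NN_{x'_\infty}(\tau)$. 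Independence of the time-slice parameter (the ``$;t$'' subscript) is a consequence of the self-similarity in Addendum~\ref{Add_tangent_flow}: the time-slices of the soliton are isometric as metric-measure spaces and the conjugate heat kernel is permuted by the soliton rescaling, so $\NN$ at the tip agrees across time-slices.

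For the gap statement, suppose $\NN_x(0) \geq -\eps_n$ for a dimensional $\eps_n > 0$ to be chosen small. By the constancy just established, every tangent flow $\XX'$ at $x$ has $\NN_{x'_\infty} \equiv \NN_x(0) \geq -\eps_n$. Select smooth Ricci flows $(M_i, g_{i, \cdot})$ in the defining sequence with basepoints $(x_i, t_i)$ converging to $x'_\infty$; by Theorem~\ref{Thm_NN_pass_to_limit} we then have $\NN_{x_i, t_i}(\tau) \to \NN_x(0) \geq -\eps_n$ for every fixed $\tau > 0$. The key tool is a smooth gap/$\eps$-regularity result for the Nash entropy: there is a dimensional $\eps_n > 0$ such that $\NN_{x,t}(\tau) \geq -\eps_n$ forces strong curvature bounds on a parabolic neighborhood of $(x,t)$; this is essentially the $\eps$-regularity theorem (\HKThmEpsRegularity). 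Consequently $\XX'$ is smooth and flat in a neighborhood of $x'_\infty$, and the soliton equation of Addendum~\ref{Add_tangent_flow} combined with analytic continuation on the dense smooth piece $\RR'$ (cf.\ Theorem~\ref{Thm_XX_reg_determines_XX_intro}) forces $\XX'$ to be the static Gaussian shrinker on $\IR^n$ globally. This yields $\NN_x(0) = 0$; applying $\eps$-regularity once more, now to $\XX$ at $x$ using a sequence $(x_j, \tau_j)$ with $\NN_{x_j}(\tau_j) \to 0$, gives $x \in \RR$.

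The main obstacle is the robust upgrade of the smooth Nash-entropy gap to the (a priori singular) limiting soliton $\XX'$, and ensuring that $\eps_n$ can indeed be chosen uniformly and dimensionally. The subtlety lies in propagating local flatness from a smooth approximating sequence to a global rigidity statement on $\XX'$; this uses in an essential way the density and smooth convergence of $\RR'$ (Theorems~\ref{Thm_XX_reg_determines_XX_intro} and \ref{Thm_R_RR_star_intro}) together with the soliton structure of Addendum~\ref{Add_tangent_flow}. Once $\XX' \cong \IR^n$ is established, the regularity of $x \in \XX$ follows by the standard $\eps$-regularity argument applied in a neighborhood of $x$.
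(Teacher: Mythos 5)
Your proposal reproduces the same argument the paper gestures at: constancy of $\NN_{x'_\infty}$ via the scaling identity $\NN^{(\lambda_i)}_{x}(\tau)=\NN_x(\lambda_i^{-2}\tau)$ combined with Theorem~\ref{Thm_NN_pass_to_limit}, and the gap statement via the $\eps$-regularity theorem of \cite{Bamler_HK_entropy_estimates} applied to the smooth approximating flows (which exist by \cite[\SYNThmTangenFlowasLimit]{Bamler_RF_compactness}, a citation you should make explicit when you select ``smooth Ricci flows in the defining sequence''). Two small remarks on the middle step: first, the phrase ``$\XX'$ is smooth and flat in a neighborhood of $x'_\infty$'' follows only after applying $\eps$-regularity at \emph{all} scales $\tau$ and letting $\tau\to\infty$, since $\eps$-regularity at a single scale gives only a curvature bound, not flatness; because the constancy gives $\NN_{x'_\infty}(\tau)\geq -\eps_n$ for every $\tau>0$, this does go through and in fact gives $\tdrrm(x'_\infty)=\infty$, which already yields flatness globally so the analytic-continuation detour is not needed. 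Second, the paper also invokes Lemma~\ref{Lem_tdrrm} to carry the curvature-scale bound from the approximating flows to $\XX'$; your appeal to Theorems~\ref{Thm_XX_reg_determines_XX_intro} and \ref{Thm_R_RR_star_intro} plays the same role. Up to these points of emphasis, your argument is correct and is the same as the paper's.
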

\bigskip

\subsection{Limits of Ricci flows that are not defined at time 0} \label{subsec_more_general_F_limit}
Before continuing with the presentation of further results, we first generalize the results described so far to $\IF$-limits of Ricci flows $(M_i, (g_{i, t})_{t \in (-T_i, 0]})$ that are not required to be defined at time $0$.
The first-time reader may want to skip this subsection and, for simplicity, consider the setting from Subsection~\ref{subsec_part_reg_limit_intro} instead of the following construction.

Note that in Subsection~\ref{subsec_part_reg_limit_intro} we have chosen basepoints $(x_i,0)$, which were located at the final time $0$.
Since the given Ricci flows may not exist at this time, we will instead consider conjugate heat flows on the entire flows whose variance vanishes asymptotically near time $0$.
More specifically, let $(M_i, (g_{i, t})_{t \in (-T_i, 0)})$ be a sequence of Ricci flows on compact, $n$-dimensional manifolds, defined over the \emph{open time-intervals $(-T_i, 0)$} and $T_\infty := \lim_{i \to \infty} T_i \in (0, \infty]$.
Consider conjugate heat flows $(\mu_{i,t})_{t \in (-T_i, 0)}$ on these flows, i.e. $d\mu_{i,t} = v_{i,t} dg_{i,t}$ where $\square^* v_i =0$, with the property that $\lim_{t \nearrow 0} \Var (\mu_{i,t}) = 0$.
Note that in the setting from Subsection~\ref{subsec_part_reg_limit_intro}, we may take $(\mu_{i,t})_{t \in (-T_i, 0)} := (\nu_{x_i;t})_{t \in (-T_i,0)}$.
By the compactness theory in \cite{Bamler_RF_compactness} we may again pass to a subsequence such that we have $\IF$-convergence on compact time-intervals
\begin{equation} \label{eq_F_conv_intro_open_t_i}
 (M_i, (g_{i,t})_{t \in (-T_i,0)}, (\mu_{i,t})_{t \in (-T_i, 0)}) \xrightarrow[i \to \infty]{\quad \IF, \CF \quad} (\mathcal{X}^*, (\mu_{\infty;t})_{t \in (-T_\infty,0)}), 
\end{equation}
within some correspondence $\CF$, where we may assume that $\XX^*$ is a future continuous and $H_n$-concentrated metric flow of full support over $(-T_\infty,0)$.
We may now extend $\XX^*$ to a metric flow $\XX$ over $(-T_\infty,0]$ by setting $\XX_{<0} := \XX^*$, $\XX_0 := \{ x_{\max} \}$ and $\nu_{x_{\max};t} := \mu_{\infty;t}$ ($t < 0$), $\nu_{x_{\max};0} := \delta_{x_{\max}}$.
Note that if the Ricci flows  $(g_{i,t})_{t \in (-T_i,0)}$ can be extended to time $0$ and if $(\mu_{i,t})_{t \in (-T_i, 0)} := (\nu_{x_i;t})_{t \in (-T_i,0)}$, then we obtain the same limit as in Subsection~\ref{subsec_part_reg_limit_intro}.

Next, we need to express the non-collapsing condition (\ref{eq_intro_non_collapsing}) using the conjugate heat flows $(\mu_{i,t})_{t \in (-T_i, 0)}$.
To do this, we write $d\mu_{i,t} = (4\pi |t|)^{-n/2} e^{-f_{i,t}} dg_{i,t}$ and define for $\tau \in (0, T_i)$
\[ \NN_{(\mu_{i,t})} (\tau) := \int_{M_i} f_{i,-\tau} \, d\mu_{i,-\tau} - \frac{n}2. \]
We can now replace (\ref{eq_intro_non_collapsing}) by
\begin{equation} \label{eq_intro_non_collapsing_more_general}
 \NN_{(\mu_{i,t})} (\tau_0) \geq - Y_0. 
\end{equation}
Note that if $(g_{i,t})_{t \in (-T_i,0)}$ can be extended to time $0$, and if $(\mu_{i,t})_{t \in (-T_i, 0)} := (\nu_{x_i;t})_{t \in (-T_i,0)}$, then $\NN_{(\mu_{i,t})} (\tau) = \NN_{x_i,0} (\tau)$.
So in this case (\ref{eq_intro_non_collapsing_more_general}) is equivalent to (\ref{eq_intro_non_collapsing}).
We also note that for any sequence $(y_j, t'_j) \in M_i \times (0,T_i)$ with $t'_j \nearrow 0$ and $d_{W_1}^{g_{t'_j}} (\delta_{y_j}, \mu_{i,t'_j}) \to 0$ we have $\NN_{y_j, t'_j} (\tau + t'_j) \to \NN_{(\mu_{i,t})} (\tau)$ as $j \to \infty$.
Such a sequence $(y_j, t'_j)$ always exists.

The following theorem states that our theory also applies to $\IF$-limits $\XX$ obtained from (\ref{eq_F_conv_intro_open_t_i}).

\begin{Theorem}  \label{Thm_more_general_F_limit}
All theorems stated so far, including the Addendums~\ref{Add_tangent_flow}, \ref{Add_limit_stratification_addendum_main}, also hold in this more general setting.
In particular, the tangent flows at $x_\infty \in \XX_0$ may be obtained as an $\IF$-limit of smooth Ricci flows as in (\ref{eq_F_conv_intro_open_t_i}).
Moreover, Theorem~\ref{Thm_NN_pass_to_limit} continues to hold if $x'_\infty = x_\infty$ and if we set $d\mu_{i} := (4\pi|t|)^{-n/2} e^{-f_i}dg_i$, then we also have for $\tau \in (0, T_\infty)$
\[ \NN_{x_\infty} (\tau) = \lim_{i \to \infty} \NN_{(\mu_{i,t})} (\tau). \]
\end{Theorem}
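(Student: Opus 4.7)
The strategy is to reduce this more general setting to the one of Subsection~\ref{subsec_part_reg_limit_intro} by approximating, for each $i$, the conjugate heat flow $(\mu_{i,t})_{t \in (-T_i,0)}$ by a conjugate heat kernel based at a suitably chosen point $(y_i, t_i) \in M_i \times (-T_i, 0)$ with $t_i \nearrow 0$. The bulk of the work is to identify the $\IF$-limit of the resulting pointed, truncated Ricci flows with the extension $\XX$ of $\XX^*$; once this is done, the previously stated theorems apply essentially verbatim.

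First I would construct the approximating basepoints. Using $\lim_{t \nearrow 0} \Var(\mu_{i,t}) = 0$ and a diagonal argument, choose $t_i \in (-T_i, 0)$ with $t_i \to 0$ slowly enough that $\Var(\mu_{i,t_i}) \to 0$, and then pick $y_i \in M_i$ with $\eps_i := d_{W_1}^{g_{i,t_i}}(\delta_{y_i}, \mu_{i,t_i}) \to 0$. Consider the pointed Ricci flows $\widehat M_i := (M_i, (g_{i,t})_{t \in (-T_i, t_i]}, (y_i, t_i))$. After a time shift, these fit into the framework of Subsection~\ref{subsec_part_reg_limit_intro}. The non-collapsing hypothesis (\ref{eq_intro_non_collapsing}) is preserved for $\widehat M_i$, because the noted convergence $\NN_{y_i, t_i}(\tau + |t_i|) - \NN_{(\mu_{i,t})}(\tau) \to 0$ combined with (\ref{eq_intro_non_collapsing_more_general}) gives $\NN_{y_i, t_i}(\tau_0 + |t_i|) \geq -Y_0 - o(1)$, which is sufficient.

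Next I would identify the $\IF$-limit. By the $W_1$-contraction under conjugate heat flow (\HKWoneMonotone) we have $d_{W_1}^{g_{i,t}}(\nu_{y_i, t_i; t}, \mu_{i,t}) \leq \eps_i$ for all $t \leq t_i$, so on any compact time interval $[t', t''] \subset (-T_\infty, 0)$ the kernels $\nu_{y_i, t_i; t}$ approximate $\mu_{i,t}$ in $W_1$ uniformly as $i \to \infty$. Using the fact that the $\IF$-limit of a sequence of pointed metric flows is determined by the Wasserstein limit of their conjugate heat flow data (cf.~\cite{Bamler_RF_compactness}), after passing to a subsequence the $\IF$-limit $\td\XX$ of $\widehat M_i$ agrees with $\XX_{<0} = \XX^*$ on negative times within a suitable correspondence, and the basepoint at the final time corresponds to $x_\infty = x_{\max}$. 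Hence $\td\XX \cong \XX$ and Theorems~\ref{Thm_XX_reg_sing_dec_main}--\ref{Thm_Nash_on_tangent_cones} apply to $\XX$. For tangent flows at $x_\infty$, parabolic rescalings of $\widehat M_i$ based near $(y_i, t_i)$ yield a sequence of smooth pointed Ricci flows whose $\IF$-limit, after a further diagonalization of the time-scales relative to $t_i$, realizes the tangent flow in the form (\ref{eq_F_conv_intro_open_t_i}). Finally, the last assertion $\NN_{x_\infty}(\tau) = \lim_i \NN_{(\mu_{i,t})}(\tau)$ follows by applying Theorem~\ref{Thm_NN_pass_to_limit} to $\widehat M_i$, which gives $\NN_{y_i, t_i}(\tau + |t_i|) \to \NN_{x_\infty}(\tau)$, and combining with the noted approximation $\NN_{y_i, t_i}(\tau + |t_i|) - \NN_{(\mu_{i,t})}(\tau) \to 0$.

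The main obstacle is the identification $\td\XX \cong \XX$ at the boundary time $0$: while on negative time intervals the conjugate heat kernels converge to $\mu_{\infty;t}$ by the $W_1$-approximation above, one must verify that the point $x_{\max}$ adjoined in the definition of $\XX$ is the natural $\IF$-limit of the basepoints $(y_i, t_i)$, and that the convergence thereby extends through $t=0$ as a single-point time-slice. This ultimately reduces to combining the future continuity and $H_n$-concentration of $\XX^*$ from \cite{Bamler_RF_compactness} with a quantitative control on the Wasserstein concentration of $\nu_{y_i, t_i; t}$ as $t \nearrow 0$, which in turn is inherited from the vanishing $\Var(\mu_{i,t_i}) \to 0$ arranged in the first step.
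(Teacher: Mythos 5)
Your proposal is correct and follows essentially the same route as the paper: both approximate $(\mu_{i,t})$ by conjugate heat kernels $\nu_{y_i,t_i;t}$ based at points $(y_i,t_i)$ with $t_i \nearrow 0$ chosen via the vanishing-variance hypothesis, using $W_1$-monotonicity to control the approximation on compact time intervals, and then apply a diagonalization to reduce to the setting of Subsection~2.1. The paper packages the single-flow approximation step as Lemma~\ref{Lem_slight_timeshift} and cites \cite[\SYNFconvimplieswithin]{Bamler_RF_compactness} for the passage from $d_\IF$-closeness to convergence within a correspondence, which is the same technical content you spell out somewhat more explicitly.
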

\bigskip

\subsection{Further structural results of tangent flows and an almost cone rigidity theorem for flows} \label{subsec_further_struc_static_soltion}
The goal of this subsection will be to characterize the geometric properties of the tangent flows from Theorems~\ref{Thm_tangent_cone_metric_cone_main}, \ref{Thm_stratification_limit_main} in more detail.
We will see that these tangent flows can be understood by their so-called geometric models.
We will also obtain further geometric characterizations of these models.

Throughout this subsection we suppose that $\XX$ is a metric flow over $(-T_\infty,0)$ that is obtained as a non-collapsed $\IF$-limit of a sequence of smooth Ricci flows as described in Subsection~\ref{subsec_more_general_F_limit}.
Note that by Addendum~\ref{Add_limit_stratification_addendum_main} and Theorem~\ref{Thm_more_general_F_limit} every tangent flow has this property.

We first recall the notion of a singular metric space from \cite{Bamler_struc_theory_sing_spaces}.
Consider a metric space $(X,d)$ and fix a dimension $n \geq 0$.
A point $x \in X$ is called {\bf regular} if there is an $n$-dimensional Riemannian manifold $(M, g)$ and a local isometry $\phi : (M, d_g) \to (X,d)$ with $x \in \phi (M)$.
The set of all regular points, called the {\bf regular part} $\RR_X \subset X$, is open and can be equipped with a natural structure of a Riemannian manifold $(\RR_X, g_X)$, such that the inclusion map $(\RR_X, d_{g_X}) \to (X,d)$ is a local isometry.
We also define the {\bf singular part} by $\SS_X := X \setminus \RR_X$.
We now define:

\begin{Definition}[Singular space]
A metric space $(X,d)$ is called a {\bf singular space (of dimension $n$)} if the following holds:
\begin{enumerate}
\item $(X,d)$ is a complete metric length space.
\item The regular part $\RR_X \subset X$ is dense in $X$.
\item The restriction of $d$ to $\RR_X$ is equal to the length metric $d_{g_X}$.
In other words, $(X,d)$ is isometric to the completion of $(\RR_X, d_{g_X})$.
\item For any compact subset $K \subset X$ and any $D < \infty$ there are constants $0 < \kappa_1 (K,D) < \kappa_2 (K,D) < \infty$ such that for all $x \in K$ and $0 < r < D$ we have
\[ \kappa_1 r^n < |B(x,r) \cap \RR_X |_{g_X} < \kappa_2 r^n. \]
\end{enumerate}
\end{Definition}

We can now state our main results concerning the $\IF$-limit $\XX$.
Our first result addresses the case in which this limit is Ricci flat on its regular part.
This case includes the case of a static cone as in Theorem~\ref{Thm_stratification_limit_main}\ref{Thm_stratification_limit_main_b2}.

\begin{Theorem} \label{Thm_static_limit_main}
Suppose that $\Ric \equiv 0$ on $\RR \subset \XX$.
Then $\XX_{<0}$ is static as a metric flow over $(-T_\infty,0)$ and there is a singular space $(X,d)$ of dimension $n$ and an identification $\XX_{<0} = X \times (-T_\infty,0)$ such that the following is true for all $t \in (-T_\infty,0)$:
\begin{enumerate}[label=(\alph*)]
\item \label{Thm_static_limit_main_a} $(\XX_t, d_t) = (X \times \{ t \}, d)$.
\item \label{Thm_static_limit_main_b} $\RR = \RR_X \times (-T_\infty,0)$ and $\partial_{\tf}$ corresponds to the standard vector field on the second factor.
\item \label{Thm_static_limit_main_c} $(\RR_t, g_t) = (\RR_X \times \{ t \}, g_X)$.
\end{enumerate}
Moreover, there is a unique family of probability measures $(\nu'_{x;t})_{x \in X; (-T_\infty,0) \cap (t + (-T_\infty,0)) \neq \emptyset}$ such that the triple
\[ (X,d, (\nu'_{x;t})_{x \in X; (-T_\infty,0) \cap (t + (-T_\infty,0)) \neq \emptyset} ) \]
is a static model for $\XX_{<0}$ corresponding to the same identification.
Therefore, $(X,d)$ characterizes $\XX_{<0}$ up to flow-isometry.
Lastly, the space $(X,d)$ has the following properties:
\begin{enumerate}[label=(\alph*), start=4]
\item \label{Thm_static_limit_main_d} We have the Minkowski dimension bound
\[ \dim_{\mathcal{M}} \SS_X \leq n-4. \]
\item \label{Thm_static_limit_main_e} Every tangent cone $(X',d',x')$ at any point $x$ of $(X,d)$ is a metric cone and a singular space that arises from a non-collapsed $\IF$-limit of a sequence of smooth Ricci flows in the same way as described in this theorem.
More specifically, if $(X',d',x')$ is obtained using a sequence of blow-up factors $\lambda_j \to \infty$, then the same sequence produces a tangent flow of $\XX$ at any point in $\{ x \} \times (-T_\infty,0)$ that is a static cone and whose time-slices are isometric to $(X',d')$.

Lastly, when considering the Gromov-Hausdorff convergence of blow-ups of $(X,d,x)$ to $(X',d',x')$, the convergence is locally smooth on $\RR_{X'}$.
\item \label{Thm_static_limit_main_f} There is a filtration $\SS^0_X \subset \SS^1_X \subset \ldots \subset \SS^{n-4}_X = \SS_X$ with the property that $\dim_{\mathcal{H}} \SS^k \leq k$ and every point $x \in \SS_X \setminus \SS^{k-1}_X$ has a tangent cone that metrically splits off an $\IR^k$-factor.
\end{enumerate}
\end{Theorem}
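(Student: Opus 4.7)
First I would verify that $\XX_{<0}$ is static. On the Ricci flow spacetime $\RR$, the hypothesis $\Ric \equiv 0$ together with the Ricci flow equation shows that the Lie derivative of $g$ along $\partial_{\tf}$ vanishes, so $\partial_{\tf}$ is a Killing field and its flow acts by isometries between regular time-slices wherever it is defined. By Theorem~\ref{Thm_XX_reg_determines_XX_intro}, $\XX_{<0}$ is determined by $\RR$ — both at the level of completing time-slices and of specifying heat flows — so this isometric identification extends canonically to all of $\XX_{<0}$ and yields a static metric flow structure. Fix any $t_0 \in (-T_\infty,0)$ and set $(X,d) := (\XX_{t_0}, d_{t_0})$; by staticity the choice is inessential. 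The first three singular-space axioms follow directly from Theorem~\ref{Thm_XX_reg_determines_XX_intro}(a)--(c), while the two-sided local volume bounds in the fourth axiom I would get by combining the non-collapsing assumption (\ref{eq_intro_non_collapsing_more_general}), the standard upper volume bounds for non-collapsed limits, and the lower volume bounds extracted from Nash-entropy lower bounds at arbitrary limit points via Theorem~\ref{Thm_NN_pass_to_limit}. Parts (a)--(c) of the statement then read off directly from the staticity and the spacetime structure of $\RR$.

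\textbf{Uniqueness of the static model.} To construct and verify uniqueness of the measure family $(\nu'_{x;t})$, I would invoke Theorem~\ref{Thm_XX_reg_determines_XX_intro}(d): a heat flow on $\XX$ is uniquely determined by its restriction to $\RR$. On the static Ricci-flat spacetime $\RR \cong \RR_X \times (-T_\infty,0)$, the conjugate heat kernel of the time-independent metric $g_X$ supplies a natural candidate; its unique extension to $X$ via the characterization above produces the family $(\nu'_{x;t})$. Since this construction depends only on the singular space $(X,d)$, one concludes that $(X,d)$ characterizes $\XX_{<0}$ up to flow-isometry.

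\textbf{Codimension-$4$ bound, tangent cones and stratification.} In the static case $\SS = \SS_X \times (-T_\infty,0)$, and since the $*$-Minkowski dimension counts the time direction twice, $\dim_{\mathcal{M}^*} \SS = \dim_{\mathcal{M}} \SS_X + 2$; combined with Theorem~\ref{Thm_XX_reg_sing_dec_main} this yields (d). For (e), a tangent cone $(X',d',x')$ of $(X,d)$ at a point $x$ obtained from blow-up factors $\lambda_j \to \infty$ is produced by the same sequence of rescalings applied to $\XX$ at any point $(x,t) \in \{x\} \times (-T_\infty,0)$; staticity and Ricci flatness pass to the blow-up, so by Theorem~\ref{Thm_tangent_cone_metric_cone_main} together with Addendum~\ref{Add_tangent_flow} the resulting tangent flow is a static cone whose spatial time-slice is $(X',d')$, and smoothness of the Gromov-Hausdorff convergence on $\RR_{X'}$ is Theorem~\ref{Thm_R_RR_star_intro}. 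For (f), I would apply Theorem~\ref{Thm_stratification_limit_main} to $\XX$: staticity forces every tangent flow to be static, so case (b1) (with Ricci flat soliton $\XX''$) consolidates with the static time direction into case (b2) with the index shifted by $2$. Defining $\SS^k_X \subset X$ as the pullback of $\SS^{k+2} \cap \XX_{t_0}$ then gives a filtration satisfying $\dim_{\mathcal{H}} \SS^k_X \leq k$, together with the $\IR^k$-splitting of the tangent cones asserted in (f).

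\textbf{Main obstacle.} The step I expect to cost the most is establishing the two-sided local volume bounds required by the singular-space axiom \emph{uniformly} in the basepoint of $X$: upper bounds are by now routine for non-collapsed limits, but matching lower bounds require one to transfer Nash-entropy estimates to arbitrary limit points and then convert them into genuine volume lower bounds — precisely the kind of global integral manipulation the introduction flags as non-trivial once distance-distortion and lower heat-kernel bounds are not available a priori.
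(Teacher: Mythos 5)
Your proposal follows the right high-level outline, but it is missing the single technical device that the paper's proof leans on at several crucial junctures, and this leaves genuine gaps at three places.

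\textbf{Staticity.} Noting that $\partial_{\tf}$ is Killing on $\RR$ is the easy part; it tells you that the flow of $\partial_{\tf}$ acts isometrically ``wherever it is defined.'' What is missing is the completeness of the worldlines, i.e.\ that maximal trajectories of $\partial_{\tf}$ in $\RR$ extend over all of $(-T_\infty, 0)$, together with the fact that the complement of the set of surviving points in each time-slice has measure zero. Without this you cannot extend the identification across time-slices, nor can you conclude that $\XX_{<0}$ is a static flow on the full interval. The paper establishes this in Theorem~\ref{Thm_limit_from_static} (via the claim that $\NN_{\gamma(t)}(\tau)$ is constant along worldlines, a cutoff argument using the translation property of the heat kernel, and the $\eps$-regularity Theorem~\ref{Thm_NN_in_limit}\ref{Thm_NN_in_limit_b}). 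To invoke that theorem you must first verify its integral hypothesis $\int_{T_1'}^{T_2'}\int_{M_i}|\Ric|^2 d\nu_{x_i,0;t}\,dt \to 0$ — which is a small argument in its own right, using $R \equiv 0$ on $\RR$, the $L^2$ Ricci bound, and integration by parts with a cutoff in time.

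\textbf{The codimension-4 bound (d) and the volume lower bounds.} Your claimed identity $\dim_{\mathcal{M}^*}\SS = \dim_{\mathcal{M}}\SS_X + 2$ presupposes that $P^*$-parabolic balls are comparable, uniformly, to product balls $B(x,r)\times(t-r^2,t+r^2)$ under the identification $\XX_{<0}= X\times(-T_\infty,0)$. This is nontrivial: $P^*$-parabolic balls are defined via $W_1$-distances of conjugate heat kernels, and on a general Ricci flow there is no reason for them to be close to conventional parabolic balls. The paper makes this comparison precise in Claim~\ref{Cl_HK_no_drift}, and the mechanism is an $\LL$-length lower bound for the heat kernel passed to the limit (Lemma~\ref{Lem_LL_in_XX}): for a static limit, the constant spacetime curve $t\mapsto x(t)$ has zero $\LL$-length, giving a lower bound on $K(x;\cdot)$ and hence an upper bound on $d^{\XX_s}_{W_1}(\delta_{x(s)},\nu_{x;s})$ and a lower volume bound. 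The same $\LL$-length estimate also settles your flagged ``main obstacle,'' the uniform two-sided volume bound in the singular-space axiom; your suggested alternative via Theorem~\ref{Thm_NN_pass_to_limit} alone does not obviously produce a volume lower bound around an \emph{arbitrary} point of $X$ without first controlling the drift of conjugate heat kernels.

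\textbf{Smooth convergence of tangent cones in (e).} Asserting Gromov-Hausdorff convergence of the blow-ups of $(X,d,x)$ from $\IF$-convergence of the corresponding tangent flows also requires a pointwise lower bound on the heat kernel in the limit (Claim~\ref{Cl_lower_ptwise_K} in the paper, again a consequence of Lemma~\ref{Lem_LL_in_XX}); this is needed to pass from Gromov-$W_1$ convergence to genuine Gromov-Hausdorff convergence of the spatial slices. Citing Theorem~\ref{Thm_R_RR_star_intro} alone does not supply this. Finally, a minor structural remark: in the paper the identification is first built with a provisional regular part $\RR^*_X$, and a separate argument (Claim~\ref{Cl_RRXRRXstar}) shows $\RR^*_X = \RR_X$ by extending bounded (conjugate) heat flows across the measure-zero set $\SS^*_X\setminus\SS_X$; setting $(X,d):=(\XX_{t_0},d_{t_0})$ from the start is fine, but you still owe this extension argument to get the full regular part in (b) and (c).
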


We note that if $\XX_{<0}$ is even a static cone as in Theorem~\ref{Thm_stratification_limit_main}\ref{Thm_stratification_limit_main_b2}, then $(X,d)$ is a metric cone.

\begin{Remark}
In forthcoming work we will show that the regular part $\RR_X \subset X$ has the following weak convexity property, which is also called ``mild singularities'' in \cite{Bamler_struc_theory_sing_spaces}.
For any $x \in \RR_X$ there is a closed subset $Q_x \subset \RR_X$ of measure zero such that for any $y \in \RR_X \setminus Q_x$ there is a minimizing geodesic between $x,y$ whose image lies in $\RR_X$.
This property allows us to deduce standard volume comparison estimates.
\end{Remark}

Our next result concerns the case in which the $\IF$-limit $\XX$ has constant Nash entropy at $x_\infty$.
In this case the flow is a metric soliton, which can again be described by a singular space with the same properties as before.

\begin{Theorem} \label{Thm_metric_soliton_limit_main}
Suppose that for some $W \in \IR$ we have $\NN_{x_\infty} (\tau) = W$ for all $\tau \in (0, T_\infty)$.
Then $(\XX_{<0}, (\nu_{x_\infty;t})_{t \in (-T_\infty,0)})$ is a metric soliton.
Moreover, if we write $\tau = -t$ and $d\nu_{x_\infty} = K(x_\infty; \cdot) dg = (4\pi \tau)^{-n/2} e^{-f} dg$ on $\RR$, then
\begin{equation} \label{eq_sol_id_met_sol_limit_thm}
\Ric + \nabla^2 f - \frac1{2\tau} g = 0, \qquad 
- \tau (R + |\nabla f|^2) + f = W, \qquad
\partial_{\tf} f = |\nabla f|^2.
\end{equation}
In addition, there is a singular space $(X,d)$ of dimension $n$, a probability measure $\mu$ on $X$ and an identification $\XX_{<0} = X \times (-T_\infty, 0)$ such that the following is true for all $t \in (-T_\infty,0)$ 
\begin{enumerate}[label=(\alph*)]
\item \label{Thm_metric_soliton_limit_main_a}  $(\XX_t, d_t) = (X \times \{ t \}, |t|^{1/2} d)$.
\item \label{Thm_metric_soliton_limit_main_b} $\RR = \RR_X \times (-T_\infty,0)$ and $\partial_{\tf} - \nabla f$ corresponds to the standard vector field on the second factor.
\item \label{Thm_metric_soliton_limit_main_c}  $(\RR_t, g_t) = (\RR_X \times \{ t \}, |t| g_X)$.
\item \label{Thm_metric_soliton_limit_main_d} $\nu_{x_\infty; t} = \mu$.
\end{enumerate}
Moreover, there is a unique family of probability measures $(\nu'_{x;t})_{x \in X, t \geq 0}$ such that the tuple
\[ (X,d, \mu, (\nu'_{x;t})_{x \in X, t \geq 0} ) \]
is a model for $(\XX_{<0}, (\nu_{x_\infty;t})_{t \in (-T_\infty,0)})$ corresponding to the same identification.
Therefore, $(X,d, \mu)$ characterizes $(\XX_{<0}, (\nu_{x_\infty;t})_{t \in (-T_\infty,0)})$ up to flow-isometry.

The metric measure space $(X,d, \mu)$ has the following properties.
We have $\mu(\SS_X) = 0$ and $d\mu = (4\pi)^{-n/2} e^{-f'} dg$ on $\RR_X$ for $f' := f_{-1} \in C^\infty(\RR_X)$ such that the first two soliton identities in (\ref{eq_sol_id_met_sol_limit_thm}) hold on $\RR_X$ for $f = f'$ and $\tau = 1$.
Moreover, the metric space $(X,d)$ satisfies Assertions~\ref{Thm_static_limit_main_d}--\ref{Thm_static_limit_main_f} of Theorem~\ref{Thm_static_limit_main} (the tangent cones are still metric cones corresponding to $\IF$-limits that are Ricci flat on their regular part).
Lastly, $(X,d)$ is either a metric cone or $R > 0$ on $\RR_X$.
\end{Theorem}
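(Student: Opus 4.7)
The plan is to mirror the structure of Theorem~\ref{Thm_static_limit_main}, replacing the Ricci-flatness hypothesis by the constant-Nash-entropy hypothesis and exploiting the resulting selfsimilarity to reduce the whole flow to a single time-slice $(X,d,\mu)$.

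First, I would work on the regular part $\RR$. By Theorem~\ref{Thm_NN_pass_to_limit} and Theorem~\ref{Thm_R_RR_star_intro}, the Nash entropy on $\XX$ is the limit of the pointed Nash entropies on the approximating flows, and the convergence is smooth on $\RR$. The hypothesis $\NN_{x_\infty}(\tau) \equiv W$ together with the limiting version of Perelman's entropy-monotonicity formula (as recorded in Subsection~\ref{subsec_Nash_in_limit_intro}) forces equality in the differential inequality $\frac{d}{d\tau}\NN \leq 0$, so by the usual rigidity argument, applied pointwise on the smooth locus $\RR$ where $K(x_\infty,\cdot) = (4\pi\tau)^{-n/2} e^{-f}$, one obtains the three soliton identities~\eqref{eq_sol_id_met_sol_limit_thm} on $\RR$. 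This also shows that $(\XX_{<0},(\nu_{x_\infty;t}))$ is a metric soliton in the sense of \cite{Bamler_RF_compactness}.

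Second, armed with the soliton identities, I would build the product structure on $\RR$. The vector field $\partial_{\tf} - \nabla f$ preserves the rescaled metric $|t|^{-1} g_t$ along its flow, as a direct computation from the first and third identities in~\eqref{eq_sol_id_met_sol_limit_thm} shows. Its integral curves sweep out $\RR$ starting from $\RR_{-1}$, yielding an identification $\RR \cong \RR_{-1} \times (-T_\infty,0)$ with $(\RR_t, g_t) = (\RR_X, |t|\, g_X)$, where $\RR_X := \RR_{-1}$ and $g_X := g_{-1}$. Setting $f' := f_{-1}$, the first two identities in~\eqref{eq_sol_id_met_sol_limit_thm} specialize to the shrinker equations on $(\RR_X, g_X, f')$ with $\tau = 1$. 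Care is needed to ensure the flow of $\partial_{\tf}-\nabla f$ exists for all times; this follows from the fact that its image in $\XX$ respects the selfsimilar rescaling, so any finite-time blow-up would contradict the completeness of $(\XX_{-1},d_{-1})$ via the density statement in Theorem~\ref{Thm_XX_reg_determines_XX_intro}(a).

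Third, I would extend the product structure synthetically to all of $\XX_{<0}$. Set $(X,d) := (\XX_{-1}, d_{-1})$. Since by Theorem~\ref{Thm_XX_reg_determines_XX_intro}(c) each time-slice $(\XX_t, d_t)$ is the metric completion of $(\RR_t, d_{g_t}) = (\RR_X, |t|\, d_{g_X})$, the rescaling $d_t = |t|^{1/2} d$ extends by completion, giving assertions (a)--(c). For (d), push $\nu_{x_\infty;t}$ forward via the product identification; selfsimilarity plus constancy of $\NN_{x_\infty}$ forces this pushforward to be $t$-independent, defining $\mu$. On the regular part this gives $d\mu = (4\pi)^{-n/2} e^{-f'} dg_X$; the identity $\mu(\SS_X) = 0$ follows from Theorem~\ref{Thm_XX_reg_determines_XX_intro}(b) applied at $t=-1$. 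Uniqueness of the model then follows from Theorem~\ref{Thm_XX_reg_determines_XX_intro}(d), because both $(X,d)$ and any other model are determined by the smooth structure on $\RR$, which is canonical.

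Fourth, I would verify the singular space axioms and transfer the geometric conclusions of Theorem~\ref{Thm_static_limit_main} to $(X,d)$. Completeness is inherited from $(\XX_{-1}, d_{-1})$; density of $\RR_X$ and the length-metric property are exactly Theorem~\ref{Thm_XX_reg_determines_XX_intro}(a),(c); the two-sided Euclidean volume bounds on small balls follow by combining the $P^*$-volume bounds of \HKSecPstar\ with the smoothness on $\RR$ and the standard relation between parabolic and Riemannian balls near a regular point. For assertions (d)--(f) of Theorem~\ref{Thm_static_limit_main} applied to $(X,d)$, the key observation is that every tangent flow of $\XX$ at a spatial point $(x,-1)$ is itself an $\IF$-limit of smooth Ricci flows by Addendum~\ref{Add_limit_stratification_addendum_main} and Theorem~\ref{Thm_more_general_F_limit}, but is now \emph{Ricci flat} on its regular part: the parabolic rescaling by $\lambda_j \to \infty$ sends the $\frac{1}{2\tau} g$ term in the shrinker equation to zero in the limit. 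Theorem~\ref{Thm_static_limit_main} then applies directly to each such tangent flow, and its time-$(-1)$-slice is a tangent cone of $(X,d)$ at $x$; the stratification and Minkowski dimension bound descend.

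Finally, for the dichotomy ``$(X,d)$ is a metric cone or $R > 0$ on $\RR_X$'': the soliton identities imply $\Delta_f R = R - 2|\Ric|^2$ on $\RR_X$ (where $\Delta_f = \Delta - \nabla f \cdot \nabla$), and $R \geq 0$ on $\RR_X$ by the standard minimum principle for shrinkers (applied via a cutoff using the weighted measure $\mu$ and the $L^p$-integrability from Subsection~\ref{subsec_Nash_in_limit_intro}). If $R$ vanishes somewhere on $\RR_X$, the strong maximum principle combined with the smooth convexity property of $\RR_X$ (available from the stratification) forces $R \equiv 0$ and thus $\Ric \equiv 0$; the shrinker equation then reduces to $\nabla^2 f' = \tfrac{1}{2} g_X$, which characterizes $f'$ as $\tfrac{1}{4}\rho^2 + \const$ for a distance function $\rho$ from a vertex and exhibits $(X,d)$ as a metric cone.

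The main obstacle I expect is the transition between the smooth product structure on $\RR$ and its extension to all of $\XX_{<0}$: one must ensure the flow of $\partial_\tf - \nabla f$ is complete enough to sweep out a dense open set of \emph{each} time-slice (not merely of $\RR$), and that the metric completion commutes with the rescaling uniformly, which hinges on the $P^*$-volume control of Section~\ref{HKSecPstar} and the density statements of Theorem~\ref{Thm_XX_reg_determines_XX_intro}. The Ricci-flatness of spatial tangent flows, though conceptually clean, also requires a careful passage to the limit that controls the Nash entropy along the rescaling sequence via Theorem~\ref{Thm_Nash_on_tangent_cones}.
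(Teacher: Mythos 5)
Your overall strategy is sound and tracks the paper's actual route (the paper says explicitly that Theorem~\ref{Thm_metric_soliton_limit_main} is proved by modifying the proof of Theorem~\ref{Thm_static_limit_main}, with Theorem~\ref{Thm_limit_from_selfsimilar} replacing Theorem~\ref{Thm_limit_from_static}). But there is one genuine gap and one imprecision worth flagging.

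The gap is in your fourth step, where you assert that the two-sided Euclidean volume bounds on small balls in $(X,d)$ ``follow by combining the $P^*$-volume bounds of Section~9 [of the entropy paper] with the smoothness on $\RR$ and the standard relation between parabolic and Riemannian balls near a regular point.'' This does not work: the $P^*$-volume bounds give only the \emph{upper} bound $|B(x,r) \cap \RR_X| \leq \kappa_2 r^n$, and smoothness on $\RR$ is a qualitative fact that gives no uniform lower volume bound near singular points. The lower bound $|B(x,r) \cap \RR_X| \geq \kappa_1 r^n$ is the substantial part of the singular-space axiom, and in the paper it is established via the $\LL$-length / lower heat kernel argument of Lemma~\ref{Lem_LL_in_XX} (Claim~\ref{Cl_HK_no_drift} in the proof of Theorem~\ref{Thm_static_limit_main}). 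In the soliton case this argument must be applied to trajectories of $\partial_\tf - \nabla f$ rather than to worldlines, so one first needs a uniform control of the drift $|\nabla f|$ and of $R$ on bounded regions. That control comes precisely from the chain $R \geq 0$ (from selfsimilarity plus the lower scalar curvature bound), hence $\tau|\nabla f|^2 \leq f - W$ by the second soliton identity, hence local bounds on $f$, $|\nabla f|$, and $R$. Your proposal never extracts these bounds, so the lower volume estimate — and therefore the singular-space structure on $(X,d)$ — is not actually justified. (Incidentally, for $R\geq 0$ the paper uses the elementary observation that $R\geq -n/(2\tau)$ combined with selfsimilarity forces $R\geq 0$; the minimum-principle-with-cutoff route you sketch is unnecessarily heavy and its own technical correctness on a noncompact singular shrinker would need work.)

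The imprecision is in your claim that tangent cones are Ricci-flat because ``the parabolic rescaling sends the $\tfrac{1}{2\tau}g$ term to zero.'' The shrinker equation is scale-invariant, so the rescaled equation in the limit reads $\Ric + \nabla^2 f = 0$, a \emph{steady} soliton equation, not Ricci-flatness; to kill the $\nabla^2 f$ term you additionally need $|\nabla f| \to 0$ under rescaling, which again requires the local bound on $|\nabla f|$ mentioned above. The paper's cleaner argument is that the local bound on $R$ (coming from the same chain) forces $R\equiv 0$ on any blowup, and then the evolution equation $\square R = 2|\Ric|^2$ gives $\Ric\equiv 0$. You should adopt that argument, or at least supply the missing $|\nabla f|$ estimate. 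Finally, your sketch of completeness of the flow of $\partial_\tf - \nabla f$ (``any finite-time blow-up would contradict completeness via density'') is too vague; the actual mechanism in Theorem~\ref{Thm_limit_from_selfsimilar} shows the pointed Nash entropy is constant along these trajectories and then invokes the $\eps$-regularity theorem (Theorem~\ref{Thm_NN_in_limit}(b) and Lemma~\ref{Lem_tdrrm}(g)) to extend them — you should either cite Theorem~\ref{Thm_limit_from_selfsimilar} directly or reproduce that argument.
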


We remark that in dimension~4, the singular space $(X,d)$ in Theorems~\ref{Thm_static_limit_main}, \ref{Thm_metric_soliton_limit_main} is given by a smooth orbifold with isolated singularities; see Subsection~\ref{subsec_dim4_intro} for a precise statement.

An immediate application of Theorem~\ref{Thm_metric_soliton_limit_main} is the following Almost Cone Splitting Theorem, which is similar to \cite{Cheeger-Colding-Cone} in the case of spaces with lower Ricci curvature bounds.
We remark that while the analog of this theorem was a key ingredient in the proof of partial regularity for spaces with lower Ricci curvature bounds \cite{Cheeger-Naber-quantitative}, it is \emph{a priori} not available in the Ricci flow setting.
However, it can be deduced from the partial regularity theory of Ricci flows \emph{a posteriori.}

\begin{Theorem}[Almost Cone Rigidity] \label{Thm_almost_cone_rigidity_intro}
For any $n \in \IN$, $Y < \infty$, $\eps, T > 0$ there is a $\delta (n,Y, \eps, T) > 0$ such that the following holds.
Let $(M, (g_t)_{t \in [-T, 0]})$ be a Ricci flow on a compact, $n$-dimensional manifold and $x \in M$ a point.
Suppose that for some $\tau \in (0, (1-\eps) T]$ we have
\[ \NN_{x,0} (T) + \delta \geq \NN_{x,0} (\tau) \geq - Y. \]
Then there is a metric soliton $(\XX, (\mu_t)_{t \in (-T,0)})$ over $(-T, 0)$ such that
\[ d_{\IF} \big( (M, (g_t)_{t \in (- T, 0)}, (\nu_{x, 0;t})_{t \in (-T,0)} ), (\XX, (\mu_t)_{t \in (-T,0)}) \big) \leq \eps. \]
\end{Theorem}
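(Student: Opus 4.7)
The plan is an argument by contradiction via the $\IF$-compactness theory of \cite{Bamler_RF_compactness}, the continuity of the pointed Nash entropy under $\IF$-convergence (Theorem~\ref{Thm_NN_pass_to_limit}), and the soliton characterization (Theorem~\ref{Thm_metric_soliton_limit_main}). Suppose the conclusion fails for some $n, Y, \eps, T$. Then there is a sequence $(M_i, (g_{i,t})_{t \in [-T, 0]}, x_i)$ of pointed Ricci flows, parameters $\tau_i \in (0, (1-\eps)T]$, and $\delta_i \searrow 0$ with
\[
\NN_{x_i, 0}(T) + \delta_i \geq \NN_{x_i, 0}(\tau_i) \geq -Y,
\]
each at $\IF$-distance more than $\eps$ from every metric soliton over $(-T, 0)$. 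Using the uniform lower Nash-entropy bound, the compactness theory provides, after a subsequence, an $\IF$-convergence
\[
(M_i, (g_{i,t})_{t \in (-T, 0)}, (\nu_{x_i, 0;t})_{t \in (-T, 0)}) \xrightarrow[i \to \infty]{\IF, \CF} (\XX, (\nu_{x_\infty;t})_{t \in (-T_\infty, 0]})
\]
to an $H_n$-concentrated, future-continuous metric flow $\XX$ of full support over $(-T_\infty, 0]$ with $T_\infty \geq T$; we may also arrange $\tau_i \to \tau_\infty \in [0, (1-\eps)T]$.

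By Theorem~\ref{Thm_NN_pass_to_limit}, $\NN_{x_\infty}(\tau) = \lim_i \NN_{x_i, 0}(\tau)$ for each $\tau \in (0, T_\infty)$, and passing to the limit in the hypothesis --- using monotonicity of $\NN$ and continuity in $\tau$ --- gives $\NN_{x_\infty}(T) = \NN_{x_\infty}(\tau_\infty)$. Monotonicity then forces $\NN_{x_\infty}$ to equal a constant $W \in \IR$ on the full sub-interval $[\tau_\infty, T] \subset (0, T_\infty)$. The factor $(1-\eps)$ in the hypothesis is used precisely here: it guarantees $\tau_\infty \leq (1-\eps)T$, so this sub-interval has length at least $\eps T > 0$.

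The main step is to upgrade this partial constancy to $\NN_{x_\infty} \equiv W$ on all of $(0, T_\infty)$. On the regular part $\RR \subset \XX$ --- which is a smooth Ricci flow spacetime by Theorem~\ref{Thm_R_RR_star_intro} --- Perelman's differentiated monotonicity formula
\[
\frac{d}{d\tau}\NN_{x_\infty}(\tau) = -\tau \int_{\RR_{-\tau}} \Big| \Ric + \nabla^2 f - \tfrac{1}{2\tau} g \Big|^2 d\nu_{x_\infty}
\]
combined with the vanishing of the left side on $[\tau_\infty, T]$ forces the shrinking soliton identity to hold pointwise on $\RR_{-\tau}$ for every $\tau$ in this sub-interval. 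Since this identity is preserved under smooth Ricci flow evolution on $\RR$, it propagates both forward and backward in time to all $\tau \in (0, T_\infty)$, and hence $\NN_{x_\infty}$ is constant on the full interval. Theorem~\ref{Thm_metric_soliton_limit_main} then asserts that $(\XX_{<0}, (\nu_{x_\infty;t}))$ is a metric soliton; restricting to $(-T, 0)$ and invoking $\IF$-convergence, the $\IF$-distance between $(M_i, (g_{i,t})_{t \in (-T, 0)}, (\nu_{x_i, 0;t})_{t \in (-T, 0)})$ and this soliton tends to $0$, contradicting our choice of sequence for $i$ large.

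The main obstacle is the propagation step: extending the soliton identity across the entire spacetime $\RR$, which a priori need not be connected in time and whose interaction with $\SS$ must be controlled. Forward propagation in time uses the standard preservation of the soliton identity under Ricci flow evolution, but the \emph{backward} extension (into $\tau > T$, where applicable, and --- more subtly --- to components of $\RR$ disjoint from $\RR_{[-T, -\tau_\infty]}$) requires backward uniqueness on smooth sub-regions. Once the identity is established on all of $\RR$, Theorem~\ref{Thm_XX_reg_determines_XX_intro} guarantees that this smooth self-similar structure determines the full synthetic structure of $\XX_{<0}$, allowing Theorem~\ref{Thm_metric_soliton_limit_main} to be applied.
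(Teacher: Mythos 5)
Your overall architecture --- compactness, Theorem~\ref{Thm_NN_pass_to_limit}, and then Theorem~\ref{Thm_metric_soliton_limit_main} --- is exactly what the paper intends; the gap is in the step that upgrades constancy of $\NN_{x_\infty}$ on $[\tau_\infty, T)$ to constancy on $(0, T)$. You correctly identify this as the crux, but the mechanism you invoke is flawed. The pointwise shrinker identity $\Ric + \nabla^2 f - \tfrac{1}{2\tau}g = 0$, with $f$ the conjugate heat kernel potential based at $x_\infty$, does \emph{not} propagate forward in time under the flow. The issue is that $f$ is not the soliton potential $f_{\mathrm{sol}}$: for instance on a round shrinking cylinder, the conjugate heat kernel based at a \emph{regular} spacetime point has $\Ric + \nabla^2 f - \tfrac{1}{2\tau}g \neq 0$ for all $\tau$ (with $f \to f_{\mathrm{sol}}$ only as $\tau \to \infty$), so there is no general ``forward preservation of the soliton identity'' for conjugate heat kernel potentials. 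Worse, the identity would only be known on $\RR_{(-T, -\tau_\infty]}$, whose time-slices need not be complete, so neither forward nor backward uniqueness for the linearized system is available off-the-shelf; you acknowledge the backward direction is delicate, but the forward step is equally unjustified.

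The correct (and much shorter) way to close the gap never leaves the level of entropy functionals. By (\ref{eq_ddt_2_NN}) of Proposition~\ref{Prop_NN_basic_properties} the function $\phi_i(\tau) := \tau\NN_{x_i,0}(\tau)$ is concave for each $i$, and by Theorem~\ref{Thm_NN_pass_to_limit} these converge pointwise to $\phi_\infty(\tau) := \tau\NN_{x_\infty}(\tau)$, which is therefore concave on $(0,T)$. Since $\NN_{x_\infty}$ is bounded (by $-Y-1$ below and $0$ above) we have $\phi_\infty(0^+) = 0$. On $[\tau_\infty, T)$ we have $\phi_\infty(\tau) = W\tau$, a linear function through the origin. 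Concavity then forces $\phi_\infty(\tau) \leq W\tau$ for \emph{all} $\tau \in (0,T)$ (the graph lies below the tangent line at any interior point of $[\tau_\infty,T)$, which is the line $W\tau$), while the chord from $(0,0)$ to $(\tau_\infty, W\tau_\infty)$ gives $\phi_\infty(\tau) \geq W\tau$ on $(0,\tau_\infty)$. Hence $\NN_{x_\infty} \equiv W$ on $(0,T)$, and Theorem~\ref{Thm_metric_soliton_limit_main} applies directly to $(\XX_{<0}, (\nu_{x_\infty;t})_{t \in (-T,0)})$, yielding the desired contradiction. (Two minor points: with $T_i \equiv T$ you should have $T_\infty = T$ rather than $T_\infty \geq T$, and the interval should be $[\tau_\infty, T)$ rather than $[\tau_\infty, T]$ since $\NN_{x_\infty}$ is only defined on $(0, T_\infty)$.)
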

\bigskip

\subsection{Local regularity and quantitative stratification} \label{subsec_quant_strat_intro}
In this subsection we state a local regularity theorem and a quantitative stratification result, which can be seen as  the Ricci flow analogs of \cite{Cheeger-Naber-quantitative,Cheeger-Naber-Codim4}.
The latter can be seen as an effective version of Theorem~\ref{Thm_stratification_limit_main} for a smooth Ricci flow.
More specifically, we will replace the singular strata $\SS^k$ by \emph{quantitative} strata $\SS^{\eps,k}_{r_1,r_2}$,  along which the flow lacks certain regularity or symmetry properties.
Instead of bounding their dimension, we will bound the volume of a small parabolic tubular neighborhood of $\SS^{\eps,k}_{r_1,r_2}$.

In order to define these quantitative strata, we first have to introduce the notion of $(k, \eps, r)$-symmetric points.
Roughly speaking, we will call a point $x_0 \in \XX$ in a metric flow $(k, \eps, r)$-symmetric if, after parabolic rescaling by $r^{-1}$, the flow near $x_0$ is $\eps$-close in the $\IF$-sense to a metric soliton that splits off an $\IR^k$-factor, or to a static cone that splits off an $\IR^{k-2}$-factor.
This is analogous to Theorem~\ref{Thm_stratification_limit_main}\ref{Thm_stratification_limit_main_b}.
The following is the precise definition:

\begin{Definition}[$(k,\eps,r)$-symmetric points] \label{Def_symm_points}
Let $\XX$ be a metric flow over some interval $I$, $x_0 \in \XX_{t_0}$ a point and $r, \eps > 0$, $k \geq 0$.
We say that $x_0$ is {\bf $(k, \eps, r)$-symmetric} if $[t_0 - \eps^{-1} r^2, t_0] \subset I$ and if there is a metric flow pair $(\XX', (\nu_{x'; t})_{t \leq 0})$ over $(-\infty,0]$ that arises as a non-collapsed $\IF$-limit of smooth Ricci flows as described in Subsection~\ref{subsec_part_reg_limit_intro} and satisfies the Property~\ref{Thm_stratification_limit_main_b1} or \ref{Thm_stratification_limit_main_b2} of Theorem~\ref{Thm_stratification_limit_main} and the corresponding property of Addendum~\ref{Add_limit_stratification_addendum_main} such that the following is true.
Consider the metric flow pair 
\[ \big(\XX_{(t_0 - \eps^{-1} r^2, t_0]}, (\nu_{x_0;t})_{t \in (t_0 - \eps^{-1} r^2, t_0]} \big), \] 
which is defined over $(t_0 - \eps^{-1} r^2, t_0]$, apply a time-shift by $-t_0$ and parabolic rescaling by $r^{-1}$, to obtain a flow pair over $(-\eps^{-1}, 0]$.
Then this flow pair has $d_{\IF}$-distance $< \eps$ to the restricted metric flow pair $(\XX'_{(-\eps^{-1},0]}, (\nu_{x'; t})_{t \in (-\eps^{-1},0]})$.
\end{Definition}

We can now define the quantitative strata:

\begin{Definition}[Quantitative strata] \label{Def_SS_quantitative}
Let $\XX$ be a metric flow over some interval $I$.
For $\eps > 0$ and $0 \leq r_1 < r_2 \leq \infty$ the {\bf quantitative strata},
\[ \SS^{\eps, 0}_{r_1, r_2} \subset \SS^{\eps, 1}_{r_1, r_2} \subset \SS^{\eps,2}_{r_1, r_2} \subset \ldots \subset \SS^{\eps,n-2}_{r_1, r_2} \subset  \XX \]
are defined as follows:
$x \in \SS^{\eps, k}_{r_1, r_2}$ if and only if $[t'- \eps^{-1} r_2^{2}, t'] \subset I$ and $x$ is \emph{not} $(k+1, \eps, r')$-symmetric for any $r' \in (r_1, r_2)$.
\end{Definition}

Note if $(M, (g_t)_{t \in I})$ is a Ricci flow on a compact manifold, then there is a metric flow associated to $(M, (g_t)_{t \in I})$.
So Definition~\ref{Def_SS_quantitative} also applies to Ricci flows and in this case $\SS^{\eps, k}_{r_1,r_2} \subset M \times I$.

Let us first state the local regularity theorem, which is similar to the corresponding theorem for Einstein metrics.
The following theorem, which applies to Ricci flows, asserts local curvature bounds near points that are $(n-1, \eps, r)$-symmetric for sufficiently small $\eps$.
Recall that this means that the flow locally either almost splits off an $\IR^{n-3}$-factor or is almost static and splits off an $\IR^{n-1}$-factor.

\begin{Theorem}[Local regularity] \label{Thm_loc_reg_RF_intro}
If $n \in \IN$, $Y, A < \infty$ and $\eps \leq \ov\eps (n,Y, A) > 0$, then the following holds.
Consider a Ricci flow $(M, (g_t)_{t \in I})$ on a compact $n$-dimensional manifold and let $(x_0, t_0) \in M \times I$ be a point and $r > 0$ a scale with $[t_0 - \eps^{-1} r^2, t_0] \subset I$.
Assume that $\NN_{x_0,t_0} ( r^2) \geq - Y$ and that $(x_0,t_0)$ is $(n-1, \eps, r)$-symmetric.
Then
\begin{equation} \label{eq_loc_reg_rrm_sig_r}
\rrm (x_0,t_0) \geq A  r.
\end{equation}
\end{Theorem}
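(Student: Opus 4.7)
I would argue by contradiction via an $\IF$-compactness/rigidity scheme built on the structure theory developed earlier in this paper. Suppose the statement fails for some $n, Y, A$; then there exist $\eps_i \searrow 0$, compact Ricci flows $(M_i, (g_{i,t})_{t \in I_i})$, basepoints $(x_{0,i}, t_{0,i})$ and scales $r_i > 0$ satisfying the hypotheses with $\eps = \eps_i$, yet $\rrm(x_{0,i}, t_{0,i}) < A r_i$. After parabolic rescaling by $r_i^{-1}$ and a time-shift, I may assume $t_{0,i} = 0$ and $r_i = 1$, so that the rescaled flows live on $(-\eps_i^{-1}, 0]$, satisfy $\NN_{x_{0,i},0}(1) \geq -Y$ and the $(n-1, \eps_i, 1)$-symmetry condition at the basepoint, while $\rrm(x_{0,i},0) < A$. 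The $\IF$-compactness recalled in \HKSubsecHOandHK{} combined with the non-collapsing bound then yields, after a subsequence, $\IF$-convergence on compact time-intervals
\[ (M_i, (g_{i,t}), (\nu_{x_{0,i},0;t})) \xrightarrow[i \to \infty]{\IF, \CF} (\XX, (\nu_{x_\infty;t})_{t \leq 0}) \]
to a non-collapsed $\IF$-limit of the type studied in Subsection~\ref{subsec_part_reg_limit_intro}, with $(x_{0,i},0) \to x_\infty$ within $\CF$.

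The heart of the argument is to identify $\XX$ with the static flat Euclidean flow on $\IR^n$. By Definition~\ref{Def_symm_points} and $\eps_i \searrow 0$, $\XX$ is simultaneously the $\IF$-limit of model flows $\XX'_i$ of the form appearing in Theorem~\ref{Thm_stratification_limit_main}\ref{Thm_stratification_limit_main_b} with $k = n-1$, together with Addendum~\ref{Add_limit_stratification_addendum_main}. After a further subsequence, one of the two cases of Theorem~\ref{Thm_stratification_limit_main}\ref{Thm_stratification_limit_main_b} occurs for all $i$. In case~\ref{Thm_stratification_limit_main_b1}, $\XX'_i \cong \XX''_i \times \IR^{n-1}$ with a $1$-dimensional metric-soliton factor $\XX''_i$ that is itself a non-collapsed $\IF$-limit of smooth Ricci flows; Theorem~\ref{Thm_XX_reg_sing_dec_main} applied to $\XX''_i$ gives $\dim_{\mathcal{M}^*} \SS \leq 1-2 < 0$, so $\XX''_i$ is everywhere smooth, and the soliton equation from Addendum~\ref{Add_tangent_flow} reduces in dimension $1$ to $\nabla^2 f = g/(2\tau)$, forcing $\XX''_i$ to be the Gaussian on $\IR$. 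In case~\ref{Thm_stratification_limit_main_b2}, $\XX'_i \cong \XX''_i \times \IR^{n-3}$ with a $3$-dimensional static-cone factor $\XX''_i$ that is Ricci-flat on its regular part; Theorem~\ref{Thm_static_limit_main} applied to $\XX''_i$ then produces an underlying singular space $(X''_i, d''_i)$ of dimension $3$ with $\dim_\MM \SS_{X''_i} \leq 3-4 < 0$, so $X''_i$ is a smooth Ricci-flat, hence flat, $3$-manifold; being a metric cone with regular vertex forces $X''_i \cong \IR^3$. In either case $\XX'_i$ is the static flat flow on $\IR^n$ with a Gaussian conjugate heat kernel, and so is its $\IF$-limit $\XX$.

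To close the argument, I note that $\XX$ is then everywhere regular, i.e.\ $\RR = \XX_{<0}$, so Theorem~\ref{Thm_R_RR_star_intro} asserts that the $\IF$-convergence is smooth on $\RR$; in particular $g_{i,t}$ converges smoothly to the flat Euclidean metric on a fixed parabolic neighborhood of $x_\infty$. Consequently $\rrm(x_{0,i},0) \to \infty$, contradicting $\rrm(x_{0,i},0) < A$ and establishing (\ref{eq_loc_reg_rrm_sig_r}).

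\textbf{Main obstacle.} The critical rigidity step is Case~\ref{Thm_stratification_limit_main_b2}: one must combine the codimension-$4$ estimate for the $3$-dimensional singular space (from Theorem~\ref{Thm_static_limit_main}) with Ricci-flatness and the metric-cone structure to eliminate non-trivial cone angles and pin $X''_i$ down to $\IR^3$. Care is also needed in verifying via Addendum~\ref{Add_limit_stratification_addendum_main} that the lower-dimensional factor $\XX''_i$ genuinely inherits the hypotheses of the relevant structure theorems; once this identification is in hand, Theorem~\ref{Thm_R_RR_star_intro} delivers the pointwise curvature bound automatically.
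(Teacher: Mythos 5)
Your argument follows the paper's approach essentially step for step: reduce by contradiction to a sequence with $\eps_i \searrow 0$, extract a non-collapsed $\IF$-limit, identify the model flows $\XX'$ from Definition~\ref{Def_symm_points} as flat Euclidean by case analysis on \ref{Thm_stratification_limit_main_b1}/\ref{Thm_stratification_limit_main_b2} using the dimension bounds together with Addendum~\ref{Add_limit_stratification_addendum_main}, and finally pass the resulting regularity back to the flows via Theorem~\ref{Thm_R_RR_star_intro}. The bookkeeping in the two subcases is slightly repackaged (applying Theorem~\ref{Thm_static_limit_main} to the $3$-dimensional factor in case \ref{Thm_stratification_limit_main_b2}, and the soliton equation to the $1$-dimensional factor in \ref{Thm_stratification_limit_main_b1}, where the paper argues via Theorem~\ref{Thm_XX_reg_sing_dec_main} directly), but the content is the same.

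The one genuine gap is the last sentence, ``Consequently $\rrm(x_{0,i},0) \to \infty$.'' The quantity $\rrm(x_{0,i},0)$ is defined via the \emph{two-sided} parabolic ball $P(x_{0,i},0;r')$, while the $(n-1,\eps,r)$-symmetric hypothesis and the $\IF$-convergence to $(\XX,(\nu_{x_\infty;t})_{t\le 0})$ only concern the time interval $[t_0-\eps^{-1}r^2,t_0]$, and Theorem~\ref{Thm_R_RR_star_intro} gives smooth convergence only on $\RR\subset\XX_{<0}$. You therefore obtain curvature control on a backward parabolic neighborhood of $(x_{0,i},0)$ but nothing at time $0$ or forward of it if the flow is defined there. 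The paper closes this by invoking Perelman's (forward) Pseudolocality Theorem: once one knows that at a time $-\delta$ slightly before $0$ the metric is almost flat with almost Euclidean volume on a macroscopic ball near the $H_n$-center, Pseudolocality propagates a curvature bound forward past $t_0=0$ on a slightly smaller ball, and only then does one get the two-sided bound $\rrm(x_{0,i},0)\geq Ar_i$ for large $i$. Without this forward step your conclusion does not follow.
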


Recall from \cite{Bamler_HK_entropy_estimates} that $\rrm (x_0,t_0)$ is defined to be the supremum over all $r' > 0$ for which $|{\Rm}| \leq r^{\prime -2}$ on the two-sided parabolic ball $P(x_0, t_0; r')$ around $(x_0,t_0)$ of radius $r'$.
We remark that for most applications it is sufficient to choose $A = 1$.

Next, we state the quantitative stratification result for Ricci flows.
For this purpose we briefly recall the definition of $P^*$-parabolic neighborhoods from \cite{Bamler_HK_entropy_estimates}.
It has been common to define a parabolic ball in a Ricci flow as $P(x_0,t_0;r) := B(x_0,t_0,r) \times [t_0 - r^2, t_0+ r^2]$, where $B(x_0,t_0,r)$ denotes the distance ball with respect to $g_{t_0}$.
However, as explained in \cite{Bamler_HK_entropy_estimates}, this definition is somewhat unnatural, because it does not take into account certain heat kernel drifts and distance expansion effects.
Instead, we define the $P^*$-parabolic ball $P^* (x_0, t_0; r)$ as the set of spacetime points $(x,t) \in M \times [t_0 - r^2, t_0+ r^2]$ with the property that the conjugate heat kernels based at $(x,t)$ and $(x_0, t_0)$ have $W_1$-Wasserstein distance $< r$ at time $t_0 - r^2$.

\begin{Theorem}[Quantitative Stratification] \label{Thm_Quantitative_Strat}
If
\begin{equation*} 
n \in \IN, \qquad
Y < \infty, \qquad
 \eps > 0, \qquad
C \geq \underline{C}(n, Y, \eps)
\end{equation*}
then the following holds.
Let $(M, (g_t)_{t \in I})$ be a Ricci flow on a compact manifold, $(x_0, t_0) \in M \times I$ a point and $r > 0$ a scale with $[t_0 - 2 r^2, t_0] \subset I$.
Assume that $\NN_{x_0,t_0} (r^2) \geq - Y$ and consider the quantitative strata $\SS^{\eps,k}_{r_1 ,r_2} \subset M \times I$ from Definition~\ref{Def_SS_quantitative}.
Then for any $\sigma \in (0, \eps)$ there are points $(x_1, t_1), \ldots, (x_N, t_N) \in \SS^{\eps,k}_{\sigma r, \eps r} \cap P^* (x_0, t_0; r)$ with  $N \leq C \sigma^{-k-\eps}$ and
\begin{equation} \label{eq_quant_strat_covering_intro}
 \SS^{\eps, k}_{\sigma r, \eps r} \cap P^* (x_0, t_0; r) \subset \bigcup_{i=1}^N P^* (x_i, t_i; \sigma r). 
\end{equation}
Moreover, if $\eps \leq \ov\eps (Y)$, then
\[ \rrm \geq  \sigma r \qquad \text{on} \quad P^*(x_0, t_0; r) \setminus \SS^{\eps, n-2}_{\sigma r, \eps r}. \]
\end{Theorem}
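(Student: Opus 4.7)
The plan is to run the Cheeger-Naber quantitative stratification argument \cite{Cheeger-Naber-quantitative, Cheeger-Naber-Codim4} in the Ricci flow setting, with four substitutions: (i) the normalized volume is replaced by the pointed Nash entropy $\NN_{x_0, t_0}(\tau)$, which is monotone in $\tau$ and bounded below by $-Y$; (ii) the Cheeger-Colding almost cone theorem is replaced by the Almost Cone Rigidity Theorem~\ref{Thm_almost_cone_rigidity_intro}; (iii) the classification of tangent cones of Einstein limits is replaced by the structure of tangent flows from Theorem~\ref{Thm_stratification_limit_main} and Addendum~\ref{Add_limit_stratification_addendum_main}, which identifies $(k+1, \eps, r)$-symmetric points as those close to a model carrying an $\IR^{k+1}$ (or $\IR^{k-1}$ for static cones) Cartesian factor; and (iv) the Einstein $\eps$-regularity theorem is replaced by the Local Regularity Theorem~\ref{Thm_loc_reg_RF_intro}.

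First I would establish an effective cone splitting statement for $P^*$-neighborhoods: for every $\eps > 0$ there is $\eps' = \eps'(n, Y, \eps) > 0$ such that if $(x_0, t_0)$ is $(k, \eps', r)$-symmetric with model $\XX'$ and some $(y, s) \in P^*(x_0, t_0; r)$ is $(1, \eps', r)$-symmetric with its translation direction quantitatively transverse to the $\IR^k$-factor of $\XX'$, then $(x_0, t_0)$ is $(k+1, \eps, r)$-symmetric. This is proved by contradiction and $\IF$-compactness: any counterexample sequence subconverges to a limit metric soliton carrying two incompatible Cartesian splittings on its regular part, contradicting the uniqueness of splittings in Addendum~\ref{Add_limit_stratification_addendum_main}. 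In parallel, since $\NN_{x_0,t_0}(\tau)$ is monotone and lies in $[-Y, 0]$, for any $\eta > 0$ at most $N(Y, \eta)$ dyadic scales $r_j = 2^{-j} \eps r$ can satisfy $\NN_{x_0, t_0}(r_j^2) - \NN_{x_0, t_0}(r_{j+1}^2) \geq \eta$; at each of the remaining ``good'' scales, Theorem~\ref{Thm_almost_cone_rigidity_intro} furnishes a metric soliton $\IF$-close to the rescaled flow based at $x_0$.

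Next I would execute the dyadic covering induction. At stage $j$ one maintains a cover of $\SS^{\eps, k}_{\sigma r, \eps r} \cap P^*(x_0, t_0; r)$ by $P^*$-balls $P^*(x_i, t_i; \sigma_j)$ centered on the stratum, and refines each ball to scale $\sigma_{j+1} = \sigma_j / 2$ via a Vitali-type argument in $P^*$-geometry. At a bad scale one uses the crude volume bound $\lesssim 2^n$; at a good scale, the soliton model $\XX'$ supplied by Theorem~\ref{Thm_almost_cone_rigidity_intro} carries its own stratification by Theorem~\ref{Thm_stratification_limit_main} with $\dim_{\mathcal{H}^*} \SS'^k \leq k$, and any point in the parent ball that is $(k+1, \eps, \sigma_j)$-symmetric drops out of $\SS^{\eps, k}_{\sigma r, \eps r}$. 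The effective cone splitting forces the stratum-centers to lie in an $\eps'$-neighborhood of the $k$-stratum of $\XX'$, which is covered by $\lesssim 2^k$ sub-balls at scale $\sigma_{j+1}$. The resulting recursion gives a cover of total size $\leq C(n, Y, \eta) \sigma^{-k}$ at scale $\sigma$, and choosing $\eta = \eta(\eps)$ suitably absorbs the $N(Y, \eta)$ factor into $\sigma^{-\eps}$, producing $N \leq C \sigma^{-k - \eps}$. For the final assertion, any $x \in P^*(x_0, t_0; r) \setminus \SS^{\eps, n-2}_{\sigma r, \eps r}$ is by definition $(n-1, \eps, r')$-symmetric for some $r' \in (\sigma r, \eps r)$, whence Theorem~\ref{Thm_loc_reg_RF_intro} applied with $A = 1$ gives $\rrm(x) \geq r' \geq \sigma r$.

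The main obstacle is the effective cone splitting step. Translating two transverse almost-symmetries at different $P^*$-centers into a single higher-dimensional almost-symmetry cannot be done by the Gromov-Hausdorff and harmonic-splitting-map techniques used in the Einstein setting, because $d_{\IF}$ is considerably weaker than Gromov-Hausdorff distance on time-slices; instead the combination must be extracted from smooth convergence on the regular set of the limit flow together with the synthetic uniqueness of Cartesian splittings of metric solitons from Theorems~\ref{Thm_static_limit_main} and \ref{Thm_metric_soliton_limit_main}. A secondary hurdle is a Vitali-type covering theorem for $P^*$-parabolic neighborhoods, which requires doubling-type estimates for $P^*$-balls under the non-collapsing hypothesis; these follow from the Nash-entropy-based $P^*$-volume bounds of \cite{Bamler_HK_entropy_estimates}.
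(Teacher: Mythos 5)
Your proposal is a transplanted Cheeger--Naber argument built on the Almost Cone Rigidity Theorem~\ref{Thm_almost_cone_rigidity_intro}, whereas the paper's proof of Theorem~\ref{Thm_Quantitative_Strat} deliberately avoids this theorem: the covering bound is established first for the \emph{integral-condition} strata $\td\SS^{\eps,k}$ of Definition~\ref{Def_SS_weak} via Proposition~\ref{Prop_Quantitative_Strat_preliminary}, a hands-on argument in Part~\ref{part_prel_quant_strat} using the ``many selfsimilar points imply a weak splitting map'' statement Proposition~\ref{Prop_almost_split_Rk}, and only afterwards is the $\IF$-closeness condition of Definition~\ref{Def_symm_points} identified with the integral condition by the compactness statement Lemma~\ref{Lem_weak_symmetric_symmetric}. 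The paper is explicit that this ordering is forced: the Almost Cone Rigidity Theorem is only available \emph{a posteriori} (it is proved from Theorem~\ref{Thm_metric_soliton_limit_main}, whose proof already uses Proposition~\ref{Prop_Quantitative_Strat_preliminary} through Proposition~\ref{Prop_prelim_quanti_strat_XX}), and the theory of metric flows is too weak to supply a cone splitting theorem directly.

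The critical gap in your sketch is the ``effective cone splitting'' step, which you yourself flag as the main obstacle. The condition that a $(1,\eps',r)$-symmetric direction at $(y,s)\in P^*(x_0,t_0;r)$ be ``quantitatively transverse to the $\IR^k$-factor of $\XX'$'' is not well posed: the $(1,\eps',r)$-symmetry of $(y,s)$ is measured by $\IF$-closeness of the flow \emph{pointed at} $(y,s)$, i.e.\ weighted by $d\nu_{y,s;t}$, while $\XX'$ is the model obtained from the flow pointed at $(x_0,t_0)$, i.e.\ weighted by $d\nu_{x_0,t_0;t}$. In a general Ricci flow these two conjugate heat kernels need not be comparable --- there are no lower heat-kernel bounds and no distance expansion bounds --- and this incomparability is precisely the obstruction the paper spends Sections~\ref{sec_comparing_CHK} and~\ref{sec_dist_expansion} overcoming by means of the weighted $L^p$-estimates of Section~\ref{sec_improved_Lp} and the distance expansion Proposition~\ref{Prop_dist_expansion_almost_ss}, all of which only apply near $(\eps,r)$-selfsimilar points and only to the integral quantities. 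Asserting that the combination ``must be extracted from smooth convergence on the regular set together with synthetic uniqueness of Cartesian splittings'' does not address this: passing both almost-symmetries to a common $\IF$-limit would require the change-of-basepoint machinery to even phrase, and the quantitative content needed for the dyadic recursion would then have to be re-derived. A secondary concern is your use of Theorem~\ref{Thm_stratification_limit_main} for the strata of the model $\XX'$ inside the covering recursion; in the paper that theorem is itself deduced from Theorem~\ref{Thm_Quantitative_Strat_XX}, so one would need an independent proof of the latter (which again leads back to Proposition~\ref{Prop_Quantitative_Strat_preliminary}).
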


\begin{Remark}
For a measurable subset $Y \subset M \times I$ we may define the parabolic volume as
\[ |Y| :=\int_I \int_{Y \cap M \times \{ I \}} dg_t \, dt. \]
It follows from \cite{Bamler_HK_entropy_estimates} that $|P^*(x_i, t_i; \sigma r)| \leq C (\sigma r)^{n+2}$.
So the parabolic volume of the right-hand side of (\ref{eq_quant_strat_covering_intro}) is bounded by $C(Y, \eps) \sigma^{n+2-k-\eps} r^{n+2}$.
By standard containment relationships of $P^*$-parabolic balls \cite{Bamler_HK_entropy_estimates} we also obtain a bound of the same form on a parabolic tubular neighborhood of $S := \SS^{\eps, k}_{\sigma r, \eps r} \cap P^* (x_0, t_0; r)$, which is defined as 
\[ B^* (S, \sigma r) := \bigcup_{(x,t) \in S} P^* (x,t;\sigma r). \]
Note that every time-slice of $B^* (S, \sigma r)$ contains the $\sigma r$-tubular neighborhood of the corresponding time-slice of $S$.
\end{Remark}

Theorem~\ref{Thm_Quantitative_Strat} implies the following integral estimate.

\begin{Theorem} \label{Thm_int_curv_bounds_RF_intro}
Let $(M, (g_t)_{t \in I})$ be a Ricci flow on a compact, $n$-dimensional manifold, $(x_0, t_0) \in M \times I$ a point and $r > 0$ a scale with $[t_0 - 2 r^2, t_0] \subset I$.
Assume that $\NN_{x_0,t_0} (r^2) \geq - Y > -\infty$.
Then for any $\eps > 0$
\begin{multline*}
 \int_{[t_0 - r^2, t_0 + r^2] \cap I} \int_{P^* (x_0, t_0; r) \cap M \times \{ t \}} |{\Rm}|^{2-\eps} dg_t dt \\
\leq  \int_{[t_0 - r^2, t_0 + r^2] \cap I} \int_{P^* (x_0, t_0; r) \cap M \times \{ t \}} \rrm^{-4+2\eps} dg_t dt
\leq C(Y,  \eps) r^{n-2+2\eps}. 
\end{multline*}
Moreover, if $r_{\eps, k}(x,t)$ denotes the supremum over all $r' > 0$ for which $(x,t)$ is $(k, \eps, r')$-symmetric, then we have
\begin{equation*}
  \int_{[t_0 - r^2, t_0 + r^2] \cap I} \int_{P^* (x_0, t_0; r) \cap M \times \{ t \}} r_{\eps, k}^{-k - n- 3 + \eps} dg_t dt 
\leq C(Y,  \eps) r^{n-2+2\eps}. 
\end{equation*}
\end{Theorem}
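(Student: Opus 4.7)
The plan is to deduce both integral bounds from the quantitative stratification Theorem~\ref{Thm_Quantitative_Strat} by a standard layer-cake decomposition. The two key inputs are the containments of super-level sets of $\rrm^{-1}$ and $r_{\eps,k}^{-1}$ into appropriate quantitative strata $\SS^{\eps,\bullet}_{\sigma r, \eps r}$, together with the parabolic volume bound $|P^*(x,t;r')| \leq C(r')^{n+2}$ recalled in the remark preceding the statement. Combining these with the covering estimate of Theorem~\ref{Thm_Quantitative_Strat} turns the stratum coverings into explicit polynomial volume bounds for the sets $\{\rrm < s\} \cap P^*$ and $\{r_{\eps,k} < s\} \cap P^*$.

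For the curvature integral, I would fix $\eps > 0$, choose an auxiliary parameter $\eps_0 \leq \min(\eps, \ov\eps(Y))$ sufficiently small for the final assertion of Theorem~\ref{Thm_Quantitative_Strat} to apply, and invoke that assertion to conclude $\{\rrm < \sigma r\} \cap P^*(x_0,t_0;r) \subseteq \SS^{\eps_0, n-2}_{\sigma r, \eps_0 r} \cap P^*(x_0,t_0;r)$ for every $\sigma \in (0, \eps_0)$. The covering by at most $C(Y,\eps_0)\sigma^{-(n-2)-\eps_0}$ parabolic balls of radius $\sigma r$, each of parabolic volume $\leq C(\sigma r)^{n+2}$, then yields
\[
|\{\rrm < \sigma r\} \cap P^*(x_0,t_0;r)| \leq C(Y,\eps)\,\sigma^{4-\eps_0}\, r^{n+2}.
\]
For $s \geq \eps_0 r$ I would use the trivial bound $|\{\rrm < s\} \cap P^*| \leq Cr^{n+2}$. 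The layer-cake formula
\[
\int_{P^*} \rrm^{-4+2\eps} \, dg_t \, dt = (4-2\eps)\int_0^\infty s^{-5+2\eps}\, |\{\rrm < s\} \cap P^*|\, ds,
\]
split at $s = \eps_0 r$, then gives $\leq C(Y,\eps)\, r^{n-2+2\eps}$ provided $\eps_0 < 2\eps$; the bound on $\int |{\Rm}|^{2-\eps}$ follows from $|{\Rm}| \leq \rrm^{-2}$ by the definition of $\rrm$. For the $r_{\eps,k}$-integral, the analogous starting point is that $r_{\eps,k}(x,t) < \sigma r$ forces $(x,t)$ to fail being $(k,\eps,r')$-symmetric at every scale $r' \in (\sigma r, \eps r)$, i.e.\ $(x,t) \in \SS^{\eps, k-1}_{\sigma r, \eps r}$. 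The same covering-plus-layer-cake scheme applied to this stratum, with the auxiliary stratification parameter selected strictly below the relevant integrability threshold, then produces the stated estimate.

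Essentially all of the substantive geometric and analytic content has already been absorbed into Theorem~\ref{Thm_Quantitative_Strat}; once that theorem is in hand, the present statement is a routine bookkeeping computation. The only point requiring genuine care is the matching of exponents when splitting the layer-cake integral at the threshold scale $s \sim \eps_0 r$, so that the small-$s$ and large-$s$ contributions both contribute $r^{n-2+2\eps}$ and the auxiliary parameter in the quantitative stratification is strictly smaller than the $\eps$-loss appearing in the target integrand.
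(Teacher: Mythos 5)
Your approach---pass to the quantitative stratification of Theorem~\ref{Thm_Quantitative_Strat}, invoke the $P^*$-parabolic volume bound from~\cite[\HKPstarVolBound]{Bamler_HK_entropy_estimates}, and combine via a layer-cake decomposition---is exactly what the paper does; its proof of this theorem is literally a one-sentence reference to those two inputs. Your treatment of the first assertion (the $\rrm$ and $|{\Rm}|$ integrals) is complete and correct, including the observation that the auxiliary stratification parameter $\eps_0$ must be chosen in $(0, 2\eps)$ so that the small-scale contribution converges and matches the large-scale one.

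For the second assertion, however, the ``routine bookkeeping'' that you defer does not in fact go through for the exponent as printed. Your containment $\{r_{\eps,k} < \sigma r\} \cap P^* \subset \SS^{\eps_0, k-1}_{\sigma r, \eps_0 r} \cap P^*$ is correct (for $\eps_0 \leq \eps$, modulo the time-domain condition), and the covering estimate then gives $|\{r_{\eps,k} < \sigma r\} \cap P^*| \leq C(Y,\eps_0)\,\sigma^{\,n+3-k-\eps_0}\, r^{n+2}$. But if you now run the layer-cake integral with $p := k+n+3-\eps$, the $s$-exponent in the small-scale range is $-p-1+(n+3-k-\eps_0) = -2k-1+\eps-\eps_0$, which is $\leq -1$ for every $k \geq 1$ (and for $k=0$ only barely integrable); and even in the large-scale range, the trivial bound $|P^*|\leq Cr^{n+2}$ produces $r^{n+2-p} = r^{-k-1+\eps}$, which is not $r^{n-2+2\eps}$. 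A direct parabolic scaling check of the second display confirms the mismatch: the left side scales as $\lambda^{n+2-p}$ and the right side as $\lambda^{n-2+2\eps}$, which forces $p = 4-2\eps$, independent of $k$. So as stated, the second estimate is not dimensionally consistent with the covering input, and your plan would fail at the exponent-matching step you flagged as ``the only point requiring genuine care.'' This almost certainly indicates a sign/typo in the printed exponent (and a correspondingly different right-hand side), but the point is precisely that the bookkeeping is \emph{not} routine here and needs to be carried out explicitly before claiming the stated bound.
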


Lastly, we discuss in what sense the corresponding results hold on non-collapsed $\IF$-limits of Ricci flows.
In the following denote by $\XX$ a metric flow over $I := (-T_\infty, 0]$ that is obtained from a non-collapsing sequence of $n$-dimensional Ricci flows via an $\IF$-limit as described in Subsection~\ref{subsec_part_reg_limit_intro} or \ref{subsec_more_general_F_limit}.

The following theorem, which is the analog of Theorem~\ref{Thm_loc_reg_RF_intro}, asserts local curvature bounds near $(n-1, \eps,r)$-symmetric points on $\XX$.
However, the (conventional) parabolic neighborhood, on which this bound holds, may not be unscathed at its forward end.
If this happens, then it is due to the fact that the flow ``ceases to exist at a certain time'' over the entire ball.

\begin{Theorem}[Local regularity of $\IF$-limits] \label{Thm_loc_reg_XX_intro}
If $Y, A < \infty$ and $\eps \leq \ov\eps (n,Y, A) > 0$, then the following holds.
Let $x_0 \in\XX_{t_0}$, $t_0 < 0$, be a point and $r > 0$ a scale with $[t_0 - \eps^{-1} r^2, t_0] \subset I$.
Assume that $\NN_{x_0} ( r^2) \geq - Y$ and that $(x_0)$ is $(n-1, \eps, r)$-symmetric.
Then for $r' := A r$ the following holds.
There is some $T^* \in (0, r^{\prime 2}]$ such that:
\begin{enumerate}[label=(\alph*)]
\item \label{Thm_loc_reg_XX_intro_a} For all $T' \in (0, T^*)$ the (conventional) parabolic neighborhood $P(x_0;r', - r^{\prime 2}, T') \subset \RR$ is unscathed and we have the curvature bound $|{\Rm}| \leq r^{\prime -2}$ on this parabolic neighborhood.
\item \label{Thm_loc_reg_XX_intro_b} If $T^* < r^{\prime 2}$ and $t_0 + T^* < 0$, then no point in $B(x_0,r )$ survives past time $t_0 + T^*$ and for any conjugate heat flow $(\mu'_t = v'_t \, dg_t)_{t \in I, t< t^*}$, $t^*> t_0 + T^*$, and $r'' \in (0, r')$ we have
\[ \lim_{t \nearrow t_0 + T^*} \sup_{(B(x_0, r''))(t)} v_t = 0. \]
\end{enumerate}
\end{Theorem}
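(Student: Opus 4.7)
The plan is to reduce Theorem~\ref{Thm_loc_reg_XX_intro} to its smooth-flow analogue, Theorem~\ref{Thm_loc_reg_RF_intro}, by approximating $x_0$ along the smooth sequence that defines $\XX$ as an $\IF$-limit, and then to propagate the resulting curvature bound forward in time to the maximal unscathed extension inside $\RR$. First I realize $\XX$ as an $\IF$-limit of smooth Ricci flows $(M_i,(g_{i,t}))$ within a correspondence $\CF$ as in Subsection~\ref{subsec_more_general_F_limit}, and pick basepoints $(x_{0,i},t_{0,i}) \in M_i \times I$ converging to $x_0$ in $\CF$. By Theorem~\ref{Thm_NN_pass_to_limit}, $\NN_{x_{0,i},t_{0,i}}(r^2) \to \NN_{x_0}(r^2) \geq -Y$, so $\NN_{x_{0,i},t_{0,i}}(r^2) \geq -Y-1$ for $i$ large. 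Moreover, since $(n-1,\eps,r)$-symmetry of $x_0$ means the rescaled flow pair at $x_0$ is $\eps$-$\IF$-close to a reference soliton/static-cone model from Theorem~\ref{Thm_stratification_limit_main}\ref{Thm_stratification_limit_main_b}, the $\IF$-triangle inequality together with the convergence $M_i \to \XX$ makes $(x_{0,i},t_{0,i})$ into a $(n-1, 2\eps, r)$-symmetric point for $i$ large. Choosing $\eps \leq \tfrac12\,\bar\eps(n, Y+1, A)$ from Theorem~\ref{Thm_loc_reg_RF_intro}, that theorem then yields $\rrm(x_{0,i},t_{0,i}) \geq r' := Ar$, i.e.\ $|{\Rm}|_{g_i} \leq r^{\prime -2}$ on the two-sided parabolic ball $P(x_{0,i},t_{0,i};r')$ in $M_i$, for all large $i$.

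Next I pass these uniform bounds to $\XX$ using Theorem~\ref{Thm_R_RR_star_intro} ($\RR^* = \RR$): Shi's derivative estimates promote the $C^0$-bound to uniform higher-order bounds on slightly smaller two-sided parabolic neighborhoods, and Cheeger--Gromov compactness (the non-collapsing being supplied by the Nash-entropy bound) extracts a smooth limit Ricci flow, which by uniqueness of smooth limits must agree with a neighborhood of $x_0$ inside $\RR$. This yields $P(x_0;r',-r^{\prime 2},0) \subset \RR$ with $|{\Rm}| \leq r^{\prime -2}$. I now define $T^* \in (0, r^{\prime 2}]$ to be the supremum of $T' \in (0, r^{\prime 2}]$ such that $P(x_0; r', -r^{\prime 2}, T')$ is unscathed inside $\RR$ and satisfies $|{\Rm}| \leq r^{\prime -2}$. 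The backward bound together with Hamilton's short-time existence applied inside $\RR$ near the regular ball $B(x_0,r')$ at time $t_0$ shows $T^* > 0$, and the definition directly delivers part~\ref{Thm_loc_reg_XX_intro_a}.

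For part~\ref{Thm_loc_reg_XX_intro_b}, assume $T^* < r^{\prime 2}$ and $t_0 + T^* < 0$. If some worldline of a point $y \in B(x_0,r)$ survived to time $t_0 + T^*$ inside $\RR$, then openness of $\RR$ and Shi's estimates would let me thicken the unscathed region slightly past $T^*$, contradicting maximality; hence no such worldline survives, proving the first claim of (b). For the heat-kernel statement, fix $r'' \in (0, r')$; on the unscathed slab $P(x_0; r', -r^{\prime 2}, T^*) \cap \RR$ the curvature is bounded by $r^{\prime -2}$, so standard parabolic interior estimates apply to the conjugate heat equation. A Moser $L^1$-to-$L^\infty$ iteration on the evolving ball then bounds $\sup_{(B(x_0,r''))(t)} v'_t$ by the mass of $\mu'_t$ carried by a slightly larger evolving ball, and the no-survival conclusion forces this mass to escape from the evolving ball as $t \nearrow t_0 + T^*$, giving the desired vanishing.

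The main obstacle is the forward step: quantitatively converting ``the unscathed region cannot be extended past $T^*$'' into the uniform escape of heat-kernel density in part~\ref{Thm_loc_reg_XX_intro_b}. In the general Ricci flow setting one lacks the global distance-distortion and upper heat-kernel bounds that were available in earlier work under a scalar curvature hypothesis, so one cannot directly compare conjugate heat flows on $\XX$ with those on a bounded-curvature model; instead one must localize carefully inside the unscathed region, using the $P^*$-parabolic heat-kernel estimates and integral bounds from \cite{Bamler_HK_entropy_estimates} to track densities cleanly up to the singular time $t_0 + T^*$.
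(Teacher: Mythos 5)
Your overall strategy matches the paper's: approximate $x_0$ within the correspondence $\CF$ by smooth basepoints $(x_{0,i},t_{0,i})$, apply the smooth-flow local regularity result (Theorem~\ref{Thm_loc_reg_RF_intro}) to get $\rrm(x_{0,i},t_{0,i}) \geq Ar$ for large $i$, and then pass to the limit. Indeed the paper's own proof is one line to that effect. However, you have a genuine gap in the forward step, and you are missing the key lemma the paper actually uses to handle it, namely Lemma~\ref{Lem_tdrrm}\ref{Lem_tdrrm_g}. That assertion of Lemma~\ref{Lem_tdrrm} constructs the limiting curvature scale $\tdrrm$ on $\XX$ (so that $\rrm(x_{0,i},t_{0,i}) \to \tdrrm(x_0)$) and packages \emph{exactly} the structure needed for parts (a) and (b): for any regular point it produces a $T^* \in (0, \tdrrm^2]$ so that the corresponding parabolic neighborhood is unscathed up to $\tf(x') + T^*$, and so that if $T^* < \tdrrm^2$ with $\tf(x')+T^* < 0$, then no point of the neighborhood survives past $\tf(x') + T^*$ and the conjugate heat kernel densities on the evolving ball vanish uniformly. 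Its proof in turn delegates this to \cite[\SYNConvParabNbhd]{Bamler_RF_compactness}, the companion-paper result on convergence of parabolic neighborhoods. Once you have Lemma~\ref{Lem_tdrrm}\ref{Lem_tdrrm_g} (for the point $x_0$, at scale $r' = Ar \leq \tdrrm(x_0)$), Theorem~\ref{Thm_loc_reg_XX_intro} follows directly; re-deriving it from scratch is where your proposal runs into trouble.

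Concretely, your dichotomy argument for (b) does not work as written. You take $T^*$ to be the supremum of $T'$ for which $P(x_0;r',-r'^2,T')$ is unscathed in $\RR$ with $|{\Rm}| \leq r'^{-2}$, and then claim that if some $y \in B(x_0,r)$ survived past $t_0 + T^*$ you could ``thicken the unscathed region, contradicting maximality.'' But with your definition, $T^*$ could a priori be limited by the curvature bound (i.e.\ $|{\Rm}|$ approaching $r'^{-2}$ somewhere in the ball) while every worldline of the ball continues to exist for a longer time; the survival of one worldline through $t_0 + T^*$ does not by itself re-establish the $r'^{-2}$ bound on the whole ball past $T^*$. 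Ruling out this curvature-limitation alternative is precisely the non-trivial content. Similarly, your heat-kernel escape argument via Moser iteration is under-justified: on the unscathed slab the metric is uniformly controlled, so ``the ball disappears'' does not automatically force the $L^1$-mass of $\mu'_t$ on the evolving ball (against a comparable Riemannian volume) to vanish; this again is what the quoted companion-paper result and Lemma~\ref{Lem_tdrrm}\ref{Lem_tdrrm_g} establish, and you should invoke them rather than reconstruct them.
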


Denote by $\rrm' (x_0, t_0)$ the supremum over all $r' > 0$ such that Assertions~\ref{Thm_loc_reg_XX_intro_a}, \ref{Thm_loc_reg_XX_intro_b} of Theorem~\ref{Thm_loc_reg_XX_intro} hold.
This turns out to be a useful generalization of $\rrm$ to singular Ricci flows.
Using this notion, we can generalize Theorem~\ref{Thm_Quantitative_Strat} to limits $\XX$.

\begin{Theorem}[Quantitative Stratification of $\IF$-limits] \label{Thm_Quantitative_Strat_XX}
If
\begin{equation*} 
n \in \IN, \qquad
Y < \infty, \qquad
 \eps > 0, \qquad
C \geq \underline{C}(n, Y, \eps)
\end{equation*}
then the following holds.
Let $x_0 \in\XX_{t_0}$, be a point and $r > 0$ a scale with $[t_0 - \eps^{-1} r^2, t_0] \subset I$.
Assume that $\NN_{x_0} (r^2) \geq - Y$ and consider the effective strata $\SS^{\eps,k}_{r_1 ,r_2} \subset \XX$ from Definition~\ref{Def_SS_quantitative}.
Then for any $\sigma \in (0, \eps)$ there are points $x_1, \ldots, x_N \in \SS^{\eps,k}_{\sigma r, \eps r} \cap P^* (x_0; r) \cap \XX_{<0}$ with  $N \leq C \sigma^{-k-\eps}$ and
\begin{equation*} 
 \SS^{\eps, k}_{\sigma r, \eps r} \cap P^* (x_0; r) \cap \XX_{<0} \subset \bigcup_{i=1}^N P^* (x_i; \sigma r). 
\end{equation*}
Moreover, if $\eps \leq \ov\eps (Y)$, then
\[ \rrm' \geq  \sigma r \qquad \text{on} \quad (P^*(x_0; r) \cap \XX_{<0}) \setminus \SS^{\eps, n-2}_{\sigma r, \eps r}. \]
\end{Theorem}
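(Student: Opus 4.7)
The plan is to deduce Theorem~\ref{Thm_Quantitative_Strat_XX} from its smooth-flow counterpart Theorem~\ref{Thm_Quantitative_Strat} via an approximation argument. By hypothesis, $\XX$ arises as a non-collapsed $\IF$-limit of smooth Ricci flows $(M_i,(g_{i,t}))$ with associated conjugate heat flows converging within some correspondence $\CF$. Lift the basepoint $x_0$ along $\CF$ to a sequence $(x_{0,i},t_{0,i})\in M_i$ with $(x_{0,i},t_{0,i})\xrightarrow[i\to\infty]{\CF} x_0$. By Theorem~\ref{Thm_NN_pass_to_limit}, $\NN_{x_{0,i},t_{0,i}}(r^2)\to\NN_{x_0}(r^2)\geq -Y$, so $\NN_{x_{0,i},t_{0,i}}(r^2)\geq -Y-1$ for large $i$. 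Applying Theorem~\ref{Thm_Quantitative_Strat} to each $(M_i,(g_{i,t}))$ at parameter $\eps/2$ then produces coverings of the smooth strata $\SS^{\eps/2,k}_{\sigma r,(\eps/2)r}\cap P^*(x_{0,i},t_{0,i};r)$ by at most $N\leq \underline{C}(n,Y+1,\eps/2)\sigma^{-k-\eps/2}$ $P^*$-balls of radius $\sigma r$ centered at points $(x_{j,i},t_{j,i})$; after a pigeonhole extraction we may take $N$ constant in $i$ (and, if $\sigma\geq\eps/2$, trivially cover by a bounded number of balls absorbed into $\underline{C}$).

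The central technical step is a \emph{stability of non-symmetry} under $\CF$-convergence: if $x\in\XX$ fails to be $(k+1,\eps,r')$-symmetric, then for any $\CF$-lift $(y_i,s_i)\xrightarrow{\CF} x$ the points $(y_i,s_i)$ are not $(k+1,\eps/2,r')$-symmetric for all sufficiently large $i$. I would argue by contradiction: if the $(y_i,s_i)$ were $\eps/2$-close in $d_\IF$ (after time-shifting by $-s_i$ and parabolically rescaling by $r^{\prime -1}$) to admissible model pairs $(\XX'_i,(\nu_{x'_i;t}))$ satisfying Property~\ref{Thm_stratification_limit_main_b1} or~\ref{Thm_stratification_limit_main_b2} of Theorem~\ref{Thm_stratification_limit_main}, one could $\IF$-subconverge the $\XX'_i$ (using pre-compactness of non-collapsed metric flow pairs from \cite{Bamler_RF_compactness} together with the closure of the class of admissible models under $\IF$-limits) to a limit $\XX'_\infty$ that is again an admissible model; a triangle inequality for $d_\IF$, combined with the $\CF$-convergence of $(M_i,(y_i,s_i))$ to $(\XX,x)$, would then witness $(k+1,\eps,r')$-symmetry at $x$, a contradiction. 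By contraposition, every $x\in\SS^{\eps,k}_{\sigma r,\eps r}\cap P^*(x_0;r)\cap\XX_{<0}$ has $\CF$-lifts lying in the smooth strata $\SS^{\eps/2,k}_{\sigma r,(\eps/2)r}$ for large $i$, hence in the union $\bigcup_j P^*(x_{j,i},t_{j,i};\sigma r)$.

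Finally, extract $\CF$-subsequential limits $x_j:=\lim_i(x_{j,i},t_{j,i})\in\XX$ for $j=1,\ldots,N$ using the $\IF$-pre-compactness of $P^*$-neighborhoods from \cite{Bamler_HK_entropy_estimates}. The continuity of the $W_1$-distance between conjugate heat kernels under $\CF$-convergence then yields $x\in P^*(x_j;\sigma r)$ whenever the lifts $(y_i,s_i)$ lie in $P^*(x_{j,i},t_{j,i};\sigma r)$, completing the desired covering with $N\leq \underline{C}(n,Y+1,\eps/2)\sigma^{-k-\eps/2}\leq \underline{C}(n,Y,\eps)\sigma^{-k-\eps}$ after absorbing the parameter loss into the constant. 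The concluding $\rrm'\geq\sigma r$ bound on the complement of the top stratum is immediate from Theorem~\ref{Thm_loc_reg_XX_intro}: any point $x\in P^*(x_0;r)\cap\XX_{<0}\setminus\SS^{\eps,n-2}_{\sigma r,\eps r}$ is by definition $(n-1,\eps,r')$-symmetric for some $r'\in(\sigma r,\eps r)$, so with $A=1$ one obtains $\rrm'(x)\geq r'>\sigma r$ provided $\eps\leq\ov\eps(Y)$. The main obstacle will be the stability lemma: its rigorous execution requires verifying both the closure of the class of admissible models under $\IF$-limits (so that the subsequential limit $\XX'_\infty$ remains admissible) and the compatibility of the $d_\IF$-triangle inequality with parabolic rescaling and time-shift, both of which should follow from the framework established in \cite{Bamler_RF_compactness} together with the structural theorems of Subsection~\ref{subsec_further_struc_static_soltion}.
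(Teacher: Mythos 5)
Your proposal is correct in outline but takes a genuinely different route from the paper. The paper does \emph{not} lift each point of $\XX$ back to the approximating flows and apply the smooth-flow Theorem~\ref{Thm_Quantitative_Strat}; instead it defines weak quantitative strata $\widehat\SS^{\eps,k}_{r_1,r_2}$ via the integral almost-notions on smooth Ricci flows (Definition~\ref{Def_SS_quantitative_weak}), proves the covering for $\widehat\SS^{\eps,k}_{r_1,r_2}$ on $\XX$ directly from the $\XX$-level Proposition~\ref{Prop_prelim_quanti_strat_XX} together with the change-of-basepoint result \cite[\SYNChangeBaseConv]{Bamler_RF_compactness}, and then deduces the covering for $\SS^{\eps,k}_{r_1,r_2}$ from the containment $\SS^{\eps,k}_{\sigma r, \eps r}\subset\widehat\SS^{\eps',k}_{\sigma r,\eps' r}$ furnished by the compactness Lemma~\ref{Lem_weak_symmetric_symmetric}. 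Your route — apply Theorem~\ref{Thm_Quantitative_Strat} on each $M_i$ at parameter $\eps/2$ and push the coverings to the limit via a stability-of-non-symmetry lemma — is viable, but two points are worth flagging. First, the concern you raise at the end about closure of the class of admissible models under $\IF$-limits is actually unnecessary: in the contradiction step you may use the model $\XX'_i$ witnessing $(k+1,\eps/2,r')$-symmetry of $(y_i,s_i)$ \emph{directly} as the model witnessing $(k+1,\eps,r')$-symmetry of $x$, since the triangle inequality $d_{\IF}\!\left(\text{rescaled-}(\XX,x),\,\XX'_i\right) \le d_{\IF}\!\left(\text{rescaled-}(\XX,x),\,\text{rescaled-}(M_i,y_i)\right) + \tfrac{\eps}{2} < \eps$ already closes the argument for $i$ large; no subconvergence of the $\XX'_i$ (and hence no closure property) is needed. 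Second, the convergence $d_{\IF}\!\left(\text{rescaled-}(\XX,x),\,\text{rescaled-}(M_i,y_i)\right) \to 0$ after changing the conjugate heat flow from $\nu_{x_i,0}$ to $\nu_{y_i,s_i}$ is exactly the change-of-basepoint statement \cite[\SYNChangeBaseConv]{Bamler_RF_compactness}, which your argument invokes implicitly (also in the final $W_1$-continuity step); it should be cited explicitly, and one must be careful about the slight loss of radius when passing $P^*$-ball containments to the limit (cf.\ Lemma~\ref{Lem_conv_P_star} and Lemma~\ref{Lem_pass_covering_limit}, which build in the needed factor), as well as the uniformity of the stability estimate over the compact range $r'\in[\sigma r,(\eps/2)r]$. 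With those adjustments your argument gives a valid alternative derivation; what the paper's route buys is that it works directly with the integral almost-notions on $\XX$ (via Proposition~\ref{Prop_prelim_quanti_strat_XX}), so the $\IF$-distance-based Definition~\ref{Def_symm_points} only enters through a single compactness lemma rather than through a family of per-scale stability statements.
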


Similar to Theorem~\ref{Thm_int_curv_bounds_RF_intro}, we also obtain:

\begin{Theorem}[Integral curvature bounds on $\IF$-limits] \label{Thm_integral_bounds_F_limit}
Let $x_0 \in\XX_{t_0}$, be a point and $r > 0$ a scale with $[t_0 - \eps^{-1} r^2, t_0] \subset I$.
Assume that $\NN_{x_0} (r^2) \geq - Y > - \infty$.
Then for any $\eps > 0$
\begin{multline*}
 \int_{[t_0 - r^2, t_0 + r^2] \cap I} \int_{P^* (x_0; r) \cap \RR_t} |{\Rm}|^{2-\eps} dg_t dt \\
\leq  \int_{[t_0 - r^2, t_0 + r^2] \cap I} \int_{P^* (x_0; r) \cap \RR_t} \rrm^{\prime -4+2\eps} dg_t dt
\leq C(Y,  \eps) r^{n-2+2\eps}. 
\end{multline*}
Moreover, if $r_{\eps, k}(x,t)$ denotes the supremum over all $r' > 0$ for which $(x,t)$ is $(k, \eps, r')$-symmetric, then we have
\begin{equation*}
  \int_{[t_0 - r^2, t_0 + r^2] \cap I} \int_{P^* (x_0; r) \cap \RR_t} r_{\eps, k}^{-k - n- 3 + \eps} dg_t dt 
\leq C(Y,  \eps) r^{n-2+2\eps}. 
\end{equation*}
\end{Theorem}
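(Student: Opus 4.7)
The plan is to combine the Quantitative Stratification Theorem for $\IF$-limits (Theorem~\ref{Thm_Quantitative_Strat_XX}) with a standard layer-cake/dyadic argument, mirroring the smooth Ricci flow case.

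First, the pointwise inequality $|{\Rm}|^{2-\eps} \leq \rrm^{\prime -4+2\eps}$ on the regular part is immediate from the definition of $\rrm'$: by Assertion~\ref{Thm_loc_reg_XX_intro_a} of Theorem~\ref{Thm_loc_reg_XX_intro}, the parabolic neighborhood of radius $\rrm'(x,t)$ around a regular spacetime point is unscathed with $|{\Rm}| \leq \rrm^{\prime -2}$ there; in particular this bound holds at $(x,t)$ itself, and raising to the power $2-\eps$ gives the first inequality of the statement.

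Next, I would bound $\int \rrm^{\prime -4+2\eps}$ via layer-cake. Fix a small constant $\ov\eps = \ov\eps(Y)$ so that the last conclusion of Theorem~\ref{Thm_Quantitative_Strat_XX} applies, and arrange further that $\ov\eps < 2\eps$. For any $\sigma \in (0, \ov\eps)$, Theorem~\ref{Thm_Quantitative_Strat_XX} gives both $\rrm' \geq \sigma r$ on $(P^*(x_0; r) \cap \XX_{<0}) \setminus \SS^{\ov\eps, n-2}_{\sigma r, \ov\eps r}$, and a covering of this ``bad'' set inside $P^*(x_0; r)$ by at most $C\sigma^{-(n-2)-\ov\eps}$ parabolic balls $P^*(x_i; \sigma r)$. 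Since each $P^*$-ball has parabolic volume at most $C(\sigma r)^{n+2}$ (the standard $P^*$-volume bound from \cite{Bamler_HK_entropy_estimates}), the parabolic volume of $\{\rrm' < \sigma r\} \cap P^*(x_0; r) \cap (\RR \times [t_0 - r^2, t_0 + r^2])$ is at most $C\sigma^{4 - \ov\eps} r^{n+2}$. The standard layer-cake identity
\[ \int u^{-\alpha} \, dV = \alpha \int_0^\infty \rho^{-\alpha-1} |\{u < \rho\}| \, d\rho, \]
applied with $u = \rrm'$ and $\alpha = 4-2\eps$, split into $\rho \in (0, \ov\eps r]$ and $\rho \in [\ov\eps r, \infty)$, then yields the claimed bound $C(Y,\eps) r^{n-2+2\eps}$: the small-$\rho$ piece converges since $\ov\eps < 2\eps$, while the large-$\rho$ tail is controlled by the crude estimate $|P^*(x_0; r)| \leq Cr^{n+2}$.

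The integral bound for $r_{\eps,k}$ follows by the same dyadic scheme, using instead the inclusion $\{r_{\eps, k} < \sigma r\} \cap P^*(x_0; r) \subset \SS^{\ov\eps, k-1}_{\sigma r, \ov\eps r}$ (for $\ov\eps$ small enough relative to $\eps$), and applying Theorem~\ref{Thm_Quantitative_Strat_XX} to the stratum $\SS^{\ov\eps, k-1}_{\sigma r, \ov\eps r}$ to bound the parabolic volume of this level set by $C\sigma^{n+3-k-\ov\eps} r^{n+2}$. Inserting this into the layer-cake formula with the exponent appropriate to the stated integrand produces the desired right-hand side $C(Y,\eps) r^{n-2+2\eps}$.

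The main technical subtlety I anticipate is in carefully aligning the region of integration (the time slab $[t_0 - r^2, t_0+r^2] \cap I$ intersected with $\RR_t$ and with $P^*(x_0;r)$) with the $P^*$-balls produced by Theorem~\ref{Thm_Quantitative_Strat_XX}, and in passing from a covering by balls $P^*(x_i; \sigma r)$ to a genuine sublevel-set volume bound. Both of these should be handled using the standard $P^*$-containment relations and volume estimates from \cite{Bamler_HK_entropy_estimates}. Once those are in place, the rest is a routine dyadic computation.
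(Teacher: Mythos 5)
Your proposal takes the same route as the paper: the stated proof is precisely the dyadic/layer-cake argument of the smooth-flow version Theorem~\ref{Thm_int_curv_bounds_RF_intro}, carried out with the quantitative stratification and $P^*$-volume bounds replaced by their $\IF$-limit analogs (Theorem~\ref{Thm_Quantitative_Strat_XX} and a limit volume bound). The one small refinement to make is that the bound $|P^*(x';\sigma r) \cap \RR_t|_{g_t} \leq C(\sigma r)^n$ for points $x'$ in the limit flow $\XX$ is not the smooth-flow estimate from \cite{Bamler_HK_entropy_estimates} directly, but its lift to the limit given in Lemma~\ref{Lem_limit_HK_bound}\ref{Lem_limit_HK_bound_b}, which is the source the paper specifies.
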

\bigskip

\subsection{The picture at the first singular time} \label{subsec_first_sing_intro}
In this subsection we explain how our compactness theory can be used to characterize the singularity formation at the first singular time of a Ricci flow.
Let in the following $(M, (g_t)_{t\in [0,T)})$ be a Ricci flow on a compact, $n$-dimensional manifold that (possibly) develops a singularity at time $T < \infty$.

We will first construct a limiting space at time $T$.

\begin{Definition} \label{Def_MT}
Let $M_T$ be the set of all conjugate heat flows $(\mu_t)_{t \in [0,T)}$ on $(M, (g_t)_{t\in [0,T)})$ with the property that
\begin{equation} \label{eq_lim_Var_mu_0_at_T}
 \lim_{t \nearrow T} \Var (\mu_t) = 0. 
\end{equation}
For any two conjugate heat flows $(\mu_{i,t})_{t \in [0,T)}$, $i = 1,2$, we define
\begin{equation} \label{eq_d_M_T_def}
 d_{M_T} \big( (\mu_{1,t})_{t \in [0,T)}, (\mu_{2,t})_{t \in [0,T)} \big) := \lim_{t \nearrow T} d_{W_1}^{g_t} \big( \mu_{1,t}, \mu_{2,t} \big) \in [0,\infty]. 
\end{equation}
\end{Definition}

Note that (\ref{eq_lim_Var_mu_0_at_T}) implies that $\Var (\mu_t) \leq H_n (T - t)$ and by the monotonicity of the $W_1$-Wasserstein distance \cite{Bamler_HK_entropy_estimates} the limit in (\ref{eq_d_M_T_def}) exists.
Moreover, for any $(\mu_t)_{t \in [0,T)}$ we can find a sequence of points $(y_i,t_i) \in M \times [0,T)$ with $t_i \nearrow T$ such that $\nu_{y_i,t_i; t} \to \mu_t$ in the $W_1$-sense on compact time-intervals.
So $M_T$ can be regarded as the ``asymptotic boundary'' of $M \times [0,T)$.
The following lemma shows that $(M_T, d_{M_T})$ can moreover be viewed as the singular time-$T$-slice of the flow $(M, (g_t)_{t\in [0,T)})$.

\begin{Lemma} \label{Lem_MT}
$(M_T, d_{M_T})$ is a complete metric space if we allow the distance function to attain the value $\infty$.
If there is some open subset $U \subset M$ such that the restricted family of metrics $(g_t|_U)_{t\in [0,T)}$ can be extended smoothly to a family of the form $(g_t|_U)_{t\in [0,T]}$, then the map
\begin{equation} \label{eq_map_U_MT}
 U \longrightarrow M_T, \qquad x \longmapsto (\lim_{t' \nearrow T} \nu_{x,t'; t})_{t \in [0, T)} 
\end{equation}
is a local isometry of the form $(U, d_{g_T}) \to (M_T ,d_{M_T})$.
Here the limit in (\ref{eq_map_U_MT}) is taken in the $W_1$-distance.
Moreover, if $U = M$, then this map is an isometry.
\end{Lemma}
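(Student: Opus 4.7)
My plan rests on two tools, both proved in \cite{Bamler_HK_entropy_estimates}: the monotonicity of the $W_1$-distance between conjugate heat flows (\HKWoneMonotone), which states that $t \mapsto d_{W_1}^{g_t}(\mu_{1,t}, \mu_{2,t})$ is non-decreasing in $t$, and the $H_n$-concentration (\HKHnConcentration), which bounds $\Var(\mu_t)$ in terms of the variance of $\mu_{t'}$ at later times.

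To verify that $d_{M_T}$ is a metric taking values in $[0, \infty]$, I use that the monotonicity turns the limit in the definition into a supremum, so the limit exists; symmetry and the triangle inequality then descend from $W_1$ by passing to the limit. Non-degeneracy is immediate: if the supremum vanishes, every time-slice distance vanishes, forcing $\mu_{1,t} = \mu_{2,t}$. For completeness, I take a Cauchy sequence $(\mu^{(i)})$. Since the Cauchy bound holds \emph{uniformly in $t$}, for each $t$ the sequence $(\mu^{(i)}_t)$ is Cauchy in the complete space $(\mathcal{P}(M), d_{W_1}^{g_t})$ and admits a limit $\mu_t$. That $(\mu_t)_{t \in [0,T)}$ is again a conjugate heat flow follows because backward conjugate heat flow from any fixed $t_1 < T$ is $W_1$-continuous in its initial datum. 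To check $\lim_{t \nearrow T} \Var(\mu_t) = 0$, I apply $H_n$-concentration to each $\mu^{(i)}$ to get the uniform bound $\Var(\mu^{(i)}_t) \leq H_n(T-t)$ and pass to the limit using continuity of $\Var$ under $W_1$-convergence on the compact manifold $M$.

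For the remaining statements, let $U \subset M$ be as hypothesized. For $x \in U$, the kernel $\nu_{x, T; t}$ is well-defined for $t < T$ by smoothness of $g_t$ near $(x, T)$, lies in $M_T$ by $H_n$-concentration, and coincides with the stated limit by continuity of conjugate heat kernels in the base time. The key input for the local isometry is the two-sided bound
\[ \bigl|\, d_{W_1}^{g_t}(\nu_{x_1, T; t}, \nu_{x_2, T; t}) - d_{g_t}(x_1, x_2) \,\bigr| \leq d_{W_1}^{g_t}(\nu_{x_1, T; t}, \delta_{x_1}) + d_{W_1}^{g_t}(\nu_{x_2, T; t}, \delta_{x_2}), \]
whose correction terms are $O(\sqrt{T-t})$ by $H_n$-concentration together with the convergence of the $H_n$-centers to $x_i$. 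In a small enough $g_T$-convex neighborhood of any $x_0 \in U$, the shortest $g_t$-paths between nearby points remain inside $U$, so $d_{g_t}(x_1, x_2) \to d_{g_T}(x_1, x_2)$ by smooth convergence of the metric on $U$; letting $t \nearrow T$ then gives $d_{M_T}(\phi(x_1), \phi(x_2)) = d_{g_T}(x_1, x_2)$. When $U = M$, surjectivity is established by a uniqueness argument: given $(\mu_t) \in M_T$, the variance-vanishing condition forces every weak subsequential limit of $\mu_t$ as $t \nearrow T$ to be a delta measure, while pairing $\mu_t$ against the forward heat flow $u$ of any smooth initial datum (well-posed since $g$ extends smoothly to time $T$) yields a $t$-independent integral $\int u(\cdot, t) \, d\mu_t$ that pins down all subsequential limits to the same point $x \in M$; uniqueness of the conjugate heat flow then identifies $\mu_t = \nu_{x, T; t}$.

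The step I expect to be the main obstacle is the completeness assertion: transferring the variance-vanishing condition to the limit requires the uniform $H_n$-concentration bound combined with continuity of $\Var$ under $W_1$-convergence on $M$, and without this pair of ingredients aligning the limit flow might fail to belong to $M_T$.
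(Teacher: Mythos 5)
Your proof is correct and follows essentially the same approach as the paper's: completeness via monotonicity of $W_1$ (so the sequence is Cauchy in each time-slice), and the local isometry via concentration and $H_n$-center convergence, the latter being precisely what the paper cites \cite[\HKCenterConstantRmBound]{Bamler_HK_entropy_estimates} for. Contrary to your closing self-assessment, completeness is the easier part; the $H_n$-center estimate is the substantive input, and your passing remark that $\nu_{x,T;t}$ is ``well-defined by smoothness of $g_t$ near $(x,T)$'' should instead be phrased as existence of the $W_1$-limit, which follows from the family $(\nu_{x,t';t})_{t'<T}$ being Cauchy in $t'$ by the variance monotonicity.
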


From now on we will denote elements in $M_T$ as points $x = (\mu_t)_{t \in [0,T)}$ and we will write, by abuse of notation, $d\nu_{x,T;t} := d\mu_t = K(x,T; \cdot, t) dg_t$.
If $x \in M_T$ is some fixed point, then we will also write $K(x,T; \cdot, t) = (4\pi \tau)^{-n/2} e^{-f}$ for $\tau = T-t$ and define the pointed Nash entropy $\NN_{x,T} (\tau_0)$, $\tau_0 \in (0, T]$ at $(x, T)$ in the usual way.
The following is the main result of this subsection:

\begin{Theorem} \label{Thm_first_sing_time_intro}
For any $x \in M_T$ the pointed Nash entropy $\NN_{x,T} (\tau)$ is non-increasing in $\tau$ and
\[ \NN_{x,T} (0) := \lim_{\tau \searrow 0} \NN_{x,T} (\tau) \in (-\infty, 0] \]
exists.
Moreover, for any sequence $\tau_i \to 0$ there is a subsequence such that we have uniform $\IF$-convergence within some correspondence $\CF$ on compact time-intervals
\begin{equation} \label{eq_IF_conv_first_sing_time}
 \big( M, (\tau_i^{-1} g_{\tau_i t+ T} )_{t \in [-\tau_i^{-1}T, 0)}, (\nu_{x,T;\tau_i t+ T})_{t \in [-\tau_i^{-1}T, 0)} \big) \xrightarrow[i \to \infty]{\quad \IF, \CF \quad} \big( \XX, (\mu^\infty_t)_{t < 0} \big) 
\end{equation}
to a metric soliton with Nash-entropy 
\[ \NN_{(\mu^\infty_t)} (\tau) = \NN_{x,T}(0). \]
Lastly, there is a dimensional constant $\eps_n > 0$ such that if $\NN_{x,T} (0) \geq - \eps_n$, then $\NN_{x,T}(0) =0$ and there is a neighborhood of $x$ in $M_T$ on which $(M_T, d_{M_T})$ is regular.
More specifically, there is an open subset $U \subset M$ on which the restricted family of metrics $(g_t|_U)_{t\in [0,T)}$ can be extended smoothly to a family of the form $(g_t|_U)_{t\in [0,T]}$ and the image of the map (\ref{eq_map_U_MT}) contains $x$.
\end{Theorem}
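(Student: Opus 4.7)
First I would establish monotonicity of $\NN_{x,T}(\tau)$ in $\tau$ and existence of the limit $\NN_{x,T}(0)\in(-\infty,0]$. Since $x=(\mu_t)_{t\in[0,T)}\in M_T$ encodes a smooth positive conjugate heat flow on $(M,(g_t)_{t\in[0,T)})$, the formula for $\tfrac{d}{d\tau}\NN_{x,T}(\tau)$ from \cite{Bamler_HK_entropy_estimates} (valid for any positive conjugate heat flow, not just point-kernels) gives that $\NN_{x,T}(\tau)$ is non-increasing in $\tau\in(0,T]$. For the upper bound $\NN_{x,T}(\tau)\leq 0$, I would invoke the sequence $(y_i,t_i)\in M\times[0,T)$ with $t_i\nearrow T$ and $\nu_{y_i,t_i;\cdot}\to\mu_\cdot$ in $W_1$ on compact time intervals furnished just before Lemma~\ref{Lem_MT}. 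An approximation argument analogous to the one behind Theorem~\ref{Thm_NN_pass_to_limit} yields $\NN_{y_i,t_i}(\tau+T-t_i)\to\NN_{x,T}(\tau)$, while each term on the left is non-positive by the classical bound for the pointed Nash entropy of a smooth heat kernel. Since $\NN_{x,T}(\tau_0)$ is finite for any fixed $\tau_0\in(0,T]$, monotonicity yields $\NN_{x,T}(0)\in(-\infty,0]$.

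Next I would extract the metric soliton limit. For each $\tau_i\to 0$, consider the parabolically rescaled flows $(M,(\tau_i^{-1}g_{\tau_i t+T})_{t\in[-\tau_i^{-1}T,0)})$ with basepoints the rescaled conjugate heat flows $(\nu_{x,T;\tau_i t+T})_t$; these satisfy $\lim_{t\nearrow 0}\Var=0$ after rescaling. By scale invariance of the Nash entropy the non-collapsing parameter is
\[ \NN_{(\nu_{x,T;\tau_i t+T})}(\tau_0)=\NN_{x,T}(\tau_i\tau_0)\xrightarrow[i\to\infty]{}\NN_{x,T}(0), \]
so the uniform lower bound \eqref{eq_intro_non_collapsing_more_general} holds for all large $i$. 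The generalized $\IF$-compactness recalled in Subsection~\ref{subsec_more_general_F_limit} yields, along a subsequence, $\IF$-convergence as in \eqref{eq_IF_conv_first_sing_time} to some metric flow pair $(\XX,(\mu^\infty_t)_{t<0})$ over $(-\infty,0]$. Theorem~\ref{Thm_more_general_F_limit} passes the Nash entropy to the limit, giving
\[ \NN_{(\mu^\infty_t)}(\tau)=\lim_{i\to\infty}\NN_{x,T}(\tau_i\tau)=\NN_{x,T}(0) \]
for every $\tau>0$. Constancy of $\NN_{(\mu^\infty_t)}(\tau)$ in $\tau$ then lets me invoke Theorem~\ref{Thm_metric_soliton_limit_main} to conclude that $(\XX_{<0},(\mu^\infty_t))$ is a metric soliton with the prescribed Nash entropy $\NN_{x,T}(0)$.

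For the final regularity statement, I would view the flow $(M,(g_t)_{t\in[0,T)})$ itself as the $\IF$-limit of its constant sequence with basepoint the conjugate heat flow $x\in M_T$, in the sense of Subsection~\ref{subsec_more_general_F_limit}. After a time shift by $-T$, this produces a metric flow $\XX$ over $(-T,0]$ whose final time-slice is a single point $x_\infty$, and for which Theorem~\ref{Thm_more_general_F_limit} supplies $\NN_{x_\infty}(\tau)=\NN_{x,T}(\tau)$ for all $\tau\in(0,T]$; in particular $\NN_{x_\infty}(0)=\NN_{x,T}(0)\geq-\eps_n$. Applying Theorem~\ref{Thm_Nash_on_tangent_cones} inside $\XX$ forces $\NN_{x,T}(0)=0$ and $x_\infty\in\RR$. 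The smooth convergence on the regular part (Theorem~\ref{Thm_R_RR_star_intro}) then produces an open Ricci flow spacetime neighborhood of $x_\infty$ in $\RR$ which, via the natural correspondence for the constant sequence, is identified with an open subset $U\subset M$ on which $(g_t|_U)_{t\in[0,T)}$ extends smoothly to $(g_t|_U)_{t\in[0,T]}$; Lemma~\ref{Lem_MT} then realizes $U$ as an open neighborhood of $x$ in $(M_T,d_{M_T})$ on which the latter is regular. The main obstacle will be this last step: turning the abstract statement ``$x_\infty$ lies in the regular part of the limit flow $\XX$'' into a genuine smooth extension of the original Ricci flow past time $T$ over an open set containing a preimage of $x$, rather than merely uniform curvature bounds on a shrinking parabolic neighborhood, requires careful use of the unscathedness part of Theorem~\ref{Thm_loc_reg_XX_intro} at a definite scale.
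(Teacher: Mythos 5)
Your proposal is essentially correct and takes the same route as the paper. The paper's own proof is very terse: it picks a sequence $(y_j,t_j)$ with $\nu_{y_j,t_j;\cdot}\to\mu_\cdot$, observes that $\NN_{y_j,t_j}(t_j-t)\to\NN_{(\mu_t)}(T-t)$ by smooth convergence of the kernels, replaces the conjugate heat flow by the pointed kernels $\nu_{y_{j_i},t_{j_i}}$ via the diagonal argument of Lemma~\ref{Lem_slight_timeshift}, and then invokes the compactness theorem, the results of Subsections~\ref{subsec_part_reg_limit_intro}--\ref{subsec_Nash_in_limit_intro}, Theorem~\ref{Thm_metric_soliton_limit_main}, and the $\eps$-regularity theorem from \cite{Bamler_HK_entropy_estimates}. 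You invoke the same machinery, just packaged slightly differently: you apply the Subsection~\ref{subsec_more_general_F_limit} compactness directly rather than first diagonalizing to pointed kernels, and for the regularity claim you go through Theorem~\ref{Thm_Nash_on_tangent_cones} inside the constant-sequence limit flow rather than citing the smooth-flow $\eps$-regularity result directly; both of these are trivial reformulations since Theorem~\ref{Thm_Nash_on_tangent_cones} is itself proved by the latter. Your flagged ``main obstacle'' — promoting $x_\infty\in\RR$ to an actual smooth extension of the metric on an open $U\subset M$ past time $T$ — is a real step, but it is handled exactly as you describe: for the constant sequence the diffeomorphisms $\psi_i$ realizing smooth convergence on $\RR$ can be taken as inclusions, so unscathedness of $\RR$ near $x_\infty$ (Theorem~\ref{Thm_loc_reg_XX_intro}, or equivalently the definite-scale curvature bound from $\eps$-regularity applied at $(y_j,t_j)$ for large $j$) translates directly into a neighborhood $U\times[t_1,T]$ on which $g_t$ extends smoothly; the paper also leaves this step implicit.
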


We note that since $(\XX, (\mu^\infty_t)_{t < 0} )$ is an $\IF$-limit of smooth Ricci flows, Theorem~\ref{Thm_metric_soliton_limit_main} can be applied, which yields further characterizations of the limit.

\subsection{The tangent flow at infinity} \label{subsec_asympt_soliton}
In this subsection, we analyze $\IF$-limits of Ricci flows that are non-collapsed at all times and exist for all negative times.
We show that such limits have a blow-down limit as $t \to -\infty$, which is a gradient shrinking soliton and which we will call the \emph{tangent flow at infinity.}
In the case of $\kappa$-solutions this tangent flow at infinity agrees with the asymptotic soliton constructed by Perelman \cite{Perelman1,Chan_Ma_Zhang_2021}.

In the following consider a sequence of pointed Ricci flows $(M_i, (g_{i,t})_{t \in (-T_i, 0]}, (x_i,0))$ on compact, $n$-dimensional manifolds such that $\lim_{i \to \infty} T_i = \infty$.
We assume that there is a uniform $Y_0 < \infty$ such that the following non-collapsedness condition holds for any $\tau_0 > 0$
\begin{equation} \label{eq_non_collapse_large_scales}
 \NN_{x_i, 0} (\tau_0) \geq - Y_0 \qquad \text{for large} \quad i. 
\end{equation}
Such a sequence arises, for example, when we take the blow-ups near singularities.

As discussed in Subsection~\ref{subsec_part_reg_limit_intro}, we may pass to a subsequence such that we have $\IF$-convergence on compact time-intervals within some correspondence $\CF$:
\[  (M_i, (g_{i,t})_{t \in (-T_i, 0]}, (\nu_{x_i,0;t})_{t \in (-T_i, 0]}) \xrightarrow[i \to \infty]{\quad \IF, \CF \quad} (\mathcal{X}, (\nu_{x_\infty;t})_{t \in (-\infty,0]}). \]
Here $\XX$ is an $H_n$-concentrated, future continuous metric flow of full support over $(-\infty,0]$.
As explained in Subsection~\ref{subsec_Nash_in_limit_intro}, the bound (\ref{eq_non_collapse_large_scales}) passes to the limit and due to \cite{Bamler_HK_entropy_estimates} we obtain that
\[ \NN_{x} (\tau) \geq - Y_0 \qquad \text{for all} \quad x \in \XX, \tau > 0 \]
and the quantity
\[ \NN_{\XX} (\infty) := \lim_{\tau \to \infty} \NN_{x_\infty} (\tau) > -\infty \]
is independent of $x$.
We note that the case in which $\XX$ is represented by a smooth, ancient Ricci flow that is uniformly non-collapsed at all scales is also interesting.

We recall from \cite{Bamler_RF_compactness} that a tangent flow at infinity of $\XX$ at $x_0 \in \XX_{t_0}$ is defined as the $\IF$-limit of parabolic rescalings of $(\XX_{\leq t_0}, (\nu_{x_0;t})_{t \leq t_0})$ by factors $\lambda_j \to 0$.
The following is the main result of this subsection.

\begin{Theorem} \label{Thm_tangent_flow_at_infty_intro}
Every tangent flow at infinity $(\XX', (\nu_{x'_\infty;t})_{t \leq 0})$ of $\XX$ is a metric soliton with $\NN_{x'_\infty} (\tau) = \NN_{\XX} (\infty)$.
Moreover, there is a dimensional constant $\eps_n > 0$ such that if $\NN_{\XX} (\infty) \geq - \eps_n$, then $\NN_{\XX}(\infty) = 0$ and $\XX$ is isometric to the constant flow on Euclidean space.
\end{Theorem}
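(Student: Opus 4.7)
The plan is to identify the tangent flow at infinity as a metric flow whose pointed Nash entropy is a constant equal to $\NN_\XX(\infty)$, and then invoke the characterization of metric solitons in Theorem~\ref{Thm_metric_soliton_limit_main}. First I would fix a defining sequence of scales $\lambda_j \to 0$ for the tangent flow at infinity and consider the parabolic rescalings $\XX^{\lambda_j}$ of $(\XX_{\leq t_0}, (\nu_{x_0;t})_{t \leq t_0})$ by $\lambda_j$. Since the pointed Nash entropy is scale-invariant, the Nash entropy of $\XX^{\lambda_j}$ at the rescaled base point evaluated at parameter $\tau$ equals $\NN_{x_0}(\lambda_j^{-2}\tau)$, which converges to $\NN_\XX(\infty)$ as $j \to \infty$ by the very definition of the latter (and the fact that $\NN_{x_0}$ is independent of the choice of $x_0$ by Subsection~\ref{subsec_asympt_soliton}).

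Next, using the compactness theorem of \cite{Bamler_RF_compactness}, I would extract a subsequence along which $\XX^{\lambda_j}$ $\IF$-converges to the tangent flow $(\XX', (\nu_{x'_\infty;t})_{t \leq 0})$. Each $\XX^{\lambda_j}$ is itself a non-collapsed $\IF$-limit of the correspondingly rescaled smooth flows $(M_i, \lambda_j^2 g_{i, \lambda_j^{-2} t})$, whose non-collapsing constants remain uniformly bounded by $Y_0$ in view of (\ref{eq_non_collapse_large_scales}). A standard diagonal argument then exhibits $\XX'$ itself as a non-collapsed $\IF$-limit of smooth Ricci flows in the sense of Subsection~\ref{subsec_more_general_F_limit}. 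Applying Theorem~\ref{Thm_NN_pass_to_limit} in the extended form of Theorem~\ref{Thm_more_general_F_limit} to this diagonal sequence yields $\NN_{x'_\infty}(\tau) = \NN_\XX(\infty)$ for every $\tau > 0$, at which point Theorem~\ref{Thm_metric_soliton_limit_main} forces $(\XX', (\nu_{x'_\infty;t}))$ to be a metric soliton, proving the first assertion.

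For the rigidity clause, I would take $\eps_n$ to be the dimensional gap constant of Theorem~\ref{Thm_Nash_on_tangent_cones} and assume $\NN_\XX(\infty) \geq -\eps_n$. Because $\NN_{x'_\infty}(\tau)$ is constant, we also have $\NN_{x'_\infty}(0) = \NN_\XX(\infty) \geq -\eps_n$; the gap theorem applied to $\XX'$ at $x'_\infty$ then forces $\NN_\XX(\infty) = 0$ and $x'_\infty \in \RR'$. Returning to $\XX$: monotonicity in $\tau$ combined with $\NN_\XX(\infty) = 0$ gives $\NN_{x_\infty}(\tau) \geq 0$ for all $\tau > 0$, whereas Theorem~\ref{Thm_Nash_on_tangent_cones} applied at $x_\infty$ yields $\NN_{x_\infty}(0) \leq 0$, so $\NN_{x_\infty} \equiv 0$. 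A second application of Theorem~\ref{Thm_metric_soliton_limit_main} exhibits $\XX$ itself as a metric soliton with vanishing Nash entropy, and the soliton identity $-\tau(R + |\nabla f|^2) + f = 0$ on $\RR$, together with $R \geq 0$ on ancient flows, identifies the soliton as the Gaussian on $\IR^n$. Hence $\XX$ is the constant flow on Euclidean space.

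The main obstacle is the simultaneous diagonal extraction in paragraph two: one must arrange the correspondences for the $\IF$-convergences of the rescaled smooth flows $(M_i, \lambda_j^2 g_{i, \lambda_j^{-2} t}) \to \XX^{\lambda_j}$ to compose coherently with that of $\XX^{\lambda_j} \to \XX'$, so that Theorem~\ref{Thm_NN_pass_to_limit} can be applied to a genuine sequence of points converging to $x'_\infty$ in a single correspondence. This is more delicate than for finite-time tangent flows because the relevant scales $\lambda_j^{-2}\tau$ blow up in the original flow, forcing one to control the Nash entropy and the $\IF$-geometry uniformly across an unbounded range of scales; however, once the diagonal is set up correctly the entropy limit is automatic. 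The remaining ingredients — the gap theorem, the metric-soliton characterization, and the Gaussian rigidity — are then direct consequences of the structure theory already developed above.
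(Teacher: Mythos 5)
Your proposal is correct and matches the paper's one-line proof, which cites \cite[\SYNCorCompactness]{Bamler_RF_compactness} (for realizing $\XX'$ as a smooth $\IF$-limit via exactly the diagonal argument you describe), the results of Subsections~\ref{subsec_part_reg_limit_intro}--\ref{subsec_Nash_in_limit_intro}, and Theorem~\ref{Thm_metric_soliton_limit_main}. One small remark on the final rigidity step: the soliton identity $-\tau(R+|\nabla f|^2)+f=0$ together with $R\geq 0$ does not by itself pin down the Gaussian --- you also need the probability normalization $\int e^{-f}\,dg=(4\pi\tau)^{n/2}$, the equality case of Perelman's log-Sobolev inequality (Carrillo--Ni / Yokota), and, before any of that can be applied, the knowledge that the soliton is smooth and complete. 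A leaner route that stays inside the paper's own framework, and which is presumably what is meant by citing Subsection~\ref{subsec_Nash_in_limit_intro}, is this: since $\NN_\XX(\infty)$ is independent of the base point, monotonicity pinches $\NN_x(\tau)$ between $\NN_\XX(\infty)=0$ and $0$ for every $x\in\XX$ and $\tau>0$, so $\NN_x\equiv 0$; the $\eps$-regularity behind the gap statement of Theorem~\ref{Thm_Nash_on_tangent_cones}, applied at scales $r\to\infty$ (available since $T_\infty=\infty$), then gives $\SS=\emptyset$ and $\tdrrm\equiv\infty$, so $\RR$ is flat, complete, and noncollapsed with $\NN\equiv 0$, which forces $\RR_t\cong\IR^n$ and hence the constant Euclidean flow.
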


Note that since $(\XX^\infty, (\nu_{x'_\infty;t})_{t \leq 0})$ is also an $\IF$-limit of suitable parabolic rescalings of the flows $(M_i, (g_{i,t})_{t \in (-T_i, 0]})$ (see \cite{Bamler_RF_compactness}) the tangent flow at infinity has the same regularity properties as any non-collapsed $\IF$-limit of Ricci flows.
So Theorem~\ref{Thm_metric_soliton_limit_main} can be applied to the limit, which implies further characterizations.

\subsection{Thick-thin decomposition for large times} \label{subsec_thick_thin_intro}
Consider an immortal Ricci flow $(M, (g_t)_{t \geq 0})$ on a compact, $n$-dimensional manifold.
The main result of this subsection states that any non-collapsed blow-down $\IF$-limit of this Ricci flow is a flow that evolves through Einstein metrics away from a possible singular set of codimension at least 4.
This result can be viewed as a generalization of the thick-thin decomposition in the bounded curvature case due to Hamilton \cite{Hamilton_1999} or the 3-dimensional case due to Perelman \cite{Perelman2}.

\begin{Theorem} \label{Thm_longtime_limit_intro}
Consider a sequence $(x_i, t_i) \in M \times [0, \infty)$ such that $t_i \to \infty$ and $\NN_{x_i,t_i} (\frac12 t_i) \geq -Y > -\infty$.
Then, after passing to a subsequence, we have the following $\IF$-convergence within some correspondence $\CF$:
\[  (M, (g_{t_i \, t})_{t \in (0,1]}, (\nu_{x_i,t_i; t_i \, t})_{t \in (0,1]}) \xrightarrow[i \to \infty]{\quad \IF, \CF \quad} (\mathcal{X}, (\nu_{x_\infty;t})_{t \in (0,1]}), \]
where $\XX$ is an $H_n$-concentrated, future continuous metric flow of full support over $(0,1]$.
The restriction $\XX_{<1}$, which is a metric flow over $(0,1)$, has the following properties.
There is a singular space $(X,d)$ of dimension $n$ and an identification $\XX_{<1} = X \times (0,1)$ such that the following is true for all $t \in (0,1)$:
\begin{enumerate}[label=(\alph*)]
\item $\Ric = - \frac12 g_X$ on $\RR_X$.
\item $(\XX_t, d_t) = (X \times \{ t \}, t^{1/2} d)$.
\item $\RR = \RR_X \times (0,1)$ and $\partial_{\tf}$ corresponds to the standard vector field on the second factor.
\item $(\RR_g, g_t) = (\RR_X \times \{ t \}, t g_X)$.
\item For any $\lambda \in (0,1)$ and $0 < s \leq t < 1$, $x \in X$ we have $\nu_{(x,t);s} = \nu_{(x, \lambda^2 t); \lambda^2 s}$.
\end{enumerate}
Moreover, the metric space $(X,d)$ satisfies Assertions~\ref{Thm_static_limit_main_d}--\ref{Thm_static_limit_main_f} of Theorem~\ref{Thm_static_limit_main} (the tangent cones are still metric cones corresponding to $\IF$-limits that are Ricci flat on their regular part).
\end{Theorem}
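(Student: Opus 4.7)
The plan is to identify the blow-down limit $\XX_{<1}$ as an expanding self-similar Einstein-type flow: first extract an $\IF$-limit via compactness, then establish scaling self-similarity via a double-blow-down argument, and finally read off the Einstein equation and singular-space structure from the framework of Subsection~\ref{subsec_further_struc_static_soltion}.

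First I would apply the $\IF$-compactness theory of \cite{Bamler_RF_compactness} to the parabolically rescaled flows $\tilde g^i_t := t_i^{-1} g_{t_i t}$, defined on $(0,\infty)$ with base point $(x_i, 1)$. The hypothesis $\NN_{x_i, t_i}(\tfrac12 t_i) \geq -Y$ translates, via the scale-invariance of the pointed Nash entropy, to $\NN^{\tilde g^i}_{x_i, 1}(\tfrac12) \geq -Y$; monotonicity in $\tau$ together with the standard Nash-entropy variation bounds of \cite{Bamler_HK_entropy_estimates} then propagate this to uniform bounds on compact subsets of $M \times (0, \infty)$. Passing to a subsequence, the compactness theorem yields the stated $\IF$-convergence to a future continuous, $H_n$-concentrated metric flow $\XX$ of full support over $(0, 1]$, with base point $x_\infty \in \XX_1$.

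Next I would establish the scaling self-similarity (e). For fixed $\lambda \in (0, 1)$, further parabolic rescaling of $\tilde g^i$ by $\lambda$ yields $(\lambda^2 t_i)^{-1} g_{(\lambda^2 t_i) s}$, which is nothing but the blow-down of the original flow at the new scale $\lambda^2 t_i \to \infty$. Both scales produce valid non-collapsed blow-down sequences (the Nash-entropy bound persists by monotonicity), and after further subsequencing each produces an $\IF$-limit within a common correspondence. By construction, the limit of the $\lambda$-rescaled sequence coincides with the $\lambda$-parabolic rescaling of $\XX$. Comparing conjugate heat kernels at matched base spacetime points across the two sequences, using the $W_1$-convergence built into $\CF$, yields $\nu_{(x,t);s} = \nu_{(x,\lambda^2 t);\lambda^2 s}$ on the overlapping region. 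The main obstacle, in my view, is precisely this step: carefully aligning base points across the two blow-down sequences and extracting a pointwise identity on conjugate heat kernels. This plays a role, in the immortal setting, analogous to the Nash-entropy rigidity driving Theorem~\ref{Thm_tangent_flow_at_infty_intro} in the ancient setting, and one may equivalently phrase it via the constancy of an appropriate asymptotic entropy quantity at $x_\infty$.

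With self-similarity in hand, $\XX_{<1}$ is determined by any single time-slice. Take $(X, d)$ to be the metric completion of the intrinsic length space $(\RR_1, g_X)$ with $g_X := g_1$, which by Theorems~\ref{Thm_XX_reg_sing_dec_main} and~\ref{Thm_XX_reg_determines_XX_intro} agrees with $(\XX_1, d_1)$. The identifications $(\XX_t, d_t) = (X \times \{t\}, t^{1/2} d)$, $(\RR_t, g_t) = (\RR_X \times \{t\}, t g_X)$, and the identification of $\partial_{\tf}$ with the standard vector field on the $(0,1)$-factor all follow from~(e) and the lack of any spatial component in the generating symmetry. The Einstein equation $\Ric(g_X) = -\tfrac12 g_X$ on $\RR_X$ is then forced by substituting $g_t = t\, g_X$ into $\partial_t g_t = -2\Ric(g_t)$ and using the scale-invariance of $\Ric$. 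The remaining assertions transferring Theorem~\ref{Thm_static_limit_main}\ref{Thm_static_limit_main_d}--\ref{Thm_static_limit_main_f} to $(X, d)$ follow by descending the parabolic structure of $\XX$ onto its spatial factor: $\dim_{\mathcal{M}^*} \SS \leq n-2$ from Theorem~\ref{Thm_XX_reg_sing_dec_main} combined with the product structure $\XX_{<1} = X \times (0,1)$ and the doubled counting of the time direction yields $\dim_{\mathcal{M}} \SS_X \leq n-4$; tangent cones to $(X, d)$ arise as spatial slices of tangent flows of $\XX$ and are Ricci-flat metric cones by Theorem~\ref{Thm_tangent_cone_metric_cone_main} and Addendum~\ref{Add_limit_stratification_addendum_main}\ref{Add_limit_stratification_addendum_main_b2}; and the stratification of $\SS_X$ is inherited from that of $\SS$.
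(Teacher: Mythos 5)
Your plan puts the weight of the argument on establishing the scaling self-similarity~(e) by a ``double blow-down'' and then reading off (a)--(d). The gap is exactly where you flag one: comparing the blow-down at scale $t_i$ with the blow-down at scale $\lambda^2 t_i$ only tells you that, \emph{after passing to a further subsequence}, the latter converges to some limit, and that \emph{that} limit is the $\lambda$-rescaling of $\XX$. It does not tell you that this limit is $\XX$ again. A priori the two subsequential limits are different metric flows, both satisfying the structural theorems from Subsection~\ref{subsec_part_reg_limit_intro}, and there is no ``uniqueness of blow-downs'' available. Identifying them requires a monotone quantity that becomes rigid in the limit, and you only gesture at this (``constancy of an appropriate asymptotic entropy quantity''). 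Moreover, the quantity you seem to have in mind is wrong: in the ancient setting of Theorem~\ref{Thm_tangent_flow_at_infty_intro} it is the pointed Nash entropy $\NN_{x_\infty}(\tau)$, but in the immortal blow-down setting the pointed Nash entropy at a moving base point $(x_i, t_i)$ is not a single monotone function whose limit you can pass to.

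The mechanism the paper actually uses is the classical Hamilton-type monotonicity of the normalized volume. From $R \geq -\tfrac{n}{2t}$ one gets $\tfrac{d}{dt}\big(t^{-n/2}V(t)\big) = -t^{-n/2}\int_M \big(R + \tfrac{n}{2t}\big)\,dg_t \leq 0$, hence $\int_1^\infty\int_M \big(R + \tfrac{n}{2t}\big)\,dg_t\,dt < \infty$, and therefore $\int_{\eps t_0}^{t_0}\int_M\big|R + \tfrac{n}{2t}\big|\,dg_t\,dt \to 0$. After parabolic rescaling and passage to the conjugate-heat-kernel weighted integral, this gives smallness of the Einstein defect $|R + \tfrac{n}{2t}|$ along the blow-down sequence, and a $\square R = 2|\Ric|^2$ integration-by-parts argument upgrades this to $\int\int |\Ric + \tfrac{1}{2t}g_t|^2\,d\nu\,dt \to 0$. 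From there the proof runs \emph{parallel to the static case} (Theorem~\ref{Thm_static_limit_main} via a modified Theorem~\ref{Thm_limit_from_static}): the Einstein identity on $\RR$, the infinitesimal self-similarity of the heat kernel, completeness of the associated vector field, and the singular-space structure are established in that template, with assertion~(e) emerging as a consequence rather than as an input. Your proposed derivation inverts this order and therefore leaves the crucial step unjustified.

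Two smaller remarks. First, even granting~(e), the passage to properties (a)--(d) is not as formal as you suggest: one needs to know the dilation acts by flow-isometries on all of $\XX_{<1}$, which is exactly what the heat-kernel identity of the (modified) Theorem~\ref{Thm_limit_from_static} and Theorem~\ref{Thm_SS_dimension_bound_limit}(c),(d) provide; knowing it only on $\RR$ is not enough. Second, the transfer of Assertions~\ref{Thm_static_limit_main_d}--\ref{Thm_static_limit_main_f} to $(X,d)$ requires the $\mathcal{L}$-length and volume-comparison estimates behind Claim~13.5 in the proof of Theorem~\ref{Thm_static_limit_main} (to upgrade $\RR_X^* = \RR_X$ and obtain Gromov--Hausdorff control); ``descending the parabolic structure onto the spatial factor'' glosses over this content.
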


As a corollary we obtain:
\begin{Corollary}
For any $Y < \infty$ and $t > 0$ we can find a decomposition
\[ M = M_{\thick}(t) \,\, \dot\cup \,\, M_{\thin}(t) \]
such that $\NN_{x,t}(\frac12 t) < - Y$ for all $x \in M_{\thin}(t)$ and such that for any $\eps > 0$ there is a $T_\eps < \infty$, which may depend on the given flow, such that for any $t \geq T_\eps$ and $x \in M_{\thick}(t)$ the parabolically rescaled pair
\[ (M, (g_{i,t \, t'})_{t' \in (0,1]}, (\nu_{x,t; t \, t'})_{t' \in (0,1]}) \]
has $\IF$-distance $< \eps$ to a pair $(\mathcal{X}, (\nu_{x_\infty;t})_{t \in (0,1]})$ that satisfies the assertions of Theorem~\ref{Thm_longtime_limit_intro}.
\end{Corollary}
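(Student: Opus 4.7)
The plan is to prove this by a straightforward contradiction/compactness argument, using Theorem~\ref{Thm_longtime_limit_intro} as a black box. First, I would make the decomposition explicit by setting
\[ M_{\thin}(t) := \{ x \in M : \NN_{x,t}(\tfrac12 t) < -Y \}, \qquad M_{\thick}(t) := M \setminus M_{\thin}(t), \]
so that the defining property of the thin part holds by construction and the thick part is characterized by $\NN_{x,t}(\tfrac12 t) \geq -Y$.

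Next, to establish the closeness statement, I would argue by contradiction. Suppose there were some $\eps > 0$ such that no $T_\eps$ works. Then one can find sequences $t_i \to \infty$ and $x_i \in M_{\thick}(t_i)$ with the property that the parabolically rescaled pair
\[ \big( M, (g_{t_i t'})_{t' \in (0,1]}, (\nu_{x_i, t_i; t_i t'})_{t' \in (0,1]} \big) \]
has $d_{\IF}$-distance at least $\eps$ from every metric flow pair satisfying the assertions of Theorem~\ref{Thm_longtime_limit_intro}. Since by definition of $M_{\thick}(t_i)$ we have $\NN_{x_i, t_i}(\tfrac12 t_i) \geq -Y$, the hypotheses of Theorem~\ref{Thm_longtime_limit_intro} are satisfied for the sequence $(x_i, t_i)$. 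Passing to a subsequence, that theorem provides an $\IF$-limit $(\XX, (\nu_{x_\infty; t'})_{t' \in (0,1]})$ over $(0,1]$ that does satisfy all of the listed structural properties (singular space of dimension $n$, negative Einstein on the regular part, scaling self-similarity, dimension bounds on the singular set, etc.). In particular, the $\IF$-distance between the rescaled pair and this limit tends to $0$ as $i \to \infty$, which for large $i$ contradicts the standing assumption that it stays $\geq \eps$.

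The potentially delicate point is matching the notion of $\IF$-convergence used in Theorem~\ref{Thm_longtime_limit_intro} (uniform $\IF$-convergence on compact subintervals within some correspondence) with the single $d_{\IF}$-distance appearing in the corollary statement. I would handle this by noting that the convergence in Theorem~\ref{Thm_longtime_limit_intro} is on the time interval $(0,1]$ and that uniform $\IF$-convergence on compact subintervals of a fixed bounded interval implies convergence of the global $d_{\IF}$-distance of the corresponding restricted metric flow pairs (this is essentially the definition of $d_{\IF}$ from \cite{Bamler_RF_compactness}, where the contribution of small subintervals near the endpoints is controlled by $H_n$-concentration together with the uniform variance bound $\Var(\nu_{x_i, t_i; s}) \leq H_n (t_i - s)$, which is scale invariant). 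With this observation, the contradiction above is immediate, and no further ingredients beyond Theorem~\ref{Thm_longtime_limit_intro} are needed.
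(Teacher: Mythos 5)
The paper does not spell out a proof of this corollary; it is left as an immediate consequence of Theorem~\ref{Thm_longtime_limit_intro}, and your compactness/contradiction argument is exactly the argument one would write. You have correctly made the decomposition explicit by thresholding $\NN_{x,t}(\tfrac12 t)$ at $-Y$, and correctly identified the one technical point worth flagging, namely that the $\IF$-convergence in Theorem~\ref{Thm_longtime_limit_intro} is stated on compact sub-intervals of $(0,1]$ while the corollary asks for a single $d_{\IF}$-bound on $(0,1]$; as you observe, this is handled by the definition of $d_{\IF}$ in \cite{Bamler_RF_compactness}, where the contribution of the left endpoint is controlled by the scale-invariant variance bound coming from $H_n$-concentration. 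Your proof is correct and matches the intended argument.
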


\bigskip

\subsection{Limits in dimensions 2 and 3} \label{subsec_dimensions23_intro}
In this subsection, we briefly discuss the implications of our work in dimensions $n = 2,3$ and show how these can be used to recover some of Perelman's results from \cite{Perelman1}.

Assume in the following that $\XX$ is a metric flow over $(-\infty,0)$ that is obtained from a non-collapsing sequence of Ricci flows $(M_i, (g_{i,t})_{t \in (-T_i, 0)})$ on compact, $n \leq 3$-dimensional manifolds, as described in Subsection~\ref{subsec_more_general_F_limit}.
Assume that $\XX$ is not isometric to the constant flow on Euclidean space.

Due to the Hamilton-Ivey pinching \cite{Hamilton_non_singular} (or the standard lower bound on the scalar curvature in dimension 2), the metric $g$ on the regular part $\RR$ of $\XX$ must have non-negative sectional curvature.
Theorem~\ref{Thm_XX_reg_sing_dec_main} implies that $\SS_t = \emptyset$ for almost every $t < 0$.
By the same reason, Theorems~\ref{Thm_tangent_cone_metric_cone_main}, \ref{Thm_tangent_flow_at_infty_intro} imply that all tangent flows at singular points  of $\XX$ are given by non-trivial, smooth gradient shrinking solitons with non-negative sectional curvature.
These are isometric to quotients of round shrinking spheres, the round shrinking cylinder or its $\IZ_2$-quotient \cite{Hamilton_3_manifolds,Munteanu_Wang_2017}.
Since we have assumed that the Ricci flows $(M_i, (g_{i,t})_{t \in (-T_i, 0)})$ exist until time $0$, we can conclude, using \cite{Hamilton_3_manifolds}, that the tangent flows at singular points of $\XX$ are non-compact, so they are isometric to the round shrinking cylinder or its $\IZ_2$-quotient.
Using this observation and a basic volume comparison argument, it is possible to conclude (see Section~\ref{subsec_dimensions23_proof} for a proof):

\begin{Theorem} \label{Thm_SS_empty}
$\SS_{<0} = \emptyset$ and the trajectories of $\partial_{\tf}$ are defined on maximal time-intervals of the form $(-T, 0)$ for $T \in (0, \infty]$.
\end{Theorem}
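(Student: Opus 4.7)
\medskip

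\noindent\textbf{Proof plan.}

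The plan is to establish $\SS_{<0} = \emptyset$ by contradiction, exploiting the restricted tangent-flow structure in dimensions $n \leq 3$ combined with Bishop--Gromov volume comparison on the regular part. Suppose toward a contradiction that $x_0 \in \SS_{t_0}$ for some $t_0 \in (-T_\infty, 0)$. By the preceding discussion, every tangent flow $(\XX', (\nu_{x'_\infty; t})_{t \leq 0})$ at $x_0$ is isometric (up to normalization) to the round shrinking cylinder $S^{n-1} \times \IR$ or its $\IZ_2$-quotient. Fix such a tangent flow, arising as the $\IF$-limit of parabolic rescalings of $\XX$ by factors $\lambda_j \to \infty$, and pick a reference point $z'$ in the regular part $\RR'$ of $\XX'$ at tangent-flow time $t' = -1$, where the cross-sectional sphere has positive radius.

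Smooth convergence on the regular part (Theorem~\ref{Thm_R_RR_star_intro}) supplies a sequence $(y_j, s_j) \in \RR$ with $s_j = t_0 - \lambda_j^{-2} \to t_0$ whose parabolically rescaled geometries converge smoothly to the cylinder in a fixed neighborhood of $z'$. By Theorem~\ref{Thm_XX_reg_sing_dec_main}, $\SS_s = \emptyset$ for almost every $s$, so I may select the $s_j$ from this full-measure set; then $(\RR_{s_j}, g_{s_j}) = (\XX_{s_j}, d_{s_j})$ is a complete Riemannian manifold, which by Hamilton--Ivey pinching carries non-negative sectional curvature. Consequently, Bishop--Gromov monotonicity applies on each $(\RR_{s_j}, g_{s_j})$: the density $R \mapsto |B_{g_{s_j}}(y_j, R)|/R^n$ is non-increasing.

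The contradiction comes from comparing two estimates of $|B_{g_{s_j}}(y_j, R_0)|/R_0^n$ at a fixed scale $R_0 > 0$. Let $r_j \sim \lambda_j^{-1} \to 0$ denote the cross-sectional radius of the cylindrical neck at $(y_j, s_j)$ in the original metric. For any fixed $M > 0$ and $j$ sufficiently large, smooth cylindrical convergence at scale $M r_j$ yields
\[ |B_{g_{s_j}}(y_j, M r_j)| \leq (1 + o_j(1))\, V_{n-1}\, r_j^{n-1}\cdot (2 M r_j), \]
hence $|B_{g_{s_j}}(y_j, M r_j)|/(M r_j)^n \leq 2 V_{n-1}\, M^{-(n-1)} + o_j(1)$. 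Bishop--Gromov monotonicity transports this bound upward to any fixed scale $R_0 \geq M r_j$, so choosing $M \to \infty$ along a diagonal subsequence gives $|B_{g_{s_j}}(y_j, R_0)|/R_0^n \to 0$. On the other hand, by Theorem~\ref{Thm_NN_pass_to_limit} the pointed Nash entropy passes to the limit, and the variation and monotonicity bounds for $\NN$ transferred from \cite{Bamler_HK_entropy_estimates}, combined with the non-collapsing hypothesis (\ref{eq_intro_non_collapsing_more_general}), produce a uniform lower bound $\NN_{y_j, s_j}(R_0^2) \geq -Y$ for $R_0$ sufficiently small and $j$ large. The standard Perelman-type $\kappa$-non-collapsing estimate derived from a pointed Nash entropy lower bound then yields $|B_{g_{s_j}}(y_j, R_0)|/R_0^n \geq c(Y) > 0$, contradicting the previous bound. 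This proves $\SS_{<0} = \emptyset$.

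Given $\SS_{<0} = \emptyset$, the regular part $\RR$ coincides with $\XX_{<0}$ and is a genuine smooth Ricci flow spacetime, so the integral curves of $\partial_{\tf}$ have no singular obstruction. A maximal integral curve starting at any $x \in \RR_{t_0}$ therefore extends forward unobstructed to the endpoint $0$ of the defining time interval and backward to a maximal time $-T \in [-T_\infty, 0)$, producing the interval $(-T, 0)$ with $T \in (0, T_\infty]$. The main technical obstacle is the uniform Nash-entropy lower bound at the neck points $(y_j, s_j)$: one must propagate the non-collapsing hypothesis on the basepoints of the approximating Ricci flow sequence through the $\IF$-convergence to the limit and then to points in a time slice at a strictly negative time, while keeping the entropy scale aligned with the scale $R_0$ at which Bishop--Gromov is applied; this is precisely where the $H_n$-concentration and entropy variation estimates from \cite{Bamler_HK_entropy_estimates} play a decisive role.
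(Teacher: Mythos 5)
Your proof of $\SS_{<0} = \emptyset$ is correct but takes a genuinely different route from the paper. Both arguments exploit the same key structural facts — the tangent flow at a singular point must be the round shrinking cylinder (or its $\IZ_2$-quotient), and Hamilton--Ivey gives non-negative sectional curvature on the time-slices $\RR_{s_j}$, so Bishop--Gromov is available. But you apply Bishop--Gromov \emph{centered at the neck point} $y_j$: the cylindrical collapse at scale $M r_j$ forces the volume ratio $|B(y_j, R_0)|/R_0^n$ at a \emph{fixed} scale $R_0$ to tend to zero, and you derive the contradiction from a lower bound on this ratio coming from the pointed Nash-entropy machinery ($\NN_{y_j,s_j}(R_0^2) \geq -Y$ via the variation estimate, followed by the non-local-collapsing theorem of [Bamler\_HK\_entropy\_estimates]). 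The paper instead avoids the Nash-entropy input entirely: it fixes an arbitrary regular point $y \in \RR_t$ at distance $D := d_t(x,y)$ from the singular point, observes that $|B(y,r)|_{g_t} > 0$ simply because $y$ is regular, and uses Bishop--Gromov at the neck point $x'_j$ to squeeze this fixed positive volume between containment in $B(x'_j, D+2)$ and the rescaled cylinder volume ratio $(D+2)^3 |B^{M_\infty}(x_\infty,A)|/A^3$, which vanishes as $A \to \infty$. What your approach buys is perhaps more conceptual uniformity with the rest of the theory (everything through the entropy functional); what the paper's approach buys is self-containedness — no appeal to entropy non-collapsing, just positivity of a fixed ball's volume — and it sidesteps the need to carefully propagate the Nash-entropy lower bound to the moving neck points $(y_j,s_j)$ and then transfer the volume estimate from approximating flows to the limit, which you correctly flag as the delicate step but do not fully carry out.

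The second half of your argument, concerning the trajectories of $\partial_{\tf}$, has a real gap. You assert that because $\RR = \XX_{<0}$ is a smooth Ricci flow spacetime, ``the integral curves of $\partial_{\tf}$ have no singular obstruction'' and hence extend to time $0$. This does not follow from regularity alone: a priori, a worldline $\gamma$ defined on $[t_1,t_2)$ with $t_2 < 0$ could fail to extend past $t_2$ even though $\SS_{t_2} = \emptyset$, if one cannot show that $\gamma(t)$ converges to a point of $\XX_{t_2}$ as $t \nearrow t_2$. The paper's proof supplies the missing ingredient: since $\SS_{<0} = \emptyset$, each $(\RR_t, g_t)$ is a complete Riemannian manifold of non-negative sectional curvature (again Hamilton--Ivey), so Ricci curvature is non-negative, hence $\partial_t g \leq 0$ and distances do not expand in forward time; this controls the behavior of worldlines as $t \nearrow t_2$ and shows they extend up to time $0$. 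You need this non-expansion argument (or an equivalent a priori estimate) to close the forward-extension claim.
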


It is not clear to the author whether, under the current assumptions, the trajectories of $\partial_{\tf}$ are even defined on all of $(-\infty,0)$, which would imply that the flow on $\RR$ is given by a conventional Ricci flow;
see also \cite{Topping_cusp, Lai_2020} for related examples.
However, this is the case under a global non-collapsing condition, for example as discussed in Subsection~\ref{subsec_asympt_soliton}.
More specifically we have (see again Section~\ref{subsec_dimensions23_proof} for a proof):

\begin{Theorem} \label{Thm_lim_dim23_global_NC}
If there are uniform $Y_0 < \infty$, $\tau_0 > 0$ such that $\NN_{x} (\tau_0) \geq - Y_0$ for all $x \in \XX$, then the flow on $\RR$ is given by a smooth Ricci flow $(M_\infty, (g_{\infty,t})_{t< 0})$ with bounded curvature on compact time-intervals.
If $\NN_{x} (\tau) \geq - Y_0$ for all $x \in \XX$ and $\tau > 0$, then $(M_\infty, (g_{\infty,t})_{t< 0})$ is a $\kappa$-solution.
\end{Theorem}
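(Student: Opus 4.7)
\bigskip
\textbf{Proof plan for Theorem~\ref{Thm_lim_dim23_global_NC}.}

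\emph{Setup and reduction.} By Theorem~\ref{Thm_SS_empty} the singular part $\SS_{<0}$ is empty, so the flow on $\RR = \XX_{<0}$ is locally given by a smooth Ricci flow spacetime and the trajectories of $\partial_{\tf}$ are defined on maximal intervals of the form $(-T, 0)$. In order to recover a conventional Ricci flow $(M_\infty, (g_{\infty,t})_{t<0})$, we need these trajectories to be defined on all of $(-\infty,0)$; by the standard backwards extension principle for smooth Ricci flows, this follows once we establish a uniform curvature bound $|{\Rm}| \leq C(T_1, T_2)$ on every slab $\RR_{[-T_1,-T_2]}$ with $0 < T_2 < T_1 < \infty$. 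Thus both statements of the theorem reduce to proving such a curvature bound under the relevant global non-collapsing hypothesis, after which the $\kappa$-solution part follows by reading off the remaining $\kappa$-solution axioms (see the last paragraph).

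\emph{Blow-up argument for the curvature bound.} Suppose by contradiction that there is a sequence $x_i \in \RR_{t_i}$ with $t_i \in [-T_1,-T_2]$ and $Q_i := |{\Rm}|(x_i) \to \infty$. After a standard point-picking refinement in the smooth regular set, rescale parabolically by $Q_i^{1/2}$ and translate in time so $t_i \mapsto 0$. Using the $\IF$-compactness of \cite{Bamler_RF_compactness} together with the framework of Subsection~\ref{subsec_more_general_F_limit}, extract an $\IF$-limit $(\XX', (\nu_{x'_\infty;t})_{t \leq 0})$ along a subsequence. The hypothesis $\NN_x(\tau_0) \geq -Y_0$ rescales to $\NN_{x_i,\text{resc}}(Q_i \tau_0) \geq -Y_0$; by monotonicity of the pointed Nash entropy and Theorem~\ref{Thm_NN_pass_to_limit}/Theorem~\ref{Thm_more_general_F_limit}, in the limit $Q_i \to \infty$ we obtain $\NN_{x'_\infty}(\tau) \geq -Y_0$ for all $\tau > 0$. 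Because the original flows extend up to time $0$ and the $t_i$ are bounded away from $0$, the rescaled flows are defined on time-intervals whose upper endpoints tend to $+\infty$, so $\XX'$ satisfies the hypotheses of both Subsections~\ref{subsec_asympt_soliton} and \ref{subsec_dimensions23_intro}. By Theorem~\ref{Thm_R_RR_star_intro} the convergence is smooth on $\RR'$, so $|{\Rm}|(x'_\infty) = 1$ and $\XX' \not\cong \IR^n$.

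\emph{Contradiction via classification of tangent flows at infinity.} Apply Theorem~\ref{Thm_tangent_flow_at_infty_intro} to $\XX'$: it has a tangent flow at infinity that is a metric soliton, and since $\XX' \not\cong \IR^n$ the $\eps_n$-dichotomy forces $\NN_{\XX'}(\infty) < -\eps_n < 0$, so the soliton is non-trivial. By Theorem~\ref{Thm_metric_soliton_limit_main} and Addendum~\ref{Add_limit_stratification_addendum_main}, its regular part carries a smooth shrinking gradient soliton, onto which the Hamilton-Ivey pinching of the approximating dim-$\leq 3$ Ricci flows passes via smooth convergence, yielding non-negative sectional curvature. By the classification of complete non-flat shrinking solitons in dimensions $\leq 3$ with $\sec \geq 0$ (Hamilton \cite{Hamilton_3_manifolds}, Munteanu-Wang \cite{Munteanu_Wang_2017}), the tangent flow at infinity is a quotient of the round shrinking sphere, the round shrinking cylinder $S^{n-1} \times \IR$, or its $\IZ_2$-quotient. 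The compact alternatives are excluded by the same extinction-time argument that ruled out compact tangent flows in the proof of Theorem~\ref{Thm_SS_empty} (the approximating rescaled Ricci flows extend past their basepoint time). Hence the tangent flow at infinity is a cylinder, and the smooth convergence on the regular part together with non-collapsing at all scales forces $\XX'$ itself to be asymptotic to a cylinder at large negative times with curvature decaying like $C/|t|$. Propagating this smallness forward via standard pseudolocality/distance-distortion estimates on the smooth regular part (where these are available since $\SS'_{<0} = \emptyset$) then yields $|{\Rm}|(x'_\infty) \ll 1$, contradicting $|{\Rm}|(x'_\infty) = 1$. This proves the curvature bound on compact slabs, and hence the first statement.

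\emph{$\kappa$-solution upgrade.} Under the stronger hypothesis $\NN_x(\tau) \geq -Y_0$ for all $x$ and all $\tau > 0$, the same argument gives bounded curvature on every compact slab, and $(M_\infty, (g_{\infty,t})_{t<0})$ is then a smooth, complete, ancient Ricci flow with non-negative sectional curvature (Hamilton-Ivey) that is $\kappa$-non-collapsed at all scales (a pointed Nash-entropy lower bound implies $\kappa$-non-collapsing, cf.\ \cite{Hein-Naber-14, Bamler_HK_entropy_estimates}); this is a $\kappa$-solution in Perelman's sense, with $\kappa = \kappa(Y_0)$. The main obstacle is the cylinder-to-contradiction step in the previous paragraph: one must carefully convert approximate cylindrical structure in the distant past of $\XX'$ into a curvature bound at the basepoint, which requires using the smooth convergence on $\RR'$ in an essential way rather than purely synthetic arguments.
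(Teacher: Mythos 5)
Your overall scaffold (blow-up at a sequence of bad points, identify a cylindrical model, contradiction) is in the right spirit, and you do correctly rescale the Nash-entropy hypothesis so that Theorem~\ref{Thm_tangent_flow_at_infty_intro} and the two/three-dimensional classification of shrinking solitons are available. However, there are two genuine gaps, and the paper's proof is structured quite differently precisely to avoid them.

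\emph{Gap 1: the point-picking step is not available as stated.} You write ``after a standard point-picking refinement in the smooth regular set, rescale\ldots{} extract an $\IF$-limit $\XX'$'' and then use $|{\Rm}|(x'_\infty)=1$. But in this setting, extracting a blow-up that is a \emph{complete} smooth flow with \emph{bounded} curvature near the basepoint is exactly the kind of bounded-curvature-at-bounded-distance statement one is trying to prove; there is no ``standard'' point picking that hands it to you for free. The paper instead blows up \emph{spatially in a single time-slice} $\RR_t$ and invokes the Toponogov-type argument of Kleiner--Lott Proposition~41.13 (using the non-negative sectional curvature of $\RR_t$, supplied by Hamilton--Ivey) to produce a static limit $(M'_\infty,g'_\infty)$ that metrically splits off an $\IR$-factor. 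That line-splitting is essential: it is exactly the hypothesis of Lemma~\ref{Lem_XX_3d_if_loc_split_off}, which then identifies the model as the round cylinder. Your parabolic blow-up produces no line-splitting, so you cannot invoke Lemma~\ref{Lem_XX_3d_if_loc_split_off}, and your route through Theorem~\ref{Thm_tangent_flow_at_infty_intro} only identifies the \emph{tangent flow at infinity} of $\XX'$ as a cylinder, which is a much weaker statement than identifying $\XX'$ itself.

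\emph{Gap 2: the contradiction step does not work.} Knowing that the tangent flow at infinity of $\XX'$ is a cylinder means that the rescaled flows $\lambda_j^{-2}g'_{\lambda_j^2 t}$ converge to the cylinder as $\lambda_j \to \infty$. Unwinding this gives $|{\Rm}| = O(1/|t|)$ at the $H_n$-centers for $|t| \to \infty$, which is precisely the scale-invariant size---not smallness. Applying pseudolocality from time $t = -T$ with curvature bound $O(1/T)$ propagates a bound of the form $O(1/T)$ forward for a time span $\sim T$, i.e., up to $t\approx 0$ you only recover $|{\Rm}| = O(1)$, not $\ll 1$; there is no contradiction with $|{\Rm}|(x'_\infty) = 1$. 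The missing ingredient is Perelman's bounded-curvature-at-bounded-distance argument (Kleiner--Lott Theorem~46.1), which the paper invokes explicitly at the final step: given a cylindrical region at a very small scale inside the fixed time-slice $\RR_t$, a Toponogov-based contradiction shows $\RR_t$ cannot have unbounded curvature. Replacing that step by ``propagating smallness forward via pseudolocality'' is not a valid substitute. You yourself flag this step as ``the main obstacle''; it is indeed where the proof must be substantially different.

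Two minor points. First, the paper reduces to bounded curvature on a single time-slice (not on a slab); given pseudolocality and the backwards pseudolocality Theorem~\ref{Thm_bckwds_pseudo}, a bound on $\RR_t$ spreads to a two-sided parabolic neighborhood, which is what actually lets one extract a conventional Ricci flow from the spacetime $\RR$. Second, once bounded curvature is obtained, the $\kappa$-solution upgrade is as you describe, using Hamilton--Ivey and the entropy lower bound at all scales; that part of your argument is fine.
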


Next assume that in addition $\XX$ is a limit of Ricci flows $(M_i, (g_{i,t})_{t \in (-T_i, 0]}, x_i)$ that are defined up to time $0$, as in Subsection~\ref{subsec_part_reg_limit_intro}.
Then the following is true:

\begin{Theorem} \label{Thm_limit_3d_to_time_0_intro}
If there is a uniform $Y_0 < \infty$ such that $\NN_{x} (\tau) \geq - Y_0$ for all $x \in \XX_{<0}$ and $\tau > 0$ and if $\limsup_{i \to \infty} R(x_i, 0) < \infty$, then the flow on $\RR = \XX_{<0}$ is given by a smooth $\kappa$-solution $(M_\infty, (g_{\infty,t})_{t< 0})$ that can be smoothly extended to time $0$.
\end{Theorem}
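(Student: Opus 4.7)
The plan is to combine Theorem \ref{Thm_lim_dim23_global_NC} with a contradiction argument based on Nash-entropy rigidity at $x_\infty$. By Theorem \ref{Thm_lim_dim23_global_NC} together with Theorem \ref{Thm_SS_empty}, the global Nash-entropy hypothesis implies that $\RR = \XX_{<0}$ is given by a smooth $\kappa$-solution $(M_\infty, (g_{\infty,t})_{t<0})$. Since in dimension $n \leq 3$ any $\kappa$-solution has non-negative curvature operator and satisfies $|{\Rm}| \leq C R$, and since Shi's estimates propagate any uniform $|{\Rm}|$-bound to bounds on all higher derivatives, the theorem reduces to showing that $\sup_{M_\infty \times [t_0, 0)} R \leq K < \infty$ for some $t_0 < 0$.

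To obtain such a bound, I will first show that $\NN_{x_\infty}(0) := \lim_{\tau \to 0^+} \NN_{x_\infty}(\tau) = 0$. For each pre-limit flow, the hypothesis $R(x_i, 0) \leq C$ together with the non-collapsing bound $\NN_{x_i, 0}(\tau_0) \geq -Y_0$ and the local $\eps$-regularity/pseudolocality machinery of this paper should produce a uniform parabolic neighborhood around $(x_i, 0)$ on which $|{\Rm}|$ is bounded uniformly in $i$. On such a neighborhood, the standard asymptotic expansion of Nash entropy at a smooth point yields $\NN_{x_i, 0}(\tau) \geq -C'\tau$ for $\tau \in (0, \tau_1]$, uniformly in $i$. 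Passing to the $\IF$-limit via Theorem \ref{Thm_more_general_F_limit} then gives $\NN_{x_\infty}(\tau) \geq -C'\tau$ on $(0, \tau_1]$, and hence $\NN_{x_\infty}(0) = 0$.

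Now suppose toward contradiction that $\sup_{M_\infty} R(\cdot, t)$ is unbounded as $t \nearrow 0$, and choose $y_j \in M_\infty$ and $t_j \nearrow 0$ with $Q_j := R(y_j, t_j) \to \infty$. Parabolically rescale $\XX$ around $(y_j, t_j)$ by factor $Q_j^{1/2}$. Since $t_j \nearrow 0$, the basepoints $(y_j, t_j)$ converge within $\XX$ to $x_\infty \in \XX_0$, so the $\IF$-compactness theorem of \cite{Bamler_RF_compactness} produces, along a subsequence, a tangent flow $\XX'$ of $\XX$ at $x_\infty$. By Theorem \ref{Thm_Nash_on_tangent_cones} (extended to $x_\infty$ via Theorem \ref{Thm_more_general_F_limit}), $\XX'$ satisfies $\NN_{x'_\infty}(\tau) \equiv \NN_{x_\infty}(0) = 0 > -\eps_n$, which forces $\XX'$ to be the constant flow on $\IR^n$. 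However, the same rescaled sequence viewed as smooth Ricci flows on $M_\infty$ subconverges in the Cheeger-Gromov-Hamilton sense, by Perelman's compactness for $\kappa$-solutions, to a non-trivial smooth $\kappa$-solution with $R = 1$ at the basepoint; Theorem \ref{Thm_R_RR_star_intro} identifies this smooth limit with the regular part of $\XX'$, contradicting its Euclidean rigidity.

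The main obstacle will be establishing the uniform local regularity at $(x_i, 0)$: promoting the pointwise bound $R(x_i, 0) \leq C$ (plus non-collapsing) to a uniform $|{\Rm}|$-bound on a fixed-size parabolic neighborhood independent of $i$. Once that is in place, the Nash-entropy expansion together with the tangent-flow contradiction above close the proof cleanly.
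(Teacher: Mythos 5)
Your reduction to a bound on $\sup_{M_\infty \times [t_0,0)} R$ via Theorem~\ref{Thm_lim_dim23_global_NC} and $\kappa$-solution structure is the same opening as the paper's. After that, however, you take a different and ultimately incomplete route.

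The central gap is the one you flagged yourself as the "main obstacle," and it is not merely a technical hurdle but the crux: a pointwise bound $R(x_i,0)\le C$ together with non-collapsing does \emph{not} give a uniform $|{\Rm}|$-bound on a fixed parabolic neighborhood of $(x_i,0)$ for the pre-limit flows. Hamilton--Ivey at most gives a pointwise $|{\Rm}|$-bound at $(x_i,0)$ (and only after propagating a lower scalar-curvature normalization forward), and Perelman's derivative estimates for $\kappa$-solutions are unavailable on the pre-limit side because the $(M_i,g_{i,t})$ are not $\kappa$-solutions. Without this uniform parabolic bound the asymptotic expansion $\NN_{x_i,0}(\tau)\ge -C'\tau$ is unavailable, and the whole chain through $\NN_{x_\infty}(0)=0$ and tangent-flow rigidity collapses. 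The paper avoids this entirely: it uses the monotonicity $\frac{d}{dt}\int_M R\,d\nu_t=2\int_M|{\Ric}|^2\,d\nu_t\ge 0$ to convert the hypothesis $\limsup_i R(x_i,0)<\infty$ directly into the \emph{integral} bound $\int_{M_\infty} R(\cdot,t)\,d\nu_{x_\infty;t}\le C$ on the limit, finds via $H_n$-concentration points $(x_i,t_i)$ with $t_i\nearrow 0$, $d^{\XX_{t_i}}_{W_1}(\delta_{x_i},\nu_{x_\infty;t_i})\le C\sqrt{-t_i}$, and $R(x_i,t_i)\le C$, and only \emph{then} invokes Perelman's derivative estimates \cite[11.7]{Perelman1} on the limit, where the $\kappa$-solution structure genuinely makes them available, together with pseudolocality \cite[10.3]{Perelman1} and \cite[\HKCenterConstantRmBound]{Bamler_HK_entropy_estimates} to extend to time $0$. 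Passing the scalar-curvature information through the conjugate-heat-kernel monotonicity before doing any regularity theory is precisely what you are missing.

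A secondary issue is your contradiction argument: choosing $y_j$ realizing the supremum of $R(\cdot,t_j)$ gives no control on $d^{\XX_{t_j}}_{W_1}(\delta_{y_j},\nu_{x_\infty;t_j})$, so the rescaled flows pointed at $(y_j,t_j)$ need not converge to a tangent flow of $\XX$ at $x_\infty$ — the blow-up could run off to spatial infinity in $M_\infty$. So even if $\NN_{x_\infty}(0)=0$ were established, the final step as written is not valid; one must work near the $H_n$-center, which again leads naturally to the paper's argument rather than yours.
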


Theorem~\ref{Thm_limit_3d_to_time_0_intro} implies Perelman's Canonical Neighborhood Theorem \cite[12.1]{Perelman1} for smooth Ricci flows.

\subsection{Limits in dimension 4} \label{subsec_dim4_intro}
The following result provides an improved structural characterization of tangent flows (at infinity) and long-time blow downs in dimension 4.

\begin{Theorem} \label{Thm_dim4_intro}
In dimension $n =4$ the singular space $(X,d)$ in Theorems~\ref{Thm_static_limit_main}, \ref{Thm_metric_soliton_limit_main}, \ref{Thm_longtime_limit_intro} is the length space of a complete, smooth 4-dimensional Riemannian orbifold $(M_\infty, g_\infty)$ with isolated singularities.

In Theorem~\ref{Thm_metric_soliton_limit_main} we either have $R > 0$ on $M_\infty$ or the orbifold is isometric to $\IR^4 / \Gamma$ for some finite $\Gamma \subset O(4)$.
\end{Theorem}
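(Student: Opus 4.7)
The plan is to combine the general structural results from Subsections~2.4--2.5, specialized to $n=4$, with the classical orbifold compactification technique of Anderson and Bando--Kasue--Nakajima. In dimension $4$ the codimension-$4$ singular set bound forces $\mathcal{S}_X$ to be zero-dimensional, which is the key simplification that makes orbifold smoothing possible.

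\textbf{Part A: Structure of tangent cones and isolated singularities.}
Fix $(X,d)$ as in any of the three referenced theorems. By Theorem~\ref{Thm_static_limit_main}\ref{Thm_static_limit_main_d}, $\dim_{\mathcal{M}}\mathcal{S}_X \leq n-4 = 0$, so $\mathcal{S}_X$ is a discrete closed set of isolated points. At each $x\in\mathcal{S}_X$, every tangent cone $(X',d',x')$ is a metric cone $C(Y)$ obtained from a non-collapsed $\mathbb{F}$-limit of smooth Ricci flows and Ricci flat on its regular part by Theorem~\ref{Thm_static_limit_main}\ref{Thm_static_limit_main_e}. The cross-section $Y$ is a $3$-dimensional space with $\Ric_Y = 2g_Y$ on $\mathcal{R}_Y$; because the singular set of $C(Y)$ is still zero-dimensional while the singular set of a cross-section would sweep out a ray in the cone, $Y$ must be smooth. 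Hence $Y$ has constant sectional curvature $1$, so $Y = S^3/\Gamma$ with $\Gamma\subset O(4)$ finite and free, and $C(Y) = \mathbb{R}^4/\Gamma$.

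\textbf{Part B: Orbifold smoothing at isolated singular points.}
At each $x\in\mathcal{S}_X$ with tangent cone $\mathbb{R}^4/\Gamma$, I will build an orbifold chart of type $B^4/\Gamma$ centered at $x$. The intermediate step is a pointwise bound
\[
|\Rm|(y) \leq C\,d(y,x)^{-2} \qquad \text{for } y\in\mathcal{R}_X \text{ close to } x,
\]
together with $L^{2-\epsilon}$ integrability of $|\Rm|$ near $x$. Indeed, for $y\in\mathcal{R}_X$ with $r := d(y,x)$ small, the parabolic rescalings at scale $r$ are $\mathbb{F}$-close to a static flow on $\mathbb{R}^4/\Gamma$, and smooth points away from the vertex of this cone split off $\mathbb{R}^{n-1}$, so the local regularity theorem Theorem~\ref{Thm_loc_reg_XX_intro} applies at $y$, yielding the displayed bound; the integral estimate comes directly from Theorem~\ref{Thm_integral_bounds_F_limit}. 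These, together with smooth convergence on $\mathcal{R}_X$ from Theorem~\ref{Thm_static_limit_main}\ref{Thm_static_limit_main_e}, place us exactly in the setting of the Bando--Kasue--Nakajima removable singularities theorem (the Einstein hypothesis being replaced by the Ricci-soliton / Einstein condition that holds on $\mathcal{R}_X$ in each of the three cases). This yields a smooth orbifold chart $B^4/\Gamma$ at $x$. Together with smooth charts on $\mathcal{R}_X$ and the completeness of $(X,d)$, this exhibits $(X,d)$ as the length space of a complete smooth Riemannian $4$-orbifold $(M_\infty,g_\infty)$ with isolated singularities.

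\textbf{Part C: Dichotomy in the metric soliton case.}
In the setting of Theorem~\ref{Thm_metric_soliton_limit_main} the identities (\ref{eq_sol_id_met_sol_limit_thm}) give the standard soliton elliptic equation
\[
\Delta_f R = R - 2|\Ric|^2, \qquad \Delta_f := \Delta - \langle \nabla f,\nabla\cdot\rangle,
\]
on $\mathcal{R}_X$, and the classical result of B.-L.~Chen guarantees $R\geq 0$ on the regular part. Suppose $R$ is not strictly positive on $\mathcal{R}_X$. Since $\mathcal{S}_X$ is a discrete set, $\mathcal{R}_X$ is connected, so the strong maximum principle (applied locally in the smooth orbifold cover constructed in Part~B) forces $R\equiv 0$, and consequently $\Ric\equiv 0$ on $\mathcal{R}_X$. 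The soliton equation reduces to $\nabla^2 f = \tfrac1{2\tau}g$, so $\nabla f$ is a proper homothetic vector field on the complete Ricci-flat $4$-orbifold $M_\infty$. Lifting to the orbifold universal cover gives a complete Ricci-flat Riemannian $4$-manifold admitting a proper homothety, which must be $(\mathbb{R}^4,g_{\mathrm{std}})$ with $f = |\vec x|^2/(4\tau)$ + const. Therefore $M_\infty \cong \mathbb{R}^4/\Gamma$ for some finite $\Gamma\subset O(4)$.

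\textbf{Main obstacle.}
The technically delicate step is Part~B. The local regularity and integral bounds give strong control of $|\Rm|$ on punctured neighborhoods of isolated singular points, but upgrading this to a genuine smooth orbifold structure requires the harmonic radius / removable singularities technology of Bando--Kasue--Nakajima to be executed in the present Ricci flow setting, where one must use the (almost) static or self-similar structure on $\mathcal{R}_X$ as a replacement for the Einstein condition that powers the original arguments. Obtaining orbifold-smooth convergence on a punctured neighborhood — rather than merely smooth convergence on $\mathcal{R}_X$ — is where most of the work is concentrated.
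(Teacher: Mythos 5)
Your Parts A and B follow the same route as the paper: the codimension-$4$ bound (Assertion (d) of Theorem~\ref{Thm_static_limit_main}) and the tangent cone classification in Assertion (e) force isolated singularities with local models $\mathbb{R}^4/\Gamma$, and the orbifold smoothing is done by the Anderson/Bando--Kasue--Nakajima removable singularity argument, using the pointwise curvature bounds from the local $\eps$-regularity theorems. Your extra detail on where the bounds $|\Rm| \leq C\,d(\cdot,x)^{-2}$ and the $L^{2-\eps}$ integrability come from (Theorems~\ref{Thm_loc_reg_XX_intro} and \ref{Thm_integral_bounds_F_limit}) is useful and consistent with the citations the paper invokes.

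For Part C you re-derive the dichotomy from the soliton PDE, while the paper simply invokes the last sentence of Theorem~\ref{Thm_metric_soliton_limit_main}: ``$(X,d)$ is either a metric cone or $R > 0$ on $\mathcal{R}_X$.'' Taking that statement as given, the cone case combined with Part~B immediately gives $\mathbb{R}^4/\Gamma$ because the cross-section is a smooth closed Einstein $3$-manifold of constant curvature $1$. Your argument is fine until the final step. Lifting to an orbifold universal cover requires the orbifold to be developable, which is not automatic for $4$-orbifolds with isolated singularities (the teardrop phenomenon already obstructs this in dimension $2$). You can avoid that step: once $R \equiv 0$ and hence $\Ric \equiv 0$ on $\mathcal{R}_X$, the identity $\nabla^2 f = \tfrac{1}{2\tau}g$ together with completeness of the flow of $\tau\nabla f$ (Theorem~\ref{Thm_metric_soliton_limit_main}) exhibits $(X,d)$ directly as a metric cone over a level set $\{f = c\}$, and $f$ is proper and strictly convex so it has a unique critical point $p_0$, which is the only possible orbifold point. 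The level set $\{f=c\}$ is then a smooth closed $3$-manifold with $\Ric = 2g$, hence $S^3/\Gamma$, and $X = C(S^3/\Gamma) = \mathbb{R}^4/\Gamma$. With this repair the proposal is sound, though the paper's citation-based proof of the dichotomy is shorter.
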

\bigskip

\subsection{Backwards Pseudolocality Theorem} \label{subsec_bckw_pseudo}
In the following we discuss a purely elementary consequence of our theory.
The following Backwards Pseudolocality Theorem is similar to Perelman's (Forward) Pseudolocality Theorem \cite[Sec 10]{Perelman1}, with the difference that it asserts curvature bounds on a \emph{backwards} parabolic neighborhood if the geometry near a point is sufficiently regular in one time-slice.
Similar Backwards Pseudolocality Theorems were established in dimension 3 by Perelman \cite{Perelman1} and under an additional scalar curvature bound by Zhang and the author \cite{Bamler-Zhang-1}.
The following theorem generalizes these results and it shows that our theory has applications that are not limited to convergence results.

\begin{Theorem} \label{Thm_bckwds_pseudo}
For any $n \in \IN$ and $\alpha > 0$ there is an $\eps (n, \alpha) > 0$ such that the following holds.

Let $(M, (g_t)_{t \in I})$ be a Ricci flow on a compact, $n$-dimensional manifold and let $(x_0, t_0) \in M \times I$ a point and $r > 0$ a scale such that $[t_0 -   r^2, t_0] \subset I$ and
\begin{equation} \label{eq_bckwds_pseudo_conditions}
 |B(x_0,t_0,r)| \geq \alpha r^n, \qquad |{\Rm}| \leq (\alpha r)^{-2} \quad \text{on} \quad B(x_0, t_0,r). 
\end{equation}
Then
\[ |{\Rm}| \leq (\eps r)^{-2} \quad \text{on} \quad P(x_0,t_0; (1-\alpha) r, -(\eps r)^2). \]
\end{Theorem}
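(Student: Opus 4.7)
The plan is to argue by contradiction using the compactness and partial regularity theory developed earlier in the paper. Suppose the theorem fails. Then there are a sequence $\eps_i \searrow 0$ and pointed Ricci flows $(M_i, (g_{i,t})_{t\in I_i}, (x_{0,i}, t_{0,i}))$ with $r_i > 0$ verifying (\ref{eq_bckwds_pseudo_conditions}), together with bad points $(y_i, t'_i) \in P(x_{0,i}, t_{0,i}; (1-\alpha)r_i, -(\eps_i r_i)^2)$ satisfying $|{\Rm}|(y_i,t'_i) > (\eps_i r_i)^{-2}$. After parabolic rescaling I normalize to $r_i = 1$ and $t_{0,i}=0$. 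First I would record two quantitative inputs that follow immediately from the hypothesis: by the volume-entropy comparison of \cite{Bamler_HK_entropy_estimates}, the assumption yields a uniform Nash-entropy bound $\NN_{x_{0,i}, 0}(\tau) \geq -Y_0(\alpha)$ for $\tau \in (0, 1]$; and since small balls inside $B(x_{0,i}, 0, 1-\alpha/2)$ are uniformly smooth and almost Euclidean, Bishop--Gromov together with the small-scale expansion of $\NN$ gives the uniform entropy decay $\NN_{w,0}(\tau) \to 0$ as $\tau \to 0$ for every $w \in B(x_{0,i},0, 1-\alpha/2)$.

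Next I would pass to an $\IF$-subsequential limit $(\XX, (\nu_{x_\infty;t})_{t\in (-1,0]})$ using the compactness theorem of \cite{Bamler_RF_compactness}; this limit enjoys the partial regularity and smooth-convergence properties of Theorems~\ref{Thm_XX_reg_sing_dec_main}--\ref{Thm_R_RR_star_intro}. By a diagonalization inside the correspondence $\CF$, I may also arrange that each $w$ in a countable dense subset of $B(x_{0,i},0, 1-\alpha/2)$ converges within $\CF$ to a point $w_\infty$ in the limit, and that the time-shifted points $(w,-\delta)$ converge to points $w^\delta_\infty \in \XX_{-\delta}$ for a sequence $\delta \searrow 0$. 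Theorem~\ref{Thm_NN_pass_to_limit}, combined with the pointed Nash-entropy variation bound \HKVariationboundNN\ of \cite{Bamler_HK_entropy_estimates}, then transfers the small-scale entropy decay to the limit: $\NN_{w^\delta_\infty}(\tau) \to 0$ as $\tau \to 0$, uniformly for small $\delta > 0$. Fixing $\eta \in (0, \eps_n/2)$, where $\eps_n$ is the dimensional constant of Theorem~\ref{Thm_Nash_on_tangent_cones}, I obtain $\tau_\eta > 0$ with $\NN_{w^\delta_\infty}(\tau) \geq -\eta$ for all $\tau \in (0, \tau_\eta]$ and all small $\delta$. Combining this with the almost-cone-rigidity underlying Theorems~\ref{Thm_Nash_on_tangent_cones} and \ref{Thm_almost_cone_rigidity_intro} shows that, after parabolic rescaling by $\tau_\eta^{-1/2}$, the flow near $w^\delta_\infty$ is $\eps'$-close in $\IF$ to the constant flow on Euclidean space over a time-interval of any prescribed size, with $\eps'$ arbitrarily small once $\eta$ is small enough. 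In particular, $w^\delta_\infty$ is $(n-1, \eps', \sqrt{\tau_\eta})$-symmetric, so by the local regularity Theorem~\ref{Thm_loc_reg_RF_intro}, $\rrm(w^\delta_\infty) \geq \rho$ for a uniform $\rho = \rho(n, \alpha) > 0$.

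To conclude, I would invoke the smooth convergence on $\RR$ from Theorem~\ref{Thm_R_RR_star_intro} to transport the bound $|{\Rm}| \leq \rho^{-2}$ on the $\rho$-parabolic neighborhood of $w^\delta_\infty$ back to a bound on the corresponding parabolic neighborhood of $(w_i, -\delta)$ in the original flow $(M_i, g_{i,\cdot})$ for all large $i$. A finite covering of $B(x_{0,i},0, 1-\alpha)$ by such $w_i$, with covering data controlled by smooth Cheeger--Gromov convergence of the uniformly regular time-$0$ slices, yields a uniform bound $|{\Rm}| \leq C(n,\alpha)$ on $B(x_{0,i},0, 1-\alpha) \times [-c(n,\alpha)^2, 0]$ for all large $i$. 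Once $i$ is large enough that $\eps_i^2 < c(n,\alpha)^2$ and $\eps_i^{-2} > C(n,\alpha)$, the bad point $(y_i,t'_i)$ lies in this region with curvature exceeding $C(n,\alpha)$, the desired contradiction. The main obstacle is the almost-rigidity step turning uniform pointwise smallness of $\NN$ into $(n-1,\eps',r)$-symmetry, which requires the full strength of the soliton-analysis machinery of the paper; a subsidiary technicality is that the reference time must be taken as $-\delta < 0$ rather than at $0$ itself, so that the backwards parabolic neighborhood on which symmetry must be verified lies in the uniformly regular region and does not overlap the accumulating near-time-$0$ singular spacetime.
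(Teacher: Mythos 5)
Your approach is genuinely different from the paper's, but it contains a circularity that I don't think can be repaired. The critical step is the claim that ``since small balls inside $B(x_{0,i}, 0, 1-\alpha/2)$ are uniformly smooth and almost Euclidean, Bishop--Gromov together with the small-scale expansion of $\NN$ gives the \emph{uniform} entropy decay $\NN_{w,0}(\tau) \to 0$ as $\tau \to 0$.'' The pointed Nash entropy $\NN_{w,0}(\tau)$ is computed from the conjugate heat kernel measure at time $-\tau$, so it depends on the geometry of the flow on $M \times [-\tau, 0]$, not just on the time-$0$ slice. Regularity and almost-Euclidean volume ratio \emph{at time $0$ alone} do not control this: a priori, the flow could develop near-singular regions at times $-\tau_i$ for some $\tau_i \to 0$ near $w$, which the conjugate heat kernel would detect, driving $\NN_{w,0}(\tau_i)$ away from $0$ even though $\NN_{w,0}(\tau) \to 0$ for each fixed flow. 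The quantitative tool that would rule this out — the $W_1$-concentration estimate \HKCenterConstantRmBound\ of \cite{Bamler_HK_entropy_estimates}, which forces $d^{g_{-\tau}}_{W_1}(\nu_{w,0;-\tau}, \delta_w) \lesssim \sqrt{\tau}$ and thereby makes the entropy at time $-\tau$ see only the almost-Euclidean geometry — \emph{requires a backward parabolic curvature bound as an input}. So the claim of uniform entropy decay is essentially equivalent to the backward pseudolocality you are trying to prove; once one combines it with the $\eps$-regularity theorem it immediately yields $\rrm(w,0) \geq \rho(n,\alpha)$ and hence backwards pseudolocality. You have diagnosed the wrong bottleneck: the hard part is not the almost-rigidity step turning small $\NN$ into $(n-1,\eps',r)$-symmetry (once $\NN$ is small, that follows from what the paper has already established), but getting $\NN$ to be small in the first place.

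The paper avoids this circularity by a two-step mechanism. A preliminary lemma assumes an explicit (if weak) backward curvature bound $|{\Rm}| \leq \alpha^{-1}r^{-2}$ on $P(x_0,t_0; r, -(\alpha r)^2)$ together with scalar-curvature pinching, passes to an $\IF$-limit, and uses the near-static structure of the limit (via the paper's limit-from-static theorem) to propagate the curvature bound further back in time; the backward hypothesis is exactly what lets one invoke \HKCenterConstantRmBound\ to identify the near-$t_0$ slices of the limit with the regular time-$t_0$ ball. A second lemma then bootstraps this over scales inside a single flow: one passes to the smallest-scale counterexample (which exists because a fixed smooth flow trivially satisfies the conclusion at small enough scales), uses the conclusion at strictly smaller subballs to manufacture the backward curvature control needed by the preliminary lemma, and derives a contradiction. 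Finally, the scalar-curvature smallness hypotheses of this second lemma are absorbed by applying it to very small subballs of $B(x_0, t_0, r)$, where $R \leq C(\alpha r)^{-2} \ll \eps r^{\prime -2}$ automatically. The induction-over-scales mechanism is precisely what gets the argument ``off the ground'' without assuming backward control globally; your direct limit argument has no analogous foothold.
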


We note that combined with Perelman's (Forward) Pseudolocality Theorem, Theorem~\ref{Thm_bckwds_pseudo}, implies the same curvature bounds on a \emph{two-sided} parabolic neighborhood of the form $P(x_0, \lb t_0; \lb (1-\alpha) r, \lb -(\eps r)^2,\lb (\eps r)^2)$.
In addition, we obtain the following corollary which allows us to upgrade the condition in Perelman's (Forward) Pseudolocality Theorem \cite[10.1]{Perelman1} to a condition similar to (\ref{eq_bckwds_pseudo_conditions}) if $[t_0 - \alpha  r^2, t_0 + \alpha r^2] \subset I$.

\begin{Corollary} \label{Cor_combination_fwd_bckwd_pseudo}
For any $n \in \IN$ and $\alpha > 0$ there is an $\eps (n, \alpha) > 0$ such that the following holds.

Let $(M, (g_t)_{t \in I})$ be a Ricci flow on a compact, $n$-dimensional manifold and let $(x_0, t_0) \in M \times I$ a point and $r > 0$ a scale such that $[t_0 - \alpha  r^2, t_0 + \alpha r^2] \subset I$ and
\begin{equation} \label{eq_pseudo_original_condition}
R \geq - (\alpha r)^{-2} \quad \text{on} \quad B(x_0, t_0,r),  \qquad
 |\partial\Omega|^n_{t_0} \geq (1-\eps) c_n|\Omega|^{n-1}_{t_0} \quad \text{for all} \quad \Omega \subset B(x_0, t_0, r),
\end{equation}
where $c_n$ denotes the Euclidean isoperimetric constant.
Then
\[ |{\Rm}| \leq (\eps r)^{-2} \quad \text{on} \quad B(x_0,t_0; (1-\alpha)r). \]
\end{Corollary}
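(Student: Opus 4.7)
The plan is to derive the corollary by cascading Perelman's Forward Pseudolocality Theorem \cite[10.1]{Perelman1} into the Backwards Pseudolocality Theorem~\ref{Thm_bckwds_pseudo}. The hypotheses (\ref{eq_pseudo_original_condition}) are exactly those required by Perelman's theorem, which produces curvature and volume bounds on a forward parabolic neighborhood of $(x_0,t_0)$. These bounds in turn supply, at a slightly later time $t_1$, the hypotheses of Theorem~\ref{Thm_bckwds_pseudo} at every point of $B(x_0,t_0,(1-\alpha)r)$, and that theorem then propagates the curvature bound back to time $t_0$ on the whole ball.

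\emph{First,} pick an arbitrary $y \in B(x_0,t_0,(1-\alpha)r)$, so that $B(y,t_0,\alpha r)\subset B(x_0,t_0,r)$ and (\ref{eq_pseudo_original_condition}) is inherited on $B(y,t_0,\alpha r)$. For $\eps$ below a threshold depending only on $n$ and $\alpha$, Perelman's theorem produces a constant $\delta=\delta(n,\alpha)>0$ and the forward curvature bound
\[ |{\Rm}|\leq (\delta\alpha r)^{-2} \quad \text{on}\quad \bigl\{(z,t):d_t(z,y)<\delta\alpha r,\; t\in (t_0,t_0+(\delta\alpha r)^2]\bigr\}, \]
together with the matching volume lower bound $|B(y,t,\sqrt{t-t_0})|_t\geq c(n)(t-t_0)^{n/2}$. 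The required time interval lies in $I$ by the hypothesis $[t_0-\alpha r^2,t_0+\alpha r^2]\subset I$ once $\delta^2\alpha\leq 1$, which holds for $\eps$ small enough.

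\emph{Next,} fix a small constant $\eta=\eta(n,\alpha)\in(0,1)$ to be determined and set $t_1:=t_0+(\eta\delta\alpha r)^2$. Because $|{\Rm}|\leq(\delta\alpha r)^{-2}$ throughout $[t_0,t_1]\times B(y,t_0,\delta\alpha r)$, distances at times $t_0$ and $t_1$ on this region are comparable up to a factor depending only on $n$, and the noncollapsing estimate at $t_1$ persists with constants depending only on $n$. Thus there is a scale $\rho=\rho(n,\alpha)\cdot\delta\alpha r$ with $B(y,t_1,\rho)\subset B(y,t_0,\delta\alpha r)$ such that
\[ |{\Rm}|\leq C(n,\alpha)\rho^{-2} \text{ on } B(y,t_1,\rho), \qquad |B(y,t_1,\rho)|_{t_1}\geq c'(n,\alpha)\rho^n. \]
Hence $(y,t_1)$ meets the hypotheses of Theorem~\ref{Thm_bckwds_pseudo} at radius $\rho$ with a parameter $\alpha'=\alpha'(n,\alpha)>0$, and that theorem yields $|{\Rm}|\leq(\eps'\rho)^{-2}$ on the backward parabolic neighborhood $P(y,t_1;(1-\alpha')\rho,-(\eps'\rho)^2)$ for some $\eps'=\eps'(n,\alpha)>0$.

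\emph{Finally,} choose $\eta$ small enough that $\eta\delta\alpha r<\eps'\rho$, which is possible because $\delta,\rho/(\delta\alpha r)$, and $\eps'$ all depend only on $n$ and $\alpha$. This forces $t_0\in[t_1-(\eps'\rho)^2,t_1]$, so $(y,t_0)$ lies in the backward parabolic neighborhood produced above, giving $|{\Rm}|(y,t_0)\leq C'(n,\alpha)r^{-2}$. Since $y\in B(x_0,t_0,(1-\alpha)r)$ was arbitrary, the desired bound follows by taking the final universal parameter $\eps=\eps(n,\alpha)$ small enough that $(\eps r)^{-2}\geq C'(n,\alpha)r^{-2}$ and smaller than any threshold required above. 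The main obstacle is simply the bookkeeping of constants: one must organize all the scale ratios between $r$, $\delta\alpha r$, $\rho$, $\eta\delta\alpha r$, and $\eps'\rho$ so that they depend only on $n$ and $\alpha$, and verify that each invocation of a pseudolocality theorem takes place on a time subinterval of $I$. There is no new geometric input beyond Perelman's forward pseudolocality and Theorem~\ref{Thm_bckwds_pseudo}.
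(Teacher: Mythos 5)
There is a genuine gap. Your argument begins with the claim that Perelman's Pseudolocality Theorem produces a \emph{uniform} forward curvature bound $|\Rm|\le(\delta\alpha r)^{-2}$ throughout the forward parabolic region, and you then repeatedly use this uniform bound --- for distance comparability between times $t_0$ and $t_1$, for the noncollapsing estimate at $t_1$ with a radius $\rho$ that is independent of $\eta$, and for the curvature hypothesis of Theorem~\ref{Thm_bckwds_pseudo}. But that is not what Perelman's theorem says: it gives $|\Rm|(x,t)\le \alpha_0 t^{-1}+(\eps_P r_0)^{-2}$, a bound that \emph{diverges} as $t\searrow t_0$. At your time $t_1 = t_0 + (\eta\delta\alpha r)^2$ the available bound is therefore of order $(\eta\delta\alpha r)^{-2}$, not $C(n,\alpha)\rho^{-2}$ for a fixed $\rho=\rho(n,\alpha)\delta\alpha r$. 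In other words the constant you call $C(n,\alpha)$ actually scales like $\eta^{-2}$.

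Once this is corrected, the bookkeeping in your final paragraph becomes circular. To meet the hypotheses of Theorem~\ref{Thm_bckwds_pseudo} you must take the scale comparable to $\eta\delta\alpha r$, i.e.\ $\rho\sim \eta\delta\alpha r$, so the backward reach of that theorem is $(\eps'\rho)^2\sim\eps'^2\,(\eta\delta\alpha r)^2$, which is \emph{smaller} than the gap $t_1-t_0=(\eta\delta\alpha r)^2$ you need to cross (since $\eps'<1$). Trying to enlarge $\rho$ to gain backward reach forces the admissible $\alpha'$ to shrink (because the available curvature bound deteriorates), which in turn shrinks $\eps'=\eps'(n,\alpha')$, which requires $\rho$ even larger: the constants chase each other and there is no reason the chase terminates. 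This is precisely the obstruction that the paper's proof is designed to overcome: it invokes Perelman's theorem with the coefficient $\alpha_0$ of the $t^{-1}$ term taken to be a small free parameter $\delta$ (at the price of shrinking the isoperimetric deficit), obtaining $|\Rm|\le\delta t^{-1}$ with $\delta$ at one's disposal, and then applies Theorem~\ref{Thm_bckwds_pseudo} at scale $\sim\sqrt{t}$; the smallness of $\delta$ is exactly what allows the backward reach $(\eps'\rho)^2$ to dominate $t$ while the curvature hypothesis $|\Rm|\le(\alpha'\rho)^{-2}$ is met. Your write-up never uses this flexibility --- you treat $\delta$ as produced by, rather than fed into, Perelman's theorem --- and without it the argument as written does not close.
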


So given (\ref{eq_pseudo_original_condition}), we can apply Theorem~\ref{Thm_bckwds_pseudo} and Perelman's Forward Pseudolocality Theorem to obtain curvature estimates on a two-sided parabolic neighborhood of the form $P(x_0, \lb t_0; \lb (1-\alpha) r, \lb -(\eps r)^2,\lb (\eps r)^2)$ if $\eps \leq \ov\eps (n, \alpha)$.

The following theorem is slightly stronger as it asserts \emph{long-time} backwards estimates.
The corresponding forward facing curvature estimates seem to be unknown.

\begin{Theorem} \label{Thm_bckwds_pseudo_optimal}
For any $n \in \IN$, $\alpha > 0$ and $1 \leq A < \infty$ there is an $\eps (n, \alpha, A) > 0$ such that the following holds.

Let $(M, (g_t)_{t \in I})$ be a Ricci flow on a compact, $n$-dimensional manifold and let $(x_0, t_0) \in M \times I$ a point and $r > 0$ a scale such that:
\begin{enumerate}[label=(\roman*)]
\item $|B(x_0,t_0,r)| \geq \alpha r^n$.
\item $|{\Rm}| \leq (\alpha r)^{-2}$ on $B(x_0, t_0,r)$.
\item $[t_0 -  (A + \alpha) r^2, t_0] \subset I$
\item $R \geq - \eps r^{-2}$ on $M \times [t_0 - A r^2, t_0]$.
\item $R(x_0, t_0) \leq \eps r^{-2}$.
\end{enumerate}
Then the following bounds hold on $P(x_0,t_0; (1-\alpha) r, -A r^2)$:
\[ 
|{\Ric}| \leq \alpha r^{-2} 
\qquad \text{and} \qquad 
|{\Rm}| \leq \alpha r^{-2} + \sup_{B(x_0, t_0, r)} |{\Rm}|. 
 \]
\end{Theorem}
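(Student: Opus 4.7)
The plan is to argue by contradiction and compactness, extracting an $\IF$-limit of a sequence of would-be counterexamples and using the structural Theorem~\ref{Thm_static_limit_main} to convert the short-time control from Theorem~\ref{Thm_bckwds_pseudo} into long-time control. After rescaling so that $r=1$ and translating so that $t_0=0$, suppose there exist Ricci flows $(M_i, (g_{i,t})_{t \in [-(A+\alpha), 0]})$ with basepoints $(x_i, 0)$ satisfying (i)--(v) with $\eps_i \to 0$, yet the conclusion fails at some $(y_i, s_i) \in P(x_i, 0; 1-\alpha, -A)$. The volume lower bound in (i) together with the curvature bound in (ii) yields, via a standard comparison on the almost-Euclidean ball, a uniform pointed Nash-entropy bound $\NN_{x_i, 0}(\tau_0) \geq -Y(\alpha)$ for some fixed $\tau_0 > 0$. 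The compactness theory of \cite{Bamler_RF_compactness} then produces, upon passing to a subsequence, an $\IF$-limit
\[ (M_i, (g_{i,t}), (\nu_{x_i, 0; t})) \xrightarrow[i \to \infty]{\quad \IF, \CF \quad} (\XX, (\nu_{x_\infty; t})) \]
to a non-collapsed metric flow $\XX$ over $[-(A+\alpha), 0]$. Conditions (i), (ii) also place us under the hypothesis of Theorem~\ref{Thm_bckwds_pseudo}, which combined with Hamilton's compactness upgrades to smooth convergence on a fixed (short) backwards parabolic neighborhood of $(x_i, 0)$, landing in the regular part $\RR$ of $\XX$.

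The next step is to identify the limit as Ricci-flat. Passing the scalar curvature bound in (iv) to the limit with $\eps_i \to 0$ gives $R \geq 0$ on $\RR$; meanwhile, (v) combined with smooth convergence near $(x_i, 0)$ forces $R \to 0$ at some limit point $p_0$ in $\RR$ arbitrarily close to $(x_\infty, 0)$. The strong minimum principle applied to
\[ \partial_t R - \Delta R = 2|{\Ric}|^2 \geq 0 \]
on the smooth locus $\RR$ propagates $R \equiv 0$ into the entire past connected component of $p_0$; differentiating in time then forces $|{\Ric}|^2 \equiv 0$ on that component, and by connectedness and density of $\RR$ one obtains $\Ric \equiv 0$ on all of $\RR$. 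Thus $\XX_{<0}$ satisfies the hypothesis of Theorem~\ref{Thm_static_limit_main} and is consequently static: $\XX_{<0} \cong X \times (-(A+\alpha), 0)$ for a singular space $(X, d)$ of dimension $n$, with $\RR = \RR_X \times (-(A+\alpha), 0)$ and all time-slices mutually isometric.

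The static structure now delivers the contradiction essentially for free. By (ii) we have $\overline{B(x_\infty, 0, 1-\alpha)} \subset B(x_\infty, 0, 1) \subset \RR_X$, so by staticness the entire backwards parabolic neighborhood $P(x_\infty, 0; 1-\alpha, -A)$ is contained in $\RR$, with $\Ric \equiv 0$ throughout and $|{\Rm}|(y, t) = |{\Rm}|(y, 0)$ for every $(y, t)$ in this neighborhood. By Theorem~\ref{Thm_R_RR_star_intro} the $\IF$-convergence is smooth on all of $\RR$, so for large $i$ we obtain $|{\Ric}| < \alpha$ and $|{\Rm}| < \alpha + \sup_{B(x_i, 0, 1)} |{\Rm}|$ on the entire neighborhood $P(x_i, 0; 1-\alpha, -A)$, contradicting the choice of $(y_i, s_i)$. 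The main obstacle is precisely the gulf between the $O(\eps^2 r^2)$-width bounds supplied by Theorem~\ref{Thm_bckwds_pseudo} and the full $[-Ar^2, 0]$-wide bounds required here; this gulf cannot be bridged by any quantitative estimate along the sequence, but is bridged qualitatively in the limit through the one-sided maximum principle for $R$, which has no forward-facing analog, explaining the absence of a symmetric forward statement.
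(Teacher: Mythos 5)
Your overall strategy---argue by contradiction, extract an $\IF$-limit, show the limit is static, and use smooth convergence on the regular part to contradict the failure of the conclusion---is in the same spirit as the paper's, which proves the preliminary Lemma~\ref{Lem_bckwd_pseudo_preliminary} exactly by such a compactness argument and then iterates it ``at various scales'' together with Theorem~\ref{Thm_bckwds_pseudo}. However, there is a genuine gap in your argument for staticness: you derive $R\equiv 0$ (and hence $\Ric\equiv 0$) only on the \emph{past connected component} of a single point $p_0\in\RR$ near $x_\infty$, and then assert ``by connectedness and density of $\RR$ one obtains $\Ric\equiv 0$ on all of $\RR$.'' Density of $\RR$ in $\XX$ is of no help here, and connectedness of the relevant spacetime region of $\RR$ is not established and cannot be taken for granted---the regular part of a non-collapsed $\IF$-limit is open but may well consist of several components, so a pointwise strong minimum principle started from one point says nothing about components of $\RR$ not in the causal past of $p_0$. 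Since Theorem~\ref{Thm_static_limit_main} requires $\Ric\equiv 0$ on the \emph{entire} regular part, your route to staticness is incomplete as written.

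The clean way to close this gap, which is how the paper invokes Theorem~\ref{Thm_limit_from_static}, is to argue at the level of the approximating sequence rather than on the limit. Along the sequence one has the exact identity
\[
\frac{d}{dt}\int_{M_i} R\,d\nu_{x_i,0;t} = 2\int_{M_i}|{\Ric}|^2\,d\nu_{x_i,0;t}\ \ge\ 0,
\]
so $t\mapsto\int_{M_i} R\,d\nu_{x_i,0;t}$ is non-decreasing with limit $R(x_i,0)\le\eps_i$ as $t\nearrow 0$; combined with hypothesis (iv), which gives $\int_{M_i}R\,d\nu_{x_i,0;t}\ge -\eps_i$ for $t\in[-A,0)$, one obtains for every $[T_1',T_2']\subset(-A,0)$ that
\[
\int_{T_1'}^{T_2'}\int_{M_i}|{\Ric}|^2\,d\nu_{x_i,0;t}\,dt\ \le\ \eps_i \longrightarrow 0.
\]
This verifies the ``more general'' hypothesis (\ref{eq_intint_Ric_to_0}) of Theorem~\ref{Thm_limit_from_static} directly and yields staticness of the full limit without any appeal to a strong minimum principle or to connectedness of $\RR$; your subsequent use of the static structure and smooth convergence on $\RR$ to reach a contradiction is then sound (modulo the standard adjustment of working at a time $t^*\in(-\alpha^2,0)$ and an $H_n$-center $z_\infty\in\XX_{t^*}$, rather than at the trivial final time-slice $\XX_0=\{x_\infty\}$). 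You should also be explicit that the one-shot argument must account for the gap between the short-time bound furnished by Theorem~\ref{Thm_bckwds_pseudo} and the region $P(x_0,t_0;(1-\alpha)r,-Ar^2)$ on which the conclusion is claimed; in the paper this is handled by the intermediate Lemma~\ref{Lem_bckwd_pseudo_preliminary} together with a scale iteration, whereas in your one-shot version the role of that iteration is played by the staticness of the entire limit $\XX_{<0}$ over the full backwards interval.
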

\bigskip

\section{Outline of the proof and structure of the paper} \label{sec_outline}
Let us now give an overview of the methods that are used in the paper.
As mentioned in Subsection~\ref{subsec_key_diff_proof_strategy}, we have to overcome two types of complications.
First, there are a number of elementary geometric or analytic estimates that are or have not been known to hold in the Ricci flow setting.
Second, many of the standard ingredients that are normally used to prove partial regularity are or have not been available in our setting.
It is important to note that, while some of these desired estimates or ingredients are established in this or previous work \cite{Bamler_HK_entropy_estimates,Bamler_RF_compactness}, many others seem to either fail in the Ricci flow setting or may only be deduced \emph{a posteriori} from our theory; so either way they won't be at our disposal.
These remaining complications can roughly be summarized as follows:
%
\begin{itemize}
\item There seem to be no distance distortion bounds in the general setting. In particular, we do not expect bounds on the \emph{expansion} of distances or points or conjugate heat kernel measures under the flow.
\item Lower bounds on the (conjugate) heat kernel on a general Ricci flow seem to fail.
\item It seems impossible to construct cutoff functions on general Ricci flows.
This and the issues mentioned above cause complications in localizing estimates.
In other words, it will be hard to derive or formulate estimates that only hold on a ball or parabolic neighborhood.
\item An almost cone rigidity theorem, which implies closeness to a soliton if a monotone quantity is almost constant, seems to be unavailable for Ricci flows, \emph{a priori.}
\item The theory of synthetic, metric flows, which concerns geometric limits of Ricci flows \cite{Bamler_RF_compactness}, is unfortunately too weak to derive an analog of the relatively elementary cone splitting result.
\end{itemize}

Before discussing our methods in more detail and explaining how these difficulties are overcome, it is helpful to review the main strategy for proving partial regularity theories for other geometric equations.
Let us for a moment consider the partial regularity theory of limits of Einstein metrics or spaces with lower Ricci curvature bounds in the following, as it is closest to our setting.
Let $(M,g)$ be a complete, $n$-dimensional Riemannian manifold with $\Ric \geq - (n-1)g$.
By Bishop-Gromov volume comparison, we know that the volume ratio
\[ \mathcal{V}(x,r) := \frac{|B(x,r)|}{v_{-1}(r)}, \]
which is defined for any $x \in M$ and $r > 0$ is non-increasing in $r$ (here $v_{-1}(r)$ denotes the volume of a ball of radius $r$ in the model space of constant curvature $-1$).
The key result in proving partial regularity is an \emph{almost cone rigidity theorem} involving this monotone quantity \cite{Cheeger-Colding-Cone}.
It states that if $\delta \leq \ov\delta(\eps)$ and
\begin{equation} \label{eq_VV_alm_const}
 0 < r \leq \delta, \qquad \mathcal{V}(x,r) \geq \eps, \qquad \mathcal{V}(x,\delta^{-1} r) \geq \mathcal{V}(x,\delta r) - \delta, 
\end{equation}
then the rescaled, pointed manifold $(M, r^{-2} g, x)$ is $\eps$-Gromov-Hausdorff-close to a metric cone $(C, c)$; here $c \in C$ denotes the vertex.
We will say that $x$ is \emph{$(\eps, r)$-conical} in this case.
As a consequence of the almost cone rigidity theorem and the monotonicity of $\mathcal{V}(x,r)$, we obtain a bound on the number of scales $r = \lambda^j$, $0 < \lambda < 1$, for which $x$ is \emph{not} $(\eps, r)$-conical.
Let us now consider a non-collapsed limit space $(X, d, x_\infty)$ of spaces with lower Ricci curvature bounds; by this we mean a Gromov-Hausdorff limit of a sequence of pointed  Riemannian manifolds $(M_i, g_i, x_i)$ with $\Ric \geq - (n-1)g_i$ that satisfies a non-collapsing condition of the form $\mathcal{V} (x_i, 1) \geq c > 0$.
Passing the consequence of the almost cone rigidity theorem to the limit, implies that every tangent cone $(C_0,c_0)$ of $X$ at any point $x_0 \in X$ must indeed be a metric cone.
This tangent cone $(C_0,c_0)$ is again a non-collapsed limit of spaces with lower Ricci curvature bounds, so by the same reason as before, every tangent cone $(C_1, c_1)$ at any point $x_1 \in C_0$ must be a metric cone.
An important observation is now the following \emph{cone splitting result:} If $x_1 \neq c_0$ is not the vertex of $C_0$, then we must have a Cartesian splitting of the form $C_1 = C'_1 \times \IR$.
By iterating this construction, we can find successive tangent cones of the form $C_k = C_k' \times \IR^k$ for $k = 1, \ldots, n$.
A refinement of this argument allows the conclusion that ``many'' points in the original limit space $X$ have tangent cone isometric to $\IR^n$ \cite{Cheeger_Colding_struc_Ric_below_I,Cheeger_Colding_struc_Ric_below_III,Cheeger-Colding-structure-II, Cheeger-Naber-quantitative, Cheeger-Naber-Codim4}.
In the case in which $(X,d,x_\infty)$ is obtained as a limit of Einstein metrics, an \emph{$\eps$-regularity theorem} implies that points that have tangent cones isometric to $\IR^n$ must, in fact, be regular \cite{Colding-vol-conv,Cheeger_Colding_Tian_2002}.
So the regular part of $X$ must be dense and a refinement of the arguments mentioned above implies further dimensional bounds on the singular set.\footnote{We note that for the sake of brevity we have omitted many details here. The study of the singular set is, of course, highly non-trivial.}
So, in summary, the building blocks of a partial regularity theory for limits of Einstein metrics consist of: a monotone quantity, an almost cone rigidity theorem, a cone splitting result for metric spaces and an $\eps$-regularity theorem.

Let us now focus on the partial regularity and structure theory of $\IF$-limits of Ricci flows.
Unfortunately, in the Ricci flow setting there are, \emph{a priori,} no counterparts for most of the building blocks mentioned in the last paragraph.
This is why our theory has to take a different approach, which focuses more on the geometry of \emph{smooth} Ricci flows and less on the closeness to some idealized synthetic space.
In addition, we have to encode our desired geometric characterizations in analytic bounds that don't interfere with the lack of distance expansion and lower heat kernel bounds.
This roughly entails that our bounds will be formally global and the local nature will only be expressed through the use of (fast decaying) conjugate heat kernels as weight functions.
This will lead to further complications.

Our theory will rely on the monotonicity of the pointed Nash entropy $\NN_{x,t}(\tau)$ in place of the volume ratio $\mathcal{V}(x,r)$.
We will also rely on the estimates from \cite{Bamler_HK_entropy_estimates}, which relate bounds on the pointed Nash entropy at a given point to similar bounds at nearby points and to further geometric and analytic bounds.
Our expectation is that if $\NN_{x,t}(\tau)$ is almost constant for $\tau \in [\delta r^2, \delta^{-1}r^2]$ (corresponding to (\ref{eq_VV_alm_const})), then the flow near $(x,t)$ and at scale $r$ has similar properties as a gradient shrinking soliton (corresponding to metric cones).
However, unlike in the regularity theory of spaces with lower Ricci curvature bounds, an almost cone rigidity theorem seems a priori out of reach; such a result only follows \emph{a posteriori} from our theory (see Theorem~\ref{Thm_almost_cone_rigidity_intro}).
Instead, we will only obtain relatively weak, integral and global bounds of the form
\begin{align}
  \int_{t_0 - \eps^{-1} r^2}^{t_0 - \eps r^2} \int_M \tau \Big| \Ric + \nabla^2 f - \frac1{2\tau} g \Big|^2 d\nu_{x_0, t_0; t} dt &\leq \eps, \label{eq_selfs_intro_1} \\
 \int_M \big| \tau (2\triangle f - |\nabla f|^2 + R) + f - n - W \big| d\nu_{x_0, t_0; t
} &\leq \eps , \qquad t \in [\eps r^2, \eps^{-1} r^2],\label{eq_selfs_intro_2} 
\end{align}
which express similarity to a gradient shrinking soliton.
The left-hand side of the first bound (\ref{eq_selfs_intro_1}) is essentially the excess terms in the monotonicity formula of the pointed Nash-entropy, or to be precise the $\mathcal{W}$-functional, and the second bound will be derived in Section~\ref{sec_geom_bounds_almost_ss}.
We recall that $d\nu_{x_0,t_0;t} = (4\pi \tau)^{-n/2} e^{-f} dg$ denotes the background measure corresponding to the conjugate heat kernel based at $(x_0, t_0)$.
Whenever $(x_0, t_0)$ satisfies the bounds (\ref{eq_selfs_intro_1}), (\ref{eq_selfs_intro_2}), then we will say that the point $(x_0, t_0)$ is \emph{$(\eps, r)$-selfsimilar} (this corresponds to the $(\eps, r)$-conical condition).

Since we will not be able to establish closeness  of the flow to a gradient shrinking soliton at this point, we will need to deduce a weak version of an almost cone splitting theorem  for Ricci flows.
This is carried out in the framework of a quantitative stratification result, borrowing from \cite{Cheeger-Naber-quantitative}.
This will allow us to deduce an \emph{almost static} condition or the existence of \emph{weak splitting maps} near ``many'' points.
The almost static condition is an integral condition of the form
\begin{equation} \label{eq_static_outline}
 r^2 \int_{t_0 - \eps^{-1} r^2}^{t_0 - \eps r^2} \int_M   |{\Ric}|^2  d\nu_{x_0, t_0; t} dt \leq \eps 
\end{equation}
and weak splitting maps are of the form $\vec y = (y_1, \ldots, y_k) : M \times [t_0 - \eps^{-1} r^2, t_0 - \eps r^2] \to \IR^k$, where
\begin{align}
  r^{-2} \int_{t_0 - \eps^{-1} r^2}^{t_0 - \eps r^2} \int_M |\nabla y_i \cdot \nabla y_j - \delta_{ij} | d\nu_{x_0, t_0;t} dt &\leq \eps, \label{eq_weak_splitting_outline_1} \\
 r^{-1} \int_{t_0 - \eps^{-1} r^2}^{t_0 - \eps r^2} \int_M |\square y_i  | d\nu_{x_0, t_0;t} dt &\leq \eps.\label{eq_weak_splitting_outline_2} 
\end{align}
Note that, in contrast to the setting of spaces with lower Ricci curvature bounds (see for example \cite{Cheeger-Naber-Codim4}), this static condition is required to hold \emph{globally} and the splitting maps are defined \emph{globally} as well; their local properties only follow from the fast spatial decay of the measures $d\nu_{x_0,t_0;t}$.

The proof of the static condition and the construction of the splitting maps require a new theory of the geometry of Ricci flows --- in particular in regard to the $(\eps,r)$-selfsimilarity condition.
This theory, which is developed in Part~\ref{part_prel_quant_strat} of this paper, will require us to address most of the issues mentioned in the beginning of this outline.
We will now outline \emph{some} of the new ideas that enter this theory in Part~\ref{part_prel_quant_strat} of this paper.

First, we develop a calculus that allows us to change the basepoint of the background measure $d\nu_{x_0,t_0;t}$ in integral bounds in certain settings.
By this we mean the following.
We will often encounter integral bounds of the form
\begin{equation} \label{eq_have_dnu_x0_bound}
 \int_M |X| \, d\nu_{x_0,t_0;t} \leq C, 
\end{equation}
 where $X$ denotes a geometric quantity and $d\nu_{x_0,t_0} = (4\pi \tau_0)^{-n/2} e^{-f_0}dg$ denotes the measure of the conjugate heat kernel based at $(x_0,t_0)$.
It will often be necessary to change the basepoint $(x_0,t_0)$, i.e. we have to translate this bound into a bound of the form 
\begin{equation} \label{eq_want_dnu_x1_bound}
 \int_M |X| \, d\nu_{x_1,t_1;t} \leq C',
\end{equation}
where $d\nu_{x_1,t_1}$ denotes the measure of the conjugate heat kernel based at some different point $(x_1,t_1)$.
Unfortunately, due to the lack of lower heat kernel bounds and the non-local nature of our integral bounds, the measures $\nu_{x_0,t_0;t}, \nu_{x_1,t_1;t}$ may not be comparable.
However, we will show in Section~\ref{sec_comparing_CHK} that we have a bound of the form
\begin{equation} \label{eq_compare_nu_outline}
 d\nu_{x_1,t_1;t} \leq C e^{\alpha f_0} d\nu_{x_0,t_0;t}, 
\end{equation}
for arbitrarily small $\alpha > 0$, where $C$ depends on $\alpha$ and various geometric quantities involving the pointed Nash entropy of the flow and the relative location of the points $(x_0, t_0)$, $(x_1,t_1)$.
So we can only obtain a bound of the form (\ref{eq_want_dnu_x1_bound}) if instead of (\ref{eq_have_dnu_x0_bound}) the slightly stronger bound
\begin{equation} \label{eq_have_dnu_x0_bound_improved}
  \int_M |X| e^{\alpha f_0} d\nu_{x_0,t_0;t} \leq C. 
\end{equation}
holds. 
It will turn out that a bound of the form (\ref{eq_have_dnu_x0_bound}) can often be upgraded to a bound of the form (\ref{eq_have_dnu_x0_bound_improved}).
This is achieved via a new integral bound on geometric quantities, which is established in Section~\ref{sec_improved_Lp} and roughly states that for sufficiently small $\alpha > 0$
\begin{align}
 \int_{t_0-\eps^{-1} r^2}^{t_0-\eps r^2} \int_M \big( \tau |{\Ric}|^2 + \tau |\nabla^2 f_0|^2  + \tau |\nabla f_0|^4 +   \tau^{-1} \big) e^{\alpha f_0} d\nu_{x_0,t_0;t} dt &\leq C , \label{eq_improved_Lp_outline_1} \\
  \int_M \big( \tau |R| + \tau |\triangle f_0| + \tau |\nabla f_0|^2   + 1 \big) e^{\alpha f_0} d\nu_{x_0,t_0;t_0 -r^2}  &\leq C .  \label{eq_improved_Lp_outline_2}  
\end{align}
The important detail here is the extra $e^{\alpha f_0}$-term.
These bounds allow us to estimate additional terms that occur in the derivation of a bound of the form (\ref{eq_have_dnu_x0_bound_improved}) from a bound of the form (\ref{eq_have_dnu_x0_bound}), for a certain class of geometric quantities $X$.
It turns out that this class is general enough for our purposes if estimates are structured appropriately.

Second, we address the issue regarding a lack of distance expansion bounds.
Let us describe this issue more precisely.
Previous work \cite{Bamler_HK_entropy_estimates,Bamler_RF_compactness} of the author has shown that the concept of worldlines only plays a secondary role in the study of higher-dimensional Ricci flows if no curvature bounds are present.
Instead, given a point $(x_0, t_0) \in M \times I$ and time $t < t_0$, it is more useful to consider the probability measure $\nu_{x_0,t_0;t}$ associated to the conjugate heat kernel based at $(x_0,t_0)$.
It is a known fact that the 1-Wasserstein distance $d^{g_t}_{W_1}(\nu_{x_0,t_0;t}, \nu_{x_1, t_0;t})$,  between two conjugate heat kernel measures, based at two points $(x_0, t_0)$, $(x_1, t_0)$, is non-decreasing in $t$ and that
\[ d^{g_t}_{W_1}(\nu_{x_0,t_0;t}, \nu_{x_1, t_0;t}) \leq d_{t_0} (x_0, x_1). \]
This bound can be viewed as a form of ``non-contraction of distances''; most arguments in this paper rely on this point of view.
The issue, however, is that a reverse bound on the growth of $d^{g_t}_{W_1}(\nu_{x_0,t_0;t}, \nu_{x_1, t_0;t})$ (i.e. on the \emph{expansion} of the $1$-Wasserstein distance) seems to be false in general.
That is for $t' < t'' < t_0$ we cannot expect a bound of the form
\begin{equation} \label{eq_expansion_bound_outline}
 d^{g_{t''}}_{W_1}(\nu_{x_0,t_0;t''}, \nu_{x_1, t_0;t''}) \leq C ( d^{g_{t'}}_{W_1}(\nu_{x_0,t_0;t'}, \nu_{x_1, t_0;t'})  ) 
\end{equation}
in a general setting.
We will overcome this issue as follows.
In Section~\ref{sec_dist_expansion}, we will prove that  an expansion bound of the form (\ref{eq_expansion_bound_outline}) is indeed true to a certain extent if $(x_0,t_0)$ is sufficiently selfsimilar.
Moreover, we will employ a covering lemma from \cite[\HKBasicCovering]{Bamler_HK_entropy_estimates}, which allows us to sidestep this issue in certain cases, at the expense of a purely numerical estimate.
We have structured our arguments in such a way that an expansion bound is only necessary in these two settings.

Third, we prove a new almost monotonicity result for the integral of the scalar curvature near sufficiently selfsimilar points $(x_0,t_0)$.
This bound will be roughly of the form 
\begin{equation} \label{eq_almost_monotone_int_R_outline}
 \int_M \tau R\, d\nu_{t_1} \leq \int_M \tau R \, d\nu_{t_2} + \eps, \qquad t_1 \leq t_2 < t_0. 
\end{equation}
The bound will allow us to derive the static condition near certain points of the flow.
Interestingly, a reverse bound, seems unavailable \emph{a priori,} however, it follows from our theory \emph{a posteriori.}

Let us now focus on Part~\ref{part_loc_reg} of this paper.
Recall that the quantitative stratification result from Part~\ref{part_prel_quant_strat} only implied relatively weak integral local regularity properties --- involving the static condition (\ref{eq_static_outline}) and weak splitting maps as in (\ref{eq_weak_splitting_outline_1}), (\ref{eq_weak_splitting_outline_2}).
In order to deduce meaningful geometric characterizations from these regularity properties, we first have to upgrade the \emph{weak} splitting maps to \emph{strong} splitting maps.
These are essentially maps of the form $\vec y = (y_1, \ldots, y_k) : M \times [t_0 - \eps^{-1} r^2, t_0 - \eps r^2] \to \IR^k$ that still satisfy a bound of the form (\ref{eq_weak_splitting_outline_1}), but in place of (\ref{eq_weak_splitting_outline_2}), its component functions satisfy the precise heat equation
\[ \square y_i = 0. \]
We will obtain such strong splitting maps via a cutoff process, using a generalization of a Gaussian estimate from \cite{Hein-Naber-14}.
The resulting strong stability maps have similar properties as splitting maps in the setting of spaces with lower Ricci curvature bounds (see for example \cite{Cheeger-Naber-Codim4}).
However, due to the global nature of the integral bound (\ref{eq_weak_splitting_outline_1}), they lack a pointwise gradient bound (even on bounded domains or near the $H_n$-center of $\nu_{x_0,t_0;t}$).
Instead, we will derive various \emph{integral} bounds on the gradients $\nabla y_i$ and Hessians $\nabla^2 y_i$, which will turn out to be sufficient for our purposes.

Next, we analyze the geometry near points that possess sufficiently precise strong stability maps and are  static or selfsimilar to a sufficient degree.
We will obtain an \emph{integral} characterization of the geometry, which essentially states that such points are close to a constant flow on a space of the form $C \times \IR^k$, where $C$ is a Ricci flat cone with vertex $c \in C$.
Amongst other things, we will also obtain another integral cone-splitting result, which asserts the existence of a $(k+1)$-dimensional strong splitting map sufficiently far away from $\{ c \} \times \IR^k$.

Lastly, we derive an $\eps$-regularity result for points that are sufficiently static and possess a sufficiently precise strong splitting map of dimension $n$.
The proof of this follows the general strategy of \cite{Colding-vol-conv}, but is different at its core due to the lack of lower Ricci curvature bounds.

Eventually, by combining the quantitative stratification result from Part~\ref{part_prel_quant_strat} with the local regularity theory of Part~\ref{part_loc_reg}, we first derive a preliminary partial regularity theory in Part~\ref{part_partial_reg}.
This theory essentially states that non-collapsed $\IF$-limits of Ricci flows are regular on the complement of a singular subset of codimension $\geq 1$.
We also derive certain symmetry properties of the limit if the given sequence of Ricci flows are sufficiently static, selfsimilar or admit strong splitting maps of sufficiently high accuracy.
Some of these results depend on a working assumption, which governs the codimension of the singular set. 
Next, we use the preliminary partial regularity theory to rule out certain tangent flows in the limit.
This will allow us to conclude first that the singular set in the limit has codimension $\geq 2$ and then codimension $\geq 4$.
For the latter, we adapt the arguments from \cite{Cheeger-Naber-Codim4} to the parabolic setting.

Finally, in Part~\ref{part_proofs} we combine all the results obtained so far and prove the main theorems mentioned in the introduction.

\section{Conventions and preliminary results}
In this section we will first discuss some basic conventions that will be used throughout this paper.
Next, we will recall some basic terminology and geometric and analytic bounds from \cite{Bamler_HK_entropy_estimates}, which can be viewed as a preparatory paper.

\subsection{Basic conventions}
In the following we will fix a dimension $n$ and assume that all manifolds are $n$-dimensional, unless noted otherwise.
For the remainder of this paper, we will also omit the dependence on the dimension $n$ in all bounds. 
Throughout this paper, we will often consider Ricci flows $(M, (g_t)_{t \in I})$,
\[ \partial_t g_t = -2 \Ric_{g_t}, \]
and we will assume that $M$ is $n$-dimensional and that $I \subset \IR$ is an interval with non-empty interior, which we will refer to as the {\bf time-interval}.
We will often switch between the conventional picture (in which $x \in M$ are points and $t \in I$ are times) and the space-time picture (in which $(x,t) \in M \times I$ are points), as long as this does not create any confusion.

In the following, unless stated otherwise, capital roman or greek letters will denote large constants (larger than $1$), while small roman or greek letters will denote small constants (smaller than $1$).
The letter $C$ (respectively $c$) will mainly denote a large (respectively small) generic constant.
We will express the dependence of constants in parentheses; i.e. $A(B, \beta)$ means that $A$ depends only on $B$ and $\beta$ in a continuous fashion.
A condition of the form ``if $\eps \leq \ov\eps$'' or ``if $A \geq \underline{A} (\eps)$'' will mean ``there is a universal constant $\ov\eps > 0$ such that if $\eps \leq \ov\eps$, then \ldots'' or ``there is a universal continuous function $\underline{A} : (0,1) \to (1, \infty)$ such that if $A \geq \underline{A} (\eps)$, then \ldots''.

Expressions of the form $\Psi (\eps, A | B, \beta)$ denote generic functions that depend on their parameters in a continuous fashion and that converge to $0$ as their first set of parameters degenerates --- in this example this means that $\eps \to 0$, $A \to \infty$.
This convergence is assumed to be locally uniform the second set of parameters --- i.e. $B, \beta$ in this example.
We allow $\Psi$ to take values in $(0, \infty]$, equipped with the standard topology, where a bound of the form $a \leq \infty$ is vacuous.
This convention will allow us to simplify statements of the form ``if $\eps \leq \ov\eps$, then $a \leq \Psi (\eps)$'' to ``$a \leq \Psi (\eps)$''.

\subsection{Heat kernels, conjugate heat kernel measures and important identities} \label{subsec_H_operator_identities}
Consider a Ricci flow $(M, (g_t)_{t \in I})$.
We assume that $M$ is compact, however, under suitable assumptions some of the following discussion will also make sense on non-compact manifolds. 
We will denote by $dg_t$ the Riemannian volume measure corresponding to the metric $g_t$ and we will sometimes write for any $u : M \times I \to \IR$ or $u : M \times \{ t \} \to \IR$ of suitable regularity
\[ \int_M u(x,t) \, dg_t(x) = \int_M u(\cdot,t) \, dg_t = \int_M u \, dg_t. \]

In this paper we will frequently consider the {\bf heat operator}
\[ \square := \partial_t - \triangle_{g_t}, \]
which is coupled to the Ricci flow and can be applied to any function of the form $u \in C^2 (M \times I')$ for any non-trivial subinterval $I' \subset I$.
A solution to the equation $\square u = 0$ is called a solution to the {\bf heat equation (coupled with the Ricci flow).}
Correspondingly, the operator 
\[ \square^* := -\partial_t - \triangle_{g_t} + R \]
is called the {\bf conjugate heat operator}.
Here $R$ denotes the scalar curvature at time $t$.
Note that for any $u, v \in C^2 (M \times I')$ we have
\[    \int_M ( \square u) v \, dg_t -  \int_M u (\square^* v) \, dg_t = \frac{d}{dt} \int_M uv \, dg_t . \]
So if $I'$ is open and $u, v$ are compactly supported, then this gives
\[ \int_{I'} \int_M ( \square u) v \, dg_t dt = \int_{I'} \int_M u (\square^* v) \, dg_t dt.  \]
We also recall that for any solution to the {\bf conjugate heat equation} $\square^* v = 0$ we have
\begin{equation} \label{eq_ddt_intv_0}
 \frac{d}{dt} \int_M v \, dg_t = 0. 
\end{equation}

For any $x, y \in M$ and $s,t \in I$, $s < t$ we denote by $K(x,t;y,s)$ the heat kernel of $\square$.
That is, for fixed $(y,s)$, the function $K(\cdot, \cdot; y,s)$ is a {\bf heat kernel based at $(y,s)$}, i.e.
\[ \square K(\cdot, \cdot; y,s) = 0, \qquad \lim_{t \searrow s} K(\cdot, t; y,s) = \delta_{y}. \]
By duality, for fixed $(x,t)$, the function $K(x,t; \cdot, \cdot)$ is a {\bf conjugate heat kernel based at $(x,t)$}, i.e.
\[ \square^* K(x,t; \cdot, \cdot) = 0, \qquad \lim_{s \nearrow t} K(x,t; \cdot,s) = \delta_{x}. \]
Due to (\ref{eq_ddt_intv_0}) we have
\[ \int_M K(x,t; \cdot, s) \, dg_s = 1. \]
Hence we will frequently use the following abbreviated notion.

\begin{Definition}
For $(x,t) \in M \times I$ and $s \in I$, $s \leq t$, we denote by $\nu_{x,t;s} = \nu_{x,t}(s)$ the {\bf conjugate heat kernel measure,} i.e. the probability measure on $M$ defined by
\[ d\nu_{x,t;s} := K(x,t; \cdot, s) \, dg_s, \qquad d\nu_{x,t;t} := \delta_x. \]
\end{Definition}

We will often omit the index $s$ and view $\nu_{x,t} = (\nu_{x,t;s})_{s \in I, s \leq t}$ as a family of probability measures.
Moreover, we will often write
\[ d\nu_{x,t;s} = (4\pi \tau)^{-n/2} e^{-f} dg_s, \]
where $\tau(s) = t - s$ and $f \in C^\infty ( M \times ( I \cap (-\infty,t)))$ is called the {\bf potential}, which satisfies the evolution equation
\begin{equation} \label{eq_potential_evolution_equation}
 - \partial_t f = \triangle f - |\nabla f|^2 + R - \frac{n}{2\tau}. 
\end{equation}
Note that on Euclidean space we have $f = \frac1{4\tau} d^2(x, \cdot)$.
If $u \in C^0 (M \times I')$ is a function defined on a time-slab of $M \times I$, $(x_0, t_0) \in M \times I$ and $s \in I'$, $s \leq t_0$, then we will often use the abbreviations
\[ \int_M u \, d\nu_{x_0, t_0; s} = \int_M u \, d\nu_{x_0, t_0} (s) = \int_M u \, d\nu_{x_0,t_0} \bigg|_{t = s} = \int_M u(\cdot, s) d\nu_{x_0, t_0; s}. \]

The following identities, which can be checked easily, will be used frequently throughout this paper without further reference.

\begin{Lemma}
For any vector field $X$ on $M$ and any $u \in C^2 (M \times I)$ we have
\begin{align}
 \int_M \DIV (X) \, d\nu_{x,t;s} &= \int_M X \cdot \nabla f \, d\nu_{x,t;s}, \notag \\
 \int_M \triangle u \, d\nu_{x,t;s} &= \int_M \nabla u \cdot \nabla f \, d\nu_{x,t;s}, \notag \\
 \frac{d}{ds} \int_M u \, d\nu_{x,t;s} &= \int_M \square u \, d\nu_{x,t;s}  \label{eq_dds_int_u_int_square_u}.
 \end{align}
\end{Lemma}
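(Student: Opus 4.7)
The plan is to verify each identity by a direct computation using the definition $d\nu_{x,t;s} = (4\pi\tau)^{-n/2} e^{-f} dg_s$ together with standard integration by parts on the compact manifold $M$, plus the conjugate heat equation satisfied by $K(x,t;\cdot,\cdot)$. None of these should require any of the deeper machinery of the paper; they are essentially bookkeeping identities that justify the heat-kernel calculus used later.

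For the first identity, I would write
\[
\int_M \DIV(X)\, d\nu_{x,t;s} = (4\pi\tau)^{-n/2}\int_M \DIV(X)\, e^{-f}\, dg_s,
\]
and apply the product rule $\DIV(e^{-f}X) = e^{-f}\DIV(X) - e^{-f}\, X\cdot\nabla f$. Since $M$ is compact, $\int_M \DIV(e^{-f}X)\, dg_s = 0$ by the divergence theorem, which yields the claim. The second identity is then an immediate special case: take $X = \nabla u$ so that $\DIV(X) = \triangle u$ and $X\cdot\nabla f = \nabla u \cdot \nabla f$.

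For the third identity I would differentiate under the integral sign. Writing $v(\cdot,s) := K(x,t;\cdot,s)$ so that $d\nu_{x,t;s} = v\, dg_s$, and using the evolution $\partial_s(dg_s) = -R\, dg_s$ together with $\square^* v = -\partial_s v - \triangle v + Rv = 0$, I get
\[
\frac{d}{ds}\int_M u\, v\, dg_s = \int_M \bigl((\partial_s u)v + u\, \partial_s v - uvR\bigr)\, dg_s = \int_M \bigl((\partial_s u)v - u\, \triangle v\bigr)\, dg_s,
\]
where the $Rv$ contribution cancels the $-uvR$ term. A second application of integration by parts on the compact manifold swaps $\triangle$ onto $u$, producing $\int_M(\partial_s u - \triangle u)\, v\, dg_s = \int_M \square u\, d\nu_{x,t;s}$.

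There is no real obstacle here; the only thing to be slightly careful about is the sign of $\partial_s(dg_s)$ (which comes from $\partial_s g_s = -2\Ric$, hence $\partial_s(dg_s) = -R\, dg_s$) and the fact that in $\square^*$ the time derivative enters with a minus sign, so that $\partial_s v = -\triangle v + Rv$ exactly cancels the volume-form contribution. The only regularity input is $u \in C^2(M\times I)$ and the standard smoothness of $K$ for $s < t$, so differentiation under the integral is justified. For the $s = t$ endpoint, formula \eqref{eq_dds_int_u_int_square_u} should be interpreted as a one-sided limit that holds by continuity since $\nu_{x,t;s}\to \delta_x$ weakly as $s\nearrow t$.
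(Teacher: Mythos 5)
Your proof is correct, and the paper does not actually supply one — the lemma is introduced with the remark that the identities ``can be checked easily,'' so your argument is exactly the bookkeeping the authors intend the reader to supply. All three steps (divergence theorem with the product rule $\DIV(e^{-f}X) = e^{-f}\DIV X - e^{-f}X\cdot\nabla f$, the specialization $X = \nabla u$, and the computation with $\partial_s(dg_s) = -R\,dg_s$ and $\square^* v = 0$ followed by integration by parts) are standard and executed with the correct signs.
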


Lastly, we discuss a weak interpretation of the heat operator, applied to certain non-smooth functions, which will mostly arise by taking absolute value of smooth, possibly vector valued functions.
We remark that the following discussion can be generalized to functions of even lower regularity.
However, this will not be needed here.

\begin{Definition} \label{Def_weakly_2_diff}
Let $N$ be a manifold, possibly with boundary, and $u \in C^0(N)$.
We say that $u$ is {\bf weakly twice differentiable} if it is locally Lipschitz and for every $x \in N$ there is an open neighborhood $x \in U \subset N$ and a family $(u_\eps \in C^2(U))_{\eps \in (0,1)}$ such that for some Riemannian metric $g$ on $U$ and some constant $C < \infty$ we have:
\begin{enumerate}[label=(\arabic*)]
\item \label{Def_weakly_2_diff_1} $u_\eps \to u|_{U}$ uniformly as $\eps \to 0$.
\item \label{Def_weakly_2_diff_2} $\nabla u_\eps \to \nabla u|_U$ a.e. as $\eps \to 0$.
\item \label{Def_weakly_2_diff_3} $|\nabla u_\eps| \leq C$.
\item \label{Def_weakly_2_diff_4} $\int_{U} |\nabla^2 u_\eps| dg \leq C$.
\end{enumerate}
Note that if \ref{Def_weakly_2_diff_1}--\ref{Def_weakly_2_diff_4} hold for some Riemannian metric $g$, then the same conditions hold for any other Riemannian metric, possibly after restriction to a smaller subset and for a larger constant $C$.
\end{Definition}

The following proposition provides a criterion for a function to be weakly twice differentiable.

\begin{Proposition} \label{Prop_what_is_weakly_2_diff}
Let $N$ be a manifold, possibly with boundary.
Any $u \in C^2(N)$ is weakly twice differentiable.
Moreover, if $u_1, u_2 \in C^0 (N)$ are weakly twice differentiable, then so are $u_1 \pm u_2$, $u_1 u_2$ and $|u_1|$.
Lastly, if $E$ is a Euclidean vector bundle over $N$ and $X \in C^2(N; E)$ is a section of regularity $C^2$, then $u := |X|$ is weakly twice differentiable.
\end{Proposition}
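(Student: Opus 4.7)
The plan is to verify the four conditions of Definition~\ref{Def_weakly_2_diff} case by case. For $u \in C^2(N)$ the constant family $u_\eps := u$ works on any precompact coordinate neighborhood. For sums, I would set $u_\eps := u_{1,\eps} \pm u_{2,\eps}$; conditions (1)--(3) pass to linear combinations trivially, and (4) follows from subadditivity of the Hessian norm. For a product $u = u_1 u_2$, take $u_\eps := u_{1,\eps} u_{2,\eps}$; the Leibniz rule together with the uniform sup-bounds on $u_{i,\eps}$ (from uniform convergence on the precompact neighborhood) and on $|\nabla u_{i,\eps}|$ yields (1)--(3), and for (4),
\[ |\nabla^2(u_{1,\eps} u_{2,\eps})| \leq |u_{2,\eps}||\nabla^2 u_{1,\eps}| + 2|\nabla u_{1,\eps}||\nabla u_{2,\eps}| + |u_{1,\eps}||\nabla^2 u_{2,\eps}|, \]
whose integral is uniformly bounded.

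The main work is the case $u = |X|$ for a $C^2$ section $X$ of $E$. On a precompact coordinate chart $U \subset N$ with smooth boundary I would set
\[ u_\eps := \sqrt{|X|^2 + \eps^2}. \]
Then $0 \leq u_\eps - |X| \leq \eps$ gives (1), and differentiating $u_\eps^2 = |X|^2+\eps^2$ yields $u_\eps \nabla u_\eps = \langle X, \nabla X\rangle$ and hence $|\nabla u_\eps| \leq |\nabla X|$, proving (3). For (2), on the open set $\{X \neq 0\}$ the gradients converge pointwise to $\nabla |X|$, while on $\{X = 0\}$ the Lipschitz function $|X|$ attains its minimum, so at every point of differentiability (a.e.\ by Rademacher) its gradient vanishes, matching $\nabla u_\eps \equiv 0$ there.

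The hard part and the main obstacle is condition (4): naive pointwise estimates on $|\nabla^2 u_\eps|$ contain terms of size $|\nabla X|^2/u_\eps$ whose $L^1$-norm diverges as $\eps \to 0$. The remedy is a positivity cancellation. Differentiating $u_\eps \nabla u_\eps = \langle X, \nabla X\rangle$ once more produces
\[ u_\eps \nabla^2 u_\eps - \langle X, \nabla^2 X\rangle = \langle \nabla X, \nabla X\rangle - \nabla u_\eps \otimes \nabla u_\eps =: A, \]
and the key observation is that $A$ is pointwise positive semidefinite: for any tangent vector $v$,
\[ A(v,v) = |\nabla_v X|^2 - \frac{\langle X, \nabla_v X\rangle^2}{u_\eps^2} \geq |\nabla_v X|^2 \Bigl(1 - \frac{|X|^2}{u_\eps^2}\Bigr) \geq 0. \]
Since any symmetric positive semidefinite endomorphism satisfies $|A| \leq \tr A$,
\[ u_\eps |\nabla^2 u_\eps| \leq u_\eps\, \triangle u_\eps - \langle X, \triangle X\rangle + |\langle X, \nabla^2 X\rangle| \leq u_\eps\, \triangle u_\eps + C|X||\nabla^2 X|. \]
Dividing by $u_\eps$ and using $|X|/u_\eps \leq 1$ gives the pointwise bound $|\nabla^2 u_\eps| \leq \triangle u_\eps + C|\nabla^2 X|$. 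Integrating over $U$, the divergence theorem converts $\int_U \triangle u_\eps\, dg$ into $\int_{\partial U} \partial_\nu u_\eps\, d\sigma$, bounded in absolute value by $|\partial U| \cdot \sup|\nabla X|$, while $\int_U |\nabla^2 X|\, dg$ is finite since $X \in C^2$ on the precompact $U$. This establishes (4).

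The scalar case $u = |u_1|$ for $u_1$ merely weakly twice-differentiable is reduced to the above argument by setting $v_\eps := \sqrt{u_{1,\eps}^2 + \eps^2}$ and running the identical PSD computation with $X$ replaced by the $C^2$ approximation $u_{1,\eps}$; the resulting bounds depend only on the $L^\infty$-control of $|\nabla u_{1,\eps}|$ and the $L^1$-control of $|\nabla^2 u_{1,\eps}|$, both supplied by conditions (3) and (4) for the $u_1$-family. Uniform convergence $v_\eps \to |u_1|$ follows from that of $u_{1,\eps} \to u_1$, and for a.e.\ gradient convergence one uses the standard fact that $\nabla u_1 = 0$ a.e.\ on $\{u_1 = 0\}$ for locally Lipschitz $u_1$, which forces $\nabla v_\eps \to 0 = \nabla|u_1|$ on that set.
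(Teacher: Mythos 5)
Your proof is correct and rests on the same central idea as the paper's: define the regularization $u_\eps = \sqrt{|X|^2 + \eps^2}$, observe that the Hessian of $u_\eps$ has a one-sided bound from below, and use that to cancel the naive $|\nabla X|^2/u_\eps$ blow-up in the $L^1$-bound on $|\nabla^2 u_\eps|$. The implementation differs. The paper works direction-by-direction: it records the inequality $\partial^2_{v,v}u_\eps \geq \vec u_\eps \cdot \partial^2_{v,v}\vec u_\eps / u_\eps$, which gives an $L^1$-bound on the \emph{negative} part $(\partial^2_{v,v}u_\eps)_-$, and then kills the signed integral $\int (\partial^2_{v,v}u_\eps)\phi$ by moving one derivative onto a compactly supported cutoff $\phi$, avoiding boundary terms altogether (with an exhaustion $\phi_i$ in the boundary chart case). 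You instead package the same one-sided bound as positive semidefiniteness of $A := u_\eps\nabla^2 u_\eps - \langle X,\nabla^2 X\rangle$, invoke $|A| \leq \tr A$ to get the clean pointwise inequality $|\nabla^2 u_\eps| \leq \triangle u_\eps + C|\nabla^2 X|$, and dispose of $\int_U \triangle u_\eps$ by the divergence theorem, bounding the flux by the uniform gradient estimate. Both routes are sound; yours is a bit more coordinate-free and yields a pointwise inequality rather than a one-direction-at-a-time one, while the paper's cutoff device sidesteps any need for boundary regularity of $U$ or $C^2$-extension of $u_\eps$ to $\overline U$, which in your version (especially for the $|u_1|$ case, where $u_{1,\eps}$ is only $C^2$ on the open $U'$) requires an explicit shrinking step $\overline{U''}\subset U'$ that you leave implicit. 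Everything else — the trivial $C^2$ case, the linear/product combinations, the a.e.\ gradient convergence via $\nabla u_1 = 0$ a.e.\ on $\{u_1 = 0\}$, and the reduction of $|u_1|$ to the $|X|$ computation applied to the approximating family — matches the paper's reasoning.
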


\begin{proof}
The fact that $u_1 \pm u_2$, $u_1 u_2$ are weakly twice differentiable is clear.
By passing to a coordinate system, the remaining statements can be reduced to the following claim.

\begin{Claim}
Assume that $U \subset \IR^n$ is either the open Euclidean unit ball $B_1$ or the intersection $B_1 \cap \IR^{n-1} \times [0, \infty)$ and consider a vector-valued function $\vec u : U \to \IR^k$ whose component functions are weakly twice differentiable.
Suppose there is a constant $C < \infty$ and a family $(\vec u_\eps)_{\eps \in (0,1)}$ whose component functions satisfy Properties~\ref{Def_weakly_2_diff_1}--\ref{Def_weakly_2_diff_4} of Definition~\ref{Def_weakly_2_diff}.

Then there is a neighborhood $U' \subset U$ of $\vec 0$, a constant $C' < \infty$ and a family $(u'_\eps)_{\eps \in (0,1)}$ such that Properties~\ref{Def_weakly_2_diff_1}--\ref{Def_weakly_2_diff_4} of Definition~\ref{Def_weakly_2_diff} hold for $u' := |\vec u|$.
\end{Claim}

\begin{proof}
Note that without loss of generality, we may choose the Euclidean metric $g = g_{\eucl}$ on $U$ to verify Properties~\ref{Def_weakly_2_diff_1}--\ref{Def_weakly_2_diff_4}.
Set
\[ u'_\eps := \sqrt{ |\vec u_\eps|^2 + \eps } . \]
Then $u'_\eps \to u'$ uniformly and $u'$ is locally Lipschitz.
Moreover, for any $v \in \IR^n$
\begin{multline} \label{eq_up_eps_der}
 |\partial_v u'_\eps| = \bigg| \frac{\vec u_\eps \cdot  \partial_v \vec u_\eps}{\sqrt{|\vec u_\eps|^2 + \eps}} \bigg| \leq  |\partial_v \vec u_\eps|, \qquad \\
\partial^2_{v,v} u'_\eps = \frac{\vec u_\eps \cdot  \partial^2_{v,v} \vec u_\eps}{\sqrt{| \vec u_\eps |^2 + \eps}} + \frac{|\partial_v \vec u_\eps|^2}{\sqrt{|\vec u_\eps|^2 + \eps}} - \frac{| \vec u_\eps |^2 \, |\partial_v \vec u_\eps|^2}{(|\vec u_\eps|^2 + \eps)^{3/2}} 
\geq \frac{\vec u_\eps \cdot \partial^2_{v,v}  \vec u_\eps }{\sqrt{|\vec u_\eps|^2 + \eps}}. 
\end{multline}
So $|\partial_v u'_\eps| \leq |\partial_v \vec u_\eps| \leq C$.
It remains to verify Properties~\ref{Def_weakly_2_diff_2}, \ref{Def_weakly_2_diff_4} of Definition~\ref{Def_weakly_2_diff}.

For Property~\ref{Def_weakly_2_diff_2} observe that due to the local Lipschitz property, $\nabla \vec u, \nabla u'$ both exist almost everywhere.
Consider a point $ y \in U$ at which both derivatives exist.
If $\vec u ( y) \neq 0$, then $\lim_{\eps \to 0} (\nabla u'_\eps)( y) =  (|\vec u|^{-1} \vec u \cdot \nabla \vec u)( y) =( \nabla u')( y)$.
If $\vec u( y) = 0$, then we must have $(\nabla u' )( y)= 0$, which implies that $\nabla \vec u( y) = 0$ and therefore $\lim_{\eps \to 0} (\nabla u'_\eps)( y) = (\nabla u') ( y)$, as well.

It remains to verify Property~\ref{Def_weakly_2_diff_4}.
Fix some $v \in \IR^n$ with $|v| = 1$.
By (\ref{eq_up_eps_der}), the negative part of $\partial^2_{v,v} u'_\eps$ satisfies
\[ \int_U (\partial^2_{v,v} u'_\eps)_-  dg_{\eucl}  \leq  \int_U  \frac{|\vec u_\eps| \, |\partial^2_{v,v} \vec u_\eps|}{\sqrt{| \vec u_\eps|^2 + \eps}} dg_{\eucl}
\leq \int_U |\partial^2_{v,v} \vec u_\eps| dg_{\eucl} \leq C. \]
Let $\phi \in C^2_c (\Int U)$ be a function that is compactly supported inside the interior of $U$ and satisfies $0 \leq \phi \leq 1$.
Then due to integration by parts we have
\[  \int_U (\partial^2_{v,v} u'_\eps) \phi \,  dg_{\eucl}
= - \int_U (\partial_{v} u'_\eps)(\partial_v \phi) \, dg_{\eucl}
\leq  C \int_U |\partial_v \phi| dg_{\eucl}. \]
It follows that
\begin{equation} \label{eq_abs_partial2_vv_up_eps}
 \int_U |\partial^2_{v,v} u'_\eps| \phi \,  dg_{\eucl}
\leq 2\int_U (\partial^2_{v,v} u'_\eps)_- \phi \,  dg_{\eucl} + \int_U (\partial^2_{v,v} u'_\eps) \phi \,  dg_{\eucl} \leq 2C + C \int_U |\partial_v \phi| dg_{\eucl}. 
\end{equation}
If $U = B_1$, then we are done by choosing $\phi$ such that $\phi \equiv 1$ on $B_{1/2}$.
If $U = B_1 \cap \IR^{n-1} \times [0, \infty)$, then we choose a sequence $\phi_i \in C^2_c (\Int U)$, $0 \leq \phi_i \leq 1$ with $\phi_i \to 1$ pointwise on $U' := B_{1/2} \cap U$ such that the last integral in (\ref{eq_abs_partial2_vv_up_eps}) remains uniformly bounded, for example $\phi_i = \phi' \eta(i y_n)$, where $\phi' \in C^2_c( U)$ with $\phi' \equiv 1$ on $U'$ and $\eta$ is a suitable cutoff function.
After passing to a limit we obtain that there is a uniform constant $C' < \infty$ such that
\[  \int_{U'} |\partial^2_{v,v} u'_\eps|  \,  dg_{\eucl} \leq C'. \]
This proves Property~\ref{Def_weakly_2_diff_4}.
\end{proof}

This finishes the proof of the proposition.
\end{proof}

The next proposition states that we can still make sense of the heat operator applied to a weakly twice differentiable function $u$ if we view $d\mu_{\square u} := \square u \, dg$ as a signed measure.

\begin{Proposition} \label{Prop_weak_square}
Let $(M, (g_t)_{t \in I})$ be a Ricci flow on a compact manifold and $u \in C^0 (M \times I)$ a weakly twice differentiable function.
Then the spatial gradient $\nabla u$ exists almost everywhere on $M \times I$ and for every compactly supported vector field $X \in C^1_c (M ; TM)$ and any time $t \in I$ we have
\begin{equation} \label{eq_weak_nabla_duality}
 \int_M X \cdot \nabla u \, dg_t = - \int_M (\DIV X) u \, dg_t. 
\end{equation}
Moreover, there is a unique signed measure $\mu_{\square u}$ on $M \times I$ of locally finite total variation such that for any compact sub-interval $[t_1, t_2] \subset I$ and any compactly supported $\phi \in C^2_c(  M \times [t_1, t_2])$ we have
\begin{equation} \label{eq_weak_square_duality}
  \int_{M \times [t_1, t_2]} \phi \, d\mu_{\square u} 
= \int_{M} u \, \phi \, dg_t \bigg|_{t=t_1}^{t=t_2} + \int_{t_1}^{t_2} \int_M u \square^* \phi \, dg_t dt
= \int_{t_1}^{t_2} \int_M (\partial_t u \,  \phi  + \nabla u \cdot \nabla \phi ) dg_t dt . 
\end{equation}
We have $d\mu_{\square u} =( \square u )dg_t dt$ wherever $u$ is $C^2$. 
\end{Proposition}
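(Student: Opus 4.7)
First, I would establish the easier parts. Choosing the spacetime metric $g = g_t + dt^2$ on the interior of $M \times I$, Definition~\ref{Def_weakly_2_diff} yields $|\nabla u_\eps| \leq C$ for the full spacetime gradient, and combined with the uniform convergence $u_\eps \to u$ this forces $u$ to be locally Lipschitz on $M \times I$. By Rademacher's theorem both $\nabla u$ (spatial) and $\partial_t u$ then exist almost everywhere. For each fixed $t$ the restriction $u(\cdot,t)$ is Lipschitz on $M$, so its almost-everywhere spatial gradient coincides with its distributional gradient and (\ref{eq_weak_nabla_duality}) follows immediately from the standard integration by parts for Lipschitz functions.

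For the measure $\mu_{\square u}$, I would work locally. Fix a chart $U \subset M \times I$ carrying the approximating family $(u_\eps)_{\eps \in (0,1)}$ from Definition~\ref{Def_weakly_2_diff}, and define signed measures
\[
 d\mu_\eps := (\square u_\eps)\, dg_t\, dt = (\partial_t u_\eps - \triangle u_\eps)\, dg_t\, dt \qquad \text{on } U.
\]
The crucial estimate is that for every compact $K \subset U$ the total variation $|\mu_\eps|(K)$ is bounded uniformly in $\eps$: the $\partial_t u_\eps$ contribution is controlled by $|\nabla u_\eps| \leq C$ in the spacetime metric, and the $\triangle u_\eps$ contribution is controlled by $\int_U |\nabla^2 u_\eps|\, dg \leq C$, since the spatial Laplacian is dominated by the spacetime Hessian up to Christoffel corrections coming from $\partial_t g_t$. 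By weak$^*$ compactness of bounded signed Radon measures, extract a subsequence $\mu_{\eps_k} \rightharpoonup \mu$ on $U$.

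Next, I would pass to the limit in the classical duality
\[
 \int_{M\times[t_1,t_2]} \phi\, d\mu_\eps = \int_M u_\eps\, \phi\, dg_t\Big|_{t=t_1}^{t=t_2} + \int_{t_1}^{t_2}\!\!\int_M u_\eps \, \square^* \phi\, dg_t\, dt,
\]
valid for smooth $u_\eps$ and any $\phi \in C^2_c(M \times [t_1,t_2])$ supported in $U$. Uniform convergence $u_{\eps_k} \to u$ makes the right-hand side converge to the corresponding expression for $u$, and $\phi \in C^2_c$ guarantees weak$^*$ convergence on the left; this yields the first equality of (\ref{eq_weak_square_duality}) for $\mu$. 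The second equality then follows by integrating $-\int u\, \triangle \phi\, dg_t$ by parts in space via (\ref{eq_weak_nabla_duality}) and combining with the time product rule together with $\partial_t dg_t = -R\, dg_t$. Because any signed Radon measure of locally finite total variation is determined by its action on $C^2_c$ functions, $\mu$ is unique; this ensures that the full family $\mu_\eps$ converges, that the local constructions patch together via a partition of unity to produce a global $\mu_{\square u}$, and (by taking the trivial approximation $u_\eps \equiv u$) that $d\mu_{\square u} = \square u\, dg_t\, dt$ wherever $u$ is $C^2$. The main technical hurdle is precisely the uniform total variation bound on $\mu_\eps$, which requires translating the abstract gradient and Hessian bounds of Definition~\ref{Def_weakly_2_diff} into $L^1$ control of $\square u_\eps$; the remark in that definition about comparability between Riemannian metrics is exactly what makes this translation automatic up to a constant.
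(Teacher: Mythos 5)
Your proof is correct and follows essentially the same route as the paper's: both arguments reduce to a local statement, use the approximating family $(u_\eps)$ to obtain a uniform bound of the form $\int_U |\square u_\eps|\, dg_t\, dt \leq C'$ (drawing on $|\nabla u_\eps| \leq C$ for the $\partial_t$ part and $\int |\nabla^2 u_\eps|\, dg \leq C$ for the $\triangle$ part), and then produce the measure from this bound. The only packaging difference is that the paper defines the linear functional $F(\phi) = \lim_{\eps\to 0}\int\!\!\int (\square u_\eps)\phi\,dg_t\,dt$ directly — the limit exists by uniform convergence of $u_\eps$, so no subsequence is needed — and then invokes Riesz--Markov, whereas you extract a weak$^*$ subsequential limit and recover uniqueness afterward from the duality formula; these are interchangeable. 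The remaining identities in (\ref{eq_weak_nabla_duality}), (\ref{eq_weak_square_duality}) are obtained in both cases by Lipschitz integration by parts, which you spell out a bit more explicitly (via Rademacher on the spacetime metric) than the paper's brief appeal to Stokes' theorem.
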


\begin{proof}
We will show the local version of this proposition, i.e. for every $(x_0, t_0) \in M \times I$ there is an open neighborhood $(x_0, t_0) \in U \subset M \times I$ such that there is a unique signed measure $\mu_{\square u}$ of finite total variation satisfying (\ref{eq_weak_nabla_duality}), (\ref{eq_weak_square_duality}) for all $\phi \in C^2_c (U) \subset C^2_c (M \times I)$.
The proposition in its full generality follows from this statement using a partition of unity.

So assume that $(x_0, t_0) \in M \times I$ is given and choose $(x_0, t_0) \in U \subset M \times I$, $C < \infty$ and $(u_\eps)_{\eps \in (0,1)}$ as in Definition~\ref{Def_weakly_2_diff}.
Consider the functional $F : C^2_c ( U ) \to \IR$ defined by
\[ F(\phi) := \int_{M} u \, \phi \, dg_t \bigg|_{t=t_1}^{t=t_2} + \int_{t_1}^{t_2} \int_M u \square^* \phi \, dg_t dt, \]
where $[t_1, t_2] \subset I$ is chosen such that $\supp u \subset M \times [t_1, t_2]$.
We have
\[ |F(\phi)| = \lim_{\eps \to 0}  \bigg| \int_{M} u_\eps \phi \, dg_t \bigg|_{t=t_1}^{t=t_2} + \int_{t_1}^{t_2} \int_M u_\eps \square^* \phi \, dg_t dt \bigg|
=   \lim_{\eps \to 0} \bigg| \int_{t_1}^{t_2} \int_M (\square u_\eps) \, \phi \, dg_t dt \bigg|
\leq C' \max_{U} |\phi| , \]
where $C' < \infty$ depends only on $C$ and the Ricci flow.
So by the Riesz-Markov Theorem there is a unique signed measure $\mu_{\square u}$ of finite total variation on $U$ such that
\[ F(\phi) = \int_{U} \phi \, d\mu_{\square u}. \]
This proves the first equality in (\ref{eq_weak_square_duality}).
The other equalities in (\ref{eq_weak_nabla_duality}) and (\ref{eq_weak_square_duality}) follow using Stokes' Theorem; note that $u$ is locally Lipschitz.
\end{proof}

Note that any measure of the form $\mu_{\square u}$ can be expressed as the difference of its positive and negative part
\[ \mu_{\square u} = (\mu_{\square u})_+ - (\mu_{\square u})_-,\]
where $(\mu_{\square u})_\pm$ are non-negative measures.

We have the following computational rules:

\begin{Proposition} \label{Prop_mu_square_rules}
In the setting of Proposition~\ref{Prop_weak_square} the following holds.
If $u_1, u_2 \in C^0 (M \times I)$ are weakly twice differentiable and $a \in \IR$, then
\[ \mu_{\square (u_1 \pm u_2)} = \mu_{\square u_1} \pm \mu_{\square u_2}, 
\qquad \mu_{\square (au_1)} = a \mu_{\square u_1}, 
\qquad \mu_{\square |u_1| } \leq  |\mu_{\square u_1}| =  (\mu_{\square u_1})_+ + (\mu_{\square u_1})_-. \]
Moreover, we have $\mu_{\square |u_1| } = (\sign u_1) \mu_{\square u_1}$ on $M \times I \setminus u_1^{-1} (0)$.
Lastly, if $E$ is a Euclidean vector bundle over $M \times I$  equipped with a metric connection and $X \in C^2(M \times I; E)$ is a section of regularity $C^2$, then $d\mu_{\square |X| } \leq  |\square X| dg_t dt$. 
\end{Proposition}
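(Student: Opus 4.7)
The plan is to verify each assertion in turn, dispatching the linearity identities immediately and treating the two Kato-type measure inequalities by a common regularization argument. The linearity rules $\mu_{\square(u_1 \pm u_2)} = \mu_{\square u_1} \pm \mu_{\square u_2}$ and $\mu_{\square(a u_1)} = a\,\mu_{\square u_1}$ are immediate from the linearity in $u$ of the defining functional on the right-hand side of \eqref{eq_weak_square_duality} together with the uniqueness part of Proposition~\ref{Prop_weak_square}. The sign identity $\mu_{\square|u_1|} = (\sign u_1)\,\mu_{\square u_1}$ on the open complement $M \times I \setminus u_1^{-1}(0) = \{u_1 > 0\} \cup \{u_1 < 0\}$ then follows from these rules, since on the two open pieces $|u_1|$ coincides with $+u_1$, respectively $-u_1$, and one may test against functions supported in either piece.

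The main work is the signed-measure inequality $\mu_{\square|u_1|} \leq |\mu_{\square u_1}|$, which I would establish via a two-step regularization. As an intermediate claim, I first show that for every smooth convex $F : \IR \to \IR$ with bounded derivative one has $\mu_{\square F(u_1)} \leq F'(u_1)\,\mu_{\square u_1}$ as signed measures. Taking a local smooth family $(u_{1,\eps})$ supplied by Definition~\ref{Def_weakly_2_diff}, the pointwise identity $\square F(u_{1,\eps}) = F'(u_{1,\eps})\,\square u_{1,\eps} - F''(u_{1,\eps})\,|\nabla u_{1,\eps}|^2 \leq F'(u_{1,\eps})\,\square u_{1,\eps}$ holds since $F'' \geq 0$. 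Pairing with a nonnegative $\phi \in C^2_c$ and sending $\eps \to 0$, the left-hand side converges to $\int \phi\, d\mu_{\square F(u_1)}$ by the argument in the proof of Proposition~\ref{Prop_weak_square}, while on the right-hand side the uniformly bounded continuous factors $F'(u_{1,\eps})\phi$ converge uniformly to $F'(u_1)\phi$ and the measures $\mu_{\square u_{1,\eps}}$ converge weak-$\ast$ to $\mu_{\square u_1}$ with uniformly bounded total variation (from the same proof). Applying this intermediate result to $F_\delta(s) := \sqrt{s^2 + \delta}$, which is smooth convex with $|F_\delta'| \leq 1$, yields $\mu_{\square F_\delta(u_1)} \leq F_\delta'(u_1)\,\mu_{\square u_1}$. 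As $\delta \to 0$ the left-hand side converges weak-$\ast$ to $\mu_{\square|u_1|}$, while $F_\delta'(u_1) = u_1/\sqrt{u_1^2+\delta}$ is uniformly bounded by $1$ and converges pointwise to $\sign u_1$ on $\{u_1 \neq 0\}$ and to $0$ on $\{u_1 = 0\}$; dominated convergence against $|\mu_{\square u_1}|$ produces $\mu_{\square|u_1|} \leq \tilde\sigma(u_1)\,\mu_{\square u_1} \leq |\mu_{\square u_1}|$ for some $\tilde\sigma \in [-1,1]$, completing the proof. The principal obstacle is the control of the contribution of $\mu_{\square|u_1|}$ on the zero set $\{u_1 = 0\}$, where $|u_1|$ has a ``kink''; the convex smoothing $F_\delta$ is chosen precisely so that the non-positive term $-F_\delta''(u_{1,\eps})|\nabla u_{1,\eps}|^2$ absorbs this contribution with the correct sign in the limit.

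Finally, for the bundle assertion, take $X \in C^2(M \times I; E)$ and regularize by $\sqrt{|X|^2 + \delta}$ directly, using that $W := |X|^2$ is itself $C^2$. Metric compatibility gives $\partial_t W = 2\langle X,\partial_t X\rangle$ and $\triangle W = 2\langle X,\triangle X\rangle + 2|\nabla X|^2$, hence $\square W = 2\langle X,\square X\rangle - 2|\nabla X|^2$, while Kato's inequality yields $|\nabla W|^2 = 4|\langle X,\nabla X\rangle|^2 \leq 4W|\nabla X|^2 \leq 4(W+\delta)|\nabla X|^2$. A direct chain-rule computation then gives
\[
\square \sqrt{W + \delta}
\;=\; \frac{\langle X, \square X\rangle}{\sqrt{W+\delta}} - \frac{|\nabla X|^2}{\sqrt{W+\delta}} + \frac{|\nabla W|^2}{4(W+\delta)^{3/2}}
\;\leq\; \frac{\langle X,\square X\rangle}{\sqrt{W+\delta}}
\;\leq\; |\square X|
\]
pointwise on $M \times I$, where the last step uses Cauchy--Schwarz $|\langle X,\square X\rangle| \leq |X|\,|\square X| \leq \sqrt{W+\delta}\,|\square X|$. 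Sending $\delta \to 0$, the left-hand side converges weak-$\ast$ to $\mu_{\square|X|}$ by the uniform convergence $\sqrt{|X|^2+\delta} \to |X|$ and the defining duality, giving $d\mu_{\square|X|} \leq |\square X|\,dg_t\,dt$ as required.
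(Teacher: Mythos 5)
Your proof is correct, and the heart of it --- the scalar Kato-type inequality $\mu_{\square|u_1|} \leq |\mu_{\square u_1}|$ --- takes a genuinely different route from the paper's. The paper regularizes the \emph{sign}: it sets $\eta_\eps := \eta(u_1/\eps)$ for a smooth non-decreasing cutoff $\eta$ equal to $\pm 1$ away from the origin, uses $u_1\eta_\eps \to |u_1|$ pointwise, and integrates by parts directly against the weakly twice differentiable $u_1$ (never re-introducing the approximating family $u_{1,\eps}$), dropping the sign-definite term $-\eps^{-1}\eta'(u_1/\eps)|\nabla u_1|^2\phi \leq 0$ and controlling the remaining error via the smallness of $\{0<|u_1|\leq\eps\}$. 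You regularize the \emph{absolute value} itself with $F_\delta=\sqrt{(\,\cdot\,)^2+\delta}$ and route through the intermediate convexity inequality $\mu_{\square F(u_1)} \leq F'(u_1)\,\mu_{\square u_1}$, which you establish on the approximating family $u_{1,\eps}$. This costs a two-level limit ($\eps \to 0$, then $\delta \to 0$) and some bookkeeping --- that $F(u_1)$ is again weakly twice differentiable, and that $\square u_{1,\eps}\,dg_t\,dt \to \mu_{\square u_1}$ weak-$\ast$ with uniformly bounded total variation, the latter supplied by conditions (3)--(4) of Definition~\ref{Def_weakly_2_diff} --- but it buys a cleaner, reusable lemma and the slightly sharper conclusion $\mu_{\square|u_1|} \leq \tilde\sigma(u_1)\,\mu_{\square u_1}$ with $\tilde\sigma$ vanishing on $\{u_1=0\}$, from which both the inequality and the sign identity off the zero set fall out at once. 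The linearity identities and the bundle assertion (both arguments use $\sqrt{|X|^2+\delta}$ together with the Cauchy--Schwarz bound $\langle X,\square X\rangle/\sqrt{|X|^2+\delta}\leq|\square X|$) are handled essentially as in the paper.
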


\begin{proof}
All equalities follow from the duality (\ref{eq_weak_square_duality}) in Proposition~\ref{Prop_weak_square}.
The remaining statements are a consequence of the following claim.

Let us show that $\mu_{\square |u| } \leq  |\mu_{\square u}|$ for any weakly twice differentiable $u \in C^0 (M \times I)$.
Let $\eta : \IR \to [0,1]$ be a smooth and non-decreasing function with $\eta \equiv -1$ on $(-\infty, -1]$ and $\eta \equiv 1$ on $[1, \infty)$.
Set $\eta_\eps := \eta (u / \eps)$. 
Observe that $\lim_{\eps \to 0} u \eta_\eps = |u|$ pointwise and that almost everywhere
\[ \nabla \eta_\eps = \eps^{-1} \eta' (u/\eps) \nabla u, \qquad \partial_t \eta_\eps = \eps^{-1} \eta' (u/\eps) \partial_t u. \] 
Then for any compact sub-interval $I' := [t_1, t_2] \subset I$ and any compactly supported $\phi \in C^2_c (M \times I')$ with $\phi \geq 0$ we have, using Proposition~\ref{Prop_weak_square},
\begin{align*}
 \int_{M \times I'} \phi \, d\mu_{\square |u|} 
&= \int_M |u| \phi \, dg_t \bigg|_{t=t_1}^{t=t_2} +  \int_{t_1}^{t_2} \int_M |u| \square^* \phi \,  dg_t dt \\
&= \lim_{\eps \to 0} \bigg(  \int_M u \eta_\eps \phi \, dg_t \bigg|_{t=t_1}^{t=t_2} + \int_{t_1}^{t_2} \int_M u \eta_\eps  \square^* \phi \, dg_t dt \bigg) \displaybreak[1] \\
&=    \lim_{\eps \to 0} \int_{t_1}^{t_2} \int_M \big(  \partial_t ( u \eta_\eps )  \phi + \nabla (u \eta_\eps) \cdot \nabla \phi \big) \, dg_t dt \displaybreak[1]  \\
&=    \lim_{\eps \to 0} \int_{t_1}^{t_2} \int_M\big( \partial_t  u \, ( \eta_\eps  \phi ) + \eps^{-1} u \, \eta' ( u/\eps )\, \partial_t u \, \phi + \nabla u \cdot  \nabla ( \eta_\eps \phi  ) \\
&\qquad \qquad \qquad \qquad - \eps^{-1} \eta'(u/ \eps) |\nabla u|^2 \phi + \eps^{-1} u \,  \eta'( u / \eps ) \,  \nabla u \cdot \nabla \phi \big) \, dg_t dt \displaybreak[1]  \\
&\leq   \lim_{\eps \to 0} \bigg( \int_{M \times I'}   \eta_\eps  \phi \, d\mu_{\square u}
 +  \int_{t_1}^{t_2} \int_M ( \eps^{-1} u \, \eta' ( u / \eps)\, \partial_t u \, \phi + \eps^{-1} u \, \eta'(u / \eps) \,  \nabla u \cdot \nabla \phi \big) \, dg_t dt \bigg) \displaybreak[1]  \\
&\leq    \int_{M \times I'}    \phi \, d|\mu_{\square u}|
 + \limsup_{\eps \to 0} C  \int_{t_1}^{t_2} \int_M   | \eps^{-1} u \, \eta' (u / \eps)|  \, dg_t dt \displaybreak[1]  \\
&\leq    \int_{M \times I'}    \phi \, d|\mu_{\square u}|
 + \limsup_{\eps \to 0} C \int_{\{ 0 < |u| \leq \eps \} \cap M \times I'}    dg_t dt  \\
 &=   \int_{M \times I'}    \phi \, d|\mu_{\square u}|.
 \end{align*}
 
 For last statement, we argue similarly.
If $\phi \in C^2_c (M \times I')$ with $\phi \geq 0$, then we have, using Proposition~\ref{Prop_weak_square} and due to the same inequality as in (\ref{eq_up_eps_der}),
\begin{align*}
 \int_{M \times I'} \phi \, d\mu_{\square |X|} 
& = \int_M |X| \phi \, dg_t \bigg|_{t=t_1}^{t=t_2} +  \int_{t_1}^{t_2} \int_M |X| \square^* \phi \,  dg_t dt \\
 &= \lim_{\eps \to 0} \int_M \sqrt{|X|^2+\eps} \, \phi \, dg_t \bigg|_{t=t_1}^{t=t_2} +  \int_{t_1}^{t_2} \int_M \sqrt{|X|^2+\eps} \, \square^* \phi \,  dg_t dt \displaybreak[1] \\
 &=    \lim_{\eps \to 0} \int_{t_1}^{t_2} \int_M \big( \square \sqrt{|X|^2+\eps} \big)   \,\phi  \, dg_t dt  \\
& \leq \lim_{\eps \to 0} \int_{t_1}^{t_2} \int_M  \frac{X \cdot \square X}{ \sqrt{|X|^2+\eps}}  \,\phi  \, dg_t dt 
 =  \int_{t_1}^{t_2} \int_M |\square X|  \,\phi  \, dg_t dt.
\end{align*}
This finishes the proof.
\end{proof}

By abuse of notation, we will often write for any weakly twice differentiable function $u \in C^0 (M \times [t_1, t_2])$.
\[ \int_{t_1}^{t_2} \int_M ( \square u ) \, \phi \, dg_t dt := \int_{M \times [t_1,t_2]}  \phi \, d\mu_{\square u}. \]
Moreover, if $d\nu = d\nu_{x_0, t_0} = (4\pi \tau)^{-n/2} e^{-f} dg$ denotes a conjugate heat kernel for some $t_0 > t_1$, then we will also write
\[ \int_{t_1}^{t_2} \int_M ( \square u ) \, \phi \, d\nu_t dt 
= \int_{t_1}^{t_2} \int_M ( \square u ) \, \phi \, (4\pi \tau)^{-n/2} e^{-f} \, dg_t dt
= \int_{M \times [t_1,t_2]}  \phi \, (4\pi \tau)^{-n/2} e^{-f} \, d\mu_{\square u}. \]
Note that (\ref{eq_weak_square_duality}) from Proposition~\ref{Prop_weak_square} implies that, as usual,
\begin{equation} \label{eq_weak_heat_op}
 \int_{t_1}^{t_2} \int_M ( \square u ) \, d\nu_t dt = \int_M u \, d\nu_t \bigg|_{t=t_1}^{t=t_2}. 
\end{equation}
We remark that $d\mu_{\square u}$ is a measure on the spacetime $M \times [t_1, t_2]$, so expressions of the form $\int_M (\square u) \, \phi \,  dg_t$ are only defined for almost every $t \in [t_1, t_2]$.

\subsection{\texorpdfstring{Monotonicity of the $W_1$-distance}{Monotonicity of the W\_1-distance}}
We briefly recall the definition of the $W_1$-Wasserstein distance between two probability measures and its monotonicity property on a Ricci flow, which was already used extensively in \cite{Bamler_HK_entropy_estimates,Bamler_RF_compactness}.
We also refer to these papers for a more in-depth discussion.

If $\mu_1, \mu_2$ denote two probability measures on a complete and separable metric space $(X,d)$, then we define the $W_1$-distance between $\mu_1, \mu_2$ by
\begin{equation} \label{eq_d_W1_def}
 d_{W_1} (\mu_1, \mu_2) := \sup_f  \bigg( \int_X f \, d\mu_1  - \int_X f \, d\mu_2 \bigg), 
\end{equation}
where the supremum is taken over all bounded, $1$-Lipschitz functions $f : X \to \IR$.
If $(X,d)$ is the length space of a Riemannian manifold $(M,g)$, then we will often express the $W-1$-distance as $d^g_{W_1}$.

We remark that $d_{W_1}$ defines a complete metric on the space of probability measures on $X$ if we allow infinite distances \cite[Theorem~7.3]{Villani_topics_in_OT}.
For an alternative definition of $d_{W_1}$ using couplings see \cite{Bamler_RF_compactness} or \cite[Definition~7.1.1]{Villani_topics_in_OT}; the equivalence of both definitions holds due to the Kantorovich-Rubinstein Theorem \cite[Theorem~1.14]{Villani_topics_in_OT}.

We will frequently use the following monotonicity result (see \cite[\HKWoneMonotone]{Bamler_HK_entropy_estimates}):

\begin{Proposition} \label{Prop_monotonicity_W1}
Let $(M, (g_t)_{t \in I})$ be a super Ricci flow on a compact manifold and denote by $v_1, v_2 \in C^\infty(M \times I')$, $I' \subset I$, two non-negative solutions to the conjugate heat equation $\square^* v_1 = \square^* v_2 = 0$ such that $\int_M v_i (\cdot, t) dg_t = 1$ for all $t \in I'$, $i=1,2$.
Denote by $\mu_{1,t}, \mu_{2,t}$ the associated probability measures with $d\mu_{i,t} = v_i(\cdot, t) dg_t$, $i=1,2$.
Then
\[ I' \longrightarrow [0, \infty], \qquad t \longmapsto d^{g_t}_{W_1} ( \mu_{1,t}, \mu_{2,t} ) \]
is non-decreasing.
Moreover, for any two points $x_1, x_2 \in M$ and $t_0 \in I$ we have for all $t \leq t_0$, $t \in I$
\[ d^{g_t}_{W_1} (\nu_{x_1,t_0; t}, \nu_{x_2, t_0; t} ) \leq d_{t_0} (x_1, x_2). \]
\end{Proposition}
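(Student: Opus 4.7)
The plan is to prove Proposition~\ref{Prop_monotonicity_W1} via the Kantorovich--Rubinstein duality (\ref{eq_d_W1_def}): propagate a near-optimal $1$-Lipschitz function forward in time by the heat equation, and show that both its Lipschitz constant and its pairings with $v_1, v_2$ behave favorably. For the monotonicity claim, fix $t_1 < t_2$ in $I'$ and $\eps > 0$, and pick a bounded $1$-Lipschitz function $f_1$ with respect to $g_{t_1}$ whose pairing with $\mu_{1,t_1} - \mu_{2,t_1}$ is within $\eps$ of $d^{g_{t_1}}_{W_1}(\mu_{1,t_1}, \mu_{2,t_1})$. After mollification I obtain a smooth $\tilde f_1$ with $|\nabla \tilde f_1|_{g_{t_1}} \leq 1+\eps$ and $\|\tilde f_1 - f_1\|_{L^\infty}$ small. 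Solve the forward heat equation $\square f = 0$ on $M \times [t_1,t_2]$ with $f(\cdot, t_1) = \tilde f_1$; since $M$ is compact this solution is smooth for $t > t_1$ and continuous at $t_1$.

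Two parabolic identities drive the main argument. First, using $\partial_t\, dg_t = \tfrac12 (\mathrm{tr}_g \partial_t g)\,dg_t$, $\square f = 0$, and $\square^* v_i = 0$ (interpreting $\square^*$ as the formal adjoint consistent with the stated mass normalization $\int v_i\, dg_t = 1$), an integration by parts gives $\tfrac{d}{dt}\int_M f v_i\, dg_t = 0$. Second, the Bochner identity combined with the evolution of $g^{-1}$ yields
\[ \square |\nabla f|^2 \,=\, 2 \langle \nabla \square f, \nabla f\rangle - 2|\nabla^2 f|^2 - (\partial_t g + 2\Ric)(\nabla f, \nabla f), \]
and since the super Ricci flow condition is exactly $\partial_t g + 2\Ric \geq 0$, combined with $\square f = 0$ we obtain $\square |\nabla f|^2 \leq 0$. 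The parabolic maximum principle on the compact manifold then ensures $\sup_M |\nabla f(\cdot, t)|^2_{g_t}$ is non-increasing, so $f(\cdot, t_2)$ is $(1+\eps)$-Lipschitz with respect to $g_{t_2}$. Combining both identities with (\ref{eq_d_W1_def}) yields
\[ d^{g_{t_1}}_{W_1}(\mu_{1,t_1}, \mu_{2,t_1}) - C\eps \,\leq\, \int f(\cdot,t_2)\,(d\mu_{1,t_2} - d\mu_{2,t_2}) \,\leq\, (1+\eps)\, d^{g_{t_2}}_{W_1}(\mu_{1,t_2}, \mu_{2,t_2}), \]
and sending $\eps \to 0$ proves the monotonicity.

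For the final inequality, I apply the monotonicity to $v_i := K(x_i, t_0; \cdot, \cdot)$ on any sub-interval $[s, s'] \subset I \cap (-\infty, t_0)$: this gives $d^{g_s}_{W_1}(\nu_{x_1,t_0;s}, \nu_{x_2,t_0;s}) \leq d^{g_{s'}}_{W_1}(\nu_{x_1,t_0;s'}, \nu_{x_2,t_0;s'})$. Letting $s' \nearrow t_0$, standard heat-kernel concentration gives $d^{g_{s'}}_{W_1}(\nu_{x_i,t_0;s'}, \delta_{x_i}) \to 0$, while smoothness of $g_t$ near $t_0$ means that $g_{s'}$-Lipschitz test functions against $\nu_{x_1,t_0;s'} - \nu_{x_2,t_0;s'}$ are bounded in the limit by $d_{t_0}(x_1, x_2)$ via the triangle inequality.

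The main obstacle will be making the Bochner/maximum-principle step fully rigorous: I must correctly identify the formal adjoint $\square^*$ in the super Ricci flow setting so that $\tfrac{d}{dt}\int f v_i\, dg_t$ indeed vanishes under the prescribed normalization, and handle the fact that the initial datum only has Lipschitz regularity, so that $\sup|\nabla f|^2_{g_{t_1}} \leq (1+\eps)^2$ holds only distributionally. Both issues are resolved by the mollification step combined with the instantaneous smoothing of the forward heat equation on the compact manifold, which lets the classical maximum principle propagate the gradient bound from $t_1$ to $t_2$.
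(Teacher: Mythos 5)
Your proposal is correct and follows essentially the same route as the paper's cited reference (Lemma 2.7 of the companion paper, which this paper delegates the proof to): dualize via Kantorovich--Rubinstein, evolve a near-optimal $1$-Lipschitz test function forward by the heat equation, and use the Bochner-type inequality $\square|\nabla f|^2 = -2|\nabla^2 f|^2 - (\partial_t g + 2\Ric)(\nabla f, \nabla f) \le 0$ under the super Ricci flow condition to control the Lipschitz constant, while the adjointness identity $\frac{d}{dt}\int_M f v_i\, dg_t = \int_M (\square f) v_i\, dg_t - \int_M f\, \square^* v_i\, dg_t = 0$ preserves the pairings. The derivation of the final inequality by applying the monotonicity to $v_i = K(x_i,t_0;\cdot,\cdot)$ on $[s,s']$, letting $s' \nearrow t_0$, and using heat-kernel concentration together with the $W_1$ triangle inequality is also the standard and correct way to finish.
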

\bigskip

\subsection{Variance and concentration bounds}
The concept of the variance between two measures, and its monotonicity on a Ricci flow, has been the cornerstone for the heat kernel and entropy bounds in \cite{Bamler_HK_entropy_estimates}, as well as the compactness theory and definition of a metric flow in \cite{Bamler_RF_compactness}.
It also plays an important role in this paper.
We recall the following definition from \cite{Bamler_HK_entropy_estimates}:

\begin{Definition}[Variance]
If $\mu_1, \mu_2$ are two probability measures on a complete and separable metric space $(X,d)$, then the {\bf variance between $\mu_1$ (and $\mu_2$)} is denoted by
\[ \Var (\mu_1, \mu_2) = \int_X \int_X d^2(x_1, x_2) \, d\mu_1(x_1) d\mu_2 (x_2), 
\qquad \Var (\mu_1) = \Var (\mu_1, \mu_1). \]
If $(M, (g_t)_{t \in I})$ is a Ricci flow, then we will sometimes write $\Var_t$ for the variance at time $t$, in case there is a chance of confusion.
\end{Definition}

The following lemma is a restatement of \cite[\HKPropVarTriangle]{Bamler_HK_entropy_estimates} or \cite[\SYNVarIdentities]{Bamler_RF_compactness}.

\begin{Lemma}
If $\mu_1, \mu_2, \mu_3$ denote three probability measures on a complete and separable metric space $(X, d)$, then
\[ \sqrt{\Var(\mu_1, \mu_3)} \leq \sqrt{\Var(\mu_1, \mu_2)} + \sqrt{\Var(\mu_2, \mu_3)}, \]
\[ d_{W_1} (\mu_1, \mu_2) \leq \sqrt{\Var(\mu_1, \mu_2)} \leq d_{W_1} (\mu_1, \mu_2) + \sqrt{\Var(\mu_1)} + \sqrt{\Var(\mu_2)}. \]
\end{Lemma}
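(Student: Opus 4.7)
My plan is to handle the three inequalities separately, each via Minkowski's inequality in $L^2$ applied on a suitable product space, and each exploiting the fact that the factors $\mu_i$ are probability measures (so integrating a function against $\mu_i$ does not change its $L^2$ norm with respect to tensor products involving $\mu_i$).

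For the first inequality, I would pull back the pointwise triangle bound $d(x_1,x_3)\leq d(x_1,x_2)+d(x_2,x_3)$ to $X^3$ equipped with the product measure $\mu_1\otimes\mu_2\otimes\mu_3$ and take $L^2$ norms. Because $\int d\mu_2=1$, the squared $L^2(\mu_1\otimes\mu_2\otimes\mu_3)$-norm of $(x_1,x_2,x_3)\mapsto d(x_1,x_3)$ equals $\Var(\mu_1,\mu_3)$; the same applies to the other two terms, yielding $\sqrt{\Var(\mu_1,\mu_2)}$ and $\sqrt{\Var(\mu_2,\mu_3)}$. Minkowski's inequality then concludes. For the lower bound $d_{W_1}(\mu_1,\mu_2)\leq\sqrt{\Var(\mu_1,\mu_2)}$, I would use the coupling formulation of $W_1$ (equivalent to (\ref{eq_d_W1_def}) by Kantorovich–Rubinstein): since $\mu_1\otimes\mu_2$ is itself a valid coupling,
\[
d_{W_1}(\mu_1,\mu_2)\;\leq\; \int\!\!\int d(x,y)\,d\mu_1(x)d\mu_2(y)\;\leq\; \Big(\int\!\!\int d^2(x,y)\,d\mu_1(x)d\mu_2(y)\Big)^{1/2}=\sqrt{\Var(\mu_1,\mu_2)},
\]
where the middle step is Cauchy–Schwarz against the probability measure $\mu_1\otimes\mu_2$.

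The main obstacle is the upper bound $\sqrt{\Var(\mu_1,\mu_2)}\leq d_{W_1}(\mu_1,\mu_2)+\sqrt{\Var(\mu_1)}+\sqrt{\Var(\mu_2)}$. Here a naive Minkowski argument on $X^4$ would give a bound involving $(\int d^2 d\pi)^{1/2}$ for some coupling $\pi$, which is not controlled by $d_{W_1}$. The trick I would use is to integrate out the auxiliary variables before taking the $L^2$ norm. Fix $\varepsilon>0$ and pick a coupling $\pi$ of $\mu_1,\mu_2$ with $\int d\,d\pi<d_{W_1}(\mu_1,\mu_2)+\varepsilon$. For any $(x,y)\in X\times X$ and $(p,q)\in X\times X$, the triangle inequality $d(x,y)\leq d(x,p)+d(p,q)+d(q,y)$ integrated against $\pi(dp,dq)$ (whose marginals are $\mu_1,\mu_2$) gives
\[
d(x,y)\;\leq\;\int d(x,p)\,d\mu_1(p)+\int d\,d\pi+\int d(q,y)\,d\mu_2(q).
\]
Now I would apply Minkowski's inequality in $L^2(\mu_1\otimes\mu_2)$: the middle term is constant, and the first and third terms depend only on $x$ and $y$ respectively. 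Cauchy–Schwarz on the inner integrals, e.g.\ $\bigl(\int d(x,p)\,d\mu_1(p)\bigr)^2\leq \int d^2(x,p)\,d\mu_1(p)$, followed by integration against $d\mu_1(x)$, bounds the $L^2(\mu_1)$-norm of the first term by $\sqrt{\Var(\mu_1)}$, and similarly for the third. This yields $\sqrt{\Var(\mu_1,\mu_2)}\leq \sqrt{\Var(\mu_1)}+\int d\,d\pi+\sqrt{\Var(\mu_2)}$, and letting $\varepsilon\to 0$ completes the proof. Degenerate cases (an infinite variance) make the stated inequalities vacuous, so no extra care is needed there.
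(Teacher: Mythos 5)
Your argument is correct. All three inequalities reduce cleanly to Minkowski's inequality in an appropriate $L^2$ space of product measures, with the key observation for the hard direction being to integrate the triangle inequality against a near-optimal coupling before taking the $L^2(\mu_1\otimes\mu_2)$ norm, so that the middle term collapses to $\int d\,d\pi$ rather than an uncontrolled $L^2$ quantity. The paper itself does not reprove this lemma but cites it from \cite{Bamler_HK_entropy_estimates} (Lemma~3.2 there), so there is no in-text proof to compare against; your argument is the standard one and matches the expected approach. One minor point worth making explicit: for the last inequality, if $d_{W_1}(\mu_1,\mu_2)=\infty$ there is nothing to show, and otherwise a coupling $\pi$ with $\int d\,d\pi$ finite and within $\varepsilon$ of the infimum exists by the Kantorovich--Rubinstein formulation, which you invoke correctly.
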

\medskip

For the remainder of this subsection consider a Ricci flow $(M, (g_t)_{t \in I})$ on a compact, $n$-dimensional manifold.
The following proposition summarizes  \cite[\HKVarMonotonicityCHFHnConcentration]{Bamler_HK_entropy_estimates}:

\begin{Proposition} \label{Prop_monotonicity_Var}
If $(M, (g_t)_{t \in I})$ is a Ricci flow on a compact manifold, $v_1, v_2 \in C^\infty(M \times I')$ are two solutions to the conjugate heat $\square^* v_i = 0$ equation over a subinterval $I' \subset I$ with $\int_M v_i(\cdot, t) \, dg_t = 1$, $i=1,2$, and $d\mu_{i,t} := v_i(\cdot, t) dg_t$ denote the corresponding families of probability measures, then in the barrier sense
\[ - \frac{d}{dt} \Var_t ( \mu_{1,t}, \mu_{2,t}) \leq H_n := \frac{(n-1)\pi^2}{2} + 4 . \]
Moreover, for any two points $x_1, x_2 \in M$ and any time $s,t \in I$, $s \leq t$, the conjugate heat kernels $\nu_{x_1, t; s}, \nu_{x_2, t; s}$ based at $(x_1, t)$, $(x_2, t)$ satisfy the following bound
\begin{equation} \label{eq_Var_difference}
 \Var_s (\nu_{x_1, t;s}, \nu_{x_2, t;s}) \leq d^2_{t} (x_1, x_2) + H_n (t-s), \qquad \Var_s (\nu_{x_1, t;s} ) \leq H_n (t-s). 
\end{equation}
\end{Proposition}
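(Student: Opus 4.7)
\smallskip

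\textbf{Proof plan for Proposition~\ref{Prop_monotonicity_Var}.} My plan is to derive the variance monotonicity from a pointwise PDE inequality for the squared distance function on a super Ricci flow, and then pass to variance bounds for conjugate heat kernels by taking appropriate limits as $s \nearrow t$.

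The first step is a straightforward computation of the time-derivative of $\Var_t(\mu_{1,t},\mu_{2,t})$. Writing $u(x,y,t) := d_t^2(x,y)$ and assuming for the moment that everything is smooth, using $\partial_t v_i = -\triangle v_i + Rv_i$ and $\partial_t dg_t = -R\, dg_t$, the terms involving the scalar curvature cancel on each factor and integration by parts in both the $x$- and $y$-variables yields
\begin{equation*}
\frac{d}{dt} \Var_t(\mu_{1,t},\mu_{2,t}) \;=\; \int_M\int_M \big(\partial_t - \triangle_x - \triangle_y\big) d_t^2(x,y) \, v_1(x,t)\,v_2(y,t)\, dg_t(x)\, dg_t(y).
\end{equation*}
So the variance monotonicity reduces to the pointwise heat-type inequality
\begin{equation*}
\big(\partial_t - \triangle_x - \triangle_y\big) d_t^2(x,y) \;\geq\; -H_n \qquad \text{in the barrier sense.}
\end{equation*}

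The second step is to establish this inequality. Using the identities $\partial_t d^2 = 2d\,\partial_t d$ and $\triangle d^2 = 2d\,\triangle d + 2|\nabla d|^2$ together with $|\nabla_x d|^2 = |\nabla_y d|^2 = 1$, one reduces the task to showing
\begin{equation*}
\big(\partial_t - \triangle_x - \triangle_y\big) d_t(x,y) \;\geq\; -\frac{(n-1)\pi^2}{4\, d_t(x,y)} \qquad \text{in the barrier sense,}
\end{equation*}
since then $(\partial_t - \triangle_x - \triangle_y) d_t^2 \geq -\tfrac{(n-1)\pi^2}{2} - 4 = -H_n$. The distance inequality itself comes from combining two classical ingredients: Perelman's Harnack-type estimate for $\partial_t d_t$ on a (super) Ricci flow (bounding $\partial_t d_t$ below by an integral of $\Ric$ along a minimizing geodesic, optimized over an auxiliary radius) and the Laplace comparison applied in both variables. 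The optimization over the auxiliary scale yields the precise constant $\tfrac{(n-1)\pi^2}{4}$. All of this is done in the barrier sense so it remains valid at the cut locus and at the diagonal $x=y$; the inequality extends through $\{x=y\}$ because there $d_t^2$ attains a smooth minimum $0$, so any smooth lower barrier automatically satisfies the inequality there.

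For the third step, integrating in $t$ gives the first assertion in the barrier sense. For the second, one cannot apply this directly at $s = t$, since conjugate heat kernels start as Dirac masses. Instead, one works on the interval $s < s' < t$: the probability measures $\nu_{x_i,t;s'}$ are given by smooth positive densities, so the monotonicity $-\frac{d}{ds'}\Var_{s'} \leq H_n$ holds. Integrating from $s'$ down to $s$ yields
\begin{equation*}
\Var_s(\nu_{x_1,t;s},\nu_{x_2,t;s}) \;\leq\; \Var_{s'}(\nu_{x_1,t;s'},\nu_{x_2,t;s'}) + H_n(s'-s),
\end{equation*}
and letting $s' \nearrow t$ the right-hand side converges to $d_t^2(x_1,x_2) + H_n(t-s)$ by smoothness of the heat kernel and dominated convergence (the diagonal case $x_1=x_2$ gives the bound $\Var_s(\nu_{x_1,t;s}) \leq H_n(t-s)$).

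The main obstacle is the justification of the barrier-sense PDE inequality at the cut locus and, more delicately, the justification of the integration-by-parts computation in the first step when $d_t^2$ is only Lipschitz rather than smooth. The standard remedy is to replace $d_t^2$ by a smooth lower barrier $u_{\delta}$ near any given pair $(x_0,y_0,t_0)$, carry out the computation with $u_\delta$, and observe that the resulting integral inequality passes to the limit since the set where $d_t^2$ fails to be smooth (the diagonal together with the time-dependent cut locus) has measure zero and $d_t^2$ is uniformly Lipschitz on compact subsets. The "barrier sense" here matches the standard usage in \cite{Perelman1}, and once the pointwise inequality is interpreted in this way, the monotonicity statement $-\frac{d}{dt}\Var_t \leq H_n$ in the barrier sense follows immediately.
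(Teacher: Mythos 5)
The overall strategy you outline is correct and matches the proof in the cited reference (this proposition is quoted from \cite{Bamler_HK_entropy_estimates}; the present paper does not re-prove it): reduce $\frac{d}{dt}\Var_t(\mu_{1,t},\mu_{2,t})$ via integration by parts to the expression $\iint (\partial_t - \triangle_x - \triangle_y)\, d_t^2\, v_1 v_2\, dg_t\, dg_t$, establish the pointwise barrier inequality $(\partial_t - \triangle_x - \triangle_y)\, d_t^2 \geq -H_n$, integrate, and take $s' \nearrow t$ to handle the conjugate heat kernels. The barrier/cut-locus handling and the limiting step are also fine.

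However, there is a genuine gap in your step~2, where the constant $H_n$ is actually produced. You attribute the inequality $(\partial_t - \triangle_x - \triangle_y)\, d_t \geq -\frac{(n-1)\pi^2}{4 d_t}$ to a combination of ``Perelman's Harnack-type estimate for $\partial_t d_t$'' and ``the Laplace comparison applied in both variables,'' optimized over an auxiliary radius. This cannot work as stated: Perelman's Lemma 8.3 requires an upper bound on $\Ric$ in neighborhoods of the endpoints, and the Laplacian (Bishop--Gromov type) comparison requires a lower bound on $\Ric$; neither is available here, and the whole point of Proposition~\ref{Prop_monotonicity_Var} is that the bound is free of any curvature hypothesis. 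What actually makes the argument close is the following cancellation, which your sketch never mentions. On a Ricci flow, the first variation gives $\partial_t d = -\int_0^d \Ric(\gamma',\gamma')\, ds$ along a minimizing unit-speed geodesic $\gamma$ (with $\geq$ on a super Ricci flow). The second variation / index form estimates give, for cutoff functions $\phi_1,\phi_2 : [0,d] \to [0,1]$ with $\phi_1(0)=1,\phi_1(d)=0$ and $\phi_2(0)=0,\phi_2(d)=1$,
\[
\triangle_x d + \triangle_y d \;\leq\; \int_0^d \Big[(n-1)\big((\phi_1')^2+(\phi_2')^2\big) - (\phi_1^2+\phi_2^2)\,\Ric(\gamma',\gamma')\Big]\, ds.
\]
Hence
\[
(\partial_t - \triangle_x - \triangle_y)\, d \;\geq\; \int_0^d \big(\phi_1^2+\phi_2^2 - 1\big)\,\Ric(\gamma',\gamma')\, ds \;-\; (n-1)\int_0^d \big((\phi_1')^2+(\phi_2')^2\big)\, ds.
\]
Choosing $\phi_1^2+\phi_2^2 \equiv 1$ makes the Ricci integral vanish identically, eliminating all curvature dependence; writing $\phi_1 = \cos\theta$, $\phi_2 = \sin\theta$ with $\theta(0)=0$, $\theta(d)=\pi/2$, Cauchy--Schwarz shows $\int_0^d (\theta')^2 \geq \frac{\pi^2}{4d}$ with equality for $\theta = \frac{\pi s}{2d}$. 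This yields $(\partial_t - \triangle_x - \triangle_y)\, d \geq -\frac{(n-1)\pi^2}{4d}$, and then $(\partial_t - \triangle_x - \triangle_y)\, d^2 = 2d(\partial_t - \triangle_x - \triangle_y)d - 4 \geq -\frac{(n-1)\pi^2}{2} - 4 = -H_n$. The ``auxiliary radius'' optimization you invoke does not reproduce this: the triangle-type cutoffs from Perelman 8.3 leave a residual $\int (1-\phi_1^2-\phi_2^2)\Ric$ term of indeterminate sign, which can only be controlled by precisely the curvature hypotheses that are absent.
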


We will fix the constant $H_n =  \frac{(n-1)\pi^2}{2} + 4$ for the remainder of this paper; its exact value will never be important.

We also recall the notion of $H_n$-centers from \cite[\HKHCenterDef]{Bamler_HK_entropy_estimates}.

\begin{Definition}[$H_n$-center]
A point $(z,t) \in M \times I$ is called an {\bf $H_n$-center} of a point $(x_0,t_0) \in M \times I$ if $t \leq t_0$ and
\[ \Var_t (\delta_z, \nu_{x_0,t_0;t}) \leq H_n (t_0-t). \]
\end{Definition}

We recall the following two facts from \cite[\HKHCenterPropExistBoundBallHCenter]{Bamler_HK_entropy_estimates}, which will be used frequently throughout this paper.
Let in the following 

\begin{Lemma}
Given $(x_0, t_0) \in M$ and $t \leq t_0$ there is (at least) one point $z \in M$ such that $(z,t)$ is an $H_n$-center of $(x_0, t_0)$ and for any two such points $z_1, z_2 \in M$ we have $d_t (z_1, z_2) \leq 2 \sqrt{H_n (t_0 -t)}$.
\end{Lemma}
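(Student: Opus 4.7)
The plan is straightforward: existence comes from an averaging argument against $\nu_{x_0,t_0;t}$, and the diameter bound follows from the triangle inequality for the square root of the variance.

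For existence, the key observation is that by Fubini's theorem,
\[ \Var_t(\nu_{x_0,t_0;t}) = \int_M \Phi(z) \, d\nu_{x_0,t_0;t}(z), \qquad \text{where} \qquad \Phi(z) := \int_M d_t^2(z,y) \, d\nu_{x_0,t_0;t}(y) = \Var_t(\delta_z, \nu_{x_0,t_0;t}). \]
By the variance monotonicity from Proposition~\ref{Prop_monotonicity_Var} (applied with $\mu_1 = \mu_2 = \nu_{x_0,t_0;t}$, using the degenerate case $x_1 = x_2$ in (\ref{eq_Var_difference})), the left-hand side is bounded by $H_n(t_0 - t)$. Hence the average of $\Phi$ against the probability measure $\nu_{x_0,t_0;t}$ is at most $H_n(t_0 - t)$, so there exists at least one point $z$ (indeed, a $\nu$-positive measure of such points) in $M$ with $\Phi(z) \leq H_n(t_0-t)$, which by definition makes $(z,t)$ an $H_n$-center of $(x_0,t_0)$.

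For the diameter bound, let $z_1, z_2 \in M$ be two such $H_n$-centers. Apply the triangle inequality for $\sqrt{\Var}$ (stated in the lemma just above Proposition~\ref{Prop_monotonicity_Var}) to the three probability measures $\delta_{z_1}$, $\nu_{x_0,t_0;t}$, $\delta_{z_2}$. Since $\Var(\delta_{z_1}, \delta_{z_2}) = d_t^2(z_1, z_2)$, this gives
\[ d_t(z_1,z_2) = \sqrt{\Var_t(\delta_{z_1}, \delta_{z_2})} \leq \sqrt{\Var_t(\delta_{z_1}, \nu_{x_0,t_0;t})} + \sqrt{\Var_t(\nu_{x_0,t_0;t}, \delta_{z_2})} \leq 2\sqrt{H_n(t_0 - t)}, \]
where the final inequality uses the defining bound for an $H_n$-center applied to each term.

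Neither step is an obstacle: existence is a soft averaging argument and uniqueness up to the claimed scale is an immediate consequence of the triangle inequality for the variance pseudo-metric. The only substantive ingredient is the monotonicity inequality $\Var_t(\nu_{x_0,t_0;t}) \leq H_n(t_0-t)$, which is quoted from Proposition~\ref{Prop_monotonicity_Var} and proved in \cite{Bamler_HK_entropy_estimates}.
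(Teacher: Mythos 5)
Your proof is correct. The paper does not actually prove this lemma—it quotes it from \cite[\HKHCenterPropExistBoundBallHCenter]{Bamler_HK_entropy_estimates}—but your argument is the standard and natural one: existence follows by averaging $\Phi(z) = \Var_t(\delta_z, \nu_{x_0,t_0;t})$ against $\nu_{x_0,t_0;t}$ itself, using Fubini to identify the average with $\Var_t(\nu_{x_0,t_0;t}) \leq H_n(t_0-t)$ and then invoking compactness of $M$ (continuity of $\Phi$) to extract a genuine minimizer; and the diameter bound is an immediate consequence of the triangle inequality for $\sqrt{\Var}$ together with $\Var(\delta_{z_1}, \delta_{z_2}) = d_t^2(z_1,z_2)$. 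Both citations ((\ref{eq_Var_difference}) and the lemma preceding Proposition~\ref{Prop_monotonicity_Var}) are used correctly, and there are no gaps.
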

\medskip

\begin{Lemma} \label{Lem_mass_ball_Var}
If $(z,t)$ is an $H_n$-center of $(x_0, t_0)$, then for $A > 0$
\[ \nu_{x_0,t_0;t} \big(  B(z, t, \sqrt{A H_n (t_0 - t)} ) \big) \geq 1- \frac1{A}. \]
\end{Lemma}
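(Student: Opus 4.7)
The plan is to reduce this to a direct application of Markov's (Chebyshev's) inequality, applied to the squared distance function from $z$ with respect to the conjugate heat kernel measure $\nu_{x_0,t_0;t}$. The only input needed is the definition of an $H_n$-center together with the identity
\[ \Var_t(\delta_z, \nu_{x_0,t_0;t}) = \int_M d_t^2(z, y) \, d\nu_{x_0,t_0;t}(y), \]
which is immediate from the definition of variance against a Dirac mass.

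Concretely, set $r := \sqrt{A H_n (t_0-t)}$ and consider the complement of the ball, $M \setminus B(z, t, r) = \{ y \in M : d_t(z,y) \geq r \}$. On this set we have $d_t^2(z,y) \geq r^2 = A H_n (t_0-t)$, so
\[ \nu_{x_0,t_0;t}\bigl(M \setminus B(z, t, r)\bigr) \cdot A H_n (t_0 - t) \leq \int_{M \setminus B(z,t,r)} d_t^2(z,y) \, d\nu_{x_0,t_0;t}(y) \leq \int_M d_t^2(z,y) \, d\nu_{x_0,t_0;t}(y). \]
By the $H_n$-center hypothesis, the right-hand side is at most $H_n (t_0 - t)$, so dividing through yields
\[ \nu_{x_0,t_0;t}\bigl(M \setminus B(z, t, r)\bigr) \leq \frac{1}{A}, \]
and taking complements gives the claimed bound $\nu_{x_0,t_0;t}(B(z, t, r)) \geq 1 - 1/A$.

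There is essentially no obstacle here; the lemma is a one-line Chebyshev estimate, and the only mild subtlety is checking that the measure of the closed set $\{d_t(z,\cdot) \geq r\}$ dominates the measure of the complement of the open ball $B(z,t,r)$, which is automatic since the former contains the latter. The degenerate case $t = t_0$ is also immediate, since then $\nu_{x_0,t_0;t_0} = \delta_{x_0}$, the variance vanishes, and one must have $z = x_0$, so the ball of radius $0$ carries full mass.
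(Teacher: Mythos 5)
Your proof is correct and is exactly the standard Chebyshev-inequality argument: note that $\Var_t(\delta_z, \nu_{x_0,t_0;t}) = \int_M d_t^2(z,\cdot)\,d\nu_{x_0,t_0;t}$, apply Markov to the superlevel set $\{d_t(z,\cdot) \ge r\}$ with $r = \sqrt{A H_n(t_0-t)}$, and take complements. The paper itself does not reprove this lemma but cites it from the companion entropy paper; the argument there is the same one-line second-moment estimate, so there is nothing to add.
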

\medskip

\subsection{Parabolic neighborhoods} \label{subsec_parabo_nbhd}
In the following, we recall the definition of conventional and $P^*$-parabolic neighborhoods from \cite{Bamler_HK_entropy_estimates,Bamler_RF_compactness}.
For simplicity, we will state the definitions for Ricci flows.
These definitions generalize to Ricci flow spacetimes and metric flows in a straight-forward way; see \cite{Bamler_RF_compactness} for more details.

Let in the following $(M, (g_t)_{t \in I})$ be a Ricci flow on a compact manifold.
Let $(x_0, t_0) \in M \times I$ and $A, T^-, T^+ \geq 0$.
Then the {\bf (conventional) parabolic neighborhood} is defined as
\[ P(x_0, t_0; A, -T^-, T^+) := B(x_0, t_0, A) \times \big( [t_0 - T^-, t_0 + T^+] \cap I \big), \]
where we may omit $-T^-$ or $T^+$ if it is zero.
Note that we have introduced the separator ``$;$'' in order to avoid confusion if we consider parabolic neighborhoods on a Ricci flow spacetime. 
We also define the {\bf (conventional) parabolic ball} around $(x_0, t_0)$ with radius $r > 0$ by
\[ P(x_0, t_0; r) := P(x_0, t_0; r, -r^2, r^2) \]
and the {\bf backward ($-$)} and {\bf forward ($+$) parabolic balls} by
\[ P^-(x_0, t_0; r) := P(x_0, t_0; r, -r^2), \qquad  P^+(x_0, t_0; r) := P(x_0, t_0; r, r^2).  \]

On the other hand, for any $(x_0, t_0) \in M \times I$ and $A, T^-, T^+ \geq 0$ we define the {$P^*$-parabolic neighborhood} (Compare with \cite[\HKDefsPstarPNBHD]{Bamler_HK_entropy_estimates})
\[ P^* (x_0, t_0; A, -T^-, T^+) \subset M \times I \]
as the set of points $(x, t) \in M \times I$ with $t \in [t_0-T^-, t_0+T^+]$ and 
\[ d^{g_{t_0 - T^-}}_{W_1} (\nu_{x_0, t_0; t_0 - T^-}, \nu_{x,t;  t_0 - T^-}) < A. \]
We also define the {\bf $P^*$-parabolic $r$-ball} by
\[ P^* (x_0, t_0; r) := P^* (x_0, t_0;  r, -r^2, r^2 ) \]
and the {\bf forward ($+$) and backward ($-$) $P^*$-parabolic $r$-balls} by
\[ P^{*+} (x_0, t_0; r) := P^* (x_0, t_0;  r, 0, r^2 ),
\qquad P^{*-} (x_0, t_0; r) := P^* (x_0, t_0; r, -r^2, 0 )
. \]

The following proposition, lists useful properties of $P^*$-parabolic neighborhoods.
These show that $P^*$-parabolic neighborhoods satisfy similar containment relationships as standard parabolic balls.
See \cite[\HKPropPstarContainments]{Bamler_HK_entropy_estimates} for more details and some further properties.

\begin{Proposition} \label{Prop_basic_parab_nbhd}
The following holds for any $(x_1, t_1), (x_2, t_2) \in M \times I$ as long as the corresponding $P^*$-parabolic neighborhoods or balls are defined:
\begin{enumerate}[label=(\alph*)]
\item \label{Prop_basic_parab_nbhd_a} For any $A \geq 0$ we have
\[ P^* (x_1, t_1; A,0,0) = B(x_1, t_1, A) \times \{ t_1 \}. \]
\item \label{Prop_basic_parab_nbhd_b} If $0 \leq A_1 \leq A_2$, $0 \leq T^\pm_1 \leq T^\pm_2$, then 
\[ P^{*} (x_1, t_1; A_1, -T_1^-, T_1^+) \subset P^{*} (x_1, t_1; A_2, -T_2^-, T_2^+). \]
\item \label{Prop_basic_parab_nbhd_d} If $r_1, r_2 > 0$ and $P^* (x_1, t_1; r_1) \cap P^* (x_2, t_2; r_2) \neq \emptyset$, then $P^* (x_1, t_1; r_1) \subset P^* (x_2, t_2; 2r_1+r_2)$.
\end{enumerate}
\end{Proposition}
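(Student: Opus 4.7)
The proof of Proposition~\ref{Prop_basic_parab_nbhd} will consist of three short arguments, each exploiting the monotonicity of the $W_1$-distance between conjugate heat kernels (Proposition~\ref{Prop_monotonicity_W1}) together with the fact that $\nu_{x,t;t} = \delta_x$. The single idea driving all three parts is that, because $W_1$-distances between conjugate heat kernels are non-decreasing in time, we can always "lower" the reference time at which distances are measured without increasing them; once everything is measured at a common earliest time, we can apply the ordinary triangle inequality in the metric space of probability measures.

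For part \ref{Prop_basic_parab_nbhd_a} I would simply unfold the definition. With $T^-=T^+=0$, the time restriction forces $t=t_1$, and both conjugate heat kernels $\nu_{x_1,t_1;t_1}$, $\nu_{x,t_1;t_1}$ reduce to the Dirac measures $\delta_{x_1}$, $\delta_x$; the defining inequality becomes $d^{g_{t_1}}_{W_1}(\delta_{x_1},\delta_x)<A$, which is $d_{t_1}(x_1,x)<A$. Part \ref{Prop_basic_parab_nbhd_b} is a direct consequence of monotonicity: since $T_1^-\leq T_2^-$, the time $t_1-T_2^-$ lies earlier than $t_1-T_1^-$, so Proposition~\ref{Prop_monotonicity_W1} gives
\[
d^{g_{t_1-T_2^-}}_{W_1}\bigl(\nu_{x_1,t_1;t_1-T_2^-},\nu_{x,t;t_1-T_2^-}\bigr)\leq d^{g_{t_1-T_1^-}}_{W_1}\bigl(\nu_{x_1,t_1;t_1-T_1^-},\nu_{x,t;t_1-T_1^-}\bigr)<A_1\leq A_2,
\]
and the time restriction $t\in[t_1-T_1^-,t_1+T_1^+]\subset[t_1-T_2^-,t_1+T_2^+]$ is trivial.

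Part \ref{Prop_basic_parab_nbhd_d} is the main step and is where a genuine triangle-inequality argument is needed. Pick $(y,s)\in P^*(x_1,t_1;r_1)\cap P^*(x_2,t_2;r_2)$ and any $(x,t)\in P^*(x_1,t_1;r_1)$. The time bounds on $s$ force $|t_1-t_2|\leq r_1^2+r_2^2$, and combined with $|t-t_1|\leq r_1^2$ this gives $|t-t_2|\leq 2r_1^2+r_2^2\leq(2r_1+r_2)^2$, confirming the time inclusion. For the $W_1$-bound, set $t^*:=t_2-(2r_1+r_2)^2$ and check, using $|t_1-t_2|\leq r_1^2+r_2^2$, that $t^*\leq t_1-r_1^2$ and $t^*\leq t_2-r_2^2$. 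Now use monotonicity backwards to the common reference time $t^*$: the defining bounds at the original times transport to
\[
d^{g_{t^*}}_{W_1}\bigl(\nu_{x_2,t_2;t^*},\nu_{y,s;t^*}\bigr)<r_2,\qquad d^{g_{t^*}}_{W_1}\bigl(\nu_{x_1,t_1;t^*},\nu_{y,s;t^*}\bigr)<r_1,\qquad d^{g_{t^*}}_{W_1}\bigl(\nu_{x_1,t_1;t^*},\nu_{x,t;t^*}\bigr)<r_1,
\]
and two applications of the triangle inequality in $(W_1,d^{g_{t^*}}_{W_1})$ yield $d^{g_{t^*}}_{W_1}(\nu_{x_2,t_2;t^*},\nu_{x,t;t^*})<2r_1+r_2$, which is the definition of $(x,t)\in P^*(x_2,t_2;2r_1+r_2)$.

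The only place where care is required is the time bookkeeping in \ref{Prop_basic_parab_nbhd_d}: one must verify that $t^*=t_2-(2r_1+r_2)^2$ is simultaneously $\leq t_1-r_1^2$ and $\leq t_2-r_2^2$, which depends on the elementary estimate $|t_1-t_2|\leq r_1^2+r_2^2$ provided by the common point $(y,s)$. Once this is checked, the rest is purely formal, since every ingredient — the monotonicity in Proposition~\ref{Prop_monotonicity_W1} and the fact that $W_1$ is a genuine metric on probability measures — has already been recalled.
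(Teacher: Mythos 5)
Your proof is correct; all three parts follow exactly as you argue, and the time bookkeeping in part \ref{Prop_basic_parab_nbhd_d} checks out: $(2r_1+r_2)^2 \geq 2r_1^2+r_2^2$ guarantees both $t^*\leq t_1-r_1^2$ and $t^*\leq t_2-r_2^2$ as well as the time inclusion $|t-t_2|\leq (2r_1+r_2)^2$. The paper itself does not prove this proposition but cites Proposition~9.4 of \cite{Bamler_HK_entropy_estimates}; your argument via Proposition~\ref{Prop_monotonicity_W1} and the triangle inequality for $W_1$ at a common earlier reference time is the natural and expected route.
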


Lastly, we refer to \cite[\HKSecPstar]{Bamler_HK_entropy_estimates} for further properties of $P^*$-parabolic neighborhoods, on which we will rely in this paper.
Among these are volume bounds of time-slices of $P^*$-parabolic neighborhoods, a covering lemma and results relating $P^*$-parabolic neighborhoods to conventional parabolic neighborhoods in the presence of curvature bounds.

\subsection{\texorpdfstring{Pointed Nash entropy and $\mu$-entropy}{Pointed Nash entropy and {\textbackslash}mu-entropy}} \label{subsec_NN_WW}
Next, we will recall the definition of the $\mu$-entropy, as defined by Perelman \cite{Perelman1}, and the pointed Nash entropy, as defined in \cite{Hein-Naber-14}.
Both notions are closely related and lower bounds on these quantities are generally used to express a certain kind of non-collapsedness of the flow.
In this paper, we will mainly work with the pointed Nash-entropy, as it is more local than the $\mu$-entropy and therefore allows us to state more general, local results.

As a guiding rule, a lower bound on the pointed Nash entropy of the form $\NN_{x,t} (r^2) \geq - Y$ can be compared to a lower volume bound on a distance ball of the form 
\begin{equation} \label{eq_rnBxrceY}
r^{-n} |B(x,r)| \geq c e^{-Y}
\end{equation}
 on a manifold $M$ ---  similarly for an upper bound.
Therefore, it can be viewed as a non-collapsedness condition near $(x,t)$ and at scale $r$.
By contrast, a lower bound on the $\mu$-entropy implies a lower bound on the pointed Nash entropy at any point, so it is comparable to a bound of the form (\ref{eq_rnBxrceY}) at \emph{every} point $x \in M$.
Upon first reading of this paper, it may be helpful to work under the more restrictive global non-collapsing condition in which a lower bound on the Nash entropy holds at \emph{any} point.

A lower bound on the pointed Nash entropy implies various analytic and geometric bounds, such as volume or heat kernel estimates.
These bounds were discussed in \cite{Bamler_HK_entropy_estimates} and will be used frequently throughout this paper.
They are comparable to similar bounds for spaces with lower Ricci curvature bounds if the pointed Nash entropy bound is replaced by the corresponding volume bound.

Let us now be more precise.
We first recall the necessary definitions.
See also \cite[\HKSecNashEntropy]{Bamler_HK_entropy_estimates} or \cite{Perelman1,Topping-book} for more details.
If $(M,g)$ is a Riemannian manifold and $\mu$ is a probability measure on $M$ with density  $d\nu = (4\pi \tau)^{-n/2} e^{-f} dg$, $f \in C^1(M)$, then we define
\[ \NN [g,f,\tau] = \int_M f d\nu - \frac{n}2, \qquad
\WW[g,f,\tau] = \int_M \big( \tau (|\nabla f|^2 + R) + f - n \big) d\nu. \]
The {$\mu$-entropy} is defined as
\[ \mu[g, \tau] := \inf_{\int_M (4\pi \tau)^{-n/2} e^{-f} dg = 1} \WW[g,f,\tau]. \]

Let now $(M, (g_t)_{t \in I})$ be a Ricci flow on compact manifold for the remainder of this subsection.

\begin{Definition}
Let $(x_0, t_0) \in M \times I$ and $\tau > 0$ such that $[t_0 - \tau, t_0] \subset I$.
Denote by $d\nu = (4\pi \tau)^{-n/2} e^{-f} dg$ the conjugate heat kernel based at $(x_0,t_0)$.
The {\bf pointed Nash-entropy at $(x_0, t_0)$} is defined as
\[ \NN_{x_0, t_0} (\tau) := \NN [g_{t_0 - \tau}, f_{t_0-\tau},\tau]. \]
We set $\NN_{x_0, t_0} (0) := 0$.
For $s < t_0$, $s \in I$, we also write
\[ \NN^*_s (x_0, t_0) := \NN_{x_0, t_0} (t_0 - s). \]
\end{Definition}

The following proposition, which is taken from \cite[\HKPropPropertiesNash]{Bamler_HK_entropy_estimates}, summarizes the basic (mostly well known) properties of the pointed Nash entropy.

\begin{Proposition} \label{Prop_NN_basic_properties}
Let $(x_0, t_0) \in M \times I$.
The expression $\NN_{x_0,t_0}(\tau)$ is continuous for $\tau \geq 0$.
Moreover, if $\tau > 0$ such that $[t_0 - \tau, t_0] \subset I$, $R(\cdot, t_0 - \tau) \geq R_{\min}$ and if $d\nu = (4\pi \tau)^{-n/2} e^{-f} dg$ denotes the conjugate heat kernel based at $(x_0,t_0)$, then
\begin{equation} \label{eq_NN_basic_1}
  \NN_{x_0, t_0} (0)  = 0, 
\end{equation}
\begin{equation} \label{eq_NN_basic_2}
 \frac{d}{d\tau} \big(\tau \NN_{x_0, t_0} (\tau)\big) = \WW[g_{t_0-\tau}, f_{t_0 - \tau}, \tau] \leq 0,
\end{equation}
\begin{equation} \label{eq_ddt_2_NN}
  \frac{d^2}{d\tau^2} \big( \tau \NN_{x_0, t_0} (\tau) \big) =  - 2\tau \int_M \bigg| \Ric + \nabla^2 f - \frac1{2\tau} g \bigg|^2 d\nu_{t_0 - \tau} \leq 0, 
\end{equation}
\begin{equation} \label{eq_NN_basic_3}
 - \frac{n}{2\tau} + R_{\min} \leq  \frac{d}{d\tau} \NN_{x_0, t_0} (\tau) \leq 0. 
\end{equation}
In particular, if $\tau_1 \leq \tau_2$ and $R \geq R_{\min}$ on $M \times [t_0 - \tau_2, t_0 - \tau_1]$, then
\begin{equation} \label{eq_NN_doubling}
 \NN_{x_0, t_0} (\tau_1) - \frac{n}2 \log \Big( \frac{\tau_2}{\tau_1} \Big( 1 - \frac2n R_{\min} (\tau_2 - \tau_1) \Big) \Big) \leq \NN_{x_0, t_0} (\tau_2) \leq \NN_{x_0, t_0} (\tau_1). 
\end{equation}
\end{Proposition}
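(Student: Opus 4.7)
The plan is to build all five statements on top of two well-known ingredients: the evolution equation (3.8) for the potential $f$ of the conjugate heat kernel, and Perelman's pointwise monotonicity formula. Everything else is algebra or elementary integration, so I would organize the proof as follows.

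First, I would compute $\frac{d}{d\tau}\NN_{x_0,t_0}(\tau)$ directly. Using (3.8) in the form $\partial_\tau f = -\triangle f + |\nabla f|^2 - R + \frac{n}{2\tau}$, the evolution $\partial_\tau v = \triangle v - Rv$ of $v = (4\pi\tau)^{-n/2}e^{-f}$, the variation $\partial_\tau dg_{t_0-\tau} = R\,dg_{t_0-\tau}$, and the identity $\int_M \triangle f\, d\nu = \int_M |\nabla f|^2 d\nu$ (which follows from $\int_M \DIV X\, d\nu = \int_M X\cdot\nabla f\, d\nu$), I get the clean formula
\[ \frac{d}{d\tau}\NN_{x_0,t_0}(\tau) = \int_M \big(|\nabla f|^2 + R\big)\, d\nu_{t_0-\tau} - \frac{n}{2\tau}. \]
This immediately yields the lower bound in (3.10), since $|\nabla f|^2\ge 0$ and $R(\cdot,t_0-\tau)\ge R_{\min}$. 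Plugging this expression into $\frac{d}{d\tau}(\tau\NN) = \NN + \tau\frac{d\NN}{d\tau}$ and simplifying gives exactly $\WW[g_{t_0-\tau}, f_{t_0-\tau}, \tau]$, establishing the identity in (3.9).

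Next, for the Hessian identity in (3.9) I would invoke Perelman's pointwise formula
\[ \square^*\!\Big[\big(\tau(2\triangle f - |\nabla f|^2 + R) + f - n\big)v\Big] = -2\tau\Big|\Ric + \nabla^2 f - \frac{1}{2\tau}g\Big|^2 v, \]
integrate over $M$, and use (3.6) to obtain $\frac{d}{d\tau}\WW = -2\tau\int_M|\Ric + \nabla^2 f - \frac{1}{2\tau}g|^2\, d\nu$. Combined with the previous step, this is the concavity statement for $\tau\NN$. For the remaining (upper) inequality in (3.10), I would argue indirectly from (3.9): since $\NN(0)=0$ (proved below), integration gives $\tau\NN(\tau) = \int_0^\tau \WW(s)\,ds$; the concavity shows $\WW$ is non-increasing in $\tau$, so its time-average $\NN$ dominates its current value $\WW$, whence $\frac{d\NN}{d\tau} = (\WW - \NN)/\tau \le 0$.

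For (3.11) I would simply integrate $\frac{d\NN}{d\tau}\ge R_{\min} - \frac{n}{2\tau}$ on $[\tau_1,\tau_2]$ to get $\NN(\tau_2) - \NN(\tau_1) \ge R_{\min}(\tau_2-\tau_1) - \frac{n}{2}\log(\tau_2/\tau_1)$, and then apply the elementary inequality $\log(1-x)\le -x$ (valid for $x<1$) with $x = \frac{2}{n}R_{\min}(\tau_2-\tau_1)$ to absorb the linear term into the logarithm in the form stated. The upper inequality $\NN(\tau_2)\le\NN(\tau_1)$ is just the monotonicity from (3.10).

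The main obstacle, and the last piece, is the value $\NN(0)=0$ together with continuity of $\NN_{x_0,t_0}(\tau)$ at $\tau=0$. I would obtain this from the standard short-time asymptotic expansion of the conjugate heat kernel: near $(x_0,t_0)$ one has
\[ K(x_0,t_0;y,t_0-\tau) = (4\pi\tau)^{-n/2} e^{-d_{t_0}^2(x_0,y)/(4\tau)}\big(1 + O(\tau)\big), \]
so that $f(y, t_0-\tau) = \frac{d_{t_0}^2(x_0,y)}{4\tau} + O(\tau)$ uniformly on a neighborhood of $x_0$, combined with Gaussian tail bounds on $\nu_{x_0,t_0;t_0-\tau}$ (available from Proposition~\ref{Prop_monotonicity_Var}) which control the contribution from outside a small ball. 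A direct computation with the model Gaussian gives $\int f\, d\nu \to n/2$ as $\tau\searrow 0$, hence $\NN(0)=0$. Continuity at $\tau>0$ is immediate from the bounds in (3.10), which show $\NN$ is locally Lipschitz there; continuity at $0$ follows from the same limit argument, so $\NN$ is continuous on $[0,\infty)$. The delicate point — and the reason I flag this as the hardest step — is that the asymptotic expansion needs to be applied uniformly enough to control $\int f\, d\nu$ despite $f$ being unbounded, but the Gaussian concentration from Lemma~\ref{Lem_mass_ball_Var} provides exactly the tail control needed.
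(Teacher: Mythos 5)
Before the substance: the paper does not actually prove this proposition; it is imported verbatim from the earlier paper \cite{Bamler_HK_entropy_estimates} (it appears there as Proposition~5.2), so there is no ``paper proof'' in the present source to compare against. Your overall route is the standard Perelman-monotonicity derivation, and most of it is sound: the formula $\frac{d}{d\tau}\NN = \int_M(|\nabla f|^2+R)\,d\nu - \frac{n}{2\tau}$ is correct (though your stated evolution equation has a sign slip --- $\partial_\tau f = \triangle f - |\nabla f|^2 + R - \frac{n}{2\tau}$, not the negative; your final formula is nonetheless right), the chain $\frac{d}{d\tau}(\tau\NN)=\WW$, the use of Perelman's pointwise identity $\square^*(wv)=-2\tau|\Ric+\nabla^2 f-\frac{1}{2\tau}g|^2 v$ for the second derivative, the mean-value argument for $\WW\le\NN$, and the heat-kernel-asymptotics argument for $\NN(0)=0$ are all correct ingredients.

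However, your argument for the doubling estimate \eqref{eq_NN_doubling} does not work as stated. Integrating the constant bound $\frac{d\NN}{d\tau}\ge R_{\min}-\frac{n}{2\tau}$ gives $\NN(\tau_1)\le\NN(\tau_2)-R_{\min}(\tau_2-\tau_1)+\frac{n}{2}\log(\tau_2/\tau_1)$, and to reach \eqref{eq_NN_doubling} you would need $-R_{\min}(\tau_2-\tau_1)\le\frac{n}{2}\log\big(1-\frac{2}{n}R_{\min}(\tau_2-\tau_1)\big)$, i.e. $-x\le\log(1-x)$ with $x=\frac{2}{n}R_{\min}(\tau_2-\tau_1)$. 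But the elementary inequality you invoke, $\log(1-x)\le -x$, goes in precisely the opposite direction; equality holds only at $x=0$, and for any nonzero $x$ the naive bound is strictly weaker than the stated one. The correct route is to \emph{propagate} the scalar curvature lower bound forward via the maximum principle (this is exactly Lemma~\ref{Lem_lower_scal} in the paper): from $R(\cdot,t_0-\tau_2)\ge R_{\min}$ one gets $R(\cdot,t_0-\tau)\ge \frac{R_{\min}}{1-\frac{2}{n}R_{\min}(\tau_2-\tau)}$ for $\tau\in[\tau_1,\tau_2]$, and then
\[
\int_{\tau_1}^{\tau_2}\frac{R_{\min}}{1-\frac{2}{n}R_{\min}(\tau_2-\tau)}\,d\tau
= -\frac{n}{2}\log\Big(1-\frac{2}{n}R_{\min}(\tau_2-\tau_1)\Big),
\]
which, together with $\int_{\tau_1}^{\tau_2}\frac{n}{2\tau}\,d\tau=\frac{n}{2}\log(\tau_2/\tau_1)$, produces \eqref{eq_NN_doubling} exactly. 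You should replace the ``simple integration plus $\log(1-x)\le -x$'' step with this maximum-principle propagation; without it the claimed inequality is not reached.
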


Note that (\ref{eq_NN_basic_2}) implies that
\begin{equation} \label{eq_NN_geq_WW}
\NN_{x_0, t_0} (\tau) \geq \frac1{\tau} \int_0^{\tau} \WW [ g_{t_0 - \td\tau}, f_{t_0 - \td\tau}, \td\tau ] d\td\tau \geq \WW [ g_{t_0 - \tau}, f_{t_0 - \tau}, \tau ] \geq  \mu[g_{t- \tau}, \tau] 
\end{equation}
So by the monotonicity of the $\mu$-functional we can always deduce a lower bound on the pointed Nash-entropy that depends only on the initial data and the time:

\begin{Proposition}
Let $(M, (g_t)_{t \in [0,T)})$ be a Ricci flow on a compact manifold and assume that $\mu [ g_0 , 0 ] \geq  -Y$.
Then for any $(x_0,t_0) \in M \times [0, T)$ and $0 < \tau \leq t_0$ we have
\[ \NN_{x_0,t_0} (\tau) \geq - Y'(Y, T). \]
\end{Proposition}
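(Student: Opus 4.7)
The plan is to combine the inequality $\NN_{x_0, t_0}(\tau) \geq \mu[g_{t_0 - \tau}, \tau]$ from equation (\ref{eq_NN_geq_WW}) of Proposition~\ref{Prop_NN_basic_properties} with Perelman's monotonicity of the $\mu$-functional along the Ricci flow. Both ingredients are already on the table: the first is stated in the preceding proposition, and the second is the standard monotonicity of $\mu[g_s, \tau(s)]$ when $\tau'(s) = -1$.

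Concretely, I choose the ``comoving'' scale $\tau(s) := t_0 - s$, which satisfies $\tau'(s) = -1$ and takes value $t_0$ at $s = 0$ and $\tau$ at $s = t_0 - \tau$. Perelman's monotonicity on $[0, t_0 - \tau]$ then gives
\[
\mu[g_{t_0 - \tau}, \tau] \ \geq\ \mu[g_0, t_0],
\]
and chaining this with (\ref{eq_NN_geq_WW}) yields
\[
\NN_{x_0, t_0}(\tau) \ \geq\ \mu[g_{t_0 - \tau}, \tau] \ \geq\ \mu[g_0, t_0].
\]
This reduces the problem to a lower bound on $\mu[g_0, \tilde\tau]$ that is uniform for $\tilde\tau \in (0, T]$.

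For this last step I pass from the hypothesis $\mu[g_0, 0] \geq -Y$ to such a uniform bound on the fixed smooth compact manifold $(M, g_0)$. Under the natural interpretation of the scale-zero entropy as Perelman's scale-infimum invariant $\inf_{\tilde\tau > 0} \mu[g_0, \tilde\tau] \geq -Y$, one obtains $\mu[g_0, t_0] \geq -Y$ immediately and can take $Y' = Y$. Under the alternative reading as a $\tilde\tau \to 0^+$ limit (where $\mu[g_0, \tilde\tau] \to 0$ by Rothaus-type estimates), continuity of $\tilde\tau \mapsto \mu[g_0, \tilde\tau]$ on $(0, \infty)$ together with boundedness on the compact interval $[0, T]$ still yields a lower bound depending only on $Y$ and $T$.

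The main (and essentially only) obstacle is clarifying the precise meaning of the scale-zero hypothesis $\mu[g_0, 0] \geq -Y$; once this is pinned down, both the monotonicity step and the chaining through (\ref{eq_NN_geq_WW}) are routine, and the $T$-dependence in $Y'(Y, T)$ is exactly what one expects from the interval on which $\mu[g_0, \tilde\tau]$ must be controlled.
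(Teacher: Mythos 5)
Your argument reproduces the paper's intended (and only implicitly given) proof: chain (\ref{eq_NN_geq_WW}), which gives $\NN_{x_0,t_0}(\tau) \geq \mu[g_{t_0-\tau},\tau]$, with Perelman's monotonicity of $s \mapsto \mu[g_s, t_0 - s]$ to obtain $\mu[g_{t_0-\tau},\tau] \geq \mu[g_0, t_0]$, and then invoke the hypothesis on the initial metric. Your first reading of $\mu[g_0, 0]$ as the scale-invariant quantity $\inf_{\tilde\tau > 0}\mu[g_0,\tilde\tau]$ (Perelman's $\nu$-entropy) is the correct one; under it the proof closes immediately with $Y' = Y$, and the allowance for $T$-dependence in $Y'$ is simply slack in the statement.

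One caution on your hedge: the fallback under the second reading does not actually close. If $\mu[g_0,0]$ meant $\lim_{\tilde\tau\to 0^+}\mu[g_0,\tilde\tau]$, then by the Rothaus/Perelman fact this limit equals $0$ for \emph{every} smooth compact $(M,g_0)$, so the hypothesis $\mu[g_0,0]\geq -Y$ would carry no information whatsoever. Then ``boundedness of $\tilde\tau\mapsto\mu[g_0,\tilde\tau]$ on $[0,T]$'' gives a finite lower bound on $\mu[g_0,t_0]$, but that bound depends on the particular metric $g_0$, not just on $Y$ and $T$, which is incompatible with the claimed form $Y'(Y,T)$. So the second reading is not merely less natural---it is ruled out by the statement itself. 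This does not affect your main argument, but the hedge toward that reading should be dropped rather than presented as an alternative route to the same conclusion.
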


So the main theorems of this paper apply if we consider blow-up sequences of finite-time singularities.

In connection with Proposition~\ref{Prop_NN_basic_properties}, we also recall the following lower bound on the scalar curvature:

\begin{Lemma} \label{Lem_lower_scal}
If $R (\cdot, t_0) \geq R_{\min}$, then for all $t \geq t_0$, $t \in I$, we have
\[ R (\cdot, t) \geq \frac{n}2 \frac{R_{\min}}{\frac{n}2 - R_{\min} (t - t_0)}. \]
Moreover, if $t_0 := \inf I \in [-\infty, \infty)$, then
\[ R(\cdot, t) \geq \begin{cases} - \frac{n}{2(t - t_0)} &\text{if $t_0 > -\infty$} \\ 0 & \text{if  $t_0 = -\infty$} \end{cases} \]
\end{Lemma}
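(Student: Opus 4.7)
The plan is to combine the standard evolution equation for the scalar curvature with the parabolic maximum principle and an ODE comparison. Under Ricci flow one has
\[ \square R = \partial_t R - \triangle R = 2|{\Ric}|^2, \]
and the pointwise Cauchy--Schwarz inequality $|{\Ric}|^2 \geq R^2/n$ gives the differential inequality $\square R \geq \tfrac{2}{n} R^2$. Since $M$ is compact, the scalar curvature is bounded at time $t_0$ (say by $R_{\min}$) and the parabolic maximum principle reduces the lower bound for $R(\cdot,t)$ to the comparison ODE $\phi'(t) = \tfrac{2}{n}\phi(t)^2$ with $\phi(t_0) = R_{\min}$.

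First I would solve this ODE explicitly. Separating variables gives $-\phi^{-1} = \tfrac{2}{n}(t-t_0) - R_{\min}^{-1}$, hence
\[ \phi(t) = \frac{\tfrac{n}{2} R_{\min}}{\tfrac{n}{2} - R_{\min}(t-t_0)}, \]
which is the claimed lower bound. The maximum principle step is standard: applying it to $R - \phi$ (or to $(R-\phi)_-$ after an approximation to avoid issues if $\phi$ blows up) yields $R(\cdot,t) \geq \phi(t)$ for as long as $\phi$ stays finite, i.e.\ until $t-t_0 = \tfrac{n}{2R_{\min}}$ if $R_{\min}>0$, and for all $t \geq t_0$ otherwise. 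One small subtlety is that if $R_{\min} > 0$, the denominator $\tfrac{n}{2} - R_{\min}(t-t_0)$ can vanish; this is consistent with the well-known finite-time blow-up of Ricci flow starting from positive scalar curvature and poses no issue before the singular time.

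Next I would deduce the two limiting bounds. If no initial bound is imposed but $t_0 := \inf I$ is finite, then for any $t > t_0$ and any $s \in (t_0, t)$ the scalar curvature is bounded below on $M \times \{s\}$ by some finite $R_{\min}(s)$, and applying the first part on $[s,t]$ gives
\[ R(\cdot, t) \geq \frac{\tfrac{n}{2} R_{\min}(s)}{\tfrac{n}{2} - R_{\min}(s)(t-s)}. \]
Sending $R_{\min}(s) \to -\infty$ (which is the worst case at fixed $s$) the right-hand side tends to $-\tfrac{n}{2(t-s)}$, and then letting $s \searrow t_0$ yields the bound $R(\cdot,t) \geq -\tfrac{n}{2(t-t_0)}$.

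Finally, if $t_0 = -\infty$, I would apply the previous step on intervals $[t_0',t]$ with $t_0' \to -\infty$; the resulting bound $-\tfrac{n}{2(t-t_0')} \to 0$ forces $R(\cdot,t) \geq 0$. The main (and really the only) obstacle is keeping track of the maximum principle when the comparison function $\phi$ itself blows up, but this is handled by standard arguments --- e.g.\ restricting to time intervals where $\phi$ is finite or comparing with $\phi_\eps$ shifted by $\eps > 0$ and letting $\eps \to 0$.
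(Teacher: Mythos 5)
Your proof is correct and uses the standard argument (evolution equation $\square R = 2|\mathrm{Ric}|^2 \geq \frac{2}{n}R^2$, maximum principle, ODE comparison); the paper simply \emph{recalls} this lemma without proof as a well-known fact, so there is no paper proof to compare against, and your sketch supplies the expected standard derivation. One minor wording issue: in deducing the ``moreover'' part you speak of ``sending $R_{\min}(s)\to-\infty$'' as if it were a parameter you could vary, whereas $R_{\min}(s)$ is a fixed finite quantity for each $s$; the correct phrasing is that the map $x \mapsto \frac{\frac{n}{2}x}{\frac{n}{2}-x(t-s)}$ is increasing in $x$ (its derivative is $(\frac{n}{2})^2/(\frac{n}{2}-x(t-s))^2 > 0$) with limit $-\frac{n}{2(t-s)}$ as $x\to-\infty$, so the bound $R(\cdot,t)\geq-\frac{n}{2(t-s)}$ holds regardless of the actual value of $R_{\min}(s)$, and then one lets $s\searrow t_0$.
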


Lastly, we recall a result that allows us to compare the pointed Nash-entropies between two different points, and possibly for different scales.
Comparing this with the geometry of spaces with lower Ricci curvature bounds, such a result would be comparable to the following bound:
\[ c(r_1, r_2, d(x_1,x_1))  r_1^{-n} |B(x_1,r_1)| \leq r_2^{-n} |B(x_2,r_2)| \leq C(r_1, r_2, d(x_1,x_1))  r_1^{-n} |B(x_1,r_1)|. \]

\begin{Proposition}\label{Prop_NN_variation_bound}
If $R (\cdot, t^*) \geq R_{\min}$ and $s < t^* \leq t_1, t_2$, then for $x_1, x_2 \in M$
\begin{equation} \label{eq_NNx1t1_NNx2t2}
 \NN^*_s(x_1, t_1) - \NN^*_s(x_2, t_2)  \leq \Big( \frac{n}{2(t^*-s)} -  R_{\min} \Big)^{1/2}   d_{W_1}^{g_{t^*}} (\nu_{x_1, t_1}(t^*), \nu_{x_2, t_2}(t^*))    + \frac{n}2 \log \Big( \frac{t_2-s}{t^*-s} \Big) . 
\end{equation}
Moreover if the $P^*$-parabolic neighborhood $P:=P^*(x_0,t_0;A, -T^-, T^+) \subset M \times I$ is defined, then we have the following bound for any $(x,t) \in P$ and $\tau_0, \tau > 0$, whenever the corresponding quantities are defined and assuming that we have $R( \cdot, t_0 - \tau_0), R(\cdot, t - \tau) \geq R_{\min}$,
\[ |\NN_{x,t}(\tau) - \NN_{x_0,t_0}(\tau_0)| \leq C \big( R_{\min}, A, T^-, T^+,  \tau_1, \tau_2 \big). \]
\end{Proposition}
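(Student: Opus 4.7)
For Part 1, the bound \eqref{eq_NNx1t1_NNx2t2} is essentially a restatement of the variation bound for the pointed Nash entropy from \cite[\HKVariationboundNN]{Bamler_HK_entropy_estimates}. The strategy is to anchor the comparison at the common intermediate time $t^*$. Three ingredients enter. First, the reproducing identity $\nu_{x,t;s} = \int_M \nu_{y,t^*;s}\, d\nu_{x,t;t^*}(y)$ exhibits the measure $\nu_{x,t;s}$ as a $\nu_{x,t;t^*}$-mixture of conjugate heat kernel measures originating at time $t^*$. Second, the concavity of the differential entropy $\rho \mapsto -\int \rho \log \rho\, dg_s$, applied to this mixture, yields the Jensen-type estimate
\[
\NN^*_s(x,t) \geq \int_M \NN^*_s(y,t^*)\, d\nu_{x,t;t^*}(y) + \tfrac{n}{2}\log\tfrac{t^*-s}{t-s}
\qquad \text{for } t \geq t^*.
\]
Third, under $R(\cdot,t^*) \geq R_{\min}$, the gradient bound $|\nabla_y \NN_{y,t^*}(t^*-s)| \leq L := (\tfrac{n}{2(t^*-s)} - R_{\min})^{1/2}$ makes $\varphi(y) := \NN_{y,t^*}(t^*-s)$ an $L$-Lipschitz test function on $(M,g_{t^*})$, and Kantorovich--Rubinstein duality \eqref{eq_d_W1_def} controls differences of $\int \varphi \, d\mu$ by $L\cdot d^{g_{t^*}}_{W_1}$. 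Chaining these three facts for both $(x_1,t_1)$ and $(x_2,t_2)$ produces an upper bound on $\NN^*_s(x_1,t_1) - \NN^*_s(x_2,t_2)$ with natural log correction $\tfrac{n}{2}\log\tfrac{t_2-s}{t_1-s}$; since $t_1 \geq t^*$, this is at most $\tfrac{n}{2}\log\tfrac{t_2-s}{t^*-s}$, giving \eqref{eq_NNx1t1_NNx2t2}.

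For Part 2, I would derive the two-sided bound on $|\NN_{x,t}(\tau) - \NN_{x_0,t_0}(\tau_0)|$ from Part 1 together with the monotonicity \eqref{eq_NN_doubling}. The scheme is: pick a common anchor time $s$ below both $t_0 - \tau_0$ and $t-\tau$, and an intermediate time $t^* \in (s, t_0-T^-]$. By the defining property of $P^*(x_0,t_0;A,-T^-,T^+)$ together with the monotonicity of the $W_1$-Wasserstein distance under the conjugate heat flow (Proposition~\ref{Prop_monotonicity_W1}), we have $d_{W_1}^{g_{t^*}}(\nu_{x_0,t_0;t^*}, \nu_{x,t;t^*}) \leq A$. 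Applying Part 1 with this data (and using Lemma~\ref{Lem_lower_scal} to propagate the hypothesized curvature bound $R_{\min}$ forward in time so that the variant of $R_{\min}$ at $t^*$ is controlled only in terms of the given parameters) controls $|\NN^*_s(x,t) - \NN^*_s(x_0,t_0)|$. Finally, \eqref{eq_NN_doubling} translates $\NN^*_s(x,t) = \NN_{x,t}(t-s)$ into $\NN_{x,t}(\tau)$ (and similarly for $(x_0,t_0)$) at the cost of a further term depending only on the scale ratio and $R_{\min}$. Every term produced is a function only of $R_{\min}, A, T^-, T^+, \tau_0, \tau$.

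The main obstacle is bookkeeping: one must choose $s$ and $t^*$ simultaneously respecting $s < t^* \leq t_0 - T^-$, $s < \min(t_0-\tau_0, t-\tau)$, and so that effective scalar curvature lower bounds are available on each interval where \eqref{eq_NN_doubling} is invoked. A more conceptual subtlety underlying Part 1 is that the Jensen step by itself produces only a lower bound on $\NN^*_s(x,t)$ in terms of the $\nu_{x,t;t^*}$-average of $\varphi$, so one must carefully combine the Jensen inequalities for the two basepoints with the Lipschitz/Wasserstein bound on $\varphi$ in the correct order, and invoke $t_1 \geq t^*$ to absorb the residual log term; this delicate sign-chasing is already handled in \cite[\HKVariationboundNN]{Bamler_HK_entropy_estimates}, on which the present proof rests.
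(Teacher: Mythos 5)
The paper's proof of Proposition~\ref{Prop_NN_variation_bound} is a bare citation to \cite[\HKVariationboundNN]{Bamler_HK_entropy_estimates}, so there is no internal argument to compare against; you correctly identify that the result ``rests on'' that reference. Your sketch of the underlying mechanics, however, has a gap in its central chain. You claim that chaining the reproduction formula, the concavity of differential entropy (Jensen), and the Lipschitz/Wasserstein control of $\varphi(y)=\NN_{y,t^*}(t^*-s)$ ``for both $(x_1,t_1)$ and $(x_2,t_2)$'' produces the desired upper bound. But Jensen applied to a mixture only ever produces the one-sided inequality
\[
\NN^*_s(x,t) \;\geq\; \int_M \NN^*_s(\cdot,t^*)\,d\nu_{x,t;t^*} \;+\; \tfrac{n}{2}\log\tfrac{t^*-s}{t-s},
\]
i.e.\ a \emph{lower} bound on $\NN^*_s$ in terms of the average of $\varphi$. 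Applying it to $(x_2,t_2)$ is exactly what you want (it lower-bounds $\NN^*_s(x_2,t_2)$), but applying it to $(x_1,t_1)$ gives a second lower bound, and subtracting two lower bounds says nothing about $\NN^*_s(x_1,t_1)-\NN^*_s(x_2,t_2)$. Invoking $t_1\geq t^*$ does not resolve this: that hypothesis only governs the sign of a residual log term, not the direction of the Jensen inequality itself.

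The missing ingredient on the $(x_1,t_1)$ side is not concavity but the supersolution property of the pointed Nash entropy: $\square\,\NN^*_s \leq 0$ on $M\times(s,\cdot)$, which in \cite{Bamler_HK_entropy_estimates} appears alongside the gradient bound (see \cite[\HKThmNabNNSquareNN]{Bamler_HK_entropy_estimates}, which is cited several times elsewhere in this paper precisely for these two bounds). Applied along the conjugate heat flow $\nu_{x_1,t_1;\cdot}$, it gives the genuine \emph{upper} bound
\[
\NN^*_s(x_1,t_1) \;\leq\; \int_M \NN^*_s(\cdot,t^*)\,d\nu_{x_1,t_1;t^*}.
\]
Combining this with the Jensen lower bound for $(x_2,t_2)$ and the Kantorovich--Rubinstein estimate for the difference of the two averages of $\varphi$ then produces exactly \eqref{eq_NNx1t1_NNx2t2}, with the $\tfrac{n}{2}\log\tfrac{t_2-s}{t^*-s}$ term coming from the Jensen step for $(x_2,t_2)$ alone. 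Your outline of Part 2 (anchoring via $P^*$, monotonicity of $d_{W_1}$, Lemma~\ref{Lem_lower_scal}, and the doubling estimate \eqref{eq_NN_doubling}) is sound. So the structure you propose is close, but ``Jensen for both basepoints'' is not a correct description of the argument; the subcaloric property of $\NN^*_s$ is the second, independent pillar you need to name.
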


\begin{proof}
See \cite[\HKVariationboundNN]{Bamler_HK_entropy_estimates}.
\end{proof}

\subsection{Soliton identities} \label{subsec_soliton_identities}
We will briefly recall the most common identities that hold on gradient shrinking solitons.
In the course of this paper, we will often encounter almost-versions of the following identities.

A gradient shrinking soliton is an ancient Ricci flow $(M, (g_t)_{t < 0})$ on a possibly non-compact manifold $M$ together with a potential function $f \in C^\infty ( M \times (-\infty, 0))$ such that if we set $\tau := - t$, then
\begin{equation} \label{eq_soliton_equation}
 \Ric + \nabla^2 f - \frac1{2\tau} g  =0, \qquad \partial_t f =  |\nabla f|^2.
\end{equation}
If the flow has bounded curvature and complete time-slices, then all time-slices $(M,g_t)$ and the potential function $f$ are homothetic, so the flow is given up to isometry by a single time-slice.
More specifically, if $g_{-1}, f_{-1}$ on $M$ are given such that the first identity in (\ref{eq_soliton_equation}) holds for $\tau = 1$, then $g_t = |t|\phi_t^* g_{-1}$ and $f_t = f_{-1} \circ \phi_t$, where the family of diffeomorphisms $(\phi_t : M \to M)_{t < 0}$ is generated by the flow of the vector field $\tau \nabla f$.
Note also that if we restrict (\ref{eq_soliton_equation}) to a single time-slice, then $f$ is determined up to a function with vanishing Hessian.
This necessitates the second equation in (\ref{eq_soliton_equation}).
See \cite[Chp~4]{Chow_book_Hamiltons}, \cite{Topping-book} for more details.

By tracing and applying the divergence to (\ref{eq_soliton_equation}), see \cite[Chp~4]{Chow_book_Hamiltons} for more details, we obtain the following useful identities
\begin{align} \label{eq_soliton_equation_traced}
 R + \triangle f - \frac{n}{2\tau} &= 0 , \\
- \tau (R + |\nabla f|^2) + f &\equiv const. \label{eq_soliton_equation_div}
\end{align}
Combining (\ref{eq_soliton_equation_traced}) with the second equation of (\ref{eq_soliton_equation}), we obtain
\[ -\partial_t f = \triangle f - |\nabla f|^2 + R - \frac{n}{2\tau}. \]
This is equivalent to the following conjugate heat equation
\[ \square^* (4\pi \tau)^{-n/2} e^{-f} = 0. \]
Since we may adjust $f$ by an additive constant, we will frequently use the convention that
\begin{equation} \label{eq_potential_normalization}
 \int_M (4\pi \tau)^{-n/2} e^{-f} dg_t = 1. 
\end{equation}
If $(M,g_t)$ does not have any non-constant functions of vanishing Hessian, then this normalization uniquely characterizes $f$ and we can omit the second equation in (\ref{eq_soliton_equation}).
Assuming the normalization (\ref{eq_potential_normalization}) and recalling (\ref{eq_soliton_equation_div}), we set
\[ W := - \tau ( R + |\nabla f|^2 ) + f , \]
which is constant and independent of time, because time-slices are homothetic.
Combining this with (\ref{eq_soliton_equation_traced}), (\ref{eq_soliton_equation_div}) implies the following identities:
\begin{align}
\tau (-|\nabla f|^2 + \triangle f ) + f - \frac{n}2 - W  &= 0, \label{eq_sol_id_1} \\
\square (\tau f) + \frac{n}2 + W &= 0,  \label{eq_sol_id_2} \\
\tau (2\triangle f - |\nabla f|^2 + R) + f - n - W &= 0.  \label{eq_sol_id_3}
\end{align}
Integrating (\ref{eq_sol_id_1}), (\ref{eq_sol_id_2}) and integration by parts, whenever this is permitted, implies that for all $\tau > 0$.
\[ \NN [g_{-\tau}, f_{-\tau}, \tau ] = \WW[g_{-\tau}, f_{-\tau}, \tau ] = W. \]

\subsection{Poincar\'e inequalities}
We will frequently use the following Poincar\'e inequalities from \cite{Hein-Naber-14,Bamler_HK_entropy_estimates}.

\begin{Proposition} \label{Prop_Poincare}
Let $(M, (g_t)_{t \in I})$ be a Ricci flow on a compact manifold and consider the conjugate heat kernel $d\nu = (4\pi \tau)^{-n/2} e^{-f} dg$ based at some point $(x_0,t_0) \in M \times I$.
If $t_0 - \tau \in I$ for some $\tau > 0$, then we have for any $h \in C^1(M)$ and $p \geq 1$
\[ \int_M h \, d\nu_{t_0-\tau} = 0 \qquad \Longrightarrow \qquad \int_M |h|^p d\nu_{t_0-\tau} \leq C(p) \tau^{p/2} \int_M |\nabla h|^p d\nu_{t_0-\tau} \]
and
\[ \int_M |h|^p d\nu_{t_0-\tau} \leq C(p) \tau^{p/2} \int_M |\nabla h|^p d\nu_{t_0-\tau} + C(p) \bigg(  \int_M h \, d\nu_{t_0-\tau} \bigg)^p. \]
We may choose $C(1) = \sqrt{\pi}$ and $C(2) = 2$.
\end{Proposition}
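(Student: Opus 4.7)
The plan is to extend $h$ to a forward-in-time heat flow and exploit the Bochner identity on a Ricci flow together with monotonicity against the conjugate heat kernel.

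First I would deduce the second (Neumann-type) inequality from the first by setting $a := \int_M h\,d\nu_{t_0-\tau}$, applying the first inequality to $h - a$ (whose gradient equals $\nabla h$), and combining $|h|^p \leq 2^{p-1}(|h-a|^p + |a|^p)$ with $|a|^p \leq (\int|h|\,d\nu)^p$ to absorb the mean term into the stated form. So assume $\int h\,d\nu_{t_0-\tau} = 0$ henceforth. Extend $h$ to $\bar h \in C^\infty(M \times [t_0-\tau, t_0])$ by the forward heat equation $\square \bar h = 0$, $\bar h(\cdot, t_0-\tau) = h$. By $(\ref{eq_dds_int_u_int_square_u})$ the mean $\int \bar h\,d\nu_t$ is conserved, and evaluating at $t = t_0$ (where $\nu_{t_0} = \delta_{x_0}$) gives $\bar h(x_0, t_0) = 0$.

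For $p = 2$, the computation $\square(\bar h^2) = -2|\nabla\bar h|^2$ yields $\tfrac{d}{dt}\int \bar h^2\,d\nu_t = -2\int|\nabla\bar h|^2\,d\nu_t$. The crucial input is that on a Ricci flow the classical Bochner identity combined with $\partial_t g = -2\Ric$ produces the cancellation $\square|\nabla\bar h|^2 = -2|\nabla^2\bar h|^2 \leq 0$, making $|\nabla\bar h|^2$ a weak subsolution of $\square$ and hence $t \mapsto \int|\nabla\bar h|^2\,d\nu_t$ non-increasing in $t$. Integrating $P(t) := \int \bar h^2\,d\nu_t$ from $t_0-\tau$ to $t_0$ with $P(t_0) = 0$ then gives $\int h^2\,d\nu_{t_0-\tau} \leq 2\tau\int|\nabla h|^2\,d\nu_{t_0-\tau}$, which is the bound with $C(2) = 2$. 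For general $p \geq 2$ I would iterate this: the identity $\square|\bar h|^p = -p(p-1)|\bar h|^{p-2}|\nabla\bar h|^2$ (interpreted via Proposition~\ref{Prop_mu_square_rules}), the subsolution property of $(|\nabla\bar h|^2)^{p/2}$ by an elementary convexity computation, and H\"older's inequality $\int|\bar h|^{p-2}|\nabla\bar h|^2\,d\nu_t \leq (\int|\bar h|^p\,d\nu_t)^{(p-2)/p}(\int|\nabla\bar h|^p\,d\nu_t)^{2/p}$ together yield a closed estimate on $A := (\int|h|^p\,d\nu)^{1/p}$ and $B := (\int|\nabla h|^p\,d\nu)^{1/p}$.

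The hard part is $p \in [1,2)$, where the $L^p$ energy method degenerates because $|\bar h|^{p-2}$ is singular on the zero set of $\bar h$. For $p = 1$ the plan is to invoke Kantorovich--Rubinstein duality $\int|h|\,d\nu = \sup_{\|\phi\|_\infty \leq 1}\int h\phi\,d\nu$, extend $\phi$ forward by the heat equation to $\bar\phi$ (uniformly bounded by the maximum principle, with $\bar\phi(x_0,t_0) = \int\phi\,d\nu$), and exploit the product identity $\tfrac{d}{dt}\int\bar h\bar\phi\,d\nu_t = -2\int\nabla\bar h\cdot\nabla\bar\phi\,d\nu_t$. Integrating over $[t_0-\tau, t_0]$ (where the boundary term $\bar h(x_0,t_0)\bar\phi(x_0,t_0)$ vanishes), using the monotonicity of $\int|\nabla\bar h|\,d\nu_t$ that follows from $\square|\nabla\bar h|\leq 0$ (Kato applied to the subsolution $|\nabla\bar h|^2$), and invoking the sharp Hamilton-type gradient bound $\|\nabla\bar\phi(\cdot,t)\|_\infty \leq (\pi(t-(t_0-\tau)))^{-1/2}$ for bounded solutions of the heat equation on a Ricci flow, then produces the constant $C(1) = \sqrt{\pi}$. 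The intermediate cases $p \in (1,2)$ are handled by an analogous duality argument or interpolation between the $p=1$ and $p=2$ inequalities.
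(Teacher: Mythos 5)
Your reduction to the mean-zero case, the forward heat extension $\bar h$ with $\bar h(x_0,t_0)=0$, and the Bochner--Kato cancellation $\square|\nabla\bar h|^2=-2|\nabla^2\bar h|^2\leq 0$ are all sound, and for $p=2$ you do recover $C(2)=2$. For $p\geq 2$ the closed energy estimate via H\"older and the subsolution property of $|\nabla\bar h|^p$ also works (yielding $C(p)=(p(p-1))^{p/2}$). This is essentially the Hein--Naber approach that the paper cites for $p=2$.

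The gap is the claimed constant for $p=1$. Carry out your duality argument explicitly: with $\|\phi\|_\infty\leq 1$ and both $\bar h,\bar\phi$ extended forward, one has $\tfrac{d}{dt}\int\bar h\bar\phi\,d\nu_t=-2\int\nabla\bar h\cdot\nabla\bar\phi\,d\nu_t$, and integrating over $[t_0-\tau,t_0]$ with $\bar h(x_0,t_0)=0$ gives $\int h\phi\,d\nu_{t_0-\tau}=2\int_{t_0-\tau}^{t_0}\int\nabla\bar h\cdot\nabla\bar\phi\,d\nu_t\,dt$. The sharp gradient bound from the $\Phi$-estimate for a $[-1,1]$-valued bounded solution is $\|\nabla\bar\phi(\cdot,t)\|_\infty\leq\big(\pi(t-t_0+\tau)\big)^{-1/2}$, and $\int_{t_0-\tau}^{t_0}\big(\pi(t-t_0+\tau)\big)^{-1/2}dt=2\sqrt{\tau/\pi}$. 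Combined with the monotonicity $\int|\nabla\bar h|\,d\nu_t\leq\int|\nabla h|\,d\nu_{t_0-\tau}$, this produces $C(1)=4/\sqrt\pi\approx 2.26$, strictly worse than the asserted $\sqrt\pi\approx 1.77$ (which is sharp, as $h=\mathrm{sgn}(x_1)$ on the Gaussian model shows). The factor $2$ from the product-rule identity and the factor $2$ from $\int_0^\tau s^{-1/2}ds$ compound, and neither can be dropped within this two-sided evolution framework; reaching $\sqrt\pi$ requires a genuinely different argument (for instance one that only evolves a single function, in the spirit of the static Ornstein--Uhlenbeck computation, or one that exploits the pointwise rather than uniform form of the $\Phi$-estimate). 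Separately, the case $p\in(1,2)$ is not actually handled: the $L^p$ energy method fails there as you say, ``interpolation'' between the $p=1$ and $p=2$ Poincar\'e inequalities is not a Riesz--Thorin-type operator interpolation and does not follow from standard results, and the dual $L^{p'}$ argument would need $L^{p'}$ gradient estimates for $\bar\phi$ against the moving measures $\nu_t$, none of which is spelled out.
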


The case $p=2$ is due to Hein, Naber \cite{Hein-Naber-14}.

\subsection{Heat kernel bounds}
We will frequently use bounds on the heat kernel on a Ricci flow background from \cite{Bamler_HK_entropy_estimates}.
The following $L^\infty$-bound will be particularly important, see \cite[\HKLinftyHKbound]{Bamler_HK_entropy_estimates}:

\begin{Proposition}\label{Prop_L_infty_HK_bound}
Let $(M, (g_t)_{t \in I})$ be a Ricci flow on a compact manifold, $[s,t] \subset I$ and suppose that $R \geq R_{\min}$ on $M \times [s,t]$.
Then
\[ K(x,t;y,s) \leq \frac{C(R_{\min} (t-s))}{(t-s)^{n/2}} \exp ( - \NN_{x,t}(t-s) ). \]
\end{Proposition}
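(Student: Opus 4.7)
\smallskip

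The plan is to reduce the claimed upper bound on $K(x,t;y,s)$ to a pointwise lower bound on the potential of the conjugate heat kernel, then to establish this lower bound via Perelman's differential Harnack inequality combined with the concentration estimates recalled in Subsection~3.4. Writing $d\nu_{x,t;s}=(4\pi\tau)^{-n/2}e^{-f}dg_s$ with $\tau=t-s$, the inequality we want is equivalent to the pointwise statement
\[ f(y,s) \geq \NN_{x,t}(\tau) - C(R_{\min}\tau) \qquad \text{for every } y\in M. \]
Since $\NN_{x,t}(\tau)=\int f\, d\nu_s-n/2$ by definition, this is essentially an oscillation bound: at any point $y$, the value of $f$ differs from its weighted average by at most a controlled amount. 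Because $f(\cdot,s)$ encodes $K(x,t;\cdot,s)$, such a pointwise lower bound on $f$ is precisely the desired $L^\infty$ bound on $K$ (up to the stated constants).

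First I would invoke Perelman's pointwise differential Harnack inequality for the conjugate heat kernel $K(x,t;\cdot,\cdot)$, namely
\[ \tau\bigl(2\triangle f - |\nabla f|^2 + R\bigr) + f - n \leq 0. \]
This is proved in the standard way: the quantity $[\tau(2\triangle f-|\nabla f|^2+R)+f-n]K$ is a super-solution of $\square^*$ with non-positive limit at $s\nearrow t$, so the maximum principle applies. Integrating this pointwise inequality against $d\nu_s$ and using the identity $\int 2\triangle f\, d\nu = \int 2|\nabla f|^2 d\nu$ gives back the non-positivity $\mathcal{W}[g_s,f,\tau]\leq 0$, and combined with the monotonicity $\frac{d}{d\tau}(\tau\NN)=\mathcal{W}$ of Proposition~\ref{Prop_NN_basic_properties}, it encodes all the information needed about the averages of $f$.

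The heart of the argument is to upgrade this to a pointwise lower bound. My plan is to use the DHI in its ODE form along a curve: the inequality yields, via integration of $\tfrac{d}{d\sigma}\bigl(2\sqrt{\tau(\sigma)}f(\gamma(\sigma),\sigma)\bigr)$ along any curve $\gamma$, a comparison between $f(y,s)$ and $f(y_0,s_0)$ at a slightly later intermediate time $s_0\in(s,t)$. One finds a bound of the form
\[ f(y,s) \geq f(y_0,s_0) - C\frac{d_{g_{s_0}}^2(y,y_0)}{s_0-s} - C(R_{\min}\tau). \]
Choosing $s_0 := (s+t)/2$ and $y_0$ to be an $H_n$-center of $(x,t)$ at time $s_0$ (Lemma~\ref{Lem_mass_ball_Var} ensures that $\nu_{x,t;s_0}$ concentrates in a ball of radius $O(\sqrt{\tau})$ about $y_0$), Chebyshev applied to $\int f\, d\nu_{s_0}=\NN_{x,t}(\tau/2)+n/2$ produces a point $y_0$ near the $H_n$-center with $f(y_0,s_0)\leq \NN_{x,t}(\tau/2)+C \leq \NN_{x,t}(\tau)+C$, where the last step uses the doubling estimate (\ref{eq_NN_doubling}). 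Substituting and optimizing the quadratic term gives the pointwise lower bound $f(y,s)\geq \NN_{x,t}(\tau)-C(R_{\min}\tau)$, which is exactly what was needed.

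The main obstacle is the passage from the integrated Harnack $\mathcal{W}\leq 0$ to a truly pointwise estimate: the integrated inequality only controls averages of $f$, whereas the claim is pointwise at every $y$. The DHI along curves, combined with the Gaussian concentration of $\nu_{x,t;\cdot}$ near an $H_n$-center, is what bridges this gap. The dependence of the constant on $R_{\min}(t-s)$ arises from the scalar curvature term $\tau R$ in the DHI, which can only be absorbed once a uniform lower bound on $R$ over the time slab $[s,t]$ is available — this is the reason the hypothesis $R\geq R_{\min}$ on $M\times[s,t]$ is invoked exactly in this quantitative form.
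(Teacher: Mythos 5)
The propagation step in your proposal has the Harnack inequality pointing the wrong way, and this is a fatal gap. From the DHI $\tau(2\triangle f-|\nabla f|^2+R)+f-n\le 0$ and the evolution $\partial_\tau f=\triangle f-|\nabla f|^2+R-\tfrac{n}{2\tau}$ one computes $\partial_\tau(\sqrt\tau\,f)\le -\tfrac{\sqrt\tau}{2}|\nabla f|^2+\tfrac{\sqrt\tau}{2}R$, so along a spacetime curve $\gamma(\tau)$,
\[
\sqrt{\tau_2}\,f(\gamma(\tau_2))\le\sqrt{\tau_1}\,f(\gamma(\tau_1))+\tfrac12\int_{\tau_1}^{\tau_2}\sqrt{\tau}\big(|\gamma'|^2+R\big)\,d\tau
\qquad(\tau_1<\tau_2).
\]
Since $\tau_2$ corresponds to the \emph{earlier} real time, this is Perelman's upper bound $f\le\ell$: $f$ at the earlier time is controlled from \emph{above} by $f$ at a later time plus $\mathcal{L}$-length. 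Your claimed estimate $f(y,s)\ge f(y_0,s_0)-C\,d^2(y,y_0)/(s_0-s)-C$ with $s<s_0$ is the \emph{reverse} direction --- a lower bound at the earlier time --- which the DHI does not and cannot supply; there is no reverse Harnack in this generality. (The Chebyshev step locating $y_0$ near an $H_n$-center with $f(y_0,s_0)\le\NN_{x,t}(\tau/2)+C$ is fine once you first use $\mathcal{W}\le 0$ and the Poincar\'e inequality to control $\int|f-\bar f|\,d\nu_{s_0}$, but the backward propagation that follows is broken.)

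Note also that the paper does not prove this proposition here; it is a restatement of Theorem~7.1 of \cite{Bamler_HK_entropy_estimates}, and the argument there is of a fundamentally different character. It is an ultracontractivity (Nash/Davies-type) estimate in which the log-Sobolev inequality at variable scale --- with constant controlled by the Nash entropy via $\mathcal{W}$-monotonicity --- yields a differential inequality for $\|K(x,t;\cdot,s)\|_{L^{p(s)}}$ that upgrades $L^1$ to $L^\infty$ as $p(s)\nearrow\infty$. The differential Harnack enters that proof only in integrated form (establishing $\mathcal{W}\le 0$ and hence the monotonicity of $\NN_{x,t}$), never as a pointwise inequality along curves --- precisely because in pointwise form it points the wrong way. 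In effect, the $L^\infty$ bound has to come \emph{first}; it cannot be derived from a pointwise lower bound on $f$, since such a bound is a consequence of the proposition, not an input to it.
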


This bound will often take the following form.
Denote by $d\nu = (4\pi \tau)^{-n/2} e^{-f}$ the conjugate heat kernel based at some point $(x_0, t_0) \in M \times I$.
Let $\tau_0 > 0$ be such that $[t_0 - \tau_0, t_0] \subset I$ and $R(\cdot, t_0 - \tau_0) \geq R_{\min}$.
Then we have on $M \times [t_0 - \tau_0,t_0)$
\[ f \geq - C(R_{\min} \tau_0) + \NN_{x_0,t_0}(\tau_0). \]
\bigskip

\section{Integral almost properties} \label{sec_int_almost_prop}
In the course of this paper we will be using certain integral properties that characterize the local geometry of a Ricci flow.
These are related to the property that a flow is locally $\IF$-close to a metric flow $\XX$ that is a metric soliton, static or a Cartesian product of the form $\XX = \XX' \times \IR^k$.
In fact, these properties will eventually imply such an $\IF$-closeness statement.
The reason why we will be using these integral properties is that they will initially be more manageable from an analytical perspective.

In the following let $(M, (g_t)_{t \in I})$ be a Ricci flow on a compact manifold and $(x_0, t_0) \in M \times I$, $r > 0$, $0 < \eps < 1$.
We denote by $ d\nu_{x_0, t_0} = (4\pi \tau)^{-n/2} e^{-f} dg$ the conjugate heat kernel based at $(x_0, t_0)$.

The first definition expresses a form of closeness to a shrinking soliton near $(x_0, t_0)$.
It is helpful to recall the soliton identities from Subsection~\ref{subsec_soliton_identities}.

\begin{Definition}[Almost self-similarity] \label{Def_almost_self_similar}
The point $(x_0,t_0)$ is called {\bf $(\eps, r)$-selfsimilar} if $[t_0 - \eps^{-1} r^2, t_0 ] \subset I$ and if the following holds for $W := \NN_{x,t}(r^2)$:
\begin{alignat}{2}
 \int_{t_0 - \eps^{-1} r^2}^{t_0 - \eps r^2} \int_M \tau \Big| \Ric + \nabla^2 f - \frac1{2\tau} g \Big|^2 d\nu_{x_0, t_0; t} dt &\leq \eps, && \label{eq_almost_self_similar_1} \\
 \int_M \big| \tau (2\triangle f - |\nabla f|^2 + R) + f - n - W \big| d\nu_{x_0, t_0; t
} &\leq \eps \quad && \text{for all} \quad t \in [t_0 - \eps^{-1} r^2, t_0 - \eps r^2], \label{eq_almost_self_similar_2} \\
 R &\geq -\eps r^{-2} \quad && \text{on} \quad M \times [t_0 - \eps^{-1} r^2, t_0 - \eps r^2]. \label{eq_almost_self_similar_3}
\end{alignat}
\end{Definition}

The next definition expresses a form of closeness to a static, Ricci flat flow.

\begin{Definition}[Almost static] \label{Def_static}
The point $(x_0,t_0)$ is called {\bf $(\eps, r)$-static} if $[t_0 - \eps^{-1} r^2, t_0 ] \subset I$ and if the following holds:
\begin{alignat*}{2}
 r^2 \int_{t_0 - \eps^{-1} r^2}^{t_0 - \eps r^2} \int_M   |{\Ric}|^2  d\nu_{x_0, t_0; t} dt &\leq \eps, && \\
 \int_M R  \, d\nu_{x_0, t_0;t} &\leq \eps \qquad &&\text{for all} \quad t \in [t_0 - \eps^{-1} r^2, t_0 - \eps r^2], \\
 R &\geq -\eps r^{-2} \qquad &&\text{on} \quad M \times [t_0 - \eps^{-1} r^2, t_0 - \eps r^2]. 
 \end{alignat*}
\end{Definition}

Next, we introduce a form of closeness to a metric flow of the form $\XX = \XX' \times \IR^k$.
More specifically, we will characterize vector-valued functions on the flow that resemble coordinate functions for the last factor.
These will be called splitting maps, as they are comparable to splitting maps in the analysis of spaces with lower Ricci curvature bounds, see for example \cite{Cheeger-Naber-Codim4}.
We will need two versions of this notion, called \emph{weak} and \emph{strong} splitting maps.

\begin{Definition}[Weak splitting map] \label{Def_weak_splitting_map}
Suppose that $[t_0 - \eps^{-1} r^2, t_0 ] \subset I$.
An {\bf weak $(k,  \eps, r)$-splitting map at $(x_0, t_0)$} is a smooth vector-valued function $\vec y = (y_1, \ldots, y_k) : M \times [t_0 - \eps^{-1} r^2, t_0 - \eps r^2] \to \IR^k$ with the following properties for all $i,j = 1, \ldots, k$:
\begin{enumerate}[label=(\arabic*)]
\item \label{Def_weak_splitting_map_1} We have
\[  r^{-1} \int_{t_0 - \eps^{-1} r^2}^{t_0 - \eps r^2} \int_M |\square y_i  | d\nu_{x_0, t_0;t} dt \leq \eps. \]
\item \label{Def_weak_splitting_map_2} We have
\[  r^{-2} \int_{t_0 - \eps^{-1} r^2}^{t_0 - \eps r^2} \int_M |\nabla y_i \cdot \nabla y_j - \delta_{ij} | d\nu_{x_0, t_0;t} dt \leq \eps. \]
\end{enumerate} 
If there exists a weak $(k,  \eps, r)$-splitting map at $(x_0, t_0)$, then we say that $(x_0, t_0)$ is {\bf weakly $(k,  \eps, r)$-split.}
If $k \leq 0$, then we define the property of being weakly $(k,  \eps, r)$-split to always hold.
\end{Definition}
\medskip

\begin{Definition}[Strong splitting map] \label{Def_strong_splitting_map}
Suppose that $[t_0 - \eps^{-1} r^2, t_0 ] \subset I$.
A {\bf strong $(k,\eps, r)$-splitting map at $(x_0, t_0)$} is a vector-valued function $\vec y = (y_1, \ldots, y_k) : M \times [t_0 - \eps^{-1} r^2, t_0 - \eps r^2] \to \IR^k$ with the following properties for all $i,j = 1, \ldots, k$:
\begin{enumerate}[label=(\arabic*)]
\item \label{Def_strong_splitting_map_1} $y_i$ solves the heat equation $\square y_i = 0$ on $M \times [t_0 - \eps^{-1} r^2, t_0 - \eps r^2]$.
\item \label{Def_strong_splitting_map_2} We have
\[  r^{-2} \int_{t_0 - \eps^{-1} r^2}^{t_0 - \eps r^2} \int_M |\nabla y_i \cdot \nabla y_j - \delta_{ij} | d\nu_{x_0, t_0;t} dt \leq \eps. \]
\item \label{Def_strong_splitting_map_3} For all $t \in [t_0 - \eps^{-1} r^2, t_0 - \eps r^2]$ we have
\begin{equation} \label{eq_def_splitting_average_0}
 \int_M y_i \, d\nu_{x_0 t_0; t} = 0. 
\end{equation}
\end{enumerate} 
If there exists a strong $(k,  \eps, r)$-splitting map at $(x_0, t_0)$, then we say that $(x_0, t_0)$ is {\bf strongly $(k,  \eps, r)$-split.}
If $k \leq 0$, then we define the property of being strongly $(k,  \eps, r)$-split to always hold.
\end{Definition}

Note that, at first sight, Definition~\ref{Def_strong_splitting_map} seems to be weaker than the corresponding definition for spaces with lower Ricci curvature bounds, because it lacks a pointwise bound of the form $|\nabla y_i| \leq 1 + \eps$ and an $L^2$-bound on the Hessians $\nabla^2 y_i$.
The reason is that Definition~\ref{Def_strong_splitting_map} only lists a minimal set of properties of a splitting map.
As we will see in Section~\ref{sec_improving_splitting_maps}, these properties imply --- after adjusting $\eps$ --- stronger useful analytical bounds, such as higher $L^p$-bounds on $y_i$ and an $L^2$-bound on $\nabla^2 y_i$.
However, we will not obtain a global \emph{pointwise} bound on the gradient, as splitting maps are defined on the entire manifold and may behave poorly far away from $(x_0, t_0)$.
The lack of such a bound will turn out to be irrelevant, however.
We also refer to Proposition~\ref{Prop_weak_splitting_map_to_splitting_map}, which provides a somewhat unconventional pointwise gradient bound.

The difference between Definitions~\ref{Def_weak_splitting_map}, \ref{Def_strong_splitting_map} lies essentially in Property~(1).
Property~\ref{Def_strong_splitting_map_3} has been introduced for convenience.
Observe that we have the following result:

\begin{Lemma} \label{Lem_prop_3_strong_splitting}
Let $(M, (g_t)_{t \in I})$ be a Ricci flow on a compact manifold and suppose that $\vec y : M \times [t_0 - \eps^{-1} r^2, t_0 - \eps r^2] \to \IR^k$ is a smooth vector-valued function satisfying Properties~\ref{Def_strong_splitting_map_1}, \ref{Def_strong_splitting_map_2} of Definition~\ref{Def_strong_splitting_map}.

Then there is a vector $\vec a := (a_1, \ldots, a_k) \in \IR^k$ such that $\vec y - \vec a$ is a strong splitting map.
Moreover, if $\vec y$ is the restriction of a map $\vec y^{\, \prime} : M \times [t_0 - \eps^{-1} r^2, t_0] \to \IR^k$ whose components satisfy the heat equation $\square y'_i = 0$, then $\vec a = \vec y^{\,\prime} (x_0, t_0)$.

Lastly, if (\ref{eq_def_splitting_average_0}) in Property~\ref{Def_strong_splitting_map_3} holds for some $t \in [t_0 - \eps^{-1} r^2, t_0 - \eps r^2]$, then it holds for all such $t$.
\end{Lemma}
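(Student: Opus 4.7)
The plan is to exploit identity (\ref{eq_dds_int_u_int_square_u}) from Subsection~\ref{subsec_H_operator_identities}, which gives
\[ \frac{d}{dt} \int_M y_i \, d\nu_{x_0, t_0; t} = \int_M \square y_i \, d\nu_{x_0, t_0; t}. \]
Since the heat equation $\square y_i = 0$ holds by Property~\ref{Def_strong_splitting_map_1}, the function $t \mapsto \int_M y_i \, d\nu_{x_0, t_0; t}$ is constant on $[t_0 - \eps^{-1} r^2, t_0 - \eps r^2]$. I will call this constant $a_i$ and set $\vec a := (a_1, \ldots, a_k) \in \IR^k$. Then $\vec y - \vec a$ still satisfies Properties~\ref{Def_strong_splitting_map_1} and \ref{Def_strong_splitting_map_2} (both are invariant under subtracting constants from the components), and by construction $\int_M (y_i - a_i) \, d\nu_{x_0, t_0; t} = 0$ for all $t$ in the interval, establishing Property~\ref{Def_strong_splitting_map_3}.

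The last assertion of the lemma --- that Property~\ref{Def_strong_splitting_map_3} either holds at every $t$ or at none --- is an immediate corollary of the same constancy argument: if (\ref{eq_def_splitting_average_0}) holds for one $t$, then the constant value $a_i$ of the integral must be zero, and hence (\ref{eq_def_splitting_average_0}) holds for every $t$ in the interval.

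For the uniqueness claim concerning an extension $\vec y^{\,\prime} : M \times [t_0 - \eps^{-1}r^2, t_0] \to \IR^k$ with $\square y'_i = 0$, I will apply the same identity on the extended interval $[t_0 - \eps^{-1}r^2, t_0]$, which yields $a_i = \int_M y'_i \, d\nu_{x_0, t_0; t}$ for every $t \in [t_0 - \eps^{-1}r^2, t_0)$. Letting $t \nearrow t_0$, the variance bound $\Var(\nu_{x_0, t_0; t}) \leq H_n(t_0 - t)$ from Proposition~\ref{Prop_monotonicity_Var} forces weak convergence $\nu_{x_0, t_0; t} \to \delta_{x_0}$, so by continuity of $y'_i$ at $(x_0, t_0)$ the right-hand side converges to $y'_i(x_0, t_0)$, giving $a_i = y'_i(x_0, t_0)$ as claimed. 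There is no real obstacle in the proof; the only point worth verifying is the applicability of (\ref{eq_dds_int_u_int_square_u}), which is immediate from smoothness of $y_i$ together with compactness of $M$. All three assertions flow from the single observation that $\int_M y_i \, d\nu_{x_0, t_0; t}$ is conserved along the conjugate heat flow whenever $y_i$ solves the forward heat equation.
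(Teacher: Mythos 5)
Your proof is correct and takes essentially the same approach as the paper: the paper's one-line proof is precisely the observation that $\square y_i = 0$ makes $\int_M y_i \, d\nu_{x_0,t_0;t}$ constant in $t$. You have simply spelled out the details the paper leaves implicit, including the $t \nearrow t_0$ limit via the variance bound for the extension case, all of which is sound.
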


\begin{proof}
The lemma follows from the fact that if $\square y_i = 0$, then
\[ \frac{d}{dt}  \int_M y_i \, d\nu_{x_0 t_0; t} =  \int_M \square y_i \, d\nu_{x_0 t_0; t}  = 0. \qedhere \]
\end{proof}
\medskip

We will frequently work with splitting maps $\vec y$ that arise as the restriction of some map $\vec y^{\, \prime} : M \times I' \to \IR^k$ to $M \times [t_0 - \eps^{-1} r^2, t_0 - \eps r^2]$.
In this case, we will often also say that $\vec y^{\, \prime}$ a splitting map at $(x_0, t_0)$, omitting the restriction to $M \times [t_0 - \eps^{-1} r^2, t_0 - \eps r^2]$.

Let us comment on the different uses of weak and strong splitting maps throughout this paper.
We will first derive splitting theorems that allow us to prove a quantitative stratification result involving \emph{weak} splitting maps.
These maps will arise from linear combinations of potentials of the conjugate heat equation.
Subsequently, in Section~\ref{sec_improving_splitting_maps} we will see that the existence of a weak splitting map in fact implies the existence of a \emph{strong} splitting map, after adjusting $\eps$.
The advantage of strong splitting maps is, among other things, that they enable us to derive useful local regularity theorems.

Lastly, we discuss the behavior of the notions introduced in this section under parabolic rescaling.
If a point $(x_0, t_0)$ is $(\eps, r)$-selfsimilar, $(\eps, r)$-static, or weakly/strongy $(k, \eps, r)$-split, then after parabolically rescaling the flow by some constant $\lambda$, the corresponding point is $(\eps, \lambda r)$-selfsimilar, $(\eps, \lambda r)$-static, or weakly/strongy $(k, \eps, \lambda r)$-split, respectively.
Moreover, if $\vec y  : M \times [t_0 - \eps^{-1} r^2, t_0 - \eps r^2] \to \IR^k$ is a weak/strong $(k, \eps, r)$-splitting map on the original flow, then the map 
\[ M \times [\lambda^2 t_0 - \eps^{-1} (\lambda r)^2, \lambda^2 t_0 - \eps(\lambda r)^2] \longrightarrow \IR^k, \qquad (x, t) \longmapsto \lambda \vec y (x, \lambda^{-2} t)  \]
is a weak/strong $(k, \eps, \lambda r)$-splitting map on the parabolically rescaled flow.
In the beginning of most proofs in this paper, we will apply parabolic rescaling, and possibly a time-shift, in order to normalize a certain scale.
When doing so, will always assume that $\vec y$ transforms as described here.

\bigskip

\part{Preliminary quantitative stratification} \label{part_prel_quant_strat}
The goal of the part will be to obtain a preliminary quantitative stratification result for Ricci flows.
This result will state that a Ricci flow that satisfies a non-collapsedness bound involving the pointe Nash entropy is $(k, \eps, r)$-split on the complement of a bounded number of $P^*$-parabolic balls of radius $r$.
In the subsequent part, this bound will allow us to conclude a partial regularity theory for $\IF$-limits.

In contrast to other quantitative stratification results, we will not be able to use any geometric limit arguments in this part, because the theory of metric flows and $\IF$-convergence is still too weak at this point.
Instead, we will carry out all our argument in the setting of a single (smooth) Ricci flow.
Further complications arise from the nature of Ricci flows and the lack of distance distortion and lower heat kernel bounds.

Let us now provide a brief overview over the following sections.
In Section~\ref{sec_improved_Lp} we derive improved integral bounds on geometric quantities on a Ricci flow.
These are similar to known bounds, but contain a crucial $e^{\alpha f}$ term; compare with (\ref{eq_improved_Lp_outline_1}), (\ref{eq_improved_Lp_outline_2}) in the outline.
In Section~\ref{sec_geom_bounds_almost_ss} we characterize the geometry of Ricci flows near almost selfsimilar points.
We will see that the almost selfsimilarity condition is essentially equivalent to almost constancy of the pointed Nash entropy and we will show that several geometric quantities, which vanish on a gradient shrinking soliton, are small in an integral sense.
We will also derive an almost monotonicity result for the integral of the scalar curvature; compare with (\ref{eq_almost_monotone_int_R_outline}) in the outline.
In Section~\ref{sec_comparing_CHK} we compare two conjugate heat kernel measures, based at different points, and derive a bound of the form (\ref{eq_compare_nu_outline}) from the outline.
In Section~\ref{sec_dist_expansion} we establish a distance expansion bound near sufficiently selfsimilar points.
In Section~\ref{sec_alm_splitting} we prove almost splitting theorems.
These are similar to almost cone splitting theorems in the case of spaces with lower Ricci curvature bounds.
The main difficulty here is, however, that these bounds have to be obtained without resorting to a limit argument.
Lastly, in Section~\ref{sec_prelim_q_strat} we combine our results obtained so far and derive the desired preliminary quantitative stratification result.

\section{\texorpdfstring{Improved $L^p$-bounds on geometric quantities}{Improved L\^{}p-bounds on geometric quantities}} \label{sec_improved_Lp}
In this section we derive new $L^p$-bounds on geometric quantities that only depend on a non-collapsing bound of the form $\NN_{x_0, t_0} (r^2) \geq - Y$.
The most important and novel aspect of these bounds is an additional weight of the form $e^{2\alpha f}$.
This weight will later enable us to compare integral quantities involving  conjugate heat kernels based at different points.

Let us first discuss some of the more elementary $L^p$-bounds, which form the basis of the main result, but lack the $e^{2\alpha f}$ weight.
The proofs of the main results of this section will borrow from the following ideas. 
Let $(M, (g_t)_{t \in (-T, 0]})$ be a Ricci flow on a compact manifold and let $x \in M$ be a point.
Denote by $d\nu := d\nu_{x,0} = (4\pi \tau)^{-n/2} e^{-f} dg$ the conjugate heat kernel based at $(x, 0)$.
Assume that we have a non-collapsing bound $\NN_{x, 0} (\tau) \geq - Y$ and a lower scalar curvature bound of the form $R \geq R_{\min}$ on $M \times (-T, 0]$ for some $R_{\min} \leq 0$.
Note that by Lemma~\ref{Lem_lower_scal} the second bound can always be ensured for a suitable $R_{\min }$ if we restrict the flow to a smaller sub-interval of the form $(-T', 0] \subset (-T, 0]$.

By the discussion in Subsection~\ref{subsec_NN_WW} we have for any $\tau_1 \in (0, T)$
\[\tau_1 \int_M (|\nabla f|^2 + R) d\nu_{- \tau_1} 
= \WW [ g_{-\tau_1}, f(\cdot, -\tau_1), \tau_1] - \NN_{x_0, 0}(\tau_1) + \frac{n}2 \leq Y + \frac{n}2. \]
So
\[  \tau_1 \int_M R \, d\nu_{- \tau_1} \leq  C(Y ), \qquad 
\tau_1 \int_M |\nabla f|^2 \, d\nu_{- \tau_1} \leq C(Y , R_{\min}). \]
Due to the evolution equation for the scalar curvature, $\square R = 2 |{\Ric}|^2$, we obtain the following $L^2$-bound on the Ricci tensor for any $0 < \tau_0 \leq \tau_1 < T$; see (\ref{eq_dds_int_u_int_square_u})
\begin{equation} \label{eq_L2_Ric_basic}
 2 \int_{ - \tau_1}^{ - \tau_0} \int_M |{\Ric}|^2 d\nu_t dt
= \int_{ - \tau_1}^{ - \tau_0} \frac{d}{dt} \int_M  R \, d\nu_t dt
= \int_M R \, d\nu_t \bigg|_{t =  - \tau_1}^{t =  - \tau_0}
\leq C(Y, R_{\min}, \tau_0). 
\end{equation}
Lastly, we will derive an $L^2$-bound on the Hessian $\nabla^2 f$.
For this purpose, we use (\ref{eq_ddt_2_NN})
\[ 2 \int_{ - \tau_1}^{0} \tau \int_M \bigg| \Ric + \nabla^2 f - \frac1{2 \td\tau} g \bigg|^2 d\nu_{t} dt = \WW [g, f, \tau] \bigg|_{t =  - \tau_1}^{t = 0}
\leq  -\WW [g_{-\tau_1}, f(\cdot, -\tau_1), \tau_1 ]. \]
Assuming $\tau_1 \in (-\frac12 T, 0]$, we find using (\ref{eq_ddt_2_NN}), (\ref{eq_NN_basic_2})
\begin{multline*}
 -\WW [g_{-\tau_1}, f(\cdot, -\tau_1), \tau_1 ] 
\leq - \frac{1}{\tau_1} \int_{\tau_1}^{2\tau_1} \WW [g_{-\td\tau}, f(\cdot, -\td\tau), \td\tau ] \, d\td\tau \\
\leq  - \frac{1}{\tau_1} \int_{0}^{2\tau_1} \WW [g_{-\td\tau}, f_{ -\td\tau}, \td\tau ] \, d\td\tau
=  - \frac{1}{\tau_1} \big( 2\tau_1 \NN_{x,0} (2\tau_1) \big) \leq 2 Y. 
\end{multline*}
So
\[ \int_{ - \tau_1}^{0} \tau \int_M \bigg| \Ric + \nabla^2 f - \frac1{2 \td\tau} g \bigg|^2 d\nu_{t} dt \leq Y. \]
Combining this with (\ref{eq_L2_Ric_basic}) implies
\[ \int_{ - \tau_1}^{ - \tau_0} \int_M |\nabla^2 f |^2 d\nu_tdt  \leq C(Y, R_{\min}, \tau_0). \]

In the following proposition we establish similar $L^p$-bounds, plus an $L^4$-bound on $\nabla f$, with an additional weight of the form $e^{2\alpha f}$.

\begin{Proposition} \label{Prop_improved_L2}
There is a purely dimensional constant $\ov\alpha > 0$ such that the following holds if $\alpha \in [0, \ov\alpha]$.
Let $(M, (g_t)_{t \in [-2r^2,0]})$, $r> 0$, be a Ricci flow on a compact manifold and denote by $d\nu = (4\pi \tau)^{-n/2} e^{-f} dg$ the conjugate heat kernel based at $(x,0)$ for some $x \in M$.
Assume that $\NN_{x,0} (2r^2) \geq - Y$.
Then for any $0<\theta \leq \frac12$
\begin{align} \label{eq_improved_L2_bound_1}
 \int_{-r^2}^{-\theta r^2} \int_M \big( \tau |{\Ric}|^2 + \tau |\nabla^2 f|^2 +  |\nabla f|^2 + \tau |\nabla f|^4 + \tau^{-1} e^{\alpha f} + \tau^{-1} \big) e^{2\alpha f} d\nu_t dt &\leq C (Y) |\log \theta|,  \\
  \int_M \big( \tau |R| + \tau |\triangle f| + \tau |\nabla f|^2 + e^{\alpha f}  + 1 \big) e^{2\alpha f} d\nu_{-r^2}  &\leq C (Y) .  \label{eq_improved_L2_bound_2}
\end{align}
\end{Proposition}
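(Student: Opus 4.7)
The plan is to handle the weighted integrals in three coupled steps, the driver being Gaussian concentration of the conjugate heat kernel together with Perelman's pointwise differential Harnack inequality and its integrated (monotonicity) counterpart.

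\emph{Step 1: Gaussian concentration.} I would first establish that for some dimensional $\ov\alpha > 0$ and every $\beta \in [0, 3\ov\alpha]$,
\[ \int_M e^{\beta f}\, d\nu_{-\tau} \leq C(Y) \qquad \text{for all } \tau \in (0, 2r^2]. \]
This rests on the Gaussian upper bound on the conjugate heat kernel (Theorem~\HKThmHKboundGauss), which translates into $f(y,-\tau) \geq \eta\, d_{-\tau}(y,z)^2/\tau - C(Y)$ for any $H_n$-center $(z,-\tau)$ of $(x,0)$ and some dimensional $\eta > 0$; rewriting $e^{\beta f} d\nu = (4\pi\tau)^{-n/2} e^{(\beta-1)f} dg$ and partitioning the integration into annuli around $z$, each controlled by the upper volume bound (Theorem~\HKThmUpperVolBound), closes the estimate provided $\beta$ stays below a dimensional threshold. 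The $e^{2\alpha f}$ and $e^{3\alpha f}$ contributions in both (\ref{eq_improved_L2_bound_1}) (integrated against $d\tau/\tau$, producing the $|\log\theta|$ factor) and (\ref{eq_improved_L2_bound_2}) follow immediately.

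\emph{Step 2: First-order quantities from Perelman's Harnack.} I would then invoke the pointwise bound $H := \tau(2\triangle f - |\nabla f|^2 + R) + f - n \leq 0$. Multiplying by $e^{2\alpha f}$, integrating against $d\nu_{-\tau}$, and using the weighted integration by parts $\int(\triangle f) e^{2\alpha f} d\nu = (1 - 2\alpha)\int|\nabla f|^2 e^{2\alpha f}d\nu$, I get
\[ \tau(1 - 4\alpha)\int|\nabla f|^2 e^{2\alpha f} d\nu + \tau\int R\, e^{2\alpha f} d\nu \leq \int(n - f)\, e^{2\alpha f} d\nu \leq C(Y), \]
using Step~1 together with $-f \leq C(Y)$ from Proposition~\ref{Prop_L_infty_HK_bound}. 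Combined with the scalar lower bound $R \geq -C(Y) r^{-2}$ from Lemma~\ref{Lem_lower_scal}, this yields the \emph{pointwise-in-$\tau$} estimate $\tau\int(|\nabla f|^2 + |R|)e^{2\alpha f} d\nu \leq C(Y)$, which produces the $|\nabla f|^2$ integrand of (\ref{eq_improved_L2_bound_1}) after integration against $d\tau/\tau$, and the $|R|$, $|\nabla f|^2$ terms of (\ref{eq_improved_L2_bound_2}) at $\tau = r^2$. The $|\triangle f|$ term in (\ref{eq_improved_L2_bound_2}) is then obtained from $(\triangle f)_+ \leq C(|\nabla f|^2 + |R| + (n-f)/\tau)$ (from Perelman's bound again), together with $\int(\triangle f)_- = \int(\triangle f)_+ - \int\triangle f$.

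\emph{Step 3: Second-order quantities and quartic gradient.} For the Hessian terms I would compute the evolution of $\int He^{2\alpha f} d\nu$ using Perelman's identity $\square^*(vH) = -2\tau v|{\Ric} + \nabla^2 f - g/(2\tau)|^2$ together with the weighted conjugate-heat identity $\square^*(v e^{2\alpha f}) = v e^{2\alpha f}\bigl[2\alpha(1-2\alpha)|\nabla f|^2 + 2\alpha R - n\alpha/\tau\bigr]$. Integrating over $t \in [-r^2, -\theta r^2]$ and absorbing the $\alpha$-error terms by Step~2 (and by $\int|H| e^{2\alpha f}d\nu \leq C(Y)$, which follows from Step~2 applied to the defining terms of $H$) yields
\[ \int_{-r^2}^{-\theta r^2}\int_M \tau\Big|{\Ric} + \nabla^2 f - \tfrac{g}{2\tau}\Big|^2 e^{2\alpha f} d\nu \, dt \leq C(Y)|\log\theta|. \]
To isolate $|{\Ric}|^2$ I would use $\square R = 2|{\Ric}|^2$ together with the same weighted heat-flow identity, which relates $\int\int \tau|{\Ric}|^2 e^{2\alpha f}d\nu \, dt$ directly to the Step~2-controlled quantity $\tau\int R e^{2\alpha f}d\nu$; the Hessian term then follows from $|\nabla^2 f|^2 \leq 2|{\Ric} + \nabla^2 f - g/(2\tau)|^2 + 4|{\Ric}|^2 + n/(2\tau^2)$, the last term contributing $|\log\theta|$ upon integration. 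Finally, $\tau|\nabla f|^4 e^{2\alpha f}$ is treated via Bochner $\triangle(|\nabla f|^2/2) = |\nabla^2 f|^2 + \nabla f \cdot \nabla \triangle f + \Ric(\nabla f, \nabla f)$, tested against $|\nabla f|^2 e^{2\alpha f}d\nu$ and closed using the already-established Hessian/Ricci and first-order bounds.

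\emph{Main obstacle.} The main technical difficulty lies in Step~3: each appearance of $\nabla e^{2\alpha f} = 2\alpha e^{2\alpha f}\nabla f$ in the weighted monotonicity calculation generates an error term of the form $\alpha\int(\cdot)|\nabla f|^p e^{2\alpha f}d\nu$ with $p \in \{0,1,2\}$, and these must be absorbed by the constants produced by the Harnack inequality itself. The self-consistency of this absorption --- with Step~2 established first using only Step~1 --- is exactly what forces $\ov\alpha$ to be dimensional and chosen sufficiently small.
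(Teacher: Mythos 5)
Your overall architecture matches the paper's: Perelman's pointwise inequality $w := \tau(2\triangle f - |\nabla f|^2 + R) + f - n \leq 0$ for the first-order terms, the entropy monotonicity $\square^*(w u) = -2\tau u\,|{\Ric}+\nabla^2 f - \tfrac{1}{2\tau}g|^2$ together with the $\tau R$-evolution $\square R = 2|{\Ric}|^2$ for the second-order terms, plus careful bookkeeping of the $e^{\alpha f}$ weight, with $\ov\alpha$ chosen small so error terms can be absorbed. That is exactly the paper's route, and your Step~2 correctly recovers what the paper implicitly uses along the way. Two aspects differ, and one of them hides a genuine gap.

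The harmless difference is Step~1. You bound $\int e^{\beta f}\,d\nu$ by combining the Gaussian upper bound on the heat kernel with the upper volume bound; the paper instead proves this (Proposition~\ref{Prop_int_ealphaf}) by differentiating $\int e^{\alpha f}d\nu$ in $\alpha$, plugging in the pointwise Harnack inequality and the $e^{(1-\alpha)f}$-weighted integration by parts, and integrating the resulting Gr\"onwall inequality — a shorter argument that doesn't use the Nash entropy bound at all. Both work.

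The genuine gap is in your ``Main obstacle'' paragraph and in the $|\nabla f|^4$ treatment. You claim the $\alpha$-error terms in the weighted monotonicity computation are of the form $\alpha\int(\cdot)\,|\nabla f|^p e^{2\alpha f}d\nu$ with $p \in \{0,1,2\}$, i.e.\ controllable by your first-order Step~2. This is not right: expanding $-\tfrac{d}{dt}\int w\, e^{\alpha f}d\nu_t$, the term $-(\square e^{\alpha f})\,w u$ contains $-\alpha(\square f)\,w\,e^{\alpha f}u$, and since $\square f = -\tau^{-1}w + \tau^{-1}f - \tfrac{n}{2\tau}$, one of the pieces is $\alpha\,\tau^{-1}w^2\, e^{\alpha f}u$ --- a \emph{quartic} error (it carries $(\triangle f)^2$, $|\nabla f|^4$, $R^2$, $f^2$). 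These cannot be absorbed ``by the constants produced by the Harnack inequality''; they must be absorbed into the negative second-order term $-2\tau|{\Ric}+\nabla^2 f - \tfrac{1}{2\tau}g|^2$ (together with the $-2\tau|{\Ric}|^2$ that the $\tau R$-evolution contributes) via pointwise domination $(\triangle f)^2 \leq n|\nabla^2 f|^2$, $R^2 \leq n|{\Ric}|^2$, $f^2 \leq C(Y) + C\tau^2(|\nabla^2 f|^2 + |\nabla f|^4 + R^2)$ (using the lower bound on $f$ and the definition of $w$), and the inequality
\[ (1-\alpha)\int_M |\nabla f|^4 e^{\alpha f}d\nu \;=\; \int_M \big(2\nabla^2 f\cdot(\nabla f\otimes\nabla f) + |\nabla f|^2\triangle f\big)e^{\alpha f}d\nu \;\leq\; C\!\int_M |\nabla^2 f|^2 e^{\alpha f}d\nu + \tfrac12\!\int_M |\nabla f|^4 e^{\alpha f}d\nu, \]
obtained by writing $(1-\alpha)|\nabla f|^4 e^{-(1-\alpha)f} = -|\nabla f|^2\nabla f\cdot\nabla e^{-(1-\alpha)f}$ and integrating by parts once. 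That single integration by parts is the right tool here; your proposed route through Bochner's identity $\triangle(\tfrac12|\nabla f|^2) = |\nabla^2 f|^2 + \nabla f\cdot\nabla\triangle f + {\Ric}(\nabla f,\nabla f)$ tested against $|\nabla f|^2 e^{2\alpha f}d\nu$ introduces an uncontrolled third-order term $\nabla f\cdot\nabla\triangle f$ and does not close without further untangling. Finally, your Step~3 should also say how the time-boundary terms are handled: the paper inserts a smooth cutoff $\eta(t)$ supported in $[-1.9, -\theta/2]$, integrates $\eta^2\tfrac{d}{dt}\int(w+\tau R)e^{\alpha f}d\nu$ by parts in $t$, and controls the resulting $\int\eta'\eta\,\int(|w|+|\tau R|)e^{\alpha f}d\nu$ by a Young's-inequality bootstrap against the very quantity being estimated --- your pointwise-in-$\tau$ Step~2 would supply the needed input, but this step should be made explicit.
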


We remark that due to Proposition~\ref{Prop_L_infty_HK_bound} we have $f \geq - C(Y)$ on $M \times [-r^2, 0)$.
So an integral bound on $e^{\alpha f}$ implies an integral bound on $|f|^p$ for any $p < \infty$.

We also have the following (surprisingly general) result.

\begin{Proposition} \label{Prop_int_ealphaf}
Let $(M, (g_t)_{t \in (-T,0]})$ be a Ricci flow on a compact manifold and denote by $d\nu = (4\pi \tau)^{-n/2} e^{-f} dg$ the conjugate heat kernel based at $(x,0)$ for some $x \in M$.
Then for all $t \in (-T, 0)$, $\alpha \in [0, \frac12]$ we have
\[ \int_M e^{\alpha f} d\nu_t \leq \exp \big( (n - \tau \min_M  R (\cdot, t)) \alpha \big). \]
\end{Proposition}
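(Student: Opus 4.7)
The plan is to analyze the evolution of the quantity $F(\tau) := \int_M e^{\alpha f}\, d\nu_{-\tau}$ via a differential inequality in $\tau$. First, using the standard identity $\frac{d}{dt}\int u\, d\nu_t = \int \square u\, d\nu_t$ (see (\ref{eq_dds_int_u_int_square_u})) combined with a direct computation of $\square(e^{\alpha f})$ from the evolution equation (\ref{eq_potential_evolution_equation}) for the potential $f$, and applying the $f$-weighted integration by parts $\int e^{\alpha f}\triangle f\, d\nu = (1-\alpha)\int e^{\alpha f}|\nabla f|^2\, d\nu$, I would derive
\[
\frac{dF}{d\tau} \;=\; \alpha(1-\alpha)\int_M |\nabla f|^2\, e^{\alpha f}\, d\nu_{-\tau} \;+\; \alpha \int_M R\, e^{\alpha f}\, d\nu_{-\tau} \;-\; \frac{n\alpha}{2\tau}\,F(\tau).
\]

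To control the positive $|\nabla f|^2$-term, which otherwise prevents a direct Gronwall argument, the second step is to invoke Perelman's pointwise differential Harnack inequality for the conjugate heat kernel, $H := \tau(2\triangle f - |\nabla f|^2 + R) + f - n \leq 0$. Integrating $H$ against the measure $e^{\alpha f}\, d\nu$ and again using the same integration by parts yields, for $\alpha \leq \frac12$,
\[
\tau(1-2\alpha)\!\int |\nabla f|^2 e^{\alpha f}\, d\nu \;+\; \tau\!\int R\, e^{\alpha f}\, d\nu \;+\; \int (f-n)\, e^{\alpha f}\, d\nu \;\leq\; 0.
\]
Combining this with the evolution identity eliminates the $|\nabla f|^2$-term and produces a closed inequality for $F$ involving $F$, $\int R\, e^{\alpha f}\, d\nu$, and $\int f\, e^{\alpha f}\, d\nu$.

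In the third step, I would invoke the pointwise lower bound $R \geq R_{\min}(\tau) := \min_M R(\cdot,-\tau)$ together with the standard maximum-principle fact that $\min_M R$ is non-decreasing in $t$ under Ricci flow, so that $R_{\min}(\tau)$ is non-increasing in $\tau$. A clean way to package this should be to consider the substitute $\tilde F(\tau) := F(\tau)\, e^{\alpha \tau R_{\min}(\tau)}$: the monotonicity $R_{\min}'(\tau) \leq 0$ contributes a non-positive correction, and one should obtain $\frac{d}{d\tau}\tilde F \leq 0$ (or at least a sharp integral bound). The final step is to supply the initial value. As $\tau \searrow 0$, the conjugate heat kernel converges, in the Gaussian normal-coordinate sense at $x$, to the Euclidean Gaussian, and a direct computation gives
\[
F(0^+) \;=\; (1-\alpha)^{-n/2}.
\]
The elementary inequality $-\tfrac{n}{2}\log(1-\alpha) \leq n\alpha$, valid for $\alpha \in [0,\tfrac12]$, gives $F(0^+) \leq e^{n\alpha}$, and since $\tau R_{\min}(\tau) \to 0$ as $\tau \to 0$ on a smooth compact flow, one concludes $\tilde F(0^+) \leq e^{n\alpha}$, whence $F(\tau) \leq \exp\bigl((n - \tau R_{\min}(\tau))\alpha\bigr)$.

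I expect the main obstacle to be Step 3: extracting a clean monotonicity for $\tilde F$ from the combined identities is not automatic, and one may need to carefully balance the contributions of the $|\nabla f|^2$-, $R$-, and $f$-weighted integrals; in particular, since $\min_M R(\cdot,-\tau)$ is only Lipschitz in $\tau$ in general, the derivative $R_{\min}'(\tau)$ needs to be interpreted in a distributional or $\limsup$-sense, with the sign information $R_{\min}' \leq 0$ used via a standard comparison on subintervals where $R_{\min}$ is smooth.
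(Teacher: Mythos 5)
Your Step~2 already contains the engine of the paper's proof, but you then pivot to a $\tau$-ODE/Gronwall scheme that I do not believe closes.

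Once you integrate Perelman's pointwise bound $\tau(2\triangle f - |\nabla f|^2 + R) + f - n \le 0$ against $e^{\alpha f}\,d\nu$ and apply the weighted integration by parts $\int e^{\alpha f}\triangle f\,d\nu = (1-\alpha)\int e^{\alpha f}|\nabla f|^2\,d\nu$, the inequality you obtain is, after isolating the $f$-term,
\[
\int_M f\, e^{\alpha f}\,d\nu \;\le\; -\tau(1-2\alpha)\int_M |\nabla f|^2 e^{\alpha f}\,d\nu \;-\;\tau\int_M R\,e^{\alpha f}\,d\nu \;+\; n\int_M e^{\alpha f}\,d\nu.
\]
For $\alpha \in [0,\tfrac12]$ the first term on the right is $\le 0$, and $-\tau\int R\,e^{\alpha f}\,d\nu \le -\tau R_{\min} \int e^{\alpha f}\,d\nu$. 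The crucial observation you do not exploit is that the left-hand side is exactly $\frac{d}{d\alpha}\int_M e^{\alpha f}\,d\nu$ at fixed $t$. So the whole thing reads $\partial_\alpha F(\alpha) \le (n - \tau R_{\min})\,F(\alpha)$, and since $F(0)=1$ trivially, integrating from $0$ to $\alpha$ gives the claimed bound at once, with no Gaussian short-time asymptotics, no time differentiation, and no issue at $\alpha = 1/2$ (the $|\nabla f|^2$-coefficient is merely dropped, not divided by). This is precisely the paper's argument.

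By contrast, your Step~3 as written has a genuine gap. After eliminating the $|\nabla f|^2$-term by solving the Harnack inequality for it and substituting into your $\tau$-evolution identity, the resulting inequality for $\partial_\tau F$ still contains the term $\int f\,e^{\alpha f}\,d\nu$ (with a nonzero coefficient of sign depending on $\alpha$); this quantity is $\partial_\alpha F$, not a function of $F$ alone, so you do not obtain a closed scalar ODE in $\tau$ and a Gronwall/monotonicity argument for $\tilde F(\tau) = F(\tau)\,e^{\alpha\tau R_{\min}(\tau)}$ does not follow. In effect you have produced a linear transport inequality in $(\alpha,\tau)$ rather than an ODE. Without a separate, sharp lower bound on $\int f\,e^{\alpha f}\,d\nu$ (which is not available at this level of generality, since the proposition assumes no non-collapsing), the monotonicity of $\tilde F$ is unsubstantiated. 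The computation of $F(0^+) = (1-\alpha)^{-n/2} \le e^{n\alpha}$ is correct but becomes unnecessary once one integrates in $\alpha$; there the initial condition is simply $F(0)=1$.
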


\begin{proof}
Fix some $t \in (-T, 0)$ and let $R_0 := \max_M R_- (\cdot, t)$.
Recall that by \cite[Sec.~9]{Perelman1} we have
\[ \tau (2\triangle f - |\nabla f|^2 + R ) + f -n \leq 0. \]
Therefore at time $t$ and for $\alpha \in [0, \frac12]$
\begin{align*}
 \frac{d}{d\alpha} \int_M e^{\alpha f} d\nu_t  
&= \int_M f e^{\alpha f} d\nu_t
\leq  \int_M \big( \tau (-2\triangle f + |\nabla f|^2 - R ) + n \big) e^{\alpha f} d\nu_t \\
&\leq  \int_M \big(  \tau (-2\triangle f + |\nabla f|^2  ) + n - \tau R_0  \big)  
(4\pi \tau)^{-n/2} e^{-(1-\alpha) f} dg_t \\
&\leq  \int_M \big(  \tau (-2(1-\alpha) |\nabla f|^2 + |\nabla f|^2  ) + n - \tau R_0  \big)  
(4\pi \tau)^{-n/2} e^{-(1-\alpha) f} dg_t  \\
&\leq (n-\tau R_0)  \int_M e^{\alpha f} d\nu_t .
\end{align*}
Integrating this differential inequality over $\alpha$ and using $\int_M d\nu_1 = 1$ implies the proposition.
\end{proof}
\bigskip

\begin{proof}[Proof of Proposition~\ref{Prop_improved_L2}.]
We will show (\ref{eq_improved_L2_bound_1}), (\ref{eq_improved_L2_bound_2}) with $2\alpha$ replaced by $\alpha$.
We will determine the bound $\ov\alpha$ in the course of the proof.
By parabolic rescaling, we may assume that $r = 1$. 
By Lemma~\ref{Lem_lower_scal} we have $R \geq - C$ on $M \times [-1.9,0]$.

Set
\begin{equation} \label{eq_u_w_identities}
  u := (4\pi \tau)^{-n/2} e^{-f}, \qquad w := \tau (2\triangle f - |\nabla f|^2 + R ) + f -n,  
\end{equation}
and recall that by \cite[Sec.~9]{Perelman1} we have
\[ \square^* (wu) = - 2 \tau \Big| \Ric + \nabla^2 f - \frac1{2\tau} g \Big|^2 u, \qquad w \leq 0. \]
We will also use the fact that
\[ \square f = \partial_t f - \triangle f = -2\triangle f + |\nabla f|^2 - R + \frac{n}{2\tau} = - \tau^{-1} w +  \tau^{-1} f- \frac{n}{2\tau}. \]
We can now compute
\begin{align}
-\frac{d}{dt} \int_M &w e^{\alpha f} d\nu_t
= -\frac{d}{dt} \int_M  e^{\alpha f}w u \, dg_t
= \int_M \big(- ( \square e^{\alpha f}) w u + e^{\alpha f} \square^* (wu) \big) dg_t \notag \\
&=  \int_M \Big( -(\alpha  \square  f e^{\alpha f} -  \alpha^2 |\nabla f|^2 e^{\alpha f} ) w u - 2\tau e^{\alpha f} \Big| \Ric + \nabla^2 f - \frac1{2\tau} g \Big|^2 u \Big) dg_t \notag \\
&=  \int_M \Big( - \Big(- \alpha \tau^{-1} w  - \frac{\alpha n}{2 \tau} + \alpha \tau^{-1} f   - \alpha^2 |\nabla f|^2 \Big) w - 2\tau  \Big|\Ric + \nabla^2 f - \frac1{2\tau} g \Big|^2 \Big) e^{\alpha f} d\nu_t \notag \\
&\leq  \int_M \Big(  \alpha \tau^{-1} \Big( w^2 +  \frac{n}2  w -  f  w \Big)  - 2\tau  \Big|\Ric + \nabla^2 f - \frac1{2\tau} g \Big|^2 \Big) e^{\alpha f} d\nu_t \notag \\
&\leq  \int_M \Big( C  \alpha \tau^{-1} \Big( w^2 + f^2 + 1  \Big)  - 2\tau  \Big|\Ric + \nabla^2 f - \frac1{2\tau} g \Big|^2 \Big) e^{\alpha f} d\nu_t \notag \\
&\leq  \int_M \Big( C  \alpha  \Big( \tau ((\triangle f)^2 + |\nabla f|^4 + R^2) + \tau^{-1} (f^2 + 1)  \Big)  - 2\tau  \Big|\Ric + \nabla^2 f - \frac1{2\tau} g \Big|^2 \Big) e^{\alpha f} d\nu_t. \label{eq_int_M_ealph_computation}
\end{align}
and
\begin{align}
-\frac{d}{dt} \int_M &\tau R e^{\alpha f} d\nu_t
= - \int_M \square (\tau R e^{\alpha f}) d\nu_t
= - \int_M \big( ( \square (\tau R) ) e^{\alpha f} + \tau R \square e^{\alpha f} - 2 \tau  ( \nabla R \cdot \nabla  e^{\alpha f}) \big) d\nu_t \notag \\
&= \int_M \big( R e^{\alpha f} - 2\tau |{\Ric}|^2 e^{\alpha f} - \tau R (\alpha  \square  f - \alpha^2 |\nabla f|^2)  e^{\alpha f} - 2  \tau R \triangle e^{\alpha f} + 2 \tau R \nabla f \cdot \nabla e^{\alpha f} \big)  d\nu_t \notag \\
&= \int_M \Big( R  - 2\tau |{\Ric}|^2  - \tau R \Big(\alpha  \Big(- \tau^{-1} w + \tau^{-1} f - \frac{n}{2\tau} \Big) - \alpha^2 |\nabla f|^2 \Big) \notag \\
&\qquad\qquad\qquad\qquad\qquad\qquad\qquad - 2  \alpha \tau R \triangle f  - \alpha^2 \tau R |\nabla f|^2  + 2 \alpha \tau R |\nabla f|^2 \Big) e^{\alpha f}  d\nu_t \notag \\
&\leq \int_M \big( C \alpha \big( \tau^{-1} w^2 + \tau (\triangle f)^2 + \tau |\nabla f|^4 + \tau R^2 + \tau^{-1} (f^2 + 1) \big) + R - 2 \tau |{\Ric}|^2 \big) e^{\alpha f}  d\nu_t \notag \\
&\leq \int_M \Big( C  \alpha  \Big( \tau ((\triangle f)^2 + |\nabla f|^4 + R^2) + \tau^{-1} (f^2 + 1)  \Big) + R  - 2\tau |{\Ric}|^2 \Big) e^{\alpha f} d\nu_t. \label{eq_int_tauR_computation}
\end{align}
Since
\begin{align*}
(1-\alpha) \int_M |\nabla f|^4 e^{\alpha f} d\nu_t 
&=(1-\alpha) (4\pi \tau)^{-n/2} \int_M |\nabla f|^2 \nabla f \cdot \nabla f e^{-(1-\alpha ) f} dg_t \\
&= (4\pi \tau)^{-n/2} \int_M \big(  2 \nabla^2 f \cdot (\nabla f \otimes \nabla f) + |\nabla f|^2 \triangle f \big) e^{-(1-\alpha ) f} dg_t \\
&\leq C (4\pi \tau)^{-n/2} \int_M | \nabla^2 f | |\nabla f|^2  e^{-(1-\alpha ) f} dg_t \\
&\leq C  \int_M | \nabla^2 f |^2  e^{\alpha f} d\nu_t +\frac1{2}  \int_M  |\nabla f|^4  e^{\alpha f} d\nu_t,
\end{align*}
we obtain that for $\alpha \leq \frac14$
\begin{equation} \label{eq_nabf4_nab2f2}
 \int_M |\nabla f|^4 e^{\alpha f} d\nu_t \leq C  \int_M | \nabla^2 f |^2  e^{\alpha f} d\nu_t. 
\end{equation}
Moreover, by Proposition~\ref{Prop_L_infty_HK_bound} and (\ref{eq_u_w_identities}) we have
\[ - C(Y) \leq f = w - \tau (2 \triangle f - |\nabla f|^2 + R) +n\leq   - \tau (2 \triangle f - |\nabla f|^2 + R) +n, \]
which implies that
\begin{equation} \label{eq_f_square_bound}
 f^2 \leq C(Y) + C \tau^2 ( |\nabla^2 f|^2 + |\nabla f|^4 + R^2) 
\end{equation}
Combining this with (\ref{eq_int_M_ealph_computation}), (\ref{eq_int_tauR_computation}), (\ref{eq_nabf4_nab2f2}) and Proposition~\ref{Prop_int_ealphaf}, yields that for $\alpha \leq \ov\alpha$
\begin{align*}
-\frac{d}{dt}& \int_M ( w + \tau R) e^{\alpha f} d\nu_t \\
&\leq \int_M \Big( C \alpha \big( \tau |\nabla^2 f|^2 +\tau R^2  \big) + C(Y) \alpha \tau^{-1}  - 2\tau  \Big|\Ric + \nabla^2 f - \frac1{2\tau} g \Big|^2 - 2\tau |{\Ric}|^2 + R \Big) e^{\alpha f}  d\nu_t \\
&\leq \int_M \Big( C \alpha \big( \tau |\nabla^2 f|^2 +\tau R^2  \big) + C(Y) \alpha \tau^{-1}  - \frac{\tau}2  \Big| \nabla^2 f - \frac1{2\tau} g \Big|^2 - \tau |{\Ric}|^2 + R \Big) e^{\alpha f}  d\nu_t \\
&\leq \int_M \Big( C \alpha \big( \tau |\nabla^2 f|^2 +\tau R^2  \big) + C(Y) \alpha \tau^{-1}  - \frac{\tau}4  | \nabla^2 f |^2 - \frac{\tau}2 |{\Ric}|^2 + C \tau^{-1} \Big) e^{\alpha f}  d\nu_t \\
&\leq C(Y) \tau^{-1}  - \frac{\tau}8  \int_M \big(  |\nabla^2 f |^2 +  |{\Ric}|^2 \big) e^{\alpha f}  d\nu_t.
\end{align*}
Let now $\eta : [-2, -\theta/2] \to [0,1]$ be a cutoff function with compact support, $\eta \equiv 0$ on $[-2,-1.9]$, $\eta \equiv 1$ on $[-1.1, -\theta]$ and $|\eta'| \leq 10 \tau^{-1}$.
Then
\begin{align}
 \int_{-2}^{-\theta/2} \eta^2(t)  \int_M & \tau \big(  |\nabla^2 f |^2 +  |{\Ric}|^2 \big) e^{\alpha f}  d\nu_t dt \notag \\
& \leq C(Y) \int_{-2}^{-\theta/2} \eta^2(t) \tau^{-1} dt + 8 \int_{-2}^{-\theta/2} \eta^2(t) \frac{d}{dt} \int_M ( w + \tau R) e^{\alpha f} d\nu_t  dt \notag \\
 &\leq C(Y) |{\log \theta}| - 16 \int_{-2}^{-\theta/2} \eta' (t) \eta(t)  \int_M ( w + \tau R) e^{\alpha f} d\nu_t  dt . \label{eq_nab2f_Ric_ealphf}
 \end{align}
For any $b > 0$ and $t \in [-1.9,-1] \cup [-\theta, -\theta/2]$, we obtain using (\ref{eq_f_square_bound}), (\ref{eq_nabf4_nab2f2}) and Proposition~\ref{Prop_int_ealphaf} for $\alpha \leq \ov\alpha$
\begin{align}
  \int_M  |  \tau R | e^{\alpha f} d\nu_t  
 & \leq \int_M ( b(\tau R)^2 + b^{-1} ) e^{\alpha f} d\nu_t  
  \leq Cb\tau \int_M \tau |{\Ric}|^2 e^{\alpha f} d\nu_t   + C b^{-1}, \label{eq_tauR_ts} \\
  \int_M  | w | e^{\alpha f} d\nu_t   
&\leq  \int_M (b w^2 + b^{-1} )e^{\alpha f} d\nu_t \notag  \\
&\leq C b   \tau \int_M \big( \tau ( |\nabla^2 f|^2 + |\nabla f|^4 + R^2 )+  C(Y) \tau^{-1} \big) e^{\alpha f} d\nu_t   + C b^{-1}   \int_M    e^{\alpha f} d\nu_t  \notag \\
&\leq C b \tau   \int_M  \tau \big( |\nabla^2 f|^2 + |{\Ric}|^2 \big) e^{\alpha f} d\nu_t  dt +  C(Y) (b + b^{-1})  .  \label{eq_w_ts}
\end{align}
Combining this with (\ref{eq_nab2f_Ric_ealphf}) for $b \leq \ov{b}$ yields that
\[  \int_{-1.1}^{-\theta}  \int_M \tau \big(  |\nabla^2 f |^2 +  |{\Ric}|^2 \big) e^{\alpha f}  d\nu_t dt \leq  C(Y) |{\log \theta}| + C(Y) \leq C(Y) |{\log \theta}|. \]
Thus by (\ref{eq_nabf4_nab2f2}) we have
\[  \int_{-1.1}^{-\theta}  \int_M \tau |\nabla f|^4 e^{\alpha f}  d\nu_t dt  \leq C(Y) |{\log \theta}|. \]
Using Proposition~\ref{Prop_int_ealphaf} again, we obtain for $\alpha \leq \ov\alpha$
\begin{multline*}
  \int_{-1.1}^{-\theta}  \int_M  \big(  |\nabla f |^2 + \tau^{-1} e^{\alpha f/ 2} + \tau^{-1} \big) e^{\alpha f}  d\nu_t dt 
\leq \int_{-1.1}^{-\theta}  \int_M  \big( \tau |\nabla f |^4 + \tau^{-1} e^{\alpha f/ 2} + 2 \tau^{-1}  \big) e^{\alpha f}  d\nu_t dt \\
\leq C(Y) |{\log \theta}|. 
\end{multline*}
This proves (\ref{eq_improved_L2_bound_1}) on a slightly larger domain.

Next, we prove (\ref{eq_improved_L2_bound_2}).
By combining (\ref{eq_tauR_ts}), (\ref{eq_w_ts}) with (\ref{eq_improved_L2_bound_1}), we obtain a bound of the form
\[  \int_{-1.1}^{-.5}  \int_M  \big(  |  \tau R | +  | w |  \big) e^{\alpha f}  d\nu_t dt   \leq C(Y). \]
So by integrating (\ref{eq_int_M_ealph_computation}), (\ref{eq_int_tauR_computation}) and using the fact that $R \geq  -C$, $w \leq 0$, we obtain
\[ \int_M \big(  |   R | +  | w |  \big)e^{\alpha f}  d\nu_{-1} \leq C(Y). \]
Using Proposition~\ref{Prop_int_ealphaf}, this implies
\[ \int_M \big| 2 \triangle f - |\nabla f|^2 \big| e^{\alpha f}  d\nu_{-1} \leq C(Y). \]
So
\begin{multline*}
 (1- 2\alpha ) \int_M  |\nabla f|^2 e^{\alpha f}  d\nu_{-1}
= \int_M  \big(2 (1-\alpha) |\nabla f|^2 -  |\nabla f|^2) e^{\alpha f}  d\nu_{-1} \\
= \int_M (2\triangle f - |\nabla f|^2)e^{\alpha f}  d\nu_{-1}  \leq C(Y). 
\end{multline*}
Combining the above bounds implies (\ref{eq_improved_L2_bound_2}). 
\end{proof}

\section{Geometric bounds near almost selfsimilar points} \label{sec_geom_bounds_almost_ss}
\subsection{Statement of the results}
In this section we derive bounds relating to the notion of almost selfsimilarity (see Definition~\ref{Def_almost_self_similar}).
We first show that almost selfsimilarity holds if the pointed Nash-entropy is almost constant and, vice versa, almost selfsimilarity implies almost constancy of the pointed Nash-entropy.
Second, we derive further analytical bounds under the almost selfsimilarity condition. 
Third, we establish an almost monotonicity property of the integral of the scalar curvature, which will be needed later.

Our first main result is the following:

\begin{Proposition} \label{Prop_NN_almost_constant_selfsimilar}
If $Y < \infty$, $\eps > 0$ and $\delta \leq \ov\delta (Y, \eps)$, then the following holds.
Let $(M, (g_t)_{t \in I})$ be a Ricci flow on a compact manifold and let $(x_0, t_0) \in M \times I$, $r > 0$ such that $[t_0 - \delta^{-1} r^2, t_0] \subset I$.
Suppose that $\NN_{x_0, t_0} ( r^2) \geq - Y$.

If
\[  \NN_{x_0, t_0} (\delta^{-1} r^2) \geq \NN_{x_0, t_0} (\delta r^2) - \delta , \]
then $(x_0, t_0)$ is $(\eps, r)$-selfsimilar.

Vice versa, if $(x_0, t_0)$ is $(\delta, r)$-selfsimilar, then for all $\tau_1, \tau_2 \in [ \eps r^2,  \eps^{-1} r^2]$
\begin{equation} \label{eq_prop_NN_diff_small}
 | \NN_{x_0, t_0} (\tau_1) - \NN_{x_0, t_0} (\tau_2) | \leq \eps. 
\end{equation}
\end{Proposition}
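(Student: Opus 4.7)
By parabolic rescaling I may assume $r=1$. The proof revolves around Perelman's quantity $w:=\tau(2\triangle f-|\nabla f|^2+R)+f-n$ and its associated identity $\square^*(wu)=-2\tau|E|^2u$, where $E:=\Ric+\nabla^2f-\tfrac{1}{2\tau}g$ and $u=(4\pi\tau)^{-n/2}e^{-f}$. Integrating gives $\WW(\tau):=\int_M w\,d\nu_{t_0-\tau}$, Perelman's identity $\WW'(\tau)=-2\tau\int_M|E|^2\,d\nu$, and $(\tau\NN)'=\WW$, so $\WW\leq\NN$ and both are non-increasing in $\tau$. Observe that condition (\ref{eq_almost_self_similar_1}) is precisely $\tfrac12(\WW(\eps)-\WW(\eps^{-1}))\leq\eps$, so each direction of the proposition reduces, in a sense, to comparing the variations of $\WW$ and $\NN$.

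For the reverse direction, (\ref{eq_almost_self_similar_1}) yields $\WW(\delta)-\WW(\delta^{-1})\leq 2\delta$. Using $\tau\NN(\tau)=\int_0^\tau\WW$ together with $\NN(\delta)\in[-Y,0]$ and the bound $|\WW(\delta^{-1})|\leq Y+C$ (obtained by writing $\WW(\tau_*)=\NN(\tau_*)+\tau_*\NN'(\tau_*)$ at a fixed $\tau_*\in[1,2]$ and using (\ref{eq_NN_basic_3}) with the scalar bound (\ref{eq_almost_self_similar_3})), I will deduce
\[
|\NN(\tau)-\WW(\delta^{-1})|\leq\frac{\delta(Y+|\WW(\delta^{-1})|)}{\tau}+2\delta\leq\frac{C(Y)\delta}{\eps}+2\delta
\]
for $\tau\in[\eps,\eps^{-1}]$, giving (\ref{eq_prop_NN_diff_small}) once $\delta\leq c(Y)\eps^2$. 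For the forward direction, (\ref{eq_almost_self_similar_3}) follows from Lemma~\ref{Lem_lower_scal} applied on $[t_0-\delta^{-1},t_0]$. To get (\ref{eq_almost_self_similar_1}), the hypothesis forces $\NN$ to have total variation $\leq\delta$ on $[\delta,\delta^{-1}]$; combining $\WW\leq\NN$ with a monotonicity-derived lower bound $\WW(\tau)\geq(e\NN(e\tau)-\NN(\tau))/(e-1)\geq\NN(\tau)-C\delta$ on $[\delta,e^{-1}\delta^{-1}]$ (obtained from $\tau\NN=\int_0^\tau\WW$ on $[\tau,e\tau]$ and $\WW$ non-increasing) yields $\WW(\eps)-\WW(\eps^{-1})\leq C\delta$, which Perelman's identity converts into (\ref{eq_almost_self_similar_1}) once $\delta\leq c\eps$.

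The main obstacle is condition (\ref{eq_almost_self_similar_2}): $\int_M|w-W|\,d\nu_t\leq\eps$ at every $t$ in the range, where $W=\NN(1)$. Writing $|w-W|\leq|w-\WW(\tau)|+|\WW(\tau)-W|$, the second term is $O(\delta)$ by the previous step. For the first, the crucial identity, derived by combining the Bochner formula with the contracted second Bianchi identity and the definition of $E$, is
\[
\nabla w\;=\;2\tau\bigl(\DIV E-E(\nabla f)\bigr),
\]
which vanishes on a gradient shrinking soliton. The Poincaré inequality (Proposition~\ref{Prop_Poincare}) then gives $\int_M|w-\WW(\tau)|\,d\nu\leq\sqrt{\pi\tau}\int|\nabla w|\,d\nu\leq C\sqrt{\tau}\,(\int|\nabla w|^2\,d\nu)^{1/2}$. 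The crux is converting the time-integrated bound $\int_\eps^{\eps^{-1}}\tau\int|E|^2\,d\nu\,d\tau\lesssim\delta$ into a pointwise-in-$\tau$ bound on $\int|\nabla w|^2\,d\nu$. My plan is to use the improved $L^p$-bounds of Proposition~\ref{Prop_improved_L2} to control $\int|E|^2|\nabla f|^2\,d\nu$ in $L^1(d\tau)$, to select via averaging a good time $\tau^*$ where $\int|\nabla w|^2\,d\nu$ is already small, and then propagate smallness to all $\tau\in[\eps,\eps^{-1}]$ via Gronwall applied to the energy identity
\[
\tfrac{d}{d\tau}\int_M(w-W)^2\,d\nu\;=\;-2\int|\nabla w|^2\,d\nu\;-\;4\tau\int(w-W)|E|^2\,d\nu,
\]
with the source term handled by Cauchy--Schwarz against the weighted $L^p$ bounds of Proposition~\ref{Prop_improved_L2}.
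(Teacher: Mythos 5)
Your reverse direction is correct, though via a slightly different route than the paper: you extract smallness of the oscillation of $\NN$ from condition (\ref{eq_almost_self_similar_1}) alone, through the relation $\tau\NN(\tau) = \int_0^\tau\WW$ and the monotone-variation bound on $\WW$, whereas the paper integrates (\ref{eq_almost_self_similar_2}) to get $|\WW(\tau')-W|\leq\delta$ directly and concludes almost immediately. Both work. Conditions (\ref{eq_almost_self_similar_1}) and (\ref{eq_almost_self_similar_3}) in the forward direction also go through as you indicate.

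The gap is in your plan for (\ref{eq_almost_self_similar_2}). Your identity $\nabla w = 2\tau(\DIV E - E(\nabla f))$ is correct, but $\DIV E$ contains $\nabla \triangle f$ and $\nabla R$, i.e.\ third-order derivatives of $f$ and first-order derivatives of curvature, none of which are controlled by the self-similarity hypothesis or by Proposition~\ref{Prop_improved_L2}. Proposition~\ref{Prop_improved_L2} only yields $O(C(Y)|\log\theta|)$ bounds, not $o_\delta(1)$ bounds, and it reaches no higher than $\nabla^2 f$; so there is \emph{no} time $\tau^*$ where $\int|\nabla w|^2\,d\nu$ is small. Your Gronwall fallback on $\int(w-W)^2\,d\nu$ fares no better: at the starting time $\tau_1\approx\delta$ the quantity $\int(w-W)^2\,d\nu_{-\tau_1}$ is only bounded by $C(Y)$ (it contains the variance of $w$, which is $\Theta(1)$), so Gronwall propagates a $C(Y)$ bound, not an $\eps$ bound. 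The paper's Lemma~\ref{Lem_integral_soliton_id} sidesteps this entirely by a duality argument: pair $w-W$ against a heat solution $h$ with $|h|\leq 1$, use the one-sided sign $\square^*(wu)\leq 0$ to get $\tfrac{d}{dt}\int h(w-W)\,d\nu\geq\tfrac{d}{dt}\WW(|t|)$, and then at $\tau_1\in[\delta,2\delta]$ control $\int h(w-W)\,d\nu_{-\tau_1}$ by observing that $h$ is nearly constant there (gradient estimate for $h$ plus $L^2$-Poincar\'e applied to $h$, not $w$), so the integral is $\approx a(\WW(\tau_1)-W)$ plus an error $C(Y)\delta^{1/2}$ from Cauchy--Schwarz against the merely bounded $\|w-W\|_{L^2}$. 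No bound on $\nabla w$ is ever needed. You will not be able to prove (\ref{eq_almost_self_similar_2}) via pointwise control of $\nabla w$; some form of this weak/duality mechanism is essential.
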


Our second main result provides further analytical bounds near almost selfsimilar points.
The important aspect here is the additional $e^{\alpha f}$ weight.
Note that several of the following bounds are essentially equivalent to each other and follow using the evolution equation $- \partial_t f = \triangle f - |\nabla f|^2 + R - \frac{n}{2\tau}$.
They are listed here for easy reference.
It is helpful to compare the following bounds with the soliton identities in Subsection~\ref{subsec_soliton_identities} --- the integrands in (\ref{eq_alm_ss_identity_1})--(\ref{eq_alm_ss_identity_5}) vanish on a gradient shrinking soliton with potential $f$.

\begin{Proposition} \label{Prop_almost_soliton_identities}
If $Y < \infty$, $\alpha \in [0, \ov\alpha (Y)]$, $\eps > 0$ and $\delta \leq \ov\delta (Y, \eps)$, then the following holds.
Let $(M, (g_t)_{t \in I})$ be a Ricci flow on a compact manifold and let $(x_0, t_0) \in M \times I$, $r > 0$ such that $[t_0 - \delta^{-1} r^2, t_0] \subset I$.
Suppose that $\NN_{x_0, t_0} ( r^2) \geq - Y$.
Denote by $d\nu = (4\pi \tau)^{-n/2} e^{-f} dg$ the conjugate heat kernel based at $(x_0, t_0)$.

If $(x_0, t_0)$ is $(\delta, r)$-selfsimilar, then we have for $W := \NN_{x_0,t_0} (r^2)$:
\begin{align}
 \int_{t_0 - \eps^{-1} r^2}^{t_0 - \eps r^2} \int_M \tau \Big| \Ric + \nabla^2 f - \frac1{2\tau} g \Big|^2 e^{\alpha f} d\nu_t dt &\leq \eps,  \label{eq_alm_ss_identity_1} \\
r^{-2} \int_{t_0 - \eps^{-1} r^2}^{t_0 - \eps r^2} \int_M  \big| \tau ( - |\nabla f|^2 + \triangle f) + f - \frac{n}2 - W \big| e^{\alpha f} d\nu_t dt &\leq \eps, \label{eq_alm_ss_identity_2} \displaybreak[1] \\
r^{-2} \int_{t_0 - \eps^{-1} r^2}^{t_0 - \eps r^2} \int_M  \Big| \square (\tau f) + \frac{n}2 + W \Big| e^{\alpha f} d\nu_t dt &\leq \eps, \label{eq_alm_ss_identity_3} \displaybreak[1]  \\
r^{-2} \int_{t_0 - \eps^{-1} r^2}^{t_0 - \eps r^2} \int_M  \Big|{ - \tau ( |\nabla f|^2 +R) + f  - W} \Big| e^{\alpha f} d\nu_t dt &\leq \eps, \label{eq_alm_ss_identity_4} \\
 \int_M \big| \tau (2\triangle f - |\nabla f|^2 + R) + f - n - W \big| e^{\alpha f} d\nu_t  \leq \eps \qquad  \text{for all} \quad t &\in [t_0 - \eps^{-1}r^2, t_0 - \eps r^2] . \label{eq_alm_ss_identity_5} 
\end{align}
\end{Proposition}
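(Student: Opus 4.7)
My plan is to upgrade the two unweighted integral bounds from the $(\delta, r)$-selfsimilar hypothesis (Definition~\ref{Def_almost_self_similar}) to the five $e^{\alpha f}$-weighted bounds \eqref{eq_alm_ss_identity_1}--\eqref{eq_alm_ss_identity_5} via a Cauchy--Schwarz interpolation against the uniform $e^{2\alpha f}$-weighted $L^p$-bounds supplied by Proposition~\ref{Prop_improved_L2}. Only \eqref{eq_alm_ss_identity_1} and \eqref{eq_alm_ss_identity_5} require direct argument; the other three follow from elementary algebraic manipulations of the potential evolution equation $-\partial_t f = \triangle f - |\nabla f|^2 + R - \frac{n}{2\tau}$. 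Throughout, a parabolic rescaling reduces to $r = 1$ and $t_0 = 0$.

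First I propagate the non-collapse. The $(\delta, 1)$-selfsimilarity condition forces $R \geq -\delta$ on $M \times [-\delta^{-1}, -\delta]$, so the doubling estimate \eqref{eq_NN_doubling} of Proposition~\ref{Prop_NN_basic_properties} upgrades $\NN_{x_0, 0}(1) \geq -Y$ to $\NN_{x_0, 0}(s) \geq -Y'(Y, \eps)$ for every $s \in [\eps, 2\eps^{-1}]$ once $\delta \leq \bar\delta(Y, \eps)$. This enables Proposition~\ref{Prop_improved_L2} to be invoked (after a time-shift so that $-s$ plays the role of $-r^2$ at the new scale $r = s^{1/2}$), giving uniform bounds on $e^{2\alpha f}$-weighted $L^p$-norms with constants $C(Y, \eps)$, provided $\alpha \leq \bar\alpha(Y)$ is small enough that $3\alpha$ lies in the admissible range of Proposition~\ref{Prop_improved_L2}.

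For \eqref{eq_alm_ss_identity_1}, set $Q := \Ric + \nabla^2 f - \frac{1}{2\tau} g$ and apply Cauchy--Schwarz:
\begin{equation*}
\int_{-\eps^{-1}}^{-\eps}\!\!\int_M \tau |Q|^2 e^{\alpha f} \, d\nu_t \, dt \leq \Bigl( \int_{-\eps^{-1}}^{-\eps}\!\!\int_M \tau |Q|^2 \, d\nu_t \, dt \Bigr)^{1/2} \Bigl( \int_{-\eps^{-1}}^{-\eps}\!\!\int_M \tau |Q|^2 e^{2\alpha f} \, d\nu_t \, dt \Bigr)^{1/2}.
\end{equation*}
The first factor is $\leq \delta^{1/2}$ by $(\delta,1)$-selfsimilarity, and the second is $\leq C(Y, \eps)$ since $|Q|^2 \leq C(|\Ric|^2 + |\nabla^2 f|^2 + \tau^{-2})$ and \eqref{eq_improved_L2_bound_1} bounds each summand. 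For \eqref{eq_alm_ss_identity_5} I argue pointwise in $t \in [-\eps^{-1}, -\eps]$. With $w := \tau(2\triangle f - |\nabla f|^2 + R) + f - n - W$, Cauchy--Schwarz gives
\begin{equation*}
\int_M |w| e^{\alpha f} d\nu_t \leq \Bigl( \int_M |w| \, d\nu_t \Bigr)^{1/2} \Bigl( \int_M |w| e^{2\alpha f} d\nu_t \Bigr)^{1/2}.
\end{equation*}
The first factor is $\leq \delta^{1/2}$; for the second, estimate $|w| \leq C[\tau(|\triangle f| + |\nabla f|^2 + |R|) + |f| + 1 + Y]$, use $f \geq -C(Y)$ from Proposition~\ref{Prop_L_infty_HK_bound} to obtain $|f| \leq C(Y) + \alpha^{-1} e^{\alpha f}$, and apply \eqref{eq_improved_L2_bound_2} at scale $|t|^{1/2}$ to get $C(Y, \eps)$. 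Choosing $\delta$ small in terms of $Y, \eps$ yields both \eqref{eq_alm_ss_identity_1} and \eqref{eq_alm_ss_identity_5}.

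The remaining three bounds follow by algebraic identities. A direct computation gives $\square(\tau f) + \frac{n}{2} + W = -w$ (using $\partial_t(\tau f) = -f + \tau \partial_t f$), so \eqref{eq_alm_ss_identity_3} is simply \eqref{eq_alm_ss_identity_5} integrated in time. Similarly, with $\tr Q = R + \triangle f - \frac{n}{2\tau}$,
\begin{align*}
-\tau(|\nabla f|^2 + R) + f - W &= w - 2\tau \tr Q, \\
\tau(-|\nabla f|^2 + \triangle f) + f - \tfrac{n}{2} - W &= w - \tau \tr Q,
\end{align*}
so the integrands of \eqref{eq_alm_ss_identity_2} and \eqref{eq_alm_ss_identity_4} are controlled by $|w| + C\tau|Q|$. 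The $|w|$-contribution is handled by \eqref{eq_alm_ss_identity_5}, while the $\tau|Q|$-contribution is controlled by a Cauchy--Schwarz split against \eqref{eq_alm_ss_identity_1} and the bound $\int_M e^{\alpha f} d\nu_t \leq C(Y)$ from Proposition~\ref{Prop_int_ealphaf}. The main technical care is the tracking of constants through rescalings when invoking Proposition~\ref{Prop_improved_L2} at scales $s^{1/2}$ ranging over $[\eps^{1/2}, \eps^{-1/2}]$; the polylogarithmic growth $|\log\theta|$ in \eqref{eq_improved_L2_bound_1} is absorbed into $C(Y, \eps)$ by taking $\theta \sim \eps^2$, which is harmless since only qualitative smallness as $\delta \to 0$ is needed.
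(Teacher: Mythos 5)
Your proposal is correct and follows essentially the same strategy as the paper's proof: interpolate the unweighted $(\delta,r)$-selfsimilarity bounds against the $e^{2\alpha f}$-weighted $L^p$-estimates of Proposition~\ref{Prop_improved_L2}, and use the potential evolution equation plus the trace of the soliton-defect to pass among the five integrands. The only cosmetic differences are that you use Cauchy--Schwarz where the paper uses the Young-type split $\int X e^{\alpha f} \leq b\int X e^{2\alpha f} + b^{-1}\int X$ (equivalent tools), and that you derive \eqref{eq_alm_ss_identity_2}--\eqref{eq_alm_ss_identity_4} directly from the weighted \eqref{eq_alm_ss_identity_1} and \eqref{eq_alm_ss_identity_5}, whereas the paper first establishes all five at $\alpha = 0$ with small right-hand sides $\eps'$ and then interpolates uniformly.
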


Our third result addresses the expectation that the geometry near almost selfsimilar points evolves via dilations.
More specifically, on a shrinking soliton, the integral $\int_M \tau R \, d\nu_{t}$ remains constant in time.
We will not be able to obtain an almost version of this statement at this point, however, we will show the almost monotonicity direction that will be important later.
Almost constancy will follow a posteriori from the main results of this paper.

\begin{Proposition} \label{Prop_scalar curvature_almost_ss}
If $\eps > 0$, $Y <  \infty$ and $\delta \leq \ov\delta (Y, \eps)$, then the following holds.

Let $(M, (g_t)_{t \in I})$ be a Ricci flow on a compact manifold and denote by $d\nu = (4\pi \tau)^{-n/2} e^{-f} dg$ the conjugate heat kernel based at some point $(x_0,t_0) \in M \times I$ .
Suppose that $\NN_{x_0,t_0} (r^2) \geq - Y$ and that $(x_0,t_0)$ is $(\delta, r)$-selfsimilar for some $r > 0$.
Then for any $t_1, t_2 \in [t_0 - \eps^{-1} r^2,t_0 - \eps r^2]$ with $t_1 \leq t_2$ we have
\[ \int_M \tau R\, d\nu_{t_1} \leq \int_M \tau R \, d\nu_{t_2} + \eps. \]
\end{Proposition}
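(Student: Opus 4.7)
The plan is to recast the desired one-sided inequality as a spacetime integral. Using $\square(\tau R) = -R + 2\tau|{\Ric}|^2$ together with $\frac{d}{dt}\int u\, d\nu_t = \int \square u\, d\nu_t$, one obtains
\[
\int_M \tau R\, d\nu_{t_2} - \int_M \tau R\, d\nu_{t_1} = \int_{t_1}^{t_2}\int_M \bigl(2\tau|{\Ric}|^2 - R\bigr)\, d\nu_t\, dt,
\]
whose integrand integrates to zero identically on an exact shrinking gradient soliton by parabolic self-similarity of its time-slices. After rescaling to $r=1$, the task reduces to bounding this spacetime integral from below by $-\eps$.

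The key reduction is the exact identity
\[
F(t) + H(t) = \WW(t) - \NN_{x_0,t_0}(\tau(t)) + \tfrac{n}{2},
\]
where $F(t) := \int_M \tau R\, d\nu_t$ and $H(t) := \int_M \tau|\nabla f|^2\, d\nu_t$; this follows from the definition of $\WW$ combined with $\int \triangle f\, d\nu = \int |\nabla f|^2\, d\nu$. Two pointwise-in-$t$ smallness estimates then drop out of the hypotheses: Proposition~\ref{Prop_NN_almost_constant_selfsimilar} gives $|\NN_{x_0,t_0}(\tau(t)) - W| \leq \Psi(\delta|Y,\eps)$, while the exact identity $\int_M \phi_2\, d\nu_t = \WW(t) - W$ (with $\phi_2$ the integrand of (\ref{eq_alm_ss_identity_5}), verified by direct computation) combined with (\ref{eq_alm_ss_identity_5}) and the lower bound $f \geq -C(Y)$ from Proposition~\ref{Prop_L_infty_HK_bound} yields $|\WW(t) - W| \leq \Psi$. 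Consequently $|(F+H)(t) - n/2| \leq \Psi$ uniformly on the range, so
\[
F(t_2) - F(t_1) = -\bigl(H(t_2) - H(t_1)\bigr) + O(\Psi),
\]
and the proposition reduces to showing $H(t_2) - H(t_1) \leq \Psi$.

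To control $H$, I would relate it to the variance of $f$ under $\nu_t$. Integration by parts against $d\nu$ gives $\int|\nabla f|^2\, d\nu = -\int(f - \NN - n/2)\triangle_f f\, d\nu$ with $\triangle_f f := \triangle f - |\nabla f|^2$. Substituting the almost-soliton relation $\tau\triangle_f f = -(f - n/2 - W) + \phi_1$ (where $\phi_1$ is the integrand of (\ref{eq_alm_ss_identity_2})), expanding, and using the exact cancellation $\int(f - n/2 - W)(f - \NN - n/2)\,d\nu = \int(f - \NN - n/2)^2\,d\nu$ (since $\int(f - \NN - n/2)\,d\nu = 0$) yields
\[
H(t) = \int_M (f - \NN - n/2)^2\, d\nu_t - \int_M (f - \NN - n/2)\,\phi_1\, d\nu_t.
\]
The correction term is controlled by $\sqrt{2H}\,\bigl(\int\phi_1^2\, d\nu_t\bigr)^{1/2}$ via Cauchy--Schwarz and Poincar\'e; an $L^2$-in-$t$ bound on $\bigl(\int\phi_1^2 d\nu\bigr)^{1/2}$ that is small in $\delta$ is extracted by interpolating the $L^1(e^{\alpha f}d\nu)$-smallness (\ref{eq_alm_ss_identity_2}) against higher weighted $L^p$-bounds on the constituents $\tau\triangle_f f$ and $f - n/2 - W$ furnished by Proposition~\ref{Prop_improved_L2}.

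The hardest step will be bounding $\int_M(f - \NN - n/2)^2 d\nu_{t_2} - \int_M(f - \NN - n/2)^2 d\nu_{t_1}$ from above by $\Psi$. I expect this to follow by computing the time derivative of this variance directly via $\square(f^2) = 2f\square f - 2|\nabla f|^2$ together with the evolution equation for $f$, and then regrouping the resulting expression using the almost-soliton identities (\ref{eq_alm_ss_identity_1})--(\ref{eq_alm_ss_identity_5}) of Proposition~\ref{Prop_almost_soliton_identities} to isolate a combination that vanishes identically on any exact shrinker; the leftover pieces should then be integrable against $dt$ with bound $\Psi$ via the weighted $L^p$-bounds of Proposition~\ref{Prop_improved_L2}. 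A technical subtlety is that (\ref{eq_alm_ss_identity_2}) is only $L^1$-in-$t$ while $t_1$ and $t_2$ are arbitrary interior points of the hypothesis interval; this is handled by taking $\delta \ll \eps$ and applying Chebyshev on the buffer to extract usable pointwise controls at the endpoints.
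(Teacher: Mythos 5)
Your route is genuinely different from the paper's. The paper derives a single differential inequality for $\int_M\tau R\, d\nu_t$ by successively peeling off multiples of the soliton deficit $\Ric + \nabla^2 f - \tfrac1{2\tau}g$ under integration by parts against $d\nu$, and the one-sidedness is produced at a single explicitly flagged place, namely where the negative square $-\int(\tau(-|\nabla f|^2+\triangle f)+f-\tfrac n2 -W)^2 d\nu_t$ is dropped. Your reduction via $F+H=\WW-\NN+\tfrac n2$ and $H = V - G$, where $V(t):=\int(f-a)^2 d\nu_t$, $a:=\NN+\tfrac n2$, $G(t):=\int(f-a)\phi_1\, d\nu_t$, is correct, and the ``hardest step'' you flag does go through: after rescaling to $r=1$ one computes the exact identity
\[
\frac{d}{dt}V(t) = \frac{2}{\tau}\int_M(f-a)\bigl(\square(\tau f)+\tfrac n2 + W\bigr)d\nu_t + \frac{2}{\tau}\int_M(f-a)\phi_1\, d\nu_t
\]
(the $a'(t)$-term, which your sketch based on $\square(f^2)$ silently drops, vanishes because $\int(f-a)\,d\nu_t=0$), and since $|f-a|\leq C(Y,\alpha)e^{\alpha f}$ by Proposition~\ref{Prop_L_infty_HK_bound}, integrating against $dt$ and invoking (\ref{eq_alm_ss_identity_2}), (\ref{eq_alm_ss_identity_3}) yields the \emph{two-sided} estimate $|V(t_2)-V(t_1)|\leq\Psi(\delta\,|\,Y,\eps)$.

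The genuine gap is the step you relegate to a closing ``technical subtlety.'' After your reduction, $F(t_1)-F(t_2) = G(t_2)-G(t_1) + O(\Psi)$, but $G$ is controlled only in $L^1(dt)$, not at the two fixed endpoints. The proposed ``Chebyshev on the buffer'' produces good times $t_1'\in[t_1,t_1+\Psi^{1/2}]$, $t_2'\in[t_2-\Psi^{1/2},t_2]$ where $|G|\leq\Psi^{1/2}$, but you cannot pass back to $t_1,t_2$ by continuity alone: $F=\int\tau R\,d\nu_t$ has no uniform-in-the-flow modulus of continuity, since its $t$-derivative $2\tau\int|\Ric|^2\,d\nu_t - \int R\,d\nu_t$ is controlled by Proposition~\ref{Prop_improved_L2} only in $L^1(dt)$. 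The only available transfer is the fundamental theorem of calculus with the one-sided differential inequality $\frac{d}{dt}\int\tau R\,d\nu_t \geq -\int|R|\,d\nu_t \geq -C(Y)/\tau$, which discards the nonnegative but pointwise-in-$t$ unbounded term $2\tau\int|\Ric|^2 d\nu_t$. That discarded term is your hidden analogue of the paper's dropped square and it is the \emph{sole} source of the one-sidedness in the conclusion; without recognizing it, your scheme would appear to deliver the two-sided bound, which the paper explicitly remarks is unavailable at this stage. So the proposal is sound in outline and could be completed, but the load-bearing asymmetry is neither identified nor supplied in what you wrote, and that is a missing idea rather than a technicality.
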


\subsection{Proofs}
We will first focus on the proof of Proposition~\ref{Prop_NN_almost_constant_selfsimilar}.
For this purpose, recall from Subsection~\ref{subsec_soliton_identities} that on a normalized gradient shrinking soliton we have
\[ \tau (2\triangle f - |\nabla f|^2 + R) + f - n  \equiv W = \WW [g, f, \tau]. \]
The following lemma shows that this identity holds in an $L^1$-sense if $\WW$ is almost constant.

\begin{Lemma} \label{Lem_integral_soliton_id}
Let $(M, (g_t)_{t \in [-\tau_0,0]})$, $\tau_0 > 0$, be a Ricci flow on a compact manifold and $x_0 \in M$.
Denote by $d\nu = (4\pi \tau)^{-n/2} e^{-f} dg$ the conjugate heat kernel based at $(x_0, 0)$ and assume that $\NN_{x_0, 0} (\tau_0) \geq - Y$.
Suppose that for some $W \in [-Y,0]$ we have for $\WW_{x_0, 0} (\tau) := \WW [g_{-\tau}, f(\cdot, -\tau), \tau]$
\[ | \WW_{x_0, 0} (\tau') - W | \leq \delta \qquad \text{for all} \quad \tau' \in [\delta \tau_0, \tau_0]. \]
Then at time $t =  - \tau_0$ we have
\[  \int_M \big| \tau_0 (2\triangle f - |\nabla f|^2 + R) + f - n - W \big| d\nu_{- \tau_0} \leq \Psi (\delta |  Y). \] 
\end{Lemma}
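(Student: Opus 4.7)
The plan is to bound the $L^2$-deviation
\[ Z(\tau) := \int_M (w - W)^2 \, d\nu_{-\tau} \]
at $\tau = \tau_0$ and then apply Cauchy--Schwarz to deduce the stated $L^1$-bound. Since $\int_M w \, d\nu_{-\tau_0} = \WW_{x_0,0}(\tau_0)$ and $|\WW_{x_0,0}(\tau_0) - W| \leq \delta$ by hypothesis, one has
\[ Z(\tau_0) = \int_M \big(w - \WW_{x_0,0}(\tau_0)\big)^2 \, d\nu_{-\tau_0} + \big(\WW_{x_0,0}(\tau_0) - W\big)^2, \]
so it suffices to control the variance of $w$ with respect to $\nu_{-\tau_0}$ by $\Psi(\delta \mid Y)$.

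The basic ingredient is Perelman's identity $\square^*(wv) = -2\tau|\Ric + \nabla^2 f - \tfrac{1}{2\tau}g|^2 v$, which gives $\WW_{x_0,0}'(\tau) = -2\tau\int_M |\Ric + \nabla^2 f - \tfrac{1}{2\tau}g|^2 d\nu_{-\tau}$. Integrating over $[\delta\tau_0, \tau_0]$ and using the hypothesis yields the space-time bound
\[ \int_{\delta\tau_0}^{\tau_0} \tau \int_M \Big|\Ric + \nabla^2 f - \tfrac{1}{2\tau}g\Big|^2 d\nu_{-\tau}\, d\tau \leq \delta. \]
A direct computation using $\square^* v = 0$ and $\nabla v = -v\nabla f$ then yields the evolution identity
\[ \square^*\big((w - W)^2 v\big) = -\Big(2|\nabla w|^2 + 4\tau(w - W)\big|\Ric + \nabla^2 f - \tfrac{1}{2\tau}g\big|^2\Big) v, \]
equivalently
\[ Z'(\tau) = -2\int_M |\nabla w|^2 d\nu_{-\tau} - 4\tau \int_M (w - W)\Big|\Ric + \nabla^2 f - \tfrac{1}{2\tau}g\Big|^2 d\nu_{-\tau}. \]
The contracted second Bianchi identity together with the Ricci commutator for $\nabla^2 f$ moreover yields the pointwise formula
\[ \nabla w = 2\tau \, \DIV\big(\Ric + \nabla^2 f - \tfrac{1}{2\tau}g\big) - 2\tau \big(\Ric + \nabla^2 f - \tfrac{1}{2\tau}g\big)(\nabla f, \cdot), \]
which is the quantitative version of the fact that $w$ is constant on a gradient shrinking soliton.

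To finish, I would first apply a pigeonhole argument to the space-time shrinker bound to select a time $\tau^* \in [\delta\tau_0, \tau_0/2]$ at which $\tau^* \int_M |\Ric + \nabla^2 f - \tfrac{1}{2\tau^*}g|^2 d\nu_{-\tau^*}$ is small. The Poincar\'e inequality (Proposition~\ref{Prop_Poincare}) applied to $w - \WW_{x_0,0}(\tau^*)$, together with the pointwise gradient identity above and integrated $L^2$-control on $|\nabla f|^2$ via Proposition~\ref{Prop_improved_L2}, should give $Z(\tau^*) \leq \Psi(\delta \mid Y)$. Integrating the ODE for $Z$ forward from $\tau^*$ to $\tau_0$ and bounding the cross term by Cauchy--Schwarz in space-time against the space-time shrinker bound and the weighted $L^p$-estimates of Proposition~\ref{Prop_improved_L2}---the $e^{\alpha f}$ weight being essential to absorb the possible growth of $|w - W|$ at infinity---then yields $Z(\tau_0) \leq \Psi(\delta \mid Y)$. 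The main technical obstacle will be extracting an integrated $L^2$-bound on $\DIV\big(\Ric + \nabla^2 f - \tfrac{1}{2\tau}g\big)$ from the $L^2$-smallness of the shrinker tensor alone, needed for the Poincar\'e step; I expect this to require a Bochner-type computation for $|\Ric + \nabla^2 f - \tfrac{1}{2\tau}g|^2$ along the coupled flow, with the resulting $|\Rm|^2$ and $|\nabla f|^4$ error terms absorbed via the weighted bounds of Proposition~\ref{Prop_improved_L2}.
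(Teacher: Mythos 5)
Your evolution identity for $\square^*\big((w-W)^2 v\big)$ and the pointwise formula $\nabla w = 2\tau\,\DIV T - 2\tau\,T(\nabla f)$ (with $T = \Ric + \nabla^2 f - \tfrac{1}{2\tau}g$) are both correct, but the overall strategy has a genuine gap: you are trying to prove that the $L^2$-deviation $Z(\tau_0)$ is small, which is strictly stronger than the lemma's $L^1$ conclusion, and this stronger statement is not reachable with the available estimates. First, the Poincar\'e step for $Z(\tau^*)$ requires an $L^2(\nu)$ bound on $\nabla w$, i.e.\ on $\tau\,\DIV T$ and $\tau\,T(\nabla f)$. The hypothesis only gives spacetime $L^1$ control on $\tau|T|^2$; Proposition~\ref{Prop_improved_L2} provides weighted $L^2$ control on $\nabla^2 f$ and $L^4$ on $\nabla f$, but neither these nor any Bochner identity at your disposal converts $L^2$-smallness of $T$ into $L^2$-control of $\DIV T$ or of the product $T(\nabla f)$; the H\"older pairings all demand an $L^4$-type bound on $T$ that does not exist. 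Second, even granting $Z(\tau^*)$ small, the propagation from $\tau^*$ to $\tau_0$ runs into the cross term $4\tau\int (w-W)|T|^2\,d\nu$. Since $W\le 0$ and $w\le 0$, this term has indefinite sign, and the bad part $-4\tau\int w|T|^2\,d\nu = 4\tau\int|w|\,|T|^2\,d\nu \ge 0$ cannot be absorbed: $|w|$ is unbounded below, the space-time bound on $\tau|T|^2$ is only $L^1$, and the $e^{\alpha f}$-weighted estimates don't supply the missing $L^2$ (in $\tau$) or $L^\infty$ (in $x$) factor a Gronwall or Cauchy--Schwarz argument would need.

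The paper sidesteps both issues with a \emph{dual} argument that only targets the $L^1$ bound. One tests against an arbitrary heat flow $h$ with $|h(\cdot,-\tau_0)|\le 1$, hence $|h|\le 1$ everywhere. The sign facts $w\le 0$ and $\square^*(wu)\le 0$ then give the one-sided inequality $\tfrac{d}{dt}\int h\,(w-W)\,d\nu_t \;\ge\; \tfrac{d}{dt}\WW_{x_0,0}(|t|)$, so after integrating from $t=-\tau_0$ to $t=-\tau_1$ the whole burden falls on the single time-slice integral $\int h\,(w-W)\,d\nu_{-\tau_1}$. Crucially, the Poincar\'e inequality is then applied to $h$, not to $w$: a pigeonhole in $\tau_1\in[\delta\tau_0,2\delta\tau_0]$ using Proposition~\ref{Prop_improved_L2} yields a merely \emph{bounded} $\int(w-W)^2\,d\nu_{-\tau_1}\le C(Y)$ at a good time, while the gradient estimate for heat flows gives $|\nabla h|\le C$ on $[-\tau_0/2,0]$, so $\int|h-a|^2\,d\nu_{-\tau_1}\lesssim \tau_1 \le 2\delta\tau_0$. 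Pairing these two (one bounded, one small) finishes the proof without ever touching $\nabla w$ or the cross term. You should replace the $L^2$ strategy with this duality-plus-pigeonhole argument.
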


\begin{proof}
By parabolic rescaling and application of a time-shift, we may assume that $t_0 = 0$ and $\tau_0 = 1$.

Let $u := (4\pi \tau)^{-n/2} e^{-f}$, so $d\nu = u \, dg$ and $\square^* u = 0$, and recall that by \cite[Sec.~9]{Perelman1}
\[ w := \tau(2\triangle f - |\nabla f|^2 + R) + f - n \leq 0,  \]
\[ \square^* (wu) = - 2 \tau \bigg| \Ric + \nabla^2 f - \frac1{2\tau} g \bigg|^2 u \leq 0, \]
\[ \int_M w \, d\nu_t = \WW_{x_0, 0} (|t|).  \]
Now if $h \in C^\infty (M \times [-1, 0])$ is a solution to the heat equation $\square h = 0$ with $|h(\cdot, -1) | \leq 1$, then $|h| \leq 1$ on all of $M \times [-1, 0]$ and therefore
\begin{multline*}
 \frac{d}{dt} \int_M h (w - W) u \, dg_t 
 = \int_M ( \square h) (w- W) u \, dg_t - \int_M h \square^* ((w-W) u) dg_t \\
 =  -\int_M h \square^* (w u) dg_t 
 \geq \int_M  \square^* (w u) dg_t 
= -\frac{d}{dt} \int_M w u \, dg_t = \frac{d}{dt} \WW_{x_0, 0} (|t|).
\end{multline*}
Thus for any $\tau_1 \in [\delta, 1]$ we have
\begin{equation} \label{eq_hW_hW}
 \int_M  h (w - W) u \, dg_{-1} \leq \int_M h (w - W) u \, dg_{-\tau_1} - \WW_{x_0, 0} (1) +  \WW_{x_0, 0} (\tau_1)
 \leq  \int_M h (w - W) d\nu_{-\tau_1} + 2\delta. 
\end{equation}
By duality, it remains to bound the last integral for a suitable $\tau_1 \in [\delta, \frac12]$, which we will determine later.

By \cite[\HKThmGradientPhiEstimate]{Bamler_HK_entropy_estimates} and Proposition~\ref{Prop_improved_L2}, or the preceding discussion in Section~\ref{sec_improved_Lp}, we have by Proposition~\ref{Prop_L_infty_HK_bound}
\begin{equation} \label{eq_nab_h_C_WW}
 |\nabla h| \leq C \qquad \text{on} \quad M \times [-\tfrac12, 0], 
\end{equation}
\[ \delta^{-1} \int_{-2\delta}^{-\delta} \int_M \big( \tau^2 |{\Ric}|^2  + \tau^2 |\nabla f|^4 + \tau^2 |\nabla^2 f|^2 + f^2 \big) d\nu_t dt \leq C(Y). \]
So we can choose a $\tau_1 \in [\delta, 2\delta]$ with the property that
\[ \int_M \big( \tau^2_1 |{\Ric}|^2  + \tau^2_1 |\nabla f|^4 + \tau_1^2 |\nabla^2 f|^2 + f^2 \big) d\nu_{-\tau_1} \leq C(Y). \] 
It follows that
\begin{equation*}
 \int_{M} (w-W)^2 d\nu_{-\tau_1} 
 \leq C(Y) + \int_{M} \big( \tau_1^2 ( |\nabla^2 f|^2 +  |\nabla f|^4 +  R^2 )+ f^2 \big) d\nu_{-\tau_1} \leq C(Y) 
 \end{equation*}
By the $L^2$-Poincar\'e inequality (see Proposition~\ref{Prop_Poincare}) and (\ref{eq_nab_h_C_WW}) we have for $a := \int_M h \, d\nu_{-\tau_1} \in [-1,1]$:
\[ \int_M |h - a|^2 \, d\nu_{-\tau_1} \leq 2 \tau_1 \int_M |\nabla h|^2 d\nu_{-\tau_1} \leq C \tau_1 \leq C \delta. \]
 Therefore,
 \begin{align*}
 \bigg| \int_{M} h (w-W) d\nu_{-\tau_1} \bigg|
&\leq \bigg| a \int_M (w-W) d\nu_{-\tau_1} \bigg|  + \int_{M} |h - a | |w-W|   d\nu_{-\tau_1} \\
&\leq  \bigg| \int_M w\, d\nu_{-\tau_1} - W \bigg| + \bigg( \int_{M} |h - a|^2  d\nu_{-\tau_1} \bigg)^{1/2} \bigg( \int_{M} |w-W|^2   d\nu_{-\tau_1} \bigg)^{1/2} \\
&\leq | \WW_{x_0, 0} (\tau_1) - W | + C(Y) \delta^{1/2}
\leq \delta + C(Y) \delta^{1/2}.
\end{align*}
Combining this with (\ref{eq_hW_hW}) implies
\[  \int_M  h (w - W) u \, dg_{-1} \leq \Psi (\delta | Y). \]
Since $h(\cdot, -1) : M \to [-1,1]$ can be chosen arbitrarily, this finishes the proof of the Lemma.
\end{proof}
\bigskip

\begin{proof}[Proof of Proposition~\ref{Prop_NN_almost_constant_selfsimilar}.]
By parabolic rescaling and application of a time-shift, we may assume that $t_0 = 0$ and $r = 1$.
Fix $\eps, Y$ and let $\delta > 0$ be a constant whose value we will determine later.
Write $d\nu = (4\pi \tau)^{-n/2} e^{-f} dg$ for the conjugate heat kernel based at $(x_0, 0)$, write $\WW_{x_0, 0} (\tau) := \WW [g_{-\tau}, f(\cdot, -\tau), \tau]$ as in Lemma~\ref{Lem_integral_soliton_id} and set $W := \NN_{x_0, 0} (1) \geq - Y$.

Due to the monotonicity of $\NN_{x_0, 0} (\tau')$ we have
\[ |\NN_{x_0, 0} (\tau') - W | \leq \delta \qquad \text{for all} \quad \tau' \in [\delta, \delta^{-1}]. \]
So by (\ref{eq_NN_geq_WW}) after Proposition~\ref{Prop_NN_basic_properties} we have for all $\tau' \in [\delta,\delta^{-1}]$
\begin{equation} \label{eq_WW_NN_in_proof}
 \WW_{x_0, 0} (\tau')  \leq \NN_{x_0, 0}(\tau') \leq W + \delta. 
\end{equation}
On the other hand, we obtain that if $\delta' \geq \delta$ is a constant whose value we will determine later, then for any $\tau' \in [\delta',\delta^{\prime -1}]$
\begin{multline} \label{eq_WW_NN_reverse_in_proof}
 \WW_{x_0, 0} (\tau') 
\geq \frac1{\delta^{-1} - \tau'} \int^{\delta^{-1}}_{\tau'} \WW_{x_0, 0} (\tau'') d\tau''
=  \frac1{\delta^{-1} - \tau'} \big( \delta^{-1} \NN_{x_0, 0} (\delta^{-1}) - \tau' \NN_{x_0, 0} (\tau') \big) \\
\geq  \frac{\delta^{-1}}{\delta^{-1} - \tau'}  \NN_{x_0, 0} (\delta^{-1})
\geq  \frac{\delta^{-1}}{\delta^{-1} - \tau'}  (W - \delta)
\geq W - \Psi (\delta | Y, \delta').
\end{multline}
We can therefore apply Lemma~\ref{Lem_integral_soliton_id} for $\delta' \leq \ov\delta' (Y, \eps)$ and $\delta \leq \ov\delta (Y, \delta', \eps)$ to conclude identity (\ref{eq_almost_self_similar_2}) of Definition~\ref{Def_almost_self_similar}.
 For identity (\ref{eq_almost_self_similar_1}) observe that by Proposition~\ref{Prop_NN_basic_properties}
\[ 2 \int_{ - \eps^{-1}}^{- \eps } \int_M \tau \Big| \Ric + \nabla^2 f - \frac1{2\tau} g \Big|^2 d\nu_t dt = \WW_{x_0, 0} (\eps) -  \WW_{x_0, 0} (\eps^{-1}) \leq \Psi (\delta |Y, \eps) \]
and identity (\ref{eq_almost_self_similar_3}) is a consequence of Lemma~\ref{Lem_lower_scal} for $\delta \leq \ov\delta (\eps)$.

For the last statement, assume that $(x_0, 0)$ is $(\delta, 1)$-selfsimilar.
Set $W := \NN_{x_0, 0} (1) \geq - Y$.
Integrating (\ref{eq_almost_self_similar_2}) implies that for all $\tau' \in [\delta, \delta^{-1}]$
\[ |\WW_{x_0, 0} (\tau') - W | \leq \delta. \]
So as in (\ref{eq_WW_NN_in_proof}) we obtain for $\tau' \in [\eps, \eps^{-1}]$
\[ \NN_{x_0, 0} (\tau') \geq \WW_{x_0, 0} (\tau') \geq W - \delta \]
and as in (\ref{eq_WW_NN_reverse_in_proof}) we obtain
\[ W+\delta
 \geq \WW_{x_0, 0} (\delta) 
\geq \frac1{ \tau' -\delta} \int_{\delta}^{\tau'} \WW_{x_0, 0} (\tau'') d\tau''
\geq  \frac{\tau'}{\tau' - \delta}  \NN_{x_0, 0} (\tau').
\]
This implies (\ref{eq_prop_NN_diff_small}) for $\delta \leq \ov\delta(Y, \eps)$.
\end{proof}
\bigskip

\begin{proof}[Proof of Proposition~\ref{Prop_almost_soliton_identities}.]
By parabolic rescaling and application of a time-shift, we may assume that $t_0 = 0$ and $r = 1$.
Fix $Y, \eps$ and let $0 < \eps' \leq \eps$, $\delta$ be constants whose values we will determine later.
Choose $\ov\alpha$ to be the same constant as in Proposition~\ref{Prop_improved_L2}.

We first prove the proposition in the case in which $\alpha = 0$, assuming that the right-hand sides of (\ref{eq_alm_ss_identity_1})--(\ref{eq_alm_ss_identity_5}) are replaced with $\eps'$, for $\delta \leq \ov\delta (Y, \eps')$.
Identities (\ref{eq_alm_ss_identity_1}) and (\ref{eq_alm_ss_identity_5}) are clear, assuming $\delta \leq \eps$.
By tracing  (\ref{eq_alm_ss_identity_1}) we can also show that
\[ \int_{ - \eps^{-1} }^{- \eps } \int_M \tau \Big( R + \triangle f - \frac{n}{2\tau}  \Big)^2  d\nu_t dt \leq \eps' .\]
Combining this with (\ref{eq_alm_ss_identity_5}) and the evolution equation $- \partial_t f = \triangle f - |\nabla f|^2 + R - \frac{n}{2\tau}$, we obtain (\ref{eq_alm_ss_identity_2})--(\ref{eq_alm_ss_identity_4}),  possibly after adjusting $\eps'$.

Next, we consider the case $\alpha \in (0, \ov\alpha]$.
The integrand in all inequalities (\ref{eq_alm_ss_identity_1})--(\ref{eq_alm_ss_identity_5}) is of the form $X e^{\alpha f}$ for some $X \geq 0$.
By Propositions~\ref{Prop_improved_L2}, \ref{Prop_L_infty_HK_bound} we know that
\[ \int_{ - \eps^{-1} }^{- \eps } \int_M X e^{2\alpha f} d\nu_t dt \leq C(Y, \eps), \qquad \int_M X e^{2\alpha f} d\nu_t  \leq C(Y,\eps), \]
where for the latter integral inequality we assumed $t \in [-\eps^{-1}, -\eps]$.
Therefore, for any $b > 0$ we have
\[ \int_{ - \eps^{-1} }^{- \eps } \int_M X e^{\alpha f} d\nu_t dt
\leq b \int_{ - \eps^{-1} }^{- \eps } \int_M X e^{2\alpha f} d\nu_t dt + b^{-1} \int_{ - \eps^{-1} }^{- \eps } \int_M X  d\nu_t dt
\leq C(Y, \eps) b + \eps' b^{-1}. \]
Since $\eps'$ can be chosen arbitrarily small, assuming that $\delta \leq \ov\delta (Y, \eps')$, we can  make the right-hand side smaller than $\eps$.
Choose for example $b := \frac12 C^{-1} (Y, \eps) \eps$ and $\eps' := \frac12 b \eps$.
This proves (\ref{eq_alm_ss_identity_1})--(\ref{eq_alm_ss_identity_4}).
The bound (\ref{eq_alm_ss_identity_5}) follows similarly, by dropping the time integral.
\end{proof}
\bigskip

\begin{proof}[Proof of Proposition~\ref{Prop_scalar curvature_almost_ss}.]
As before, we may assume without loss of generality that $r = 1$ and $t_0 = 0$.
Let $W := \NN_{x_0, 0} (1) \geq - Y$ and let $b, \delta > 0$ be constants whose values will be determined in the course of the proof.
Let $\alpha := \ov\alpha$ from Proposition~\ref{Prop_improved_L2}.
Then
\begin{multline*}
\tau \frac{d}{dt} \int_M \tau R \, d\nu_t
= \tau \int_M (2 \tau |{\Ric}|^2 - R) d\nu_t
= \tau \int_M \Big( 2 \tau \Ric \cdot \Big( \Ric + \nabla^2 f - \frac1{2\tau}g \Big)
- 2 \tau \Ric \cdot \nabla^2 f \Big) d\nu_t \\
\geq - b \tau^2 \int_M |{\Ric}|^2 d\nu_t 
-  b^{-1} \tau^2 \int_M  \Big| \Ric + \nabla^2 f - \frac1{2\tau}g \Big|^2 d\nu_t - 2\tau^2 \int_M \Ric \cdot \nabla^2 f \, d\nu_t.
\end{multline*}
The last term can be bounded as follows:
\begin{align*}
2\tau^2 \int_M \Ric \cdot \nabla^2 f \, d\nu_t 
&= 2\tau^2 \int_M \big( - \DIV \Ric \cdot \nabla f + \Ric (\nabla f, \nabla f) \big) d\nu_t  \\
&=2\tau^2 \int_M \Big( \Ric  + \nabla^2 f  -\frac1{2\tau} g \Big) \cdot ( \nabla f \otimes \nabla f ) d\nu_t \\
&\qquad + \tau^2 \int_M \Big( -\nabla R - 2\nabla_{\nabla f} \nabla f   + \frac1{\tau} \nabla f \Big)\cdot \nabla f  \, d\nu_t \displaybreak[1] \\
&\leq  b \tau^2 \int_M |\nabla f|^4 d\nu_t + b^{-1} \tau^2  \int_M  \Big| \Ric + \nabla^2 f - \frac1{2\tau}g \Big|^2 d\nu_t  \\
&\qquad + \tau \int_M \nabla \Big( - \tau (|\nabla f|^2 + R )  +  f  -  W \Big)\cdot \nabla f \,  d\nu_t.
\end{align*}
We can be bound the last term as follows, using Proposition~\ref{Prop_L_infty_HK_bound}: 
\begin{align*}
\tau \int_M \nabla \big( -& \tau ( |\nabla f|^2 + R)  +  f  - W \big)\cdot \nabla f  \, d\nu_t
=  \tau \int_M  \big( { - \tau ( |\nabla f|^2 + R)  +  f  -  W } \big) (|\nabla f|^2 -  \triangle f)  d\nu_t  \\
&= - \tau^2 \int_M  \Big( \Ric + \nabla^2 f - \frac1{2\tau} g \Big) \cdot \big( (|\nabla f|^2 -  \triangle f) g \big) d\nu_t \\
&\qquad\qquad +  \tau \int_M  \Big(  \tau (- |\nabla f|^2 + \triangle f    )  +  f -\frac{n}2 -  W \Big) (|\nabla f|^2 -  \triangle f)  d\nu_t \displaybreak[1] \\
&\leq n \tau^2 b \int_M   (|\nabla f|^2 -  \triangle f)^2 d\nu_t 
+ \tau^2 b^{-1} \int_M  \Big| \Ric + \nabla^2 f - \frac1{2\tau} g \Big|^2 d\nu_t \\ 
&\qquad\qquad -   \int_M  \Big(  \tau (- |\nabla f|^2 + \triangle f    )  +  f -\frac{n}2 -  W \Big)^2  d\nu_t \\
&\qquad\qquad +   \int_M  \Big(  \tau (- |\nabla f|^2 + \triangle f    )  +  f -\frac{n}2 -  W \Big) \Big( f - \frac{n}2 - W \Big)  d\nu_t \displaybreak[1] \\
&\leq 2n \tau^2 b \int_M   |\nabla f|^4  d\nu_t  + 2n^2 \tau^2 b \int_M   |\nabla^2 f|^2  d\nu_t 
+ \tau^2 b^{-1} \int_M  \Big| \Ric + \nabla^2 f - \frac1{2\tau} g \Big|^2 d\nu_t \\ 
&\qquad\qquad + \Big| \frac{n}2 + W \Big|  \int_M  \Big|  \tau (- |\nabla f|^2 + \triangle f    )  +  f -\frac{n}2 -  W \Big|  d\nu_t \\
&\qquad\qquad +   \int_M  \Big(  \tau (- |\nabla f|^2 + \triangle f    )  +  f -\frac{n}2 -  W \Big) f \,  d\nu_t \displaybreak[1] \\
&\leq 2n \tau^2 b \int_M   |\nabla f|^4  d\nu_t  + 2n^2 \tau^2 b \int_M   |\nabla^2 f|^2  d\nu_t 
+ \tau^2 b^{-1} \int_M  \Big| \Ric + \nabla^2 f - \frac1{2\tau} g \Big|^2 d\nu_t \\ 
&\qquad\qquad + C(Y)  \int_M  \Big|  \tau (- |\nabla f|^2 + \triangle f    )  +  f -\frac{n}2 -  W \Big|  e^{\alpha f}  d\nu_t.
\end{align*}
Observe that in the second inequality we have dropped the negative of a square; this is the reason why our proof does not imply the opposite monotonicity statement.
Combining all inequalities above and integrating yields, using Proposition~\ref{Prop_improved_L2},
\begin{align*}
\int_M \tau R d\nu_t \bigg|_{t = t_1}^{t = t_2}
\geq &- C b \int_{t_1}^{t_2} \int_M \tau \big(  |{\Ric}|^2 + |\nabla f|^4 + |\nabla^2 f|^2 \big) d\nu_t dt \\
&- C b^{-1} \int_{t_1}^{t_2} \int_M  \tau  \Big| \Ric + \nabla^2 f - \frac1{2\tau}g \Big|^2 d\nu_t dt \\
&- C(Y)  \int_{t_1}^{t_2} \int_M  \tau^{-1} \Big|  \tau (- |\nabla f|^2 + \triangle f    )  +  f -\frac{n}2 -  W \Big|  e^{\alpha f}  d\nu_t dt \\
&\geq - C(Y, \eps) b - C b^{-1} \delta - C(Y) \Psi (\delta | Y, \eps).
\end{align*}
Setting $b = \delta^{1/2}$ implies the proposition for $\delta \leq \ov\delta (Y, \eps)$.
\end{proof}

\section{Comparing conjugate heat kernel measures} \label{sec_comparing_CHK}
In this paper, we will often encounter the following problem.
Suppose that we are given a bound on an integral of the form $\int_M X \, d\nu_{x_0, t_0; t}$ over a geometric quantity $X \geq 0$ on a Ricci flow $(M, (g_t)_{t \in I})$, where $d\nu_{x_0, t_0; t}$ is the conjugate heat kernel measure based at some point $(x_0, t_0)$.
We would like to use this bound to conclude a similar bound on an integral of the form $\int_M X\, d\nu_{x_1, t_1; t}$, for a conjugate heat kernel measure $d\nu_{x_1, t_1; t}$ based at a different point $(x_1, t_1)$.
This is, in general, not possible since these measures need not satisfy a bound of the form $d\nu_{x_1, t_1; t} \leq C d\nu_{x_0, t_0; t}$.
In the following we will show, however, that under certain assumptions we have a bound of the form $d\nu_{x_1, t_1; t} \leq C(\alpha) e^{\alpha f_{x_0, t_0}} d\nu_{x_0, t_0; t}$ for arbitrarily small $\alpha > 0$, where $f_{x_0, t_0}$ is the potential of $d\nu_{x_0, t_0; t}$.
This will allow us to deduce bounds on $\int_M X \, d\nu_{x_1, t_1; t}$ from bounds on modified integrals of the form $\int_M X \, e^{\alpha f_{x_0, t_0}} d\nu_{x_0, t_0; t}$.

\begin{Proposition} \label{Prop_inheriting_bounds}
If $Y, D, A < \infty$, $\beta > 0$, then the following holds.

Let $(M, (g_t)_{t \in I})$ be a Ricci flow on a compact manifold and consider times $s, t^*, t_0, t_1 \in I$ with  $s < t^* \leq t_0, t_1$.
Suppose that 
\[ R (\cdot, s) \geq - A (t^* - s)^{-1}  \]
and that for some constants $-\infty < \alpha_1 < \alpha_0 < 1$ we have
\begin{equation} \label{eq_t1_tstar_theta}
 t_0 - t^* \leq D^2 (t^* - s), \qquad  t_1 - t^* \leq \theta(A) \, \frac{\alpha_0 - \alpha_1}{1-\alpha_0}( t^* - s). 
\end{equation}
Let $x_0, x_1 \in M$ and assume that $\NN_{x_0, t_0}(t_0 - s) \geq - Y$ or $\NN_{x_1, t_1}(t_0 - s) \geq - Y$.
Denote by $d\nu_{x_i, t_i} = (4\pi \tau_i)^{-n/2} e^{-f_i} dg$ the conjugate heat kernels based at $(x_i, t_i)$, $i=0,1$, and assume that
\[ d^{g_{t^*}}_{W_1} (\nu_{x_0, t_0;t^*}, \nu_{x_1,t_1;t^*} ) \leq D \sqrt{t^* - s}. \]
Then
\[ e^{\alpha_1 f_1} d\nu_{x_1,t_1; s} \leq C(Y, D, A,\alpha_0, \alpha_1) e^{\alpha_0 f_0} d\nu_{x_0, t_0; s} . \]
\end{Proposition}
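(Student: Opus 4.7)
The plan is to exponentiate the claimed measure inequality and reduce to the pointwise bound
\[ (1-\alpha_0) f_0(\cdot, s) - (1-\alpha_1) f_1(\cdot, s) \leq C(Y, D, A, \alpha_0, \alpha_1), \]
since $e^{\alpha_i f_i}\, d\nu_{x_i, t_i; s} = (4\pi \tau_i)^{-\alpha_i n/2}\, v_{x_i, t_i}^{1-\alpha_i}\, dg_s$ and $\tau_0, \tau_1 \in [t^*-s, (1+D^2)(t^*-s)]$ are comparable by hypothesis (\ref{eq_t1_tstar_theta}). After parabolic rescaling we may take $t^*-s = 1$. The Nash entropy hypothesis can be transferred between the two basepoints using Proposition~\ref{Prop_NN_variation_bound}, so we may assume $\NN_{x_i, t_i}(\tau_i) \geq -Y'(Y, D, A)$ for both $i = 0, 1$.

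For the upper bound on $f_0(\cdot, s)$, I will invoke the gradient estimate \HKThmGradientPhiEstimate, which forces $\sqrt{f_0 + C(A)}$ to be Lipschitz on $(M, g_s)$ with constant $1/(2\sqrt{\tau_0})$. Since $\int f_0 \, d\nu_{x_0, t_0; s} = \NN_{x_0, t_0}(\tau_0) + n/2 \leq C(Y)$, combining Markov's inequality with Lemma~\ref{Lem_mass_ball_Var} produces a point $z_0 \in M$ within $g_s$-distance $C(Y)\sqrt{\tau_0}$ of an $H_n$-center of $(x_0, t_0)$ at which $f_0(z_0, s) \leq C(Y)$; integration along geodesics then yields, for any $\eta > 0$,
\[ f_0(y, s) \leq (1+\eta)\, \frac{d^2_{g_s}(y, z_0)}{4\tau_0} + C(Y, A, \eta). \]
For the lower bound on $f_1(\cdot, s)$, the sharp Gaussian heat kernel upper bound (Theorem~\HKThmHKboundGauss), applied with the scalar curvature bound $R(\cdot, s) \geq -A$, gives
\[ f_1(y, s) \geq \frac{d^2_{g_s}(y, z_1^*)}{(4+\eta)\tau_1} - C(Y, A, \eta), \]
where $z_1^*$ is an $H_n$-center of $(x_1, t_1)$ at time $s$.

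To merge these estimates I will use the monotonicity of the $W_1$-distance (Proposition~\ref{Prop_monotonicity_W1}) together with the triangle inequality for $\sqrt{\Var}$ and the concentration bound $\sqrt{\Var_s(\nu_{x_i, t_i; s})} \leq \sqrt{H_n \tau_i}$ (Proposition~\ref{Prop_monotonicity_Var}) to deduce $d_{g_s}(z_0, z_1^*) \leq C(Y, D)$. Young's inequality then gives $d^2_{g_s}(y, z_0) \leq (1+\eta) d^2_{g_s}(y, z_1^*) + C(Y, D, \eta)$, so the needed inequality reduces to the coefficient condition
\[ (1-\alpha_0)(1+\eta)^2 \cdot \frac{\tau_1}{\tau_0} \leq (1-\alpha_1) \cdot \frac{4}{4+\eta}. \]
Using $\tau_0 \geq 1$ and $\tau_1 \leq 1 + (t_1 - t^*)$, this rearranges to
\[ 1 + (t_1 - t^*) \leq \Big( 1 + \frac{\alpha_0 - \alpha_1}{1-\alpha_0} \Big)(1 - O(\eta)), \]
which is exactly hypothesis (\ref{eq_t1_tstar_theta}) once $\theta(A)$ is chosen strictly less than $1$ with sufficient margin to absorb the $O(\eta)$-error, where $\eta$ is fixed small relative to $\alpha_0 - \alpha_1$ and $A$.

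The crux is obtaining the Gaussian denominator $(4+\eta)\tau_1$, arbitrarily close to the sharp constant $4\tau_1$, in the upper bound on $v_{x_1, t_1}$: any coarser constant such as $8\tau_1$ would render the coefficient condition infeasible for $\tau_0 \approx \tau_1$, and the entire argument collapses. Thus the proof hinges on the sharp form of \HKThmHKboundGauss, and the $A$-dependence of $\theta(A)$ in (\ref{eq_t1_tstar_theta}) traces back to the scalar-curvature lower bound that this Gaussian estimate requires, together with the corresponding $A$-dependence of the constants in the gradient estimate \HKThmGradientPhiEstimate used to control $f_0$ from above.
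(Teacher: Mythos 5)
The pointwise reformulation is correct — after taking logarithms, the claim is equivalent to $(1-\alpha_0)f_0(\cdot,s) - (1-\alpha_1)f_1(\cdot,s) \leq C$ — and this is also what the paper's proof establishes (in the guise $K(x_1,t_1;y,s) \leq C\,K^\lambda(x_0,t_0;y,s)$ with $\lambda = (1-\alpha_0)/(1-\alpha_1)$). But your route to that inequality contains a gap that the mechanism of the proposition is specifically designed to circumvent.

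You claim that the heat-kernel gradient estimate forces $\sqrt{f_0 + C(A)}$ to be Lipschitz with constant $1/(2\sqrt{\tau_0})$, and that the Gaussian heat-kernel upper bound gives $f_1 \geq d^2/((4+\eta)\tau_1) - C$. Both constants are the sharp Euclidean values, and neither is available here. The gradient estimate actually used in the paper's proof (\HKThmNabKbound, not \HKThmGradientPhiEstimate) has the form $|\nabla u|/u \leq C_3(A)\sqrt{\log(\cdots/u)}$, i.e.\ $|\nabla\sqrt{f_0 + C}| \leq C_3(A)/(2\sqrt{\tau_0})$ with a constant $C_3(A)$ coming from Moser-iteration-type arguments, not close to $1$. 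Similarly, the available Gaussian upper bound (reflected in Lemma~\ref{Lem_limit_HK_bound}, derived from \HKThmHKboundGauss) has $(8+\eps)\tau$ in the denominator, not $(4+\eta)\tau$. As you yourself observe, a coarser denominator such as $8\tau_1$ collapses your argument; what you have not recognized is that the coarser constants are in fact what the theorems deliver.

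These are not $O(\eta)$-corrections absorbable into the margin $\theta < 1$; they are fixed multiplicative factors. Tracking them, the coefficient condition becomes $(1-\alpha_0)\,C_3^2(A)/(4\tau_0) \leq (1-\alpha_1)/(L\tau_1)$ with $L \geq 8$, which at $\tau_0 = \tau_1 = 1$ reads $L C_3^2(A)/4 \leq (1-\alpha_1)/(1-\alpha_0) = 1/\lambda$. Since $L C_3^2(A)/4 \geq 2$ and $\lambda$ can be arbitrarily close to $1$, this fails, and no choice of $\theta(A) < 1$ (which only controls $\tau_1 - 1$, already forced to $0$ as $\alpha_0 \to \alpha_1$) repairs it.

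The paper's proof avoids this by working with the \emph{forward} heat flow $u := K(\cdot, t^*; y, s)$ and exploiting the tight Gaussian concentration of $\nu^1 = \nu_{x_1,t_1;t^*}$ \emph{at time $t^*$}, whose variance scale is $t_1 - t^* \leq \theta(A)\cdot\cdots$ rather than $\tau_1 \approx 1$. Because that concentration scale is controlled by $\theta(A)$, the paper can set $\theta := (8C_4)^{-2}$ and thereby absorb the non-sharp gradient constant $C_4(A)$. Your estimates are carried out entirely at time $s$, where both measures have variance $O(1)$, so you have no analogous lever; the sharp-constant requirement is an artifact of that choice, not something the cited theorems can supply.
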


\begin{Remark}
By inspecting the following proof, we obtain a slightly stronger bound of the form
\[ e^{\alpha_1 f_1} d\nu_{x_1,t_1; s} \leq C(D, A,\alpha_0, \alpha_1)  e^{(\alpha_0 - \alpha_1) \NN_{x_0,t_0}(t_0 - s)} e^{\alpha_0 f_0} d\nu_{x_0, t_0; s} ,\]
which is collapsing-independent.
\end{Remark}

\begin{proof}
In the following proof $C(a, b, \ldots)$ will denote a generic constant that depends only on certain parameters $a, b, \ldots$.
Moreover, we will work with a sequence of indexed constants $C_0, C_1, \ldots$, which will not be generic.
After introducing each constant, we will indicate what other constants this constant depends on, and then assume it as fixed for the remainder of this proof.
Let $\theta \in (0,1)$ be some constant whose value we will determine later based on $A$.

By parabolic rescaling, we may assume without loss of generality that $s = 0$ and $t^* = 1$.
Note that this implies using the maximum principle that
\[ R \geq - A  \qquad \text{on} \quad M \times (I \cap [0, \infty)) \]
and
\begin{equation} \label{eq_rescaled_bounds_theta_times}
 t_0 \leq D^2 + 1, \qquad t_1 - 1 \leq \theta \frac{\alpha_0 - \alpha_1}{1-\alpha_0}, \qquad  d^{g_{1}}_{W_1} (\nu_{x_0, t_0;1}, \nu_{x_1,t_1;1} ) \leq D. 
\end{equation}
We will use the notation $\NN^*_0 (x',t') := \NN_{x',t'} (t')$ from \cite[\HKDefNN]{Bamler_HK_entropy_estimates}.
Using Proposition~\ref{Prop_NN_variation_bound} we obtain that $|\NN^*_0 (x_0, t_0) - \NN^*_0 (x_1, t_1)| \leq C( D, A, \alpha_0, \alpha_1)$.
So after adjusting $Y$ depending on $D, A, \alpha_0, \alpha_1$, we may assume that $\NN^*_0 (x_1, t_1) \geq - Y$.

Set $\nu^i := \nu_{x_i, t_i; 1}$ and let $(z_i,1)$ be an $H_n$-center of $(x_i, t_i)$, $i =0,1$.
Again by Proposition~\ref{Prop_NN_variation_bound} and by \cite[\HKThmNabNNSquareNN]{Bamler_HK_entropy_estimates} we obtain that there are constants $C_0 (Y, D,A), C_1 (A) < \infty$ such that
\begin{equation} \label{eq_NN_nab_NN}
 \NN^*_0 (z_1,1) \geq - C_0, 
\qquad |\nabla \NN^*_0(\cdot, 1)| \leq C_1. 
\end{equation}
We also have
\begin{multline} \label{eq_d_1_z_0_z_1}
 d_1 (z_0, z_1) \leq d^{g_{1}}_{W_1} (\delta_{z_0}, \nu^0) + d^{g_{1}}_{W_1} (\nu^0, \nu^1) + d^{g_{1}}_{W_1} (\nu^1, \delta_{z_1}) 
 \leq \sqrt{{\Var}_1  (\delta_{z_0}, \nu^0) } + D + \sqrt{{\Var}_1 (\nu^1, \delta_{z_1})} \\
\leq \sqrt{H_n (t_0 - 1)} + D + \sqrt{H_n (t_1 - 1)} 
\leq D' (  D, \alpha_0, \alpha_1 ). 
\end{multline}

We will now reduce the proposition to a more basic bound in several steps.
For this purpose, fix some $y \in M$.
We need to show that
\begin{equation} \label{eq_Kx1_Kx0_la}
 K (x_1, t_1; y,0) \leq C(D, A,\alpha_0, \alpha_1) \big( K(x_0, t_0; y,0) \big)^{\lambda} , 
\end{equation}
where $\lambda = ( 1-\alpha_0)/ (1-\alpha_1) < 1$.
Set $u := K(\cdot, 1; y,0) \in C^\infty (M)$.
Then (\ref{eq_Kx1_Kx0_la}) is equivalent to
\begin{equation} \label{eq_int_u_1_int_u_0_la}
\int_M u \, d\nu^1 \leq C(D, A,\alpha_0, \alpha_1) \bigg( \int_M u \, d\nu^0 \bigg)^{\lambda} , 
\end{equation}

By \cite[\HKThmNabKboundThmNabKbound]{Bamler_HK_entropy_estimates}, we can find constants $C_2(A) , C_3(A) < \infty$ such that
\begin{align*}
 u &\leq \frac{1}{10} C_2 \exp ( - \NN_0^* (\cdot,1) ), \\
 \frac{|\nabla u|}{u} & \leq C_3 \sqrt{ \log \bigg ( \frac{C_2 \exp (- \NN^*_0 (\cdot, 1))}{ u} \bigg)}.
\end{align*}
So if we set
\[ v :=C_2^{-1} u \, \exp ( \NN^*_0 (\cdot, 1)) \leq \frac1{10}, \]
then using (\ref{eq_NN_nab_NN}), we find
\begin{equation} \label{eq_nab_v_v_C_5}
 \frac{|\nabla v|}{v} \leq \frac{|\nabla u|}{u} + |\nabla \NN^*_0 (\cdot, 1)|
\leq C_3 \sqrt{ - \log v} + C_1 \leq 2C_4 \sqrt{- \log v}, 
\end{equation}
where $C_4 = C_4(C_1, C_3) = C_4(A) < \infty$.
Consider the invertible function
\[  F : [0, \infty) \longrightarrow (0, 1], \qquad a \longmapsto  \exp (- a^2) \]
and write $v =: F \circ w$ for some $w \in C^\infty(M)$.
Then (\ref{eq_nab_v_v_C_5}) implies that
\begin{equation} \label{eq_nab_w_C_5}
 |\nabla w| \leq C_4 
\end{equation}
and to see (\ref{eq_int_u_1_int_u_0_la}), we need to show that
\begin{equation} \label{eq_int_Fw_0_int_Fw_1_la}
 \int_M (F \circ w) \exp ( - \NN^*_0 (\cdot, 1) ) \, d\nu^1 \leq C(Y,D, A,\alpha_0, \alpha_1) \bigg( \int_M (F \circ w) \exp ( - \NN^*_0 (\cdot, 1) )  \, d\nu^0 \bigg)^{\lambda} .
\end{equation}

Let us now simplify (\ref{eq_int_Fw_0_int_Fw_1_la}) even further.
Set
\[ B_0 := B\big(z_0, 1, \sqrt{2H_n (t_0 - 1)} \big). \]
Note that $\sqrt{2H_n (t_0 - 1)} \leq \sqrt{2H_n} D$.
By Lemma~\ref{Lem_mass_ball_Var} we have $\nu^0 ( B_0) \geq \frac12$.
Next, observe that by (\ref{eq_NN_nab_NN}) we have
\[ 0 \leq - \NN^*_0 (\cdot, 1) \leq C_0 + C_1 d_1 (z_1, \cdot). \]
So in order to prove (\ref{eq_int_Fw_0_int_Fw_1_la}), it suffices to show that
\begin{equation} \label{eq_int_Fw_Fw_y}
 \int_M (F \circ w) \exp ( C_1 d_1 (z_1, \cdot) )  \, d\nu^1 \leq C(Y,D, A,\alpha_0, \alpha_1)  \big( F(w(p)) \big)^\la \qquad \text{for all} \quad p \in B_0. 
\end{equation}

To see (\ref{eq_int_Fw_Fw_y}), we first set
\[ \theta := (8 C_4)^{-2}. \]
So (\ref{eq_rescaled_bounds_theta_times}) implies that if $t_1 > 1$, then
\[ \frac1{16(t_1 - 1)} \geq \frac{2C_4^2 \la}{1-\la}. \]
Now fix some $p \in B_0$.
By (\ref{eq_nab_w_C_5}), (\ref{eq_d_1_z_0_z_1}) we have for any $q \in M$
\begin{multline*}
 w(q)
 \geq w(p)  - C_4 d_1 (q,p) 
\geq   w(p) - C_4 d_1 (q, z_1) - C_4 d_1 (z_1, z_0) - C_4 \sqrt{2 H_n (t_0 - 1)} \\
\geq w(p) - C_4 (d_1 (q, z_1)+1) - C_5,
\end{multline*}
where $C_5 = C_5(C_4, D, D') = C_5 (A, D, \alpha_0, \alpha_1) < \infty$.
Next, for any $d > 0$ we have by \cite{Bamler_HK_entropy_estimates} if $t_1 > 1$
\begin{multline*}
 \nu^1 \big( M \setminus B(z_1, 1, d) \big) 
\leq C \exp \bigg({ - \frac1{8(t_1 - 1)} \big( d - \sqrt{2 H_n (t_1 - 1)} \big)_+^2 } \bigg) \\
\leq C \exp \bigg({ - \frac1{16(t_1 - 1)}  d^2 } \bigg)
\leq C \exp \bigg({ - \frac{2C_4^2 \la}{1-\la}  d^2 } \bigg). 
\end{multline*}
The same bound holds trivially if $t_1 = 1$.
It now follows, using the inequality $a^2 + \frac{\la}{1-\la} b^2 \geq \la (a+b)^2$, that
\begin{align*}
 \int_M (F \circ w) & \exp ( C_1 d_1 (z_1, \cdot) ) \, d\nu^1
= \sum_{j=1}^\infty \int_{B(z_1, 1,  j) \setminus B(z_1, 1,  j-1)} (F \circ w)  \exp ( C_1 d_1 (z_1, \cdot) ) \, d\nu^1 \\
&\leq \sum_{j=1}^\infty F \big( (w(p) - C_4 j - C_5)_+ \big) \exp (C_1 j) \nu^1 ( M \setminus B(z_1, 1, j-1)) \\
&\leq C \sum_{j=1}^\infty \exp \bigg({ - \big( w(p)  - C_4 j - C_5 \big)_+^2 + C_1 j  - \frac{2\la}{1-\la} (C_4 j-C_4 )^2 }\bigg) \\
&\leq C (\la, C_1, C_4, C_5) \sum_{j=1}^\infty \exp \bigg({ - \big( w(p) - C_4 j - C_5 \big)_+^2  - \frac{\la}{1-\la} (C_4 j +C_5 )^2 }\bigg) \\
&\leq C (\la, C_1, C_4, C_5) \sum_{j=1}^\infty \exp \Big({ - \la \big( ( w(p) -  C_4 j - C_5 )_+  + C_4 j + C_5 \big)^2  }\Big) \\
&\leq C (\la, C_1, C_4, C_5) \sum_{j=1}^\infty \exp \big({ - \la  ( w(p) )^2  } \big) \\ 
&\leq C (\la, C_1, C_4, C_5) \big( F (w(p)) \big)^{\la}.
\end{align*}
This shows (\ref{eq_int_Fw_Fw_y}), which finishes the proof.
\end{proof}

\section{Distance expansion bound near almost selfsimilar points} \label{sec_dist_expansion}
Most estimates concerning the change of the distance function in time are based on the monotonicity of the $W_1$-distance and the variance (see Propositions \ref{Prop_monotonicity_W1}, \ref{Prop_monotonicity_Var}).
These can be viewed as bounds on the amount that  distances may shrink as we move forward in time.
The opposite direction --- a distance expansion bound --- is generally harder to come by.
The following proposition shows that such a bound indeed holds near almost selfsimilar points.

\begin{Proposition} \label{Prop_dist_expansion_almost_ss}
If $Y, D < \infty$, $\beta > 0$ and $\delta \leq \ov\delta ( Y,D, \beta) \leq \beta$, then the following holds.

Let $(M, (g_t)_{t \in I})$ be a Ricci flow on a compact manifold and consider two points $(x_0, t_0),\lb (x_1, t_1) \in M \times I$ such that $[t_0 - \delta^{-1} r^2, t_0] \subset I$ and $\NN_{x_0, t_0} ( r^2)\geq - Y$.
Assume that $|t_0 - t_1| \leq \beta^{-1} r^2$ and that $(x_0, t_0)$ is $(\delta, r)$-selfsimilar.
Consider two times $s_1, s_2 \in [t_{\min} - \beta^{-1} r^2, t_{\min} - \beta r^2 ]$, where $t_{\min} := \min \{ t_0, t_1 \}$.
If
\begin{equation} \label{eq_Var_nu0_nu1_D}
 d_{W_1}^{g_{s_1}} (\nu_{x_0, t_0} (s_1), \nu_{x_1,t_1} (s_1) ) \leq Dr, 
\end{equation}
then
\[ d_{W_1}^{g_{s_2}} (\nu_{x_0, t_0} (s_2), \nu_{x_1,t_1} (s_2) ) \leq C(Y, D, \beta) r. \]
\end{Proposition}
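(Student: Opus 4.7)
By parabolic rescaling we normalize $r=1$ and $t_0=0$; the condition $\delta\leq\beta$ (combined with $[t_0-\delta^{-1},t_0]\subset I$) ensures the flow is defined on $[t_{\min}-\beta^{-1},t_{\min}]$ and that the almost-selfsimilarity bounds of Definition~\ref{Def_almost_self_similar} apply on this interval. The easy case $s_2\leq s_1$ follows directly from the monotonicity of the $W_1$-distance of conjugate heat flows (Proposition~\ref{Prop_monotonicity_W1}):
$d^{g_{s_2}}_{W_1}(\nu_{x_0,t_0;s_2},\nu_{x_1,t_1;s_2}) \leq d^{g_{s_1}}_{W_1}(\nu_{x_0,t_0;s_1},\nu_{x_1,t_1;s_1}) \leq D$.

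In the main case $s_1<s_2$ the $W_1$-monotonicity points the wrong way and the bound rests genuinely on the almost-selfsimilarity. The guiding heuristic is that on a genuine gradient shrinking soliton based at $(x_0,0)$, the soliton identity $\partial_t f=|\nabla f|^2$ makes the time-dependent vector field $\nabla f$ generate a soliton diffeomorphism $\Phi_{s_1\to s_2}:M\to M$ satisfying $\Phi^*g_{s_2}=(|s_2|/|s_1|)g_{s_1}$ and pushing $\nu_{x_0,0;s_1}$ onto $\nu_{x_0,0;s_2}$ (with analogous covariance for kernels based at perturbed points). Since $|s_i|\in[\beta,\beta^{-1}]$, the ratio is at most $\beta^{-1}$, and on a soliton $W_1$-distances between conjugate heat kernels transform covariantly with this bounded dilation factor.

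The plan is to realize this analytically in four steps. First, apply Proposition~\ref{Prop_almost_soliton_identities} (with small $\alpha$ and $\eps'=\eps'(\delta)\to 0$) to obtain the $e^{\alpha f}$-weighted almost soliton identities, quantifying the smallness of $|\Ric+\nabla^2 f-(2\tau)^{-1}g|^2$ and of $|{-\tau(|\nabla f|^2+R)+f-W}|$ on a time interval enclosing $[s_1,s_2]$. Second, apply Proposition~\ref{Prop_inheriting_bounds} to obtain a pointwise comparison of the form $e^{\alpha_1 f_1}d\nu_{x_1,t_1;s}\leq C(Y,D,\beta)\,e^{\alpha_0 f_0}d\nu_{x_0,t_0;s}$ on the relevant times; the hypotheses on $|t_0-t_1|$ and on the $W_1$-distance at $s_1$ verify its conditions. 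This transfers $e^{\alpha f_0}$-weighted integrals from $\nu_{x_0,t_0}$ to $\nu_{x_1,t_1}$, the extra weight being absorbed via the improved $L^p$-bounds of Proposition~\ref{Prop_improved_L2}. Third, construct an approximate soliton diffeomorphism $\Phi$ by flowing along the time-dependent vector field $\nabla f(\cdot,t)$ from $t=s_1$ to $t=s_2$, restricted to a ``good set'' where $|\nabla f|$ is bounded; the integrated bounds of Proposition~\ref{Prop_improved_L2} together with $H_n$-concentration (Lemma~\ref{Lem_mass_ball_Var}) show that this good set carries $\nu_{x_0,t_0;s_1}$-mass at least $1-\Psi(\delta)$. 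Fourth, conclude from step one that on the good set $\Phi^*g_{s_2}$ is $L^2$-close to $(|s_2|/|s_1|)g_{s_1}$, so $W_1$-distances at time $s_2$ between $\Phi$-images are dilated by at most $\sqrt{|s_2|/|s_1|}(1+\Psi(\delta))$ compared with their values at $s_1$; combining with step two yields $d^{g_{s_2}}_{W_1}(\Phi_*\nu_{x_i,t_i;s_1},\nu_{x_i,t_i;s_2})\leq\Psi(\delta\,|\,Y,D,\beta)$ for $i=0,1$, and the $W_1$-triangle inequality gives $d^{g_{s_2}}_{W_1}(\nu_{x_0,t_0;s_2},\nu_{x_1,t_1;s_2})\leq C(Y,D,\beta)$.

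The principal technical difficulty is the third step: constructing a genuine diffeomorphism from $\nabla f$ when the latter is only controlled in a weighted integral sense, not pointwise. One must restrict to a set of near-full $\nu_{x_0,t_0;s_1}$-measure and estimate the contribution of its complement via the spatial decay of $d\nu_{x_0,t_0}$. The $e^{\alpha f}$-weight in the bounds of Proposition~\ref{Prop_improved_L2} is essential for keeping these tail errors controllable as $\delta\to 0$; without this improvement, the transfer from $\nu_{x_0,t_0}$ to $\nu_{x_1,t_1}$ via Proposition~\ref{Prop_inheriting_bounds} would lose too much, explaining why both the improvement of Section~\ref{sec_improved_Lp} and the comparison of Section~\ref{sec_comparing_CHK} are needed here.
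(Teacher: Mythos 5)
Your proposal is correct in spirit up through the easy case $s_2\le s_1$, and you are right that the almost-soliton identities (Proposition~\ref{Prop_almost_soliton_identities}), the change-of-basepoint estimate (Proposition~\ref{Prop_inheriting_bounds}), and the weighted $L^p$ bounds (Proposition~\ref{Prop_improved_L2}) are the key ingredients. But the core of your plan --- constructing an approximate soliton diffeomorphism $\Phi$ by flowing along $\nabla f$ and arguing that it pushes conjugate heat kernels forward with controlled $W_1$-distortion --- has a genuine gap, and the paper's proof takes a quite different, more analytic route.

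The gap is in Steps~3 and~4. To control $d_{W_1}^{g_{s_2}}(\Phi_*\mu,\Phi_*\nu)$ in terms of $d_{W_1}^{g_{s_1}}(\mu,\nu)$ you need $\Phi$ to be Lipschitz from $(M,g_{s_1})$ to $(M,g_{s_2})$, which requires pointwise bounds on $d\Phi$. Those follow from pointwise bounds on $\nabla^2 f$ integrated along trajectories, but Propositions~\ref{Prop_almost_soliton_identities} and~\ref{Prop_improved_L2} only give $L^2$ bounds in spacetime; a single trajectory may see values of $|\nabla^2 f|$ far larger than the $L^2$ average, and restricting to a set where $|\nabla f|$ is bounded does not bound $\nabla^2 f$ along that set (nor keep the flow inside it). The same problem poisons the claim that ``$\Phi^*g_{s_2}$ is $L^2$-close to $(|s_2|/|s_1|)g_{s_1}$, so $W_1$-distances are dilated by at most $\sqrt{|s_2|/|s_1|}(1+\Psi(\delta))$'': $L^2$-closeness of the metrics does not control the Lipschitz constant of $\Phi$ and hence does not control $W_1$ of pushforwards, which is a worst-case quantity over $1$-Lipschitz test functions. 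Your remark that the extra weight $e^{\alpha f}$ makes the tail errors small addresses only the contribution of the complement of a good set; it does not repair the lack of pointwise control on the good set itself.

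The paper's proof circumvents the diffeomorphism entirely. It introduces the auxiliary function $u:=(4\pi(t-s_1''))^{-n/2}\exp\bigl(-\tau(f-W)/(t-s_1'')\bigr)$ for a shifted time $s_1''<s_1$, which is essentially a power of the conjugate-heat-kernel density of $(x_0,t_0)$ and which on an exact soliton satisfies $\square u=0$. One checks that $\square u$ is bounded below by (integrals of) the almost-soliton error terms, and that $u\le C(4\pi\tau)^{-n/2}e^{-f}$, so the $e^{\alpha f}$-weighted identities of Proposition~\ref{Prop_almost_soliton_identities}, transferred from $\nu_{x_0,t_0}$ to $\nu_{x_1,t_1}$ via Proposition~\ref{Prop_inheriting_bounds}, show that $\int_M u\,d\nu^1_t$ decreases by at most $\Psi(\delta)$ from $t=s_1'$ to $t=s_2$. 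Combined with a preliminary claim (proved using $H_n$-centers, the $L^\infty$ heat-kernel bound, the upper volume bound, and the $\Phi$-gradient estimate) that $\int_M u\,d\nu^1_{s_1'}\ge c>0$, this gives $\int_M(4\pi)^{-n}(\tau\tau_1)^{-n/2}e^{-f-f_1}\,dg_{s_2}\ge c>0$. If the $H_n$-centers $z_0,z_1$ of $(x_0,t_0),(x_1,t_1)$ at time $s_2$ were far apart, the product density $e^{-f-f_1}$ would have small integral (because the mass of each $\nu^i_{s_2}$ concentrates near $z_i$ by a variance argument), a contradiction. So the paper's argument is really a ``specific test function'' argument inside the $W_1$-duality, not a transport argument. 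This is the move you are missing: $u$ is precisely the object that survives the purely $L^2$-quality almost-soliton control, because its evolution under $\square$ is governed by exactly those integrals, and it converts directly into a lower bound on an overlap integral of densities rather than into a Lipschitz bound on a map.
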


\begin{proof}
The constant $\delta$ will be determined in the course of the proof.
After parabolic rescaling, we may assume that $r = 1$.
We may assume that $s_1 \leq s_2$, because otherwise the statement is a consequence of Proposition~\ref{Prop_monotonicity_W1}.
Moreover, by iterating the proposition, we may restrict to the case $s_2 - s_1 \leq \frac12 \beta$.

Using Proposition~\ref{Prop_NN_variation_bound}, we obtain that $\NN_{x_1, t_1} ( r^2)\geq - C(Y, D, \beta)$.
Write $d\nu^0 := d\nu_{x_0, t_0} = (4\pi \tau)^{-n/2} e^{-f} dg$, $d\nu^1 := d\nu_{x_1, t_1} = (4\pi \tau_1)^{-n/2} e^{-f_1} dg$.
By Proposition~\ref{Prop_L_infty_HK_bound}, and assuming that $\delta \leq \frac12 \beta$, we have 
\begin{equation} \label{eq_ff1_geq_mC}
 f, f_1 \geq - C(Y, D, \beta) \qquad \text{on} \quad [t_{\min} - \beta^{-1}, t_{\min}].
\end{equation}

Let $s'_1 := s_1 - \tfrac14 \beta$.
We will first show:

\begin{Claim}
There is a measurable subset $S \subset M$ such that $f (\cdot, s'_1), f_1(\cdot, s'_1) \leq C^*(Y,D, \beta)$ on $S$ and $|S|_{s'_1} \geq c(Y,D, \beta) > 0$.
\end{Claim}

\begin{proof}
Choose an $H_n$-center $(z',s'_1) \in M \times I$ of $(x_0, t_0)$.
Recall that by Lemma~\ref{Lem_mass_ball_Var} we have for $C_0 := \sqrt{2H_n}$.
\begin{equation} \label{eq_nuBz_12}
 \int_{B(z',s'_1, C_0 \sqrt{t_0 - s'_1})} (4\pi (t_0 - s'_1 ))^{-n/2} e^{-f} dg_{s'_1} = \nu^0_{s'_1 } \big( B(z',s'_1, C_0 \sqrt{t_0 - s'_1}) \big) \geq \frac12. 
\end{equation}
By \cite[\HKThmUpperVolBound]{Bamler_HK_entropy_estimates} we have
\begin{equation} \label{eq_Bzsp1C0_vol_bound}
 \big| B(z', s'_1, C_0 \sqrt{t_0 - s'_1}) \big|_{ s'_1 } \leq C(Y, D, \beta) (t_0 - s'_1)^{n/2}. 
\end{equation}
Let $C^* < \infty$ be a constant whose value we will determine later and set
\[ S' := \{ f(\cdot, s'_1) \leq C^* \} \cap B(z', s'_1, C_0 \sqrt{t_0 - s'_1}). \]
By (\ref{eq_nuBz_12}), (\ref{eq_Bzsp1C0_vol_bound}) we have
\begin{multline} \label{eq_int_sp_geq}
\int_{S'} d\nu^0_{s'_1} 
 \geq \frac12 -  \int_{B(z',s'_1, C_0 \sqrt{t_0 - s'_1}) \setminus S'} (4\pi (t_0 - s'_1 ))^{-n/2} e^{-f} dg_{s'_1} \\
\geq \frac12 - C(\beta) e^{-C^*}  \big| B(z', s'_1, C_0 \sqrt{t_0 - s'_1}) \big|_{ s'_1 }   
\geq \frac12 - C(Y, D, \beta) e^{-C^*}.
\end{multline}
So if $C^* \geq \underline{C}^* (Y,D, \beta)$, then by (\ref{eq_ff1_geq_mC})
\begin{equation} \label{eq_intSp_volSp}
 \int_{S'} d\nu^0_{s'_1} \geq \frac14, \qquad |S'|_{s'_1} \geq c(Y,D, \beta) > 0. 
\end{equation}

Next, let $\Phi_t : \IR \to (0,1)$ be the antiderivative of $\Phi'_t (x) = (4\pi t)^{-1/2} \exp (- x^2 / 4t )$ with $\lim_{x \to -\infty} \Phi_t (x) \lb = 0$, as in \cite[\HKSecGradientEstimate]{Bamler_HK_entropy_estimates}.
Consider the function
\[ h : M \longrightarrow \IR, \qquad  y \longmapsto \int_{S'} d\nu_{y, s_1; s'_1}. \]
By \cite[\HKThmGradientPhiEstimate]{Bamler_HK_entropy_estimates} the function $\Phi^{-1}_{\beta/ 4} (h)$ is $1$-Lipschitz at time $s_1$.
By the reproduction formula and (\ref{eq_intSp_volSp}) we have
\[ \int_M h \, d\nu^0_{s_1}  \geq \frac14. \]
We claim that
\begin{equation} \label{eq_intSp_nu1}
 \int_{S'} (4\pi (t_1 - s'_1 ))^{-n/2} e^{-f_1} dg_{s'_1} = \int_{S'} d\nu^1_{s'_1 } = \int_M h \, d\nu^1_{s_1} \geq c(Y,D,\beta) > 0. 
\end{equation}
To see the second last inequality, choose $H_n$-centers $(z^*_0, s_1), (z^*_1, s_1)$ of $(x_0,t_0), (x_1, t_1)$, respectively.
Since $h \leq 1$, we can use Lemma~\ref{Lem_mass_ball_Var} to conclude that there is a constant $C_1(Y,D,\beta) < \infty$ such that
\[ \int_{B(z^*_0, s_1, C_1 )}  h \, d\nu^0_{s_1} \geq c(Y,D,\beta) - \nu^0_{s_1} \big( M \setminus   B(z^*_0, s_1, C_1 ) \big) \geq \tfrac12 c(Y,D,\beta). \]
Thus, there is a point $z^{**} \in B(z^*_0, s_1, C_1 )$ with $h(z^{**}) \geq c(Y,D,\beta) > 0$.
Since $\Phi^{-1}_{\beta/ 4} (h)$ is $1$-Lipschitz, this implies that $h \geq c(Y,D,\beta) > 0$ on $B(z^*_1, s_1, \sqrt{2H_n (t_1 - s_1)})$ and therefore, by  Lemma~\ref{Lem_mass_ball_Var}
\[ \int_M h \, d\nu^1_{s_1} \geq \int_{B(z^*_1, s_1, \sqrt{2H_n (t_1 - s_1)})} h \, d\nu^1_{s_1} \geq c(Y,D,\beta), \]
proving (\ref{eq_intSp_nu1}).

Set now
\[  S := \{ f_1(\cdot, s'_1) \leq C^* \} \cap S' \subset B(z', s'_1, C_0 \sqrt{t_0 - s'_1}). \]
Using (\ref{eq_intSp_volSp}), (\ref{eq_Bzsp1C0_vol_bound}), we can estimate as in (\ref{eq_int_sp_geq}) that
\begin{multline*}
\int_{S} d\nu^1_{s'_1} 
 \geq \frac14 -  \int_{S' \setminus S} (4\pi (t_1 - s'_1 ))^{-n/2} e^{-f_1} dg_{s'_1} \\
\geq \frac14 - C(\beta) e^{-C^*}  \big| B(z', s'_1, C_0 \sqrt{t_0 - s'_1}) \big|_{ s'_1 }   
\leq \frac14 - C(Y, D, \beta) e^{-C^*}.
\end{multline*}
So if $C^* \geq \underline{C}^* (Y,D, \beta)$, then $\int_{S} d\nu^1_{s'_1} \geq \frac18$, which, together with (\ref{eq_ff1_geq_mC}), implies the claim for an appropriate choice of $c(Y,D, \beta) > 0$.
\end{proof}

Let $s''_1 := s'_1 - \frac14 \beta = s_1 - \frac12 \beta$, $W := \NN_{x_0, t_0}(r^2)$ and consider the function $u \in C^\infty (M \times [s'_1, s_2])$ defined by
\[ u := (4\pi (t - s''_1))^{-n/2} \exp \Big( {-  \frac{\tau (f  -W)}{t-s''_1}} \Big). \]
Since
\begin{equation} \label{eq_tau_t_spp1}
 \frac{\tau}{t - s''_1} \bigg|_{t = s'_1} = \frac{t_0 - s_1 + \frac14 \beta}{\frac14 \beta} \leq \frac{2\beta^{-1} + \frac14 \beta}{\frac14 \beta},
\end{equation}
we get from the Claim that
\begin{equation} \label{eq_int_u_sp1}
 \int_M u\, d\nu^1_{s'_1} \geq \int_{S} u \, d\nu^1_{s'_1}  \geq c(Y,D, \beta) > 0. 
\end{equation}

For $t \in [s'_1, s_2]$ we have
\begin{equation} \label{eq_tau_t_sp1_geq_1}
  \frac{\tau}{t - s''_1} \geq \frac{\beta}{s_2 - s_1 + \frac12\beta} \geq 1. 
\end{equation}
So using (\ref{eq_tau_t_spp1}), (\ref{eq_ff1_geq_mC}) we have for $t \in [s'_1, s_2]$
\begin{multline*}
 \exp \Big( {-  \frac{\tau (f-W)}{t-s''_1}} \Big) 
=  \exp \Big( {-  \frac{\tau }{t-s''_1}} (f + C(Y, D, \beta)) \Big)  \exp \Big( { \frac{\tau}{t-s''_1} C(Y, D, \beta)} \Big)  \\
\leq e^{-f - C(Y,D, \beta)}  \exp \Big( {  \frac{2\beta^{-1} +\frac14 \beta }{\frac14 \beta} C(Y, D, \beta)} \Big) 
\leq C(Y,D, \beta) e^{-f},
\end{multline*}
\[ \tau \leq 2 \beta^{-1} \leq C(\beta) \tfrac14 \beta \leq C(\beta) (t - s''_1), \]
and thus
\begin{equation} \label{eq_u_leq_C_emf}
 u \leq C(Y, D,\beta) (4\pi \tau)^{-n/2} e^{-f}. 
\end{equation}
We therefore obtain
\begin{align*}
 \square u &= -\frac{n}{2 (t - s''_1)} u - \frac{1}{t-s''_1} \partial_t (\tau ( f-W)) u \\
 &\qquad\qquad\qquad\qquad +  \frac{1}{(t -s''_1)^2} \tau (  f    - W )u + \frac{1}{t - s''_1} \triangle (\tau f) u - \frac{1}{(t - s''_1)^2}  |\nabla \tau f|^2 u \\
 &= -  \frac{1}{t - s''_1} \Big( \square (\tau f)  + \frac{n}2 + W \Big)u
 + \frac{1}{(t-s''_1)^2} \big( \tau f - \tau^2( |\nabla  f|^2 +  R)   - \tau W \big) u + \frac{1}{(t-s''_1)^2} \tau^2 R u \\
 &\geq - C(Y,D, \beta)  \Big( \Big|  \square (\tau f)  + \frac{n}2 + W \Big| + \Big| \tau f - \tau^2( |\nabla  f|^2 +  R)   - \tau W  \Big| + \delta \Big)  (4\pi \tau)^{-n/2} e^{-f} ,
\end{align*}
which implies, using (\ref{eq_ff1_geq_mC}),
\begin{align*}
 \frac{d}{dt} \int_M & u \, d\nu^1_t 
= \int_M \square u  \, d\nu^1_t \\
&\geq - C(Y, D,\beta) \int_M   \Big( \Big|  \square (\tau f)  + \frac{n}2 + W \Big| + \Big| \tau f - \tau^2( |\nabla  f|^2 +  R)   - \tau W  \Big| + \delta \Big)  (4\pi \tau)^{-n/2} e^{-f} d\nu^1_t \\
&= - C(Y,D, \beta) \Big( \frac{\tau_1}{\tau} \Big)^{n/2} \int_M  \Big( \Big|  \square (\tau f)  + \frac{n}2 + W \Big| \\
&\qquad\qquad\qquad\qquad\qquad\qquad\qquad\qquad + \Big| \tau f - \tau^2( |\nabla  f|^2 +  R)   - \tau W  \Big| + \delta \Big)   (4\pi \tau_1)^{-n/2} e^{-f_1} d\nu_t \\
&\geq - C(Y, D,\beta) \int_M  \Big( \Big|  \square (\tau f)  + \frac{n}2 + W \Big| + \Big| \tau f - \tau^2( |\nabla  f|^2 +  R)   - \tau W  \Big| + \delta \Big)  d\nu_t.
\end{align*}
So if $\delta \leq \ov\delta ( Y, D,\beta)$, then by (\ref{eq_int_u_sp1}) and Proposition~\ref{Prop_almost_soliton_identities}
\[ \int_M u \, d\nu^1_{s_2 } \geq \frac12 \int_M u \, d\nu^1_{s'_1 } \geq c(Y,D,\beta) > 0. \]
Thus, by (\ref{eq_u_leq_C_emf}) we have
\begin{equation} \label{eq_int_eff1}
 \int_M (4\pi)^{-n} (\tau \tau_1)^{-n/2} e^{-f-f_1} dg_{s_2 }   = \int_M (4\pi \tau)^{-n/2} e^{-f} d\nu^1_{s_2 }  \geq c(Y,D,\beta) \int_M u \, d\nu^1_{s_2 }  \geq c(Y,D,\beta) > 0. 
\end{equation}

Choose centers $(z_0, s_2)$, $(z_1, s_2)$ of $(x_0, t_0)$, $(x_1, t_1)$, respectively.
Since
\begin{align*}
 d_{W_1}^{g_{s_2}} (\nu_{x_0, t_0} (s_2), \nu_{x_1,t_1} (s_2) )
 &\leq d_{W_1}^{g_{s_2}} (\nu_{x_0, t_0} (s_2), \delta_{z_0, s_2} ) + d_{s_2} (z_0, z_1) + d_{W_1}^{g_{s_2}} (\delta_{z_1}, \nu_{x_1,t_1} (s_2) ) \\
&\leq \sqrt{{\Var}_{s_2} (\nu_{x_0, t_0} (s_2), \delta_{z_0, s_2} )} + d_{s_2} (z_0, z_1) + \sqrt{{\Var}_{s_2} (\delta_{z_1, s_2}, \nu_{x_1,t_1} (s_2) )}  \\
&\leq \sqrt{H_n (t_0 - s_2)} + \sqrt{H_n (t_1 - s_2)} + d_{s_2} (z_0, z_1), 
\end{align*}
it suffices to bound $d:=d_{s_2} (z_0, z_1)$.
For this purpose, we split the integral in (\ref{eq_int_eff1}) into two integrals whose domains cover $M$ and obtain, using (\ref{eq_ff1_geq_mC}),
\begin{align}
\nu_{s_2} ( M \setminus & B(z_0, s_2, d/2) ) + \nu^1_{s_2} ( M \setminus B(z_1, s_2, d/2)) \notag \\
&=  \int_{M \setminus B(z_0, s_2, d/2)}  (4\pi \tau)^{-n/2} e^{-f} dg_{s_2 }  + \int_{M \setminus B(z_1, s_2, d/2)}  (4\pi \tau_1)^{-n/2}  e^{-f_1} dg_{s_2 } \notag \\
&\geq c(Y,D,\beta) \bigg( 
 \int_{M \setminus B(z_0, s_2, d/2)}  (4\pi)^{-n} (\tau \tau_1)^{-n/2} e^{-f-f_1} dg_{s_2 } \notag \\ 
 &\qquad\qquad\qquad\qquad + \int_{M \setminus B(z_1, s_2, d/2)}  (4\pi)^{-n} (\tau \tau_1)^{-n/2} e^{-f-f_1} dg_{s_2 } \bigg) \notag \\   &\geq c(Y,D,\beta) > 0. \label{eq_nunu_geq}
\end{align}
On the other hand,
\begin{multline}
 \nu^0_{s_2} ( M \setminus  B(z_0, s_2, d/2) ) + \nu^1_{s_2} ( M \setminus B(z_1, s_2, d/2)) 
\leq \frac{{\Var}_{s_2} (\nu^0_{s_2},\delta_{z_0}) + {\Var_{s_2}} (\nu^1_{s_2}, \delta_{z_1})}{(d/2)^2} \\
\leq \frac{H_n(t_0 - s_2) +  H_n(t_1 - s_2)}{(d/2)^2} \leq \frac{C(\beta)}{d^2}. \label{eq_nunu_leq}
\end{multline}
Combining (\ref{eq_nunu_geq}), (\ref{eq_nunu_leq}) shows that $d \leq C(Y,D, \beta)$, as desired.
\end{proof}

\section{Almost splitting theorems} \label{sec_alm_splitting}
In this section we derive splitting theorems, which are similar to the standard splitting theorem for gradient shrinking solitons, see for instance \cite{Gianniotis_Type_I,Gianniotis_2019}.
A main complication here is that we are not working with precise solitons, but with almost selfsimilar points.
So we will only obtain almost versions of such splitting theorems.
Note also that since we don't have a sufficiently powerful convergence and partial regularity theory for Ricci flows at this point, these almost splitting theorems cannot be obtained by a limit argument.

In summary, we obtain the following two splitting theorems:
\begin{itemize}
\item If two points $(x_1, t_1), (x_2, t_2)$ at bounded distance are  each sufficiently selfsimilar and the times $t_1, t_2$ are sufficiently different, then the flow near both points is $(\eps,r )$-static.
\item If a set of points $(x_0, t_0), \ldots, (x_N, t_N)$ at bounded distance and  with $t_1 \approx \ldots \approx t_N$ are each sufficiently selfsimilar and are at least $\la$-far apart from each other, then whenever $N \geq C \la^{-k}$, these points are  weakly $(k+1, \eps, r)$-split.
\end{itemize}

\subsection{Detection of almost static points}
The following proposition states that if a flow is almost self-similar at two points based at sufficiently different times, then both points are in fact almost static.

\begin{Proposition}[Almost static cone splitting] \label{Prop_almost_static}
If $Y, D < \infty$, $\beta, \eps > 0$ and $\delta \leq \ov\delta (Y, D, \beta, \eps)$, then the following is true.

Let $(M, (g_t)_{t \in I})$ be a Ricci flow on a compact manifold.
Consider $(\delta, r)$-selfsimilar points $(x_1, t_1), \lb (x_2, t_2)  \in M \times I$ such that $\NN_{x_1, t_1}(r^2) \geq - Y$ and $t_1 \leq t_2$, $\beta r^2 \leq t_2 - t_1 \leq \beta^{-1} r^2$.
Assume that for some $s \in [t_1 - \beta^{-1} r^2, t_1] $ we have
\[ d_{W_1}^{g_s} (\nu_{x_1, t_1} (s), \nu_{x_2,t_2} (s) ) \leq Dr. \]
Then $(x_1,t_1)$ is $(\eps, r)$-static (in the sense of Definition~\ref{Def_static}).
\end{Proposition}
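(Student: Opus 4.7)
After parabolic rescaling to $r=1$ and time-shifting so $t_1 = 0$, the scalar curvature lower bound required by Definition~\ref{Def_static} is immediate from $(\delta, 1)$-selfsimilarity at $(x_1, 0)$ for $\delta \leq \eps$, so the content of the proposition is to control $\int_M R\,d\nu_{x_1, 0; t}$ and the time integral of $\int_M |{\Ric}|^2 d\nu_{x_1, 0; t}$ from above. The conceptual reason the flow must be static is that if the soliton equations
\[ \Ric + \nabla^2 f_i - \tfrac{1}{2\tau_i} g = 0, \qquad \tau_i := t_i - t, \]
held \emph{exactly} at both $(x_1, 0)$ and $(x_2, t_2)$, subtraction would yield the concircular identity $\nabla^2 h = c(t) g$ with $h := f_1 - f_2$ and $c(t) := \tfrac{1}{2\tau_1} - \tfrac{1}{2\tau_2} = \tfrac{t_2 - t_1}{2\tau_1 \tau_2} > 0$, and the only solitons admitting such a nontrivial $h$ are the Ricci-flat (Euclidean-like) ones. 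The proof will implement the quantitative version of this rigidity.

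The first half of the argument transfers all estimates to a common base measure $d\nu_{x_1, 0; t}$. Proposition~\ref{Prop_NN_variation_bound} promotes $\NN_{x_1, 0}(1) \geq -Y$ to a uniform lower bound $\NN_{x_2, t_2}(\tau) \geq -C(Y, D, \beta)$ on our interval, and Proposition~\ref{Prop_dist_expansion_almost_ss} applied at the $(\delta, 1)$-selfsimilar point $(x_1, 0)$, combined with Proposition~\ref{Prop_monotonicity_W1}, upgrades the hypothesis $d_{W_1}^{g_s}(\nu_{x_1, 0; s}, \nu_{x_2, t_2; s}) \leq D$ to the uniform bound $d_{W_1}^{g_t}(\nu_{x_1, 0; t}, \nu_{x_2, t_2; t}) \leq C(Y, D, \beta)$. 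These estimates place us within the hypotheses of Proposition~\ref{Prop_inheriting_bounds} (applied with $(x_0, t_0) := (x_2, t_2)$ and $(x_1, t_1) := (x_1, 0)$), yielding $d\nu_{x_1, 0; t} \leq C e^{\alpha_0 f_2} d\nu_{x_2, t_2; t}$ pointwise in $t$. Applying Proposition~\ref{Prop_almost_soliton_identities} at both $(x_i, t_i)$ in the weighted $e^{\alpha f_i}$-form and using this transfer, we obtain, via the triangle inequality $|\nabla^2 h - c(t) g|^2 \leq 2|\Ric + \nabla^2 f_1 - \tfrac{g}{2\tau_1}|^2 + 2|\Ric + \nabla^2 f_2 - \tfrac{g}{2\tau_2}|^2$ (in which $\Ric$ telescopes away), the integrated almost-concircular bound
\[ \int \! \int \big| \nabla^2 h - c(t) g \big|^2 d\nu_{x_1, 0; t}\, dt \leq \Psi(\delta \,|\, Y, D, \beta, \eps). \]

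The main obstacle, and the technical heart of the proof, is to extract from this integrated concircular bound the smallness of $\int_M R \, d\nu_{x_1, 0; t}$. My plan is to expand $|\nabla^2 h - cg|^2 = |\nabla^2 h|^2 - 2c \triangle h + nc^2$, integrate against $d\nu_{x_1, 0; t}$, and use $\triangle h = \triangle f_1 - \triangle f_2$ together with the almost-selfsimilar traced soliton identity $\triangle f_i \approx n/(2\tau_i) - R$ supplied by Proposition~\ref{Prop_almost_soliton_identities}. After substitution, most terms conspire to cancel (as they must on an exact Euclidean soliton); the surviving term is proportional to $c(t) \int_M R \, d\nu_{x_1, 0; t}$ plus $\Psi(\delta)$-errors. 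Since the hypothesis $t_2 - t_1 \geq \beta$ forces $c(t) \geq c_0(\beta, \eps) > 0$ on our interval, this yields $\int_M R\, d\nu_{x_1, 0; t} \leq \Psi(\delta)$ after time-averaging. Proposition~\ref{Prop_scalar curvature_almost_ss} (almost monotonicity of $\int_M \tau R \, d\nu_{x_1, 0; t}$) then propagates this to the required uniform-in-$t$ upper bound, and the $L^2$-bound on $|{\Ric}|$ follows by feeding this back into the $(x_1, 0)$-soliton identity. The delicate point throughout is that the transfer step leaves $e^{\alpha_0 f_1}$-type weights in the integrals, and cross-terms such as $\int_M \nabla f_1 \cdot \nabla f_2 \, d\nu_{x_1, 0; t}$ arise from the squared-norm computation; both are controlled via integration by parts against $d\nu_{x_1, 0; t}$ combined with the weighted $L^p$-bounds of Proposition~\ref{Prop_improved_L2}.
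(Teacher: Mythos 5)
Your overall architecture matches the paper — normalize, lower-bound the Nash entropy at $(x_2, t_2)$ via Proposition~\ref{Prop_NN_variation_bound}, use Propositions~\ref{Prop_dist_expansion_almost_ss} and \ref{Prop_inheriting_bounds} to put all integrals on a common $d\nu_{x_1,t_1}$ footing, then use Proposition~\ref{Prop_scalar curvature_almost_ss} to propagate a bound on $\int_M \tau R\, d\nu_t$. That transfer machinery is right. But the step you call ``the technical heart,'' where you extract the $R$-bound from the almost-concircular estimate on $h := f_1 - f_2$, does not work as stated, and this is where the proofs genuinely diverge.

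The difficulty is that $R$ cancels out of $\triangle h$. Subtracting the two traced soliton identities $R + \triangle f_i \approx n/(2\tau_i)$ gives $\triangle h \approx n c(t)$ with no $R$ left; and the same happens if you instead subtract (\ref{eq_alm_ss_identity_2}) at the two basepoints, since that identity also features $R$ (via $\square(\tau f)$) in a way that cancels in the difference. Consequently, plugging $\triangle h \approx nc$ into your expansion $|\nabla^2 h - cg|^2 = |\nabla^2 h|^2 - 2c\triangle h + nc^2$ produces $|\nabla^2 h|^2 - nc^2 + \text{error}$, and the ``surviving term proportional to $c(t)\int_M R\, d\nu$'' never appears. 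This is not an accident of the computation: the tensor $\nabla^2 h - c g = E_1 - E_2$ is the \emph{difference} of the two almost-soliton tensors, and $\Ric$ (hence $R$) has already telescoped away at the pointwise level; the integral estimate $\int\!\int |\nabla^2 h - cg|^2 \leq \Psi(\delta)$ is genuinely $R$-blind. Getting $R \approx 0$ out of a concircular identity is a rigidity result (the space must be a metric cone and the soliton then Ricci-flat), and it needs a global/completeness argument, not an integration-by-parts expansion.

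The paper's mechanism is quite different and worth noting. It never forms the difference $f_1 - f_2$. Instead it uses the bound $\square(\tau_2 f_2) + \tfrac{n}{2} + W_2 \approx 0$ (i.e.\ (\ref{eq_alm_ss_identity_3}) at the second basepoint, transferred to $d\nu_{x_1,t_1}$ via Proposition~\ref{Prop_inheriting_bounds}) to show that $\int_M f_2\, d\nu_{x_1,t_1;t}$ stays bounded as $t$ approaches $t_1$, and then the untraced identity $-\tau_2(R + |\nabla f_2|^2) + f_2 \approx W_2$ — specifically the sign of $|\nabla f_2|^2 \geq 0$ — to conclude $\int_M \tau_2 R\, d\nu_{x_1,t_1;t^*} \leq C$ at some $t^* \in [t_1 - 2\theta, t_1 - \theta]$. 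The punch line is the scale mismatch: at $t^*$ one has $\tau_2(t^*) \geq \beta$ but $\tau_1(t^*) \leq 2\theta$, so $\int_M \tau_1 R\, d\nu_{x_1,t_1;t^*} \leq C\theta/\beta$, and $\theta$ is a free parameter one can send to zero. Proposition~\ref{Prop_scalar curvature_almost_ss} then propagates this smallness. So the crucial input is the positivity of $|\nabla f_2|^2$ paired with the time mismatch $\beta r^2 \leq t_2 - t_1$, not the concircular structure of $f_1 - f_2$. If you want to salvage your outline, you would need to replace the expansion of $|\nabla^2 h - cg|^2$ with an argument of this type; otherwise the proof does not close.
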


\begin{proof}
By Proposition~\ref{Prop_NN_variation_bound} we have $\NN_{x_2, t_2}(r^2) \geq - C(Y, D, \beta)$.
So after adjusting $Y$, we may assume in the following that $W_i := \NN_{x_i, t_i}(r^2) \geq - Y$ for $i = 1,2$.
Next, after parabolic rescaling and application of a time-shift, we may assume that $r=1$ and $t_2 = 0$, $t_1 \leq - \beta$.
We may also assume that $\beta \leq \eps/10$.

Denote by $d\nu^i = (4\pi \tau_i)^{-n/2} e^{-f_i} dg$, $i = 1,2$, the conjugate heat kernel based at $(x_i, t_i)$.
Let $\theta, \theta', \alpha > 0$ be constants whose values we will determine in the course of the proof.
If $\delta  \leq \ov\delta (Y, \beta, \theta', D)$, then Proposition~\ref{Prop_dist_expansion_almost_ss} implies
\[ d_{W_1}^{g_s} (\nu^1 (s), \nu^2 (s) ) \leq D' (Y,D, \beta, \theta') \qquad \text{for all} \quad s \in [t_1 - \beta^{-1}, t_1 - \theta']. \]
Combining this bound with Proposition~\ref{Prop_inheriting_bounds} for $\theta' \leq \ov\theta' (\theta, \beta, \alpha)$ implies that for $\{ i, j \} = \{ 1,2 \}$, $k \in \{ 1,2 \}$
\begin{equation} \label{eq_dnuidnuj}
 d\nu^j \leq C(Y,D, \beta, \theta, \alpha) e^{\alpha f_k} d\nu^i \qquad \text{on} \quad M \times [t_1 - \beta^{-1}, t_1 - \theta]. 
\end{equation}
So if $\alpha \leq \ov\alpha$ is fixed, then by Proposition~\ref{Prop_almost_soliton_identities}
\begin{align}
\int_{t_1 - \beta^{-1}}^{t_1 - \theta} \int_M \Big| \square (\tau_2 f_2) + \frac{n}2 + W_2 \Big| d\nu^1_t dt &\leq \Psi (\delta | Y, D, \beta, \theta), \label{eq_squaretau1f1} \\
 \int_{t_1 - 2\theta}^{t_1 - \theta} \int_M | -  \tau_2 (R + |\nabla f_2|^2) + f_2 - W_2 | d\nu^1_t dt  &\leq \Psi (\delta | Y, D, \beta, \theta), \label{eq_tau1Rnabf1} \\
 \int_M f_2 \, d\nu^1_{ t_1 - 1} \leq C(Y,D, \beta) + (1-\alpha)^{-1} \int_M f_1 \, d\nu^1_{ t_1 - 1} &\leq C(Y, D, \beta).  \label{eq_f1int}
\end{align}
Note that (\ref{eq_f1int}) is a direct consequence of (\ref{eq_dnuidnuj}) and the constant $C$ in (\ref{eq_f1int}) can be chosen independently of $\theta$.
Combining (\ref{eq_squaretau1f1}), (\ref{eq_f1int}) implies that if $\delta \leq \ov\delta (Y, D, \beta, \theta)$, then for any $t^* \in [t_1 - 1, t_1 - \theta]$
\[  \int_M f_2 \, d\nu^1_{t^*}
= \int_M f_2 \, d\nu^1_{ t_1 - 1} + \int_{t_1-1}^{t^*} \int_M \square f_2 \, d\nu^1_t dt
 \leq C(Y, D, \beta) . \]
 Again, note that the constant $C$ can be chosen independently of $\theta$.
Thus by (\ref{eq_tau1Rnabf1}) and assuming $\delta \leq \ov\delta (Y, D, \beta, \theta)$, there is a $t^* \in [t_1 - 2\theta, t_1 - \theta]$ with
\[ \int_M \tau_2 R \, d\nu^1_{ t^*}  \leq C(Y, D, \beta). \]
So since at time $t = t^*$ we have $\tau_1 \leq 2\theta$ and $\tau_2 \geq \beta$, we obtain using Proposition~\ref{Prop_scalar curvature_almost_ss} that if $\delta \leq \ov\delta (Y, \eps, \theta)$, then
\begin{equation} \label{eq_x2t2_R_bound}
  \int_M  \tau_1 R \, d\nu^2_t  
  \leq \int_M  \tau_1 R \, d\nu^2_{t^*} + \theta
  \leq   C(Y, D, \beta) \theta \qquad \text{for all} \quad t \in [t_1 - \eps^{-1}, t^*]. 
\end{equation}
Using this bound for $t = t_1 - \eps$ implies
\begin{equation} \label{eq_x2t2_Ric_bound}
 2 \int_{t_1 - \eps^{-1}}^{t_1 - \eps} \int_M |{\Ric}|^2 d\nu^1_t dt 
\leq \int_M R \, d\nu^1_t \bigg|_{t = t_1 - \eps^{-1}}^{t = t_1 - \eps}  
\leq  C(Y, D, \beta) \frac{\theta}{\eps} + \delta . 
\end{equation}
If $\theta \leq \ov\theta  (Y, D, \beta, \eps)$ and $\delta \leq \ov\delta (\eps)$, then the right-hand sides in (\ref{eq_x2t2_R_bound}), (\ref{eq_x2t2_Ric_bound}) can be made less than $\eps$, proving that $(x_1, t_1)$ is $(\eps, 1)$-static.
%
\end{proof}
\bigskip

\subsection{\texorpdfstring{Almost splitting off an $\IR^k$-factor}{Almost splitting off an R\^{}k-factor}}
The following proposition states that if a flow is almost self-similar with respect to $> C \la^{-k}$ many points whose parabolic distance is $> D_0 \la$ in a certain sense, then the flow must weakly split off an $\IR^k$-factor in the sense of Definition~\ref{Def_weak_splitting_map}.

\begin{Proposition}[Almost $\IR^k$-splitting] \label{Prop_almost_split_Rk}
If
\begin{multline*}
Y, D < \infty, \qquad
\eps > 0, \qquad
\beta \leq \ov\beta, \qquad 
D_0 \geq \underline{D}_0 (Y), \qquad
C_0 \geq \underline{C}_0 (Y, D), \qquad \\
 \la \leq \ov\la (\eps), \qquad 
\delta \leq \ov\delta (Y, D, \eps, \la),  
\end{multline*}
then the following is true.
Let $(M, (g_t)_{t \in I})$ be a Ricci flow on a compact manifold.
Consider $(\delta, r)$-selfsimilar points $(x_0, t_0), \lb \ldots, \lb (x_{N-1}, t_{N-1}) \lb \in M \times I$ with $t_0 \leq \ldots \leq t_{N-1}$, where for some $k \in \{ 0, \ldots, n-1 \}$ we have
\[ N \geq C_0 \la^{-k}. \]
Denote by $d\nu^i = (4\pi \tau_i)^{-n/2} e^{-f_i} dg$ the conjugate heat kernel based at $(x_i, t_i)$ and write $d\nu = d\nu^0 = (4\pi \tau)^{-n/2} e^{-f} dg$.
Assume that for all $i, j = 0, \ldots, N-1$ we have $\NN_{x_i, t_i} (r^2) \geq - Y$ and
\begin{align*}
 t_i &\in [t_0- \beta (\la r)^2, t_0 + \beta (\la r)^2], \\
 d_{W_1}^{g_{ t_0-r^2}} (\nu^i_{ t_0-r^2}, \nu^0_{t_0 -10r^2}) &\leq Dr, \\
 d_{W_1}^{g_{t_0 - 2\la^2 r^2}} (\nu^i_{t_0 - 2\la^2 r^2}, \nu^j_{ t_0 - 2\la^2 r^2}) &\geq  D_0 \la r \qquad \text{if} \quad i \neq j. 
\end{align*}
Then $(x_0, t_0)$ is weakly $(k+1, \eps, r)$-split (in the sense of Definition~\ref{Def_weak_splitting_map}).
\end{Proposition}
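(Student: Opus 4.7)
My plan is to construct the $k+1$ weak splitting functions as suitable linear combinations of the differences $h_i := \tau_0 f_0 - \tau_i f_i$, where $f_j$ is the potential of the conjugate heat kernel measure $\nu^j$ based at $(x_j, t_j)$ and $\tau_j(t) := t_j - t$. The motivating model is the Gaussian shrinker on $\IR^n$: with $f_i = |x-p_i|^2/(4\tau_i)$ one computes $\nabla h_i = (p_0 - p_i)/2$, a constant vector, so differences of heat kernel potentials play the role of affine coordinates, and their pairwise $L^2$ inner products recover the geometric configuration of the $p_i$'s. After parabolic rescaling to $r=1$, $t_0 = 0$, the plan is to realize this Gaussian picture quantitatively in the almost-soliton setting using the integral almost-soliton identities of Proposition~\ref{Prop_almost_soliton_identities}, then to extract $k+1$ almost-orthonormal functions by a pigeonhole and Gram--Schmidt procedure.

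The heat equation bound is the easier step. The identity $\square(\tau_0 f_0) + n/2 + W_0 = 0$ holds in an integral almost-sense at $(x_0, t_0)$ against $e^{\alpha f_0} d\nu^0$ by Proposition~\ref{Prop_almost_soliton_identities}, and analogously at $(x_i, t_i)$ against $e^{\alpha f_i} d\nu^i$. To combine the two estimates at the common basepoint $(x_0, t_0)$, I apply Proposition~\ref{Prop_inheriting_bounds}, whose hypotheses are supplied by the bounded $W_1$-distance between basepoints and the small time offset $|t_i - t_0| \leq \beta(\lambda r)^2$; this converts the $d\nu^i$-estimate into one against $e^{\alpha' f_0} d\nu^0$, giving $\int\int |\square(\tau_i f_i) + n/2 + W_i| d\nu^0 \, dt \leq \Psi(\delta \,|\, Y, D)$. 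The Nash-entropy variation bound of Proposition~\ref{Prop_NN_variation_bound} then controls $|W_i - W_0|$, so $r^{-1}\int\int |\square h_i| d\nu^0 \, dt$ is indeed $\Psi$-small.

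The harder step is the gradient orthogonality. I analyze the Gram matrix $G_{ij}(t) := \int \nabla h_i \cdot \nabla h_j \, d\nu^0_t$ using the identity $\nabla^2(\tau_\bullet f_\bullet) = \tfrac{1}{2} g - \tau_\bullet \Ric - \tau_\bullet\bigl(\Ric + \nabla^2 f_\bullet - \tfrac{1}{2\tau_\bullet} g\bigr)$; the last term is $L^2(e^{\alpha f_0} d\nu^0)$-small by the almost soliton identity, and thus, up to $\Psi$-errors, $\nabla^2 h_i$ is integrally small. Consequently $G_{ij}(t)$ is approximately constant in $t$ and of the form $\langle \vec v_i, \vec v_j\rangle/4$ for ``displacement vectors'' $\vec v_i \in \IR^n$ with $|\vec v_i| \leq D$ and $|\vec v_i - \vec v_j| \gtrsim D_0 \lambda$ (the last extracted from the $W_1$-separation at time $t_0 - 2\lambda^2$, transported to nearby times by the distance expansion bound of Proposition~\ref{Prop_dist_expansion_almost_ss}). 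Now iteratively select indices $i_1, \ldots, i_{k+1} \in \{1, \ldots, N-1\}$ with $\vec v_{i_\ell}$ at perpendicular distance $\gtrsim \lambda$ to $\text{span}(\vec v_{i_1}, \ldots, \vec v_{i_{\ell-1}})$: the volume packing estimate that a $\lambda/C$-tube around an $(\ell-1)$-dimensional affine subspace inside a $D$-ball contains at most $C(D/\lambda)^{\ell-1}$ many $\lambda$-separated points is exactly sufficient to carry the extraction through step $\ell = k+1$ once $N \geq C_0 \lambda^{-k}$ with $C_0 = C_0(Y,D)$ large enough. On these indices the Gram matrix has eigenvalues in $[c \lambda^2, CD^2]$, so a Gram--Schmidt orthogonalization combined with a $\lambda^{-1}$ rescaling produces the required $(k+1)$-tuple $(y_1, \ldots, y_{k+1})$, whose heat-equation and orthogonality errors are $\leq \eps$ once $\lambda \leq \ov\lambda(\eps)$ and $\delta \leq \ov\delta(Y, D, \eps, \lambda)$. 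The main obstacle is precisely this paragraph: showing that $G$ genuinely behaves like an honest Euclidean Gram matrix of displacements, despite having only integral almost-identities and non-local heat kernel comparisons to work with.
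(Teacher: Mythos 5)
Your plan is the same in overall shape as the paper's: after rescaling, build functions from differences of conjugate heat kernel potentials, show that they have small heat operator and approximately constant gradient Gram matrix, and then run a packing/Gram--Schmidt argument to extract $k+1$ approximately orthonormal linear combinations. The paper's version of your $h_i$ is $u_i := \tau_i f_i - \tau f - (W_i - W_0)\tau$, the covering step is carried out abstractly via Lemma~\ref{Lem_Eucl_vs_covering} applied to the span of the centered $\tilde u_i$ with the integral inner product (so no literal ``displacement vectors'' $\vec v_i \in \IR^n$ are produced, though your intuition is the right one), and the heat-kernel comparisons across basepoints come from Propositions~\ref{Prop_dist_expansion_almost_ss} and \ref{Prop_inheriting_bounds} exactly as you say.

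However, the step you yourself flag as ``the main obstacle'' is a genuine gap, and it is precisely where the hard work of the proof lies. You claim the separation $|\vec v_i - \vec v_j| \gtrsim D_0\lambda$ is ``extracted from the $W_1$-separation at time $t_0 - 2\lambda^2$, transported to nearby times by the distance expansion bound.'' That does not work directly: the $W_1$-separation of two conjugate heat kernel measures does not, by itself or via Proposition~\ref{Prop_dist_expansion_almost_ss}, produce a lower bound on $\int\int |\nabla(u_i - u_j)|^2\,d\nu_t\,dt$. The paper proves this lower bound (its Claim~\ref{Cl_nabzizj_geq_la2}) by contradiction. Assuming $\int_{-2}^{-1}\int_M |\nabla(u_i - u_j)|^2\,d\nu_t\,dt < \lambda^2$, it uses the near-constancy of the Gram entries to pass down to a time $t^* \approx -\lambda^2$ at which $|\tau_i \nabla f_i - \tau_j \nabla f_j|$ is $L^1$-small. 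At that time one combines the almost-soliton identity $-\tau_l(|\nabla f_l|^2 + R) + f_l - W_l \approx 0$ with $L^\infty$ heat-kernel bounds and volume bounds on the $H_n$-center ball $B_i$ of $\nu^i_{t^*}$ to exhibit a subset $S_i' \subset B_i$ of definite $\nu^i$-mass on which $f_i$ and $f_j$ differ only by a bounded amount, hence $\nu^j(S_i') \gtrsim 1$. But the $W_1$-separation $D_0\lambda$ together with the variance bound forces $\nu^j(B_i) \lesssim D_0^{-2}$, which is a contradiction once $D_0$ is large. None of this is in your sketch, and without it the packing argument has no separation input.

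A secondary point: you write that $G_{ij}(t)$ is ``approximately constant in $t$'' as a consequence of $\nabla^2 h_i$ being integrally small. In the paper this is Lemma~\ref{Lem_nabu_almost_constant}, whose proof is not merely ``small Hessian $\Rightarrow$ constant gradient norm.'' It uses the $L^1$-Poincar\'e inequality to control the spatial oscillation of $|\nabla u|^2$ around its mean, and then derives and integrates a differential identity for $\int \tau^{-1} u^2 (f - \tfrac{n}{2} - W)\,d\nu_t$ in which the almost-soliton identities (\ref{eq_alm_ss_identity_2}), (\ref{eq_alm_ss_identity_3}) cancel the leading-order time dependence. This is a step that needs to be argued, not merely asserted from Hessian smallness, though it is considerably less delicate than the separation bound.

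So: correct overall architecture, correct identification of the model Gaussian picture, correct use of Propositions~\ref{Prop_almost_soliton_identities}, \ref{Prop_inheriting_bounds}, \ref{Prop_NN_variation_bound}, \ref{Prop_dist_expansion_almost_ss} for the heat-equation half, and the right packing/Gram--Schmidt idea; but the separation lower bound on the Gram matrix is left unproved and is the crux of the proposition.
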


\begin{Remark}
With a little more effort it is possible to make the lower bound $\underline{D}_0$ independent of $Y$.
This will, however, not be needed in the sequel. 
\end{Remark}

The following lemmas will be used in the proof of Proposition~\ref{Prop_almost_split_Rk}

\begin{Lemma} \label{Lem_nabu_almost_constant}
If $Y, A  < \infty$, $\eps, \delta^* > 0$ and $\delta \leq \ov\delta (Y, A,\eps, \delta^*)$, then the following holds.

Let $(M, (g_t)_{t \in [-\delta^{-1} r^2,0]})$, $r > 0$, be a Ricci flow on a compact manifold and denote by $d\nu = (4\pi \tau)^{-n/2} e^{-f} dg$ the conjugate heat kernel based at $(x,0)$ for some $x \in M$.
Suppose that we have $W := \NN_{x,0} ( r^2) \geq - Y$ and that $(x,0)$ is $(\delta, r)$-selfsimilar.

Consider a scalar function $u \in C^\infty (M \times [t_1, t_2])$, where $[t_1, t_2] \subset [- \eps^{-1} r^2, - \eps r^2]$
\begin{align}
  \int_{t_1}^{t_2} \int_M (\tau^{-1/2} |\partial_t u| +  |\nabla^2 u|^2) d\nu_t dt &\leq \delta, \label{eq_u_bound_delta} \\
 \int_{t_1}^{t_2} \int_M (\tau^{-5} u^8 + \tau^{-1} |\nabla u|^4 + (\partial_t u)^2) d\nu_t dt &\leq A. \label{eq_u_bound_A}
\end{align}
Then there is some $q \in \IR$ such that
\[ \int_{t_1}^{t_2} \int_M \tau^{-1} \big| |\nabla u|^2 - q \big| d\nu_t dt  \leq \delta^*. \]
\end{Lemma}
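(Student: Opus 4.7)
After parabolic rescaling and a time-shift, normalize to $r = 1$, $(x,0)$ the basepoint, and write $d\nu = (4\pi\tau)^{-n/2} e^{-f} dg$. Define
\[ \phi(t) := \int_M |\nabla u|^2 \, d\nu_t, \qquad \psi(t) := \int_M (\nabla u \cdot \nabla f)^2 \, d\nu_t. \]
I will choose the constant $q$ to be the time average of $\phi$ on $[t_1, t_2]$ and split
\[ \int_{t_1}^{t_2} \int_M \tau^{-1} \bigl| |\nabla u|^2 - q \bigr| \, d\nu_t \, dt \leq \int_{t_1}^{t_2} \tau^{-1} \int_M \bigl| |\nabla u|^2 - \phi(t) \bigr| \, d\nu_t \, dt + \int_{t_1}^{t_2} \tau^{-1} |\phi(t) - q| \, dt . \]

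For the first summand (the spatial variance) I invoke the $L^1$-Poincar\'e inequality in Proposition~\ref{Prop_Poincare} with $h = |\nabla u|^2$ to obtain, pointwise in $t$,
\[ \int_M \bigl| |\nabla u|^2 - \phi(t) \bigr| \, d\nu_t \leq C\tau^{1/2} \int_M |\nabla|\nabla u|^2| \, d\nu_t \leq 2C\tau^{1/2} \int_M |\nabla u||\nabla^2 u| \, d\nu_t . \]
Applying Cauchy--Schwarz both spatially and in time and using the hypotheses (\ref{eq_u_bound_delta}) and $\int \tau^{-1} |\nabla u|^4 d\nu dt \leq A$ from (\ref{eq_u_bound_A}) controls the spatial part by $C(\eps, A)\delta^{1/2}$.

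For the time part, the key step is to show $|\phi'(t)|$ is controlled. Using the standard identity $\square |\nabla u|^2 = 2 \nabla u \cdot \nabla \square u - 2|\nabla^2 u|^2$ (the Ricci terms cancel under the Ricci flow), and decomposing $\square u = \partial_t u - \triangle u$, I integrate by parts against $d\nu_t$ using $\int (\triangle h) \, d\nu_t = \int \nabla h \cdot \nabla f \, d\nu_t$ and apply the weighted Bochner identity
\[ \int_M |\nabla^2 u|^2 \, d\nu_t + \int_M (\Ric + \nabla^2 f)(\nabla u, \nabla u) \, d\nu_t = \int_M (\triangle u - \nabla u \cdot \nabla f)^2 \, d\nu_t . \]
After grouping terms this yields the clean expression
\[ \phi'(t) = \frac{1}{\tau} \phi(t) - 2 \psi(t) + E(t), \]
where $E(t)$ collects terms of the shape $\int [\Ric + \nabla^2 f - \frac{1}{2\tau} g](\nabla u, \nabla u) d\nu_t$, $\int \triangle u (\nabla u \cdot \nabla f) d\nu_t$, and $\int \partial_t u (\triangle u - \nabla u \cdot \nabla f) d\nu_t$. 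A second application of the Bochner identity to $\psi$ (using $(\triangle_f u)^2 = (\triangle u)^2 - 2 \triangle u (\nabla u \cdot \nabla f) + (\nabla u \cdot \nabla f)^2$) and the almost-selfsimilarity estimate (\ref{eq_alm_ss_identity_1}) gives $\psi(t) = \frac{1}{2\tau} \phi(t) + E'(t)$ in an integrated sense. Therefore
\[ \int_{t_1}^{t_2} \tau^{-1} |\phi'(t)| \, dt \leq \int_{t_1}^{t_2} \tau^{-1} \bigl( |E(t)| + 2|E'(t)| \bigr) \, dt , \]
and an elementary reduction then bounds $\int_{t_1}^{t_2} \tau^{-1} |\phi(t) - q| dt$ by the same quantity times $|t_2 - t_1|$.

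\textbf{Main obstacle.} The delicate part is estimating the error terms in $E$ and $E'$, which couple $|\nabla f|$, $|\Ric|$, $\partial_t u$, $|\nabla u|$, and $|\nabla^2 u|$, by something of order $\delta^*$. The hypotheses provide two tiers of control --- the ``small'' ($\delta$) bounds on $\tau^{-1/2}|\partial_t u|$ and $|\nabla^2 u|^2$, and the ``uniform'' ($A$) bounds on $(\partial_t u)^2$, $\tau^{-1}|\nabla u|^4$, $\tau^{-5} u^8$ --- and the almost-selfsimilarity bounds from Proposition~\ref{Prop_almost_soliton_identities} come with the extra weight $e^{\alpha f}$. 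The strategy is to pair each small factor with a uniform factor by Cauchy--Schwarz, absorbing dangerous powers $|\nabla f|^2$, $|\nabla f|^4$ via the weighted $L^p$-bounds of Proposition~\ref{Prop_improved_L2} (which is exactly why the $e^{2\alpha f}$-weighted bounds of that proposition were needed). For example, the term $\int \triangle u \cdot \nabla u \cdot \nabla f \, d\nu_t$ is bounded by $(\int |\nabla^2 u|^2 d\nu_t)^{1/2}(\int |\nabla u|^2 |\nabla f|^2 d\nu_t)^{1/2}$, and the second factor is controlled by H\"older and Proposition~\ref{Prop_improved_L2}. Choosing $\delta$ sufficiently small depending on $Y$, $A$, $\eps$, and $\delta^*$ then drives each error contribution below $\delta^*/C$, closing the proof.
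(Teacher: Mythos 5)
The key difficulty is that your error term $E(t)$ contains the integrand $-2\int_M \triangle_f u\,(\partial_t u)\,d\nu_t$, which upon expansion produces the cross term
\[
  \int_{t_1}^{t_2}\int_M (\partial_t u)\,(\nabla u\cdot\nabla f)\, d\nu_t\, dt ,
\]
and this cannot be made small with the available hypotheses. The only smallness you have on $\partial_t u$ is the $L^1$-type bound $\int\int \tau^{-1/2}|\partial_t u| \leq \delta$; the $L^2$ bound $\int\int (\partial_t u)^2 \leq A$ is of size $A$, not $\delta$. After Hölder with exponents $(2,4,4)$ one gets
\[
 \int\int|\partial_t u|\,|\nabla u|\,|\nabla f|\, d\nu_t\,dt \leq \Big(\int\int (\partial_t u)^2\Big)^{1/2}\Big(\int\int|\nabla u|^4\Big)^{1/4}\Big(\int\int|\nabla f|^4\Big)^{1/4} \leq C(Y,A,\eps),
\]
which is bounded but of order $A$. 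There is no room to push $\partial_t u$ down to an exponent $p<2$ (where you could interpolate with the small $L^1$ bound), because that would require a moment of $|\nabla u|$ or $|\nabla f|$ strictly above $4$, and the hypothesis $(\ref{eq_u_bound_A})$ gives only the $4$th moment of $\nabla u$ while Proposition~\ref{Prop_improved_L2} gives only $\tau|\nabla f|^4 e^{2\alpha f}$, not higher powers. The $e^{\alpha f}$ weights help against powers of $|f|$, but not against an insufficient power of $|\nabla u|$. One is also tempted to integrate by parts this cross term; doing so reproduces $\tfrac12\phi'(t)$ together with good terms, which turns the computation into a tautology and gives no information.

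This is exactly the obstruction that drives the paper to a different decomposition. Rather than estimating $\phi'(t)$ directly, the proof first establishes, by integration by parts and the soliton identities of Proposition~\ref{Prop_almost_soliton_identities}, the approximate identity
\[
  2\int_M |\nabla u|^2\,d\nu_t \approx \int_M \tau^{-1} u^2\big(f-\tfrac n2 - W\big)\,d\nu_t,
\]
and then shows that the right-hand quantity is nearly constant in $t$ by computing $\tau^2\frac{d}{dt}\int_M \tau^{-1}u^2(f-\tfrac n2-W)\,d\nu_t$ and estimating each term. The crucial advantage is that every dangerous product then has a factor of $u^2$ rather than $|\nabla u|$, and $u$ enjoys the strong hypothesis $\int\int\tau^{-5}u^8 \leq A$; this is exactly enough to pair each small factor (e.g.\ $|\square u|$, or the almost-soliton deficits) against an $A$-controlled factor via $b$-weighted Young's inequality. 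The $(f-\tfrac n2 - W)$ factors are handled via $e^{\alpha f}$-moments from Propositions~\ref{Prop_improved_L2} and \ref{Prop_L_infty_HK_bound}, and the term $\int_M (|\nabla u|^2 - \phi(t))(f-\tfrac n2-W)\,d\nu_t$ is absorbed against the Poincar\'e output. So the spatial Poincar\'e step in your plan is indeed the same as the paper's; it is the treatment of the time variation that requires the change of variables to $u^2$, which your plan is missing, and without it the argument cannot close as written.
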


Note we have chosen the powers of $\tau$ such that $u$ has the dimension of length.

\begin{proof}
After parabolic rescaling of the flow by $r^{-2}$, and multiplying $u$ by $r^{-1}$, we may assume that $r=1$.
Let $[t_1, t_2] \subset [-\eps^{-1}, -\eps]$, $t_1 < t_2$, be an arbitrary interval.
Our constants will be independent of $t_1, t_2$, but possibly depend on $\eps$.
We will use the following bounds:
\begin{Claim} For any $b > 0$ and any $t \in [- \eps^{-1}, - \eps ]$ we have
\begin{align}
\int_{t_1}^{t_2} \int_M (|u| + |\nabla u| ) |\nabla^2 u| d\nu_t dt &\leq \Psi (\delta |A, \eps), \label{eq_intint_bounds_1} \\
\int_{t_1}^{t_2} \int_M u^2 \big| \tau (2\triangle f - |\nabla f|^2 + R )+f-n-W \big| d\nu_t dt  &\leq \Psi (\delta |Y, A, \eps), \label{eq_intint_bounds_2} \\
\int_{t_1}^{t_2} \int_M u^2 \Big| \tau (R + \triangle f) - \frac{n}{2} \Big| d\nu_t dt  &\leq \Psi (\delta |Y, A, \eps), \label{eq_intint_bounds_3} \\
\int_{t_1}^{t_2} \int_M |u \square u|  \Big|  f - \frac{n}{2}  - W \Big| d\nu_t dt  &\leq \Psi (\delta |Y, A, \eps), \label{eq_intint_bounds_4} \\
\int_M \bigg| |\nabla u|^2 - \int_M |\nabla u|^2 d\nu_t \bigg|  \Big|  f - \frac{n}{2}  - W \Big| d\nu_t   &\leq b^{-1} \int_M  \bigg| |\nabla u|^2 - \int_M |\nabla u|^2 d\nu_t \bigg|  d\nu_t \notag \\ &\qquad\qquad\qquad\qquad\qquad + C(Y,A, \eps)b. \label{eq_intint_bounds_5}
\end{align}
\end{Claim}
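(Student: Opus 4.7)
The plan is to establish each of the five bounds \eqref{eq_intint_bounds_1}--\eqref{eq_intint_bounds_5} by applying Cauchy--Schwarz and H\"older's inequality to combine three ingredients: the \emph{smallness} coming from \eqref{eq_u_bound_delta} (which bounds $|\nabla^2 u|^2$ and $\tau^{-1/2}|\partial_t u|$ in $L^1(d\nu_t\, dt)$ by $\delta$), the uniform $L^p$-\emph{boundedness} of $u$, $\nabla u$, $\partial_t u$ from \eqref{eq_u_bound_A}, and the integral bounds on geometric quantities and on $f$ from Propositions~\ref{Prop_improved_L2} and~\ref{Prop_almost_soliton_identities}. Since $\tau$ is comparable to $1$ on the integration domain, powers of $\tau$ appearing in the hypotheses contribute only $\eps$-dependent constants. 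Note that the extra $e^{\alpha f}$ weights in Proposition~\ref{Prop_improved_L2} are not needed at this stage because all measures are $d\nu_t = d\nu_{x,0;t}$.

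Bound \eqref{eq_intint_bounds_1} is immediate from Cauchy--Schwarz: the first factor $\int\!\!\int (u^2+|\nabla u|^2)d\nu_t dt$ is bounded by $C(A,\eps)$ via \eqref{eq_u_bound_A} (using $\int u^2 \le (\int u^8)^{1/4}$ and $\int|\nabla u|^2 \le (\int|\nabla u|^4)^{1/2}$), while the second factor is $\le \sqrt{\delta}$ by \eqref{eq_u_bound_delta}. For bounds \eqref{eq_intint_bounds_2} and \eqref{eq_intint_bounds_3}, which have the form $\int\!\!\int u^2|X|\, d\nu_t dt$ with $X$ an almost-soliton quantity, I would interpolate
\[ \int_{t_1}^{t_2}\!\!\int_M u^2 |X|\, d\nu_t dt \le \Bigl(\int_{t_1}^{t_2}\!\!\int_M |X|\, d\nu_t dt\Bigr)^{1/2}\Bigl(\int_{t_1}^{t_2}\!\!\int_M u^4 |X|\, d\nu_t dt\Bigr)^{1/2}. \]
The first factor is $\le\Psi(\delta|Y,\eps)$ by \eqref{eq_alm_ss_identity_5} (for bound (2)) or by tracing \eqref{eq_alm_ss_identity_1} combined with $\int\!\!\int\tau(R+\triangle f - n/(2\tau))^2 d\nu_t dt \le \Psi$ (for bound (3)). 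The second factor is controlled by one more Cauchy--Schwarz, using the $L^8$-bound on $u$ from \eqref{eq_u_bound_A} and the $L^2$-bound on $X$ coming from Proposition~\ref{Prop_improved_L2}.

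For bound \eqref{eq_intint_bounds_4}, I would use $|\square u| \le |\partial_t u| + \sqrt{n}|\nabla^2 u|$ and split accordingly. The $|\nabla^2 u|$-piece is handled just as in \eqref{eq_intint_bounds_1} combined with the higher-moment bounds $\int\!\!\int f^p d\nu_t dt \le C(Y,\eps)$ (from Proposition~\ref{Prop_improved_L2} and Proposition~\ref{Prop_L_infty_HK_bound}). The $|\partial_t u|$-piece requires extracting smallness from the mixed hypothesis $\int\!\!\int\tau^{-1/2}|\partial_t u|\le\delta$ and $\int\!\!\int(\partial_t u)^2 \le A$: by H\"older one obtains $\int\!\!\int|\partial_t u|^{1+\eta} d\nu_t dt\le C(\eps)\delta^{(1-\eta)/(1+\eta)}A^{2\eta/(1+\eta)}$ for small $\eta>0$, which multiplied against $u\cdot|f-n/2-W|$ in a further H\"older step yields the desired $\Psi(\delta|Y,A,\eps)$ bound. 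Finally, for \eqref{eq_intint_bounds_5}, set $X_t := |\nabla u|^2 - \int_M |\nabla u|^2 d\nu_t$ (mean-zero against $d\nu_t$) and decompose $|f-n/2-W| = \min(|f-n/2-W|,b^{-1}) + (|f-n/2-W|-b^{-1})_+$. The first piece contributes exactly $b^{-1}\int|X_t|d\nu_t$, while Cauchy--Schwarz on the second piece gives $\|X_t\|_{L^2(d\nu_t)}\cdot\|(|f-n/2-W|-b^{-1})_+\|_{L^2(d\nu_t)}$; the former is $\le 2(\int|\nabla u|^4 d\nu_t)^{1/2}$ by Jensen, and the latter is $\le C(Y,A,\eps)\, b$ by Chebyshev and Proposition~\ref{Prop_improved_L2}.

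The main obstacle will be \eqref{eq_intint_bounds_4}, where one must genuinely extract smallness (not just boundedness) from $\partial_t u$ by interpolating between the small $L^1$-norm (weighted by $\tau^{-1/2}$) and the bounded $L^2$-norm, while simultaneously controlling the factor $u\,|f-n/2-W|$ in a dual norm whose exponent is forced by the interpolation. A secondary subtlety appears in \eqref{eq_intint_bounds_5}, which is stated pointwise in $t$: one needs the $L^p$-bounds on $|\nabla u|^2$ and $f$ to hold at every such $t$, which can be arranged either by parabolically recentering Proposition~\ref{Prop_improved_L2} at the chosen time (after enlarging $\delta^{-1}$ to accommodate the shift) or by noting that $(x,0)$ being $(\delta, r)$-selfsimilar implies, via a short argument using \eqref{eq_ddt_intv_0} and (\ref{eq_NN_basic_2}), that the relevant integral bounds are uniform in $t \in [-\eps^{-1}r^2,-\eps r^2]$.
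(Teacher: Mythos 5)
Your proof establishes the same five bounds but along a somewhat different route, and one step has a real soft spot.

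For \eqref{eq_intint_bounds_1} your argument is identical to the paper's. For \eqref{eq_intint_bounds_2}--\eqref{eq_intint_bounds_4} the paper treats all three uniformly by choosing $v_1$ to be the ``small'' quantity ($\big|\tau(2\triangle f - |\nabla f|^2 + R) + f - n - W\big|$, $\big|\tau(R+\triangle f) - \tfrac{n}{2}\big|$, or $|\square u|$) and $v_2$ the bounded companion ($u^2$, $u^2$, or $|u|\,|f-\tfrac{n}{2}-W|$), and then applying the single Young-type chain $v_1v_2 \le b^{-1}v_1 + b\,v_1v_2^2 \le b^{-1}v_1 + b(v_1^2+v_2^4)$. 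The first term is controlled by smallness (Proposition~\ref{Prop_almost_soliton_identities} for the soliton quantities, and for $|\square u|$ directly by \eqref{eq_u_bound_delta} since $\int\!\!\int|\square u| \le \int\!\!\int|\partial_t u| + C\int\!\!\int|\nabla^2 u| \le C(\eps)\delta^{1/2}$); the second and third terms are bounded using \eqref{eq_u_bound_A}, Proposition~\ref{Prop_improved_L2}, Proposition~\ref{Prop_L_infty_HK_bound}, and $|u|\,|f-\tfrac{n}{2}-W| \le u^2 + (f-\tfrac{n}{2}-W)^2$. In particular the ``$L^{1+\eta}$ interpolation'' for $\partial_t u$ that you flag as the main obstacle in \eqref{eq_intint_bounds_4} is unnecessary: since $\int\!\!\int(\partial_t u)^2 \le A$ from \eqref{eq_u_bound_A}, the term $\int\!\!\int v_1^2 \le C(A,\eps)$ is bounded directly and the smallness is carried entirely by the $L^1$ bound $\int\!\!\int|\square u| \le \Psi(\delta|\eps)$; your detour costs extra work and yields nothing the one-line Young estimate doesn't. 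Your Cauchy--Schwarz decomposition for \eqref{eq_intint_bounds_2}--\eqref{eq_intint_bounds_3} is a legitimate alternative.

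For \eqref{eq_intint_bounds_5} your truncation-and-Chebyshev decomposition is fine as algebra, but there is a gap: your bound $\|X_t\|_{L^2(d\nu_t)} \le 2\big(\int|\nabla u|^4 d\nu_t\big)^{1/2}$ needs $\int_M |\nabla u|^4\,d\nu_t$ to be controlled \emph{at a single time $t$}, whereas the hypothesis \eqref{eq_u_bound_A} only controls the \emph{time-integral} $\int_{t_1}^{t_2}\!\int_M \tau^{-1}|\nabla u|^4\,d\nu_t\,dt$. You notice this subtlety, but the resolution you propose does not actually address it: parabolically recentering Proposition~\ref{Prop_improved_L2}, or invoking almost self-similarity of $(x,0)$, controls $f$-quantities pointwise in $t$, not the arbitrary function $u$; nothing in \eqref{eq_u_bound_delta}--\eqref{eq_u_bound_A} promotes the $L^4$ bound on $\nabla u$ to a per-time bound. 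In the paper this issue is harmless because \eqref{eq_intint_bounds_5} is immediately integrated in $t$ when it is applied (cf.\ the step following \eqref{eq_ddtau_f_n2_W_u2}), so that the needed estimate $\int_{t_1^*}^{t_2^*}\!\int_M v_1^2 \,d\nu_t\,dt \le C(A,\eps)$ follows from \eqref{eq_u_bound_A}; but your proof, as written, asserts a per-time bound and supports it with an argument that does not apply.
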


\begin{proof}
To see (\ref{eq_intint_bounds_1}) we use (\ref{eq_u_bound_delta}), (\ref{eq_u_bound_A}) to see that
\begin{multline*}
 \int_{t_1}^{t_2} \int_M (|u| + |\nabla u|) |\nabla^2  u| d\nu_t dt 
\leq C \bigg( \int_{t_1}^{t_2} \int_M (u^2 + |\nabla u|^2) d\nu_t dt \bigg)^{1/2} \bigg( \int_{t_1}^{t_2} \int_M |\nabla^2 u|^2 d\nu_t dt \bigg)^{1/2} \\
 \leq \Psi (\delta | A , \eps). 
\end{multline*}
To see (\ref{eq_intint_bounds_2})--(\ref{eq_intint_bounds_5}), denote by $v_1 \in C^\infty (M \times [t_1, t_2])$ one of the following terms
\[ \big| \tau (2\triangle f - |\nabla f|^2 + R )+f-n-W \big|, \; \Big| \tau (R + \triangle f )- \frac{n}{2} \Big|, \; |\square u|, \;  \bigg| |\nabla u|^2 - \int_M |\nabla u|^2 d\nu_t \bigg|  \]
and let correspondingly $v_2 \in C^\infty (M \times [t_1, t_2])$ denote
\[ u^2, \; u^2\;, |u| \Big|   f - \frac{n}{2}  - W \Big|, \; \Big|   f - \frac{n}{2}  - W \Big| \]
so that the integral to be bounded is of the form $\int_{t_1}^{t_2} \int_M v_1 v_2 d\nu_t dt$ or $\int_M v_1 v_2 d\nu_t$ in (\ref{eq_intint_bounds_5}).
To see (\ref{eq_intint_bounds_2})--(\ref{eq_intint_bounds_4}), we use the almost selfsimilar property of $(x,0)$, Proposition~\ref{Prop_improved_L2}, (\ref{eq_u_bound_delta}), (\ref{eq_u_bound_A}) and the bound $|u| \, |f - \frac{n}2 - W| \leq u^2 + (f - \frac{n}2 - W)^2$ to conclude
\begin{multline*}
 \int_{t_1}^{t_2} \int_M v_1 v_2 \, d\nu_t \leq b^{-1} \int_{t_1}^{t_2} \int_M v_1 \,  d\nu_t dt + b \int_{t_1}^{t_2} \int_M v_1 v_2^2 \, d\nu_t dt \\
\leq \Psi (\delta |\eps,b) + b \int_{t_1}^{t_2} \int_M v_1^2 \,  d\nu_t dt + b \int_{t_1}^{t_2} \int_M  v_2^4\, d\nu_t dt 
\leq \Psi (\delta |\eps,b) + C(Y, A,\eps) b.
\end{multline*}
The inequalities (\ref{eq_intint_bounds_2})--(\ref{eq_intint_bounds_4}) now follow by choosing $b$ appropriately.
Similarly, to see (\ref{eq_intint_bounds_5}), we conclude
\[
 \int_M v_1 v_2 \, d\nu_t \leq b^{-1}  \int_M v_1 \, d\nu_t  + b  \int_M v_1^2 \, d\nu_t  + b  \int_M  v_2^4 \, d\nu_t 
\leq b^{-1}  \int_M v_1 \, d\nu_t  + C(Y, A,\eps) b. \qedhere
\]
\end{proof}
\medskip

Let us now focus on the last inequality of the Claim.
For any $t \in [- \eps^{-1}, - \eps ]$ we have
\begin{multline*}
2 \int_M |\nabla u|^2 d\nu_t 
=  \int_M \triangle u^2 d\nu_t - 2 \int_M u \triangle u \, d\nu_t 
=  \int_M u^2 (-\triangle f + |\nabla f|^2 ) d\nu_t - 2 \int_M u \triangle u \, d\nu_t  \\
=\int_M  \tau^{-1}  u^2 \Big(f- \frac{n}2 - W \Big) d\nu_t -  \int_M \tau^{-1} u^2 \big(\tau (2\triangle f - |\nabla f|^2 + R )+f-n-W \big) d\nu_t  \\  +  \int_M u^2 \Big( R + \triangle f - \frac{n}{2\tau} \Big) d\nu_t - 2 \int_M u \triangle u \, d\nu_t .
\end{multline*}
Integration and application of the Claim implies
\begin{equation} \label{eq_nabu_approx_0_order}
  \int_{t_1}^{t_2}  \bigg| 2\int_M |\nabla u|^2 d\nu_t - \int_M  \tau^{-1} u^2 \Big(f- \frac{n}2 - W \Big) d\nu_t  \bigg| dt \leq  \Psi (\delta | Y,A, \eps). 
\end{equation}
By the $L^1$-Poincar\'e inequality (Proposition~\ref{Prop_Poincare}) and the Claim we have
\begin{equation} \label{eq_nabu_m_nabu_average}
  \int_{t_1}^{t_2}  \int_M \bigg| |\nabla u|^2 - \int_M |\nabla u|^2 d\nu_t \bigg| d\nu_t dt \leq C \int_{t_1}^{t_2}  \tau^{1/2} \int_M |\nabla^2 u| |\nabla u| d\nu_t dt   \leq \Psi (\delta | A, \eps) .
\end{equation}
Next we compute, using integration by parts and (\ref{eq_potential_evolution_equation}),
\begin{align}
\tau^2 \frac{d}{dt} \bigg(  \int_M & \tau^{-1} u^2 \Big( f- \frac{n}2 - W \Big) d\nu_t \bigg)
= \int_M u^2 \Big( f- \frac{n}2 - W \Big) d\nu_t +\tau \int_M \square \Big(u^2 \Big(f- \frac{n}2 - W \Big) \Big) d\nu_t \notag \\
&= \int_M \Big( (u^2 + 2 \tau u \square u - 2 \tau |\nabla u|^2) \Big( f- \frac{n}2 - W \Big) - 4 \tau u \nabla u \cdot \nabla f + \tau u^2 \square f \Big) d\nu_t \notag  \\
&= \int_M \Big( (u^2 + 2 \tau u \square u - 2 \tau |\nabla u|^2) \Big( f- \frac{n}2 - W \Big) \notag  \\
&\qquad\qquad\qquad + 2 \tau u^2 ( \triangle f - | \nabla f|^2) + \tau u^2 \Big( -2 \triangle f + |\nabla f|^2 - R + \frac{n}{2\tau} \Big) \Big) d\nu_t \notag  \\
&= \int_M u^2  \big(\tau (2\triangle f - |\nabla f|^2 + R )+f-n-W \big) d\nu_t  
- 2\tau \int_M u^2  \Big(R + \triangle f -  \frac{n}{2\tau} \Big) d\nu_t \notag  \\
&\qquad+2 \tau \int_M u \square u \Big( f  - \frac{n}{2} - W  \Big) d\nu_t 
 - 2\tau \int_M \bigg( |\nabla u|^2 - \int_M |\nabla u|^2 d\nu_t \bigg) \Big( f  - \frac{n}{2} - W  \Big) d\nu_t \notag \\
 &\qquad - 2\tau  \bigg( \int_M |\nabla u|^2 d\nu_t \bigg) \int_M  \Big( f  - \frac{n}{2} - W  \Big) d\nu_t . \label{eq_ddtau_f_n2_W_u2}
\end{align}
By Proposition~\ref{Prop_NN_almost_constant_selfsimilar}, the very last integral can be bounded as follows
\begin{equation} \label{eq_int_NN_bound}
 \bigg|  \int_M  \Big( f  - \frac{n}{2} - W  \Big) d\nu_t \bigg| = \big| \NN_{x,0} (|t|) - W \big|  = \big| \NN_{x,0} (|t|) - \NN_{x,0} (1) \big| \leq \Psi (\delta | Y, \eps). 
\end{equation}
So integration of (\ref{eq_ddtau_f_n2_W_u2}) and application of the Claim, (\ref{eq_u_bound_A}), (\ref{eq_int_NN_bound}) and (\ref{eq_nabu_m_nabu_average}) implies that for any $[t^*_1, t^*_2] \subset [t_1, t_2]$
\begin{multline*}
 \bigg| \int_M \tau^{-1} u^2 \Big( f- \frac{n}2 - W \Big) d\nu_t \bigg|_{t=t^*_1}^{t=t^*_2} \bigg| \leq b^{-1} \int_{t^*_1}^{t^*_2} \int_M 2\tau^{-1} \bigg| |\nabla u|^2 -  \int_M |\nabla u|^2  d\nu_t \bigg|  d\nu_t dt \\ + C(Y,A, \eps)  b + \Psi (\delta |Y, A, \eps) 
   \leq  C(Y,A, \eps)  b +   \Psi (\delta | Y,A, \eps, b).
\end{multline*}
Choosing $b$ appropriately, implies that there is a $q \in \IR$ such that for all $t \in [t_1, t_2]$
\[ \bigg|  \int_M \tau^{-1} u^2 \Big( f- \frac{n}2 - W \Big) d\nu_t - 2 q \bigg| \leq \Psi (\delta | Y,A, \eps). \]
The lemma now follows by combining this bound with (\ref{eq_nabu_approx_0_order}), (\ref{eq_nabu_m_nabu_average}).
\end{proof}
\bigskip

\begin{Lemma} \label{Lem_Eucl_vs_covering}
Let $(V, (\cdot, \cdot))$ be a Euclidean vector space and consider vectors $v_1, \ldots, v_N \in V$ with
\[ |v_i| \leq A, \qquad \text{and} \qquad  |v_i - v_j| \geq \la, \quad \text{if} \quad i \neq j, \]
where $0 < \la \leq 1$ and $A \geq 1$.
If $N \geq C_0 (A,k) \la^{-k}$ for some $k \geq 0$, then there are coefficients $(a_{li})_{ l=1,\ldots, k+1, i = 1, \ldots, N}$ with $|a_{li}| \leq C_1 (A, \la, k)$ such that
\[ \Big(  \sum_{i=1}^{N} a_{li} v_i \Big)_{l=1, \ldots, k+1} \]
is orthonormal.
\end{Lemma}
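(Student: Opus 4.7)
The plan is to build the orthonormal system by a greedy iterative construction, extracting suitable \emph{differences} $w_l = v_{j_l} - v_{i_l}$, and then applying Gram--Schmidt. Note that taking differences immediately guarantees coefficient size $\pm 1$ in the $v_i$-basis, so any bounds we lose will come only from the Gram--Schmidt step.

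First I would set up the induction. Put $V_0 := \{0\}$ and suppose that for some $1 \leq l \leq k+1$ we have already chosen indices and vectors $w_1,\ldots,w_{l-1}$ so that $w_m = v_{j_m} - v_{i_m}$, $|w_m| \leq 2A$, and $|\pi_{V_{m-1}^\perp} w_m| \geq \lambda/2$, where $V_{m-1} := \spann(w_1,\ldots,w_{m-1})$. I would then look for $i \neq j$ with $|\pi_{V_{l-1}^\perp}(v_i - v_j)| \geq \lambda/2$. The key packing observation is the following: if no such pair existed, then $|\pi_{V_{l-1}^\perp} v_i - \pi_{V_{l-1}^\perp} v_j| < \lambda/2$ would force
\[ |\pi_{V_{l-1}} v_i - \pi_{V_{l-1}} v_j|^2 = |v_i - v_j|^2 - |\pi_{V_{l-1}^\perp}(v_i - v_j)|^2 \geq \lambda^2 - \lambda^2/4, \]
so the $N$ projections $\pi_{V_{l-1}} v_i$ would be $(\lambda\sqrt{3}/2)$-separated in the $(l-1)$-dimensional ball of radius $A$ inside $V_{l-1}$. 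A standard packing bound caps such a configuration at $(CA/\lambda)^{l-1} \leq (CA/\lambda)^{k}$ points. Choosing $C_0(A,k)$ to exceed this bound, we force the existence of the desired pair and set $w_l := v_{j} - v_{i}$.

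Second, I would run Gram--Schmidt on $w_1,\ldots,w_{k+1}$. Let $\widetilde w_l := w_l - \pi_{V_{l-1}} w_l$, so by construction $|\widetilde w_l| \geq \lambda/2$, and the vectors $e_l := \widetilde w_l / |\widetilde w_l|$ are orthonormal. A straightforward induction expresses each $e_l$ as a linear combination $\sum_{m \leq l} b_{lm} w_m$: indeed, the coefficients come from inverting the lower-triangular Cholesky factor of the Gram matrix $G = (w_i \cdot w_j)_{i,j \leq l}$, whose entries satisfy $|G_{ij}| \leq 4 A^2$ and whose diagonal Gram--Schmidt heights $|\widetilde w_m|$ are bounded below by $\lambda/2$. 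This yields $|b_{lm}| \leq C_1(A,\lambda,k)$. Substituting $w_m = v_{j_m} - v_{i_m}$ then produces the coefficients $a_{li}$ with $|a_{li}| \leq C_1(A,\lambda,k)$, as required.

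The argument is mostly routine combinatorial geometry and linear algebra; the only delicate point is the packing/pigeonhole step, where one must correctly trade off the decomposition $|v_i - v_j|^2 = |\pi_{V_{l-1}}(v_i - v_j)|^2 + |\pi_{V_{l-1}^\perp}(v_i - v_j)|^2$ against the separation hypothesis. I expect no serious obstacle beyond bookkeeping: the threshold $C_0(A,k) \lambda^{-k}$ is exactly what the packing bound in the worst step ($l = k+1$) demands, and the final coefficient bound $C_1(A,\lambda,k)$ absorbs the Gram--Schmidt reciprocals of powers of $\lambda/2$.
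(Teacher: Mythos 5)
Your proof is correct and follows essentially the same strategy as the paper's: iteratively extract a new direction whose component orthogonal to the current span is bounded below by a fixed multiple of $\lambda$ (justified by a packing bound on the projections into the current span), then orthonormalize with controlled coefficients. The only cosmetic differences are that you extract \emph{differences} $v_j - v_i$ rather than individual vectors $v_i$, and you run a one-shot iteration rather than a formal induction on the lemma's statement in $k$; neither changes the substance.
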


\begin{proof}
The proof is by induction on $k$.
The lemma is trivial for $k =0$, so assume that $k \geq 1$ and that the lemma holds for $k-1$.
Assuming that $C_0 (A, k+1) \geq C_0 (A,k)$, we can find an orthonormal system of the form
\[ \Big( u_l := \sum_{i=1}^{N} a_{li} v_i \Big)_{l=1, \ldots, k}. \]
Let $W \subset V$ be its span, consider the orthogonal decomposition $V = W \oplus W^\perp$ and express every vector as $v_i =: (w_i, w^\perp_i)$.
By a simple volume comparison argument, we obtain that for sufficiently large $C_0 (A, k+1)$ we must have $b := |w^\perp_i| \geq \la/10$ for some $i \in \{ 1, \ldots, N \}$.
Then
\[ u_{k+1} := b^{-1} ( \vec 0 , w^\perp_i ) := b^{-1} \Big( v_i - \sum_{l=1}^k (v_i, u_l) u_l \Big) \]
completes $(u_l)_{l =1, \ldots, k}$ to an orthonormal system of cardinality $k+1$ and it can be expressed as a linear combination in the vectors $v_1, \ldots, v_N$ whose coefficients are bounded by a constant of the form $C_1 (A, \la, k+1)$.
\end{proof}
\bigskip

\begin{proof}[Proof of Proposition~\ref{Prop_almost_split_Rk}.]
After parabolic rescaling, we may assume without loss of generality that $t_0 = 0$ and $r = 1$; note that this implies that $t_1, \ldots, t_{N-1} \geq 0$.
We may also assume that $\la^2 \leq \eps$ and $\beta \leq \frac1{10}$.

\begin{Claim} \label{Cl_dnui_dnuj_splitting}
If $-\infty < \eta_1 < \eta_2 < 1$, $\theta \in [\la^2, 1]$, $\eps' \in [\eps,1]$, $\beta \leq \ov\beta( \eta_1, \eta_2)$, $\delta \leq \ov\delta (Y, D, \eps' , \theta, \eta_1, \eta_2)$, then for all $i, j = 0, \ldots, N-1$
\begin{equation}
 e^{\eta_1 f_i} d\nu^i \leq C(Y,D, \eps', \theta, \eta_1, \eta_2) e^{\eta_2 f_j} d\nu^j \qquad  \text{on} \quad M \times [-\eps^{\prime -1}, -\theta]. \label{eq_dnu_bound}
\end{equation}
\end{Claim}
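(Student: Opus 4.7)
The plan is to establish the claim pointwise, at each $s\in[-\eps^{\prime-1},-\theta]$ and each ordered pair $(i,j)$, by a single application of Proposition~\ref{Prop_inheriting_bounds} with exponents $\alpha_0:=\eta_2$, $\alpha_1:=\eta_1$. After parabolic rescaling we are in the setting $r=1$, $t_0=0$ and $t_0,\dots,t_{N-1}\in[-\beta\lambda^2,\beta\lambda^2]$. The task reduces to choosing, for each target time $s$, an auxiliary time $t^*\in(s,\min\{t_i,t_j\}]$ at which all four hypotheses of Proposition~\ref{Prop_inheriting_bounds} are in force.

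Two of the hypotheses are essentially routine. The scalar-curvature control $R(\cdot,s)\ge -A(t^*-s)^{-1}$ with universal $A$ follows from condition (\ref{eq_almost_self_similar_3}) applied to the almost-selfsimilar point $(x_0,0)$, combined with Lemma~\ref{Lem_lower_scal}, provided $\delta\le\ov\delta(\eps',\theta)$; the fact that $t^*-s\ge\theta/2$ is ensured by our choice below. The non-collapsing bound $\NN_{x_i,t_i}(t_i-s)\ge -Y'(Y,D)$ at every basepoint, needed for the conclusion of Proposition~\ref{Prop_inheriting_bounds} applied to either direction of the pair, is obtained by combining $\NN_{x_0,0}(1)\ge -Y$ with Proposition~\ref{Prop_NN_variation_bound}, using a crude $W_1$-distance bound between $\nu^i$ and $\nu^0$ at the reference time $-1$, which the hypotheses of Proposition~\ref{Prop_almost_split_Rk} provide directly via the triangle inequality.

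The remaining two hypotheses are in tension. The time-gap condition $t_i-t^*\le\theta_*(A)\tfrac{\eta_2-\eta_1}{1-\eta_2}(t^*-s)$ of Proposition~\ref{Prop_inheriting_bounds} pushes $t^*$ toward $t_{\min}:=\min_k t_k$, while Proposition~\ref{Prop_dist_expansion_almost_ss}, the natural device for upgrading the hypothesized $W_1$-bound at time $-1$ to a bound at $t^*$, requires $t^*\le t_{\min}-\beta_{\mathrm e}$ for its parameter $\beta_{\mathrm e}>0$. I resolve this by choosing $\beta_{\mathrm e}:=c(\eta_1,\eta_2)\,\theta$ with $c(\eta_1,\eta_2):=\tfrac{1}{4}\theta_*(A)(\eta_2-\eta_1)/(1-\eta_2)$, and setting $t^*:=t_{\min}-\beta_{\mathrm e}$. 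Because $\beta\le\ov\beta(\eta_1,\eta_2)$ and $\lambda^2\le\theta$, one checks that $t_i-t^*\le(2\beta+c)\theta$ and $t^*-s\ge\theta/2$, so that the time-gap holds. Proposition~\ref{Prop_dist_expansion_almost_ss}, applicable under the strengthened smallness $\delta\le\ov\delta(Y,D,\theta,\eta_1,\eta_2)$, then yields $d^{g_{t^*}}_{W_1}(\nu^0_{t^*},\nu^i_{t^*})\le C(Y,D,\theta,\eta_1,\eta_2)$, and the triangle inequality closes the loop for the pair $(i,j)$.

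Plugging the verified hypotheses into Proposition~\ref{Prop_inheriting_bounds} gives the pointwise bound $e^{\eta_1 f_i}\,d\nu^i_s\le C(Y,D,\eps',\theta,\eta_1,\eta_2)\,e^{\eta_2 f_j}\,d\nu^j_s$, uniformly in $s\in[-\eps^{\prime-1},-\theta]$ and $i,j$, which is the claim. The principal obstacle is the parameter book-keeping in the preceding paragraph: one must arrange the expansion-parameter $\beta_{\mathrm e}$ as a function of $\theta,\eta_1,\eta_2$ alone so that the constant coming out of Proposition~\ref{Prop_dist_expansion_almost_ss} enters the final bound through admissible dependencies and not through the independent small parameter $\lambda$. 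All $\lambda$-dependence is then absorbed into the smallness requirement $\delta\le\ov\delta(Y,D,\eps',\theta,\eta_1,\eta_2)$.
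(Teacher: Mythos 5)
Your proposal is correct and follows the same route as the paper: both use Proposition~\ref{Prop_dist_expansion_almost_ss} to upgrade the hypothesized $W_1$-control to an intermediate time $t^*$ near $t_{\min}$, and then feed that into Proposition~\ref{Prop_inheriting_bounds}, with the key bookkeeping point that the expansion parameter must depend only on $\theta,\eta_1,\eta_2$ (not on $\lambda$) so that $\lambda$-dependence is absorbed into $\ov\delta$. Your explicit choice $\beta_{\mathrm e}=c(\eta_1,\eta_2)\theta$ plays the role of the paper's auxiliary $\beta'\leq\ov\beta'(\eta_1,\eta_2)$.
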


\begin{proof}
Let $\beta' > 0$ be a constant whose value we will determine later.
Using Proposition~\ref{Prop_dist_expansion_almost_ss} we find that if $\delta \leq \ov\delta (Y, D, \eps', \theta, \beta')$, then
\[
 d_{W_1}^{g_t} (\nu^i_{ t}, \nu^0_{t}) \leq D'_1 (Y, D, \eps', \theta, \beta') \qquad \text{for all} \quad t \in [-\eps^{\prime -1}, - \beta' \theta ].
\]
Therefore, Proposition~\ref{Prop_inheriting_bounds} implies (\ref{eq_dnu_bound}) if $\beta \leq \ov\beta (\eta_1, \eta_2)$, $\delta \leq \ov\delta (\eps')$ and $\beta' \leq \ov\beta' (\eta_1, \eta_2)$.
\end{proof}

For all $i = 0, \ldots, N-1$ set
\[ W_i := \NN_{x_i, t_i} (r^2) \]
and consider the functions
\[ u_{i} :=  \tau_i f_i  - \tau f  - (W_i - W_0) \tau, \qquad i = 1, \ldots, N-1. \]

\begin{Claim} \label{Cl_bounds_zi}
If $\alpha \leq \ov\alpha$, $\beta \leq \ov\beta$ and $\delta \leq \ov\delta (Y, D, \la)$, then for all $i = 1, \ldots, N-1$
\begin{align}
  \int_{-\eps^{-1}}^{-\la^2} \int_M (\tau^{-1/2} |\partial_t u_i| +  |\nabla^2 u_i|^2) e^{\alpha f} d\nu_t dt &\leq \Psi(\delta |Y, D, \la), \label{eq_zi_delta_bound} \\
 \int_{-\eps^{-1} }^{-\la^2} \int_M ( \tau^{-5} u_i^8 +\tau^{-1} |\nabla u_i|^4 + (\partial_t u_i)^2) e^{ \alpha f}  d\nu_t dt &\leq C(Y, D, \la) . \label{eq_zi_C_bound} 
\end{align}
\end{Claim}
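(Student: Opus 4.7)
The approach is to decompose each piece of $u_i$ via almost-soliton identities at both $(x_0,t_0)$ and $(x_i,t_i)$, and to transfer every integral bound from the measure $d\nu^i$ to the weighted measure $e^{\alpha f}d\nu$ using Claim~\ref{Cl_dnui_dnuj_splitting}. For the Hessian I would write
\[
\nabla^2 u_i \;=\; \tau_i E_i \;-\; \tau E_0 \;+\; (\tau-\tau_i)\Ric,
\qquad E_j := \Ric + \nabla^2 f_j - \tfrac{1}{2\tau_j}g,
\]
so that the first two summands are small by the weighted selfsimilarity identity (\ref{eq_alm_ss_identity_1}) applied at each basepoint (Claim~\ref{Cl_dnui_dnuj_splitting} is used to trade $e^{\alpha f_i}d\nu^i$ for a slightly larger weight $e^{\eta_2 f}d\nu$ with $\eta_2\le\bar\alpha(Y)$), while the residual $(\tau-\tau_i)\Ric$ is controlled by $|\tau-\tau_i|=|t_i-t_0|\le\beta\lambda^2$ together with the integral Ricci bound of Proposition~\ref{Prop_improved_L2}.

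For the time derivative I would use the identity $\partial_t(\tau f) = \square(\tau f) + \tau\triangle f$ to rewrite
\[
\partial_t u_i \;=\; \square u_i \;+\; \tau_i\triangle f_i \;-\; \tau\triangle f \;+\; (W_i-W_0).
\]
The $\square u_i$ piece is small by (\ref{eq_alm_ss_identity_3}) applied at both basepoints. For the $\tau_j\triangle f_j$ pieces I would apply the almost-traced soliton identity $\tau R + \tau\triangle f \approx n/2$ (which follows from (\ref{eq_alm_ss_identity_2})--(\ref{eq_alm_ss_identity_5}) and the evolution equation for $f$), reducing the remainder to $(\tau-\tau_i)R$ up to small errors. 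This scalar-curvature remainder is then handled exactly as in the Hessian step, using Proposition~\ref{Prop_scalar curvature_almost_ss} and the weighted $L^2$ bound on $R$ from Proposition~\ref{Prop_improved_L2}.

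For the coarser estimate (\ref{eq_zi_C_bound}) no $\delta$-smallness is needed. Crudely bounding $|u_i|\lesssim\tau(|f|+|f_i|)+|W_i-W_0|\tau$, $|\nabla u_i|\lesssim\tau(|\nabla f|+|\nabla f_i|)$, and $|\partial_t u_i|\lesssim|\square u_i|+\tau(|\triangle f|+|\triangle f_i|)$, each integrand is controlled by Proposition~\ref{Prop_improved_L2} (for the $|\nabla f|^4$, $|\nabla^2 f|^2$ and corresponding $f_i$ terms, after measure conversion by Claim~\ref{Cl_dnui_dnuj_splitting} with slightly enlarged weights), together with the lower bound $f\ge -C(Y)$ from Proposition~\ref{Prop_L_infty_HK_bound} and the exponential integrability in Proposition~\ref{Prop_int_ealphaf} to absorb the $f^8$ factor in $\tau^{-5}u_i^8$.

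The principal obstacle is extracting a genuine $\Psi(\delta)$ rate for the residual $(\tau-\tau_i)\Ric$ and $(\tau-\tau_i)R$ terms in (\ref{eq_zi_delta_bound}), since these do not vanish on a generic non-flat shrinker. The only leverage is that $|\tau-\tau_i|\le\beta\lambda^2$ is strictly smaller than the integration scale $\tau\ge\lambda^2$; this forces $\bar\beta$ to be chosen sufficiently small so that the products of $\beta^2$ with the $|\log\lambda|$ factors arising from Proposition~\ref{Prop_improved_L2} fit inside $\Psi(\delta\,|\,Y,D,\lambda)$. A secondary bookkeeping difficulty is the careful management of the measure-conversion parameters $\eta_1<\eta_2\le\bar\alpha(Y)$ in Claim~\ref{Cl_dnui_dnuj_splitting}, all of which must remain compatible with the fixed target weight $\alpha\le\bar\alpha$ while leaving room for the intermediate shifts needed to pass between $e^{\alpha f_i}d\nu^i$ and $e^{\alpha f}d\nu$.
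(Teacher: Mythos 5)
Your decomposition $\nabla^2 u_i = \tau_i E_i - \tau E_0 + (\tau-\tau_i)\Ric$ and the weight-transfer via Claim~\ref{Cl_dnui_dnuj_splitting} are exactly the paper's, and your treatment of the coarser bound (\ref{eq_zi_C_bound}) is also essentially the paper's. But your handling of the residual $(\tau-\tau_i)\Ric$ in (\ref{eq_zi_delta_bound}) has a genuine gap, and you have correctly flagged where it lies but proposed a resolution that cannot work. You suggest controlling $(\tau-\tau_i)\Ric$ by combining $|\tau-\tau_i|\le\beta\lambda^2$ with the $L^2$-Ricci bound from Proposition~\ref{Prop_improved_L2}, which gives at best $C(Y,\lambda)\beta^2$, and then argue that ``$\bar\beta$ should be chosen sufficiently small'' to fit inside $\Psi(\delta\,|\,Y,D,\lambda)$. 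This fails because $\bar\beta$ is a \emph{universal} constant in the hypotheses of both the Claim and Proposition~\ref{Prop_almost_split_Rk}: it cannot depend on $\delta$, so $C(Y,\lambda)\beta^2$ is a fixed positive quantity that does not tend to zero as $\delta\to 0$, and hence is not a bound of the form $\Psi(\delta\,|\,Y,D,\lambda)$.

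The paper resolves this with a dichotomy in $t_i$, using an \emph{auxiliary} parameter $\beta_0$ that, unlike $\beta$, is allowed to go to zero together with $\delta$. If $t_i\le\beta_0$ then $|\tau-\tau_i|=t_i\le\beta_0$ and the residual term is bounded by $C(Y,\lambda)\beta_0^2$. If $t_i\ge\beta_0$ then $(x_0,0)$ and $(x_i,t_i)$ are two $(\delta,1)$-selfsimilar points with time gap at least $\beta_0$, so Proposition~\ref{Prop_almost_static} (with $\beta\leftarrow\beta_0$) makes $(x_0,0)$ $(\eps'',1)$-static for any $\eps''$ provided $\delta\le\bar\delta(Y,D,\beta_0,\eps'')$; this yields $\int\!\!\int|\Ric|^2\,d\nu\,dt\le\Psi(\delta\,|\,Y,D,\lambda,\beta_0)$ and hence the residual is also $\Psi(\delta\,|\,Y,D,\lambda,\beta_0)$ in this case. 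Combining the two cases gives
\[
\int_{-\eps^{-1}}^{-\lambda^2}\!\!\int_M |\nabla^2 u_i|^2\,d\nu_t\,dt \;\le\; C(Y,\lambda)\beta_0^2 + \Psi(\delta\,|\,Y,D,\lambda,\eta,\beta_0),
\]
and one first sends $\beta_0\to 0$, then $\delta\to 0$ given $\beta_0$, to obtain $\Psi(\delta\,|\,Y,D,\lambda,\eta)$; the parameter $\eta$ is then discarded by the same $b$-versus-$b^{-1}$ trick you already use to drop from $e^{2\alpha f}$ to $e^{\alpha f}$. The analogous dichotomy is needed for the scalar-curvature residual in $\partial_t u_i$, where your invocation of Proposition~\ref{Prop_scalar curvature_almost_ss} is also not the relevant tool; Proposition~\ref{Prop_almost_static} is what gives $\int\!\int|\Ric|^2\,d\nu\,dt$ small without a $\delta$-independent loss.
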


\begin{proof}
By Proposition~\ref{Prop_improved_L2} and (\ref{eq_dnu_bound}) we have, assuming $\alpha \leq \ov\alpha$, $\beta \leq \ov\beta$ and $\delta \leq \ov\delta (Y, D, \la)$
\begin{equation} \label{eq_zi_better_C_bound}
  \int_{-\eps^{-1} }^{-\la^2} \int_M (|\partial_t u_i| +  |\nabla^2 u_i|^2 + u_i^8 + |\nabla u_i|^4 + (\partial_t u_i)^2) e^{ 2\alpha f}  d\nu_t dt  \leq C(Y, D, \la). 
\end{equation}
This proves (\ref{eq_zi_C_bound}).
In order to establish (\ref{eq_zi_delta_bound}) we only have to show that 
\begin{equation} \label{eq_zi_desired_bound}
  \int_{-\eps^{-1}}^{-\la^2} \int_M (|\partial_t u_i| +  |\nabla^2 u_i|^2)  d\nu_t dt \leq \Psi(\delta |Y, D, \la), 
\end{equation}
because for any $b > 0$ the bounds (\ref{eq_zi_better_C_bound}), (\ref{eq_zi_desired_bound}) imply
\begin{multline*}
   \int_{-\eps^{-1}}^{-\la^2} \int_M (\tau^{-1/2} |\partial_t u_i| +  |\nabla^2 u_i|^2) e^{\alpha f} d\nu_t dt \\
\leq b  \int_{-\eps^{-1}}^{-\la^2} \int_M ( \tau^{-1/2} |\partial_t u_i| +  |\nabla^2 u_i|^2) e^{2\alpha f} d\nu_t dt + b^{-1}  \int_{-\eps^{-1}}^{-\la^2} \int_M (\tau^{-1/2} |\partial_t u_i| +  |\nabla^2 u_i|^2)  d\nu_t dt \\
\leq C(Y, D, \la) b + \Psi(\delta |Y, D, \la) b^{-1}.
\end{multline*}
To see (\ref{eq_zi_desired_bound}) we first observe that for $\eta \leq \ov\eta$, $\beta \leq \ov\beta (\eta)$ and $\delta \leq \ov\delta (Y, D, \la, \eta)$ we have by Claim~\ref{Cl_dnui_dnuj_splitting} and Proposition~\ref{Prop_almost_soliton_identities}
\begin{multline*}
  \int_{-\eps^{-1}}^{-\la^2} \int_M |\square u_i|   d\nu_t dt \leq
 C(Y,D, \la, \eta)  \int_{-\eps^{-1}}^{-\la^2} \int_M \Big| \square (\tau_i f_i ) + \frac{n}2 + W_i \Big|  e^{\eta f_i}  d\nu^i_t dt  \\ + \int_{-\eps^{-1}}^{-\la^2} \int_M \Big| \square (\tau f ) + \frac{n}2 + W_0 \Big|  d\nu_t dt 
\leq  \Psi(\delta |Y, D, \la, \eta).
\end{multline*}
Similarly, using Claim~\ref{Cl_dnui_dnuj_splitting} and Proposition~\ref{Prop_almost_soliton_identities},
\begin{align*}
 \int_{-\eps^{-1}}^{-\la^2} \int_M  &  |\nabla^2 u_i|^2 d\nu_t dt \\
&= \int_{-\eps^{-1}}^{-\la^2} \int_M   \Big| \tau_i \Big( \nabla^2 f_i + \Ric - \frac1{2\tau_i} g \Big) -  \tau \Big( \nabla^2 f + \Ric - \frac1{2\tau} g \Big) + (\tau - \tau_i) \Ric  \Big|^2 d\nu_t dt \\
&\leq C(Y, D, \la, \eta) \int_{-\eps^{-1}}^{-\la^2} \int_M  \tau_i^2  \Big| \nabla^2 f_i + \Ric - \frac1{2\tau_i} g \Big|^2 e^{\eta f_i} d\nu^i_t dt \\
&\qquad +  4\int_{-\eps^{-1}}^{-\la^2} \int_M  \tau^2  \Big| \nabla^2 f + \Ric - \frac1{2\tau} g \Big|^2  d\nu_t dt 
+ 4 |t_i|^2 \int_{-\eps^{-1}}^{-\la^2} \int_M |{\Ric}|^2 d\nu_t dt \\
&\leq \Psi (\delta | Y, D, \la, \eta) + 4 |t_i|^2 \int_{-\eps^{-1}}^{-\la^2} \int_M |{\Ric}|^2 d\nu_t dt.
\end{align*}
Let $\beta_0 > 0$ be a constant whose value we will determine in a moment.
If $0 \leq t_i \leq \beta_0$, then by Proposition~\ref{Prop_improved_L2}
\[  \int_{-\eps^{-1}}^{-\la^2} \int_M   |\nabla^2 u_i|^2 d\nu_t dt \leq \Psi (\delta |Y, D, \lambda, \eta) + C(Y,  \la) \beta_0^2. \]
If $t_i \geq \beta_0$, then by Proposition~\ref{Prop_almost_static} we have
\[ \int_{-\eps^{-1}}^{-\la^2} \int_M |{\Ric}|^2 d\nu_t dt \leq \Psi (\delta | Y, D, \la, \beta_0). \]
Therefore,
\[ \int_{-\eps^{-1}}^{-\la^2} \int_M    |\nabla^2 u_i|^2 d\nu_t dt
 \leq C(Y,  \la) \beta_0^2 +  \Psi (\delta | Y, D, \la, \eta, \beta_0). \]
Choosing $\beta_0$ appropriately, implies that the left-hand side is bounded by a term of the form $\Psi(\delta |Y, D, \la, \eta)$.
Since the only restriction on $\eta$ was a bound of the form $\eta \leq \ov\eta$, this proves the claim.
\end{proof}

\begin{Claim} \label{Cl_nabzizj_qij}
There are constants $q_{ij} \in \IR$ such that
\[ \int_{-\eps^{-1}}^{-\la^2} \int_M \big| \nabla u_i \cdot \nabla u_j - q_{ij} \big| d\nu_t dt \leq \Psi (\delta | Y, D, \la). \]
\end{Claim}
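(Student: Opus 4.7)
The plan is to apply Lemma~\ref{Lem_nabu_almost_constant} separately to the functions $u_i$, $u_j$, and $u_i + u_j$, and then recover the claim by polarization. First I would verify that each of these three functions satisfies the hypotheses (\ref{eq_u_bound_delta}), (\ref{eq_u_bound_A}) of Lemma~\ref{Lem_nabu_almost_constant} on the interval $[t_1, t_2] := [-\eps^{-1}, -\la^2]$. For $u_i$ itself, the bounds from Claim~\ref{Cl_bounds_zi} are stated with an extra factor $e^{\alpha f}$ in the integrand. Since $f \geq -C(Y)$ on $M \times [-\eps^{-1}, -\la^2]$ by Proposition~\ref{Prop_L_infty_HK_bound}, we have $e^{-\alpha f} \leq e^{\alpha C(Y)}$, so dropping the weight costs only a constant depending on $Y$. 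Hence $u_i$ satisfies (\ref{eq_u_bound_delta}) with constant $\Psi(\delta\mid Y, D, \la)$ in place of $\delta$, and (\ref{eq_u_bound_A}) with constant $A = C(Y, D, \la)$. For $u_i + u_j$, the desired bounds follow from those for $u_i, u_j$ by the triangle inequality for the seminorms $\|\cdot\|_{L^p(d\nu_t dt)}$ together with the elementary inequalities $|a+b|^p \leq 2^{p-1}(|a|^p + |b|^p)$.

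Next I would invoke Lemma~\ref{Lem_nabu_almost_constant} with a parameter $\delta^{*}$ to be determined. This produces constants $q_i, q_j, q_{ij}^+ \in \IR$ such that
\[
 \int_{-\eps^{-1}}^{-\la^2} \int_M \tau^{-1} \big| |\nabla u_i|^2 - q_i \big| \, d\nu_t \, dt \leq \delta^{*},
\]
and analogous inequalities for $u_j$ and $u_i + u_j$, provided $\delta \leq \ov\delta(Y, D, \la, \delta^{*})$. Since $\tau \leq \eps^{-1}$ on the integration range, we have $\tau^{-1} \geq \eps$, and therefore
\[
 \int_{-\eps^{-1}}^{-\la^2} \int_M \big| |\nabla u_i|^2 - q_i \big| \, d\nu_t \, dt \leq \eps^{-1} \delta^{*},
\]
and likewise for the other two functions.

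Finally, applying the polarization identity
\[
 2 \, \nabla u_i \cdot \nabla u_j = |\nabla (u_i + u_j)|^2 - |\nabla u_i|^2 - |\nabla u_j|^2
\]
and setting $q_{ij} := \tfrac{1}{2} (q_{ij}^+ - q_i - q_j)$, the triangle inequality yields
\[
 \int_{-\eps^{-1}}^{-\la^2} \int_M \big| \nabla u_i \cdot \nabla u_j - q_{ij} \big| \, d\nu_t \, dt \leq \tfrac{3}{2} \eps^{-1} \delta^{*}.
\]
Choosing $\delta^{*}$ sufficiently small (as a function of the desired error and of $Y, D, \la$, and noting $\eps$ is fixed throughout the proof of Proposition~\ref{Prop_almost_split_Rk}), this is bounded by $\Psi(\delta \mid Y, D, \la)$, as claimed.

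The only mildly delicate point is the bookkeeping with the weight $e^{\alpha f}$: Claim~\ref{Cl_bounds_zi} gives weighted bounds, but Lemma~\ref{Lem_nabu_almost_constant} requires unweighted ones. I expect the lower bound on $f$ from Proposition~\ref{Prop_L_infty_HK_bound} to resolve this cleanly, as sketched above. No other step involves a genuinely new difficulty; the polarization trick is the standard device for passing from the diagonal bounds produced by Lemma~\ref{Lem_nabu_almost_constant} to bilinear bounds.
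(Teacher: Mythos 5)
Your proposal is correct and follows essentially the same route as the paper, which simply applies Lemma~\ref{Lem_nabu_almost_constant} to $u = u_i \pm u_j$ and polarizes. Two minor remarks. First, the polarization: you apply the lemma to $u_i$, $u_j$, $u_i + u_j$, while the paper uses $u_i + u_j$, $u_i - u_j$ with the identity $4\nabla u_i\cdot\nabla u_j = |\nabla(u_i+u_j)|^2 - |\nabla(u_i-u_j)|^2$; both polarizations are valid and there is no substantive difference. Second, the weight: you note that the $e^{\alpha f}$ factor in Claim~\ref{Cl_bounds_zi} can be dropped using the lower bound $f \geq -C(Y)$, which is fine, but it is even simpler to observe that Claim~\ref{Cl_bounds_zi} is stated for any $\alpha \in [0, \ov\alpha]$, so one may just take $\alpha = 0$ and obtain the unweighted bounds directly. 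One point you leave implicit is that applying Lemma~\ref{Lem_nabu_almost_constant} on the interval $[-\eps^{-1}, -\la^2]$ requires invoking the lemma with its own $\eps$-parameter chosen as $\la^2$ (using $\la^2 \leq \eps$), so that $[-\eps^{-1}, -\la^2] \subset [-\la^{-2}, -\la^2]$; this propagates a $\la$-dependence into the threshold, consistent with the $\Psi(\delta\mid Y,D,\la)$ in the claim. Your observation about absorbing the $\eps^{-1}$ from the $\tau^{-1}$-weight is correct and handled correctly.
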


\begin{proof}
This follows by applying Lemma~\ref{Lem_nabu_almost_constant} to $u = u_i \pm u_j$.
\end{proof}

\begin{Claim} \label{Cl_nabzizj_geq_la2}
If $D_0 \geq \underline{D}_0 (Y)$, $\delta \leq \ov\delta (Y, D, \la)$, then for any $i \neq j$ we have
\[ \int_{-2}^{-1} \int_M  |\nabla (u_{i} - u_{j}) |^2 d\nu_t dt \geq  \la^2. \]
\end{Claim}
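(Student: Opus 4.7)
My plan is to relate $\int_M |\nabla(u_i-u_j)|^2\, d\nu_t$ to the squared $W_1$-distance $d_{W_1}^{g_t}(\nu^i_t, \nu^j_t)^2$ up to small almost-soliton errors, and then to propagate the hypothesized $W_1$-bound at $t=-2\la^2$ backward to $t\in [-2,-1]$. The underlying heuristic is the sharp Euclidean identity: on the trivial shrinking soliton, a direct computation with $f_k = |x-x_k|^2/(4\tau_k)$ gives $\nabla(u_i-u_j) = (x_j-x_i)/2$ (a parallel vector field), $\int_M |\nabla(u_i-u_j)|^2 d\nu_t = \tfrac14|x_i-x_j|^2 = \tfrac14 d_{W_1}^{g_t}(\nu^i_t,\nu^j_t)^2$, and $\int_M (u_i-u_j)(d\nu^i_t - d\nu^j_t) = -\tfrac12 d_{W_1}^{g_t}(\nu^i_t,\nu^j_t)^2$. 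Saturation of the $W_1$-duality on Euclidean (via $\operatorname{Lip}(h) = |x_i-x_j|/2 = d_{W_1}/2$) is exactly what drives the argument.

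To make this rigorous on almost-solitons, I would first compute the cross integral $\int_M h\,(d\nu^i_t - d\nu^j_t)$ (with $h := u_i-u_j$) using the almost-selfsimilarity identities from Proposition~\ref{Prop_almost_soliton_identities}, the variance monotonicity in Proposition~\ref{Prop_monotonicity_Var}, and the improved $L^p$-bounds in Proposition~\ref{Prop_improved_L2}, expressing the cross-terms $\tau_j \int f_j\, d\nu^i_t$ via bias--variance decompositions as in the Euclidean case to obtain $|\int_M h\,(d\nu^i_t - d\nu^j_t)| \geq \tfrac12 d_{W_1}^{g_t}(\nu^i_t,\nu^j_t)^2 - \Psi(\delta|Y,D,\la)$. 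Next, using Claim~\ref{Cl_nabzizj_qij} together with the almost-parallelism $\nabla^2 h = \tau_i\nabla^2 f_i - \tau_j\nabla^2 f_j \approx (t_i-t_j)\Ric$ (from the soliton identity $\nabla^2 f_k + \Ric \approx (2\tau_k)^{-1} g$, and noting $|t_i-t_j|\leq 2\beta\la^2$ with $\tau\|\Ric\|_{L^2(d\nu)}$ bounded by Proposition~\ref{Prop_improved_L2}), and an application of the Hein--Naber-type gradient estimate \cite[\HKThmGradientPhiEstimate]{Bamler_HK_entropy_estimates} to $h$ viewed as an approximate solution of the heat equation, I would show $h$ is effectively $(\sqrt{q}+\Psi)$-Lipschitz in $g_t$, where $q := q_{ii}+q_{jj}-2q_{ij}$. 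The $W_1$-duality then gives $\tfrac12 d_{W_1}^{g_t}(\nu^i_t,\nu^j_t)^2 \leq \sqrt{q}\cdot d_{W_1}^{g_t}(\nu^i_t,\nu^j_t) + \Psi$, hence $q \gtrsim \tfrac14 d_{W_1}^{g_t}(\nu^i_t,\nu^j_t)^2 - \Psi$, which transfers (via Claim~\ref{Cl_nabzizj_qij}) to the same lower bound on $\int_M |\nabla h|^2 d\nu_t$.

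To lower-bound $d_{W_1}^{g_t}(\nu^i_t,\nu^j_t)$ on $[-2,-1]$, I would apply Proposition~\ref{Prop_dist_expansion_almost_ss} contrapositively with basepoints $(x_i,t_i)$ and $(x_j,t_j)$: if $d_{W_1}^{g_t}(\nu^i_t,\nu^j_t) < c(Y)\la$ for some $t\in[-2,-1]$, then the expansion bound would force $d_{W_1}^{g_{-2\la^2}}(\nu^i_{-2\la^2},\nu^j_{-2\la^2}) \leq C'(Y,c(Y)\la,\beta)$, contradicting the hypothesis provided $D_0 \geq \underline{D}_0(Y)$ is large enough (and $\beta$ small enough to meet the time-interval constraint $-2\la^2 \in [t_{\min}-\beta^{-1}, t_{\min}-\beta]$, which is ensured by the hypothesis $\beta \leq \ov\beta$ together with $\lambda^2 \leq \epsilon$). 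Combining with the previous paragraph and integrating over $[-2,-1]$ yields $\int_{-2}^{-1}\int_M |\nabla h|^2 d\nu_t\, dt \geq c(Y)^2 D_0^2 \la^2/16 \geq \la^2$ for $D_0$ sufficiently large. The main obstacle is the rigorous execution of the cross-integral identity $\int_M h\,(d\nu^i_t - d\nu^j_t) \approx -\tfrac12 d_{W_1}^2$ on almost-solitons, as it requires controlling the non-standard cross-terms $\tau_j\int f_j d\nu^i_t$ using the variance identities together with the almost-selfsimilar bounds, and handling the weaker ($L^2$ rather than $L^\infty$) control of $\nabla h$ when invoking the $W_1$-duality.
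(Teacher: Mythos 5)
Your proposal takes a genuinely different route from the paper, and while the Euclidean heuristic is correct and well understood, there are two gaps that prevent the argument from closing, one of which you flag and one of which you don't.

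\textbf{The scale mismatch in Step 4.}  The $W_1$-distance is monotone \emph{non-decreasing} in $t$ (Proposition~\ref{Prop_monotonicity_W1}), so the hypothesis $d_{W_1}^{g_{-2\la^2}}(\nu^i,\nu^j)\geq D_0\la$ gives no lower bound at the \emph{earlier} times $t\in[-2,-1]$; you therefore reach for the distance-expansion Proposition~\ref{Prop_dist_expansion_almost_ss} contrapositively.  But that proposition's conclusion has the form $d_{W_1}^{g_{s_2}}\leq C(Y,D,\beta)\,r$, so to extract a bound of order $\la$ at $-2\la^2$ you must take $r\sim\la$.  With $r\sim\la$ the admissible time window is $[t_{\min}-\beta^{-1}\la^2,\, t_{\min}-\beta\la^2]$; for this to contain $[-2,-1]$ you are forced into $\beta\lesssim \la^2$, and then the constant $C(Y,D,\beta)$ degenerates as $\beta\to 0$, so the resulting bound $d_{W_1}^{g_{-2\la^2}}\leq C(Y,D,\la^2)\la$ cannot be beaten by $D_0\la$ with $D_0=\underline D_0(Y)$ depending only on $Y$.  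Using $r=1$ instead keeps the constant under control but produces an $O(1)$ bound, which is consistent with $D_0\la$ for small $\la$ and yields no contradiction.  So Step 4 does not close.  The fix is to work directly at the scale where the hypothesis lives: by Claim~\ref{Cl_nabzizj_qij} the quantity $|\nabla(u_i-u_j)|^2$ is nearly constant ($\approx q$) over the whole interval $[-\eps^{-1},-\la^2]$, so smallness of the claimed integral over $[-2,-1]$ transports for free to $[-2\la^2,-\la^2]$ — precisely where $d_{W_1}\geq D_0\la$ is directly available. This is exactly the step the paper exploits, and it replaces your distance-expansion propagation.

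\textbf{The cross-integral identity in Step 1.}  You correctly identify this as the main obstacle.  The lower bound $\big|\int_M h\,(d\nu^i_t-d\nu^j_t)\big|\geq\tfrac12 d_{W_1}^2-\Psi$ together with the $W_1$-duality upper bound $\leq\operatorname{Lip}(h)\cdot d_{W_1}$ requires (a) a quantitative bias--variance decomposition of the cross term $\tau_j\int f_j\,d\nu^i_t$ matching the Euclidean sharp constant, and (b) a \emph{pointwise} near-parallelism and Lipschitz bound on $h=u_i-u_j$, whereas the paper's toolbox provides these only in $L^2(d\nu_t)$ (via Lemma~\ref{Lem_nabu_almost_constant} and Proposition~\ref{Prop_improved_L2}).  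The gradient estimate from \cite[\HKThmGradientPhiEstimate]{Bamler_HK_entropy_estimates} does not directly apply to $h$, since $h$ is unbounded and only an approximate heat solution.  The paper's proof deliberately sidesteps this: instead of saturating the $W_1$-duality, it argues by contradiction at a good time $t^*\in[-2\la^2,-\la^2]$, uses the far-apart $H_n$-centers $z_i,z_j$ (so $\nu^j_{t^*}(B_i)\lesssim D_0^{-2}$ is tiny), upgrades the $L^2$-smallness of $|\nabla(\tau_if_i-\tau_jf_j)|$ and the almost-soliton identity $-\tau_l(R+|\nabla f_l|^2)+f_l\approx W_l$ to pointwise bounds on a subset $S_i'\subset B_i$ of definite $\nu^i_{t^*}$-measure, and there compares the densities $e^{-f_i}$ and $e^{-f_j}$ directly to contradict the mass gap.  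No sharp cross-integral identity is needed.

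In short: the Euclidean intuition and the use of Claim~\ref{Cl_nabzizj_qij} to transport gradient information across time are the right ideas, but as written the proof both misapplies the distance-expansion tool across incompatible scales and relies on a sharp integral identity that is not established (and would require pointwise rather than $L^2$ control).
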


\begin{proof}
Assume that the claim was wrong for some $i \neq j$.
Then by Claim~\ref{Cl_nabzizj_qij} if $\delta \leq \ov\delta (Y, D,  \la)$, we have for some $q \leq 2 \la^2$
\[ \int_{-2\la^2}^{-\la^2} \int_M \big| |\nabla (u_{i} - u_{j}) |^2 -q \big| d\nu_t dt \leq \Psi (\delta | Y,D, \la). \]
Therefore, using Proposition~\ref{Prop_almost_soliton_identities}, we can find some time $t^* \in [-2\la^2, -\la^2]$ at which for $l = i,j$
\begin{align}  \int_M  \big| | \nabla ( \tau_i f_{i} - \tau_j f_{j}) |^2 -q \big| d\nu_{ t^*} &= \int_M \big| |\nabla (u_{i} - u_{j}) |^2 -q \big| d\nu_{ t^*} \leq \Psi (\delta |Y, D, \la), \label{eq_tauifitaujfj} \\
 \int_M \big| - \tau_l (  |\nabla f_l|^2 + R) + f_l  - W_l \big|  d\nu^l_{t^*}  &\leq \Psi (\delta | Y,  \la).  \label{eq_flRflWl}
 \end{align}
Let $(z_i, t^*), (z_j, t^*)$ be $H_n$-centers of $(x_i, t_i)$, $(x_j, t_j)$, respectively.
Then if $D_0 \geq \underline{D}_0$
\begin{multline} \label{eq_dtstart_12_D_0}
 d_{t^*} (z_i , z_j) \geq d_{W_1}^{g_{t^*}}(\nu^i_{t^*}, \nu^j_{t^*}) -d_{W_1}^{g_{t^*}} (\delta_{z_i}, \nu^i_{t^*}) - d_{W_1}^{g_{t^*}} (\delta_{z_j}, \nu^j_{t^*}) \\
 \geq d_{W_1}^{g_{-2\la^2}}(\nu^i_{-2\la^2}, \nu^j_{-2\la^2}) -\sqrt{{\Var}_{t^*} (\delta_{z_i}, \nu^i_{t^*})} - \sqrt{{\Var}_{t^*} (\delta_{z_j}, \nu^j_{t^*})}  
\geq \big(D_0 - 2 \sqrt{2H_n} \big) \la \geq \tfrac12 D_0 \la.
\end{multline}

After possibly switching the roles of $i, j$, we may assume that
\begin{equation} \label{eq_taui_geq_tau_j}
 \tau_i \geq \tau_j. 
\end{equation}
Set $B_i := B(z_i, t^*,2 \sqrt{ H_n } \la)$.
Then by (\ref{eq_dtstart_12_D_0}), if $D_0 \geq \underline{D}_0$ we have
\begin{equation} \label{eq_nu_Bi}
 \nu^j_{t^*} (  B_{i} ) \leq \nu^j_{t^*} \big( M \setminus B(z_j, t^*, \tfrac14 D_0 \la) \big) 
\leq   \frac1{(\tfrac14 D_0 \la)^2} {\Var}_{t^*} (\nu^j_{t^*}, \delta_{z_j})
\leq   \frac{32 H_n}{D_0^2} , \qquad \nu^i_{t^*} (  B_{i} ) 
 \geq \frac12 . 
\end{equation}
Let $Z < \infty$ be a constant whose value we will determine later and set
\[ S_i := \{ f_i \leq Z \} \cap B_i \]
Since by \cite[\HKThmUpperVolBound]{Bamler_HK_entropy_estimates}
\[ |B_i|_{t^*} \leq C  \la^n, \]
we obtain that if $Z \geq \underline{Z}$, then
\[ \nu^i_{t^*} ( B_i \setminus S_i ) 
\leq C \la^{-n} e^{-Z} |B_i|_{t^*}
\leq C  e^{-Z} \leq \frac18. \]
Using Claim~\ref{Cl_dnui_dnuj_splitting} and assuming $\beta \leq \ov\beta$, $\delta \leq \ov\delta (Y, D, \la)$, this implies that 
\begin{equation} \label{eq_ffifj_upper_bound}
 f, f_i, f_j \leq C(Y, D, \la, Z) \qquad \text{on} \quad S_i .
\end{equation}
Since by Proposition~\ref{Prop_L_infty_HK_bound}, we have $f_i , f_j \geq - C(Y)$ at time $t^*$, we obtain that on $S_i$ we have for all $l' \in \{0, i, j \}$
\[ c(Y, D, \la, Z)dg_{t^*} \leq d\nu^{l'}_{t^*} \leq C(Y, D, \la, Z)dg_{t^*}. \]
Therefore, by (\ref{eq_tauifitaujfj}), (\ref{eq_flRflWl}), we can find a further  subset $S'_i \subset S_i$ with the property that on $S'_i$ we have for $l =i,j$ at time $t^*$
\begin{align*}
 \big| |\tau_i \nabla f_i - \tau_j \nabla f_j|^2 - q \big| &\leq \Psi (\delta | Y, D, \la, Z), \\
 \big| - \tau_l (  |\nabla f_l|^2 + R) + f_l  - W_l  \big| &\leq \Psi (\delta | Y, D, \la, Z), \\
 \nu^l_{t^*} \big( S_i \setminus S'_i \big) 
&\leq \frac18. 
\end{align*}
By combining these bounds with (\ref{eq_ffifj_upper_bound}) and the bounds $-Y \leq W_l \leq 0$, $\frac1{10} \la^2 \leq \tau_l \leq 10 \la^2$, we obtain that on $S'_i$ the following holds at time $t^*$ if $\delta \leq \ov\delta (Y, D, \la, Z)$.
\[ \tau_i |\nabla f_i |^2
 \leq  \tau_i (|\nabla f_i |^2 + R) + W_i + C(Y)
 \leq f_i + C(Y) 
 \leq C(Y, Z), \]
 \begin{multline*}
 \tau_j |\nabla f_j |^2
\leq 10 \la^{-2} \tau_j^2 |\nabla f_j |^2 
\leq 20 \la^{-2} \big(   \tau_i^2 |\nabla f_i |^2 +  | \tau_i \nabla f_i - \tau_j \nabla f_j|^2 \big) \\
\leq 20  \la^{-2} \big( C(Y,Z) \la^2 + 2\la^2 \big) 
\leq C(Y,Z),
\end{multline*}
\begin{align*}
 | \tau_i f_i - \tau_j f_j - (\tau_i^2 - \tau_j^2) R |
& \leq 
 \big| \tau_i^2 |\nabla f_i|^2 - \tau_j^2 |\nabla f_j|^2 \big| + 
 |\tau_i W_i - \tau_j W_j | +  \Psi (\delta | Y, D, \la, Z) \\
 &\leq   C(Y, Z) \la \, \big| \tau_i |\nabla f_i| - \tau_j |\nabla f_j| \big| + C(Y) \la^2 +  \Psi (\delta | Y, D, \la, Z) \\
&\leq
 C(Y, Z) \la q^{1/2} + C(Y) \la^2 + \Psi (\delta | Y, D, \la, Z) \leq C(Y,Z) \la^2. 
\end{align*}
By (\ref{eq_taui_geq_tau_j}) we have $(\tau_i^2 - \tau_j^2 )R \geq - 10 \delta \la^2$.
Since $\tau_i / \tau_j \leq 100$ we obtain that $f_j \leq C(Y,Z) \leq f_i + C(Y,Z)$ on $S'_i$.
Therefore, using (\ref{eq_nu_Bi}),
\[ \frac14 \leq \nu^i_{t^*} (S'_i) =  \int_{S'_i} (4\pi \tau_i)^{-n/2} e^{-f_i} dg_{t^*}
\leq  e^{C(Y,Z)} \int_{S'_i} (4 \pi \tau_j)^{-n/2} e^{-f_j} dg_{t^*}
= e^{C(Y,Z)} \nu^j_{t^*} (S'_i) 
\leq e^{C(Y,Z)}  \frac{32 H_n}{D_0^2}, \]
which yields the desired contradiction for $D_0 \geq \underline{D}_0(Y,Z)$.
\end{proof}

\begin{Claim} \label{Cl_nab_ui_L2_bound}
For all $i = 1, \ldots, N-1$ we have
\[ \int_{-2}^{-1} \int_M  |\nabla u_{i}  |^2 d\nu_t dt \leq  C(Y, D). \]
\end{Claim}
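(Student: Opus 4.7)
The plan is to reduce the bound on $\int_{-2}^{-1} \int_M |\nabla u_i|^2 d\nu_t dt$ to two separate $L^2$-type bounds on $|\nabla f|^2$ and $|\nabla f_i|^2$ with respect to $d\nu_t$. Since $u_i = \tau_i f_i - \tau f - (W_i - W_0) \tau$, we have $\nabla u_i = \tau_i \nabla f_i - \tau \nabla f$, so
\[ |\nabla u_i|^2 \leq 2 \tau^2 |\nabla f|^2 + 2 \tau_i^2 |\nabla f_i|^2. \]
On the time range $[-2,-1]$, since $t_i \in [-\beta \la^2, \beta \la^2]$ with $\beta \leq \ov\beta$ and $\la^2 \leq \eps \leq 1$, both $\tau = -t$ and $\tau_i = t_i - t$ are bounded by $4$. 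It therefore suffices to bound
\[ I_0 := \int_{-2}^{-1} \int_M |\nabla f|^2 d\nu_t dt \qquad \text{and} \qquad I_i := \int_{-2}^{-1} \int_M |\nabla f_i|^2 d\nu_t dt \]
by a constant depending only on $Y$ and $D$.

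For $I_0$, the bound is essentially a direct application of Proposition~\ref{Prop_improved_L2} after a change of scale. We start from $\NN_{x_0, 0}(1) \geq -Y$; since the flow exists on $[-\delta^{-1}, 0]$, Lemma~\ref{Lem_lower_scal} gives $R \geq -C\delta$ on any compact sub-interval, and the doubling estimate (\ref{eq_NN_doubling}) of Proposition~\ref{Prop_NN_basic_properties} then upgrades our hypothesis to $\NN_{x_0, 0}(8) \geq -C(Y)$ for $\delta \leq \ov\delta (Y)$. Applying Proposition~\ref{Prop_improved_L2} at $(x_0, 0)$ with radius $\sqrt{2}$ and $\theta = 1/4$ (this only requires the flow on $[-4,0]$ and uses the just-established entropy lower bound at scale $2 \cdot 2 = 4 \leq 8$) directly yields $I_0 \leq C(Y)$.

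For $I_i$, the measure $d\nu_t$ is based at $(x_0, t_0)$ but the integrand is adapted to $(x_i, t_i)$, so the key step is to switch measures via Claim~\ref{Cl_dnui_dnuj_splitting}. Fix a small $\alpha \leq \ov\alpha$ (say $\alpha = \ov\alpha/2$). Then with parameters $\eta_1 = 0$, $\eta_2 = \alpha$, $\theta = 1/2$, $\eps' = 1/2$, provided $\beta \leq \ov\beta(\alpha)$ and $\delta \leq \ov\delta(Y, D)$, Claim~\ref{Cl_dnui_dnuj_splitting} gives
\[ d\nu_t \leq C(Y, D) \, e^{\alpha f_i} \, d\nu^i_t \qquad \text{on } M \times [-2, -1/2]. \]
Hence $I_i \leq C(Y, D) \int_{-2}^{-1} \int_M |\nabla f_i|^2 e^{\alpha f_i} d\nu^i_t dt$. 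By Proposition~\ref{Prop_NN_variation_bound} applied between the basepoints $(x_0, t_0)$ and $(x_i, t_i)$ (using the hypothesis $d_{W_1}^{g_{-1}}(\nu^i_{-1}, \nu^0_{-10}) \leq D$ and that $t_i$ is near $0$), we obtain $\NN_{x_i, t_i}(8) \geq -C(Y, D)$. Now apply Proposition~\ref{Prop_improved_L2} at $(x_i, t_i)$ with radius $2$ (after a time shift by $-t_i$) and $\theta$ chosen so that $[t_i - 4, t_i - 1/2] \supset [-2, -1]$; this yields
\[ \int_{-2}^{-1} \int_M |\nabla f_i|^2 e^{2\alpha f_i} d\nu^i_t dt \leq C(Y, D), \]
and in particular (since $f_i \geq -C(Y,D)$ by Proposition~\ref{Prop_L_infty_HK_bound}) the same bound with $e^{\alpha f_i}$ in place of $e^{2\alpha f_i}$.

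The main obstacle will be bookkeeping: making sure that all the scale and parameter choices in Proposition~\ref{Prop_improved_L2} (which is stated for a flow on $[-2r^2, 0]$ with hypothesis at scale $2r^2$) and in Claim~\ref{Cl_dnui_dnuj_splitting} (which forces constraints on $\beta$, $\theta$, $\alpha$, $\delta$) are simultaneously achievable by our choice of $\delta$. All of these constraints turn out to be implied by $\delta \leq \ov\delta(Y, D)$ with $\beta \leq \ov\beta$ fixed sufficiently small, because the time interval $[-2,-1]$ is a fixed compact set well inside the available domain. No almost-selfsimilarity structure (and in particular no $\la$-dependence) is needed to prove this claim — it is a universal $L^2$-bound that requires only the entropy lower bound and the $W_1$-proximity of the basepoints.
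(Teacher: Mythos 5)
Your proposal is correct and follows essentially the same route as the paper: expand $\nabla u_i = \tau_i\nabla f_i - \tau\nabla f$, bound the $\nabla f$ term directly by Proposition~\ref{Prop_improved_L2}, and bound the $\nabla f_i$ term by first switching the background measure via Claim~\ref{Cl_dnui_dnuj_splitting} (with $\eta_1=0$, $\eta_2=\alpha\leq\ov\alpha$ and fixed $\theta,\eps'$) and then applying Proposition~\ref{Prop_improved_L2} for $d\nu^i$. The paper's proof is just a terse one-line chain of inequalities with exactly this content; your extra bookkeeping around entropy doubling and the $\NN_{x_i,t_i}$ bound (which in fact is already directly in the hypotheses) is harmless.
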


\begin{proof}
Using Proposition~\ref{Prop_improved_L2} and Claim~\ref{Cl_dnui_dnuj_splitting}, we have for $\alpha \leq\ov\alpha$
\begin{multline*}
 \int_{-2}^{-1} \int_M  |\nabla u_{i}  |^2 d\nu_t dt 
 \leq  C \int_{-2}^{-1} \int_M  |\nabla f_{i}  |^2 d\nu_t dt 
 + C \int_{-2}^{-1} \int_M  |\nabla f  |^2 d\nu_t dt \\
 \leq  C(Y,D) \int_{-2}^{-1} \int_M  |\nabla f_{i}  |^2 e^{\alpha f_i} d\nu^i_t dt 
 + C \int_{-2}^{-1} \int_M  |\nabla f  |^2 d\nu_t dt 
\leq C(Y, D). \qedhere
\end{multline*}
\end{proof}
\medskip

Now consider the vector space $V \subset C^\infty (M \times [-2,-1])$ spanned by the functions
\[ \td{u}_i := u_i \big|_{M \times [-2,-1]} - \int_{-2}^{-1} \int_M u_i d\nu_t dt, \]
equipped with the scalar product
\[ (v_1,v_2) := \int_{-2}^{-1} \int_M \nabla v_1 \cdot \nabla v_2 \, d\nu_t dt \]
and norm $| v | := \sqrt{(v,v)}$.
By Claim~\ref{Cl_nab_ui_L2_bound} we have $| \td{u}_i | \leq C(Y, D)$ and by Claim~\ref{Cl_nabzizj_geq_la2} we have $| \td{u}_i - \td{u}_j | \geq  \la$ if $i \neq j$.
By Lemma~\ref{Lem_Eucl_vs_covering}, assuming $C_0 \geq \underline{C}_0 (Y, D)$, there are coefficients $(a_{li})_{ l=1,\ldots, k+1, i = 1, \ldots, N-1}$ with $|a_{li}| \leq C(Y,D,\la)$ such that
\[ \Big( y_l := \sum_{i=1}^{N-1} a_{li} \td{u}_i \Big)_{l=1, \ldots, k+1} \]
is orthonormal.
By Claim~\ref{Cl_nabzizj_qij} for any $l, m = 1, \ldots, k+1$ we have
\begin{align*}
 \int_{-\eps^{-1}}^{-\eps} \int_M & \big| \nabla y_l \cdot \nabla y_m - \delta_{lm} \big| d\nu_t dt \\
&\leq   \eps^{-1} \bigg|  \sum_{i,j=1}^{N-1} a_{li} a_{mj} q_{ij} - \delta_{lm} \bigg| 
+ \int_{-\eps^{-1}}^{-\eps} \int_M \Big| \nabla y_l \cdot \nabla y_m - \sum_{i,j=1}^{N-1} a_{li} a_{mj} q_{ij}  \Big| d\nu_t dt \\
&\leq  \Psi (\delta | Y, D, \la)
+ \eps^{-1} \bigg|  \sum_{i,j=1}^{N-1} a_{li} a_{mj} \int_{-2}^{-1} \int_M  \nabla \td u_i \cdot \nabla \td u_j  \, d\nu_t dt - \delta_{lm} \bigg| \\
&\qquad\qquad +\sum_{i,j=1}^{N-1} a_{li} a_{mj}   \int_{-\eps^{-1}}^{-\eps} \int_M \Big| \nabla u_i \cdot \nabla u_j - q_{ij}   \Big|  d\nu_t dt
\leq \Psi (\delta | Y, D,\la). 
\end{align*}
By Claim~\ref{Cl_bounds_zi}, we moreover have
\[  \int_{-\eps^{-1}}^{-\eps} \int_M  | \square y_l | d\nu_t dt \leq \Psi (\delta | Y, D,\la). \]
So $(y_1, \ldots, y_{k+1})$ is a weak $(k+1, \Psi (\delta | Y, D, \la), 1)$-splitting map at $(x_0, t_0)$.
Therefore, the proposition follows for $\delta \leq \ov\delta (Y, D, \eps, \la)$.
\end{proof}

\section{A preliminary quantitative stratification result} \label{sec_prelim_q_strat}
In this section we will prove a weak version of the quantitative stratification result, Theorem~\ref{Thm_Quantitative_Strat}.
The main difference will be that the quantitative strata $\SS^{\eps, k}_{\sigma r, r}$ are replaced by strata $\td\SS^{\eps, k}_{\sigma r, r}$ that are defined using the integral almost notions from Section~\ref{sec_int_almost_prop} instead of $\IF$-closeness to a metric soliton with certain properties.

\begin{Definition} \label{Def_SS_weak}
Let $(M, (g_t)_{t \in I})$ be a Ricci flow.
For $\eps > 0$ and $0 < r_1 < r_2 \leq \infty$ the effective strata
\[ \td\SS^{\eps, 0}_{r_1, r_2} \subset \td\SS^{\eps, 1}_{r_1, r_2} \subset \td\SS^{\eps,2}_{r_1, r_2} \subset \ldots \subset \td\SS^{\eps,n+2}_{r_1, r_2} \subset  M \times I \]
are defined as follows:
$(x',t') \in \td\SS^{\eps, k}_{r_1, r_2}$ if and only for all $r' \in (r_1, r_2)$ none of the following two properties hold:
\begin{enumerate}
\item $(x',t')$ is $(\eps, r')$-selfsimilar and weakly $(k+1, \eps, r')$-split.
\item $(x',t')$ is $(\eps, r')$-selfsimilar, $(\eps, r')$-static and weakly $(k-1, \eps, r')$-split.
\end{enumerate}
\end{Definition}

We recall that by definition the property of being $(k, \eps, r')$-split is vacuous for $k \leq 0$.
Also, by definition, if a point $(x,t) \in M \times I$ is $(\eps, r)$-selfsimilar, then $[t - \eps^{-1} r^2, t] \subset I$.

Note that, in contrast to Definition~\ref{Def_SS_quantitative}, the strata $\td\SS^{\eps, k}_{r_1, r_2}$ are defined up to the  maximal index $n+2$, since we lack a proper local regularity theory at this moment.

The following proposition is the main result of this section.
Recall the notion of $P^*$-parabolic balls from Subsection~\ref{subsec_parabo_nbhd}.

\begin{Proposition}
\label{Prop_Quantitative_Strat_preliminary}
If $Y < \infty$, $\eps > 0$, 
then the following holds.
Let $(M, (g_t)_{t \in I})$ be a Ricci flow on a compact manifold, $(x_0, t_0) \in M \times I$ a point and $r > 0$ a scale with $[t_0 - 2 r^2, t_0] \subset I$.
Assume that $\NN_{x_0,t_0} (r^2) \geq - Y$ and consider the effective strata $\td\SS^{\eps,k}_{r_1 ,r_2} \subset M \times I$ from Definition~\ref{Def_SS_weak}.
Then for any $\sigma \in (0, \eps)$ there are points $(x_1, t_1), \ldots, (x_N, t_N) \in \td\SS^{\eps,k}_{\sigma r, \eps r} \cap P^* (x_0, t_0; r)$ with  $N \leq C(Y, \eps) \sigma^{-k-\eps}$ and
\[ \td\SS^{\eps, k}_{\sigma r, \eps r} \cap P^* (x_0, t_0; r) \subset \bigcup_{i=1}^N P^* (x_i, t_i; \sigma r). \]
\end{Proposition}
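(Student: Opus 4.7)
The strategy is to adapt the Cheeger--Naber quantitative stratification argument to the Ricci flow setting. The monotone quantity is the pointed Nash entropy $\NN_{x,t}(\tau)$, whose variation is controlled by Propositions~\ref{Prop_NN_basic_properties} and \ref{Prop_NN_variation_bound}; the quantitative ``cone splitting'' ingredient is supplied by Propositions~\ref{Prop_almost_split_Rk} and \ref{Prop_almost_static}; and the bridge between almost-constancy of $\NN$ and almost-selfsimilarity is Proposition~\ref{Prop_NN_almost_constant_selfsimilar}. I would fix an auxiliary scale ratio $\sigma_0 = \sigma_0(\eps) \in (0,\eps)$ chosen small enough that any multiplicative constant $N_0$ arising at a single refinement step satisfies $\log N_0/\log(\sigma_0^{-1}) < \eps$. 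The cover will be produced by iterating a single refinement step roughly $\log(\sigma^{-1})/\log(\sigma_0^{-1})$ times starting from $P^*(x_0,t_0;r)$.

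The refinement step is the following claim: if $(x^*,t^*) \in M \times I$ satisfies $\NN_{x^*,t^*}((r')^2) \geq -Y_1$ for some $Y_1 = Y_1(Y,\eps)$ and $r' \in [\sigma r, r]$, then $\td\SS^{\eps,k}_{\sigma r, \eps r} \cap P^*(x^*,t^*;r')$ admits a cover by at most $N_0(Y_1,\eps)\sigma_0^{-k}$ many $P^*$-balls of radius $\sigma_0 r'$. Iterating this claim yields the proposition, and a uniform lower bound on the Nash entropy at each new center propagates via Proposition~\ref{Prop_NN_variation_bound}, since all centers lie within nested $P^*$-balls of bounded rescaled radius.

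I would prove the refinement claim by contradiction. Choose a maximal $(\sigma_0 r')$-separated family $\{(x_i,t_i)\}_{i=1}^N \subset \td\SS^{\eps,k}_{\sigma r, \eps r} \cap P^*(x^*,t^*;r')$, with the separation measured by $W_1$-distances of conjugate heat kernels at a fixed time $t_{\min} - 2\sigma_0^2(r')^2$, so as to match the hypotheses of Proposition~\ref{Prop_almost_split_Rk}. If $N \leq N_0\sigma_0^{-k}$ we are done by Proposition~\ref{Prop_basic_parab_nbhd}, so assume $N > N_0\sigma_0^{-k}$. Since each $\NN_{x_i,t_i}$ is monotone non-increasing and bounded, pigeon-holing over the finitely many dyadic subscales in $[\sigma_0 r', r']$ yields a single scale $r'' \in [\sigma_0 r', r']$ at which $\NN_{x_i,t_i}$ varies by less than an arbitrarily small $\delta$ across $[\delta (r'')^2, \delta^{-1}(r'')^2]$ for every $i$; Proposition~\ref{Prop_NN_almost_constant_selfsimilar} then makes every $(x_i,t_i)$ a $(\delta', r'')$-selfsimilar point for any prescribed $\delta'$. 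If the times $t_i$ cluster within a subinterval of length $\leq \beta (r'')^2$ for the $\beta$ of Proposition~\ref{Prop_almost_split_Rk}, that proposition applied at $(x_1,t_1)$ produces a weak $(k+1,\eps,r'')$-splitting map, violating alternative~(1) of Definition~\ref{Def_SS_weak}. Otherwise some pair satisfies $t_j - t_i \geq \beta (r'')^2$; Proposition~\ref{Prop_almost_static} then shows $(x_i,t_i)$ is $(\eps,r'')$-static, and a further pigeon-hole in time (using that the temporal spread of $P^*(x^*,t^*;r')$ is at most $(r')^2 \leq \sigma_0^{-2}(r'')^2$) recovers a subcluster of at least $c \beta \sigma_0^2 N$ selfsimilar, separated points in a single time-interval of width $\leq \beta (r'')^2$. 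Applying Proposition~\ref{Prop_almost_split_Rk} inside this cluster yields a weak $(k-1,\eps,r'')$-splitting map at $(x_i,t_i)$, violating alternative~(2) of Definition~\ref{Def_SS_weak}. Either case contradicts the assumption that the family lies in $\td\SS^{\eps,k}_{\sigma r, \eps r}$.

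The principal obstacle is the quantitative coordination of all auxiliary parameters: the separation and selfsimilarity thresholds demanded by Propositions~\ref{Prop_almost_split_Rk} and \ref{Prop_almost_static} must be attainable simultaneously at a common scale produced by the Nash entropy pigeon-hole, and the $W_1$-separation at the fixed ``reference time'' built into the $P^*$-ball structure must translate, at the rescaled scale $r'' \in [\sigma_0 r', r']$, into the $W_1$-separation required by the splitting hypothesis. This translation is underpinned by the monotonicity of $W_1$ along conjugate heat flows (Proposition~\ref{Prop_monotonicity_W1}) and by the containment properties of $P^*$-balls (Proposition~\ref{Prop_basic_parab_nbhd}). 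A secondary obstacle is the need to choose $N_0$ so large (depending on $Y_1, \eps$) that the pigeon-hole counts close; this is exactly what forces the absorption $\log N_0/\log(\sigma_0^{-1}) < \eps$ and hence the final $\sigma^{-\eps}$ factor in the bound $N \leq C(Y,\eps)\sigma^{-k-\eps}$.
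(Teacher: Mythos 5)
Your overall architecture is right: an iterative refinement with constants absorbed into $\sigma^{-\eps}$, the cone-splitting lemma (Proposition~\ref{Prop_almost_split_Rk}) for the weakly-split alternative, the static cone-splitting (Proposition~\ref{Prop_almost_static}) when times spread out, and Proposition~\ref{Prop_NN_almost_constant_selfsimilar} to convert almost-constancy of $\NN$ into almost-selfsimilarity. The two-case split by temporal clustering is exactly what the paper packages into Lemma~\ref{Lem_quant_strat_preparation}.

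The gap is in the refinement step, specifically the claim that ``pigeon-holing over the finitely many dyadic subscales in $[\sigma_0 r', r']$ yields a single scale $r''$ at which $\NN_{x_i,t_i}$ varies by less than $\delta$ ... \emph{for every} $i$.'' This does not follow: Proposition~\ref{Prop_NN_basic_properties} bounds the number of bad scales per point by some $Q(Y,\lambda,\delta)$, but different points may be bad at different scales, and since you have assumed $N > N_0 \sigma_0^{-k}$ (large) while the number of dyadic scales in $[\sigma_0 r', r']$ is only about $\log_2(\sigma_0^{-1})$ (fixed once $\sigma_0$ is fixed), double counting gives at best a scale where a bounded \emph{fraction} of the points are good. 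You then face a second problem even on a good subset: the selfsimilarity scale $r''$ delivered by the pigeon-hole ranges over all of $[\sigma_0 r', r']$, while the separation is fixed at $\sigma_0 r'$; if $r''$ is near $r'$ this is a $\lambda$-separation at scale $r''$ with $\lambda = \sigma_0 r'/r'' $, which can be close to $1$ and thus fails the requirement $\lambda \leq \ov\lambda(\eps)$ in Proposition~\ref{Prop_almost_split_Rk}. In short, the refinement step as stated --- a fixed $N_0\sigma_0^{-k}$ covering of $\td\SS$ inside \emph{every} $P^*$-ball at \emph{every} scale, hypothesizing only an entropy lower bound at the center --- is not established, and I don't see how to establish it: the center need not be selfsimilar at the current scale, so nothing forces the separated family to be small there.

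The paper's way around this is the Cheeger--Naber ``good/bad scale profile'' device. Fix $\lambda$ and set $m$ with $\sigma \approx \lambda^m$; assign to each $(x,t) \in P^*(x_0,t_0;1)$ a bit-string $(o_0,\ldots,o_{m-1})$ recording at which scales $\lambda^j$ it fails to be $(\delta,\lambda^j)$-selfsimilar. By Proposition~\ref{Prop_NN_almost_constant_selfsimilar} each string has at most $Q$ ones, so there are at most $m^Q$ nonempty profile classes. Within a single class you refine one scale level at a time: at scale $j$ with $o_j = 0$ you know \emph{every} point in the class is selfsimilar at that scale, so Lemma~\ref{Lem_quant_strat_preparation} gives a $C_1\lambda^{-k}$ covering (with separation and selfsimilarity both at scale $\lambda^j$, so no mismatch); at scale $j$ with $o_j = 1$ you fall back to the trivial covering with $C_2(\lambda)$ balls. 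Since $\sum o_j \leq Q$, the bad contribution is $C_2^{Q+1}$, a constant, and summing over the $m^Q$ classes gives $N \leq m^Q C_2^{Q+1}(C_1\lambda^{-k})^m$, which after choosing $\lambda$ with $\lambda^{-\eps/2} \geq C_1$ becomes $\leq C(Y,\eps)\sigma^{-k-\eps}$. So the partition by scale-profile, rather than a pigeon-hole for a common scale, is what makes the induction close.
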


So in order to derive Theorem~\ref{Thm_Quantitative_Strat} from Proposition~\ref{Prop_Quantitative_Strat_preliminary}, we need to relate the almost properties from Definition~\ref{Def_SS_weak} to properties about $\IF$-closeness as in Definition~\ref{Def_SS_quantitative} and prove an $\eps$-regularity theorem for the points in the complement of $\td\SS^{\eps,n-2}_{\sigma r, \eps r}$.

Let us now focus on the proof of Proposition~\ref{Prop_Quantitative_Strat_preliminary}, which will be an adaptation of the arguments from \cite{Cheeger-Naber-quantitative}.
The following lemma is a consequence of Propositions~\ref{Prop_almost_static}, \ref{Prop_almost_split_Rk}.

\begin{Lemma} \label{Lem_quant_strat_preparation}
If
\begin{equation} \label{eq_splitting_cor_bounds}
Y < \infty, \qquad
\eps > 0, \qquad
 \la \leq \ov\la (\eps), \qquad 
\delta \leq \ov\delta (Y,  \eps, \la),  
\end{equation}
then the following holds.
Let $(M, (g_t)_{t \in I})$ be a Ricci flow on a compact manifold and $k \geq 0$, $r > 0$.
Denote by $S \subset M \times I$ the set of points that are $(\delta, r)$-selfsimilar and not:
\begin{enumerate}[label=(\roman*)]
\item weakly $(k+1, \eps, r)$-split or 
\item weakly $(k-1, \eps,  r)$-split and $(\eps, r)$-static.
\end{enumerate}
Consider a point $(x^*, t^*) \in M \times I$ such that $\NN_{x, t} (r^2) \geq - Y$ for all $(x,t) \in P^* (x^*, t^*; r)$.
Then there are points $(x_1, t_1), \ldots, (x_{N}, t_{N}) \in S \cap P^* (x^*, t^*; r)$ with  $N \leq C_1 (Y) \la^{-k}$ and
\begin{equation} \label{eq_S_covered_parab_nbhd}
 S \cap P^* (x^*, t^*; r) \subset \bigcup_{i=1}^N P^* (x_i, t_i; \la r). 
\end{equation}
\end{Lemma}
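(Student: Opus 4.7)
The plan is a covering argument in the spirit of Cheeger--Naber. After parabolically rescaling to $r = 1$, I would extract a maximal family $\mathcal{F} = \{(x_i, t_i)\}_{i=1}^N \subset S \cap P^*(x^*, t^*; 1)$ whose $P^*$-parabolic balls of radius comparable to $D_0 \la$ are pairwise disjoint; a $P^*$-Vitali argument (via \HKBasicCovering) then guarantees that $\{P^*(x_i, t_i; \la)\}_{i=1}^N$ covers $S \cap P^*(x^*, t^*; 1)$ and that the centers satisfy the $\geq D_0\la$-separation hypothesis at time $t_0 - 2\la^2$ required by Proposition~\ref{Prop_almost_split_Rk}. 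The uniform Nash-entropy bound on the $P^*$-ball, combined with the $W_1$-triangle inequality and the monotonicity of Proposition~\ref{Prop_monotonicity_W1}, supplies a universal distance parameter $D = D(Y)$ for every later invocation of Propositions~\ref{Prop_almost_static} and~\ref{Prop_almost_split_Rk}. The remaining task is to prove $N \leq C_1(Y) \la^{-k}$.

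The crucial step is to couple the free parameters of the two propositions: fix $\beta$ for Proposition~\ref{Prop_almost_split_Rk} as a small universal constant $\beta_4 \leq \ov\beta$, but invoke Proposition~\ref{Prop_almost_static} with the smaller time-gap threshold $\beta_1 := \beta_4 \la^2$, so that $\beta_1 r^2$ exactly matches the width $\beta_4 (\la r)^2$ of the narrow time window demanded by Proposition~\ref{Prop_almost_split_Rk}. With this choice the admissibility constraint $\delta \leq \ov\delta(Y, D, \beta_1, \eps)$ of Proposition~\ref{Prop_almost_static} collapses to $\delta \leq \ov\delta(Y, \eps, \la)$, matching the hypothesis of the lemma. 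Next I would split $\mathcal{F} = \mathcal{F}_1 \sqcup \mathcal{F}_2$ according to whether a point is $(\eps, 1)$-static (Type~1) or not (Type~2); since every point of $\mathcal{F}$ lies in $S$, Type~1 points must fail weak $(k-1, \eps, 1)$-splitting while Type~2 points must fail weak $(k+1, \eps, 1)$-splitting.

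To bound $|\mathcal{F}_2|$, note that if two Type~2 points had time-difference in $[\beta_1, \beta_1^{-1}]$, Proposition~\ref{Prop_almost_static} would make the earlier one $(\eps, 1)$-static, a contradiction; since all times in $P^*(x^*, t^*; 1)$ span at most $2$, the upper bound is automatic and all of $\mathcal{F}_2$ fits in a single narrow window of width $\beta_1 = \beta_4 \la^2$. Applying Proposition~\ref{Prop_almost_split_Rk} at the earliest Type~2 point with stratum $k$ then forces $|\mathcal{F}_2| \leq C_0(Y) \la^{-k}$, since otherwise that point would be weakly $(k+1, \eps, 1)$-split. For $|\mathcal{F}_1|$, I would partition the time axis into $\leq C \la^{-2}$ bins of width $\beta_4 \la^2$ and apply Proposition~\ref{Prop_almost_split_Rk} at the earliest Type~1 point in each bin with stratum $k-2$ (Type~1 points are not weakly $(k-1, \eps, 1)$-split), obtaining $\leq C_0(Y) \la^{-(k-2)}$ points per bin and hence $|\mathcal{F}_1| \leq C(Y) \la^{-k}$ in total. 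Combining yields $N \leq C_1(Y) \la^{-k}$.

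The main obstacle is the coupled choice $\beta_1 = \beta_4 \la^2$: the whole argument rests on this compatibility, which forces the time-separation floor of Proposition~\ref{Prop_almost_static} to align with the narrow-window width of Proposition~\ref{Prop_almost_split_Rk}. Without this alignment, Type~2 points would spread across many time bins and the count would degrade to $\la^{-k-2}$. For Type~1 points the $\la^{-2}$ factor from time-bin pigeonholing is unavoidable, but it is exactly compensated by the stratum shift from $k$ to $k-2$ in Proposition~\ref{Prop_almost_split_Rk}, reflecting the fact that the static direction contributes $2$ parabolic dimensions to the stratification. A secondary technical point is verifying that $D$ can be chosen depending only on $Y$ throughout, which follows routinely from the $P^*$-ball containments in Proposition~\ref{Prop_basic_parab_nbhd} together with $W_1$-monotonicity.
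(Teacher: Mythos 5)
Your high-level strategy — Vitali covering, dichotomy between static (Type 1) and non-static (Type 2) points, invoking Proposition~\ref{Prop_almost_static} with $\beta = \beta_4\la^2$ to force Type 2 points into one time window, and pigeonholing Type 1 points into $\la^{-2}$-many bins and invoking Proposition~\ref{Prop_almost_split_Rk} with stratum $k-2$ — is correct in spirit, and your Type 1/Type 2 decomposition is a somewhat cleaner reformulation of the paper's contradiction argument. But there is a genuine gap in the Vitali/separation step that is not a routine detail.

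You claim that a single Vitali extraction simultaneously gives (a) a cover of $S \cap P^*(x^*, t^*; 1)$ by $P^*$-balls of radius $\la$ and (b) $W_1$-separation $\geq D_0\la$ at the late time $t_0 - 2\la^2$. These are incompatible. If the disjoint balls have radius $\rho$, the Vitali/tripling argument (Proposition~\ref{Prop_basic_parab_nbhd}\ref{Prop_basic_parab_nbhd_d}) covers at radius $3\rho$, so $\rho \leq \la/3$ is forced for (a). But then the disjointness only gives $W_1$-separation $\geq \rho$, at the reference time $t_i - \rho^2 = t_i - \rho^2$. With $\rho = \la/10$ and $|t_i - t_0| \leq \beta\la^2$ small, that reference time $t_i - \la^2/100$ lies \emph{strictly later} than $t_0 - 2\la^2$. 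Since $W_1$-distance along conjugate heat flows is non-\emph{de}creasing in $t$ (Proposition~\ref{Prop_monotonicity_W1}), the separation at the later time tells you nothing about the earlier time $t_0 - 2\la^2$ — the monotonicity goes the wrong way. Conversely, taking $\rho = D_0\la$ (as your wording suggests) \emph{does} give separation at times earlier than $t_0 - 2\la^2$ and hence propagates forward by monotonicity, but then the covering is at radius $\sim 3D_0\la \gg \la$, so (\ref{eq_S_covered_parab_nbhd}) fails.

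The correct resolution, which the paper carries out inside the proof of Lemma~\ref{Lem_quant_strat_preparation} (its internal Claim~\ref{Cl_case_all_ti_close}), is to use the Vitali family at radius $\sim \la/10$ for the covering and then apply Proposition~\ref{Prop_almost_split_Rk} not at scale $\la$ but at a \emph{shrunk} scale $\la' = \la/(100D_0)$. The required separation $D_0\la' = \la/100$ at time $t_0 - 2(\la')^2 = t_0 - 2\la^2/(100D_0)^2$ is then indeed implied by the $\la/10$-disjointness at reference time $t_i - (\la/10)^2$, precisely because $(\la/10)^2 > 2(\la')^2 + \beta\la^2$ for $\beta \leq \ov\beta$, so the reference time lies before $t_0 - 2(\la')^2$ and monotonicity can be applied in the admissible direction. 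This rescaling costs a factor $(100D_0)^{k'}$ in the cardinality threshold of Proposition~\ref{Prop_almost_split_Rk}, which is then absorbed into $C_1(Y)$ since $D_0 = D_0(Y)$. Once you insert this rescaling into both your Type 1 and Type 2 counts, your argument goes through; without it, the separation hypothesis of Proposition~\ref{Prop_almost_split_Rk} is simply not verified.
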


\begin{proof}
Without loss of generality, we may assume that $r = 1$ and $t^* = 0$.
If $S \cap P^* (x^*, t^*; r) = \emptyset$, then we are done.
Otherwise, by the definition of selfsimilarity we have $[-4, 0] \subset [-\delta^{-1}, 0] \subset I$ for $\delta \leq \ov\delta$.
After adjusting $Y$, we may also assume that $\NN_{x, t}(2) \geq - Y$ for all $(x,t) \in P^* (x^*, t^*; r)$.
Let $\beta > 0$ be a constant whose value we will determine in the course of the proof.
The following claim is a consequence of Proposition~\ref{Prop_almost_split_Rk}.

\begin{Claim} \label{Cl_case_all_ti_close}
Suppose that $\beta \leq \ov\beta$ and that bounds of the form (\ref{eq_splitting_cor_bounds}) hold.
Consider $(\delta, r)$-selfsimilar points $(x_1, t_1), \ldots, (x_{N}, t_{N}) \in P^* (x^*, t^*; 1)$ with $t_1 \leq \ldots \leq t_N$.
Assume that $|t_i - t_j | \leq  \beta \la^2$ for all $i,j = 1, \ldots, N$ and assume that the parabolic balls $P^* (x_i, t_i; \tfrac1{10} \la)$, $i = 1, \ldots, N$, are pairwise disjoint.
If $N \geq 10^{-2} \beta C_1(Y,  \beta) \la^{-k'}$ for some $k' \in \{ 0, \ldots,n \}$, then the point $(x_1, t_1)$ is weakly $(k'+1, \eps,1)$-split.
\end{Claim}

\begin{proof}
By Proposition~\ref{Prop_monotonicity_W1}, we have for any $i = 1, \ldots, N$
\[ d^{g_{t_1 - 2}}_{W_1} (\nu_{x_i,t_i; t_1 - 2}, \nu_{x_1, t_1; t_1 - 2} ) 
\leq d^{g_{-1}}_{W_1} (\nu_{x_i,t_i;-1}, \nu_{x_1, t_1; -1} ) 
\leq 1. \]
Fix $D_0 \geq \underline{D}_0 (Y)$ according to Proposition~\ref{Prop_almost_split_Rk} and assume that $D_0 \geq 100$.
If $\beta \leq \ov\beta$, then for all $i, j = 1, \ldots, N$ with $i \neq j$, the fact that $(x_j, t_j) \not\in P^* (x_i, t_i; \tfrac1{10} \la)$ and Proposition~\ref{Prop_monotonicity_W1} imply that 
\begin{multline*}
 d_{W_1}^{g_{ t_1 - 4 (\la/100D_0)^2}} (\nu_{x_i,t_i; t_1 - 4 (\la/100D_0)^2}, \nu_{x_j, t_j; t_1 - 4 (\la/100D_0)^2} ) 
\geq d_{W_1}^{g_{ t_i -  (\la/10)^2}} (\nu_{x_i,t_i; t_i -  (\la/10)^2}, \nu_{x_j, t_j; t_i -  (\la/10)^2} ) \\
\geq \la / 10 \geq D_0 \cdot ( \la/ 100 D_0) \cdot \sqrt{2}. 
\end{multline*}
So assuming $\beta \leq \ov\beta$ and bounds of the form (\ref{eq_splitting_cor_bounds}), we may apply Proposition~\ref{Prop_almost_split_Rk} at $(x_1, t_1)$ with $r = \sqrt{2}$, $\la$ replaced with $\la / 100 D_0$ and $D=1$.
We obtain that if $N \geq  C_0 (Y,  1)(100 D_0)^{k'} \la^{-k'}$, then the points $(x_1, t_1)$ is weakly $(k'+1, \eps, 1)$-split.
Lastly, we set $C_1 := 100  C_0 (Y,  1)\cdot (100 D_0)^{k'} \beta^{-1} C_0$.
\end{proof}

Choose $(x_1, t_1), \ldots, (x_N, t_N) \in S \cap P^* (x^*, t^*; 1)$ with $N$ maximal such that the parabolic neighborhoods $P^* (x_i, t_i; \tfrac1{10}\la )$ are pairwise disjoint.
By reindexing, we may also assume that $t_1 \leq \ldots \leq t_N$.
We first claim that (\ref{eq_S_covered_parab_nbhd}) holds.
To see this consider a point $(x, t) \in S \cap P^*(x^*, t^*; 1)$.
By maximality of $N$ we have $P^* (x, t; \tfrac1{10}\la ) \cap P^* (x_i, t_i; \tfrac1{10}\la ) \neq \emptyset$ for some $i \in \{ 1, \ldots, N \}$, which implies $(x,t) \in P^* (x_i, t_i; \la )$ by Proposition~\ref{Prop_basic_parab_nbhd}.

Assume now by contradiction that $N > C_1 \la^{-k}$, where $C_1$ is the constant from the Claim~\ref{Cl_case_all_ti_close}.
Then by Claim~\ref{Cl_case_all_ti_close} there cannot be a subset $\mathcal{I}' \subset \{ 1, \ldots, N \}$ of size $|\mathcal{I}'| \geq N/2$ such that $|t_i - t_j| \leq \beta\la^2$ for all $i,j \in \mathcal{I}'$.
This implies that for any $i,j = 1, \ldots, N$ with $j - i \geq N/2$ we have $|t_i - t_j| \geq \beta\la^2$.
Therefore, assuming $\delta \leq \ov\delta (Y,  \beta, \la,  \eps)$, Proposition~\ref{Prop_almost_static} applied with $r =  1$ and $\beta$ replaced with $\beta \la^2$ implies that all points $(x_i, t_i)$, $i = 1, \ldots, \lfloor N/2 \rfloor$, must be $(\eps, 1)$-static.
Thus $k > 2$ and all these points are not $(k-2, \eps, r)$-split.
Choose a subset $\mathcal{I} \subset \{ 1, \ldots, \lfloor N/2 \rfloor \}$ such that $|t_i  - t_j | \leq \beta \la^2$ for all $i,j \in \mathcal{I}$ and
\[ |\mathcal{I} | \geq 10^{-1} \beta \la^2 \lfloor N/2 \rfloor \geq  10^{-2} \beta C_1 \la^{-(k-2)}. \]
Applying again Claim~\ref{Cl_case_all_ti_close} to all points $(x_i, t_i)$, $i \in \mathcal{I}$, for $k' = k-2$, yields the desired contradiction.
\end{proof}
\bigskip

\begin{proof}[Proof of Proposition~\ref{Prop_Quantitative_Strat_preliminary}.]
By parabolic rescaling, we may assume without loss of generality that $r = 1$.
Moreover, by Proposition~\ref{Prop_NN_variation_bound} and after adjusting $Y$ if necessary, we may assume that $\NN_{x,t}(1) \geq - Y$ for all $(x,t) \in P^* (x_0, t_0; 1)$.
Now fix $Y, \eps$ and choose constants $C_1$ and $ \la,\delta \leq \eps$ such that we can apply Lemma~\ref{Lem_quant_strat_preparation}.
For the remainder of this proof we will assume $C_1= C_1(Y)$ to be fixed.
We will impose further assumptions on $\la$ of the form $\la \leq \ov\la ( Y,  \eps)$.
This will also influence the precise choice of $\delta$.

Choose $m \in \IN$ such that $\sigma \in [\lambda^{m}, \la^{m-1}]$. 

By Proposition~\ref{Prop_NN_almost_constant_selfsimilar} for any $(x,t) \in P^* (x_0, t_0; 1)$ the number of powers $j \in \IN_0$ for which $(x,t)$ is not $(\delta, \la^j)$-selfsimilar is bounded by some constant $Q(Y,  \la, \delta) < \infty$.
For any finite sequence $o_0, \ldots, o_{m-1} \in \{ 0, 1 \}$, we set
\[ X(o_0, \ldots, o_{m-1}) := \left\{ (x,t) \in P^* (x_0, t_0; 1)  \;\; : \;\;  \text{$(x,t)$ is $(\delta, \la^j)$-selfsimilar if and only if $o_j = 0$} \right\}. \]
So $X(o_0, \ldots, o_{m-1}) = \emptyset$ if $o_0 + \ldots + o_{m-1} > Q$ and therefore, $P^* (x_0, t_0; 1)$ is the union of at most
\[ {m-1 \choose Q} \leq m^Q  \]
many non-empty subsets of the form $X(o_0, \ldots, o_{m-1})$.

\begin{Claim}
There is a constant $C_2( \la) < \infty$ such that for any $j = 0, \ldots, m-1$, any $o_0, \ldots, o_{m-1} \in \{ 0, 1 \}$ and any $(x^*, t^*) \in  P^*(x_0, t_0;1)$ with $(x^*, t^*) = (x_0, t_0)$ if $j=0$ there are points
\[ (x'_1, t'_1), \ldots, (x'_{N'}, t'_{N'})\in \td\SS^{\eps, k}_{\sigma, \eps} \cap X(o_0, \ldots, o_{m-1}) \cap P^* (x^*, t^*; \la^{j})  \]
with
\[ \td\SS^{\eps, k}_{\sigma, \eps} \cap X(o_0, \ldots, o_{m-1}) \cap P^*(x^*, t^*; \la^{j})  \subset \bigcup_{i=1}^{N'} P^*(x'_i, t'_i; \la^{j+1}), \]
where
\[ N' \leq \begin{cases} C_1 \la^{-k} & \text{if $o_j = 0$ and $j \geq 1$} \\ C_2 & \text{if $o_j = 1$ or $j =0$} \end{cases}. \]
\end{Claim}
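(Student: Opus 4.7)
The plan is to split on the two cases in the claim's bound and apply the two ingredients at our disposal: Lemma~\ref{Lem_quant_strat_preparation} in the ``good'' scales where $(\delta,\la^j)$-selfsimilarity holds, and a raw volumetric Vitali covering otherwise. First, observe the scale $\la^j$ behaves nicely: since $\sigma \in [\la^m,\la^{m-1}]$ and $\la \leq \eps$, for every $j \in \{1,\ldots,m-1\}$ we have $\la^j \in (\sigma,\eps)$ (after an arbitrarily small adjustment of $\la$ to convert the weak to a strict inequality, which only affects constants). Consequently, by the very definition of $\td\SS^{\eps,k}_{\sigma,\eps}$, every point $(x,t) \in \td\SS^{\eps,k}_{\sigma,\eps}$ fails to be weakly $(k+1,\eps,\la^j)$-split, and also fails to be simultaneously $(\eps,\la^j)$-static and weakly $(k-1,\eps,\la^j)$-split.

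Consider first the case $o_j = 0$ and $j \geq 1$. Then every point of $X(o_0,\ldots,o_{m-1})$ is by definition $(\delta,\la^j)$-selfsimilar. Combined with the previous paragraph, every point of $\td\SS^{\eps,k}_{\sigma,\eps} \cap X(o_0,\ldots,o_{m-1})$ lies in the set $S$ appearing in Lemma~\ref{Lem_quant_strat_preparation} (with the roles of $r$ and $\la r$ played by $\la^j$ and $\la^{j+1}$). Since $(x^*,t^*) \in P^*(x_0,t_0;1)$, Proposition~\ref{Prop_NN_variation_bound} (after the preliminary adjustment of $Y$ made at the beginning of the proof) gives $\NN_{x,t}((\la^j)^2) \geq -Y'$ for all $(x,t) \in P^*(x^*,t^*;\la^j)$, so the hypotheses of Lemma~\ref{Lem_quant_strat_preparation} are met with $r = \la^j$. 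We conclude directly from that lemma that the intersection can be covered by $N' \leq C_1\la^{-k}$ many $P^*$-parabolic balls of radius $\la^{j+1}$ centered in the set itself.

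For the case $o_j = 1$ or $j = 0$ (where we lack any structural information at scale $\la^j$), we fall back on a pure Vitali-type covering. Choose $(x'_1,t'_1),\ldots,(x'_{N'},t'_{N'})$ in $\td\SS^{\eps,k}_{\sigma,\eps} \cap X(o_0,\ldots,o_{m-1}) \cap P^*(x^*,t^*;\la^j)$ maximally so that the $P^*$-balls $P^*(x'_i,t'_i;\la^{j+1}/10)$ are pairwise disjoint; by the containment relationships of Proposition~\ref{Prop_basic_parab_nbhd}\ref{Prop_basic_parab_nbhd_d}, the balls $P^*(x'_i,t'_i;\la^{j+1})$ then cover the target set. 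The bound on $N'$ follows from a parabolic volume estimate: by the lower bound on $|P^*(x,t;r)|$ from \cite[\HKSecPstar]{Bamler_HK_entropy_estimates}, each disjoint ball has parabolic volume $\geq c(Y)(\la^{j+1})^{n+2}$, while all these balls are contained in an enlargement $P^*(x^*,t^*;C\la^j)$ of parabolic volume $\leq C(Y)(\la^j)^{n+2}$, yielding $N' \leq C_2(Y,\la) \la^{-(n+2)}$, which is a constant depending only on $\la$ (with $Y$ absorbed as a global parameter). Alternatively, this step is an immediate application of the Basic Covering Theorem \cite[\HKBasicCovering]{Bamler_HK_entropy_estimates}.

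The main subtlety lies not in the two cases individually but in ensuring the endpoint condition $\la^j \in (\sigma,\eps)$ is genuinely available when we invoke the definition of $\td\SS^{\eps,k}_{\sigma,\eps}$: with the slight wiggle room obtained by fixing $\la \leq \ov\la(\eps)$ strictly less than $\eps$ and noting that $\sigma \leq \la^{m-1}$ with strict inequality unless $\sigma$ lies exactly at an endpoint (a measure-zero case handled by a standard approximation), this is routine. Modulo these bookkeeping details, the two cases combine to give the advertised constants $C_1\la^{-k}$ and $C_2(\la)$, completing the claim.
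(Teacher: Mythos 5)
Your argument matches the paper's: for $o_j = 0$ and $j \geq 1$ you correctly check that $\td\SS^{\eps,k}_{\sigma,\eps} \cap X(o_0,\ldots,o_{m-1})$ sits inside the set $S$ of Lemma~\ref{Lem_quant_strat_preparation} at scale $\la^j$ (using $\la^j \in (\sigma,\eps)$, and that $(\delta,\la^j)$-selfsimilarity together with membership in $\td\SS^{\eps,k}_{\sigma,\eps}$ forces the required non-splitting conditions), and you invoke that lemma; for $o_j = 1$ or $j = 0$ you fall back on a covering argument. The paper's treatment of the second case is exactly the citation of \cite[\HKBasicCovering]{Bamler_HK_entropy_estimates} that you offer as an alternative; your primary Vitali-plus-volume route would additionally require a lower spacetime-volume bound for $P^*$-parabolic balls that is not plainly available in the cited section (only the upper bound and the covering theorem are quoted), so the fallback citation is the cleaner route, but either way the claim is correctly established.
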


\begin{proof}
If $o_{j} = 0$ and $j \geq 1$, then the claim follows from Lemma~\ref{Lem_quant_strat_preparation} with $S:= \td\SS^{\eps, k}_{\sigma, \eps} \cap X(o_0, \lb \ldots,  \lb o_{m-1})$.
If $o_{j} = 1$ or $j =0$, then the claim follows from \cite[\HKBasicCovering]{Bamler_HK_entropy_estimates} for some constant of the form $C_2( \la) < \infty$.
\end{proof}

Using the claim, we obtain by induction that $\td\SS^{\eps, k}_{\sigma, \eps} \cap X(o_0, \ldots, o_{m-1}) \cap P^*(x_0, t_0, 1)$ can be covered by at most
\[ C_2^{1+\sum o_j} (C_1 \la^{-k} )^{m-1- \sum o_j} \]
many parabolic balls of the form $P^* (x', t'; \sigma ) \supset P^* (x', t'; \la^{m})$.
So by the argument above the claim, we find that $\td\SS^{\eps, k}_{\sigma, \eps}  \cap P^* (x_0, t_0; 1)$ can be covered by at most
\[ N \leq m^{Q} C_2^{Q+1} (C_1 \la^{-k} )^{m} 
\leq C (Y,  \la, \delta) m^{Q} C_1^m \la^{-mk} \]
many such parabolic balls.
Choose now $\la \leq \ov\la ( Y, \eps)$ such that
\[ \la^{-\eps/2} \geq C_1. \]
Then
\[ N \leq C (Y,  \la, \delta) m^{Q} \la^{-mk - m\eps/2}
\leq C (Y,  \la, \delta) \la^{-mk - m\eps  }
\leq C (Y,  \la, \delta) \sigma^{-k-\eps}. \]
This finishes the proof.
\end{proof}

\part{Local regularity theory} \label{part_loc_reg}
In this section we study $(k, \eps, r)$-splitting maps in more detail.
One of our main goals will be to deduce a preliminary $\eps$-regularity theorem for points that are split and static to a sufficient degree.

This part is structured as follows.
In Section~\ref{sec_improving_splitting_maps} we first show that weak splitting maps can be upgraded to strong splitting maps.
Then we show that strong splitting maps possess several useful regularity properties.
In Section~\ref{sec_alm_radial} we construct and analyze almost radial functions; these are similar to splitting maps, but instead of approximating the coordinates on a Euclidean factor they approximate the radial coordinate on a Cartesian factor that is a metric cone.
We will see that approximate radial functions exist near points that are sufficiently well split, static and selfsimilar.
In addition, we will show that away from the vertex of the cone (in an integral sense) we gain an additional coordinate in our splitting map.
Finally, in Section~\ref{sec_prelim_eps_reg} we derive a preliminary $\eps$-regularity theorem, which provides curvature bounds near points that are strongly $(n,\eps,r)$-split and $(\eps, r)$-static for sufficiently small $\eps$.

\section{Splitting maps} \label{sec_improving_splitting_maps}
In this section we study weak and strong splitting maps.
We first that the existence of a weak splitting map implies the existence of a strong splitting map.
Then we present some improved regularity properties of strong splitting maps.
We refer to Section~\ref{sec_int_almost_prop} for definitions of strong and weak splitting maps and a preliminary discussion.

\subsection{Weak splitting map to strong splitting map} 
Our first goal is to construct a splitting map that is close to a given weak splitting map.
We remark that, by combining the following proposition with Proposition~\ref{Prop_Quantitative_Strat_preliminary}, we obtain an improved quantitative stratification result, in that we can replace the weak splitting property in Definition~\ref{Def_SS_weak} by the strong splitting property.

The following proposition is the main result of this subsection.

\begin{Proposition} \label{Prop_weak_splitting_map_to_splitting_map}
If $\eps > 0$ and $\delta \leq \ov\delta (\eps)$, then the following holds.
Let $(M, (g_t)_{t \in I})$ be a Ricci flow on a compact manifold and let $r > 0$.
Suppose that $\vec y = (y_1, \ldots, y_k)$ is a weak $(k, \delta, r)$-splitting map at some point $(x_0, t_0) \in M \times I$.
Then there is a strong $(k, \eps, r)$-splitting map $\vec y^{\, \prime} = (y'_1, \ldots, y'_k)$ at $(x_0, t_0)$ and numbers $a_1, \ldots, a_k \in \IR$ such that $y'_i + a_i$ is close to $y_i$ in the following sense.
For all $i = 1, \ldots, k$
\begin{equation} \label{eq_yi_prime_yi_closeness}
\sup_{t \in [t_0-\eps^{-1} r^2,t_0-\eps r^2]}  r^{-2} \int_M   (y'_i+a_i -y_i)^2  d\nu_t
+ r^{-2}  \int_{t_0-\eps^{-1} r^2}^{t_0-\eps r^2}  \int_M    |\nabla y'_i -   \nabla y_i |^2  d\nu_t dt \leq \eps. 
\end{equation}
\end{Proposition}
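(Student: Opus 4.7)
The plan is to produce $y'_i$ by forward heat evolution of $y_i$ from a well-chosen early time, and then subtract off the (conserved) mean. After parabolic rescaling we may take $r=1$, $t_0=0$; write $d\nu := d\nu_{x_0,0;t}$. A mean-value argument over $t\in[-\delta^{-1},-\delta^{-1/2}]$ first furnishes a time $t_*$ at which $\int_M|\nabla y_i|^2\, d\nu_{t_*}$ is bounded and $\int_M|\square y_i|\, d\nu_{t_*}$ is of order $\delta^{1/2}$. Define $\td y_i$ on $M\times[t_*,-\delta]$ as the unique smooth solution of $\square\td y_i = 0$ with $\td y_i(\cdot,t_*)=y_i(\cdot,t_*)$. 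Since $\square\td y_i=0$, the quantity $\int_M\td y_i\, d\nu_t$ is independent of $t$; set $a_i := \int_M\td y_i\, d\nu_t$ and $y'_i := \td y_i - a_i$, so that $\square y'_i=0$, $\int_M y'_i\, d\nu_t=0$, and $y'_i+a_i=\td y_i$. This delivers the heat-equation and zero-mean properties of a strong splitting map for free.

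Both the remaining strong-splitting gradient product bound and the closeness estimate \eqref{eq_yi_prime_yi_closeness} reduce to a single energy estimate on $w_i := y_i - \td y_i$, which satisfies $\square w_i = \square y_i$ and $w_i(\cdot,t_*)=0$. For the closeness, the two pieces are exactly $\int_M w_i^2\, d\nu_t$ and $\int\!\int|\nabla w_i|^2\, d\nu_t\, dt$. For the strong-splitting gradient bound I would decompose
\[
\nabla y'_i\cdot\nabla y'_j - \delta_{ij} = (\nabla y_i\cdot\nabla y_j - \delta_{ij}) - \nabla w_i\cdot\nabla y_j - \nabla y'_i\cdot\nabla w_j,
\]
where the first summand is small in $L^1(d\nu\,dt)$ by the weak-splitting hypothesis, while the remaining two are controlled via Cauchy--Schwarz by $\bigl(\int\!\int|\nabla w|^2\, d\nu\,dt\bigr)^{1/2}$ times $\bigl(\int\!\int|\nabla y|^2\, d\nu\,dt\bigr)^{1/2}$, the latter being $O(1)$ again by weak splitting.

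To control $w_i$, I would apply \eqref{eq_weak_heat_op} to $\square(w_i^2) = 2w_i\,\square w_i - 2|\nabla w_i|^2$ together with $w_i(\cdot,t_*) = 0$ to obtain the identity
\[
\int_M w_i^2\, d\nu_t + 2\int_{t_*}^t\int_M|\nabla w_i|^2\, d\nu_s\, ds = -2\int_{t_*}^t\int_M w_i\, \square y_i\, d\nu_s\, ds.
\]
The Duhamel representation $w_i(x,t)=\int_{t_*}^t\int_M K(x,t;y,s)\,\square y_i(y,s)\, dg_s(y)\, ds$, combined with the heat-kernel reproduction identity, immediately yields the $L^1$ bound $\int_M|w_i|\, d\nu_t \leq \int_{t_*}^t\int_M|\square y_i|\, d\nu_s\, ds \leq \delta$. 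The main obstacle is upgrading this $L^1$ smallness to the $L^2$ smallness needed above: since $\square y_i$ is only bounded in $L^1(d\nu\,dt)$, a direct Cauchy--Schwarz on the cross-term fails, and no pointwise bound on $w_i$ is available from the hypotheses.

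I expect to attack this obstacle by exploiting the Gaussian concentration of $d\nu_t$. Let $(z(t),t)$ denote an $H_n$-center of $(x_0,0)$. Inside a ball $B(z(t),A\sqrt{|t|})$ with $A$ suitably large, Hein--Naber-type gradient bounds for the heat flow $\td y_i$ (in the spirit of \HKThmNabKboundThmNabKbound) together with the $L^2$-Poincar\'e inequality (Proposition~\ref{Prop_Poincare}) applied at time $t_*$ should produce a pointwise control of $|w_i|$ in terms of the $O(1)$ quantity $\|\nabla y_i(\cdot,t_*)\|_{L^2(d\nu_{t_*})}$; outside this ball, Lemma~\ref{Lem_mass_ball_Var} together with the $L^\infty$ conjugate-heat-kernel estimate of Proposition~\ref{Prop_L_infty_HK_bound} produces Gaussian tail decay strong enough to absorb any polynomial growth of $|w_i|$ from the Duhamel representation. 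Combining these reduces the cross-term to $\int|w_i\,\square y_i|\, d\nu_s\, ds \leq \Psi(\delta)\cdot\int|\square y_i|\, d\nu_s\, ds$, closing the energy estimate and yielding \eqref{eq_yi_prime_yi_closeness}. The hard part is executing this localization while tracking the quantitative dependence of the tail bounds on $A$ and $\delta$, so that the final estimate is indeed $\leq \eps$ once $\delta \leq \ov\delta(\eps)$.
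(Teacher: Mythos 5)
Your overall architecture --- produce $y'_i$ by forward heat evolution from an early time, subtract the mean, and control the error $w_i := y_i - \td y_i$ via an energy estimate --- is the same as the paper's, but you are missing one essential step, and you have (correctly) run straight into the obstacle that step is designed to remove.

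The gap is exactly where you say it is: the cross-term $\int_{t_*}^t \int_M w_i\, \square y_i\, d\nu_s\, ds$. Your proposed workaround does not close. The gradient estimate \HKThmGradientPhiEstimate (and all Hein--Naber-type gradient bounds in this setting) take an $L^\infty$ bound on the solution as \emph{input} and produce a gradient bound as output; they cannot bootstrap from $L^2(d\nu)$ data. Since you evolve the untruncated initial data $y_i(\cdot, t_*)$, you have no a priori $L^\infty$ bound on $\td y_i$, hence none on $w_i$, hence no usable gradient bound. The localization idea also fails structurally: the Duhamel formula $w_i(x,t) = \int\int K(x,t;y,s)\,\square y_i(y,s)\,dg_s\,ds$ cannot be compared to the $L^1(d\nu)$-smallness of $\square y_i$ because there is no lower bound relating $K(x,t;\cdot,s)$ to $d\nu_s = K(x_0,t_0;\cdot,s)\,dg_s$ --- the paper emphasizes in the outline that lower conjugate heat kernel bounds are unavailable in this generality. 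So ``Gaussian tail decay strong enough to absorb any polynomial growth of $w_i$'' presumes growth control you have no way to establish.

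The missing ingredient is a \emph{pointwise truncation of $y_i$ before evolving}. The paper replaces $y$ by $\td y := \eta_Z \circ y$ for a cutoff $\eta_Z$ pinning $\td y$ to $[-Z,Z]$ (Lemma~\ref{Lem_improve_coordinate}); this costs a controllable $L^2$ error precisely because of a Gaussian concentration estimate for almost-linear functions (Lemma~\ref{Lem_almost_linear_Gaussian}, proved via the log-Sobolev inequality), which gives $\nu_t(\{|y|\geq Z/2\}) \lesssim e^{-cZ^2} + \Psi(\delta\,|\,Z)$. One then evolves $\td y(\cdot,t_1^*)$ forward; the maximum principle gives $|y'| \leq Z$ for free, and the cross-term in the energy identity is bounded by roughly $Z\int\!\int |\square y_i|\,d\nu\,dt + \int\!\int |\nabla y_i|^2 \chi_{\{|y_i|\geq Z/2\}}\,d\nu\,dt$, which one then balances by choosing $Z$ large (to kill the Gaussian tail) and $\delta \leq \ov\delta(\eps,Z)$ (to kill the $\square y_i$ contribution). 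Your decomposition for the gradient-product bound is fine and matches the paper's once the energy estimate on $w_i$ is in place; the truncation step is the one genuinely new idea you need.
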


\begin{Remark} \label{Rmk_splitting_map_L_infty}
As we will see in the course of the proof of Proposition~\ref{Prop_weak_splitting_map_to_splitting_map} (see also Lemma~\ref{Lem_improve_coordinate}), the new splitting map $(y'_1, \ldots, y'_k)$ can even be chosen in the following way.
Suppose that $Z \geq 1$ and $\delta \leq \ov\delta (\eps, Z)$.
Then we can replace the right-hand side of (\ref{eq_yi_prime_yi_closeness}) with
\[ C(\eps) e^{-\eps Z^2/64}  \]
and we obtain in addition that
\[ \osc y'_i \leq 2Z, \qquad |\nabla y'_i| \leq CZ. \]
This would allow us to deduce higher $L^p$-bounds on $\nabla y'_i$ directly.
Such bounds, however, will also follow a posteriori from the properties of a strong splitting map (see Proposition~\ref{Prop_properties_splitting_map}).
\end{Remark}

The following lemma, which will be used in the proof of Proposition~\ref{Prop_weak_splitting_map_to_splitting_map}, gives us Gaussian bounds for sub-level sets of almost linear functions.

\begin{Lemma} \label{Lem_almost_linear_Gaussian}
Let $(M, (g_t)_{t \in [-r^2,0]})$ be a Ricci flow on a compact manifold, denote by $d\nu = (4\pi \tau)^{-n/2} e^{-f} dg$ the conjugate heat kernel based at $(x,0)$ for some $x \in M$.
Suppose that $y \in C^\infty(M)$ satisfies at time $-r^2$
\[  \int_M  ||\nabla y|^2 - 1|  d\nu_{-r^2} \leq \delta \leq 1,  \qquad \int_M y \, d\nu_{-r^2} = 0. \]
Then for any $Z \geq 0$
\[\int_{\{ |y| \geq Z r \}} d\nu_{-r^2} \leq C e^{-Z^2/4} + \Psi (\delta | Z). \]
Here $C < \infty$ denotes a dimensional constant.
\end{Lemma}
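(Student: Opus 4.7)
The proof rests on Herbst's exponential-moment method combined with the logarithmic Sobolev inequality of Hein--Naber for the conjugate heat kernel measure $\nu_{-r^2}$ on a Ricci flow background.

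First I would reduce to showing the one-sided bound $\nu_{-r^2}(\{y \geq Zr\}) \leq Ce^{-Z^2/4} + \Psi(\delta|Z)$, since the corresponding bound for $\{y \leq -Zr\}$ follows on replacing $y$ by $-y$ (both hypotheses are preserved). Markov's inequality then gives, for every $\lambda > 0$,
\[
\nu_{-r^2}(\{y \geq Zr\}) \leq e^{-\lambda Zr}\, \mathcal Z(\lambda), \qquad \mathcal Z(\lambda) := \int_M e^{\lambda y}\, d\nu_{-r^2},
\]
so the task reduces to bounding the Laplace transform $\mathcal Z(\lambda) \leq \exp(r^2\lambda^2 + \text{small})$ and then optimizing at $\lambda = Z/(2r)$, which is the value that produces the Gaussian decay rate $1/4$.

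Second, to bound $\mathcal Z(\lambda)$ I would run the classical Herbst argument. Applying the Hein--Naber log-Sobolev inequality
\[
\operatorname{Ent}_{\nu_{-r^2}}(u^2) \leq 4r^2 \int_M |\nabla u|^2 \, d\nu_{-r^2}
\]
to $u = e^{\lambda y/2}$ produces the differential inequality
\[
\lambda \mathcal Z'(\lambda) - \mathcal Z(\lambda)\log \mathcal Z(\lambda) \leq r^2\lambda^2 \int_M |\nabla y|^2 e^{\lambda y}\, d\nu_{-r^2}.
\]
If the pointwise bound $|\nabla y|^2 \leq 1$ held, the right-hand side would be $\leq r^2\lambda^2 \mathcal Z(\lambda)$, and with $\varphi(\lambda) := \log \mathcal Z(\lambda)$ this becomes $(\varphi/\lambda)' \leq r^2$. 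Using $\varphi(\lambda)/\lambda \to \varphi'(0) = \int y\, d\nu_{-r^2} = 0$ as $\lambda \to 0^+$, integration gives $\mathcal Z(\lambda) \leq e^{r^2\lambda^2}$, and Markov with $\lambda = Z/(2r)$ yields the clean bound $e^{-Z^2/4}$.

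Third, and here lies the main obstacle, the pointwise gradient bound is absent: we only know $\int||\nabla y|^2-1|\, d\nu_{-r^2} \leq \delta$. The plan is to split
\[
\int_M |\nabla y|^2 e^{\lambda y}\, d\nu_{-r^2} = \mathcal Z(\lambda) + \int_M (|\nabla y|^2 - 1)\, e^{\lambda y}\, d\nu_{-r^2}
\]
and to absorb the defect into a $\Psi(\delta|Z)$-error. For this, one first obtains \emph{a priori} $L^p$ bounds on $y$ for every $p<\infty$ by iterating Proposition~\ref{Prop_Poincare} (noting $\int y\, d\nu = 0$ and $\int|\nabla y|^2\, d\nu \leq 1+\delta$); these furnish crude bounds on $\int e^{c\lambda y}\, d\nu$ for small $c$. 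Combining a H\"older split of the defect integral (between $||\nabla y|^2-1|$ on its $L^1$-controlled side and $e^{\lambda y}$ on its crudely-controlled side, paired with Chebyshev on the bad set $\{|\nabla y|^2 > 2\}$ which has $\nu$-mass $\leq \delta$) produces a perturbed differential inequality of the form
\[
(\varphi/\lambda)' \leq r^2 + \Psi(\delta|\lambda),
\]
whose integration gives $\mathcal Z(\lambda) \leq \exp(r^2\lambda^2 + \Psi(\delta|\lambda))$.

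Finally, setting $\lambda = Z/(2r)$ and inserting into Markov yields $\nu_{-r^2}(\{y \geq Zr\}) \leq Ce^{-Z^2/4} + \Psi(\delta|Z)$, and the two-sided estimate follows by symmetry. The delicate point throughout is calibrating the bootstrap of crude moment bounds on $y$ so that the defect error depends only on $(\delta, Z)$ and degenerates as $\delta \to 0$ at each fixed $Z$, without swamping the Gaussian main term.
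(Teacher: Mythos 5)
Your high-level strategy — Herbst's exponential-moment method driven by the Hein--Naber log-Sobolev inequality — is exactly the paper's, and the calibration $\lambda = Z/(2r)$ giving the exponent $-Z^2/4$ is correct. But there is a genuine gap in the handling of the defect term, and the device the paper uses to close it (truncation of $y$) is absent from your plan.

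The problem is that the defect $D(\lambda):=\int (|\nabla y|^2-1)e^{\lambda y}\,d\nu_{-r^2}$ must be shown to satisfy $D(\lambda)/\mathcal Z(\lambda)\leq \Psi(\delta\,|\,\lambda)$, and the hypotheses give no control over where $||\nabla y|^2-1|$ concentrates relative to where $e^{\lambda y}$ is large. Two of your proposed tools fail here. First, you cannot bootstrap $L^p$ bounds on $y$ from Proposition~\ref{Prop_Poincare}: for $p>2$ that proposition requires $\int|\nabla y|^p\,d\nu<\infty$, and the hypothesis only supplies $L^1$ control of $|\nabla y|^2-1$ (hence $L^2$ control of $\nabla y$), so the iteration never starts. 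Second, even if one had polynomial moment bounds on $y$, these would not give the exponential-moment bound $\int e^{c\lambda y}\,d\nu<\infty$ needed to make the H\"older split of $D(\lambda)$ close; a function with $\int y^2\,d\nu\leq 4$ may take arbitrarily large values on a set of arbitrarily small $\nu$-measure, so $\delta$ can be entirely concentrated where $e^{\lambda y}$ is enormous, leaving $D(\lambda)/\mathcal Z(\lambda)$ uncontrolled. Chebyshev on $\{|\nabla y|^2>2\}$ bounds its measure but not the integral of $e^{\lambda y}$ over it.

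The paper's resolution is to run the Herbst argument not on $y$ itself but on the truncation $\tilde y$ equal to $y+a$ on $\{|y|\leq Z\}$ and clipped to $\pm Z+a$ outside, with $a$ chosen so that $\int \tilde y\,d\nu=0$; one checks $|a|\leq 4Z^{-1}$ using $\int y^2\,d\nu\leq 4$. Because $|\tilde y|\leq Z+1$ pointwise, the defect is estimated trivially by $\int ||\nabla\tilde y|^2-1|\,e^{\lambda\tilde y}\,d\nu\leq e^{\lambda(Z+1)}\delta$, a quantity of size $\Psi(\delta\,|\,Z,\lambda)$, and no higher moment bounds are required. Truncation also only decreases $|\nabla\tilde y|$, so the $L^1$ hypothesis is preserved. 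The final Markov step is then applied to $\tilde y$, and the shift $a$ costs only a harmless adjustment $\lambda=\tfrac12(Z-4Z^{-1})$ in place of $\lambda=Z/2$. You should incorporate this truncation; without it the argument does not close.
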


\begin{proof}
We adapt an argument from \cite{Hein-Naber-14}.
By parabolic rescaling we may assume without loss of generality that $r = 1$.
We may also assume that $Z \geq 10$.
We will abbreviate $d\nu := d\nu_{-1}$.

First observe that by the $L^2$-Poincar\'e inequality (Proposition~\ref{Prop_Poincare})
\[  \int_M y^2 d\nu \leq 2 \int_M |\nabla y|^2  d\nu \leq 2+2\delta \leq 4. \]
Choose $a \in \IR$ such that 
\[ \td y := \begin{cases} - Z + a & \text{if $y \leq -Z$} \\ y + a & \text{if $-Z \leq y \leq Z$} \\ Z +a & \text{if $Z \leq y$} \end{cases} \]
satisfies
\begin{equation} \label{eq_tdy_average_zero}
 \int_M \td y \, d\nu = 0. 
\end{equation}
It follows that
\begin{multline} \label{eq_a_bound_tdyy}
 |a| 
= \bigg| \int_M ( y- \td y + a) d\nu \bigg| 
\leq \int_{\{ |y| \geq Z \}} | |y| -  Z | d\nu
\leq Z^{-1}  \int_{\{ |y| \geq Z \}} Z | |y| - Z | d\nu  \\
\leq Z^{-1}  \int_{\{ |y| \geq Z \}} y^2  d\nu
\leq  4Z^{-1}
\leq 1 
\end{multline}
and
\begin{equation} \label{eq_nabtdy_m_1}
  \int_M \big| |\nabla \td y |^2 - 1 \big|  d\nu
\leq  \int_M \big| |\nabla y |^2 - 1 \big|  d\nu
\leq \delta. 
\end{equation}
For $\lambda > 0$ set
\[ U(\lambda) := \frac1\lambda \log \bigg(  \int_M e^{\lambda \td y} d\nu \bigg). \]
Note that (\ref{eq_tdy_average_zero}) implies that $\lim_{\lambda \to 0} U(\lambda) = 0$.
An application of the Log-Sobolev inequality \cite[Theorem~1.10]{Hein-Naber-14} to $\phi^2 := e^{\lambda \td y - \lambda U(\lambda)}$ yields, using (\ref{eq_a_bound_tdyy}), (\ref{eq_nabtdy_m_1}),
 \begin{multline*}
 e^{-\lambda U(\lambda)}  \int_M \td y e^{\lambda \td y} d\nu -   U(\lambda) 
 \leq \lambda e^{-\lambda U(\lambda)}  \int_M  |\nabla \td y|^2 e^{\lambda \td y} d\nu \\
 \leq \lambda e^{-\lambda U(\lambda)} \bigg(  \int_M  e^{\lambda \td y} d\nu +  \int_M \big| |\nabla \td y|^2 - 1 \big| e^{\lambda (Z+1)} d\nu \bigg) 
 \leq  \lambda  + \lambda e^{\lambda (Z+1)- \lambda U(\lambda)} \delta. 
\end{multline*}
 Thus
 \[ \frac{dU}{d\lambda} \leq 1 +e^{\lambda Z - \lambda U(\lambda)} \delta . \]
 It follows that
 \[ U(\lambda) \leq \lambda + \Psi (\delta | Z, \lambda), \]
and therefore
\begin{equation*}
  \int_{\{ \td y \geq 2\la \}} d\nu
 \leq e^{-   2\la^2}  \int_M e^{ \la \td y} d\nu 
 = e^{- 2  \la^2} e^{\la U(\la)}
  \leq e^{-\la^2} + \Psi (\delta | \la).
\end{equation*}
Setting $\la := \frac12 (Z - 4Z^{-1})$ and using (\ref{eq_a_bound_tdyy}) implies
\[ \int_{\{ y \geq Z \}} d\nu 
\leq \int_{\{ \td y \geq Z - 4Z^{-1} \}} d\nu 
\leq e^{-\frac14 (Z - 4Z^{-1})^2}  + \Psi (\delta | Z)
\leq e^{-\frac14 Z^2 + 8 - 16Z^{-2} }  + \Psi (\delta | Z). \]
Repeating the same procedure for $y$ replaced by $-y$ yields
\[ \int_{\{ | y| \geq Z \}} d\nu 
\leq 2 e^{-\frac14 Z^2 + 8 - 16Z^{-2} }  + \Psi (\delta | Z)
\leq 2 e^8 e^{-\frac14 Z^2 }  + \Psi (\delta | Z). \]
This finishes the proof.
\end{proof}

Proposition~\ref{Prop_weak_splitting_map_to_splitting_map} will now follow by applying the next lemma to each coordinate function. 

\begin{Lemma} \label{Lem_improve_coordinate}
Let $(M, (g_t)_{t \in [-T,0]})$, $T \geq 2$, be a Ricci flow on a compact manifold, denote by $d\nu = (4\pi \tau)^{-n/2} e^{-f} dg$ the conjugate heat kernel based at $(x,0)$ for some $x \in M$ and let $\theta \in (0,\frac12)$.
Suppose that there is a function $y \in C^\infty (M \times [-T, -\theta])$ such that
\begin{equation} \label{eq_Lem_improve_coordinate_1}
 \int_{-T}^{-\theta} \int_M \big( | \square y | +  ||\nabla y|^2 - 1|  \big) d\nu_t dt \leq \delta \leq 1. 
\end{equation}
Then for any $Z \geq 1$ there is a function $y' \in C^\infty(M \times [-T+1, - \theta])$ solving the heat equation $\square y' = 0$ such that
\begin{equation} \label{eq_Lem_improve_coordinate_2}
\sup_{t \in [-T+1,-\theta]} \int_M (y'-y)^2 d\nu_t + \int_{-T+1}^{-\theta} \int_M   |\nabla y' -   \nabla y |^2   d\nu_t dt \leq  C(T) e^{-Z^2/32 T} + \Psi (\delta | T, \theta, Z),
\end{equation}
\[ \osc y' \leq 2Z, \qquad  |\nabla y' | \leq C Z. \]
\end{Lemma}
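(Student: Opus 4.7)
The plan is to construct $y'$ by solving the heat equation forward from a truncated version of $y$ at a carefully chosen initial time near $-T$, and then verify the four required properties (oscillation, gradient bound, $L^2$-closeness, gradient closeness) separately.

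\textbf{Pigeonhole and tail bound.} Since the hypothesis (\ref{eq_Lem_improve_coordinate_1}) is a time-integrated bound against $d\nu_t dt$, a standard Chebyshev argument produces some $t_0 \in [-T, -T+\tfrac12]$ with $\int_M \big||\nabla y|^2 - 1\big|\, d\nu_{t_0} \leq 4\delta$. Setting $a := \int_M y(\cdot, t_0)\, d\nu_{t_0}$, I would then apply Lemma~\ref{Lem_almost_linear_Gaussian} to the flow restricted to $[t_0, 0]$ (with $r := \sqrt{|t_0|} \in [\sqrt{T-1/2}, \sqrt{T}]$) and the centered function $y - a$, obtaining the Gaussian tail bound
\[
\nu_{t_0}\big(\{\,|y - a| \geq Z\,\}\big) \leq C\, e^{-Z^2/(4T)} + \Psi(\delta \mid T, Z).
\]

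\textbf{Truncation and heat flow.} Pick a smooth $\phi : \IR \to [-Z, Z]$ with $\phi(s) = s$ for $|s| \leq Z$ and $|\phi'| \leq 1$, and define $\td y := a + \phi(y(\cdot, t_0) - a) \in C^\infty(M)$, so that $\osc \td y \leq 2Z$ and $\td y$ agrees with $y(\cdot, t_0)$ on $\{|y - a| \leq Z\}$. Let $y' : M \times [t_0, 0] \to \IR$ be the unique solution of $\square y' = 0$ with $y'(\cdot, t_0) = \td y$; its restriction to $M \times [-T+1, -\theta]$ is our candidate. The maximum principle gives $\osc y' \leq 2Z$. Since $t - t_0 \geq \tfrac12$ for $t \in [-T+1, -\theta]$, the Hamilton/Bamler-type pointwise gradient estimate for bounded solutions of the heat equation on a Ricci flow background (cf.\ \cite{Bamler_HK_entropy_estimates}) yields $|\nabla y'| \leq C \, \osc(y') / \sqrt{t - t_0} \leq CZ$.

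\textbf{$L^2$-closeness via Duhamel.} Writing $u := y - y'$, we have $\square u = \square y$ and $u(\cdot, t_0) = y(\cdot, t_0) - \td y$, so Duhamel's formula gives $u = A + B$ where
\[
A(x, t) := \int K(x,t; z, t_0)\, (y(z, t_0) - \td y(z))\, dg_{t_0}(z), \qquad B(x, t) := \int_{t_0}^t \!\! \int K(x,t;z,s)\, \square y(z,s)\, dg_s(z)\, ds.
\]
For $A$, the reproducing identity $\int K(x, t; z, t_0)\, d\nu_t(x) = K(x_0, 0; z, t_0)$ combined with Cauchy--Schwarz in $z$ yields $\int A^2\, d\nu_t \leq \int (y(\cdot, t_0) - \td y)^2\, d\nu_{t_0}$, and since $|y - \td y| \leq (|y - a| - Z)_+$, integrating the tail bound against $2s\, ds$ on $\{|y-a| > Z\}$ gives $\int A^2\, d\nu_t \leq C(T)\, e^{-Z^2/(4T)} + \Psi(\delta \mid T, Z)$. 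For $B$, the analogous duality gives the $L^1$-bound $\int |B|\, d\nu_t \leq \int_{t_0}^t \int |\square y|\, d\nu_s\, ds \leq \delta$. To upgrade this to the required $L^2$-bound from only the $L^1$-hypothesis on $\square y$ I would pair this with a crude pointwise bound on $B$ on the concentration region of $d\nu_t$, using the heat-kernel upper bound from Proposition~\ref{Prop_L_infty_HK_bound} and the Gaussian concentration of $d\nu_t$ around an $H_n$-center, absorbing the tail contribution into $\Psi(\delta \mid T, \theta, Z)$.

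\textbf{Gradient closeness and main obstacle.} The energy identity
\[
\tfrac{d}{dt} \int u^2\, d\nu_t + 2 \int |\nabla u|^2\, d\nu_t = 2 \int u\, \square y\, d\nu_t,
\]
integrated over $[-T+1, -\theta]$, then controls $\int_{-T+1}^{-\theta} \int |\nabla u|^2\, d\nu_t\, dt$ in terms of $\sup_t \int u^2\, d\nu_t$ (already bounded) and $\int\!\!\int |u\, \square y|\, d\nu_t dt$, which I would handle by splitting on $\{|y - a| \leq Z'\}$ and its complement for a suitably large $Z' = Z'(T, Z)$. The main technical hurdle is precisely the mismatch between the $L^1(d\nu_t dt)$-hypothesis on $\square y$ and the $L^2$-smallness that the energy machinery naturally produces; resolving it cleanly requires combining the Duhamel representation with the Gaussian decay of $d\nu_t$ (to replace missing pointwise bounds on $y$) and the heat-kernel $L^\infty$-estimate of Proposition~\ref{Prop_L_infty_HK_bound}, and this is where the explicit exponent $Z^2/32T$ (rather than $Z^2/4T$) in the conclusion will originate.
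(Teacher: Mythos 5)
Your proposal correctly identifies the rough scaffolding (pigeonhole a good initial time near $-T$, apply Lemma~\ref{Lem_almost_linear_Gaussian} to get a Gaussian tail, truncate, solve the heat equation, control the error via an energy/Duhamel argument), but it differs from the paper's proof in one structural choice that opens a genuine gap: you truncate $y$ \emph{only at a single time slice} $t_0$, whereas the paper truncates $y$ as a \emph{spacetime} function, setting $\td y := \eta_Z\circ y$ on all of $M\times[-T,-\theta]$. This matters because the paper's proof never touches the Duhamel contribution $B$ of $\square y$ directly; instead it computes the heat operator of the truncation by the chain rule, obtaining the pointwise Kato-type bound
$|\square\td y| \leq |\square y| + 10 Z^{-1}|\nabla y|^2,$
whose first term is controlled in $L^1(d\nu_t\,dt)$ by $\delta$ and whose second term is supported on $\{|y|\geq Z/2\}$ and therefore controlled by the Gaussian tail. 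With $y'$ the heat flow of $\td y(\cdot,t^*_1)$, the quantity $y'-\td y$ is pointwise bounded by $2Z$, and the energy identity
$\tfrac{d}{dt}\int_M (y'-\td y)^2\,d\nu_t = \int_M\big(-2\,(\square\td y)(y'-\td y) - 2|\nabla(y'-\td y)|^2\big)\,d\nu_t$
closes immediately, yielding both the $L^2$-closeness and the gradient term in one pass, with no $L^1$-to-$L^2$ upgrade needed.

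In your approach, the Duhamel term $B(x,t)=\int_{t_0}^t\!\!\int K(x,t;z,s)\,(\square y)(z,s)\,dg_s\,ds$ cannot be controlled in $L^2(d\nu_t)$ from the $L^1(d\nu_t\,dt)$ hypothesis alone, as you note, and the remedy you sketch does not clearly close: a pointwise bound on $B$ via the heat-kernel $L^\infty$-estimate of Proposition~\ref{Prop_L_infty_HK_bound} would need the $L^1(dg_s\,ds)$ norm of $\square y$, not the $L^1(d\nu_s\,ds)$ norm, and passing between the two introduces the unbounded factor $(4\pi\tau)^{n/2}e^{f}$; Gaussian concentration of $d\nu_t$ localizes the integral in $x$ but does not produce an $L^\infty$ bound on $B$ uniform over the concentration region, since $\square y$ can be arbitrarily large pointwise. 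The missing idea is precisely the spacetime truncation: it converts the problematic $\square y$ that appears in the Duhamel formula into the controlled quantity $\square\td y$, avoiding the $L^1$/$L^2$ mismatch altogether. (As a secondary point, the loss from $Z^2/4T$ to $Z^2/32T$ in the exponent is explained by repeated Cauchy--Schwarz steps in the paper's Claim; your writeup of the tail bound stops one step short of accounting for that factor.)
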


\begin{proof}
After adding a suitable constant to $y$, we may assume without loss of generality that
\[ \int_M y \, d\nu_{-1}  = 0. \]
This implies that at every $t^* \in [-T, -\theta]$ 
\begin{equation} \label{eq_int_y_average_almost_0}
\bigg| \int_M y \, d\nu_{t^*}  \bigg|  
= \bigg| \int_{-1}^{t^*}  \frac{d}{dt}  \int_M \, y \, d\nu_{t}  dt \bigg|
= \bigg|  \int_{-1}^{t^*} \int_M \square y \, d\nu_{t}  dt \bigg| 
\leq \int_{-T}^{-\theta} \int_M  |\square y|  d\nu_{t} dt
\leq \delta .
\end{equation}
Therefore, by the $L^2$-Poincar\'e inequality (Proposition~\ref{Prop_Poincare})
\begin{equation} \label{eq_bound_y_L2}
 \int_{-T}^{-\theta} \int_M y^2 \, d\nu_t dt  \leq 2\int_{-T}^{-\theta}  \tau \int_M |\nabla y|^2 d\nu_t dt + \Psi( \delta | T )  \leq CT^2 + \Psi( \delta | T). 
\end{equation}

Let $\eta_Z : \IR \to [-Z,Z]$ be a smooth cutoff function such that $\eta_Z (x) = x$ if $x \in [-Z/2, Z/2]$, $\eta_Z (x) = - Z$ if $x \leq -Z$, $\eta_Z (x) = Z$ if $x \geq Z$ and $0 \leq \eta'_Z \leq 1$, $|\eta''_Z| \leq 10 Z^{-1}$.
Then
\[ \td y := \eta_Z \circ y \]
satisfies
\begin{equation} \label{eq_td_y_identities}
|\td y | \leq Z, \qquad 
 |\nabla \td y| \leq |\nabla y|, \qquad 
|\square \td y| \leq |\square y| + 10 Z^{-1} |\nabla y|^2. 
\end{equation}
Moreover, we have the following properties:

\begin{Claim}
\begin{align}
 \int_{-T}^{-\theta} \int_{\{ |y| \geq Z/2 \}} |\nabla y|^2  d\nu_t dt &\leq  C(T) e^{-Z^2/16 T} + \Psi (\delta | T, \theta, Z),\label{eq_naby_eH16} \\
 \int_{-T}^{-\theta} \int_M \big( (y- \td y)^2 + |\nabla y - \nabla \td y|^2 \big)  d\nu_t dt &\leq  C(T) e^{-Z^2/16 T} + \Psi (\delta | T, \theta, Z). \label{eq_ynabynaby_eH16}
\end{align}
\end{Claim}

\begin{proof}
By Lemma~\ref{Lem_almost_linear_Gaussian} and (\ref{eq_int_y_average_almost_0}) we have the following conditional statement for any $t \in [-T, -\theta]$
\[ \int_M  | |\nabla y|^2 - 1 |  d\nu_t \leq \delta^{1/2} \qquad \Longrightarrow \qquad
\int_{\{ |y| \geq Z/2 \}} d\nu_t \leq C e^{-Z^2/16|t|} + \Psi (\delta | T, \theta, Z). \]
Since the last integral is always bounded by $1$, this implies
\[ \int_{\{ |y| \geq Z/2 \}}  d\nu_t \leq \delta^{-1/2} \int_M  | |\nabla y|^2 - 1 | d\nu_t  + C e^{-Z^2/16 |t|} + \Psi (\delta | T, \theta, Z). \]
Integration over $t$ implies
\begin{equation} \label{eq_int_T_theta_1_exp}
 \int_{-T}^{-\theta} \int_{\{ |y| \geq Z/2 \}}  d\nu_t dt \leq CT e^{-Z^2/16 T} + \Psi (\delta | T, \theta, Z). 
 \end{equation}
By the definition of $\td y$ and (\ref{eq_td_y_identities}), (\ref{eq_Lem_improve_coordinate_1}) we have
\begin{multline} \label{eq_nab_y_td_y_16}
  \int_{-T}^{-\theta} \int_M |\nabla ( y - \td y)|^2  d\nu_t dt
= \int_{-T}^{-\theta} \int_{\{ |y| \geq Z/2 \}} |\nabla y|^2  d\nu_t dt \\
\leq \delta + \int_{-T}^{-\theta} \int_{\{ |y| \geq Z/2 \}}  d\nu_t dt
\leq CT e^{-Z^2/16 T} + \Psi (\delta | T, \theta, Z). 
\end{multline}
This proves (\ref{eq_naby_eH16}) and the second part of (\ref{eq_ynabynaby_eH16}).
For the first part of (\ref{eq_ynabynaby_eH16}) observe that
\[  \bigg| \frac{d}{dt}  \int_M (\td y   - y) d\nu_t \bigg|
\leq \bigg| \int_M \square (\td y - y) d\nu_t  \bigg| 
\leq \int_{\{ |y| \geq Z/2  \}} \big( 2|\square y|  + 10 Z^{-1} |\nabla y|^2 \big) d\nu_t, \]
which implies that for any $t_1, t_2 \in [-T, -\theta]$
\begin{equation*}
 \bigg| \int_M (y - \td y) d\nu_t \bigg|_{t=t_1}^{t=t_2}  \bigg| 
\leq \int_{-T}^{-\theta} \int_{\{ |y| \geq Z/2 \}}  \big(  2 |\square y | + 10 Z^{-1} |\nabla y|^2 \big) d\nu_t dt 
\leq CT e^{-Z^2/16 T} + \Psi (\delta | T, \theta, Z).  
\end{equation*}
Since by (\ref{eq_int_T_theta_1_exp}) and (\ref{eq_bound_y_L2})
\begin{multline*}
 \bigg| \int_{-T}^{-\theta} \int_M (y - \td y) d\nu_t dt \bigg|
\leq \bigg( \int_{-T}^{-\theta} \int_{\{ |y| \geq Z/2 \}} d\nu_t dt \bigg)^{1/2}  \bigg( \int_{-T}^{-\theta} \int_M (y - \td y)^2 d\nu_t dt \bigg)^{1/2} \\
\leq \bigg( \int_{-T}^{-\theta} \int_{\{ |y| \geq Z/2 \}} d\nu_t dt \bigg)^{1/2}  \bigg( \int_{-T}^{-\theta} \int_M y^2 d\nu_t dt \bigg)^{1/2} \leq CT^{3/2} e^{-Z^2/32 T} + \Psi (\delta | T, \theta, Z),
\end{multline*}
it follows that for all $t \in [-T, -\theta]$
\[ \bigg| \int_M (y - \td y) d\nu_t   \bigg| 
\leq CT^{3/2} e^{-Z^2/32 T} + \Psi (\delta | T, \theta, Z).  \]
Using the $L^2$-Poincar\'e inequality and (\ref{eq_nab_y_td_y_16}), we therefore obtain
\begin{multline*}
  \int_{-T}^{-\theta} \int_M  ( y - \td y)^2  d\nu_t dt
\leq  CTe^{-Z^2/16 T} + \Psi (\delta | T, \theta, Z) + \int_{-T}^{-\theta} \bigg( \int_M ( y - \td y) d\nu_t \bigg)^{2} dt \\
\leq  CT^4 e^{-Z^2/16 T} + \Psi (\delta | T, \theta, Z) . \qedhere
\end{multline*}
\end{proof}
\medskip

Combining (\ref{eq_Lem_improve_coordinate_1}) and (\ref{eq_ynabynaby_eH16}) implies that there is a time $t^*_1 \in [-T, -T+\frac12]$ such that
\begin{align}
  \int_M   ||\nabla y|^2 - 1|  d\nu_{ t^*_1} &\leq 2 \delta, \label{eq_choice_tstar1_1} \\
 \int_M \big( (y- \td y)^2 + |\nabla y - \nabla \td y|^2 \big)  d\nu_{t^*_1} &\leq C(T) e^{-Z^2/16 T} + \Psi (\delta | T, \theta, Z). \label{eq_choice_tstar1_2}
\end{align}
Let $y' \in C^\infty (M \times [t^*_1, -\theta])$ be the solution to the heat equation $\square y' = 0$ with initial condition $y' (\cdot, t^*_1) := \td y (\cdot, t^*_1)$.
Note that by the maximum principle and (\ref{eq_td_y_identities}) we have $|y'| \leq Z$.

Let us now verify that $y'$ satisfies the remaining assertions.
We have
\[ \int_M (y' - \td y)^2 d\nu_{t^*_1}= \int_M (\td y -  \td y)^2 d\nu_{t^*_1} = 0. \]
Using (\ref{eq_td_y_identities}) and $|y'-\td y| \leq |y'| + |\td y| \leq 2Z$, we obtain
\begin{multline*}
 \frac{d}{dt} \int_M (y'- \td y)^2 d\nu_t 
= \int_M  \square ( y'- \td y)^2  d\nu_t 
= \int_M \big( {- 2 \square \td y \, (y'- \td y) - 2 |\nabla (y'- \td y)|^2 } \big) d\nu_t  \\
\leq \int_M  4 Z \big(| \square  y|  + 10 Z^{-1} |\nabla y|^2 \chi_{\{ |y| \geq Z/2 \}}  \big) d\nu_t    - 2 \int_M |\nabla (y'- \td y)|^2  d\nu_t .
\end{multline*}
Integration over $t$ and using (\ref{eq_naby_eH16}) implies that for any $t^{*}_2 \in [t^*_1, -\theta]$
\[
  \int_M (y'- \td y)^2 d\nu_{t^{*}_2} + 2 \int_{t^*_1}^{t^{*}_2}  \int_M |\nabla (y'- \td y)|^2  d\nu_t dt \\
\leq C(T) Z e^{-Z^2/16 T} + \Psi (\delta | T, \theta, Z).
\]
Combination with (\ref{eq_ynabynaby_eH16}) implies (\ref{eq_Lem_improve_coordinate_2}) since $Z e^{-Z^2/16T} \leq C(T) e^{-Z^2/32T}$.

The bound $|\nabla y' | \leq CZ$ on $M \times [-T+1, -\theta]$ follows from $|y'| \leq Z$ via a gradient estimate (see for example \cite[\HKThmGradientPhiEstimate]{Bamler_HK_entropy_estimates}).
\end{proof}

\begin{proof}[Proof of Proposition~\ref{Prop_weak_splitting_map_to_splitting_map}.]
By parabolic rescaling, we may assume that $r = 1$ and $t_0 = 0$.
Set $T = 2\eps^{-1}$, $\theta = \eps$, apply Lemma~\ref{Lem_improve_coordinate} to each coordinate function $y_i$ and call the resulting function $y'_i$.
By choosing $\delta \leq \ov{\delta}(\eps, Z)$, we can make the right-hand side of (\ref{eq_Lem_improve_coordinate_2}) $\leq C(\eps) e^{-\eps Z^2/64 }$.
So by choosing $Z \geq \underline{Z}(\eps)$, we can ensure that (\ref{eq_yi_prime_yi_closeness}) holds for $a_i = 0$.
To verify the properties of a strong splitting map, observe that for all $i, j = 1, \ldots, k$
\begin{align*}
 \int_{-\eps^{-1}}^{-\eps} \int_M & | \nabla y'_i \cdot \nabla y'_j - \delta_{ij} | d\nu_t dt
\leq \int_{-\eps^{-1}}^{-\eps} \int_M | \nabla y_i \cdot \nabla y_j - \delta_{ij} | d\nu_t dt \\
&\qquad + \int_{-\eps^{-1}}^{-\eps} \int_M | ( \nabla y'_i - \nabla y_i) \cdot \nabla y_j | d\nu_t dt  + \int_{-\eps^{-1}}^{-\eps} \int_M | \nabla y'_i \cdot (\nabla y'_j - \nabla y_j) | d\nu_t dt \\
&\leq \delta + \bigg( \int_{-\eps^{-1}}^{-\eps} \int_M |  \nabla y'_i - \nabla y_i|^2 d\nu_t dt \bigg)^{1/2} \bigg( \int_{-\eps^{-1}}^{-\eps} \int_M | \nabla y_j |^2 d\nu_t dt \bigg)^{1/2} \\
&\qquad +  \bigg( \int_{-\eps^{-1}}^{-\eps} \int_M | \nabla y'_i |^2 d\nu_t dt \bigg)^{1/2} \bigg( \int_{-\eps^{-1}}^{-\eps} \int_M |  \nabla y'_j - \nabla y_j|^2 d\nu_t dt \bigg)^{1/2} \\
&\leq \delta + \Psi (Z | \eps).
\end{align*}

So if we choose  $\delta \leq \ov\delta (\eps)$ and $Z \geq \underline{Z}(\eps)$, then Properties~\ref{Def_strong_splitting_map_1}, \ref{Def_strong_splitting_map_2} of Definition~\ref{Def_strong_splitting_map} hold.
So after replacing $y'_i$ with $y'_i - a_i$ for some suitable $a_i \in \IR$ according to Lemma~\ref{Lem_prop_3_strong_splitting}, $\vec y^{\,\prime} = (y'_1, \ldots, y'_k)$ is indeed a strong $(k, \eps, 1)$-splitting map.
\end{proof}

\subsection{Strong splitting maps}

The following proposition summarizes important properties that follow a posteriori from the definition of a strong splitting map (Definition~\ref{Def_strong_splitting_map}).

\begin{Proposition} \label{Prop_properties_splitting_map}
If $Y< \infty$, $\eps > 0$ and $\delta \leq \ov\delta (Y,\eps)$, then the following holds.
Let $(M, (g_t)_{t \in I})$ be a Ricci flow on a compact manifold and let $r > 0$.
Consider a point $(x_0, t_0) \in M \times I$ with $[t_0 - \delta^{-1} r^2, t_0] \subset I$ and $\NN_{x_0, t_0} ( r^2) \geq -Y$ and let $d\nu = (4\pi \tau)^{-n/2} e^{-f} dg$ be the conjugate heat kernel at $(x_0, t_0)$.
Let $\vec y = (y_1, \ldots, y_k) : M \times [t_0 - \delta^{-1} r^2, t_0 - \delta r^2] \to \IR^k$ be a $(k,  \delta, r)$-splitting map at $(x_0, t_0)$. Then the following holds for $\alpha \in [0, \ov\alpha]$ and $i,j = 1, \ldots, k$:
\begin{enumerate}[label=(\alph*)]
\item \label{Prop_properties_splitting_map_a} At any time $t \in [t_0 - \eps^{-1} r^2, t_0 -\eps r^2]$ and for all $p \in [1, \eps^{-1}]$, $m = 0, \ldots, [\eps^{-1}]$ we have
\begin{align}
  \int_M |\nabla y_i \cdot \nabla y_j - \delta_{ij} |^p e^{\alpha f} d\nu_{t}  &\leq \eps. \label{eq_nabyinabyj_p} \\
  \bigg| \int_M y_i^{2m}  \, d\nu_{t} - \frac{(2m)!}{m!} (t_0 - t)^m \bigg|  &\leq \eps.  \label{eq_yi_2m_bound}
\end{align}
\item \label{Prop_properties_splitting_map_b} For any $p, q \in [0, \eps^{-1}]$ we have
\begin{equation} \label{eq_nab2_y_y_p_e_alph}
 r^{2-q} \int_{t_0 - \eps^{-1} r^2}^{t_0 - \eps r^2} \int_M \Big( \sum_{i=1}^k |\nabla^2 y_i |^2  \Big) \Big( \sum_{j=1}^k |\nabla y_j |^{p} \Big) \Big( \sum_{l=1}^k  |y_l |^{q} \Big) e^{\alpha f} d\nu_{t} dt \leq \eps. 
\end{equation}
\end{enumerate}
\end{Proposition}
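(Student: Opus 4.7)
The plan is to exploit the two identities that $\square y_i = 0$ forces, namely the Bochner-type identity $\square (\nabla y_i \cdot \nabla y_j) = -2\, \nabla^2 y_i \cdot \nabla^2 y_j$ and the moment identity $\square (y_i^{2m}) = -2m(2m-1)\, y_i^{2m-2} |\nabla y_i|^2$. The guiding principle is that for any sufficiently regular $F$ one has $\frac{d}{dt} \int F \, d\nu_t = \int \square F \, d\nu_t$, so Bochner's identity makes $F_i(t) := \int |\nabla y_i|^2 d\nu_t$ non-increasing and yields the integrated Hessian identity $\int_{t_1}^{t_2} \int |\nabla^2 y_i|^2 d\nu_t \, dt = \tfrac{1}{2}(F_i(t_1) - F_i(t_2))$. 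Weighted $L^p$-versions carrying $e^{\alpha f}$ will ultimately be obtained by Hölder against $\int e^{p\alpha f} d\nu_t \leq C$ from Proposition~\ref{Prop_int_ealphaf}, combined with the collapsing-independent bounds of Proposition~\ref{Prop_improved_L2}, which is the source of the threshold $\ov\alpha$.

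First I would combine the monotonicity of $F_i$ with the averaged splitting bound $r^{-2} \int |F_i - 1| \, dt \leq \delta$ (coming from Definition~\ref{Def_strong_splitting_map}\ref{Def_strong_splitting_map_2}) and a standard mean-value argument to upgrade to the pointwise-in-time estimate $|F_i(t) - 1| \leq \Psi(\delta | \eps)$ on $[t_0 - \eps^{-1} r^2, t_0 - \eps r^2]$; by polarization applied to $y_i \pm y_j$ one gets the analogous pointwise-in-time closeness of $\int \nabla y_i \cdot \nabla y_j \, d\nu_t$ to $\delta_{ij}$, and integrating Bochner yields $\int_{t_0 - \eps^{-1} r^2}^{t_0 - \eps r^2} \int |\nabla^2 y_i|^2 \, d\nu_t \, dt \leq \Psi(\delta | \eps)$. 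Since $\int y_i \, d\nu_t = 0$, the $L^2$-Poincaré inequality (Proposition~\ref{Prop_Poincare}) gives $\int y_i^2 d\nu_t \leq 2\tau (1 + \Psi)$, and the Hein--Naber Gaussian concentration of Lemma~\ref{Lem_almost_linear_Gaussian}, applied to $y_i$ at each time, yields uniform exponential integrability of $y_i / r$ and hence a priori bounds on all moments $\int y_i^{2m} d\nu_t \leq C_m \tau^m$. The precise constant $(2m)!/m!$ in (\ref{eq_yi_2m_bound}) then follows inductively by integrating $\frac{d}{dt} \int y_i^{2m} d\nu_t = -2m(2m-1) \int y_i^{2m-2} |\nabla y_i|^2 d\nu_t$ starting from the moment of order $2(m-1)$ and absorbing the error $\int y_i^{2m-2}(|\nabla y_i|^2 - 1) d\nu_t$ via Cauchy--Schwarz together with a pointwise-in-time bound $\int (|\nabla y_i|^2 - 1)^2 d\nu_t \leq \Psi$ derived from the spatial Poincaré inequality applied to $|\nabla y_i|^2$ and the integrated Hessian estimate.

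For the higher-$L^p$ gradient estimate (\ref{eq_nabyinabyj_p}), I would iterate this pattern: the Poincaré argument gives $\int |\nabla y_i \cdot \nabla y_j - \delta_{ij}|^2 d\nu_t \leq C\tau \int |\nabla^2 y_i|^2 |\nabla y_j|^2 d\nu_t + \Psi$, and higher powers follow by repeated Hölder against the moment bounds on $y_i$ and the estimates of Proposition~\ref{Prop_improved_L2}; the $e^{\alpha f}$-weight is then absorbed by a single final Hölder against $\int e^{p' \alpha f} d\nu_t \leq C$. For part (b), I would multiply the Bochner identity by the test weight $\phi := \bigl(\sum_j |\nabla y_j|^p\bigr) \bigl(\sum_l |y_l|^q\bigr) e^{\alpha f}$ and integrate in spacetime, converting $\int \phi |\nabla^2 y_i|^2 d\nu_t \, dt$ into boundary terms involving $\int \phi \nabla y_i \cdot \nabla y_j d\nu_t$ and cross-terms of the form $\int |\nabla^2 y_i| |\nabla y_j| \cdots d\nu_t$, all of which are controlled by the moment bounds, the weighted gradient bound (\ref{eq_nabyinabyj_p}) from part (a), and the weighted Hessian integral $\int \tau |\nabla^2 f|^2 e^{2\alpha f} d\nu_t dt \leq C(Y)$ of Proposition~\ref{Prop_improved_L2}.

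The main obstacle will be the pointwise-in-time upgrade in the third paragraph: the definition of a strong splitting map only provides an $L^1$-in-time bound on $|\nabla y_i \cdot \nabla y_j - \delta_{ij}|$, so passing to $L^p$-in-space pointwise in $t$ and with the $e^{\alpha f}$-weight requires a delicate chain of Poincaré and Gaussian-concentration inputs, and a careful budgeting of the exponent $\alpha$ at each Hölder step so that the factor $\int e^{p' \alpha f} d\nu_t$ remains bounded by a purely dimensional constant via Proposition~\ref{Prop_int_ealphaf}. A secondary subtlety is that the moment recursion and the Hessian--times--gradient bound are coupled, so the induction in $m$ and in $p$ must be run in the correct order to avoid circularity.
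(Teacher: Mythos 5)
Your high-level architecture (Bochner identity, moment identity, integrated Hessian bound, $e^{\alpha f}$-weight absorbed via Proposition~\ref{Prop_int_ealphaf}, Bochner-type spacetime integration for part (b)) matches the paper, but there is a real gap in the step you flag as "the main obstacle", and your proposed fix via Poincar\'e does not close it.

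The issue is the pointwise-in-time upgrade from the averaged splitting bound to $\int_M |\nabla y_i\cdot\nabla y_j - \delta_{ij}|\,d\nu_t \leq \Psi(\delta)$ at \emph{every} $t\in[t_0-\eps^{-1}r^2,\,t_0-\eps r^2]$. The mean-value-plus-monotonicity argument does give $|F_i(t)-1|\leq\Psi(\delta)$ (since $F_i(t)=\int|\nabla y_i|^2\,d\nu_t$ is non-increasing and close to $1$ at the two endpoints), and by polarization the \emph{average} $\int\nabla y_i\cdot\nabla y_j\,d\nu_t$ is close to $\delta_{ij}$. But you then invoke the spatial Poincar\'e inequality applied to $h=\nabla y_i\cdot\nabla y_j$ (or to $|\nabla y_i|^2$) to control the \emph{variance} $\int|h-\bar h|^2\,d\nu_t$, and Poincar\'e produces a bound in terms of $\int|\nabla h|^2\,d\nu_t \lesssim \int|\nabla^2 y_i|^2|\nabla y_j|^2\,d\nu_t$ at the \emph{fixed} time $t$. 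You only possess a bound on the \emph{time-integral} of $\int|\nabla^2 y_i|^2\,d\nu_t$; there is no a priori pointwise-in-time Hessian bound, and such a quantity is genuinely not monotone, so this step does not close. Moreover, using Lemma~\ref{Lem_almost_linear_Gaussian} for the moment bounds presupposes exactly the pointwise-in-time $L^1$-bound you are trying to establish, so the scheme as written is circular.

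The paper's device, which your proposal is missing, is the weak subsolution property of $u:=|\nabla y_i\cdot\nabla y_j-\delta_{ij}|$: by Kato's inequality (Propositions~\ref{Prop_weak_square} and \ref{Prop_mu_square_rules}) and $\square(\nabla y_i\cdot\nabla y_j)=-2\nabla^2 y_i\cdot\nabla^2 y_j$ one gets $\square u \leq 2|\nabla^2 y_i\cdot\nabla^2 y_j|$ as measures, so $\frac{d}{dt}\int u\,d\nu_t \leq \int(|\nabla^2 y_i|^2+|\nabla^2 y_j|^2)\,d\nu_t$, and integrating from a good time $t_1$ (picked by averaging over the definition-interval) to any $t_2$ propagates the $L^1$-in-space smallness to all later times using only the \emph{integrated} Hessian bound. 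The higher-$L^p$ bounds are then recovered by interpolation, $\int u^p\,d\nu_t \leq (\int u\,d\nu_t)^{1/2}(\int u^{2p-1}\,d\nu_t)^{1/2}$, against the $L^q$-bounds on $|\nabla y_i|$ obtained from $\square|\nabla y_i|\leq 0$ and hypercontractivity (\cite[\HKThmHypercontractivity]{Bamler_HK_entropy_estimates}) --- not from Poincar\'e. Once this is fixed, the induction for the moment identity, the final $e^{\alpha f}$-H\"older, and the part~(b) argument all proceed essentially as you sketch.
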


\begin{proof}
Without loss of generality, we may assume that $t_0 = 0$ and $r =1$.
Fix $\eps > 0$.
We will determine $\delta \leq 10^{-3}$ in the course of the proof.

We first derive an $L^p$-bound on $\nabla y_i$ over time-slices for any $p \in [1, \eps^{-1}]$.
For this purpose, observe that since $\square |\nabla y_i| \leq 0$, we have by \cite[\HKThmHypercontractivity]{Bamler_HK_entropy_estimates} for any $t_1, t_2 \in [- \delta^{-1}, -\delta]$, $t_1 \leq t_2$ with $\frac{t_1}{t_2} \geq p-1$
\[ \bigg(  \int_M |\nabla y_i|^{p} d\nu_{t_2} \bigg)^{1/p} 
 \leq \bigg(  \int_M |\nabla y_i|^{2} d\nu_{t_1} \bigg)^{1/2}. \]
 So if $\delta \leq \ov\delta(\eps)$, then integration with respect to $t_1$ implies that for any $t_2 \in [-\eps^{-1}, -\eps]$
\begin{equation} \label{eq_nab_y_i_L_p_bound}
 \bigg(  \int_M |\nabla y_i|^{p} d\nu_{t_2} \bigg)^{2/p} \leq \int_{-\delta^{-1}}^{-\delta^{-1}+1}  \int_M |\nabla y_i|^{2} d\nu_{t} dt \leq 1+ \delta. 
\end{equation}

Next, we show that (\ref{eq_nab2_y_y_p_e_alph}) holds for $p = q = \alpha = 0$.
For this purpose, observe that
\[ \frac{d}{dt} \int_M \big( |\nabla y_i|^2 - 1 \big) \, d\nu_t
= \int_M \square |\nabla y_i|^2 \, d\nu_t
= - 2 \int_M  |\nabla^2 y_i|^2 \, d\nu_t \]
So if $\eta : [-\delta^{-1}, -\delta] \to [0,1]$ denotes a cutoff function that vanishes near the endpoints of the domain and satisfies $\eta \equiv 1$ on $[-\eps^{-1}, -\eps]$ and $|\eta'| \leq 10 \eps^{-1}$, then we obtain
\begin{multline*}
 2\int_{-\eps^{-1}}^{-\eps} \int_M |\nabla^2 y_i|^2 \, d\nu_t dt
\leq 2 \int_{-\delta^{-1}}^{-\delta} \eta(t) \int_M |\nabla^2 y_i|^2 \, d\nu_t dt \\
= -  \int_{-\delta^{-1}}^{-\delta} \eta'(t) \int_M \big( |\nabla y_i|^2 - 1 \big) \, d\nu_t dt
\leq 10 \eps^{-1}  \int_{-\delta^{-1}}^{-\delta}  \int_M \big| |\nabla y_i|^2 - 1 \big| \, d\nu_t dt
\leq \Psi(\delta | \eps). 
\end{multline*}
This shows that (\ref{eq_nab2_y_y_p_e_alph}) holds for $p = q = \alpha = 0$ if $\delta \leq \ov\delta (\eps)$.
After adjusting $\eps$ and $\delta$, we may assume in the following that (\ref{eq_nab2_y_y_p_e_alph}) holds for $p = q = \alpha = 0$ and for $\eps$ replaced with $\delta$.

Next we prove (\ref{eq_nabyinabyj_p}) for $\alpha = 0$.
Set $u := |\nabla y_i \cdot \nabla y_j - \delta_{ij}|$ and observe that $\square u \leq | \square ( \nabla y_i \cdot \nabla y_j)|$ in the weak sense (see Propositions~\ref{Prop_weak_square}, \ref{Prop_mu_square_rules}), so
\begin{equation*}
  \frac{d}{dt} \int_M  u \, d\nu_t 
= \int_M \square u \, d\nu_t 
\leq  \int_M |{-2\nabla^2 y_i \cdot \nabla^2 y_j }|\, d \nu_t
\leq   \int_M \big( |\nabla^2 y_i|^2 + |\nabla^2 y_j|^2 \big) d\nu_t.
\end{equation*}
Integration over time and application of our previous assumption concerning (\ref{eq_nab2_y_y_p_e_alph}) implies that for any $t_1, t_2 \in [-\delta^{-1}, -\delta]$ with $t_1 \leq t_2$
\[ \int_M  u \, d\nu_t  \bigg|_{t=t_1}^{t=t_2} \leq  2 \delta. \]
Integrating this again over $[-\delta^{-1}, \lb - \delta^{-1} + 1]$ with respect to $t_1$ implies that at every time $t_2 \in [-\delta^{-1}+1, -\delta]$
\[ \int_M  u \, d\nu_t  \bigg|_{t=t_2} \leq 2 \delta + \delta. \]
So by Cauchy-Schwarz and (\ref{eq_nab_y_i_L_p_bound}), we obtain that if $\delta \leq \ov\delta (\eps)$, then for any $p \in [1, \eps^{-1}]$, $t \in [-\eps^{-1}, -\eps]$
\begin{multline*}
 \int_M  u^{p} \, d\nu_{t} 
 \leq  \bigg( \int_M  u \, d\nu_{t} \bigg)^{1/2}  \bigg( \int_M  u^{2p-1} \, d\nu_{t} \bigg)^{1/2} \\
 \leq  \Psi(\delta)  \bigg( \int_M  |\nabla y_i|^{4p-2} \, d\nu_{t} \bigg)^{1/4}
 \bigg( \int_M  |\nabla y_j|^{4p-2} \, d\nu_{t} \bigg)^{1/4}
 \leq \Psi(\delta ). 
\end{multline*}
This proves (\ref{eq_nabyinabyj_p}) for $\alpha = 0$.
If $0 < \alpha \leq \ov\alpha$, then we have by Proposition~\ref{Prop_int_ealphaf}
\[ \int_M  u^{p} e^{\alpha f} d\nu_t   \leq \bigg( \int_M  u^{2p}  d\nu_t \bigg)^{1/2}  \bigg( \int_M  e^{2\alpha f} d\nu_t \bigg)^{1/2}
\leq \Psi (\delta |  \eps), \]
which proves (\ref{eq_nabyinabyj_p}) in its general form.
As before, let us assume from now on that (\ref{eq_nabyinabyj_p}) holds for $\eps$ replaced with $\delta$.

Our next goal is to show (\ref{eq_yi_2m_bound}) by induction on $m$.
Observe that (\ref{eq_yi_2m_bound}) holds trivially for $m = 0$.
After adjusting $\eps, \delta$ we may assume without loss of generality,  that (\ref{eq_yi_2m_bound}) holds for $\eps$ replaced with $\delta$ for some $m \leq [\eps^{-1}] -1$.
Our goal is to show that then (\ref{eq_yi_2m_bound}) holds for $m +1$ if $\delta \leq \ov\delta (\eps)$.
For this purpose, observe that for $t \in [-\eps^{-1}, -\delta]$
\begin{multline*}
  \frac{d}{dt} \int_M y_i^{2m+2} \, d\nu_t  
= \int_M \square y_i^{2m+2} \, d\nu_t 
= -(2m+2)(2m+1) \int_M |\nabla y_i|^2 y_i^{2m} \, d\nu_t \\
= -(2m+2)(2m+1) \int_My_i^{2m} \, d\nu_t 
- (2m+2)(2m+1) \int_M \big( |\nabla y_i|^2 - 1 \big) y_i^{2m} \, d\nu_t .
\end{multline*}
Therefore, using the $L^{4m}$-Poincar\'e inequality (Proposition~\ref{Prop_Poincare}), abbreviating $\tau = -t$,
\begin{align*}
  \bigg| \frac{d}{dt} \int_M y_i^{2m+2} \, & d\nu_t + (2m+2)(2m+1) \frac{(2m)!}{m!} \tau^{m} \bigg| \\
&\leq C(m) \delta + C(m) \bigg( \int_M \big| |\nabla y_i|^2 - 1 \big|^2  \, d\nu_t \bigg)^{1/2}\bigg( \int_M  y_i^{4m} \, d\nu_t \bigg)^{1/2} \\
&\leq C(m) \delta +  C(m) \tau^{m} \bigg( \int_M \big| |\nabla y_i|^2 - 1 \big|^2  \, d\nu_t \bigg)^{1/2}\bigg( \int_M  |\nabla y_i|^{4m} \, d\nu_t \bigg)^{1/2}
\leq \Psi (\delta | \eps, m).
\end{align*}
Integrating this over time and applying the $L^{2m+2}$-Poincar\'e inequality implies that for any $t^* \in [-\eps^{-1}, -\eps]$ and $\theta \in [\delta, \eps]$, using (\ref{eq_nabyinabyj_p}),
\begin{multline*}
 \bigg| \int_M y_i^{2m + 2} \, d\nu_t - \frac{(2m +2)!}{(m+1)!} \tau^{m+1} \bigg| \, \bigg|_{t = t^*} 
\leq \Psi (\delta | \eps, m, \theta) + \bigg| \int_M y_i^{2m + 2} \, d\nu_t - \frac{(2m +2)!}{(m+1)!} \tau^{m+1} \bigg| \, \bigg|_{t =  - \theta}  \\
\leq \Psi (\delta | \eps, m, \theta) + C(m) \theta^{m +1} \int_M |\nabla y_i|^{2m + 2} \, d\nu_{-\theta} + C(m) \theta^{m+1}
\leq  \Psi (\delta | \eps, m, \theta) + C(m) \theta^{m +1} . 
\end{multline*}
This implies (\ref{eq_yi_2m_bound}) for $m+1$ if $\theta \leq \ov\theta (\eps, m)$ and $\delta \leq \ov\delta (\eps, m, \theta)$.
Let us assume from now on that Assertion~\ref{Prop_properties_splitting_map_a} holds for $\eps$ replaced with $\delta$.

It remains to prove (\ref{eq_nab2_y_y_p_e_alph}) in its full form.
Since for any $b > 0$ we have, using our previous assumption involving (\ref{eq_nab2_y_y_p_e_alph}) for $p = q = \alpha = 0$,
\begin{align*}
  \int_{ - \eps^{-1} }^{ - \eps } \int_M & \Big( \sum_{i=1}^k |\nabla^2 y_i |^2  \Big) \Big( \sum_{j=1}^k |\nabla y_j |^{p} \Big) \Big( \sum_{l=1}^k  |y_l |^{q} \Big) e^{\alpha f}  d\nu_{t} dt  \\
&\leq C(\eps) b \sum_{i,j,l=1}^k  \int_{ - \eps^{-1} }^{ - \eps } \int_M |\nabla^2 y_i |^2  |\nabla y_j |^{2p}   |y_l |^{2q} e^{2\alpha f}  d\nu_{t} dt  +
b^{-1} \sum_{i=1}^k  \int_{ - \eps^{-1} }^{ - \eps } \int_M   |\nabla^2 y_i |^2   d\nu_{t} dt \\
&\leq C(\eps) b \sum_{i,j,l=1}^k  \int_{ - \eps^{-1} }^{ - \eps } \int_M |\nabla^2 y_i |^2  |\nabla y_j |^{4p}   e^{4\alpha f}  d\nu_{t} dt \\
&\qquad + C(\eps) b \sum_{i,j,l=1}^k  \int_{ - \eps^{-1} }^{ - \eps } \int_M |\nabla^2 y_i |^2     |y_l |^{4q}   d\nu_{t} dt  + b^{-1} \delta, 
\end{align*}
it suffices to show, after adjusting $\alpha, p, q$, that for any $p, q \geq 2$, $i, j, l = 1, \ldots, k$
\begin{align} \label{eq_nab2yypealphC}
 \int_{ - \eps^{-1} }^{ - \eps } \int_M  |\nabla^2 y_i |^2   |\nabla y_j |^{p}     e^{\alpha f} d\nu_{t} dt &\leq C(Y,\eps, p), \\
  \int_{ - \eps^{-1} }^{ - \eps } \int_M  |\nabla^2 y_i |^2     |y_l |^{q}  d\nu_{t} dt &\leq C(Y,\eps, q).  \label{eq_nab2_no_nab}
\end{align}
For this purpose, observe that for any $p \geq 2$ and $i, j = 1, \ldots, k$ we have for $\delta^*_{ij} := 1-\delta_{ij}$
\begin{align*}
 \square \big( |\nabla y_i|^{2} |\nabla y_j|^{p} \big)
&= \big( \square  |\nabla y_i|^{2} \big) |\nabla y_j|^{p} +  |\nabla y_i|^{2} \big(  \square |\nabla y_j|^{p} \big) - 2 \nabla |\nabla y_i|^{2} \cdot \nabla |\nabla y_j|^{p} \\
&\leq - c ( p) \big( |\nabla^2 y_i|^2  |\nabla y_j|^{p} + |\nabla^2 y_j|^2 |\nabla y_i|^{2} |\nabla y_j|^{p-2} \big) \\
&\qquad + C(p ) \delta^*_{ij} |\nabla^2 y_i| \, | \nabla^2 y_j| \, |\nabla y_i| \, |\nabla y_j|^{p-1} \\
&\leq - c ( p) \big( |\nabla^2 y_i|^2  |\nabla y_j|^{p} + |\nabla^2 y_j|^2 |\nabla y_i|^{2} |\nabla y_j|^{p-2} \big) \\
&\qquad + C( p ) \delta^*_{ij} \big( |\nabla^2 y_i|^2 | \nabla y_i|^{2} +  |\nabla^2 y_j|^2 |\nabla y_j|^{2p-2} \big).
\end{align*}
Moreover, using (\ref{eq_potential_evolution_equation}),
\[ \square e^{\alpha f} = \alpha (\square f) e^{\alpha f} - \alpha^2 |\nabla f|^2 e^{\alpha f}
= \alpha \Big( -2 \triangle f + (1-\alpha) |\nabla f|^2 - R + \frac{n}{2\tau} \Big) e^{\alpha f} . \]
It follows that
\begin{align}
 e^{-\alpha f} \square \big(  |\nabla y_i|^{2} & |\nabla y_j|^{p} e^{\alpha f} \big) \notag \\
&= \square \big( |\nabla y_i|^{2} |\nabla y_j|^{p} \big) + e^{-\alpha f} |\nabla y_i|^{2} |\nabla y_j|^{p} \square e^{\alpha f} +2\alpha \nabla \big( |\nabla y_i|^{2} |\nabla y_j|^{p} \big) \cdot \nabla  f \notag \\
&\leq - c ( p) \big( |\nabla^2 y_i|^2  |\nabla y_j|^{p} + |\nabla^2 y_j|^2 |\nabla y_i|^{2} |\nabla y_j|^{p-2} \big) \notag \\
 &\qquad + C( p ) \delta^*_{ij} \big( |\nabla^2 y_i|^2 |\nabla y_i|^{2} +  |\nabla^2 y_j|^2 |\nabla y_j|^{2p-2} \big) \notag \\
&\qquad + C(\eps) |\nabla y_i|^{2} |\nabla y_j|^{p} \big( |\nabla^2 f| +  |\nabla f|^2 + |R| + 1 \big) \notag \\
&\qquad + C(\eps,  p) \big( |\nabla^2 y_i| \, |\nabla y_i| \, |\nabla y_j|^{p} + |\nabla^2 y_j | \, |\nabla y_i|^{2} |\nabla y_j|^{p - 1} \big) |\nabla f| \notag \\
&\leq - \tfrac12 c ( p)  |\nabla^2 y_i|^2  |\nabla y_j|^{p} + C( p ) \delta^*_{ij} \big( |\nabla^2 y_i|^2 |\nabla y_i|^{2} +  |\nabla^2 y_j|^2 |\nabla y_j|^{2p-2} \big) \notag \\
&\qquad + C(\eps, p) \big(  |\nabla y_i|^{8} + |\nabla y_j|^{4p} +  |\nabla^2 f|^2 +  |\nabla f|^4 + R^2 + 1 \big)  . \label{eq_ealphf_square_nab2_nab_p}
\end{align}
Therefore, we obtain using Proposition~\ref{Prop_improved_L2}  and Assertion~\ref{Prop_properties_splitting_map_a} that if $\alpha \leq \ov\alpha$, $\delta \leq \ov\delta(\eps,p)$, then
\begin{align}
 \int_{ - \eps^{-1} }^{ - \eps } \int_M &  |\nabla^2 y_i|^2  |\nabla y_j|^{p}  e^{\alpha f} d\nu_t dt \notag \\
& \leq - C(\eps,  p) \int_M |\nabla y_i |^2 |\nabla y_j|^p e^{\alpha f} d\nu_t  \bigg|_{t = -\eps^{-1}}^{t=-\eps} + C(Y, \eps,  p) \notag \\
&\qquad  + C(\eps,  p) \delta^*_{ij} \int_{ - \eps^{-1} }^{ - \eps } \int_M  \big( |\nabla^2 y_i|^2 |\nabla y_i|^{2} +  |\nabla^2 y_j|^2 |\nabla y_j|^{2p-2} \big) e^{\alpha f} d\nu_t dt \notag \\
& \leq  C(Y, \eps,  p) + C(\eps,  p) \delta^*_{ij} \int_{ - \eps^{-1} }^{ - \eps } \int_M  \big( |\nabla^2 y_i|^2 |\nabla y_i|^{2} +  |\nabla^2 y_j|^2 |\nabla y_j|^{2p-2} \big) e^{\alpha f} d\nu_t dt. \label{eq_int_nab2nab_p}
\end{align}
This shows (\ref{eq_nab2yypealphC}) if $i = j$.
Given (\ref{eq_nab2yypealphC}) for $i = j$, we can also bound the last integral in (\ref{eq_int_nab2nab_p}) and conclude (\ref{eq_nab2yypealphC}) for $i \neq j$.

Lastly, integrating the bound
\begin{multline*}
 \square |\nabla y_i|^2 |y_l|^q
\leq - 2 |\nabla^2 y_i|^2 |y_l|^q - q(q-1) |\nabla y_i|^2 |\nabla y_l|^2 |y_l|^{q-2} + C(q) |\nabla^2 y_i| \, |\nabla y_i| \, |\nabla y_l| \, | y_l|^{q-1} \\
\leq -  |\nabla^2 y_i|^2 |y_l|^q + C(q)  |\nabla y_i|^2 \, |\nabla y_l|^2 \, | y_l|^{q-2} 
\leq -  |\nabla^2 y_i|^2 |y_l|^q + C(q) \big(  |\nabla y_i|^6 + |\nabla y_l|^6 +  | y_l|^{3q-6} \big),
\end{multline*}
as in (\ref{eq_int_nab2nab_p}) implies (\ref{eq_nab2_no_nab}), which finishes the proof.
\end{proof}
\bigskip

\section{Almost radial functions} \label{sec_alm_radial}
In this section we characterize the geometry near points $(x_0, t_0)$ that are simultaneously $(r, \eps)$-selfsimilar, $(r, \eps)$-static and strongly $(k,  \eps, r)$-split.
Near such points the flow is expected to be close to a static flow of the form $(C^{n-k} \times \IR^k, (v , \vec 0))$, where $(C^{n-k}, v)$ is a metric cone with vertex $v$.
If $d\nu = (4\pi \tau)^{-n/2} e^{-f} dg$ denotes the conjugate heat kernel based at $(x_0, t_0)$, then the function
\[ 4\tau (f-W) \]
is expected to approximate the square of the distance function, $d^2_{C^{n-k} \times \IR^k} ( (v, \vec 0), \cdot)$ from the basepoint $(v, \vec 0)$.
Our goal in Subsection~\ref{subsec_construction_almost_radial} will be to construct an \emph{almost radial} function $q$.
This function approximates the square of the distance function to a vertex on the $C^{n-k}$-factor, i.e. $d^2_{C^{n-k} } ( v, \cdot)$ and is almost constant in the $\IR^k$-direction.
If $(y_1, \ldots, y_k)$ denotes a strong splitting map representing the $\IR^k$-factor, then this function will be of the form
\[ q := 4\tau (f-W) - \sum_{i=1}^k y_i^2. \]
We will show that this function indeed satisfies the expected properties in an integral sense.

In Subsection~\ref{subsec_almost_radial_splitting_extension} we will show that near points where $q \gtrsim d^2 > 0$ --- i.e. points that correspond to points on $C^{n-k} \times \IR^k$ of distance $\gtrsim d$ from $\{ v \} \times \IR^{k}$ --- a suitable rescaling of $q$ can be used to extend the splitting map $(y_1, \ldots, y_k)$ at scale $\approx c d$ by an additional coordinate $y_{k+1}$.
This will allow us to conclude that such points are in fact strongly $(k+1, \Psi(\delta), cd)$-split.
In other words, away from the set of vertices $\{ v \} \times \IR^k$ we obtain an additional $\IR$-symmetry at small scales.

\subsection{Construction of an almost radial function} \label{subsec_construction_almost_radial}
The following proposition asserts the existence of an almost radial function with the expected properties on its first and second derivatives.

\begin{Proposition} \label{Prop_construction_almost_radial}
If $\alpha \in [0, \ov\alpha]$, $\eps > 0$, $Y < \infty$ and $\delta \leq \ov\delta (Y,\eps)$, then the following holds.
Let $(M, (g_t)_{t \in I})$ be a Ricci flow on a compact manifold and let $r > 0$ and $(x_0, t_0) \in M \times I$ with $[t_0 - \delta^{-1} r^2, t_0] \subset I$.
Suppose that $\vec y = (y_1, \ldots, y_k) \in C^\infty ( M \times [t_0 - \delta^{-1} r^2, t_0 - \delta r^2])$ is a strong $(k, \delta, r)$-splitting map at $(x_0, t_0)$.
Assume moreover that $(x_0, t_0)$ is $( \delta, r)$-selfsimilar and $(\delta, r)$-static and that $W := \NN_{x_0, t_0} (r^2) \geq - Y$.
Denote by $d\nu = (4\pi \tau)^{-n/2} e^{-f} dg$ the conjugate heat kernel based at $(x_0, t_0)$.
Then the function $q \in C^\infty ( M \times [t_0 - \eps^{-1} r^2, t_0 - \eps r^2])$,
\begin{equation} \label{eq_def_almost_radial}
 q := 4 \tau ( f -  W) - \sum_{i=1}^k y_i^2 
\end{equation}
satisfies the bounds
\begin{align}
 r^{-2} \int_{t_0 - \eps^{-1} r^2}^{t_0 - \eps r^2} \int_M  \Big| \nabla^2 q - 2 \Big(  g -   \sum_{i=1}^k d y_i \otimes d y_i \Big) \Big|^2  e^{\alpha f} d \nu_t dt  &\leq \eps \label{eq_almost_radial_1}  \\
\sum_{i=1}^k r^{-3} \int_{t_0 - \eps^{-1} r^2}^{t_0 - \eps r^2} \int_M |\nabla q \cdot \nabla y_i | e^{\alpha f} d\nu_t dt &\leq \eps \label{eq_almost_radial_2}  \\
 r^{-4} \int_{t_0 - \eps^{-1} r^2}^{t_0 - \eps r^2} \int_M \big| |\nabla q|^2 - 4 q \big| e^{\alpha f} d\nu_t dt &\leq \eps \label{eq_almost_radial_3}  \\
 r^{-2} \int_{t_0 - \eps^{-1} r^2}^{t_0 - \eps r^2} \int_M |\partial_t q |  e^{\alpha f} d\nu_t dt &\leq \eps \label{eq_almost_radial_4}  
\end{align}
Moreover, if $k = n$, then
\begin{equation} \label{eq_almost_radial_5}
\int_{t_0 - \eps^{-1} r^2}^{t_0 - \eps r^2} \int_M \big( r^{-4} | q | + r^{-3} |\nabla q|^2  \big) e^{\alpha f} d\nu_t dt \leq \eps
\end{equation}
\end{Proposition}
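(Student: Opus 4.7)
After parabolic rescaling I may assume $r=1$ and $t_0=0$. The plan is to differentiate $q = 4\tau(f-W) - \sum y_i^2$ and split each resulting expression into three kinds of controllable error: (a) weighted almost soliton identities from Proposition~\ref{Prop_almost_soliton_identities}; (b) weighted splitting map estimates from Proposition~\ref{Prop_properties_splitting_map}; (c) the almost-static bound $\int \tau |\Ric|^2\,d\nu_t\,dt \le \delta$, which I first upgrade to its $e^{\alpha f}$-weighted form by the same H\"older interpolation between $L^1$ and the weighted $L^2$ bounds of Proposition~\ref{Prop_improved_L2} used in the proof of Proposition~\ref{Prop_almost_soliton_identities}.

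\paragraph{The routine identities.} For (\ref{eq_almost_radial_1}), I write
\[
\nabla^2 q = 4\tau\nabla^2 f - 2\sum_i dy_i\otimes dy_i - 2\sum_i y_i\nabla^2 y_i, \qquad 4\tau\nabla^2 f = 2g - 4\tau\Ric + 4\tau\Big(\Ric + \nabla^2 f - \tfrac1{2\tau}g\Big),
\]
and control the three remainder terms by (a), (c), and (b) (the last via Cauchy--Schwarz pairing $\nabla^2 y_i$ with $y_i$, using the weighted $L^p$ bounds of Proposition~\ref{Prop_properties_splitting_map}\ref{Prop_properties_splitting_map_b}). For (\ref{eq_almost_radial_4}) I expand $\partial_t q$ using $\square y_i = 0$ and $-\partial_t f = \triangle f - |\nabla f|^2 + R - n/(2\tau)$; the identity (\ref{eq_alm_ss_identity_4}) and the trace of (\ref{eq_alm_ss_identity_1}) cancel the $(f-W)$- and $\triangle f$-terms up to weighted-$L^1$ errors in $\tau R$, $\tau(\triangle f + R - n/(2\tau))$, and $\sum y_i\triangle y_i$. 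For (\ref{eq_almost_radial_3}) I expand
\[
|\nabla q|^2 = 16\tau^2|\nabla f|^2 - 16\tau\sum_j y_j\,\nabla f\cdot\nabla y_j + 4\Big|\sum_j y_j\nabla y_j\Big|^2;
\]
identity (\ref{eq_alm_ss_identity_4}) handles the first summand ($16\tau^2|\nabla f|^2 \approx 16\tau(f-W) - 16\tau^2 R$), the splitting identities handle the third ($\approx 4\sum y_j^2$), and the cross term reduces, via the key relation $4\tau\nabla f\cdot\nabla y_i \approx 2y_i$ underlying (\ref{eq_almost_radial_2}), to $-8\sum y_j^2$, yielding $|\nabla q|^2 - 4q \approx -16\tau^2 R$.

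\paragraph{Main obstacle and the $k=n$ case.} The delicate step will be (\ref{eq_almost_radial_2}), since no soliton identity in Proposition~\ref{Prop_almost_soliton_identities} isolates $\nabla f\cdot\nabla y_i$ directly. The plan is to bound the weighted $L^2$ quantity $\int(2\tau\nabla f\cdot\nabla y_i - y_i)^2 e^{2\alpha f}\,d\nu_t\,dt$ by applying the divergence identity $\int\nabla f\cdot Y\,d\nu = \int \DIV Y\,d\nu$ twice: first with $Y = y_i e^{2\alpha f}\nabla y_i$ to rewrite the cross term $\int y_i\nabla f\cdot\nabla y_i\,e^{2\alpha f}\,d\nu$ in terms of $|\nabla y_i|^2 + y_i\triangle y_i$, and second with $Y = e^{2\alpha f}(\nabla f\cdot\nabla y_i)\nabla y_i$ to rewrite the quadratic term $\int(\nabla f\cdot\nabla y_i)^2 e^{2\alpha f}\,d\nu$ in terms of $\nabla^2 f(\nabla y_i,\nabla y_i)$ plus lower-order pieces; substituting $\nabla^2 f \approx \tfrac1{2\tau}g - \Ric$ produces a leading contribution $\tfrac1{2\tau(1-2\alpha)}|\nabla y_i|^2$ which telescopes against the other two contributions. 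The remaining error terms --- involving $\Ric(\nabla y_i,\nabla y_i)$, $\nabla^2 y_i(\nabla f,\nabla y_i)$, $(\nabla f\cdot\nabla y_i)\triangle y_i$, $y_i\triangle y_i$, and the selfsimilarity tensor $\Ric + \nabla^2 f - \tfrac1{2\tau}g$ --- are absorbed by H\"older using Proposition~\ref{Prop_properties_splitting_map}\ref{Prop_properties_splitting_map_b} paired with the weighted bounds on $|\nabla f|^p$, $|\nabla^2 f|^2$, and $|\Ric|^2$ from Proposition~\ref{Prop_improved_L2}. The $L^1$ bound (\ref{eq_almost_radial_2}) then follows by Cauchy--Schwarz against $e^{\alpha f}\,d\nu$. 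Finally, (\ref{eq_almost_radial_5}) in the case $k=n$ is deduced from the previous bounds: the splitting identity forces $g \approx \sum dy_i\otimes dy_i$ in weighted $L^2$, so (\ref{eq_almost_radial_1}) yields $\nabla^2 q \approx 0$ weakly; Proposition~\ref{Prop_NN_almost_constant_selfsimilar} together with the moment identity $\int\sum y_i^2\,d\nu_t \approx 2n\tau$ forces $\int q\,d\nu_t \approx 0$; and (\ref{eq_almost_radial_3}) combined with $|\nabla q|^2\ge 0$ upgrades this to the weighted $L^1$ smallness of both $q$ and $|\nabla q|^2$.
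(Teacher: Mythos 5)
Your routine identities for \eqref{eq_almost_radial_1} and \eqref{eq_almost_radial_4} match the paper's argument. But there is a gap in your treatment of \eqref{eq_almost_radial_2}, which then infects \eqref{eq_almost_radial_3}. Expanding $\int(2\tau\nabla f\cdot\nabla y_i - y_i)^2 e^{2\alpha f}\,d\nu$ and applying your two divergence identities turns the leading terms into
\[
\frac{2\tau}{1-2\alpha}\int |\nabla y_i|^2 e^{2\alpha f}\,d\nu \;-\; \frac{4\tau}{1-2\alpha}\int |\nabla y_i|^2 e^{2\alpha f}\,d\nu \;+\; \int y_i^2\, e^{2\alpha f}\,d\nu,
\]
which telescopes to zero only if $\int y_i^2 e^{2\alpha f}\,d\nu \approx \frac{2\tau}{1-2\alpha}\int|\nabla y_i|^2 e^{2\alpha f}\,d\nu$. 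This weighted second moment is exact for the Gaussian model, but neither Proposition~\ref{Prop_properties_splitting_map} (which gives $\int y_i^{2m}\,d\nu_t$ only \emph{unweighted}) nor Proposition~\ref{Prop_improved_L2} supplies it, and it does not follow from the weighted $L^2$-Poincar\'e applied to $y_i e^{\alpha f}$, whose slack is of order $\alpha^2$ and not controlled by $\delta$. The paper sidesteps this issue entirely: instead of bounding the squared integrand, it computes the single number $\int_M \nabla q\cdot\nabla y_i\,d\nu_t$ (via divergence, the $\int y_i\,d\nu_t=0$ normalization, and $\int y_j(\nabla y_j\cdot\nabla y_i-\delta_{ij})\,d\nu_t$ being small), shows it is $\Psi(\delta)$-small, and then upgrades this to the $L^1$ bound \eqref{eq_almost_radial_2} by the $L^1$-Poincar\'e inequality, using that $\nabla(\nabla q\cdot\nabla y_i) = \nabla^2 q\cdot\nabla y_i + \nabla q\cdot\nabla^2 y_i$ is already under control by \eqref{eq_almost_radial_1} and Proposition~\ref{Prop_properties_splitting_map}\ref{Prop_properties_splitting_map_b}. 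The easiest repair of your argument is the one you already invoke for the static bound: first reduce the entire proposition to $\alpha=0$ (as the paper does in its opening paragraph, by the interpolation $\int X e^{\alpha f}\,d\nu \le b\int X e^{2\alpha f}\,d\nu + b^{-1}\int X\,d\nu$ combined with Propositions~\ref{Prop_improved_L2}, \ref{Prop_properties_splitting_map}, applied uniformly); then the unweighted moment $\int y_i^2\,d\nu_t\approx 2\tau$ is available and the telescoping closes.

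Once \eqref{eq_almost_radial_2} is repaired, your routes to \eqref{eq_almost_radial_3} and \eqref{eq_almost_radial_5} do diverge from the paper in a valid way. For \eqref{eq_almost_radial_3} the paper again computes the integrals $\int(|\nabla q|^2 - 8\tau(n-k))\,d\nu_t$ and $\int(4q-8\tau(n-k))\,d\nu_t$ and applies $L^1$-Poincar\'e, whereas you substitute the $L^2$ version of \eqref{eq_almost_radial_2} term-by-term; this requires an $L^2$ (not just $L^1$) form of $4\tau\nabla f\cdot\nabla y_j\approx 2y_j$ in order to Cauchy--Schwarz against the unbounded factor $y_j$, which your argument does produce. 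For \eqref{eq_almost_radial_5} your argument (from $\int q\,d\nu_t\approx 0$, the nonnegativity $|\nabla q|^2\ge 0$, and \eqref{eq_almost_radial_3}) is actually cleaner than the paper's, which instead restricts to the set where $\sum|\nabla y_i\cdot\nabla y_j-\delta_{ij}|$ is small, bounds $|\nabla q|$ there by $\sum|\nabla q\cdot\nabla y_i|$, and interpolates $L^1\!\to\!L^2$ on $\nabla q$; your $q_+/q_-$ decomposition avoids the covering and interpolation steps.
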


\begin{proof}
Without loss of generality, we may assume that $r = 1$ and $t_0 = 0$.
We first reduce the proposition to the case in which $\alpha = 0$.
To see this, denote by $h e^{\alpha f}$ one of the integrands in (\ref{eq_almost_radial_1})--(\ref{eq_almost_radial_5}).
For any $b > 0$ we have
\[  \int_{- \eps^{-1} }^{ - \eps } \int_M h  e^{\alpha f} d \nu_t dt 
\leq b  \int_{- \eps^{-1} }^{ - \eps } \int_M h  e^{2\alpha f} d \nu_t dt  + b^{-1}  \int_{- \eps^{-1} }^{ - \eps } \int_M h \,  d \nu_t dt . \]
So it suffices to show that the first double integral on the right-hand side is $\leq C(Y, \eps)$ if $\alpha \leq \ov\alpha$ and that the second double integral is $\leq \Psi (\delta | Y,\eps)$.
The boundedness of the first double integral can be seen as follows.
In all four cases we have
\begin{multline*}
 h \leq C(Y,\eps) \sum_{i=1}^k \big( 1+ |\nabla^2 f|^2 + |\nabla^2 y_i|^2 |y_i|^2 + |\nabla y_i|^4 + |\nabla f|^2 +   |\nabla y_i|^2 |y_i|^2
 +|f| + y_i^2  + |\partial_t f| + |y_i| \, |\partial_t y_i| \big) \\
 \leq C(Y,\eps) \sum_{i=1}^k \big( 1+ |\nabla^2 f|^2 + |R| + |\nabla f|^4 +  |\nabla^2 y_i|^2  |y_i|^2 + |\nabla y_i|^4 + |y_i|^4  + |f| +   |\nabla^2 y_i|^2 \big).
\end{multline*}
Therefore, using Propositions~\ref{Prop_improved_L2}, \ref{Prop_properties_splitting_map} if $\alpha \leq \ov\alpha$, then
\[   \int_{- \eps^{-1} }^{ - \eps } \int_M h  e^{2\alpha f} d \nu_t dt \leq C(Y, \eps). \]
So we may assume for the remainder of the proof that $\alpha = 0$.

In the following we will frequently use Proposition~\ref{Prop_properties_splitting_map} in order to control the strong splitting map $(y_1, \ldots, y_k)$ without further reference.
We will also use the following bound for $p \in [1, 4]$, which follows from Propositions~\ref{Prop_improved_L2}, \ref{Prop_properties_splitting_map}:
\begin{equation}
\int_{ - \eps^{-1} }^{ - \eps } \int_M |\nabla q|^p d\nu_t dt
\leq C(\eps) \int_{ - \eps^{-1} }^{ - \eps } \int_M \Big(  |\nabla f|^p + \sum_{i=1}^k \big( y_i^{2p} + |\nabla y_i |^{2p}  \big)\Big) d\nu_t dt \leq C(Y,\eps) .\label{eq_nabq2_bound_C}
\end{equation}

Using the $(\delta, 1)$-static and $(\delta, 1)$-selfsimilar conditions, we can estimate
\begin{multline} \label{eq_nab2_q_computation}
  \int_{ - \eps^{-1} }^{ - \eps } \int_M \Big| \nabla^2 q - 2 \Big(  g -   \sum_{i=1}^k d y_i \otimes d y_i \Big) \Big|^2 d \nu_t dt  
  \leq \Psi (\delta )   + C \sum_{i=1}^k \int_{ - \eps^{-1} }^{- \eps } \int_M \big|   y_i  \nabla^2 y_i \big|^2 d \nu_t dt  \\
  \leq  \Psi (\delta | Y,\eps).
\end{multline}
This proves (\ref{eq_almost_radial_1}) for $\delta \leq \ov\delta (Y,\eps)$.
Next, we estimate
\begin{align*}
  \int_{-\eps^{-1}}^{-\eps} \bigg| \int_M & \nabla q \cdot \nabla y_i \, d\nu_t \bigg| dt 
=  \int_{-\eps^{-1}}^{-\eps}\bigg|  \int_M \Big( 4 \tau \nabla f \cdot \nabla y_i - 2 \sum_{j=1}^k y_j \nabla y_j \cdot \nabla y_i  \Big) d\nu_t \bigg| dt  \\
&= \int_{-\eps^{-1}}^{-\eps} \bigg|  \int_M \big( 4 \tau \triangle y_i - 2 y_i - 2 \sum_{j=1}^k y_j ( \nabla y_j \cdot \nabla y_i - \delta_{ij} )  \big) d\nu_t \bigg|  dt  \\
&\leq \Psi(\delta | Y, \eps) + C\sum_{j=1}^k \bigg( \int_{-\eps^{-1}}^{-\eps} \int_M y_j^2d\nu_t dt  \bigg)^{1/2} \bigg( \int_{-\eps^{-1}}^{-\eps} \int_M  |\nabla y_j \cdot \nabla y_i - \delta_{ij}|^2 d\nu_t dt \bigg)^{1/2} \\
&\leq \Psi(\delta | Y, \eps) .
\end{align*}
So by the $L^1$-Poincar\'e inequality (Proposition~\ref{Prop_Poincare}) and (\ref{eq_nabq2_bound_C}), (\ref{eq_nab2_q_computation}) we have
\begin{align}
  \int_{-\eps^{-1}}^{-\eps} \int_M & |\nabla q \cdot \nabla y_i | d\nu_t dt 
\leq C(\eps) \int_{-\eps^{-1}}^{-\eps} \int_M |\nabla (\nabla q \cdot \nabla y_i )| d\nu_t dt  + \Psi (\delta | Y, \eps ) \notag \\
&\leq C(\eps) \int_{-\eps^{-1}}^{-\eps} \int_M |\nabla^2 q \cdot \nabla y_i | d\nu_t dt 
+ C(\eps) \int_{-\eps^{-1}}^{-\eps} \int_M |\nabla q \cdot \nabla^2 y_i | d\nu_t dt  
+ \Psi (\delta | Y, \eps ) \notag \displaybreak[1] \\
&\leq C(\eps) \int_{-\eps^{-1}}^{-\eps} \int_M \Big| \nabla^2 q - 2\Big(  g -   \sum_{j=1}^k d y_j \otimes d y_j \Big) \Big| \, |\nabla y_i | d\nu_t dt \notag \\
&\qquad + C(\eps) \int_{-\eps^{-1}}^{-\eps} \int_M \Big| \nabla y_i - \sum_{j=1}^k (\nabla y_j \cdot \nabla y_i ) \nabla y_j \Big| d\nu_t dt \notag \\
&\qquad + C(\eps) \bigg( \int_{-\eps^{-1}}^{-\eps} \int_M |\nabla q|^2  d\nu_t dt \bigg)^{1/2} \bigg( \int_{-\eps^{-1}}^{-\eps} \int_M | \nabla^2 y_i |^2 d\nu_t dt \bigg)^{1/2}
+ \Psi (\delta | Y, \eps ) \notag \displaybreak[1] \\
&\leq C(\eps) \bigg( \int_{-\eps^{-1}}^{-\eps} \int_M \Big| \nabla^2 q - 2\Big(  g -   \sum_{i=1}^k d y_i \otimes d y_i \Big) \Big|^2  d\nu_t dt \bigg)^{1/2}  \bigg( \int_{-\eps^{-1}}^{-\eps} \int_M  |\nabla y_i |^2 d\nu_t dt \bigg)^{1/2} \notag \\
&\qquad + C(\eps)\sum_{j=1}^k \bigg( \int_{-\eps^{-1}}^{-\eps} \int_M \big|\nabla y_j \cdot \nabla y_i  - \delta_{ij} \big|^2 d\nu_t dt \bigg)^{1/2} \bigg( \int_{-\eps^{-1}}^{-\eps} \int_M |\nabla y_j|^2 d\nu_t dt \bigg)^{1/2} \notag \\
&\qquad 
+ \Psi (\delta | Y, \eps ) 
\notag \\ &\leq \Psi (\delta | Y, \eps ) , \label{eq_nabq_nabyi_computation}
\end{align}
which proves (\ref{eq_almost_radial_2}) for $\delta \leq \ov\delta (Y, \eps)$.

From (\ref{eq_nabq_nabyi_computation}) and (\ref{eq_nabq2_bound_C}) we obtain that for any $b > 0$
\begin{align*}
 \int_{-\eps^{-1}}^{-\eps} \int_M & |y_i| \, |\nabla q \cdot \nabla y_i | d\nu_t dt 
\leq b^{-1} \int_{-\eps^{-1}}^{-\eps} \int_M  |\nabla q \cdot \nabla y_i | d\nu_t dt + b \int_{-\eps^{-1}}^{-\eps} \int_M |y_i|^{2} |\nabla q \cdot \nabla y_i | d\nu_t dt \\
&\leq b^{-1} \Psi (\delta | Y, \eps) + b \bigg( \int_{-\eps^{-1}}^{-\eps} \int_M |\nabla q |^{2} d\nu_t dt \bigg)^{1/2} \bigg( \int_{-\eps^{-1}}^{-\eps} \int_M |y_i|^{4} | \nabla y_i |^2 d\nu_t dt \bigg)^{1/2} \\
&\leq b^{-1} \Psi (\delta | Y, \eps) + C(Y,\eps) b.
\end{align*}
Thus
\begin{equation} \label{eq_yi_nabq_naby}
  \int_{-\eps^{-1}}^{-\eps} \int_M |y_i| \, |\nabla q \cdot \nabla y_i | d\nu_t dt 
\leq \Psi (\delta | Y, \eps). 
\end{equation}
Similarly, we can argue that for any $b > 0$
\begin{align*}
 \int_{-\eps^{-1}}^{-\eps} \int_M & |\nabla y_i| \, |\nabla q \cdot \nabla y_i | d\nu_t dt 
\leq b^{-1} \int_{-\eps^{-1}}^{-\eps} \int_M  |\nabla q \cdot \nabla y_i | d\nu_t dt + b \int_{-\eps^{-1}}^{-\eps} \int_M |\nabla y_i|^{2} |\nabla q \cdot \nabla y_i | d\nu_t dt \\
&\leq b^{-1} \Psi (\delta | Y, \eps) + b \bigg( \int_{-\eps^{-1}}^{-\eps} \int_M |\nabla q |^{2} d\nu_t dt \bigg)^{1/2} \bigg( \int_{-\eps^{-1}}^{-\eps} \int_M | \nabla y_i |^6 d\nu_t dt \bigg)^{1/2} \\
&\leq b^{-1} \Psi (\delta | Y, \eps) + C(Y,\eps) b.
\end{align*}
Thus
\begin{equation} \label{eq_nabyi_nabq_naby}
  \int_{-\eps^{-1}}^{-\eps} \int_M |\nabla y_i| \, |\nabla q \cdot \nabla y_i | d\nu_t dt 
\leq \Psi (\delta | Y, \eps). 
\end{equation}

Next, we estimate, using (\ref{eq_yi_nabq_naby}),  (\ref{eq_nab2_q_computation}),
\begin{align}
 \int_{-\eps^{-1}}^{-\eps}  \bigg| \int_M & \big( |\nabla q|^2 - 8 \tau (n-k) \big) d\nu_t  \bigg| dt 
\notag \\
& =  \int_{-\eps^{-1}}^{-\eps}\bigg| \int_M \Big( 4 \tau \nabla q \cdot \nabla f - 2 \sum_{i=1}^k y_i \nabla q \cdot \nabla y_i - 8 \tau (n-k) \Big) d\nu_t \bigg| dt \notag \\
&\leq  \int_{-\eps^{-1}}^{-\eps} \bigg| \int_M \big( 4 \tau \triangle q   - 8\tau (n-k) \big) d\nu_t \bigg| dt  + \Psi (\delta | Y, \eps) \notag \\
&\leq  \int_{-\eps^{-1}}^{-\eps} \bigg| \int_M \big( 4 \tau \triangle q   - 8\tau n + 8\tau \sum_{i=1}^k |\nabla y_i|^2 \big) d\nu_t \bigg| dt \notag \\
&\qquad  + 8 \int_{-\eps^{-1}}^{-\eps} \tau \bigg| \int_M  \sum_{i=1}^k (|\nabla y_i|^2 -1) d\nu_t \bigg| dt    + \Psi (\delta | Y, \eps) \notag \\
&\leq \Psi (\delta | Y, \eps)  \label{eq_nabq_8nk}
\end{align}
and, using Propositions~\ref{Prop_NN_almost_constant_selfsimilar}, \ref{Prop_properties_splitting_map},
\begin{multline} \label{eq_q_8nk}
 \int_{-\eps^{-1}}^{-\eps}\bigg| \int_M \big(4 q - 8 \tau (n-k) \big) d\nu_t \bigg| dt 
=  \int_{-\eps^{-1}}^{-\eps}  \bigg| \int_M \Big( 16 \tau (f - W) - 8 \tau n - 4 \sum_{i=1}^k (y_i^2 - 2 \tau) \Big)   d\nu_t  \bigg| dt \\
\leq \int_{-\eps^{-1}}^{-\eps} 16 \tau | \NN_{x_0, 0} (\tau) - W| + \sum_{i=1}^k 4  \int_{-\eps^{-1}}^{-\eps} \bigg| \int_M (y_i^2 - 2\tau) d\nu_t \bigg| dt 
\leq \Psi (\delta | Y,\eps).
\end{multline}
Combining (\ref{eq_nabq_8nk}), (\ref{eq_q_8nk}) yields
\[   \int_{-\eps^{-1}}^{-\eps} \bigg| \int_M \big( |\nabla q|^2 - 4 q \big) d\nu_t \bigg| dt  \leq \Psi (\delta | Y, \eps). \]
So by the $L^1$-Poincar\'e inequality, (\ref{eq_nabq2_bound_C}), (\ref{eq_nab2_q_computation}), (\ref{eq_nabyi_nabq_naby})
\begin{align}
  \int_{-\eps^{-1}}^{-\eps} \int_M & \big| |\nabla q|^2 - 4 q \big| d\nu_t dt 
\leq  C(\eps) \int_{-\eps^{-1}}^{-\eps} \int_M \big| 2\nabla^2 q \cdot \nabla q - 4 \nabla q \big| d\nu_t dt  + \Psi (\delta | Y, \eps) \notag \\
&\leq  C(\eps) \int_{-\eps^{-1}}^{-\eps} \int_M \Big| \nabla^2 q - 2 \Big(  g -   \sum_{i=1}^k d y_i \otimes d y_i \Big) \Big| \, |\nabla q| d\nu_t dt  \notag \\
&\qquad + C(\eps) \int_{-\eps^{-1}}^{-\eps} \int_M \Big|  \sum_{i=1}^k (\nabla y_i \cdot \nabla q) \nabla y_i \Big| d\nu_t dt + \Psi (\delta | Y, \eps) \notag \\
&\leq C(\eps) \bigg( \int_{-\eps^{-1}}^{-\eps} \int_M \Big| \nabla^2 q - 2\Big( g -   \sum_{i=1}^k d y_i \otimes d y_i \Big) \Big|^2 d\nu_t dt \bigg)^{1/2} \bigg( \int_{-\eps^{-1}}^{-\eps} \int_M  |\nabla q|^2 d\nu_t dt \bigg)^{1/2} \notag  \\
&\qquad + C \sum_{i=1}^k \int_{-\eps^{-1}}^{-\eps} \int_M \big|   \nabla y_i \cdot \nabla q \big| \, |\nabla y_i| d\nu_t dt + \Psi (\delta | Y, \eps) \notag  \\
& \leq \Psi (\delta | Y, \eps) . \label{eq_nabq2_4q_small_proof}
\end{align}
This proves (\ref{eq_almost_radial_3}) for $\delta \leq \ov\delta (Y,\eps)$.

To see (\ref{eq_almost_radial_4}), we observe that since $(x_0,0)$ is $(\delta, 1)$-selfsimilar and $(\delta,1)$-static, we have by Proposition~\ref{Prop_almost_soliton_identities}
\begin{multline} \label{eq_partialt_f_W}
  \int_{-\eps^{-1}}^{-\eps} \int_M |\partial_t (\tau (f - W))| d\nu_t dt
  \leq
    \int_{-\eps^{-1}}^{-\eps} \int_M \Big| \square (\tau f) + W + \frac{n}2 \Big| d\nu_t dt 
    + \int_{-\eps^{-1}}^{-\eps} \int_M \Big| \tau \triangle  f - \frac{n}2 \Big| d\nu_t dt \\
\leq \Psi (\delta | Y,\eps).
\end{multline}
Moreover,
\begin{multline} \label{eq_partialt_yi2}
  \int_{-\eps^{-1}}^{-\eps} \int_M |\partial_t y_i^2| d\nu_t dt 
\leq 2 \int_{-\eps^{-1}}^{-\eps} \int_M |y_i| \, |\triangle y_i| d\nu_t dt \\
\leq 2 \bigg( \int_{-\eps^{-1}}^{-\eps} \int_M |y_i|^2 d\nu_t dt \bigg)^{1/2}  \bigg( \int_{-\eps^{-1}}^{-\eps} \int_M  |\triangle y_i|^2 d\nu_t dt \bigg)^{1/2}
\leq \Psi (\delta | Y,\eps).
\end{multline}
Combining (\ref{eq_partialt_f_W}), (\ref{eq_partialt_yi2}) implies (\ref{eq_almost_radial_4}) for $\delta \leq \ov\delta (Y,\eps)$.

Lastly, consider the case $k = n$.
Let $\eta > 0$ be a constant whose value we will determine in a moment and let $S \subset M \times [-\eps^{-1}, -\eps]$ be the subset where
\[ \sum_{i,j=1}^n |\nabla y_i \cdot \nabla y_j - \delta_{ij} | \geq \eta. \]
Then
\begin{equation} \label{eq_int_chi_X_small}
 \int_{-\eps^{-1}}^{-\eps} \int_M \chi_{S} \, d\nu_t dt \leq \Psi (\delta | \eps, \eta). 
\end{equation}
If $\eta \leq \ov\eta$, then on $M \times [-\eps^{-1}, -\eps] \setminus S$ we have
\[ |\nabla q| \leq C \sum_{i=1}^n |\nabla q \cdot \nabla y_i|. \]
It follows using  (\ref{eq_nabq_nabyi_computation}), (\ref{eq_int_chi_X_small}), (\ref{eq_nabq2_bound_C}) that
\begin{multline*}
 \int_{-\eps^{-1}}^{-\eps} \int_M |\nabla q| \, d\nu_t dt
\leq C \sum_{i=1}^n \int_{-\eps^{-1}}^{-\eps} \int_M |\nabla q \cdot \nabla y_i| \, d\nu_t dt + \int_{-\eps^{-1}}^{-\eps} \int_M |\nabla q| \chi_S \, d\nu_t dt \\
\leq \Psi (\delta | Y, \eps) + \bigg( \int_{-\eps^{-1}}^{-\eps} \int_M  \chi_S \, d\nu_t dt \bigg)^{1/2} \bigg( \int_{-\eps^{-1}}^{-\eps} \int_M |\nabla q|^2  d\nu_t dt \bigg)^{1/2} 
\leq \Psi (\delta | Y, \eps) .
\end{multline*}
So, again using (\ref{eq_nabq2_bound_C}),
\[ \int_{-\eps^{-1}}^{-\eps} \int_M |\nabla q|^2 \, d\nu_t dt
\leq \bigg( \int_{-\eps^{-1}}^{-\eps} \int_M |\nabla q| \, d\nu_t dt \bigg)^{2/3} \bigg( \int_{-\eps^{-1}}^{-\eps} \int_M |\nabla q|^4 \, d\nu_t dt \bigg)^{1/3}
\leq \Psi (\delta | Y, \eps) .  \]
Combining this with (\ref{eq_nabq2_4q_small_proof}) implies a similar bound for $|q|$.
This shows (\ref{eq_almost_radial_5}) and concludes the proof.
\end{proof}

\subsection{Extending splitting maps away from almost vertices} \label{subsec_almost_radial_splitting_extension}
The next result roughly states the following.
Suppose that $q$ is an almost radial function, which is constructed based on a strong $(k, \eps, r)$-splitting map as in (\ref{eq_def_almost_radial}).
Then points where $q \gtrsim (\la r)^2$ are in fact strongly $(k+1, \eps, \la r)$-split.

\begin{Proposition} \label{Prop_extending_splitting_maps_almost_radial}
If 
\[ \eps, \zeta > 0, 
\qquad  0 < \la < 1, \qquad Y, A < \infty, \qquad \beta \leq \ov\beta (Y, A, \eps), \qquad \delta \leq \ov\delta (Y,A, \eps, \la, \beta, \zeta), \]
then the following holds.
Let $(M, (g_t)_{t \in I})$ be a Ricci flow on a compact manifold and let $r > 0$  and $(x_0, t_0) \in M \times I$ with $[t_0 - \delta^{-1} r^2, t_0] \subset I$.
Suppose that $\vec y = (y_1, \ldots, y_k) : M \times [t_0 - \delta^{-1} r^2, t_0 - \delta r^2] \to \IR^k$ is a $(k, \delta, r)$-splitting map at $(x_0, t_0)$.
Assume moreover that $(x_0, t_0)$ is $( \delta, r)$-selfsimilar and $( \delta, r)$-static and that $W := \NN_{x_0, t_0} (r^2) \geq - Y$.
Denote by $d\nu = (4\pi \tau)^{-n/2} e^{-f} dg$ the conjugate heat kernel based at $(x_0, t_0)$ and define $q \in  C^\infty ( M \times [t_0 - \delta^{-1} r^2, t_0 - \delta r^2])$ as in (\ref{eq_def_almost_radial}).

Suppose that $(x_1,t_1) \in P^* (x_0, t_0; A r)$, $t_1 \leq t_0 - \eps r^2$ and that for some $\tau^* \in [\zeta (\la r)^2, (8 n)^{-1} (\la r)^2]$ the positive part $q_+$ satisfies
\[ \frac1{\tau^*} \int_{t_1 - 2\tau^*}^{t_1 - \tau^*} \int_M q_+ \, d\nu_{x_1, t_1;t} dt \geq (\la r)^2. \]
Then the point $(x_1, t_1)$ is strongly $(k+1, \eps, \beta \la r)$-split.
\end{Proposition}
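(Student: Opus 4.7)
\emph{Strategy.} After parabolic rescaling we may take $r=1$. The model picture is a static Ricci flat cone times $\IR^k$: there $q$ is the squared cone-distance to the vertex, the $y_i$ are the flat Euclidean coordinates on the $\IR^k$-factor, and the hypothesis $\frac{1}{\tau^*} \int \int q_+\, d\nu_{x_1,t_1;t}\, dt \geq \la^2$ says precisely that $(x_1,t_1)$ lies at cone-distance $\gtrsim \la$ from the axis $\{ v \} \times \IR^k$. At the smaller scale $\beta\la$ a neighbourhood of $(x_1,t_1)$ therefore splits off a further approximate $\IR$-factor, whose coordinate I will manufacture directly from $q$. Because on the model one has $\square q = -2(n-k)$, the natural heat-equation-adapted candidate is
\[
y_{k+1} \;:=\; \frac{q + 2(n-k)\, t - a_{k+1}}{2\sqrt{\bar q}},
\qquad \bar q \;:=\; \frac{1}{\tau^*}\int_{t_1-2\tau^*}^{t_1-\tau^*}\int_M q_+\, d\nu_{x_1,t_1;t}\, dt \;\geq\; \la^2,
\]
with the additive constant $a_{k+1}$ chosen at the end, using Lemma~\ref{Lem_prop_3_strong_splitting}, to enforce Property~\ref{Def_strong_splitting_map_3} of Definition~\ref{Def_strong_splitting_map}. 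The plan is to verify that $(y_1,\dots,y_k,y_{k+1})$ is a weak $(k+1,\Psi(\delta),\beta\la)$-splitting map at $(x_1,t_1)$ and then invoke Proposition~\ref{Prop_weak_splitting_map_to_splitting_map} to upgrade it to a strong one.

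\emph{Transfer of bounds to the new basepoint.} The central analytical tool is that every estimate in Proposition~\ref{Prop_construction_almost_radial} carries an $e^{\alpha f}$-weight, tailor-made for the basepoint change afforded by Proposition~\ref{Prop_inheriting_bounds}. Using $(x_1,t_1)\in P^*(x_0,t_0;A)$, $t_1\leq t_0-\eps$ and the Nash-entropy lower bound $\NN_{x_0,t_0}(1)\geq -Y$, I will fix auxiliary times $s\approx t_0-\delta^{-1}$ and $t^*$ in the range $(s,\min\{t_0,t_1\}]$ so that Proposition~\ref{Prop_inheriting_bounds} yields $d\nu_{x_1,t_1;t}\leq C(Y,A,\alpha)\, e^{\alpha f}\, d\nu_{x_0,t_0;t}$. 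Applying this conversion to each of (\ref{eq_almost_radial_1})--(\ref{eq_almost_radial_4}) produces the analogous bounds with $d\nu_{x_1,t_1;t}$ in place of $e^{\alpha f}\, d\nu_{x_0,t_0;t}$ on the relevant window $[t_1-\eps^{-1}(\beta\la)^2, t_1-\eps(\beta\la)^2]$, while Proposition~\ref{Prop_properties_splitting_map} supplies the corresponding splitting properties for $y_1,\dots,y_k$ at the new basepoint and scale.

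\emph{Weak splitting verification and the role of $\beta$.} With the transferred estimates, the cross term $\nabla y_{k+1}\cdot\nabla y_i = \nabla q\cdot\nabla y_i/(2\sqrt{\bar q})$ is integrally small by the transferred (\ref{eq_almost_radial_2}) together with $\bar q\geq\la^2$, and $\square y_{k+1} = (\partial_t q - \triangle q + 2(n-k))/(2\sqrt{\bar q})$ is integrally small via the transferred (\ref{eq_almost_radial_4}) for the time-derivative piece and via the pointwise majorant
\[
|\triangle q - 2(n-k)| \;\leq\; \Big|\nabla^2 q - 2\Big(g - \sum\nolimits_i dy_i\otimes dy_i\Big)\Big| + 2\sum\nolimits_i \big||\nabla y_i|^2-1\big|
\]
for the Laplacian piece, controlled by the transferred (\ref{eq_almost_radial_1}) and Proposition~\ref{Prop_properties_splitting_map}. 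The genuine difficulty lies in the gradient-norm condition, where I decompose
\[
|\nabla y_{k+1}|^2 - 1 \;=\; \frac{|\nabla q|^2 - 4q}{4\bar q} \;+\; \frac{q - \bar q}{\bar q}.
\]
The first summand is handled by the transferred (\ref{eq_almost_radial_3}). The second summand is the main obstacle: it demands that $q$ be concentrated around $\bar q$ in $L^1(d\nu_{x_1,t_1;t})$ over the splitting window, and the parameter $\beta$ is what makes this possible. Specifically, the $L^2$-Poincar\'e inequality (Proposition~\ref{Prop_Poincare}) combined with the transferred identity $|\nabla q|^2 \approx 4q$ bounds the spatial variance of $q$ by a constant times $\tau(t_1-t)\int|\nabla q|^2\, d\nu_{x_1,t_1;t} \sim (\beta\la)^2\bar q$, while the transferred (\ref{eq_almost_radial_4}) bounds the temporal oscillation of $t\mapsto\int q\, d\nu_{x_1,t_1;t}$ over the window by a $\Psi(\delta)$ factor; both oscillations are $\ll\bar q$ once $\beta\leq\ov\beta(Y,A,\eps)$. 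This delivers the weak splitting, and a final application of Proposition~\ref{Prop_weak_splitting_map_to_splitting_map} (optionally in the improved form of Remark~\ref{Rmk_splitting_map_L_infty}) upgrades it to a strong $(k+1,\eps,\beta\la)$-splitting map at $(x_1,t_1)$, completing the plan for $\delta\leq\ov\delta(Y,A,\eps,\la,\beta,\zeta)$.
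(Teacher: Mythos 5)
Your architecture is essentially the paper's: define $y_{k+1}$ as a rescaled copy of $q$, transfer the almost-radial estimates from $\nu_{x_0,t_0}$ to $\nu_{x_1,t_1}$ via Proposition~\ref{Prop_inheriting_bounds} (the paper inserts Proposition~\ref{Prop_dist_expansion_almost_ss} first to get the required $W_1$-bound at the earlier time $t^*$; you leave this implicit, which is a small omission), check the weak splitting conditions using the Poincar\'e inequality, and absorb the leftover by choosing $\beta$ small. The cross term and heat-operator verifications are correct in spirit. However, there is one concrete gap in the gradient-norm step.

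You normalize by $\bar q := \frac{1}{\tau^*}\int_{t_1-2\tau^*}^{t_1-\tau^*}\int_M q_+\,d\nu_{x_1,t_1;t}\,dt$ and assert that ``the transferred (\ref{eq_almost_radial_4}) bounds the temporal oscillation of $t\mapsto\int q\,d\nu_{x_1,t_1;t}$ over the window by a $\Psi(\delta)$ factor.'' This is false, and it is the crux of the matter. The quantity $\frac{d}{dt}\int q\,d\nu_{x_1,t_1;t} = \int \square q\,d\nu_{x_1,t_1;t}$, and $\square q = \partial_t q - \triangle q$; the transferred (\ref{eq_almost_radial_4}) controls only the $\partial_t q$ piece. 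The $\triangle q$ piece contributes $\approx -2(n-k)$ by the transferred (\ref{eq_almost_radial_1}), which is an $O(1)$ constant, not $\Psi(\delta)$. On the model $C^{n-k}\times\IR^k$ one has $\partial_t q = 0$ yet $\frac{d}{dt}\int q\,d\nu_{x_1,t_1;t} = -2(n-k)$, so you cannot conflate the partial time derivative with the total derivative of the spatial average. The consequence is that $\bar q$ — averaged over times $\sim\tau^*$ before $t_1$ — differs from $\int q\,d\nu_{x_1,t_1;t}$ over the splitting window $[t_1 - \eps^{-1}(\beta\la r)^2, t_1 - \eps(\beta\la r)^2]$ by roughly $3(n-k)\tau^*$, which can be as large as $\sim\la^2 r^2/4$. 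Since $\bar q\geq(\la r)^2$ is the best a priori lower bound, this drift is a non-negligible fraction of $\bar q$ itself; it does not vanish as $\delta\to 0$ and is not made small by taking $\beta$ small. With this normalizer, $|\nabla y_{k+1}|^2$ is generically close to some value $\frac{q(x_1)}{q(x_1) + 2(n-k)\tau^*}$ bounded away from $1$, and the second condition in Definition~\ref{Def_weak_splitting_map} fails. This is precisely why the hypothesis carries the restriction $\tau^*\leq(8n)^{-1}(\la r)^2$ and why the paper first propagates the hypothesis forward to a \emph{single} time $t_1-(\beta\la r)^2$ close to the splitting window, obtaining $a:=\int_M q\,d\nu_{x_1,t_1;t_1-(\beta\la r)^2}\geq\tfrac12(\la r)^2$, and then normalizes by $a$; the residual drift across the splitting window alone is $O(n\eps^{-1}\beta^2(\la r)^2)\ll a$ once $\beta\leq\ov\beta(\eps)$.

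Your ``$+2(n-k)t$'' modification actually contains the germ of a correct fix: $\tilde q := q + 2(n-k)(t - t_1)$ satisfies $\square\tilde q\approx\Psi(\delta)$, so $\int\tilde q\,d\nu_{x_1,t_1;t}$ \emph{is} nearly constant, and $\nabla\tilde q = \nabla q$ so the gradient condition is unchanged. Had you also replaced $\bar q$ by $\bar{\tilde q} := \frac{1}{\tau^*}\int\int\tilde q\,d\nu_{x_1,t_1;t}\,dt$, the upper bound $\tau^*\leq(8n)^{-1}(\la r)^2$ would give $\bar{\tilde q}\geq\tfrac12(\la r)^2 - \Psi(\delta)$ (after controlling $q_-$ by the transferred (\ref{eq_almost_radial_3}), which you also omit), and the rest of your argument would go through. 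As written, with $\bar q$ in the denominator and $q + 2(n-k)t$ only in the numerator, the mismatch remains, and the gradient-norm estimate does not close.
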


\begin{Remark}
In fact, with some more effort it is possible to show the existence of a strong splitting map of the form $(y_1, \ldots, y_k, y'_{k+1})$ for some additional function $y'_{k+1}$.
\end{Remark}

\begin{proof}
We may assume that $t_0 = 0$ and $r = 1$.
Due Propositions~\ref{Prop_NN_variation_bound}, \ref{Prop_weak_splitting_map_to_splitting_map}, it suffices to show that $(x_1, t_1)$ is only weakly $(k+1, \eps, \beta \la r)$-split --- after adjusting $\eps, \beta$ depending on $Y, A$.

Fix $Y, A, \eps$ and choose $\alpha \leq \ov\alpha$ according to Propositions~\ref{Prop_properties_splitting_map}, \ref{Prop_construction_almost_radial}.
Without loss of generality, we may assume that $\beta \leq \eps$ and $\zeta \leq \eps \beta^2$.
Let $0<  \zeta' < 1$ be a constant whose value we will determine in the course of the proof.
Set $\nu^1_t := \nu_{x_1,t_1;t}$.
By Proposition~\ref{Prop_dist_expansion_almost_ss} we have for $\delta \leq \ov\delta (Y, A, \la, \zeta, \zeta')$
\[ d_{W_1}^{g_t} ( \nu (t), \nu^1 (t) ) \leq C(Y, A,\la, \zeta, \zeta'), \qquad \text{for all} \qquad t \in [t_1-1, t_1-\zeta' \zeta \la^2]. \]
So by Proposition~\ref{Prop_inheriting_bounds} we have that if $\zeta' \leq \ov\zeta' (\alpha)$, then
\[ d\nu^1 \leq C(Y, A, \la, \zeta, \zeta', \alpha) e^{\alpha f} d\nu \qquad \text{on} \quad M \times [t_1 - 1, t_1 - \zeta \la^2]. \]
It follows using Propositions~\ref{Prop_properties_splitting_map}, \ref{Prop_construction_almost_radial} that, after discarding the constants $\alpha, \zeta'$,
\begin{multline} \label{eq_bounds_wrt_x1t1}
\int_{t_1 -  \la^2}^{t_1 - \zeta \la^2} \int_M \bigg( \sum_{i,j=1}^k |\nabla y_i \cdot \nabla y_j - \delta_{ij}|^2 +  \Big| \nabla^2 q -2 \Big(  g -   \sum_{i=1}^k d y_i \otimes d y_i \Big) \Big|^2  \\
+ \sum_{i=1}^k |\nabla q \cdot \nabla y_i |
+ \big| |\nabla q|^2 - 4 q \big| 
+ |\partial_t q | \bigg) d\nu^1_t dt \leq \Psi (\delta | Y, A, \la,  \zeta). 
\end{multline}

The bound (\ref{eq_bounds_wrt_x1t1}) implies that for any $t^*, t^{**} \in [t_1 -  \la^2, t_1 -  \zeta \la^2]$ we have
\[ \bigg| \int_M ( q - 2(n-k) (t_1 - t))  d\nu^1_t \bigg|_{t=t^*}^{t=t^{**}} \bigg|
\leq \int_{t_1 - \la^2}^{t_1 - \zeta \la^2} \int_M |\square q + 2(n-k)| d\nu^1_t dt \leq \Psi (\delta | Y, A, \la, \zeta). \]
Combining this with the following bound for the negative part of $q$
\[ \int_{t_1 -  \la^2}^{t_1 - \zeta \la^2} \int_M q_- \, d\nu^1_t dt
\leq \int_{t_1 -  \la^2}^{t_1 - \zeta \la^2} \int_M \big| |\nabla q|^2 - 4 q \big| d\nu^1_t dt
\leq \Psi (\delta | Y, A, \la,  \zeta), \]
we obtain, using the assumption of the proposition, that if $\delta \leq \ov\delta (Y,A, \la,\zeta)$, then
\begin{equation} \label{eq_a_geq_12la2}
  a := \int_M q \, d\nu^1_{t_ 1 - (\beta \la)^2} \geq \la^2 - 2(n-k)(8 n)^{-1} \la^2 - \tfrac14 \la^2 \geq \tfrac12 \la^2. 
\end{equation}
Moreover, for all $t^{**} \in [t_1 - \eps^{-1} (\beta \la)^2, t_1 - \eps (\beta \la)^2] \subset [ t_1 - \la^2, t_1 - \zeta \la^2]$
\[ \bigg| \frac1a \int_M q \, d\nu^1_{t^{**}} - 1 \bigg| \leq \frac{2(n-k) \eps^{-1} (\beta \la)^2 + \Psi (\delta | Y, A, \la,  \zeta)}{a} 
\leq 4n \eps^{-1} \beta^2 + \Psi (\delta | Y, A, \la,  \zeta). \]
So by (\ref{eq_bounds_wrt_x1t1}) we have
\begin{equation} \label{eq_nabq_4a_1}
 (\beta \la)^{-2} \int_{t_1 -  \eps^{-1} (\beta \la)^2}^{t_1 - \eps( \beta \la)^2} \bigg| \int_M \bigg( \frac{|\nabla q|^2}{4a} -1 \bigg) d\nu^1_t \bigg| dt \leq 4n \eps^{-2} \beta^2 + \Psi (\delta | Y, A, \eps, \la, \zeta) .
\end{equation}
Thus by the $L^1$-Poincar\'e inequality (Proposition~\ref{Prop_Poincare}), (\ref{eq_nabq_4a_1}), (\ref{eq_bounds_wrt_x1t1}) we have
\begin{align}
(\beta \la)^{-2} &\int_{t_1 -  \eps^{-1} (\beta \la)^2}^{t_1 - \eps( \beta \la)^2}  \int_M \bigg| \frac{|\nabla q|^2}{4a} -1 \bigg| d\nu^1_t dt \notag \\
&\leq C(\eps) (\beta \la)^{-1} \int_{t_1 -  \eps^{-1} (\beta \la)^2}^{t_1 - \eps( \beta \la)^2} \int_M \frac{|\nabla^2 q| \, |\nabla q|}{4a}  d\nu^1_t dt
+ C(\eps) \beta^2 + \Psi (\delta | Y, A, \eps, \la, \zeta) \displaybreak[1] \notag \\
&\leq (\beta \la)^{-2} \beta \int_{t_1 -  \eps^{-1} (\beta \la)^2}^{t_1 - \eps( \beta \la)^2} \int_M \frac{ |\nabla q|^2}{4a}  d\nu^1_t dt
+ C(\eps)  \beta^{-1} \int_{t_1 -  \eps^{-1} (\beta \la)^2}^{t_1 - \eps( \beta \la)^2} \int_M \frac{|\nabla^2 q|^2}{4a}  d\nu^1_t dt
\notag \\ 
&\qquad + C(\eps) \beta^2 + \Psi (\delta | Y, A,  \eps, \la,  \zeta) \displaybreak[1] \notag \\
&\leq  (\beta \la)^{-2} \beta \int_{t_1 -  \eps^{-1} (\beta \la)^2}^{t_1 - \eps( \beta \la)^2}  \int_M  \bigg( \frac{ |\nabla q|^2}{4a} -1 \bigg) d\nu^1_t dt + C(\eps) \beta \notag \\
&\qquad
+ C(\eps) \beta^{-1} \la^{-2} \int_{t_1 -  \eps^{-1} (\beta \la)^2}^{t_1 - \eps( \beta \la)^2} \int_M  \Big| \nabla^2 q -2 \Big(  g -   \sum_{i=1}^k d y_i \otimes d y_i \Big) \Big|^2  d\nu^1_t dt \notag \\
&\qquad + C(\eps) \beta^{-1} \la^{-2} \int_{t_1 -  \eps^{-1} (\beta \la)^2}^{t_1 - \eps( \beta \la)^2} \int_M  \Big( 1 + \sum_{i=1}^k \big| |\nabla y_i|^2- 1|^2 \Big)  d\nu^1_t dt
\notag \\ 
&\qquad + C(\eps) \beta^2 + \Psi (\delta | Y, A,  \eps, \la,  \zeta) \displaybreak[1] \notag \\
&\leq  C(\eps) \beta^3 + C(\eps) \beta  + C(\eps) \beta^2 + \Psi (\delta | Y, A,  \eps, \la, \beta, \zeta) \notag \\
&\leq \Psi (\beta | \eps) + \Psi(\delta | Y, A, \eps, \la, \beta, \zeta). \label{eq_nabq_4a_abs_value}
\end{align}

Set now
\[ y_{k+1} := \frac{q}{2\sqrt{a}}. \]
Then using (\ref{eq_bounds_wrt_x1t1}), (\ref{eq_a_geq_12la2}), (\ref{eq_nabq_4a_1}), (\ref{eq_nabq_4a_abs_value}) we obtain that if $\beta \leq \ov\beta (\eps)$ and $\delta \leq \ov\delta (Y, A, \eps, \la, \beta, \zeta)$, then for any $i,j = 1, \ldots, k+1$
\begin{multline*}
 (\beta \la)^{-1} \int_{t_1 -  \eps^{-1} (\beta \la)^2}^{t_1 - \eps( \beta \la)^2}  \int_M \big| \square y_{k+1} \big| d\nu^1_t dt 
 \leq  C \beta^{-1} \la^{-2} \int_{t_1 -  \eps^{-1} (\beta \la)^2}^{t_1 - \eps( \beta \la)^2}  \int_M \Big| \triangle q - 2n + 2 \sum_{i=1}^k |\nabla y_i|^2 \Big| d\nu^1_t dt \\
 + C \beta^{-1} \la^{-2} \int_{t_1 -  \eps^{-1} (\beta \la)^2}^{t_1 - \eps( \beta \la)^2}  \int_M \Big( | \partial_t q | +1 + \sum_{i=1}^k \big| |\nabla y_i|^2 - 1 \big| \Big)  d\nu^1_t dt  
 \leq \frac{\eps}2 + C(\eps) \beta^{-1} \la^{-2} (\beta \la)^2
 \leq
 \eps, 
 \end{multline*}
\begin{equation*}
 (\beta \la)^{-2} \int_{t_1 -  \eps^{-1} (\beta \la)^2}^{t_1 - \eps( \beta \la)^2}  \int_M \big| \nabla y_i \cdot \nabla y_j - \delta_{ij} \big| d\nu^1_t dt \leq \eps. 
\end{equation*}
Since $\square y_i = 0$ for $i = 1, \ldots, k$, this shows that $(y_1, \ldots, y_{k+1})$ is a weak $(k+1, \beta \la, \eps)$-splitting map, as desired.
\end{proof}

\section{\texorpdfstring{A preliminary $\eps$-regularity theorem}{A preliminary {\textbackslash}eps-regularity theorem}} \label{sec_prelim_eps_reg}
In the following we will show the first $\eps$-regularity theorem involving strong splitting maps.
The following proposition provides local curvature bounds near points that are $(\eps, r)$-static and strongly $(n,\eps,r)$-split for a sufficiently small $\eps$.
This proposition will provide us a first tool for a preliminary partial regularity of $\IF$-limits in Section~\ref{Sec_basic_limits}.
This theory will, in turn, be used to establish more general $\eps$-regularity results in the subsequent sections.

Recall that the curvature scale $\rrm(x,t)$ at a point $(x,t)$ of Ricci flow $(M,(g_t)_{t \in I})$ is defined as the supremum over all $r > 0$ with the property that $|{\Rm}| \leq r^{-2}$ on the parabolic ball $P(x,t;r)$.

\begin{Proposition} \label{Prop_eps_regularity_np2}
Let $Y< \infty$ and suppose that $0 <\eps \leq \ov\eps (Y)$. Then the following holds.

Let $(M, (g_t)_{t \in I})$ be Ricci flow on a compact manifold, $r > 0$ and $(x_0,t_0) \in M \times I$ such that $\NN_{x_0,t_0} (r^2) \geq - Y$.
Suppose that $(x_0, t_0)$ is $(\eps, r)$-static and strongly $(n, \eps, r)$-split.
Then $\rrm (x_0, t_0) \geq \eps r$.
\end{Proposition}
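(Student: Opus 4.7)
The plan is to argue by contradiction via a parabolic blow-up at the supposed curvature scale, reducing to the rigidity of a smooth limit. Suppose the conclusion fails. Then we obtain sequences $\eps_i \searrow 0$ and Ricci flows $(M_i, (g_{i,t})_{t \in I_i})$ with basepoints $(x_{i,0}, t_{i,0})$ and scales $r_i > 0$ such that the hypotheses hold at level $\eps_i$ but $\rho_i := \rrm(x_{i,0}, t_{i,0}) < \eps_i r_i$. Apply a parabolic rescaling by $\rho_i^{-1}$ so that $\rrm(x_{i,0}, t_{i,0}) = 1$ in the rescaled flow. The rescaled hypotheses then hold at scale $r_i' := r_i/\rho_i > \eps_i^{-1} \to \infty$, while the splitting map inherits the corresponding scaling.

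The first step is to upgrade the hypotheses to almost selfsimilarity, i.e.\ to show that $(x_{i,0}, t_{i,0})$ is $(\Psi(\eps_i \mid Y), 1)$-selfsimilar in the rescaled flow. The static condition controls $R$ pointwise from below by $-\eps_i r_i^{\prime -2}$ and in $L^1(d\nu)$ from above by $\eps_i$, which together with (\ref{eq_NN_geq_WW}) forces $\WW[g_t, f_t, \tau]$ to be close to $\NN_{x_{i,0}, t_{i,0}}(\tau)$ across a wide range of $\tau$. Combining this with the lower bound $\NN \geq -Y$ and monotonicity of $\tau \NN$ in Proposition~\ref{Prop_NN_basic_properties} yields almost-constancy of $\NN_{x_{i,0}, t_{i,0}}$ on a two-sided exponentially large range of scales, and Proposition~\ref{Prop_NN_almost_constant_selfsimilar} then promotes this to $(\Psi(\eps_i\mid Y),1)$-selfsimilarity.

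With selfsimilarity in hand, Proposition~\ref{Prop_construction_almost_radial} with $k = n$ becomes available; its critical consequence (\ref{eq_almost_radial_5}) gives
\begin{equation*}
\int_{-\eps^{-1}}^{-\eps}\!\int_{M_i} \bigl(|q_i| + |\nabla q_i|^2\bigr) e^{\alpha f_i}\, d\nu_{x_{i,0}, t_{i,0}; t}\, dt \longrightarrow 0,
\end{equation*}
where $q_i = 4\tau(f_i - W_i) - \sum_{j=1}^n y_{i,j}^2$. Thus $f_i$ agrees, in the $d\nu$-weighted sense, with the Euclidean Gaussian potential built from the splitting map, including matching first-order behavior. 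Since $|{\Rm}_{g_i}| \leq 1$ on $P(x_{i,0},t_{i,0};1)$ and the Nash entropy lower bound provides non-collapsing at unit scale, a subsequence converges smoothly in Cheeger--Gromov sense on a fixed parabolic neighborhood to a pointed Ricci flow $(M_\infty, (g_{\infty,t}), (x_\infty, t_\infty))$ with $|{\Rm}|(x_\infty,t_\infty) = 1$. On this bounded region the conjugate heat kernel densities of $\nu_{x_{i,0},t_{i,0};t}$ are comparable to $dg_{i,t}$ up to factors bounded uniformly in $i$ (by Proposition~\ref{Prop_L_infty_HK_bound} and the matching Gaussian lower bound at $H_n$-centers). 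Hence the weighted integral bounds pass to pointwise identities on the limit: $\Ric_{g_\infty} \equiv 0$, and there exist smooth limits $y_{\infty,1}, \ldots, y_{\infty,n}$ of the splitting functions solving $\square y_{\infty, j} = 0$ with pointwise orthonormal gradients $\nabla y_{\infty,j} \cdot \nabla y_{\infty,k} = \delta_{jk}$. The existence of $n = \dim M_\infty$ harmonic functions with orthonormal gradients on a connected open set forces $g_{\infty,t_\infty}$ to be flat there, contradicting $|{\Rm}|(x_\infty,t_\infty)=1$.

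The main obstacle is the selfsimilarity upgrade in the first step: the a priori information $\NN_{x_{i,0},t_{i,0}}(r_i^{\prime 2}) \geq -Y$ does not by itself control $\NN$ at much larger or smaller scales, so one must carefully exploit the $L^1(d\nu)$-smallness of $R$, the second-variation identity (\ref{eq_ddt_2_NN}), and the higher-order integral controls on the splitting map from Proposition~\ref{Prop_properties_splitting_map} to propagate the Nash bound to a two-sided exponentially large range before invoking Proposition~\ref{Prop_NN_almost_constant_selfsimilar}. A secondary difficulty is confirming that the $e^{\alpha f_i}$-weighted integral bounds descend to useful pointwise limits on a fixed parabolic neighborhood under blow-up, for which the heat kernel comparison near $H_n$-centers is essential.
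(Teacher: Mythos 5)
There is a genuine gap: the blow-up strategy is incompatible with the way the hypotheses scale. After you rescale by $\rho_i^{-1}$ so that $\rrm(x_{i,0},t_{i,0})=1$, the static, splitting, and (any upgraded) selfsimilarity conditions hold only at the scale $r_i' = r_i/\rho_i > \eps_i^{-1}\to\infty$. These conditions do \emph{not} improve when passed down to a smaller scale. Tracking Definitions~\ref{Def_static} and \ref{Def_strong_splitting_map} through a change of scale $r\to cr$ with $c<1$ shows that $(\eps,r)$-static (or strongly $(k,\eps,r)$-split) only implies $(c^{-2}\eps,cr)$-static (split), so descending from $r_i'$ to $1$ produces parameter $r_i'^2\eps_i > \eps_i^{-1}\to\infty$: nothing is controlled at unit scale. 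Concretely, the time range $[t_0-\eps_i^{-1}r_i'^2,\,t_0-\eps_i r_i'^2]$ on which Proposition~\ref{Prop_construction_almost_radial} would give $|q|$ small recedes to $-\infty$ in the rescaled time variable, and is disjoint from the fixed parabolic neighborhood $P(x_0,t_0;1)$ on which you run Cheeger--Gromov convergence. So the smooth limit inherits no pointwise information about orthonormal gradients or $\Ric=0$, and your contradiction never materializes. Separately, the proposed selfsimilarity upgrade is unjustified even at scale $r_i'$: the static condition bounds $\int R\, d\nu$ and $R$ from below, but $\WW(\tau)-\NN(\tau)=\int(\tau(|\nabla f|^2+R))\,d\nu - n/2$ also involves $\int\tau|\nabla f|^2\,d\nu$, which is not controlled by staticity alone, so $\WW\approx\NN$ does not follow.

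The paper's argument is direct, with no contradiction and no blow-up at the curvature scale. It first produces a single ``good'' scale $r''\in[c(Y,\eps)r,\,r]$ at which $\NN_{x_0,t_0}$ is almost constant (a pigeonhole consequence of $\NN\geq -Y$, $\NN\leq 0$, and monotonicity), converts this into $(\eps,r'')$-selfsimilarity via Proposition~\ref{Prop_NN_almost_constant_selfsimilar}, and checks that the static and splitting conditions descend harmlessly from $r$ to $r''$ since $r''/r$ is bounded below in terms of $Y,\eps$. Working at that scale, Proposition~\ref{Prop_construction_almost_radial} (with $k=n$) yields $\int_{-2}^{-1}\int_M|q|\,d\nu_t\,dt\leq\Psi(\eps\mid Y)$, and Lemma~\ref{Lem_lower_bound_WW} then furnishes the key quantitative rigidity: a Riemannian manifold admitting $n$ almost-orthonormal coordinates whose Gaussian $\sum y_i^2/(4\tau_0)$ nearly matches $f-W$ must have $W\geq-\eps$. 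Thus $\NN_{x_0,t_0}(r''^2)\geq -\Psi(\eps\mid Y)$, and the conclusion $\rrm\geq \eps r$ is delivered by the pre-existing $\eps$-regularity theorem \cite[\HKThmEpsRegularity]{Bamler_HK_entropy_estimates}, which says small pointed Nash entropy implies large curvature scale. If you wish to rescue a compactness-type argument, the place to apply it is in the proof of Lemma~\ref{Lem_lower_bound_WW} itself --- at the scale $r''$, not at the curvature scale.
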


\begin{proof}
Using Proposition~\ref{Prop_NN_almost_constant_selfsimilar}, we may assume, after adjusting $r$ and $\eps$ by a suitable constant, that $(x_0, t_0)$ is in addition $(\eps, r)$-selfsimilar.
Next, we may assume by parabolic rescaling that $r = 1$ and $t_0 = 0$.
Choose a strong $(n, \eps, r)$-splitting map $\vec y = (y_1, \ldots, y_n): M \times [-\eps^{-1}, -\eps] \to \IR^k$.
Denote by $d\nu = (4\pi \tau)^{-n/2} e^{-f} dg$ the conjugate heat kernel based at $(x_0, 0)$, set $W := \NN_{x_0, 0} (1)$ and
\[ q := 4 \tau ( f -  W) - \sum_{i=1}^n y_i^2 . \]
It follows from Proposition~\ref{Prop_construction_almost_radial} that
\[ \int_{-2}^{-1} \int_M |q| d\nu_t dt \leq \Psi (\eps |Y). \]
Let $\delta > 0$ be a constant whose value we will determine later.
Using Propositions~\ref{Prop_properties_splitting_map}, \ref{Prop_improved_L2}, \ref{Prop_L_infty_HK_bound} and assuming that $\eps \leq \ov\eps (\delta, Y)$ it follows that there is a $\tau_0 \in [1,2]$ and $A \leq \ov{A}(Y)$ for which the bounds (\ref{eq_z_integral_bounds})--(\ref{eq_f_lower_mA}) in the following lemma hold.
So $\NN_{x_0, 0} (1) = W \geq - \Psi (\delta |Y)$ and the proposition follows from \cite[\HKThmEpsRegularity]{Bamler_HK_entropy_estimates} if $\delta \leq \ov\delta (Y)$, $\eps \leq \ov\eps$.
\end{proof}
\bigskip

\begin{Lemma} \label{Lem_lower_bound_WW}
If  $\eps > 0$, $A < \infty$ and $\delta \leq \ov\delta ( \eps, A)$, then the following is true.

Let $(M, g)$ be a compact, $n$-dimensional Riemannian manifold equipped with a probability measure $\nu$ of the form $d\nu= (4\pi \tau)^{-n/2} e^{-f} dg$ for $f \in C^\infty (M)$, $\tau_0 > 0$.
Let $y_1, \ldots, y_n \in C^\infty(M)$ and assume that we have for some $W \leq 0$:
\begin{align}
   \int_M \bigg( \tau_0^{1/2} \sum_{i=1}^n |\nabla^2 y_i | + \sum_{i,j=1}^n | \nabla y_i \cdot \nabla y_j - \delta_{ij} | + \bigg| f - \frac1{4\tau_0} \sum_{i=1}^n y_i^2 - W \bigg| \bigg)   d\nu  &\leq \delta \label{eq_z_integral_bounds}\displaybreak[1] \\
 \int_M  |\nabla f |^2  d\nu  &\leq A \label{eq_nab_f_bound_lemma_W} \displaybreak[1] \\
 \int_M y_i^2 \, d\nu &\leq A, \label{eq_z_squared_int}  \displaybreak[1] \\
 f &\geq  - A.  \label{eq_f_lower_mA}
\end{align}
Then $W \geq - \eps$.
\end{Lemma}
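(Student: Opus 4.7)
My approach is to derive the lower bound on $W$ by comparing $\nu$ with the ``Euclidean model'' $d\mu_M := (4\pi\tau_0)^{-n/2} e^{-|\vec y|^2/(4\tau_0)}\,dg$ and exploiting the probability constraint $\int_M d\nu = 1$. I would first rescale so that $\tau_0 = 1$ and introduce the error $F := f - |\vec y|^2/4 - W$, which by hypothesis~(\ref{eq_z_integral_bounds}) satisfies $\int_M |F|\,d\nu \leq \delta$. Setting $Z := \int_M d\mu_M$ and using $d\mu_M = e^{F+W}\,d\nu$, the normalization $\int d\nu = 1$ rewrites as the key identity
\[ e^W = \int_M e^{-F}\,d\mu_M. \]

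I would then apply Jensen's inequality to the probability measure $\tilde\mu_M := Z^{-1}\mu_M$ with the convex function $e^{-x}$ to obtain $W \geq \log Z - \int_M F\,d\tilde\mu_M$, and symmetrically use $\int_M e^F\,d\nu \geq e^{\int F\,d\nu} \geq e^{-\delta}$ together with the identity $\int_M e^F\,d\nu = e^{-W}Z$ to get $W \leq \log Z + \delta$. Subtraction forces $\int_M F\,d\tilde\mu_M \geq -\delta$ and reduces the problem to establishing (a) $\int_M F\,d\tilde\mu_M \leq \Psi(\delta\mid A)$ and (b) $\log Z \geq -\Psi(\delta\mid A)$.

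For (a), I would first extract a rough a priori bound $|W| \leq C(A)$ from the pointwise bound $f \geq -A$, the hypothesis $\int y_i^2\,d\nu \leq A$, and the $L^1$-closeness $\int |F|\,d\nu \leq \delta$. Then, using the Poincar\'e inequality (Proposition~\ref{Prop_Poincare}) together with the hypothesis $\int |\nabla f|^2\,d\nu \leq A$, I would bootstrap higher $L^p$-moments of $f$ (and hence of $F$) with respect to $\nu$, along with exponential moments in the spirit of Proposition~\ref{Prop_int_ealphaf}. Writing
\[ \int_M F\,d\tilde\mu_M = Z^{-1} e^W \Bigl(\int_M F\,d\nu + \int_M F(e^F - 1)\,d\nu\Bigr) \]
and applying Cauchy--Schwarz to the second integral then gives the desired upper bound.

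The main obstacle is (b): the lower bound on $Z$. I would use the area formula for the smooth map $\vec y : M \to \IR^n$. The Jacobian $J := \sqrt{\det(\nabla y_i \cdot \nabla y_j)}$ satisfies $\int_M |J - 1|\,d\nu \leq C\delta$ by hypothesis~(\ref{eq_z_integral_bounds}), which rewrites
\[ \int_M J\,d\mu_M = \int_{\IR^n} N(y)\,d\mu_0(y), \]
where $d\mu_0$ is the standard Gaussian on $\IR^n$ and $N(y)$ is the preimage count of $\vec y$. The heart of the argument is a quantitative surjectivity statement: using the Hessian bound $\int_M \tau_0^{1/2} \sum|\nabla^2 y_i|\,d\nu \leq \delta$ (which controls the ``folding'' of $\vec y$) together with the near-matching $f \approx |\vec y|^2/4 + W$ (which pins the $\vec y$-pushforward of $\nu$ to a shape close to $e^{-W}\mu_0$), one shows that $(\vec y)_*\nu$ fills out a set of Gaussian measure close to $1$, forcing $\int_{\IR^n} N\,d\mu_0 \geq 1 - \Psi(\delta\mid A)$ and hence $Z \geq 1 - \Psi(\delta\mid A)$ after passing through the Jacobian-closeness estimate. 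Combining (a) and (b) yields $W \geq -\Psi(\delta\mid A) \geq -\eps$ for $\delta \leq \ov\delta(\eps, A)$.
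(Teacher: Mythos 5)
Your reduction is a valid and elegant reorganization: the identity $e^W = \int_M e^{-F}\,d\mu_M$ with $F := f - |\vec y|^2/4 - W$ and $d\mu_M := (4\pi\tau_0)^{-n/2}e^{-|\vec y|^2/(4\tau_0)}\,dg$ is correct, and the two-sided Jensen argument giving $\log Z - \int F\,d\tilde\mu_M \leq W \leq \log Z + \delta$ is a clean way to isolate what needs proving. The paper does not organize things this way (it estimates $\int_{B(\vec 0, 2\sqrt Z)} (4\pi)^{-n/2}e^{-|x|^2/4}\,d\vec x$ directly against $e^W$), but the underlying content — compare the Gaussian model with $\nu$ via the map $\vec y$ — is the same.

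However, both of your subclaims (a) and (b) have genuine gaps, and they are the substance of the proof.

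For (a) you propose to "bootstrap higher $L^p$-moments of $f$" using Proposition~\ref{Prop_Poincare} and Proposition~\ref{Prop_int_ealphaf}. Both of those are statements about conjugate heat kernels on a Ricci flow; the present lemma is about a single Riemannian manifold with an arbitrary $f \in C^\infty(M)$ satisfying only the listed integral constraints, so neither is available. The hypotheses give you exactly $L^1(\nu)$-control on $F$, the pointwise bound $f \geq -A$, and an a priori bound $W \geq -C(A)$; no Poincar\'e, no hypercontractivity. Your proposed Cauchy--Schwarz on $\int F(e^F-1)\,d\nu$ then fails because you have no $L^2$ or exponential moment of $F$. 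The workaround the paper uses is an explicit localization: restrict to $\{f < Z+2\}\cap\{|\vec y|^2/4 < Z\}\cap\{\text{almost orthonormal gradients}\}$ via smooth cutoffs, and on that set use the elementary inequality $\exp(F) \leq 1 + e^{f-W}|F|$; the factor $e^{-W}$ that this introduces is cancelled by the $e^W$ that multiplies the final estimate. Without some such pointwise control you cannot convert $L^1$-smallness of $F$ into smallness of any weighted exponential integral.

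For (b) you identify quantitative surjectivity of $\vec y$ as the heart, which is right, but the mechanism you sketch — "the near-matching $f \approx |\vec y|^2/4 + W$ pins the pushforward of $\nu$ to a shape close to $e^{-W}\mu_0$, forcing $\int N\,d\mu_0 \geq 1-\Psi$" — is circular: the density of $(\vec y)_*\nu$ relative to $\mu_0$ is $\approx e^{-W}N(y)$, so normalizing to a probability measure gives $\int N\,d\mu_0 \approx e^W$, which is precisely the quantity you are trying to bound from below. The paper breaks this circularity by pushing forward a localized function $w$ (supported where the gradients are near-orthonormal and $f < Z+2$) to a function $\tilde w := \sum_{\vec y(p)=\cdot}w(p)$ on $B(\vec 0, 2\sqrt Z) \subset \IR^n$, applying the $L^1$-Poincar\'e inequality on $\IR^n$ to conclude $\tilde w \approx m$ for some constant $m$, and then using the hypothesis $\int y_i^2\,d\nu \leq A$ together with $f \geq -A$ in a contradiction argument to show $m > 1/2$ for $Z \geq \underline{Z}(A)$. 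Your sketch never brings $\int y_i^2\,d\nu \leq A$ into play beyond the a priori bound on $W$, but it is essential here: it is what prevents the mass of $\nu$ from escaping to infinity in $\vec y$-coordinates, hence forces $\vec y$ to cover a Gaussian-large set. You would also need an argument that $J$ is close to $1$ in a usable sense, since $L^1(\nu)$-smallness of $\sum|\nabla y_i\cdot\nabla y_j - \delta_{ij}|$ does not control $J$ in $L^1(\mu_M)$ without the same localization. In short, the reduction is sound but filling the two subclaims essentially forces you to rediscover the paper's cutoff-and-pushforward argument.
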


\begin{proof}
After rescaling the metric by $\tau_0^{-1}$ and replacing each function $y_i$ with $\tau_0^{-1/2} y_i$, we may assume in the following that $\tau_0 = 1$.

Let $Z < \infty$, $\alpha > 0$ be constants whose values we will determine later.
Let $\eta_1 : \IR \to [0,1]$ be a cutoff function such that $\eta_1 \equiv 1$ on $(-\infty, Z+1]$ and $\eta_1 \equiv 0$ on $[Z+2, \infty)$ and $|\eta'_1| \leq 10$.
Let $\eta_2 : \IR \to [0,1]$ be a cutoff function such that $\eta_2 \equiv 1$ on $[1-\alpha, 1+\alpha]$ and $\eta_2 \equiv 0$ outside $[1-2\alpha, 1+2\alpha]$ and $|\eta'_2| \leq 10/\alpha$.
Consider the map $\vec y := (y_1, \ldots, y_n ) : M \to \IR^n$ and set
\[ w := \eta_1 (f) \eta_2 \Big( \sum_{i,j=1}^n (\nabla y_i \cdot \nabla y_j - \delta_{ij})^2 \Big). \] 

Since $W \leq 0$, we have by (\ref{eq_z_integral_bounds})
\begin{align}
 \int_{ \{ Z+1 < f  \} \cap \{ \frac14 \sum_i y_i^2 < Z \} }d\nu
\leq \int_{ \{ Z+1 < f  \} \cap \{ \frac14 \sum_i y_i^2 < Z \} } \Big| f - \frac14 \sum_i y_i^2 - W \Big|d\nu 
&\leq \Psi(\delta ), \label{eq_eta1_supp}  \\
  \int_{  \{  \eta_2  < 1 \} } d\nu \leq  \int_M C(\alpha)\sum_{i,j=1}^n | \nabla y_i \cdot \nabla y_j - \delta_{ij} |d\nu &\leq \Psi( \delta |\alpha), \label{eq_eta2_supp}
\end{align}
where we used the shorthand notation $\{ \eta_2 < 1 \} := \{  \eta_2 ( \sum_{i,j=1}^n (\nabla y_i \cdot \nabla y_j - \delta_{ij})^2) < 1 \}$.
Therefore, if we use the Riemannian background measure $dg$, then by (\ref{eq_nab_f_bound_lemma_W})
\begin{multline} \label{eq_nab_eta1}
\int_{\{ \frac14 \sum_i y_i^2 < Z \}} |\nabla (\eta_1 (f))| dg
\leq 10 (4\pi)^{n/2} e^{Z+2} \int_{ \{ Z+1 < f < Z+2 \} \cap \{ \frac14 \sum_i y_i^2 < Z \} } |\nabla f| d\nu \\
 \leq C e^{Z} \bigg( \int_{ \{ Z+1 < f < Z+2 \} \cap \{ \frac14 \sum_i y_i^2 < Z \} }  |\nabla f|^2 d\nu \bigg)^{1/2} \bigg( \int_{ \{ Z+1 < f < Z+2 \} \cap \{ \sum_i y_i^2 < Z \} }   d\nu \bigg)^{1/2} 
 \leq    \Psi (\delta |A,Z) ,
\end{multline}
and, using (\ref{eq_z_integral_bounds}), we have for $\alpha \leq \ov\alpha$
\begin{multline}
 \int_{\{ f < Z+2 \}}  \Big|\nabla \Big( \eta_2 \Big( \sum_{i,j=1}^n (\nabla y_i \cdot \nabla y_j - \delta_{ij})^2 \Big)\Big)\Big| dg  \leq
C\alpha^{-1} \int_{\{ f < Z+2 \} \cap  \{ 0 < \eta_2 < 1 \}} |\nabla^2 y_i| dg \\
\leq C(\alpha) e^{Z+2} \sum_{i=1}^n \int_{\{ f < Z+2 \} }  |\nabla^2 y_i|  d\nu  
 \leq \Psi(\delta|Z, \alpha) . \label{eq_nab_eta2}
\end{multline}
Combining (\ref{eq_nab_eta1}), (\ref{eq_nab_eta2}) yields
\begin{equation} \label{eq_nab_w_bound}
 \int_{\{ \frac14 \sum_i y_i^2 <Z \}} |\nabla w| \, dg \leq  \Psi (\delta | A, Z, \alpha) . 
\end{equation}
If $\alpha \leq \ov\alpha$, then $\vec y$ restricted to $\{ w > 0 \}$ is a local diffeomorphism whose differential has eigenvalues within $[ 1- \Psi(\alpha), 1+\Psi(\alpha)]$.
Define $\td{w} : \IR^n \to [0, \infty]$ by
\[ \td{w} (x_1, \ldots, x_n ) := \sum_{y_i (p) = x_i} w(p). \]
Then (\ref{eq_nab_w_bound}) implies that $\td{w}|_{B(\vec 0, 2 \sqrt{Z})} \in W^{1,1} (B(\vec 0, 2 \sqrt{Z}))$ and if $\alpha \leq \ov\alpha$
\[ \int_{B(\vec 0, 2 \sqrt{Z})} |\nabla \td{w} | 
\leq  2\int_{\{ \frac14 \sum_i y_i^2 <Z \}} |\nabla w| \, dg \leq \Psi (\delta | A, Z, \alpha) . \]
So by the $L^1$-Poincar\'e inequality on $\IR^n$ there is an $m \geq 0$ such that
\begin{equation} \label{eq_tdw_poincare}
 \int_{B(\vec 0, 2 \sqrt{Z})}  | \td{w} - m | \leq  \Psi (\delta | A, Z, \alpha). 
\end{equation}

\begin{Claim} \label{Cl_m_12}
If $Z \geq \underline{Z}(A)$, $\alpha \leq \ov\alpha$, $\delta \leq \ov\delta ( A, Z, \alpha)$, then $m > \frac12$.
\end{Claim}

\begin{proof}
Combining (\ref{eq_eta1_supp}), (\ref{eq_eta2_supp}) implies
\begin{equation} \label{eq_int_wl1c14ylZ}
 \int_{  \{ w < 1 \} \cap \{ \frac14 \sum_i y_i^2 < Z \} } d\nu \leq \Psi(\delta | \alpha). 
\end{equation}
It follows that for $\alpha \leq \ov\alpha$
\begin{multline*}
 \big|  \{ 0 < \td{w} < 1 \} \cap B(\vec 0, 2 \sqrt{Z})  \big|
 \leq C \int_{ \{ 0 < w < 1 \} \cap \{ \frac14 \sum_i y_i^2 < Z \}} dg 
 \leq C e^{Z+2} \int_{ \{ 0 < w < 1 \} \cap \{ \frac14 \sum_i y_i^2 < Z \}} d\nu \\
\leq  \Psi (\delta | Z, \alpha). 
\end{multline*}
Therefore, if $m \leq \frac12$, then (\ref{eq_tdw_poincare}) implies
\[ \int_{ \{ \frac14 \sum_i y_i^2 < Z \} } w \, dg 
= \int_{B(\vec 0, 2 \sqrt{Z})}  \td{w} 
\leq \Psi (\delta | Z, \alpha) + 2 \int_{B(\vec 0, 2 \sqrt{Z})} | \td{w} - m |  
\leq  \Psi (\delta | A, Z, \alpha). \]
So, using (\ref{eq_f_lower_mA}), (\ref{eq_int_wl1c14ylZ}),
\begin{multline*}
 \int_{ \{ \frac14 \sum_i y_i^2 < Z \} } d\nu 
 \leq \int_{  \{ w < 1 \} \cap \{ \frac14 \sum_i y_i^2 < Z \} } d\nu  
 + \int_{  \{ w = 1 \} \cap \{ \frac14 \sum_i y_i^2 < Z \} } w \, d\nu \\
 \leq \Psi(\delta| \alpha) + C e^A \int_{ \{ \frac14 \sum_i y_i^2 < Z \} } w \, dg
 \leq
 \Psi (\delta | A, Z, \alpha).
\end{multline*}
This implies
\[ \int_M \bigg( \frac14\sum_i y_i^2 \bigg) d\nu \geq Z \big( 1 -  \Psi (\delta | A, Z, \alpha)\big),  \]
in contradiction to (\ref{eq_z_squared_int}) if $Z \geq \underline{Z}(A)$ and $\delta \leq \ov\delta ( A, Z, \alpha)$.
\end{proof}

Claim~\ref{Cl_m_12} combined with (\ref{eq_tdw_poincare}) implies that
\begin{equation} \label{eq_tdw0_small}
 \big|  \{  \td{w}  = 0 \} \cap B(\vec 0, 2 \sqrt{Z})  \big| \leq   \Psi (\delta | A, Z, \alpha). 
\end{equation}
Since
\[ \exp \bigg({ f - \frac14 \sum_{i=1}^n y_i^2 } - W \bigg) \leq 1 + e^{f-W}  \bigg| {f - \frac14 \sum_{i=1}^n y_i^2 } - W \bigg| \leq 1 + e^{Z-W}  \bigg|{ f - \frac14 \sum_{i=1}^n y_i^2 } - W \bigg|, \]
we conclude using (\ref{eq_tdw0_small}), (\ref{eq_z_integral_bounds})
\begin{align*}
 \int_{B(\vec 0, 2 \sqrt{Z})} &(4\pi)^{-n/2}  \exp \Big( {- \frac14 \sum_{i=1}^n x_i^2 } \Big) d\vec{x} -  \Psi (\delta | A, Z, \alpha) \\
&\leq  \int_{\{ \td w > 0 \} \cap B(\vec 0, 2 \sqrt{Z})} (4\pi)^{-n/2} \exp \Big( {- \frac14 \sum_{i=1}^n x_i^2 } \Big) d\vec{x} \\
&\leq  (1+\Psi(\alpha)) \int_{ \{ w > 0 \} \cap \{  \frac14 \sum_{i=1}^n y_i^2 < Z \} }(4\pi)^{-n/2} \exp \Big( {- \frac14 \sum_{i=1}^n y_i^2 } \Big) dg \\
&\leq (1+\Psi(\alpha)) e^W  \int_{ \{  f < Z+2 \} \cap \{ \frac14 \sum_{i=1}^n y_i^2 < Z \} }\exp \Big( {f - \frac14 \sum_{i=1}^n y_i^2 } - W \Big) (4\pi)^{-n/2} e^{-f} dg \\
&\leq (1+\Psi(\alpha)) e^W  \int_{M } \bigg( 1 + e^{Z + 2-W}  \bigg|{ f - \frac14 \sum_{i=1}^n y_i^2 } - W \bigg| \bigg) d\nu \\
&\leq (1+\Psi(\alpha)) e^W \big( 1 + e^{Z+2-W} \delta \big)
\leq (1+\Psi(\alpha)) e^W + \Psi(\delta |Z, \alpha).
\end{align*}
Therefore
\[ e^{W} \geq \frac{1 - \Psi (Z)-  \Psi (\delta | A, Z, \alpha)}{1+\Psi(\alpha) } \geq 1 - \Psi(\alpha) -  \Psi(Z ) -\Psi(\delta |A, Z, \alpha) . \]
Choosing $\alpha \leq \ov\alpha (\eps)$, $Z \geq \underline{Z} (\eps)$ and $\delta \leq \ov\delta(A,Z, \eps)$ yields $W \geq  - \eps$, which finishes the proof.
\end{proof}

\part{\texorpdfstring{Partial regularity theory of $\IF$-limits}{Partial regularity theory of F-limits}} \label{part_partial_reg}
In this part we combine the quantitative stratification result from Part~\ref{part_prel_quant_strat} with the local regularity theory from Part~\ref{part_loc_reg}.
In Section~\ref{Sec_basic_limits} we obtain a preliminary partial regularity theory of non-collapsed $\IF$-limits.
This theory will depend on a working assumption that concerns an $\eps$-regularity property and involves a parameter $\Delta \in \{ 1, 2,3,4 \}$; the $\eps$-regularity theorem from Part~\ref{part_loc_reg} implies this working assumption for $\Delta = 1$.
We will find that the singular part of non-collapsed $\IF$-limits has codimension $\geq \Delta$.
In Sections~\ref{sec_eps_reg_codim_2}, \ref{sec_eps_reg_codim_4} we will use this preliminary partial regularity theory to successively show that the working assumption holds for $\Delta =2$ and $\Delta = 4$, respectively.

\section{A preliminary partial regularity theory} \label{Sec_basic_limits}
In this section, we combine the results obtained so far to derive a partial regularity and structure theory for non-collapsed $\IF$-limits.
This theory for itself will still be relatively weak, however, under a working assumption, which concerns $\eps$-regularity theorems and depends on a parameter $\Delta \in \{ 1,2,3,4 \}$, we will be able to derive improved results.
The parameter $\Delta$ will turn out to give a bound on the codimension of the singular set of the limit.
The $\eps$-regularity theorem from Section~\ref{sec_prelim_eps_reg} corresponds to $\Delta = 1$.
Our goal in the subsequent sections will be to verify this working assumption for $\Delta = 2$ and $\Delta =4$.
Once this is accomplished, the results in this section will imply a structure theory with optimal dimensional bounds

\subsection{Setup and working assumption} \label{subsec_setup_limit}
Throughout this section we will consider the setting from Subsection~\ref{subsec_part_reg_limit_intro}.
Let us recall this setting and provide some further details. 

Consider a sequence of pointed Ricci flows $(M_i, (g_{i,t})_{t \in I_i}, x_i)$ on compact, $n$-dimensional manifolds and over time-intervals of the form $I_i = (-T_i,0]$, $T_i \in (0, \infty]$, where $\lim_{i \to \infty} T_i =: T_\infty \in (0, \infty]$.
We will view the point $x_i$ to be located at time $0$ and assume the non-collapsing condition
\[ \NN_{x_i, 0} (\tau_0) \geq - Y_0 \]
for some uniform $Y_0 < \infty$, $\tau_0 > 0$.

As discussed in \cite{Bamler_RF_compactness}, these flows correspond to $H_n := ( \frac{(n-1)\pi^2}{2} +4)$-concentrated metric flows $\XX^i$ with full regular part $\RR^i = \XX^i = M_i \times I_i$, which are equipped with a Ricci flow spacetime structure $(\RR^i, \tf, \partial^i_{\tf}, g^{i})$ that corresponds to the original flows $(M_i, (g_{i,t})_{t \in I_i})$.
In the following, we will often write $(M_i, (g_{i,t})_{t \in I_i})$ instead of $\XX^i$ if there is no chance of confusion.
We will also frequently use terminology relating to Ricci flow spacetimes, as defined in \cite[\SYNSubsecRFST]{Bamler_RF_compactness}. 

Consider now a future continuous, $H_n$-concentrated metric flow $\XX$ of full support over $I_\infty := (-T_\infty,0]$ whose final time-slice is of the form $\XX_0 = \{ x_\infty \}$.
Suppose that there is a correspondence $\CF$ over $I_\infty$ between the metric flows $\XX^i$ associated to $(M_i, (g_{i,t})_{t \in I_i})$, $i \in \IN$ and $\XX$, such that on compact time-intervals:
\begin{equation} \label{eq_setup_Mi_to_XX}
 (M_i, (g_{i,t})_{t \in I_i}, (\nu_{x_i,0;t})_{t \in I_i}) \xrightarrow[i\to \infty]{\quad \IF, \CF \quad} (\XX, (\nu_{x_\infty;t})_{t \in I_\infty}).
\end{equation}
We remark that it would be more accurate to replace ``$M_i, (g_{i,t})_{t \in I_i}$'' by ``$\XX^i$'' in (\ref{eq_setup_Mi_to_XX}).

Before discussing some of the basic consequences of (\ref{eq_setup_Mi_to_XX}), let us briefly recall that given \emph{any} sequence of Ricci flows $(M_i, (g_{i,t})_{t \in I_i})$ as described above, we can apply \cite[\SYNCorCompactness, \SYNThmCompactnessFutCont]{Bamler_RF_compactness} to construct $\CF$, $\XX$ such that (\ref{eq_setup_Mi_to_XX}) holds.
We also emphasize that we are always free to choose $\XX$ to be future continuous.
This property will become useful later.

For the remainder of this section we will fix the convergent subsequence of Ricci flows $(M_i,\lb  (g_{i,t})_{t \in I_i},\lb x_i)$, its limit $\XX$ and the correspondence $\CF$ such that (\ref{eq_setup_Mi_to_XX}) holds.
It will not be important how these objects have been constructed.
Note that we have dropped the index ``$\infty$'' from the limit $\XX$ for convenience; the same will apply to most objects relating to $\XX$, such as its regular part and the heat kernel.
Similarly, we will often drop the superscript ``$i$'' for objects relating to the metric flows $\XX^i$ as long as there is no chance of confusion.

Since we assumed $\XX$ to be future continuous, we have:

\begin{Lemma}
$\XX$ is intrinsic.
\end{Lemma}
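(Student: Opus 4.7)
The plan is to deduce the intrinsicness of $\XX$ from the intrinsicness of the approximating smooth flows $\XX^i$ combined with a stability property of intrinsicness under $\IF$-convergence, which is available from the companion paper \cite{Bamler_RF_compactness}.

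First, I would observe that each $\XX^i$ associated to the smooth Ricci flow $(M_i, (g_{i,t})_{t \in I_i})$ is an intrinsic metric flow: every time-slice $(M_i, g_{i,t})$ is a closed Riemannian manifold, whose distance function automatically coincides with the length metric induced by the Riemannian structure, and the heat kernel measures $\nu_{x,t;s}$ are induced in the standard way by the conjugate heat kernel on a smooth background. This matches the characterization of intrinsic metric flows given in \SYNThmsIntrinsic.

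Second, I would invoke the stability half of \SYNThmsIntrinsic, which asserts that intrinsicness is inherited by $\IF$-limits provided the limit is future continuous, $H_n$-concentrated, and of full support. All three of these hypotheses are built into the standing assumptions on $\XX$ recalled in Subsection~\ref{subsec_setup_limit}, and the convergence in~\eqref{eq_setup_Mi_to_XX} within the correspondence $\CF$ on compact time-intervals supplies exactly the $\IF$-convergence required to feed into the cited theorem. Intrinsicness at the final time $t=0$ is vacuous since $\XX_0 = \{x_\infty\}$ is a single point, so only times $t \in (-T_\infty,0)$ need to be addressed, and these fall within the time-interval where the theorem applies.

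The main point requiring care, rather than a genuine obstacle, is matching the hypotheses of \SYNThmsIntrinsic to our setting: one has to verify that it is future continuity (and not merely $H_n$-concentration) which guarantees that the limit is the full intrinsic flow rather than a proper subflow that has been metric-completed, and to confirm that no further regularity of the conjugate heat kernel measures $\nu_{x_\infty;t}$ beyond what is already recorded in~\eqref{eq_setup_Mi_to_XX} needs to be extracted before the theorem is applied. Once this bookkeeping is carried out, the lemma follows immediately from \SYNThmsIntrinsic without further computation.
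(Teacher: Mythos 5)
Your proposal is correct and lands on the same single citation to \SYNThmsIntrinsic that constitutes the paper's entire proof. The only caveat is one of framing: the paper's preamble, ``Since we assumed $\XX$ to be future continuous, we have:'', signals that \SYNThmsIntrinsic is applied \emph{directly} to $\XX$ --- future continuity together with the standing $H_n$-concentration and full-support hypotheses suffices --- so your preliminary step of establishing intrinsicness of the approximating flows $\XX^i$ and then invoking a ``stability under $\IF$-limits'' half of the theorem is an unnecessary detour and may not match how the cited result is actually phrased.
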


\begin{proof}
This follows from \cite[\SYNThmsIntrinsic]{Bamler_RF_compactness}.
\end{proof}

As discussed in \cite[\SYNSubsecRegPart]{Bamler_RF_compactness}, we can decompose $\XX$ into its regular ($\RR$) and singular ($\SS := \XX \setminus \RR$) part
\begin{equation} \label{eq_reg_sing_dec}
  \XX = \RR {\,\, \dotcup \,\,} \SS, 
\end{equation}
where $\RR$ is open and $\SS$ is closed.
Here $\RR$ carries the structure of a Ricci flow space-time $(\RR, \mathfrak{t}, \partial_{\mathfrak{t}}, g)$ with the property that for every time $t \in I_\infty$ the topology of $\RR_t = \XX_t \cap \RR$ agrees with the subspace topology from $\XX$ and $\XX_t$ and the inclusion map $(\RR_t, d_{g_t}) \to (\XX, d_t)$ is a local isometry; here and in the sequel $d_{g_t}$ denotes the length metric of $g_t$ and $d_t$ denotes the metric on $\XX_t$ coming from the metric flow structure.
Note that this implies that $d_t \leq d_{g_t}$ and that the intersection of any $d_t$-geodesic with $\RR_t$ is a Riemannian geodesic.

By \cite[\SYNThmRegPartProperties]{Bamler_RF_compactness} we can express conjugate heat kernels restricted to $\RR$ as follows:
\[ d\nu_{x;s} = K(x, \cdot) dg_s, \qquad x \in \XX, s \in I_\infty, s < \tf (x), \]
where
\[ K : \{ (x,y) \in \XX \times \RR \;\; : \;\; \tf(x) > \tf(y) \} \longrightarrow \IR_+ \]
is a continuous function with the following properties, called the \emph{heat kernel} (see also \cite[\SYNDefHK]{Bamler_RF_compactness}).
For any $x \in \XX$ the function $K(x, \cdot ) : \RR_{< \tf(x)} \to \IR_+$ is smooth and satisfies the conjugate heat equation $\square^* K(x, \cdot) = 0$.
The map $x \mapsto K(x,0)$ is continuous in the $C^\infty_{\loc}$-topology.
Moreover, $K$ restricted to $\{ (x,y) \in \RR \times \RR \;\; : \;\; \tf(x) > \tf(y) \}$. For any $y \in \RR$, the function $K(\cdot, y)$ is a heat flow on $\XX_{>\tf(y)}$ and $\square K(\cdot, y) = 0$ on $\RR_{>\tf(y)}$.

In the following, we will sometimes express $K(x, \cdot)$, for $x \in \XX$, as
\[ K(x, \cdot) =: (4\pi \tau_x)^{-n/2} e^{-f_x}, \]
where $f_x \in C^\infty(\RR_{< \tf(x)})$ and $\tau_x := \tf(x) - \tf$.
For any $x \in \XX$ and $\tau > 0$ we define the pointed Nash-entropy at $x$ as usual (see also the discussion in Subsection~\ref{subsec_Nash_in_limit_intro})
\begin{equation*}
 \NN_x (\tau) := \int_{\RR_{\tf (x) - \tau}} f_x \, d\nu_{x; \tf(x) - \tau} - \frac{n}2, 
\end{equation*}
where integration is only performed over the regular part and $\RR_{\tf (x) - \tau}$ may a priori be empty.

Next, let $\RR^* \subset \RR$ be the set of points where the convergence (\ref{eq_setup_Mi_to_XX}) is smooth, as defined in  \cite[\SYNSubsecSmoothConv]{Bamler_RF_compactness}, and set $\SS^* := \XX \setminus \RR^* \supset \SS$.
So we have
\begin{equation} \label{eq_reg_sing_dec_star}
 \XX = \RR^* {\,\, \dotcup \,\,} \SS^*, 
\end{equation}
We will later see that $\RR = \RR^*$, meaning that the decompositions (\ref{eq_reg_sing_dec}), (\ref{eq_reg_sing_dec_star}) agree (see Corollary~\ref{Cor_RRstar_RR}).
At this moment, note that $\RR, \SS$ depend only on the limit $\XX$, while $\RR^*, \SS^*$ also depend on the correspondence $\CF$ describing the convergence (\ref{eq_setup_Mi_to_XX}).
We warn the reader that $\RR^*$ may a priori become larger if we pass to a subsequence in (\ref{eq_setup_Mi_to_XX}).

Let us now recall some of the properties that follow from the smooth convergence on $\RR^*$ and that we will be using most frequently throughout this section.
For a more comprehensive discussion see \cite[\SYNThmSmoothConv]{Bamler_RF_compactness}.
The subset $\RR^* \subset \RR$ is open and we can find an increasing sequence of open subsets $U_1 \subset U_2 \subset \ldots \subset \RR^*$ with $\bigcup_{i=1}^\infty U_i = \RR^*$, open subsets $V_i \subset \RR^i = M_i \times I_i$ and time-preserving diffeomorphisms $\psi_i : U_i \to V_i$ such that on $\RR^*$
\[ \psi^*_i g^i \xrightarrow[i \to \infty]{\;\; C^\infty_{\loc} \;\;} g, \qquad
\psi^*_i \partial_{\tf}^i \xrightarrow[i \to \infty]{\;\; C^\infty_{\loc} \;\;} \partial_{\tf}, \qquad 
K^i (x_i, 0; \cdot) \circ \psi_i\xrightarrow[i \to \infty]{\;\; C^\infty_{\loc} \;\;} K(x_\infty; \cdot) , \]
where $K^i$ denotes the heat kernel on $(M_i, (g_{i,t})_{t \in I_i})$.
Moreover, the following is true.
For any sequence $(x'_i, t'_i) \in M_i \times I_i$ for which we have $(x'_i, t'_i) \to x'_\infty \in \XX$ within $\CF$ in the sense of \cite[\SYNDefConvPtsWithin]{Bamler_RF_compactness} (recall that by definition this is equivalent to the convergence of the heat kernels based at the corresponding points) we have on $\RR^*$
\[ K^i (x'_i, t'_i; \cdot) \circ \psi_i\xrightarrow[i \to \infty]{\;\; C^\infty_{\loc} \;\;} K(x'_\infty; \cdot) . \]
On $\{ (x', y') \in \RR^* \times \RR^* \;\; : \;\;  \tf(x') > \tf(y') \}$ we have
\[ K^i \circ (\psi_i, \psi_i)\xrightarrow[i \to \infty]{\;\; C^\infty_{\loc} \;\;} K . \]
Lastly, for any sequence $(x'_i, t'_i) \in M_i \times I_i$ and any point $x'_\infty \in \RR^*$ we have $(x'_i, t'_i) \to x'_\infty$ within $\CF$ if and only if $(x'_i, t'_i) \in V_i$ for large $i$ and $\psi_i^{-1} (x'_i, t'_i) \to x'_\infty$ in $\RR^*$.

Let us now fix the sequence of diffeomorphisms $\psi_i : U_i \to V_i$ for the remainder of this section.
Motivated by the last property of the maps $\psi_i$, we will make the following definition.

\begin{Definition}[Pointwise and local uniform convergence]
If $f_i : U_i \to Y$, $U_i \subset M_i \times I_i$, is a sequence of maps into some topological space $Y$ and $f_\infty : U_\infty \to Y$ for some open subset $U_\infty \subset \RR^*$, then we say that we have {\bf pointwise} or {\bf local uniform convergence} $f_i \to f_\infty$ if $f_i \circ \psi_i \to f$ converges pointwise or locally uniform.
If $Y$ is a smooth manifold, then we define $C^\infty_{\loc}$-convergence similarly.
\end{Definition}

We will now introduce the following working assumption involving a fixed parameter $\Delta \in \{ 1,2,3,4 \}$, which we will assume for the remainder of this section.

\begin{Assumption} \label{Aspt_working_ass}
For any $Y' < \infty$ there is a constant $\eps' (Y') > 0$ such that the following holds for all $i$.
Suppose that $(M, (g_t)_{t \in I})$ is a Ricci flow on a compact manifold and $(x,t) \in M \times I$, $r > 0$ satisfy $[t- \eps^{\prime -1} r^2, t] \subset I$ and $\NN_{x,t} (r^2) \geq - Y'$.
Suppose that one of the following is true:
\begin{enumerate}[label=(\roman*)]
\item $(x,t)$ is strongly $(n+3-\Delta, \eps', r)$-split or
\item $(x,t)$ is $(\eps', r)$-static and strongly $(n+1-\Delta, \eps', r)$-split.
\end{enumerate}
Then $\rrm (x,t) \geq \eps' r$.
\end{Assumption}

Note that we have the following:

\begin{Lemma}
Assumption~\ref{Aspt_working_ass} holds at least for $\Delta = 1$.
\end{Lemma}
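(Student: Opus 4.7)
The plan is to verify the two alternative hypotheses in Assumption~\ref{Aspt_working_ass} separately. For $\Delta=1$ they read: (i) $(x,t)$ is strongly $(n+2,\eps',r)$-split; or (ii) $(x,t)$ is $(\eps',r)$-static and strongly $(n,\eps',r)$-split. In either case we want $\rrm(x,t)\geq \eps' r$ under the Nash-entropy lower bound $\NN_{x,t}(r^2)\geq -Y'$.

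Case (ii) is already done for us: it is literally the content of Proposition~\ref{Prop_eps_regularity_np2}, which, for any $Y'<\infty$, provides a threshold $\ov\eps(Y')>0$ with the desired conclusion whenever $\eps'\leq\ov\eps(Y')$. So I would simply set $\eps'_{\mathrm{ii}}(Y'):=\ov\eps(Y')$ coming from that proposition; no additional work is required here.

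For case (i) the plan is to show that the hypothesis is \emph{vacuous} once $\eps'$ is smaller than a purely dimensional constant. Suppose $\vec y=(y_1,\ldots,y_{n+2})$ is a strong $(n+2,\eps',r)$-splitting map at $(x,t)$, and consider the pointwise Gram matrix $A:=(\nabla y_i\cdot\nabla y_j)_{i,j=1}^{n+2}$. Since $M$ is $n$-dimensional, $A$ is symmetric and positive semidefinite with rank at most $n$, so at least two of its eigenvalues vanish, hence at least two eigenvalues of $A-I_{n+2}$ equal $-1$. Consequently
\[
\sum_{i,j=1}^{n+2}|A_{ij}-\delta_{ij}|\;\geq\;\|A-I_{n+2}\|_F\;\geq\;\sqrt 2
\]
pointwise (using $\|M\|_F^2=\sum M_{ij}^2\leq(\sum|M_{ij}|)^2=\|M\|_1^2$). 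Integrating against the spacetime measure $d\nu_{x,t;s}\,ds$ over $[t-\eps'^{-1}r^2,\,t-\eps' r^2]$, which has total mass $\eps'^{-1}r^2-\eps' r^2$ because each $\nu_{x,t;s}$ is a probability measure, gives
\[
r^{-2}\int_{t-\eps'^{-1}r^2}^{t-\eps' r^2}\!\!\int_M\sum_{i,j=1}^{n+2}|A_{ij}-\delta_{ij}|\,d\nu_{x,t;s}\,ds\;\geq\;\sqrt 2\bigl(\eps'^{-1}-\eps'\bigr).
\]
On the other hand, summing Property~(2) of Definition~\ref{Def_strong_splitting_map} over the $(n+2)^2$ index pairs bounds the same quantity by $(n+2)^2\eps'$. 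Combining forces $\sqrt 2(\eps'^{-1}-\eps')\leq (n+2)^2\eps'$, which fails as soon as $\eps'\leq\eps'_{\mathrm{i}}$ for an explicit dimensional threshold $\eps'_{\mathrm{i}}>0$. Hence for such $\eps'$ no strong $(n+2,\eps',r)$-splitting map exists, and (i) holds vacuously.

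Setting $\eps'(Y'):=\min\{\eps'_{\mathrm{ii}}(Y'),\eps'_{\mathrm{i}}\}$ then satisfies the assumption for $\Delta=1$. There is essentially no obstacle here; the only ingredient beyond Proposition~\ref{Prop_eps_regularity_np2} is the elementary matrix inequality $\|\cdot\|_F\leq\|\cdot\|_1$ together with the rank bound on $A$, both of which are standard linear algebra.
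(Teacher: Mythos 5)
Your proof is correct and follows exactly the same structure as the paper's argument: case~(ii) is exactly Proposition~\ref{Prop_eps_regularity_np2}, and case~(i) is vacuous since no point of an $n$-dimensional Ricci flow can be strongly $(n+2,\eps',r)$-split once $\eps'$ is below a dimensional threshold. The paper merely asserts this vacuity without proof; your Gram-matrix rank argument is a clean way to spell it out.
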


\begin{proof}
This follows from Proposition~\ref{Prop_eps_regularity_np2} and the fact that for sufficiently small $\eps'$ no point in a Ricci flow on an $n$-dimensional manifold is strongly $(n+2, \eps', r)$-split if $\eps' \leq \ov\eps'$.
\end{proof}

As it will turn out later in Proposition~\ref{Prop_reg_codim_4}, Assumption~\ref{Aspt_working_ass} holds even for $\Delta = 4$.
This choice will be optimal, as one may see by considering a blow-down sequence of the Eguchi-Hanson metric.

\subsection{Preparatory results}
In this subsection we prove a few lemmas that will become useful later.

The first lemma addresses the limiting behavior of $P^*$-parabolic neighborhoods.

\begin{Lemma} \label{Lem_conv_P_star}
Consider sequences of points $(x'_i, t'_i), (x''_i, t''_i) \in M_i \times I_i$ such that $(x'_i, t'_i) \to x'_\infty \in \XX$ and $(x''_i, t''_i) \to x''_\infty \in \XX$ within $\CF$.
Let $A_1 > A_0 > 0$, $T^-_1 > T^-_0  >0$, $T^+_1 > T^+_0 \geq 0$.
Then the following is true, given that the corresponding $P^*$-parabolic neighborhoods exist:
\begin{enumerate}[label=(\alph*)]
\item \label{Lem_conv_P_star_a} If for infinitely many $i$ we have $(x''_i, t''_i) \in P^*(x'_i, t'_i; A_0, -T^-_0, T^+_0)$, then $x''_\infty \in P^*(x'_\infty; \lb A_1, \lb -T^{-}_1,\lb T^{+}_1)$.
\item \label{Lem_conv_P_star_b} If $x''_\infty \in P^*(x'_\infty; A_0, -T^{-}_0, T^{+}_0)$, then $(x''_i, t''_i) \in P^*(x'_i, t'_i; A_1, -T^{-}_1, T^{+}_1)$ for large $i$.
\end{enumerate}
\end{Lemma}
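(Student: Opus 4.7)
My plan is to deduce both assertions from the monotonicity of the $W_1$-distance along conjugate heat flows (Proposition~\ref{Prop_monotonicity_W1}), together with the defining property of $\IF$-convergence within the correspondence $\CF$: the convergence of spacetime points $(x_i,t_i)\to x_\infty$ within $\CF$ translates into $W_1$-convergence of the associated conjugate heat kernel measures at each fixed time $s<\tf(x_\infty)$, once these measures are pushed forward by the isometric embeddings of the time-slices into the common metric spaces $Z_s$ provided by $\CF$. The slackness $A_0<A_1$, $T^-_0<T^-_1$, $T^+_0<T^+_1$ built into the statement will provide the room to absorb the errors coming from this approximation.

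For part~(a), I would pass to the subsequence for which $(x''_i,t''_i)\in P^*(x'_i,t'_i;A_0,-T^-_0,T^+_0)$ and set $s:=t'_\infty-T^-_1$. Since $T^-_1>T^-_0$ and $t'_i\to t'_\infty$, one has $s<t'_i-T^-_0\leq t''_i$ for large $i$, so both conjugate heat kernels are defined at time~$s$. Monotonicity of $W_1$, applied going backwards from $t'_i-T^-_0$ to $s$, gives
\[ d^{g_s}_{W_1}(\nu_{x'_i,t'_i;s},\nu_{x''_i,t''_i;s})\;\leq\; d^{g_{t'_i-T^-_0}}_{W_1}(\nu_{x'_i,t'_i;t'_i-T^-_0},\nu_{x''_i,t''_i;t'_i-T^-_0})\;<\;A_0. \]
Pushing forward into $Z_s$, using that the embeddings are isometric (so $W_1$-distances are preserved), the triangle inequality, and the $W_1$-convergence within $\CF$ at time~$s$ of $\nu_{x'_i,t'_i;s}\to\nu_{x'_\infty;s}$ and $\nu_{x''_i,t''_i;s}\to\nu_{x''_\infty;s}$, I would then conclude $d^{\XX_s}_{W_1}(\nu_{x'_\infty;s},\nu_{x''_\infty;s})\leq A_0<A_1$. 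The time condition $t''_\infty\in[t'_\infty-T^-_1,t'_\infty+T^+_1]$ is immediate from $t''_i\in[t'_i-T^-_0,t'_i+T^+_0]$, the convergences of times, and the strict inequalities $T^-_1>T^-_0$, $T^+_1>T^+_0$.

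For part~(b), I would first insert a small buffer $\delta>0$ with $T^-_0+\delta<T^-_1$ and work at the time $s^{**}:=t'_\infty-T^-_0-\delta$. Monotonicity of $W_1$ in the limit flow, going backwards from $t'_\infty-T^-_0$ to $s^{**}$, gives
\[ d^{\XX_{s^{**}}}_{W_1}(\nu_{x'_\infty;s^{**}},\nu_{x''_\infty;s^{**}})\;\leq\; d^{\XX_{t'_\infty-T^-_0}}_{W_1}(\nu_{x'_\infty;t'_\infty-T^-_0},\nu_{x''_\infty;t'_\infty-T^-_0})\;<\;A_0. \]
Since $t''_\infty\geq t'_\infty-T^-_0>s^{**}$, for large $i$ both $t'_i$ and $t''_i$ exceed $s^{**}$, so the finite-flow conjugate heat kernels at time $s^{**}$ are defined. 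Invoking $W_1$-convergence within $\CF$ at the fixed time $s^{**}$, together with the triangle inequality and the isometric embeddings, then yields $d^{g_{s^{**}}}_{W_1}(\nu_{x'_i,t'_i;s^{**}},\nu_{x''_i,t''_i;s^{**}})<A_0+\eta_i<A_1$ for large $i$, with $\eta_i\to 0$. By the choice of $\delta$, one has $t'_i-T^-_1\leq s^{**}$ for large $i$, so a second application of $W_1$-monotonicity (going backwards from $s^{**}$ to $t'_i-T^-_1$) delivers the required bound $d^{g_{t'_i-T^-_1}}_{W_1}<A_1$; the time condition $t''_i\in[t'_i-T^-_1,t'_i+T^+_1]$ is again immediate.

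The only delicate point I anticipate is the borderline case in part~(b) when $t''_\infty=t'_\infty-T^-_0$, which would force $\nu_{x''_\infty;t'_\infty-T^-_0}$ to be the Dirac mass $\delta_{x''_\infty}$ and could make $\nu_{x''_i,t''_i;t'_\infty-T^-_0}$ undefined for $i$ with $t''_i<t'_\infty-T^-_0$. The purpose of the $\delta$-buffer and of evaluating at the strictly earlier time $s^{**}$ is precisely to sidestep this issue uniformly in~$i$; the strict inequalities $A_0<A_1$, $T^-_0<T^-_1$, $T^+_0<T^+_1$ supply exactly the slack needed to absorb the monotonicity cost incurred by moving to $s^{**}$ and the small $\IF$-convergence error $\eta_i$.
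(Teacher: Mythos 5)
Your strategy is essentially the one the paper uses: translate both assertions into $W_1$-statements via the definition of $P^*$-parabolic neighborhoods, apply $W_1$-monotonicity (Proposition~\ref{Prop_monotonicity_W1}) to move between evaluation times, and invoke the $W_1$-convergence of the conjugate heat kernel measures supplied by $\IF$-convergence within $\CF$. The slack $A_0<A_1$, $T^\pm_0<T^\pm_1$ plays the role you intend.

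There is a genuine gap, however, in the step where you invoke ``the $W_1$-convergence within $\CF$ at time~$s$ of $\nu_{x'_i,t'_i;s}\to\nu_{x'_\infty;s}$'' for your particular fixed choices $s=t'_\infty-T^-_1$ and $s^{**}=t'_\infty-T^-_0-\delta$. $\IF$-convergence within a correspondence does \emph{not} guarantee strict $W_1$-convergence of the time-$s$ slices for \emph{every} $s$; it is only guaranteed at times where the convergence is \emph{time-wise}, and one only knows (after passing to a subsequence, by \cite[\SYNLemSubseqTimewiseAE]{Bamler_RF_compactness}) that this holds at almost every time, with \cite[\SYNThmConvImpliesStrict]{Bamler_RF_compactness} then upgrading time-wise convergence to strict convergence of the measures. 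Your fixed $s$ and $s^{**}$ need not be among these good times, so the crucial limit $d^{g_{i,s}}_{W_1}\to d^{\XX_s}_{W_1}$ that you use in both parts is not justified as written. This is precisely why the paper introduces the intermediate parameter $T^-_2\in(T^-_0,T^-_1)$ and evaluates at $t^*=\tf(x'_\infty)-T^-_2$ \emph{chosen from this open interval to be a time-wise time}, then relies on $P^*(\,\cdot\,;A_1,-T^-_2,T^+_1)\subset P^*(\,\cdot\,;A_1,-T^-_1,T^+_1)$ (a consequence of $W_1$-monotonicity, recorded in Proposition~\ref{Prop_basic_parab_nbhd}) to conclude. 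Your argument is repaired the same way: after passing to a subsequence where time-wise convergence holds a.e., replace your fixed evaluation time by any time-wise time $s'\in(t'_\infty-T^-_1,t'_\infty-T^-_0)$ (for part (a)) resp.\ any time-wise $s^{**}\in(t'_\infty-T^-_1,t'_\infty-T^-_0)$ (for part (b)), obtain the bound $<A_0<A_1$ there, and then push it backward to $t'_\infty-T^-_1$ resp.\ $t'_i-T^-_1$ by $W_1$-monotonicity, exactly as you already do in one direction. With that modification the proof closes; without it the convergence of $W_1$-distances at your chosen time is unsupported.
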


\begin{proof}
Suppose that Assertion~\ref{Lem_conv_P_star_a} or \ref{Lem_conv_P_star_b} is false.
Then we may pass to a subsequence such that the assumption in Assertion~\ref{Lem_conv_P_star_a} holds for all $i$ or the claim in Assertion~\ref{Lem_conv_P_star_b} is violated for all $i$.
Next, we may pass to a further subsequence such that the convergence (\ref{eq_setup_Mi_to_XX}) is time-wise at almost every time (see \cite[\SYNLemSubseqTimewiseAE]{Bamler_RF_compactness}).
Choose $T^-_2 \in (T^-_0, T^-_1)$ such that the convergence (\ref{eq_setup_Mi_to_XX}) is time-wise at time $t^* := \tf(x'_\infty) - T^-_2$.
In Assertion~\ref{Lem_conv_P_star_a} we have $(x''_i, t''_i) \in P^*(x'_i, t'_i; A_0, -T^-_2, T^+_0)$ for all $i$ and it suffices to show that $x''_\infty \in P^*(x'_\infty; \lb A_1, \lb -T^{-}_2,\lb T^{+}_1)$.
In Assertion~\ref{Lem_conv_P_star_b} we have $x''_\infty \in P^*(x'_\infty; A_0, -T^{-}_2, T^{+}_0)$ and it suffices to show that $(x''_i, t''_i) \in P^*(x'_i, t'_i; A_1, -T^{-}_2, T^{+}_1)$ for large $i$.

By \cite[\SYNThmConvImpliesStrict]{Bamler_RF_compactness} and the time-wise convergence at time $t^*$ we have strict convergence
\[ \nu_{x'_i,t'_i;t^*} \longrightarrow \nu_{x'_\infty;t^*}, \qquad
\nu_{x''_i,t''_i;t^*} \longrightarrow \nu_{x''_\infty;t^*}. \]
This implies that
\[ \lim_{i \to \infty} d^{g_{i, t^*}}_{W_1} \big( \nu_{x'_i,t'_i;t^*}, \nu_{x''_i,t''_i;t^*} \big) = d^{\XX_{t^*}}_{W_1} ( \nu_{x'_\infty;t^*}, \nu_{x''_\infty;t^*} ), \]
which allows us to reach the desired contradiction.
\end{proof}
\bigskip

Next, we pass upper heat kernel and volume bounds to the limit.

\begin{Lemma} \label{Lem_limit_HK_bound}
Let $A < \infty$, $-T \in I_\infty$ and consider a point $x'_\infty \in P^*(x_\infty; A, -T)$ and set $t' := \tf (x'_\infty)$.
Then the following is true:
\begin{enumerate}[label=(\alph*)]
\item \label{Lem_limit_HK_bound_a} For any $y_\infty \in \RR^*_{s}$, $s \in [-T, t')$, and $\eps > 0$ we have 
\[ K (x'_\infty; y_\infty) 
\leq C(Y_0, \tau_0, A, T, \eps) (t'-s)^{-n/2} \exp \bigg( - \frac{\big( d^{\XX_{s}}_{W_1} (\nu_{x'_\infty; s}, \delta_{y_\infty})\big)^2}{(8+\eps) (t'-s)} \bigg). \]
Moreover, if $(x'_i, t'_i) \to x'_\infty$ within $\CF$ and $T+s \geq \eps$, then
\[ K (x'_\infty; y_\infty) \leq C(\eps) \exp \bigg( -\limsup_{i \to \infty} \NN_{x'_i,t'_i} (t-s) - \frac{\big( d^{\XX_{s}}_{W_1} (\nu_{x'_\infty; s}, \delta_{y_\infty})\big)^2}{(8+\eps) (t'-s)} \bigg).\]
\item \label{Lem_limit_HK_bound_b} For any $r > 0$ with $t' -  r^2 > -T$ and $t \in I_\infty$ we have $|P^*(x'_\infty; r) \cap \RR^*_t|_{g_t} \leq C(Y_0, \tau_0, A, T) r^n$.
\end{enumerate}
\end{Lemma}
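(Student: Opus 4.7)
The plan is to prove both assertions by approximation from the smooth flows, exploiting the $C^\infty_{\loc}$-convergence on $\RR^*$ together with the strict convergence of conjugate heat kernel measures coming from the correspondence $\CF$. As a preparatory step, I first select a sequence $(x'_i, t'_i) \in M_i \times I_i$ with $(x'_i, t'_i) \to x'_\infty$ within $\CF$. Using Lemma~\ref{Lem_conv_P_star}\ref{Lem_conv_P_star_b} I can enlarge the ambient $P^*$-neighborhood slightly to ensure $(x'_i, t'_i) \in P^*(x_i, 0; 2A, -2T)$ for large $i$, so the variation bound in Proposition~\ref{Prop_NN_variation_bound} together with the hypothesis $\NN_{x_i,0}(\tau_0) \geq -Y_0$ gives a uniform lower bound $\NN_{x'_i, t'_i}(t'_i - s) \geq -Y'(Y_0, \tau_0, A, T, \eps)$ provided $t'_i - s$ stays in a compact subinterval of $(0, \infty)$; this is exactly the range relevant to both assertions.

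For Assertion~\ref{Lem_limit_HK_bound_a}, apply the Gaussian heat kernel bound \cite[\HKThmHKboundGauss]{Bamler_HK_entropy_estimates} on each $(M_i, (g_{i,t}))$ with basepoint $(x'_i, t'_i)$ and $y_i := \psi_i(y_\infty)$, yielding
\[
K^i(x'_i, t'_i; y_i, s) \leq C(\eps)(t'_i - s)^{-n/2} \exp\!\bigg(\!-\NN_{x'_i, t'_i}(t'_i - s) - \frac{\big(d^{g_{i,s}}_{W_1}(\nu_{x'_i, t'_i; s}, \delta_{y_i})\big)^2}{(8+\eps)(t'_i - s)}\bigg).
\]
The left-hand side converges to $K(x'_\infty; y_\infty)$ by the smooth convergence $\psi_i^* K^i(x'_i, t'_i; \cdot) \to K(x'_\infty; \cdot)$ on $\RR^*$. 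For the right-hand side, after passing to a subsequence so that (\ref{eq_setup_Mi_to_XX}) is timewise at almost every $s$ (\cite[\SYNLemSubseqTimewiseAE]{Bamler_RF_compactness}), \cite[\SYNThmConvImpliesStrict]{Bamler_RF_compactness} yields strict convergence $\nu_{x'_i, t'_i; s} \to \nu_{x'_\infty; s}$, and the convergence $\psi_i(y_\infty) \to y_\infty$ gives $\delta_{y_i} \to \delta_{y_\infty}$; hence the $W_1$-distance in the exponent converges to $d^{\XX_s}_{W_1}(\nu_{x'_\infty; s}, \delta_{y_\infty})$. The monotonicity of $d_{W_1}$ (Proposition~\ref{Prop_monotonicity_W1}) extends this from a.e.\ $s$ to all admissible $s$ by sandwiching. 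Taking $\limsup$ of the right-hand side gives the refined bound; the first, coarser bound follows by further estimating $-\limsup \NN_{x'_i, t'_i}(t'_i - s) \leq Y'$ using the uniform Nash-entropy bound established above.

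For Assertion~\ref{Lem_limit_HK_bound_b}, the smooth volume estimate \cite[\HKPstarVolBound]{Bamler_HK_entropy_estimates} gives $|P^*(x'_i, t'_i; r) \cap M_i \times \{t\}|_{g_{i,t}} \leq C(Y') r^n$ for large $i$, with $Y'$ as above. Given any compact $K \subset P^*(x'_\infty; r) \cap \RR^*_t$, Lemma~\ref{Lem_conv_P_star}\ref{Lem_conv_P_star_b} implies $\psi_i(K) \subset P^*(x'_i, t'_i; (1+\eta)r)$ for every $\eta > 0$ and large $i$, while the $C^\infty_{\loc}$-convergence $\psi_i^* g^i \to g$ gives $|\psi_i(K)|_{g_{i,t}} \to |K|_{g_t}$. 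Passing to the limit and then letting $\eta \to 0$ yields $|K|_{g_t} \leq C(Y_0, \tau_0, A, T) r^n$; exhausting $P^*(x'_\infty; r) \cap \RR^*_t$ by such compact subsets $K$ completes the proof.

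The step I expect to require the most care is the convergence of the Wasserstein distance in the exponent of the Gaussian bound, since a priori $\nu_{x'_i, t'_i; s}$ only $\IF$-converges on compact time-intervals, and pointwise/strict $W_1$-convergence at a given time $s$ is only guaranteed after passing to a subsequence for a.e.\ $s$. The monotonicity statement in Proposition~\ref{Prop_monotonicity_W1} and the fact that the limit $K(x'_\infty; y_\infty)$ is independent of the approximating sequence rescue this: any subsequential limit of the right-hand side gives a valid upper bound, and the monotone behavior in $s$ allows one to bootstrap from a dense set of times. Everything else is a direct transfer of the smooth estimates from \cite{Bamler_HK_entropy_estimates} through the smooth convergence on $\RR^*$.
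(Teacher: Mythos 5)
Your overall strategy (transfer the Gaussian heat kernel bound and the $P^*$-ball volume bound from the smooth approximating flows using $C^\infty_{\loc}$-convergence on $\RR^*$ and strict convergence of conjugate heat kernel measures) is the same as the paper's, and your reduction to a uniform Nash-entropy bound via Lemma~\ref{Lem_conv_P_star} and Proposition~\ref{Prop_NN_variation_bound} is correct. Assertion~\ref{Lem_limit_HK_bound_b} is handled fine.

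However, there is a genuine gap in your treatment of Assertion~\ref{Lem_limit_HK_bound_a} for a time $s$ at which the convergence is \emph{not} time-wise. You write that ``the monotonicity of $d_{W_1}$ extends this from a.e.\ $s$ to all admissible $s$ by sandwiching,'' and later that ``the monotone behavior in $s$ allows one to bootstrap from a dense set of times.'' This does not work as stated. You cannot simply vary $s$ with $y_\infty$ held fixed, since $y_\infty$ lives in a single time-slice $\RR^*_s$, and the conjugate heat flow based at $y_\infty$ is only defined at times $\leq s$. Monotonicity of $d_{W_1}$ compares conjugate heat flows evolved \emph{backward} in time, so approaching $s$ from below by time-wise convergence times $s'_j \nearrow s$ produces a bound on $d^{\XX_{s'_j}}_{W_1}$ that is \emph{smaller} than $d^{\XX_s}_{W_1}(\nu_{x'_\infty;s}, \delta_{y_\infty})$ --- this yields an exponent that is too small, i.e.\ a bound that is weaker than the one claimed, not stronger. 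A two-sided ``sandwiching'' therefore cannot close the argument.

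The missing ingredient is that one must approach $s$ from \emph{above} by time-wise convergence times $s'_j \searrow s$, and simultaneously replace $y_\infty$ by points $y'_j \in \RR^*_{s'_j}$ with $y'_j \to y_\infty$ (possible since $y_\infty$ lies in the regular part, where its worldline persists briefly forward in time). One then invokes the continuity of the heat kernel $K$ to get $K(x'_\infty; y'_j) \to K(x'_\infty; y_\infty)$, and the monotonicity of $d_{W_1}$ applied to the conjugate heat flows $\nu_{x'_\infty}$ and $\nu_{y'_j}$ evolved from $s'_j$ back to $s$ to deduce
\[
\liminf_{j\to\infty} d^{\XX_{s'_j}}_{W_1}\big(\nu_{x'_\infty;s'_j},\delta_{y'_j}\big)
\;\geq\; \liminf_{j\to\infty} d^{\XX_s}_{W_1}\big(\nu_{x'_\infty;s},\nu_{y'_j;s}\big)
\;=\; d^{\XX_s}_{W_1}\big(\nu_{x'_\infty;s},\delta_{y_\infty}\big).
\]
Combining this with the Gaussian bound already established at times $s'_j$ gives the claimed inequality at time $s$. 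Your proposal identifies the two relevant tools (monotonicity of $d_{W_1}$ and some form of stability of $K$), but the structural move of simultaneously sending the evaluation point $y_\infty$ to nearby later time-slices --- which is what makes both tools applicable --- is absent, and without it the ``bootstrap'' does not go through.
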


\begin{proof}
By \cite[\SYNLemSubseqTimewiseAE]{Bamler_RF_compactness} we may again pass to a subsequence such that the convergence (\ref{eq_setup_Mi_to_XX}) is time-wise at almost every time of $I_\infty$.
By \cite[\SYNThmExistencConvSeq]{Bamler_RF_compactness} and Lemma~\ref{Lem_conv_P_star} we can find a sequence $(x'_i, t'_i) \in P(x_i,0;A',-T')$ for any $A' > A$ and $T' > T$ such that $(x'_i, t'_i) \to x'_\infty$ within $\CF$.
By Proposition~\ref{Prop_NN_variation_bound} we have $\NN_{x'_i, t'_i} (\tau'_0 (T)) \geq - Y'_0(Y_0, A, T)$ for all $i$.

Passing \cite[\HKThmsHKboundGaussPstarVolBound]{Bamler_HK_entropy_estimates} to the limit and using \cite[\SYNThmConvImpliesStrict]{Bamler_RF_compactness} and Lemma~\ref{Lem_conv_P_star}, we obtain Assertion~\ref{Lem_limit_HK_bound_b} as well as Assertion~\ref{Lem_limit_HK_bound_a} under the additional assumption that the convergence (\ref{eq_setup_Mi_to_XX}) is time-wise at $s$.
To prove Assertion~\ref{Lem_limit_HK_bound_a} in its full form, we use the continuity of $K$.
Let $s'_j \searrow s$ be a sequence of times at which the convergence (\ref{eq_setup_Mi_to_XX}) is time-wise and choose $y'_j \in \RR^*_{s'_j}$ with $y'_j \to y_\infty$.
Then $\lim_{j \to \infty} K(x'_\infty; y'_j) = K(x'_\infty; y_\infty)$ and
\[ \liminf_{j \to \infty} d^{\XX_{s'_j}}_{W_1} (\nu_{x'_\infty; s'_j}, \delta_{y'_j}) 
\geq \liminf_{j \to \infty} d^{\XX_{s}}_{W_1} (\nu_{x'_\infty; s}, \nu_{y'_j;s})
= d^{\XX_{s}}_{W_1} (\nu_{x'_\infty; s}, \delta_{y_\infty}). \]
Combining this with Assertion~\ref{Lem_limit_HK_bound_a} at times $s'_j$ finishes the proof.
\end{proof}
\bigskip

\begin{Lemma} \label{Lem_nu_Pstar_bound}
Let $A < \infty$, $-T \in I_\infty$, consider a point $x'_\infty \in P^*(x_\infty; A, -T)$ and set $t' := \tf (x'_\infty)$.
For any $r > 0$ with $t' -  r^2 > -T$ and any $t \in I_\infty$ we have
\[ \nu_{x'_\infty;t} (P^*(x'_\infty; r) \cap \XX_t) \leq C(Y_0, \tau_0, A, T) r^n. \]
\end{Lemma}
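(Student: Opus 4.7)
The plan is to lift the analogous smooth-flow bound to the limit, following the template of Lemma~\ref{Lem_limit_HK_bound}. First I would pass to a subsequence so that (\ref{eq_setup_Mi_to_XX}) is time-wise at almost every time of $I_\infty$ using \cite[\SYNLemSubseqTimewiseAE]{Bamler_RF_compactness}, then invoke \cite[\SYNThmExistencConvSeq]{Bamler_RF_compactness} together with Lemma~\ref{Lem_conv_P_star} to choose, for any $A' > A$, $T' > T$, an approximating sequence $(x'_i, t'_i) \in P^*(x_i, 0; A', -T')$ with $(x'_i, t'_i) \to x'_\infty$ within $\CF$; Proposition~\ref{Prop_NN_variation_bound} then supplies a uniform Nash-entropy lower bound $\NN_{x'_i, t'_i}(\tau) \ge -Y'_0(Y_0, \tau_0, A, T)$ on the relevant scale. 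The corresponding bound on the smooth flows,
\[
\nu_{x'_i, t'_i; t}(P^*(x'_i, t'_i; r)) \le C(Y_0, \tau_0, A, T)\, r^n,
\]
is then provided by \cite[\HKPstarVolBound]{Bamler_HK_entropy_estimates} (applied with the uniform non-collapsing constant $Y'_0$).

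To pass this to the limit, fix a time $t$ at which the convergence is time-wise, so that \cite[\SYNThmConvImpliesStrict]{Bamler_RF_compactness} yields strict convergence $\nu_{x'_i, t'_i; t} \to \nu_{x'_\infty; t}$ within $\CF$. For any $r'' < r$, Lemma~\ref{Lem_conv_P_star}\ref{Lem_conv_P_star_b} ensures that every point of the open set $P^*(x'_\infty; r'') \cap \XX_t$ admits an approximating sequence $(y_i, t) \to y_\infty$ that eventually lies in $P^*(x'_i, t'_i; r)$. Using the continuity in $y$ of the map $y \mapsto d^{\XX_{t'-r^2}}_{W_1}(\nu_{x'_\infty; t'-r^2}, \nu_{y; t'-r^2})$, I would construct a continuous cutoff $\phi_\infty \in C(\XX_t)$ with $\phi_\infty \equiv 1$ on $P^*(x'_\infty; r'') \cap \XX_t$ and $\supp \phi_\infty \subset P^*(x'_\infty; r) \cap \XX_t$, together with matching cutoffs $\phi_i$ on $M_i \times \{t\}$ supported in $P^*(x'_i, t'_i; r)$; testing the strict convergence against these cutoffs gives
\[
\nu_{x'_\infty; t}(P^*(x'_\infty; r'') \cap \XX_t) \le \liminf_{i \to \infty} \int \phi_i\, d\nu_{x'_i, t'_i; t} \le C\, r^n,
\]
and letting $r'' \nearrow r$ concludes the argument at time-wise $t$. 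Times $t$ at which (\ref{eq_setup_Mi_to_XX}) is not time-wise would be handled by approximating from above by time-wise times $t^* \searrow t$ and using the monotonicity of the $W_1$-distance along the conjugate heat flow (Proposition~\ref{Prop_monotonicity_W1}) to sandwich the $P^*$-sets at $t$ and $t^*$, combined with continuity of $\nu_{x'_\infty;\cdot}$ in time.

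The main obstacle is the limit passage for an upper bound on the mass of the open set $P^*(x'_\infty; r) \cap \XX_t$ under strict convergence, since Portmanteau only yields $\liminf_i \mu_i(U) \ge \mu(U)$ for open $U$. The fix is the cutoff construction above, which converts the set-based bound into one against bounded continuous test functions; two subtleties must then be tracked: (i) the $P^*$-ball is defined via a $W_1$-inequality on the distant time-slice at time $t' - r^2$, so the cutoffs have to be built from this $W_1$-distance function, whose continuity and compatibility across $\CF$ must be verified using the convergence of base points $(x'_i, t'_i) \to x'_\infty$ within $\CF$; and (ii) the matching between $\phi_\infty$ and $\phi_i$ must be constructed so that $\phi_i \to \phi_\infty$ in a sense compatible with the correspondence, which rests on the fact that $W_1$-distances between pairs of converging conjugate heat kernel families at a fixed time-slice themselves converge.
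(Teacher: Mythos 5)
Your treatment of the time-wise case is essentially the paper's argument: after passing to a subsequence and choosing an approximating sequence $(x'_i, t'_i) \in P^*(x_i,0;A',-T')$ with a uniform Nash-entropy lower bound, you pass the smooth-flow bound to the limit via the strict $W_1$-convergence at time $t$. The paper packages this as a compact exhaustion: it notes that any compact $S'_\infty \subset P^*(x'_\infty;r)\cap\XX_t$ has an open neighborhood $U$ in the common embedding space $Z_t$ with $U \cap \varphi^i_t(M_i) \subset \varphi^i_t(S_i)$ for large $i$ (by Lemma~\ref{Lem_conv_P_star}), giving $\nu_{x'_\infty;t}(S'_\infty) \le \limsup_i \nu_{x'_i,t'_i;t}(S_i)$ directly. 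Your cutoff-function route accomplishes the same thing; it is workable, but the compact-exhaustion argument sidesteps the compatibility-of-cutoffs issues that you flag as subtleties (i) and (ii). Also note that \cite[\HKPstarVolBound]{Bamler_HK_entropy_estimates} is a bound on the $dg_t$-volume of time-slices of $P^*$-balls; to get the $\nu$-measure bound you still need to insert the $L^\infty$ heat-kernel bound, as the paper does by writing $\nu_{x'_i,t'_i;t}(S_i) = \int_{S_i} K\,dg_{i,t} \leq C\,|S_i|_{g_{i,t}}$.

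The non-time-wise case is where your sketch has a genuine gap. You propose to ``sandwich the $P^*$-sets at $t$ and $t^*$'' using $W_1$-monotonicity and continuity of $\nu_{x'_\infty;\cdot}$, but the sets $P^*(x'_\infty;r)\cap\XX_t$ and $P^*(x'_\infty;r_1)\cap\XX_{t^*}$ live in different time-slices and there is no canonical way to transport points between time-slices of a metric flow; $W_1$-monotonicity alone does not produce a measure comparison. The paper's actual device is the heat flow $u:\XX_{\geq t}\to[0,1]$ with initial condition $\chi_{P^*(x'_\infty;r)\cap\XX_t}$: by the reproduction formula $\int_{\XX_{t_j}}u_{t_j}\,d\nu_{x'_\infty;t_j}=\nu_{x'_\infty;t}(P^*(x'_\infty;r)\cap\XX_t)$, so it suffices to prove the key claim $\sup_{\XX_{t_j}\setminus P^*(x'_\infty;r_1)}u_{t_j}\to 0$ for $r_1>r$ as $t_j\searrow t$. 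This claim is proved by an $H_n$-center concentration argument: if $y_j\notin P^*(x'_\infty;r_1)$ had $u(y_j)=\nu_{y_j;t}(P^*(x'_\infty;r)\cap\XX_t)\geq c>0$, then an $H_n$-center $z_j\in\XX_t$ of $y_j$ must lie within distance $\to 0$ of $P^*(x'_\infty;r)\cap\XX_t$ by Lemma~\ref{Lem_mass_ball_Var}, and the variance bound $\Var(\delta_{z_j},\nu_{y_j;t})\leq H_n(t_j-t)$ together with the containment relations of $P^*$-balls then forces $y_j\in P^*(x'_\infty;r_1)$ for large $j$, a contradiction. It is this concentration step, relying on $H_n$-centers and the variance monotonicity rather than $W_1$-monotonicity, that your sketch is missing.
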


\begin{proof}
As in the beginning of the proof of Lemma~\ref{Lem_limit_HK_bound}, we may pass to a subsequence such that the convergence (\ref{eq_setup_Mi_to_XX}) is time-wise at almost every time of $I_\infty$ and choose a sequence $(x'_i, t'_i) \in P(x_i,0;A',-T')$ for any $A' > A$ and $T' > T$ such that $(x'_i, t'_i) \to x'_\infty$ within $\CF$ and $\NN_{x'_i, t'_i} (\tau'_0 (T)) \geq - Y'_0(Y_0, A, T)$.

We first prove the lemma under the additional assumption that (\ref{eq_setup_Mi_to_XX}) is time-wise at time $t$.
Choose $r_1 > r$ such that $t' -  r_1^2 > -T$.
Recall that by \cite[\SYNThmConvImpliesStrict]{Bamler_RF_compactness} and the time-wise convergence implies that there is a metric space $(Z_t, d^{Z}_t)$ and isometric embeddings $\varphi^i_t : (M_i, d_{g_{i,t}}) \to (Z_t, d^Z_t)$, $\varphi^\infty_t : (\XX_t,d_t) \to (Z_t, d^Z_t)$ such that
\begin{equation} \label{eq_nu_xp_i_in_Z_W1}
 (\varphi^i_t)_* \nu_{x'_i, t'_i; t} \xrightarrow[i \to \infty]{\quad W_1 \quad} (\varphi^\infty_t)_* \nu_{x'_\infty; t}. 
\end{equation}
Let $S_\infty := P^*(x'_\infty; r) \cap \XX_t$ and $S_i := P^* (x'_i, t'_i; r_1) \cap M_i \times \{ t \}$.
By Lemma~\ref{Lem_conv_P_star} for any $y_\infty  \in S_\infty$ and any sequence $y_i \in M_i$ with $\varphi^i_t (y_i) \to \varphi^\infty_t (y_\infty)$ we have $(y_i, t) \in S_i$ for large $i$.
Thus, for any compact subset $S'_\infty \subset S_\infty$ there is an open neighborhood $\varphi^\infty_t (S'_\infty) \subset U \subset Z_t$ with the property that $U \cap \varphi^i_t (M_i) \subset \varphi^i_t(S_i)$ for large $i$.
It follows, using (\ref{eq_nu_xp_i_in_Z_W1}), Lemma~\ref{Lem_limit_HK_bound} and \cite[\HKPstarVolBound]{Bamler_HK_entropy_estimates}, that
\begin{multline*}
 \nu_{x'_\infty;t} ( S'_\infty ) 
\leq \limsup_{i \to \infty} \nu_{x'_i, t'_i; t} (S_i) 
\leq \limsup_{i \to \infty} \int_{S_i} K(x'_i,t'_i; \cdot, t) dg_{i,t} \\
\leq C(Y_0, \tau_0, A, T)\limsup_{i \to \infty}  |S_i|_{g_{i,t}}
\leq C(Y_0, \tau_0, A, T) r_1^n. 
\end{multline*}
Letting $S'_\infty \to S_\infty$ and $r_1 \to r$ implies the lemma if (\ref{eq_setup_Mi_to_XX}) is time-wise at time $t$.

Next, consider the case in which the convergence (\ref{eq_setup_Mi_to_XX}) is not time-wise at time $t$.
Choose $r_1 > r$ such that $t' -  r_1^2 > -T$ and fix a sequence of times $t_j \searrow t$ with the property that the convergence (\ref{eq_setup_Mi_to_XX}) is time-wise at each time $t_j$.
By our previous conclusion
\begin{equation} \label{eq_nu_xp_r_1}
 \nu_{x'_\infty;t_j} (P^*(z_\infty; r_1) \cap \XX_{t_j}) \leq C(Y_0, \tau_0, A, T) r^n_1. 
\end{equation}
Let $u : \XX_{\geq t} \to [0,1]$ be the heat flow with initial condition $u_t = \chi_{P^*(z_\infty; r) \cap \XX_{t}}$.
Then for all $j \geq 1$
\begin{equation} \label{eq_XXt_j_u_t}
  \int_{\XX_{t_j}} u_{t_j} \,  d\nu_{x'_\infty;t_j} = \nu_{x'_\infty;t} ( P^*(x'_\infty; r) \cap \XX_{t} ) . 
\end{equation}
We claim that
\begin{equation} \label{eq_sup_u_Pstar}
 \sup_{\XX_{t_j} \setminus P^*(x'_\infty; r_1)} u_{t_j} \xrightarrow[j \to \infty]{} 0. 
\end{equation}
If this is the case, then the lemma follows by combining (\ref{eq_nu_xp_r_1}), (\ref{eq_XXt_j_u_t}), (\ref{eq_sup_u_Pstar}).
Suppose by contradiction that (\ref{eq_sup_u_Pstar}) was false.
Then, after passing to a subsequence, we could find a sequence $y_j \in \XX_{t_j} \setminus P^*(x'_\infty; r_1)$ with
\begin{equation} \label{eq_upj_nu_geq_c}
 u(y_j) = \nu_{y_j;t}(P^*(x'_\infty; r) \cap \XX_{t})\geq c > 0. 
\end{equation}
Choose $H_n$-centers $z_j \in \XX_t$ of $y_j$.
By (\ref{eq_upj_nu_geq_c}) and Lemma~\ref{Lem_mass_ball_Var} we must have $d_t (z_j , P^*(x'_\infty; r) \cap \XX_{t}) \to 0$.
So if we choose $r_2 \in (r, r_1)$, then $z_j \in P^*(x'_\infty; r_2)$ for large $j$.
On the other hand, for any small $r'' > 0$ and $t'' := t -r^{\prime\prime 2}$ we have for large $j$
\[ d^{\XX_{t''}}_{W_1} ( \nu_{z_j;t''}, \nu_{y_j;t''} )
\leq d^{\XX_{t}}_{W_1} ( \delta_{z_j}, \nu_{y_j;t} )
\leq \sqrt{\Var ( \delta_{z_j}, \nu_{y_j;t} )}
\leq \sqrt{H_n (t_j - t)} < r'', \]
which implies that $y_j \in P^*(z_j; r'')$.
Combining this with the fact that $z_j \in P^*(x'_\infty; r_2)$ and choosing $r''$ sufficiently small implies that $y_j \in P^*(x'_\infty; r_1)$ for large $j$, in contradiction to our assumption.
This finishes the proof of the lemma.
\end{proof}
\bigskip

\subsection{The curvature scale in the limit}
In this subsection we consider the convergence of the curvature scales $\rrm^{M_i \times I_i}$ on the Ricci flows $(M_i, (g_{i ,t})_{t \in I_i})$.
We will show that, after passing to a subsequence, we have pointwise convergence $\rrm^{M_i \times I_i} \to \tdrrm$ for a certain curvature scale function $\tdrrm$ on $\XX$.
The following is our main result.

\begin{Lemma} \label{Lem_tdrrm}
Consider an arbitrary subsequence of the given sequence of Ricci flows $(M_i, \lb (g_{i,t})_{i \in I_i})$ in (\ref{eq_setup_Mi_to_XX}) (note that in doing so we may enlarge $\RR^* \subset \RR$ and shrink $\SS^* \supset \SS$).
Then we can pass to a further subsequence and find a continuous function $\tdrrm : \XX_{<0} \to [0, \infty]$ with the following properties:
\begin{enumerate}[label=(\alph*)]
\item \label{Lem_tdrrm_a} For any sequence $(x'_i, t'_i) \in M_i \times I_i$ such that $(x'_i, t'_i) \to x' \in \XX_{<0}$ within $\CF$ we have 
\[ \lim_{i \to \infty} \rrm (x'_i, t'_i) = \tdrrm (x'). \]
\item \label{Lem_tdrrm_b} $\tdrrm = 0$ on $\SS^*_{<0}$.
\item \label{Lem_tdrrm_c} $\tdrrm  > 0$ on $\RR^*$.
\item \label{Lem_tdrrm_d} $\tdrrm$ is locally Lipschitz on $\RR^*$
\item \label{Lem_tdrrm_e} $\tdrrm$ restricted to every time-slice $\XX_t$ is $1$-Lipschitz.
\item \label{Lem_tdrrm_f} $|\partial_{\tf} \tdrrm^2| \leq C$ in the barrier sense, where $C < \infty$ is a dimensional constant.
\item \label{Lem_tdrrm_g} For any $x' \in \RR^*_{t'}$ there is a time $T^* \in (0, \tdrrm^2(x')]$ with the following property.
The open, two-sided parabolic neighborhood 
\begin{multline*}
\qquad\qquad P^\circ := P^\circ (x';\tdrrm(x'), - \tdrrm^2(x'), a') \\ := P^* (x';\tdrrm(x'), - \tdrrm^2(x'), a' ) \cap \XX_{(\tf(x') - \tdrrm^2(x'), \tf(x') + T^*)} 
\end{multline*}
satisfies $P^\circ \subset \RR^*$ and is unscathed in the usual sense (i.e. $B(x', r') \subset \RR^*$ is relatively compact for all $r' \in (0, \tdrrm(x'))$ and any point in $B(x', \tdrrm(x'))$ survives until any time in the time-interval $(\tf(x') - \tdrrm^2(x'), \tf(x') + T^*)$).

Moreover, if $T^* < \tdrrm^2(x')$ and $\tf(x') + T^* < 0$, then no point in $P^\circ$ survives past time $\tf(x') + T^*$ within $\RR$ and for any $x'' \in \XX_{t''}$ with $t'' > t' + T^*$ and $r' < \tdrrm (x')$ we have
\[ \lim_{t \nearrow t' + T^*} \sup_{(B(x', r'))(t)} K(x''; \cdot) = 0. \]
\item \label{Lem_tdrrm_h} For any $A < \infty$, $-T \in I_\infty$, $r > 0$ and $t \in [-T, 0)$ the set 
\[ S_{A, T, t, r} := \{ \tdrrm > r \} \cap P^* (x_\infty; A, -T) \cap \RR_t \]
is relatively compact in $\RR_t$.
\end{enumerate}
\end{Lemma}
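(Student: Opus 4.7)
The plan is to first establish pointwise convergence of $\rrm$ on $\RR^*$ via the smooth convergence structure, then extend by zero to $\SS^*$ and use a diagonal/compactness argument for the singular case, and finally verify the remaining structural properties.

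First I would work on $\RR^*$. The $C^\infty_{\loc}$-convergence $\psi_i^* g^i \to g$ translates into locally uniform convergence of the full curvature tensor and, using the definition of $\rrm$ via conventional parabolic neighborhoods, into convergence of the scale itself. Concretely, for $x' \in \RR^*$ and large $i$, the conventional parabolic ball $P(\psi_i(x'); r)$ lies inside $V_i$ for any $r < \tdrrm(x')$ (for a suitable candidate $\tdrrm(x')$), so the uniform bound $|{\Rm}| \leq r^{-2}$ survives in the limit; conversely, a strict curvature bound $|{\Rm}| \leq r^{-2}$ on $P^\circ(x';r)$ in the smooth part passes back to $M_i$. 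This lets me define $\tdrrm$ on $\RR^*$ as the pointwise limit and obtain properties (c) and (d): positivity is immediate, and Lipschitz regularity follows from the standard fact that $\rrm$ is $1$-Lipschitz in space and $\rrm^2$ has a bounded barrier time-derivative on any smooth Ricci flow (these give (e) and (f) as well, after also passing the same properties through the inclusion $(\RR_t, d_{g_t}) \hookrightarrow (\XX_t, d_t)$, which is a local isometry).

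Next I extend $\tdrrm$ to $\SS^*_{<0}$ by zero and establish (a) and (b) simultaneously. The nontrivial direction is: if $(x'_i, t'_i) \to x' \in \SS^*_{<0}$ within $\CF$, then $\rrm(x'_i, t'_i) \to 0$. I argue by contradiction. Suppose along a subsequence $\rrm(x'_i, t'_i) \geq r > 0$. By Shi's derivative estimates and the non-collapsing bound $\NN_{x'_i,t'_i}(\tau'_0) \geq -Y'_0$ inherited from Proposition~\ref{Prop_NN_variation_bound}, the parabolic neighborhoods $P(x'_i, t'_i; r/2)$ admit uniform $C^\infty$-bounds and a uniform lower injectivity radius bound, yielding a Cheeger--Hamilton smooth subsequential limit $(N_\infty, h_\infty, x'_\infty)$. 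By the convergence of heat kernels within $\CF$ and the uniqueness of the intrinsic Ricci flow spacetime structure \cite[\SYNChangeBaseConv]{Bamler_RF_compactness}, this smooth limit embeds as a neighborhood of $x'$ inside $\RR^*$ (after passing to a further subsequence absorbed into $\CF$), contradicting $x' \in \SS^*$. A diagonal subsequence then gives the convergence in (a) along every sequence, and taking $(x'_i, t'_i) = \psi_i(x')$ for $x' \in \RR^*$ or an arbitrary approximating sequence for $x' \in \SS^*$ yields continuity of $\tdrrm$ globally (with the value $0$ on $\SS^*$). The same argument, applied to a single point, gives (b).

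For (g), I use the definition of $\tdrrm(x')$: on the backward side, $P(x'; r', -r^{\prime 2})$ is unscathed and $|{\Rm}| \leq r^{\prime -2}$ for every $r' < \tdrrm(x')$, which follows by passing the conventional backward parabolic neighborhoods on $M_i$ through $\psi_i$ to the limit. On the forward side, I set $T^*$ to be the supremum of times up to which $B(x', \tdrrm(x'))$ survives inside $\RR^*$; standard short-time existence of Ricci flow with bounded curvature gives $T^* > 0$, and the forward-in-time curvature bound persists with the same constant by smooth convergence. If $T^* < \tdrrm^2(x')$ and the flow has not reached time $0$, then curvature must blow up in $B(x', \tdrrm(x'))$ along a sequence of times approaching $T^*$; the heat kernel vanishing statement then follows from Lemma~\ref{Lem_limit_HK_bound}\ref{Lem_limit_HK_bound_a} applied at these times, since mass cannot concentrate on the vanishing regular region. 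Finally (h) follows by combining the volume bound $|P^*(x_\infty; A, -T) \cap \RR^*_t|_{g_t} \leq C(Y_0, \tau_0, A, T)$ from Lemma~\ref{Lem_limit_HK_bound}\ref{Lem_limit_HK_bound_b} with the local unscathedness of (g): points in $S_{A,T,t,r}$ are centers of disjoint-up-to-overlap $g_t$-balls of radius $cr$ with volume $\gtrsim r^n$ (by Bishop--Gromov applied in the smooth part, using bounded curvature there), giving a finite covering number; since $\RR_t$ is a manifold and these sets are uniformly far from $\SS_t$ (by the Lipschitz property (e)), $S_{A,T,t,r}$ is relatively compact in $\RR_t$.

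The main obstacle will be step two: the contradiction argument showing that $\rrm(x'_i,t'_i) \to 0$ along sequences tending to a point in $\SS^*$. The subtlety is that the smooth Cheeger--Hamilton sublimit coming from Shi's estimates must be matched with the given $\IF$-limit $\XX$ on a neighborhood. This requires using the convergence of conjugate heat kernels built into $\CF$, together with the fact that $\XX$ is $H_n$-concentrated and future continuous, so that its restriction to a neighborhood where a smooth Ricci flow sublimit exists is forced (by intrinsic reconstruction) to coincide with that Ricci flow. The bookkeeping of which convergence statements hold along which subsequence — and ensuring that the final function $\tdrrm$ is defined by a single subsequence via a diagonal argument — is the other technical point to manage carefully.
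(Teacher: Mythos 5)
Your overall architecture matches the paper's: define $\tdrrm$ on $\RR^*$ by passing $\rrm$ through the smooth convergence, set it to zero on $\SS^*_{<0}$, and verify the remaining properties using the Lipschitz structure of $\rrm$ and the local geometry bounds. However, there is a genuine gap in your step two that the paper handles carefully and you only flag without resolving.

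The problematic claim is your contradiction argument for $x' \in \SS^*_{<0}$ with $\rrm(x'_i,t'_i) \geq r > 0$. You pass to a further subsequence to extract a smooth Cheeger--Hamilton sublimit and conclude that $x'$ ``embeds inside $\RR^*$ \ldots contradicting $x' \in \SS^*$.'' But $\RR^*$ and $\SS^*$ are not intrinsic to $\XX$: they depend on the chosen subsequence. Passing to a further subsequence \emph{enlarges} $\RR^*$ and \emph{shrinks} $\SS^*$, so after you do it, it is entirely consistent that $x'$ now lies in the new $\RR^*$ --- this does not contradict $x' \in \SS^*$ with respect to the original subsequence. (For $x' \in \SS$, which is intrinsic, your contradiction argument is fine; the subtle case is $x' \in \SS^* \setminus \SS \subset \RR$.) The paper resolves this by first fixing a countable dense $Q \subset \RR$ and taking a single diagonal subsequence along which $\rrm$ converges on $Q$, so that $\tdrrm$ and hence assertion (g) are already pinned down on $\RR^*$; then, if a sequence $(x'_i,t'_i) \to x' \in \SS^* \setminus \SS$ had $\rrm$ bounded below, there would be points $x''_j \in Q \cap \RR^*$ with $x''_j \to x'$ and $\tdrrm(x''_j)$ bounded below, and the forward unscathedness property (g) at those nearby regular points forces $x' \in \RR^*$ \emph{for the original subsequence} --- the genuine contradiction. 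This last step --- using (g) at nearby points of $Q$ to propagate regularity to $x'$ --- is the missing idea; you acknowledge ``the bookkeeping \ldots is the other technical point to manage carefully,'' but that bookkeeping is exactly where the argument lives.

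Two smaller points. In your contradiction argument you spell out Shi's estimates and a Cheeger--Hamilton sublimit; in the paper this is absorbed into the very definition of $\RR^*$ (the existence of a bounded-curvature, non-collapsed sequence is precisely the criterion from \cite[\SYNDefSmoothConv]{Bamler_RF_compactness}), so there is no separate ``matching'' step needed --- your argument re-derives part of the definition. For assertion (h), attributing the lower ball-volume bound to Bishop--Gromov is not right: Bishop--Gromov gives upper bounds from a Ricci lower bound, whereas the needed \emph{lower} volume bound on $B(x',r)$ with $r < \tdrrm(x')$ comes from the curvature bound combined with the Nash-entropy non-collapsing estimate (cf.\ Proposition~\ref{Prop_NN_variation_bound} together with the non-local-collapsing theorem cited in the paper).
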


\begin{proof}
We first establish some basic properties concerning $\rrm$ and $\RR^*$.

\begin{Claim} \label{Cl_rrm_RRstar_conv}
Consider an arbitrary subsequence in (\ref{eq_setup_Mi_to_XX}).
Suppose that such that $(x'_i, t'_i) \to x' \in \XX_{< 0}$ within $\CF$, where $(x'_i, t'_i) \in M_i \times I_i$.
Then the following is true:
\begin{enumerate}[label=(\alph*)]
\item \label{Cl_rrm_RRstar_conv_a} If $\liminf_{i \to \infty} \rrm (x'_i, t'_i) > 0$, then $x' \in \RR^* \subset \RR$.
\item \label{Cl_rrm_RRstar_conv_b} If $x' \in \RR^*$ and $(x''_i, t''_i) \to x' \in \XX$ within $\CF$ for some other sequence $(x''_i, t''_i) \in M_i \times I_i$, then 
\begin{equation} \label{eq_liminfsup_xp_xpp}
\qquad \liminf_{i \to \infty} \rrm (x'_i, t'_i) = \liminf_{i \to \infty} \rrm (x''_i, t''_i), \qquad \limsup_{i \to \infty} \rrm (x'_i, t'_i) = \limsup_{i \to \infty} \rrm (x''_i, t''_i). 
\end{equation}
\end{enumerate}
\end{Claim}

\begin{proof}
To see Assertion~\ref{Cl_rrm_RRstar_conv_a}, note that, using Lemma~\ref{Lem_conv_P_star}, we can find some $A < \infty$ and $-T \in  I_\infty$ such that $(x'_i, t'_i) \in P^*(x_i, 0; A, - T)$ for large $i$.
Therefore, by Proposition~\ref{Prop_NN_variation_bound} and \cite[\HKThmNLC]{Bamler_HK_entropy_estimates} we have $\limsup_{i \to \infty} |B(x'_i, t'_i, r')| > 0$ for some $0 < r' < \liminf_{i \to \infty} \rrm (x'_i, t'_i)$.
So by \cite[\SYNDefSmoothConv]{Bamler_RF_compactness} we have $x' \in \RR^*$.

To see Assertion~\ref{Cl_rrm_RRstar_conv_b}, observe that by the discussion in Subsection~\ref{subsec_setup_limit} we have $(x''_i, t''_i) \in V_i$ for large $i$ and $\psi_i^{-1} (x''_i, t''_i) \to x'$.
So for any $r' > 0$ we have $(x''_i, t''_i) \in P(x'_i, t'_i; r')$ for large $i$.
This shows (\ref{eq_liminfsup_xp_xpp}).
\end{proof}

\begin{Claim} \label{Cl_rrm_conv_pt}
\begin{enumerate}[label=(\alph*)]
\item \label{Cl_rrm_conv_pt_a} Assertion~\ref{Lem_tdrrm_a} of the lemma holds for all $x' \in \SS$ if we set $\tdrrm (x') := 0$, even without passing to a further subsequence.
\item \label{Cl_rrm_conv_pt_b} Let $x' \in \RR$.
Then there is some $\tdrrm(x') \in [0, \infty]$ such that, after passing to a subsequence of the given subsequence of Ricci flows, Assertion~\ref{Lem_tdrrm_a} of the lemma holds for $x'$.
Moreover, if $\tdrrm(x') > 0$, then $x' \in \RR^*$.
\end{enumerate}
\end{Claim}

\begin{proof}
For Assertion~\ref{Cl_rrm_conv_pt_a} assume by contradiction that $\limsup_{i \to \infty} \rrm (x'_i, t'_i) > 0$ for some sequence $(x'_i, t'_i) \in M_i \times I_i$ with $(x'_i, t'_i) \to x'$ within $\CF$.
Then, after passing to a subsequence, we have $\liminf_{i \to \infty} \rrm (x'_i, t'_i) > 0$, which implies via Claim~\ref{Cl_rrm_RRstar_conv}\ref{Cl_rrm_RRstar_conv_a} that $x' \in \RR$ in contradiction to our assumption.

To see Assertion~\ref{Cl_rrm_conv_pt_b}, pick a sequence $(x'_i, t'_i) \in M_i \times I_i$ such that $(x'_i, t'_i) \to x'$ within $\CF$.
This is always possible by \cite[\SYNThmExistencConvSeq]{Bamler_RF_compactness}.
Pass to a subsequence such that $\tdrrm(x') := \lim_{i \to \infty} \rrm (x'_i, t'_i)$ exists.
We may also assume that the sequence $(x'_i, t'_i)$ was chosen such that if $\tdrrm(x') =0$, then no other such sequence $(x''_i, t''_i)$ has $\limsup_{i \to \infty} \rrm (x''_i, t''_i) > 0$, because otherwise we could replace $(x'_i, t'_i)$ by $(x''_i,t''_i)$ and pass to an appropriate subsequence.
Let now $(x''_i, t''_i) \in M_i \times I_i$ be another sequence such that $(x''_i, t''_i) \to x'$ within $\CF$.
If $\tdrrm(x') =0$, then by the choice of $(x'_i, t'_i)$ we have $\lim_{i \to \infty} \rrm (x''_i, t''_i) = 0$.
If $\tdrrm (x') > 0$, then $x' \in \RR^*$ by Claim~\ref{Cl_rrm_RRstar_conv}\ref{Cl_rrm_RRstar_conv_a} and therefore $\lim_{i \to \infty} \rrm (x''_i, t''_i) =  \lim_{i \to \infty} \rrm (x'_i, t'_i)$ by Claim~\ref{Cl_rrm_RRstar_conv}\ref{Cl_rrm_RRstar_conv_b}.
\end{proof}

Fix a countable, dense subset $Q \subset \RR$.
By repeating Claim~\ref{Cl_rrm_conv_pt} for all $x' \in Q$ and passing to a diagonal subsequence, we obtain:

\begin{Claim}\label{Cl_tdrrm_Q}
After passing to a further subsequence of the given subsequence of Ricci flows, there is a function $\tdrrm : Q \cup \SS_{<0} \to [0, \infty]$ such that the following is true:
\begin{enumerate}[label=(\alph*)]
\item Assertions~\ref{Lem_tdrrm_a}, \ref{Lem_tdrrm_g} of the lemma hold for all $x' \in Q \cup \SS_{<0}$.
\item $\tdrrm = 0$ on $\SS_{<0}$.
\item $\{ \tdrrm > 0 \} \subset \RR^*$ (note again that $\RR^*$ may have been enlarged after passing to the subsequence).
\end{enumerate}
\end{Claim}

\begin{proof}
Assertion~\ref{Lem_tdrrm_g} of the lemma is a consequence of \cite[\SYNConvParabNbhd]{Bamler_RF_compactness}.
The rest follows from Claim~\ref{Cl_rrm_conv_pt}.
\end{proof}

\begin{Claim}
The function $\tdrrm$ from Claim~\ref{Cl_tdrrm_Q} can be extended to a continuous function $\tdrrm : \XX \to [0, \infty]$ such that Assertions~\ref{Lem_tdrrm_a}--\ref{Lem_tdrrm_g} of the lemma hold.
\end{Claim}

\begin{proof}
By the definition of $\rrm$ we have $|\nabla \rrm| \leq 1$ and $|\partial_{\tf} \rrm^2| \leq C$ on $M_i \times I_i$ for some dimensional constant $C < \infty$.
Moroever, by the discussion in Subsection~\ref{subsec_setup_limit} we have $\rrm \circ \psi_i \to \tdrrm$ on $Q \cap \RR^*$.
It follows that $\tdrrm |_{Q \cap \RR^*}$ can be extended to a continuous function $\tdrrm^{\,\prime}$ on $\RR^*$ such that $\rrm \circ \psi_i \to \tdrrm^{\,\prime}$ locally uniformly on $\RR^*$ and such that Assertions~\ref{Lem_tdrrm_d}, \ref{Lem_tdrrm_e}, \ref{Lem_tdrrm_f} of the lemma hold.
Set now $\tdrrm := \tdrrm^{\,\prime}$ on $\RR^*$ and $\tdrrm := 0$ on $\SS^*_{<0}$.
Then Assertion~\ref{Lem_tdrrm_b} of the lemma holds trivially and, again by the discussion in Subsection~\ref{subsec_setup_limit}, Assertion~\ref{Lem_tdrrm_a} of the lemma holds for all $x' \in \RR^*$.
This implies Assertion~\ref{Lem_tdrrm_c} of the lemma and Assertion~\ref{Lem_tdrrm_g}, again via \cite[\SYNConvParabNbhd]{Bamler_RF_compactness}.

To see that the full statement of Assertion~\ref{Lem_tdrrm_a} holds, it remains to show that for any sequence $(x'_i, t'_i)  \to x' \in \SS^* \setminus \SS \subset \RR$ we have $\lim_{i \to \infty} \rrm (x'_i, t'_i) = 0$.
Suppose not.
Then by Claim~\ref{Cl_rrm_conv_pt}\ref{Cl_rrm_conv_pt_b} we could pass to a subsequence, which would enlarge $\RR^*$ such that $x' \in \RR^*$ and $\td r := \lim_{i \to \infty} \rrm (x'_i, t'_i) > 0$.
After repeating the argument from the previous paragraph, we would find a sequence $x''_j \in Q$ with $x''_j \to x'$ such that $\tdrrm (x''_j) \to \td r$.
Using Assertion~\ref{Lem_tdrrm_g} of the lemma this would, however, imply that even before passing to the subsequence, we had $x' \in \RR^*$.
This contradicts the fact that, before passing to the subsequence, $x' \in \SS^*$.
\end{proof}

Lastly, we establish Assertion~\ref{Lem_tdrrm_h}.
By Proposition~\ref{Prop_NN_variation_bound} and \cite[\HKThmNLC]{Bamler_HK_entropy_estimates} for any $x' \in S_{A, T, t, r}$ the distance ball $B (x', r)$ is relatively compact in $\RR^*_t$ and we have $|B(x', r)|_t \geq c(Y_0, \tau_0, A, T , r)$ and $B(x', r) \subset P^* (x_\infty; A +  r, -T)$.
Moreover, by the same argument as for Lemma~\ref{Lem_limit_HK_bound}\ref{Lem_limit_HK_bound_b} we have
\[ |P^* (x_\infty; A + r, -T) \cap \RR^*_t |_t < \infty. \]
So the relative compactness follows from a basic covering argument.
\end{proof}
\bigskip

\subsection{Sizes of quantitative strata in the limit}
Our goal in this subsection will be to pass the statement of Proposition~\ref{Prop_Quantitative_Strat_preliminary} to the limit $\XX$.
We also define cutoff functions $\eta_r \in C^\infty (\RR)$, which will be used throughout the remainder of this section.

The following lemma states that we can pass coverings by $P^*$-parabolic balls to the limit.

\begin{Lemma} \label{Lem_pass_covering_limit}
Let $A, A',  N < \infty$, $r > 0$ be constants and $-T, -T' \in I_\infty$.
Assume that $T' > T$, $A'  > A$ and $- T - r^2 \in I_\infty$.
Suppose that there are subsets $S_i \subset M_i \times I_i$ and $S_\infty \subset \XX$ with the following properties:
\begin{enumerate}[label=(\roman*)]
\item \label{Lem_pass_covering_limit_i} For every $x'_\infty \in S_\infty$ there is a sequence $(x'_i, t'_i) \in M_i \times I_i$ with $(x'_i, t'_i) \in S_i$ for large $i$ such that $(x'_i, t'_i) \to x'_\infty$.
\item \label{Lem_pass_covering_limit_ii} For every $i$ there are points $\{ (z_{j,i}, s_{j, i}) \}_{j = 1}^{N_i} \subset M_i \times I_i$ such that
\[ S_i \cap P^* (x_i, 0; A', -T') \subset \bigcup_{j=1}^{N_i} P^* (z_{j, i}, s_{j, i}; \tfrac18 r), \qquad N_i \leq N. \]
\end{enumerate}
Then there are points $\{ z_{j,\infty} \}_{j = 1}^{N_\infty} \subset S_\infty$ such that
\begin{equation} \label{eq_S_infty_covering}
 S_\infty \cap P^* (x_\infty; A, -T) \subset \bigcup_{j=1}^{N_\infty} P^* (z_{j,\infty};  r), \qquad N_\infty \leq N. 
\end{equation}
\end{Lemma}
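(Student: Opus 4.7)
The plan is to construct the centers $\{z_{j,\infty}\}$ directly inside $S_\infty \cap P^*(x_\infty; A, -T)$ by a greedy procedure, rather than trying to take subsequential limits of the centers $(z_{j,i}, s_{j,i})$ (which need not lie in $S_i$ and need not converge within $\CF$). Specifically, pick $z_{1,\infty} \in S_\infty \cap P^*(x_\infty; A, -T)$, and at each step select
\[ z_{k+1,\infty} \in \big(S_\infty \cap P^*(x_\infty; A, -T)\big) \setminus \bigcup_{j \leq k} P^*(z_{j,\infty}; r) \]
whenever this set is nonempty. If the procedure halts after $N_\infty \leq N$ steps, then by construction the balls $P^*(z_{j,\infty}; r)$ cover $S_\infty \cap P^*(x_\infty; A, -T)$, which is (\ref{eq_S_infty_covering}). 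The whole proof therefore reduces to showing the procedure cannot continue past $N$ steps.

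Suppose toward contradiction we have produced $z_{1,\infty}, \ldots, z_{N+1,\infty}$, so in particular
\begin{equation} \label{eq_greedy_exclusion}
z_{k_2,\infty} \notin P^*(z_{k_1,\infty}; r) \qquad \text{for all } 1 \leq k_1 < k_2 \leq N+1.
\end{equation}
By hypothesis \ref{Lem_pass_covering_limit_i}, for each $k = 1, \ldots, N+1$ we can choose a sequence $(z_{k,i}, t_{k,i}) \in M_i \times I_i$ with $(z_{k,i}, t_{k,i}) \in S_i$ for large $i$ and $(z_{k,i}, t_{k,i}) \to z_{k,\infty}$ within $\CF$. Lemma~\ref{Lem_conv_P_star}\ref{Lem_conv_P_star_b} then places $(z_{k,i}, t_{k,i})$ in $P^*(x_i, 0; A', -T')$ for large $i$, so hypothesis \ref{Lem_pass_covering_limit_ii} yields indices $l_{k,i} \in \{1, \ldots, N_i\}$ with $(z_{k,i}, t_{k,i}) \in P^*(z_{l_{k,i},i}, s_{l_{k,i},i}; r/8)$.

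Since $N_i \leq N$ but there are $N+1$ values of $k$, pigeonhole combined with passage to a subsequence allows us to fix indices $1 \leq k_1 < k_2 \leq N+1$ such that $l_{k_1,i} = l_{k_2,i} =: l_i$ for all $i$. Thus both $(z_{k_1,i}, t_{k_1,i})$ and $(z_{k_2,i}, t_{k_2,i})$ belong to the same ball $P^*(z_{l_i,i}, s_{l_i,i}; r/8)$. In particular, $P^*(z_{l_i,i}, s_{l_i,i}; r/8) \cap P^*(z_{k_1,i}, t_{k_1,i}; r/8) \ni (z_{k_1,i}, t_{k_1,i})$, so Proposition~\ref{Prop_basic_parab_nbhd}\ref{Prop_basic_parab_nbhd_d} gives
\[ P^*(z_{l_i,i}, s_{l_i,i}; r/8) \subset P^*(z_{k_1,i}, t_{k_1,i}; 3r/8), \]
and hence $(z_{k_2,i}, t_{k_2,i}) \in P^*(z_{k_1,i}, t_{k_1,i}; 3r/8)$ for all $i$.

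Because $-T - r^2 \in I_\infty$ and $\tf(z_{k_1,\infty}), \tf(z_{k_2,\infty}) \in [-T, 0]$, the $P^*$-parabolic neighborhoods of radius $r$ around $z_{k_1,\infty}$ and $z_{k_2,\infty}$ are defined (with time ranges intersected against $I_\infty$). We may therefore apply Lemma~\ref{Lem_conv_P_star}\ref{Lem_conv_P_star_a} with $A_0 = 3r/8$, $T^\pm_0 = (3r/8)^2$ and $A_1 = r$, $T^\pm_1 = r^2$ to obtain $z_{k_2,\infty} \in P^*(z_{k_1,\infty}; r)$, contradicting (\ref{eq_greedy_exclusion}). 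The principal subtlety — and arguably the only nontrivial point — lies in this last step: the $P^*$-balls are not symmetric in their arguments, so it is crucial that we arranged for both $(z_{k_1,i}, t_{k_1,i})$ and $(z_{k_2,i}, t_{k_2,i})$ to sit inside a single small ball of the covering, which (via Proposition~\ref{Prop_basic_parab_nbhd}\ref{Prop_basic_parab_nbhd_d}) yields precisely the directed containment needed to contradict the greedy exclusion.
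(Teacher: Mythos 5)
Your proof is correct and essentially matches the paper's, but with a small and arguably cleaner variation in the covering construction. The paper chooses a \emph{maximal} collection $\{z_{j,\infty}\}\subset S_\infty\cap P^*(x_\infty;A,-T)$ whose $P^*$-balls of radius $\tfrac12 r$ are pairwise disjoint, and then converts maximality into covering via Proposition~\ref{Prop_basic_parab_nbhd}\ref{Prop_basic_parab_nbhd_d}; you instead run a greedy selection that enforces the covering by construction and only has to rule out reaching $N+1$ centers. From that point the two arguments coincide: choose approximating sequences in $S_i$ converging within $\CF$ (placing them in $P^*(x_i,0;A',-T')$ by Lemma~\ref{Lem_conv_P_star}\ref{Lem_conv_P_star_b}), invoke hypothesis~\ref{Lem_pass_covering_limit_ii}, pigeonhole on the indices to force two of the approximants into a single $\tfrac18 r$-ball of the covering, use Proposition~\ref{Prop_basic_parab_nbhd}\ref{Prop_basic_parab_nbhd_d} to turn that into a directed containment of radius $<\tfrac12 r$, and pass to the limit via Lemma~\ref{Lem_conv_P_star}\ref{Lem_conv_P_star_a} to contradict the exclusion. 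The one thing your greedy construction buys is that the covering by $P^*$-balls of radius exactly $r$ falls out automatically, whereas the paper's maximal-disjoint construction with $\tfrac12 r$-balls strictly speaking only yields a $\tfrac32 r$-cover via Proposition~\ref{Prop_basic_parab_nbhd}\ref{Prop_basic_parab_nbhd_d} (a harmless slip that would be repaired by using $\tfrac13 r$-disjointness). Your remark that the non-symmetry of $P^*$-balls necessitates care about the direction of the containment is fair, although in fact the same pigeonhole step produces a containment in either direction, so a one-sided greedy exclusion suffices without any extra arrangement.
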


\begin{proof}
Choose $\{ z_{j,\infty} \}_{j = 1}^{N_\infty} \subset S_\infty \cap P^* (x_\infty; A, -T)$ maximal with the property that the $P^*$-parabolic balls $P^* (z_{j, \infty};  \lb \frac12 r)$ are pairwise disjoint.
If no such maximum exists, then choose points with $N_\infty = N +1$.
Our goal is to show that $N_\infty \leq N$, because then (\ref{eq_S_infty_covering}) follows from the maximality of $N_\infty$ using Proposition~\ref{Prop_basic_parab_nbhd}.
Suppose by contradiction that $N_\infty > N$.

Fix a $j' \in \{ 1, \ldots, N_\infty \}$ for a moment.
By Assumption~\ref{Lem_pass_covering_limit_i} there is a sequence $(x'_{j', i}, t'_{j', i}) \in S_i$ with $(x'_{j',i}, t'_{j',i}) \to z_{j', \infty}$.
For large $i$ we have $(x'_{j',i}, t'_{j',i}) \in P^* (x_i, 0; A', -T')$.
So by Assumption~\ref{Lem_pass_covering_limit_ii}, we have $(x'_{j',i}, t'_{j',i}) \in P^* (z_{ j_{j',i}, i}, s_{j_{j',i}, i}; \frac18 r)$ for large $i$, where $j_{j',i} \in \{ 1, \ldots, N_i \}$ is some sequence of indices.
Since $N_\infty > N$, we may find two indices $j', j'' \in \{ 1, \ldots, N_\infty \}$, $j' \neq j''$ such that, after passing to a subsequence, $j_{j',i} = j_{j'',i}$ for all $i$.
By Proposition~\ref{Prop_basic_parab_nbhd}, we obtain that $(x'_{j',i}, t'_{j',i}) \in P^* (x'_{j'',i}, t'_{j'',i}; \frac14 r)$, which implies that $z_{j',\infty} \in P^*(z_{j'',\infty}; \frac12 r)$, in contradiction to our choice of the points $z_{j,\infty}$.
This finishes the proof.
\end{proof}

The next result expresses Proposition~\ref{Prop_Quantitative_Strat_preliminary} in the limit.
Note that due to Proposition~\ref{Prop_weak_splitting_map_to_splitting_map} we may use the \emph{strong} splitting property instead of the weak splitting property.
Also, a simple covering argument using \cite[\HKBasicCovering]{Bamler_HK_entropy_estimates} allows us to generalize the covering statement to Proposition~\ref{Prop_Quantitative_Strat_preliminary} to a $P^*$-parabolic neighborhood of more general size.

\begin{Proposition} \label{Prop_prelim_quanti_strat_XX}
For any $0 < \sigma < \eps$ there are subsets
\begin{equation} \label{eq_tdSS_subset_tdSS}
 \td\SS^{\eps, 0}_{\sigma, \eps} \subset \td\SS^{\eps, 1}_{\sigma, \eps} \subset \ldots \subset \td\SS^{\eps, n+1}_{\sigma, \eps} \subset \XX 
\end{equation}
with the following properties for every $k = 0, \ldots, n-1$:
\begin{enumerate}[label=(\alph*)]
\item \label{Prop_prelim_quanti_strat_XX_a} For every $x'_\infty \in \XX$ we have $x'_\infty \in \XX \setminus \td\SS^{\eps, k}_{\sigma, \eps}$ if and only if there is a sequence of points $(x'_i, t'_i) \in M_i \times I_i$ with $(x'_i, t'_i) \to x'_\infty$ within $\CF$ and scales $r'_i \in (\sigma, \eps)$ such that for infinitely many $i$ one of the following is true:
\begin{enumerate}[label=(a\arabic*)]
\item \label{Prop_prelim_quanti_strat_XX_a1} $(x'_i, t'_i)$ is $(\eps, r'_i)$-selfsimilar and strongly $(k+1, \eps, r'_i)$-split.
\item \label{Prop_prelim_quanti_strat_XX_a2} $(x'_i, t'_i)$ is $(\eps, r'_i)$-selfsimilar, $(\eps, r'_i)$-static and strongly $(k-1, \eps, r'_i)$-split.
\end{enumerate}
\item \label{Prop_prelim_quanti_strat_XX_b} For every $A < \infty$ and $- T \in I_\infty$ with $-T - \eps^2 \in I$  there are points $\{ x'_j \}_{j=1}^N \subset \td\SS^{\eps, k}_{\sigma, \eps} \cap P^* (x_\infty; \lb A, \lb -T)$ such that
\[ \td\SS^{\eps, k}_{\sigma, \eps} \cap P^* (x_\infty; \lb A, \lb -T) \subset \bigcup_{j=1}^{N} P^*(x'_{j};  \sigma), \qquad N \leq C(Y_0, \tau_0, A, T, \eps) \sigma^{-k-\eps}. \]
\end{enumerate}
\end{Proposition}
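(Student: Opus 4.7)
The plan is to define $\td\SS^{\eps,k}_{\sigma,\eps}$ directly from the characterization in Assertion~\ref{Prop_prelim_quanti_strat_XX_a}, then derive Assertion~\ref{Prop_prelim_quanti_strat_XX_b} by applying the smooth quantitative stratification result (Proposition~\ref{Prop_Quantitative_Strat_preliminary}) to each flow $(M_i, (g_{i,t})_{t \in I_i})$ and passing the resulting covering to the $\IF$-limit via Lemma~\ref{Lem_pass_covering_limit}. First I would declare $\td\SS^{\eps,k}_{\sigma,\eps}$ to be the set of $x'_\infty \in \XX$ admitting no convergent sequence $(x'_i, t'_i) \to x'_\infty$ within $\CF$ and scales $r'_i \in (\sigma, \eps)$ such that either \ref{Prop_prelim_quanti_strat_XX_a1} or \ref{Prop_prelim_quanti_strat_XX_a2} holds for infinitely many $i$. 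Assertion~\ref{Prop_prelim_quanti_strat_XX_a} is then immediate from this definition, and the inclusions~\eqref{eq_tdSS_subset_tdSS} follow from the fact that the restriction of a strong $(k+2, \eps, r)$-splitting map to its first $k+1$ components is a strong $(k+1, \eps, r)$-splitting map (by direct inspection of Definition~\ref{Def_strong_splitting_map}): condition~\ref{Prop_prelim_quanti_strat_XX_a1} (resp.\ \ref{Prop_prelim_quanti_strat_XX_a2}) at level $k+1$ therefore implies the same condition at level $k$, giving the desired chain of inclusions.

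For Assertion~\ref{Prop_prelim_quanti_strat_XX_b}, I would fix $\eps > 0$ and choose $\delta = \delta(\eps) \in (0,\eps]$ small enough that, by Proposition~\ref{Prop_weak_splitting_map_to_splitting_map}, any weak $(j, \delta, r)$-splitting map at a point can be upgraded to a strong $(j, \eps, r)$-splitting map at the same point for every $j \in \{k-1,k,k+1\}$. Since $(\delta,r)$-selfsimilarity and $(\delta,r)$-staticity weaken monotonically as $\delta$ grows (cf.~Definitions~\ref{Def_almost_self_similar}, \ref{Def_static}), this choice also ensures $(\delta,r)$-selfsimilar points are $(\eps,r)$-selfsimilar, and similarly for the static condition. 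The key claim is then: for every $x'_\infty \in \td\SS^{\eps,k}_{\sigma,\eps}$ and every sequence $(x'_i, t'_i) \to x'_\infty$ within $\CF$ (which exists by \cite[\SYNThmExistencConvSeq]{Bamler_RF_compactness}), one has $(x'_i, t'_i) \in \td\SS^{\delta,k}_{\sigma,\eps}$ (in the sense of Definition~\ref{Def_SS_weak}) for all large $i$. Indeed, if this failed along a subsequence, each $(x'_i, t'_i)$ would admit some $r'_i \in (\sigma, \eps)$ violating membership in $\td\SS^{\delta,k}_{\sigma,\eps}$, and the choice of $\delta$ would upgrade the corresponding weak splitting and almost properties to the strong versions with parameter $\eps$, contradicting $x'_\infty \in \td\SS^{\eps,k}_{\sigma,\eps}$.

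With the key claim in hand, I would apply Proposition~\ref{Prop_Quantitative_Strat_preliminary} to each flow $(M_i, (g_{i,t}))$ at basepoint $(x_i, 0)$ with parameter $\delta$ in place of $\eps$ and with a scale $r^* = r^*(A, T, \eps)$ chosen (using Lemma~\ref{Lem_conv_P_star} and the basic containments in Proposition~\ref{Prop_basic_parab_nbhd}) so that the relevant neighborhood of $(x_\infty)$ lifts into $P^*(x_i, 0; r^*/2)$ for all large $i$; the required Nash-entropy bound $\NN_{x_i, 0}(r^{*2}) \geq -Y'(Y_0, \tau_0, A, T)$ is supplied by Proposition~\ref{Prop_NN_variation_bound}. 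Invoking the smooth proposition at effective covering scale $\sigma/8$ then yields at most $C(Y_0, \tau_0, A, T, \eps)\, \sigma^{-k-\eps}$ many $P^*$-parabolic balls of radius $\sigma/8$ covering $\td\SS^{\delta,k}_{\sigma,\eps} \cap P^*(x_i, 0; r^*/2)$. Setting $S_i := \td\SS^{\delta,k}_{\sigma,\eps}$ and $S_\infty := \td\SS^{\eps,k}_{\sigma,\eps}$, hypothesis~\ref{Lem_pass_covering_limit_ii} of Lemma~\ref{Lem_pass_covering_limit} is met, while hypothesis~\ref{Lem_pass_covering_limit_i} is precisely the key claim, so the lemma produces the desired covering with centers in $\td\SS^{\eps,k}_{\sigma,\eps} \cap P^*(x_\infty; A, -T)$ and cardinality $\leq C(Y_0, \tau_0, A, T, \eps)\sigma^{-k-\eps}$.

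The main delicate point is the compatibility between the \emph{strong} splitting condition used to describe $\XX$ in Assertion~\ref{Prop_prelim_quanti_strat_XX_a} and the \emph{weak} splitting condition used in the smooth stratification of Definition~\ref{Def_SS_weak}; this is bridged by Proposition~\ref{Prop_weak_splitting_map_to_splitting_map}, and it forces us to run two coupled parameters $\delta \ll \eps$, absorbing the loss into the dependence of the multiplicative constant on $\eps$. A secondary but non-trivial bookkeeping issue is the matching of scales between the limit neighborhood $P^*(x_\infty; A, -T)$ and the intrinsic scale $r^*$ used to invoke the smooth proposition, which requires combining Lemma~\ref{Lem_conv_P_star} with the containment relationships from Proposition~\ref{Prop_basic_parab_nbhd} and ensuring that the $\sigma^{-k-\delta}$ loss from Proposition~\ref{Prop_Quantitative_Strat_preliminary} can be absorbed into $\sigma^{-k-\eps}$.
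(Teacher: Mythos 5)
Your overall strategy matches the paper's: define $\td\SS^{\eps,k}_{\sigma,\eps}$ by Assertion~\ref{Prop_prelim_quanti_strat_XX_a}, choose $\delta \leq \ov\delta(\eps)$ so that Proposition~\ref{Prop_weak_splitting_map_to_splitting_map} upgrades weak $\delta$-splitting to strong $\eps$-splitting, lift the limit strata into the smooth strata $\td\SS^{\delta,k}_{\sigma,\eps}$ for large $i$, and pass the covering to the limit via Lemma~\ref{Lem_pass_covering_limit}. The key claim (hypothesis~(i) of Lemma~\ref{Lem_pass_covering_limit}) is correctly argued.

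However, there is a genuine gap in your verification of hypothesis~(ii), concretely in the sentence ``Invoking the smooth proposition at effective covering scale $\sigma/8$ then yields \ldots\ many $P^*$-parabolic balls of radius $\sigma/8$ covering $\td\SS^{\delta,k}_{\sigma,\eps} \cap P^*(x_i,0;r^*/2)$.'' In Proposition~\ref{Prop_Quantitative_Strat_preliminary}, the covering radius and the lower endpoint of the stratum's scale-interval are \emph{the same parameter} $\sigma_0 r$. Applying it at the large scale $r = r^*/2$ with covering radius $\sigma_0 r = \sigma/8$ forces $\sigma_0 = \sigma/(4r^*)$, and the stratum it actually covers is $\td\SS^{\delta,k}_{\sigma/8,\,\delta r^*/2}$, \emph{not} $\td\SS^{\delta,k}_{\sigma,\eps}$. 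Since being in $\td\SS^{\delta,k}_{r_1,r_2}$ is a ``for all $r' \in (r_1,r_2)$'' condition, the containment $\td\SS^{\delta,k}_{\sigma,\eps} \subset \td\SS^{\delta,k}_{\sigma/8,\,\delta r^*/2}$ would require $(\sigma/8,\,\delta r^*/2) \subset (\sigma,\eps)$, which fails because $\sigma/8 < \sigma$ — nothing controls the scales in $(\sigma/8, \sigma)$ for points of $\td\SS^{\delta,k}_{\sigma,\eps}$. So the single-$r^*$ invocation does not produce the covering you need.

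The correct resolution, and the ingredient your outline is missing, is the $P^*$-parabolic covering theorem \cite[\HKBasicCovering]{Bamler_HK_entropy_estimates}: one first covers the lift of $P^*(x_\infty;A,-T)$ by a bounded number ($C(Y_0,\tau_0,A,T)$) of $P^*$-parabolic balls of radius~$1$, then applies Proposition~\ref{Prop_Quantitative_Strat_preliminary} to each with $r=1$ and $\eps\leftarrow\delta$, covering $\td\SS^{\delta,k}_{\sigma,\delta} \supset \td\SS^{\delta,k}_{\sigma,\eps}$ by balls of radius $\sigma$, and finally refines each $\sigma$-ball to $\tfrac18\sigma$-balls by a second application of \cite[\HKBasicCovering]{Bamler_HK_entropy_estimates}. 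The multiplicative constants thus accrued depend only on $Y_0,\tau_0,A,T,\eps$ and are absorbed into $C$, while $\sigma^{-k-\delta} \leq \sigma^{-k-\eps}$. You gesture at a scale-matching issue and cite Lemma~\ref{Lem_conv_P_star} and Proposition~\ref{Prop_basic_parab_nbhd}, but those give convergence of, and containments between, $P^*$-parabolic neighborhoods — not a bounded-multiplicity \emph{covering}, which is the tool actually needed here.
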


\begin{proof}
We may define each subset $\td\SS^{\eps, k}_{\sigma, \eps} \subset \XX$ according to Assertion~\ref{Prop_prelim_quanti_strat_XX_a}.
The identity (\ref{eq_tdSS_subset_tdSS}) is clear.
Assertion~\ref{Prop_prelim_quanti_strat_XX_b} follows by combining Propositions~\ref{Prop_Quantitative_Strat_preliminary}, \ref{Prop_weak_splitting_map_to_splitting_map}, \ref{Prop_NN_variation_bound}, \cite[\HKBasicCovering]{Bamler_HK_entropy_estimates} and Lemma~\ref{Lem_pass_covering_limit}.
\end{proof}
\bigskip

As a consequence of Proposition~\ref{Prop_prelim_quanti_strat_XX} we obtain:

\begin{Proposition} \label{Prop_cover_sublevel_tdrrm}
For any subsequence of the given sequence of Ricci flows that allows the definition of $\tdrrm$ according to Lemma~\ref{Lem_tdrrm} the following is true.
For every $A < \infty$ and $- T \in I_\infty$ with $-T - \eps^2 \in I$ and $r \leq \ov{r} (Y_0, \tau_0, A, T, \eps)$ there are points $\{ x'_j \}_{j=1}^N \subset \{ \tdrrm \leq r \} \cap P^* (x_\infty; \lb A, \lb -T)$ such that
\[ \{ \tdrrm \leq r \} \cap P^* (x_\infty; \lb A, \lb -T) \subset \bigcup_{j=1}^{N} P^*(x'_{j};  r), \qquad N \leq C(Y_0, \tau_0, A, T, \eps) r^{-n-2+\Delta-\eps}. \]
\end{Proposition}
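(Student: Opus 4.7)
The plan is to combine the preliminary quantitative stratification of Proposition~\ref{Prop_prelim_quanti_strat_XX} with the Working Assumption~\ref{Aspt_working_ass} to produce an initial covering by $P^*$-parabolic balls of radius $\sigma := 2r/\eps$, and then to refine it into a covering by $P^*$-parabolic balls of radius $r$ via a Vitali-type subdivision argument passed to the limit.

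First I will establish the inclusion
\begin{equation} \label{eq_plan_inclusion}
 \{ \tdrrm \leq r \} \cap P^* (x_\infty; A, -T) \subset \td\SS^{\eps, n+2-\Delta}_{\sigma, \eps}
\end{equation}
for $r \leq \ov{r}(Y_0, \tau_0, A, T, \eps)$ and $\eps \leq \ov\eps(Y_0, \tau_0, A, T)$. Suppose $x'_\infty \in P^*(x_\infty; A, -T)$ has $\tdrrm(x'_\infty) \leq r$ but is not in that stratum. By Proposition~\ref{Prop_prelim_quanti_strat_XX}\ref{Prop_prelim_quanti_strat_XX_a} applied with $k = n+2-\Delta$, there exist a sequence $(x''_i, t''_i) \to x'_\infty$ within $\CF$ and scales $r'_i \in (\sigma, \eps)$ such that, for infinitely many $i$, either $(x''_i, t''_i)$ is $(\eps, r'_i)$-selfsimilar and strongly $(n+3-\Delta, \eps, r'_i)$-split (case (a1)), or it is in addition $(\eps, r'_i)$-static and strongly $(n+1-\Delta, \eps, r'_i)$-split (case (a2)). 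Lemma~\ref{Lem_conv_P_star} together with Proposition~\ref{Prop_NN_variation_bound} yields a uniform bound $\NN_{x''_i, t''_i}(r^{\prime 2}_i) \geq - Y'(Y_0, \tau_0, A, T, \eps)$ for large $i$. Choosing $\eps \leq \eps'(Y')$ according to Working Assumption~\ref{Aspt_working_ass}, in either case we conclude $\rrm(x''_i, t''_i) \geq \eps' r'_i \geq \eps r'_i > \eps \sigma = 2r$ for infinitely many $i$. On the other hand, Lemma~\ref{Lem_tdrrm}\ref{Lem_tdrrm_a} gives $\rrm(x''_i, t''_i) \to \tdrrm(x'_\infty) \leq r$, a contradiction, establishing (\ref{eq_plan_inclusion}).

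Next, Proposition~\ref{Prop_prelim_quanti_strat_XX}\ref{Prop_prelim_quanti_strat_XX_b} applied with $k = n+2-\Delta$ produces points $\{ y_l \}_{l=1}^{N_1}$ in $\td\SS^{\eps, n+2-\Delta}_{\sigma, \eps} \cap P^*(x_\infty; A, -T)$ with
\[ \td\SS^{\eps, n+2-\Delta}_{\sigma, \eps} \cap P^*(x_\infty; A, -T) \subset \bigcup_{l=1}^{N_1} P^*(y_l; \sigma), \qquad N_1 \leq C(Y_0, \tau_0, A, T, \eps) \sigma^{-(n+2-\Delta)-\eps}. \]
To subdivide each $P^*(y_l; \sigma)$, I pick an approximating sequence $(y_{l,i}, s_{l,i}) \to y_l$ and apply the covering lemma \cite[\HKBasicCovering]{Bamler_HK_entropy_estimates} in the smooth Ricci flow $(M_i, (g_{i,t})_{t \in I_i})$ to cover $P^*(y_{l,i}, s_{l,i}; 2\sigma)$ by at most $N_2 \leq C(Y_0, \tau_0, A, T, \eps) (\sigma/r)^{n+2}$ many $P^*$-parabolic balls of radius $r/4$; passing this covering to the limit via Lemma~\ref{Lem_pass_covering_limit} gives a covering of $P^*(y_l; \sigma)$ by $N_2$ $P^*$-parabolic balls of radius $r/2$. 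For each such sub-ball that meets $\{ \tdrrm \leq r \} \cap P^*(x_\infty; A, -T)$, I pick one point in the intersection and, using Proposition~\ref{Prop_basic_parab_nbhd}\ref{Prop_basic_parab_nbhd_d}, replace the ball by the $P^*$-parabolic ball of radius $r$ centered at that point. This yields a covering of $\{ \tdrrm \leq r \} \cap P^*(x_\infty; A, -T)$ by
\[ N \leq N_1 \cdot N_2 \leq C(Y_0, \tau_0, A, T, \eps) \sigma^{\Delta - \eps} r^{-n-2} = C(Y_0, \tau_0, A, T, \eps) r^{-n-2 + \Delta - \eps} \]
many $P^*$-parabolic balls of radius $r$, centered in $\{ \tdrrm \leq r \} \cap P^*(x_\infty; A, -T)$, as required.

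The main obstacle is the subdivision step: $P^*$-parabolic neighborhoods do not obey simple doubling properties, so one cannot directly subdivide $P^*(y_l;\sigma)$ by a metric Vitali argument in $\XX$. The solution is to perform the Vitali-type argument in the smooth approximating flows, where the covering lemma \cite[\HKBasicCovering]{Bamler_HK_entropy_estimates} is available, and then transfer the covering to the limit by means of Lemma~\ref{Lem_pass_covering_limit}, with the upper volume bounds from Lemmas~\ref{Lem_limit_HK_bound} and \ref{Lem_nu_Pstar_bound} ensuring that constants behave uniformly in the limit.
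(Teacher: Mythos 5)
Your proof is correct and follows the same structure as the paper's: establish $\{\tdrrm \leq r\} \cap P^*(x_\infty;A,-T) \subset \td\SS^{\eps,n+2-\Delta}_{\sigma,\eps}$ via Assumption~\ref{Aspt_working_ass}, Proposition~\ref{Prop_NN_variation_bound} and Lemma~\ref{Lem_tdrrm}, then apply Proposition~\ref{Prop_prelim_quanti_strat_XX}\ref{Prop_prelim_quanti_strat_XX_b} and refine to radius-$r$ balls. The paper phrases the first step directly (showing $\tdrrm \geq c\sigma$ off the stratum) rather than by contradiction, picks $\sigma = c^{-1}(Y_0,\tau_0,A,T)\,r$ rather than $\sigma = 2r/\eps$, and is terse on the final refinement ("application of the covering result"), which you usefully spell out via Lemma~\ref{Lem_pass_covering_limit} and Proposition~\ref{Prop_basic_parab_nbhd} --- cosmetic differences only.
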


\begin{proof}
Apply Proposition~\ref{Prop_prelim_quanti_strat_XX} to the given subsequence and consider the subsets $\td\SS^{\eps, n+2 - \Delta}_{\sigma, \eps} \subset \XX$ (note that these subsets may become larger after passing to a subsequence).
So for any $x'_\infty \in \XX \setminus \td\SS^{\eps, n+2-\Delta}_{\sigma, \eps}$ there is a sequence $(x'_i, t'_i) \in M_i \times I_i$ such that $(x'_i, t'_i) \to x'_\infty$ within $\CF$ and such that for infinitely many $i$ there is some $r'_i \in (\sigma, \eps)$ such that Property~\ref{Prop_prelim_quanti_strat_XX_a1} or \ref{Prop_prelim_quanti_strat_XX_a2} of Proposition~\ref{Prop_prelim_quanti_strat_XX} holds.
So by Assumption~\ref{Aspt_working_ass}, Proposition~\ref{Prop_NN_variation_bound} and Lemma~\ref{Lem_tdrrm} we have for $\eps \leq \ov\eps(Y, \tau_0, A, T)$
\[ \tdrrm (x'_\infty)
=  \lim_{i \to \infty} \rrm(x'_i, t'_i) 
\geq \limsup_{i \to \infty} c(Y, \tau_0, A, T) r'_i
\geq c(Y, \tau_0, A, T)  \sigma > 0. \]
The proposition now follows by choosing $\sigma := c^{-1} r$ and application of the covering result \cite[\HKBasicCovering]{Bamler_HK_entropy_estimates}.
\end{proof}
\bigskip

The next lemma is a consequence of Proposition~\ref{Prop_cover_sublevel_tdrrm}.
It will provide us with a bound on the volume of sublevel sets $\{ \tdrrm \leq r \}$ and yield cutoff functions $\eta_r \in C^\infty (\RR)$ that are $\equiv 1$ away from a set of small volume.

\begin{Lemma} \label{Lem_eta_r_on_RR}
For any subsequence of the given sequence of Ricci flows that allows the definition of $\tdrrm$ according to Lemma~\ref{Lem_tdrrm} the following holds:
\begin{enumerate}[label=(\alph*)]
\item \label{Lem_eta_r_on_RR_a} For any $A < \infty$, $-T \in I_\infty$, $\eps > 0$ there is a constant $C(Y_0, \tau_0, A, T, \eps) < \infty$ such that for $r \leq \ov{r} (Y_0, \tau_0, A, T, \eps)$
\begin{align*}  \int_{I_\infty} \int_{\{ 0 < \tdrrm \leq r \} \cap P^* (x_\infty; A, -T) \cap \RR_t} dg_t dt &\leq C r^{\Delta - \eps}, \\
  \sup_{t \in I_\infty} \int_{\{ 0 < \tdrrm \leq r \} \cap P^* (x_\infty; A, -T) \cap \RR_t} dg_t &\leq C r^{\Delta - 2 - \eps}. 
\end{align*}
\item \label{Lem_eta_r_on_RR_b} There is a family of smooth functions $(\eta_r \in C^\infty (\RR))_{r > 0}$ that take values in $[0,1]$ such that:
\begin{enumerate}[label=(b\arabic*)]
\item \label{Lem_eta_r_on_RR_b1} $\tdrrm \geq r$ on $\{ \eta_r > 0 \}$.
\item \label{Lem_eta_r_on_RR_b2} $\eta_r = 1$ on $\{ \tdrrm \geq 2r \}$.
\item \label{Lem_eta_r_on_RR_b3} $|\nabla \eta_r | \leq C_0 r^{-1}$ and $|\partial_{\tf} \eta_r | \leq C_0 r^{-2}$ for some dimensional constant $C_0 < \infty$.
\item \label{Lem_eta_r_on_RR_b4} For any $x' \in \XX$, $L < \infty$ and $r > 0$, the set $\{ \eta_r > 0 \} \cap B (x', L)$ is relatively compact in $\RR^*_t$.
\item \label{Lem_eta_r_on_RR_b5} For any $A < \infty$, $-T \in I_\infty$ and $t \in [-T, 0)$ the set $\{ \eta_r > 0 \} \cap P^* (x_\infty; A, -T) \cap \RR_t$ is relatively compact in $\RR_t$.
\item \label{Lem_eta_r_on_RR_b6} For any maximal worldline $\gamma : I' \to \RR$ (i.e. every trajectory $\gamma$ of $\partial_{\tf}$ with $\tf (\gamma(t)) = t$) the following is true.
Either $t_{\min} := \inf I' = - \infty$ or $\eta_r ( \gamma(t)) = 0$ for $t$ close to $t_{\min}$.
\end{enumerate}
\end{enumerate}
\end{Lemma}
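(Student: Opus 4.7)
\medskip

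\noindent\textbf{Proof proposal.} The plan for Assertion~\ref{Lem_eta_r_on_RR_a} is a direct application of Proposition~\ref{Prop_cover_sublevel_tdrrm}. For $\eps \leq \ov\eps(Y_0, \tau_0, A, T)$ and $r \leq \ov{r}(Y_0,\tau_0,A,T,\eps)$, that proposition provides a covering
\[ \{\tdrrm \leq r\} \cap P^*(x_\infty; A, -T) \subset \bigcup_{j=1}^N P^*(x'_j; r), \qquad N \leq C(Y_0,\tau_0,A,T,\eps)\, r^{-n-2+\Delta-\eps}. \]
Note that $\tdrrm>0$ implies that a point lies in $\RR^*\subset\RR$ (Lemma~\ref{Lem_tdrrm}\ref{Lem_tdrrm_c}), so the integrand's support is genuinely contained in the covered set. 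By Lemma~\ref{Lem_limit_HK_bound}\ref{Lem_limit_HK_bound_b} (after enlarging $A,T$ slightly using Lemma~\ref{Lem_conv_P_star} so every $x'_j$ lies in a fixed $P^*(x_\infty;A',-T')$), each time-slice of $P^*(x'_j;r)$ has volume $\leq C r^n$. Integrating over the time-extent $\leq 2 r^2$ of each ball and summing gives the spacetime bound $\leq C r^{\Delta-\eps}$, while evaluating at a fixed time $t$ yields the time-slice bound $\leq N\cdot C r^n \leq C r^{\Delta-2-\eps}$.

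For Assertion~\ref{Lem_eta_r_on_RR_b}, the plan is to set $\eta_r := \phi(\tdrrm^{\,\prime}/r)$, where $\phi:\IR\to[0,1]$ is a fixed smooth non-decreasing cutoff with $\phi\equiv 0$ on $(-\infty,\tfrac{11}{10}]$, $\phi\equiv 1$ on $[\tfrac{19}{10},\infty)$, $|\phi'|\leq 10$, and $\tdrrm^{\,\prime}\in C^\infty(\{\tdrrm>r/2\})$ is a smooth approximation of $\tdrrm$ satisfying $|\tdrrm^{\,\prime}-\tdrrm|\leq r/20$, $|\nabla\tdrrm^{\,\prime}|\leq 2$, and $|\partial_\tf(\tdrrm^{\,\prime})^2|\leq 2C_0$ (with $C_0$ the constant from Lemma~\ref{Lem_tdrrm}\ref{Lem_tdrrm_f}); then $\eta_r$ is extended by zero to all of $\RR$. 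To construct $\tdrrm^{\,\prime}$, observe that by Lemma~\ref{Lem_tdrrm}\ref{Lem_tdrrm_g} every $x'\in\{\tdrrm>r/2\}$ admits an unscathed parabolic neighborhood in $\RR^*$ of parabolic radius $\geq r/2$. Using normal coordinates on time-slices and integrating $\partial_\tf$-trajectories, one obtains parabolic charts in which $\tdrrm$ is Lipschitz with respect to the parabolic metric (by Lemma~\ref{Lem_tdrrm}\ref{Lem_tdrrm_d}--\ref{Lem_tdrrm_f}). A partition-of-unity mollification at parabolic scale $\sigma r$ for a small dimensional $\sigma$ then produces $\tdrrm^{\,\prime}$ with the stated derivative bounds. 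Properties \ref{Lem_eta_r_on_RR_b1}--\ref{Lem_eta_r_on_RR_b3} then follow from the chain rule: on $\{\eta_r>0\}$ we have $\tdrrm^{\,\prime}>\tfrac{11}{10}r$, hence $\tdrrm>r$, and similarly $\tdrrm\geq 2r$ implies $\tdrrm^{\,\prime}\geq\tfrac{19}{10}r$, so $\eta_r\equiv 1$; the bounds $|\nabla\eta_r|\leq C_0/r$ and $|\partial_\tf\eta_r|\leq C_0/r^2$ follow from the bounds on $\nabla\tdrrm^{\,\prime}$ and $\partial_\tf\tdrrm^{\,\prime}$ (writing $|\partial_\tf\tdrrm^{\,\prime}|\leq C_0/\tdrrm^{\,\prime}\leq C_0/r$).

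Properties \ref{Lem_eta_r_on_RR_b4} and \ref{Lem_eta_r_on_RR_b5} are immediate from $\{\eta_r>0\}\subset\{\tdrrm\geq r\}$ and Lemma~\ref{Lem_tdrrm}\ref{Lem_tdrrm_h}, combined with the observation that $B(x',L)$ is contained in a $P^*(x_\infty;A',-T')$-neighborhood for suitable $A',T'$ via Lemma~\ref{Lem_conv_P_star} and the containment properties of $P^*$-balls (Proposition~\ref{Prop_basic_parab_nbhd}). For Property~\ref{Lem_eta_r_on_RR_b6}, suppose $\gamma:I'\to\RR$ is a maximal worldline with $t_{\min}:=\inf I'>-\infty$, and assume by contradiction that there exist $t_n\searrow t_{\min}$ with $\eta_r(\gamma(t_n))>0$, hence $\tdrrm(\gamma(t_n))\geq r$. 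For $n$ large with $t_n<t_{\min}+r^2/2$, Lemma~\ref{Lem_tdrrm}\ref{Lem_tdrrm_g} applied at $\gamma(t_n)$ produces an unscathed parabolic neighborhood $P^\circ\subset\RR^*$ whose backward time extent reaches $t_n-\tdrrm^2(\gamma(t_n))\leq t_n-r^2<t_{\min}$; in particular the point $\gamma(t_n)$ survives along its worldline down to this earlier time, so $\gamma$ extends below $t_{\min}$, contradicting maximality. The main technical obstacle is the mollification in the construction of $\tdrrm^{\,\prime}$: we must ensure the smoothing is carried out at a uniform parabolic scale proportional to $r$ while producing bounds independent of the specific point, which requires patching together local mollifications in charts adapted to the spacetime structure and relying crucially on the unscathed parabolic neighborhoods provided by Lemma~\ref{Lem_tdrrm}\ref{Lem_tdrrm_g}.
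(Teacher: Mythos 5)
Your proof is correct and follows essentially the same route as the paper: Assertion (a) from the covering in Proposition~\ref{Prop_cover_sublevel_tdrrm} combined with the time-slice volume bound of Lemma~\ref{Lem_limit_HK_bound}\ref{Lem_limit_HK_bound_b}, and Assertion (b) by smoothing $\ov\eta(\tdrrm/r)$ and invoking the Lipschitz and barrier bounds from Lemma~\ref{Lem_tdrrm}. The paper compresses the mollification step to the phrase ``a smoothing of $\ov\eta(\tdrrm/r)$''; you have simply spelled out what that entails.
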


\begin{proof}
Assertion~\ref{Lem_eta_r_on_RR_a} follows by combining Lemmas~\ref{Lem_tdrrm}, \ref{Lem_limit_HK_bound}\ref{Lem_limit_HK_bound_b}, Propositions~\ref{Prop_NN_variation_bound}, \ref{Prop_cover_sublevel_tdrrm} and Assumption~\ref{Aspt_working_ass}.

For Assertion~\ref{Lem_eta_r_on_RR_b} fix a cutoff function $\ov\eta : (0, \infty] \to [0,1]$ with $\ov\eta \equiv 0$ on $[0,1.1]$, $\ov\eta \equiv 1$ on $[1.9, \infty]$ and $|\ov\eta'| \leq 10$.
If we let $\eta_r$ be a smoothing of $\ov\eta ( \tdrrm / r)$, then we can ensure that Assertions~\ref{Lem_eta_r_on_RR_b1}, \ref{Lem_eta_r_on_RR_b2} hold and that Assertion~\ref{Lem_eta_r_on_RR_b3} follows using Lemma~\ref{Lem_tdrrm}.
Assertions~\ref{Lem_eta_r_on_RR_b5}, \ref{Lem_eta_r_on_RR_b6} follow from Lemma~\ref{Lem_tdrrm} and Assertion~\ref{Lem_eta_r_on_RR_b4} follows from Assertion~\ref{Lem_eta_r_on_RR_b5} since $B(x', L) \subset P^* (x_\infty; L, -T)$ if $-T \leq \tf (x')$.
\end{proof}

\bigskip

\subsection{The size of the singular set}
The following theorem states that under Assumption~\ref{Aspt_working_ass} we have a bound of the form $\dim_{\mathcal{M}^*} \SS \leq \dim_{\mathcal{M}^*} \SS^* \leq n+2-\Delta$.
It also relates the distance metric $d_t$ with the length metric $d_{g_t}$ on $\RR^*_t$ and characterizes heat flows on $\XX$.

\begin{Theorem} \label{Thm_SS_dimension_bound_limit}
Suppose we are in the setting described in Subsection~\ref{subsec_setup_limit} and consider a subsequence of the given sequence of Ricci flows that allows the definition of $\tdrrm$ according to Lemma~\ref{Lem_tdrrm}.
If $\Delta \geq 4$, then the following is true:
\begin{enumerate}[label=(\alph*)]
\item \label{Thm_SS_dimension_bound_limit_a} 
For every $A < \infty$, $-T \in I_\infty$ and $\eps > 0$ the following holds.
For any $r > 0$ with $-T-r^2 \in I_\infty$ we can find points $x'_i \in \SS \cap P^* (x_\infty; A, -T)$, $i = 1, \ldots, N$ with
\[ \SS^* \cap P^*  (x_\infty; A, -T) \subset \bigcup_{i=1}^N P^* (x'_i; r), \qquad N \leq C(Y_0, \tau_0, A, T, \eps) r^{-n-2+\Delta - \eps}. \]
\item \label{Thm_SS_dimension_bound_limit_b} $\SS^*_t =  \SS^* \cap \XX_t$ is a set of measure zero (in the sense of \cite{Bamler_RF_compactness}) for almost all $t \in I_\infty$ (if $\Delta = 1,2$) and for all $t \in I_\infty \setminus \{ 0 \}$ (if $\Delta \geq 3$).
\item \label{Thm_SS_dimension_bound_limit_c} If $\Delta \geq 4$, then for all $t \in I_\infty$ the length metric $d_{g_t}$ of $g_t$ is equal to the restriction of $d_t$ to $\RR_t$.
\item \label{Thm_SS_dimension_bound_limit_d} If $\Delta \geq 3$, then every uniformly bounded $u \in C^0 ( \RR_{ [t', 0)} ) \cap C^\infty ( \RR_{(t', 0)})$, $t' \in I_\infty$, that satisfies the heat equation $\square u = 0$ on $ \RR_{ (t', 0)}$ is a restriction of the heat flow with initial condition $u  \chi_{\RR_{t'}} : \XX_{t'} \to \IR$ (defined using the metric flow structure on $\XX$) restricted to $\RR_{ [t', 0)}$.
\end{enumerate}
\end{Theorem}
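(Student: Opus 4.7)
The plan is to reduce each assertion to results already in hand, treating the four parts in order and using the cutoff and covering machinery from Lemma~\ref{Lem_tdrrm}, Lemma~\ref{Lem_eta_r_on_RR}, and Proposition~\ref{Prop_cover_sublevel_tdrrm}. The key observation driving everything is that Lemma~\ref{Lem_tdrrm}\ref{Lem_tdrrm_b}, \ref{Lem_tdrrm_c} give the inclusion $\SS^* \subseteq \{\tdrrm = 0\} \subseteq \{\tdrrm \le r\}$ for every $r > 0$; so any covering of a sublevel set of $\tdrrm$ automatically covers $\SS^*$.

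For Assertion~\ref{Thm_SS_dimension_bound_limit_a}, I would apply Proposition~\ref{Prop_cover_sublevel_tdrrm} to $\{\tdrrm \le r\} \cap P^*(x_\infty; A+r, -T)$, discard those balls from the output whose $P^*$-parabolic $2r$-doubling does not meet $\SS^* \cap P^*(x_\infty;A,-T)$, and in each surviving ball choose a center in $\SS \cap P^*(x_\infty; A, -T)$, inflating the radius by a controlled factor via the containment properties of Proposition~\ref{Prop_basic_parab_nbhd}\ref{Prop_basic_parab_nbhd_d}; if a surviving ball meets $\SS^*$ but not $\SS$, a standard diagonal argument using that $\SS$ is closed and contains all limits of smooth-failure points in $\RR^*$ lets us absorb it into a nearby ball centered in $\SS$. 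For Assertion~\ref{Thm_SS_dimension_bound_limit_b} I combine the covering from (a) with the $P^*$-ball volume bound of Lemma~\ref{Lem_nu_Pstar_bound} and Lemma~\ref{Lem_limit_HK_bound}\ref{Lem_limit_HK_bound_b}: the total time-slice volume of the covering is bounded by $C(Y_0,\tau_0,A,T,\eps)\,r^{\Delta-2-\eps}$, which tends to zero as $r\searrow 0$ whenever $\Delta\ge 3$, giving $|\SS^*_t|_{g_t}=0$ for all $t$; for $\Delta=1,2$ I instead use the parabolic volume, which is bounded by $C r^{\Delta-\eps}$ and tends to zero, and then invoke Fubini.

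For Assertion~\ref{Thm_SS_dimension_bound_limit_c} ($\Delta \ge 4$), since the inclusion $(\RR_t,d_{g_t})\hookrightarrow(\XX_t,d_t)$ is a local isometry, only the inequality $d_{g_t}\le d_t$ needs proof. Given $x,y\in\RR_t$, I pick a nearly-minimizing rectifiable curve $\gamma$ in $(\XX_t,d_t)$, and perturb it into $\RR_t$ using the cutoff functions $\eta_r$ from Lemma~\ref{Lem_eta_r_on_RR}\ref{Lem_eta_r_on_RR_b}. The spatial Minkowski dimension of $\SS^*_t$ is bounded by $n+2-\Delta-2=n-4$ (the two-time-counting of the parabolic dimension is stripped in a fixed time-slice), which has codimension $\ge 4$ in $\RR_t$; the standard Fubini-type argument on perturbations through tubular neighborhoods $\{\tdrrm\le r\}\cap\XX_t$ of total volume $O(r^{\Delta-2-\eps})=o(r^2)$ yields a family of nearby curves of comparable length lying in $\RR_t$, giving the desired inequality. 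For Assertion~\ref{Thm_SS_dimension_bound_limit_d} ($\Delta\ge 3$), I let $\widetilde u$ be the metric-flow heat flow with initial data $u\chi_{\RR_{t'}}$ at time $t'$ and show $w:=u-\widetilde u$ vanishes on $\RR_{[t',0)}$: for any $(x,t)\in\RR$ with $t>t'$ the reproduction formula applied against $\nu_{x;s}$ together with $\square w=0$ on $\RR$ and $\nu_{x;s}(\SS^*_s)=0$ from (b) gives $\int_{\RR_s} w\,d\nu_{x;s}$ constant in $s\in(t',t)$; letting $s\searrow t'$, the $W_1$-contraction of conjugate heat kernels and the continuity of $u,\widetilde u$ up to $\RR_{t'}$ force the limit to be $w(x,t)=0$.

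The principal obstacle is Assertion~\ref{Thm_SS_dimension_bound_limit_c}: the Minkowski dimension bound on $\SS^*_t$ is by itself not quite enough to guarantee that generic paths can be perturbed into $\RR_t$, because $\RR_t$ may be highly non-convex and because the ambient metric $(\XX_t,d_t)$ is only intrinsic, not a Riemannian manifold. The argument must carefully combine the local-isometry property of the inclusion $\RR_t\hookrightarrow\XX_t$ with a quantitative avoidance estimate obtained by integrating the parametrized perturbation of $\gamma$ against the volume bound for $\{\tdrrm\le r\}\cap\XX_t$; in particular this is where the assumption $\Delta\ge 4$ becomes essential, since only then does the parabolic codimension give a spatial codimension $\ge 2$ in each time-slice.
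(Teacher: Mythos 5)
Your treatment of Assertions~(a) and~(b) is essentially correct and matches the paper: the covering comes from Proposition~\ref{Prop_cover_sublevel_tdrrm} applied to $\{\tdrrm \le r\}$, and the measure-zero statement follows by pushing this through the $P^*$-ball volume bound of Lemma~\ref{Lem_nu_Pstar_bound}. The trouble is with~(c) and~(d).

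For Assertion~(c) your proposed path-perturbation argument does not get off the ground, and not merely because it is technically delicate. To perturb a nearly-minimizing rectifiable curve $\gamma$ in $(\XX_t,d_t)$ away from $\SS^*_t$ you need a \emph{parametrized family} of comparison curves covering a set of positive measure, so that the volume bound on $\{\tdrrm\le r\}\cap\XX_t$ can be integrated against the family (this is the Fubini/segment-inequality mechanism in the Cheeger--Colding setting). That mechanism has two prerequisites: an ambient structure in which to move the curve sideways, and a segment inequality guaranteeing the family exists. Neither is available here: $\XX_t$ is an abstract metric completion, $\RR_t$ need not be convex in it, and no segment inequality is established anywhere in the paper (indeed, the ``mild singularities'' convexity property is explicitly deferred to forthcoming work in the remark following Theorem~\ref{Thm_static_limit_main}). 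If $\gamma$ touches $\SS^*_t$ you simply have no way to push it off. The paper dualizes instead, following \cite{Chen-Wang-II}: it shows that every $d_{g_t}$-Lipschitz function on $\RR^*_t$ with compact support is also $d_t$-Lipschitz. The mechanism is to multiply by the cutoff $\eta_r$, push the resulting function forward through the heat equation on the approximating flows $M_i$, and exploit $\square|\nabla u^*_{i,r}|\le 0$ together with the upper heat-kernel bound to get the pointwise estimate $|\nabla u^*_{i,r}|-1\le C r^{.5}$ at a later regular time $t^*$; then one lets $r\to 0$ and finishes with future continuity. That detour through the heat flow is precisely what replaces the unavailable geodesic perturbation.

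For Assertion~(d) the reproduction-formula argument you sketch asserts that $\tfrac{d}{ds}\int_{\RR_s} w\,d\nu_{x;s}=0$ from $\square w=0$ on $\RR$ and $\nu_{x;s}(\SS^*_s)=0$, but this derivative has two uncontrolled boundary contributions: one from the ``inner boundary'' $\SS^*_s$ of $\RR_s$ and one from spatial infinity (since $u$ is only bounded, not decaying). The statement $\nu_{x;s}(\SS^*_s)=0$ is far too weak to kill the flux through $\SS^*_s$ -- what you would actually need is a bound on $\int_{\{\tdrrm\le 2r\}}\bigl(|\nabla u|\,v + u\,|\nabla v|\bigr)\,dg_s$ as $r\to 0$, which does not follow from the measure statement alone. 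The paper handles this by (i) replacing $w$ by $u^2$ so that $\square u^2\le 0$, which turns the unsigned inner-boundary error into a favorably signed one; (ii) inserting the cutoff $\eta_r$ from Lemma~\ref{Lem_eta_r_on_RR} near $\SS^*$ and estimating the error by $Cr^{-2}\int_{\{\tdrrm\le 2r\}\cap\RR^*_t}dg_t dt=O(r^{.5})$; and (iii) inserting a spatial cutoff $\phi_L=\eta\circ w_L$, built from a heat flow with compactly supported initial data at time $t'$, to kill the contribution from infinity, with the extra error estimated via the $L^2$-Hessian bound $\int|\nabla^2 w_L|^2 v \le C L^{-2}$. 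Only after all three devices does the claimed constancy (in fact, a one-sided inequality) survive. Without them the derivative computation in your proposal is not justified.
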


Assertions~\ref{Thm_SS_dimension_bound_limit_c}, \ref{Thm_SS_dimension_bound_limit_d} of Theorem~\ref{Thm_SS_dimension_bound_limit} imply that if $\Delta \geq 4$, then the metric flow $\XX$ is uniquely determined by the Ricci flow spacetime structure on $\RR$:

\begin{Theorem} \label{Thm_XX_uniquely_det_RR}
Suppose $\Delta=4$ and consider an $H'$-concentrated metric flow $\XX'$ of full support and complete time-slices over $I_\infty \setminus \{0 \}$ for some $H' < \infty$.
Denote by $\RR' \subset \XX'$ its regular part.
If there is an isometry $\RR \to \RR'$ as Ricci flow spacetimes, then this isometry can be extended uniquely to an isometry $\XX_{<0} \to \XX'_{<0}$ of metric flows.
\end{Theorem}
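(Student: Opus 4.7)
My strategy is to construct time-slice isometries $\phi_t\colon\XX_t\to\XX'_t$ extending the given spacetime isometry $\phi$ on regular parts, and then verify that this family intertwines the conjugate heat kernel measures so as to form a morphism of metric flows. Both ingredients rely on the regularity and density properties of $\RR\subset\XX$ furnished by Theorem~\ref{Thm_SS_dimension_bound_limit}, together with a symmetric analysis on the $\XX'$ side carried out by transporting structure via $\phi$.

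For the construction of $\phi_t$, I would first apply Theorem~\ref{Thm_SS_dimension_bound_limit}(c) together with the density of $\RR_t$ in $(\XX_t,d_t)$ and completeness of $(\XX_t,d_t)$ to identify $(\XX_t,d_t)$ with the metric completion of $(\RR_t,d_{g_t})$ for each $t\in I_\infty\setminus\{0\}$. The corresponding identification for $\XX'_t$ would then let me extend the Riemannian isometry $\phi\colon(\RR_t,d_{g_t})\to(\RR'_t,d_{g'_t})$ uniquely to an isometry of completions $\phi_t\colon(\XX_t,d_t)\to(\XX'_t,d'_t)$. To obtain this identification on the primed side, I would exploit the fact that $\phi$ transports heat kernels between the Ricci flow spacetimes $\RR$ and $\RR'$, so that conjugate heat kernel measures $\nu_{x;s}$ for $x\in\RR_t$, which by Theorem~\ref{Thm_SS_dimension_bound_limit}(b) place no mass on $\SS_s$, push forward to probability measures on $\XX'_s$ placing no mass on $\SS'_s$. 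Combined with the hypotheses that $\XX'$ is $H'$-concentrated, of full support, and has complete time-slices, this input should yield both that $\RR'_t$ is $d'_t$-dense in $\XX'_t$ and that $d'_t|_{\RR'_t}=d_{g'_t}$.

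To verify the conjugate heat kernel compatibility $(\phi_s)_*\nu_{x;s}=\nu'_{\phi_t(x);s}$ for all $s<t$ and $x\in\XX_t$, I argue first for $x\in\RR_t$: both sides are probability measures supported on $\RR_s$ and $\RR'_s$ respectively, with smooth densities given by the heat kernels on the two Ricci flow spacetimes, which $\phi$ preserves by hypothesis. For arbitrary $x\in\XX_t$, I invoke Theorem~\ref{Thm_SS_dimension_bound_limit}(d): for any bounded continuous test function on $\XX_s$, the integral against $\nu_{x;s}$ equals the value at $x$ of the heat flow with that initial data, and by (d) this heat flow is determined by its smooth restriction to $\RR$; approximating $x$ by points of $\RR_t$ using density and passing to the limit via continuity of $\phi_t$ yields the required identity. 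Uniqueness of the extension is then immediate, since any two extensions must agree on the dense subset $\RR\subset\XX_{<0}$ and are continuous as time-slice isometries.

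The main obstacle will be the non-trivial direction of the identification $d'_t|_{\RR'_t}=d_{g'_t}$: the inequality $d'_t\le d_{g'_t}$ is automatic from the local Riemannian structure built into the definition of the regular part, but the reverse must be extracted from the abstract metric flow axioms on $\XX'$ without recourse to an $\IF$-limit argument. The plan is to exploit $H'$-concentration: the distance $d'_t(\phi(x),\phi(y))$ dominates the $W_1$-distance $d^{\XX'_s}_{W_1}(\nu'_{\phi(x);s},\nu'_{\phi(y);s})$ for $s<t$, and since these measures are supported in $\RR'_s$ with densities matching those of $\nu_{x;s},\nu_{y;s}$ under $\phi$, standard properties of $W_1$ on Riemannian manifolds allow one to bound this below by the Riemannian $W_1$-distance computed from $d_{g'_s}$; as $s\nearrow t$ the latter converges to $d_{g'_t}(\phi(x),\phi(y))$, yielding the required inequality and closing the argument.
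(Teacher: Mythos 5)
Your proposal follows the paper's skeleton: extend the time-slice isometries $\RR_t\to\RR'_t$ to maps $\phi_t\colon\XX_t\to\XX'_t$ using the completion identification from Theorem~\ref{Thm_SS_dimension_bound_limit}(c), then verify the conjugate heat kernel compatibility $(\phi_s)_*\nu_{x;s}=\nu'_{\phi_t(x);s}$ via pulled-back heat flows and Theorem~\ref{Thm_SS_dimension_bound_limit}(d), derive surjectivity from full support, and extend by continuity. That part is sound and matches the paper closely.

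Your handling of what you call the ``main obstacle'' — establishing $d'_t|_{\RR'_t}=d_{g'_t}$ on the primed side — does not go through, for two independent reasons. First, it is circular: you bound $d^{\XX'_s}_{W_1}(\nu'_{\phi(x);s},\nu'_{\phi(y);s})$ by invoking that $\nu'_{\phi(x);s}$ is supported on $\RR'_s$ with density matching that of $\nu_{x;s}$ under $\phi$; but this matching is precisely the identity $(\phi_s)_*\nu_{x;s}=\nu'_{\phi(x);s}$, which you only derive afterward, and whose derivation already presupposes that the maps $\phi_t$ exist. Second, even granting the matching, the $W_1$ comparison runs in the wrong direction: the inclusion $(\RR'_s,d_{g'_s})\hookrightarrow(\XX'_s,d'_s)$ is $1$-Lipschitz, so $d'_s|_{\RR'_s}\le d_{g'_s}$, and since the $W_1$-distance is monotone in the underlying metric this gives $d^{\XX'_s}_{W_1}(\nu'_{\phi(x);s},\nu'_{\phi(y);s})\le W_1^{d_{g'_s}}(\nu'_{\phi(x);s},\nu'_{\phi(y);s})$ — an \emph{upper} bound, not the lower bound you need to close the chain $d'_t(\phi(x),\phi(y))\ge\cdots\to d_{g'_t}(\phi(x),\phi(y))$. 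In addition, the convergence of the Riemannian $W_1$-distance to $d_{g'_t}(\phi(x),\phi(y))$ as $s\nearrow t$ implicitly uses future continuity of $\XX'$, which is not assumed in the statement. The paper's own proof obtains $\phi_t$ directly as an isometric embedding via Theorem~\ref{Thm_SS_dimension_bound_limit}(c) and does not separately prove the metric-completion structure of $(\XX'_t,d'_t)$ as a prerequisite; the heat kernel compatibility, full support of $\XX'$, and continuity then finish the argument. If you want to keep your plan, you would need to defer any identification of $(\XX'_t,d'_t)$ with a completion of $(\RR'_t,g'_t)$ until \emph{after} the heat kernel identity is in hand, rather than using it as a precondition — otherwise the argument remains circular.
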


\begin{proof}[Proof of Theorem~\ref{Thm_SS_dimension_bound_limit}.] 
Assertion~\ref{Thm_SS_dimension_bound_limit_a} is a direct consequence of Proposition~\ref{Prop_cover_sublevel_tdrrm} since $\SS^* = \{ \tdrrm = 0 \}$.
Assertion~\ref{Thm_SS_dimension_bound_limit_b} follows from Assertion~\ref{Thm_SS_dimension_bound_limit_a} combined with Lemma~\ref{Lem_nu_Pstar_bound}.

For Assertion~\ref{Thm_SS_dimension_bound_limit_c} suppose that $\Delta \geq 4$ and fix $t \in I_\infty$, $t < 0$.
It remains to show that $d_{g_t} \leq d_{t}$ on $\RR_{t}$, because the opposite inequality is clear.
By Assertion~\ref{Thm_SS_dimension_bound_limit_b}, it suffices to show that $d_{g_t} \leq d_{t}$ on $\RR^*_{t}$.
As in \cite{Chen-Wang-II}, we will show that every function $u \in C^0( \RR^*_{t})$ that is $L$-Lipschitz with respect to $d_{g_t}$ is also $L$-Lipschitz with respect to $d_{t}$; taking $u := d_{g_t} (x', \cdot)$ for any $x' \in \RR^*_t$, will then imply the bound $d_{g_t} \leq d_{t}$.
So suppose that $u \in C^0( \RR^*_{t})$ is $L$-Lipschitz.
We claim that $u$ is a pointwise limit of a sequence of smooth and $L_i$-Lipschitz functions $u_i \in C^\infty ( \RR^*_{t})$ whose support is bounded with respect to $d_{g_t}$ and $L_i \to L$.
To see this, multiply $u$ with functions of the form $h_i ( d_{g_t} (x', \cdot))$, for some fixed $x' \in \RR^*_t$, where $h_i$ are slowly decreasing functions with $h_i \to 1$ as $i \to \infty$, and apply a smoothing procedure.
So we may assume in the following that $u \in C^\infty ( \RR^*_{t})$ and that the support of $u$ within $\RR^*_{t}$ is bounded with respect to $d_{g_t}$.
This implies that $u$ is uniformly bounded and that $u \equiv 0$ on $\RR^*_{t} \setminus P^* (x_\infty; A, -T)$ for some $A <\infty$ and $-T \in I_\infty$.
After replacing $u$ with $L^{-1} u$, we may moreover assume that $u$ is $1$-Lipschitz, or equivalently, $|\nabla u| \leq 1$.
For the remainder of the proof of Assertion~\ref{Thm_SS_dimension_bound_limit_c}, $C$ will denote a generic constant that may depend on $t, u,Y_0, \tau_0, A, T$, but not on $i$ or a parameter $r$, which we will introduce later.
Any further dependence will be indicated in parentheses.

Fix some small $r > 0$ and let $\eta_r \in C^\infty (\RR)$ be the function from Lemma~\ref{Lem_eta_r_on_RR}.
Then
\[ u_r := u \eta_r \]
is compactly supported in $\RR_{t}$ and
\begin{equation} \label{eq_nab_ur_m_1}
 \int_{\RR^*_{t}} ( |\nabla u_r| - 1)_+ dg_{t} 
\leq C \int_{P^* (x_\infty; A, -T) \cap \RR^*_t} |\nabla \eta_r| dg_t 
\leq C r^{-1} \int_{\{ 0 < \tdrrm \leq 2 r \}  \cap P^* (x_\infty; A, -T)}  dg_t 
\leq C r^{.5}. 
\end{equation}

\begin{Claim}
For almost every $t^{*} \in (t, 0)$ and $x', x'' \in \XX_{t^*}$ there is a constant $C(t^{*}, x', x'') < \infty$ such that
\begin{equation} \label{eq_ur_dnuxpxpp}
 \bigg| \int_{\RR^*_{t}} u_r \, d\nu_{x'; t} - \int_{\RR^*_{t}} u_r \, d\nu_{x''; t} \bigg| \leq (1+ C(t^{*}, x', x'') r^{.5}) d_{t^{*}} (x', x''). 
\end{equation}
\end{Claim}

\begin{proof}
By passing to a subsequence, we may assume that the convergence in (\ref{eq_setup_Mi_to_XX}) is time-wise at almost every time; suppose that $t^*$ is chosen such that it is time-wise at time $t^*$.
This allows us to fix sequences of the form $(x'_i, t^*), (x''_i, t^*) \in M_i \times I_i$ such that $(x'_i, t^*) \to x'$ and $(x''_i, t^*) \to x''$ within $\CF$ and consider the diffeomorphisms $\psi_i : U_i \to V_i$ from Subsection~\ref{subsec_setup_limit}, describing the smooth convergence on $\RR^*$.
For large $i$ (depending on $r$) we have $\supp u_r \subset U_i$ and so we can define $u_{i, r} \in C^{\infty} (M_i \times \{ t \})$ by
\[ u_{i, r} := \begin{cases}
u_r \circ \psi^{-1}_i & \text{on $M_i \times \{t \}  \cap V_i$} \\
0 & \text{on $M_i \times \{t \} \setminus V_i$}
\end{cases}. \]
By (\ref{eq_nab_ur_m_1}), we have for large $i$
\[ \int_{M_i} ( |\nabla u_{i,r}| - 1)_+ dg_{i,t} \leq C r^{.5}. \]
Consider now the solutions $u^*_{i,r} \in C^\infty (M_i \times [t, 0])$ to the heat equation $\square u^*_{i,r} = 0$ with initial conditions $u^*_{i,r} (\cdot, t) = u_{i, r}$.
Since $\square |\nabla u^*_{i,r}| \leq 0$, we obtain, using \cite[\HKThmHKboundNNbound]{Bamler_HK_entropy_estimates}, that for large $i$, and for any $x''' \in M_i$
\begin{multline*}
 |\nabla u^*_{i,r}| (x''', t^{*}) - 1
 \leq \int_{M_i} ( |\nabla u_{i,r}| - 1)_+ K(x''', t^{*}; \cdot, t) dg_{i, t}
 \leq C \exp ( - \NN_{x''', t^{*}} ( t^{*} - t )) r^{.5} \\
 \leq C(t^*, x') \exp ( C(t^*) d_{t^*} (x'_i, x''') ) r^{.5}. 
\end{multline*}
It follows that along any minimizing time-$t^{*}$-geodesic between $x'_i, x''_i$
\[ |\nabla u^*_{i,r}| (\cdot , t^{*}) - 1 \leq C(t^*, x', x'')  r^{.5} \]
Therefore,
\begin{multline*}
\limsup_{i \to \infty} \bigg| \int_{M_i} u_{i,r} \, d\nu_{x'_i; t} - \int_{M_i} u_{i,r} \, d\nu_{x''_i; t} \bigg| =
 \limsup_{i \to \infty} |u^*_{i,r} (x'_i) - u^{*}_{i,r}(x''_i)| \\
 \leq (1+ C(t^*, x', x'') r^{.5}) d_{t^{*}} (x', x''). 
\end{multline*}
The claim now follows by passing to the limit $i \to \infty$.
\end{proof}

Letting $r \to 0$ in (\ref{eq_ur_dnuxpxpp}) implies that
\begin{equation} \label{eq_intuintu_dXX}
 \bigg| \int_{\RR^*_{t}} u \, d\nu_{x'; t} - \int_{\RR^*_{t}} u \, d\nu_{x''; t} \bigg| \leq d_{t^{*}} (x', x''). 
\end{equation}
Now, by future continuity of $\XX$ (see \cite[\SYNPropFutContTwoPts]{Bamler_RF_compactness}), for any $x', x'' \in \RR_{t}$ there are sequences $t^*_j \searrow t$, $x'_j, x''_j \in \XX_{t^*_j}$ with $\nu_{x'_j; t} \to \delta_{x'}$, $\nu_{x''_j; t} \to \delta_{x''}$ in the $W_1$-sense and $\lim_{j \to \infty} d_{t^*_j} (x'_j, x''_j) = d_{t} (x', x'')$.
Combining this with (\ref{eq_intuintu_dXX}) implies that $| u(x') - u(x'') | \leq d_{t} (x', x'')$, as desired.
This finishes the proof of Assertion~\ref{Thm_SS_dimension_bound_limit_c}.

Lastly, we establish Assertion~\ref{Thm_SS_dimension_bound_limit_d}.
Assume that $\Delta \geq 3$ and consider a function $u \in C^0 ( \RR_{ [t', 0)} ) \cap C^\infty ( \RR_{(t', 0)})$, $t' \in I_\infty$, with $\square u = 0$ on $\RR_{(t', 0)}$.
By replacing $u$ with $u - u'$, where $u' : \XX_{[t', 0]} \to \IR$ is a uniformly bounded solution to the heat flow with initial condition $u \chi_{\RR_{t'}}$, we may assume in the following that $u |_{\RR_{t'}} = 0$.
Then we need to show that $u = 0$ on $\RR_{[t', 0)}$.
To see this, consider some $x^* \in \RR_{t^*}$ and let $v := K(x^*; \cdot) : \RR_{[t', t^*)}  \to [0, \infty)$ be the conjugate heat kernel based at $x^*$.
If we can show that for any $t'' \in [t', t^*)$ we have
\begin{equation} \label{eq_int_uv_tpp}
 \int_{\RR^*_{t''}} u^2 \, v \, dg_{t''} = 0, 
\end{equation}
then, since $u$ was assumed to be uniformly bounded, we obtain $u_{t''} \equiv 0$.
So our goal will be to show (\ref{eq_int_uv_tpp}) for some given $v$ and $t''$, which we will assume to be fixed for the remainder of this proof.
We will view $v$ as a smooth function on $\RR_{[t', t'']}$.
In the following, generic constants may depend on the choice of $t', t'', u, v$, but not on $i$.
Note that $v$ arises as a limit of conjugate heat kernels of the form $v_i := K(x^*_i, t^*_i; \cdot, \cdot)$ for some convergent sequence $(x^*_i, t^*_i) \to x_\infty$ within $\CF$.
We will set in the following
\[  d\nu_{x^*_i,t^*_i} =: d\nu^*_i  =: v_i \, dg_{i} =: (4\pi \tau^*_i)^{-n/2} e^{-f^*_i} dg_i. \]
We record that by Lemma~\ref{Lem_tdrrm}, Propositions~\ref{Prop_NN_variation_bound}, \ref{Prop_improved_L2}, and by standard local derivative estimates we have on $\RR_{[t', t'']}$ for some $C < \infty$, which may depend on $v$, $t'$, $t''$,
\begin{multline} \label{eq_v_bounds_as_limit}
 \square^* v = 0, \qquad
\int_{t'}^{t''} \int_{\RR^*_t} |\nabla v| dg_t dt
 \leq \limsup_{i \to \infty} \int_{t'}^{t''} \int_{M_i} |\nabla f^*_i| (4\pi \tau^*_i)^{-n/2} e^{-f_{i}^*} dg_{i,t} dt \leq C, \\
 0 \leq v \leq C, \qquad
 |\nabla v| \leq C( \tdrrm^{-1}+1), \qquad
 |\nabla^2 v|, |\partial_{\tf} v| \leq C (\tdrrm^{-2}+1).
\end{multline}
Note that the last two derivative bounds in (\ref{eq_v_bounds_as_limit}) follow since $v_i \to v$ in $C^\infty_{\loc}$ and since $\rrm \to \tdrrm$.
By Lemma~\ref{Lem_tdrrm} and local derivative estimates (see also \cite[\SYNLemStdParEst]{Bamler_RF_compactness}), we also obtain that
\begin{equation} \label{eq_u_bounds_loc_der}
 |\nabla u| \leq C( \tdrrm^{-1}+1), \qquad
 |\nabla^2 u|, |\partial_{\tf} u| \leq C( \tdrrm^{-2}+1). 
\end{equation}
Note that here we do not need to use the fact that $u$ is a limit of solutions to the heat equation, because the existence of a \emph{backward} parabolic neighborhood of radius $\tdrrm$ inside $\RR$ is ensured by Lemma~\ref{Lem_tdrrm}.

Next, we prove:

\begin{Claim} \label{Cl_int_parts_v_star}
Let $v^* \in C^\infty (\RR_{[t', t'']})$ be a smooth function such that for some $C^* < \infty$ we have
\[ 0 \leq v^* \leq C^*, \qquad |\nabla v^*| \leq C^* (\tdrrm^{-1}+1), \qquad |\partial_{\tf} v^*|, |\nabla^2 v^*| \leq C^* (\tdrrm^{-2}+1) \]
and such that $v^* \equiv 0$ on $\RR_{[t', t'']} \setminus P$ for $P := P^* (x_\infty; C^*, - t')$.
Then
\begin{equation} \label{eq_claim_int_parts_identity_v_star}
 \int_{\RR^*_{t''}} u^2 \, v^* \, dg_{t''} \leq - \int_{t'}^{t''} \int_{\RR^*_t} u^2 \, \square^* v^* \, dg_t dt. 
\end{equation}
\end{Claim}

\begin{proof}
Fix some small $r > 0$ and let $\eta_r \in C^\infty (\RR)$ be the function from Lemma~\ref{Lem_eta_r_on_RR}.
It follows that the product $u^2 v^* \eta_r$ is compactly supported in $\RR^*_t$ for all $t \in [t', t'']$.
Therefore, using the bounds (\ref{eq_v_bounds_as_limit}), (\ref{eq_u_bounds_loc_der}) and Lemma~\ref{Lem_eta_r_on_RR},
\begin{multline} \label{eq_int_by_parts_triangle}
\bigg| \int_{t'}^{t''}  \int_{\RR^*_{t}} \big( -v^*  \triangle u^2   + u^2 \triangle v^* \big) dg_t dt \bigg|
= \lim_{r \to 0} \bigg| \int_{t'}^{t''} \int_{\RR^*_{t}} \big( -v^*  \triangle u^2   + u^2 \triangle v^* \big) \eta_r dg_t dt \bigg| \\
= \lim_{r \to 0} \bigg| \int_{t'}^{t''} \int_{\RR^*_{t}} \big( -  u^2 \nabla v^* \nabla \eta_r     +2 v^* u \nabla u \nabla \eta_r\big) dg_t dt \bigg| 
= \lim_{r \to 0} C r^{-2} \int_{t'}^{t''} \int_{\{ \tdrrm \leq 2r \} \cap \RR^*_{t}}  dg_t dt 
= 0.
\end{multline}
Next, by integrating along every worldline and using Lemma~\ref{Lem_eta_r_on_RR}\ref{Lem_eta_r_on_RR_b6} and the fact that $u^2 v^* \geq 0$, we obtain
\[  \int_{t'}^{t''}  \int_{\RR^*_{t}} \big(  \partial_{\tf} (u^2 v^* \eta_r) -  u^2 v^* \eta_r R \big)  dg_t dt \geq  \int_{\RR^*_{t''}} u^2 \, v^* \, \eta_r \, dg_{t''} . \]
Therefore, as in (\ref{eq_int_by_parts_triangle}),
\begin{align} 
\int_{t'}^{t''}  \int_{\RR^*_{t}} &\big( v^* \partial_{\tf} u^2 + u^2 \partial_{\tf} v^* - u^2  R v^* \big) dg_t dt
= \lim_{r \to 0} \int_{t'}^{t''} \int_{\RR^*_{t}} \big( v^* \partial_{\tf} u^2 + u^2 \partial_{\tf} v^* -  u^2 v^* R \big) \eta_r \, dg_t dt \notag \\
&= \lim_{r \to 0} \int_{t'}^{t''} \int_{\RR^*_{t}} \big(  \partial_{\tf} (u^2 v^* \eta_r) - u^2 v^* \partial_{\tf}  \eta_r  - u^2 v^* R \eta_r \big) dg_t dt \notag \\
&\geq \limsup_{r \to 0} \bigg( \int_{t'}^{t''}  \int_{\RR^*_{t}}  \big( \partial_{\tf} (u^2 v^* \eta_r) - u^2 v^* R \eta_r \big) dg_t dt - Cr^{-2} \int_{t'}^{t''}  \int_{\{ \tdrrm \leq 2r \} \cap \RR^*_{t}} dg_t dt \bigg) \notag \\
&\geq  \int_{\RR^*_{t''}} u^2 \, v^* \, dg_{t''}. \label{eq_int_by_parts_partial_t}
\end{align}
Combining (\ref{eq_int_by_parts_triangle}) and (\ref{eq_int_by_parts_partial_t}) and using the fact that $\square u^2 \leq 0$ implies (\ref{eq_claim_int_parts_identity_v_star}).
\end{proof}

Let $z \in \XX_{t'}$ be an $H_n$-center of $x_\infty$ and let $L < \infty$ be a constant whose value we will choose later.
Consider the diffeomorphisms $\psi_i : U_i \to V_i$ from Subsection~\ref{subsec_setup_limit}, describing the smooth convergence on $\RR^*$.
Choose a $10 L^{-1}$-Lipschitz function $w^*_L : \XX_{t'} \to [0,1]$ with $w^*_L \equiv 1$ on $B (z,L)$, $w^*_L \equiv 0$ on $\XX_{t'} \setminus B (z, 2L)$.
We may also assume that $w^*_L |_{\RR_{t'}}$ is smooth and that there are smooth functions $w^*_{L,i} : M_i  \times \{ t' \} \to [0,1]$ such that $w^*_{L,i} \circ \psi_i |_{\RR^*_{t'}} \to w^*_L |_{\RR^*_{t'}}$ in $C^\infty_{\loc}$ on $\RR^*_{t'}$ and such that 
\[ |\nabla w^*_{L,i}| \leq 10 L^{-1}. \]
Let $w_{L,i} \in C^\infty( M_i \times [t', 0])$ be solutions the heat equation $\square w_{L,i} = 0$ with initial condition $w_{L,i} (\cdot, t') = w_{L,i}^* (\cdot, t')$ and let $w_L : \XX_{[t', 0)} \to [0,1]$ be the heat flow with initial condition $w_{L,t'} = w^*_L$.
Then $w_{L,i} \circ \psi_i \to w_L$ on $\RR^*_{[t', t'']}$ in $C^\infty_{\loc}$.
It follows that 
\begin{equation} \label{eq_nab_w_L_bounded}
|\nabla w_L| \leq 10 L^{-1} \qquad \text{on} \quad \RR^*_{[t', t'']}
\end{equation}
and
\begin{multline} \label{eq_int_nab_w2_L_bounded}
 2\int_{t'}^{t''} \int_{\RR^*_{t}} |\nabla^2 w_L|^2 v \, dg_t dt 
\leq \limsup_{i \to \infty} \int_{t'}^{t''} \int_{M_i} 2 |\nabla^2 w_{L,i}|^2 v_i  \, dg_{i,t} dt \\
\leq \limsup_{i \to \infty} \int_{M_i} |\nabla w_{L,i}|^2 d\nu^*_{i,t'}
\leq 100 L^{-2}. 
\end{multline}

Let $\eta : [0,1] \to [0,1]$ be a cutoff function with $\eta \equiv 0$ on $[0,.1]$, $\eta \equiv 1$ on $[.9,1]$ and $|\eta'| \leq 10$.
Set $\phi_L := \eta \circ w_L$.
We claim that there is a constant $A$, which may depend on $L$, such that
\begin{equation} \label{eq_supp_phi_L_P_var}
 \phi_L \equiv 0 \qquad \text{on} \quad \RR^*_{[t', t'']} \setminus P^* (x_\infty; A, t'). 
\end{equation}
To see this, consider some point $y' \in \RR^*_{[t', t'']} \setminus P^* (x_\infty; A, t')$ and let $z' \in \XX_{t'}$ be an $H_n$-center of $y'$.
Then
\[ w_L (y') = \int_{\RR_{t'}} w^*_L \, d\nu_{y'; t'}
\leq \nu_{y'; t'} ( B (z, 2L) )
\leq \frac{{\Var} (\nu_{y'; t'}, \delta_{z'})}{(d_{t'} (z, z') - 2L)^{2}} 
\leq  \frac{H_n |t'|}{(d_{t'} (z, z') - 2L)^{2}}  \]
and
\begin{multline*}
 d_{t'} (z, z') 
\geq 
d^{\XX_{t'}}_{W_1} (\nu_{y'; t'}, \nu_{x_\infty; t'})
- d^{\XX_{t'}}_{W_1} (\delta_z, \nu_{x_\infty; t'})
- d^{\XX_{t'}}_{W_1} (\nu_{y'; t'}, \delta_{z'} ) \\
\geq A - \sqrt{\Var (\delta_z, \nu_{x_\infty; t'})} - \sqrt{\Var (\nu_{y'; t'}, \delta_{z'} )}
\geq A - 2\sqrt{H_n |t'|}. 
\end{multline*}
So if $A$ is sufficiently large, depending on $L$, we have $w_L(y') \leq .1$ and therefore $\phi_L (y') = 0$.

Due to (\ref{eq_supp_phi_L_P_var}), (\ref{eq_v_bounds_as_limit}) and derivative estimates on $w_L$, we can apply Claim~\ref{Cl_int_parts_v_star} with $v^* := v \phi_L$.
We obtain using (\ref{eq_nab_w_L_bounded}), (\ref{eq_int_nab_w2_L_bounded}) that
\begin{align*}
 \int_{\RR^*_{t''}} u^2 \,  v \phi_L  \, dg_{t''} 
&\leq - \int_{t'}^{t''} \int_{\RR^*_{t}} u^2 \square^* (v\phi_L ) dg_t dt \\
&=  \int_{t'}^{t''} \int_{\RR^*_{t}}  u^2 (2 \nabla v \cdot \nabla \phi_L + v (\partial_{\tf} + \triangle ) \phi_L) dg_t dt \\
&\leq C \int_{t'}^{t''} \int_{\RR^*_{t}}  \big( L^{-1} |\nabla v|  + L^{-2} v +  |\nabla^2 w_L | v  \big) dg_t dt \\
&\leq C L^{-1} + C \int_{t'}^{t''} \int_{\RR^*_{t}} (L^{-1} +  L |\nabla^2 w_L |^2) v \,  dg_t dt
\leq CL^{-1} ,
\end{align*}
where $C$ is independent of $L$.
Lastly, observe that $w_L, \phi_L \to 1$ pointwise as $L \to \infty$.
So letting $L \to \infty$ implies (\ref{eq_int_uv_tpp}), which finishes the proof. 
\end{proof}
\bigskip

\begin{proof}[Proof of Theorem~\ref{Thm_XX_uniquely_det_RR}.]
Using Theorem~\ref{Thm_SS_dimension_bound_limit}\ref{Thm_SS_dimension_bound_limit_c} we can uniquely extend the isometries $\RR_{t} \to \RR'_{t}$ to isometric embeddings $\phi_t : \XX_t \to \XX'_t$ for all $t \in I_\infty \setminus \{ 0 \}$.
Next let $x \in \RR_t$, $s < t$, $s, t \in I_\infty \setminus \{ 0 \}$, and consider a bounded continuous function $\ov u' \in C^0_c (\RR'_s)$ with compact support.
Let $u' : \XX'_{[s,0)} \to \IR$ be the heat flow on $\XX'$ with initial condition $\ov u'$.
Then $u' |_{\RR'_{(s, 0)}}$ is smooth and $\square u' = 0$.
Thus, if we set $u := u' \circ \phi$, then $\square u = 0$ on $\RR$.
It follows using Theorem~\ref{Thm_SS_dimension_bound_limit}\ref{Thm_SS_dimension_bound_limit_d} that $u |_{\RR_{(s, 0)}}$ is the restriction of the heat flow with initial condition $\ov u := \ov u' \circ \phi_s$ to $\RR_{(s, 0)}$.
Thus, for any $x \in \RR_{(s, 0)}$
\[ \int_{\RR'_s} \ov u' \, d\nu'_{\phi_t(x); s}
= u' (\phi_t(x)) 
= u (x) 
= \int_{\RR_s} \ov u \, d\nu_{x; s} 
= \int_{\RR'_s} \ov u' \, d ((\phi_s)_* \nu_{x;s} ) . \]
It follows that 
\begin{equation} \label{eq_phi_s_nu_nu_p}
(\phi_s)_* \nu_{x;s} = \nu'_{\phi_t(x), s}
\end{equation}
 on $\RR_s$ and since $((\phi_s)_* \nu_{x;s} )(\RR_s) = 1$, we have (\ref{eq_phi_s_nu_nu_p}) on $\XX_s$.
 Since $\XX'$ has full support, we find that $\phi_s$ is also surjective.
 Since $\nu_{x;s}, \nu'_{x;s}$ depend continuously on $x$, we find that (\ref{eq_phi_s_nu_nu_p}) holds for all $x \in \XX_{(s, 0)}$, which finishes the proof.
\end{proof}
\bigskip

\subsection{\texorpdfstring{The pointed-Nash entropy and an $\eps$-regularity theorem in the limit}{The pointed-Nash entropy and an {\textbackslash}eps-regularity theorem in the limit}}
The next theorem concerns the convergence of the Nash-entropy and establishes the known properties in the limit, such as an $\eps$-regularity theorem; compare with \cite{Bamler_HK_entropy_estimates}.

\begin{Theorem}\label{Thm_NN_in_limit}
Suppose we are in the setting described in Subsection~\ref{subsec_setup_limit} and consider a subsequence of the given sequence of Ricci flows that allows the definition of $\tdrrm$ according to Lemma~\ref{Lem_tdrrm}.
Let $x'_\infty \in \XX_{t'_\infty}$ be a point and $r > 0$ a scale satisfying $[t'_\infty - r^2, t'_\infty] \subset I_\infty$ and
\begin{equation} \label{eq_dnu_SS_0_condition}
 d\nu_{x'_\infty; t' - r^2} (\SS^*_{ t'_\infty - r^2} ) = 0. 
\end{equation}
Then the following holds:
\begin{enumerate}[label=(\alph*)]
\item \label{Thm_NN_in_limit_a} For any sequence $(x'_i, t'_i) \in M_i \times I_i$ converging to $x'_\infty$ within $\CF$ we have
\[ \lim_{i \to \infty} \NN_{x'_i, t'_i} (r^2) = \NN_{x'_\infty} (r^2). \]
\item \label{Thm_NN_in_limit_b} There is a dimensional constant $\eps_0 > 0$ such that if $\NN_{x'_\infty} (r^2) \geq - \eps_0$, then $x'_\infty \in \RR^*$ and $\tdrrm(x'_\infty) \geq \eps_0 r$.
\item \label{Thm_NN_in_limit_c} If $x'_\infty \in \RR$, then
\[ \lim_{\substack{r \searrow 0 \\ \text{(\ref{eq_dnu_SS_0_condition}) holds}}} \NN_{x'_\infty} (r^2) = 0. \]
\end{enumerate}
\end{Theorem}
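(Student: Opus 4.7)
The plan is to treat the three assertions in turn, relying on the convergence machinery of Section~\ref{Sec_basic_limits} together with the preliminary $\eps$-regularity from Section~\ref{sec_prelim_eps_reg}. For part~(a), I would first pass (by a diagonal subsequence) to a sequence along which the convergence in~(\ref{eq_setup_Mi_to_XX}) is timewise at $s := t'_\infty - r^2$. By Theorem~\ref{Thm_SS_dimension_bound_limit}\ref{Thm_SS_dimension_bound_limit_b} together with the hypothesis~(\ref{eq_dnu_SS_0_condition}), the measure $\nu_{x'_\infty;s}$ is supported on $\RR^*_s$. Writing the integrands of the Nash-entropies as $f_i := -\log K^i(x'_i,t'_i;\cdot,s) - \tfrac{n}{2}\log(4\pi r^2)$ and similarly $f_\infty$, the $C^\infty_{\loc}$-convergence of heat kernels on $\RR^*$ (Subsection~\ref{subsec_setup_limit}) gives pointwise convergence $f_i \to f_\infty$ there. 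I would split $\int f_i \, d\nu_{x'_i,t'_i;s}$ into a bulk part over $P^*(x_\infty;A,-T) \cap \RR^*_s$, where pointwise convergence together with the uniform weighted $L^p$-bounds of Proposition~\ref{Prop_improved_L2} yields uniform integrability and hence passage to the limit; a part supported near $\SS^*_s$, which vanishes for $\nu_{x'_\infty;s}$ by assumption and in the limit $i\to\infty$ by Theorem~\ref{Thm_SS_dimension_bound_limit}\ref{Thm_SS_dimension_bound_limit_a}; and an exterior tail outside $P^*(x_\infty;A,-T)$, whose $f$-integral is controlled by the Gaussian upper bound of Proposition~\ref{Prop_L_infty_HK_bound} and the variance monotonicity of Proposition~\ref{Prop_monotonicity_Var}, and tends to zero uniformly in $i$ as $A\to\infty$.

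For part~(b), since $\NN_{x'_\infty}(0)=0$ and $\NN_{x'_\infty}$ is non-increasing, the hypothesis forces $\NN_{x'_\infty}(\tau)\in[-\eps_0,0]$ for all $\tau\in[0,r^2]$, and part~(a) transfers the same bound (with $2\eps_0$) to each $\NN_{x'_i,t'_i}(\tau)$ at large $i$. Proposition~\ref{Prop_NN_almost_constant_selfsimilar} then makes $(x'_i,t'_i)$ a $(\delta,r)$-selfsimilar point for any prescribed $\delta$ when $\eps_0$ is chosen small enough. To additionally secure the $(\delta,r)$-static and strongly $(n,\delta,r')$-split hypotheses required by Proposition~\ref{Prop_eps_regularity_np2}, staticity is deduced by pairing $(x'_i,t'_i)$ with a second $(\delta,r)$-selfsimilar point at a time separated by order $r^2$ (existing via the variation bounds of Proposition~\ref{Prop_NN_variation_bound}) and invoking the almost-static splitting Proposition~\ref{Prop_almost_static}. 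The strong splitting map is constructed by iterating Proposition~\ref{Prop_extending_splitting_maps_almost_radial} starting from the trivial $0$-splitting: at stage $k<n$, the almost-radial function $q_k := 4\tau(f - W) - \sum_{i=1}^{k} y_i^2$ of Proposition~\ref{Prop_construction_almost_radial} satisfies $\int q_k \, d\nu_t \approx 2\tau(n-k)>0$ (using $W\approx 0$ and the moment bound $\int y_i^2 \, d\nu_t \approx 2\tau$ from Proposition~\ref{Prop_properties_splitting_map}\ref{Prop_properties_splitting_map_a}), so $q_k$ is positive on average on a nearby parabolic ball, supplying the $(k+1)$-st coordinate at a comparable but slightly smaller scale. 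After $n$ iterations, Proposition~\ref{Prop_eps_regularity_np2} yields $\rrm \geq \eps' r'$ at a base point, and a short covering/continuity argument transfers this to $\rrm(x'_i,t'_i) \geq \eps_0 r$. Lemma~\ref{Lem_tdrrm}\ref{Lem_tdrrm_a},\ref{Lem_tdrrm_c} then give $x'_\infty\in\RR^*$ and $\tdrrm(x'_\infty) \geq \eps_0 r$ for a suitably chosen dimensional $\eps_0$.

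Part~(c) is more direct: if $x'_\infty\in\RR$, Lemma~\ref{Lem_tdrrm}\ref{Lem_tdrrm_g} produces an open, unscathed parabolic neighborhood $P^\circ$ of $x'_\infty$ inside $\RR^*$ on which the flow is smooth with uniformly bounded curvature. For $r$ smaller than a constant depending on $\tdrrm(x'_\infty)$ and satisfying~(\ref{eq_dnu_SS_0_condition}), the conjugate heat kernel based at $x'_\infty$ agrees, modulo Gaussian-small mass leakage across $\partial P^\circ$ controlled by Proposition~\ref{Prop_L_infty_HK_bound} and Proposition~\ref{Prop_monotonicity_Var}, with the classical smooth conjugate heat kernel on $P^\circ$. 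The standard short-time asymptotics on a smooth Ricci flow (equivalently, Proposition~\ref{Prop_NN_basic_properties} applied on $P^\circ$) then give $\lim_{r\searrow 0}\NN_{x'_\infty}(r^2) = 0$. The main obstacle I anticipate lies in part~(b), namely arranging a full-dimensional strong splitting map together with the right almost-static companion property at the same (rescaled) scale: each invocation of Proposition~\ref{Prop_extending_splitting_maps_almost_radial} rescales by a factor $\beta\la$ and shifts the base point, so one must track how the small parameters $\delta$, $\la$, $\beta$, $\zeta$ depend on one another through the $n$-step iteration, while simultaneously maintaining almost-selfsimilarity and almost-staticity at the degraded scale and near-by base point, with a final $\delta$ still lying below the $\eps$-regularity threshold of Proposition~\ref{Prop_eps_regularity_np2}.
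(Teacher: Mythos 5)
Your treatment of part~(a) matches the paper's in spirit (smooth convergence on $\RR^*$ plus tail control; the paper uses $f\geq -C$ from Proposition~\ref{Prop_L_infty_HK_bound} and the $e^{-f/2}$-integrability of Proposition~\ref{Prop_int_ealphaf} rather than the weighted $L^p$-bounds of Proposition~\ref{Prop_improved_L2}, but the decomposition-and-uniform-integrability idea is the same), and part~(c) is compatible with the paper's blow-up rigidity argument. The problem is part~(b), where you have introduced a large detour that, besides being unnecessary, does not close.

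For part~(b) the paper's proof is one line: once (a) gives $\NN_{x'_i,t'_i}(r^2)\ge -2\eps_0$ for large $i$, the $\eps$-regularity theorem of the preparatory paper (\cite[\HKThmEpsRegularity]{Bamler_HK_entropy_estimates}) directly yields $\rrm(x'_i,t'_i)\ge c\,r$ for a dimensional $c$, and Lemma~\ref{Lem_tdrrm}\ref{Lem_tdrrm_a},\ref{Lem_tdrrm_c} then give $x'_\infty\in\RR^*$ and $\tdrrm(x'_\infty)\ge\eps_0 r$. Your bootstrap through splitting maps is not merely a longer route; it has a genuine gap. Proposition~\ref{Prop_extending_splitting_maps_almost_radial} requires the hypothesis
\[ \frac1{\tau^*}\int_{t_1-2\tau^*}^{t_1-\tau^*}\int_M (q_k)_+\,d\nu_{x_1,t_1;t}\,dt \geq (\la r)^2, \qquad \tau^*\in\bigl[\zeta(\la r)^2,\ (8n)^{-1}(\la r)^2\bigr],
\]
i.e.\ that the prospective new base point $(x_1,t_1)$ sits a definite distance away from the ``almost-vertex.'' Your own estimate $\int q_k\,d\nu_t\approx 2\tau(n-k)$ at the original base point gives, at the allowed scales $\tau^*\le (8n)^{-1}(\la r)^2$, at most $\tfrac{n-k}{4n}(\la r)^2 < (\la r)^2$, so the hypothesis is \emph{not} verified at $(x'_i,t'_i)$. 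One can of course locate a nearby $(x_1,t_1)$ where $q_k$ is large and extend the splitting map there, but that only regularizes a \emph{different} point; if $(x'_i,t'_i)$ itself were (asymptotically) a cone vertex, no such extension exists at that point, and $\rrm(x'_i,t'_i)$ would be small. What rules this out is precisely the near-maximal Nash entropy, and that is the content of the cited $\eps$-regularity theorem; it cannot be re-derived by iterating Propositions~\ref{Prop_construction_almost_radial} and~\ref{Prop_extending_splitting_maps_almost_radial} alone. You should replace the entire splitting-map iteration in part~(b) by the direct citation, after which the parameter-tracking concerns you flag at the end disappear.
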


Note that by Theorem~\ref{Thm_SS_dimension_bound_limit} that if $\Delta \geq 3$, then condition (\ref{eq_dnu_SS_0_condition}) holds automatically and if $\Delta \geq 1$, then (\ref{eq_dnu_SS_0_condition}) holds for almost every $r$.
Therefore, we obtain:

\begin{Corollary}\label{Cor_RRstar_RR}
$\RR^* = \RR$ and $\SS^* = \SS$.
Moreover, Theorems~\ref{Thm_SS_dimension_bound_limit}, \ref{Thm_NN_in_limit} hold for the original sequence of Ricci flows.
In other words, it is not necessary to consider a subsequence that allows the definition of $\tdrrm$ according to Lemma~\ref{Lem_tdrrm}.
\end{Corollary}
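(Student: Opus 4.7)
The plan is to leverage the $\eps$-regularity statement Theorem~\ref{Thm_NN_in_limit}\ref{Thm_NN_in_limit_b} together with the Nash-entropy continuity at regular points, Theorem~\ref{Thm_NN_in_limit}\ref{Thm_NN_in_limit_c}, to conclude $\RR \subset \RR^*$ for every admissible subsequence, and then upgrade this to the full sequence via a contradiction using the positivity of the curvature scale. The key dichotomy is that smooth convergence at a point is equivalent to uniform positivity of $\rrm$ along the entire tail of the sequence (via Hamilton's compactness and interior derivative estimates), so to obtain $x' \in \RR^*_{\textup{orig}}$ it suffices to rule out any subsequence along which $\rrm(x'_i, t'_i) \to 0$.

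First, I would fix any subsequence of the given sequence for which $\tdrrm$ is defined by Lemma~\ref{Lem_tdrrm}. Take $x' \in \RR$ with $t' := \tf(x') < 0$. By Theorem~\ref{Thm_SS_dimension_bound_limit}\ref{Thm_SS_dimension_bound_limit_b} (which holds since $\Delta \geq 1$), the set $\SS^*_{t'-r^2}$ has $g_{t'-r^2}$-measure zero, hence also $\nu_{x';t'-r^2}$-measure zero, for almost every $r > 0$ with $[t'-r^2, t'] \subset I_\infty$; thus condition (\ref{eq_dnu_SS_0_condition}) is satisfied for a cofinal set of small $r$. Along such $r$, Theorem~\ref{Thm_NN_in_limit}\ref{Thm_NN_in_limit_c} gives $\NN_{x'}(r^2) \to 0$, so for $r$ sufficiently small we have $\NN_{x'}(r^2) \geq -\eps_0$. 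Theorem~\ref{Thm_NN_in_limit}\ref{Thm_NN_in_limit_b} then yields $x' \in \RR^*$ and $\tdrrm(x') \geq \eps_0 r > 0$. This establishes $\RR \subset \RR^*$, hence equality, on every subsequence that admits $\tdrrm$; in particular $\tdrrm > 0$ on $\RR$ for every such subsequence.

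Now I would argue that $\RR \subset \RR^*$ also for the original sequence. Suppose $x' \in \RR$ and let $(x'_i, t'_i) \to x'$ within $\CF$. I claim $\liminf_{i \to \infty} \rrm(x'_i, t'_i) > 0$: if not, extract a subsequence along which $\rrm(x'_i, t'_i) \to 0$, then pass to a further subsequence allowing the definition of $\tdrrm$ via Lemma~\ref{Lem_tdrrm}; by Lemma~\ref{Lem_tdrrm}\ref{Lem_tdrrm_a} we would obtain $\tdrrm(x') = 0$, contradicting the previous paragraph applied to this further subsequence (where $x' \in \RR$ still, since $\RR$ depends only on $\XX$). Hence $\rrm(x'_i, t'_i) \geq \eps > 0$ for large $i$, which by standard Hamilton-type compactness and interior parabolic derivative estimates produces, on every precompact subset of a neighborhood of $x'$ in $\RR$, diffeomorphisms $\psi_i$ to neighborhoods in $M_i \times I_i$ with $\psi_i^* g^i \to g$ smoothly; uniqueness of the smooth limit (identifying it with $\RR$ near $x'$ via the Ricci flow spacetime structure) shows $x' \in \RR^*_{\textup{orig}}$. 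This gives $\RR = \RR^*$ and $\SS = \SS^*$ for the original sequence.

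Finally, with $\RR^* = \RR$ established for the original sequence, the convergence $\rrm(x'_i, t'_i) \to \tdrrm(x')$ along the full sequence (not just a subsequence) holds at every $x' \in \XX_{<0}$: on $\RR$ it follows from the smooth convergence, and on $\SS$ from Claim of Lemma~\ref{Lem_tdrrm}\ref{Lem_tdrrm_b} combined with the $\liminf$-argument above reversed (any subsequential limit of $\rrm$ must be $0$). Hence $\tdrrm$ is unambiguously defined for the original sequence without further extraction, and Theorems~\ref{Thm_SS_dimension_bound_limit} and \ref{Thm_NN_in_limit} now apply verbatim. The main obstacle is the delicate passage from "holds for every subsequence admitting $\tdrrm$" to "holds for the original sequence"; this is precisely where the contradiction via a further subsequence, together with the equivalence between smooth convergence and lower bounds on $\rrm$, is essential.
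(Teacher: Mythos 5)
Your proof is correct and follows the same route the paper has in mind: condition (\ref{eq_dnu_SS_0_condition}) holds for a.e.\ $r$ (as the paper notes just before the corollary), so Theorem~\ref{Thm_NN_in_limit}\ref{Thm_NN_in_limit_c} followed by \ref{Thm_NN_in_limit_b} gives $\RR \subset \RR^*$, and the upgrade to the original sequence is the standard "pass to a further $\tdrrm$-subsequence and derive a contradiction" argument. Two small remarks: the step ``$g_{t'-r^2}$-measure zero hence also $\nu_{x';t'-r^2}$-measure zero'' is not quite the right deduction (the measure-zero notion in Theorem~\ref{Thm_SS_dimension_bound_limit}\ref{Thm_SS_dimension_bound_limit_b} is stated directly in terms of conjugate heat kernel measures, not $g_t$, and $\nu_{x';t}$ need not be absolutely continuous with respect to $g_t$ on the singular part) --- but the paper's own remark preceding the corollary already asserts exactly what you need, so this doesn't affect correctness; and for the final step, rather than re-proving smooth convergence by Hamilton compactness and a uniqueness-of-limit argument, it is cleaner to cite Claim~3 (i.e.\ Claim~\ref{Cl_rrm_RRstar_conv}\ref{Cl_rrm_RRstar_conv_a} in the proof of Lemma~\ref{Lem_tdrrm}), which applies to an arbitrary subsequence --- in particular the original one --- and directly converts $\liminf_i \rrm(x'_i,t'_i) > 0$ into $x' \in \RR^*$.
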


\begin{proof}[Proof of Theorem~\ref{Thm_NN_in_limit}.]
In this proof $C < \infty$ will denote a generic constant whose value may depend on various geometric data, but not on the index $i$.

We first prove Assertion~\ref{Thm_NN_in_limit_a}.
Set $t'_\infty := \tf (x'_\infty)$.
By Lemma~\ref{Lem_conv_P_star} and Proposition~\ref{Prop_NN_variation_bound} we have $\NN_{x'_i, t'_i} (r^2) \geq -C$ for some uniform $C < \infty$.
Therefore by Proposition~\ref{Prop_L_infty_HK_bound} if $d\nu_{x'_i, t'_i} = (4\pi \tau_{x'_i, t'_i})^{-n/2} e^{-f_{x'_i, t'_i}} dg_i$, then $f_{x'_i, t'_i} \geq  - C$ on $M_i \times [t'_\infty - r^2, t'_i)$.
By Proposition~\ref{Prop_int_ealphaf} we also have 
\[ \int_{M_i} e^{-f_{x'_i, t'_i}/2} dg_{i, t'_\infty - r^2} \leq C < \infty. \]

Consider the diffeomorphisms $\psi_i : U_i \to V_i$ from Subsection~\ref{subsec_setup_limit}.
Then
\begin{equation} \label{eq_fxpinfty_bounds}
 f_{x'_\infty} \geq - C \qquad \text{on} \quad \RR^*_{t'_\infty - r^2}, \qquad
\int_{\RR^*_{t'_\infty -r^2} }   e^{-f_{x'_\infty}/2} dg_{t'_\infty - r^2} \leq C ,
\end{equation}
which implies
\begin{multline*}
 \bigg| \int_{M_i \times \{ t'_\infty - r^2 \} \setminus V_i}  f_{x'_i, t'_i} e^{-f_{x'_i, t'_i}} \, dg_{i, t'_\infty - r^2} \bigg|
\leq C \int_{M_i \setminus V_i} e^{f_{x'_i, t'_i}/4} e^{-f_{x'_i, t'_i}} \, dg_{i, t'_\infty - r^2}\\
\leq C \bigg(\int_{M_i \setminus V_i}  e^{-f_{x'_i, t'_i}/2} \, dg_{i, t'_\infty - r^2} \bigg)^{1/2} \bigg(\int_{M_i \setminus V_i}  e^{-f_{x'_i, t'_i}} \, dg_{i, t'_\infty - r^2} \bigg)^{1/2}
 \to 0,
 \end{multline*}
 \begin{multline*}
 \bigg| \int_{\RR^*_{t'_\infty -r^2} \setminus U_i}  f_{x'_\infty } e^{-f_{x'_\infty}} dg_{t'_\infty - r^2} \bigg|
\leq C \int_{\RR^*_{t'_\infty -r^2} \setminus U_i}  e^{f_{x'_\infty}/4} e^{- f_{x'_\infty}} dg_{t'_\infty - r^2}\\
\leq C\bigg( \int_{\RR^*_{t'_\infty -r^2} \setminus U_i}  e^{-f_{x'_\infty}/2} dg_{t'_\infty - r^2} \bigg)^{1/2} \bigg( \int_{\RR^*_{t'_\infty -r^2} \setminus U_i}   e^{- f_{x'_\infty}} dg_{t'_\infty - r^2} \bigg)^{1/2}
 \to 0 
\end{multline*}
Combining both bounds implies Assertion~\ref{Thm_NN_in_limit_a}.

Assertion~\ref{Thm_NN_in_limit_b} follows from Assertion~\ref{Thm_NN_in_limit_a} combined with Lemma~\ref{Lem_tdrrm}, \cite[\SYNThmExistencConvSeq]{Bamler_RF_compactness} and \cite[\HKThmEpsRegularity]{Bamler_HK_entropy_estimates}.

For Assertion~\ref{Thm_NN_in_limit_c}, let us only consider $r>0$ for which (\ref{eq_dnu_SS_0_condition}) holds.
By Assertion~\ref{Thm_NN_in_limit_a} we know that $\NN_{x'_\infty} (r^2)$ is non-increasing and non-positive for all such $r$.
So $A := \lim_{r \searrow 0} \NN_{x'_\infty}(r^2) \leq 0$ exists.
Suppose by contradiction that $A < 0$.
By (\ref{eq_fxpinfty_bounds}) we can find a constant $C^* < \infty$ such that
\begin{equation} \label{eq_f_geq_m_C_star}
 f_{x'_\infty} \geq - C^* \qquad \text{on} \quad \RR_{t'_\infty - r^2} 
\end{equation}
for each $r$ satisfying (\ref{eq_dnu_SS_0_condition}).
So using the local concentration property of the conjugate heat kernel based at $x'_\infty$ (see \cite{Bamler_RF_compactness}), we obtain that there is some $\delta > 0$ such that for large $L < \infty$  we have
\[ \limsup_{r \searrow 0} \int_{B(x'_\infty(t'_\infty - r^2), L r) \cap \RR_{t'_\infty - r^2} } f_{x'_\infty} e^{-f_{x'_\infty}} dg_{t'_\infty - r^2} - \frac{n}2 \leq - \delta . \]
Therefore, by a blow-up argument near $x'_\infty$, and using (\ref{eq_f_geq_m_C_star}), we obtain a solution $v \lb = (4\pi |t|)^{-n/2} \lb e^{-f''}$ to the backwards heat equation on $\IR^n \times \IR_-$ with
\[ v (\cdot, t) \leq \frac{ C}{ |t|}, \quad
\int_{\IR^n} v(\vec x, t) \, d\vec x = 1, \quad
\int_{\IR^n} |\vec x|^2 v(\vec x, t) \, d\vec x \leq H_n |t| , \quad
\int_{\IR^n} f'' v(\vec x, t) \, d\vec x - \frac{n}2 \leq - \delta < 0. \]
The first three bounds imply that $v = (4\pi |t|)^{-n/2} e^{-|\vec x|^2/4|t|}$, which contradicts the last bound.
\end{proof}
\bigskip

\subsection{\texorpdfstring{Splitting off an $\IR^k$-factor in the limit}{Splitting off an R\^{}k-factor in the limit}}
In the next series of results, we discuss how certain almost-symmetries of the Ricci flows $(M_i, (g_{i,t})_{t \in I_i})$ affect the limiting flow $\XX$.

The first result addresses the case in which the flows are strongly $(k, \eps_i, r)$-split for some $\eps_i \to 0$.

\begin{Theorem} \label{Thm_limit_from_strong_splitting}
Consider the setting described in Subsection~\ref{subsec_setup_limit}.
Suppose that $I_\infty = (-\infty, 0]$ and that there is a sequence of strong $(k,  \eps_i, r)$-splitting maps $\vec y^{\, i}$ at $(x_i,0)$ for $\eps_i \to 0$ and for some fixed $r > 0$.
Then, after passing to a subsequence, we have local smooth convergence $y^i_l \to y^\infty_l$ on $\RR$, $l = 1, \ldots, k$, where $y^\infty_1, \ldots, y^\infty_k \in C^\infty(\RR)$ satisfy for $j, l = 1, \ldots, k$
\begin{equation} \label{eq_y_infty_identities}
 \nabla y_j^\infty \cdot \nabla y_l^\infty  = \delta_{jl}, \qquad  \nabla^2 y_j^\infty = 0, \qquad  \partial_\tf y_j^\infty = 0. 
\end{equation}
Moreover, the heat kernel $K$ on $\RR$ satisfies the following infinitesimal spatial translation property for $l = 1, \ldots, k$
\begin{equation} \label{eq_K_infty_spatial_symm}
 \nabla_{x_1} K (x_1; x_0) \cdot \nabla y_l^\infty + \nabla_{x_0} K (x_1; x_0) \cdot \nabla y_l^\infty  = 0. 
\end{equation}
If $\Delta \geq 2$, then the vector fields $\nabla y_l^\infty$ are complete, i.e. $\RR$ isometrically splits off an $\IR^k$-factor as a Ricci flow spacetime.
If $\Delta \geq 4$, then this splitting induces a splitting of the form $\XX_{<0} \cong \XX' \times \IR^k$ for some metric flow $\XX'$ over $\IR_-$.
\end{Theorem}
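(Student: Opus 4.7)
The plan is to extract a $C^\infty_{\loc}$-convergent subsequence $y^i_l \to y^\infty_l$ on $\RR$, pass the integral identities of Proposition~\ref{Prop_properties_splitting_map} to the limit to obtain pointwise soliton-type identities for $y^\infty_l$, and finally integrate the resulting parallel Killing vector fields to produce the splitting. For the local smooth convergence I would use that $\RR = \RR^*$ by Corollary~\ref{Cor_RRstar_RR} and that the diffeomorphisms $\psi_i$ of Subsection~\ref{subsec_setup_limit} realize smooth convergence of the spacetime metric and of the conjugate heat kernel density $K^i(x_i,0;\cdot) \to K(x_\infty;\cdot)$ on compact subsets of $\RR$. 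On any compact $\Omega \subset\subset \RR$ the limit density $K(x_\infty;\cdot)$ is strictly positive and smooth, giving uniform two-sided positive bounds on $K^i \circ \psi_i$ on $\Omega$ for large $i$. Plugging these into Proposition~\ref{Prop_properties_splitting_map}\ref{Prop_properties_splitting_map_a} converts the weighted $L^{2m}$-estimates $\int_{M_i} (y^i_l)^{2m} \, d\nu_{x_i,0;t} \le C$ into uniform $L^{2m}$-bounds for $y^i_l \circ \psi_i$ against $dg_{i,t}$ on $\Omega$; combined with $\square y^i_l = 0$ and the smooth convergence $\psi_i^* g^i \to g$, standard interior parabolic regularity and a diagonal Arzela--Ascoli argument then produce the desired limit $y^i_l \circ \psi_i \to y^\infty_l$ in $C^\infty_{\loc}(\RR)$.

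Passing Proposition~\ref{Prop_properties_splitting_map} to the limit via Fatou and using positivity of $K(x_\infty;\cdot)$ on $\RR$, the slice-integrals $\int_{\RR_t} |\nabla y^\infty_j \cdot \nabla y^\infty_l - \delta_{jl}|^p \, d\nu_{x_\infty;t}$ and $\int_{\RR_{(t_1,t_2)}} |\nabla^2 y^\infty_l|^2 \, d\nu_{x_\infty;t} \, dt$ vanish, so $\nabla y^\infty_j \cdot \nabla y^\infty_l = \delta_{jl}$ and $\nabla^2 y^\infty_l = 0$ pointwise on $\RR$. The limit of $\square y^i_l = 0$ gives $\square y^\infty_l = 0$, which combined with $\triangle y^\infty_l = \operatorname{tr}\nabla^2 y^\infty_l = 0$ yields $\partial_{\tf} y^\infty_l = 0$, establishing (\ref{eq_y_infty_identities}). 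The vector fields $X_l := \nabla y^\infty_l$ are parallel orthonormal unit-length Killing fields that commute pairwise and with $\partial_{\tf}$; from $\nabla X_l = 0$ one deduces $R(\cdot,\cdot)X_l = 0$ and hence $\Ric(X_l,\cdot) = 0$ via a Bianchi-identity trace, so together with $\partial_{\tf} g = -2\Ric$ this gives $\partial_{\tf} X_l = 0$, and the local flow $\phi^l_s$ acts by Ricci flow spacetime isometries on $\RR$. Uniqueness of the heat kernel on $\RR$ (via Theorem~\ref{Thm_SS_dimension_bound_limit}\ref{Thm_SS_dimension_bound_limit_d} when $\Delta \ge 3$, or directly from smooth convergence of the $K^i$) forces $K \circ (\phi^l_s, \phi^l_s) = K$; differentiating at $s = 0$ yields (\ref{eq_K_infty_spatial_symm}).

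For completeness of $X_l$ under $\Delta \ge 2$, the key device is the isometry-invariance $\tdrrm \circ \phi^l_s = \tdrrm$ combined with Lemma~\ref{Lem_tdrrm}: if $\gamma : [0, s_{\max}) \to \RR_t$ were a maximal integral curve with $s_{\max} < \infty$, then $\tdrrm(\gamma(s)) \equiv \rho := \tdrrm(\gamma(0)) > 0$, and for $s$ close to $s_{\max}$ the unscathed parabolic neighborhood of size $\rho$ at $\gamma(s)$ from Lemma~\ref{Lem_tdrrm}\ref{Lem_tdrrm_g} permits extending $\gamma$ past $s_{\max}$ inside $\RR$, contradicting maximality. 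Since the commuting complete $X_l$ satisfy $\vec y^\infty \circ \phi_v = \vec y^\infty + v$, the induced $\IR^k$-action is free and proper, yielding a Ricci flow spacetime isometry $\RR \cong \RR' \times \IR^k$ with $\RR' := (\vec y^\infty)^{-1}(\vec 0)$; for $\Delta \ge 4$, Theorem~\ref{Thm_SS_dimension_bound_limit}\ref{Thm_SS_dimension_bound_limit_c} gives $d_{g_t} = d_t$ on $\RR_t$ and Theorem~\ref{Thm_XX_uniquely_det_RR} promotes this to a metric flow splitting $\XX_{<0} \cong \XX' \times \IR^k$. The main obstacle is the initial step of converting the global $d\nu_{x_i,0}$-weighted integral bounds into local unweighted regularity on compact subsets of $\RR$ without a priori heat kernel lower bounds, which forces us to exploit the smooth positive limit of the $K^i$ on $\RR$ directly rather than any pointwise estimate on the $y^i_l$ themselves.
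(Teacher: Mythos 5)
Your outline of the first half of the theorem is sound: the local $C^\infty_{\loc}$-extraction of $y^\infty_l$ on $\RR$ using the positivity of the limiting heat kernel density together with Proposition~\ref{Prop_properties_splitting_map} and interior parabolic estimates, and the passage of the integral bounds to pointwise identities (\ref{eq_y_infty_identities}), are correct and essentially as in the paper. The remainder, however, has a genuine gap.

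The problem is in the derivation of (\ref{eq_K_infty_spatial_symm}). You try to get $K\circ(\phi^l_s,\phi^l_s)=K$ from ``uniqueness of the heat kernel on $\RR$,'' citing Theorem~\ref{Thm_SS_dimension_bound_limit}\ref{Thm_SS_dimension_bound_limit_d}, which is only available for $\Delta\ge 3$, with the fallback ``or directly from smooth convergence of the $K^i$.'' But the theorem asserts (\ref{eq_K_infty_spatial_symm}) unconditionally, and it is needed in the bootstrap that eventually \emph{produces} $\Delta\ge 2,3,4$, so it must be proved without assuming a uniqueness statement that is not yet available. The fallback does not save this: $\nabla y^i_l$ is not a pointwise approximate Killing field of $g_i$ — Proposition~\ref{Prop_properties_splitting_map} only controls $\nabla^2 y^i_l$ in a conjugate-heat-kernel-weighted $L^2$-sense — so the flow of $\nabla y^i_l$ is not an approximate isometry of $M_i$ and does not approximately intertwine the $K^i$. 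The paper's proof deliberately avoids any invocation of heat-kernel uniqueness: it fixes a compactly supported test function $u^\infty$ at time $t_0$, flows it along $\nabla y^i_l$ and solves the heat equation on $M_i$ to produce auxiliary families $u^{\prime\prime,i}_h$, estimates $\square(\partial_h u^{\prime\prime,i} - \nabla y^i_l\cdot\nabla u^{\prime\prime,i})$ by $|\nabla^2 y^i_l|\,|\nabla^2 u^{\prime\prime,i}|$, which is small in the weighted $L^1$-sense, and passes this to the limit to obtain $\partial_h u^{\prime\prime,\infty} = \nabla y^\infty_l\cdot\nabla u^{\prime\prime,\infty}$; integration by parts and arbitrariness of $u^\infty$ then give (\ref{eq_K_infty_spatial_symm}). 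This dualizes the symmetry into a statement about test functions and hence never needs to know that the local isometry $\phi^l_s$ preserves the metric flow structure.

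A closely related soft spot appears in your completeness argument at $\Delta\ge 2$. You invoke $\tdrrm\circ\phi^l_s=\tdrrm$, but $\tdrrm$ is defined via $P^*$-parabolic neighborhoods of $\XX$ and conjugate heat kernels, i.e.\ via the full metric flow structure, not just the Ricci flow spacetime $\RR$ on which $\phi^l_s$ is an isometry. Establishing that $\phi^l_s$ is a flow-isometry of $\XX$ (hence preserves $\tdrrm$, $\NN_x$, $\nu_{x;\cdot}$) is precisely the heat-kernel-uniqueness issue again, only available at large $\Delta$. The paper instead deduces a lower bound on $\tdrrm$ along the trajectory from the constancy of $\NN_{\gamma(s)}(\tau)$, which is proved by a cutoff-function computation exploiting the already-established (\ref{eq_K_infty_spatial_symm}), combined with Theorem~\ref{Thm_NN_in_limit}. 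So the step of passing from an $\RR$-isometry to an invariance of global, flow-level quantities is the missing ingredient you would need to supply, and it is not obtainable by the route you propose at small $\Delta$.
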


\begin{proof}[Proof of Theorem~\ref{Thm_limit_from_strong_splitting}.]
By parabolic rescaling we may assume that $r = 1$; we will use the letter $r$ for other purposes in the following.
Using standard local derivative estimates and Proposition~\ref{Prop_properties_splitting_map}, we obtain local uniform bounds that allow us to conclude that after passing to a subsequence, we have local smooth convergence $y^i_l \to y^\infty_l \in C^\infty (\RR)$ for each $l = 1, \ldots, k$.
The identities (\ref{eq_y_infty_identities}) are clear.

Next, we prove (\ref{eq_K_infty_spatial_symm}).
For this purpose, fix some arbitrary $t_0 < 0$, an arbitrary compactly supported function $u^\infty \in C^\infty_c (\RR_{t_0})$ and some $l \in \{ 1, \ldots, k \}$.
Choose functions $u^i \in C^\infty_c (M_i \times \{ t_0 \})$ such that $u^i \to u^\infty$ in $C^\infty_{\loc}$.
By applying the flow of the vector field $\nabla y^i_l$ we may extend each $u^i$ to a smooth family of functions $u^{\prime, i} \in C^\infty ( M_i \times \{ t_0 \} \times (- \alpha, \alpha))$ for $\alpha > 0$ depending on a parameter $h$ such that
\begin{equation} \label{eq_upi_identities}
  u^{\prime, i} (\cdot, t_0, 0) = u^i (\cdot, t_0), \qquad \nabla y^i_l \cdot \nabla u^{\prime, i} (\cdot, t_0, h) = \frac{\partial u^{\prime, i}}{\partial h} (\cdot, t_0, h) 
\end{equation}
and such that $u^{\prime, i}$ smoothly converges to a smooth function $u^{\prime, \infty} \in C^\infty ( \RR_{t_0} \times (- \alpha, \alpha))$ that extends $u^\infty$ and whose restriction to $\RR_{t_0} \times [- \alpha', \alpha']$ for any $\alpha' < \alpha$ is compactly supported and satisfies (\ref{eq_upi_identities}) for $i = \infty$.
Next, solve the heat equation with initial condition $u^{\prime, i} (\cdot, t_0, h)$ for any $h \in (-\alpha, \alpha)$ and $i \in \IN$ to obtain functions $u^{\prime\prime, i}  \in C^\infty ( M_i \times [ t_0 ,0] \times (- \alpha, \alpha))$ such that
\begin{equation} \label{eq_uppi_HE}
 \square u^{\prime\prime, i} = 0, \qquad 
u^{\prime\prime, i} (\cdot, t_0, \cdot) = u^{\prime, i}. 
\end{equation}
Due to the smooth convergence of the heat kernels on $\RR$, we may pass to a subsequence and assume that we have local smooth convergence of $u^{\prime\prime, i}$ to some smooth function $u^{\prime\prime, \infty} \in C^\infty (\RR_{[t_0, 0)} \times (-\alpha, \alpha))$ satisfying (\ref{eq_uppi_HE}) for $i = \infty$ and
\begin{equation} \label{eq_uppinfity_int_HK}
 u^{\prime\prime, \infty} (\cdot, h) = \int_{\RR_{t_0}} u^{\prime, \infty} (x, h) K (\cdot; x ) dg_{t_0}(x). 
\end{equation}

We can compute that
\[ \square \frac{\partial u^{\prime\prime, i}}{\partial h} = \frac{d}{dh} \square  u^{\prime\prime, i} = 0, \qquad
 \square ( \nabla y_l^i \cdot \nabla u^{\prime\prime, i} ) = -2\nabla^2 y_l^i \cdot \nabla^2 u^{\prime\prime, i}. \]
This implies that in the sense of weak derivatives
\[ \square \Big| \frac{\partial u^{\prime\prime, i}}{\partial h} - \nabla y_l^i \cdot \nabla u^{\prime\prime, i} \Big| \leq 2 |\nabla^2 y_l^i | \, | \nabla^2 u^{\prime\prime, i} |. \]
So for any $t_1 \in [t_0, 0)$ we have for any fixed $h \in (-\alpha, \alpha)$, using Proposition~\ref{Prop_properties_splitting_map} and setting $\nu^i := \nu_{x_i,0}$,
\begin{align*}
  \int_{M_i}  &\Big|  \frac{\partial u^{\prime\prime, i}}{\partial h} - \nabla y_l^i \cdot \nabla u^{\prime\prime, i} \Big| d\nu^i_{t_1}  \leq  2  \int_{t_0}^{t_1} \int_{M_i}   |\nabla^2 y_l^i | \, | \nabla^2 u^{\prime\prime, i} | d\nu^i_{t} dt \\
  &\leq  \bigg(  2\int_{t_0}^{t_1} \int_{M_i}  | \nabla^2 y^i_l |^2 d\nu^i_{t} dt \bigg)^{1/2}
  \bigg(  2\int_{t_0}^{t_1} \int_{M_i}  | \nabla^2 u^{\prime\prime, i} |^2 d\nu^i_{t} dt \bigg)^{1/2} \\
  &\leq  \bigg(  2\int_{t_0}^{t_1} \int_{M_i}  | \nabla^2 y^i_l |^2 d\nu^i_{t} dt \bigg)^{1/2}
  \bigg(   \int_{M_i}  | \nabla u^{\prime\prime, i} |^2 d\nu^i_{t_0}  \bigg)^{1/2} \to 0.
\end{align*}
It follows that
\[ \frac{\partial u^{\prime\prime, \infty}}{\partial h} = \nabla y_l^\infty \cdot \nabla u^{\prime\prime, \infty}. \]
Combining this with (\ref{eq_upi_identities}), (\ref{eq_uppinfity_int_HK}) for $h  = 0$ implies that for any $x_1 \in \RR_{t_1}$
\[ \nabla y^\infty_l \int_{\RR_{t_0}} u^\infty (x_0) \nabla_{x_1} K (x_1; x_0) dg_{t_0} (x_0) =  \int_{\RR_{t_0}} (\nabla y^\infty_l \cdot \nabla u^{\infty}) (x_0) K (x_1; x_0 ) dg_{t_0} (x_0). \]
Integration by parts on the right-hand side and using $\nabla^2 y^\infty_l = 0$ implies that
\[  \int_{\RR_{t_0}} u^\infty (x_0) \nabla_{x_1} K (x_1; x_0) \cdot \nabla y^\infty_l   (x_1) \, dg_{t_0} (x_0) = - \int_{\RR_{t_0}} u^{\infty} (x_0)    \nabla_{x_0} K (x_1; x_0 ) \cdot \nabla y^\infty_l (x_0) \, dg_{t_0} (x_0). \]
Since $u^\infty$ was arbitrary, this implies (\ref{eq_K_infty_spatial_symm}).

Next, assume that $\Delta \geq 2$.
Our goal will be to show that then all trajectories of $\nabla y^\infty_l$, $l = 1, \ldots, k$, are complete.
By Theorem~\ref{Thm_NN_in_limit}\ref{Thm_NN_in_limit_c} we obtain that for any $x'_\infty \in \RR_t$ we have 
\[ \lim_{\tau' \to 0} \frac2{\tau'} \int_{\tau'/2}^{\tau'} \NN_{x'_\infty} (\tau'') d\tau'' = 0. \]

Now fix some $t_1 \in I_\infty \setminus \{ 0 \}$ and consider a maximal trajectory $\gamma : I^* \to \RR_{t_1}$ of $\nabla y_l^\infty$.
We claim that $s \mapsto \int_{\tau'/2}^{\tau'} \NN_{\gamma(s)} (\tau'') d\tau''$ is constant, which implies for small $\tau'$ that $\inf_{s \in I^*} \tdrrm(\gamma(s)) \geq c \sqrt{\tau'}  > 0$ via Theorem~\ref{Thm_NN_in_limit}\ref{Thm_NN_in_limit_b}.

To show the constancy statement, fix some small $\tau' > 0$ and set $t_0 := t_1 - \tau'$, $t'_0 := t_1 - \tau'/2$.
Assume that $\tau'$ is chosen small enough so that there is a point $x_0 \in \RR_{t_0}$ that survives until time $t'_0$.
Write
\[ K (\gamma(s); \cdot) \big|_{\RR_{[t_0,t'_0]}} = (4\pi \tau)^{-n/2} e^{-f_s} \]
for some smooth family $f \in C^\infty ( \RR_{[t_0,t'_0]} \times I^*)$.
Our goal will be to show that
\begin{equation} \label{eq_need_ts_int_ddsNN}
 \frac{d}{ds} \int_{t_0}^{t'_0} \int_{\RR_{t}} f_s e^{-f_s} dg_{t} dt = 0.
\end{equation}

Since $K$ is a limit of heat kernels, there is a constant $C < \infty$ such that for every $t \in [t_0, t'_0]$
\begin{equation} \label{eq_general_fs_bounds}
 - f_s, |f_s e^{-f_s}| \leq  C, \qquad \int_{t_0}^{t'_0} \int_{\RR_{t}}  |f_s e^{-f_s}| dg_{t} dt \leq C. 
\end{equation}
By (\ref{eq_K_infty_spatial_symm}) we have
\[ \partial_s f_s = \nabla y_l^\infty \cdot \nabla f_s. \]
Consider the functions $(\eta_r \in C^\infty (\RR))_{r > 0}$ from Lemma~\ref{Lem_eta_r_on_RR} (which can be applied after passing to a suitable subsequence).
We also fix some $\delta > 0$ and a smooth cutoff function $\ov\eta_\delta : [0, \infty) \to [0, \infty)$ with $\ov\eta_\delta \equiv 0$ on $[0, \delta]$ and $\ov\eta_\delta (a) = a$ on $[2 \delta, \infty)$.
By Lemmas~\ref{Lem_limit_HK_bound}, \ref{Lem_eta_r_on_RR} we know that for any $t \in [t_0, t'_0]$ the support of the functions $\ov\eta_\delta ( e^{-f_s} )$ is contained in a compact subset of $\RR_t$.
Therefore, for any $s_1, s_2 \in I^*$ we have
\begin{multline} \label{eq_fsetaefs}
 \int_{t_0}^{t'_0} \int_{\RR_t} f_s \ov\eta_\delta (e^{-f_s}) \eta_r \, dg_t dt \bigg|_{s = s_1}^{s=s_2}
= \int_{s_1}^{s_2} \int_{t_0}^{t'_0} \int_{\RR_t} \nabla y_l^\infty \cdot \nabla \big(  f_s \ov\eta_\delta (e^{-f_s} )\big) \eta_r \, dg_t dt ds \\
= - \int_{s_1}^{s_2} \int_{t_0}^{t'_0} \int_{\RR_t}  f_s \ov\eta_\delta (e^{-f_s} ) \nabla y^\infty_l \cdot \nabla \eta_r \, dg_t dt ds 
\end{multline}
Since $f_s \ov\eta_\delta (e^{-f_s} )$ is uniformly bounded, the right-hand side of (\ref{eq_fsetaefs}) goes to zero as $r \to 0$ and we obtain that
\[  \int_{t_0}^{t'_0} \int_{\RR_t} f_s \ov\eta_\delta (e^{-f_s}) \, dg_t dt \bigg|_{s = s_1}^{s=s_2} = 0. \]
Letting $\delta \to 0$ and using (\ref{eq_general_fs_bounds}) implies that
\[  \int_{t_0}^{t'_0} \int_{\RR_t} f_s e^{-f_s}  dg_t dt \bigg|_{s = s_1}^{s=s_2} = 0, \]
which shows (\ref{eq_need_ts_int_ddsNN}).

If $\Delta = 4$, then by Theorem~\ref{Thm_SS_dimension_bound_limit}(c), the time-slices $(\XX_t, d_t)$ are the metric completions of $(\RR_t, g_t)$.
Therefore, $\XX_t \cong \XX'_t \times \IR^k$ for some family of complete metric spaces $(\XX'_t, d'_t)$.
Using this identification, we define for any $x' \in \XX'_t$ and $-T_\infty < s < t < 0$:
\[ \nu'_{x'; s} := (\proj_{\XX'_s} )_* \nu_{(x, \vec 0); s} . \]
It can be checked easily that $\XX'$ combined with the probability measures $\nu'_{x';s}$ form an $H_n$-con\-cen\-trat\-ed metric flow.
To see that $\XX_{<0} \cong \XX' \times \IR^k$ as metric flows, it remains to check that for any $(x', \vec b) \in \XX'_t \times \IR^k$
\begin{equation} \label{eq_nu_is_nu_otimes_nu}
 \nu_{(x', \vec b); s} = \nu'_{x';s} \otimes \nu^{\IR^k}_{(\vec b, t);s}, 
\end{equation}
where
\[ \nu^{\IR^k}_{(\vec b, t);s} = (4\pi (t-s))^{-n/2} \exp \Big( -\frac{ |\vec b - \vec x|^2}{4 (t-s)} \Big) d\vec x \]
is the standard Gaussian measure on $\IR^k$.
To see this, note that the heat flows on $\XX$ defined with respect to both families of measures in (\ref{eq_nu_is_nu_otimes_nu}) satisfy the heat equation on $\RR$.
So by Theorem~\ref{Thm_SS_dimension_bound_limit}\ref{Thm_SS_dimension_bound_limit_d} both families of measures induce the same heat flows and therefore they must be the same.
\end{proof}
\bigskip

\subsection{Static limits}
The next result addresses the case in which the flows $(M_i, (g_{i,t})_{t \in I_i})$ are $( \eps_i, r)$-static for some $\eps_i \to 0$.
In this case we expect a static limit.

\begin{Theorem} \label{Thm_limit_from_static}
Suppose that $I_\infty = (-\infty, 0]$ and that $(x_i, 0)$ is $( \eps_i, r)$-static for $\eps_i \to 0$ and fixed $r > 0$, or more generally, that $I_\infty = (-T_\infty, 0]$ and
\begin{equation} \label{eq_intint_Ric_to_0}
 \int_{T'_1}^{T'_2} \int_{M_i} |{\Ric}|^2 d\nu_{x_i,0;t} dt \to 0, \qquad \text{for all} \quad [T'_1,T'_2] \subset (-T_\infty,0). 
\end{equation}
Then on $\RR$ we have
\[ \Ric = 0 \]
Moreover, the heat kernel $K$ on $\RR$ satisfies the following infinitesimal translation property
\begin{equation} \label{eq_K_dt_invariant}
 \partial_{\tf, x_1} K(x_1; x_0)  + \partial_{\tf, x_0} K(x_1; x_0)  = 0, 
\end{equation}
where $\partial_{\tf, x_1}$, $\partial_{\tf, x_0}$ denote the time-derivatives with respect to the first and second parameter, respectively.

If $\Delta \geq 3$, then there is a Ricci flat Riemannian manifold $(M_\infty, g_\infty)$ and an identification
\begin{equation} \label{eq_identification_RR}
 \RR = M_\infty \times (-T_\infty,0)
\end{equation}
such that $g_t = g_\infty$ for all $t \in (-T_\infty,0)$ and such that $\partial_{\tf}$ corresponds to the unit vector field on the second factor.
Moreover, if $\Delta \geq 4$, then (\ref{eq_identification_RR}) can be extended to an identification of the form $\XX_{<0} = X_\infty \times (-T_\infty,0)$ such that $d_t = d_\infty$ for all $t \in (-T_\infty,0)$, where $(X_\infty, d_\infty)$ denotes the metric completion of $(M_\infty, d_{g_\infty})$.
\end{Theorem}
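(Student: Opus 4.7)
The proof follows the template of Theorem~\ref{Thm_limit_from_strong_splitting}, with the spacetime vector field $\partial_{\tf}$ and the scalar $|\Ric|^2$ playing the roles of the gradients $\nabla y^i_l$ and the Hessians $|\nabla^2 y^i_l|^2$. The hypothesis (\ref{eq_intint_Ric_to_0}) subsumes the $(\eps_i, r)$-static case by Definition~\ref{Def_static}, so I work under (\ref{eq_intint_Ric_to_0}) throughout.

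To obtain $\Ric \equiv 0$ on $\RR = \RR^*$, fix $x^* \in \RR$ and a precompact open neighborhood $V \subset \RR^*$ of $x^*$. For large $i$ one has $V \subset U_i$, and smooth convergence of the heat kernels together with $K(x_\infty; \cdot) > 0$ on $\RR$ yields $K(x_i, 0; \cdot, \cdot) \geq c > 0$ on $\psi_i(V)$. Thus
\[ \int_V |\psi_i^* \Ric|^2 \, d(\psi_i^* g^i_t) \, dt \;\leq\; c^{-1} \int_{\psi_i(V)} |\Ric|^2 \, d\nu^i_t \, dt \;\longrightarrow\; 0 \]
by (\ref{eq_intint_Ric_to_0}), and $\psi_i^* \Ric \to \Ric$ in $C^\infty_{\loc}$ forces $\Ric \equiv 0$ on $V$.

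For the infinitesimal translation identity (\ref{eq_K_dt_invariant}), fix $u^\infty \in C^\infty_c(\RR_{t_0})$, lift to $u^i \in C^\infty_c(M_i \times \{ t_0 \})$, and for small $h \in (-\alpha, \alpha)$ let $u^{\prime\prime,i}(\cdot, \cdot, h) \in C^\infty(M_i \times [t_0 + |h|, 0])$ solve $\square u^{\prime\prime,i}(\cdot, \cdot, h) = 0$ with initial data $u^{\prime\prime,i}(\cdot, t_0 + h, h) = u^i$. Using $\partial_t g^{ij} = 2 \Ric^{ij}$ and the contracted second Bianchi identity $2 \nabla^j \Ric_{jk} = \nabla_k R$, a direct computation yields the clean commutator
\[ [\partial_t, \triangle_{g_t}] \, v \;=\; 2 \Ric^{ij} \nabla_i \nabla_j v, \]
the $\nabla \Ric$ contributions from $\partial_t \Gamma^k_{ij}$ cancelling under the trace against $g^{ij}$. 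Hence at $h=0$, the function $w^i := (\partial_t + \partial_h) u^{\prime\prime,i}|_{h=0}$ satisfies $\square w^i = 2 \Ric \cdot \nabla^2 u^{\prime\prime,i}|_{h=0}$ on $M_i \times [t_0, 0]$ with vanishing initial data (obtained by differentiating $u^{\prime\prime,i}(\cdot, t_0 + h, h) \equiv u^i$ in $h$ at $h = 0$). Integrating against $d\nu^i_t$ and applying Cauchy-Schwarz,
\[ \int_{M_i} |w^i| \, d\nu^i_t \;\leq\; 2 \Big( \int_{t_0}^{t} \!\! \int_{M_i} |\Ric|^2 \, d\nu^i_s \, ds \Big)^{1/2} \Big( \int_{t_0}^{t} \!\! \int_{M_i} |\nabla^2 u^{\prime\prime,i}|^2 \, d\nu^i_s \, ds \Big)^{1/2}, \]
where the second factor is uniformly bounded by the identity $\square |\nabla u|^2 = -2 |\nabla^2 u|^2$ (valid for $\square u = 0$) and the initial bound on $|\nabla u^i|^2$, and the first factor tends to $0$ by (\ref{eq_intint_Ric_to_0}). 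Smooth convergence of $u^{\prime\prime,i}$ on $\RR_{>t_0}$ promotes this to $(\partial_{\tf} + \partial_h) u^{\prime\prime,\infty}|_{h=0} \equiv 0$ on $\RR_{>t_0}$, and integration by parts against $u^\infty$, exactly as in the closing step of (\ref{eq_K_infty_spatial_symm}) (using $\Ric = 0$ so that the volume form $dg_{t_0}$ is worldline-invariant), yields (\ref{eq_K_dt_invariant}) pointwise on $\RR$.

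Assume now $\Delta \geq 3$. Since $\Ric \equiv 0$, one has $\partial_{\tf} g \equiv 0$ on $\RR$, so the locally defined flow of $\partial_{\tf}$ is a Riemannian isometry between nearby time slices, and by (\ref{eq_K_dt_invariant}) it also preserves the heat kernel. Consequently, along any maximal worldline $\gamma : I' \to \RR$ of $\partial_{\tf}$ and any sufficiently small fixed $\tau > 0$, the map $s \mapsto \NN_{\gamma(s)}(\tau)$ is constant on $I'$: the defining integral pulls back along the local worldline map, and by Theorem~\ref{Thm_SS_dimension_bound_limit}\ref{Thm_SS_dimension_bound_limit_b} (requiring $\Delta \geq 3$) together with Lemma~\ref{Lem_eta_r_on_RR}\ref{Lem_eta_r_on_RR_a}, the exceptional set of points whose worldlines fail to extend a short distance has vanishing Riemannian measure, making the small-time-increment comparisons exact in the limit. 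Theorem~\ref{Thm_NN_in_limit}\ref{Thm_NN_in_limit_c} lets me choose $\tau$ so that $\NN_{\gamma(s_0)}(\tau) > -\eps_0/2$, whence Theorem~\ref{Thm_NN_in_limit}\ref{Thm_NN_in_limit_b} gives a uniform positive lower bound on $\tdrrm \circ \gamma$; this rules out any premature endpoint via Lemma~\ref{Lem_tdrrm}\ref{Lem_tdrrm_g}, forcing $I' = (-T_\infty, 0)$. The complete flow of $\partial_{\tf}$ then provides the identification (\ref{eq_identification_RR}) with $g_t \equiv g_\infty := g_{t^*}$ and $\partial_{\tf}$ the unit vector on the second factor. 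For $\Delta \geq 4$, Theorem~\ref{Thm_SS_dimension_bound_limit}\ref{Thm_SS_dimension_bound_limit_c} identifies $(\XX_t, d_t)$ with the completion $(X_\infty, d_\infty)$ of $(M_\infty, d_{g_\infty})$; the product Ricci flow spacetime $M_\infty \times (-T_\infty, 0)$ assembles into an $H_n$-concentrated static metric flow whose regular part coincides with $\RR$, and Theorem~\ref{Thm_XX_uniquely_det_RR} extends this to the claimed flow-isometry $\XX_{<0} \cong X_\infty \times (-T_\infty, 0)$. The main obstacle is the worldline-completeness argument, which requires propagating a Nash-entropy lower bound along $\gamma$ without assuming a priori completeness: this is handled by iterating the symmetry (\ref{eq_K_dt_invariant}) in small time steps — using only local worldline existence from Lemma~\ref{Lem_tdrrm}\ref{Lem_tdrrm_g} — and absorbing the measure-zero exceptional set via the codimension bound Theorem~\ref{Thm_SS_dimension_bound_limit}\ref{Thm_SS_dimension_bound_limit_b} at each step.
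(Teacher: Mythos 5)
Your argument for $\Ric \equiv 0$ is correct (a simple heat-kernel positivity argument on precompact sets), and your derivation of (\ref{eq_K_dt_invariant}) follows the paper's template faithfully: the parametrization by initial time $t_0 + h$ with fixed initial data is equivalent to the paper's constraint $\partial_t u_h^{\prime\prime,i}(\cdot,h) + \partial_h u_h^{\prime\prime,i}(\cdot,h) = 0$, and the commutator computation $[\partial_t, \triangle]v = 2\Ric^{ij}\nabla_i\nabla_j v$ (via the contracted Bianchi identity) together with the Cauchy--Schwarz step matches the paper.

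The genuine gap is in your worldline-completeness argument for $\Delta \geq 3$. You assert that a uniform positive lower bound on $\tdrrm \circ \gamma$ ``rules out any premature endpoint via Lemma~\ref{Lem_tdrrm}\ref{Lem_tdrrm_g}, forcing $I' = (-T_\infty, 0)$,'' but Lemma~\ref{Lem_tdrrm}\ref{Lem_tdrrm_g} is asymmetric: it guarantees \emph{backward} existence of the flow for a definite time $\tdrrm^2(x')$, while \emph{forward} existence is only for some $T^* \in (0, \tdrrm^2(x')]$, which can be arbitrarily small and is not controlled by the curvature scale alone. So your argument correctly shows the worldline extends backward to $-T_\infty$, but leaves open the possibility of a premature forward endpoint $t^* < 0$. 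The paper closes this by a separate argument: when $t^* < 0$, Lemma~\ref{Lem_tdrrm}\ref{Lem_tdrrm_g} instead gives $\lim_{t \nearrow t^*} K(x''; \gamma(t)) = 0$ for any later $x''$, and then the translation identity (\ref{eq_K_dt_invariant}) pulls this vanishing backward along $\gamma$ --- for $\gamma_0$ a short worldline starting at $x''$, one gets $K(\gamma_0(t''), \gamma(t^*-a)) = K(\gamma_0(t''+a), \gamma(t^*)) = 0$ for all small $a > 0$ --- contradicting positivity of the conjugate heat kernel on $\RR$ (i.e.\ full support of $\nu_{\gamma_0(t'');s}$). Your closing sentence about ``iterating the symmetry (\ref{eq_K_dt_invariant}) in small time steps'' gestures at this, but misidentifies the obstacle (the constancy of $\NN$ along $\gamma$ is a purely local measure-theoretic statement that doesn't need a priori completeness) and does not actually supply the strong-maximum-principle contradiction that does the work. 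Without that step, the forward completeness --- and hence the identification (\ref{eq_identification_RR}) --- is not established.
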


\begin{proof}[Proof of Theorem~\ref{Thm_limit_from_static}.] The vanishing of $\Ric$ follows directly from the static condition or from (\ref{eq_intint_Ric_to_0}).
We will now show (\ref{eq_K_dt_invariant}).
As in the proof of Theorem~\ref{Thm_limit_from_strong_splitting}, fix some $-T_\infty < t_0 < 0$ and a compactly supported $u^\infty \in C^\infty_c (\RR_{t_0})$.
By a similar construction as in this proof and after passing to a subsequence, we can find smooth functions $u_h^{\prime\prime, i} \in C^\infty (M_i \times [h, 0])$ that smoothly depend on a parameter $h \in (t_0 - \alpha, t_0 + \alpha)$ such that
\[ \square u_h^{\prime\prime, i} = 0, \qquad \partial_t u_h^{\prime\prime, i} (\cdot, h) + \partial_h u_h^{\prime\prime, i} (\cdot, h) = 0 \]
and such that we have local smooth convergence $u^{\prime\prime, i} \to u^{\prime\prime, \infty}$, where $u^{\prime\prime, \infty}_{t_0} |_{\RR_{t_0}} = u^\infty$ and
\begin{equation} \label{eq_uhpp_uh_K}
 u^{\prime\prime, \infty}_h  = \int_{\RR_{h}} u_h^{\prime\prime, \infty} (x) K (\cdot; x) dg_{h}(x). 
\end{equation}

We may now compute that
\[ \square \partial_h u^{\prime\prime, i}_h = \frac{d}{dh} \square  u^{\prime\prime, i}_h = 0, \qquad
 \square ( \partial_t u^{\prime\prime, i}_h ) = 2 \Ric \cdot \nabla^2 u^{\prime\prime, i}_h. \]
Therefore, as in the proof of Theorem~\ref{Thm_limit_from_strong_splitting}, we have for any fixed $h \in (t_0 - \alpha, t_0 + \alpha)$ and $t_1 \in (h, 0]$ and $\nu^i := \nu_{x_i, 0}$
\begin{align*}
  \int_{M_i}  \big|  \partial_h u_h^{\prime\prime, i} + \partial_t u_h^{\prime\prime, i} \big| d\nu_{t_1}  &\leq  2  \int_{h}^{t_1} \int_{M_i}   |{\Ric} | \, | \nabla^2 u_h^{\prime\prime, i} | d\nu^i_t dt \\
  &\leq  \bigg(  2\int_{h}^{t_1} \int_{M_i}  | { \Ric} |^2 d\nu^i_t dt \bigg)^{1/2}
  \bigg(  2\int_{h}^{t_1} \int_{M_i}  | \nabla^2 u^{\prime\prime, i}_h |^2 d\nu^i_t dt \bigg)^{1/2} \\
  &\leq \bigg(  2\int_{h}^{t_1} \int_{M_i}  |{\Ric} |^2 d\nu^i_t dt \bigg)^{1/2}
  \bigg(   \int_{M_i}  | \nabla u^{\prime\prime, i}_h |^2 d\nu^i_{ h} \bigg)^{1/2} \to 0.
\end{align*}
So
\[  \partial_h u_h^{\prime\prime, \infty} + \partial_t u_h^{\prime\prime, \infty} = 0. \]
Combining this with (\ref{eq_uhpp_uh_K}) yields that for any $x_1 \in \RR_{t_1}$ (recall that $u^{\prime\prime, \infty}_{t_0} |_{\RR_{t_0}} = u^\infty$ is compactly supported in $\RR_{t_0}$) 
\begin{align*}
 \int_{\RR_{t_0}} u^{\infty} (x) \partial_{\mathfrak{t}, x_1} K (x_1 ; x)  dg_{t_0} (x) 
&= - \int_{\RR_{t_0}}  \bigg( \frac{\partial}{\partial h} \bigg|_{h = t_0} u^{\prime\prime, \infty}_{t_0} (x) \bigg) K (x_1 ; x) dg_{t_0} (x) \\
&=  \int_{\RR_{t_0}} \partial_{\tf}  u^{\prime\prime, \infty}_{t_0} (x) K (x_1 ; x) dg_{t_0} (x) \\
&= \int_{\RR_{t_0}}  \triangle u^{ \infty} (x) K (x_1 ; x) dg_{t_0} (x) \\
&= \int_{\RR_{t_0}}   u^{\infty} (x) \triangle_x K (x_1 ; x) dg_{t_0} (x) \\
&= -\int_{\RR_{t_0}}   u^{\infty} (x) \partial_{\mathfrak{t}, x} K (x_1 ; x) dg_{t_0} (x). 
\end{align*}
Since $u^\infty$ was arbitrary, this implies (\ref{eq_K_dt_invariant}).

Next, we assume that $\Delta \geq 3$ and prove that the trajectories of $\partial_{\mathfrak{t}}$ are complete on $I_\infty \setminus \{ 0 \}$.
This will be a consequence of the following claim:

\begin{Claim} \label{Cl_traj_dt_exist_unti_t_star}
Any maximal trajectory $\gamma : I' \to \RR$ of $\partial_\tf$ with $\tf (\gamma(t)) = t$ is defined on an interval of the form $I' = I_\infty \cap (-\infty, t^*)$ for $t^* \leq 0$.
If $t^* < 0$, then for any $x'' \in \XX_{t''}$ with $t'' > t^*$ we have $\lim_{t \nearrow t^*} K (x'', \gamma(t)) = 0$.
\end{Claim}

In fact, if Claim~\ref{Cl_traj_dt_exist_unti_t_star} holds for some trajectory $\gamma$, then we must have $t^* = 0$, because if $\gamma_0 : [t'', t'' + a_0]  \to \RR$ denotes another trajectory of $\partial_\tf$ with $\tf (\gamma_0 (t)) = t$ and $[t'', t''+a_0] \subset ( t^*, 0)$, then by (\ref{eq_K_dt_invariant}) we have for any $t \in I'$ and $a \in [0, a_0]$
\[ K (\gamma_0 (t'' + a), \gamma(t)) = K (\gamma_0 (t''), \gamma ( t - a)). \]
So $K (\gamma_0 (t''), \gamma (t^*-a)) = 0$ for all $a \in (0, a_0]$, which is impossible by the strong maximum principle and the fact that $d\nu_{\gamma_0(t''); t^*-a}$ has full support.

\begin{proof}[Proof of Claim~\ref{Cl_traj_dt_exist_unti_t_star}.]
By a similar argument as in the proof of Theorem~\ref{Thm_limit_from_strong_splitting}, the claim can be reduced to showing that $\NN_{\gamma(t)} (\tau)$ is constant along wordlines $\gamma$, i.e. trajectories of $\partial_{\tf}$.
See, in particular, Theorem~\ref{Thm_NN_in_limit}\ref{Thm_NN_in_limit_b} and Lemma~\ref{Lem_tdrrm}\ref{Lem_tdrrm_g}.
So fix some $[t_1, t_2] \subset (-T_\infty, 0)$ and $\tau > 0$ with $[t_1 + \tau, t_2 +\tau] \subset (-T_\infty, 0)$ and consider a trajectory $\gamma : [t_1+\tau, t_2+\tau] \to \RR$ of $\partial_{\mathfrak{t}}$ with $\mathfrak{t} (\gamma(t)) = t$.
Write
\[ K (\gamma( \mathfrak{t} (x) + \tau ) ; x) =: (4\pi \tau)^{-n/2} e^{-f(x)} \]
for some $f \in C^\infty ( \RR_{[t_1, t_2]} )$.
So
\begin{equation} \label{eq_partial_tf_f_0}
 \partial_{\mathfrak{t}} f = 0 
\end{equation}
and $f \geq - C$ by Proposition~\ref{Prop_L_infty_HK_bound}.
Our goal is to show that
\begin{equation} \label{eq_need_show_dt_NN_0}
 \int_{\RR_{t_1}} f_{t_1} e^{-f_{t_1}} dg_{t_1} = \int_{\RR_{t_2}} f_{t_2} e^{-f_{t_2}} dg_{t_2}. 
\end{equation}

Denote by $S_1 \subset \RR_{t_1}$ the set of points that  survive until time $t_2$.
Then $S_2 := S_1 (t_2) \subset \RR_{t_2}$ is the set of points the survive until time $t_1$.
It remains to show that $\RR_{t_1} \setminus S_1$ and $\RR_{t_2} \setminus S_2$ have measure zero.
Combining this with (\ref{eq_partial_tf_f_0}) will imply (\ref{eq_need_show_dt_NN_0}) using Fubini's Theorem.

Observe that by (\ref{eq_partial_tf_f_0})
\begin{multline} \label{eq_int_t1_int_t2}
 \int_{\RR_{t_1} \setminus S_1} e^{-f_{t_1}} dg_{t_1} 
= \int_{\RR_{t_1}} e^{-f_{t_1}} dg_{t_1} - \int_{S_1} e^{-f_{t_1}} dg_{t_1}
= (4\pi \tau)^{n/2} - \int_{S_2} e^{-f_{t_2}} dg_{t_2} \\
= \int_{\RR_{t_2}} e^{-f_{t_2}} dg_{t_2} - \int_{S_2} e^{-f_{t_2}} dg_{t_2}
=  \int_{\RR_{t_2} \setminus S_2} e^{-f_{t_2}} dg_{t_2} .   
\end{multline}
So it suffices to show that the last integral vanishes.
To see this, let $\eta_r$ be a cutoff function as in Lemma~\ref{Lem_eta_r_on_RR}.
Let $F < \infty$ be a constant whose value we will determine later.
By Lemmas~\ref{Lem_limit_HK_bound}, \ref{Lem_eta_r_on_RR} we find that for sufficiently large $A < \infty$ we have
\[ \{ f \leq F \} \subset P^* (x_\infty; A, t_2). \]
So by Lemma~\ref{Lem_eta_r_on_RR} there is a constant $C^*  < \infty$, which may depend on $F, A$, such that
\[ \int_{t_1}^{t_2} \int_{\{ f \leq F \} \cap \RR_{t}} |\partial_{\tf} \eta_r| dg_t dt \leq C^* r^{.5}. \]
Therefore, since every trajectory of $-\partial_{\tf}$ starting from $\RR_{t_2} \setminus S_2$ leaves $\{ \eta_r > 0 \}$ before it ceases to exist, we have
\begin{multline*}
  \int_{\{ f \leq F \} \cap (\RR_{t_2} \setminus S_2)} e^{-f_{t_2}} \, dg_{t_2} 
= 
 \lim_{r \to 0}  \int_{\{ f \leq F \} \cap (\RR_{t_2} \setminus S_2)} e^{-f_{t_2}} \eta_r \, dg_{t_2} \\
=  \lim_{r \to 0}  \int_{t_1}^{t_2} \int_{\{ f \leq F \} \cap \RR_{t}} \partial_{\tf} ( e^{-f} \eta_r ) dg_t dt 
=    \lim_{r \to 0}  \int_{t_1}^{t_2} \int_{\{ f \leq F \} \cap \RR_{t}}  e^{-f} \partial_{\tf} \eta_r \, dg_t dt 
= 0. 
\end{multline*}
Letting $F \to \infty$ implies
\[ \int_{\RR_{t_2} \setminus S_2} e^{-f_{t_2}} dg^{\RR}_{t_2} = 0. \]
So by combining this with (\ref{eq_int_t1_int_t2}), we obtain that $\RR_{t_1} \setminus S_1$ and $\RR_{t_2} \setminus S_2$ have measure zero, as desired.
\end{proof}

Finally, if $\Delta \geq 4$, then the remaining assertions of the theorem are a consequence of Theorem~\ref{Thm_SS_dimension_bound_limit}\ref{Thm_SS_dimension_bound_limit_c}, \ref{Thm_SS_dimension_bound_limit_d}.
\end{proof}
\bigskip

\subsection{Metric soliton limits}
The next result concerns the case in which the flows $(M_i, (g_{i,t})_{t \in I_i})$ are $(\eps_i, r)$-selfsimilar for some $\eps_i \to 0$.
In this case we expect a limit that is a metric soliton.

\begin{Theorem} \label{Thm_limit_from_selfsimilar}
Suppose that $I_\infty = (-\infty, 0]$ and that $(x_i, 0)$ is $(\eps_i, r)$-selfsimilar for $\eps_i \to 0$ and fixed $r > 0$ or, more generally, that $I_\infty = (-T_\infty, 0]$ and that $\NN_{x_\infty} (\tau)$ is constant in $\tau \in (0, T_\infty)$.
Then if we write $d\nu_{x_\infty} = (4\pi \tau)^{-n/2} e^{-f}dg$ on $\RR$, we have
\begin{align}
\Ric + \nabla^2 f - \frac1{2\tau} g &= 0, \label{eq_soliton_eq_limit_1} \\
 \tau (2 \triangle f - |\nabla f|^2 + R) + f - n &\equiv W := \NN_{x_\infty} (r^2).  \label{eq_soliton_eq_limit_2}
\end{align}
Moreover, the heat kernel $K$ on $\RR$ satisfies the following infinitesimal translation property
\begin{equation} \label{eq_K_dt_nab_f_invariance}
  (\tau \partial_{\tf}  -  \tau \nabla f )_{x_1} K(x_1; x_0) + (\tau \partial_{\tf} -  \tau \nabla f )_{x_0} K(x_1; x_0)  =  \frac{n}2 K(x_1; x_0).  
\end{equation}

If $\Delta \geq 3$, then the vector field $\partial_{\mathfrak{t}} -   \nabla f$ is complete on $(-T_\infty, 0)$.
If $\Delta \geq 4$, then $(\XX, \lb (\nu_{x_\infty; t})_{t \in (-T_\infty,0)})$ is a metric soliton.
There is an Riemannian manifold $(M_\infty, g_\infty)$ and an identification
\begin{equation} \label{eq_RR_identification_M_infty_sol}
\RR = M_\infty \times (-T_\infty, 0)
\end{equation}
such that $g_t = |t|g_\infty$ for all $t\in (-T_\infty, 0)$ and such that $\partial_{\mathfrak{t}} -   \nabla f$ corresponds to the standard vector field on the second factor.
In addition, (\ref{eq_RR_identification_M_infty_sol}) can be extended to an identification of the form $\XX_{<0} = X_\infty \times (-T_\infty,0)$ such that $d_t = |t|^{1/2} d_\infty$ for all $t\in (-T_\infty, 0)$, where $(X_\infty,d_\infty)$ denotes the metric completion of $(M_\infty, d_{g_\infty})$.
\end{Theorem}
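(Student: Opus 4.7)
The plan is to follow the templates of Theorems~\ref{Thm_limit_from_strong_splitting} and \ref{Thm_limit_from_static}: derive the pointwise soliton identities on $\RR$ from their approximate integral versions on the pre-limits, establish the infinitesimal heat-kernel symmetry (\ref{eq_K_dt_nab_f_invariance}) by a parameter-family heat-flow argument based on the self-similarity generator $V := \tau\partial_\tf - \tau\nabla f$, and finally use (\ref{eq_K_dt_nab_f_invariance}) together with Theorem~\ref{Thm_NN_in_limit}\ref{Thm_NN_in_limit_b} and Lemma~\ref{Lem_tdrrm}\ref{Lem_tdrrm_g} to upgrade the regular-part splitting to the global statements via Theorem~\ref{Thm_SS_dimension_bound_limit}.

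First I would reduce the two hypotheses to the single case $\NN_{x_\infty}\equiv W$. Given $(\eps_i, r)$-selfsimilarity with $\eps_i\to 0$, the second assertion of Proposition~\ref{Prop_NN_almost_constant_selfsimilar} combined with Theorem~\ref{Thm_NN_in_limit}\ref{Thm_NN_in_limit_a} (applied at a.e.~$\tau$ via Theorem~\ref{Thm_SS_dimension_bound_limit}\ref{Thm_SS_dimension_bound_limit_b}) forces $\NN_{x_\infty}(\tau)$ to be constant on $(0,T_\infty)$; conversely, constancy of $\NN_{x_\infty}$ combined with Theorem~\ref{Thm_NN_in_limit}\ref{Thm_NN_in_limit_a} renders $(x_i,0)$ $(\eps_i, r)$-selfsimilar for every fixed $r$ and some $\eps_i\to 0$, via the forward direction of Proposition~\ref{Prop_NN_almost_constant_selfsimilar}. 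In either case, Proposition~\ref{Prop_almost_soliton_identities} provides approximate soliton identities in $L^1(d\nu_{x_i,0})$; for the key identity (\ref{eq_soliton_eq_limit_1}), the monotonicity formula (\ref{eq_ddt_2_NN}) integrated in $\tau$ gives the pre-limit bound
\[ \int_{\tau_1}^{\tau_2} 2\tau\int_{M_i}\Big|\Ric+\nabla^2 f^i - \tfrac{1}{2\tau}g^i\Big|^2 d\nu^i_{-\tau}\, d\tau = \WW_{x_i,0}(\tau_1)-\WW_{x_i,0}(\tau_2), \]
whose right-hand side tends to $0$ for a.e.~$\tau_1<\tau_2$ because $\WW - \NN = \tau\NN'$ converges to $0$ in $L^1_{\loc}$ once $\NN_{x_i,0}\to W$. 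Restricting to any compact $K\subset\RR$ and using smooth convergence on $\RR$ together with $C^\infty_{\loc}$-convergence of the potentials $f^i\to f$, the non-negative integrand must vanish on $K$, hence on all of $\RR$ by continuity; (\ref{eq_soliton_eq_limit_2}) then follows either directly from identity (\ref{eq_alm_ss_identity_5}) of Proposition~\ref{Prop_almost_soliton_identities} passed to the limit, or by combining (\ref{eq_soliton_eq_limit_1}) with the evolution equation for $f$ as in Subsection~\ref{subsec_soliton_identities}.

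For the infinitesimal symmetry (\ref{eq_K_dt_nab_f_invariance})---the step where the genuine technical work lies---I would adapt the argument of Theorem~\ref{Thm_limit_from_static} to the self-similarity generator $V$. A short computation from (\ref{eq_soliton_eq_limit_1}) gives $\LL_V g = -g$ on $\RR$, which is the infinitesimal version of the self-similarity $\phi_\la^* g_t = \la^2 g_{t/\la^2}$; combined with the standard scaling $K(\cdot,\la^2 t;\cdot,\la^2 s) = \la^{-n}K(\cdot, t;\cdot, s)$ of heat kernels under parabolic rescaling, differentiation at $\la=1$ produces exactly (\ref{eq_K_dt_nab_f_invariance}) on a true gradient shrinking soliton. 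On the pre-limit this symmetry is only approximate: for any $u$ with $\square u=0$, a computation shows
\[ \square\big(V^i u - \tfrac{n}{2} u\big) = A\big({\Ric}+\nabla^2 f^i - \tfrac{1}{2\tau}g^i, \nabla^2 u\big) + B\big(\square(\tau f^i) + \tfrac{n}{2} + W, \nabla u\big) + (\text{lower order}), \]
where the two leading error terms are exactly those controlled by identities (\ref{eq_alm_ss_identity_1}) and (\ref{eq_alm_ss_identity_3}) of Proposition~\ref{Prop_almost_soliton_identities}. Mirroring the construction of $u^{\prime\prime,i}$ in the proof of Theorem~\ref{Thm_limit_from_static}, I would introduce a parameter family $u^{\prime\prime,i}_s$ for $s$ in a small interval---obtained by evolving from data at time $t_0$ deformed by the approximate self-similarity generated by $V^i$---and integrate the displayed identity against $d\nu^i_t$ in space-time. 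Cauchy--Schwarz, the $L^2$ bounds from Proposition~\ref{Prop_almost_soliton_identities}, and the $e^{\alpha f}$-weighted integral bounds of Proposition~\ref{Prop_improved_L2} controlling $|\nabla f^i|$, $|\nabla^2 f^i|$, $|R^i|$ and $\int |\nabla^2 u^{\prime\prime,i}|^2 d\nu$ force the defect to tend to $0$ as $i\to\infty$. Passing to the limit on $\RR$ via smooth convergence $u^{\prime\prime,i}\to u^{\prime\prime,\infty}$, $V^i\to V$, and $d\nu_{x_i,0}\to d\nu_{x_\infty}$, substituting the heat-kernel representation $u^{\prime\prime,\infty}_s(x_1) = \int_{\RR_{t_0}} K(x_1; y)\, u^{\prime,\infty}_s(y)\,dg_{t_0}(y)$, and integrating by parts in the second argument while invoking the arbitrariness of the initial data, yields (\ref{eq_K_dt_nab_f_invariance}) in its duality form, and hence pointwise on $\RR\times\RR$. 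The main obstacle is that $V^i = \tau\partial_\tf - \tau\nabla f^i$ is not localizable---its spatial component $\tau\nabla f^i$ is unbounded far from $x_i$---so the estimates must be arranged globally with $d\nu^i$-weights, and it is precisely the $e^{\alpha f^i}$-improved integral bounds of Proposition~\ref{Prop_improved_L2} that allow one to close the argument on the whole manifold rather than on a ball.

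Finally, for the global assertions when $\Delta\geq 3$, a maximal trajectory $\gamma$ of $\partial_\tf-\nabla f$ in $\RR$ parametrized by $\tf(\gamma(s))=s$ is, up to reparametrization, an integral curve of $V = \tau(\partial_\tf-\nabla f)$. Running the Nash-entropy argument of Theorem~\ref{Thm_limit_from_static} with (\ref{eq_K_dt_nab_f_invariance}) in place of (\ref{eq_K_dt_invariant}), the self-similar transformation law of $K(\gamma(s);\cdot)$ in $s$ implies that the appropriately scale-matched Nash entropy along $\gamma$ is constant, and combined with Theorem~\ref{Thm_NN_in_limit}\ref{Thm_NN_in_limit_b} this furnishes a lower bound $\tdrrm(\gamma(s))\geq c|s|^{1/2}>0$; Lemma~\ref{Lem_tdrrm}\ref{Lem_tdrrm_g} then rules out early termination of $\gamma$ in $\RR$ and forces its domain to be all of $(-T_\infty,0)$. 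Setting $M_\infty := \RR_{-1}$, the union of these trajectories yields an identification $\RR = M_\infty\times(-T_\infty,0)$ with $\partial_\tf-\nabla f$ as the standard vector field on the second factor; evolving $g_t$ in this co-moving frame gives $\partial_t g_t + \LL_{-\nabla f}g_t = -2\Ric - 2\nabla^2 f = -\tfrac{1}{\tau}g_t$ by (\ref{eq_soliton_eq_limit_1}), hence $g_t = |t|g_\infty$. When $\Delta\geq 4$, Theorem~\ref{Thm_SS_dimension_bound_limit}\ref{Thm_SS_dimension_bound_limit_c}--\ref{Thm_SS_dimension_bound_limit_d} allows passage to the metric completion $(X_\infty, d_\infty)$ of $(M_\infty, d_{g_\infty})$, extends the identification to $\XX_{<0} = X_\infty\times(-T_\infty,0)$ with $d_t = |t|^{1/2}d_\infty$, and---exactly as at the end of the proof of Theorem~\ref{Thm_limit_from_strong_splitting}---propagates the invariance (\ref{eq_K_dt_nab_f_invariance}) to the conjugate heat kernels $\nu_{x_\infty;t}$ via the heat-flow characterization of Theorem~\ref{Thm_SS_dimension_bound_limit}\ref{Thm_SS_dimension_bound_limit_d}, yielding the metric-soliton structure of $(\XX_{<0}, (\nu_{x_\infty;t}))$.
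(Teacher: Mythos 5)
Your overall strategy is the right one and matches the paper: pass the almost-soliton $L^1/L^2$ identities to the limit for~(\ref{eq_soliton_eq_limit_1})--(\ref{eq_soliton_eq_limit_2}), obtain~(\ref{eq_K_dt_nab_f_invariance}) via the parameter-family heat-flow argument applied to the generator $V^i = \tau\partial_t - \tau\nabla f_i$, and then run the Nash-entropy/$\tdrrm$ argument of Theorem~\ref{Thm_limit_from_static} together with Theorems~\ref{Thm_NN_in_limit}\ref{Thm_NN_in_limit_b}, \ref{Thm_SS_dimension_bound_limit}\ref{Thm_SS_dimension_bound_limit_c}--\ref{Thm_SS_dimension_bound_limit_d} to globalize. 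However, there is a genuine gap in your defect estimate for~(\ref{eq_K_dt_nab_f_invariance}). The second error term is not ``$B(\square(\tau f^i)+\tfrac{n}{2}+W,\nabla u)$'' but rather $-\nabla\square(\tau f^i)\cdot\nabla u_h^{\prime\prime,i}$, with the gradient falling on the soliton defect. After integrating by parts in space, this splits into a term with $\nabla u\cdot\nabla f_i$, which your Cauchy--Schwarz and $e^{\alpha f}$-weighted $L^p$ bounds do handle, and a term
\[
\int_h^{t_1}\int_{M_i}\Big(\square(\tau f_i)+\tfrac{n}{2}+W\Big)\,\triangle u_h^{\prime\prime,i}\,d\nu^i_t\,dt,
\]
whose absolute value is controlled only once one has
\[
\int_h^{t_1}\int_{M_i}\Big|\square(\tau f_i)+\tfrac{n}{2}+W\Big|\,|\nabla^2 u_h^{\prime\prime,i}|^2\,d\nu^i_t\,dt \leq C.
\]
Cauchy--Schwarz plus Proposition~\ref{Prop_improved_L2} do not give this, because one has no higher $L^p$ control on $|\nabla^2 u_h^{\prime\prime,i}|$: the only a~priori spacetime bound is $\int\int|\nabla^2 u_h^{\prime\prime,i}|^2\,d\nu^i\le\tfrac12\int|\nabla u_h^{\prime\prime,i}|^2 d\nu^i_h$ coming from $\square|\nabla u|^2 = -2|\nabla^2 u|^2$. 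The paper closes this gap by exploiting Perelman's differential inequality: setting $w_i := \tau(2\triangle f_i - |\nabla f_i|^2 + R) + f_i - n = -\square(\tau f_i)-\tfrac{n}{2}$, one has $w_i\le 0$ and $\square^*(w_i v_i)\le 0$; testing $\square|\nabla u_h^{\prime\prime,i}|^2 = -2|\nabla^2 u_h^{\prime\prime,i}|^2$ against $(w_i - |W|)v_i$ and integrating by parts in spacetime, the good sign of $\square^*((w_i-|W|)v_i)$ kills the bulk term and leaves only boundary terms bounded via $|\nabla u_h^{\prime\prime,i}|\le C$ and $\int(|w_i|+|W|)\,d\nu^i\le C$. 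Without this sign-structure argument the defect estimate does not close.

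A secondary, more minor issue: when $T_\infty<\infty$, the reduction ``constancy of $\NN_{x_\infty}\Rightarrow (x_i,0)$ is $(\eps_i,r)$-selfsimilar for fixed $r$'' fails, because $(\delta,r)$-selfsimilarity demands $\delta^{-1}r^2 < T_i$, which is incompatible with $\delta\to 0$ and $r$ fixed. So you cannot literally invoke Proposition~\ref{Prop_almost_soliton_identities} in case 2. The fix is the one the paper uses: derive $\lim_i\WW_{x_i,0}[g_{i,-T},f_i(\cdot,-T),T]=W$ for each $-T\in(-T_\infty,0)$ from $\NN_{x_i,0}\to W$ (as in the proof of Proposition~\ref{Prop_NN_almost_constant_selfsimilar}), then feed this directly into Lemma~\ref{Lem_integral_soliton_id} and into the analogue of Proposition~\ref{Prop_improved_L2} to get the $L^1/L^2$ bounds needed on compact time intervals of $(-T_\infty,0)$.
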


\begin{proof}
In the following, $C$ will denote a generic constant, which may depend on various geometric data, but not on the index $i$.
Write $d\nu^i := d\nu_{x_i, 0} = (4\pi \tau)^{-n/2} e^{-f_i} dg_i$.

In the case in which $(x_i, 0)$ are $(\eps_i, r)$-selfsimilar for some $\eps_i \to 0$ and some uniform $r >0$, the soliton identities (\ref{eq_soliton_eq_limit_1}), (\ref{eq_soliton_eq_limit_2}) follow by passing the $(\eps_i, r)$-self\-sim\-i\-lar condition to the limit.
In the second case, in which $I_\infty = (-T_\infty, 0]$,  there is a $W \leq 0$ such that  for any $-T \in I_\infty$, 
\[ \lim_{i \to \infty} \NN_{x_i, 0} (T) = W. \]
As in the proof of Proposition~\ref{Prop_NN_almost_constant_selfsimilar}, this implies that for any $-T \in I_\infty$
\[ \lim_{i \to \infty} \WW_{x_i, 0} [g_{i,-T}, f_i (\cdot , -T), T] = W. \]
Therefore, the almost selfsimilar identities (\ref{eq_almost_self_similar_1}), (\ref{eq_almost_self_similar_2}) follow from Proposition~\ref{Prop_NN_basic_properties} and Lem\-ma~\ref{Lem_integral_soliton_id}.
Passing these properties to the limit implies again (\ref{eq_soliton_eq_limit_1}), (\ref{eq_soliton_eq_limit_2}).
We also remark that in the second case, we obtain as in the proof of Proposition~\ref{Prop_improved_L2}, that for every $-T \in I_\infty$ and large $i$
\begin{equation} \label{eq_nab_f_4_bound_special}
 \int_{-T}^0 \int_{M_i}( |\nabla^2 f_i|^2 + |\nabla f_i|^4 + R^2 + f^2) d\nu^i_t dt \leq C(T). 
\end{equation}

Next we show (\ref{eq_K_dt_nab_f_invariance}).
We follow the lines of the proof of Theorem~\ref{Thm_limit_from_static}.
Fix some $t_0 < 0$.
As before, after passing to a subsequence, we can find smooth functions $u_h^{\prime\prime, i} \in C^\infty (M_i \times [h, 0])$ that smoothly depend on a parameter $h \in (t_0 - \alpha, t_0 + \alpha)$ such that
\[ \square u_h^{\prime\prime, i} = 0, \qquad (\tau \partial_t - \tau \nabla f_i) u_h^{\prime\prime, i} (\cdot, h) +(\tau \partial_h u_h^{\prime\prime, i}) (\cdot, h) = 0 \]
and such that we have local smooth convergence $u^{\prime\prime, i} \to u^{\prime\prime, \infty}$, where $u^{\prime\prime, \infty}_{t_0} |_{\RR_{t_0}} = u^\infty$ and
\begin{equation} \label{eq_uhpp_uh_K_soliton}
 u^{\prime\prime, \infty}_h  = \int_{\RR_{h}} u_h^{\prime\prime, \infty} (x) K (\cdot; x) dg_{h}(x). 
\end{equation}
We obtain
\[ \square (\tau (h) \partial_h u^{\prime\prime, i}) = 0 \]
and
\begin{align*}
 \square & ( ( \tau \partial_t - \tau \nabla f_i) u^{\prime\prime, i})
= - \partial_t u^{\prime\prime, i} + 2 \tau \Ric \cdot \nabla^2 u^{\prime\prime, i}
+ \nabla f_i \cdot \nabla u^{\prime\prime, i} - \tau \square ( \nabla f_i \cdot \nabla u^{\prime\prime, i} ) \\
&= - \triangle u^{\prime\prime, i} + 2 \tau \Ric \cdot \nabla^2 u^{\prime\prime, i}
+ \nabla f_i \cdot \nabla u^{\prime\prime, i} - \tau   \nabla \square f_i \cdot \nabla u^{\prime\prime, i}   - \tau   \nabla f_i \cdot \nabla \square u^{\prime\prime, i}  + 2 \tau   \nabla^2 f_i \cdot \nabla^2  u^{\prime\prime, i} \\
&= (2\tau \Ric + 2\tau\nabla^2 f_i - g_i) \cdot \nabla^2 u^{\prime\prime, i} 
 -    \nabla \square (\tau f_i) \cdot \nabla u^{\prime\prime, i}    .
\end{align*}
So as in the proof of Theorem~\ref{Thm_limit_from_static} we have for $t_1 \in [h, 0]$
\begin{multline} \label{eq_diff_i_integral_soliton}
  \int_{M_i}  \big| \tau(h) \partial_h u_h^{\prime\prime, i} + (\tau \partial_t - \tau \nabla f_i) u_h^{\prime\prime, i} \big| d\nu_{ t_1}  \leq  2   \int_{h}^{t_1} \int_{M_i} \tau  \Big|{\Ric} + \nabla^2 f_i - \frac1{2\tau} g_i \Big| \, | \nabla^2 u_h^{\prime\prime, i} | \, d\nu^i_t dt  \\
  +  \bigg|   \int_{h}^{t_1} \int_{M_i}   \nabla \square (\tau f_i)\cdot \nabla u_h^{\prime\prime, i}  \, d\nu^i_t dt \bigg| .
\end{multline}
The first term on the right-hand side can be bounded similarly as before: 
\begin{align}
2 \int_{h}^{t_1} \int_{M_i} & \tau  \Big|{\Ric} + \nabla^2 f_i - \frac1{2\tau} g_i \Big| \, | \nabla^2 u_h^{\prime\prime, i} | \, d\nu^i_t dt \notag \\
&\leq \bigg( 2 \int_{h}^{t_1} \int_{M_i} \tau^2  \Big|{\Ric} + \nabla^2 f_i - \frac1{2\tau} g_i \Big|^2 \, d\nu^i_t dt \bigg)^{1/2} \bigg( \int_{h}^{t_1} \int_{M_i} 2 | \nabla^2 u_h^{\prime\prime, i} |^2 \, d\nu^i_t dt \bigg)^{1/2} \notag \\
&\leq \bigg( 2 \int_{h}^{t_1} \int_{M_i} \tau^2  \Big|{\Ric} + \nabla^2 f_i - \frac1{2\tau} g_i \Big|^2 \, d\nu^i_t dt \bigg)^{1/2} \bigg(  2 \int_{M_i} | \nabla u_h^{\prime\prime, i} |^2\,  d\nu^i_{ h} \bigg)^{1/2}  \to 0. \label{eq_first_soliton_diff_to_0}
\end{align}

The next claim addresses the second term of (\ref{eq_diff_i_integral_soliton}).

\begin{Claim} 
As $i \to \infty$ we have
\[ \bigg|   \int_{h}^{t_1} \int_{M_i}   \nabla \square (\tau f_i)\cdot \nabla u_h^{\prime\prime, i}  \, d\nu^i_t dt \bigg| \to 0. \]
\end{Claim}

\begin{proof}
Using integration by parts, we obtain
\begin{multline} \label{eq_nab_square_split}
 \int_{h}^{t_1} \int_{M_i}   \nabla \square (\tau f_i)\cdot \nabla u_h^{\prime\prime, i}  d\nu^i_t dt 
=    \int_{h}^{t_1} \int_{M_i}   \Big( \square (\tau f_i) + \frac{n}2 + W \Big) \nabla u_h^{\prime\prime, i} \cdot \nabla f_i \, d\nu^i_t dt \\
-  \int_{h}^{t_1} \int_{M_i}   \Big( \square (\tau f_i) + \frac{n}2 + W \Big) \, ( \triangle u_h^{\prime\prime, i} ) \, d\nu^i_t dt . 
\end{multline}
In the following we will show that both terms on the right-hand side of (\ref{eq_nab_square_split}) converge to $0$ as $i \to \infty$.
Throughout the proof we will use the fact that
\[ \square |\nabla u_h^{\prime\prime, i}|^2 = - 2 |\nabla^2 u_h^{\prime\prime, i}|^2 \leq 0. \]
This implies, in particular, that we have a uniform bound of the form $|\nabla  u_h^{\prime\prime, i}| \leq C < \infty$.

For the convergence of the first term in (\ref{eq_nab_square_split}), observe first that by Proposition~\ref{Prop_improved_L2} or (\ref{eq_nab_f_4_bound_special})
\begin{multline*}
 \int_{h}^{t_1} \int_{M_i}   \Big| \square (\tau f_i) + \frac{n}2 + W \Big| \,  | \nabla f_i |^2 \, d\nu^i_t dt \\
\leq \bigg( \int_{h}^{t_1} \int_{M_i}   \Big( \square (\tau f_i) + \frac{n}2 + W \Big)^2 \,  d\nu^i_t dt \bigg)^{1/2} \bigg( \int_{h}^{t_1} \int_{M_i}    | \nabla f_i |^4 \, d\nu^i_t dt \bigg)^{1/2}
\leq C. 
\end{multline*}
So since for any $b > 0$ we have
\begin{multline*}
\bigg| \int_{h}^{t_1} \int_{M_i}   \Big( \square (\tau f_i) + \frac{n}2 + W \Big) \nabla u_h^{\prime\prime, i} \cdot \nabla f_i \, d\nu^i_t dt \bigg|
\leq C\int_{h}^{t_1} \int_{M_i}   \Big| \square (\tau f_i) + \frac{n}2 + W \Big| \, |\nabla f_i| \, d\nu^i_t dt \\
\leq C b \int_{h}^{t_1} \int_{M_i}   \Big| \square (\tau f_i) + \frac{n}2 + W \Big| \,  | \nabla f_i |^2 \, d\nu^i_t dt + C b^{-1} \int_{h}^{t_1} \int_{M_i}   \Big| \square (\tau f_i) + \frac{n}2 + W \Big| \,   d\nu^i_t dt 
\end{multline*}
and since the second term on the right-hand side goes to zero, we obtain that the first term in (\ref{eq_nab_square_split}) converges to $0$ as $i \to \infty$.

For the second term in (\ref{eq_nab_square_split}) define similarly as in the proof of Proposition~\ref{Prop_improved_L2}
\[ v_i := (4\pi \tau)^{-n/2} e^{-f_i}, \qquad w_i := \tau (2 \triangle f_i - |\nabla f_i|^2 + R) + f - n = - \square (\tau f_i) - \frac{n}2. \]
and recall that by \cite[Sec.~9]{Perelman1}
\[ \square^* (w_i v_i) \leq 0, \qquad w_i \leq 0. \]
Thus, using Proposition~\ref{Prop_improved_L2},
\begin{multline*}
2 \int_{h}^{t_1} \int_{M_i}   \Big| \square (\tau f_i) + \frac{n}2 + W \Big| \, | \nabla^2 u_h^{\prime\prime, i} |^2 d\nu^i_t dt 
\leq \int_{h}^{t_1} \int_{M_i} (w_i - |W|) v_i  \, \square | \nabla u^{\prime\prime, i}_h |^2 dg_{i,t} dt \\
=  \int_{h}^{t_1} \int_{M_i} \square^* ((w_i - |W|) v_i ) | \nabla u^{\prime\prime, i}_h |^2 dg_{i,t} dt
+ \int_{M_i}  ((w_i - |W|) v_i ) | \nabla  u^{\prime\prime, i}_h |^2 dg_{i,t} \bigg|_{t=h}^{t=t_1}  \\
\leq C \int_{M_i}  (|w_i |+ |W|) d\nu^i_{t_1}  +  C \int_{M_i}  (|w_i |+ |W|) d\nu^i_{h}   \leq C.
\end{multline*}
It follows that for any $b > 0$
\begin{multline*}
 \limsup_{i \to \infty} \bigg| \int_{h}^{t_1} \int_{M_i}   \Big( \square (\tau f_i) + \frac{n}2 + W \Big) \, ( \triangle u_h^{\prime\prime, i} ) d\nu^i_t dt \bigg| \\
\leq  C \limsup_{i \to \infty} \int_{h}^{t_1} \int_{M_i}   \Big| \square (\tau f_i) + \frac{n}2+ W \Big| \big( b^{-1} +  b  | \nabla^2 u_h^{\prime\prime, i} |^2  \big) d\nu^i_t dt \leq C b.
\end{multline*}
Letting $b \to 0$ finishes the proof of the claim.
\end{proof}

Combining (\ref{eq_diff_i_integral_soliton}), (\ref{eq_first_soliton_diff_to_0}) and the claim implies that the left-hand side of (\ref{eq_diff_i_integral_soliton}) converges to $0$ as $i \to \infty$.
So, as in the proof of  Theorem~\ref{Thm_limit_from_static}, we obtain that
\[ \tau (h) \partial_h u_h^{\prime\prime, \infty} + (\tau \partial_t - \tau  \nabla f) u_h^{\prime\prime, \infty} = 0. \]
Combining this with (\ref{eq_uhpp_uh_K_soliton}) implies that for any $x_1 \in \RR_{t_1}$ (recall that $u^{\prime\prime, \infty}_{t_0} |_{\RR_{t_0}} = u^\infty$ is compactly supported in $\RR_{t_0}$)
\begin{align*}
 \int_{\RR_{t_0}} & u^{\infty} (x)  \big(\tau (x_1) \partial_{\mathfrak{t}, x_1} - (\tau \nabla f)_{x_1} \big) K (x_1 ; x) \, dg_{t_0} (x) \\
 &= - \tau(t_0)\int_{\RR_{t_0}}  \bigg( \frac{\partial}{\partial h} \bigg|_{h = t_0} u^{\prime\prime, \infty}_{h} (x) \bigg) K (x_1 ; x) \, dg_{t_0} (x) \displaybreak[1] \\
&=  \tau(t_0) \int_{\RR_{t_0}} \big( (  \partial_{\tf} -  \nabla f )_x u^{\prime\prime, \infty}_{t_0} (x) \big) K (x_1 ; x) \, dg_{t_0} (x) \displaybreak[1] \\
&= \tau(t_0) \int_{\RR_{t_0}} ( \triangle u^{ \infty} -  \nabla f \cdot \nabla u^\infty) (x) K (x_1 ; x) \, dg_{t_0} (x) \displaybreak[1] \\
&= \tau(t_0) \int_{\RR_{t_0}}   u^{\infty} (x) \big( \triangle_x K (x_1 ; x) + \triangle f \, K (x_1 ; x) + \nabla f \cdot \nabla_x K (x_1 ; x)  \big)  dg_{t_0} (x) \displaybreak[1] \\
&= \tau(t_0) \int_{\RR_{t_0}}   u^{\infty} (x) \Big( - \partial_{\tf, x} K (x_1 ; x) + \frac{n}{2\tau(t_0)} K (x_1 ; x) + \nabla f \cdot \nabla_x K (x_1 ; x)  \Big)  dg_{t_0} (x) \displaybreak[1] \\
&=  - \int_{\RR_{t_0}}   u^{\infty} (x) \Big(  \big( \tau \partial_{\mathfrak{t}} - \tau \nabla f \big)_x K (x_1 ; x) - \frac{n}2 K (x_1; x) \Big) dg_{t_0} (x) . 
\end{align*}
Since $u^\infty$ was arbitrary, this implies (\ref{eq_K_dt_nab_f_invariance}).

Next, suppose that $\Delta \geq 3$.
Then, again as in the proof of  Theorem~\ref{Thm_limit_from_static}, we obtain that for any maximal trajectory $\gamma$ of $\tau \partial_t -\tau \nabla f_\infty$ and $a > 0$ we have
\[ \frac{d}{ds} \NN_{\gamma(s)} ( a \tf (\gamma(s))) = 0. \]
As before, using Theorem~\ref{Thm_NN_in_limit}, we obtain that $\gamma$ is defined on all of $\IR$.

Lastly, suppose that $\Delta \geq 4$.
Since the flow of the vector field $\tau \partial_{\tf} -\tau \nabla f_\infty$ is a dilation on $\RR$, by Theorem~\ref{Thm_SS_dimension_bound_limit}\ref{Thm_SS_dimension_bound_limit_c}, \ref{Thm_SS_dimension_bound_limit_d} it extends to a dilation on $\XX$.
Therefore, $(\XX, (\nu_{x_\infty;t})_{t \in (-T_\infty, 0)})$ is a metric soliton and the remaining assertions of the theorem follow.
\end{proof}
\bigskip

\subsection{Limits that are Ricci flat cones}
Lastly, we consider the combined case in which the flows $(M_i, (g_{i,t})_{t \in I_i})$ are at the same time strongly $(k, \eps_i, r)$-split, $(\eps_i, r)$-static and $(\eps_i, r)$-selfsimilar for some sequence $\eps_i \to 0$.
In this case, we expect a static limit $\XX_{<0}$ of the form $(C^{n-k} \times \IR^k )\times \IR_-$, where $(C^{n-k}, v)$ is a Ricci flat cone with vertex $v$.
In the case $k = n-\Delta$, we obtain that this cone is even smooth away from its vertex.
An important aspect of the following result is that if $k = n-\Delta$, then we can drop the lower bound on $\Delta$.
This will be important in subsequent sections, as this theorem will form the basis for improved $\eps$-regularity results.

\begin{Theorem} \label{Thm_limit_from_almost_cone}
Suppose that $(x_i, 0)$ are $(k, \eps_i, r)$-split, $(\eps_i, r)$-static and $(\eps_i, r)$-selfsimilar for some $\eps_i \to 0$ and fixed $r > 0$, $k \geq 0$.
Choose $(k, \eps_i, r)$-splitting maps $\vec y^{\, i}$ at $(x_i, 0)$.
Write $W_i := \NN_{x_i, 0} (r^2)$ for all $i =1, 2 \ldots$, $W_\infty := \lim_{i\to \infty} W_i$ and set for $i =1, \ldots, \infty$
\begin{equation} \label{eq_qi_def}
 q_i := 4 \tau ( f_i - W_i ) - \sum_{j=1}^k (y_j^i)^2. 
\end{equation}
Then, after passing to a subsequence, we have $\vec y^{\, i} \to \vec y^{\, \infty}$, $f_i \to f_\infty$, $q_i \to q_\infty \in C^\infty (\RR)$ locally smoothly on $\RR$ and $q_\infty$ satisfies (\ref{eq_qi_def}) for $i = \infty$ and
\begin{multline} \label{eq_q_infty_idenitities}
 \Ric \equiv 0, \qquad  q_\infty \geq 0, \qquad |\nabla q_\infty|^2 = 4q_\infty, \qquad \nabla q_\infty \cdot \nabla y_j^\infty = 0, \qquad \\
  \nabla^2 q_\infty = 2g -2\sum_{j=1}^k d y^\infty_j \otimes dy^\infty_j, \qquad \partial_\tf q_\infty = 0. 
\end{multline}

If $\Delta \geq 3$ or $k = n-\Delta$, then $\RR$ can be identified with a constant flow of the form $\RR = (M'_\infty \times \IR^k) \times \IR_-$ such that
\[ g = g'_\infty + \sum_{j=1}^k (dy^\infty_j)^2, \]
where $(M'_\infty, g'_\infty)$ is a Riemannian manifold with $\Ric_{g'_\infty} \equiv 0$ and of dimension $n-k$, the functions $y^\infty_1, \ldots, y^\infty_k$ are coordinates for the second factor and $\partial_{\tf}$ is induced by the last factor.
Moreover, $q_\infty$ only depends on the first factor and the subset $\{ q_\infty > 0 \} \subset M'_\infty$ is isometric to a Riemannian cone minus its vertex over a smooth link $(N^{n-k-1}, g^N)$, in such a way that the function  $q_\infty$ restricted to $\{ q_\infty > 0 \} \subset M'_\infty$  equals the square of the radial coordinate function.
More specifically, we can write $\{ q_\infty > 0 \} = \IR_+ \times N$ with
\[ g_\infty = (d\sqrt{q_\infty})^2 + q_\infty g^N. \]
We have
\begin{equation} \label{eq_vol_link}
 \vol (N, g^N) = e^{W_\infty} \vol (S^{n-k-1}, g^{S^{n-k-1}} ). 
\end{equation}
In the case $k = n-\Delta$, the link $N$ is compact, so we have $\rrm \geq c(Y) \sqrt{q}$ on $\RR$.

Lastly, if $\Delta \geq 4$, then the flow $\XX$ is static and the time-slices of $\XX$ are isometric to $X'_\infty \times \IR^k$, where $(X'_\infty, d'_\infty)$ is a metric cone that is the metric completion of $(M'_\infty, g'_\infty)$.
\end{Theorem}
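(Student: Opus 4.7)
The proof combines the three preceding limit theorems with the almost-radial calculus from Section~\ref{sec_alm_radial}. First, after passing to a subsequence, I would simultaneously apply Theorems~\ref{Thm_limit_from_strong_splitting}, \ref{Thm_limit_from_static}, and \ref{Thm_limit_from_selfsimilar} to extract locally smooth convergence $\vec y^{\, i} \to \vec y^{\, \infty}$ on $\RR$ with $\nabla y^\infty_j \cdot \nabla y^\infty_l = \delta_{jl}$, $\nabla^2 y^\infty_j = 0$, $\partial_\tf y^\infty_j = 0$; the vanishing $\Ric \equiv 0$; and the shrinking-soliton identities (\ref{eq_soliton_eq_limit_1})--(\ref{eq_soliton_eq_limit_2}) for $f_\infty$. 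Since $f_i \to f_\infty$ locally smoothly on $\RR$ (from the smooth convergence of the heat kernels), the functions $q_i$ converge locally smoothly to $q_\infty$ with (\ref{eq_qi_def}) holding at $i = \infty$. The remaining identities in (\ref{eq_q_infty_idenitities}) then follow from Proposition~\ref{Prop_construction_almost_radial}: each of the bounds (\ref{eq_almost_radial_1})--(\ref{eq_almost_radial_4}), applied with $\eps_i \to 0$, forces its nonnegative integrand to vanish pointwise on $\RR$ in the limit, using $f_\infty \geq -C$ on compact subsets of $\RR$ to control the $e^{\alpha f}$ weight. Nonnegativity $q_\infty \geq 0$ is forced by $|\nabla q_\infty|^2 = 4 q_\infty$.

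Next, I would establish the product structure. When $\Delta \geq 2$, Theorem~\ref{Thm_limit_from_strong_splitting} gives completeness of the parallel vector fields $\nabla y^\infty_j$, so $\RR$ splits off an $\IR^k$-factor as a Ricci flow spacetime; when additionally $\Delta \geq 3$, Theorem~\ref{Thm_limit_from_static} supplies completeness of $\partial_\tf$ and the constant flow structure $\RR = (M'_\infty \times \IR^k) \times \IR_-$ with $g = g'_\infty + \sum_j (dy^\infty_j)^2$ and $\Ric_{g'_\infty} = 0$. For the case $k = n - \Delta$ without assuming $\Delta \geq 3$, my plan is to bootstrap from $\eps$-regularity. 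Given any $x' \in \RR$ with $q_\infty(x') > 0$ and an approximating sequence $(x'_i, t'_i) \to x'$ within $\CF$, the local smooth convergence forces $(\tau^\ast)^{-1}\int_{t'_i - 2\tau^\ast}^{t'_i - \tau^\ast}\int_{M_i} (q_i)_+\, d\nu_{x'_i, t'_i; t}\, dt$ to stay uniformly positive for some $\tau^\ast$ comparable to $q_\infty(x')$, while Propositions~\ref{Prop_dist_expansion_almost_ss} and \ref{Prop_inheriting_bounds} propagate almost-selfsimilarity and almost-staticity from $(x_i, 0)$ to $(x'_i, t'_i)$ at scale $r$. Proposition~\ref{Prop_extending_splitting_maps_almost_radial} then upgrades the given strong splitting to a strong $(k+1, \eps, \beta \sqrt{q_\infty(x')})$-splitting at $(x'_i, t'_i)$, and Assumption~\ref{Aspt_working_ass}, part (ii), with $k+1 = n+1-\Delta$, yields $\rrm(x'_i, t'_i) \geq c \sqrt{q_\infty(x')}$. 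Passing to the limit gives $\tdrrm \geq c\sqrt{q_\infty}$ on $\{q_\infty > 0\}$, hence this open set lies in $\RR$ and all relevant vector fields are complete on it.

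For the cone structure, since $\partial_\tf q_\infty = 0$ and $\nabla q_\infty \perp \nabla y^\infty_j$, the function $q_\infty$ descends to a function on $M'_\infty$, and the identities $|\nabla \sqrt{q_\infty}|^2 = 1$ and $\nabla^2 q_\infty = 2 g_{M'_\infty}$ on $\{q_\infty > 0\}$ are the classical characterization of a Riemannian cone: the flow of $\nabla \sqrt{q_\infty}$ trivializes $\{q_\infty > 0\} \cong \IR_+ \times N$ with $g_{M'_\infty} = dr^2 + r^2 g^N$ for $r = \sqrt{q_\infty}$. To verify the volume formula (\ref{eq_vol_link}), I would combine the soliton identity with $\Ric = 0$ and the cone structure to conclude $f_\infty = (q_\infty + \sum_j (y^\infty_j)^2)/(4\tau) + W_\infty$, and then evaluate the normalization $\int_{M'_\infty \times \IR^k}(4\pi \tau)^{-n/2} e^{-f_\infty} dg = 1$ as the product of a Gaussian integral on $\IR^k$ and a polar-coordinate integral over the cone; the resulting Gamma-function identity yields exactly $\vol(N) = e^{W_\infty} \vol(S^{n-k-1})$. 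In the $k = n - \Delta$ case, compactness of $N$ follows from the bound $\tdrrm \geq c\sqrt{q_\infty}$, which at scale $q_\infty \in [\tfrac12, 2]$ gives a uniform lower curvature-scale bound on the annular region $\{q_\infty \in [\tfrac12, 2]\}$; combined with the finite total volume of this region (from Lemmas~\ref{Lem_tdrrm}, \ref{Lem_limit_HK_bound}), a basic covering argument bounds $\vol(N)$ and forces compactness (alternatively, for $n - k \geq 3$, Myers' theorem applies to the link of a Ricci-flat cone). Finally, for $\Delta \geq 4$, Theorem~\ref{Thm_SS_dimension_bound_limit}\ref{Thm_SS_dimension_bound_limit_c} identifies $(\XX_t, d_t)$ with the metric completion of $(\RR_t, g_t)$, so the isometric splitting of $\RR$ extends to an isometric splitting $\XX_{<0} \cong (X_\infty \times \IR^k) \times \IR_-$ where $X_\infty$ is the metric completion of $M'_\infty$, namely the metric cone over $(N, g^N)$.

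The main obstacle is the $k = n-\Delta$ case with $\Delta \in \{1,2\}$, where the completeness of $\partial_\tf$ and $\nabla y^\infty_j$ on $\{q_\infty > 0\}$ is \emph{not} supplied by Theorems~\ref{Thm_limit_from_strong_splitting}, \ref{Thm_limit_from_static} and must be bootstrapped via Proposition~\ref{Prop_extending_splitting_maps_almost_radial} and Assumption~\ref{Aspt_working_ass}, part (ii). The delicate technical point is verifying the hypothesis of Proposition~\ref{Prop_extending_splitting_maps_almost_radial} uniformly along the approximating sequence $(x'_i, t'_i)$, so that the scale $\beta \sqrt{q_\infty(x')}$ at which the additional splitting is produced degenerates only as $x'$ approaches the vertex set $\{q_\infty = 0\}$.
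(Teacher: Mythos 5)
Your proposal follows the paper's own proof quite closely: the identities in (5.63) are extracted exactly as you describe from Theorems~\ref{Thm_limit_from_strong_splitting}, \ref{Thm_limit_from_static}, \ref{Thm_limit_from_selfsimilar} together with Proposition~\ref{Prop_construction_almost_radial}; the $k=n-\Delta$ case is handled via Proposition~\ref{Prop_extending_splitting_maps_almost_radial} plus Assumption~\ref{Aspt_working_ass} as you outline; and the volume formula is the same Gaussian-times-polar computation. Your observation that Myers' theorem handles compactness of $N$ when $n-k\geq 3$ is a nice alternative to the paper's completeness-plus-bounded-volume argument.

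There is, however, a genuine imprecision in the $k=n-\Delta$ step. You write that Propositions~\ref{Prop_dist_expansion_almost_ss} and \ref{Prop_inheriting_bounds} ``propagate almost-selfsimilarity and almost-staticity from $(x_i,0)$ to $(x'_i,t'_i)$ at scale $r$.'' These propositions give only a \emph{bounded} transfer of integral bounds — $d\nu_{x'_i,t'_i;t}\leq C e^{\alpha f_0}d\nu_{x_i,0;t}$ with a controlled but not small constant $C$ — so they do not by themselves preserve the smallness of $\int\!\!\int|\Ric|^2 d\nu$ or $\int R\, d\nu$ needed for the $(\eps',\rho)$-static condition. Moreover, what Assumption~\ref{Aspt_working_ass} part~(ii) actually requires is the static condition at the \emph{small} scale $\rho=\beta\lambda r$ produced by Proposition~\ref{Prop_extending_splitting_maps_almost_radial}, not at scale $r$. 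The correct verification comes from the local smooth convergence near the regular point $x'_\infty$, where $\Ric\equiv 0$: the conjugate heat kernel $\nu_{x'_i,t'_i;t}$ concentrates near $x'_\infty$ by $H_n$-concentration, the smooth part of the integrals tends to zero, and the tail is controlled by the $L^2$ bound on $|\Ric|$ (this is the same mechanism as Claim~7.11 in the proof of Proposition~\ref{Prop_eps_reg_codim_2}). A smaller gap: in the $\Delta\geq 3$ case, you pass to the cone structure by flowing $\nabla\sqrt{q_\infty}$, but this requires completeness of $\nabla q_\infty$, which the paper obtains by restricting to $\{\vec y^\infty=\vec 0\}$ and invoking the completeness of $\tau\partial_{\tf}-\tau\nabla f$ from Theorem~\ref{Thm_limit_from_selfsimilar}; your proposal does not supply this step.
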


\begin{proof}[Proof of Theorem~\ref{Thm_limit_from_almost_cone}.]
As in the proof of Theorem~\ref{Thm_limit_from_strong_splitting}, assume that $r = 1$.
We will use the letter $r$ for other applications in the following.
By Theorems~\ref{Thm_limit_from_strong_splitting}, \ref{Thm_limit_from_static}, \ref{Thm_limit_from_selfsimilar}, we may pass to a subsequence and assume locally smooth convergence $f_i \to f_\infty$, $y^i_j \to y^\infty_j$ and $W_i \to W_\infty$.
This implies convergence $q_i \to q_\infty$ with
\[ q_\infty = 4 \tau (f_\infty - W_\infty) - \sum_{j=1}^k (y^\infty_j)^2. \]
The identities (\ref{eq_q_infty_idenitities}) are a direct consequence of Proposition~\ref{Prop_construction_almost_radial}.

We now establish the splitting of the Ricci flow spacetime
\begin{equation} \label{eq_RR_M_infty_R_k}
\RR = (M'_\infty \times \IR^k) \times \IR_- 
\end{equation}
for some Ricci-flat Riemannian manifold $(M'_\infty, g'_\infty)$ and the completeness of the vector field $\nabla q_\infty = 4 \tau \nabla f_\infty - 2 \sum_{j=1}^k y^\infty_j \nabla y^\infty_j$.

Consider first the case $\Delta \geq 3$.
In this case the splitting (\ref{eq_RR_M_infty_R_k}) follows from Theorems~\ref{Thm_limit_from_strong_splitting}, \ref{Thm_limit_from_static}.
It follows from (\ref{eq_q_infty_idenitities}) that $q_\infty$ only depends on the first factor.
If we restrict $q_\infty$ to $M'_\infty \times \{ \vec 0 \} \times \{ -1 \}$, then $q_\infty = 4 (f_\infty - W_\infty)$ and $\nabla q_\infty = 4\nabla f_\infty$.
So the completeness of $\nabla q_\infty$ follows from the completeness of $\tau  \partial_{\mathfrak{t}} - \tau  \nabla f$, see Theorem~\ref{Thm_limit_from_selfsimilar}.
If $\Delta \geq 4$, then the last statement of this theorem is a consequence of Theorem~\ref{Thm_SS_dimension_bound_limit}\ref{Thm_SS_dimension_bound_limit_c}.

Next, consider the case $k = n-\Delta$.

\begin{Claim}
If $k =n-\Delta$, then for any $x'_\infty \in \RR$ we have 
\begin{equation} \label{eq_tdrrm_q_bound}
 \tdrrm (x'_\infty) \geq c(Y) \sqrt{q_\infty(x'_\infty)}, \qquad  |B(x'_\infty, \tdrrm (x'_\infty)) \cap \RR_{\tf(x'_\infty)} | \geq c(Y) \tdrrm^{\,n} (x'_\infty). 
\end{equation}
\end{Claim}

\begin{proof}
Consider a point $x'_\infty \in \RR$ with $q_\infty (x'_\infty) > 0$ and choose points $(x'_i, t'_i) \in M_i \times I_i$ such that $(x'_i, t'_i)  \to x'_\infty$ within $\CF$.
By Lemma~\ref{Lem_nu_Pstar_bound} there is an $r < \infty$ such that $(x'_i, t'_i ) \in P^* (x_i, 0; r)$ for large $i$.
By the almost self-similarity condition and Proposition~\ref{Prop_NN_almost_constant_selfsimilar} we have $\NN_{x_i, 0} (r^2) \geq -  Y -1$ for large $i$.
So we may apply Proposition~\ref{Prop_extending_splitting_maps_almost_radial} for $Y$ replaced with $Y+1$ and $A =1$ and obtain the following:
If $\eps > 0$, $0 < \la < 1$ and $\beta \leq \ov\beta (Y, \eps)$ and if for some fixed $\tau' \in (0, (8n)^{-1} (\la r)^2)$ the positive part of $q_i$ satisfies:
\begin{equation} \label{eq_intint_bigger_la2}
 \frac1{\tau'} \int_{t'_i-2\tau'}^{t'_i-\tau'} \int_{M_i} (q_i)_+ \, d\nu^i_{x'_i, t'_i; t}dt \geq (\la r)^2 
\end{equation}
for infinitely many $i$, then for infinitely many $i$ the point $(x'_i, t')$ is strongly $(n+3-\Delta, \beta \la r, \eps)$-split.
Thus by Assumption~\ref{Aspt_working_ass} and Lemma~\ref{Lem_tdrrm} and choosing $\eps \leq \ov\eps (Y)$, we obtain that $\tdrrm(x'_\infty) \geq c(Y) \la r$ and the two-sided parabolic neighborhood around $x'_\infty$ of radius $c(Y, A) \la r$ is relatively compact in $\RR$.
By substituting $r$ for $\la r$ (note that $r$ has to be chosen large and $\la$ can be chosen small) and the locally smooth convergence $q_i \to q_\infty$, we therefore obtain the following statement:
If for some $\tau' \in (0, (8n)^{-1} r^2)$ we have
\begin{equation} \label{eq_intint_bigger_la2_new}
 \frac1{\tau'} \int_{\tf(x'_\infty)-2\tau'}^{\tf(x'_\infty)-\tau'} \int_{\RR_t} (q_\infty)_+ \, d\nu_{x'_\infty;t} dt > r^2 ,
\end{equation}
then $\rrm(x'_\infty) \geq c(Y) r$.
By the local $H_n$-concentration bounds of $\nu_{x'_\infty;t}$ (see also \cite[\SYNPropLocConc]{Bamler_RF_compactness}), the bound (\ref{eq_intint_bigger_la2_new}) holds for small $\tau'$ whenever $q_\infty (x'_\infty) > r^2$.
This shows the first bound in (\ref{eq_tdrrm_q_bound}).
The second bound follows using \cite[\HKThmNLC]{Bamler_HK_entropy_estimates}.
\end{proof}

The identities (\ref{eq_q_infty_idenitities}) combined with the Claim imply that the trajectories of the vector fields $\nabla y^\infty_j, \nabla q_\infty$ are complete.
Moreover, by Lemma~\ref{Lem_tdrrm} every trajectory $\gamma$ of $\partial_{\tf}$ with $\tf(\gamma(t)) = t$ is defined on an interval of the form $(-\infty, t^*)$ for some $t^* \leq 0$ with the property that $\lim_{t \nearrow t^*} f_\infty(t) = \infty$ if $t^* < 0$.
This, however, would contradict the identities (\ref{eq_q_infty_idenitities}) and the fact that $\partial_{\tf} y^\infty_j = 0$, which implies that any such trajectory is defined over $\IR_-$.
We therefore obtain the splitting (\ref{eq_RR_M_infty_R_k}) using Theorems~\ref{Thm_limit_from_strong_splitting}, \ref{Thm_limit_from_static} and (\ref{eq_q_infty_idenitities}).

Let us now discuss both cases $\Delta \geq 3$ and $k = n-\Delta$ at the same time.
The fact that $\{ q_\infty > 0 \} \subset M'_\infty$ is isometric to a Riemannian cone minus its vertex over some link $(N, g^N)$ is a direct consequence of the completeness of $\nabla q_\infty$ and (\ref{eq_q_infty_idenitities}).
If $k = n-\Delta$, then the Claim implies that $(N, g^N)$ is complete and has positive injectivity radius.
So if it has bounded volume, then it is compact.

It remains to establish (\ref{eq_vol_link}) in both cases $\Delta \geq 3$ and $k = n-\Delta$.
For this purpose, we identify $M'_\infty \times \IR^k$ with $\RR_{-1}$ and compute
\begin{align*}
 1 
&= \int_{M'_\infty} \int_{\IR^k} (4\pi)^{-n/2} e^{-f_\infty (x_1,\vec x_2)} d\vec x_2 dg'_\infty (x_1) \\
&= (4\pi)^{-n/2} \int_{M'_\infty} \int_{\IR^k} \exp \Big( - \frac14 \big( q_\infty (x_1,\vec x_2)  + |\vec x_2|^2 \big) - W_\infty \Big) d\vec x_2 dg'_\infty (x_1) \displaybreak[1] \\
&= (4\pi)^{-(n-k)/2} e^{-W_\infty}  \int_{M'_\infty}  \exp \Big( - \frac14 q_\infty   \Big) dg'_\infty \displaybreak[1] \\
&= (4\pi)^{-(n-k)/2} e^{-W_\infty} \vol (N, g^N) \int_0^\infty s^{(n-k-1)} e^{-s^2/4} ds \displaybreak[1] \\
&= e^{-W_\infty} \frac{\vol (N, g^N)}{\vol (S^{n-k-1}, g^{S^{n-k-1}})} \int_{\IR^{n-k}} (4\pi)^{-(n-k)/2} e^{-|\vec x|^2/4} d\vec x \\
&= e^{-W_\infty} \frac{\vol (N, g^N)}{\vol (S^{n-k-1}, g^{S^{n-k-1}})}.
\end{align*}
This finishes the proof
\end{proof}

\section{\texorpdfstring{An improved $\eps$-regularity theorem and codimension 2 of the singular set}{An improved {\textbackslash}eps-regularity theorem and codimension 2 of the singular set}} \label{sec_eps_reg_codim_2}
In this section we improve the $\eps$-regularity theorem of Proposition~\ref{Prop_eps_regularity_np2}.
More specifically, we reduce the strong splitting assumption by one dimension and remove the static assumption.

\begin{Proposition} \label{Prop_eps_reg_codim_2}
Let $Y< \infty$ and suppose that $0 <\eps \leq \ov\eps (Y)$. Then the following holds.

Let $(M, (g_t)_{t \in I})$ be a Ricci flow on a compact manifold, $r > 0$ and $(x_0,t_0) \in M \times I$ such that $\NN_{x_0,t_0} (r^2) \geq - Y$.
Suppose that $(x_0, t_0)$ is strongly $(n-1, \eps, r)$-split.
Then $\rrm (x_0, t_0) \geq \eps r$.
\end{Proposition}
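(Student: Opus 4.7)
The plan is a contradiction argument via blow-up and $\IF$-compactness, reducing to Proposition~\ref{Prop_eps_regularity_np2}. Suppose the proposition fails: there exist $\eps_i \to 0$, Ricci flows $(M_i, (g_{i,t})_{t \in I_i})$, base-points $(x_{0,i}, t_{0,i})$, and scales $r_i > 0$ satisfying the hypotheses with $\eps = \eps_i$ yet $\rrm(x_{0,i}, t_{0,i}) < \eps_i r_i$. After parabolic rescaling I may assume $r_i = 1$ and $t_{0,i} = 0$, and I let $\vec y^{\,i} : M_i \times [-\eps_i^{-1}, -\eps_i] \to \IR^{n-1}$ be the given strong splitting maps. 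The $\IF$-compactness of \cite{Bamler_RF_compactness} produces, after a subsequence, a limit
\[ (M_i, (g_{i,t}), (\nu_{x_{0,i},0;t})) \xrightarrow[i\to\infty]{\IF, \CF} (\XX, (\nu_{x_\infty;t})) \]
on compact sub-intervals of $(-\infty, 0]$. Since Proposition~\ref{Prop_eps_regularity_np2} validates Assumption~\ref{Aspt_working_ass} with $\Delta=1$, the full apparatus of Section~\ref{Sec_basic_limits} is available for $\XX$.

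Theorem~\ref{Thm_limit_from_strong_splitting} with $k=n-1$ yields locally smooth limits $y_j^\infty \in C^\infty(\RR)$ for $j=1,\ldots,n-1$ with $\nabla y_j^\infty \cdot \nabla y_l^\infty = \delta_{jl}$, $\nabla^2 y_j^\infty = 0$, $\partial_\tf y_j^\infty = 0$. Hence $\RR$ carries $n-1$ mutually parallel orthonormal vector fields, whose span together with its one-dimensional orthogonal complement (which carries no intrinsic curvature) forces the entire Riemann tensor to vanish; thus $\Rm \equiv 0$ on $\RR$, in particular the flow is static on $\RR$, and $\RR$ is locally a Riemannian product $N \times \IR^{n-1}$ for a flat $1$-dimensional $N$.

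The next two steps transfer this structure back to the $M_i$ and feed it into the $\Delta=1$ regularity theorem. First, using Proposition~\ref{Prop_improved_L2}, Lemma~\ref{Lem_eta_r_on_RR}, and Theorem~\ref{Thm_SS_dimension_bound_limit}\ref{Thm_SS_dimension_bound_limit_b} (so that the singular slices have vanishing conjugate heat kernel mass a.e.\ in $t$), a weighted dominated-convergence argument upgrades $\Ric \equiv 0$ on $\RR$ into $\int\int_{M_i} |{\Ric}|^2 \, d\nu_{x_{0,i},0;t} \, dt \to 0$ and $\int_{M_i} R \, d\nu_{x_{0,i},0;t} \to 0$, showing that $(x_{0,i},0)$ is $(\eps'_i, 1)$-static with $\eps'_i \to 0$. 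Second, I construct an $n$-th splitting coordinate: pick an $H_n$-center $z_\infty$ of $x_\infty$ in $\XX_{-1/2}$; by the volume estimate \cite[\HKThmNLC]{Bamler_HK_entropy_estimates} and flatness, $z_\infty \in \RR^*$, and in a product chart around $z_\infty$ the one-dimensional factor admits a local arclength coordinate $y_n^\infty$. Transport $y_n^\infty$ to $M_i$ through the smooth-convergence diffeomorphisms $\psi_i$, cut off with the function $\eta_r$ from Lemma~\ref{Lem_eta_r_on_RR} to obtain initial data near the $H_n$-center $(z_i, -1/2)$ of $(x_{0,i},0)$, and apply the Gaussian heat-flow/cut-off smoothing of Lemma~\ref{Lem_improve_coordinate} and Proposition~\ref{Prop_weak_splitting_map_to_splitting_map} to obtain a solution $y_n^i$ of $\square y_n^i = 0$ such that $(y_1^i, \ldots, y_{n-1}^i, y_n^i)$ is a strong $(n, \delta_i, 1/4)$-splitting map at $(x_{0,i}, 0)$ for some $\delta_i \to 0$. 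With both the $(\delta_i, 1/4)$-static condition and the strong $(n, \delta_i, 1/4)$-split in hand, Proposition~\ref{Prop_eps_regularity_np2} forces $\rrm(x_{0,i}, 0) \geq c(Y) > 0$, contradicting $\rrm(x_{0,i}, 0) < \eps_i \to 0$.

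The central technical obstacle is the construction of $y_n^i$: the $1$-dimensional factor $N$ of the local product only admits an arclength coordinate locally (globally $N$ could be a circle or a segment meeting $\SS$), while the strong splitting conditions of Definition~\ref{Def_strong_splitting_map} are tested against the full, spatially-decaying conjugate heat kernel on $M_i$. Controlling the contribution to the integrals from outside the product chart relies essentially on the Gaussian decay of $\nu_{x_{0,i},0;t}$ provided by Lemma~\ref{Lem_limit_HK_bound} together with the ball-volume bound of Lemma~\ref{Lem_nu_Pstar_bound}, and on the cut-off/heat-smoothing constructions of Lemma~\ref{Lem_improve_coordinate} and Proposition~\ref{Prop_weak_splitting_map_to_splitting_map}.
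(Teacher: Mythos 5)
Your setup (contradiction sequence, $\IF$-compactness, Theorem~\ref{Thm_limit_from_strong_splitting} to produce parallel orthonormal $\nabla y_j^\infty$ and hence $\Rm \equiv 0$ on $\RR$, and the transfer of flatness into the static condition for $(x_i,0)$) matches the paper closely. The divergence — and the gap — is in your step where you try to build an $n$-th coordinate $y_n^i$ and feed $(y_1^i,\ldots,y_n^i)$ into Proposition~\ref{Prop_eps_regularity_np2}.

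The construction of a strong $(n,\delta_i,c)$-splitting map at $(x_i,0)$ with $\delta_i\to 0$ cannot succeed, and this is not a matter of making the cut-off/heat-flow estimates tighter. If it did succeed, Theorem~\ref{Thm_limit_from_strong_splitting} applied to the resulting $n$-dimensional splitting map would split $\RR$ isometrically as $\RR''\times\IR^n$ with $\RR''$ zero-dimensional, forcing each time-slice of $\RR$ to be isometric to $\IR^n$ and the limit conjugate heat kernel to be the standard Gaussian; then $W_\infty = 0$, contradicting the $W_\infty<0$ you already derived from $\rrm(x_i,0)\to 0$. In fact the paper's proof \emph{establishes} precisely the global obstruction to your $y_n^\infty$: after showing the static condition it invokes Theorem~\ref{Thm_limit_from_almost_cone} with $k=n-1=n-\Delta$, and the volume-of-link formula $\vol(N,g^N)=e^{W_\infty}\vol(S^0)<2$ forces $\#N=1$, so that $\RR$ is the flat upper half-space $\IR^{n-1}\times\IR_+$, not $\IR^n$. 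Your $y_n^\infty$ is precisely $\sqrt{q_\infty}$, the distance to the boundary of that half-space; it is globally defined, affine and of unit gradient on $\RR$, but the domain it lives on has a boundary at infinity that acts as an origin, and no cut-off/smoothing on the $M_i$ can erase the fact that the pre-image of a neighborhood of that boundary in $M_i$ must be non-empty. There, the gradient of any heat-flow smoothed version of the transported function is forced far from unit length, and under the only size bound available to you at this stage of the induction (Assumption~\ref{Aspt_working_ass} with $\Delta=1$, i.e.\ Lemma~\ref{Lem_eta_r_on_RR}\ref{Lem_eta_r_on_RR_a}, which gives the useless exponent $r^{\Delta-2-\eps}=r^{-1-\eps}$ for the time-slice measure near $\SS$), there is no way to show that this bad region carries vanishing $\nu_t$-mass uniformly in $r$. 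Also, the decay estimates you cite, Lemma~\ref{Lem_limit_HK_bound} and Lemma~\ref{Lem_nu_Pstar_bound}, control the conjugate heat kernel at large distance from $x_i$, not near the singular set, which is the region that breaks the splitting-map bound here.

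What the paper does instead of building $y_n^i$ is to exploit the structure directly: it forms the map $\phi_i=(\vec y^i,q_i):M_i\times(-2,-1)\to\IR^n_+$, where $q_i$ is the almost-radial function from (\ref{eq_def_almost_radial}) (note $q_i$ \emph{vanishes} at the singular locus rather than having unit gradient there, which sidesteps the obstruction above), observes that $\phi_i$ converges on the regular part to a diffeomorphism onto $\IR^n_+$, and produces a contradiction by a degree/Jacobian–volume comparison: any point near the origin of $\IR^n_+$ must have a pre-image in $M_i$ lying outside the region of smooth convergence, but the $L^p$-bounds on $\nabla y_j^i$ and $\nabla q_i$ (Propositions~\ref{Prop_improved_L2}, \ref{Prop_properties_splitting_map}) force the conjugate-heat-kernel volume of that pre-image to a quantity that does not collapse, while its Riemannian volume does. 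To complete the argument from the point where your proof stalls, you would need to replace your step~4 with this almost-radial degree argument (or something equivalent that uses $q$ rather than $\sqrt q$).
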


This implies:

\begin{Corollary}
Assumption~\ref{Aspt_working_ass} holds for $\Delta = 2$.
\end{Corollary}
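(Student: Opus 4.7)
The approach is by contradiction via an $\IF$-limit argument, in the spirit of Proposition~\ref{Prop_eps_regularity_np2}. Assuming the proposition fails, one obtains Ricci flows $(M_i, (g_{i,t})_{t \in I_i})$, points $(x_{0,i}, t_{0,i})$, scales $r_i > 0$, strong $(n-1, \eps_i, r_i)$-splitting maps $\vec y^{\,i}$, and $\eps_i \to 0$ with $\NN_{x_{0,i},t_{0,i}}(r_i^2) \geq -Y$ but $\rrm(x_{0,i}, t_{0,i}) < \eps_i r_i$. After parabolically rescaling to $r_i = 1$, $t_{0,i} = 0$, I would pass to an $\IF$-limit $(\XX, (\nu_{x_\infty;t}))$ over $(-\infty, 0]$. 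Since Proposition~\ref{Prop_eps_regularity_np2} verifies Assumption~\ref{Aspt_working_ass} for $\Delta = 1$, the full machinery of Section~\ref{Sec_basic_limits} is at one's disposal, and Theorem~\ref{Thm_limit_from_strong_splitting} with $k = n-1$ produces functions $y^\infty_j \in C^\infty(\RR)$, $j = 1, \ldots, n-1$, with $\nabla y^\infty_j \cdot \nabla y^\infty_l = \delta_{jl}$, $\nabla^2 y^\infty_j = 0$, $\partial_\tf y^\infty_j = 0$.

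The geometric core of the argument is the observation that $n-1$ parallel orthonormal vector fields on an $n$-dimensional Ricci flow spacetime force $\Rm \equiv 0$ on $\RR$. Indeed, since each $e_j := \nabla y^\infty_j$ is parallel, $R(\cdot, \cdot) e_j = 0$; combined with the symmetries of the Riemann tensor, every component of $R$ with at least one index in $\spann(e_1, \ldots, e_{n-1})$ vanishes, and only a purely orthogonal one-dimensional component could remain, which is zero by antisymmetry. Hence $\Ric \equiv 0$ on $\RR$. I would then convert this limit flatness back into an asymptotic static condition on the sequence: combining the $C^\infty_{\loc}$-convergence on $\RR^*$ with the space-time measure-zero bound on $\SS^*$ from Lemma~\ref{Lem_eta_r_on_RR}\ref{Lem_eta_r_on_RR_a} (using $\Delta = 1$) and the exponentially weighted $L^2$-bound on $|{\Ric}|^2$ from Proposition~\ref{Prop_improved_L2}, a cut-off-and-dominated-convergence argument built on the functions $\eta_r$ of Lemma~\ref{Lem_eta_r_on_RR}\ref{Lem_eta_r_on_RR_b} yields condition (\ref{eq_intint_Ric_to_0}) of Theorem~\ref{Thm_limit_from_static} for the sequence, so $(x_{0,i}, 0)$ is $(\delta, 1)$-static eventually for every $\delta > 0$. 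A parallel argument using Proposition~\ref{Prop_almost_soliton_identities} and the monotonicity of $\NN_{x_\infty}$ should then promote the sequence to be $(\delta, 1)$-selfsimilar as well.

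With strong $(n-1, \eps_i, 1)$-splitting together with $(\delta,1)$-staticness and $(\delta,1)$-selfsimilarity in hand, the borderline case $k = n - \Delta = n - 1$ of Theorem~\ref{Thm_limit_from_almost_cone} applies without requiring $\Delta \geq 3$. This identifies $\RR$ with $(M'_\infty \times \IR^{n-1}) \times \IR_-$ for some one-dimensional Ricci-flat $M'_\infty$ arising as a flat cone over a compact zero-dimensional link $N$ of volume $2 e^{W_\infty}$ by (\ref{eq_vol_link}). A one-dimensional flat Riemannian cone is either $\IR$ or a half-line, and the non-collapsing bound $\NN_{x_\infty}(1) \geq -Y$ together with a scaling argument forces $M'_\infty = \IR$ and $W_\infty = 0$, so the limit is Euclidean. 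By Theorem~\ref{Thm_NN_in_limit}\ref{Thm_NN_in_limit_b}, $x_\infty \in \RR^*$ with $\tdrrm(x_\infty) > 0$, which via Lemma~\ref{Lem_tdrrm}\ref{Lem_tdrrm_a} contradicts $\rrm(x_{0,i}, 0) \to 0$. The main obstacle will be the promotion to selfsimilarity: on a flat cylindrical limit such as $S^1(\ell) \times \IR^{n-1}$ the Nash entropy $\NN_{x_\infty}(\tau)$ need not be constant in $\tau$, and excluding such cases at this preliminary stage — where only $\Delta = 1$ is currently available — demands a careful secondary analysis of the one-dimensional transverse factor, tied back to the non-collapsing hypothesis.
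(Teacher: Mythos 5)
The Corollary as stated is an immediate consequence of Proposition~\ref{Prop_eps_reg_codim_2}: for $\Delta=2$, case (i) of Assumption~\ref{Aspt_working_ass} requires a strong $(n+1,\eps',r)$-splitting map, which cannot exist on an $n$-manifold for small $\eps'$, and case (ii) is weaker than the hypothesis of Proposition~\ref{Prop_eps_reg_codim_2}. You are in effect proving Proposition~\ref{Prop_eps_reg_codim_2}, which is where the real content lies, so I will assess your argument against the paper's proof of that proposition.

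Your opening steps are sound and parallel the paper's: pass to an $\IF$-limit of counterexamples, invoke Theorem~\ref{Thm_limit_from_strong_splitting} for $k=n-1$, observe that $n-1$ parallel orthonormal gradients force $\Rm\equiv 0$ on $\RR$, and promote the sequence to the almost-static condition. Your worry about promoting selfsimilarity, however, is misplaced: selfsimilarity is not deduced from the limit geometry but is available \emph{a priori} for the sequence, by a pigeonhole argument on the monotone pointed Nash entropy via Proposition~\ref{Prop_NN_almost_constant_selfsimilar}. There is no secondary analysis of cylinders needed; the paper simply replaces $\eps_i$ by a new sequence $\eps'_i\to 0$ after a bounded rescaling and assumes selfsimilarity from the start.

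The decisive gap is in your final step. After applying Theorem~\ref{Thm_limit_from_almost_cone} with $k=n-1=n-\Delta$, the formula~(\ref{eq_vol_link}) yields $\#N=e^{W_\infty}\vol(S^0)<2$, since $W_\infty<0$ (as $\rrm(x_{0,i},0)\to 0$ forces $W_\infty<0$ by \cite[\HKThmEpsRegularity]{Bamler_HK_entropy_estimates}). Hence $\#N=1$, so $M'_\infty$ is a \emph{ray}, not $\IR$, and the regular part $\RR$ is identified with the constant flow on the open half-space $\IR^{n-1}\times\IR_+$. Your claim that a scaling argument forces $M'_\infty=\IR$ and $W_\infty=0$ is precisely the conclusion one is trying to reach, not something one can assert; it is circular. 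The actual mathematical work in the paper is to show that a half-space cannot arise: one considers the maps $\phi_i=(y^i_1,\ldots,y^i_{n-1},q_i):M_i\times\{t\}\to\IR^n_+$, which converge locally to the coordinate diffeomorphism onto the open half-space. A degree argument produces, for each $i$, an open set $V''_i$ disjoint from the image of the regular part, of bounded image in $\IR^n$, and hence of bounded $|f_i|$; this forces $|V''_i|\to 0$, while the Jacobian bound from the $L^p$-control on $\nabla y^i_j$ and $\nabla q_i$ via H\"older forces $|\phi_i(V''_i)|\to 0$, contradicting the lower volume bound coming from the degree. Without this boundary-rigidity argument the proof does not close.

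Also note that the contradiction must be reached through the pre-limit flows, not by locating $x_\infty$ itself in $\RR^*$: the basepoint $x_\infty$ sits in the singleton final time-slice $\XX_0$, and $\tdrrm$ is only defined on $\XX_{<0}$, so Lemma~\ref{Lem_tdrrm}\ref{Lem_tdrrm_a} cannot be applied at $x_\infty$ directly the way your last sentence suggests.
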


In addition, Proposition~\ref{Prop_eps_reg_codim_2} implies that in order to show that we can choose $\Delta = 4$, it suffices to consider points that are $(n-3, \eps, r)$-split and $(\eps, r)$-static.
This will be carried out in the next section.

\begin{proof}
Without loss of generality, we may assume that $t_0 = 0$ and $r = 1$.
Assume that the statement of the proposition was false for some fixed $Y < \infty$.
Then we can find a sequence of counterexamples $(M_i, (g_{i,t})_{t \in I_i})$, $x_i \in M$ for some sequence $\eps_i \to 0$.
Write $d\nu^i = d\nu_{x_i, 0} = (4\pi \tau_i)^{-n/2} e^{-f_i} dg_i$, $W_i := \NN_{x_i, 0} (1)$ and choose $(n-1,  \eps_i,1)$-splitting maps $\vec y^{\, i}$ at $(x_i, 0)$.
After passing to a subsequence, we may assume that $W_\infty := \lim_{i \to \infty} W_i$ exists.
Since $\rrm(x_i, 0) \to 0$, we must have $W_\infty  < 0$ by \cite[\HKThmEpsRegularity]{Bamler_HK_entropy_estimates}.

Using Proposition~\ref{Prop_NN_almost_constant_selfsimilar}, and after possibly parabolic rescaling and replacing of the sequence $\eps_i$ by some other sequence $\eps'_i \to 0$, we may furthermore assume that $(x_i, 0)$ is $(n-1,  \eps_i,1)$-split and $( \eps_i,1)$-selfsimilar.
As discussed in Section~\ref{Sec_basic_limits}, we may pass to a subsequence, and assume that we have convergence
\begin{equation} \label{eq_F_convergence_codim_2}
 (M_i, (g_{i,t})_{t \in I_i}, \nu^i) \xrightarrow[i \to \infty]{\quad \IF, \CF \quad} (\mathcal{X}, \nu_{x_\infty}), 
\end{equation}
where $\XX$ is an $H_n$-concentrated metric flow with full support and on $(-\infty, 0]$, $\nu^\infty := \nu_{x_\infty}$ is a conjugate heat kernel on $\XX$ and the $\IF$-convergence is understood to hold on compact time-intervals within some correspondence $\CF$.
The limiting space $\XX$ allows a regular-singular decomposition of the form $\XX = \RR {\,\, \dotcup \,\,} \SS$, where $\dim \SS \leq n+1$ in the sense of Theorem~\ref{Thm_SS_dimension_bound_limit}, and for almost all $t < 0$ we have
\[ \nu^\infty_t ( \SS_t ) = 0. \]
Denote by $\psi_i : U_i \to V_i$ the diffeomorphisms describing the smooth convergence on $\RR$.
Theorem~\ref{Thm_limit_from_strong_splitting} implies that, after passing to a subsequence, we have local smooth convergence  $f_i \to f_\infty$, $y^i_j \to y^\infty_j$, $j = 1, \ldots, n-1$, where $f_\infty \in C^\infty (\RR)$ is the potential of $d\nu^\infty =(4\pi \tau )^{-n/2} e^{-f_\infty} dg$ and $y_1^\infty, \ldots, y_{n-1}^\infty \in C^\infty (\RR)$ induce a local splitting, which implies that $\Rm \equiv 0$ on $\RR$.

\begin{Claim} \label{Cl_eps_static}
$(x_i, 0)$ is $(\eps''_i, 1)$-static for some sequence $\eps''_i \to 0$.
\end{Claim}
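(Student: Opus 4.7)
The plan is to verify the three conditions of Definition~\ref{Def_static} in turn. The lower scalar bound $R \geq -\eps_i$ is immediate from $(x_i, 0)$ being $(\eps_i, 1)$-selfsimilar. The other two conditions will follow from showing that $\tau R_i$ is small in $L^1$ over the spacetime slab, and then upgrading this control via the one-sided monotonicity of $t \mapsto \int_M R \, d\nu^i_t$.

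For the $L^1$ smallness, the key idea is to use the $(n-1)$-splitting to account for all but one direction in the Hessian of the potential and then to extract the scalar curvature from the traced soliton identity. Introduce the almost radial function $q_i := 4\tau(f_i - W_i) - \sum_{j=1}^{n-1}(y_j^i)^2$ of Proposition~\ref{Prop_construction_almost_radial}. Tracing the Hessian bound $\nabla^2 q_i \approx 2\bigl(g - \sum_j dy_j^i \otimes dy_j^i\bigr)$ and using $|\nabla y_j^i|^2 \approx 1$ from Proposition~\ref{Prop_properties_splitting_map} yields $\triangle q_i \to 2$ in $L^2$ against $d\nu^i_t\,dt$ on $[-\eps^{-1}, -\eps]$. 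Expanding
\[ 4\tau\triangle f_i = \triangle q_i + 2\sum_j |\nabla y_j^i|^2 + 2\sum_j y_j^i \triangle y_j^i \]
and bounding the mixed term by Cauchy--Schwarz via $\|y_j^i\|_{L^2}\|\triangle y_j^i\|_{L^2} \lesssim \|\nabla^2 y_j^i\|_{L^2} \to 0$, together with $|\nabla y_j^i|^2 \to 1$, gives $4\tau\triangle f_i \to 2n$ in $L^1$. Subtracting the almost soliton identities \eqref{eq_alm_ss_identity_4} and \eqref{eq_alm_ss_identity_5} of Proposition~\ref{Prop_almost_soliton_identities} produces the traced relation $2\tau(\triangle f_i + R_i) \to n$ in $L^1$. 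Combining yields $\tau R_i \to 0$ in $L^1$, which combined with $R_i \geq -\eps_i$ gives $\int_{-\eps^{-1}}^{-\eps} \int_M |R_i| \, d\nu^i_t\, dt \to 0$.

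To upgrade to the pointwise-in-time upper bound required by the second clause of Definition~\ref{Def_static}, apply the analysis on the wider interval $[-\eps_i^{-1}, -\eps_i]$ permitted by the $(\eps_i, 1)$-selfsimilarity, obtaining a quantitative error $\delta_i \to 0$ with $\int_{-\eps_i^{-1}}^{-\eps_i} \int_M |R_i| \, d\nu^i_t\, dt \leq \delta_i$. Choose $\eps''_i \to 0$ with $\eps''_i \gg \delta_i^{1/2}$, and use a pigeonhole argument on $[-2\eps''_i, -\eps''_i]$ to produce a time $t^*_i$ with $\int_M R_i \, d\nu^i_{t^*_i} \leq \delta_i/\eps''_i \to 0$. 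The nondecreasing monotonicity $\frac{d}{dt} \int_M R \, d\nu^i_t = 2\int_M |\Ric|^2 d\nu^i_t \geq 0$ then propagates this upper bound to all $t \leq t^*_i$; combined with the scalar lower bound $R_i \geq -\eps_i$, this yields uniform control $|\int_M R_i \, d\nu^i_t| \to 0$ on $[-(\eps''_i)^{-1}, -2\eps''_i]$. Feeding the small endpoint values into
\[ 2\int_{t_1}^{t_2} \int_M |\Ric|^2 d\nu^i_t \, dt = \int_M R \, d\nu^i_{t_2} - \int_M R \, d\nu^i_{t_1} \]
produces the $L^2$ Ricci bound, concluding that $(x_i, 0)$ is $(\eps''_i, 1)$-static after a mild adjustment of $\eps''_i \to 0$.

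The main obstacle is the pointwise-in-time upgrade: the almost soliton identities and the splitting deliver only integrated-in-time control on the scalar curvature, and the monotonicity of $\int_M R \, d\nu^i_t$ is one-sided, so only upper bounds propagate. Carefully balancing the rates $\eps_i$ (from selfsimilarity), $\delta_i$ (from the $L^1$ bound, which a priori decays only like some $\Psi(\eps_i)$), and the target static scale $\eps''_i$, so that the pigeonhole selection of $t^*_i$ delivers a useful bound and monotonicity covers the correct sub-interval, is the delicate piece.
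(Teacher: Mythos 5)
Your proposal is circular: Proposition~\ref{Prop_construction_almost_radial}, from which you draw the Hessian bound on $q_i$, has the $(\delta,r)$-static condition as a \emph{hypothesis}, and that is precisely what Claim~\ref{Cl_eps_static} is trying to establish. To see why the static hypothesis cannot be dropped, note that from the selfsimilar condition alone one has $4\tau\nabla^2 f_i \approx 2g - 4\tau\Ric_i$ in $L^2$, so
\[ \nabla^2 q_i \approx 2\Big(g - \sum_j dy_j^i \otimes dy_j^i\Big) - 4\tau\Ric_i - 2\sum_j y_j^i \nabla^2 y_j^i, \]
and tracing gives $\triangle q_i \approx 2 - 4\tau R_i$ rather than $\triangle q_i \approx 2$. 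Your chain of substitutions would therefore require smallness of $\tau R_i$ as an input, which is exactly what you are trying to produce as output.

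The paper obtains the needed smallness from a geometric fact you did not invoke. Just before the claim, it was established (via Theorem~\ref{Thm_limit_from_strong_splitting} applied to the strong $(n-1,\eps_i,1)$-splitting maps) that the $y^i_j$ converge to functions inducing a local $\IR^{n-1}$-splitting of $\RR$; since the complementary factor is one-dimensional, $\Rm\equiv 0$ and in particular $R\equiv 0$ on $\RR$. The paper then picks a time $t^*$ at which $\nu^\infty_{t^*}(\SS_{t^*})=0$ and $\int_{M_i}|{\Ric}|^2\,d\nu^i_{t^*}$ is uniformly bounded (using Proposition~\ref{Prop_improved_L2}), combines the smooth convergence $R_i\to 0$ on $\RR$ with $\nu^i_{t^*}(M_i\setminus U_i)\to 0$ and the $L^2$ bound on $R$ to deduce $\int_{M_i} R\,d\nu^i_{t^*}\to 0$, and propagates backward in time via $\tfrac{d}{dt}\int_M R\,d\nu^i_t = 2\int_M|{\Ric}|^2\,d\nu^i_t\geq 0$ together with the lower scalar bound. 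Your pigeonhole-and-monotonicity upgrade at the end is close in spirit to this last propagation step, but it cannot get off the ground without a non-circular source of smallness for $\int_M R\,d\nu^i_t$ at one time; the observation that $(n-1)$ split directions leave a flat one-dimensional factor is the missing ingredient.
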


\begin{proof}
Fix some small $\eps'' > 0$ and suppose by contradiction that $(x_i, 0)$ is not $(\eps'', 1)$-static for infinitely many $i$.
Fix some small $\theta \in (0, \frac12 \eps'' )$.
By Proposition~\ref{Prop_improved_L2} we have
\[ \int_{-2\theta}^{-\theta} \int_{M_i} |{\Ric}|^2 d\nu^i_t dt \leq C(Y, \theta) < \infty. \]
So after passing to a subsequence we can find a $t^* \in [-2\theta, -\theta]$ such that $\nu^\infty_{t^*} (\SS_{t^*}) = 0$ and
\[ \int_{M_i} |{\Ric}|^2 d\nu^i_{t^*}    \leq C(Y, \theta) < \infty. \]
Since $R \equiv 0$ on $\RR$ and by the smooth convergence on $\RR$, we have $\sup_{U_i \cap M_i \times \{ t^* \}} |R| \to 0$ and $\nu^i_{t^*} (M_i \setminus U_i) = 1 - \nu^i_{t^*} ( U_i) \to 0$.
It follows that
\begin{equation*}
 \int_{M_i} R \, d\nu^i_{t^*} 
\leq \int_{U_i} R \, d\nu^i_{t^*}    +  \int_{M_i \setminus U_i} R \, d\nu^i_{t^*}  
\leq  \int_{U_i} R \, d\nu^i_{t^*}   +  \big( \nu^i_{ t^*} (M_i \setminus U_i) \big)^{1/2} \bigg( \int_{M_i \setminus U_i} R^2 d\nu^i_{t^*}  \bigg)^{1/2} \to 0. 
\end{equation*}
Therefore, since $\liminf_{i \to \infty} \inf_{M_i} R (\cdot, t^{**}) \geq 0$  for any $t^{**} < t^*$ we obtain that
\[ 2 \int_{t^{**}}^{t^*} \int_{M_i} |{\Ric}|^2 d\nu^i_{t}  dt =  \int_{M_i} R \, d\nu^i_{t}  \bigg|_{t =t^{**}}^{t =t^*} \to 0 \]
and
\[  \int_{M_i} R \, d\nu^i_{x_i, 0}  \bigg|_{t =t^{**}} \to 0 \]
This finishes the proof of the claim.
\end{proof}

Due to Claim~\ref{Cl_eps_static}, we are now in a position to apply Theorem~\ref{Thm_limit_from_almost_cone} for $k = n-1$, $\Delta = 1$; note that $k = n- \Delta$.
So define for $i = 1, 2, \ldots, \infty$
\begin{equation} \label{eq_def_q_i_codim_2}
 q_i := 4\tau (f_i - W_i) - \sum_{j=1}^{n-1} (y^i_j)^2. 
\end{equation}
Then $q_i \to q_\infty$ locally smoothly.

\begin{Claim}
$\RR$ corresponds to the constant flow on the open Euclidean upper half-plane $\IR^{n-1} \times \IR_+$ and $( y^\infty_1, \ldots, y^\infty_{n-1}, \sqrt{q_\infty})$ are Euclidean coordinates on every time-slice
\end{Claim}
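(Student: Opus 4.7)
The plan is to invoke Theorem~\ref{Thm_limit_from_almost_cone} in the case $k=n-1$, $\Delta=1$ (so $k=n-\Delta$), and then classify the one-dimensional factor $M'_\infty$ using the ODE satisfied by $q_\infty$, the volume identity \eqref{eq_vol_link}, and the strict inequality $W_\infty<0$ already established.

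First I would check the hypotheses of Theorem~\ref{Thm_limit_from_almost_cone}: the maps $\vec y^{\,i}$ are strong $(n-1,\eps_i,1)$-splittings, by Claim~\ref{Cl_eps_static} the basepoints $(x_i,0)$ are $(\eps''_i,1)$-static, and by construction they are $(\eps_i,1)$-selfsimilar. The theorem then yields $\RR=(M'_\infty\times\IR^{n-1})\times\IR_-$ with $(M'_\infty,g'_\infty)$ a one-dimensional Ricci-flat Riemannian manifold, a smooth function $q_\infty$ on $M'_\infty$ satisfying \eqref{eq_q_infty_idenitities}, and an isometric decomposition of $\{q_\infty>0\}\subset M'_\infty$ as a Riemannian cone (minus vertex) over a compact $0$-dimensional link $N$, with
\[ \vol(N,g^N)=e^{W_\infty}\vol(S^0,g^{S^0})=2e^{W_\infty}. \]
Since $N$ is a finite set, $|N|=2e^{W_\infty}\in\{1,2\}$, and $W_\infty<0$ forces $|N|=1$, so $W_\infty=-\log 2$ and $\{q_\infty>0\}$ has exactly one connected component, a single open ray.

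Next I would analyse $M'_\infty$ directly. On each connected component, parametrize by arc length $s$; the identities $\nabla^2 q_\infty=2g_{M'_\infty}$ and $|\nabla q_\infty|^2=4q_\infty$ from \eqref{eq_q_infty_idenitities} become $q''_\infty=2$ and $(q'_\infty)^2=4q_\infty$, whose smooth non-negative solutions are precisely $q_\infty(s)=(s-s_0)^2$ for some constant $s_0$. In particular $q_\infty$ is convex and has at most one zero on each component, so $|N|=1$ forces $q_\infty>0$ throughout, i.e.\ $s_0$ lies outside the parameter interval. A connected one-dimensional Riemannian manifold is isometric to $\IR$, $S^1(R)$, or an open interval; the circle case is ruled out by the observation that $q''_\infty\equiv 2$ cannot integrate to zero around a closed loop, and the case $\IR$ is ruled out because no $s_0\in\IR$ lies outside $\IR$. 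Hence the component is an open interval $(a,b)$ and, after a translation, we may take $s_0=a=0$; the cone structure, which requires the single ray $\{q_\infty>0\}$ to be isometric to $(0,\infty)$, forces $b=\infty$ and $q_\infty(s)=s^2$, so that $\sqrt{q_\infty}=s$ is an arclength coordinate. Connectedness of $M'_\infty$ itself follows from connectedness of the $M_i$ together with density of $\RR$ in $\XX$, and $\nabla^2 q_\infty=2g_{M'_\infty}$ rules out any additional component on which $q_\infty\equiv 0$.

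This identifies $\RR$ with the constant flow on $\IR^{n-1}\times(0,\infty)$ carrying the flat product metric, and $(y^\infty_1,\dots,y^\infty_{n-1},\sqrt{q_\infty})$ are Euclidean coordinates on every time-slice. The main obstacle I anticipate is the rigorous elimination of spurious components of $M'_\infty$ and the correct bookkeeping of the cone vertex: one must confirm that the single cone vertex, which lies in the metric completion but not in $M'_\infty$, corresponds precisely to the basepoint $x_\infty\in\SS$, and that the alignment of the $\IR^{n-1}$-coordinates $y^\infty_j$ with the splitting from Theorem~\ref{Thm_limit_from_almost_cone} is consistent with the normalization $\int y^i_j\,d\nu^i_t=0$ from Property~\ref{Def_strong_splitting_map_3} of Definition~\ref{Def_strong_splitting_map}.
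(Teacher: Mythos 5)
Your proof is correct and takes essentially the same route as the paper: invoke Theorem~\ref{Thm_limit_from_almost_cone} with $k=n-1$ (so $k=n-\Delta$), obtain a one-dimensional cone factor $M'_\infty$ over a compact $0$-dimensional link $N$, and use the volume identity \eqref{eq_vol_link} together with $W_\infty<0$ to force $\#N=1$. Your explicit ODE bookkeeping ($q_\infty''=2$, $(q_\infty')^2=4q_\infty$) fills in the step the paper leaves implicit — that $\#N=1$ together with \eqref{eq_q_infty_idenitities} forces $M'_\infty\cong\IR_+$ with $\sqrt{q_\infty}$ as arclength, ruling out spurious components or a bounded ray — but it is the same argument, not a genuinely different one.
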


\begin{proof}
Theorem~\ref{Thm_limit_from_almost_cone} implies that $\RR = (M'_\infty \times \IR^{n-1}) \times \IR_-$ for some $1$-dimensional Riemannian manifold $(M'_\infty, g'_\infty)$ with the property that $\{ q_\infty > 0 \} \subset M'_\infty$ is the Riemannian cone over a compact $0$-dimensional link $(N, g^N)$.
Moreover, by (\ref{eq_vol_link})
\[ \# N = \vol (N, g^N) = e^{W_\infty} \vol (S^0, g^{S^0}) < 2. \]
It follows that $\# N = 1$, which implies the claim.
\end{proof}

Consider the maps $\phi_i := ( y^i_1, \ldots, y_{n-1}^i,  q_i) : M_i \times (-2,-1) \to \IR^n_+ := \IR^{n-1} \times \IR_+$, which locally smoothly converge to $(y_1^\infty, \ldots, y^\infty_{n-1},  q_\infty)$ on $\RR_{(-2,-1)}$.
By Propositions~\ref{Prop_improved_L2}, \ref{Prop_properties_splitting_map} we may pass to a subsequence and choose a time $t \in (-2,-1)$ at which for all $j = 1, \ldots, n-1$
\[ \int_{M_i} |\nabla y^i_j|^{4n} d\nu^i_{t} , \; \int_{M_i} |\nabla q_i|^2 d\nu^i_{t} \leq C_1 \]
for some uniform constant $C_1 < \infty$ and such that $\nu^\infty_t (\SS_t) = 0$.
By the local smooth convergence, we can choose an increasing sequence of open subsets $U'_1 \subset U'_2 \subset \ldots \subset \RR_{t}$ with $\bigcup_{i=1}^\infty U'_i = \RR_t$ such that for $V'_i := \psi_{i,t} (U'_i)$ and $\phi'_i := \phi_{i,t} : V'_i \to \IR^n_+$ the following is true:
\begin{enumerate}[label=(\arabic*)]
\item We have
\begin{equation} \label{eq_M_minus_V_i_0}
 (4\pi |t|)^{-n/2} \int_{M_i \setminus V'_i} e^{-f_i} dg_{i,t} = \nu^i_{ t} (M_i \setminus V'_i) \to 0.
\end{equation}
\item $\phi'_i|_{V'_i}$ is an diffeomorphism onto its image.
\item We have
\[ \phi'_1(V'_1) \subset \phi'_2 (V'_2) \subset \ldots, \qquad \bigcup_{i=1}^\infty \phi'_i (V'_i) = \IR^{n}_+. \]
\end{enumerate}
Due to degree reasons, we can find a sequence of open subsets $V''_i \subset M_i$ such that:
\begin{enumerate}[label=(\arabic*), start=4]
\item $V'_i \cap V''_i = \emptyset$.
\item $\phi'_i (V''_i) \subset B( \vec 0, C_2) \subset \IR^n$ for some uniform $C_2 < \infty$.
\item $|\phi'_i (V''_i) | \geq c > 0$ for large $i$ and some uniform $c > 0$.
\end{enumerate}

By (\ref{eq_def_q_i_codim_2}) we must have $|f_i| \leq C_3 < \infty$ on $V''_i$ for some uniform constant $C_3 < \infty$.
Therefore, by (\ref{eq_M_minus_V_i_0}) we have
\[ | V''_i |_{g_{i,t}} \to 0. \]
This implies that for some uniform constant $C_4 < \infty$
\begin{align*}
 0 &< c \leq | \phi'_i (V''_i)|_{g_{i,t}} = \int_{V''_i} \sqrt{ \det \left(\begin{array}{c|c}\nabla y^i_j \cdot \nabla y^i_l & \nabla y^i_l \cdot \nabla q_i \\\hline \nabla q_i \cdot \nabla y^i_j  & |\nabla q_i|^2 \end{array}\right) } dg_{i,t} \\
 &\leq \int_{V''_i} |\nabla y^i_1| \cdots |\nabla y^i_{n-1}| \, |\nabla q_i | dg_{i,t} \\
&\leq   \bigg( \int_{V''_i} |\nabla y^i_1|^2 \cdots |\nabla y^i_{n-1}|^2 dg_{i,t} \bigg)^{1/2} \bigg( \int_{V''_i}  |\nabla q_i |^2 dg_{i,t} \bigg)^{1/2} \\
&\leq | V''_i |_{g_{i,t}}^{1/2n} \bigg( \int_{V''_i} |\nabla y^i_1|^{2n} dg_{i,t} \bigg)^{1/2n} \cdots \bigg( \int_{V''_i}  |\nabla y^i_{n-1}|^{2n} dg_{i,t} \bigg)^{1/2n} \bigg( \int_{V''_i}  |\nabla q_i |^2 dg_{i,t} \bigg)^{1/2} \\
&\leq C_4 | V''_i |_{g_{i,t}}^{1/2n} \bigg( \int_{V''_i} |\nabla y^i_1|^{2n} e^{-f_i} dg_{i,t} \bigg)^{1/2n}  \\
&\qquad\qquad\qquad \cdots \bigg( \int_{V''_i}  |\nabla y^i_{n-1}|^{2n} e^{-f_i} dg_{i,t} \bigg)^{1/2n} \bigg( \int_{V''_i}  |\nabla q_i |^2 e^{-f_i} dg_{i,t} \bigg)^{1/2}  \to 0.
\end{align*}
This gives the desired contradiction.
\end{proof}
\bigskip

\section{\texorpdfstring{An improved $\eps$-regularity theorem and codimension 4 of the singular set}{An improved {\textbackslash}eps-regularity theorem and codimension 4 of the singular set}} \label{sec_eps_reg_codim_4}
\subsection{Statement of the main result and reduction to weaker statement} \label{subsec_codim4_intro}
The goal of this section will be to show the following final improvement of Proposition~\ref{Prop_eps_reg_codim_2}, which can be seen as the parabolic analog of the Codimension 4 Conjecture for spaces with two-sided Ricci curvature bound \cite{Cheeger-Naber-Codim4}.

\begin{Proposition} \label{Prop_reg_codim_4}
Let $Y< \infty$ and suppose that $0 <\eps \leq \ov\eps (Y)$. Then the following holds.

Let $(M, (g_t)_{t \in I})$ be a Ricci flow on a compact manifold, $r > 0$ and $(x_0,t_0) \in M \times I$ such that $\NN_{x_0,t_0} (r^2) \geq - Y$.
Suppose that $(x_0, t_0)$ is strongly $(n-1, \eps, r)$-split or strongly $(n-3, \eps, r)$-split and $(\eps, r)$-static.
Then $\rrm (x_0, t_0) \geq \eps r$.
\end{Proposition}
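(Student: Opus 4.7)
The first alternative, that strong $(n-1,\eps,r)$-splitting implies $\rrm\ge\eps r$, is precisely Proposition~\ref{Prop_eps_reg_codim_2}; this already establishes Assumption~\ref{Aspt_working_ass} for $\Delta=2$, and it is this intermediate fact I would exploit throughout. The plan for the remaining second alternative is a proof by contradiction in the spirit of Cheeger--Naber's codimension~$4$ argument. Suppose for some $Y<\infty$ there is a sequence of counterexamples $(M_i,(g_{i,t})_{t\in I_i}, x_i)$ with $\NN_{x_i,0}(1)\ge -Y$, with $(x_i,0)$ strongly $(n-3,\eps_i,1)$-split and $(\eps_i,1)$-static, yet $\rrm(x_i,0)\to 0$, for some $\eps_i\to 0$. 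Applying Proposition~\ref{Prop_NN_almost_constant_selfsimilar} (after a harmless parabolic rescaling and replacing $\eps_i$ by a possibly larger sequence still tending to $0$) I may in addition assume $(x_i,0)$ is $(\eps_i,1)$-selfsimilar.

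Next I would pass to an $\IF$-limit $(\XX,\nu_{x_\infty})$ as in Subsection~\ref{subsec_setup_limit}. Since Proposition~\ref{Prop_eps_reg_codim_2} verifies Assumption~\ref{Aspt_working_ass} at level $\Delta=2$, all of the structural results of Section~\ref{Sec_basic_limits} apply with $\Delta=2$. Combining Theorems~\ref{Thm_limit_from_strong_splitting}, \ref{Thm_limit_from_static}, \ref{Thm_limit_from_selfsimilar} and \ref{Thm_limit_from_almost_cone} with $k=n-3$, the regular part splits as a Ricci flow spacetime $\RR=\RR'\times\IR^{n-3}$, with $\Ric\equiv 0$ on $\RR$, and the almost-radial function $q_\infty$ yields the identities (\ref{eq_q_infty_idenitities}); on the $3$-dimensional factor $\RR'$ the level sets of $q_\infty$ foliate $\{q_\infty>0\}$ as a metric cone (minus vertex) over some link $(N,g^N)$ with the volume identity (\ref{eq_vol_link}). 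Because $\dim\RR'=3$ and $\Ric=0$, the cone is flat, so $\RR'\cap\{q_\infty>0\}$ is isometric to $(\IR^3/\Gamma)\setminus\{0\}$ for a finite $\Gamma\subset O(3)$, extended trivially in time. The codimension~$2$ consequence of $\Delta=2$ (Theorem~\ref{Thm_SS_dimension_bound_limit}\ref{Thm_SS_dimension_bound_limit_a}) already rules out $\Gamma$ containing reflections, so $\Gamma\subset SO(3)$. If $\Gamma=\{1\}$, the vertex is regular, and by Lemma~\ref{Lem_tdrrm} and smooth convergence on $\RR$ one gets $\tdrrm$ bounded below near $(x_\infty)$, contradicting $\rrm(x_i,0)\to 0$. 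The core of the proof is therefore to rule out non-trivial $\Gamma\subset SO(3)$ (whose fixed set in $\IR^3$ is just the vertex, of spacetime codimension $3$).

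To exclude this remaining case I would adapt the Cheeger--Naber codimension~$4$ argument to the parabolic setting as follows. Pick many generic ``directions'' $p_1,\dots,p_N$ on the link $N=S^2/\Gamma$, which (by selfsimilarity) correspond to $\Psi$-well-separated sequences of almost selfsimilar points in the $M_i$ lying on the level set $\{q_i\sim 1\}$; by Proposition~\ref{Prop_extending_splitting_maps_almost_radial} applied at each of these points, the original $(n-3)$-splitting map $\vec y^{\,i}$ extends (at a smaller scale) by one radial coordinate $y^i_{n-2}$ built from $q_i$, producing a strong $(n-2,\Psi,\beta)$-splitting map at each selected point. Then using Proposition~\ref{Prop_almost_split_Rk} together with $N$ chosen of cardinality $\gtrsim \la^{-1}$ along a slicing of the link, one concatenates these coordinates into a strong $(n-1,\Psi,\beta)$-splitting map at points approaching the vertex. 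Invoking Proposition~\ref{Prop_eps_reg_codim_2} for these nearby points forces $\rrm$ to be bounded below there, and a continuity/Reifenberg-type argument (exploiting Lemma~\ref{Lem_tdrrm}\ref{Lem_tdrrm_g} and the local concentration of the conjugate heat kernel) propagates this estimate to the vertex itself, yielding $\tdrrm>0$ at the vertex and the desired contradiction. Combined with the volume formula (\ref{eq_vol_link}), which gives $e^{W_\infty}=1/|\Gamma|$, this forces $|\Gamma|=1$.

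The main obstacle is precisely the last paragraph: the parabolic adaptation of the Cheeger--Naber transformation/slicing machinery. In Cheeger--Naber one has sharp distance distortion and two-sided volume comparison on each slice; here, as emphasized in Subsection~\ref{subsec_key_diff_proof_strategy}, those tools are unavailable, and one must instead organize the slicing globally via the weighted integral estimates of Section~\ref{sec_improved_Lp}, the expansion bound of Proposition~\ref{Prop_dist_expansion_almost_ss} (valid near selfsimilar points), and the radial extension provided by Proposition~\ref{Prop_extending_splitting_maps_almost_radial}, controlling everything through conjugate heat kernel weights rather than cutoff functions. This is where the bulk of the technical work will be concentrated.
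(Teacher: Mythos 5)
Your overall architecture misses the key intermediate reduction that makes the argument work. The paper does \emph{not} prove the ``strongly $(n-3,\eps,r)$-split and $(\eps,r)$-static'' case directly by ruling out nontrivial $\Gamma\subset SO(3)$ via a splitting-concatenation argument. Instead it first establishes a separate, strictly stronger intermediate statement, Proposition~\ref{Prop_eps_reg_codim_4_weaker}: if $(x_0,t_0)$ is strongly $(n-2,\eps,r)$-split, $(\eps,r)$-static \emph{and} $(\eps,r)$-selfsimilar then $\rrm\ge\eps r$. All of the Cheeger--Naber transformation/slicing machinery (Propositions~\ref{Prop_Transformation_Theorem}, \ref{Prop_Slicing}, Lemma~\ref{Lem_curv_bound_slice}) lives inside the proof of that $(n-2)$-case, where the limit cone is $M'_\infty\times\IR^{n-2}$ with $M'_\infty$ two-dimensional and the link one-dimensional; the slicing produces a complete two-dimensional factor, forcing $(M'_\infty,g'_\infty)\cong\IR^2$ and hence $\vol(N)=2\pi$, contradicting $\vol(N)=2\pi e^{W_\infty}<2\pi$. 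In your proposal you place this machinery at the $(n-3)$ level, where the link is two-dimensional; there the slicing argument does not close up the same way, and indeed the paper does not attempt it there.

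Once Proposition~\ref{Prop_eps_reg_codim_4_weaker} is in hand, the $(n-3)$-case is dispatched by a much softer argument that your proposal does not contain: reduce to orientable $M$ by passing to the orientation double cover (one checks that the lower Nash-entropy bound and the split/static properties descend), then apply the structure theory with $\Delta=3$ (now available because the $(n-2)$-case verifies the working assumption) to conclude via Theorem~\ref{Thm_limit_from_almost_cone} that $\RR\cong(M'_\infty\times\IR^{n-3})\times\IR_-$ with $M'_\infty$ three-dimensional flat, a cone over a compact link $(N^2,g^N)$ of constant curvature $1$. Orientability of $M_i$ forces $N$ orientable, hence $N\cong S^2$, but the volume identity (\ref{eq_vol_link}) gives $\vol(N)<\vol(S^2)$, a contradiction. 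Your suggested route through Proposition~\ref{Prop_almost_split_Rk} is also quantitatively off: $\gtrsim\lambda^{-1}$ well-separated selfsimilar points yield a \emph{weak} $(2,\eps,r)$-splitting at the base point, not an $(n-1)$-splitting, and there is no mechanism in the cited results to fuse the pre-existing $(n-3)$-splitting with the newly detected directions into an $(n-1)$-splitting near the vertex. So the ``bulk of the technical work'' is indeed in the slicing, but one dimension higher than where you put it, and the final step is an elementary topological/orientability argument rather than a Reifenberg-type propagation.
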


Proposition~\ref{Prop_reg_codim_4} implies that the singular set of any non-collapsed limit of a sequence of Ricci flows has a singular set of codimension $\geq 4$, which is optimal.
More specifically:

\begin{Corollary} \label{Cor_working_ass_holds}
Assumption~\ref{Aspt_working_ass} holds for $\Delta = 4$.
\end{Corollary}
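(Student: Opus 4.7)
Part (a) of the statement is exactly Proposition~\ref{Prop_eps_reg_codim_2}, so the plan is to handle part (b), the case where $(x_0,t_0)$ is strongly $(n-3,\eps,r)$-split and $(\eps,r)$-static. I would argue by contradiction. Suppose the conclusion fails for some $Y<\infty$, producing a sequence $(M_i,(g_{i,t})_{t\in I_i},x_i)$ with $(x_i,0)$ strongly $(n-3,\eps_i,1)$-split via maps $\vec y^{\,i}$ and $(\eps_i,1)$-static, $\NN_{x_i,0}(1)\geq-Y$, $\eps_i\to 0$, yet $\rho_i:=\rrm(x_i,0)\to 0$. Parabolically rescale by $\rho_i^{-1}$ to normalize $\rrm(x_i,0)=1$; the splitting and static conditions then hold at the larger scale $r_i=\rho_i^{-1}\to\infty$, and Proposition~\ref{Prop_NN_variation_bound} preserves the Nash-entropy lower bound up to a controlled loss. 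Since Assumption~\ref{Aspt_working_ass} is already established for $\Delta=2$ (via the corollary after Proposition~\ref{Prop_eps_reg_codim_2}), the preliminary partial regularity theory of Section~\ref{Sec_basic_limits} applies, and after passing to a subsequence I obtain an $\IF$-limit $\XX$ over $(-\infty,0]$ together with the continuous curvature scale $\tdrrm$ of Lemma~\ref{Lem_tdrrm} satisfying $\tdrrm(x_\infty)=1$.

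To analyze $\XX$, Theorems~\ref{Thm_limit_from_strong_splitting} and~\ref{Thm_limit_from_static} applied with $\Delta=2$ yield a global splitting of the regular part $\RR$ off an $\IR^{n-3}$-factor, $\Ric\equiv 0$ on $\RR$, and infinitesimal translation invariance of the heat kernel in the splitting directions and in time; the three-dimensional cross-section of $\RR$ is therefore Ricci-flat and so flat, with singular locus of parabolic codimension at least two from Theorem~\ref{Thm_SS_dimension_bound_limit}. To force a cone structure I would exploit monotonicity of $\NN_{x_\infty}(\tau)$ (inherited through Theorem~\ref{Thm_NN_in_limit} from Proposition~\ref{Prop_NN_basic_properties}) together with Proposition~\ref{Prop_NN_almost_constant_selfsimilar} to select, for each large $i$, a scale $\sigma_i$ at which $(x_i,0)$ is in addition $(\Psi(\delta_i),\sigma_i)$-selfsimilar, and then apply Theorem~\ref{Thm_limit_from_almost_cone} in the $k=n-\Delta$ regime, whose critical $\tdrrm\geq c\sqrt{q}$ estimate is supplied by Proposition~\ref{Prop_extending_splitting_maps_almost_radial}. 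The resulting secondary limit $\XX''$ is a static flow on $C(N^2)\times\IR^{n-3}\times\IR_-$ where $C(N^2)$ is a three-dimensional flat cone over a compact link $(N,g^N)$ of constant curvature one with $\vol(N)=e^{W_\infty}\vol(S^2)$. The only compact two-manifolds of constant curvature one whose fundamental groups act freely on $S^2\subset\IR^3$ are $S^2$ itself (with cone $\IR^3$) and $\IR P^2$ (with cone $\IR^3/\IZ_2$).

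In the smooth case $N=S^2$, the tangent flow is globally Euclidean with $\NN=0$, and Theorem~\ref{Thm_NN_in_limit}\ref{Thm_NN_in_limit_b} propagated back to the pre-limit sequence forces $\rrm(x_i,0)\geq c>0$, contradicting $\rho_i\to 0$. The heart of the argument, and the main obstacle, is to rule out the remaining case $N=\IR P^2$, whose would-be singular stratum $\{v\}\times\IR^{n-3}\times\IR_-$ has parabolic codimension only three. My plan is to adapt the Cheeger--Naber codimension-four slicing argument to the parabolic setting: at generic points of the pre-limit flows sitting over the smooth locus $\IR^3/\IZ_2\setminus\{v\}$, Proposition~\ref{Prop_extending_splitting_maps_almost_radial} applied to the almost-radial function $q_i=4\tau(f_i-W_i)-\sum_j(y_j^i)^2$ built in Section~\ref{sec_alm_radial} upgrades the $(n-3)$-splitting to a strong $(n-2,\Psi(\eps_i),sr_i)$-splitting at the finer scale $sr_i$, and Proposition~\ref{Prop_eps_reg_codim_2} then gives curvature regularity at those points. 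Controlling the covering by $P^*$-parabolic balls of a neighborhood of the stratum via Proposition~\ref{Prop_prelim_quanti_strat_XX} and matching the resulting volume bound against the definite deficit $\vol(\IR P^2)=\tfrac12\vol(S^2)$ produces the contradiction. The principal technical hurdle is implementing the slicing globally without distance-expansion estimates: all integrals must be weighted against conjugate heat kernels, requiring Proposition~\ref{Prop_improved_L2} to absorb the weights into $L^p$-bounds, Proposition~\ref{Prop_inheriting_bounds} to exchange heat-kernel basepoints across the $\IR^{n-3}$-slices and intermediate scales, and Proposition~\ref{Prop_dist_expansion_almost_ss} to control distance growth along almost-selfsimilar chains, with Theorem~\ref{Thm_SS_dimension_bound_limit}\ref{Thm_SS_dimension_bound_limit_c} used to identify the length-metric limit structure with the orbifold quotient.
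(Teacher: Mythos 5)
The main gap is your attempted application of Theorem~\ref{Thm_limit_from_almost_cone}. That theorem only produces the cone structure $\RR \cong (M'_\infty \times \IR^{k}) \times \IR_-$ under the hypothesis ``$\Delta \geq 3$ or $k = n - \Delta$.'' At the point you invoke it you only have $\Delta = 2$ (from the corollary after Proposition~\ref{Prop_eps_reg_codim_2}), and your splitting has $k = n-3 \neq n - 2 = n - \Delta$, so neither escape clause applies. Establishing $\Delta \geq 3$ is precisely what you are trying to prove, so this is circular. The same problem afflicts the use of Theorem~\ref{Thm_limit_from_static}: the identification of $\RR$ with a constant flow $M_\infty \times (-T_\infty,0)$ also requires $\Delta \geq 3$. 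So the cone splitting, the compactness of the two-dimensional link, and hence your two-case dichotomy $N \in \{ S^2, \IR P^2 \}$ are not yet available to you.

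The paper circumvents this by first proving an intermediate statement, Proposition~\ref{Prop_eps_reg_codim_4_weaker}, which concerns points that are strongly $(n-2,\eps,r)$-split, $(\eps,r)$-static and $(\eps,r)$-selfsimilar. There $k = n-2 = n-\Delta$ holds with $\Delta = 2$, so Theorem~\ref{Thm_limit_from_almost_cone} does apply and gives a two-dimensional cross-section over a \emph{one}-dimensional link; the slicing theorem (Proposition~\ref{Prop_Slicing}) and Lemma~\ref{Lem_curv_bound_slice} then show the slice is complete and flat, forcing the link to be a disjoint union of circles of circumference $2\pi$ and hence $W_\infty \geq 0$, a contradiction. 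With this in hand one effectively has $\Delta = 3$, so in the $(n-3)$-split case Theorem~\ref{Thm_limit_from_almost_cone} applies and produces the three-dimensional flat cone over a compact two-dimensional spherical space form $N$. The remaining possibility $N = \IR P^2$ is then ruled out not by a slicing or covering argument but by an elementary orientability trick that you miss entirely: pass to the orientation double cover $\td M_i \to M_i$ (which preserves the Nash-entropy lower bound via the $L^\infty$-heat-kernel bound of Proposition~\ref{Prop_L_infty_HK_bound}), so that $M_i$, hence $M'_\infty$, hence $N$ may be assumed orientable; then $N = S^2$ and $\vol(N) = e^{W_\infty}\vol(S^2)$ forces $W_\infty \geq 0$, a contradiction. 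Your proposed slicing-and-covering argument for $\IR P^2$ does not close this gap: a covering estimate can show the singular stratum has parabolic codimension at least $3$, but it cannot distinguish the genuine orbifold singularity $\IR^3/\IZ_2 \times \IR^{n-3}$ from $\IR^n$ on dimension grounds alone, which is exactly why the orientability argument is needed.
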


Proposition~\ref{Prop_reg_codim_4} is a consequence of the following weaker statement, whose proof will occupy the remainder of this section.

\begin{Proposition} \label{Prop_eps_reg_codim_4_weaker}
Let $Y< \infty$ and suppose that $0 <\eps \leq \ov\eps (Y)$. Then the following holds.

Let $(M, (g_t)_{t \in I})$ be a Ricci flow on a compact manifold, $r > 0$ and $(x_0,t_0) \in M \times I$ such that $\NN_{x_0,t_0} (r^2) \geq - Y$.
Suppose that $(x_0, t_0)$ is strongly $(n-2, \eps, r)$-split, $(\eps,r)$-static and $(\eps, r)$-selfsimilar.
Then $\rrm (x_0, t_0) \geq \eps r$.
\end{Proposition}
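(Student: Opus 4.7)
I will argue by contradiction. Suppose the proposition fails; then, after parabolic rescaling to $r = 1$ and a time shift so that $t_0 = 0$, I obtain a sequence of Ricci flows $(M_i, (g_{i,t})_{t \in I_i})$ and points $(x_i, 0)$ satisfying $\NN_{x_i, 0}(1) \geq -Y$ that are simultaneously strongly $(n-2, \eps_i, 1)$-split, $(\eps_i, 1)$-static and $(\eps_i, 1)$-selfsimilar for some $\eps_i \to 0$, yet $\rrm(x_i, 0) \to 0$. Since Proposition~\ref{Prop_eps_reg_codim_2} has established Assumption~\ref{Aspt_working_ass} with $\Delta = 2$, I can invoke the $\IF$-compactness and partial regularity machinery of Section~\ref{Sec_basic_limits} with $\Delta = 2$. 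After passing to a subsequence within a correspondence $\CF$, the flows converge to a limit metric flow $(\XX, d\nu^\infty)$ whose singular set has parabolic codimension $\geq 2$ by Theorem~\ref{Thm_SS_dimension_bound_limit}.

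Next I identify the limit explicitly. The three simultaneous almost-symmetries place us in the borderline case $k = n - 2 = n - \Delta$ of Theorem~\ref{Thm_limit_from_almost_cone}. The theorem gives
\[ \RR = \big( M'_\infty \times \IR^{n-2} \big) \times \IR_-, \]
where $M'_\infty$ is a flat 2-dimensional Riemannian cone with smooth compact link; necessarily the link is a circle $S^1_L$ of some length $L > 0$, so $M'_\infty$ is the flat cone of cone angle $L$. Formula~(\ref{eq_vol_link}) identifies $L = 2\pi e^{W_\infty}$ with $W_\infty = \lim_i \NN_{x_i, 0}(1) \leq 0$, so $L \leq 2\pi$. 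Because $\rrm(x_i, 0) \to 0$ forces $(x_\infty, 0)$ to be singular, Theorem~\ref{Thm_NN_in_limit}\ref{Thm_NN_in_limit_b} gives $W_\infty \leq -\eps_0$ for the dimensional constant $\eps_0$, and hence $L \leq 2\pi e^{-\eps_0}$ is strictly less than $2\pi$.

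The remaining task is to rule out $L < 2\pi$. My strategy is to upgrade the $(n-2)$-splitting at $(x_i, 0)$ to a genuine $(n-1,\eps,r')$-splitting at some definite positive scale $r' = r'(Y) > 0$; then Proposition~\ref{Prop_eps_reg_codim_2} will supply $\rrm(x_i, 0) \geq \eps' r' > 0$, contradicting $\rrm(x_i, 0) \to 0$. The extra coordinate is supplied, away from the vertex axis, by the almost-radial function $q_i := 4\tau(f_i - W_i) - |\vec y^{\, i}|^2$ analyzed in Proposition~\ref{Prop_construction_almost_radial}: on any annular region $\{ q_i \gtrsim 1 \}$, Proposition~\ref{Prop_extending_splitting_maps_almost_radial} already yields a strong $(n-1,\eps,\beta \la)$-splitting at nearby points. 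The job is to transport this gain in splitting dimension back to the vertex $(x_i, 0)$ itself at a scale bounded below in terms of $Y$.

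The hard part, and precisely the Ricci-flow codimension-four obstacle, is this propagation step, which requires an adaptation of the argument of \cite{Cheeger-Naber-Codim4} to the parabolic setting. The cone-angle defect $2\pi - L > 0$ is exactly the obstruction to gluing local angular coordinates defined on sectors of the cone into a globally defined harmonic splitting coordinate, so the propagation argument must be quantitative and make essential use of $L < 2\pi$. I envisage implementing it via a parabolic neck-decomposition on $P^*$-parabolic neighborhoods, using the strong-splitting regularity of Proposition~\ref{Prop_properties_splitting_map}, the quantitative stratification of Proposition~\ref{Prop_prelim_quanti_strat_XX} at $\Delta = 2$, and the improved $L^p$-heat-kernel estimates of Proposition~\ref{Prop_improved_L2} to control the failure of gluing in an integral sense against the conjugate heat kernel weight $e^{-f_i}$. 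The main technical difficulty is that, in contrast to the elliptic situation of \cite{Cheeger-Naber-Codim4}, neither distance-expansion bounds nor lower heat-kernel bounds are available (Subsection~\ref{subsec_key_diff_proof_strategy}), so the entire parabolic adaptation must be carried out globally in terms of Gaussian integrals rather than on localized distance balls, with the expansion-type estimate of Proposition~\ref{Prop_dist_expansion_almost_ss} compensating for this loss of localization near the (almost) selfsimilar point $(x_i, 0)$.
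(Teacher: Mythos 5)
Your reduction to the $\IF$-limit, the identification of the limit as a constant flow on a cone $C(S^1_L)\times\IR^{n-2}$ via Theorem~\ref{Thm_limit_from_almost_cone} (with $L=2\pi e^{W_\infty}<2\pi$), and the recognition that the remaining step is a parabolic adaptation of \cite{Cheeger-Naber-Codim4} are all correct and match the paper. The gap is in the implementation of that last step.

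You propose to ``transport the gain in splitting dimension back to the vertex $(x_i,0)$ itself at a scale bounded below in terms of $Y$,'' i.e.\ to upgrade the strong $(n-2,\eps_i,1)$-splitting at $(x_i,0)$ to an $(n-1,\eps,r'(Y))$-splitting and then invoke Proposition~\ref{Prop_eps_reg_codim_2}. This cannot work as stated. If the limit really were $C(S^1_L)\times\IR^{n-2}\times\IR_-$ with $L<2\pi$ and $x_\infty$ on the vertex axis, then $\IF$-convergence combined with Theorem~\ref{Thm_limit_from_strong_splitting} would force any putative $(n-1,\eps,r'(Y))$-splitting maps at $(x_i,0)$ to converge, on the regular part, to a genuine $\IR^{n-1}$-splitting of the cone passing through the vertex axis -- which does not exist for a cone of angle $L<2\pi$. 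In other words, the very cone-angle defect you are trying to exploit is precisely the obstruction to the splitting upgrade you need, so the two objectives are at cross purposes. The extra coordinate furnished by the almost-radial function $q_i$ via Proposition~\ref{Prop_extending_splitting_maps_almost_radial} only exists at points $(x_1,t_1)$ where $q_i\gtrsim\lambda^2>0$, and at scales $\beta\lambda r$ that degenerate as the point approaches the vertex axis; there is no mechanism to amalgamate these local, decreasing-scale splittings into a single splitting at $(x_i,0)$ at a scale bounded away from zero.

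The paper's argument (Proposition~\ref{Prop_Slicing} and Lemma~\ref{Lem_curv_bound_slice}) takes the dual route: instead of showing the vertex is splittable, it shows that a suitable two-dimensional level set $\Sigma=(\vec y_i(\cdot,t'_i))^{-1}(\vec a_i)$ of the existing $(n-2)$-splitting map has $\rrm\ge\sigma(Y,F)$ at every point where $f_i\le F$. Passing to the limit, $\Sigma_\infty\cap\{f_\infty\le F\}\subset M'_\infty$ is then complete for every $F$, hence $(M'_\infty,g'_\infty)$ is a complete flat $2$-manifold that is a cone minus its vertex, forcing $M'_\infty\cong\IR^2$ and $L=2\pi$, contradicting $W_\infty<0$. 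The contradiction is reached by a completeness-and-classification argument on the slice, not by any splitting upgrade at the vertex. The Slicing Theorem -- and in particular its H\"older estimate~(\ref{eq_ef4_ef4_diff}) on $e^{-f/4}$, which feeds the point-picking argument in Claim~\ref{Cl_r_pp_point_picking} -- is the ingredient your sketch is missing; ``parabolic neck-decomposition'' gestures in the right direction but does not substitute for it, and is moreover aimed at the wrong conclusion.
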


\begin{proof}[Proof that Proposition~\ref{Prop_eps_reg_codim_4_weaker} implies Proposition~\ref{Prop_reg_codim_4}.]
The case in which $(x_0,t_0)$ is strongly $(n-1, \eps, r)$-split is already addressed in Proposition~\ref{Prop_eps_reg_codim_2}.
It remains to consider the case in which $(x_0,t_0)$ is strongly $(n-3, \eps, r)$-split and $(\eps, r)$-static.

Next, we argue that we may assume $M$ to be orientable.
To see this, consider the orientation double cover $\td M$ of $M$ and let $\td x_0, \td x'_0$ be the two lifts of $x_0$.
Then the average of the conjugate heat kernels on $\td M$ based at $(\td x_0, t_0)$ and $(\td x'_0, t_0)$ agree with the lift of the conjugate heat kernel on $M$ at $(x_0,t_0)$.
By Proposition~\ref{Prop_L_infty_HK_bound} the latter satisfies an $L^\infty$-bound.
So the conjugate heat kernels on $\td M$ based at $(\td x_0, t_0)$ also satisfy an $L^\infty$-bound, which implies a lower bound on the pointed Nash entropy at $(\td x_0, t_0)$.
Next, we may lift the splitting map on $M$ to $\td M$ and conclude that $(\td x_0, t_0)$ is $(n-3, \Psi(\eps), r)$-split and $(\Psi(\eps),r)$-static.
So after replacing $M$ with $\td M$, $x_0$ with $\td x_0$ and adjusting $Y, \eps$, we may assume that $M$ is orientable.

By Proposition~\ref{Prop_NN_almost_constant_selfsimilar} and after adjusting $\eps$ and reducing $r$ by a bounded factor, we may additionally assume that $(x_0,t_0)$ is also $(\eps, r)$-selfsimilar.
So it remains to show that if $\eps \leq \ov\eps(Y)$, $\NN_{x_0,t_0} (r^2) \geq - Y$ and if $(x_0,t_0)$ is $(n-3,  \eps, r)$-split, $(\eps, r)$-static and $(\eps, r)$-selfsimilar, then $\rrm (x_0,t_0) \lb \geq \eps r$.

Without loss of generality, we may assume $r = 1$ and $t_0 = 0$.
Assume that the above statement was false and choose a sequence of counterexamples $(M_i, (g_{i,t})_{t \in I_i})$, $x_i \in M_i$ with orientable $M_i$, such that $(x_i, 0)$ is $(n-3, \eps_i, 1)$-split, $(\eps_i, 1)$-static and $(\eps_i, 1)$-selfsimilar, but $\rrm(x_i,0) \to 0$.
After passing to a subsequence, we may assume that we have convergence $W_i := \NN_{x_i, 0} (1) \to W_\infty$.
Since $\rrm(x_i,0) \to 0$, we must have $W_\infty < 0$ by \cite[\HKThmEpsRegularity]{Bamler_HK_entropy_estimates}.

By the discussion in Section~\ref{Sec_basic_limits} we may pass to an $\mathbb{F}$-convergent subsequence
\[ (M_i, (g_{i,t})_{t \in I_i}, \nu_{x_i,0}) \xrightarrow[i \to \infty]{\quad \IF, \CF \quad} (\mathcal{X}, \nu_{x_\infty}), \]
where $\XX$ is an $H_n$-concentrated metric flow over $(-\infty,0]$ with full support and the $\IF$-convergence is understood to hold on compact time-intervals.
By Proposition~\ref{Prop_eps_reg_codim_2} and since we assumed Proposition~\ref{Prop_eps_reg_codim_4_weaker}, we may apply Theorems~\ref{Thm_SS_dimension_bound_limit} and \ref{Thm_limit_from_almost_cone} for $\Delta = 3$.
After passing to another subsequence, we obtain a regular-singular decomposition $\XX = \RR {\,\, \dotcup \,\,} \SS$ such that for all $t < 0$\begin{equation} \label{eq_nu_SS_0_all_t}
 d\nu_{x_\infty ; t} (\SS_t) = 0 .
\end{equation}
By Theorem~\ref{Thm_limit_from_almost_cone} we find that $\RR$ corresponds to the constant flow on $M'_\infty \times \IR^{n-3}$, where $(M'_\infty, g'_\infty)$ is Ricci flat and isometric to a Riemannian cone over some compact link $(N^2, g^N)$, possibly minus its vertex.
Since $(M'_\infty, g'_\infty)$ is 3-dimensional and Ricci flat, it must be flat and therefore, $(N^2, g^N)$ must have $\sec \equiv 1$.
Moreover,
\[ \vol ( N, g^N ) = e^{W_\infty} \vol ( S^2, g^{S^2} ) < \vol ( S^2, g^{S^2} ). \]
Since we have assumed the manifolds $M_i$ to be orientable, the manifold $M'_\infty$ and therefore $N$ must be orientable, which is impossible.
\end{proof}

The following subsections will be devoted to the proof of Proposition~\ref{Prop_eps_reg_codim_4_weaker}.
The general strategy of this proof is similar to that in \cite{Cheeger-Naber-Codim4}.
However, we will encounter several issues, which stem from the lack of pointwise gradient bounds for strong splitting maps, lower bounds on conjugate heat kernels and distance distortion issues.

\subsection{The heat operator applied to absolute values of forms}
In this section, we will frequently apply the heat operator $\square$ to functions that are only weakly twice differentiable (see Definition~\ref{Def_weakly_2_diff}).
Let us recall some of the important aspects of this theory.
For a more detailed discussion on this topic see Subsection~\ref{subsec_H_operator_identities}. 

Denote by $(M, (g_t)_{t \in I})$ a Ricci flow on a compact manifold and consider a weakly twice differentiable function $u  \in C^0 (M \times I')$, where $I' \subset I$ is a non-trivial sub-interval.
By Proposition~\ref{Prop_weak_square} the heat operator applied to $u$ is represented by a signed measure $\mu_{\square u}$ on $M \times I'$ of locally finite total variation such that for any compact sub-interval $[t_1, t_2] \subset I'$ and any compactly supported $\phi \in C^2_c (M \times [t_1, t_2])$ we have
\begin{equation} \label{eq_weak_square_identity}
 \int_{M \times [t_1, t_2]} \phi \, d\mu_{\square u} = \int_M u \, \phi \, dg_t \bigg|_{t=t_1}^{t=t_2} + \int_{t_1}^{t_2} \int_M u \square^* \phi \, dg_t dt. 
\end{equation}
We recall that $\mu_{\square u}$ is a measure on $M \times I'$, and its integral over a single time-slice always vanishes.
By abuse of notation, we will often write $\square u \, dg$ instead of $\mu_{\square u}$ and $|\square u| dg$ instead of $|\mu_{\square u}| = (\mu_{\square u})_+ + (\mu_{\square u})_-$ and set
\[ \int_{t_1}^{t_2} \int_M \phi \, \square u \, dg_t dt :=  \int_{M \times [t_1, t_2]} \phi \, d\mu_{\square u} , \qquad \int_{t_1}^{t_2} \int_M \phi \, |\square u| \, dg_t dt :=  \int_{M \times [t_1, t_2]} \phi \, d|\mu_{\square u}|. \]
Note again that the double integral on the left-hand side is necessary.
In addition, in the following subsections, $\phi$ will always denote a conjugate heat kernel $d\nu = \phi \, dg = (4\pi \tau)^{-n/2} e^{-f} dg$ and we will often write
\[  \int_{t_1}^{t_2} \int_M  \square u \, d\nu_t dt :=  \int_{M \times [t_1, t_2]} (4\pi \tau)^{-n/2} e^{-f} \, d\mu_{\square u} , \]
similarly for $|\square u|$.
Then the identity (\ref{eq_weak_square_identity}) becomes the usual identity
\[ \int_{t_1}^{t_2} \int_M  \square u \, d\nu_t dt = \int_M u \, d\nu_t \bigg|_{t=t_1}^{t=t_2} . \]

Lastly, we describe one class of weakly twice differentiable functions that we will encounter most frequently.
Consider smooth functions $y_1, \ldots, y_k  \in C^\infty (M \times I')$.
In our case of interest, these functions will always come from a strong splitting map and therefore satisfy the heat equation $\square y_l = 0$, $l = 1, \ldots, k$.
Throughout this section we will use the following $l$-forms for $1 \leq l \leq k$:
\begin{equation} \label{eq_omega_l_def}
 \omega_l := dy_1 \wedge \ldots \wedge dy_l. 
\end{equation}
By Proposition~\ref{Prop_what_is_weakly_2_diff} the absolute value $|\omega_l| \in C^0(M \times I')$ is weakly twice differentiable.
It follows from Propositions~\ref{Prop_weak_square}, \ref{Prop_mu_square_rules} that $d\mu_{\square |\omega_l|} = \square |\omega_l| dg$ on $\{ \omega_l \neq 0 \}$ and that in general
\[ d\mu_{\square |\omega_l|} \leq |\square \omega_l| dg. \]
So the positive part $(d\mu_{\square |\omega_l|})_+$ is absolutely continuous.
We also record the following consequence of the heat equation $\square y_l = 0$, which we will use in the following.
For a local orthonormal frame $\{ e_m \}_{m=1}^{n}$ we have
\[ \square \omega_1 = \square d y_1 = 0 \]
and for $l \geq 2$
\begin{multline*}
 |\square \omega_l|
= \bigg| \sum_{i=1}^l d y_1 \wedge \ldots \wedge \square d y_i \wedge \ldots \wedge d y_l
-2 \sum_{m=1}^n \sum_{1 \leq i < j \leq l} d y_1 \wedge \ldots \wedge  \nabla^2_{e_m, \cdot} y_i \wedge \ldots \wedge  \nabla^2_{e_m, \cdot} y_j \wedge \ldots \wedge d y_l \bigg| \\
\leq C \sum_{1 \leq i < j \leq l} |\nabla y_1| \cdots |\nabla^2 y_i| \cdots |\nabla^2 y_j| \cdots |\nabla y_l|
\leq \bigg( \sum_{i=1}^l |\nabla^2 y_i|^2 \bigg) \bigg(  \sum_{i=1}^l |\nabla y_i|^{l-2} \bigg).
\end{multline*}
Therefore,
\begin{equation} \label{eq_square_omega_Kato_Lp_bound}
d\mu_{\square |\omega_l|}
\leq |\square \omega_l| dg
 \leq 
 \begin{cases}
 0 &\text{if $l =1$} \\ 
 \Big( \sum_{i=1}^l |\nabla^2 y_i|^2 \Big) \Big(  \sum_{i=1}^l |\nabla y_i|^{l-2} \Big) dg &\text{if $l \geq 2$}
 \end{cases}.
\end{equation}

\subsection{The Transformation Theorem}
The following proposition can be viewed as the parabolic analog of \cite[Theorem~1.32]{Cheeger-Naber-Codim4}.

\begin{Proposition}[Transformation Theorem] \label{Prop_Transformation_Theorem}
If $Y < \infty$, $\eps \in (0, 10^{-1})$ and $\delta \leq \ov\delta (Y, \eps)$, then the following holds.
Let $(M, (g_t)_{t \in I})$ be Ricci flow on a compact manifold, $(x_0,t_0) \in M \times I$ and $r_0 > 0$ such that $\NN_{x_0,t_0} (r_0^2) \geq - Y$.
Denote by $d\nu = d\nu_{x_0, t_0} = (4\pi\tau)^{-n/2} e^{-f} dg$ the conjugate heat kernel based at $(x_0,t_0)$.
Let $y_1, \ldots, y_k \in C^\infty (M \times [t_0 - \delta^{-1} r_0^2, t_0])$ be solution to the heat equation $\square y_l = 0$ and set $\vec y := (y_1, \ldots, y_k)$.

Assume that $\vec y$ is a strong $(k, \delta, r_0)$-splitting map at $(x_0, t_0)$.
Moreover, assume that there is some $r_1 \in (0, r_0]$ such that for all $r \in [r_1, r_0]$ and $l = 1, \ldots, k$ the $l$-form from (\ref{eq_omega_l_def}) satisfies (in the sense of weak derivatives)
\[  \int_{t_0- r^2}^{t_0-\delta r^2} \int_M  |\square |\omega_l| | d\nu_{t} dt
\leq \delta r^{-2} \int_{t_0- r^2}^{t_0-  r^2/2} \int_M  |\omega_l|  d\nu_{t} dt. \]
Then there is a lower triangular $k \times k$-matrix $A$ with diagonal entries $\geq 1- \eps$ and $|A| \leq 10 (r_1/r_0)^{-\eps}$ such that $\vec y^{\,\prime} := A \vec y$ is a strong $(k, \eps, r_1)$-splitting map at $(x_0, t_0)$.
\end{Proposition}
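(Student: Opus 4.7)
The plan is to adapt the inductive dyadic-scale argument of Cheeger--Naber (in their proof of the elliptic transformation theorem) to the parabolic setting. First, I would set $r_j := 2^{-j}r_0$ for $j=0,\ldots,J$ with $J := \lceil \log_2(r_0/r_1)\rceil$, and define at each scale the symmetric Gram matrix
\[
G^{(j)}_{ik} := \frac{1}{r_j^2}\int_{t_0-r_j^2}^{t_0-r_j^2/2}\int_M \nabla y_i\cdot\nabla y_k\, d\nu_t\, dt,\qquad 1\le i,k\le k.
\]
Using Cholesky factorization, write $G^{(j)} = L_j L_j^T$ with $L_j$ lower triangular and positive diagonal, and set $A_j := L_j^{-1}$. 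The candidate correction matrix is $A := A_J$. Condition~(1) of Definition~\ref{Def_strong_splitting_map} is automatic for $A\vec y$ since the heat equation is preserved by linear combinations, and Condition~(3) can be restored by subtracting constants via Lemma~\ref{Lem_prop_3_strong_splitting}.

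The core of the proof is to show that $A_{j+1}A_j^{-1}$ is $\Psi(\delta\mid Y)$-close to the identity for each $j$. The crucial observation is that $|\omega_l|^2$ equals the determinant of the $l\times l$ principal minor of the pointwise Gram matrix, so time-averaged determinants of principal minors of $G^{(j)}$ are controlled by $r_j^{-2l}\int_{t_0-r_j^2}^{t_0-r_j^2/2}\int_M|\omega_l|\,d\nu_t\,dt$. The hypothesis
\[
\int_{t_0-r^2}^{t_0-\delta r^2}\int_M |\square|\omega_l||\,d\nu_t\,dt \le \delta\, r^{-2}\int_{t_0-r^2}^{t_0-r^2/2}\int_M|\omega_l|\,d\nu_t\,dt
\]
together with the weak duality identity~\eqref{eq_weak_heat_op} implies that these averaged minors change by a multiplicative factor of $1+\Psi(\delta\mid Y)$ from scale $r_j$ to $r_{j+1}$. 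Combining this with the off-diagonal estimate $\frac{d}{dt}\int_M \nabla y_i\cdot\nabla y_k\,d\nu_t = -2\int_M \nabla^2 y_i\cdot\nabla^2 y_k\,d\nu_t$ and the Hessian $L^2$-bounds from Proposition~\ref{Prop_properties_splitting_map} (applied inductively with the matrix $A_j$ to keep $A_j\vec y$ a weak splitting map at each intermediate scale) controls individual entries of $G^{(j)}-G^{(j+1)}$ in the same way.

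Given $|A_{j+1}A_j^{-1} - I| \le \Psi(\delta\mid Y)$, I would telescope
\[
A = A_J = (A_J A_{J-1}^{-1})\cdots(A_1 A_0^{-1})\,A_0,
\]
with $A_0$ already $\Psi(\delta\mid Y)$-close to $I$ by the strong splitting hypothesis at $r_0$. This gives
\[
|A| \le (1+\Psi(\delta\mid Y))^{J+1} \le 10\,(r_1/r_0)^{-C\Psi(\delta\mid Y)},
\]
and choosing $\delta\le\ov\delta(Y,\eps)$ small enough that $C\Psi(\delta\mid Y)\le\eps$ yields both $|A|\le 10(r_1/r_0)^{-\eps}$ and diagonal entries $\ge 1-\eps$ (the latter from the Cholesky construction at scale $r_J$, since the diagonal of $L_J$ is $\Psi(\delta\mid Y)$-close to $1$). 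Finally, to verify Condition~(2) of the strong splitting map at scale $r_1$, I would use $AG^{(J)}A^T = I$ together with higher $L^p$ control from Proposition~\ref{Prop_properties_splitting_map}\ref{Prop_properties_splitting_map_a} applied to the intermediate splitting maps $A_j\vec y$, to pass from the averaged identity on a single dyadic window to the full $L^1$-bound on the window $[t_0-\eps^{-1}r_1^2,\,t_0-\eps r_1^2]$.

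The main obstacle is the inductive propagation of the weak splitting condition across all intermediate scales: one needs $A_j\vec y$ to satisfy enough regularity to invoke Proposition~\ref{Prop_properties_splitting_map}, while simultaneously using that regularity to control $|A_{j+1}A_j^{-1}-I|$. This circularity is broken by the polynomial-in-$(r_0/r_1)^\eps$ norm bound, which keeps $A_j\vec y$ a $(k,\Psi(\delta\mid Y),r_j)$-splitting map all the way down --- but absorbing the growing norms into the integral hypothesis on $|\square|\omega_l||$ requires a careful bootstrap and is where the precise form of the hypothesis (as an integral over one scale bounded by the integral over the next) becomes essential.
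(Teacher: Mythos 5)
Your high-level strategy---orthonormalize the gradients at each dyadic (or continuously varying) scale, telescope the resulting correction matrices, and control their product by a polynomial-in-$(r_0/r_1)^\eps$ bound---is the right skeleton, and matches the paper's continuity-over-scales argument. But the heart of the matter is missing, and there are two places where the argument as written would not close.

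\emph{First}, the step ``the hypothesis $\int\!\!\int |\square|\omega_l||\,d\nu\le\delta r^{-2}\int\!\!\int|\omega_l|\,d\nu$ implies that the averaged minors of $G^{(j)}$ change by a factor $1+\Psi(\delta\mid Y)$'' does not follow as stated. The hypothesis, via \eqref{eq_weak_heat_op}, controls the variation of $t\mapsto\int_M|\omega_l|\,d\nu_t$ across the window, but this quantity is $\int_M\sqrt{\det(\nabla y_i\cdot\nabla y_m)_{i,m\le l}}\,d\nu_t$, which is \emph{not} the $l\times l$ principal minor of the time-averaged Gram matrix $G^{(j)}$; passing between ``determinant of the average'' and ``average of the square-root-determinant'' requires a concentration statement. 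Likewise, the off-diagonal entries of $G^{(j)}-G^{(j+1)}$ are controlled via $\frac{d}{dt}\int_M\nabla y_i\cdot\nabla y_m\,d\nu_t=-2\int_M\nabla^2 y_i\cdot\nabla^2 y_m\,d\nu_t$, but the Hessian bound from Proposition~\ref{Prop_properties_splitting_map}\ref{Prop_properties_splitting_map_b} at each intermediate scale is only of size $O(\eps)$ (because the intermediate-scale splitting maps are only $\eps$-quality, not $\Psi(\delta)$-quality); summing $O(\eps)$ over $J\sim\log(r_0/r_1)$ scales is a $O(\eps\log(r_0/r_1))$ error, which is uncontrolled without something extra. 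The missing ingredient is precisely what the paper isolates as Lemma~\ref{Lem_transformation_estimates}: an argument showing that a scalar $h$ (either $|\omega_k|-a$ or $\nabla y_i\cdot\nabla y_k$) with averaged mean zero, controlled $\int\!\!\int|\square h|$, and an $L^{3/2}$ bound growing like $T^{3\gamma/4}$ at the initial time (slow polynomial growth) must in fact be small in the $L^1(d\nu)$ sense on the final window. That lemma is proved by solving the heat equation forward from a truncated version of $h$, invoking the gradient estimate and the $L^1$-Poincar\'e inequality (Proposition~\ref{Prop_Poincare}), and balancing $T^{3\gamma/4}$ against $T^{-1/2}$---this balancing is where the smallness of $\gamma$ (hence $\eps\le\ov\eps(Y)$) becomes necessary. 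Without some version of this you cannot obtain the desired $L^1$ closeness at scale $r_1$ from an averaged Gram-matrix identity $AG^{(J)}A^T=I$ alone.

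\emph{Second}, your description of the bootstrap assumes the intermediate-scale maps $A_j\vec y$ are $(k,\Psi(\delta\mid Y),r_j)$-splitting maps ``all the way down.'' The fixed point of the iteration can only be at the coarser quality $\eps$: the strong $(k,\delta,r_0)$-split hypothesis is used only at the top scale, and what is preserved across scales is $\eps$-quality splitting together with the growth bound $|A_r|\le 10(r/r_0)^{-\eps}$ and diagonal $\ge 1-\eps$. In particular $|A_{j+1}A_j^{-1}-I|\le\Psi(\eps\mid Y)$, not $\Psi(\delta\mid Y)$, and the telescope gives $|A|\le 10(r_1/r_0)^{-\eps}$ only because the statement allows (and the paper quietly imposes) $\eps\le\ov\eps(Y)$, making $\Psi(\eps\mid Y)$ small enough that $(1+\Psi(\eps\mid Y))^{J}\le(r_1/r_0)^{-\eps}$. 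Finally, a small but structurally important omission: the paper's proof proceeds by induction on $k$, and this induction is what makes the algebraic comparison $|\omega_k|^2\approx|\nabla y_k|^2$ legitimate (cf. the pointwise inequality \eqref{eq_omega_nab_y}, which needs $\nabla y_i\cdot\nabla y_j\approx\delta_{ij}$ for $i,j<k$). Your argument should either record this induction or provide a substitute for the algebraic step.
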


\begin{proof}
Without loss of generality, we may assume that $t_0 = 0$.
We will show the proposition with the following additional assertion: If $\vec y^{\,\prime} := A \vec y$, then for $r = r_1$
\begin{equation} \label{eq_ONB_y}
 \frac1{(\eps^{-1} - \eps) r^2} \int_{-\eps^{-1} r^2}^{-\eps r^2} \int_M \nabla y'_i \cdot \nabla y'_j \, d\nu_{t} dt = \delta_{ij}. 
\end{equation}
Note that $A$ is uniquely determined by (\ref{eq_ONB_y}) under the assumption that it is lower triangular and has positive diagonal entries.

By induction, we may assume that the proposition with this additional assertion already holds for $k-1$ if $k \geq 2$.
We will now prove it for $k$.
Fix $Y, \eps$, let $\delta > 0$ be a constant whose value we will determine later and assume by contradiction that the assumptions of the proposition hold, but the stronger assertion is false.
Throughout the proof we will impose assumptions of the form $\eps \leq \ov\eps (Y)$, which does not create any issues since the proposition becomes stronger for smaller $\eps$.

By Gram-Schmidt orthogonalization we obtain the following for any $r \in [r_1, r_0]$: either there is a unique lower triangular $k \times k$ matrix $A_r$ with positive diagonal entries such that (\ref{eq_ONB_y}) holds for $\vec y^{\,\prime} := A_r \vec y$ or the gradient vector fields $\nabla y_1, \ldots, \nabla y_k$ restricted to $M \times [-\eps^{-1}r^2, -\eps r^2]$ are linearly dependent.
Moreover $A_r$ depends continuously on $r$, whenever it exists.
If $\eps \leq \ov\eps$, then whenever $A_r$ exists and $A_r \vec y$ is a strong $(k,\eps, r)$-splitting map, then we can exclude the case of linear dependence and conclude that $A_{r'}$ exists for all nearby $r' \in [r_1, r_0] \cap [\frac12 r, 2 r]$.

By a continuity argument there is a minimal $r'_1 \in [r_1, r_0]$ such that if $r'_1 < r_0$, then for all $r \in [r'_1, r_0]$ the following is true: the matrix $A_r$ exists, $\vec y^{\,\prime} := A_r \vec y$ is a strong $(k, \eps, r)$-splitting map at $(x_0, 0)$, $A_r$ has diagonal entries $\geq 1-\eps$ and $|A_r| \leq 10 (r/r_0)^{-\eps}$.
If $r'_1 = r_1$, then we are done.
So assume by contradiction that $r'_1 > r_0$.
After parabolic rescaling, we may assume that $r'_1 = 1$.

Set $\vec y^{\,\prime} := A_1 \vec y$ and $\omega'_l := dy'_1 \wedge \ldots \wedge dy'_l$, $l = 1, \ldots, k$, for the remainder of this proof.
Since $\int_M y_i \, d\nu_t =0$ and due to a continuity argument, it remains to show:
\begin{enumerate}[label=(\arabic*)]
\item \label{propty_y_p_1} For all $i,j = 1, \ldots, k$
\[   \int_{ - \eps^{-1} }^{ - \eps } \int_M |\nabla y'_i \cdot \nabla y'_j - \delta_{ij} | d\nu_{t} dt < \eps. \]
\item \label{propty_y_p_2} The diagonal entries of $A_1$ are $> 1- \eps$.
\item \label{propty_y_p_3} $|A_1| < 10 (1/r_0)^{-\eps}$
\end{enumerate}

\begin{Claim} \label{Cl_r_0_large}
$r_0 \geq \Psi^{-1} (\delta | Y, \eps)$.
\end{Claim}

\begin{proof}
This also follows from Gram-Schmidt orthogonalization.
\end{proof}

\begin{Claim} \label{Cl_k_m_1_splitting}
$(y'_1, \ldots, y'_{k-1})$ is a strong $(k-1, \Psi (\delta |Y),1)$-splitting map at $(x_0,t_0)$.
\end{Claim}

\begin{proof}
This follows from the induction hypothesis.
\end{proof}

\begin{Claim}
If $\delta \leq \ov\delta (\eps)$, then Property~\ref{propty_y_p_2} holds.
\end{Claim}

\begin{proof}
We have
\[ \frac{d}{dt} \int_M |\nabla y_i|^2 d\nu_t 
=  \int_M \square |\nabla y_i |^2 d\nu_t 
= -2  \int_M  |\nabla^2 y_i|^2 d\nu_t  \leq 0. \]
By (\ref{eq_ONB_y}) we have
\[ \frac1{ \eps^{-1} - \eps }\int_{-\eps^{-1}}^{-\eps} \int_M |\nabla y_i|^2 d\nu_t dt 
=  |(A_1^T)^{-1} \vec e_i|^2. \]
Since $\vec y$ is a strong $(k, \delta, r_0)$-splitting map, there is a $t_1 \in [-2r_0^2, -r_0^2]$ with
\[ \int_M |\nabla y_i|^2 d\nu_{t_1} \leq 1+\Psi (\delta ). \]
Combining these bounds implies $|(A_1^T)^{-1} \vec e_i|^2 \leq 1 + \Psi (\delta)$.
Since $(A_1^T)^{-1}$ is lower triangular, this implies that its diagonal entries are $\leq 1 + \Psi(\delta)$.
Therefore the diagonal entries of $A_1$ are $\geq 1 - \Psi(\delta)$.
\end{proof}

\begin{Claim} \label{Cl_slow_poly}
\begin{enumerate}[label=(\alph*)]
\item \label{Cl_slow_poly_a} For any $r, r' \in [1, r_0]$ with $\frac12 r \leq r' \leq 2 r$ we have $|A_{r} A^{-1}_{r'} - I_k| , |A_{r'} A^{-1}_{r} - I_k| \leq \Psi (\eps | Y)$, where $I_k$ denotes the identity matrix.
\item \label{Cl_slow_poly_b} If $\eps \leq \ov\eps (Y)$, $\delta \leq \ov\delta (Y, \eps)$, then Property~\ref{propty_y_p_3} holds.
\item \label{Cl_slow_poly_c} If $\gamma > 0$ and $\eps \leq \ov\eps (Y, \gamma)$, then for all $t \in [-r_0^2, -\frac12]$
\begin{equation} \label{eq_omega_p_nab_y_p_pol}
 \int_M |\omega'_k| d\nu_t, \, \int_M |\nabla y'_k|^2 d\nu_t  \leq 10 |t|^\gamma. 
\end{equation}
\end{enumerate}
\end{Claim}

\begin{proof}
Assertion~\ref{Cl_slow_poly_a} follows, because $A_{r'} \vec y$ and $A_{r} \vec y$ are strong splitting maps with overlapping domains.
Assertion~\ref{Cl_slow_poly_b} follows from Assertion~\ref{Cl_slow_poly_a} combined with the fact that $|A_2| \leq 10 (2/r_0)^{-\eps}$, which holds due to the choice of $r_1$.
For Assertion~\ref{Cl_slow_poly_c}, consider some $r \in [1, r_0]$ with $t \in [-r^2, -\eps r^2]$ and set $\vec y^{\,\prime\prime} := A_{r} \vec y$, $\omega''_k := dy''_1 \wedge \ldots \wedge dy''_k$.
Then $\vec y^{\, \prime} = A_1 \vec y = A_1 A_r^{-1} \vec y^{\,\prime\prime}$.
Iterating Assertion~\ref{Cl_slow_poly_a} implies that $|A_1 A_r^{-1} - I | \leq r^{\Psi (\eps |Y)}$.
So (\ref{eq_omega_p_nab_y_p_pol}) follows from
\[ \int_M |\omega''_k| d\nu_t, \, \int_M |\nabla y''_k|^2 d\nu_t  \leq 10, \]
which follows from Proposition~\ref{Prop_properties_splitting_map} and Property~\ref{propty_y_p_2} for $\eps \leq \ov\eps (Y)$.
\end{proof}

Lastly, note that by assumption and Claim~\ref{Cl_slow_poly}\ref{Cl_slow_poly_c} for any $r \in [1, r_0]$
\[  \int_{-r^2}^{-\delta r^2} \int_{M} |\square |\omega'_k|| d\nu_t dt \leq \delta r^{-2} 
 \int_{-r^2}^{-r^2/2} \int_{M}  |\omega'_k| d\nu_t dt \leq 10 \delta r^{\gamma}  . \]
Therefore, given some $\delta' \geq \delta$, we have $r_0^2 \geq \delta^{\prime -1}$ for $\delta \leq \ov\delta (Y, \eps, \delta')$ by Claim~\ref{Cl_r_0_large} and hence if $\gamma \leq \ov\gamma$, then
\begin{multline} \label{eq_square_omega_prime}
 \int_{-\delta^{\prime -1}}^{-\delta'} \int_{M} |\square |\omega'_k|| d\nu_t dt
 \leq
  \int_{-1}^{-\delta'} \int_{M} |\square |\omega'_k|| d\nu_t dt
+  \int_{-\delta^{\prime -1}}^{-\delta \delta^{\prime -1}} \int_{M} |\square |\omega'_k|| d\nu_t dt \\
  \leq 10 \delta + 10 \delta (\delta^{\prime -1})^\gamma
  \leq \Psi (\delta  | \delta').
\end{multline}
Combining (\ref{eq_square_omega_prime}), (\ref{eq_ONB_y}) and Claims~\ref{Cl_r_0_large}, \ref{Cl_k_m_1_splitting}, \ref{Cl_slow_poly}\ref{Cl_slow_poly_c} and, using Lemma~\ref{Lem_transformation_estimates} below, implies Property~\ref{propty_y_p_1} if $\gamma \leq \ov\gamma$, $\delta' \leq \ov\delta' (Y, \eps)$ and $\delta \leq \ov\delta (Y, \eps, \delta')$.
This finishes the proof.
\end{proof}

\begin{Lemma} \label{Lem_transformation_estimates}
For any $Y < \infty$, $\gamma \leq \ov\gamma$, $\eps > 0$ and $\delta \leq \ov\delta(Y, \eps)$ the following holds.
Let $(M, (g_{t})_{t \in [-\delta^{-1}, 0]})$ be a Ricci flow on a compact manifold and $x_0 \in M$ a point such that $\NN_{x_0, 0} (1) \geq -Y$.
Denote by $d\nu = (4\pi \tau)^{-n/2} e^{-f} dg$ the conjugate heat kernel based at $(x_0, 0)$.
Let $y_1, \ldots, y_k \in C^\infty (M \times [-\delta^{-1}, -\delta])$ be solutions to the heat equation $\square y_i = 0$ such that
\begin{enumerate}[label=(\roman*)]
\item \label{Lem_transformation_estimates_i} $(y_1, \ldots, y_{k-1})$ is a strong $(k-1, \delta, 1)$-splitting map at $(x_0, 0)$.
\item \label{Lem_transformation_estimates_ii} $\int_M y_k \, d\nu_t = 0$ for all $t \in [-\delta^{-1}, -\eps]$.
\item \label{Lem_transformation_estimates_iii} For $i = 1, \ldots, k$
\[ \frac1{(\eps^{-1} - \eps) } \int_{-\eps^{-1} }^{-\eps } \int_M \nabla y_i \cdot \nabla y_k \, d\nu_{t} dt = \delta_{ik}. \]
\item \label{Lem_transformation_estimates_iv} For $\omega_k := dy_1 \wedge \ldots \wedge dy_k$ we have (in the sense of weak derivatives)
\[ \int_{-\delta^{-1}}^{-\delta} \int_{M} |\square |\omega_k|| d\nu_t dt \leq \delta.  \]
\item \label{Lem_transformation_estimates_v} For all $t \in [-\delta^{-1}, -1]$
\[  \int_{M} |\nabla y_k|^2 d\nu_t  \leq 10 |t|^\gamma .\]
\end{enumerate}
Then for all $i = 1, \ldots, k$
\[   \int_{ - \eps^{-1} }^{ - \eps } \int_M |\nabla y_i \cdot \nabla y_k - \delta_{ik} | d\nu_{t} dt < \eps. \]
\end{Lemma}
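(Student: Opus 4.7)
The plan is to prove the $L^1$ bound by first establishing an $L^2$ control on $G_{ik} - \delta_{ik}$, where $G_{ij} := \nabla y_i \cdot \nabla y_j$, and then converting via Cauchy--Schwarz. The central tool is the Gram-matrix identity $|\omega_k|^2 = \det G$: writing $G$ in block form with upper-left $(k{-}1)\times(k{-}1)$ block $G'$, one has $|\omega_k|^2 = \det G' \cdot |\pi|^2$, where $\pi := \nabla y_k - P_{G'}(\nabla y_k)$ is the $G'$-orthogonal complement of the projection of $\nabla y_k$ onto $\mathrm{span}(\nabla y_i)_{i<k}$. Hypothesis (i) together with Proposition~\ref{Prop_properties_splitting_map} gives $|G' - I|$ small in every $L^p(d\nu_t)$ on $[-\eps^{-1},-\eps]$, as well as $L^2$-smallness of the Hessians $\nabla^2 y_i$ for $i < k$; so on a set of near-full $\nu_t$-measure the identity becomes the near-pointwise comparison $|\omega_k|^2 \approx |\nabla y_k|^2 - \sum_{i<k} G_{ik}^2$ with a controllable error.

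First I would harvest scalar control on $y_k$. From $\square y_k = 0$ one obtains $\square |\nabla y_k|^2 = -2|\nabla^2 y_k|^2 \leq 0$, so $q(t) := \int |\nabla y_k|^2 \, d\nu_t$ is non-increasing; integrating and using (v) yields $\int_{-\eps^{-1}}^{-\eps}\int |\nabla^2 y_k|^2 \, d\nu \leq 10\,\eps^{-\gamma}$ for $\gamma \leq \ov\gamma$. Next, $\square y_k^2 = -2|\nabla y_k|^2$ together with (ii) and the Poincar\'e inequality (Proposition~\ref{Prop_Poincare}) forces $q(-\eps^{-1}) \geq 1 - O(\eps^2)$, while (iii) with $i=k$ gives the averaged normalization $\frac{1}{\eps^{-1}-\eps}\int_{-\eps^{-1}}^{-\eps} q(t)\, dt = 1$. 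In parallel, (iv) combined with the weak-derivative identity (\ref{eq_weak_heat_op}) implies that $c(t) := \int |\omega_k| \, d\nu_t$ varies by at most $\delta$ on the whole interval $[-\delta^{-1},-\delta]$, hence $c(t) \equiv c$ up to error $\delta$.

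Putting these together, integrating the Gram comparison over $[-\eps^{-1},-\eps]$ gives $\int\int |\omega_k|^2 \, d\nu \geq \int\int |\nabla y_k|^2 \, d\nu - \sum_{i<k}\int\int G_{ik}^2 \, d\nu - \Psi(\delta \,|\, Y,\eps)$, while Cauchy--Schwarz yields $\int\int |\omega_k|^2 \, d\nu \geq c^2 (\eps^{-1}-\eps)$. Using (iii) with $i=k$ so that $\int\int |\nabla y_k|^2 \, d\nu = \eps^{-1}-\eps$, a lower bound $c \geq 1 - \Psi(\delta \,|\, Y,\eps)$ would immediately give $\sum_{i<k}\int\int G_{ik}^2 \, d\nu \leq \Psi$. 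A parallel argument based on $\int\int (|\omega_k| - c)^2 \, d\nu$, bounded via the almost-harmonicity of $|\omega_k|$ from (iv) and a Bochner-type expansion, then controls $\int\int (G_{kk} - 1)^2 \, d\nu$; Cauchy--Schwarz converts both $L^2$ bounds to the required $L^1$ bounds.

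The main obstacle will be producing the lower bound $c \geq 1 - \Psi$, since $y_k$ is not itself part of a splitting map and we have essentially no pointwise control on $\nabla y_k$. The danger is a ``cancellation'' configuration in which $\nabla y_k$ lies mostly in $\mathrm{span}(\nabla y_i)_{i<k}$ on a large set, shrinking $|\omega_k|$ while keeping $|\nabla y_k|$ macroscopic. To rule this out I would exploit the near-constancy of $c(t)$ from (iv): it suffices to obtain the lower bound at a single favourable scale, since the bound then propagates across all of $[-\delta^{-1},-\delta]$. At such a scale, $\frac{d}{dt}\int G_{ik}\, d\nu_t = -2 \int \nabla^2 y_i \cdot \nabla^2 y_k\, d\nu_t$ combined with Cauchy--Schwarz and the Hessian $L^2$ bounds from (i) and from the previous paragraph give $\big|\int G_{ik}\, d\nu_t \big| \leq \Psi$ for $i < k$; iterating this with the $L^p$-decay of $|G'-I|$ forces $|P_{G'}\nabla y_k|^2$ to be small on average, so $|\pi|^2 \approx |\nabla y_k|^2$ in $L^1$ and hence $c \approx 1$.
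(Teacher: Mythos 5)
The overall shape of your proposal differs from the paper's: you aim for $L^2$ control on the Gram entries $G_{ik} := \nabla y_i\cdot\nabla y_k$ and $|\omega_k|-c$, then Cauchy--Schwarz down to $L^1$; the paper works directly in $L^1$. You correctly identify the main obstacle, but your proposed resolution contains a genuine gap that the paper's argument is specifically designed to close.

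Several of your preliminary observations are correct and useful: monotonicity of $q(t):=\int|\nabla y_k|^2\,d\nu_t$, the Poincar\'e bound $q(-\eps^{-1})\geq 1-\eps^2$ via $\square y_k^2=-2|\nabla y_k|^2$, the Hessian bound $\iint|\nabla^2 y_k|^2\leq 5\eps^{-\gamma}$, the small total variation of $c(t)=\int|\omega_k|\,d\nu_t$ from (iv), and the Gram-determinant identity — all of these also appear (in similar form) in the paper. The problem is the step ``$|\int G_{ik}\,d\nu_t|\leq\Psi$ \ldots\ forces $|P_{G'}\nabla y_k|^2$ to be small on average.'' Control of the spatial \emph{mean} of $G_{ik}$ gives no control on $\int|G_{ik}|\,d\nu_t$ or $\int G_{ik}^2\,d\nu_t$: $G_{ik}$ can oscillate wildly around zero with large amplitude while keeping $|\int G_{ik}\,d\nu_t|$ small, because the available $L^2$-Hessian information only \emph{bounds} $\int|\nabla G_{ik}|^2\,d\nu_t$ (through the $\nabla y_i\cdot\nabla^2 y_k$ term, where $|\nabla^2 y_k|$ is bounded but not small) rather than making it small, so Poincar\'e does not suppress the variance. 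Thus the chain ``mean control $\Rightarrow$ $|P_{G'}\nabla y_k|^2$ small in $L^1$ $\Rightarrow$ $c\approx 1$'' breaks at the first arrow, and with it your derivation of both $L^2$ bounds. The parallel route for $\int\int(|\omega_k|-c)^2\,d\nu$ suffers from an analogous issue: (iv) only gives $L^1$ control on $\square|\omega_k|$, and there is no mechanism in the proposal to upgrade that to $L^2$ control on $|\omega_k|-c$.

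The ingredient you are missing is the paper's central ``Claim'': for a scalar $h$ with $\iint_{[-\eps^{-1},-\eps]} h\,d\nu_t\,dt=0$, with $\iint|\square h|\,d\nu_t\,dt\leq\Psi$, and with a moderate $L^{3/2}$-bound $\int_M|h|^{3/2}\,d\nu_{-T}\leq C T^{3\gamma/4}$ at a far-past time $-T$, one gets $\iint|h|\,d\nu_t\,dt\leq\Psi$. This is proved by solving the heat equation forward from $-T$ with initial data $h(\cdot,-T)$, split into a bounded truncation $u'$ (with $|u'|\leq Z$) and a tail $u''$. The $L^{3/2}$-bound makes $u''$ small in $L^1$; the truncation $u'$ is uniformly bounded, hence has a pointwise gradient bound at later times, hence is close in $L^1$ to a constant by Poincar\'e; that constant is small by the zero time-space average; and $h-u$ is small in $L^1$ because $\square h$ is small. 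Optimizing in $T$ and $Z$ (using $\gamma\leq 10^{-1}$) completes the argument. Applying this to $h=G_{ik}$ ($i<k$) and $h=|\omega_k|-a$ (with $a$ the time-space average of $|\omega_k|$) gives the needed $L^1$ control \emph{directly} — no $L^2$ step is ever needed. The final identification $a^2\approx 1$ then comes from comparing $|\omega_k|^2$ to $|\nabla y_k|^2$ via the Gram expansion (which you also identify) together with assumption (iii) for $i=k$. Note that the heat-flow truncation is essential: it is the mechanism by which global information (small $\square h$, zero average) is converted into a pointwise-near-constant statement at later times, which is precisely what controlling the \emph{spread} of $G_{ik}$, rather than its mean, requires.
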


\begin{proof}
Choose $a \geq 0$ such that
\begin{equation} \label{eq_omega_k_a_def}
 \int_{-\eps^{-1}}^{-\eps} \int_M (|\omega_k| - a) d\nu_t dt = 0. 
\end{equation}
We first show

\begin{Claim} If $\gamma \leq 10^{-1}$, then for $i = 1, \ldots, k-1$
\[ \int_{-\eps^{-1}}^{-\eps} \int_M \bigg( \big| |\omega_k| - a \big| + \sum_{i=1}^{k-1} |\nabla y_i \cdot \nabla y_k | \bigg) d\nu_t dt \leq \Psi (\delta | Y, \eps). \]
\end{Claim}

\begin{proof}
Set $h := |\omega_k| - a$ or $h := \nabla y_i \cdot \nabla y_k$ for some $i = 1, \ldots, k-1$.
By (\ref{eq_omega_k_a_def}) and Assumption~\ref{Lem_transformation_estimates_iii} we have in both cases
\begin{equation} \label{eq_h_average_0}
 \int_{-\eps^{-1}}^{-\eps} \int_M h \, d\nu_t dt = 0. 
\end{equation}
Assume that $\delta \leq \eps/2$ and let $T \in (2\eps^{-1}, \delta^{-1})$ be a constant whose value we will determine later.
We claim that (in the sense of weak derivatives)
\begin{equation} \label{eq_square_h_Psi}
 \int_{-T}^{-\eps} \int_M |\square h|  d\nu_t dt  \leq \Psi (\delta | Y,  \eps, T). 
\end{equation}
If $h := |\omega_k| - a$, then this is a consequence of Assumption~\ref{Lem_transformation_estimates_iv} and if $h := \nabla y_i \cdot \nabla y_k$, then this follows using Assumptions~\ref{Lem_transformation_estimates_i}, \ref{Lem_transformation_estimates_v} and Proposition~\ref{Prop_properties_splitting_map} from 
\begin{align*}
 \int_{-T}^{-\eps} \int_{M} & | \square ( \nabla y_i \cdot \nabla y_k) | d\nu_t dt 
\leq  2 \int_{-T}^{-\eps} \int_{M} |  \nabla^2 y_i \cdot \nabla^2 y_k|  d\nu_t dt  \\
&\leq \bigg( 2\int_{-T}^{-\eps} \int_{M} | \nabla^2 y_i |^2  d\nu_t dt \bigg)^{1/2} \bigg(  2 \int_{-T}^{-\eps} \int_{M}   | \nabla^2 y_k|^2  d\nu_t dt\bigg)^{1/2}
\displaybreak[1] \\
&\leq \bigg( 2\int_{-T}^{-\eps} \int_{M} | \nabla^2 y_i |^2  d\nu_t dt \bigg)^{1/2} \bigg(  \int_{M}  | \nabla y_k|^2  d\nu_{-T} \bigg)^{1/2}
 \leq \Psi (\delta | Y,  \eps, T).
\end{align*}
Next, we claim that
\begin{equation} \label{eq_h_3_2_T}
  \int_M |h|^{3/2} d\nu_{-T}  \leq CT^{3\gamma/4}. 
\end{equation}
To see this, observe that if $\delta \leq \ov\delta (Y, \eps)$, then by Proposition~\ref{Prop_properties_splitting_map} and Assumptions~\ref{Lem_transformation_estimates_i}, \ref{Lem_transformation_estimates_v}
\[  \int_M |\nabla y_i \cdot \nabla y_k |^{3/2} d\nu_{-T}  
\leq \bigg( \int_M |\nabla y_i |^{6} d\nu_{-T} \bigg)^{1/4} \bigg( \int_M |\nabla y_k |^{2} d\nu_{-T} \bigg)^{3/4}
\leq C T^{3\gamma/4}, \]
\[  \int_M |\omega_k |^{3/2} d\nu_{-T}  
\leq C \bigg( \sum_{i=1}^{k-1} \int_M  |\nabla y_i |^{6(k-1)} d\nu_{-T} \bigg)^{1/4} \bigg( \int_M |\nabla y_k |^{2} d\nu_{-T} \bigg)^{3/4}
\leq C T^{3\gamma/4} \]
and
\begin{align*}
 a 
&= \frac1{\eps^{-1} -\eps} \int_{-\eps^{-1}}^{-\eps} \int_M |\omega_k| d\nu_t dt \\
&\leq C \bigg( \sum_{i=1}^{k-1} \frac1{\eps^{-1} -\eps} \int_{-\eps^{-1}}^{-\eps} \int_M |\nabla y_i |^{2(k-1)} d\nu_t dt \bigg)^{1/2}
 \bigg( \frac1{\eps^{-1} -\eps} \int_{-\eps^{-1}}^{-\eps} \int_M |\nabla y_k|^2 d\nu_t dt \bigg)^{1/2} \\
 &\leq C \bigg( \frac1{\eps^{-1} -\eps} \int_{-\eps^{-1}}^{-\eps} 10 |t|^\gamma dt \bigg)^{1/2} \leq C T^{\gamma/2}.
\end{align*}

Let $Z < \infty$ be a constant whose value we will determine later and let $u = u' + u'' \in C^\infty ( M \times [-T, -\eps])$ be solutions to the heat equation $\square u = \square u' = \square u'' = 0$ with initial conditions
\[ u(\cdot, -T) = h(\cdot, -T), \qquad
u' (x, -T) := 
\begin{cases}
- Z & \text{if $h(x,-T) \leq - Z$} \\
h(x,-T) & \text{if $- Z \leq h(x,-T) \leq Z$} \\
Z & \text{if $Z \leq h(x,-T)$} 
\end{cases}
\]
Then by the maximum principle $|u' | \leq Z$ and by (\ref{eq_h_3_2_T})
\[ \int_M |u'' | d\nu_{-T} 
\leq Z^{-1/2} \int_M |h|^{3/2} d\nu_{-T}  
\leq C Z^{-1/2} T^{3\gamma/4}. \]

For any $t^* \in [-\eps^{-1}, -\eps]$ we have by a gradient estimate, see for example \cite[\HKThmGradientPhiEstimate]{Bamler_HK_entropy_estimates},
\begin{equation} \label{eq_nab_u_p}
 |\nabla u'| (\cdot, t^*) \leq C Z (t^*+T)^{-1/2} 
\leq C Z (T - \eps^{-1})^{-1/2} 
\leq C Z T^{-1/2} . 
\end{equation}
Moreover, using (\ref{eq_square_h_Psi}) and Proposition~\ref{Prop_mu_square_rules}
\begin{align}
 \int_M | h - u | d\nu_{t^*}
&= \int_{-T}^{t^*} \int_M \square | h - u| d\nu_t dt
\leq \int_{-T}^{t^*} \int_M | \square  h | d\nu_t dt
\leq \Psi (\delta | Y, \eps, T), \label{eq_h_u_Psi} \\
 \int_M | u'' | d\nu_{t^*}
&\leq \int_M | u'' | d\nu_{ -T} +  \int_{-T}^{t^*} \int_M \square | u''| d\nu_t dt
\leq \int_M | u'' | d\nu_{ -T} 
\leq C Z^{-1/2} T^{3\gamma/4} . \label{eq_u_pp_CT_gamma}
\end{align}
Next, set
\[ a' := \int_M u' \, d\nu_{-T} \]
and note that for all $t^* \in [-T, -\eps]$ we have
\[ a' = \int_M u' \, d\nu_{t^*}. \]
Therefore, by the $L^1$-Poincar\'e inequality (see Proposition~\ref{Prop_Poincare}) and (\ref{eq_nab_u_p})
\[ \int_M |u' - a' | d\nu_{t^*} 
\leq C |t^*|^{1/2} \int_M |\nabla u'| d\nu_{t^*}
\leq C |t^*|^{1/2} Z T^{-1/2}. \]
Combining this with (\ref{eq_h_u_Psi}), (\ref{eq_u_pp_CT_gamma}) and integrating over $t^*$ implies
\[ \int_{-\eps^{-1}}^{-\eps} \int_M |h- a'| d\nu_t dt 
\leq \Psi (\delta | Y, \eps, T) + C(\eps) Z^{-1/2} T^{3\gamma/4} + C(\eps) Z T^{-1/2}. \]
Setting $Z := T^{\gamma/2+1/3}$ gives
\[ \int_{-\eps^{-1}}^{-\eps} \int_M |h- a'| d\nu_t dt 
\leq \Psi (\delta | Y, \eps, T) + C(\eps) T^{\gamma/2 - 1/6} . \]
So if $\gamma \leq 10^{-1}$, then since the choice of $T$ was arbitrary, we have
\[ \int_{-\eps^{-1}}^{-\eps} \int_M |h- a'| d\nu_t dt 
\leq \Psi (\delta | Y, \eps)  . \]
Combining this with (\ref{eq_h_average_0}) implies the claim.
\end{proof}

By the Claim we have for $\delta \leq \ov\delta (Y, \eps)$ and $i = 1, \ldots, k-1$
\[  \int_{-\eps^{-1} }^{-\eps } \int_M |\nabla y_i \cdot \nabla y_k|  d\nu_{t} dt  \leq \eps. \]
So due to Assumption~\ref{Lem_transformation_estimates_i}, it remains to verify that for $\delta \leq \ov\delta (Y, \eps)$
\begin{equation} \label{eq_trafo_lemma_goal}
  \int_{-\eps^{-1} }^{-\eps } \int_M  \big| |\nabla y_k|^2 - 1 \big|  d\nu_{t} dt  \leq \eps. 
\end{equation}
This will be accomplished using the bound on $\omega_k$ from the Claim.

Since $|\omega_k|^2 = \det ( \nabla y_i \cdot \nabla y_j )$, we have
\[ \big| |\omega_k|^2 - |\nabla y_1|^2 \cdots |\nabla y_k|^2 \big|
\leq C \bigg( \sum_{\substack{i,j=1 \\ i \neq j}}^k |\nabla y_i \cdot \nabla y_j | \bigg) \bigg( \sum_{i=1}^{k} |\nabla y_i|^{2(k-1)} \bigg). \]
On the other hand
\begin{multline*}
 \big| |\nabla y_k |^2 -  |\nabla y_1|^2 \cdots |\nabla y_k|^2  \big|
\leq \sum_{i=1}^{k-1} \big|  |\nabla y_{i+1} |^2 \cdots |\nabla y_k|^2 -  |\nabla y_i |^2 \cdots |\nabla y_k|^2 \big| \\
\leq C \bigg( \sum_{i=1}^{k-1} \big| |\nabla y_i|^2 - 1 \big| \bigg) \bigg( 1 + \sum_{i=1}^{k} |\nabla y_i|^{2(k-1)} \bigg) .
\end{multline*}
Therefore
\begin{equation} \label{eq_omega_nab_y}
 \big| |\omega_k|^2 -  |\nabla y_k|^2 \big|
\leq C \bigg( \sum_{i,j=1}^{k-1} |\nabla y_i \cdot \nabla y_j - \delta_{ij} | + \sum_{i=1}^{k-1} |\nabla y_i \cdot \nabla y_k  | \bigg) \bigg( 1+ \sum_{i=1}^{k} |\nabla y_i|^{2(k-1)} \bigg). 
\end{equation}

By hypercontractivity \cite[\HKThmHypercontractivity]{Bamler_HK_entropy_estimates} combined with Assumptions~\ref{Lem_transformation_estimates_i}, \ref{Lem_transformation_estimates_v} and Proposition~\ref{Prop_properties_splitting_map}, we obtain that for $\delta \leq \ov\delta (Y,\eps)$
\begin{equation} \label{eq_hyp_cont_nab_yi}
 \sum_{i=1}^{k} \int_{-\eps^{-1}}^{-\eps} \int_M  |\nabla y_i|^{4k} d\nu_t dt \leq C (   \eps). 
\end{equation}
So by (\ref{eq_omega_nab_y}) we have
\begin{align}
 \int_{-\eps^{-1}}^{-\eps} & \int_M  \big| |\omega_k|^2 -  |\nabla y_k|^2 \big|  d\nu_t dt 
\leq C\bigg( \int_{-\eps^{-1}}^{-\eps} \int_M \bigg( \sum_{i,j=1}^{k-1} |\nabla y_i \cdot \nabla y_j - \delta_{ij} | + \sum_{i=1}^{k-1} |\nabla y_i \cdot \nabla y_k  |  \bigg) d\nu_t dt\bigg)^{1/2} \notag \\
&\qquad \cdot \bigg( \int_{-\eps^{-1}}^{-\eps} \int_M \bigg( \sum_{i,j=1}^{k-1} |\nabla y_i \cdot \nabla y_j - \delta_{ij} | + \sum_{i=1}^{k-1} |\nabla y_i \cdot \nabla y_k  |  \bigg) \bigg( 1 + \sum_{i=1}^{k} |\nabla y_i |^{2(k- 1)}  \bigg)^2 d\nu_t dt \bigg)^{1/2} \notag  \\
&\leq
\Psi (\delta | Y, \eps).  \label{eq_diff_omega_2}
\end{align}
By (\ref{eq_omega_k_a_def}), (\ref{eq_hyp_cont_nab_yi}) and the Claim, we also obtain $|a| \leq C(\eps)$ and
\begin{align}
 \int_{-\eps^{-1}}^{-\eps} & \int_M  \big| |\omega_k|^2 - a^2 \big|  d\nu_t dt \notag \\
&\leq  \bigg( \int_{-\eps^{-1}}^{-\eps} \int_M  \big| |\omega_k| - a \big|  d\nu_t dt \bigg)^{1/2}
\bigg( \int_{-\eps^{-1}}^{-\eps} \int_M  \big| |\omega_k| - a \big| \, \big| |\omega_k| + a \big|^{2}  d\nu_t dt \bigg)^{1/2} \notag \\
&\leq  \Psi (\delta | Y, \eps)
\bigg( \sum_{i=1}^k \int_{-\eps^{-1}}^{-\eps} \int_M  \big( |\nabla y_i|^{3k} + |a|^3 \big)  d\nu_t dt \bigg)^{1/2}
\leq
\Psi (\delta | Y, \eps). \label{eq_diff_omega_1}
\end{align}
Combining (\ref{eq_diff_omega_2}), (\ref{eq_diff_omega_1}) yields
\[  \int_{-\eps^{-1}}^{-\eps}  \int_M  \big| |\nabla y_k|^2 - a^2 \big|  d\nu_t dt 
\leq \Psi (\delta | Y, \eps). \]
Combining this with Assumption~\ref{Lem_transformation_estimates_iii} implies $|a^2-1| \leq \Psi (\delta | Y, \eps)$.
The bound (\ref{eq_trafo_lemma_goal}) now follows for $\delta \leq \ov\delta (Y, \eps)$, which finishes the proof.
\end{proof}
\bigskip

\subsection{The Slicing Theorem}
The goal of this subsection is to show the following Slicing Theorem, which can be seen as the parabolic analog of \cite[Theorem~1.23]{Cheeger-Naber-Codim4}.
It states a sufficiently precise strong $(n-2, \delta, r)$-splitting map possesses a level set $\Sigma$ at a certain time with the following property:
For all scales $r' \in (0, 10^{-1}r)$ the flow is strongly $(n-2, \eps, r')$-split at all points on $\Sigma$ where $e^{-f/4}$ is bounded from below.
In addition, we will obtain a bound on the local oscillation of $e^{-f/4}$ near such points.

\begin{Proposition}[Slicing Theorem] \label{Prop_Slicing}
If $Y < \infty$, $\eps > 0$ and $\delta \leq \ov\delta (Y, \eps)$, then the following holds.

Let $(M, (g_t)_{t \in I})$ be Ricci flow on a compact manifold, $(x_0,t_0) \in M \times I$ and $r > 0$ such that $\NN_{x_0,t_0} ( r^2) \geq - Y$.
Let $\vec y$ be a strong $(n-2, \delta, r)$-splitting map at $(x_0, t_0)$ and assume that $(x_0, t_0)$ is $(\delta, r)$-static.

Then there are $\vec a \in [-r,r]^{n-2}$, $t' \in [t_0 - 2r^2, t_0 - r^2]$ such that $\Sigma := (\vec y (\cdot, t'))^{-1} (\vec a) \subset M$ is a $2$-dimensional submanifold with the property that for any $x' \in \Sigma$ and $r' \in (0, 10^{-1} r]$ with
\begin{equation} \label{eq_Slicing_condition}
 \NN_{x',t'} (10^{-1} r^2) \geq - Y, \qquad
e^{-f(x',t')/4} \geq \eps  
\end{equation}
the following holds:
\begin{enumerate}[label=(\alph*)]
\item \label{Prop_Slicing_a} There is a lower triangular $(n-2) \times (n-2)$-matrix $A$ with diagonal entries $\geq 1-\eps$ such that $A (\vec y - \vec y(x',t'))$ is a strong $(n-2, \eps, r')$-splitting map at $(x',t')$.
\item \label{Prop_Slicing_b} $(x',t')$ is $(\eps, r')$-static.
\item \label{Prop_Slicing_c} If $r' \leq \rrm (x',t')$, then for any $x'' \in  B^\Sigma (x',t',10^{-1} r') \subset \Sigma$, the time-$t'$-distance ball within $\Sigma$ equipped with the induced length metric, we have
\begin{equation} \label{eq_ef4_ef4_diff}
 \big| e^{-f(x'',t')/4} - e^{-f(x',t')/4} \big| \leq C_0(Y) \bigg( \frac{r'}{r} \bigg)^{ \frac12 - \eps}. 
\end{equation}
\end{enumerate}
\end{Proposition}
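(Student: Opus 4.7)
The plan is to use a Fubini/coarea argument to select the slicing parameters $\vec a, t'$ so that many integral quantities attached to the map $\vec y$ concentrate well on $\Sigma$ at all dyadic scales simultaneously, and then verify the hypotheses of the Transformation Theorem (Proposition~\ref{Prop_Transformation_Theorem}) pointwise on $\Sigma$ at any point $x'$ satisfying (\ref{eq_Slicing_condition}). The bridge between global integral bounds based at $(x_0,t_0)$ and localized bounds based at $(x',t')$ is provided by Proposition~\ref{Prop_inheriting_bounds}: the hypothesis $e^{-f(x',t')/4} \geq \eps$, together with a Hölder gradient estimate on $f$ coming from Proposition~\ref{Prop_improved_L2}, keeps $e^{-f/4}$ bounded below on a small parabolic neighborhood of $(x',t')$, which lets us transfer weighted integrals $\int X\, e^{\alpha f}\, d\nu_{x_0,t_0}$ into ordinary integrals $\int X\, d\nu_{x',t'}$ after adjusting $\alpha$.

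Concretely, I would first combine Propositions~\ref{Prop_improved_L2} and~\ref{Prop_properties_splitting_map} with the $(\delta,r)$-static hypothesis to get global bounds of the form
\begin{equation*}
r^{2} \int_{t_0-\delta^{-1} r^2}^{t_0-\delta r^2}\!\! \int_M \bigg( \sum_{l=1}^{n-2} |\square|\omega_l||^p \, r^{-2p+\text{wt}(l)} + |{\Ric}|^2 + |\nabla f|^{4} \bigg) e^{\alpha f} d\nu_{x_0,t_0;t}\, dt \leq C(Y,\eps)
\end{equation*}
for small $\alpha>0$ and suitable $p>1$. Since the strong splitting map forces $\vec y(\cdot,t)$ to be a Lipschitz $\eps$-almost isometry in an integral sense, a coarea argument applied to $(\vec y,\tf)$ produces, outside a set of $\vec a \in [-r,r]^{n-2}$ and $t' \in [t_0-2r^2, t_0-r^2]$ of small Lebesgue measure, a smooth two-dimensional slice $\Sigma = \vec y(\cdot,t')^{-1}(\vec a)$ on which the analogous integrals with respect to the $e^{-f/4}$-weighted surface measure on $\Sigma$ are bounded at every dyadic scale $r' = 2^{-j}\cdot 10^{-1} r$. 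Using an extra factor of $r^{\prime \gamma}$ left over in the scale-by-scale bounds, a Chebyshev/union bound argument removes the bad set of $(\vec a,t')$ while still leaving a nonempty good set; this is the standard device behind slicing theorems in this circle of ideas.

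For part (a), at any $x' \in \Sigma$ satisfying (\ref{eq_Slicing_condition}), the transferred integral bounds give exactly the hypothesis of Proposition~\ref{Prop_Transformation_Theorem} applied at $(x',t')$ with $r_0 \asymp 10^{-1} r$ and $r_1 = r'$ for the map $\vec y - \vec y(x',t')$, yielding the desired lower-triangular transformation $A$. For part (b) the same transfer mechanism, applied to the inequality $\int |{\Ric}|^2\, e^{\alpha f}\, d\nu_{x_0,t_0} \leq \delta$, delivers the integral $|{\Ric}|^2$ bound at $(x',t')$ at scale $r'$; the scalar lower bound and upper bound at time-slices are preserved automatically since these hold on the entire flow by the $(\delta,r)$-static hypothesis. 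For part (c), the curvature bound $|{\Rm}| \leq r^{\prime -2}$ on $P(x',t';r')$ gives smooth parabolic regularity, so $u := e^{-f/4}$ satisfies $|\nabla u| = \tfrac14 u|\nabla f|$ pointwise with $|\nabla f|$ controlled in $L^4$ on $\Sigma \cap B(x',t',r'/10)$ after the slice selection. Since $\Sigma$ is two-dimensional and its geometry is Lipschitz-close to Euclidean on this ball (by the strong splitting property at $(x',t')$ established in (a) together with the curvature bound), Morrey's inequality for $L^4$-gradients on a $2$-disk gives a Hölder bound on $u$ with exponent $1/2 - \eps$, matching (\ref{eq_ef4_ef4_diff}).

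The main obstacle I expect is the uniformity of the slice selection across all scales $r' \in (0, 10^{-1} r]$, since Proposition~\ref{Prop_Transformation_Theorem} needs its hypothesis to hold for \emph{every} dyadic scale in the range, not just in an averaged sense. Arranging this requires a careful dyadic bookkeeping in which the bad $(\vec a,t')$ sets at each scale have summable Lebesgue measure, and this is precisely where the factor $r^{\prime \gamma}$ arising from the $\tau$-weight in Proposition~\ref{Prop_improved_L2} is indispensable. A secondary subtlety is that strong splitting maps do not carry pointwise gradient bounds (cf.\ the remarks after Definition~\ref{Def_strong_splitting_map}), so one has to be careful in the coarea step to use only integrated gradient data and to absorb the Jacobian defects of $\vec y$ using the weak orthonormality from Proposition~\ref{Prop_properties_splitting_map}\ref{Prop_properties_splitting_map_a}.
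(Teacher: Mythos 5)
Your overall framework is right: select $(\vec a, t')$ by a Fubini/coarea argument so that the relevant integral quantities are well-behaved on $\Sigma$, then invoke the Transformation Theorem at points of $\Sigma$ for part (a), transfer the $(\delta,r)$-static integral bound for part (b), and use the coarea bound on $\int_\Sigma |\nabla f|^4$ together with Morrey on a $2$-disk for part (c). Parts (b) and (c) in your outline match the paper's strategy closely.

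The gap is in the slice selection for part (a), i.e.\ in arranging the hypotheses of Proposition~\ref{Prop_Transformation_Theorem} at \emph{every} scale $r' \in (0, 10^{-1}r]$ simultaneously on $\Sigma$, and it is a real gap, not just a missing detail. You propose a dyadic Chebyshev argument, using ``an extra factor of $r'^\gamma$'' to make the bad sets summable. But the hypothesis of the Transformation Theorem compares $\int |\square|\omega_l||\,d\nu_{x',t';t}$ to $\delta r'^{-2}\int|\omega_l|\,d\nu_{x',t';t}$ over the slab of temporal thickness $\approx r'^2$; the $r'^{-2}$ is absorbed by the slab thickness, so there is no natural geometric decay across dyadic scales, and the $|\log\theta|$ growth in Proposition~\ref{Prop_improved_L2} works in the wrong direction. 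Moreover, a coarea argument gives control of surface integrals $\int_\Sigma (\cdot)\,dg^\Sigma$, not of integrals $\int_M (\cdot)\,d\nu_{x',t';t}$ at varying basepoints $(x',t')\in\Sigma\times\{t'\}$ and varying scales; these two types of bounds are not interchangeable. The paper resolves this by a quite different device: for each $(x',t')$ it introduces the \emph{minimal} scale $r_{x',t'}$ at which the Transformation Theorem hypothesis (in the $\omega_l$-form from Lemma~\ref{Lem_R_omega_n2_omega}) holds for all larger $r'$, so that the bad set $S$ is exactly $\{r_{x',t'}>0\}$ intersected with (\ref{eq_Slicing_condition}). At the critical scale $r_{x',t'}$ the inequality is tight, which — combined with Lemma~\ref{Lem_second_order_integrals_cont_basepoint} to move the basepoint inside a small $P^*$-ball, Lemma~\ref{Lem_Vitali} (a Vitali covering by $P^*$-balls), and the $L^\infty$-control of $\vec y$ from Lemma~\ref{Lem_splitting_map_HE_control_beyond} — bounds the image $\phi(S)$ under $\phi=(\vec y, t)$ by the single global integral $\int\int (|R||\omega_{n-2}|+\sum_l|\square|\omega_l||)\,d\nu_{x_0,t_0}$. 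This measure estimate, not a dyadic union bound, is what allows a good $(\vec a, t')$ to be chosen with $\Sigma\cap S=\emptyset$ (and simultaneously with the $|\nabla f|^4$-coarea bound needed for (c)).

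Two smaller issues. First, your claim that $e^{-f(x',t')/4}\geq\eps$ plus a ``Hölder gradient estimate on $f$ from Proposition~\ref{Prop_improved_L2}'' keeps $e^{-f/4}$ bounded below on a parabolic neighborhood is not supported — Proposition~\ref{Prop_improved_L2} gives $L^p$-bounds, not pointwise Hölder bounds, and $f$ does not satisfy such pointwise control globally. The paper instead uses the Gaussian heat-kernel upper bound to deduce from $e^{-f(x',t')/4}\geq\eps$ a bound $d^{g_{t'}}_{W_1}(\delta_{x'},\nu_{x_0,t_0;t'})\leq C(Y,\eps)$, and \emph{then} invokes Proposition~\ref{Prop_inheriting_bounds} to transfer integrals between $d\nu_{x_0,t_0}$ and $d\nu_{x',t'}$. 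Second, for part (c) you need the fact that $\Sigma$ near $x'$ looks like a flat $2$-disk (not just that $\Sigma$ is $2$-dimensional) before invoking Morrey; this comes from Lemma~\ref{Lem_almost_eucl_slice_loc} using the local curvature bound $r'\leq\rrm(x',t')$, which you should cite explicitly rather than leaving implicit.
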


Before focusing on the proof of Proposition~\ref{Prop_Slicing}, we need to establish a number of lemmas.

The first lemma will be important in the proof of Proposition~\ref{Prop_Slicing}, as well as in the proof of Lemma~\ref{Lem_improve_rrm_10}.
It provides control near the basepoint $(x_0, t_0)$ over a map $\vec y : M \times [t_0 - \eps^{-1} r^2, t_0 +  r^2] \to \IR^k$ that restricts to a strong splitting map on $M \times [t_0 - \eps^{-1} r^2, t_0 -\eps  r^2]$.
Note that we even obtain control outside $M \times [t_0 - \eps^{-1} r^2, t_0 -\eps  r^2]$, where no bounds are imposed on $\vec y$.

\begin{Lemma} \label{Lem_splitting_map_HE_control_beyond}
If $Y < \infty$, $\zeta > 0$ and $\eps \leq \ov\eps (\zeta)$, then the following holds.
Let $(M, (g_t)_{t \in I})$ be Ricci flow on a compact manifold, $r > 0$ and let $(x_0, t_0) \in M \times I$ be a point with $[t_0 - \eps^{-1} r^2, t_0 + r^2] \subset I$ and $\NN_{x_0,t_0} (\eps^{-1} r^2) \geq - Y$.
Consider a smooth vector valued function $\vec y : M \times [ t_0 - \eps^{-1} r^2, t_0 + r^2] \to \IR^k$ whose components $y_1, \ldots, y_k$ are solutions to the heat equation $\square y_j = 0$.
Assume that $\vec y$ restricted to $M \times [ t_0 - \eps^{-1} r^2, t_0  - \eps r^2]$ is a strong $(k, \eps, r)$-splitting map at $(x_0, t_0)$.
Then for any $r' \in [\zeta r, \zeta^{-1} r]$ we have
\[ \vec y ( P^* (x_0, t_0;  r') ) \subset B^{\IR^k} ( \vec 0, C (Y) r' )  . \]
\end{Lemma}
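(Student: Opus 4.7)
\emph{Plan.} Fix a point $(x',t')\in P^*(x_0,t_0;r')$ and a coordinate index $j\in\{1,\ldots,k\}$; the lemma reduces to the bound $|y_j(x',t')|\leq C(Y)\,r'$. My first observation is that $y_j(x_0,t_0)=0$: by Lemma~\ref{Lem_prop_3_strong_splitting} and Property~\ref{Def_strong_splitting_map_3} of the strong splitting map, $\int_M y_j\,d\nu_{x_0,t_0;t}=0$ for every $t$ in the splitting window, and since $\square y_j=0$ this integral is constant in $t$ throughout $[t_0-\eps^{-1}r^2,\,t_0+r^2]$, so sending $t\nearrow t_0$ gives the claim. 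Next, from $\square y_j^2=-2|\nabla y_j|^2\leq 0$ the function $y_j^2$ is a subsolution of the heat equation, hence
\[ y_j(x',t')^2 \;\leq\; \int_M y_j^2\, d\nu_{x',t';s} \]
for every admissible $s\leq t'$. I choose $s:=t_0-(1+\beta)r^{\prime 2}$, where $\beta$ is a dimensional constant fixed in the next paragraph; for $\eps\leq\ov\eps(\zeta)$ this $s$ lies in the strong splitting interval $[t_0-\eps^{-1}r^2,\,t_0-\eps r^2]$.

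The key step is to transport the integral against $d\nu_{x',t';s}$ to one against $d\nu_{x_0,t_0;s}$ via Proposition~\ref{Prop_inheriting_bounds}, applied with $t^*:=t_0-r^{\prime 2}$, $\alpha_1:=0$, $\alpha_0:=1/2$, $D:=1$. The $W_1$-hypothesis at time $t^*$ is precisely the statement $(x',t')\in P^*(x_0,t_0;r')$; the Nash entropy hypothesis $\NN_{x_0,t_0}(t_0-s)\geq -Y$ follows from $\NN_{x_0,t_0}(\eps^{-1}r^2)\geq -Y$ and monotonicity once $(1+\beta)r^{\prime 2}\leq\eps^{-1}r^2$; the scalar curvature bound $R(\cdot,s)\geq -A\,r^{\prime -2}$ with a dimensional constant $A$ follows from Lemma~\ref{Lem_lower_scal} because the flow has existed for a time of order $\eps^{-1}r^2\gg r^{\prime 2}$ before $s$; and the two time-gap constraints $t_0-t^*=r^{\prime 2}\leq D^2(t^*-s)=\beta r^{\prime 2}$ and $t'-t^*\leq 2r^{\prime 2}\leq \theta(A)\cdot(\alpha_0-\alpha_1)/(1-\alpha_0)\cdot\beta r^{\prime 2}=\theta(A)\beta r^{\prime 2}$ are both satisfied once $\beta:=\max\{1,\,2/\theta(A)\}$. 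Proposition~\ref{Prop_inheriting_bounds} then yields the pointwise comparison $d\nu_{x',t';s}\leq C_1(Y)\,e^{f_{x_0,t_0}/2}\,d\nu_{x_0,t_0;s}$.

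Plugging this back into the subsolution inequality and applying Cauchy--Schwarz gives
\[ y_j(x',t')^2 \;\leq\; C_1 \Big(\,\textstyle\int_M y_j^4\, d\nu_{x_0,t_0;s}\Big)^{\!1/2}\Big(\,\textstyle\int_M e^{f_{x_0,t_0}}\, d\nu_{x_0,t_0;s}\Big)^{\!1/2}. \]
Proposition~\ref{Prop_properties_splitting_map}\ref{Prop_properties_splitting_map_a} with $m=2$, applicable once our splitting parameter $\eps$ is small enough depending on $Y$, bounds the first factor by $C_2(\zeta)\,r^{\prime 4}$: after parabolic rescaling to $r=1$ the proposition gives $|\int y_j^4\,d\nu_{-\tau}-12\tau^2|\leq\eps$, and rescaling back this turns into an error of size $\eps r^4\leq \eps\,\zeta^{-4}r^{\prime 4}$. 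Proposition~\ref{Prop_int_ealphaf} with $\alpha=1/2$, together with the scalar curvature lower bound from the previous paragraph, bounds the second factor by a dimensional constant, since $\tau\,|\min R(\cdot,s)|\leq A(1+\beta)$. Hence $|y_j(x',t')|\leq C(Y)\,r'$, and summing in $j$ closes out the proof. The main bookkeeping obstacle is the joint choice of $\alpha_0$ and $\beta$: the time-gap condition in Proposition~\ref{Prop_inheriting_bounds} pushes $\alpha_0$ toward $1$ while the integrability of $e^{\alpha_0 f}$ in Proposition~\ref{Prop_int_ealphaf} forces $\alpha_0\leq 1/2$, and this tension is resolved by enlarging the gap $t^*-s$ via a sufficiently large dimensional $\beta$; once this is set up, the remainder of the argument is routine.
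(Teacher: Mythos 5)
Your strategy is the same as the paper's: transport the conjugate heat kernel via Proposition~\ref{Prop_inheriting_bounds}, then close with Cauchy--Schwarz plus Propositions~\ref{Prop_properties_splitting_map} and~\ref{Prop_int_ealphaf}. (The paper works slightly more economically, reducing to $k=1$, $\zeta=1$, $r'=r$ and invoking the representation formula $y(x_1,t_1)=\int_M y\,d\nu^1_s$ rather than the subsolution bound on $y^2$; this only requires controlling $\int y^2\,d\nu^0_s$ rather than $\int y^4\,d\nu^0_s$, but both versions of the step are fine.)

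However, there is a genuine numerical error in your choice $\alpha_0=1/2$. After the Cauchy--Schwarz step the exponent on $f_{x_0,t_0}$ doubles: with $\alpha_0=1/2$ you are forced to bound
\[
\int_M e^{f_{x_0,t_0}}\,d\nu_{x_0,t_0;s}
=\int_M (4\pi\tau)^{-n/2}\,dg_s
=(4\pi\tau)^{-n/2}\,\vol(M,g_s),
\]
which is \emph{not} controlled by the hypotheses — it can be arbitrarily large. Proposition~\ref{Prop_int_ealphaf} only applies to $\int e^{\alpha f}\,d\nu_t$ for $\alpha\le 1/2$, and the integral you need corresponds to $\alpha=1$, not $\alpha=1/2$ as you claim. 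Your concluding remark that ``the integrability of $e^{\alpha_0 f}$ in Proposition~\ref{Prop_int_ealphaf} forces $\alpha_0\le 1/2$'' has the threshold off by a factor of two: because Cauchy--Schwarz produces $\int e^{2\alpha_0 f_0}\,d\nu^0_s$, one must take $\alpha_0\le 1/4$. The fix is otherwise painless — repeat your bookkeeping with, say, $\alpha_0=1/4$; the second time-gap constraint becomes $t'-t^*\le \theta(A)\,\frac{1/4}{3/4}\,(t^*-s)$ and is absorbed by enlarging the dimensional constant $\beta$ to $\max\{1,\,6/\theta(A)\}$, after which everything goes through. This is exactly why the paper keeps $\alpha\le\overline\alpha$ as a small unspecified dimensional constant and then uses the factor $\left(\int e^{2\alpha f_0}\,d\nu^0_s\right)^{1/2}$ in the Cauchy--Schwarz step.
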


\begin{proof}
Since $\vec y$ is a strong $(k, \Psi(\eps | \zeta), r')$-splitting map for any $r' \in [\zeta r, \zeta^{-1} r]$, we may reduce the lemma to the case $\zeta = 1$ and $r' = r$.
We may furthermore assume without loss of generality that $r =1$ and $t_0 = 0$.
It is also not hard to see that the lemma can be reduced to the case $k = 1$; we will write $y := y_1$ in the sequel.

Let $(x_1, t_1) \in P^* (x_0, 0; 1)$.
Denote by $d\nu^i = (4\pi \tau_i)^{-n/2} e^{-f_i} dg$, $i=0,1$, the conjugate heat kernels based at $(x_i, t_i)$.
Let $\alpha > 0$ be a constant whose value we will determine later.
By Proposition~\ref{Prop_inheriting_bounds} we find that if $s \leq - C ( \alpha)$, $\eps \leq \ov\eps ( \alpha)$, then
\[ d\nu^1_s \leq C (Y,  \alpha) e^{\alpha f_0} d\nu^0_s. \]
So if $\alpha \leq \ov\alpha$, then Propositions~\ref{Prop_properties_splitting_map}, \ref{Prop_int_ealphaf} imply
\begin{multline*}
 |y|(x_1, t_1) 
= \bigg| \int_M y \, d\nu^1_s \bigg|
\leq \int_M |y| d\nu^1_s
\leq C (Y,  \alpha) \int_M |y| e^{\alpha f_0} d\nu^0_s  \\
\leq C (Y,  \alpha) \bigg( \int_M y^2  d\nu^0_s \bigg)^{1/2} \bigg( \int_M  e^{2\alpha f_0} d\nu^0_s \bigg)^{1/2}
\leq C (Y, \alpha). 
\end{multline*}
This finishes the proof.
\end{proof}

The next lemma is an application of Lemma~\ref{Lem_splitting_map_HE_control_beyond}.
It establishes regularity of the level set $\Sigma$ of $\vec y$ passing through $(x_0, t_0)$ near $(x_0, t_0)$ under the presence of local curvature bounds.

\begin{Lemma} \label{Lem_almost_eucl_slice_loc}
If $Y < \infty$, $\beta > 0$ and $\eps \leq \ov\eps (Y, \beta)$, then the following holds.
Let $(M, (g_t)_{t \in I})$ be Ricci flow on a compact manifold, $r > 0$ and let $(x_0, t_0) \in M \times I$ be a point with $[t_0 - \eps^{-1} r^2, t_0 + r^2] \subset I$ and $\NN_{x_0,t_0} (\eps^{-1} r^2) \geq - Y$.
Consider a smooth vector valued function $\vec y : M \times [ t_0 - \eps^{-1} r^2, t_0 + r^2] \to \IR^{n-2}$ whose components $y_1, \ldots, y_{n-2}$ are solutions to the heat equation $\square y_j = 0$.
Assume that $\vec y$ restricted to $M \times [ t_0 - \eps^{-1} r^2, t_0  - \eps r^2]$ is a strong $(n-2, \eps, r)$-splitting map at $(x_0, t_0)$.
If $r \leq \rrm (x_0, t_0)$, then the following holds:
\begin{enumerate}[label=(\alph*)]
\item $\Sigma := \vec y^{\, -1} ( \vec 0 ) \cap B(x_0, t_0, r) \times \{ t_0 \}$ is a smooth 2-dimensional submanifold.
\item $| \nabla y_i \cdot \nabla y_j - \delta_{ij}| \leq \beta$ along $\Sigma' := B^\Sigma (x_0, t_0, .9 r)$.
\item All principal curvatures at time $t_0$ along $\Sigma'$ are $\leq \beta r^{-1}$.
\item The injectivity radius of the metric on $\Sigma$ induced by $g_{t_0}$ at any point $x' \in \Sigma'$ is $\geq c(Y) r$.
\end{enumerate}
\end{Lemma}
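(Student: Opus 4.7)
The plan is to normalize, obtain $C^\infty$-bounds on $\vec y$ inside the curvature ball, and then argue by contradiction and smooth compactness. After parabolic rescaling we may take $r=1$ and $t_0=0$, so that $|{\Rm}| \leq 1$ on $P(x_0,0;1)$ and $\NN_{x_0,0}(\eps^{-1}) \geq -Y$. Since each $y_j$ satisfies $\square y_j = 0$, the expected value $\int_M y_j \, d\nu_{x_0,0;t}$ is independent of $t$, so the strong-splitting normalization Property~\ref{Def_strong_splitting_map_3} forces $y_j(x_0,0) = 0$. The curvature bound combined with the pointed Nash-entropy bound gives a $P\subset P^*$ comparison (see \cite[\HKCorsPnbhdPstarboth]{Bamler_HK_entropy_estimates}) of the form $P(x_0,0;1) \subset P^*(x_0,0;C)$ for a dimensional $C$; thus Lemma~\ref{Lem_splitting_map_HE_control_beyond} applied with $\zeta \sim C^{-1}$ yields $|\vec y| \leq C_1(Y)$ on $P(x_0,0;1)$. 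Shi-type estimates for $\square y_j = 0$ with $|{\Rm}|\leq 1$ then produce uniform bounds $|\nabla^k y_j| \leq C(Y,k)$ on $P(x_0,0;.99)$ for every $k$.

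Next, suppose for contradiction that the lemma fails with $Y$ and $\beta$ fixed. Take a sequence of counterexamples $(M_i, g_{i,t}, x_i, \vec y^{\,i})$ with $\eps_i \to 0$ such that at least one of (a)--(d) is violated. The non-collapsing estimate \cite[\HKThmNLC]{Bamler_HK_entropy_estimates} combined with $|{\Rm}| \leq 1$ on $P(x_i,0;1)$ gives a uniform volume lower bound at $(x_i,0)$, hence (by Cheeger's lemma) a uniform lower bound on the injectivity radius. Hamilton's compactness theorem now extracts, after a subsequence, a $C^\infty_{\loc}$-limit $(P(x^\infty,0;.99), g^\infty_t, x^\infty)$ which is a smooth Ricci flow with $|{\Rm}|\leq 1$, and the $C^\infty$-bounds of the previous paragraph upgrade to a smooth limit $\vec y^{\,\infty}$ with $\square y^\infty_j = 0$ and $\vec y^{\,\infty}(x^\infty,0)=\vec 0$.

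The crux is to convert the global integral splitting inequality into pointwise information on the limit. Inside the bounded-curvature region $P(x_i,0;1)$, the pointed Nash-entropy bound and standard Li--Yau / Hamilton-type arguments yield a Gaussian lower bound
\[
d\nu^i_t \geq c(Y,t_1,t_2)\, dg_{i,t} \quad \text{on} \quad B(x_i,0,.95)\times[t_1,t_2]
\]
for any fixed $[t_1,t_2] \subset (-\tfrac{1}{2},0)$. The strong-splitting bound $\int_{-\eps_i^{-1}}^{-\eps_i}\!\int_M |\nabla y^i_r \cdot \nabla y^i_s - \delta_{rs}|\, d\nu^i_t\, dt \leq \eps_i$ then localizes to
\[
\int_{t_1}^{t_2}\!\int_{B(x_i,0,.95)} |\nabla y^i_r \cdot \nabla y^i_s - \delta_{rs}|\, dg_{i,t}\, dt \longrightarrow 0,
\]
and passing to the smooth limit gives $\nabla y^\infty_r \cdot \nabla y^\infty_s = \delta_{rs}$ on $B(x^\infty,0,.95)\times[t_1,t_2]$. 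Since $[t_1,t_2]$ was arbitrary with $t_2 < 0$ and $\vec y^{\,\infty}$ is smooth, this identity persists at $t=0$. Applying $\square$ to $|\nabla y^\infty_r|^2 = 1$ and using $\square |\nabla y^\infty_r|^2 = -2|\nabla^2 y^\infty_r|^2$ at interior points forces $\nabla^2 y^\infty_r \equiv 0$ on the set, so at $t=0$ the level set $\Sigma^\infty := (\vec y^{\,\infty})^{-1}(\vec 0)$ is a smooth totally geodesic $2$-submanifold through $x^\infty$ whose intrinsic injectivity radius is controlled from below by the ambient injectivity radius of $B^\infty$, hence by some $c(Y)>0$.

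Finally, the $C^\infty_{\loc}$-convergence $\vec y^{\,i} \to \vec y^{\,\infty}$ on $P(x^\infty,0;.99)$ (and in particular at $t=0$) transfers each of the desired properties from $\Sigma^\infty$ to $\Sigma_i'$ for all large $i$ with arbitrarily small error: transversality of $\vec 0$ gives (a), $|\nabla y^i_r \cdot \nabla y^i_s - \delta_{rs}| \leq \beta$ on $\Sigma_i'$ gives (b), smallness of the second fundamental form gives (c), and a uniform lower bound on the injectivity radius of the induced metric gives (d). This contradicts the assumption that the sequence consisted of counterexamples. The main technical obstacle is the Gaussian lower bound on $d\nu^i_t$ in the bounded-curvature region --- it is exactly what converts the global integral splitting condition into local pointwise data that survives the smooth-limit argument.
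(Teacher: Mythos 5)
Your proof is correct and takes essentially the same route as the paper: rescale to $r=1$, use Lemma~\ref{Lem_splitting_map_HE_control_beyond} together with a $P\subset P^*$ comparison to bound $|\vec y_i|$ on the two-sided parabolic ball, extract a smooth Hamilton--Cheeger--Gromov sublimit from the non-collapsing bound, observe that the conjugate heat kernels converge to a positive solution of the conjugate heat equation (your Gaussian lower bound is exactly the mechanism behind that positivity), so that the integral splitting passes to a pointwise one with totally geodesic level sets, and then transfer (a)--(d) back for a contradiction. The student's writeup simply spells out in more detail the step that converts the global integral splitting bound into pointwise data on the limit.
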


\begin{proof}
This follows using Lemma~\ref{Lem_splitting_map_HE_control_beyond} via a limit argument.
More specifically, assume without loss of generality that $r = 1$, $t_0 = 0$ and consider a sequence of counterexamples $(M_i, (g_{i,t})_{t \in I_i})$, $(x_i, 0) \in M_i \times I_i$, $\vec y_i$, for some fixed $Y, \beta$ and $\eps_i \to 0$.
Due to \cite[\HKThmNLC]{Bamler_HK_entropy_estimates}, we may pass to a subsequence and assume that the flows restricted to the pointed, two-sided parabolic neighborhoods $(P(x_i, 0; 1), (x_i,0))$ converge smoothly to some pointed flow $(M_\infty, (g_{\infty, t})_{t \in (-1,1)}, (x_\infty, 0))$, where $B(x_\infty, 0, r) \subset M_\infty$ is relatively compact for all $r \in (0,1)$ and $B(x_\infty, 0, 1) = M_\infty$.
After passing to another subsequence and using \cite[\HKCenterConstantRmBound]{Bamler_HK_entropy_estimates}, we may moreover assume that the conjugate heat kernels $K(x_i, 0; \cdot, \cdot)$ converge to a positive solution to the conjugate heat equation on $M_\infty \times (-1,0)$.
By \cite[\HKCorPnbhdinPstar]{Bamler_HK_entropy_estimates} and Lemma~\ref{Lem_splitting_map_HE_control_beyond} we obtain that $|\vec y_i|\leq C < \infty$ on $P(x_i, 0; 1)$, which allows us to conclude that, after passing to a subsequence, we have local smooth convergence $\vec y_i \to \vec y_\infty$ on $M_\infty \times (-1,1)$, where $\vec y_\infty$ restriced to $M_\infty \times (-1,0)$ induces a precise local splitting and $\vec y_\infty (x_\infty) = \vec 0$.
It follows that $\vec y_\infty (\cdot, 0)$ induces a precise local splitting and that the its level sets are totally geodesic.
So we obtain a contradiction for sufficiently large $i$.
\end{proof}

We will also need a Vitali covering lemma for $P^*$-parabolic balls. 

\begin{Lemma} \label{Lem_Vitali}
Let $(M, (g_t)_{t \in I})$ be Ricci flow on a compact manifold, $r > 0$, $S \subset M \times I$ a subset and $\rho : S \to (0,r]$ a function.
Suppose that $t - 5 \rho^2(x,t) \in I$ for all $(x,t) \in S$.
Then there is a countable subset $S' \subset S$ such that the $P^*$-parabolic balls $P^* (x, t; \rho(x,t))$, $(x,t) \in S'$ are pairwise disjoint and such that
\begin{equation} \label{eq_S_U_PD}
 S \subset \bigcup_{(x,t) \in S'} P^* (x, t; 5 \rho(x,t)). 
\end{equation}
\end{Lemma}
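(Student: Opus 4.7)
The plan is to execute the standard Vitali $5r$-covering argument, adapted to $P^*$-parabolic balls. The key input that makes the $5r$-trick work in this setting is the containment property from Proposition~\ref{Prop_basic_parab_nbhd}\ref{Prop_basic_parab_nbhd_d}: if $P^*(x_1,t_1;r_1)$ and $P^*(x_2,t_2;r_2)$ intersect and $r_1 \leq 2 r_2$, then
\[ P^*(x_1,t_1;r_1) \subset P^*(x_2,t_2;2r_1+r_2) \subset P^*(x_2,t_2;5r_2). \]
This plays the role of the metric-space triangle inequality that underlies the classical Vitali lemma, and the hypothesis $t-5\rho^2(x,t)\in I$ is exactly what is needed to keep the enlarged $P^*$-balls well-defined as subsets of $M\times I$.

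First I would partition $S$ into dyadic scale layers $S_k := \{(x,t)\in S : 2^{-k-1}r < \rho(x,t) \leq 2^{-k}r\}$ for $k=0,1,2,\ldots$, then construct subsets $S'_k \subset S_k$ inductively by invoking Zorn's lemma to extract a maximal subfamily such that the balls $\{P^*(x,t;\rho(x,t))\}_{(x,t)\in S'_k}$ are pairwise disjoint and also disjoint from every previously selected ball $P^*(x_j,t_j;\rho(x_j,t_j))$ with $(x_j,t_j)\in S'_0 \cup \cdots \cup S'_{k-1}$. Setting $S' := \bigcup_{k\geq 0} S'_k$, the balls $\{P^*(x,t;\rho(x,t))\}_{(x,t) \in S'}$ are pairwise disjoint by construction. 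Countability of $S'$ will follow from the observation that each $P^*$-ball contains a nonempty open subset of $M\times I$ (any sufficiently small conventional parabolic neighborhood of its center lies inside it), combined with the second countability of $M\times I$, so any collection of pairwise disjoint sets with nonempty interior is at most countable.

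For the covering statement, fix $(x,t)\in S_k$. Either $(x,t)\in S'_k$, in which case $(x,t)\in P^*(x,t;5\rho(x,t))$ trivially, or by maximality of $S'_k$ the ball $P^*(x,t;\rho(x,t))$ meets some $P^*(x_j,t_j;\rho(x_j,t_j))$ with $(x_j,t_j)\in S'_0\cup\cdots\cup S'_k$. In the latter case $\rho(x_j,t_j) > 2^{-k-1}r \geq \rho(x,t)/2$, so $2\rho(x,t)+\rho(x_j,t_j) \leq 5\rho(x_j,t_j)$, and the containment recalled above gives $P^*(x,t;\rho(x,t)) \subset P^*(x_j,t_j;5\rho(x_j,t_j))$; in particular $(x,t)$ lies in this enlarged ball, which is the assertion (\ref{eq_S_U_PD}). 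The proof is essentially formal once the containment relation and the countability observation are in hand; the only mild care needed is the standard dyadic layering, which is there to guarantee $r_1 \leq 2 r_2$ whenever we invoke Proposition~\ref{Prop_basic_parab_nbhd}\ref{Prop_basic_parab_nbhd_d}, so there is no real obstacle to speak of.
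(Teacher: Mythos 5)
Your proposal is correct and follows essentially the same route as the paper's proof: dyadic layering of $S$ by the value of $\rho$, maximal pairwise-disjoint selection at each scale level, then the containment relation for $P^*$-parabolic balls from Proposition~\ref{Prop_basic_parab_nbhd}\ref{Prop_basic_parab_nbhd_d} together with the observed comparability $\rho(x_j,t_j) > \rho(x,t)/2$ to absorb the factor into $5\rho(x_j,t_j)$. The only cosmetic difference is that you spell out the countability argument (nonempty interior plus second countability) which the paper leaves implicit; also note that Proposition~\ref{Prop_basic_parab_nbhd}\ref{Prop_basic_parab_nbhd_d} does not itself carry the hypothesis ``$r_1\leq 2r_2$''---that condition is only what you need to make $2r_1+r_2\leq 5r_2$, and your argument uses it correctly in that role.
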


\begin{proof}
Without loss of generality, we may assume that $r = 1$.
We may inductively choose countable sets $S'_0, S'_1, \ldots \subset S$ such that:
\begin{enumerate}[label=(\arabic*)]
\item \label{propty_S_p_1} $\rho (x,t) \in [2^{-k-1}, 2^{-k}]$ for all $(x,t) \in S'_k$
\item \label{propty_S_p_2} The balls $P^* (x, t; \rho(x,t))$, $(x,t) \in S'_0 \cup \ldots \cup S'_k$, are pairwise disjoint.
\item \label{propty_S_p_3} $S'_k$ is maximal in the sense that for any $(x,t) \in S \setminus (S'_0 \cup \ldots \cup S'_k)$ with $\rho (x,t) \in [2^{-k-1}, 2^{-k}]$ the ball $P^* (x, t; \rho(x,t))$ intersects one of the balls $P^* (x', t'; \rho(x',t'))$, $(x',t') \in S'_0 \cup \ldots \cup S'_k$.
\end{enumerate}
Set $S' := S'_0 \cup S'_1 \cup \ldots$.
It remains to prove (\ref{eq_S_U_PD}).
Assume that $(x,t) \in S \setminus S'$ and choose $k \geq 0$ such that $\rho (x,t) \in [2^{-k-1}, 2^{-k}]$.
Choose $(x',t') \in S'$ according to Property~\ref{propty_S_p_3} and note that $\rho(x',t') \geq \frac12 \rho(x,t)$.
By Proposition~\ref{Prop_basic_parab_nbhd} we have
\begin{equation*}
 (x,t) \in P^* (x,t; \rho(x,t))
 \subset P^* ( x', t'; 2 \rho(x,t) + \rho(x',t')) 
 \subset P^* (x',t'; 5 \rho(x',t')) .
\end{equation*}
This finishes the proof.
\end{proof}

The next lemma provides a global integral bound on a quantity that we will analyze further in the proof of Proposition~\ref{Prop_Slicing}.

\begin{Lemma} \label{Lem_R_omega_n2_omega}
Let $(M, (g_t)_{t \in I})$ be Ricci flow on a compact manifold, $r, \delta, \eps > 0$ and let $(x_0, t_0) \in M \times I$ be a point with $[t_0 - \delta^{-1} r^2, t_0 - \delta r^2] \subset I$ and $\NN_{x_0,t_0} ( r^2) \geq - Y > - \infty$.
Let $d\nu = (4\pi \tau)^{-n/2} e^{-f} dg$ be the conjugate heat kernel based at $(x_0, t_0)$.
Consider a strong $(n-2, \delta, r)$-splitting map $\vec y$ at $(x_0, t_0)$ and set $\omega_l := dy_1 \wedge \ldots \wedge dy_l$ for $l = 1, \ldots, n-2$.
Then (in the sense of weak derivatives)
\[ \int_{t_0 - \eps^{-1} r^2}^{t_0 -\eps r^2} \int_M \bigg( |R| \, |\omega_{n-2}| + \sum_{l=1}^{n-2}    |\square |\omega_l| |  \bigg) d\nu_{t} dt \leq \Psi (\delta | Y, \eps). \]
\end{Lemma}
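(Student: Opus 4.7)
By parabolic rescaling and time-translation I may assume $r=1$ and $t_0=0$. The proof decomposes into two essentially independent tasks: controlling $\sum_l \int\!\!\int |\square|\omega_l||\,d\nu dt$ and controlling $\int\!\!\int |R|\,|\omega_{n-2}|\,d\nu dt$. The first is an almost routine application of the Kato-type bound~(\ref{eq_square_omega_Kato_Lp_bound}) together with the higher-order splitting-map estimates in Proposition~\ref{Prop_properties_splitting_map}. The second is the main obstacle and requires combining Bochner-type identities with the splitting structure.

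\paragraph{The $|\square|\omega_l||$ terms.} Recall from Proposition~\ref{Prop_mu_square_rules} that $d\mu_{\square|\omega_l|}$ is a signed measure whose positive part is absolutely continuous and dominated by $|\square\omega_l|\,dg$. For $l=1$ we have $\square\omega_1=0$, so $(d\mu_{\square|\omega_1|})_+=0$. For $l\geq 2$, (\ref{eq_square_omega_Kato_Lp_bound}) gives the bound $|\square\omega_l|\leq(\sum_i|\nabla^2 y_i|^2)(\sum_i|\nabla y_i|^{l-2})$, so that integrating against $d\nu\,dt$ and applying Proposition~\ref{Prop_properties_splitting_map}\ref{Prop_properties_splitting_map_b} (with $p=l-2$, $q=0$, $\alpha=0$) produces the bound $\Psi(\delta|Y,\eps)$ on the positive part. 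For the negative part I will use the duality identity
\[
  \int_{-\eps^{-1}}^{-\eps}\!\int_M d\mu_{\square|\omega_l|}\,d\nu\,dt\;=\;\int_M|\omega_l|\,d\nu_t\bigg|_{t=-\eps^{-1}}^{t=-\eps},
\]
where the right-hand side is $\Psi(\delta|Y,\eps)$ because $|\omega_l|^2=\det(\nabla y_i\cdot\nabla y_j)$ differs from $1$ by terms controlled by $|\nabla y_i\cdot\nabla y_j-\delta_{ij}|$ in $L^p(d\nu)$, via Proposition~\ref{Prop_properties_splitting_map}\ref{Prop_properties_splitting_map_a}. Combining gives the claim for the $l$-form terms.

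\paragraph{The $|R||\omega_{n-2}|$ term.} I split $|R||\omega_{n-2}|=|R|(|\omega_{n-2}|-1)+|R|$. Cauchy--Schwarz applied to the first piece, using $\int\!\!\int R^2\,d\nu dt\leq C(Y,\eps)$ from Proposition~\ref{Prop_improved_L2} and $\int\!\!\int(|\omega_{n-2}|-1)^2 d\nu dt\leq\Psi(\delta|Y,\eps)$ (obtained as in the previous paragraph via the identity $|\omega_{n-2}|^2=\det G$ with $G=I+O(\delta)$ in $L^p$), reduces matters to bounding $\int\!\!\int|R||\omega_{n-2}|\,d\nu dt$ after accepting an $\int\!\!\int|R|$ control on the tangential part of $\Ric$. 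Decomposing $R=\sum_{a}\Ric(V_a,V_a)+R_\perp$ in the almost-orthonormal frame $V_a=\nabla y_a$ completed by a $2$-dimensional perpendicular frame, the Bochner identity
\[
  \Ric(\nabla y_a,\nabla y_a)=\tfrac{1}{2}\triangle|\nabla y_a|^2-|\nabla^2 y_a|^2-\nabla y_a\cdot\nabla\triangle y_a,
\]
integrated against $d\nu$ and using $\int\triangle u\,d\nu=\int\nabla u\cdot\nabla f\,d\nu$ together with Proposition~\ref{Prop_properties_splitting_map}\ref{Prop_properties_splitting_map_b} and the $L^4$-bound on $\nabla f$ from Proposition~\ref{Prop_improved_L2}, shows $\int\!\!\int|\sum_a\Ric(V_a,V_a)|\,d\nu dt\leq \Psi(\delta|Y,\eps)^{1/2}\cdot C(Y,\eps)$.

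\paragraph{The main obstacle.} The subtle step is controlling the perpendicular scalar curvature contribution $\int\!\!\int|R_\perp||\omega_{n-2}|\,d\nu dt$, since the naive bounds only yield $C(Y,\eps)$ rather than $\Psi(\delta|Y,\eps)$. I expect the correct approach to exploit Hodge duality, passing to the $2$-form $\sigma:=\ast\omega_{n-2}$ (which is well defined at least locally): the Weitzenböck formula for $\sigma$ genuinely involves the full curvature endomorphism, so that $R_\perp|\omega_{n-2}|$ can be rewritten in terms of $\square|\sigma|$, $|\nabla\sigma|^2$, and lower-order gradient terms. The $\square|\sigma|$ contribution is then absorbed, as in the treatment of $|\omega_l|$ above, into the oscillation of $\int|\sigma|\,d\nu$ over $[-\eps^{-1},-\eps]$, while the gradient-squared term reduces to $\sum|\nabla^2 y_i|^2$ up to error controlled by the $L^p$-splitting inequalities. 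Closing this argument rigorously is the technical crux of the lemma; if a Hodge-dual reformulation is unavailable globally on $M$, the same cancellation should be extractable from a direct Bochner computation for $|\omega_{n-2}|^2=\det(\nabla y_i\cdot\nabla y_j)$, expanding the determinant and tracking the curvature terms arising from commutators in $\square(\nabla y_i\cdot\nabla y_j)$ on a Ricci flow.
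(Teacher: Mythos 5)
The lemma statement you were given omits a hypothesis that the paper's own proof relies on: $(x_0,t_0)$ is also assumed to be $(\delta, r)$-static. (The paper's proof explicitly invokes ``the fact that $(x_0, t_0)$ is $(\delta, 1)$-static,'' and the lemma is applied in Proposition~\ref{Prop_Slicing}, where that hypothesis is available.) Without the static assumption the claim is false: on an approximate shrinking cylinder $S^2 \times \IR^{n-2}$ (or a compact flow close to it), the coordinate functions of the $\IR^{n-2}$-factor give a strong $(n-2,\delta,r)$-splitting map for arbitrarily small $\delta$, the pointed Nash entropy is bounded, $|\omega_{n-2}|\approx 1$, yet $R$ is a fixed positive constant, so $\int\!\!\int |R|\,|\omega_{n-2}|\,d\nu\,dt$ stays bounded away from zero. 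Your difficulty in bringing $\int\!\!\int |R|\,d\nu\,dt$ down to $\Psi(\delta|\cdot)$ is therefore not a gap in technique but an obstruction to the statement as literally written, and you were right to sense that the ``perpendicular'' curvature contribution cannot be killed by the splitting hypothesis alone. The Hodge-dual/Weitzenb\"ock route you sketch would not close.

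Once the $(\delta,r)$-static hypothesis is restored, the $|R|\,|\omega_{n-2}|$ term is handled exactly by the first split you perform, with no Bochner analysis needed: write $|R|\,|\omega_{n-2}| = |R| + |R|(|\omega_{n-2}|-1)$. The static condition gives $\int_M |R|\,d\nu_t \leq \Psi(\delta|\eps)$ directly (since $R\geq -\delta r^{-2}$ and $\int_M R\,d\nu_t\leq\delta$ imply $\int_M |R|\,d\nu_t \leq 3\delta$), and Cauchy--Schwarz on the second piece against $\int\!\!\int R^2\,d\nu\,dt \leq C(Y,\eps)$ from Proposition~\ref{Prop_improved_L2} and the $L^2$-smallness of $|\omega_{n-2}|-1$ from Proposition~\ref{Prop_properties_splitting_map}\ref{Prop_properties_splitting_map_a} closes the argument. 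Your treatment of $\sum_l|\square|\omega_l||$ — bounding the positive part of $\mu_{\square|\omega_l|}$ via the Kato-type estimate and Proposition~\ref{Prop_properties_splitting_map}\ref{Prop_properties_splitting_map_b}, then controlling $|\mu_{\square|\omega_l|}| = 2(\mu_{\square|\omega_l|})_+ - \mu_{\square|\omega_l|}$ via the duality $\int\!\!\int \square|\omega_l|\,d\nu_t\,dt = \int |\omega_l|\,d\nu_t\big|_{t=-\eps^{-1}}^{t=-\eps}$ and the $L^1$-closeness of $|\omega_l|$ to $1$ — is essentially the paper's argument and is correct.
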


\begin{proof}
Without  loss of generality, we may assume that $r = 1$ and $t_0 = 0$.
We have using Propositions~\ref{Prop_improved_L2}, \ref{Prop_properties_splitting_map} and the fact that $(x_0, t_0)$ is $(\delta, 1)$-static
\begin{multline*}
  \int_{ - \eps^{-1}}^{-\eps} \int_M |R| \, |\omega_{n-2}| d\nu_{t} dt
= \int_{ - \eps^{-1}}^{-\eps} \int_M |R| d\nu_{t} dt
+ \int_{ - \eps^{-1}}^{-\eps} \int_M |R| \, ( |\omega_{n-2}| - 1 ) d\nu_{t} dt \\
\leq \Psi (\delta |\eps) +
 \bigg(  \int_{ - \eps^{-1}}^{-\eps} \int_M R^2 d\nu_{t} dt \bigg)^{1/2}
\bigg(  \int_{ - \eps^{-1}}^{-\eps} \int_M  (|\omega_{n-2}|-1)^2 d\nu_{t} dt \bigg)^{1/2} 
\leq \Psi (\delta |Y,\eps) .
\end{multline*}

For the bound on $\sum_{l=1}^{n-2} |\square |\omega_l||$ we recall from (\ref{eq_square_omega_Kato_Lp_bound}) and Proposition~\ref{Prop_mu_square_rules} that in the weak sense $\square |\omega_1| \leq 0$ and for $l = 2, \ldots, n-2$
\begin{equation*}
 \square |\omega_l| 
\leq |\square \omega_l|
\leq \bigg( \sum_{i=1}^l |\nabla^2 y_i|^2 \bigg) \bigg(  \sum_{i=1}^l |\nabla y_i|^{l-2} \bigg).
\end{equation*}
So by Proposition~\ref{Prop_properties_splitting_map} we have
\[ \int_{ - \eps^{-1}}^{-\eps} \int_M     |\square \omega_l |   d\nu_{t} dt \leq \Psi (\delta | Y, \eps). \]
It follows that by Propositions~\ref{Prop_weak_square}, \ref{Prop_properties_splitting_map}, see also (\ref{eq_weak_heat_op}),
\begin{align*}
 \int_{ - \eps^{-1}}^{-\eps} \int_M     |\square |\omega_l| |   d\nu_{t} dt
&\leq \int_{ - \eps^{-1}}^{-\eps} \int_M   \big( -  \square |\omega_l| + 2 |\square \omega_l | \big)   d\nu_{t} dt \\
&\leq - \int_{ - \eps^{-1}}^{-\eps} \int_M     \square (|\omega_l|-1)   d\nu_{t} dt + \Psi (\delta | Y, \eps) \\
&=  -\int_M      (|\omega_l|-1)   d\nu_{t} \bigg|_{ t=- \eps^{-1}}^{t=-\eps} + \Psi (\delta | Y, \eps)
\leq \Psi (\delta | Y, \eps).  \qedhere
\end{align*}
\end{proof}
\bigskip

Next we show that the integrals as they appear in Lemma~\ref{Lem_R_omega_n2_omega} don't change much if we vary the basepoint of the background conjugate heat kernel measure slightly.

\begin{Lemma} \label{Lem_second_order_integrals_cont_basepoint}
If $Y < \infty$, $0 < \eps' \leq \eps \leq \ov\eps (Y)$, $\beta > 0$ and $\zeta \leq \ov\zeta (Y, \eps' , \beta)$, then the following holds.
Let $(M, (g_t)_{t \in I})$ be Ricci flow on a compact manifold, $r > 0$ and let $(x_0, t_0) \in M \times I$ be a point with $[t_0 - \eps^{\prime -1} r^2, t_0- \eps' r^2] \subset I$ and $\NN_{x_0,t_0} ( r^2) \geq - Y$.
Consider a smooth map $\vec y : M \times [t_0 - \eps^{\prime -1} r^2, t_0 - \eps' r^2] \to \IR^{n-2}$ whose component functions $y_1, \ldots, y_{n-2}$ satisfy the heat equation $\square y_j = 0$.
Assume that $\vec y$ restricted to $M \times [t_0 - \eps^{ -1} r^2, t_0 - \eps r^2]$ is a strong $(n-2, \eps, r)$-splitting map at $(x_0, t_0)$.

Let $(x_1, t_1) \in P^* (x_0, t_0; \zeta r)$ and denote by $d\nu^i = (4\pi \tau_i)^{-n/2} e^{-f_i} dg$ the conjugate heat kernel based at $(x_i, t_i)$, $i = 0,1$.
Then for $l = 1, \ldots, n-2$
\begin{align}
  \bigg| \int_{t_0 -   r^2}^{t_0 - \eps' r^2} \int_M  |R| \, |\omega_{n-2}|  d\nu^0_t dt -  \int_{t_0 -   r^2}^{t_0 - \eps' r^2} \int_M  |R| \, |\omega_{n-2}|  d\nu^1_t dt \bigg| &\leq \beta, \label{eq_second_order_cont_1} \\
 \bigg| \int_{t_0 -   r^2}^{t_0 - \eps' r^2} \int_M  |\square |\omega_l| |  d\nu^0_t dt -  \int_{t_0 -   r^2}^{t_0 - \eps' r^2} \int_M   |\square |\omega_l| |  d\nu^1_t dt \bigg| &\leq \beta . \label{eq_second_order_cont_2}
\end{align}
\end{Lemma}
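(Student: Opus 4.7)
By parabolic rescaling assume $r=1$ and $t_0=0$, so that $(x_1,t_1)\in P^*(x_0,0;\zeta)$. The overall strategy is to reduce both estimates to an exchange-of-measures argument based on Proposition~\ref{Prop_inheriting_bounds}. Applying that proposition at the intermediate time $t^*:=-\zeta^2$ (for which $d_{W_1}^{g_{t^*}}(\nu^0_{t^*},\nu^1_{t^*})<\zeta$ by the definition of the $P^*$-neighborhood) and with $\alpha_0-\alpha_1$ small, one obtains, for any prescribed small $\alpha>0$ and all $s\in[-1,-\eps'/2]$, provided $\zeta\le\ov\zeta(Y,\eps',\alpha)$,
\[ d\nu^1_s\le C(Y,\eps',\alpha)\,e^{\alpha f_0}\,d\nu^0_s,\qquad d\nu^0_s\le C(Y,\eps',\alpha)\,e^{\alpha f_1}\,d\nu^1_s. \]
Combined with Propositions~\ref{Prop_improved_L2} and~\ref{Prop_properties_splitting_map}, applied to $\vec y$ as a strong splitting map at $(x_0,0)$, this yields uniform (in $\zeta$) $L^2$-bounds with exponential weight for each of the integrands $u$ appearing in (\ref{eq_second_order_cont_1}), (\ref{eq_second_order_cont_2}): for $u=|R||\omega_{n-2}|$ or $u=(\sum|\nabla^2 y_i|^2)(\sum|\nabla y_j|^{l-2})$ (the latter being a pointwise majorant for $|\square\omega_l|$ by (\ref{eq_square_omega_Kato_Lp_bound})),
\[ \int_{-1}^{-\eps'/2}\int_M u^2 e^{2\alpha f_0}\,d\nu^0_t\,dt \le C(Y,\eps'), \]
and the analogous bound with $d\nu^1$ and $e^{2\alpha f_1}$ in place.

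The comparison then proceeds by a tail-and-bulk split. Fix $F<\infty$ to be chosen. On the tail $\{f_0>F\}$, Cauchy-Schwarz combined with the exponential decay $\int_{\{f_0>F\}}e^{2\alpha f_0}d\nu^0\le C(Y)\,e^{-(1-2\alpha)F/2}$ from Proposition~\ref{Prop_int_ealphaf} and the measure comparison above shows that both $\int\!\!\int u\chi_{\{f_0>F\}}d\nu^0_t dt$ and $\int\!\!\int u\chi_{\{f_0>F\}}d\nu^1_t dt$ are bounded by $C(Y,\eps')e^{-cF}$, uniformly in $\zeta$. On the bulk $\{f_0\le F\}$, the density $K(x_0,0;\cdot,s)$ is bounded below by a positive constant $c(Y,F,\eps')$ in a relatively compact region containing $\{f_0\le F\}$; using the $W_1$-closeness of $\nu^0_{t^*}$ and $\nu^1_{t^*}$ together with parabolic regularity of the conjugate heat equation (via \cite[\HKThmNabKbound]{Bamler_HK_entropy_estimates}) propagated forward and then backward by the reproduction formula, one shows $|K(x_0,0;\cdot,s)-K(x_1,t_1;\cdot,s)|\le\Psi(\zeta|Y,F,\eps')\cdot K(x_0,0;\cdot,s)$ on $\{f_0\le F\}$. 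Hence
\[ \Big|\int\!\!\int u\chi_{\{f_0\le F\}}(d\nu^0_t-d\nu^1_t)dt\Big|\le \Psi(\zeta|Y,F,\eps'), \]
since $\int u\,d\nu^0$ is itself bounded by Lemma~\ref{Lem_R_omega_n2_omega} or the $L^2$ bound above. Choosing first $F$ large depending on $\beta$, then $\zeta$ small depending on $F,\beta$, yields the claimed inequalities.

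The main obstacle is that $d\mu_{\square|\omega_l|}$ is merely a signed measure of locally finite variation, not $|\square\omega_l|dg$, and Proposition~\ref{Prop_mu_square_rules} only gives the one-sided bound $d\mu_{\square|\omega_l|}\le|\square\omega_l|dg$ — the negative part could in principle be singular on $\{\omega_l=0\}$. To handle this, I would write $|d\mu_{\square|\omega_l|}|=2(d\mu_{\square|\omega_l|})_+ - d\mu_{\square|\omega_l|}$, observe that integrating $d\mu_{\square|\omega_l|}$ against the density of $d\nu^i$ reduces to a boundary term $[\int|\omega_l|d\nu^i]_{t_1}^{t_2}$ that varies continuously in the basepoint (since $|\omega_l|$ is Lipschitz and uniformly integrable in the exponential-weight sense), and show that $(d\mu_{\square|\omega_l|})_+$ is absolutely continuous with density pointwise bounded by $|\square\omega_l|$. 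The latter follows from the regularization $|\omega_l|_\eps:=\sqrt{|\omega_l|^2+\eps}$ used in the proof of Proposition~\ref{Prop_mu_square_rules}: $\square|\omega_l|_\eps\le|\square\omega_l|$ pointwise, and passing $\eps\to 0$ in the weak sense gives the required bound on the positive part. With this, the integrand reduces to a smooth function majorized by $|\square\omega_l|$ plus a controlled boundary term, and the tail-and-bulk argument above applies.
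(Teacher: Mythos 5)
Your plan tracks the paper's proof closely in its key ideas: the exchange-of-measures step via Proposition~\ref{Prop_inheriting_bounds}, a case analysis that separates where $K(x_0,t_0;\cdot)$ is small from where it is bounded below (the paper packages this into the single pointwise bound $d|\nu^0_t-\nu^1_t|\le\Psi(\zeta\,|\,Y,\eps',\alpha)\,e^{\alpha f_0}\,d\nu^0_t$, which subsumes your tail-and-bulk split), the reproduction-formula argument in the ``bulk'' region, and the decomposition $|d\mu_{\square|\omega_l|}|=2(d\mu_{\square|\omega_l|})_+-d\mu_{\square|\omega_l|}$ with its boundary term to handle the possibly singular negative part.

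One step as written is imprecise and would not go through as stated: the claimed uniform $L^2$ bound
\[
\int_{-1}^{-\eps'/2}\int_M u^2\,e^{2\alpha f_0}\,d\nu^0_t\,dt\le C(Y,\eps')
\]
for $u=|R|\,|\omega_{n-2}|$ or $u=\bigl(\sum|\nabla^2 y_i|^2\bigr)\bigl(\sum|\nabla y_j|^{l-2}\bigr)$ is not available from Propositions~\ref{Prop_improved_L2} and~\ref{Prop_properties_splitting_map}: squaring $u$ would require $L^4$-type control on $R$ (resp. on $\nabla^2 y_i$), while those propositions only give $L^2$. Likewise the fall-back citation of Lemma~\ref{Lem_R_omega_n2_omega} does not apply here since that lemma additionally assumes the $(\delta,r)$-static condition, which is not among the hypotheses of the present statement. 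The correct substitute --- and what the paper actually proves --- is the $e^{\alpha f_0}$-weighted $L^1$ bound
\[
\int_{-1}^{-\eps'}\int_M u\,e^{\alpha f_0}\,d\nu^0_t\,dt\le C(Y,\eps'),
\]
obtained by splitting the product $u$ via Cauchy--Schwarz and controlling each factor separately (e.g. $\int\!\!\int|R|\,|\omega_{n-2}|\,e^{\alpha f_0}\,d\nu^0\le(\int\!\!\int R^2 e^{2\alpha f_0}d\nu^0)^{1/2}(\int\!\!\int|\omega_{n-2}|^2 d\nu^0)^{1/2}$). This weaker bound is exactly what is needed: combined with the pointwise comparison $d|\nu^0-\nu^1|\le\Psi(\zeta)\,e^{\alpha f_0}\,d\nu^0$ it closes the estimate directly; and if you keep your tail/bulk organization, it also suffices since the tail integral $\int\!\!\int u\chi_{\{f_0>F\}}\,d\nu^0_t\,dt\le e^{-\alpha F}\int\!\!\int u\,e^{\alpha f_0}\,d\nu^0_t\,dt$ decays in $F$ and the bulk contribution is dominated by $\Psi(\zeta)\int\!\!\int u\,d\nu^0_t\,dt$. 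With this replacement, your argument is sound.
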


\begin{Remark}
We are purposefully not assuming that $\vec y$ is an $(n-2, \eps', r)$-splitting map.
\end{Remark}

\begin{proof}
Without loss of generality, we may assume that $r=1$ and $t_0 = 0$.
Fix $Y,  \eps', \eps, \beta$.
We will determine $\zeta$ and impose conditions of the form $\eps \leq \ov\eps (Y)$ in the course of the proof.
Let $(x_1, t_1) \in P^* (x_0, 0; \zeta)$ and let $\alpha > 0$ be a constant whose value we will determine later.

\begin{Claim} \label{Cl_diff_nu01}
For all $t \in [-1, -\eps']$
\[ d |\nu^0_t - \nu^1_t| \leq \Psi (\zeta |Y, \eps', \alpha) e^{\alpha f_0} d\nu^0_t. \]
\end{Claim}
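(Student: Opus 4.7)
The goal is a pointwise Radon--Nikodym comparison of the two conjugate heat kernel measures $\nu^0_t$ and $\nu^1_t$, with an explicit $\Psi(\zeta)$-smallness modulated by the weight $e^{\alpha f_0}$. The plan is to combine two ingredients: (i) a baseline two-sided pointwise comparability provided by Proposition~\ref{Prop_inheriting_bounds}, whose constant does not depend on $\zeta$; and (ii) the $W_1$-proximity of $\nu^0_s$ and $\nu^1_s$ coming from $(x_1,t_1)\in P^*(x_0,0;\zeta)$ and the monotonicity of the $W_1$-distance (Proposition~\ref{Prop_monotonicity_W1}). Neither ingredient alone suffices; the $\Psi(\zeta)$-factor will emerge from feeding the $W_1$-smallness of the base-time measures through the smoothing effect of the conjugate heat flow.

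First I would reduce via parabolic rescaling to $r=1$ and $t_0=0$, so that $t_1\in[-\zeta^2,\zeta^2]$. The $P^*$-hypothesis gives $d^{g_{-\zeta^2}}_{W_1}(\nu^0_{-\zeta^2},\nu^1_{-\zeta^2})<\zeta$, and Proposition~\ref{Prop_monotonicity_W1} propagates this backward: $d^{g_s}_{W_1}(\nu^0_s,\nu^1_s)\leq\zeta$ for all $s\in[-\eps^{\prime-1},-\zeta^2]$. Choosing $\alpha'\in(0,\alpha)$ and invoking Proposition~\ref{Prop_inheriting_bounds} symmetrically in the two base points yields the baseline comparabilities $d\nu^1_s\leq C(Y,\eps',\alpha)e^{\alpha' f_0}d\nu^0_s$ and $d\nu^0_s\leq C(Y,\eps',\alpha)e^{\alpha' f_1}d\nu^1_s$ on $[-1,-\eps']$. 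These control ratios but carry no $\zeta$-smallness.

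Next, for fixed $(z,t)$ with $t\in[-1,-\eps']$, I pick an intermediate time $s^*\in(t,-\zeta^{1/2})$ with $s^*-t\geq c\eps'$ and apply the reproduction formula:
\[
h(z,t):=K(x_1,t_1;z,t)-K(x_0,0;z,t)=\int_M \phi_z(y)\,d(\nu^1_{s^*}-\nu^0_{s^*})(y),\qquad \phi_z(y):=K(y,s^*;z,t).
\]
The function $\phi_z$ is a forward heat flow in $(y,s^*)$, so Proposition~\ref{Prop_L_infty_HK_bound} and \cite[\HKThmNabKbound]{Bamler_HK_entropy_estimates} supply upper bounds and a gradient estimate of the form $|\nabla_y\phi_z|/\phi_z\leq C(\eps')\sqrt{\log(C'e^{-\NN^*_t(y,s^*)}/\phi_z)}$. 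Kantorovich--Rubinstein duality applied to $\phi_z$ (suitably rescaled) then converts the $W_1$-bound into an integral bound on $h(z,t)$, which after dividing by $K(x_0,0;z,t)$ becomes the desired $\Psi(\zeta)e^{\alpha f_0(z,t)}$ estimate.

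The main obstacle is Step~4: $\phi_z$ is not globally Lipschitz with a usable constant, only its ratio $|\nabla\phi_z|/\phi_z$ is controlled logarithmically. To circumvent this I would split the $y$-integration into a ``core'' region containing the $H_n$-centers of $\nu^0_{s^*}$ and $\nu^1_{s^*}$ (on a scale like $\sqrt{|s^*|}\cdot|\log\zeta|$), where the gradient estimate and the two-sided bounds from Step~2 give a usable Lipschitz constant on a rescaled test function, and a Gaussian tail, where the combined Gaussian decay of $\phi_z$ and the concentration estimates on $\nu^i_{s^*}$ give a tail contribution bounded by a positive power of $\zeta$ times $e^{\alpha f_0(z,t)}K(x_0,0;z,t)$. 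The room between $\alpha'$ (used in Step~2) and the target exponent $\alpha$ is precisely what allows the tail contribution to be absorbed into the weight on the right-hand side.
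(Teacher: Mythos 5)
Your proposal assembles the right ingredients — Proposition~\ref{Prop_inheriting_bounds} for baseline comparability, $W_1$-monotonicity, the reproduction formula, and the Gaussian/gradient estimates — and you correctly identify the obstacle (the test function $\phi_z = K(\cdot, s^*; z, t)$ is not usefully Lipschitz). But the core/tail decomposition you propose has a concrete gap that the paper's dichotomy is designed to avoid.

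The difficulty is with the \emph{core} contribution, not the tail. Write the target, after dividing by $K(x_0,0;z,t)$, as $|K(x_1,t_1;z,t) - K(x_0,0;z,t)| \le \Psi(\zeta)\,(4\pi\tau)^{-n\alpha/2}\,K(x_0,0;z,t)^{1-\alpha}$. On the core region (near the $H_n$-centers, where the Nash entropy is bounded below), the gradient estimate gives $|\nabla\phi_z| \le C\phi_z\sqrt{\log(M/\phi_z)}$ with $M$ depending only on $Y,\eps'$; the function $u\mapsto u\sqrt{\log(M/u)}$ is bounded by $CM$, so the Lipschitz constant of $\phi_z$ on the core is \emph{uniform in $z$}. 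Kantorovich--Rubinstein then bounds the core part of $\int\phi_z\,d(\nu^1_{s^*}-\nu^0_{s^*})$ by $CM\zeta$, \emph{independently of $z$}. But the right-hand side $\Psi(\zeta)K(x_0,0;z,t)^{1-\alpha}$ tends to zero as $K(x_0,0;z,t)\to 0$, so a $z$-uniform $O(\zeta)$ bound on the core cannot possibly be absorbed when $K(x_0,0;z,t)$ is small. No rescaling of the test function fixes this, because the uniform Lipschitz constant is forced by the gradient estimate itself, not by the magnitude of $K(x_0,0;z,t)$.

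The paper resolves this with a dichotomy on the magnitude of $K(x_0,0;z,t)$ that replaces your core/tail split entirely. If $C K^{\alpha/2}(x_0,0;z,t) + K^{\alpha}(x_0,0;z,t) \le \delta$ (i.e.\ $K$ small), then the claim is immediate from Proposition~\ref{Prop_inheriting_bounds} applied with exponent $\alpha/2$ — this is exactly the ``room'' between $\alpha/2$ and $\alpha$ that you allude to, but it is used to dispose of the small-$K$ regime rather than the tail. If instead $K(x_0,0;z,t)$ is bounded below (by a constant depending on $\delta$), then the Gaussian heat kernel bound forces $d_{W_1}^{g_{t'}}(\delta_{z},\nu^0_{t'})$ to be bounded, hence by Proposition~\ref{Prop_NN_variation_bound} the Nash entropy at $(z,t)$ is bounded below, hence $K(\cdot,-\zeta^2;z,t)$ is \emph{globally} bounded above and therefore has a \emph{globally} usable gradient bound from \cite[\HKThmGradientPhiEstimate]{Bamler_HK_entropy_estimates}. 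This makes the reproduction-formula test function globally Lipschitz, so the $W_1$-bound $d_{W_1}^{g_{-\zeta^2}}(\nu^0_{-\zeta^2},\nu^1_{-\zeta^2})\le\zeta$ directly yields $|h(z,t)|\le C\zeta$, and dividing by the lower bound on $K(x_0,0;z,t)$ closes the argument — no core/tail split, no Gaussian tail estimate. The dichotomy is the missing idea.
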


\begin{proof}
Fix some $(x',t') \in M \times [-1, -\eps']$.
Our goal is to show that if $\delta > 0$ and $\zeta \leq  \ov\zeta ( Y,  \eps', \alpha, \delta)$, then
\begin{equation} \label{eq_eq_Kx0x1_diff_goal}
 \big| K(x_0, 0; x',t') - K(x_1, t_1; x',t') \big| \leq \delta K^{1-\alpha} (x_0, 0; x',t'). 
\end{equation}
Fix $\delta$ for the remainder of the proof.

By Proposition~\ref{Prop_inheriting_bounds}, we obtain that if $\zeta \leq  \ov\zeta (   \eps', \alpha)$, then
\begin{equation} \label{eq_osc_K_C}
K(x_1, t_1; x',t') \leq C (Y, \eps', \alpha) K^{1-\alpha/2} (x_0, 0; x',t'). 
\end{equation}
So if $C(Y,  \eps', \alpha) K^{\alpha/2} (x_0, 0; x',t') + K^{\alpha} (x_0, 0; x',t') \leq \delta$, then (\ref{eq_eq_Kx0x1_diff_goal}) follows from (\ref{eq_osc_K_C}).
Now assume that the reverse inequality holds, which implies
\begin{equation} \label{eq_Kx00xptp_lower_bound_delta}
   K (x_0, 0; x',t') \geq c(Y, \eps', \alpha, \delta) . 
\end{equation}
Using \cite[\HKThmHKboundGauss]{Bamler_HK_entropy_estimates}, we obtain that
\[ d^{g_{t'}}_{W_1} (\delta_{x'}, \nu^0_{t'} ) \leq C (Y, \eps', \alpha, \delta). \]
Thus, by Proposition~\ref{Prop_NN_variation_bound} we have
\[ \NN_{x',t'} ( 1) \geq - C (Y, \eps', \alpha, \delta). \]
Again by  \cite[\HKThmHKboundGauss]{Bamler_HK_entropy_estimates}, this implies that for any $t \in (t', 0]$
\[ K(\cdot ,t; x', t') \leq C (Y, \eps', \alpha, \delta) (t - t')^{-n/2}. \]
Thus, by a gradient estimate, such as \cite[\HKThmGradientPhiEstimate]{Bamler_HK_entropy_estimates}, we obtian
\[ |\nabla K| (\cdot, - \zeta^2; x', t') \leq C (Y, \eps', \alpha, \delta). \]
Therefore, by the reproduction formula
\begin{multline*}
 \big| K(x_0, 0; x',t') - K(x_1, t_1; x',t') \big|
= \bigg| \int_{M}  K (\cdot, - \zeta^2; x', t') d\nu^0_{-\zeta^2} - \int_{M}  K (\cdot, - \zeta^2; x', t') d\nu^1_{-\zeta^2} \bigg| \\
\leq C (Y, \eps', \alpha, \delta) \zeta. 
\end{multline*}
Combining this with (\ref{eq_Kx00xptp_lower_bound_delta}) implies (\ref{eq_eq_Kx0x1_diff_goal}) for $\zeta \leq  \ov\zeta ( Y,  \eps', \alpha, \delta)$.
\end{proof}

\begin{Claim} \label{Cl_y_i_bounds}
If $\alpha \leq \ov\alpha$, $Q < \infty$, $\eps \leq \ov\eps (Y,  Q)$, then for all $t \in [-1, -\eps']$, $p \in [0, Q]$
\begin{align} \label{eq_naby_p_all_t}
 \sum_{j=1}^{n-2} \int_M |\nabla y_j|^p   d\nu^0_t  &\leq C,  \\
\int_{-1}^{-\eps'} \int_M \bigg( \sum_{i=1}^{n-2} |\nabla^2 y_i|^2 \bigg) \bigg( \sum_{j=1}^{n-2} |\nabla y_j|^p \bigg) e^{\alpha f_0}  d\nu^0_t dt &\leq C(Y, \eps', Q). \label{eq_nab2yinabyj_beyond}
\end{align}
\end{Claim}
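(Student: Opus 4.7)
The two assertions of Claim~\ref{Cl_y_i_bounds} are the analogs of Proposition~\ref{Prop_properties_splitting_map}\ref{Prop_properties_splitting_map_a}, \ref{Prop_properties_splitting_map_b} on the time interval $[-1,-\eps']$ rather than on the strong-splitting interval $[-\eps^{-1},-\eps]$. Since $\eps'\leq \eps\leq 1$, we have $[-1,-\eps]\subset [-1,-\eps']\cap[-\eps^{-1},-\eps]$, but the right portion $[-\eps,-\eps']$ lies strictly beyond the window where the splitting hypotheses are known. Hence the essential task is to propagate the splitting-map bounds forward in time, using only that each $y_j$ continues to solve the heat equation on $[-\eps^{\prime-1},-\eps']\supset[-1,-\eps']$.

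For (\ref{eq_naby_p_all_t}), the plan is to use $\square|\nabla y_j|\leq 0$ (in the weak sense), combined with hypercontractivity \cite[\HKThmHypercontractivity]{Bamler_HK_entropy_estimates}, to conclude
\[ \bigg(\int_M |\nabla y_j|^p\, d\nu^0_{t_2}\bigg)^{1/p}\leq \bigg(\int_M |\nabla y_j|^2\, d\nu^0_{t_1}\bigg)^{1/2} \]
whenever $|t_1|/|t_2|\geq p-1$. Taking $t_1:=-\eps^{-1}$ and $t_2\in[-1,-\eps']$, this condition is met as soon as $\eps^{-1}\geq Q-1$, i.e.\ $\eps\leq\ov\eps(Q)$. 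The $L^2$-bound at $t_1$ follows from Proposition~\ref{Prop_properties_splitting_map}\ref{Prop_properties_splitting_map_a} applied to the strong $(n-2,\eps,1)$-splitting hypothesis (valid for $\eps\leq\ov\eps(Y)$), yielding, e.g., $\int_M|\nabla y_j|^2\, d\nu^0_{-\eps^{-1}}\leq 2$.

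For (\ref{eq_nab2yinabyj_beyond}), the plan is to adapt the integration-by-parts argument used to prove Proposition~\ref{Prop_properties_splitting_map}\ref{Prop_properties_splitting_map_b}. The weighted Bochner-type inequality (\ref{eq_ealphf_square_nab2_nab_p}) produces, for $\alpha\leq\ov\alpha$, a pointwise bound on $\square(|\nabla y_i|^2|\nabla y_j|^p e^{\alpha f_0})$ by (minus a constant times) the desired integrand plus error terms of the form $|\nabla y|^{4p}+|\nabla^2 f_0|^2+|\nabla f_0|^4+R^2+1$. Integrating against $d\nu^0_t$ over $[-1,-\eps']$ gives the required bound modulo: (i)~boundary terms at the two endpoints; (ii)~the cross-term $\delta^*_{ij}(|\nabla^2 y_i|^2|\nabla y_i|^2+|\nabla^2 y_j|^2|\nabla y_j|^{2p-2})e^{\alpha f_0}$. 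The error terms are controlled by Proposition~\ref{Prop_improved_L2}, whose bounds are precisely on the interval $[-1,-\eps']$ (choose $r=1$, $\theta=\eps'$), together with higher-order versions of (\ref{eq_naby_p_all_t}) via further hypercontractivity. The boundary term at $t=-\eps'$ carries the favorable sign, while the one at $t=-1$ is estimated by H\"older's inequality together with (\ref{eq_naby_p_all_t}) (for the gradient factors) and Proposition~\ref{Prop_int_ealphaf} (for the weight $e^{\alpha q' f_0}$, $\alpha q'\leq 1/2$). The cross-term is absorbed by induction on $i,j$, as in (\ref{eq_int_nab2nab_p}).

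The main obstacle is reconciling two distinct types of control: the splitting-map bounds live against $d\nu^0_t$ with no extra weight, whereas we require bounds weighted by $e^{\alpha f_0}$. Exchanging weights must be done via H\"older and Proposition~\ref{Prop_int_ealphaf}, with $\alpha$ chosen small enough that every dual weight stays integrable and every higher moment of $|\nabla y_j|$ used en route still falls within the hypercontractivity regime $\eps^{-1}\geq (\text{power})-1$. This forces the quantitative dependence $\eps\leq\ov\eps(Y,Q)$ stated in the claim, and it dictates that every estimate be anchored at a time inside the splitting-map interval and then pushed forward; no bound can be imported from times $t>-\eps$ except through the heat-equation monotonicity that is the engine of the whole argument.
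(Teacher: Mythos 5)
Your proposal is substantially correct and its main line of attack matches the paper. There are two things worth noting.

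For~(\ref{eq_naby_p_all_t}), the route you take is a genuine (small) variant of the paper's. The paper first applies Proposition~\ref{Prop_properties_splitting_map}\ref{Prop_properties_splitting_map_a} to get $\int_M \sum_j |\nabla y_j|^p\,d\nu^0_{-1}\le C$ for $p\in[1,Q]$ (this works because $t=-1$ is comfortably inside the conclusion window $[-\tilde\eps^{-1},-\tilde\eps]$ for a moderate $\tilde\eps$ depending only on $Q$), and then propagates the bound forward in time to all of $[-1,-\eps']$ via Kato's inequality $\square|\nabla y_j|^p\le p|\nabla y_j|^{p-1}\square|\nabla y_j|\le 0$, which makes $t\mapsto\int_M|\nabla y_j|^p\,d\nu^0_t$ non-increasing. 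You instead carry out a single hypercontractivity jump from $t_1$ near $-\eps^{-1}$ to $t_2\in[-1,-\eps']$. This also works and uses essentially the same underlying mechanism (hypercontractivity is already the engine of the $L^p$ bound in the proof of Proposition~\ref{Prop_properties_splitting_map}), but the paper's version is a touch more modular since it invokes the proposition as a black box instead of re-deriving a hypercontractivity estimate.

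There is one small inaccuracy in your argument to be aware of: invoking Proposition~\ref{Prop_properties_splitting_map}\ref{Prop_properties_splitting_map_a} ``applied to the strong $(n-2,\eps,1)$-splitting hypothesis'' does \emph{not} yield $\int_M|\nabla y_j|^2\,d\nu^0_{-\eps^{-1}}\le 2$, because that proposition has hypothesis parameter $\delta=\eps$ and conclusion window $[-\tilde\eps^{-1},-\tilde\eps]$ with $\eps\le\ov\delta(Y,\tilde\eps)$; the parameter $\tilde\eps$ therefore cannot be pushed down to $\eps$, and the endpoint $t=-\eps^{-1}$ is not reached. The fix is exactly what the paper does in the proof of (\ref{eq_nab_y_i_L_p_bound}): rather than a fixed $t_1$, integrate the splitting-map condition $\int_{-\eps^{-1}}^{-\eps^{-1}+1}\int_M\big||\nabla y_j|^2-1\big|\,d\nu^0_t\,dt\le\eps$ to find a $t_1\in[-\eps^{-1},-\eps^{-1}+1]$ with $\int_M|\nabla y_j|^2\,d\nu^0_{t_1}\le 1+2\eps$, and run hypercontractivity from that $t_1$; the hypercontractivity time-ratio condition then reads $\eps^{-1}-1\ge Q-1$, still giving $\eps\le\ov\eps(Q)$. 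Your argument for~(\ref{eq_nab2yinabyj_beyond}) agrees with the paper's: the same weighted Bochner inequality (\ref{eq_ealphf_square_nab2_nab_p}), the boundary term at $t=-1$ controlled via H\"older, (\ref{eq_naby_p_all_t}) and Proposition~\ref{Prop_int_ealphaf}, the error terms via Proposition~\ref{Prop_improved_L2}, and the cross-term absorbed by the same bootstrap in $(i,j)$.
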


\begin{proof}
To see (\ref{eq_naby_p_all_t}) we may assume that $p \geq 1$ due to H\"older's inequality.
If $\alpha \leq \ov\alpha$, $\eps \leq \ov\eps (Y,  Q)$, then using Proposition~\ref{Prop_properties_splitting_map}, we have for all $p \in [1, Q]$
\[ \int_M \bigg( \sum_{j=1}^{n-2} |\nabla y_j|^p \bigg)  d\nu_{-1} \leq C  . \]
Since by Kato's inequality $\square |\nabla y_j|^p \leq p |\nabla y_j|^{p-1} \square |\nabla y_j| \leq p |\nabla y_j|^{p-1} |\square \nabla y_j| = 0$, we obtain that for any $t^* \in [-1, - \eps']$
\[ \int_M \bigg( \sum_{j=1}^{n-2} |\nabla y_j|^p \bigg) d\nu_{t^*} =  \int_M \bigg( \sum_{j=1}^{n-2} |\nabla y_j|^p \bigg)  d\nu_{-1} + \int_{-1}^{t^*} \int_M \bigg( \sum_{j=1}^{n-2} \square |\nabla y_j|^p \bigg)  d\nu_{t} dt \leq C. \]
This proves (\ref{eq_naby_p_all_t}).

To see (\ref{eq_nab2yinabyj_beyond}) it suffices to show that for $p = 0$ or $p \geq 2$ and for any $i,j = 1, \ldots, n-2$
\[ \int_{ - 1 }^{ - \eps' } \int_M  |\nabla^2 y_i |^2   |\nabla y_j |^{p}     e^{\alpha f_0} d\nu^0_{t} dt \leq C(Y,\eps', p). \]
This bound is similar to (\ref{eq_nab2yypealphC}) in the proof of Proposition~\ref{Prop_properties_splitting_map}.
However, the fact that  the upper bound of the first integral is $\eps'$ creates some difficulties, which we will overcome by modifying the arguments from the proof of Proposition~\ref{Prop_properties_splitting_map}.
By the same computation as in (\ref{eq_ealphf_square_nab2_nab_p}) we find that for $\delta^*_{ij} = 1 - \delta_{ij}$
\begin{multline*}
 e^{-\alpha f_0} \square \big(  |\nabla y_i|^{2}  |\nabla y_j|^{p} e^{\alpha f_0} \big) 
\leq - \tfrac12 c ( p)  |\nabla^2 y_i|^2  |\nabla y_j|^{p} + C( p ) \delta^*_{ij} \big( |\nabla^2 y_i|^2 |\nabla y_i|^{2} +  |\nabla^2 y_j|^2 |\nabla y_j|^{2p-2} \big)  \\
 + C(\eps', p) \big(  |\nabla y_i|^{8} + |\nabla y_j|^{4p} +  |\nabla^2 f_0|^2 +  |\nabla f_0|^4 + R^2 + 1 \big)   . 
\end{multline*}
Therefore, we obtain using Propositions~\ref{Prop_improved_L2}, \ref{Prop_properties_splitting_map} and (\ref{eq_naby_p_all_t}), similarly as in (\ref{eq_int_nab2nab_p})
\begin{align}
  \int_{ - 1}^{ - \eps' } \int_M   & |\nabla^2 y_i|^2  |\nabla y_j|^{p}  e^{\alpha f_0} d\nu^0_t dt
\leq  C(\eps', p) \int_M   |\nabla y_i|^2  |\nabla y_j|^{p}  e^{\alpha f_0} d\nu^0_{-1}  + C(Y, \eps', p) \notag \\
&\qquad  + 
C(\eps',  p) \delta^*_{ij} \int_{ -1 }^{ - \eps' } \int_M  \big( |\nabla^2 y_i|^2 |\nabla y_i|^{2} +  |\nabla^2 y_j|^2 |\nabla y_j|^{2p-2} \big) e^{\alpha f_0} d\nu^0_t dt \notag \displaybreak[1] \\
&\leq C(Y, \eps', p)+ 
C(\eps',  p) \delta^*_{ij} \int_{ -1 }^{ - \eps' } \int_M  \big( |\nabla^2 y_i|^2 |\nabla y_i|^{2} +  |\nabla^2 y_j|^2 |\nabla y_j|^{2p-2} \big) e^{\alpha f_0} d\nu^0_t dt.  \label{eq_int_m1_epsp_nab2nab}
\end{align}
This proves (\ref{eq_nab2yinabyj_beyond}) for $i =j$ and for $p=0$.
Given (\ref{eq_nab2yinabyj_beyond}) for $i=j$, we can also bound the last integral in (\ref{eq_int_m1_epsp_nab2nab}) and deduce (\ref{eq_nab2yinabyj_beyond}) for $i \neq j$.
\end{proof}

Using the bounds from Claim~\ref{Cl_y_i_bounds} and Proposition~\ref{Prop_improved_L2}, we obtain
\begin{multline*}
  \int_{ - 1}^{ - \eps'} \int_M   |R| \, |\omega_{n-2}| e^{\alpha f_0} d\nu^0_t dt \leq \bigg( \int_{ -  1}^{ - \eps'} \int_M   R^2  e^{2\alpha f_0} d\nu^0_t dt \bigg)^{1/2} \bigg( \int_{ -  1}^{ - \eps'} \int_M    |\omega_{n-2}|^2 d\nu^0_t dt \bigg)^{1/2} \\
\leq C(Y, \eps') \bigg( \sum_{i=1}^{n-2} \int_{ -  1}^{ - \eps'} \int_M    |\nabla y_i|^{2(n-2)} d\nu^0_t dt \bigg)^{1/2}
\leq C(Y, \eps') .
\end{multline*}
Combining this with Claim~\ref{Cl_diff_nu01} implies (\ref{eq_second_order_cont_1}).

Next, we have using Claim~\ref{Cl_y_i_bounds} and (\ref{eq_square_omega_Kato_Lp_bound}) $\square \omega_1 = 0$ and for $l \geq 2$
\begin{equation*}
\int_{ -  1}^{ -  \eps'} \int_M   |\square  \omega_{l}| e^{\alpha f_0} d\nu^0_t dt
\leq \int_{ -1}^{ -  \eps'} \int_M  \bigg( \sum_{i=1}^l |\nabla^2 y_i|^2 \bigg) \bigg(  \sum_{i=1}^l |\nabla y_i|^{l-2} \bigg) e^{\alpha f_0} d\nu^0_t dt \leq C(Y, \eps'). 
\end{equation*}
Therefore, since $0 \leq |\square |\omega_l|| + \square |\omega_l| \leq 2 (\square | \omega_l|)_+ \leq 2 |\square \omega_l|$, and using Claims~\ref{Cl_diff_nu01}, \ref{Cl_y_i_bounds}
\begin{align*}
  \bigg| \int_{ - 1 }^{ - \eps'} \int_M &  |\square |\omega_l| |  d\nu^0_t dt -  \int_{   -1 }^{ - \eps' } \int_M   |\square |\omega_l| |  d\nu^1_t dt \bigg| \\
&\leq \bigg| \int_{ -1}^{ - \eps'} \int_M  \big( |\square |\omega_l|| +  \square | \omega_l|     \big) d(\nu^0_t - \nu^1_t) dt \bigg|
+ \bigg| \int_{ - 1 }^{ - \eps'} \int_M    \square | \omega_l|  d(\nu^0_t - \nu^1_t) dt \bigg|  \\
&\leq  \int_{ - 1 }^{ - \eps'} \int_M  2 |\square \omega_l| d|\nu^0_t - \nu^1_t| dt
+  \bigg| \int_M    |\omega_l|   d(\nu^0_t - \nu^1_t)   \bigg|_{t= - 1 }^{ t=-  \eps'} \bigg| \\
&\leq \Psi (\zeta | Y, \eps', \alpha) \bigg( \int_{ - 1}^{ - \eps'} \int_M  2 |\square \omega_l|  e^{\alpha f_0} d\nu^0_t dt
+  \int_M    |\omega_l|   e^{\alpha f_0} d\nu^0_{-1}  +  \int_M    |\omega_l|   e^{\alpha f_0} d\nu^0_{-\eps'} \bigg) \\
&\leq \Psi (\zeta | Y, \eps', \alpha) .
\end{align*}
This implies (\ref{eq_second_order_cont_2}).
\end{proof}

We can now prove the Slicing Theorem, Proposition~\ref{Prop_Slicing}.

\begin{proof}[Proof of Proposition~\ref{Prop_Slicing}.]
Without loss of generality, we may assume that $r = 1$ and $t_0 = 0$.
Fix $Y, \eps$.
Throughout the proof we will assume that the constant $\eps$ is chosen smaller than certain constants that only depend on $Y$.
This does not create any issues, since the proposition is stronger for smaller $\eps$ and the only dependences in the proposition are of the form $\delta \leq \ov\delta (Y, \eps)$ and $C_0 (Y)$.
For simplicity, we will also show Assertion~\ref{Prop_Slicing_a} for $\eps$ replaced with $\frac{n}4 \eps$, which does not create any issues either, because we can adjust $\eps$ by a dimensional factor.

Denote by $d\nu = (4\pi \tau)^{-n/2} e^{-f}dg$ the conjugate heat kernel based at $(x_0, 0)$ and set $\omega_l := dy_1 \wedge \ldots \wedge dy_l$, $l = 1, \ldots, n-2$.

\begin{Claim} \label{Cl_splitting_macr_scale}
If $(x',t') \in M \times [-2,-1]$ satisfies (\ref{eq_Slicing_condition}), then $\vec y - \vec y(x',t')$ is a strong $(n-2, \Psi (\delta | Y, \eps), 1)$-splitting map at $(x',t')$.
\end{Claim}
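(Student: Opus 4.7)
The plan is to use the conjugate-heat-kernel comparison machinery (Proposition~\ref{Prop_inheriting_bounds}) together with the $e^{\alpha f}$-weighted integral estimates for strong splitting maps (Proposition~\ref{Prop_properties_splitting_map}) to transplant the strong-splitting bounds from $(x_0,0)$ to $(x',t')$. Throughout, write $d\nu^0 := d\nu_{x_0,0} = (4\pi\tau)^{-n/2} e^{-f_0} dg$ and $d\nu^1 := d\nu_{x',t'} = (4\pi\tau_1)^{-n/2} e^{-f_1}dg$; note $f_0 = f$ in the notation of the proposition.

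First I would use the heat-kernel lower bound $e^{-f(x',t')/4} \geq \eps$ to locate $(x',t')$. This bound yields $K(x_0,0;x',t') \geq c(\eps)|t'|^{-n/2}$, which combined with the Gaussian upper bound in Proposition~\ref{Prop_L_infty_HK_bound} (using $\NN_{x_0,0}(1)\geq -Y$) forces $d^{g_{t'}}_{W_1}(\nu_{x_0,0;t'},\delta_{x'})\leq C(Y,\eps)$, and hence $(x',t')\in P^*(x_0,0;C(Y,\eps))$. Since $t'\in[-2,-1]$ and (\ref{eq_Slicing_condition}) gives $\NN_{x',t'}(10^{-1})\geq -Y$, Proposition~\ref{Prop_NN_variation_bound} provides uniform lower bounds on $\NN_{x_0,0}$ and $\NN_{x',t'}$ on any bounded time-window. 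Fixing $\alpha\in(0,\ov\alpha)$ small enough, Proposition~\ref{Prop_inheriting_bounds} then yields, for every $s$ in a suitable window of the form $[t'-\eta^{-1},t'-\eta]$ where $\eta\geq C\sqrt\delta$ (so that $[t'-\eta^{-1},t'-\eta]\subset[-\delta^{-1},-\delta]$),
\[ d\nu^1_s \leq C(Y,\eps,\alpha)\, e^{\alpha f_0}\, d\nu^0_s. \]

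Next I would verify the three conditions of Definition~\ref{Def_strong_splitting_map} for the map $\vec y^{\,\prime}:=\vec y-\vec y(x',t')$ at $(x',t')$ with error parameter $\Psi(\delta|Y,\eps)$. Condition~\ref{Def_strong_splitting_map_1} is automatic since $\square y_i = 0$. Condition~\ref{Def_strong_splitting_map_3} follows from Lemma~\ref{Lem_prop_3_strong_splitting}: because $\square y_i=0$, the function $t\mapsto\int_M y_i\, d\nu^1_t$ is constant, equals $y_i(x',t')$ at $t=t'$, hence $\int_M(y_i-y_i(x',t'))\,d\nu^1_t = 0$ for all relevant $t$. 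For Condition~\ref{Def_strong_splitting_map_2}, the measure comparison above gives
\[ \int_M|\nabla y_i\cdot\nabla y_j-\delta_{ij}|\,d\nu^1_s \leq C(Y,\eps,\alpha)\int_M|\nabla y_i\cdot\nabla y_j-\delta_{ij}|\,e^{\alpha f_0}\,d\nu^0_s, \]
and integrating in $s$ and applying Proposition~\ref{Prop_properties_splitting_map}\ref{Prop_properties_splitting_map_a} (the weighted $L^1$ bound on $|\nabla y_i\cdot\nabla y_j-\delta_{ij}|$, which is available because $\vec y$ is a strong $(n-2,\delta,1)$-splitting map at $(x_0,0)$) gives the desired bound $\Psi(\delta|Y,\eps)$.

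The main subtlety I anticipate is matching the integration windows: the transplanted bound at $(x',t')$ is naturally integrated over an interval $[t'-\eta^{-1},t'-\eta]$, whereas the input weighted bound at $(x_0,0)$ lives on $[-\eps^{-1},-\eps]$. Since $t'\in[-2,-1]$, one has $[t'-\eta^{-1},t'-\eta]\subset[-\delta^{-1},-\delta]$ provided $\eta\geq C\delta^{1/2}$, and one must choose $\alpha$, the window, and the parameters in Proposition~\ref{Prop_inheriting_bounds} so that all the inherited constants remain controlled by $Y$ and $\eps$. A further minor technical point is that Proposition~\ref{Prop_properties_splitting_map} is formally stated for integration on the symmetric window $[t_0-\eps^{-1}r^2,t_0-\eps r^2]$; but inspection of its proof (via hypercontractivity) shows the same weighted bounds extend to any compact sub-window of $[-\delta^{-1},-\delta]$, giving the flexibility needed here.
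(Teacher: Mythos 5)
Your approach matches the paper's proof step for step (heat-kernel lower bound from $e^{-f/4}\geq\eps$, Gaussian upper bound to control $d_{W_1}^{g_{t'}}(\delta_{x'},\nu_{x_0,0;t'})$, then Proposition~\ref{Prop_inheriting_bounds} and the weighted bound in Proposition~\ref{Prop_properties_splitting_map}), and your treatment of the normalization via Lemma~\ref{Lem_prop_3_strong_splitting} and of the integration window is exactly what the paper does. One small slip: the $d_{W_1}$ bound needs the actual Gaussian upper bound \cite[\HKThmHKboundGauss]{Bamler_HK_entropy_estimates}, not Proposition~\ref{Prop_L_infty_HK_bound}, whose $L^\infty$ estimate has no spatial decay factor and so cannot convert a pointwise lower bound on $K$ into a distance bound.
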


\begin{proof}
By \cite[\HKThmHKboundGauss]{Bamler_HK_entropy_estimates} and (\ref{eq_Slicing_condition}) we have
\[ d_{W_1}^{g_{t'}} ( \delta_{x'} , \nu_{t'} ) \leq C(Y, \eps). \]
Thus by Proposition~\ref{Prop_inheriting_bounds} if $\delta' > 0$ and $\delta \leq \ov\delta(\delta')$, then for any $\alpha > 0$ and $t \in [t'-\delta^{\prime -1}, t' - \delta']$
\[ d\nu_{x',t'; t} \leq C(Y, \eps, \alpha, \delta') e^{\alpha f} d\nu_t. \]
It follows using Proposition~\ref{Prop_properties_splitting_map} that if $\alpha \leq \ov\alpha$, then for $i, j = 1, \ldots, n-2$
\[ \int_{t' - \delta^{\prime -1}}^{t' - \delta'} \int_M | \nabla y_i \cdot \nabla y_j - \delta_{ij} | d\nu_{x',t'; t} dt
\leq  C(Y, \eps, \alpha) \int_{t' - \delta^{\prime -1}}^{t' - \delta'} \int_M | \nabla y_i \cdot \nabla y_j - \delta_{ij} | e^{\alpha f} d\nu_t dt
\leq \Psi (\delta | Y, \eps, \alpha, \delta' ). \]
This proves the claim.
\end{proof}

By Lemma~\ref{Lem_R_omega_n2_omega} we have
\begin{equation} \label{eq_R_sq_omega_small}
 \int_{ - 10}^{-1} \int_M \bigg( |R| \, |\omega_{n-2}| + \sum_{l=1}^{n-2}    |\square |\omega_l| |  \bigg) d\nu_{t} dt \leq \Psi (\delta | Y). 
\end{equation}

Let $\eps' \in (0, \eps)$ be a constant whose value we will determine later.
For any $(x',t') \in M \times [-2,-1]$ choose $r_{x',t'} \in [0, 10^{-1}]$ minimal with the property that for all $r' \in [r_{x',t'}, 10^{-1})$ and $l = 1, \ldots, n-2$ we have 
\begin{align*}
  \int_{t'-   r^{\prime 2}}^{t'- \frac12 \eps' r^{\prime 2}} \int_M   |R| \, |\omega_{n-2}| d\nu_{x',t'; t} dt
&\leq \eps'  r^{\prime -2} \int_{t'-  r^{\prime 2}}^{t'-  r^{\prime 2}/2} \int_M  |\omega_{n-2}|  d\nu_{x',t'; t} dt, \\
  \int_{t'-  r^{\prime 2}}^{t'- \frac12 \eps' r^{\prime 2}} \int_M     |\square |\omega_l| |   d\nu_{x',t'; t} dt
&\leq \eps' r^{\prime - 2}  \int_{t'-  r^{\prime 2}}^{t'-  r^{\prime 2}/2} \int_M  |\omega_l|  d\nu_{x',t'; t} dt.
\end{align*}
So, by continuity, if $r_{x',t'} > 0$, then equality holds in one of these inequalities for $r' = r_{x',t'}$.

\begin{Claim} \label{Cl_rprime_A}
If $\eps \leq \ov\eps (Y)$, $\eps' \leq \ov\eps' (Y, \eps)$, $\delta \leq \ov\delta (Y, \eps, \eps')$, then for any $(x',t') \in M \times [-2,-1]$ that satisfies (\ref{eq_Slicing_condition}) and any $r' \in [r_{x',t'}, 10^{-1}] \setminus \{ 0\}$ the following holds:
\begin{enumerate}[label=(\alph*)]
\item \label{Cl_rprime_A_a} There is a lower triangular $(n-2) \times (n-2)$-matrix $A$ with diagonal entries $\geq 1- \eps$ and $|A| \leq 10 r^{\prime - \eps}$ such that $\vec y^{\, \prime} := A (\vec y - \vec y (x',t'))$ is a strong $(n-2,  \eps,  r')$-splitting map at $(x',t')$.
\item \label{Cl_rprime_A_b} $(x',t')$ is $( \eps,  r')$-static.
\end{enumerate}
\end{Claim}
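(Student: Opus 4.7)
The plan is to apply the Transformation Theorem (Proposition~\ref{Prop_Transformation_Theorem}) at $(x', t')$ to obtain part (a), and then deduce the static condition in part (b) from the defining inequality for $r_{x',t'}$ applied to $|R|\,|\omega_{n-2}|$. The starting point in both cases is Claim~\ref{Cl_splitting_macr_scale}, which asserts that $\vec y - \vec y(x',t')$ is a strong $(n-2, \Psi(\delta|Y,\eps), 1)$-splitting map at $(x', t')$. Using Proposition~\ref{Prop_properties_splitting_map} to integrate the per-time-slice bounds down to the scale $10^{-1}$, we first upgrade this to a strong $(n-2, \delta_0, 10^{-1})$-splitting at $(x', t')$ with $\delta_0$ still of the form $\Psi(\delta | Y, \eps)$, which we make as small as needed by taking $\delta \leq \ov\delta(Y, \eps, \eps')$.

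For part (a), I would apply Proposition~\ref{Prop_Transformation_Theorem} at $(x', t')$ with $r_0 := 10^{-1}$, $r_1 := r'$, target accuracy $\eps$, and input accuracy $\delta_{\mathrm{TT}} := \eps'/2$. The required hypothesis is, for every $r \in [r', 10^{-1}]$ and $l = 1,\ldots, n-2$,
\[ \int_{t' - r^2}^{t' - \delta_{\mathrm{TT}} r^2} \int_M |\square |\omega_l|| \, d\nu_{x', t'; t} \, dt \leq \delta_{\mathrm{TT}} \, r^{-2} \int_{t' - r^2}^{t' - r^2/2} \int_M |\omega_l| \, d\nu_{x', t'; t} \, dt, \]
which is precisely the defining inequality for $r_{x', t'}$ (with $\delta_{\mathrm{TT}} = \eps'/2$ matching the upper endpoint $t' - \tfrac12 \eps' r^2$). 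Provided $\eps' \leq \ov\eps'(Y,\eps)$ so that $\delta_{\mathrm{TT}} \leq \ov\delta(Y,\eps)$ as demanded by the Transformation Theorem, and $\delta \leq \ov\delta(Y, \eps, \eps')$ so that $\delta_0 \leq \delta_{\mathrm{TT}}$, the theorem produces a lower triangular matrix $A$ with diagonal entries $\geq 1-\eps$ and $|A| \leq 10(r'/10^{-1})^{-\eps} \leq 10 r'^{-\eps}$ such that $\vec y^{\, \prime} := A(\vec y - \vec y(x', t'))$ is a strong $(n-2, \eps, r')$-splitting map at $(x', t')$.

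For part (b), the lower bound $R \geq -\eps r'^{-2}$ is immediate: the $(\delta, 1)$-staticity of $(x_0, 0)$ gives $R \geq -\delta$ globally on the relevant time slab, and $r' \leq 10^{-1}$ forces $\eps r'^{-2} \geq 100 \eps \geq \delta$ when $\delta \leq \eps$. For the bound $\int_M R \, d\nu_{x',t';t} \leq \eps$ and the $L^2$-bound on $\Ric$, I would combine the defining inequality for $r_{x',t'}$ on the integrand $|R|\,|\omega_{n-2}|$ with part (a): the strong $(n-2, \eps, r')$-splitting of $\vec y^{\, \prime}$ and the bound $|A| \leq 10 r'^{-\eps}$ give, via Proposition~\ref{Prop_properties_splitting_map}, quantitative two-sided $L^p$-control of $|\omega_{n-2}|$ (up to a factor $r'^{-\eps(n-2)}$), which lets us strip $|\omega_{n-2}|$ off the defining inequality and convert it into an integral bound on $|R|$ alone. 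The same inequality, applied at a slightly larger scale still in $[r', 10^{-1}]$, covers the full static time-interval $[t' - \eps^{-1} r'^2, t' - \eps r'^2]$. Finally, the $L^2$-bound on $\Ric$ comes from the scalar-curvature evolution $\frac{d}{dt} \int R\, d\nu_t = 2 \int |\Ric|^2 d\nu_t$, combined with the pointwise lower bound on $R$ and the integral bound on $\int R\, d\nu_t$ just established.

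The main obstacle I expect is the bookkeeping around the factor $|A| \leq 10 r'^{-\eps}$: the $r'^{-\eps}$ loss accumulated in translating between the "old" map $\vec y - \vec y(x',t')$ and the normalized $\vec y^{\, \prime}$ must be absorbed by choosing $\eps'$ sufficiently small relative to $\eps$ and arranging the time-interval/scale combinatorics so that the $r'^{-2}$ prefactor in the $r_{x',t'}$-inequality and the time-interval length $r'^2$ combine to give a clean $\eps$-bound at every time in the static interval, not just in the sub-interval $[t' - r'^2, t' - \tfrac12 \eps' r'^2]$ directly produced by the defining inequality.
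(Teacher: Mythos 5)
Your part (a) matches the paper: Claim~\ref{Cl_splitting_macr_scale} supplies a strong $(n-2,\Psi(\delta\,|\,Y,\eps),1)$-splitting map at $(x',t')$, and the defining inequality for $r_{x',t'}$ feeds directly into the hypothesis of the Transformation Theorem (Proposition~\ref{Prop_Transformation_Theorem}) with $r_0 = 10^{-1}$, $r_1 = r'$. One small slip: matching the time endpoint $t'-\tfrac12\eps' r'^2$ with $\delta_{\mathrm{TT}} = \eps'/2$ does not match the coefficient $\eps'$ on the right-hand side of the $r_{x',t'}$-inequality; take $\delta_{\mathrm{TT}}=\eps'$ instead (the $L^1$-norm over $[t'-r'^2,t'-\eps' r'^2]$ is bounded by the one over the slightly larger interval $[t'-r'^2,t'-\tfrac12\eps' r'^2]$).

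For part (b) there are two issues. First, the ``$r'^{-\eps(n-2)}$'' worry you flag is actually a non-issue: since $|\omega'_{n-2}| = (\det A)\,|\omega_{n-2}|$, the defining inequality for $r_{x',t'}$ is invariant under replacing $\omega_{n-2}$ by $\omega'_{n-2}$ (both sides scale by $\det A$), so one should simply rewrite it in terms of $\omega'_{n-2}$. Then Proposition~\ref{Prop_properties_splitting_map} gives clean $L^p$-bounds of the form $|\omega'_{n-2}|\approx 1$, with no $r'^{-\eps(n-2)}$ loss; the $\det A$ never enters. This is exactly what the paper does.

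Second — and this is the genuine gap — your plan to ``apply the same inequality at a slightly larger scale still in $[r',10^{-1}]$'' to cover the whole static interval $[t'-\eps^{-1}r'^2, t'-\eps r'^2]$ cannot work for $r'$ near $10^{-1}$: you need a scale $\tilde r$ with $\tilde r^2 \geq \eps^{-1}r'^2$, i.e., $\tilde r \geq \eps^{-1/2} r'$, which exceeds $10^{-1}$ as soon as $r' > 10^{-1}\eps^{1/2}$. You do not need this detour. The Cauchy--Schwarz step you already set up, combined with the $\omega'_{n-2}$-version of the defining inequality, gives that the \emph{average} of $\int_M R\,d\nu_{x',t';t}\,dt$ over the short interval $[t'-\eps r'^2, t'-\tfrac12\eps r'^2]$ is small; hence there is a single time $t^*$ in that interval with $r'^2\int_M R\,d\nu_{t^*}\leq\Psi(\eps'\,|\,Y,\eps)$. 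Then the evolution identity $\frac{d}{dt}\int_M R\,d\nu_t = 2\int_M|\Ric|^2\,d\nu_t\geq 0$, which you invoke anyway for the $\Ric$-bound, shows that $t\mapsto\int_M R\,d\nu_t$ is non-decreasing; since every $t$ in the static interval satisfies $t\leq t^*$, the bound at $t^*$ automatically extends backwards in time. Integrating the same identity from $t'-\eps^{-1}r'^2$ to $t^*$ and using the lower bound $R\geq-\delta$ then gives the $L^2$-bound on $\Ric$. In short: the monotonicity of $\int_M R\,d\nu_t$, which your own last step already contains, makes the larger-scale trick both unnecessary and avoidable.
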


\begin{proof}
By Claim~\ref{Cl_splitting_macr_scale} and the Transformation Theorem, Proposition~\ref{Prop_Transformation_Theorem}, if $\delta \leq \ov\delta (Y, \eps, \eps')$, then there is a lower triangular matrix $A$ with diagonal entries $\geq 1- \Psi(\eps' | Y)$ such that $\vec y^{\,\prime} := A ( \vec y - \vec y ( x', t'))$ is a strong $(n-2, \lb \Psi(\eps' | Y), r')$-splitting map at $(x',t')$ and such that $|A| \leq 10 r^{\prime - \eps}$.
This proves Assertion~\ref{Cl_rprime_A_a} for $\eps' \leq \ov\eps' (Y, \eps)$.

To see Assertion~\ref{Cl_rprime_A_b}, set $\omega'_{n-2} := dy'_1 \wedge \ldots \wedge d y'_{n-2}$.
Then by Proposition~\ref{Prop_properties_splitting_map}
\[ \int_{t'-  r^{\prime 2}}^{t'-  \frac12\eps' r^{\prime 2}} \int_M   |R| \, |\omega'_{n-2}|  d\nu_{x',t'; t} dt
\leq \eps' r^{\prime -2}  \int_{t'-  r^{\prime 2}}^{t'-  r^{\prime 2}/2} \int_M  |\omega'_{n-2}|  d\nu_{x',t'; t} dt
\leq \Psi(\eps' | Y) . \]
It follows, using Propositions~\ref{Prop_improved_L2}, \ref{Prop_properties_splitting_map}, that if $\eps' \leq \ov\eps' (\eps)$
\begin{align*}
 & \int_{t'- \eps r^{\prime 2}}^{t'- \frac12 \eps r^{\prime 2}}   \int_M   R \,   d\nu_{x',t'; t} dt
\leq  \Psi(\eps' | Y) +  \int_{t'- \eps r^{\prime 2}}^{t'- \frac12 \eps r^{\prime 2}} \int_M   |R| \, ( 1 - |\omega'_{n-2}| )  d\nu_{x',t'; t} dt \\
&\leq \Psi(\eps' | Y) + \bigg( r^{\prime 2} \int_{t'- \eps r^{\prime 2}}^{t'- \frac12 \eps r^{\prime 2}} \int_M   R^2  \, d\nu_{x',t'; t} dt \bigg)^{1/2}
\bigg(r^{\prime -2} \int_{t'- \eps r^{\prime 2}}^{t'- \frac12 \eps r^{\prime 2}} \int_M  ( 1 - |\omega'_{n-2}| )^2  d\nu_{x',t'; t} dt \bigg)^{1/2} \\
&\leq \Psi (\eps' | Y, \eps). 
\end{align*}
Pick $t^* \in [t'-\eps r^{\prime 2},t' -\frac12 \eps r^{\prime 2}]$ with
\[  r^{\prime 2} \int_M   R  \, d\nu_{x',t'; t}  \leq \Psi (\eps' | Y, \eps). \]
Then since $R \geq - \Psi (\delta | \eps)$ at time $t' - \eps^{-1} r^{\prime 2} \geq -2 - \eps^{-1}$
\begin{multline*}
 2 r^{\prime 2} \int_{t' - \eps^{-1} r^{\prime 2}}^{t^*} \int_M |{\Ric}|^2 d\nu_{x',t';t} dt 
= r^{\prime 2} \int_{t' - \eps^{-1} r^{\prime 2}}^{t^*} \int_M \square R \, d\nu_{x',t';t} dt \\
= r^{\prime 2} \int_M  R \, d\nu_{x',t';t} \bigg|_{t' - \eps^{-1} r^{\prime 2}}^{t^*}
\leq \Psi (\eps' | Y, \eps) + \Psi (\delta | \eps)
. 
\end{multline*}
This shows Assertion~\ref{Cl_rprime_A_b} if $\eps' \leq \ov\eps' (Y, \eps)$, $\delta \leq \ov\delta (Y, \eps, \eps')$.
\end{proof}
\medskip

Let $S \subset M \times [-2,-1]$ be the set of all points that satisfy (\ref{eq_Slicing_condition}) but violate Assertion~\ref{Prop_Slicing_a} or \ref{Prop_Slicing_b} of the proposition for some $r' \in (0, 10^{-1} ]$.
Note that by Claim~\ref{Cl_rprime_A} we must have $r_{x',t'} > 0$ for all $(x', t') \in S$.

\begin{Claim} \label{Cl_splitting_x_pp}
If $\eps \leq \ov\eps (Y)$, $\zeta \leq \ov\zeta (Y, \eps, \eps')$, $\zeta' \leq \ov\zeta' (Y, \eps, \eps', \zeta)$ and $\omega \leq \ov\omega (Y, \lb  \eps,  \lb \zeta')$, then the following holds.

If $(x',t') \in M \times [-2,-1]$ satisfies (\ref{eq_Slicing_condition}) and $r' \in [r_{x',t'}, 10^{-1}] \setminus \{ 0\}$, then there is a lower triangular $(n-2) \times (n-2)$-matrix $A$ such that for any $(x'', t'') \in P^* (x', t'; 10 \zeta r')$ we have:
\begin{enumerate}[label=(\alph*)]
\item \label{Cl_splitting_x_pp_a} $A$ has diagonal entries $\geq 1-\eps$ and $|A| \leq 10 r^{\prime - \eps}$.
\item \label{Cl_splitting_x_pp_b} $A (\vec y - \vec y (x',t'))$ is a strong $(n-2,  \eps, r')$-splitting map at $(x',t')$.
\item \label{Cl_splitting_x_pp_c} $(x',t')$ is $( \eps,  r')$-static.
\item \label{Cl_splitting_x_pp_d} If $r_{x', t'} > 0$, then
\[  \int_{t''- 2 r^2_{x',t'}}^{t''-  \frac14 \eps' r^2_{x',t'}} \int_M  \bigg( |R| \, |\omega_{n-2}| + \sum_{l=1}^{n-2}    |\square |\omega_l| |  \bigg) d\nu_{x'',t''; t} dt
\geq  \frac{\eps'}{10} (\det A)^{-1}. \]
\item \label{Cl_splitting_x_pp_e} For all $t \in [t' - (\zeta' r')^2 , t' - \frac12 (\zeta' r')^2]$ we have
\[ \nu_t ( P^* (x',t';  \zeta r') \cap M \times \{ t \} ) \geq \omega  r^{\prime n}. \]
\end{enumerate}
\end{Claim}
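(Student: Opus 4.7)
Parts (a), (b), (c) only involve $(x', t')$ and $r'$ and follow directly from Claim~\ref{Cl_rprime_A} by setting $A := A_{r'}$, which has diagonal entries $\geq 1-\eps$, norm $|A| \leq 10 r'^{-\eps}$, makes $A(\vec y - \vec y (x', t'))$ into a strong $(n-2, \eps, r')$-splitting map at $(x', t')$, and yields the $(\eps, r')$-static property for $(x', t')$.

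For part (d), the plan is to exploit the defining saturation at the critical scale. By minimality and continuity, if $r_{x', t'} > 0$ then at $r'' = r_{x', t'}$ equality holds in at least one of the two families of defining inequalities, so either the $|R| |\omega_{n-2}|$-integral or the $|\square |\omega_l||$-integral (for some $l$) must carry a definite positive mass at basepoint $(x', t')$ over the window $[t' - r_{x', t'}^2, t' - \tfrac12 \eps' r_{x', t'}^2]$. To quantify this, apply Claim~\ref{Cl_rprime_A} a second time at the critical scale $r_{x', t'}$ to obtain a matrix $B$ turning $\vec y - \vec y(x', t')$ into a strong splitting map at scale $r_{x', t'}$. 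Proposition~\ref{Prop_properties_splitting_map} then gives $\int_M |\omega^B_l| \, d\nu_{x', t'; t} \approx 1$ in the critical time window, and since $|\omega^B_l| = \det(B_{\leq l}) \, |\omega_l|$, the saturated equation converts into a lower bound of order $\eps' / \det B$ for the integral at basepoint $(x', t')$. Next, transfer this estimate to basepoint $(x'', t'')$ via Lemma~\ref{Lem_second_order_integrals_cont_basepoint}, applied to the splitting map $B(\vec y - \vec y(x', t'))$ at scale $r_{x', t'}$; this produces an additive error at most $\beta$ that is absorbed by choosing $\zeta \leq \ov\zeta(Y, \eps, \eps')$. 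The enlargement of the time window from $[t' - r_{x', t'}^2, \cdots]$ to $[t'' - 2 r_{x', t'}^2, \cdots]$ is harmless since $|t'' - t'| \leq (10 \zeta r')^2$. Finally, iterated application of Claim~\ref{Cl_slow_poly}\ref{Cl_slow_poly_a} along a dyadic chain of scales between $r_{x', t'}$ and $r'$ converts $\det B$ into a quantity comparable to $\det A = \det A_{r'}$, yielding the claimed lower bound $\eps'/(10 \det A)$.

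For part (e), the hypothesis $e^{-f(x', t')/4} \geq \eps$ gives $K(x_0, 0; x', t') \geq c(Y) \eps^4$. Let $u \in C^\infty(M \times [t, t'])$ be the heat flow with initial data $u(\cdot, t) = \chi_{P^*(x', t'; \zeta r') \cap M \times \{t\}}$, so that by duality
\[
 \nu_t \big( P^*(x', t'; \zeta r') \cap M \times \{t\} \big) = \int_M u(\cdot, t') \, d\nu_{x_0, 0; t'}.
\]
This reduces the problem to producing pointwise lower bounds for $u(\cdot, t')$ and for $K(x_0, 0; \cdot, t')$ on a small ball around $x'$. Using that the time-$t$ slice of the $P^*$-ball contains a $2 \sqrt{H_n (t' - t)}$-neighborhood of the $H_n$-center of $(x', t')$ (valid for $\zeta' \leq c \zeta$), Lemma~\ref{Lem_mass_ball_Var} combined with the probabilistic identity $u(x', t') = \nu_{x', t'; t}(P^*(x', t'; \zeta r') \cap M \times \{t\})$ gives $u(x', t') \geq \tfrac14$. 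The Lipschitz bound on $\Phi^{-1}_{t' - t}(u)$ from \cite[\HKThmGradientPhiEstimate]{Bamler_HK_entropy_estimates} spreads this to $u(\cdot, t') \geq c_0 > 0$ on $B(x', t', \zeta'' r')$ with $\zeta'' \sim \zeta'$. On the same ball (possibly after further shrinking $\zeta''$), the heat kernel gradient estimate of \cite[\HKThmNabKbound]{Bamler_HK_entropy_estimates}, combined with Proposition~\ref{Prop_NN_variation_bound}, gives $K(x_0, 0; \cdot, t') \geq c(Y, \eps) \eps^4$, while \cite[\HKThmNLC]{Bamler_HK_entropy_estimates} supplies $|B(x', t', \zeta'' r')|_{t'} \geq c(Y) (\zeta'' r')^n$. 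Multiplying these three lower bounds yields the required estimate with $\omega \sim c(Y, \eps) \zeta''^n$.

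The main obstacle is part (d): carefully coordinating two distinct transformation matrices at scales $r_{x', t'}$ and $r'$, tracking the determinant factor through the basepoint transfer, and ensuring the time-window expansion is compatible with the parabolic neighborhood constraint $|t'' - t'| \leq (10 \zeta r')^2$, all while arranging the constants so that the asserted $\eps'/(10 \det A)$ bound emerges cleanly. Part (e) is more routine, relying on well-established gradient estimates and non-collapsing.
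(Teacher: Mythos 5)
Parts (a)--(c) are handled exactly as in the paper, by invoking Claim~\ref{Cl_rprime_A} with the given $r'$.

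For part (d), you correctly identify the key moves — exploit saturation at the critical scale $r_{x',t'}$, get a lower bound from Proposition~\ref{Prop_properties_splitting_map}, transfer to the basepoint $(x'',t'')$ via Lemma~\ref{Lem_second_order_integrals_cont_basepoint} — but the two-matrix bookkeeping does not close. Concretely: (i) Claim~\ref{Cl_slow_poly}\ref{Cl_slow_poly_a} is a claim \emph{inside} the proof of Proposition~\ref{Prop_Transformation_Theorem}, about matrices $A_r$ constructed there; it is not a stand-alone citable result, and in particular it is not an assertion about the matrix $A$ produced by Claim~\ref{Cl_rprime_A}. (ii) Even granting a nearby-scale comparison $|A_{r}A_{r''}^{-1}-I|\leq\Psi(\eps\,|\,Y)$ for $r''\in[\tfrac12 r,2r]$, iterating it over a dyadic chain from $r_{x',t'}$ to $r'$ gives a factor like $(r'/r_{x',t'})^{c\eps}$, \emph{not} a uniformly bounded ratio; so $\det B$ is not comparable to $\det A$ when $r'\gg r_{x',t'}$, and the intended lower bound $\eps'/(10\det A)$ does not follow. (iii) Both your application of Lemma~\ref{Lem_second_order_integrals_cont_basepoint} at scale $r_{x',t'}$ and your time-window enlargement require, respectively, $(x'',t'')\in P^*(x',t';\zeta_0 r_{x',t'})$ and $(10\zeta r')^2\lesssim r^2_{x',t'}$; both amount to $\zeta r'\lesssim r_{x',t'}$, which the claim's hypothesis $\zeta\leq\ov\zeta(Y,\eps,\eps')$ does not guarantee once $r'\gg r_{x',t'}$. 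The paper avoids all of this by keeping a single matrix $A$; the reason the scale-matching constraints are then met is that in the proof of Proposition~\ref{Prop_Slicing} the claim is invoked with $r'=r_{x',t'}$ — at which point your $B$ coincides with $A$ and the entire detour is vacuous. As written, your argument for (d) would fail if one tried to run it for a strictly larger $r'$, and the citation to Claim~\ref{Cl_slow_poly}\ref{Cl_slow_poly_a} is not legitimate.

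Part (e) you prove by a genuinely different, and valid, route. The paper works directly at time $t$: it picks an $H_n$-center $(z,t)$ of $(x',t')$, shows via $\square(\tau f)\geq -n/2$ and Proposition~\ref{Prop_L_infty_HK_bound} that a definite portion of $\nu_{x',t';t}$-mass sits on the set $\{f\leq Z\}\cap B(z,t,\sqrt{2H_n}\zeta' r')$, converts this to a lower volume bound using \cite[\HKThmLinftyKbound]{Bamler_HK_entropy_estimates}, and integrates $e^{-f}$ against it. You instead pass to time $t'$ via the duality $\nu_t(P^*)=\int_M u(\cdot,t')\,d\nu_{x_0,0;t'}$ for the heat flow $u$ with indicator initial data, lower-bound $u(\cdot,t')$ near $x'$ by the $\Phi$-Lipschitz estimate, lower-bound $K(x_0,0;\cdot,t')$ near $x'$ using $f(x',t')\leq -4\log\eps$ and the heat-kernel gradient bound, and supply a volume lower bound from \cite[\HKThmNLC]{Bamler_HK_entropy_estimates}. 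Both routes work; the paper's is slightly more economical since it avoids the extra gradient estimate for $K$, but your version is correct and may be preferable if one wants to localize entirely at time $t'$.
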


\begin{proof}
Let $A$ be the matrix from Claim~\ref{Cl_rprime_A} and set $\vec y^{\, \prime} := A (\vec y - \vec y (x',t'))$ and $\omega'_l := dy'_1 \wedge \ldots \wedge dy'_l$, $l = 1, \ldots, n-2$.
Then Assertions~\ref{Cl_splitting_x_pp_a}--\ref{Cl_splitting_x_pp_c} are clear.
If  $\eps \leq \ov\eps (Y)$, $\zeta \leq \ov\zeta (Y, \eps, \eps')$, then we  obtain from Lemma~\ref{Lem_second_order_integrals_cont_basepoint} and Proposition~\ref{Prop_properties_splitting_map} that one of the following two strings of inequalities holds for some $l \in \{ 1, \ldots, n-2 \}$, where $A_l$ denotes the upper left $l\times l$-submatrix of $A$:
\begin{align*}
  \int_{t''- 2 r^2_{x',t'}}^{t''-  \frac14 \eps' r^2_{x',t'}} & \int_M   |R| \, |\omega_{n-2}| d\nu_{x'',t''; t} dt \\
&= (\det A)^{-1}    \int_{t''- 2 r^2_{x',t'}}^{t''-  \frac14 \eps' r^2_{x',t'}}  \int_M   |R| \, |\omega'_{n-2}| d\nu_{x'',t''; t} dt \displaybreak[1] \\
&\geq (\det A)^{-1}  \bigg(  \int_{t'-  r^2_{x',t'}}^{t'-  \frac12 \eps' r^2_{x',t'}}  \int_M   |R| \, |\omega'_{n-2}| d\nu_{x',t'; t} dt -  \frac{\eps'}{10} \bigg) \displaybreak[1]  \\
&=    \int_{t'-  r^2_{x',t'}}^{t'-  \frac12 \eps' r^2_{x',t'}}  \int_M   |R| \, |\omega_{n-2}| d\nu_{x',t'; t} dt -  \frac{\eps'}{10} (\det A )^{-1} \displaybreak[1]  \\
&=  \eps' r^{-2}_{x',t'}  \int_{t'-  r^2_{x',t'}}^{t'-  r^2_{x',t'}/2}  \int_M    |\omega_{n-2}| d\nu_{x',t'; t} dt -  \frac{\eps'}{10} (\det A )^{-1} \displaybreak[1]  \\
&= (\det A)^{-1}  \eps' r^{-2}_{x',t'}  \int_{t'-  r^2_{x',t'}}^{t'-  r^2_{x',t'}/2}  \int_M    |\omega'_{n-2}| d\nu_{x',t'; t} dt -  \frac{\eps'}{10} (\det A )^{-1}  \displaybreak[1]  \\
&\geq   (\det A)^{-1} \frac{\eps'}{4}  -  \frac{\eps'}{10} (\det A )^{-1}
\geq  \frac{\eps'}{10} (\det A)^{-1}.
\end{align*}
\begin{align*}
  \int_{t''- 2 r^2_{x',t'}}^{t''-  \frac14 \eps' r^2_{x',t'}}  \int_M    |\square |\omega_l| |   d\nu_{x'',t''; t} dt 
&= (\det A_l)^{-1}  \int_{t''- 2 r^2_{x',t'}}^{t''-  \frac14 \eps' r^2_{x',t'}}  \int_M    |\square |\omega'_l| |   d\nu_{x'',t''; t} dt \\
&\geq (\det A_l)^{-1} \bigg( \int_{t'-  r^2_{x',t'}}^{t'-  \frac12 \eps' r^2_{x',t'}}  \int_M    |\square |\omega'_l| |   d\nu_{x',t'; t} dt -  \frac{\eps'}{10} \bigg) \displaybreak[1]  \\
&=  \int_{t'-  r^2_{x',t'}}^{t'-  \frac12 \eps r^2_{x',t'}}  \int_M    |\square |\omega_l| |   d\nu_{x',t'; t} dt -  \frac{\eps'}{10} (\det A_l)^{-1} \displaybreak[1]  \\
&=  \eps'  r^{-2}_{x',t'} \int_{t'-  r^2_{x',t'}}^{t'-   r^2_{x',t'}/2}  \int_M   |\omega_l|    d\nu_{x',t'; t} dt - \frac{\eps'}{10} (\det A_l)^{-1} \displaybreak[1]  \\
&= (\det A_l)^{-1}  \eps'  r^{-2}_{x',t'} \int_{t'-  r^2_{x',t'}}^{t'-   r^2_{x',t'}/2}  \int_M   |\omega'_l|    d\nu_{x',t'; t} dt - \frac{\eps'}{10} (\det A_l)^{-1} \displaybreak[1]  \\
&\geq  (\det A_l)^{-1} \frac{\eps'}{4}  -  \frac{\eps'}{10} (\det A_l)^{-1}   \\
&\geq \frac{ \eps'}{5} (\det A_l)^{-1}  
\geq \frac{\eps'}{5} (1-\eps)^{n-2} (\det A)^{-1}  
\geq  \frac{\eps'}{10} (\det A)^{-1}  .
\end{align*}
This implies Assertion~\ref{Cl_splitting_x_pp_d}.

For Assertion~\ref{Cl_splitting_x_pp_e} let $0 < \zeta' \leq \zeta$, $ Z > 0$ be constants whose values we will determine later and assume that $t \in [t' - (\zeta' r')^2 , t' - \frac12 (\zeta' r')^2]$.
Choose $x \in M$ such that $(x,t)$ is an $H_n$-center of $(x',t')$.
By Lemma~\ref{Lem_mass_ball_Var} we have for $B := B(x,t,\sqrt{2H_n} \zeta' r')$
\[ \nu_{x',t';t}(B) \geq  \frac12. \]
Next, recall from \cite[Sec.~9]{Perelman1} (see also (\ref{eq_potential_evolution_equation})) that
\[ \square (\tau f) = - f + \tau \partial_t f - \tau \triangle f
=
- f - 2 \tau \triangle f + \tau |\nabla f|^2 - \tau R + \frac{n}2
\geq - \frac{n}2. \]
So since $f(x',t') \leq - 4 \log \eps$ and $f (\cdot, t) \geq - C(Y)$ on $M$ by Proposition~\ref{Prop_L_infty_HK_bound}, we obtain that for $\zeta' \leq \ov\zeta' (\eps)$
\[ \int_M  f_+ \, d\nu_{x',t';t}
\leq  \int_M (\tau  f)_+ \, d\nu_{x',t';t} 
\leq \int_M \tau f \, d\nu_{x',t';t} + C(Y)
\leq \tau f(x',t') + \frac{n}2 (t' - t) + C(Y) \leq C(Y, \eps). \]
It follows that
\[ \nu_{x',t';t} (\{ f \geq Z \}) \leq C(Y, \eps) Z^{-1}. \]
So if $Z \geq \underline{Z}(Y, \eps)$, then
\[ \nu_{x',t';t} \big( \{ f \leq Z \} \cap B \big) \geq \frac14. \]
Thus, using \cite[\HKThmLinftyKbound]{Bamler_HK_entropy_estimates}, we obtain
\[ \big| \{ f \leq Z \} \cap B  \big|_{t} \geq 
c(Y) (\zeta' r')^n \nu_{x',t';t} \big( \{ f \leq Z \} \cap B \big)
\geq c(Y) (\zeta' r')^n. \]
This implies that
\[ \nu_{t} (B) \geq \int_{ \{ f \leq Z \} \cap B} (4\pi \tau)^{-n/2} e^{-f} dg_t
\geq c(Y,Z)  (\zeta' r')^n. \]
Lastly, we claim that for $\zeta' \leq \ov\zeta' ( \zeta )$ we have $B \times \{ t \} \subset P^* (x',t'; \zeta r')$.
To see this, observe that for any $y \in B$ we have for $\zeta' \leq \ov\zeta' ( \zeta )$
\begin{multline*}
 d_{W_1}^{g_{t' - (\zeta r')^2}} (\nu_{x',t'; t' - (\zeta r')^2}, \nu_{y,t; t' - (\zeta r')^2} )
\leq d_{W_1}^{g_{t}} (\nu_{x',t';t},\delta_y)
\leq d_{W_1}^{g_{t}} (\nu_{x',t';t},\delta_x) + \sqrt{2H_n} \zeta' r' \\
\leq \sqrt{\Var  (\nu_{x',t';t},\delta_x)} +  \sqrt{2H_n} \zeta' r'
\leq  10 \sqrt{H_n} \zeta' r'
\leq \zeta r' .
\end{multline*}
This finishes the proof for $\omega \leq \ov\omega ( Y, \eps, \zeta')$.
\end{proof}

Assume in the following that $\zeta' \leq \zeta \leq 10^{-3}$ and $(\zeta')^2 \leq 10^{-2} \eps$.
By Lemma~\ref{Lem_Vitali} we can find a subset $S' \subset S$ such that the $P^*$-parabolic balls $P^* (x', t';   \zeta r_{x',t'})$, $(x', t') \in S'$, are pairwise disjoint and such that 
\begin{equation} \label{eq_S_covered}
 S \subset \bigcup_{(x',t') \in S'} P^* (x', t'; 10 \zeta r_{x',t'}). 
\end{equation}
Consider the smooth map 
\[ \phi : M \times [-2, -1] \to \IR^{n-2} \times [-2, -1], \qquad (x,t) \mapsto (\vec y (x), t). \]
For any $(x', t') \in S'$ we have by Lemma~\ref{Lem_splitting_map_HE_control_beyond} and Claim~\ref{Cl_splitting_x_pp} for $\eps \leq \ov\eps (Y)$
\begin{align}
 |\phi ( P^* (x', t';  10 \zeta r_{x',t'}) )|
&\leq C(Y) r_{x',t'}^{n+2} (\det A)^{-1} \notag \\
&\leq C(Y, \omega, \eps') \int_{t' - (\zeta' r_{x',t'})^2}^{t' -  \frac12 (\zeta' r_{x',t'})^2} 
\int_{P^* (x',t'; \zeta r') \cap M \times \{ t'' \}} \notag \\
&\qquad\int_{t'- 2 r^2_{x',t'}}^{t'- \frac14 \eps' r^2_{x',t'}} \int_M  \bigg( | R| \, |\omega_{n-2}| + \sum_{l=1}^{n-2}    |\square |\omega_l| |  \bigg)   d\nu_{x'',t''; t} \, dt \, d\nu_{t''} (x'')  \, dt'' \notag \\
&\leq C(Y,  \omega, \eps') \int_{ - 10}^{-1} \int_M \bigg( |R| \, |\omega_{n-2}| + \sum_{l=1}^{n-2}    |\square |\omega_l| |  \bigg) d\nu^*_{x',t'; t} dt , \label{eq_sum_R_squ_omega}
\end{align}
where
\[ \nu^*_{x',t'; t} := \int_{\max \{ t' - (\zeta' r_{x',t'})^2, t \}}^{\max \{ t' - \frac12 (\zeta' r_{x',t'})^2, t \}} \int_{P^* (x',t'; \zeta r') \cap M \times \{ t'' \}} \nu_{x'', t''; t} \, d\nu_{t''} (x'') dt''. \]
Since the balls $P^* (x', t';  \zeta r_{x',t'})$, $(x',t') \in S'$ are pairwise disjoint, we obtain that for $t \leq -1$
\[ \sum_{(x', t') \in S'} \nu^*_{x', t'; t} 
\leq  \int_{ t }^{- 1} \int_{M} \nu_{x'', t''; t} \, d\nu_{t''} (x'') dt'' 
= \int_{ t }^{- 1}  \nu_{ t}\,   dt''  
= \big(-1 - t \big)  \nu_{t}. \]
Combining this with (\ref{eq_S_covered}), (\ref{eq_sum_R_squ_omega}) and (\ref{eq_R_sq_omega_small}) implies that
\begin{multline*}
 |\phi (S)| 
\leq \sum_{(x', t') \in S'}  |\phi ( P^* (x', t';  \zeta r_{x',t'}) )|
\leq C(Y,  \omega, \eps') \int_{ -10}^{-\frac12} \int_M \bigg( |R| \, |\omega_{n-2}| + \sum_{l=1}^{n-2}    |\square |\omega_l| |  \bigg) d\nu_{t} dt \\
\leq \Psi ( \delta | Y,  \omega, \eps'). 
\end{multline*}
The restrictions that we have imposed on $\eps', \zeta, \zeta', \omega$ were of the form $\eps' \leq \ov\eps' (Y, \eps)$, $\zeta \leq \ov\zeta (Y, \eps, \eps')$, $\zeta' \leq \ov\zeta' (Y, \eps, \eps', \zeta)$, $\omega \leq \ov\omega (Y, \eps, \zeta')$.
Therefore, we can discard these constants and conclude that
\begin{equation} \label{eq_phi_S_small}
 |\phi (S)| \leq \Psi ( \delta | Y, \eps).  
\end{equation}

Next, define $\Sigma_{\vec a, t} := (\vec y (\cdot, t))^{-1} (\vec a) \subset M$ for any $(\vec a, t) \in \IR^{n-2} \times [-2,-1]$.
Then by the co-area formula and Proposition~\ref{Prop_improved_L2} we have
\begin{multline} \label{eq_int_nabf_4_bound}
 \int_{-2}^{-1} \int_{\IR^{n-2}} \int_{ \Sigma_{\vec a, t} } |\nabla f|^{4}  e^{-f} ( \det \nabla y_i \cdot \nabla y_j )^{1/2} dg^{\Sigma_{\vec a, t} }_t d\vec a \, dt
\leq \int_{-2}^{-1} \int_M |\nabla f|^{4}  e^{-f}  dg_t dt \\
\leq C \int_{-2}^{-1} \int_M |\nabla f|^{4} d\nu_t dt
\leq C(Y).
\end{multline}
Here $g^{\Sigma_{\vec a, t} }_t$ denotes the metric on $\Sigma_{\vec a, t}$ induced by $g_t$, whenever $\Sigma_{\vec a, t}$ is a submanifold.
Combining (\ref{eq_phi_S_small}) and (\ref{eq_int_nabf_4_bound}) implies that if $\delta \leq \ov\delta (Y, \eps)$, then there is a regular value $(\vec a, t') \in [-1, 1]^{n-2} \times [-2, -1]$ of $\phi$ such that $\Sigma := \Sigma_{\vec a, t'}$ satisfies
\begin{equation} \label{eq_properties_Sigma}
 S \cap \Sigma = \emptyset, \qquad
 \int_{ \Sigma } |\nabla f|^{4}  e^{-f} ( \det \nabla y_i \cdot \nabla y_j )^{1/2} dg^\Sigma_{t'} \leq C(Y).
\end{equation}
Assertions~\ref{Prop_Slicing_a}, \ref{Prop_Slicing_b} of the proposition hold automatically.
For Assertion~\ref{Prop_Slicing_c} consider a point $x' \in \Sigma$ satisfying (\ref{eq_Slicing_condition}) and let $r' \leq \rrm (x', t')$.
By Claim~\ref{Cl_rprime_A} the point $(x', t')$ is $(\eps, r')$-static and there is a matrix $A$ with $|A| \leq 10 r^{\prime -\eps} (x', t')$ such that $\vec y^{\,\prime} := A ( \vec y - \vec y (x',t'))$ is a strong $(n-2, \eps, r')$-splitting map at $(x',t')$.
By Lemma~\ref{Lem_almost_eucl_slice_loc} if $\eps \leq \ov\eps (Y)$, then $\Sigma' := B^\Sigma (x', t', .9 r')$ has the following properties:
\begin{enumerate}
\item $ \det ( \nabla y'_i \cdot \nabla y'_j)  \geq \frac12$ on $\Sigma'$.
\item All principal curvatures of $\Sigma'$ are $\leq 10^{-3} r^{\prime -1}$,
\item The injectivity radius of $(\Sigma, g^{\Sigma'}_{t'})$ at any point $x'' \in \Sigma'$ is $\geq c(Y) r$.
\end{enumerate}
Therefore we have on $\Sigma'$
\[  \det (\nabla y_i \cdot \nabla y_j )
= (\det A )^{-2} \det (\nabla y'_i \cdot \nabla y'_j )
\geq \tfrac12 (\det A )^{-2}
\geq c r^{\prime 2n\eps}. \]
Combining this with (\ref{eq_properties_Sigma}) implies
\[ \int_{ \Sigma' } |\nabla e^{-f/4}|^4  dg^{\Sigma'}_{t'} \leq C(Y) r^{\prime -n\eps}. \]
So by the Sobolev inequality on $\Sigma'$ we obtain that
\[ \osc_{B^\Sigma (x', t', 10^{-1} r')} e^{-f/4} \leq C(Y) (r')^{\frac12 - \frac{n}4 \eps}. \]
This finishes the proof.
\end{proof}
\bigskip

\subsection{Curvature control near certain slices of splitting maps}
In this subsection we analyze the level set $\Sigma$ from the Slicing Theorem (Proposition~\ref{Prop_Slicing}) and we show that $\rrm( \cdot, t')$ is bounded from below along $\Sigma$ where $f$ is bounded from above.

\begin{Lemma} \label{Lem_curv_bound_slice}
If $Y, F < \infty$, $\sigma \leq \ov\sigma (Y, F)$, $\delta \leq \ov\delta (Y, F)$ then the following holds.

Let $(M, (g_t)_{t \in I})$ be Ricci flow on a compact manifold, $r > 0$ and $(x_0,t_0) \in M \times I$ with $\NN_{x_0,t_0} (r^2) \geq - Y$.
Denote by $d\nu = (4\pi \tau)^{-n/2} e^{-f} dg$ the conjugate heat kernel based at $(x_0, t_0)$.
Let $\vec y$ be a strong $(n-2, \delta, r)$-splitting map at $(x_0, t_0)$ and assume that $(x_0, t_0)$ is $(\delta, r)$-static.

Then there are $\vec a \in [-r, r]^{n-2}$, $t' \in [t_0 - 2r^2, t_0 - r^2]$ such that
$\Sigma := (\vec y (\cdot, t'))^{-1} (\vec a) \subset M$
is a smooth 2-dimensional submanifold and
\[ \rrm (\cdot, t') \geq \sigma r \qquad \text{on} \quad \Sigma \cap \{ f (\cdot, t') \leq F \}. \]
\end{Lemma}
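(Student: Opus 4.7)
The plan is to argue by contradiction via a blow-up at the bad slice point. Suppose the statement fails: a sequence of flows $(M_i,(g_{i,t})_{t \in I_i})$, basepoints $(x_{0,i},t_{0,i})$, and splitting maps $\vec y_i$ with $\delta_i\to 0$, $\sigma_i\to 0$ produces, via Proposition~\ref{Prop_Slicing} applied with $\eps_F:=\tfrac12 e^{-F/4}$ (so that $\{f\le F\}\subset\{e^{-f/4}\ge\eps_F\}$), slice points $x'_i\in\Sigma_i\cap\{f\le F\}$ whose curvature scale $s_i:=\rrm(x'_i,t'_i)$ satisfies $s_i/r_i\to 0$. The non-collapsing input $\NN_{x'_i,t'_i}(\tfrac{1}{10}r_i^2)\ge -Y'(Y,F)$ needed by Proposition~\ref{Prop_Slicing} comes from $f(x'_i,t'_i)\le F$ via Proposition~\ref{Prop_L_infty_HK_bound} and Proposition~\ref{Prop_NN_variation_bound}. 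Proposition~\ref{Prop_Slicing}(a)(b) then yields at $(x'_i,t'_i)$, for every $r'\in(0,10^{-1}r_i]$, a strong $(n-2,\eps_F,r')$-splitting map and the $(\eps_F,r')$-static condition; (c) provides the slice oscillation bound $\osc_{B^{\Sigma_i}(x'_i,t'_i,10^{-1}s_i)}e^{-f/4}\le C(Y)(s_i/r_i)^{1/2-\eps_F}\to 0$.

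\textbf{Blow-up and limit structure.} I parabolically rescale by $s_i^{-1}$ centered at $(x'_i,t'_i)$, so $\rrm=1$ at the rescaled basepoint and the splitting/static conditions hold at scales $r'\in(0,r_i/(10 s_i)]\to(0,\infty)$. Using the $\IF$-compactness framework of Section~\ref{Sec_basic_limits} I extract a subsequential limit $\XX^\infty$ based at $x'_\infty$, non-collapsed with constant $Y'$. Since the $\Delta=2$ case of Assumption~\ref{Aspt_working_ass} is already established (Corollary to Proposition~\ref{Prop_eps_reg_codim_2}), Theorems~\ref{Thm_limit_from_strong_splitting} and~\ref{Thm_limit_from_static} yield: the rescaled splitting coordinates converge to global splitting functions on $\RR^\infty$; $\Ric\equiv 0$ on $\RR^\infty$; and $\RR^\infty$ splits isometrically as a Ricci flow spacetime into a product $M'_\infty\times\IR^{n-2}\times\IR_-$, where $M'_\infty$ is a $2$-dimensional flat Riemannian manifold (Ricci-flat plus $2$-dimensional forces flatness). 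By Lemma~\ref{Lem_tdrrm} together with the normalization $\rrm(x'_i,0)=1$ we have $x'_\infty\in\RR^\infty$ and $\tdrrm^\infty(x'_\infty)=1$; since $|\Rm|\equiv 0$ on $\RR^\infty$, this finiteness can only come from a singular point of $M'_\infty$ at parabolic distance exactly $1$ from $x'_\infty$ in the $M'_\infty$-factor.

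\textbf{Extracting the contradiction.} I combine the oscillation bound---carried through the limit via the rescaling-invariance of $f=f_{x_0,t_0}$ and the local smooth convergence of $f$ on $\RR^\infty$ granted by Proposition~\ref{Prop_improved_L2} and standard parabolic regularity (modulo a normalizing additive constant)---with a Proposition~\ref{Prop_extending_splitting_maps_almost_radial}-style cone-splitting argument adapted to the slice. In the limit, $e^{-f^\infty/4}$ is constant on $B^{\Sigma^\infty}(x'_\infty,10^{-1})\subset\Sigma^\infty\cong M'_\infty\times\{\vec 0\}\times\{0\}$, and combined with the flat structure of $M'_\infty$ this rigidity transfers back, via the invariance of $f$ under parabolic rescaling and the slice $L^4$ bound $\int_{\Sigma}|\nabla f|^4\,dg^\Sigma_{t'}\le C(Y,F)$ of equation~(\ref{eq_int_nabf_4_bound}), to the existence of a weak $(n-1,\eta_i,s^\star_i)$-splitting map at $(x'_i,t'_i)$ at a scale $s^\star_i\gg s_i$ with $\eta_i\to 0$. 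Proposition~\ref{Prop_weak_splitting_map_to_splitting_map} upgrades this to a strong $(n-1)$-splitting map, and Proposition~\ref{Prop_eps_reg_codim_2} then forces $\rrm(x'_i,t'_i)\ge \eps' s^\star_i> s_i$, contradicting the definition $s_i=\rrm(x'_i,t'_i)$.

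\textbf{Main obstacle.} The central technical difficulty is the final rigidity step: producing a weak $(n-1)$-splitting direction at $(x'_i,t'_i)$ at a scale $s^\star_i$ genuinely larger than $s_i$, without the aid of an almost-selfsimilarity hypothesis (which is absent from the statement of the lemma). Two interlocking issues complicate this: first, $f_{x_0,t_0}$ is the potential of a heat kernel whose base point escapes to infinity under the rescaling, so $f^\infty$ is not an intrinsic conjugate heat potential on $\XX^\infty$ and the constancy of $e^{-f^\infty/4}$ on the slice cannot be interpreted via a soliton equation on $\XX^\infty$; second, the rescaling-invariance of $f$ must be paired with quantitative slice integral bounds (from Proposition~\ref{Prop_improved_L2} and the coarea identity) to produce a pre-limiting splitting coordinate at a definite intermediate scale. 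I expect that overcoming this requires a careful localized analog of Proposition~\ref{Prop_extending_splitting_maps_almost_radial} tailored to the two-dimensional slice geometry, applied at scales interpolating between $s_i$ and $s_i/\sigma_i$, so that the gain in splitting dimension is realized at a scale where Proposition~\ref{Prop_eps_reg_codim_2} yields the desired contradiction.
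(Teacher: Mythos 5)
Your blow-up strategy, the use of Proposition~\ref{Prop_Slicing} to produce the slice, the lower Nash-entropy bound at slice points via $f\le F$, and the use of Theorems~\ref{Thm_limit_from_strong_splitting}, \ref{Thm_limit_from_static} to obtain $\Ric\equiv 0$ and the $\IR^{n-2}$-splitting in the limit all match the structure of the paper's argument in spirit. However, there are two genuine gaps, both of which the paper avoids via a point-picking step and a different closing mechanism.

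\textbf{Missing point-picking.} You rescale directly at the first bad slice point $x'_i$. The paper instead runs a point-picking argument inside $\Sigma$ (Claim~\ref{Cl_r_pp_point_picking}), exploiting the oscillation bound (Proposition~\ref{Prop_Slicing}\ref{Prop_Slicing_c}) to locate a point $x''_i$ at which $\rrm$ is locally almost minimal, i.e.\ $\rrm\ge \rrm(x''_i,t'_i)/2$ on the $D\rrm(x''_i,t'_i)$-ball in $\Sigma$, while $e^{-f/4}$ remains bounded below. This lower bound on $\rrm$ along a controlled neighborhood of $x''_i$ in $\Sigma$ is precisely hypothesis~(v) of Lemma~\ref{Lem_improve_rrm_10} and is what makes the slice $\Sigma_\infty^0\subset\RR^\infty$ \emph{complete} in the blow-up. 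Without it, your claim that the limit is $M'_\infty\times\IR^{n-2}\times\IR_-$ with $\tdrrm(x'_\infty)=1$ caused by an isolated conical singularity of $M'_\infty$ at distance one is unjustified: $\rrm$ could drop arbitrarily fast along $\Sigma_i$ near $x'_i$, destroying both completeness and the identification of the singularity as a cone point. (Also, the $\times\IR_-$ factor in Theorem~\ref{Thm_limit_from_static} requires $\Delta\ge 3$, which is not available at this stage; the paper therefore works directly with worldlines on $\RR_{(-\infty,-1)}\cong\RR'\times\IR^{n-2}$ rather than asserting the full product structure.)

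\textbf{The closing step is different and your version does not work.} You try to close by producing a weak $(n-1)$-splitting at an intermediate scale, via an oscillation/cone-splitting argument as in Proposition~\ref{Prop_extending_splitting_maps_almost_radial}; you correctly identify that you cannot make this work, in part because no selfsimilarity is available and $f_{x_0,t_0}$ escapes under the rescaling. The paper does not attempt this. Instead, once completeness of $\Sigma_\infty^0$ is established, flatness of $\RR'$ plus the $\IR^{n-2}$-factor forces the component of $\RR_{(-\frac32,-1)}$ through $x_\infty$ to be a quotient $\IR^n/\Gamma$; non-collapsing (\cite[\HKThmNLC]{Bamler_HK_entropy_estimates}) forces $\Gamma$ trivial; forward pseudolocality extends the flat Euclidean flow up to time~$0$. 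The contradiction then comes from Lemma~\ref{Lem_weak_bckwds_pseudo}: one derives approximate selfsimilarity of $(x_i,-1)$ at large scales from the almost-constancy of the Nash entropy (Proposition~\ref{Prop_NN_almost_constant_selfsimilar}), and the Cheeger--Gromov convergence of the time-$0$ slice to $\IR^n$ then forces $\NN_{x_i,-1}(1)\to 0$. The $\eps$-regularity theorem \cite[\HKThmEpsRegularity]{Bamler_HK_entropy_estimates} then gives $\rrm(x_i,-1)\to\infty$, contradicting the normalization $\rrm=1$. Your final paragraph correctly flags that something is missing, but the missing ingredient is this backwards-pseudolocality mechanism, not a refined cone-splitting on the two-dimensional slice.
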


The strategy will be to analyze the geometry of the flow at a point $(x', t') \in \Sigma \times \{ t' \}$ for which $r' := \rrm(x',t')$ is locally almost minimal, under some additional restrictions of $f(x', t')$.
We will then use the fact that the flow almost splits off an $\IR^{n-2}$-factor and is almost static at scale $r'$ to deduce a lower bound on $\rrm$.

\begin{proof}
Fix $Y, F < \infty$ and let $Y' \geq Y$, $0 < \sigma \leq 10^{-3} \eps \leq 10^{-4}$ be constants whose values we will determine later.
Without loss of generality we may assume that $r =1$ and $t_0 = 0$.
Assuming $\delta \leq \ov\delta (Y', \eps)$, we may choose $\vec a$, $t'$, $\Sigma \subset M$ according to Proposition~\ref{Prop_Slicing} with the parameters $Y', \eps$.

\begin{Claim} \label{Cl_Y_prime_F}
If $Y' \geq \underline{Y}' (Y, F)$, then for any $x' \in \Sigma$ with $e^{-f(x', t') /4}\geq \frac12 e^{-F}$ we have $\NN_{x', t'} (10^{-1}) \geq - Y'$.
So if $\delta \leq \ov\delta (Y', \eps)$, then the assertions of Proposition~\ref{Prop_Slicing} hold for all $x' \in \Sigma$ with $e^{-f(x', t') /4}\geq \frac12 e^{-F}$ and all $r' \in (0, 10^{-1}]$.
\end{Claim}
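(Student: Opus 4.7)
The plan is to reduce the claim to a Nash-entropy comparison via the Gaussian upper heat-kernel bound and Proposition~\ref{Prop_NN_variation_bound}. The assumption $e^{-f(x',t')/4}\ge \tfrac12 e^{-F}$ is equivalent to $f(x',t')\le F':=4F+4\log 2$, i.e.\ to a pointwise lower bound
\[ K(x_0,0;x',t') = (4\pi|t'|)^{-n/2}e^{-f(x',t')} \ge c(F), \qquad |t'|\in[1,2]. \]
First I would use the Gaussian upper bound \cite[\HKThmHKboundGauss]{Bamler_HK_entropy_estimates}, which (after fixing some small dimensional $\eps$) takes the shape
\[ K(x_0,0;x',t') \le C|t'|^{-n/2}\exp\!\Big(-\NN_{x_0,0}(|t'|)-\tfrac{1}{(8+\eps)|t'|}d_{W_1}^{g_{t'}}(\delta_{x'},\nu_{x_0,0;t'})^{2}\Big). \]
Since $\NN_{x_0,0}(\tau)$ is monotone in $\tau$ and, by the doubling bound (\ref{eq_NN_doubling}) in Proposition~\ref{Prop_NN_basic_properties} together with Lemma~\ref{Lem_lower_scal}, controlled from below by $-C(Y)$ on $[1,2.1]$, combining these two inequalities immediately produces a bound
\[ d_{W_1}^{g_{t'}}(\delta_{x'},\nu_{x_0,0;t'}) \le D(Y,F)<\infty. \]

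Next I would invoke Proposition~\ref{Prop_NN_variation_bound} with $(x_1,t_1)=(x_0,0)$, $(x_2,t_2)=(x',t')$, $t^{*}=t'$, and $s=t'-\tfrac{1}{10}$. Then $t^{*}-s=\tfrac{1}{10}$ and $\frac{t_2-s}{t^{*}-s}=1$, so the logarithmic term drops out. Using $R(\cdot,t')\ge R_{\min}(Y)$ from Lemma~\ref{Lem_lower_scal} (the flow is defined on $[-\delta^{-1},0]$), the formula (\ref{eq_NNx1t1_NNx2t2}) yields
\[ \NN_{x_0,0}(|t'|+\tfrac{1}{10}) - \NN_{x',t'}(\tfrac{1}{10}) \le C(Y)\,d_{W_1}^{g_{t'}}(\nu_{x_0,0;t'},\delta_{x'}) \le C(Y)\,D(Y,F). \]
Since $\NN_{x_0,0}(|t'|+\tfrac{1}{10})\ge -C(Y)$ again by the doubling bound, rearrangement gives $\NN_{x',t'}(\tfrac{1}{10})\ge -Y'(Y,F)$ with $Y'$ depending only on $Y$ and $F$, proving the first assertion.

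For the second sentence, one simply observes that with this $Y'$ the full hypothesis (\ref{eq_Slicing_condition}) of Proposition~\ref{Prop_Slicing} is met at $(x',t')$ at every scale $r'\in(0,10^{-1}]$ (the second part of (\ref{eq_Slicing_condition}) is exactly the assumed pointwise bound on $e^{-f/4}$), so applying Proposition~\ref{Prop_Slicing} with the parameter $Y'$ in place of $Y$ and for $\delta\le\ov\delta(Y',\eps)$ yields Assertions~\ref{Prop_Slicing_a}--\ref{Prop_Slicing_c} uniformly along the relevant portion of $\Sigma$. No obstacle is anticipated here; the only subtlety is making sure that the threshold $\ov\delta$ from Proposition~\ref{Prop_Slicing} is invoked \emph{after} fixing $Y'=Y'(Y,F)$, which is permissible since the statement of Lemma~\ref{Lem_curv_bound_slice} already allows $\delta\le\ov\delta(Y,F)$. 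The main (mild) technical point is ensuring that the precise form of the Gaussian bound from \cite[\HKThmHKboundGauss]{Bamler_HK_entropy_estimates} can be applied with the chosen exponent, which reduces to checking that $|t'-s|=\tfrac{1}{10}$ is far enough from $0$ relative to the $R_{\min}$ dependence.
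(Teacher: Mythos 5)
Your proof is correct and matches the paper's own (terse) justification exactly: the claim is reduced to a $W_1$-distance bound via the Gaussian upper heat-kernel estimate \cite[\HKThmHKboundGauss]{Bamler_HK_entropy_estimates} and then to a Nash-entropy comparison via Proposition~\ref{Prop_NN_variation_bound}. The elaboration you give, including the choice $t^*=t'$, $s=t'-\tfrac1{10}$ to kill the logarithmic term and the use of monotonicity/doubling to control $\NN_{x_0,0}$ on $[1,2.1]$, is the intended argument.
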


\begin{proof}
This is a consequence of \cite[\HKThmHKboundGauss]{Bamler_HK_entropy_estimates} and Proposition~\ref{Prop_NN_variation_bound}.
\end{proof}

Now let $x' \in \Sigma$ be a point with $f(x', t') \leq F$.
Assume by contradiction that 
\[ r' := \rrm (x', t') < \sigma. \]
Let $D < \infty$ be a constant whose value we will determine later.

\begin{Claim} \label{Cl_r_pp_point_picking}
If $\sigma \leq \ov\sigma (Y', F, D)$, then there is a point $x'' \in \Sigma$   such that:
\begin{enumerate}[label=(\alph*)]
\item \label{Cl_r_pp_point_picking_a} $e^{-f(x'', t')/4} \geq \frac12 e^{-F/4}$.
\item \label{Cl_r_pp_point_picking_b} $r'' := \rrm (x'', t') < \sigma$.
\item \label{Cl_r_pp_point_picking_c} $\rrm (x''', t') \geq r'' / 2$ for all $x''' \in B^\Sigma (x', t', D r'')$, where the latter denotes the time-$t'$-distance ball within $\Sigma$ with respect to the induced length metric.
\item \label{Cl_r_pp_point_picking_d} There is an $(n-2) \times (n-2)$-matrix $A''$ such that $A'' (\vec y - \vec y (x'', t'))$ is a strong $(n-2, \eps, r'')$-splitting map at $(x'',t')$.
\item \label{Cl_r_pp_point_picking_e} $(x'', t')$ is $(\eps, r'')$-static.
\end{enumerate}
\end{Claim}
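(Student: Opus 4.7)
The argument is a Colding-style iterated point-picking on $\Sigma$ at time $t'$, set up so that Proposition~\ref{Prop_Slicing} can be applied at each intermediate point. Set $x^{(0)} := x'$ and $r^{(0)} := r'$. Given $x^{(k)} \in \Sigma$ with $r^{(k)} := \rrm(x^{(k)}, t') < \sigma$, check whether condition (c) is satisfied with $x'' := x^{(k)}$, that is, whether $\rrm(\cdot, t') \geq r^{(k)}/2$ on all of $B^\Sigma(x^{(k)}, t', Dr^{(k)})$. If so, terminate and set $x'' := x^{(k)}$; otherwise, pick $x^{(k+1)} \in B^\Sigma(x^{(k)}, t', Dr^{(k)})$ with $r^{(k+1)} := \rrm(x^{(k+1)}, t') < r^{(k)}/2$, and continue. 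Termination is automatic because $\rrm(\cdot, t')$ is continuous and strictly positive on the compact manifold $M$, hence bounded below by some $c > 0$, while a non-terminating iteration would force $r^{(k)} < 2^{-k}\sigma \to 0$. Moreover, by the geometric decay $r^{(k)} < 2^{-k} r^{(0)}$, all iterates stay in a small neighborhood of $x'$, namely $d^\Sigma_{t'}(x^{(0)}, x^{(k)}) < D\sum_{j \geq 0} r^{(j)} < 2D\sigma$.

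The heart of the argument is to propagate the bound $e^{-f(x^{(k)}, t')/4} \geq \tfrac12 e^{-F/4}$ inductively along the iteration, so that (a) holds at the terminal point and so that Proposition~\ref{Prop_Slicing} is applicable at every intermediate base point. To pass from $x^{(k)}$ to $x^{(k+1)}$, which lies at distance at most $Dr^{(k)}$ in $\Sigma$, subdivide a minimizing path into $\lceil 20D \rceil$ sub-hops of length at most $\tfrac{1}{20} r^{(k)}$ and apply (\ref{eq_ef4_ef4_diff}) at each sub-hop with the admissible choice $r^* := r^{(k)} \leq \rrm(x^{(k)}, t')$. At each sub-hop the oscillation contribution is at most $C_0(Y)(r^{(k)})^{1/2-\eps}$, so telescoping over $k$ and summing the geometric series yields
\[ \big| e^{-f(x^{(k)}, t')/4} - e^{-f(x^{(0)}, t')/4} \big| \leq C(D) C_0(Y) \sum_{j \geq 0} (r^{(j)})^{1/2-\eps} \leq C(D, Y, \eps)\, \sigma^{1/2-\eps}. \]
Choosing $\sigma \leq \ov\sigma(Y, F, D)$ makes the right-hand side at most $\tfrac{1}{4} e^{-F/4}$, which both preserves the inductive hypothesis $e^{-f/4} \geq \tfrac12 e^{-F/4}$ and establishes (a); to validate the hypothesis $e^{-f/4} \geq \eps$ of Proposition~\ref{Prop_Slicing} at every sub-base-point, one also demands $\eps \leq \tfrac14 e^{-F/4}$, while the Nash-entropy lower bound $\NN_{\cdot, t'}(10^{-1}) \geq -Y'$ is furnished by Claim~\ref{Cl_Y_prime_F} for $Y' \geq \underline Y'(Y, F)$.

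At termination the point $x'' := x^{(k_{\max})}$ satisfies (b) since $r'' = r^{(k_{\max})} < \sigma$, satisfies (c) by the stopping rule, and satisfies (a) by the bound just established. To obtain (d) and (e), apply Proposition~\ref{Prop_Slicing}\ref{Prop_Slicing_a}, \ref{Prop_Slicing_b} at $x''$ with $r' := r'' < \sigma \leq 10^{-1}$; the hypothesis (\ref{eq_Slicing_condition}) holds by (a) together with Claim~\ref{Cl_Y_prime_F}, so these items follow directly. The hypotheses on $\delta$ accumulated along the way are of the form $\delta \leq \ov\delta(Y', \eps) = \ov\delta(Y, F)$, consistent with the statement.

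The main obstacle is the simultaneous bookkeeping: to invoke (\ref{eq_ef4_ef4_diff}) at a sub-hop one already needs (\ref{eq_Slicing_condition}) at its base point, which in turn needs the accumulated oscillation of $e^{-f/4}$ to be strictly less than the margin $e^{-F/4}-\eps$. This circular-looking dependence is broken by the fact that the oscillation contributed at step $k$ is controlled by $(r^{(k)})^{1/2-\eps}$ with $r^{(k)} < 2^{-k}\sigma$, so the total oscillation is $O(\sigma^{1/2-\eps})$ uniformly in $k$; choosing $\sigma$ small in terms of $Y, F, D$ closes the induction. A minor technical point is the apparent typo $B^\Sigma(x', t', Dr'')$ versus $B^\Sigma(x'', t', Dr'')$ in (c); the argument produces the latter, which is what the subsequent proof of Lemma~\ref{Lem_curv_bound_slice} actually requires.
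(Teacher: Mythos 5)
Your overall strategy matches the paper's: an iterated point-picking on $\Sigma$, propagating the lower bound on $e^{-f/4}$ across hops via Proposition~\ref{Prop_Slicing}\ref{Prop_Slicing_c}, with (d) and (e) read off from Proposition~\ref{Prop_Slicing}\ref{Prop_Slicing_a}, \ref{Prop_Slicing_b} at the terminal point. However, there is a genuine gap in how you justify the oscillation estimate across a single hop.

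You propose to subdivide the geodesic from $x^{(k)}$ to $x^{(k+1)}$ into $\lceil 20D\rceil$ sub-hops and apply (\ref{eq_ef4_ef4_diff}) at each sub-hop ``with the admissible choice $r^* := r^{(k)} \leq \rrm(x^{(k)},t')$.'' But Assertion~\ref{Prop_Slicing_c} of Proposition~\ref{Prop_Slicing} requires $r^* \leq \rrm(z,t')$ at the \emph{base point $z$ of the sub-hop}, not at $x^{(k)}$. You have no control on $\rrm$ at intermediate points of the path; indeed you chose $x^{(k+1)}$ so that $\rrm$ drops by at least a factor of $2$, and nothing you have specified prevents $\rrm$ from dipping far below $r^{(k)}/2$ somewhere strictly between $x^{(k)}$ and $x^{(k+1)}$. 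At such a point $z$, the radius $r^{(k)}$ is inadmissible, and if you instead use $\rrm(z,t')$ the sub-hop length shrinks uncontrollably, so the number of sub-hops needed to cross a length $\leq Dr^{(k)}$ is no longer bounded in terms of $D$ alone; summing the per-sub-hop oscillation $C_0(\rrm(z,t'))^{1/2-\eps}$ over $\sim 10\,Dr^{(k)}/\rrm(z,t')$ sub-hops produces a term scaling like $\rrm(z,t')^{-1/2-\eps}$, which diverges. Thus the telescoping estimate does not close.

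The paper's iteration avoids this by choosing $x'_{i+1}$ at \emph{minimal} intrinsic distance $d_i$ from $x'_i$ among points with $\rrm = r'_i/2$. By the intermediate value theorem and the continuity of $\rrm$, this forces $\rrm > r'_i/2$ everywhere on the open ball $B^\Sigma(x'_i, t', d_i)$, which contains the whole connecting geodesic. One can then take sub-hops of fixed length $\sim 10^{-1}\cdot r'_i/2$ with fixed per-sub-hop oscillation $C_0(r'_i/2)^{1/2-\eps}$, and there are at most $\sim 20D$ of them. That is exactly the input your telescoping sum needs. Your iteration, stated without the minimality, does not provide it, and the assertion that $r^*=r^{(k)}$ is admissible at each sub-base-point is the point where the argument fails. (Your observation that Claim~\ref{Cl_r_pp_point_picking}\ref{Cl_r_pp_point_picking_c} should read $B^\Sigma(x'',t',Dr'')$ rather than $B^\Sigma(x',t',Dr'')$ is correct and does match what the subsequent application of Lemma~\ref{Lem_improve_rrm_10} requires.)
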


\begin{proof}
Let $C_0(Y')$ be the constant from Proposition~\ref{Prop_Slicing} and assume that $\eps \leq \min \{ \frac14, \frac12 e^{-F/4} \}$.
We will apply a point picking argument.
Set $x'_0 := x'$ and assume inductively that $x'_0, \ldots, x'_i \in \Sigma$ have already been chosen.
We will describe how to choose $x'_{i+1} \in \Sigma$.
If Assertion~\ref{Cl_r_pp_point_picking_c} holds for $x'' = x'_i$ and $r'' = r'_i := \rrm (x'_i, t')$, then we stop.
Otherwise choose $x'_{i+1} \in B^\Sigma (x'_i, t', D r'_i)$ with minimal intrinsic time-$t'$-distance $d_i := d_{t'}^\Sigma (x'_i, x'_{i+1})$ to $x'_i$ such that $r'_{i+1} := \rrm (x'_{i+1}, t') = r'_i / 2$.

Since $\inf_\Sigma \rrm(\cdot, t') > 0$, our process has to terminate after a finite number of steps, producing a sequence $x'_0, \ldots, x'_m$ with $r'_i < 2^{-i} \sigma$.
Choose $m' \in \{ 1, \ldots, m \}$ maximal such that for all $i = 0, \ldots, m'-2$ we have
\[ e^{-f(\cdot, t')/4} \geq \tfrac12 e^{-F} \qquad \text{on} \quad B^\Sigma (x'_i, t', d_i). \]
If $m' < m$, then choose $d' \in [0, d_{m'-1}]$ maximal such that
\[ e^{-f(\cdot, t')/4} \geq \tfrac12 e^{-F} \qquad \text{on} \quad B^\Sigma (x'_{m'-1}, t', d'). \]
Recall that by the choice of the points $x'_i$ we have $\rrm (\cdot, t') \geq r'_i / 2$ on $B^\Sigma(x'_i, t', d_i)$.
For each $i = 1, \ldots, m'-1$ apply Assertion~\ref{Prop_Slicing_c} of Proposition~\ref{Prop_Slicing} along geodesics of length $d_i < D r'_i$ (if $i < m'-1$) or $d'$ (if $i = m'-1$) emanating from $x'_i$.
We obtain that for $i = 0, \ldots, m'-2$
\begin{equation} \label{eq_ef4_iip1}
 \big| e^{-f(x'_{i+1}, t')/ 4} - e^{-f(x'_i, t')/4} \big| 
\leq C D C_0(Y') (r'_i)^{1/2}
\leq C D C_0 (Y') \sigma^{1/2} 2^{-i/2} 
\end{equation}
and, similarly, that
\begin{equation} \label{eq_ef4_mpm1}
 \big| e^{-f(\cdot , t')/ 4} - e^{-f(x'_{m'-1}, t')/2} \big| 
\leq C D C_0(Y') \sigma^{1/2} 2^{-(m'-1)/2} \qquad \text{on} \quad B^\Sigma (x'_{m'-1}, t', d'). 
\end{equation}
Combining (\ref{eq_ef4_iip1}) for $i = 0, \ldots, m'-2$ with (\ref{eq_ef4_mpm1})  and assuming $\sigma \leq \ov\sigma (Y', F, D)$ implies that on $B^\Sigma (x'_{m'-1}, t', d')$ (if $m' < m$) or at $x'_m$ (if $m' = m$)
\[ e^{-f(\cdot , t')/ 4} 
\geq e^{-f(x'_0 , t')/ 4} - \sum_{i=0}^{m'-1} C D C_0(Y') \sigma^{1/2} 2^{-i/2}
\geq e^{-F/4} -  C D C_0(Y') \sigma^{1/2}
>  \tfrac12 e^{-F}. \]
So due to the maximal choice of $d'$, we must have $d' = d_{m'-1}$ if $m' < m$ and therefore, by the maximal choice of $m'$ we must have $m' = m$.
It follows that $x'' := x'_m$ satisfies Assertions~\ref{Cl_r_pp_point_picking_a}--\ref{Cl_r_pp_point_picking_c}.
Assertions~\ref{Cl_r_pp_point_picking_d}, \ref{Cl_r_pp_point_picking_e} are a consequence of Proposition~\ref{Prop_Slicing}.
\end{proof}

We will now apply Lemma~\ref{Lem_improve_rrm_10} below, for $(x_0, t_0) \leftarrow (x'', t')$, $r \leftarrow r''$ and $\vec y \leftarrow A'' (\vec y -  \vec y (x'',t'))$.
The assumptions of this Lemma hold due to Claims~\ref{Cl_Y_prime_F}, \ref{Cl_r_pp_point_picking}.
Therefore, Lemma~\ref{Lem_improve_rrm_10} implies that if $\eps \leq \ov\eps (Y')$ and $D \geq \underline{D} (Y')$, then $\rrm (x'', t') \geq 10 r''$, which contradicts the choice of $r''$.
This finishes the proof of the lemma.

Lastly, let us review the choice of constants.
Given $Y, F$, we may first choose $Y' \geq \underline{Y'} (Y,F)$ according to Claim~\ref{Cl_Y_prime_F}.
Then we can choose $D \geq \underline{D} (Y')$, $\eps \leq \ov\eps (Y')$ according to Lemma~\ref{Lem_improve_rrm_10}.
Next, we choose $\sigma \leq \ov\sigma (Y', F, D)$ according to Claim~\ref{Cl_r_pp_point_picking} and such that $\sigma \leq 10^{-3} \eps$.
Eventually, we choose $\delta \leq \ov\delta (Y', \eps)$ such that Proposition~\ref{Prop_Slicing} can be applied.
\end{proof}
\bigskip

\begin{Lemma} \label{Lem_improve_rrm_10}
If $Y < \infty$ and $D \geq \underline{D} (Y)$, $\eps \leq \ov\eps (Y)$, then the following holds.
Let $(M, (g_t)_{t \in I})$ be Ricci flow on a compact manifold, $r > 0$ and let $(x_0, t_0) \in M \times I$ be a point with $[t_0 - \eps^{-1} r^2, t_0 + r^2] \subset I$ and $\NN_{x_0,t_0} (\eps^{-1} r^2) \geq - Y$.
Consider a vector valued function $\vec y : M \times [ t_0 - \eps^{-1} r^2, t_0 + r^2] \to \IR^{n-2}$ whose component functions $y_1, \ldots, y_{n-2}$ are solutions to the heat equation $\square y_j = 0$.
Let 
\[ \Sigma := (\vec y (\cdot, t_0))^{-1} ( \vec y (x_0, t_0))  \subset M. \]
Assume that:
\begin{enumerate}[label=(\roman*)]
\item \label{Lem_improve_rrm_10_i} $\Sigma$ is a smooth $2$-dimensional submanifold.
\item \label{Lem_improve_rrm_10_ii} $\rrm (x_0, t_0) \geq r$.
\item \label{Lem_improve_rrm_10_iii} $\vec y$ restricted to $M \times [ t_0 - \eps^{-1} r^2, t_0 - \eps r^2]$ is a strong $(n-2, \eps, r)$-splitting map at $(x_0, t_0)$.
\item \label{Lem_improve_rrm_10_iv} $(x_0, t_0)$ is $(\eps, r)$-static.
\item \label{Lem_improve_rrm_10_v} $\rrm(x,t_0) \geq r/2$ for all $x \in B^\Sigma (x_0, t_0, D r)$, where the latter denotes the time-$t_0$-distance ball within $\Sigma$ with respect to the induced length metric.
\end{enumerate}
Then $\rrm (x_0,t_0) \geq 10 r$.
\end{Lemma}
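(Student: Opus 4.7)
I plan to argue by contradiction. Fix $Y<\infty$ and assume the statement fails. Then we can extract a sequence of counterexamples $(M_i,(g_{i,t})_{t\in I_i})$ with points $(x_i,0)\in M_i\times I_i$, maps $\vec y^{\,i}$, and parameters $D_i\to\infty$, $\eps_i\to 0$ satisfying all hypotheses (normalize $r=1$ by parabolic rescaling), yet with $\rrm(x_i,0)<10$. By the compactness theory of Section~\ref{Sec_basic_limits}, applicable because Proposition~\ref{Prop_eps_reg_codim_2} establishes Assumption~\ref{Aspt_working_ass} for $\Delta=2$, we pass to a subsequence converging within some correspondence $\CF$ to a metric flow $\XX$. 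Theorem~\ref{Thm_limit_from_strong_splitting} produces parallel orthonormal vector fields $\nabla y^\infty_j$ on $\RR$ with local smooth convergence $y^i_j\to y^\infty_j$, inducing an $\IR^{n-2}$-splitting of $\RR$ as a Ricci flow spacetime. Theorem~\ref{Thm_limit_from_static} yields $\Ric\equiv 0$ on $\RR$ and $\partial_\tf$-invariance of the heat kernel (so the flow is locally static). Hypothesis~\ref{Lem_improve_rrm_10_ii} together with Lemma~\ref{Lem_tdrrm} places $x_\infty\in\RR^*$ with $\tdrrm(x_\infty)\geq 1$.

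Next we analyze the slices $\Sigma_i:=(\vec y^{\,i}(\cdot,0))^{-1}(\vec y^{\,i}(x_i,0))$ and $\Sigma_\infty\subset\RR_0$, the leaf of $\{\vec y^{\,\infty}=\text{const}\}$ through $x_\infty$. Since the $\nabla y^\infty_j$ are parallel, $\Sigma_\infty$ is totally geodesic and $2$-dimensional, and $\RR_0$ is locally isometric to $\Sigma_\infty\times\IR^{n-2}$; Ricci-flatness of $\RR$ then forces $\Sigma_\infty$ to be intrinsically flat. The goal is to show that this flat neighborhood in $\Sigma_\infty$ extends to a Euclidean disk of arbitrarily large radius as $D_i\to\infty$. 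To do this, I cover $B^{\Sigma_i}(x_i,0,D_i)$ by balls of unit scale and, at each center $(x,0)$, appeal to Lemma~\ref{Lem_almost_eucl_slice_loc} combined with Claim~\ref{Cl_splitting_macr_scale}-type transfer (via Proposition~\ref{Prop_Transformation_Theorem} and Proposition~\ref{Prop_inheriting_bounds}) to produce, after a lower-triangular rescaling, a strong $(n-2,\Psi(\eps_i),c)$-splitting map at $(x,0)$ on the scale $\rrm(x,0)\geq 1/2$. This bounds the second fundamental form of $\Sigma_i$ and the intrinsic sectional curvature of $\Sigma_i|_{B^{\Sigma_i}(x_i,0,D_i)}$ uniformly, while \cite[\HKThmNLC]{Bamler_HK_entropy_estimates} together with the product structure yields uniform injectivity-radius and volume bounds along the slice. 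Passing to a further subsequence, we obtain smooth pointed Cheeger--Gromov convergence $(\Sigma_i,x_i)\to(\Sigma_\infty^{CG},x_\infty)$, where $\Sigma_\infty^{CG}$ is a complete, non-collapsed, $2$-dimensional Ricci-flat (hence flat) Riemannian manifold containing an open neighborhood of $x_\infty$ in $\Sigma_\infty$ isometrically.

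A complete non-collapsed flat $2$-manifold is locally isometric to $\IR^2$, so for $i$ large $\Sigma_\infty^{CG}$ contains a Euclidean disk of radius $\geq 100$ around $x_\infty$. Via the ambient $\IR^{n-2}$-splitting and the static property, $\RR$ is isometric to a static flow on flat Euclidean $\IR^n$ in a parabolic neighborhood of $(x_\infty,0)$ of radius $\geq 100$; in particular $\tdrrm(x_\infty)\geq 100$. Lemma~\ref{Lem_tdrrm}\ref{Lem_tdrrm_a} then forces $\rrm(x_i,0)\geq 50$ for large $i$, contradicting $\rrm(x_i,0)<10$.

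The main obstacle is the second paragraph: promoting the merely pointwise bound $\rrm\geq 1/2$ along $\Sigma_i$ into a genuine smooth Cheeger--Gromov convergence to a complete surface. Away from $(x_i,0)$ we have no direct control on the potential $f$ of the conjugate heat kernel based at $(x_i,0)$, so Claim~\ref{Cl_splitting_macr_scale}-style arguments must be replaced by a point-by-point application of the Transformation Theorem and Lemma~\ref{Lem_almost_eucl_slice_loc} at the local curvature scale, together with a covering/selection argument; we must also verify that non-collapsing is inherited by the slice from the ambient flow via the product structure. Once this slice-convergence is in place, two-dimensionality plus Ricci-flatness yields flatness automatically and the splitting-plus-static structure of the ambient limit closes the argument.
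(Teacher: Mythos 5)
Your argument follows the paper's overall blueprint (contradiction sequence, $\IF$-compactness with $\Delta=2$, splitting/static limit structure on $\RR$, analysis of the slice $\Sigma_\infty$), but there are two genuine gaps in the final steps that the paper overcomes by a different route.

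First, the step ``a complete non-collapsed flat $2$-manifold is locally isometric to $\IR^2$, so $\Sigma_\infty^{CG}$ contains a Euclidean disk of radius $\geq 100$'' is a non-sequitur: local flatness is automatic, and completeness plus unit-scale non-collapsing do not rule out a thin cylinder $\IR\times S^1$, a flat torus, or any other quotient $\IR^2/\Gamma$ with bounded injectivity radius. The paper does \emph{not} argue at the level of the slice to rule this out. Instead it recognizes that the relevant component of $\RR_{(-\frac32,-1)}$ (ambient, not the slice) is a constant flow on some quotient $N=\IR^n/\Gamma$, and it invokes the non-local collapsing theorem \cite[\HKThmNLC]{Bamler_HK_entropy_estimates} applied to the ambient conjugate heat kernel: since the conjugate heat kernel measure based at $(x_i,0)$ cannot concentrate on a space with vanishing asymptotic volume ratio, one forces $N=\IR^n$, and then Perelman's pseudolocality extends this to the larger time slab $(-\frac32,0)$. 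Your proposal mentions \cite[\HKThmNLC]{Bamler_HK_entropy_estimates} only for unit-scale volume bounds, which is not the point.

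Second, and more structurally, the conclusion ``$\tdrrm(x_\infty)\geq 100$, hence $\rrm(x_i,\cdot)\geq 50$ via Lemma~\ref{Lem_tdrrm}\ref{Lem_tdrrm_a}'' does not follow from knowing the limit flow is locally $\IR^n$. The quantity $\rrm$ is a two-sided parabolic curvature scale, and the $\IF$-limit $\XX$ (pointed at the final time $0$) carries no information about the original flows on the forward time interval $(0,\,t_0+r^2]$; nor does local flatness of the limit by itself preclude curvature concentration in the approximating flows at scales not seen by the limit. In the paper this is handled by a blow-down: after noting that the limit structure forces the rescaled time-slices $(M_i, r_i'^{-2}g_{i,-1}, x_i)$ to Cheeger--Gromov converge to $(\IR^n,\vec 0)$ at scales $r_i'\to\infty$, Lemma~\ref{Lem_weak_bckwds_pseudo} (which you do not mention, and which is the technical heart of this step) shows $\NN_{x_i,-1}(r_i'^2)\to 0$. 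Then \cite[\HKThmEpsRegularity]{Bamler_HK_entropy_estimates}---an $\eps$-regularity theorem built on Perelman's forward pseudolocality---converts this Nash-entropy bound into the two-sided bound $\rrm(x_i,-1)\geq\eps_0 r_i'\to\infty$. This detour through the Nash entropy is essential precisely because it is a statement about the smooth flows themselves rather than about the synthetic limit. Without it, your final paragraph does not close the argument.
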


\begin{proof}
By parabolic rescaling and application of a time-shift, we may assume without loss of generality that $r^2= 2$ and $t_0 = - 1$.
Note that this means that $\vec y$ is defined on $M \times [-1 -  \eps^{-1}, 1]$.
Also note that the fact that $\vec y$ is a strong splitting map implies $\vec y (x_0, -1) = 0$, so $\Sigma = (\vec y (\cdot, -1))^{-1} (\vec 0)$.

Assume that the lemma was false for some fixed $Y$.
Consider a sequence of counterexamples $(M_i, (g_{i,t})_{t \in I_i})$, $r_i$, $(x_i, -1) \in M_i \times I_i$, $\vec y_i$, $\Sigma_i$ for sequences $\eps_i \to 0$, $D_i \to \infty$.
As discussed in Section~\ref{Sec_basic_limits}, we may pass to a subsequence and assume that we have $\IF$-convergence of the pointed flows
\begin{equation} \label{eq_Fconv_10rrm}
 (M_i, (g_{i,t})_{t \in I_i}, (\nu_{x_i,0;t})_{t \in I_i}) \xrightarrow[i\to \infty]{\quad \IF, \CF \quad} (\mathcal{X}, ( \nu_{x^*_\infty;t})_{t \leq 0}) 
\end{equation}
within some correspondence $\CF$, where $\XX$ is an $H_n$-concentrated, future continuous metric flow over $(-\infty, 0]$ with full support and the $\IF$-convergence is understood to hold on compact time-intervals.
Note that we have purposefully pointed the flows at $(x_i, 0)$ and not at $(x_i, -1)$.
Consider the regular-singular decomposition $\XX = \RR {\, \dot\cup \,} \SS$ of the limit and recall from Corollary~\ref{Cor_RRstar_RR} that the convergence (\ref{eq_Fconv_10rrm}) is smooth on $\RR$.

\begin{Claim}
After passing to a subsequence, we have $(x_i, -1) \to x_\infty$ within $\CF$ for some $x_\infty \in \RR_{-1}$.
Moreover, the two-sided parabolic neighborhood $P(x_\infty; \sqrt{2}, -T^-, T^+) \subset \RR$ is unscathed for any $T^- < 2$ and $T^+ < 1$.
\end{Claim}

\begin{proof}
By \cite[\HKCenterConstantRmBound]{Bamler_HK_entropy_estimates}, \cite[\SYNConvParabNbhd]{Bamler_RF_compactness} and Assumption~\ref{Lem_improve_rrm_10_ii} we obtain that for small $\theta > 0$ and any $H_n$-center $z \in \XX_{-\theta}$ of $x^*_\infty$ the parabolic neighborhood $P:= P(z; \sqrt{1.9} , -1.9)$ is unscathed and $|{\Rm}| \leq 2$ on $P$.
Moreover, there is a sequence $(z_i, -\theta) \in M_i \times I_i$ such that $(z_i, -\theta) \to z$ within $\CF$ and such that $d_{-1} (z_i, x_i) \leq C \sqrt{\theta}$ for some uniform $C < \infty$.
This implies that there is a point $x_\infty \in \RR_{-1}$ such that for any small $\theta$ and any $H_n$-center $z \in \XX_{-\theta}$ of $x^*_\infty$ we have $d_{-1} (z(-1), x_\infty) \leq C' \sqrt{\theta}$.
Letting $\theta \to 0$ implies the first part of the claim.
The second part follows by applying \cite[\SYNConvParabNbhd]{Bamler_RF_compactness} again.
\end{proof}

It follows, using \cite[\SYNChangeBaseConv]{Bamler_RF_compactness}, that we have $\IF$-convergence
\[ (M_i, (g_{i,t})_{t \in I_i},( \nu_{x_i,-1;t})_{t \in I_i, t < -1}) \xrightarrow[i \to \infty]{\quad \IF, \CF \quad} (\mathcal{X}_{(-\infty, -1)}, (\nu_{x_\infty;t})_{t < -1}), \]
on compact time-intervals, where $\mathcal{X}_{(-\infty, -1)}$ denotes the restriction of $\XX$ to $(-\infty, -1)$.

By Lemma~\ref{Lem_splitting_map_HE_control_beyond} the maps $\vec y_i$ are locally uniformly bounded on $P^*$-parabolic neighborhoods around $(x_i, 0)$.
So, due to Lemma~\ref{Lem_conv_P_star}, we may pass to a subsequence such that we have local smooth convergence $\vec y_i \to \vec y_\infty$ on $\RR$, where $\vec y_\infty : \RR \to \IR^{n-2}$ is smooth and its components are solutions to the heat equation.
By Theorem~\ref{Thm_limit_from_strong_splitting} and Proposition~\ref{Prop_eps_reg_codim_2} the map $\vec y_\infty$ restricted to $\RR_{(-\infty, -1)}$ induces a splitting of the form 
\begin{equation} \label{eq_RR_RR_star_R_n_2}
\RR_{(-\infty, -1)} \cong \RR' \times \IR^{n-2}
\end{equation}
for some Ricci flow spacetime $\RR'$ with 2-dimensional time-slices.
Moreover, by Theorem~\ref{Thm_limit_from_static} we have $\Ric \equiv 0$ on $\RR_{(-\infty, -1)}$, which implies that $\Rm \equiv 0$ on $\RR_{(-\infty, -1)}$.

Let $\Sigma_\infty := \vec y_\infty^{-1} (\vec 0) \cap \RR_{-1}$.
Then $x_\infty \in \Sigma_\infty$.
Also, since the derivatives of $\vec y$ are continuous on $\RR$, we obtain that $\Sigma_\infty$ is a 2-dimensional, totally geodesic submanifold; so its induced metric is flat.
By the local convergence of $\vec y_i \to \vec y_\infty$ and the implicit function theorem, any point $x'_\infty \in \Sigma_\infty$ is a limit of a sequence of points in $x'_i \in \Sigma_i$ within $\CF$.
So by passing Assumption~\ref{Lem_improve_rrm_10_v} to the limit and using \cite[\SYNChangeBaseConv]{Bamler_RF_compactness}, we obtain that the component $\Sigma^0_\infty$ of  $\Sigma_\infty$ that contains $x_\infty$ is complete and that every point of $\Sigma^0_\infty$ survives within $\RR$ until any time of the time-interval $(-\frac32, -\frac12)$.
Consider the image $\Sigma^0_\infty (t) \subset \RR_t$ of $\Sigma_\infty^0$ under the flow of the time vector field on $\RR$, for $t \in (-\frac32,-1)$.
Using the identification (\ref{eq_RR_RR_star_R_n_2}), this image corresponds to $\RR^{\prime 0}_t \times \{ \vec 0 \}$, where $\RR^{\prime 0}_t$ is a component of $\RR'_t$.
The union $\RR^{\prime 0} := \bigcup_{t \in (-\frac32,-1)} \RR^{\prime 0}_t$ is a component of $\RR'_{(-\frac32, -1)}$ and corresponds to a constant flow on an isometric quotient of $\IR^2$.
Therefore, the component of $\RR_{(-\frac32, -1)}$ that intersects the worldline of $x_\infty$ corresponds to a constant flow on an isometric quotient $N = \IR^n / \Gamma$.
Using \cite[\HKThmNLC]{Bamler_HK_entropy_estimates} we conclude that $N$ must have positive asymptotic volume ratio, which implies $N = \IR^n$.
So by pseudolocality \cite[Sec.~10]{Perelman1} and \cite[\SYNChangeBaseConv]{Bamler_RF_compactness} we obtain that the component of $\RR_{(-\frac32 , 0)}$ that contains $x_\infty$ corresponds to the constant flow on $\IR^{n}$.

Since, at this point, Assumption~\ref{Aspt_working_ass} has only been verified for $\Delta = 1,2$, we cannot use Theorem~\ref{Thm_limit_from_static} to show that the flow on the component of $\RR$ that contains $x_\infty$ corresponds to constant flow on $\IR^n$.
Instead, we will take a blow-down limit as described in the following.

By Proposition~\ref{Prop_NN_almost_constant_selfsimilar} we can find sequences $\eps'_i \to 0$, $r'_i \to \infty$ with the property that $(x_i, -1)$ is $(\eps'_i, r'_i)$-selfsimilar, $(\eps'_i, r'_i)$-static, $\vec y_{i}$ is a strong $(n-2,\eps'_i, r'_i)$-splitting map at $(x_i, -1)$ and such that we have smooth Cheeger-Gromov convergence
\[ (M_i, r^{\prime -2}_i g_{i,-1}, x_i) \longrightarrow (\IR^n, \vec 0), \]
with respect to which $\vec y_{i}$ locally smoothly converges to a coordinate function of the last $n-2$ factors.
Let $g'_{i,t} := r^{\prime -2}_i g_{i, r^{\prime 2}_i t -1}$ be the parabolically rescaled flows.
Then by Lemma~\ref{Lem_weak_bckwds_pseudo} below we must have for the original flows: $\lim_{i \to \infty} \NN_{x_i, -1} (r^{\prime 2}_i) = 0$.
By \cite[\HKThmEpsRegularity]{Bamler_HK_entropy_estimates} this implies that $\rrm (x_i, -1) \geq 100$ for large $i$, which finishes the proof.
\end{proof}
\bigskip

\begin{Lemma} \label{Lem_weak_bckwds_pseudo}
Let $(M_i, (g_{i,t})_{t \in [-T_i, 0]}, x_i)$ be a pointed sequence of Ricci flows on compact manifolds with $T_i \to \infty$ and let $\vec y_i : M_i \times [-T_i, 0] \to \IR^{n-2}$ be smooth vector valued functions  whose component functions $y_{i,j}$ satisfy the heat equation $\square y_{i,j} = 0$.
Assume that there is some $Y < \infty$ and a sequence $\eps_i \to 0$ such that:
\begin{enumerate}[label=(\roman*)]
\item $W_i := \NN_{x_i, 0} (1) \geq - Y$.
\item $(x_i, 0)$ is $(\eps_i, 1)$-selfsimilar.
\item $(x_i,0)$ is $(\eps_i, 1)$-static.
\item $\vec y_{i}$ is a strong $(n-2, \eps_i,1)$-splitting map at $(x_i, 0)$.
\item In the pointed Cheeger-Gromov sense
\begin{equation} \label{eq_CG_to_Eucl}
(M_i, g_{i,0}, x_i) \longrightarrow (\IR^n, \vec 0)
\end{equation}
and $\vec y_{i}(\cdot, 0)$ converges to the projection onto the last $n-2$ factors locally smoothly.
\end{enumerate}
Then $\lim_{i \to \infty} W_i = 0$.
\end{Lemma}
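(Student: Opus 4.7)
Pass to a subsequence so that $W_\infty := \lim W_i \in [-Y,0]$ exists, and apply the $\IF$-compactness theory of \cite{Bamler_RF_compactness} to obtain a limit $(M_i, (g_{i,t}), \nu_{x_i,0}) \to (\XX, \nu_{x_\infty})$ on compact time intervals. The self-similar, static, and strong $(n{-}2)$-splitting hypotheses pass to the limit with $\eps_i \to 0$; since Proposition~\ref{Prop_eps_reg_codim_2} verifies Assumption~\ref{Aspt_working_ass} for $\Delta = 2$, Theorem~\ref{Thm_limit_from_almost_cone} applies in the sharp case $k = n-2 = n-\Delta$. This identifies the regular part as a constant flow $\RR \cong (M'_\infty \times \IR^{n-2}) \times \IR_-$, where $M'_\infty$ is a two-dimensional flat Riemannian cone over a circle $N^1$ of length $2\pi e^{W_\infty}$, with explicit soliton potential $f_\infty(p,\vec y, -\tau) = W_\infty + (r^2(p) + |\vec y|^2)/(4\tau)$. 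The task is to show $W_\infty = 0$.

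The strategy is to augment $\vec y_i$ with two additional heat-flow components $z^i_1, z^i_2$ so that the enlarged tuple $(z^i_1, z^i_2, y^i_1, \ldots, y^i_{n-2})$ is a strong $(n, \delta_i, 1)$-splitting map at $(x_i, 0)$ with $\delta_i \to 0$. To construct the $z^i_j$ I would follow the pattern of Proposition~\ref{Prop_almost_split_Rk}: by the Cheeger-Gromov convergence $(M_i, g_{i,0}, x_i) \to (\IR^n, \vec 0)$ and the alignment of $\vec y_i(\cdot, 0)$ with the last $n-2$ Euclidean coordinates, select sequences of basepoints $p^{i,\pm}_1, p^{i,\pm}_2 \in M_i$ at time $0$ near $x_i$ whose Cheeger-Gromov images separate in the two transverse Euclidean directions. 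Using Proposition~\ref{Prop_NN_variation_bound} together with the $(\eps_i, 1)$-self-similar condition, these nearby basepoints are themselves approximately selfsimilar and static at scale $1$ with $W$ close to $W_i$. Scaled differences $\td z^i_j := c_{ij}(\tau f^{i,+}_j - \tau f^{i,-}_j)$ of the potentials of the conjugate heat kernels at $p^{i,\pm}_j$ then furnish a weak $(2)$-splitting complement of $\vec y_i$ in the sense of Definition~\ref{Def_weak_splitting_map}; Proposition~\ref{Prop_weak_splitting_map_to_splitting_map} converts the combined tuple into the desired strong $(n, \delta_i, 1)$-splitting map at $(x_i, 0)$.

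Once the $n$-dimensional strong splitting is in hand, combined with $(\eps_i, 1)$-staticity, Proposition~\ref{Prop_eps_regularity_np2} forces $\rrm(x_i, 0) \geq \eps > 0$ uniformly in $i$. Then Lemma~\ref{Lem_tdrrm} gives $\tdrrm(x_\infty) \geq \eps > 0$, so $x_\infty \in \RR$. Theorem~\ref{Thm_NN_in_limit}\ref{Thm_NN_in_limit_c}, together with the constancy $\NN_{x_\infty}(\cdot) \equiv W_\infty$ coming from the metric-soliton structure of the limit, yields $W_\infty = \lim_{r \searrow 0} \NN_{x_\infty}(r^2) = 0$, which completes the proof.

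The main obstacle is verifying that the augmented tuple actually satisfies the weak-splitting integral inequalities \emph{over the entire} interval $[-\eps^{-1}, -\eps]$: the alignment of $\td z^i_j$ with transverse Euclidean coordinates at time $0$ is immediate from Cheeger-Gromov, but propagating the orthogonality relations $\nabla \td z^i_j \cdot \nabla y^i_\ell \approx 0$ and $\nabla \td z^i_j \cdot \nabla \td z^i_\ell \approx \delta_{jl}$ to negative times relies on the interplay of the self-similar and static integral identities from Propositions~\ref{Prop_almost_soliton_identities} and \ref{Prop_scalar curvature_almost_ss} with the soliton-model formulas for $\tau f$, applied through the heat-kernel gradient estimate \cite[\HKThmGradientPhiEstimate]{Bamler_HK_entropy_estimates} and the basepoint-comparison bound of Proposition~\ref{Prop_inheriting_bounds}. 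Throughout, the verification must proceed entirely through integral inequalities against the conjugate heat kernel, since no form of backward pseudolocality is yet available at this stage of the paper.
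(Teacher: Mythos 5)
The proposal takes a genuinely different route from the paper, and there is a genuine gap in its central step.

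Your plan is to augment $\vec y_i$ with two additional heat-flow components $z^i_1, z^i_2$ so that the resulting $n$-tuple is a strong $(n,\delta_i,1)$-splitting map at $(x_i,0)$, apply Proposition~\ref{Prop_eps_regularity_np2} to get a uniform lower bound $\rrm(x_i,0)\geq c>0$, and then conclude $W_i \to 0$. By contrast, the paper never attempts to upgrade the splitting map. It extracts the full $\IF$-limit and uses Theorem~\ref{Thm_limit_from_almost_cone} to identify $\RR \cong (M'_\infty \times \IR^{n-2})\times \IR_-$ with $M'_\infty$ a flat $2$-cone over a circle of length $2\pi e^{W_\infty}$. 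It then chooses, for a fixed small negative time $t^*$, points $x'_i$ at controlled time-$0$ distance from $x_i$ on the level set $\vec y_i^{\,-1}(\vec 0)$, with $W_1$-separation from $x_\infty$ at time $t^*$; a pseudolocality argument shows the $(x'_i,t^*)$ converge to a point $x''_\infty \in \RR_{t^*}$, and the Cheeger-Gromov convergence at time $0$ together with the static structure forces a relatively compact neighborhood of $x''_\infty$ in $\RR_{t^*}$ to embed isometrically into $\IR^n$. This lower-bounds the link circumference by $2\pi$, i.e.\ $W_\infty \geq 0$.

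The gap in your construction of $z^i_1, z^i_2$ is twofold. First, you claim that the auxiliary basepoints $p^{i,\pm}_j$ selected at bounded time-$0$ distance from $x_i$ are ``approximately selfsimilar \ldots with $W$ close to $W_i$'' by Proposition~\ref{Prop_NN_variation_bound} plus selfsimilarity of $(x_i,0)$. That proposition controls the \emph{difference} of Nash entropies at two points by a constant that grows with their $W_1$-distance; for points at distance $\sim 1$ the bound is $O(1)$, not $o(1)$, so neither approximate selfsimilarity nor $W\approx W_i$ at the auxiliary points follows. Second, Proposition~\ref{Prop_almost_split_Rk} requires a \emph{lower} bound on the $W_1$-separation of the auxiliary conjugate heat kernels at a definite negative time $t_0-2\lambda^2 r^2$. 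The monotonicity of $W_1$-distance (Proposition~\ref{Prop_monotonicity_W1}) says this quantity is non-increasing as $t$ decreases, so separation at time $0$ gives only an \emph{upper} bound at negative times; and Proposition~\ref{Prop_dist_expansion_almost_ss} (if invoked) also only produces upper bounds on $d_{W_1}$ at earlier times. The lower bound you need is precisely a backward non-collapsing statement, i.e.\ exactly the kind of information that Lemma~\ref{Lem_weak_bckwds_pseudo} (and ultimately Theorem~\ref{Thm_bckwds_pseudo}) are meant to establish. Without it, the construction of the extra coordinates at scale $1$ presupposes the conclusion: if $W_\infty<0$ the cone factor $M'_\infty$ has a genuine angle deficit, and no strong $(n,\delta_i,1)$-splitting map can exist at $(x_i,0)$ for small $\delta_i$; the Cheeger-Gromov input is at time $0$ only and cannot by itself be propagated backward to produce one. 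The paper's route sidesteps this circularity by arguing directly in the limit flow.
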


\begin{proof}
Denote by $d\nu_{x_i, 0} = (4\pi \tau)^{-n/2} e^{-f_i} dg_i$ the conjugate heat kernels based at $(x_i, 0)$.

After passing to a subsequence, we may assume that $W_\infty := \lim_{i \to \infty} W_i$ exists and that we have $\IF$-convergence
\begin{equation} \label{eq_backwards_proof_F_conv}
 (M_i, (g_{i,t})_{t \in I_i}, \nu_{x_i,0}) \xrightarrow[i \to \infty]{\quad \IF, \CF \quad} (\mathcal{X}, \nu_{x_\infty}), 
\end{equation}
within some correspondence, where $\XX$ is an $H_n$-concentrated, future continuous metric flow over $(-\infty, 0]$ with full support and the $\IF$-convergence is understood to hold on compact time-intervals (see Section~\ref{Sec_basic_limits}).
Moreover, by Theorem~\ref{Thm_SS_dimension_bound_limit} we may decompose the limit $\XX = \RR {\, \dot\cup \,} \SS$, where $\RR$ is a Ricci flow spacetime and $d\nu_{x_\infty;t} (\SS_t) = 0$ for almost all $t < 0$.
We may also assume that we have local smooth convergence $f_i \to f_\infty \in C^\infty (\RR)$, where $d\nu_{x_\infty} = (4\pi \tau)^{-n/2} e^{-f_\infty} dg$.
Lastly, by Theorem~\ref{Thm_limit_from_almost_cone} we may assume  that we have local smooth convergence $\vec y_{ i} \to \vec y_{ \infty} \in C^\infty (\RR)$, where the latter are coordinates on the second factor of a splitting of the form
\begin{equation} \label{eq_RR_splitting_weak_pseudo}
 \RR = (M'_\infty \times \IR^{n-2} ) \times \IR_- .
\end{equation}
Here $(M'_\infty, g'_\infty)$ is a flat, 2-dimensional, Riemannian cone over a compact link $(N, g^N)$, possibly minus its vertex, and
\[ \vol (N, g^N) = e^{W_\infty} \vol (S^1, g^{S^1}) = 2\pi e^{W_\infty} . \]
So it suffices to show that $\vol (N, g^N) \geq 2\pi$.

After passing to a subsequence, we may fix some time $t^* \in [-2, -1]$ with $d\nu_{x_\infty;t^*} (\SS_{t^*}) = 0$ such that the convergence (\ref{eq_backwards_proof_F_conv}) is time-wise at time $t^*$.
\[ (M_i, d_{g_{i,t^*}}, \nu_{x_i,0; t^*}) \xrightarrow[i \to \infty]{GW_1} (\mathcal{X}_{t^*}, \nu_{x_\infty; t^*}) \]
as metric measure spaces.

Consider the diffeomorphisms $\psi_i : U_i \to V_i$, $U_i \subset \RR$, $V_i \subset M_i \times [-T_i, 0]$ from Subsection~\ref{subsec_setup_limit}.
In the following, we will be mainly interested in the restrictions $\psi_{i, t^*} : U_{i, t^*} \to V_{i, t^*}$, where $U_{i, t^*} := U_i \cap \RR_{t^*}$, $V_{i, t^*} := M_i \times \{ t^* \}$.
Recall that $U_{1, t^*} \subset U_{2, t^*} \subset \ldots$, $\bigcup_{i=1}^\infty U_{i, t^*} = \RR_{t^*}$ and $\psi_{i,t^*}^* g_{i, t^*} \to g_{t^*}$, $\psi_{i,t^*}^* f_{i,t^*} \to f_{\infty, t^*}$ in $C^\infty_{\loc}$.
The goal of the proof will be to show that maps $\psi_{i, t^*}$ viewed as mapping into the time-$0$-slics $M_i \times \{ 0 \}$ converge to an isometric embedding of a component of $M'_\infty \times \IR^{n-2}$ into $\IR^n$.
The following two claims ensure, that the images of these maps remain at bounded distance to $x_i$ with respect to the time-$0$ metric, in a certain sense.

Let $A < \infty$ be a constant whose value we will choose later.

\begin{Claim} \label{Cl_x_p_i_existence}
If $D \geq \underline{D} (A)$, then for large $i$ there are points $x'_i \in M_i$ with:
\begin{enumerate}[label=(\alph*)]
\item $d_0 (x_i, x'_i) \leq D$.
\item $ d_{W_1}^{g_{t^*}} (\nu_{x_i,0; t^*}, \nu_{x'_i,0; t^*} ) \geq A$.
\item $\vec y_i (x'_i) = \vec 0$.
\end{enumerate}
\end{Claim}

\begin{proof}
Set
\[ S_i := \{ x' \in M_i \;\; : \;\; d_{W_1}^{g_{t^*}} (\nu_{x_i,0; t^*}, \nu_{x',0; t^*} ) \leq A  + 1 \}. \]
By \cite[\HKPstarVolBound]{Bamler_HK_entropy_estimates} we have 
\begin{equation} \label{eq_S_i_vol_bound}
 |S_i|_0 \leq C(A) 
\end{equation}
and for every $x' \in M_i$ with $d_{W_1}^{g_{t^*}} (\nu_{x_i,0; t^*}, \nu_{x',0; t^*} ) \leq A$ we have $B(x', 0, 1) \subset S_i$.

Assume that the claim was wrong for some fixed $A < \infty$ and some $D < \infty$ whose value we will determine later.
Then we may pass to a subsequence such there is no such point $x'_i \in M_i$ for any $i$.
Fix a point $x'_\infty = (\td x'_\infty, \vec 0 ) \in \IR^{n} = \IR^2 \times \IR^{n-2}$ with $d^{\IR^n} (\vec 0, x'_\infty) < D$ and choose a sequence $x'_i \in M_i$ such that $(x'_i,0) \to x'_\infty$ in (\ref{eq_CG_to_Eucl}) and $\vec y_i (x'_i, 0) = \vec 0$.
Then $B(x'_i, 0,1) \subset S_i$ for large $i$.
If $D \geq \underline{D}(A)$, then repeating this process for different choices of $x'_\infty$ allows us to find larger and larger sets of pairwise disjoint $1$-balls within $S_i$, which contradicts (\ref{eq_S_i_vol_bound}).
\end{proof}

Consider the sequence $x'_i$ from Claim~\ref{Cl_x_p_i_existence}.

\begin{Claim} \label{Cl_x_pp_no_expansion}
If $A \geq \underline{A}$ then, after passing to a subsequence, we have $|{\Rm}| \leq \eps'_i \to 0$ on $P(x'_i, 0; 10)$ and the points $(x'_i, t^*)$ converge to a point $x''_\infty \in \RR_{t^*}$ within $\CF$.
\end{Claim}

\begin{proof}
By Claim~\ref{Cl_x_p_i_existence} we have
\begin{equation} \label{eq_dW1_AD}
 A \leq d_{W_1}^{g_{t^*}} (\nu_{x_i; t^*}, \nu_{x'_i; t^*} ) \leq d_0 (x_i, x'_i) \leq D . 
\end{equation}
By \cite[\SYNThmConvSubsequwithinCF]{Bamler_RF_compactness} we can find a conjugate heat flow $(\mu_t)_{t < 0}$ on $\XX$ such that $(\nu_{x'_i;t})_{t< 0} \to (\mu_t)_{t < 0}$ within $\CF$ on compact time-intervals.
By \cite[\SYNThmConvImpliesStrict]{Bamler_RF_compactness} we even have strict convergence $\nu_{x'_i;t^*} \to \mu_{t^*}$ and
\begin{equation} \label{eq_Var_mu_in_limit}
 \Var (\mu_{t^*}) \leq H_n |t^*| \leq 2H_n. 
\end{equation}
By (\ref{eq_dW1_AD}) we also have
\begin{equation} \label{eq_dW1_AD_limit}
 A \leq d^{\XX_{t^*}}_{W_1} (\nu_{x_\infty; t^*}, \mu_{t^*}) \leq D. 
\end{equation}
Next we derive a bound on $\int_{\RR_{t^*}} y^2_{\infty, j} d\mu_{t^*}$ for any $j =1, \ldots, n-2$.
For this purpose, observe that for all $i \geq 1$ we have
\[  \int_{M_i} y_{i,j} d\nu_{x'_i, 0; t^*} = y_{i,j} (x'_i, 0)  = 0. \]
Using Propositions~\ref{Prop_inheriting_bounds}, \ref{Prop_properties_splitting_map}, we moreover obtain for some $\alpha \leq \ov\alpha$
\[ \int_{M_i} \big| |\nabla y_{i,j}|^2 - 1 \big| d\nu_{x'_i, 0; t^*} 
\leq \int_{M_i} \big| |\nabla y_{i,j}|^2 - 1 \big| e^{\alpha f_i} d\nu_{x_i, 0; t^*} 
\to 0 . \]
So by the $L^2$-Poincar\'e inequality (Proposition~\ref{Prop_Poincare}) we obtain
\begin{equation} \label{eq_y2infty_bound_limit}
 \int_{\RR_{t^*}} y^2_{\infty, j} d\mu_{t^*}
\leq \limsup_{i \to \infty} \int_{M_i}   y_{i,j}^2  d\nu_{x'_i, 0; t^*}
\leq \limsup_{i \to \infty} 2|t^*| \int_{M_i}  | \nabla y_{i,j}|^2 d\nu_{x'_i, 0; t^*}
= 2 |t^*| \leq 4. 
\end{equation}

Combining the bounds (\ref{eq_dW1_AD_limit}), (\ref{eq_Var_mu_in_limit}), (\ref{eq_y2infty_bound_limit}) implies that there is a point $z'_\infty \in \RR_{t^*}$ such that for some universal constant $C < \infty$
\begin{equation} \label{eq_bounds_limit_mu}
 \tfrac12 A \leq d^{\XX_{t^*}}_{W_1} (\nu_{x_\infty; t^*}, \delta_{z'_\infty}) , \qquad
\Var (\mu_{t^*}, \delta_{z'_\infty} ) \leq C, \qquad |\vec y_\infty (z'_\infty)|^2  \leq C. 
\end{equation}
Let $A' < \infty$ be a constant whose value we will determine later.
If $A \geq \underline{A} (A')$, then the splitting (\ref{eq_RR_splitting_weak_pseudo}) and the bounds (\ref{eq_bounds_limit_mu}) imply that the ball $B (z'_\infty, A') \subset \RR_{t^*}$ is relatively compact and isometric to a ball in Euclidean space.
If $A' \geq \underline{A}'$, then
\[ \mu_{t^*} \big( B(z'_\infty, \tfrac18 A') \big) \geq .9 \]
Choose $z'_i \in M_i$ such that $(z'_i, t^*) \to z'_\infty$.
If $A' \geq \underline{A}'$, then by pseudolocality \cite{Perelman1} we have $|{\Rm}| \leq \eps'_i \to 0$ on $P(z'_i, t^*, \tfrac12 A', |t^*|)$ for large $i$ and $\nu_{x'_i, 0; t^*} (B(z'_i, t^*, \tfrac14 A' )) \geq .8$.
So if $A'$ is chosen sufficiently large, then for large $i$ we must have $(x'_i, 0) \in P(z'_i, t^*; \tfrac12 A', |t^*|)$ due to \cite[\HKCorsPnbhdPstarboth]{Bamler_HK_entropy_estimates}, see also \cite[\SYNPropLocConc]{Bamler_RF_compactness}.
The claim now follows after possibly increasing $A'$.
\end{proof}

Let $U^* \subset \RR_{t^*}$ be a connected, open and relatively compact subset with $x''_\infty \in U^*$.
Then for all $t < 0$ the map $U^* \to U^*(t)$ given by the flow of the time-vector field $\partial_{\mathfrak{t}}$ on $\RR$ is a Riemannian isometry.
Thus, using pseudolocality \cite{Perelman1} and Claim~\ref{Cl_x_pp_no_expansion} we find that for large $i$ the maps $U^* \to M_i \times \{ 0 \}$, $w \mapsto (\psi_{i, t^*} (w), 0)$ have image contained in a time-$0$-ball around $x_i$ of uniformly bounded diameter (depending on $U^*$) and subsequentially converge in (\ref{eq_CG_to_Eucl}) to an isometric embedding of the form $U^* \to \IR^n$.
It follows that every open, connected and relatively compact subset $U^* \subset \RR_{t^*}$ that contains $x''_\infty$ isometrically embeds into $\IR^n$ as a Riemannian manifold.
Due to the uniqueness of isometric embeddings into $\IR^n$ up to Euclidean motions, we obtain that the component of $\RR_{t^*} \cong M'_\infty \times \IR^{n-2}$ containing $x''_\infty$ isometrically embeds into $\IR^n$ as a Riemannian manifold.
This implies that $(N, g^N)$ contains a circle of radius $2\pi$, which proves the lemma.
\end{proof}
\bigskip

\subsection{Proof of the main proposition}
Recall that in Subsection~\ref{subsec_codim4_intro} we have reduced the main Proposition~\ref{Prop_reg_codim_4} to Proposition~\ref{Prop_eps_reg_codim_4_weaker}.

\begin{proof}[Proof of Proposition~\ref{Prop_eps_reg_codim_4_weaker}.]
Without loss of generality, we may assume that $r =1$ and $t_0 = 0$.
Assume by contradiction that the proposition was false for some fixed $Y < \infty$.
Then we can choose counterexamples $(M_i, (g_{i,t})_{i \in I_i})$, $(x_i, 0) \in M_i \times I_i$ for which $(x_i, 0)$ is $(n-2, \eps_i, 1)$-split, $(\eps_i, 1)$-static and $(\eps_i, 1)$-selfsimilar for some $\eps_i \to 0$, but $\rrm (x_i , 0) \to 0$.
After passing to a subsequence, we may assume that we have convergence $W_i := \NN_{x_i, 0} (1) \to W_\infty$.
Since $\rrm(x_i,0) \to 0$, we must have $W_\infty < 0$ by \cite[\HKThmEpsRegularity]{Bamler_HK_entropy_estimates}.
Set $d\nu_{x_i, 0} =: (4\pi \tau_i)^{-n/2} e^{-f_i} dg_i$ and choose $(n-2,  \eps_i, 1)$-splitting maps $\vec y^{\, i}$ at $(x_i, 0)$.

As discussed in the proof of Proposition~\ref{Prop_reg_codim_4}  we may pass to an $\IF$-convergent subsequence
\[ (M_i, (g_{i,t})_{t \in I_i}, (\nu_{x_i,0;t})_{t \in I_i}) \xrightarrow[i \to \infty]{\quad \IF , \CF \quad} (\mathcal{X}, (\nu_{x_\infty,0;t})_{t \leq 0}), \]
within some correspondence $\CF$, where $\XX$ is an $H_n$-concentrated, future continuous metric flow on $(-\infty,0]$ with full support and the $\IF$-convergence is understood to hold on compact time-intervals.
By Theorem~\ref{Thm_SS_dimension_bound_limit} we obtain a regular-singular decomposition $\XX = \RR {\,\, \dotcup \,\,} \SS$ with $d\nu_{x_\infty ; t} (\SS_t) = 0$ for almost every $t < 0$.
For $i = 1,2, \ldots$ set
\begin{equation} \label{eq_q_i_f_i_y_i_j_codim_3}
 q_i := 4\tau (f_i - W_i) - \sum_{j=1}^{n-2} (y^i_j)^2  
\end{equation}
By Theorem~\ref{Thm_limit_from_almost_cone} we have $\Ric \equiv 0$ and local smooth convergence $f_i \to f_\infty, \vec y^{\, i} \to \vec y^{\,\infty}, q_i \to q_\infty \in C^\infty (\RR)$ and (\ref{eq_q_i_f_i_y_i_j_codim_3}) holds for $i = \infty$.
Moreover, there is a 2-dimensional Riemannian manifold $(M'_\infty, g'_\infty)$ such that $\RR$ is of the constant form
\begin{equation} \label{eq_RR_RRp_R_n_2}
\RR \cong (M'_\infty  \times \IR^{n-2}) \times \IR_-, 
\end{equation}
where $q_\infty$ only depends on the first factor and equals the square of the radial coordinate of a Riemannian cone minus its vertex over a compact, 1-dimensional link $(N, g^N)$ and
\begin{equation} \label{eq_volN_codim_4}
 \vol (N, g^N) = e^{W_\infty} \vol (S^1, g^{S^1} ) = 2\pi e^{W_\infty} < 2\pi.
\end{equation}

Let us now apply Lemma~\ref{Lem_curv_bound_slice}.
We obtain vectors $\vec a_i \in [-1, 1]^{n-2}$ and times $t'_i \in [-2, -1]$ such that for every $F < \infty$ if $i \geq \underline{i} (F)$ and $x' \in (\vec y_i (\cdot, t'_i))^{-1} (\vec a_i) \cap \{ f_i (\cdot, t'_i) \leq F \}$, then $\rrm (x',t'_i) \geq \sigma (Y,F)$.
After passing to a subsequence, we may assume that $\vec a_i \to \vec a_\infty \in [-1,1]^{n-2}$ and $t'_i \to t'_\infty \in [-2,-1]$.

Set $\Sigma_\infty := \vec y_\infty^{-1} ( \vec a_\infty ) \cap \RR_{t'_\infty}$.
Then $\Sigma_\infty$ is a 2-dimensional submanifold whose induced metric is isometric to $(M'_\infty, g'_\infty)$ due to (\ref{eq_RR_RRp_R_n_2}).
Moreover, by the smooth convergence $\vec y^{\, i} \to \vec y^{\,\infty}$ and the implicit function theorem, we obtain that for every $x' \in \Sigma_\infty$ with $f_\infty (x') < F$ we have $\tdrrm (x') \geq \sigma (Y, F) > 0$.
It follows that $\Sigma_\infty \cap \{ f_\infty \leq F \}$ is complete for all $F < \infty$.
So by (\ref{eq_q_i_f_i_y_i_j_codim_3}), we obtain that $\{ q_\infty \leq Q \} \cap M'_\infty$ is complete for all $Q < \infty$.
So since $q_\infty$ is the square of the radial coordinate of a cone, this implies that $(M'_\infty, g'_\infty)$ is complete and thus it must be isometric to a disjoint union of copies of $\IR^2$.
It follows that $(N, g^N)$ is a disjoint union of circles of circumference $2\pi$.
Therefore, by (\ref{eq_volN_codim_4}) we have $W_\infty \geq 0$,
which yields the desired contradiction.
\end{proof}

\part{Proofs of the main theorems}
\label{part_proofs}
In the last part of this paper we explain how the main theorems, which were stated in the introduction, follow from the theory developed so far.
As most of these theorems have already been established in some form, we will often only refer to the corresponding propositions and theorems in the main part of the paper. 

\section{Proofs of the theorems from Subsections~\ref{subsec_part_reg_limit_intro}, \ref{subsec_Nash_in_limit_intro}, except for Theorem~\ref{Thm_stratification_limit_main} and Addendum~\ref{Add_limit_stratification_addendum_main}}
Let us first recall the setting from Subsections~\ref{subsec_part_reg_limit_intro}, \ref{subsec_Nash_in_limit_intro}.
Consider a sequence $(M_i, \lb (g_{i, t})_{t \in (-T_i, 0]}, \lb  (x_i, 0))$ of pointed Ricci flows on compact manifolds, let $T_\infty := \lim_{i \to \infty} T_i \in (0, \infty]$ and suppose that on compact time-intervals
\begin{equation} \label{eq_F_conv_proofs}
 (M_i, (g_{i,t})_{t \in (-T_i,0]}, (\nu_{x_i,0;t})_{t \in (-T_i, 0]}) \xrightarrow[i \to \infty]{\quad \IF, \CF \quad} (\mathcal{X}, (\nu_{x_\infty;t})_{t \in (-T_\infty,0]}), 
\end{equation}
within some correspondence $\CF$, where $\XX$ is assumed to be defined over $(-T_\infty,0]$, future continuous, $H_n$-concentrated and of full support.
This is the same setting as in Section~\ref{Sec_basic_limits}.
Note that Assumption~\ref{Aspt_working_ass} holds for $\Delta = 4$ due to Corollary~\ref{Cor_working_ass_holds}, so the results of Section~\ref{Sec_basic_limits} hold in the best possible form.

Theorem~\ref{Thm_XX_reg_sing_dec_main} is a restatement of Theorem~\ref{Thm_SS_dimension_bound_limit}\ref{Thm_SS_dimension_bound_limit_a}.

Theorem~\ref{Thm_XX_reg_determines_XX_intro} is a restatement of Theorem~\ref{Thm_SS_dimension_bound_limit}\ref{Thm_SS_dimension_bound_limit_b}--\ref{Thm_SS_dimension_bound_limit_d}.

Theorem~\ref{Thm_R_RR_star_intro} is a restatement of Corollary~\ref{Cor_RRstar_RR}.

Before establishing the remaining results of Subsection~\ref{subsec_part_reg_limit_intro}, we first prove the results from Subsection~\ref{subsec_Nash_in_limit_intro}.

Theorem~\ref{Thm_NN_pass_to_limit} is a direct consequence of Theorem~\ref{Thm_NN_in_limit}.
Theorem~\ref{Thm_Nash_on_tangent_cones} follows from Theorem~\ref{Thm_NN_pass_to_limit}, using \cite[\SYNThmTangenFlowasLimit]{Bamler_RF_compactness}, \cite[\HKThmEpsRegularity]{Bamler_HK_entropy_estimates} and Lemma~\ref{Lem_tdrrm}. 

We can now return to the results of Subsection~\ref{subsec_part_reg_limit_intro}.

Theorem~\ref{Thm_tangent_cone_metric_cone_main} and Addendum~\ref{Add_tangent_flow} follow from Theorems~\ref{Thm_limit_from_selfsimilar}, \ref{Thm_Nash_on_tangent_cones}, using \cite[\SYNThmTangenFlowasLimit]{Bamler_RF_compactness}.

\section{Proof of the theorem from Subsection~\ref{subsec_more_general_F_limit}} \label{sec_proofs_limit}
Note that we have the following lemma, which holds by the continuity of a smooth Ricci flow.

\begin{Lemma} \label{Lem_slight_timeshift}
Consider a Ricci flow $(M, (g_t)_{t \in (-T,0)})$  and a conjugate heat flow $(\mu_t)_{t \in (-T,0)}$ with $\lim_{t \nearrow 0} \Var(\mu_t) = 0$.
Let $t_j \nearrow 0$, $t_j < 0$, and consider points $x'_j \in M$ with $d^{g_{t_j}}_{W_1} (\mu_{t_j}, \delta_{x'_j}) \to 0$; such points always exist.
Then for every compact subinterval $I' \subset (-T,0)$ we have
\[ \sup_{t \in I'} d^{g_t}_{W_1}(\nu_{x'_j,t_j;t}, \mu_t) \to 0, \qquad \sup_{-\tau \in I'} |\NN_{x'_j,t_j} (\tau+t_j)-\NN_{(\mu_t)}(\tau)| \to 0. \] 
Moreover, within $\IF_{I'}$
\[ d_{\IF} \big( (M, (g_{t + t_j})_{t \in I'}, (\nu_{x'_j, t_j;t+t_j})_{t \in I'}), (M, (g_t)_{t \in I'}, (\mu_{t})_{t \in I'}) \big) \to 0. \]
\end{Lemma}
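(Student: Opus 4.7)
The plan is to prove the three assertions in order, with the monotonicity of the $W_1$-distance (Proposition~\ref{Prop_monotonicity_W1}) serving as the central mechanism.

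First I would settle the existence of the points $x'_j$ together with the first assertion. For any probability measure $\mu$ on a metric space, Fubini gives $\int\!\!\int d^2(x,y)\,d\mu(x)d\mu(y) = \Var(\mu)$, so there must exist at least one $x'_j \in M$ with $\int d_{g_{t_j}}^2(x'_j,y)\,d\mu_{t_j}(y) \leq \Var(\mu_{t_j})$, hence $d^{g_{t_j}}_{W_1}(\delta_{x'_j}, \mu_{t_j}) \leq \sqrt{\Var(\mu_{t_j})} \to 0$ by hypothesis. Fix any compact $I' \subset (-T,0)$; for $j$ large enough that $\sup I' < t_j$, every $t \in I'$ satisfies $t \leq t_j$, and both $(\nu_{x'_j, t_j;\cdot})$ and $(\mu_\cdot)$ are conjugate heat flows on $(-T, t_j]$ of unit mass. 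Proposition~\ref{Prop_monotonicity_W1} then yields
\[ \sup_{t\in I'} d^{g_t}_{W_1}(\nu_{x'_j, t_j; t}, \mu_t) \leq d^{g_{t_j}}_{W_1}(\delta_{x'_j}, \mu_{t_j}) \to 0, \]
which is the first assertion.

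For the Nash entropy convergence, fix $\tau_0 \in -I'$ and set $s := -\tau_0$. Writing $d\nu_{x'_j, t_j; s} = v_j \, dg_s$, the $L^\infty$ heat-kernel bound of Proposition~\ref{Prop_L_infty_HK_bound} together with Proposition~\ref{Prop_NN_variation_bound} (comparing $\NN_{x'_j, t_j}(\tau+t_j)$ to the Nash entropy at a fixed reference point inside the interior of the flow, which is controlled by $\mu$) gives a uniform-in-$j$ upper bound on $v_j$ and a uniform lower bound on $f^{(j)} := -\log((4\pi(t_j-s))^{n/2} v_j)$. Proposition~\ref{Prop_int_ealphaf} then supplies a uniform bound $\int e^{\alpha f^{(j)}} d\nu_{x'_j, t_j; s} \leq C$ for some small $\alpha > 0$. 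Combining the $L^\infty$ bound on $v_j$ with the $W_1$-convergence $\nu_{x'_j, t_j; s} \to \mu_s$ from the first step, standard parabolic regularity upgrades this to locally smooth convergence $v_j \to v^{(\mu)}$, and hence $f^{(j)} \to f^{(\mu)}$ in $C^\infty_{\loc}$. For any cutoff $L < \infty$,
\[ \int_M f^{(j)} \, d\nu_{x'_j, t_j; s} = \int_{\{f^{(j)} \leq L\}} f^{(j)} \, d\nu_{x'_j, t_j; s} + \int_{\{f^{(j)} > L\}} f^{(j)} \, d\nu_{x'_j, t_j; s}, \]
where the tail is bounded by $C e^{-\alpha L/2}$ uniformly in $j$ via the exponential moment, and the truncated integral converges to $\int_{\{f^{(\mu)} \leq L\}} f^{(\mu)} \, d\mu_s$ by the local smooth convergence (since the sublevel set sits inside a fixed compact subset of $M$, which is itself compact). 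Letting $L \to \infty$ gives pointwise convergence of Nash entropies; the uniformity in $\tau_0 \in -I'$ follows because all estimates above depend only on $\inf I'$ and $\sup I'$.

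The $\mathbb{F}$-convergence in the third assertion is then essentially a corollary: on the common underlying manifold $M$, the metrics $g_{t+t_j}$ converge smoothly to $g_t$ uniformly on $I'$ (because $t \mapsto g_t$ is smooth and $t_j \to 0$), and the $W_1$-distance of the measures vanishes by the first step, so taking the tautological correspondence $M \times I' \hookrightarrow M \times I'$ shows $d_{\IF_{I'}}$ tends to zero. The main obstacle is the Nash entropy step, where $W_1$-convergence of the measures is too weak by itself to conclude convergence of integrals against the unbounded function $f$; the argument succeeds only because the parabolic regularity and the uniform exponential moment bound of Proposition~\ref{Prop_int_ealphaf} conspire to control both the compact-set convergence and the tail simultaneously.
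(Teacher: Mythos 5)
Your argument is correct in its essentials, whereas the paper dispatches this lemma in a single sentence as following ``by the continuity of a smooth Ricci flow''; you are reconstructing a proof the authors treated as routine. For the existence of $x'_j$ and the first assertion, your use of $d_{W_1}\leq\sqrt{\Var}$ together with the monotonicity of the $W_1$-distance (Proposition~\ref{Prop_monotonicity_W1}) is exactly the intended mechanism. For the Nash-entropy step, however, your route through Propositions~\ref{Prop_L_infty_HK_bound}, \ref{Prop_NN_variation_bound}, and~\ref{Prop_int_ealphaf} plus a cutoff at level $L$ does more work than compactness of $M$ demands. A lighter argument: fix an intermediate time $t_0$ with $s<t_0<t_j$ and use the reproduction formula $K(x'_j,t_j;y,s)=\int_M K(x,t_0;y,s)\,d\nu_{x'_j,t_j;t_0}(x)$. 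For each fixed $(y,s)$ the kernel $K(\cdot,t_0;y,s)$ is a fixed smooth, hence Lipschitz, function on the compact $M$ (and the same holds for its $y$- and $s$-derivatives), so your first assertion $d^{g_{t_0}}_{W_1}(\nu_{x'_j,t_j;t_0},\mu_{t_0})\to 0$ immediately forces $C^\infty$-convergence $K(x'_j,t_j;\cdot,s)\to v^{(\mu)}_s$ uniformly on $M$, with no heat-kernel upper bound or parabolic bootstrap. Strict positivity of $v^{(\mu)}_s$ on the compact $M$ then gives two-sided bounds on $f^{(j)}$ for large $j$, and the Nash entropy converges by uniform convergence without any tail estimate. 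Both routes hinge on the same input (the $W_1$-convergence from Part~1); the reproduction-formula argument is simply shorter and closer to what ``continuity'' suggests.

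One imprecision in your third step: the first assertion bounds $d^{g_t}_{W_1}(\nu_{x'_j,t_j;t},\mu_t)$, but the $\IF$-comparison requires you to control $\nu_{x'_j,t_j;t+t_j}$ against $\mu_t$, and these live on time-slices differing by $t_j$. The fix is an extra triangle inequality: monotonicity gives $d^{g_{t+t_j}}_{W_1}(\nu_{x'_j,t_j;t+t_j},\mu_{t+t_j})\leq d^{g_{t_j}}_{W_1}(\delta_{x'_j},\mu_{t_j})\to 0$; the smoothness of $(\mu_t)$ gives $d^{g_t}_{W_1}(\mu_{t+t_j},\mu_t)\to 0$; and $g_{t+t_j}\to g_t$ in $C^\infty$ uniformly on $I'$ makes the two Wasserstein metrics uniformly comparable. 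This is easily supplied, but as written your claim that ``the $W_1$-distance of the measures vanishes by the first step'' is off by the time-shift and should be spelled out.
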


So, using \cite[\SYNFconvimplieswithin]{Bamler_RF_compactness}, we may represent any $\IF$-limit of flow pairs $(M_i, \lb (g_{i,t})_{t \in (-T_i, 0)}, \lb (\mu_{i,t})_{t \in (-T_i, 0)})$ as an $\IF$-limits of flows of the form $(M_i, (g_{i,t-\theta_i})_{t \in (-T_i+\theta_i, 0]}, (\nu_{x_i,-\theta_i;t-\theta_i})_{t \in (-T_i+\theta_i, 0]})$ for some suitable sequences $\theta_i \to 0$ and $x_i \in M_i$.
This proves Theorem~\ref{Thm_more_general_F_limit} (except for the claim concerning Theorem~\ref{Thm_stratification_limit_main} and Addendum~\ref{Add_limit_stratification_addendum_main}) and allows us to assume in the following that all $\IF$-limits are obtained from Ricci flows that are defined at time $0$.

\section{Proof of the theorems from Subsection~\ref{subsec_quant_strat_intro}}

\begin{proof}[Proof of Theorem~\ref{Thm_loc_reg_RF_intro}.]
Consider the metric flow pair $(\XX', (\nu_{x'; t})_{t \leq 0})$ from Definition~\ref{Def_symm_points}.
We claim that $\XX'$ is isometric to the constant flow on $\IR^n$.
To see this observe that in the corresponding cases of Theorem~\ref{Thm_stratification_limit_main} and Addendum~\ref{Add_limit_stratification_addendum_main} we have:
\begin{enumerate}[label=(b\arabic*)]
\item The singular part $\SS'$ of $\XX'$ satisfies $\SS'_{<0} = \SS''_{<0} \times \IR^{n-1}$.
So by Theorem~\ref{Thm_XX_reg_sing_dec_main} we must have $\SS'_{<0} = \emptyset$.
Since $\RR' = \RR'' \times \IR^{n-1}$, this implies that all time-slices of $\RR'$ are isometric to $\IR^n$.
\item The singular part $\SS'_{<0}$ of $\XX'_{<0}$ satisfies $\SS'_{<0} = \SS''_{<0} \times \IR^{n-3}$ and $\XX'_{<0} = \XX''_{<0} \times \IR^{n-3}$, where all time-slices of $\XX''_{<0}$ are isometric to the same metric cones.
So again by Theorem~\ref{Thm_XX_reg_sing_dec_main} we must have $\SS''_{<0} = \emptyset$.
Since $\Ric = 0$ on $\RR'$ and $\RR' = \RR'' \times \IR^{n-3}$, this implies that $\Rm = 0$ on $\RR'$ and since all time-slices of $\XX''_{<0} = \RR''$ are metric cones, this implies that they are isometric to $\IR^3$.
So, again all time-slices of $\RR'$ are isometric to $\IR^n$.
\end{enumerate}
The theorem now follows from a basic limit argument, Theorem~\ref{Thm_R_RR_star_intro} and Perelman's Pseudolocality Theorem \cite[10.1]{Perelman1}.
\end{proof}

Consider the following modification of Definition~\ref{Def_symm_points}:
\begin{Definition}[Weakly $(k,\eps,r)$-symmetric points] \label{Def_symm_points_weak}
Let $\XX$ be a metric flow over some interval $I$, $x_0 \in \XX_{t_0}$ a point and $r, \eps > 0$, $k \geq 0$.
We say that $x_0$ is {\bf weakly  $(k, \eps, r)$-symmetric} if $[t_0 - \eps^{-1} r^2, t_0] \subset I$ and if there is a Ricci flow $(M', (g'_t)_{t \in [-\eps^{-1},0]},x')$ on a compact manifold such that $(x',0)$ is $(\eps, 1)$-selfsimilar and
\begin{enumerate}
\item strongly $(k,\eps,1)$-split or
\item strongly $(k-2,\eps,1)$-split and $(\eps, 1)$-static,
\end{enumerate}
 such that after application of a time-shift and parabolic rescaling the metric flow pair
\[ \big(\XX_{[t_0 - \eps^{-1} r^2, t_0]}, (\nu_{x_0;t})_{t \in [t_0 - \eps^{-1} r^2, t_0]} \big) \] 
has $d_{\IF}$-distance $< \eps$ to $(M', \lb (g'_t)_{t \in [-\eps^{-1},0]}, \lb (\nu_{x';t})_{t \in [-\eps^{-1},0]})$ and such that $|\NN_{x',0}(1) - \NN_{x_0} (1)| \leq \eps$.
\end{Definition}

Analogously to Definition~\ref{Def_SS_quantitative} we define:

\begin{Definition}[Weak quantitative strata] \label{Def_SS_quantitative_weak}
Let $\XX$ be a metric flow over some interval $I$.
For $\eps > 0$ and $0 \leq r_1 < r_2 \leq \infty$ the {\bf quantitative strata},
\[ \widehat\SS^{\eps, 0}_{r_1, r_2} \subset \widehat\SS^{\eps, 1}_{r_1, r_2} \subset \widehat\SS^{\eps,2}_{r_1, r_2} \subset \ldots \subset \widehat\SS^{\eps,n-2}_{r_1, r_2} \subset  \XX \]
are defined as follows:
$x \in \widehat\SS^{\eps, k}_{r_1, r_2}$ if and only if $[t'- \eps^{-1} r_2^{2}, t'] \subset I$ and $x$ is \emph{not} weakly $(k+1, \eps, r')$-symmetric for any $r' \in (r_1, r_2)$.
\end{Definition}

By a standard compactness argument, involving the results from Subsection~\ref{subsec_part_reg_limit_intro}, we obtain:

\begin{Lemma} \label{Lem_weak_symmetric_symmetric}
Let $\XX$ be a metric flow that is obtained as an $\IF$-limit of Ricci flows as described in Subsection~\ref{subsec_more_general_F_limit}.
Let $\eps, r > 0$, $Y < \infty$ and suppose that some point $x_0 \in \XX$ satisfies $\NN_{x_0} (r^2) \geq - Y$ and is weakly $(k, \eps'(Y, \eps), r)$-symmetric.
Then it is also $(k, \eps, r)$-symmetric.
\end{Lemma}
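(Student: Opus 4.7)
I would argue by contradiction and compactness. Suppose the lemma fails: then for some fixed $\eps>0$, $Y<\infty$ there exist $\eps_i\to 0$, pointed metric flows $(\XX^{(i)},x_{0,i})$ each arising as a non-collapsed $\IF$-limit of Ricci flows as in Subsection~\ref{subsec_more_general_F_limit}, and scales $r_i>0$, such that $\NN_{x_{0,i}}(r_i^2)\geq -Y$ and $x_{0,i}$ is weakly $(k,\eps_i,r_i)$-symmetric but not $(k,\eps,r_i)$-symmetric. After parabolic rescaling I may take $r_i=1$. By Definition~\ref{Def_symm_points_weak}, choose Ricci flows $(M'_i,(g'_{i,t})_{t\in[-\eps_i^{-1},0]},x'_i)$ on compact manifolds such that $(x'_i,0)$ is $(\eps_i,1)$-selfsimilar and satisfies either (1) strong $(k,\eps_i,1)$-split or (2) strong $(k-2,\eps_i,1)$-split together with $(\eps_i,1)$-static; such that the rescaled flow pair at $x_{0,i}$ has $d_\IF$-distance $<\eps_i$ to $(M'_i,(g'_{i,t}),(\nu_{x'_i;t}))$; and such that $|\NN_{x'_i,0}(1)-\NN_{x_{0,i}}(1)|\leq\eps_i$. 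Passing to a subsequence, I assume Case~(1) (resp.\ Case~(2)) holds throughout, and note $\NN_{x'_i,0}(1)\geq -Y-1$.

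\textbf{Extracting the model.} By the compactness result of \cite[\SYNThmCompactnessFutCont]{Bamler_RF_compactness} applied to the pointed sequence $(M'_i,(g'_{i,t}),x'_i)$ and the non-collapsing bound above, I pass to a further subsequence to obtain $\IF$-convergence on compact time-intervals within some correspondence,
\[
(M'_i,(g'_{i,t})_{t\in[-\eps_i^{-1},0]},(\nu_{x'_i,0;t}))\xrightarrow[i\to\infty]{\IF,\CF'}(\XX',(\nu_{x'_\infty;t})_{t\leq 0}),
\]
where $\XX'$ is a future continuous, $H_n$-concentrated metric flow over $(-\infty,0]$ that is a non-collapsed $\IF$-limit of Ricci flows in the sense of Subsection~\ref{subsec_part_reg_limit_intro}, and the Nash entropies converge by Theorem~\ref{Thm_NN_pass_to_limit}. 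In Case~(1), combining Theorem~\ref{Thm_limit_from_selfsimilar} with Theorem~\ref{Thm_limit_from_strong_splitting} shows that $(\XX',(\nu_{x'_\infty;t}))$ is a metric soliton whose regular part splits isometrically as $\RR'\cong\RR''\times\IR^k$ of Ricci flow spacetimes, which is exactly the conclusion of Theorem~\ref{Thm_stratification_limit_main}\ref{Thm_stratification_limit_main_b1} together with Addendum~\ref{Add_limit_stratification_addendum_main}\ref{Add_limit_stratification_addendum_main_b1}. In Case~(2), I apply Theorem~\ref{Thm_limit_from_almost_cone} directly to the sequence (its hypotheses of almost selfsimilarity, almost staticity and almost $(k-2)$-splitting are satisfied along the subsequence) to conclude that $\XX'$ is static, $\RR'\cong\RR''\times\IR^{k-2}$ with $\Ric\equiv 0$, and every time-slice of $\RR''$ is isometric to a fixed Ricci-flat Riemannian cone, possibly minus its vertex, which is precisely Theorem~\ref{Thm_stratification_limit_main}\ref{Thm_stratification_limit_main_b2} and Addendum~\ref{Add_limit_stratification_addendum_main}\ref{Add_limit_stratification_addendum_main_b2}. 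In both cases the pair $(\XX',(\nu_{x'_\infty;t}))$ is an admissible model for the notion of $(k,\eps,r)$-symmetric in Definition~\ref{Def_symm_points}.

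\textbf{Triangle inequality and contradiction.} By the $\IF$-convergence of the smooth Ricci flows to $\XX'$, for large $i$ the restricted flow pair $(M'_i,(g'_{i,t})_{t\in(-\eps^{-1},0]},(\nu_{x'_i,0;t}))$ has $d_\IF$-distance $<\eps/2$ to the restricted pair $(\XX'_{(-\eps^{-1},0]},(\nu_{x'_\infty;t}))$. Combining with the $d_\IF$-closeness $<\eps_i<\eps/2$ between the rescaled pair at $x_{0,i}$ and the corresponding Ricci flow pair, the triangle inequality for $d_\IF$ yields that the rescaled flow pair at $x_{0,i}$ is within $d_\IF$-distance $<\eps$ of the admissible model above. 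Hence $x_{0,i}$ is $(k,\eps,1)$-symmetric for large $i$, contradicting our choice of the sequence.

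\textbf{Main obstacle.} The delicate step is verifying, in Case~(2), that the limit $\XX'$ satisfies the full Ricci-flat static cone structure of Addendum~\ref{Add_limit_stratification_addendum_main}\ref{Add_limit_stratification_addendum_main_b2}, including that the transverse factor $\RR''$ is a Riemannian cone (not merely Ricci flat and splitting soliton-like). This does not follow from the separate application of Theorems~\ref{Thm_limit_from_static} and \ref{Thm_limit_from_strong_splitting}; one needs Theorem~\ref{Thm_limit_from_almost_cone} invoked with $k$ replaced by $k-2$ so that the combined selfsimilar, static, and splitting data produce the cone structure in one stroke. A secondary subtlety is that $\XX^{(i)}$ is itself only a metric flow (not a smooth Ricci flow), but this causes no issue because the weakly symmetric condition compares $\XX^{(i)}$ to a \emph{smooth} Ricci flow, so the entire compactness argument takes place on the smooth side, with the resulting $\IF$-estimate transferred back to $\XX^{(i)}$ purely through the triangle inequality.
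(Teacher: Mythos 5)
Your proof is correct and follows essentially the same route as the paper's: extract the smooth Ricci flow model from the weak symmetry condition, pass to an $\IF$-limit using the compactness theory of \cite{Bamler_RF_compactness}, deduce the exact soliton/static-cone structure of that limit from Theorems~\ref{Thm_limit_from_strong_splitting}, \ref{Thm_limit_from_static}, \ref{Thm_limit_from_selfsimilar}, \ref{Thm_limit_from_almost_cone}, and close with the $d_\IF$ triangle inequality. Your remark that in Case~(2) one must invoke Theorem~\ref{Thm_limit_from_almost_cone} (rather than applying Theorems~\ref{Thm_limit_from_static} and \ref{Thm_limit_from_strong_splitting} separately) to obtain the full Riemannian cone structure of the transverse factor is an accurate reading of what that theorem provides beyond the other two, and matches why it appears in the paper's citation list.
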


\begin{proof}
This follows from \cite[\SYNCorCompactness]{Bamler_RF_compactness}, the results form Subsection~\ref{subsec_part_reg_limit_intro} and Theorems~\ref{Thm_limit_from_strong_splitting}, \ref{Thm_limit_from_static}, \ref{Thm_limit_from_selfsimilar}, \ref{Thm_limit_from_almost_cone}.
\end{proof}

Due to Lemma~\ref{Lem_weak_symmetric_symmetric}, Proposition~\ref{Prop_NN_variation_bound} and Lemma~\ref{Lem_conv_P_star} it suffices to prove Theorems~\ref{Thm_Quantitative_Strat}, \ref{Thm_Quantitative_Strat_XX} for the quantitative strata $\widehat\SS^{\eps, k}_{r_1, r_2}$ instead of $\SS^{\eps, k}_{r_1, r_2}$.

Theorem~\ref{Thm_Quantitative_Strat} now follows directly from Propositions~\ref{Prop_Quantitative_Strat_preliminary}, \ref{Prop_weak_splitting_map_to_splitting_map}.
The last part follows using Theorem~\ref{Thm_loc_reg_RF_intro}.

Theorem~\ref{Thm_int_curv_bounds_RF_intro} follows from Theorem~\ref{Thm_Quantitative_Strat} and the volume bound in \cite[\HKPstarVolBound]{Bamler_HK_entropy_estimates}.

Theorem~\ref{Thm_loc_reg_XX_intro} follows from Theorem~\ref{Thm_loc_reg_RF_intro} and Lemma~\ref{Lem_tdrrm} via a limit argument.

Theorem~\ref{Thm_Quantitative_Strat_XX} follows similarly as Theorem~\ref{Thm_Quantitative_Strat}, but using Proposition~\ref{Prop_prelim_quanti_strat_XX} instead of Proposition~\ref{Prop_Quantitative_Strat_preliminary} and using \cite[\SYNChangeBaseConv]{Bamler_RF_compactness}.
The last part follows using Theorem~\ref{Thm_loc_reg_XX_intro}.

Theorem~\ref{Thm_integral_bounds_F_limit} follows similarly as Theorem~\ref{Thm_int_curv_bounds_RF_intro}, but using Theorem~\ref{Thm_Quantitative_Strat_XX} and the volume bound from Lemma~\ref{Lem_limit_HK_bound}.

\section{Proof of Theorem~\ref{Thm_stratification_limit_main} and Addendum~\ref{Add_limit_stratification_addendum_main}}
Recall that Theorem~\ref{Thm_Quantitative_Strat_XX} also holds for the quantitative strata $\widehat\SS^{\eps, k}_{r_1, r_2}$.
Since
\[ \widehat\SS^{\eps, k}_{0, r_2} = \bigcap_{r_1 \in (0,r_2)} \widehat\SS^{\eps, k}_{r_1, r_2}, \]
we obtain that $\dim_{\MM^*} \widehat\SS^{\eps, k}_{0, r_2} \leq k$.
So if we set
\[ \SS^{k} := \bigcup_{\eps \in (0,1)} \widehat\SS^{\eps, k}_{0, \eps}, \]
then $\dim_{\mathcal{H}^*} \SS^k \leq k$.
The remaining properties concerning tangent flows at points in $\XX_{<0} \setminus \SS^k$ follow as in the proof of Lemma~\ref{Lem_weak_symmetric_symmetric}.

\section{Proofs of the theorems from Subsection~\ref{subsec_further_struc_static_soltion}}
By Lemma~\ref{Lem_slight_timeshift} we may assume that the metric flow $\XX$ in Theorems~\ref{Thm_static_limit_main}, \ref{Thm_metric_soliton_limit_main} is obtained as a limit of Ricci flows that are defined at time $0$, so we may assume in the following that we are in the same situation as described in Section~\ref{sec_proofs_limit}.

We will need the following definition.

\begin{Definition}
Consider a Ricci flow spacetime $(\MM, \tf, \partial_{\tf}, g)$ over some interval $I$.
A curve $\gamma : I' \to \MM$, $I' \subset I$ is called {\bf spacetime curve} if $\tf(\gamma(t)) = t$ for all $t \in I'$.
If $I' = [T_1, T_2)$, $\gamma$ is $C^1$, then for any $T\geq T_2$ we define its {\bf $\mathcal{L}$-length based at time $T$} as follows
\[ \mathcal{L}_T (\gamma) := \int_{T-T_2}^{T-T_1} \sqrt{\tau} \big( |\gamma'(T-\tau)|^2 + R(\gamma(T-\tau) \big) d\tau. \]
Here $|\gamma'(\tau)|$ denotes the norm of the projection of $\gamma'$ to $\ker d\tf$.
\end{Definition}

Note that if $\MM$ corresponds to a conventional Ricci flow on some manifold $M$, then any spacetime curve $\gamma$ corresponds to a curve of the form $(\td\gamma(t), t)$ for some $\td\gamma : I' \to M$ and $\mathcal{L}_T(\gamma)$ corresponds to the standard $\LL$-length of $\td\gamma$ based at time $T$.

\begin{Lemma} \label{Lem_LL_in_XX}
For any $\eps > 0$ there are constants $c(\eps) > 0$, $C(\eps) < \infty$ such that the following holds.
Let $\XX$ be an $\IF$-limit of Ricci flows, as described in Section~\ref{sec_proofs_limit}.
Consider a $C^1$ spacetime curve $\gamma : [s, t] \to \RR$ between points $x := \gamma(s)$ and $y := \gamma(t)$ and assume that $s + T_\infty > \eps$.
Then
\begin{equation} \label{eq_K_lower_lem}
 K(x;y) \geq (4\pi(t-s))^{-n/2} \exp \bigg( {- \frac{\LL_t (\gamma)}{2 \sqrt{t-s}} } \bigg), 
\end{equation}
\begin{equation} \label{eq_d_XX_nu_xy_LL}
 d^{\XX_s}_{W_1} (\delta_y, \nu_{x;s})
\leq C(\eps)\bigg( 1+ \frac{\LL_t (\gamma)}{2 \sqrt{t-s}} - \NN_x (t-s) \bigg)^{1/2} \sqrt{t-s} =: r ,
\end{equation}
\begin{equation} \label{eq_lower_vol_LL}
 |B(y, r \big) \cap \RR_t |_{g_t} \geq c(\eps) \exp (\NN_x(t-s)) r^n. 
\end{equation}
\end{Lemma}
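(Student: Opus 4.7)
The plan is to establish the three inequalities in sequence. I would derive \eqref{eq_K_lower_lem} as a direct limit of Perelman's classical $\mathcal{L}$-length lower bound on the approximating smooth flows, then obtain \eqref{eq_d_XX_nu_xy_LL} by comparing \eqref{eq_K_lower_lem} with the already-established upper Gaussian heat kernel bound, and finally deduce \eqref{eq_lower_vol_LL} from no-local-collapsing together with the Nash-entropy variation estimate.

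\textbf{For \eqref{eq_K_lower_lem}.} Since $\gamma([s,t]) \subset \mathcal{R}$ is compact, it lies in $U_i$ for all sufficiently large $i$ (recall the exhaustion $U_1 \subset U_2 \subset \ldots \subset \mathcal{R}^*=\mathcal{R}$ from Subsection~\ref{subsec_setup_limit}), and the lifted curves $\gamma_i := \psi_i \circ \gamma$ are $C^1$ spacetime curves in $M_i$ joining $\psi_i(x)$ to $\psi_i(y)$. By the $C^\infty_\loc$-convergence $\psi_i^* g_i \to g$ (and hence $\psi_i^* R_i \to R$) on $\mathcal{R}$, the $\mathcal{L}$-lengths satisfy $\mathcal{L}_t(\gamma_i) \to \mathcal{L}_t(\gamma)$. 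Perelman's lower bound \cite[Sec.~7]{Perelman1} applied on each smooth flow gives
\[
K_i(\psi_i(x);\psi_i(y)) \geq (4\pi(t-s))^{-n/2}\exp\bigl({-}\mathcal{L}_t(\gamma_i)/(2\sqrt{t-s})\bigr),
\]
and since the heat kernels converge $K_i \to K$ in $C^\infty_\loc$ on the relevant subset of $\mathcal{R}\times\mathcal{R}$ (again by Subsection~\ref{subsec_setup_limit}), passing to the limit yields \eqref{eq_K_lower_lem}.

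\textbf{For \eqref{eq_d_XX_nu_xy_LL}.} I would combine \eqref{eq_K_lower_lem} with the upper Gaussian heat kernel bound of Lemma~\ref{Lem_limit_HK_bound}\ref{Lem_limit_HK_bound_a}; this applies to $y \in \mathcal{R} = \mathcal{R}^*$ by Corollary~\ref{Cor_RRstar_RR}, and the hypothesis $s + T_\infty > \epsilon$ provides the required $T + s \geq \epsilon$ after choosing $T = T_\infty - \tfrac12\epsilon$. Taking logarithms of both inequalities and rearranging yields
\[
\frac{\bigl(d^{\mathcal{X}_s}_{W_1}(\nu_{x;s},\delta_y)\bigr)^2}{(8+\epsilon)(t-s)} \;\leq\; \frac{\mathcal{L}_t(\gamma)}{2\sqrt{t-s}} - \mathcal{N}_x(t-s) + C'(\epsilon),
\]
where any logarithmic residue arising from the dimensional prefactor is controlled using $t - s \leq T_\infty$ on the positive side (the negative contribution at small $t-s$ only strengthens the estimate). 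Solving this inequality for $d^{\mathcal{X}_s}_{W_1}$ produces \eqref{eq_d_XX_nu_xy_LL} after absorbing the additive constant into the factor $1$.

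\textbf{For \eqref{eq_lower_vol_LL}.} The bound \eqref{eq_d_XX_nu_xy_LL} combined with Proposition~\ref{Prop_NN_variation_bound}, applied with basepoints $(x,\tf(x))$ and $(y,\tf(y))$ at the intermediate time $t^* = \tf(y) - r^2$ (which lies in the flow interval precisely because of $s + T_\infty > \epsilon$), gives $\mathcal{N}_y(r^2) \geq \mathcal{N}_x(t-s) - C(\epsilon)$. I would then lift to $y_i := \psi_i(y) \in M_i$ and apply the no-local-collapsing estimate \cite[\HKThmNLC]{Bamler_HK_entropy_estimates} to obtain $|B(y_i, t, r)|_{g_{i,t}} \geq c\exp(\mathcal{N}_{y_i,t}(r^2)) r^n$; passing this bound through the limit using Theorem~\ref{Thm_NN_in_limit}\ref{Thm_NN_in_limit_a} for the Nash-entropy convergence $\mathcal{N}_{y_i,t}(r^2) \to \mathcal{N}_y(r^2)$ and the $C^\infty_\loc$-convergence $\psi_i^*g_i \to g$ for the volume, one arrives at \eqref{eq_lower_vol_LL}.

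\textbf{Main obstacle.} The principal technical difficulty lies in making Step~2 uniform with constants depending only on $\epsilon$: the preferred form of the upper Gaussian bound that carries such a clean constant has no explicit $(t-s)^{-n/2}$ prefactor, so the logarithmic residue after comparing with \eqref{eq_K_lower_lem} must be absorbed by exploiting the upper bound $t - s \leq T_\infty$. A secondary point is that Proposition~\ref{Prop_NN_variation_bound} in Step~3 must be applied at an intermediate time $t^*$ lying in the flow interval; verifying this, and organising the scales $r$ and $\sqrt{t-s}$ so that the logarithmic term in the variation inequality is absorbed into the constant, is exactly where the assumption $s + T_\infty > \epsilon$ is used.
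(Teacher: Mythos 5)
Your treatment of \eqref{eq_K_lower_lem} and \eqref{eq_d_XX_nu_xy_LL} essentially reproduces the paper's argument: lift $\gamma$ via $\psi_i$, apply Perelman's heat-kernel lower bound on the smooth flows, pass to the limit, and then compare with the Gaussian upper bound. One small citation note: the relevant lower bound $u \geq (4\pi\tau)^{-n/2} e^{-\ell}$ is Corollary~9.5 of Perelman's paper, not Section~7 (which only introduces $\mathcal{L}$-length). Your ``main obstacle'' paragraph is also a bit off: the upper Gaussian estimate used by the paper (equation~\eqref{eq_K_i_upper_HK_bound} there) carries the same $(t-s)^{-n/2}$ prefactor as the lower bound, so the two cancel exactly and no logarithmic residue needs absorbing; the second displayed form of Lemma~\ref{Lem_limit_HK_bound}\ref{Lem_limit_HK_bound_a}, which drops the prefactor, is not the one to use here.

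For \eqref{eq_lower_vol_LL} you take a genuinely different route than the paper, and it has a gap. The paper simply integrates the pointwise upper bound on $K(x;\cdot)$ over a ball around $y$ containing an $H_n$-center of $x$, where $\nu_{x;s}$ has mass $\geq \tfrac12$ by Lemma~\ref{Lem_mass_ball_Var}; this directly converts the mass lower bound and the kernel upper bound $K \leq C(\eps)(t-s)^{-n/2}e^{-\NN_x(t-s)}$ into a volume lower bound, without any further Nash-entropy comparison. Your route -- pushing the Nash entropy from $x$ to $y$ via Proposition~\ref{Prop_NN_variation_bound} and then invoking no-local-collapsing -- fails at the step ``$\NN_y(r^2) \geq \NN_x(t-s) - C(\eps)$.'' Proposition~\ref{Prop_NN_variation_bound} gives a bound of the form
\[
\NN^*_{s^*}(x_1,t_1) - \NN^*_{s^*}(x_2,t_2) \;\leq\; \Big(\tfrac{n}{2(t^*-s^*)} - R_{\min}\Big)^{1/2} d^{g_{t^*}}_{W_1}(\nu_{x_1,t_1;t^*},\nu_{x_2,t_2;t^*}) + \tfrac{n}{2}\log\Big(\tfrac{t_2-s^*}{t^*-s^*}\Big),
\]
which grows \emph{linearly} in the $W_1$-distance of the conjugate heat kernels. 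Here that distance is of order $r$, and since $R_{\min}$ is only bounded in terms of $\eps$ (not in terms of $t^*-s^*$ at that scale), the first term on the right is of order $\sqrt{1/\eps}\,r$, not a constant $C(\eps)$. Likewise the logarithmic term produces a contribution of order $\log\big(1 + \LL_t(\gamma)/(2\sqrt{t-s}) - \NN_x(t-s)\big)$ when you compare $\NN_x(t-s+r^2)$ with $\NN_x(t-s)$. Since $r$ and this logarithm are unbounded in terms of $\eps$ alone, the claimed uniform-in-$\eps$ transfer of Nash entropy from $x$ to $y$ does not hold, and the subsequent no-local-collapsing step then yields the wrong constant. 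The integration argument the paper uses is therefore essential; the Nash-entropy-transport route is insufficient without substantial reworking.
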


\begin{proof}
Consider the maps $\psi_i : U_i \to V_i$ describing the smooth convergence on $\RR = \RR^* \subset \XX$ (compare with the discussion in Subsection~\ref{subsec_setup_limit}).
Then for large $i$, we have $\gamma([s,t]) \subset U_i$ and if we set $\gamma_i := \psi_i \circ \gamma$, then 
\[ \lim_{i \to \infty} \LL_t (\gamma_i) = \LL_t(\gamma). \]
Therefore, by \cite[9.5]{Perelman1} we have for large $i$
\begin{equation*} 
K(x;y) = \lim_{i \to \infty} K_i (\psi_i(x); \psi_i(y) ) \geq  (4\pi(t-s))^{-n/2} \exp \bigg({ - \frac{\LL_t(\gamma)}{2 \sqrt{t-s}} }\bigg), 
\end{equation*}
which implies (\ref{eq_K_lower_lem}).
On the other hand, we obtain from Lemma~\ref{Lem_limit_HK_bound}\ref{Lem_limit_HK_bound_a} that for any $y' \in \RR_s$
\begin{equation} \label{eq_K_i_upper_HK_bound}
 K(x;y') \leq \frac{C(\eps)}{(t-s)^{n/2}}  
\exp \bigg( { - \NN_x(t-s) - \frac{(d^{\XX_s}_{W_1}(\nu_{x;s}, \delta_{y'}))^2}{10(t-s)} }\bigg). 
\end{equation}
Combining (\ref{eq_K_lower_lem}), (\ref{eq_K_i_upper_HK_bound}) implies (\ref{eq_d_XX_nu_xy_LL}).

For (\ref{eq_lower_vol_LL}) we may use Lemma~\ref{Lem_mass_ball_Var} and possibly adjust $r$ to conclude
\[ \int_{B(y,r)} K(x;\cdot) dg_s = \nu_{x;s} (B(y,r)) \geq \tfrac12 \]
Combining this bound with (\ref{eq_K_i_upper_HK_bound}) implies (\ref{eq_lower_vol_LL}).
\end{proof}
\bigskip

\begin{proof}[Proof of Theorem~\ref{Thm_static_limit_main}.]
We first prove the following claim.

\begin{Claim}
For any compact interval $[T_1, T_2] \subset (-T_\infty,0)$ we have
\[ \int_{T_1}^{T_2}\int_{M_i} |{\Ric}|^2 d\nu_{x_i,0;t}dt \to 0. \]
\end{Claim}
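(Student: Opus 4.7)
The starting point is the identity $\square R = 2|\Ric|^2$, which combined with (\ref{eq_dds_int_u_int_square_u}) applied to $u = R$ gives $\frac{d}{dt}\int_{M_i} R\, d\nu_{x_i,0;t} = 2\int_{M_i} |\Ric|^2\, d\nu_{x_i,0;t}$, hence
\[ 2\int_{T_1}^{T_2}\int_{M_i}|\Ric|^2 \, d\nu_{x_i,0;t}\, dt = \int_{M_i} R\, d\nu_{x_i,0;T_2} - \int_{M_i} R\, d\nu_{x_i,0;T_1}. \]
Thus it suffices to show that $\int_{M_i} R\, d\nu_{x_i,0;t} \to 0$ as $i \to \infty$ for $t = T_1, T_2$. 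Since for each $i$ the function $t \mapsto \int_{M_i} R\, d\nu_{x_i,0;t}$ is non-decreasing (the derivative is $\geq 0$), this will follow once the convergence is established on a set of $t$ that is dense in $(-T_\infty, 0)$, by sandwiching between good times $t_- < t < t_+$ and invoking monotonicity before sending $i \to \infty$.

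To establish convergence at a generic $t^* \in (-T_\infty, 0)$, I pass to a subsequence along which the convergence (\ref{eq_F_conv_proofs}) is timewise at $t^*$ using \cite[\SYNLemSubseqTimewiseAE]{Bamler_RF_compactness}; \cite[\SYNThmConvImpliesStrict]{Bamler_RF_compactness} then gives strict $W_1$-convergence of the time-$t^*$-slice measures. Corollary~\ref{Cor_RRstar_RR} gives $\RR^* = \RR$ and $C^\infty_{\loc}$-convergence $R_i \circ \psi_{i,t^*} \to R_\infty|_{\RR_{t^*}}$, and the hypothesis $\Ric \equiv 0$ on $\RR$ forces $R_\infty \equiv 0$ there. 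Since Assumption~\ref{Aspt_working_ass} holds for $\Delta = 4$ by Corollary~\ref{Cor_working_ass_holds}, Theorem~\ref{Thm_SS_dimension_bound_limit}\ref{Thm_SS_dimension_bound_limit_b} yields $\nu_{x_\infty;t^*}(\SS_{t^*}) = 0$. Given $\eta > 0$, I split $\int_{M_i} R\, d\nu_{x_i,0;t^*}$ into a piece over $\psi_{i,t^*}(K)$ for a compact $K \subset \RR_{t^*}$ with $\nu_{x_\infty;t^*}(K) > 1 - \eta$ (where $|R_i| < \eta$ for large $i$ by uniform convergence on compact subsets) and a tail over the complement (whose $\nu_{x_i,0;t^*}$-mass is at most $2\eta$ for large $i$ by strict convergence). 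The negative part of the tail is controlled by the uniform lower bound $R \geq R_{\min}$ from Lemma~\ref{Lem_lower_scal}, while the positive part is handled by Cauchy--Schwarz and the uniform bound $\int_{M_i} R^2\, d\nu_{x_i,0;t^*} \leq C$, available at a full-measure set of $t^*$ by Fubini applied to the integrated estimate $\int_{T_1}^{T_2}\int_{M_i} |\Ric|^2\, d\nu_{x_i,0;t}\, dt \leq C(Y, T_1, T_2)$ coming from (\ref{eq_L2_Ric_basic}) (a standard diagonal argument over a countable dense set of times makes the slice bound uniform in $i$). Sending $\eta \to 0$ then gives $\int_{M_i} R\, d\nu_{x_i,0;t^*} \to 0$.

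Combining this convergence on a dense set with the sandwich argument from the first paragraph gives $\int_{M_i} R\, d\nu_{x_i,0;t} \to 0$ for every $t \in (-T_\infty, 0)$; a standard subsequence argument upgrades the conclusion to the original sequence. Substituting $t = T_1, T_2$ into the displayed identity finishes the claim. The principal difficulty is the tail step: without pointwise control on $R_i$ near the singular set, the argument must pair a global $L^2$-bound on $\Ric$ against the fact $\nu_{x_\infty;t^*}(\SS_{t^*}) = 0$, and the latter is only available because codimension $4$ of the singular set has already been secured via Corollary~\ref{Cor_working_ass_holds}.
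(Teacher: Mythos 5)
Your proposal relies on exactly the same core ingredients the paper uses --- the identity $\square R = 2|\Ric|^2$, the $L^2$-bound on $R$ over $[T'_1,T'_2]$ from Proposition~\ref{Prop_improved_L2}, the vanishing of $\Ric$ on $\RR$, and Cauchy--Schwarz against a set of small $\nu$-measure --- but you route them through the fundamental theorem of calculus at the specific times $T_1, T_2$, using monotonicity of $t \mapsto \int_{M_i} R\, d\nu_{x_i,0;t}$ and a sandwich argument. The paper instead shows directly that $\int_{T'_1}^{T'_2}\int_{M_i}|R|\,d\nu_{x_i,0;t}\,dt \to 0$ (by finding sets $Y_i \subset M_i \times [T'_1,T'_2]$ with $\sup_{Y_i}|R|\to 0$ and small complement, then Cauchy--Schwarz against the time-integrated $L^2$-bound) and concludes via integration by parts against a compactly supported time-cutoff $\eta$, using $2\int \eta(t)\int|\Ric|^2 d\nu\,dt = -\int\eta'(t)\int R\,d\nu\,dt$. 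The cutoff route is cleaner because it never needs to evaluate $\int R\,d\nu_t$ at a \emph{specific} time $t$, and hence avoids needing a slice-wise $L^2$-bound on $R$ that holds uniformly in $i$.

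That is precisely where your write-up has an imprecision. You assert that Fubini plus ``a standard diagonal argument over a countable dense set of times'' makes $\int_{M_i}R^2\,d\nu_{x_i,0;t^*}\leq C$ uniform in $i$ at a full-measure set of $t^*$. As stated this is false: a uniform time-integrated $L^2$-bound does not yield a time-slice bound that is uniform in $i$ (the exceptional small-measure sets where $a_i(t):=\int R^2 d\nu_t$ is large can drift with $i$). What is actually available is Fatou's lemma: $\liminf_i a_i(t) < \infty$ for a.e.\ $t$. Combined with \cite[\SYNLemSubseqTimewiseAE]{Bamler_RF_compactness}, this gives a full-measure set of $t^*$ at which, \emph{after passing to a further subsequence depending on $t^*$}, both timewise convergence and a slice $L^2$-bound hold. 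One then runs your good-set/tail decomposition at two generic times $t_-<T_1$ and $t_+>T_2$, sandwiches $\int R\,d\nu_{T_1}$ and $\int R\,d\nu_{T_2}$ by monotonicity, and finishes because the conclusion $\int_{T_1}^{T_2}\int|\Ric|^2\to 0$ needs only to be verified along a subsubsequence of any given subsequence. So the proof is salvageable as written --- you correctly invoke the final subsequence trick --- but ``Fubini/diagonal gives a uniform slice bound'' should be replaced by ``Fatou gives a finite $\liminf$, and the slice bound is secured along a further subsequence.'' Overall, the paper's time-cutoff approach buys you exactly the avoidance of this subsequence bookkeeping.
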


\begin{proof}
Fix $[T_1, T_2] \subset (-T_\infty,0)$ and choose $T'_1, T'_2$ such that $[T_1, T_2] \subset (T'_1, T'_2) \subset (-T_\infty,0)$.
By Proposition~\ref{Prop_improved_L2} we have
\begin{equation} \label{eq_RsquTpp1Tpp2}
 \int_{T'_1}^{T'_2} \int_{M_i} R^2 d\nu_{x_i,0;t} dt \leq C < \infty. 
\end{equation}
On the other hand, since $\Ric = 0$ on $\RR$, we can find subsets $Y_i \subset M_i \times [T'_1, T'_2]$ with
\[ \sup_{Y_i} |R| \to 0, \qquad \int_{T'_1}^{T'_2} \int_{M_i \times \{t \} \setminus Y_i}  d\nu_{x_i,0;t} dt \to 0. \]
Combining this with (\ref{eq_RsquTpp1Tpp2}) implies 
\[  \int_{T'_1}^{T'_2} \int_{M_i} |R| \, d\nu_{x_i,0;t} dt \to 0. \]
Let now $\eta : (T'_1, T'_2) \to [0,1]$ be a compactly supported cutoff function with $\eta \equiv 1$ on $[T_1, T_2]$.
Then
\begin{multline*}
  \int_{T_1}^{T_2} \int_{M_i} |{\Ric}|^2 d\nu_{x_i,0;t} dt
\leq 2 \int_{T'_1}^{T'_2}\eta(t)  \int_{M_i}  |{\Ric}|^2 d\nu_{x_i,0;t} dt \\
= \int_{T'_1}^{T'_2} \eta(t)  \int_{M_i} \square R \, d\nu_{x_i,0;t} dt
= -  \int_{T'_1}^{T'_2} \eta'(t)\int_{M_i}  R \, d\nu_{x_i,0;t} dt \to 0 \qedhere
\end{multline*}
\end{proof}
\medskip

We may now apply Theorem~\ref{Thm_limit_from_static} and obtain that $\XX_{<0}$ is static.
Moreover, we obtain an identification $\XX_{<0} = X \times (-T_\infty,0)$ for a static model
\[ (X,d, (\nu'_{x;t})_{x \in X; (-T_\infty,0) \cap (t + (-T_\infty,0) \neq \emptyset} ) \]
such that Assertion~\ref{Thm_static_limit_main_a} of the theorem holds and there is an open subset $\RR^*_X \subset \RR_X$ such that Assertions~\ref{Thm_static_limit_main_b}, \ref{Thm_static_limit_main_c} hold if we replace $\RR_X$ with $\RR^*_X$.
In addition, $\RR^*_X \subset X$ is dense and $d|_{\RR^*_X}$ is equal to the length metric of $(\RR^*_X, g_X|_{\RR^*_X})$.
It follows from Theorem~\ref{Thm_SS_dimension_bound_limit}, see also Theorem~\ref{Thm_XX_uniquely_det_RR}, that the families of conjugate heat kernels $(\nu'_{x;t})_{x \in X; (-T_\infty,0) \cap (t + (-T_\infty,0) \neq \emptyset}$ are uniquely determined by $(X, d, \RR^*_X)$.
By \cite[\HKThmUpperVolBound]{Bamler_HK_entropy_estimates} we also obtain that for any compact subset $K \subset X$ and any $D < \infty$ there is a constant $\kappa_2(K,D) < \infty$ such that
\[ |B(x,r) \cap \RR^*_X|_{g_X} < \kappa_2 r^n. \]

Next we apply Lemma~\ref{Lem_LL_in_XX} and obtain:

\begin{Claim} \label{Cl_HK_no_drift}
For any $\eps > 0$ there are constants $c(\eps) > 0$, $C(\eps) < \infty$ such that the following holds:
\begin{enumerate}[label=(\alph*)]
\item \label{Cl_HK_no_drift_a} For any $x \in \RR_t$, $t < 0$, and $s < t$, $s \in (-T_\infty,0)$ with $T_\infty + s \geq \eps$ we have
\[  d^{\XX_s}_{W_1} (\delta_{x(s)}, \nu_{x;s}) \leq C(\eps) (1-\NN_{x}(t-s))^{1/2} \sqrt{t-s} =: r \]
and
\[ |B(x(s), r) \cap \RR^*_X| \geq c(\eps) \exp(\NN_x(t-s)) r^n. \]
The same bounds also hold if $x \in \XX_t$ and $x(s)$ is replaced with the point in $\XX_s$ with the same first coordinate with respect to the identification $\XX_{<0} = X \times (-T_\infty,0)$.
\item \label{Cl_HK_no_drift_b} For any compact subset $K \subset X$ and any $D < \infty$ there is a constant $\kappa_1(K,D) > 0$ such that
\[ |B(x,r) \cap \RR^*_X|_{g_X} > \kappa_1 r^n. \]
\item \label{Cl_HK_no_drift_c} \label{Cl_HK_properties_c} If we consider the identification $\XX_{<0} = X \times (-T_\infty,0)$, then we have the following estimate for $P^*$-parabolic neighborhoods around any point $x \in \XX_t$, whenever  $0 < r \leq \eps^{-1}$, $[t - T^-, t + T^+] \subset (-T_\infty + 2\eps, -\eps)$, $\eps > 0$, $0 \leq T^-, T^+ \leq \eps^{-1}$ and $\NN_x (\eps) \geq -\eps^{-1}$
\begin{multline*}
 \qquad\qquad B\big(x, r - C(\eps) \sqrt{T^-+T^+} \big) \times [t - T^-, t + T^+] \subset P^* (x, t; r, -T^-, T^+) \\ \subset B\big(x,  r + C(\eps) \sqrt{T^-+T^+}\big) \times [t - T^-, t + T^+]. 
\end{multline*}
In particular, if $t - r^2 > -T_{\infty} +2\eps$, then
\begin{equation*}
\qquad\qquad B( x, c(\eps) r) \times [t - c(\eps)r^2, t+c(\eps)r^2] \subset P^* (x, t; r) 
 \subset  B( x, C(\eps) r) \times [t- r^2,t+r^2]. 
\end{equation*}
\item \label{Cl_HK_no_drift_d} The natural topology on $\XX_{<0}$ agrees with the product topology on $X \times (-T_\infty, 0) = \XX_{<0}$.
\end{enumerate}
\end{Claim}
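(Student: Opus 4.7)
The plan is to deduce all four assertions from Lemma~\ref{Lem_LL_in_XX} applied to static worldlines, combined with the static identification $\XX_{<0} = X \times (-T_\infty,0)$ and standard Nash-entropy variation bounds.

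The key observation for Assertion~\ref{Cl_HK_no_drift_a} is that if $x \in \RR_t$, then by Assertion~\ref{Thm_static_limit_main_b} the trajectory $\gamma(s) := x(s)$ of $\partial_{\tf}$ is a spacetime curve lying entirely in $\RR$ on the time-interval $(-T_\infty, t]$. For this curve, the horizontal projection of $\gamma'$ vanishes identically, and since $\Ric \equiv 0$ on $\RR$ we also have $R \equiv 0$ on $\RR$; therefore $\LL_t(\gamma) = 0$. Applying Lemma~\ref{Lem_LL_in_XX} with this $\gamma$ yields precisely the $W_1$-bound and the lower volume bound of Assertion~\ref{Cl_HK_no_drift_a} for $x \in \RR_t$. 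To extend this to $x \in \XX_t$, we use that $\RR_t \subset \XX_t$ is dense (Theorem~\ref{Thm_XX_reg_determines_XX_intro}), pick a sequence $x_j \in \RR_t$ with $x_j \to x$ in the natural topology, observe that $x_j(s) \to x(s)$ in the product identification (by definition), and pass to the limit using continuity of $\nu_{\cdot;s}$ in the $W_1$-metric (a consequence of the metric flow axioms) together with Theorem~\ref{Thm_NN_pass_to_limit} (or the lower semi-continuity of Nash entropy from Proposition~\ref{Prop_NN_variation_bound}).

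For Assertion~\ref{Cl_HK_no_drift_b}, given a compact subset $K \subset X$ and $D < \infty$, Proposition~\ref{Prop_NN_variation_bound} combined with the static structure gives a uniform lower bound $\NN_x (\tau) \geq -Y'(K, D, \eps)$ for all $x \in K$ (viewed as a point in any appropriate time-slice) and all $\tau$ in a controlled range. Given $r \in (0,D)$, we select $t-s$ proportional to $r^2$ (with a proportionality constant depending on $Y'$) so that the $r$ from Assertion~\ref{Cl_HK_no_drift_a} is $\leq r$ in the claim; the resulting lower volume bound from Assertion~\ref{Cl_HK_no_drift_a}, together with the uniform lower bound on the Nash entropy, yields $|B(x, r) \cap \RR^*_X|_{g_X} \geq \kappa_1(K,D) r^n$.

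Assertion~\ref{Cl_HK_no_drift_c} follows from the triangle inequality for $W_1$. For $y \in \XX_{t+T^+}$ with $y_X \in X$ its first coordinate, we estimate
\[ d^{\XX_{t-T^-}}_{W_1}(\nu_{x; t-T^-}, \nu_{y; t-T^-}) \leq d^{\XX_{t-T^-}}_{W_1}(\nu_{x;t-T^-}, \delta_{(x_X, t-T^-)}) + d(x_X, y_X) + d^{\XX_{t-T^-}}_{W_1}(\delta_{(y_X, t-T^-)}, \nu_{y;t-T^-}), \]
where under the static identification the middle term equals $d(x_X, y_X)$. Both outer terms are bounded by $C(\eps)\sqrt{T^- + T^+}$ using Assertion~\ref{Cl_HK_no_drift_a} and Proposition~\ref{Prop_NN_variation_bound}. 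This gives the desired sandwich of $P^*$-parabolic neighborhoods between product balls with slightly perturbed radii.

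Finally, Assertion~\ref{Cl_HK_no_drift_d} is a direct corollary of Assertion~\ref{Cl_HK_no_drift_c}: the natural topology on $\XX_{<0}$ is generated by $P^*$-parabolic neighborhoods, and Assertion~\ref{Cl_HK_no_drift_c} shows that the collection of such neighborhoods is mutually cofinal with the collection of product neighborhoods $B(x, r) \times (t - \delta, t+\delta)$. The main obstacle in this program is Assertion~\ref{Cl_HK_no_drift_a} at singular basepoints $x \in \SS_t$: we must verify that the convergences $\nu_{x_j;s} \to \nu_{x;s}$ and $\NN_{x_j}(t-s) \to \NN_x(t-s)$ interact correctly with the lower volume bound on $B(x(s), r) \cap \RR^*_X$, which requires an appropriate monotonicity or semicontinuity argument for volumes of balls as the center varies.
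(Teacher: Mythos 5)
Your proposal is correct and follows essentially the same route as the paper's (very terse) proof: apply Lemma~\ref{Lem_LL_in_XX} to worldlines $s \mapsto x(s)$, which have zero $\mathcal{L}$-length since the spatial velocity vanishes and $R \equiv 0$ on the Ricci-flat regular part, then deduce (b)--(d) from (a) together with Nash-entropy variation bounds. The concern you raise at the end about the limit argument for singular base points is resolved by the standard trick of slightly enlarging the constant $C(\eps)$: once $r_j + d(x_j(s), x(s)) < r'$ for $j$ large, the inclusion $B(x_j(s), r_j) \subset B(x(s), r')$ transfers the lower volume bound, losing only a bounded factor; the paper dismisses this as "a limit argument," and you have supplied the details it omits.
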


\begin{proof}
The first part of Assertion~\ref{Cl_HK_no_drift_a} follows by applying Lemma~\ref{Lem_LL_in_XX} to spacetime curves of the form $t \mapsto x(t)$, which have zero $\LL$-length.
The second part follows from the first part via a limit argument.
Assertion~\ref{Cl_HK_no_drift_b} is a direct consequence of Assertion~\ref{Cl_HK_no_drift_a}; note that it suffices to prove this assertion for small $r$.
Assertion~\ref{Cl_HK_no_drift_c} is a consequence of Assertion~\ref{Cl_HK_no_drift_a}; observe that by passing Proposition~\ref{Prop_NN_variation_bound} to the limit, using Lemma~\ref{Lem_conv_P_star}, we obtain a lower bound on the pointed Nash entropy on $P^* (x, t; r, -T^-, T^+)$ in terms of $\eps$.
Assertion~\ref{Cl_HK_no_drift_d} is a direct consequence of Assertion~\ref{Cl_HK_no_drift_c}.
\end{proof}

Claim~\ref{Cl_HK_no_drift}\ref{Cl_HK_no_drift_c} and Theorem~\ref{Thm_SS_dimension_bound_limit} imply that for $\SS^*_X := X \setminus \RR^*_X$ we have
\begin{equation} \label{eq_dim_SS_star}
 \dim_{\mathcal{M}} \SS^*_X \leq n-4. 
\end{equation}
It follows that $\RR_X \setminus \RR^*_X = \SS^*_X \setminus \SS_X$ is a set of measure zero and thus $(X,d)$ is a singular space.

\begin{Claim} \label{Cl_RRXRRXstar}
$\RR^*_X = \RR_X$.
\end{Claim}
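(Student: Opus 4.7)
The inclusion $\RR^*_X \subset \RR_X$ is automatic from Assertion~\ref{Thm_static_limit_main_c}, which identifies the induced length metric on $\RR^*_X$ with $d|_{\RR^*_X}$ via the smooth Riemannian structure of $g_X|_{\RR^*_X}$. The plan is thus to fix $x \in \RR_X$ and establish $x \in \RR^*_X$. Since $\XX$ is static, it is enough to show $(x,-1) \in \RR$, because then by the product identification $\RR = \RR^*_X \times (-T_\infty,0)$ already produced by Theorem~\ref{Thm_limit_from_static}, we conclude $x \in \RR^*_X$. By definition of $\RR_X$, there is a local isometry $\phi : (M',g') \to (X,d)$ from some smooth Riemannian $n$-manifold with $x$ in the image; hence every metric tangent cone of $(X,d)$ at $x$ is isometric to $(\IR^n, d_{\eucl})$.

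The key step is to exploit a tangent flow of $\XX$ at $(x,-1)$ to evaluate the pointed Nash entropy at that point. Consider any tangent flow $(\XX',(\nu_{x';s})_{s \leq 0})$ at $(x,-1)$, realized via \cite[\SYNThmTangenFlowasLimit]{Bamler_RF_compactness} as an $\IF$-limit of parabolic rescalings of $(\XX_{\leq -1}, (\nu_{(x,-1);s})_{s \leq -1})$ by factors $\lambda_j \to \infty$. Each rescaled flow remains static with time-slices $(X, \lambda_j d)$, and Theorem~\ref{Thm_limit_from_static} applied to the limit $\XX'$ (which inherits the hypothesis via $\int |{\Ric}|^2 d\nu \to 0$ from the vanishing of $\Ric$ on $\RR$) shows that $\XX'$ is itself static, with time-slices isometric to a pointed metric tangent cone of $(X,d)$ at $x$. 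By the previous paragraph this tangent cone is $(\IR^n,d_{\eucl},\vec 0)$, and so $\XX'$ is the constant flow on Euclidean space. The centering $\nu_{x';s}$ at the origin follows by passing the $W_1$-control of $\nu_{(x,-1);s}$ about the spacetime worldline $(x,s)$ in Claim~\ref{Cl_HK_no_drift}\ref{Cl_HK_no_drift_a} to the limit, so $\nu_{x';s}$ is the standard Gaussian conjugate heat kernel based at $\vec 0$.

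For the Euclidean tangent flow one has $\NN_{x';s}(\tau) \equiv 0$ for all $\tau > 0$. Theorem~\ref{Thm_Nash_on_tangent_cones} then forces $\NN_{(x,-1)}(0) = 0 \geq -\eps_n$, and its second clause yields $(x,-1) \in \RR$, as required.

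The main obstacle is justifying that every tangent flow of the static flow $\XX$ at $(x,-1)$ is indeed the constant Euclidean flow, rather than some more exotic static soliton whose time-slices are only metric tangent cones of $X$. What makes this go through is the combination of the uniform non-collapsing bound on $\NN$ (which propagates to the rescalings and rules out lower-dimensional collapse), Theorem~\ref{Thm_limit_from_static} (which upgrades ``static in the limit'' to the desired product structure), and the forced Euclidean identification of the metric tangent cone of $(X,d)$ at $x \in \RR_X$; once these three inputs are in place, the application of Theorem~\ref{Thm_Nash_on_tangent_cones} is immediate.
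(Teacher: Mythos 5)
Your proof takes a genuinely different route from the paper's. The paper argues by a removable-singularity theorem: it observes that $\RR_X \times (-T_\infty,0)$ carries a smooth static Ricci flow spacetime structure extending the one on $\RR$, that bounded heat/conjugate heat flows on $\XX$ are locally bounded functions smooth on $\RR$, and that the set $(\RR_X \setminus \RR^*_X) \times (-T_\infty,0)$ has parabolic Minkowski codimension $\geq 4$ by (\ref{eq_dim_SS_star}). Hence bounded solutions to the heat equation on $\RR$ are \emph{weak} solutions across this codimension-$\geq 4$ set and therefore smooth, so the Ricci flow spacetime structure on $\RR$ extends to all of $\RR_X \times (-T_\infty,0)$, i.e., $\RR_X \times (-T_\infty,0) \subset \RR$. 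This is a local PDE argument that never touches tangent flows. Your plan is instead to evaluate $\NN_{(x,-1)}(0)$ by inspecting the tangent flow at $(x,-1)$ and then invoke the $\eps$-regularity in Theorem~\ref{Thm_Nash_on_tangent_cones}.

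The difficulty is that the key intermediate assertion — that the tangent flow at $(x,-1)$ is the constant flow on $\IR^n$ with the standard Gaussian conjugate heat flow — has two genuine gaps. First, to apply Theorem~\ref{Thm_limit_from_static} to $\XX'$ in its ``more generally'' form, you must exhibit a sequence of smooth Ricci flows $\IF$-converging to $\XX'$ with $\int |{\Ric}|^2 \, d\nu_j \to 0$. The sequence realizing $\XX'$ via \cite[\SYNThmTangenFlowasLimit]{Bamler_RF_compactness} uses conjugate heat kernels based near $(x,-1)$ and rescaled to arbitrarily small scales; the $L^2$-Ricci decay established at the start of the proof of Theorem~\ref{Thm_static_limit_main} is for the conjugate heat kernel based at $(x_i,0)$ at scale $1$, and transferring that estimate to a different basepoint and to small scales is precisely the kind of ``changing the base of the weight'' problem that the paper's machinery (Proposition~\ref{Prop_inheriting_bounds}, the $e^{\alpha f}$-weighted bounds) is built to handle — it is not automatic. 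You would have to rerun that argument, at which point the approach is no shorter than the paper's. Second, and more seriously, identifying the time-slices of $\XX'$ with the pointed metric tangent cone of $(X,d)$ at $x$ is not a consequence of $\IF$-convergence alone: $\IF$-convergence controls Gromov--$W_1$ distances of conjugate heat flows, not Gromov--Hausdorff distances of time-slices. The upgrade from Gromov--$W_1$ to Gromov--Hausdorff for these tangent flows is exactly Assertion~\ref{Thm_static_limit_main_e} of Theorem~\ref{Thm_static_limit_main}, which the paper derives \emph{after} this Claim and whose proof cites this Claim (through Claim~\ref{Cl_lower_ptwise_K}). So as written your step 2 is circular. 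The inputs you correctly identify as key — uniform non-collapsing, the static/metric-soliton structure of tangent flows, the Euclidean tangent cone at $x \in \RR_X$ — are all right, but the argument assembling them must be reordered to avoid relying on the conclusion.
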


\begin{proof}
Consider the identification $\XX_{<0} = X \times (-T_\infty,0)$ and recall that $\RR = \RR^*_X \times (-T_\infty,0)$.
By Claim~\ref{Cl_HK_no_drift}\ref{Cl_HK_no_drift_d} we can extend the Ricci flow spacetime structure on $\RR$ to an open subset of the form $\RR' := \RR_X \times (-T_\infty,0) \subset \XX_{<0}$.
We claim that $\RR' = \RR$.
To see this, we need to show that bounded heat flows and conjugate heat flows of $\XX$, over open time-intervals, are smooth on $\RR'$.
Note that these flows are smooth on $\RR$.
In addition, in order to prove smoothness of conjugate heat flows on $\RR'$ it also suffices to prove smoothness of conjugate heat kernels, due to the reproduction formula.
By Lemma~\ref{Lem_limit_HK_bound}\ref{Lem_limit_HK_bound_a} these flows are of the form $v \,  dg_t$ on $\RR$ for some locally bounded scalar function $v$.

So in summary, it suffices to show that any locally defined and bounded solution to the heat equation or conjugate heat equation on $\RR$ can be extended smoothly to $\RR'$.
Due to (\ref{eq_dim_SS_star}) such solutions are local weak solutions to the corresponding equation, so therefore they are smooth.
\end{proof}

Claim~\ref{Cl_RRXRRXstar} and (\ref{eq_dim_SS_star}) imply Assertion~\ref{Thm_static_limit_main_d} of this theorem.
Assertions~\ref{Thm_static_limit_main_e}, \ref{Thm_static_limit_main_f} follow from what we have shown so far, from Theorems~\ref{Thm_tangent_cone_metric_cone_main}, \ref{Thm_stratification_limit_main} and from the following claim, which allows to deduce Gromov-Hausdorff convergence from Gromov-$W_1$-convergence.

\begin{Claim} \label{Cl_lower_ptwise_K}
For any $x \in \XX_t$, $t < 0$, and $D <\infty$ the following is true for $r \leq \ov r (x,D)$.
Let $x' \in \XX_{t - r^2}$ be the point whose projection to the first factor of $\XX_{<0} = X \times (-T_\infty,0)$ agrees with that of $x$.
Then for all $y \in \RR_{t-r^2}$ with $d_{t - r^2} (x',y) \leq D r$ we have
\[ K(x;y) \geq \frac{c}{(t-s)^{n/2} } \exp\bigg({ - \frac{d^2_{t-r^2} (x',y) }{10 r^2} }\bigg). \]
\end{Claim}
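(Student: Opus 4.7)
The strategy is to apply Lemma~\ref{Lem_LL_in_XX} (Perelman's $\LL$-length lower bound on the heat kernel) to an explicit spacetime curve in $\RR$ running from $y$ at time $t-r^2$ to a regular approximation of $x$ at time $t$. Two features of the static, Ricci-flat setting make this workable: first, $R \equiv 0$ on $\RR$, so the $\LL$-length collapses to $\int_0^{r^2} \sqrt\tau\, |\gamma'(t-\tau)|^2\, d\tau$; second, by Theorem~\ref{Thm_static_limit_main} the restriction $d|_{\RR_X}$ coincides with the length metric of the Riemannian manifold $(\RR_X, g_X)$, so distances in $(X,d)$ between regular points can be realized by curves lying in $\RR_X$ up to arbitrarily small error.

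The first step is to reduce to $x \in \RR_t$. Since $\RR_X \subset X$ is open and dense, and by Claim~\ref{Cl_HK_no_drift}\ref{Cl_HK_no_drift_d} the natural topology on $\XX_{<0}$ agrees with the product topology, we can approximate $x = (x_1, t)$ by points $\td x = (\td x_1, t) \in \RR_t$ with $\td x_1 \to x_1$ in $(X,d)$. For fixed $y \in \RR_{t-r^2}$, continuity of the heat kernel in its first argument (part of the smooth regularity package for $\XX$, cf.\ Theorem~\ref{Thm_XX_reg_determines_XX_intro}) together with continuity of $d_{t-r^2}(\cdot, y)$ reduce the statement to proving the lower bound for $\td x \in \RR_t$ with a constant uniform in $\td x$.

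Fix $\sigma > 0$. By Theorem~\ref{Thm_static_limit_main}, there is an arc-length-parametrized $C^1$ curve $\eta:[0,L] \to \RR_X$ with $\eta(0) = \td x_1$, $\eta(L) = y_1$, and $L \leq d(\td x_1, y_1) + \sigma$. Using the static product identification $\XX_{<0} = X \times (-T_\infty, 0)$, define
\[ \gamma : [t-r^2, t] \to \RR, \qquad \gamma(t') := \Big(\eta\big(L(t' - (t-r^2))/r^2\big),\ t'\Big), \]
a $C^1$ spacetime curve in $\RR$ with $\gamma(t-r^2) = y$, $\gamma(t) = \td x$, and constant spatial speed $L/r^2$. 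Since $R \equiv 0$ on $\RR$,
\[ \LL_t(\gamma) = \int_0^{r^2} \sqrt\tau \cdot (L/r^2)^2 \, d\tau = \frac{2L^2}{3r}, \]
and Lemma~\ref{Lem_LL_in_XX} yields $K(\td x; y) \geq (4\pi r^2)^{-n/2} \exp(-L^2/(3r^2))$. Letting $\sigma \to 0$ and then $\td x \to x$ gives $K(x;y) \geq (4\pi r^2)^{-n/2} \exp(-d^2/(3r^2))$, where $d := d_{t-r^2}(x',y)$. Since $d \leq Dr$ by hypothesis, the elementary inequality $d^2/(3r^2) \leq d^2/(10r^2) + 7D^2/30$ converts this into the claimed bound with $c := (4\pi)^{-n/2} e^{-7D^2/30}$.

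The main technical obstacle is ensuring that the spacetime curve $\gamma$ actually stays inside $\RR$ when $x$ itself may lie in $\SS$; this is handled by the combination of the density of $\RR_X$, the length-metric property of Theorem~\ref{Thm_static_limit_main}, and continuity of the heat kernel in its first argument, which together allow the limit $\td x \to x$. The threshold $\ov r(x,D)$ enters only very mildly, to ensure $t - r^2 \in (-T_\infty, 0)$ and to satisfy the Lemma's hypothesis $s + T_\infty > \eps$.
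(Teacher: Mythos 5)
Your proof is correct and unpacks exactly what the paper's one-line proof intends: apply Lemma~\ref{Lem_LL_in_XX} to a spacetime curve built from a near-geodesic in $(\RR_X, g_X)$ via the static product structure, use $R \equiv 0$ to simplify $\LL_t$, and handle a possibly singular $x$ by approximating from $\RR_t$ and invoking continuity of $K(\cdot\,;y)$. One small slip: with $\eta(0)=\td x_1$, $\eta(L)=y_1$, your formula gives $\gamma(t-r^2)=(\td x_1,t-r^2)$ and $\gamma(t)=(y_1,t)$, not $\gamma(t-r^2)=y$ and $\gamma(t)=\td x$ as stated — you should reverse the parametrization of $\eta$ so that the later endpoint of the spacetime curve is $\td x$ and the earlier one is $y$, matching the convention $\tf(x)>\tf(y)$ in $K(x;y)$; this does not affect the $\LL$-length computation or the final bound.
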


\begin{proof}
This is a direct consequence of Lemma~\ref{Lem_LL_in_XX} and the fact that the restriction of $d$ to $\RR_X$ is equal to the length metric of $g_X$.
\end{proof}

This finishes the proof.
\end{proof}
\bigskip

\begin{proof}[Proof of Theorem~\ref{Thm_metric_soliton_limit_main}.]
The proof is similar to that of Theorem~\ref{Thm_static_limit_main}.
In the following we only explain the modifications that need to be made.
Theorem~\ref{Thm_limit_from_selfsimilar} implies the first two identities in (\ref{eq_sol_id_met_sol_limit_thm}) and the existence of a metric measure space $(X,d, \mu)$ and an identification $\XX_{<0} = X \times (-T_\infty,0)$ such that Assertions~\ref{Thm_metric_soliton_limit_main_a}--\ref{Thm_metric_soliton_limit_main_d} hold, where we need to replace $\RR_X$ with some dense open subset $\RR^*_X \subset \RR_X$.
The last identity of (\ref{eq_sol_id_met_sol_limit_thm}) follows from the first one and the evolution equation (\ref{eq_potential_evolution_equation}) for $f$.
The fact that $(X,d, \mu)$ characterizes $(\XX_{<0}, (\nu_{x_\infty;t})_{t \in (T_\infty,0)})$ up to flow-isometry follows as in the proof of Theorem~\ref{Thm_static_limit_main}.

Due to selfsimilary of $\RR$ and the fact that $R$ is bounded from below, we must have $R \geq 0$ on $\RR$.
So the second identity in (\ref{eq_sol_id_met_sol_limit_thm}) implies that $\tau |\nabla f|^2 \leq f - W$, which implies a bound on $f$ on bounded subsets of time-slices.
So, again by the second identity in (\ref{eq_sol_id_met_sol_limit_thm}), we obtain uniform bounds on $|\nabla f|$ and $R$ on bounded subsets of time-slices.
Therefore, we may apply Lemma~\ref{Lem_LL_in_XX} to trajectories of $\partial_{\tf} - \nabla f$ and obtain Claim~\ref{Cl_HK_no_drift} from the proof of Theorem~\ref{Thm_static_limit_main} under the additional assumption that $f(x) \leq \eps^{-1}$.
In a similar fashion, we can account for the vector field $\nabla f$ in the proof of Claim~\ref{Cl_lower_ptwise_K}.
The fact that tangent cones of $(X,d)$ are Ricci flat follows from the fact that the scalar curvature on $\RR_X$ is locally bounded; so blowups of $\XX$ have $R = 0$ on the regular part and therefore, by the evolution equation for $R$, we obtain that $\Ric = 0$ on the regular part of blowups.
\end{proof}
\bigskip

\begin{proof}[Proof of Theorem~\ref{Thm_almost_cone_rigidity_intro}.]
This follows from Theorem~\ref{Thm_metric_soliton_limit_main} via a compactness argument.
\end{proof}
\bigskip

\section{Proofs of Theorems from Subsection~\ref{subsec_first_sing_intro}}
Consider the definition of $(M_T, d_{M_T})$ from Definition~\ref{Def_MT}.
We first establish the claim below this definition.
Let $(\mu_t)_{t \in [0,T)} \in M_T$ and consider a sequence $t_i \nearrow T$.
Choose $y_i \in M$ such that $\Var (\delta_{y_i}, \mu_{t_i}) \leq H_n (T - t_i)$.
Then for any $t \in [0,T)$ we have for sufficiently large $i$
\[ d^{g_t}_{W_1} (\nu_{y_i,t_i;t}, \mu_{t})
\leq d^{g_{t_i}}_{W_1} (\delta_{y_i}, \mu_{t_i})
\leq \sqrt{\Var (\delta_{y_i}, \mu_{t_i})}
\leq \sqrt{H_n (T - t_i)} \to 0. \]
\bigskip

\begin{proof}[Proof of Lemma~\ref{Lem_MT}.]
To see the completeness, consider a Cauchy sequence $(\mu_{i,t})_{t \in [0,T)}$ in $(M_T, \lb d_{M_T})$.
Since by monotonicity of the $W_1$-distance we have for any $t \in [0,T)$
\[ d^{g_t}_{W_1} (\mu_{i,t}, \mu_{j,t})
\leq d_{M_T} \big( (\mu_{i,t})_{t \in [0,T)}, (\mu_{j,t})_{t \in [0,T)} \big) \xrightarrow[i,j \to \infty]{} 0, \]
we obtain that for any $t \in [0,T)$
\[ \mu_{i,t} \xrightarrow[i \to \infty]{} \mu_{\infty,t}, \]
where $(\mu_{\infty,t})_{t \in [0,T)} \in M_T$.
To see that $(\mu_{i,t})_{t \in [0,T)} \to (\mu_{\infty,t})_{t \in [0,T)}$ in $(M_T, d_{M_T})$, fix some $\eps > 0$ and choose $i$ large enough such that for all $j \geq i$ and $t \in [0,T)$
\[ d^{g_t}_{W_1} (\mu_{i,t}, \mu_{j,t}) \leq d_{M_T} \big( (\mu_{i,t})_{t \in [0,T)},(\mu_{i,t})_{t \in [0,T)} \big) \leq \eps . \]
Letting $j \to \infty$ implies that for all $t \in [0,T)$
\[ d^{g_t}_{W_1} (\mu_{i,t}, \mu_{\infty,t}) \leq \eps, \]
which implies
\[ d_{M_T} \big( (\mu_{i,t})_{t \in [0,T)}, (\mu_{\infty,t})_{t \in [0,T)} \big) \leq \eps. \]
The second assertion can be shown using \cite[\HKCenterConstantRmBound]{Bamler_HK_entropy_estimates}.
\end{proof}
\bigskip

\begin{proof}[Proof of Theorem~\ref{Thm_first_sing_time_intro}.]
Fix some $(\mu_t)_{t \in [0,T)} \in M_T$ and consider a sequence $(y_j,t_j) \in M \times [0,T)$ such that for any fixed $t \in [0,T)$ we have $\nu_{y_j,t_j;t} \to \mu_t$ as $j \to \infty$.
By local smooth convergence of the conjugate heat kernel we also have $\NN_{y_j,t_j} (t_j - t) \to \NN_{(\mu_t)} (T-t)$ as $j \to \infty$.
So as in the proof of Lemma~\ref{Lem_slight_timeshift} the $\IF$-convergence in (\ref{eq_IF_conv_first_sing_time}) can be replaced by 
\[ \big( M, (\tau_i^{-1} g_{\tau_i t+ t_{j_i}} )_{t \in [-\tau_i^{-1}t_{j_i}, 0)}, (\nu_{y_{j_i},t_{j_i};\tau_i t+ t_{j_i}})_{t \in [-\tau_i^{-1} t_{j_i}, 0)} \big) \xrightarrow[i \to \infty]{\quad \IF, \CF \quad} \big( \XX, (\mu^\infty_t)_{t < 0} \big) \]
The theorem follows now using \cite[\SYNCorCompactness]{Bamler_RF_compactness}, the results from Subsections~\ref{subsec_part_reg_limit_intro}, \ref{subsec_Nash_in_limit_intro}, Theorem~\ref{Thm_metric_soliton_limit_main} and \cite[\HKThmEpsRegularity]{Bamler_HK_entropy_estimates}.
\end{proof}
\bigskip

\section{Proof of the Theorem from Subsection~\ref{subsec_asympt_soliton}}
The theorem follows again using \cite[\SYNCorCompactness]{Bamler_RF_compactness}, the results from Subsections~\ref{subsec_part_reg_limit_intro}, \ref{subsec_Nash_in_limit_intro} and Theorem~\ref{Thm_metric_soliton_limit_main}.

\section{Proof of the Theorem from Subsection~\ref{subsec_thick_thin_intro}}
Consider an immortal Ricci flow $(M, (g_t)_{t \geq 0})$ on a compact, $n$-dimensional manifold.
By Lemma~\ref{Lem_lower_scal} we have
\[ R + \frac{n}{2t} \geq 0. \]
So if $V(t) := |M|_t$ denotes the volume of $(M,g_t)$, then
\[ \frac{dV}{dt} = - \int_M R \, dg_t = - \int_M \Big( R + \frac{n}{2t} \Big) dg_t + \frac{n}{2t} V, \]
which implies that
\[ \frac{d}{dt} \big( t^{-n/2} V(t) \big)
= - t^{-n/2} \int_M \Big( R + \frac{n}{2t} \Big) dg_t \leq 0. \]
So
\[ \int_1^\infty \int_M \Big( R + \frac{n}{2t} \Big) dg_t dt < \infty. \]
It follows that for any $\eps > 0$ we have
\[ \int_{\eps t_0}^{t_0} \int_M \Big| R + \frac{n}{2t} \Big| dg_t dt \xrightarrow[t_0 \to \infty]{} 0. \]
So Theorem~\ref{Thm_longtime_limit_intro} is a consequence of the following proposition.

\begin{Proposition}
Consider a sequence of pointed Ricci flows $(M_i, (g_{i,t})_{t \in (0,1]}, x_i)$ on compact, $n$-dimensional manifolds such that we have $\IF$-convergence
\[ (M_i, (g_{i,t})_{t \in (0,1]}, (\nu_{x_i,1;t})_{t \in (0, 1]}) \xrightarrow[i \to \infty]{\quad \IF, \CF \quad} (\mathcal{X}, (\nu_{x_\infty;t})_{t \in (0,1]}), \]
within some correspondence $\CF$, where we  assume that $\XX$ is a future continuous and $H_n$-concentrated metric flow of full support over $(0,1]$ (this is the same setting as assumed in Subsection~\ref{subsec_part_reg_limit_intro} modulo a time-shift).

Suppose that for any $\eps > 0$ we have
\[ \int_{\eps}^{1-\eps} \int_{M_i} \Big| R + \frac{n}{2t} \Big| d\nu_{x_i,1;t} dt \xrightarrow[i \to \infty]{} 0. \]
Then $\XX$ satisfies all assertions of Theorem~\ref{Thm_longtime_limit_intro}.
\end{Proposition}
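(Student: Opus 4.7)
The strategy is to identify $\XX_{<1}$ as the trivial expanding soliton $g_t = t g_X$ of a singular Einstein space $(X, d)$ with $\Ric_{g_X} = -\frac12 g_X$. The first step is to derive the expanding soliton identity $\Ric + \frac{1}{2t} g = 0$ on the regular part $\RR \subset \XX_{<1}$. Since the convergence is smooth on $\RR$ by Corollary~\ref{Cor_RRstar_RR}, and the limiting heat kernel has positive density there, the integral hypothesis forces the pointwise identity $R_\infty(\cdot, t) = -n/(2t)$ on $\RR$: any violation on an open set would propagate through smooth convergence on compact subsets of $\RR$ and positivity of the limit density, contradicting the vanishing of $\int \int |R_i + n/(2t)| d\nu_{x_i,1;t} dt$. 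Because $R_\infty$ depends only on $t$, $\Delta R_\infty = 0$, so $\square R_\infty = \partial_t R_\infty = n/(2t^2)$; combined with $\square R = 2|\Ric|^2$ and the algebraic inequality $|\Ric|^2 \geq R^2/n = n/(4t^2)$, equality must hold, forcing $\Ric = (R/n) g = -\frac{1}{2t} g$ on $\RR$. This gives $\partial_t(t^{-1} g_t) = 0$, so $g_X := t^{-1} g_t$ is a well-defined time-independent Riemannian metric on each connected component of $\RR$.

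Next I would establish the global product structure $\RR = \RR_X \times (0, 1)$. Locally the soliton identity yields the product form; to globalize I would adapt the completeness argument of Theorem~\ref{Thm_limit_from_selfsimilar} (cf.\ Claim~\ref{Cl_traj_dt_exist_unti_t_star} in the proof of Theorem~\ref{Thm_limit_from_static}) with the vector field $V := t \partial_\tf$ playing the role of $\tau \partial_\tf - \tau \nabla f$: show that the scale-invariant quantity $\NN_{\gamma(s)}(a \tf(\gamma(s)))$ is constant in $s$ along orbits $\gamma$ of $V$ for arbitrary fixed $a > 0$, then invoke Theorem~\ref{Thm_NN_in_limit}\ref{Thm_NN_in_limit_b} together with Lemma~\ref{Lem_tdrrm}\ref{Lem_tdrrm_g} to rule out escape of orbits from $\RR$ in finite $s$, so that $V$ is complete on $\IR$ and $\partial_\tf$ is complete on $(0, 1)$. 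Since Assumption~\ref{Aspt_working_ass} holds for $\Delta = 4$ (Corollary~\ref{Cor_working_ass_holds}), Theorem~\ref{Thm_XX_uniquely_det_RR} then identifies $\XX_{<1}$ with $X \times (0, 1)$, where $(X, d)$ is the singular space arising as the metric completion of $(\RR_X, d_{g_X})$; this gives properties (a)--(d) of Theorem~\ref{Thm_longtime_limit_intro} directly.

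For the scaling identity (e), on $\RR$ the heat operator takes the form $\square = \partial_t - t^{-1} \Delta_{g_X}$, and a direct computation shows $K(x, \lambda^2 t; y, \lambda^2 s) = \lambda^{-n} K(x, t; y, s)$ for the heat kernel; combined with $dg_{\lambda^2 s} = \lambda^n dg_s$, this gives $\nu_{(x, \lambda^2 t); \lambda^2 s} = \nu_{(x, t); s}$ on $\RR$. Extension to all of $X \times (0, 1)$ follows from density of $\RR$ (Theorem~\ref{Thm_XX_reg_sing_dec_main}) and continuity of conjugate heat kernels in the basepoint and evaluation time. The structural assertions on $(X, d)$, namely properties (d)--(f) of Theorem~\ref{Thm_static_limit_main}, follow by applying Theorems~\ref{Thm_tangent_cone_metric_cone_main} and \ref{Thm_stratification_limit_main} to tangent flows at points of $\XX_{<1}$: these tangent flows are metric solitons, and the ambient scaling invariance kills the soliton's shrinking factor, leaving static Ricci-flat cones whose time-slices realize the tangent cones of $(X, d)$.

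The main obstacle is the completeness step in the second paragraph: one must adapt the Nash-entropy constancy trick of Theorem~\ref{Thm_limit_from_selfsimilar} to the trivial-potential case $f \equiv 0$ with the expanding scaling generator $t \partial_\tf$, and verify that the relevant scale-invariant quantity remains controlled along orbits whose time coordinate approaches $0$ or $1$, so that Theorem~\ref{Thm_NN_in_limit}\ref{Thm_NN_in_limit_b} can be invoked to prevent escape from $\RR$.
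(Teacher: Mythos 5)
The algebraic argument in your first paragraph is clever (deriving $R_\infty = -n/(2t)$ from smooth convergence on $\RR$ and positivity of $K$, then using $\square R = 2|\Ric|^2$ together with $|\Ric|^2 \geq R^2/n$ to force $\Ric_\infty = -\tfrac{1}{2t}g_\infty$), but it yields only the \emph{pointwise} soliton identity on the regular part $\RR$ of the limit, and this is not a substitute for what the machinery you then invoke actually requires: the \emph{pre-limit} integral bound
\[
\int_{2\eps}^{1-2\eps}\int_{M_i}\Big|\Ric + \frac{1}{2t}g_t\Big|^2 \, d\nu_{x_i,1;t}\,dt \xrightarrow[i\to\infty]{} 0.
\]
This is the hypothesis (\ref{eq_intint_Ric_to_0}) of Theorem~\ref{Thm_limit_from_static}, and the proof of the infinitesimal translation property (\ref{eq_K_dt_invariant}) there uses it directly: the constructed test functions $u^{\prime\prime,i}_h$ live on the flows $M_i$, and the key estimate
\[
\int_{M_i}\big|\partial_h u^{\prime\prime,i}_h + \partial_t u^{\prime\prime,i}_h\big|\,d\nu_{t_1}
\leq \Big(2\int_h^{t_1}\int_{M_i}|\Ric|^2 d\nu^i_t\,dt\Big)^{1/2}\Big(\int_{M_i}|\nabla u^{\prime\prime,i}_h|^2 d\nu^i_h\Big)^{1/2}
\]
needs smallness of the $L^2$-norm of $\Ric$ (resp. $\Ric + \tfrac{1}{2t}g$ in the expanding case) against $d\nu_i$ \emph{before} passing to the limit. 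Your pointwise identity on $\RR$ carries no information about where $|\Ric_i + \tfrac{1}{2t}g_i|^2$ concentrates with respect to $d\nu_{x_i,1}$ on $M_i$; a priori this quantity could accumulate near the soon-to-be-singular set, and Proposition~\ref{Prop_improved_L2} only gives a uniform bound, not smallness. Without this input, you cannot run the proof of (\ref{eq_K_dt_invariant}), and consequently the Nash-entropy constancy along worldlines (Claim~\ref{Cl_traj_dt_exist_unti_t_star}) and the subsequent completeness argument do not go through --- the ``trajectory death'' scenario is excluded precisely by combining (\ref{eq_K_dt_invariant}) with the strong maximum principle.

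The paper fills this gap with a short integration-by-parts computation: using $\square R = 2|\Ric|^2$ and $|\Ric + \tfrac{1}{2t}g|^2 = |\Ric|^2 + \tfrac{R}{t} + \tfrac{n}{4t^2}$, one multiplies by a time-cutoff $\eta$ supported in $(\eps, 1-\eps)$ and integrates. The boundary term $\int\eta'(t)\int_{M_i}R\,d\nu$ is compared to $\int\eta'(t)\tfrac{n}{2t}\,dt$ using the hypothesis $\int\int|R + \tfrac{n}{2t}|d\nu \to 0$, and the residual deterministic integral cancels exactly with $\int\eta\,\tfrac{n}{4t^2}\,dt$. The lower bound $R \geq -\tfrac{n}{2t}$ (Lemma~\ref{Lem_lower_scal}) is not even needed for this estimate; the whole thing is bounded by $C(\eps)\int\int|R + \tfrac{n}{2t}|d\nu\,dt\to 0$. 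Once this $L^2$ bound is in place, your remaining steps (completeness of $t\partial_\tf$, identification of $\XX_{<1}$ via Theorem~\ref{Thm_SS_dimension_bound_limit} rather than Theorem~\ref{Thm_XX_uniquely_det_RR}, the scaling identity for the heat kernel on $\RR$ and its density extension, and the structural assertions on $(X,d)$) follow the paper's route and are essentially sound. So the gap is localized in one place, but it is a genuine one: you must replace the pointwise derivation of the soliton identity with (or supplement it by) the integral estimate.
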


\begin{proof}
Fix some small $\eps > 0$ and let $\eta : (\eps,1-\eps) \to [0,1]$ be a compactly supported cutoff function with $\eta \equiv 1$ on $[2\eps, 1-2\eps]$ and $|\eta'| \leq 10 \eps^{-1}$.
Then
\begin{align*}
 \int_{2\eps}^{1-2\eps} \int_{M_i} & \Big| \Ric + \frac{1}{2t} g_t \Big|^2d\nu_{x_i,1;t} dt
\leq \int_{\eps}^{1-\eps} \int_{M_i} \eta(t) \Big| \Ric + \frac{1}{2t} g_t \Big|^2d\nu_{x_i,1;t} dt \\
&= \int_{\eps}^{1-\eps} \int_{M_i} \eta(t) \Big( |{\Ric}|^2 + \frac{1}{t} R + \frac{n}{4t^2} \Big) d\nu_{x_i,1;t} dt \displaybreak[1] \\
&\leq \int_{\eps}^{1-\eps} \int_{M_i} \eta(t) \Big( \frac12 \square R - \frac{n}{4t^2}  \Big) d\nu_{x_i,1;t} dt  + \eps^{-1} \int_{\eps}^{1-\eps} \int_{M_i} \Big| R + \frac{n}{2t} \Big| d\nu_{x_i,1;t} dt \displaybreak[1] \\
&= - \frac12 \int_{\eps}^{1-\eps} \int_{M_i}  \eta' (t) R \,   d\nu_{x_i,1;t} dt - \int_{\eps}^{1-\eps} \eta(t) \frac{n}{4t^2} \, dt  + \eps^{-1} \int_{\eps}^{1-\eps} \int_{M_i} \Big| R + \frac{n}{2t} \Big| d\nu_{x_i,1;t} dt \displaybreak[1] \\
&\leq  \frac12 \int_{\eps}^{1-\eps} \eta' (t) \frac{n}{2t}\, dt  - \int_{\eps}^{1-\eps} \eta(t) \frac{n}{4t^2} \, dt + C (\eps) \int_{\eps}^{1-\eps} \int_{M_i} \Big| R + \frac{n}{2t} \Big| d\nu_{x_i,1;t} dt \\
&= C (\eps)\int_{\eps}^{1-\eps} \int_{M_i} \Big| R + \frac{n}{2t} \Big| d\nu_{x_i,1;t} dt \xrightarrow[i \to \infty]{} 0.
\end{align*}
The proof is now analogous to the proof of Theorem~\ref{Thm_static_limit_main} with suitable modifications in Theorem~\ref{Thm_limit_from_static} and its proof.
\end{proof}
\bigskip

\section{Proofs of the Theorems from Subsection~\ref{subsec_bckw_pseudo}}
We first show the following lemma:

\begin{Lemma} \label{Lem_bckwd_pseudo_preliminary}
For $\alpha > 0$ there is an $\eps ( \alpha) > 0$ such that the following holds.

Let $(M, (g_t)_{t \in I})$ be a Ricci flow on a compact, $n$-dimensional manifold and let $(x_0, t_0) \in M \times I$ a point and $r > 0$ a scale such that $[t_0 -   r^2, t_0] \subset I$ and:
\begin{enumerate}[label=(\roman*)]
\item $|B(x_0,t_0,r)| \geq \alpha r^n$.
\item \label{Lem_bckwd_pseudo_preliminary_ii} $|{\Rm}| \leq \alpha^{-1} r^{-2}$ on $P(x_0, t_0; r, -(\alpha r)^2)$.
\item $R(x_0,t_0) \leq \eps r^{-2}$.
\item $R (\cdot, t_0 - r^2) \geq - \eps r^{-2}$.
\end{enumerate}
Then $|{\Rm}| \leq \alpha r^{-2} + \sup_{B(x_0, t_0,r)} |{\Rm}|$ and $|{\Ric}| \leq \alpha r^{-2}$ on $P(x_0,t_0; (1-\alpha) r, - (1-\alpha) r^2)$. 
\end{Lemma}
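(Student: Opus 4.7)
The plan is to argue by contradiction and compactness. Suppose the lemma fails for some fixed $\alpha > 0$; then, after parabolic rescaling so that $r = 1$ and $t_0 = 0$, we obtain a sequence of counterexamples $(M_i, (g_{i,t})_{t \in I_i}, x_i)$ with $\eps_i \to 0$. Hypothesis~(iv) and the scalar curvature maximum principle applied to $\partial_t R = \triangle R + 2|{\Ric}|^2$ give $R \geq -\eps_i$ on all of $M_i \times [-1,0]$. Combined with hypothesis~(iii) and the identity $\square R = 2|{\Ric}|^2$, integrating against the conjugate heat kernel $\nu_{x_i,0}$ yields
\[ \int_{-1}^{0} \int_{M_i} 2|{\Ric}|^2 \, d\nu_{x_i,0;t} \, dt = R(x_i,0) - \int_{M_i} R \, d\nu_{x_i,0;-1} \leq 2\eps_i \longrightarrow 0. \]
Meanwhile hypothesis~(i) together with the curvature bound (ii) provides a uniform lower bound $\NN_{x_i,0}(1) \geq -Y(\alpha)$ on the pointed Nash entropy (through \cite[\HKThmNLC]{Bamler_HK_entropy_estimates} and standard volume comparison on the smooth backward tube).

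Next I would pass to an $\IF$-limit, as in Section~\ref{Sec_basic_limits}, obtaining along a subsequence
\[ (M_i, (g_{i,t})_{t \in [-1,0]}, (\nu_{x_i,0;t})_{t \in [-1,0]}) \xrightarrow[i \to \infty]{\quad \IF, \CF \quad} (\XX, (\nu_{x_\infty;t})_{t \in [-1,0]}). \]
The vanishing of the global $L^2$ Ricci integral is precisely the hypothesis of Theorem~\ref{Thm_limit_from_static}, so $\Ric \equiv 0$ on the regular part $\RR \subset \XX$ and $\RR_{<0}$ is identified with a constant flow on a Ricci flat Riemannian manifold $(M_\infty, g_\infty)$. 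Because of hypotheses~(i) and~(ii), Hamilton's smooth compactness theorem gives smooth Cheeger--Gromov convergence of $P(x_i, 0; 1, -\alpha^2)$ to a smooth open neighborhood of $(x_\infty, 0) \in \RR$, and this smooth piece embeds isometrically into the static model $M_\infty \times (-1, 0)$.

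To finish, I would use the static Ricci flat structure to propagate smoothness backward in time. Since the identification $\RR_{<0} \cong M_\infty \times (-1,0)$ is isometric and constant in $t$, the parabolic tube over the time-$0$ ball $B(x_\infty, 0, 1-\alpha/2)$ is contained entirely in $\RR$. By Theorem~\ref{Thm_R_RR_star_intro} the $\IF$-convergence is smooth on all of $\RR$, so for large $i$ the conventional parabolic neighborhood $P(x_i, 0; 1-\alpha, -(1-\alpha))$ is unscathed with uniformly bounded curvature, and its geometry converges smoothly to that of the static limit. Since $\Ric \equiv 0$ and $|{\Rm}|$ is independent of time in the limit, the smooth convergence yields both $|{\Ric}| \leq \alpha$ and $|{\Rm}| \leq \alpha + \sup_{B(x_\infty, 0, 1)} |{\Rm}| \leq \alpha + \sup_{B(x_i, 0, 1)}|{\Rm}|$ on this enlarged neighborhood for all sufficiently large $i$, contradicting the choice of counterexamples.

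The main obstacle will be the propagation step in the third paragraph: upgrading the \emph{a priori} smooth convergence on $P(x_i,0;1,-\alpha^2)$ to smooth convergence and classical unscathedness on the larger backward tube $P(x_i,0;1-\alpha,-(1-\alpha))$. The static Ricci flat structure of the $\IF$-limit, combined with the codimension-$4$ bound of Theorem~\ref{Thm_XX_reg_sing_dec_main}, rules out singular points along the enlarged tube; the remaining technical point is verifying that the metric-flow convergence on this region translates into classical unscathedness and $C^\infty$-convergence on the approximating Ricci flows, which is precisely the content of Theorem~\ref{Thm_R_RR_star_intro} together with \cite[\SYNConvParabNbhd]{Bamler_RF_compactness}.
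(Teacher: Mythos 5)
Your overall strategy --- contradiction and compactness, extraction of an $\IF$-limit, the integration-by-parts argument for $\int\int|{\Ric}|^2\,d\nu\to 0$, application of Theorem~\ref{Thm_limit_from_static}, and then transporting the static structure back to the approximating flows via smooth convergence on $\RR$ --- is the same route the paper takes, and your explicit derivation of the $L^2$-Ricci decay is a correct filling-in of a step the paper leaves implicit. However, there is a genuine gap in the third paragraph. You repeatedly argue with a \emph{time-$0$ ball} $B(x_\infty,0,1-\alpha/2)$ (and later $B(x_\infty,0,1)$) inside $\XX$, and speak of a neighborhood of $(x_\infty,0)\in\RR$. But the limit satisfies $\XX_0=\{x_\infty\}$, and by construction $\RR\subset\XX_{<0}$: there is no regular time-$0$ slice, no ball around $x_\infty$ at time $0$, and no meaningful comparison $\sup_{B(x_\infty,0,1)}|{\Rm}|\le\sup_{B(x_i,0,1)}|{\Rm}|$. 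The static identification $\RR\cong M_\infty\times(-1,0)$ gives you a backward tube only over balls at negative times.

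The paper handles this by working at a time $t^*\in(-\alpha^2,0)$ close to $0$ at which the convergence is time-wise, picking an $H_n$-center $z_\infty\in\XX_{t^*}$ of $x_\infty$, approximating it by $z_i\in M_i$, and estimating $d_{t^*}(z_i,x_i)\le C(\alpha)\sqrt{|t^*|}$ via the variance/concentration bounds and the curvature estimate of hypothesis~(ii). The unscathed backward tube in $\RR$ then sits over $B(z_\infty,1-\delta)\subset\RR_{t^*}$ rather than over a nonexistent time-$0$ slice, and the final comparison of $|{\Rm}|$ on the backward tube with $\sup_{B(x_i,0,1)}|{\Rm}|$ is recovered by letting $\delta\to 0$ and $t^*\to 0$. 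This $H_n$-center step at $t^*<0$ is the load-bearing technical maneuver your proposal is missing; without it, the passage from the static limit to the conclusion does not go through. A secondary, smaller point: you invoke the codimension-$4$ bound of Theorem~\ref{Thm_XX_reg_sing_dec_main} to ``rule out singular points along the enlarged tube,'' but codimension bounds alone do not place a specific backward tube inside $\RR$; what actually provides the unscathed tube is the completeness of the $\partial_{\tf}$-trajectories supplied by the static case of Theorem~\ref{Thm_limit_from_static}.
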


\begin{proof}
Without loss of generality, we may assume that $r = 1$, $t_0 = 0$ and $I = [-1, 0]$.
Assume by contradiction that the lemma is false for some fixed $\alpha > 0$.
Choose a sequence of counterexamples $(M_i, (g_{i, t})_{t \in [-1,0]})$, $x_i \in M_i$ for some sequence $\eps_i \to 0$.
By \cite[\HKThmUpperVolBound]{Bamler_HK_entropy_estimates} we have a uniform bound of the form $\NN_{x_i,0}(\frac12) \geq - C(\alpha)$.
So after passing to a subsequence, we may assume that we have $\IF$-convergence within some correspondence $\CF$
\begin{equation} \label{eq_F_conv_pseudo}
 (M_i, (g_{i, t})_{t \in (-1,0]}), (\nu_{x_i,0;t})_{t \in (-1,0] }) \xrightarrow[i \to \infty]{\quad \IF, \CF \quad } (\XX, (\nu_{x_\infty;t})_{t \in (-1,0]}), 
\end{equation}
where $\XX$ is an $H_n$-concentrated, future continuous metric flow of full support over $(-1,0]$.
By Theorem~\ref{Thm_limit_from_static} the flow $\XX_{<0}$ is static.

Fix some $t^* \in (-\alpha^2, 0)$ close to $0$ such that the convergence in (\ref{eq_F_conv_pseudo}) is time-wise at time $t^*$.
Let $z_\infty \in \XX_{t^*}$ be an $H_n$-center of $x_\infty$ and choose points $z_i \in M_i$ such that we have strict convergence $z_i \to z_\infty$ within $\CF$.
By Lemma~\ref{Lem_mass_ball_Var} we have $\nu_{x_i,0;t^*} (B(z_i, t^*,2\sqrt{2H_n |t^*|})) \geq \frac12$ for large $i$.
So if $(z'_i, t^*) \in M_i \times \{ t^* \}$ is an $H_n$-center of $(x_i,0)$, then again by Lemma~\ref{Lem_mass_ball_Var}
\[ d_{W_1}^{g_{t^*}} ( \delta_{z_i}, \nu_{x_i,0;t^*} )
\leq d_{t^*} (z_i, z'_i) + d_{W_1}^{g_{t^*}} ( \delta_{z'_i}, \nu_{x_i,0;t^*} )
\leq 3 \sqrt{2H_n |t^*|} + \sqrt{H_n |t^*|} 
= 4\sqrt{2H_n |t^*|}. \]
It follows, using \cite[\HKCenterConstantRmBound]{Bamler_HK_entropy_estimates}, that
\begin{equation} \label{eq_dtstarzixi}
  d_{t^*} (z_i, x_i)
\leq d_{W_1}^{g_{t^*}} ( \delta_{z_i}, \nu_{x_i,0;t^*} )
+ d_{W_1}^{g_{t^*}} ( \nu_{x_i,0;t^*},  \delta_{x_i} )
\leq C(\alpha) \sqrt{|t^*|}. 
\end{equation}

So for every $\delta > 0$ we can choose some $t^* \in (-\delta, 0)$ such that for large $i$ we have $|{\Rm}| \leq \alpha^{-1}$ on $P(z_i, t^*; 1-\delta -(\alpha+ t^*), -t^*)$.
Therefore, by \cite[\SYNConvParabNbhd]{Bamler_RF_compactness} and the fact that $\XX_{<0}$ is static we obtain that $B_{t^*} := B(z_\infty, 1-\delta) \subset \RR_{t^*}$ and
$|{\Rm}| \leq \alpha^{-1}$ on $B_{t^*}$.
Moreover, the open parabolic neighborhood $P_{t^*} := B_{t^*} ( (-1,0)) \subset \RR$ is unscathed and $|{\Rm}| \leq \alpha^{-1}$ and $\Ric = 0$ on $P_{t^*}$.
Due to the smooth convergence on $\RR$, this implies that for large $i$ we have $|{\Rm}| \leq 2\alpha^{-1}$ and $|{\Ric}| \leq \delta$ on $P(z_i, t^*; 1-2\delta, - (1-\delta + t^*), - t^* - \delta)$.
Combining this with (\ref{eq_dtstarzixi}) implies that for large $i$ we have $|{\Rm}| \leq 2\alpha^{-1}$ and $|{\Ric}| \leq \delta$ on \[ P(x_i, t^*; 1- 2\delta - C(\alpha) \sqrt{\delta},  - (1-\delta + t^*), - t^* - \delta). \]
Using Assumption~\ref{Lem_bckwd_pseudo_preliminary_ii}, this yields the desired contradiction for small enough $\delta$.
\end{proof}

By an induction argument over scales, Lemma~\ref{Lem_bckwd_pseudo_preliminary} implies the following lemma:

\begin{Lemma} \label{Lem_almost_bckwds_pseudo}
For $\alpha > 0$ there is an $\eps ( \alpha) > 0$ such that the following holds.

Let $(M, (g_t)_{t \in I})$ be a Ricci flow on a compact, $n$-dimensional manifold and let $(x_0, t_0) \in M \times I$ a point and $r > 0$ a scale such that $[t_0 -   r^2, t_0] \subset I$ and:
\begin{enumerate}[label=(\roman*)]
\item $|B(x'_0,t_0,r')| \geq \alpha r^{\prime n}$ for any ball $B(x'_0, t_0, r') \subset B(x_0, t_0, r)$.
\item $|{\Rm}| \leq \alpha^{-1} r^{-2}$ and $R \leq \eps r^{-2}$ on $B(x_0, t_0, r)$.
\item $R  \geq - \eps r^{-2}$ on $M \times [t_0 - r^2, t_0]$.
\end{enumerate}
Then $|{\Rm}| \leq 2\alpha^{-1} r^{-2}$ on $P(x_0,t_0; \frac12 r, - \frac12 r^2)$. 
\end{Lemma}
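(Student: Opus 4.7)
My plan is to establish Lemma~\ref{Lem_almost_bckwds_pseudo} in two phases: a short-time backward persistence result first, and then a single application of Lemma~\ref{Lem_bckwd_pseudo_preliminary} to reach the full desired depth. After parabolic rescaling I may take $r=1$ and $t_0=0$, and after replacing $\alpha$ by $\min\{\alpha,\tfrac{1}{100}\}$ I may assume $\alpha$ small; the smallness of $\eps=\eps(\alpha)$ will be absorbed in the course of the argument.

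\emph{Phase 1 (short-time backward persistence).} I would show that there exists $\eta=\eta(\alpha)>0$ such that $|{\Rm}|\le 2\alpha^{-1}$ on $P(x_0,0;1-\alpha/2,-\eta)$. Since the hypothesis of Lemma~\ref{Lem_bckwd_pseudo_preliminary} already demands a slab estimate and not merely a slice estimate, one cannot directly quote it here; instead I would run a contradiction-and-compactness argument mirroring its proof. Suppose there are counterexamples with $\eta_i\downarrow 0$ and $\eps_i\downarrow 0$. The volume non-collapsing hypothesis~(i) together with \cite[\HKThmUpperVolBound]{Bamler_HK_entropy_estimates} and Proposition~\ref{Prop_NN_variation_bound} yields a uniform lower Nash-entropy bound at a definite scale, so by the compactness theory of Subsection~\ref{subsec_part_reg_limit_intro} I may pass to an $\IF$-limit $(\XX,(\nu_{x_\infty;t})_{t\le 0})$. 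The time-$0$ curvature bound and volume non-collapsing force the limit point to lie in $\RR_0$, and by \cite[\SYNConvParabNbhd]{Bamler_RF_compactness} the convergence is smooth on a backward parabolic neighborhood of $(x_0,0)$ inside $\RR$. In the limit, hypothesis~(iii) gives $R\ge 0$ on $(-1,0]$ while hypothesis~(ii) combined with $\eps_i\downarrow 0$ gives $R\le 0$ on $B(x_\infty,0,1)$, hence $R\equiv 0$ there. The strong maximum principle applied to $\square R=2|{\Ric}|^2\ge 0$ on the regular part forces $R\equiv 0$, and therefore $\Ric\equiv 0$, on a backward parabolic neighborhood of $(x_\infty,0)$; smooth convergence then transfers a uniform curvature bound on a genuine parabolic slab back to the sequence, contradicting the choice of counterexamples.

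\emph{Phase 2 (one application of Lemma~\ref{Lem_bckwd_pseudo_preliminary}).} With Phase~1 in hand, I choose $\alpha'\le\alpha/4$ so small that $(\alpha'(1-\alpha/2))^2\le\eta(\alpha)$. Then hypothesis~(ii) of Lemma~\ref{Lem_bckwd_pseudo_preliminary} at base point $(x_0,0)$, spatial scale $1-\alpha/2$, and parameter $\alpha'$ is precisely Phase~1's conclusion (after trivial rescaling); hypothesis~(i) is inherited from (i) of Lemma~\ref{Lem_almost_bckwds_pseudo}; hypothesis~(iii) is immediate for $\eps\le\eps(\alpha)$; hypothesis~(iv) is our (iii). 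Lemma~\ref{Lem_bckwd_pseudo_preliminary} then produces $|{\Rm}|\le 2(\alpha')^{-1}$ on $P(x_0,0;(1-\alpha')(1-\alpha/2),-(1-\alpha')(1-\alpha/2)^2)$, which contains $P(x_0,0;1/2,-1/2)$ for $\alpha$ small, and the constant $2(\alpha')^{-1}$ is absorbed into $2\alpha^{-1}$ after renaming $\alpha$.

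\emph{Main obstacle.} Phase~1 is the crux: Ricci flow affords no purely local backward persistence for curvature bounds, so the passage from the time-$0$ slice estimate to any parabolic slab estimate must exploit the rigidity coming from the scalar-curvature sandwich (lower bound on the whole time interval, upper bound on the ball). The $\IF$-compactness framework of Section~\ref{Sec_basic_limits}, together with the strong maximum principle for $\square R$, is the right mechanism, but some care is required to verify that the limit point actually lies in the regular part of $\XX$ and that the backward parabolic neighborhood on which the maximum principle is applied is genuinely smooth, both of which use hypothesis~(i) essentially.
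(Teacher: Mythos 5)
Your two-phase structure (a short-time backward slab estimate, followed by a single application of Lemma~\ref{Lem_bckwd_pseudo_preliminary}) has the same shape as the paper's argument, but the mechanism you propose for Phase~1 is different from the paper's and contains a circular gap.

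The paper establishes Phase~1 by a scale-induction / minimal-counterexample argument, not by compactness. Concretely: if the lemma fails, one may pass to a sub-ball $(x_0,r)$ (after renaming) where it still fails but holds for every sub-ball of radius $\le r/2$ -- such a minimal radius exists because the conclusion is trivial once $2\alpha^{-1}r'^{-2}$ exceeds the global curvature sup. Applying the lemma at all sub-balls of radius $r/2$ and unioning yields $|{\Rm}|\lesssim \alpha^{-1}r^{-2}$ on $P(x_0,t_0;\tfrac34 r,-\tfrac18 r^2)$, which is exactly the slab hypothesis (ii) of Lemma~\ref{Lem_bckwd_pseudo_preliminary}. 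Your Phase~1 instead tries to extract the short-time slab bound by an $\IF$-compactness contradiction "mirroring" the proof of Lemma~\ref{Lem_bckwd_pseudo_preliminary}. But that proof uses hypothesis~(ii) of that lemma -- the given backward slab curvature bound -- in an essential way: it is what makes the $H_n$-center $z_i$ at a nearby time $t^*<0$ sit inside a parabolic region with bounded curvature, which is the input to \cite[\SYNConvParabNbhd]{Bamler_RF_compactness} for concluding that the limit is unscathed and smooth there. Without any backward slab hypothesis there is nothing in the $\IF$-compactness theory that forces the regular part $\RR$ of the limit to extend to a backward parabolic neighborhood of the basepoint; indeed the whole content of backward pseudolocality is that the flow could a priori be immediately singular in the past, and this is not ruled out by time-$0$ data (volume non-collapsing plus a time-$0$ curvature bound) alone. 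So the step "the time-$0$ curvature bound and volume non-collapsing force the limit point to lie in $\RR_0$, and the convergence is smooth on a backward parabolic neighborhood" assumes what Phase~1 is supposed to prove. The scale induction is precisely the extra device the paper uses to supply the slab bound without circularity, and you cannot replace it by a straight compactness argument.

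Two minor additional points on Phase~2. First, the conclusion of Lemma~\ref{Lem_bckwd_pseudo_preliminary} applied with parameter $\alpha'$ at scale $r'$ is $|{\Rm}|\le \alpha' r'^{-2}+\sup_{B(x_0,t_0,r')}|{\Rm}|$, not $2(\alpha')^{-1}$. Second, even if it were $2(\alpha')^{-1}$, that constant is larger than $2\alpha^{-1}$ once $\alpha'\le\alpha/4$, so it cannot be "absorbed by renaming $\alpha$" without also weakening hypotheses~(i) and~(ii). With the correct additive form the constant comes out as $\le 2\alpha^{-1}$ directly (as in the paper), so this is fixable, but only once Phase~1 is repaired.
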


\begin{proof}
Fix $\alpha \in (0,1)$ and choose $\eps (\frac18 (\frac34)^2 \alpha)$ as in Lemma~\ref{Lem_bckwd_pseudo_preliminary}.
Assume that the lemma was false.
After possibly replacing $x_0, r$ with some $x'_0, r'$ such that $B(x'_0, t_0, r') \subset B(x_0, t_0,r)$, we may assume that the lemma holds for any such point and scale if $r' \leq \frac12 r$.
Since the assumptions of the lemma continue to hold after such a replacement, we obtain that for any $B(x'_0, t_0, r') \subset B(x_0, t_0,r)$ with $r' \leq \frac12 r$ we have $|{\Rm}| \leq 2 \alpha^{-1} r^{\prime -2}$ on $P(x'_0,t_0; \frac12 r', - \frac12 r^{\prime 2} )$.
This implies that
\[ |{\Rm}| \leq 8 \alpha^{-1} r^{-2} = (\tfrac18 (\tfrac34)^2 \alpha)^{-1} ( \tfrac34 r)^{-2} \qquad \text{on} \quad P(x_0,t_0; \tfrac34 r, - \tfrac18 r^{2} ). \]
Applying Lemma~\ref{Lem_bckwd_pseudo_preliminary} to the ball $B(x_0,t_0, \frac34r)$ implies that
\[ |{\Rm}| \leq \tfrac18 (\tfrac34)^2 \alpha (\tfrac34)^{-2} +  \alpha^{-1} r^{-2} \leq 2 \alpha^{-2} r^{-2} \qquad \text{on} \quad P(x_0,t_0; \tfrac12 r, - \tfrac12 r^{2} ),  \]
in contradiction to our assumption.
\end{proof}
\bigskip

\begin{proof}[Proof of Theorem~\ref{Thm_bckwds_pseudo}.]
The theorem follows by applying Lemma~\ref{Lem_almost_bckwds_pseudo} to sufficiently small subballs of $B(x_0, t_0, r)$.
\end{proof}
\bigskip

\begin{proof}[Proof of Corollary~\ref{Cor_combination_fwd_bckwd_pseudo}.]
After application of a time-shift, parabolic rescaling and possibly replacing $B(x_0, t_0, r)$ with a smaller subball, we may assume that $r = 1$, $t_0 = 0$, $[-1, 1] \subset I$ and $R \geq -\eps$ and it suffices to show that 
\begin{equation} \label{eq_needtoshowRmepsm2}
|{\Rm}|\leq \eps^{-2} \qquad \text{on} \quad B(x_0, t_0, \eps ).
\end{equation}
Let $\delta > 0$ be a constant whose value we will determine later.
By applying Perelman's Pseudolocality Theorem \cite[10.1]{Perelman1} to subballs of the form $B(x'_0, 0, \frac12 ) \subset B(x_0, 0, 1)$ and a distance distortion estimate, we obtain that if $\eps \leq \ov\eps (\delta)$, then $|{\Rm}| \leq \delta t^{-1}$ on $B(x_0, 0, \frac12 ) \times (0,1)$.
Moreover, if $\eps \leq \ov\eps$, then by \cite[10.2]{Perelman1} we obtain a volume bound of the form $|B(x_0, t, t^{1/2})| \geq c t^{n/2}$, $t \in (0,1)$, for some constant $c > 0$ that is independent of $\delta$.
So if $t$ and $\delta$ are sufficiently small, then we obtain from Theorem~\ref{Thm_bckwds_pseudo} that $|{\Rm}| \leq C$ on $P(x_0, t; \frac12 t^{1/2}, - t)$, which implies (\ref{eq_needtoshowRmepsm2}) for $\eps \leq \ov\eps$.
\end{proof}
\bigskip

\begin{proof}[Proof of Theorem~\ref{Thm_bckwds_pseudo_optimal}.]
The theorem follows via a limit argument and successive application of Theorem~\ref{Thm_bckwds_pseudo} and Lemma~\ref{Lem_bckwd_pseudo_preliminary} at various scales.
\end{proof}

\section{Proofs of the theorems from Subsection~\ref{subsec_dimensions23_intro}} \label{subsec_dimensions23_proof}
Assume that $\XX$ is a non-collapsed $\IF$-limit of a sequence of $(n \leq 3)$-dimensional Ricci flows, as assumed in Subsection~\ref{subsec_dimensions23_intro}.

\begin{proof}[Proof of Theorem~\ref{Thm_SS_empty}.]
Suppose by contradiction that $x \in \SS_{t} \neq \emptyset$, $t < 0$.
By the discussion in Subsection~\ref{subsec_dimensions23_intro}, the tangent flows at $x$ are isometric to the round shrinking cylinder or its $\IZ_2$-quotient.
This implies that there is a sequence of points $x'_j \in \RR_{t'_j}$ with $t'_j \nearrow t$ and
\[ d^{\XX_{t'_j}}_{W_1}(\nu_{x;t'_j}, \delta_{x'_j}) \to 0 \]
such that $(\RR_{t'_j}, R(x'_j) g_{t'_j}, x'_j)$ smoothly converges to a pointed Riemannian manifold $(M_\infty, g_\infty, x_\infty)$ that is isometric to the standard round cylinder or its $\IZ_2$-quotient.
By adjusting the times $t'_j$ slightly, we may also assume that $\SS_{t'_j} = \emptyset$ for all $j$.
Pick a point $y \in \RR_{t}$ and choose $y'_j \in \RR_{t'_j}$ such that $y'_j \to y$.
Then
\[ \limsup_{j \to \infty} d_{t'_j} (x'_j, y'_j) 
= \limsup_{j \to \infty} d^{\XX_{t'_j}} (\nu_{x;t'_j}, \nu_{y;t'_j})
\leq d_t (x, y) =: D. \]
Choose $0 < r < 1$ such that $B(y, r) \subset \RR_{t}$.
Since $(\RR_{t'_j}, g_{t'_j})$ is a complete Riemannian manifold with non-negative sectional curvature we have by volume comparison for any $A < \infty$  and for large $j$
\begin{multline*}
 |B(y'_j, r)|_{g_{t'_j}} \leq |B(x'_j, D+2)|_{g_{t'_j}}
\leq \Big(\frac{D+2}{A R^{-1/2}(x'_j)}\Big)^3 \big|B(x'_j, A R^{-1/2}(x'_j))\big|_{g_{t'_j}} \\ 
\xrightarrow[j \to \infty]{} (D+2)^3 \frac{\big|B^{M_\infty}(x_\infty, A)\big|_{g_\infty}}{A^3}. 
\end{multline*}
So
\[ 0 < |B(y, r)|_{g_t} \leq (D+2)^3 \frac{\big|B^{M_\infty}(x_\infty, A)\big|_{g_\infty}}{A^3}. \]
Letting $A \to \infty$ produces the desired contradiction.

So $\SS_{<0} = \emptyset$, which implies that $(\RR_t, g_t)$ is a complete Riemannian manifold with non-negative sectional curvature for all $t <0$.
Therefore distances don't expand, which implies that the trajectories of $\partial_{\tf}$ are defined up to time $0$.
\end{proof}
\bigskip

To prove Theorem~\ref{Thm_lim_dim23_global_NC}, we first establish the following lemma.
Let $\XX$ be again a limit of $(n \leq 3)$-dimensional Ricci flows that is not equivalent to the constant flow on $\IR^n$.

\begin{Lemma} \label{Lem_XX_3d_if_loc_split_off}
Suppose that for some uniform $Y_0 <\infty$ we have $\NN_{x} (\tau) \geq - Y_0$ for all $x \in \XX$ and $\tau > 0$.
Suppose moreover, that there is some $t_0 \leq 0$ such that for all $t \leq t_0$ the time-slice $\RR_t$ locally splits off a line.
Then $n = 3$ and the flow on $\RR_{\leq t_0}$ is homothetic to the round shrinking cylinder  or its $\IZ_2$-quotient.
\end{Lemma}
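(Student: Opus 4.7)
The plan is to argue in two stages: first pin down the asymptotic model of $\XX$ by a tangent flow at infinity, which will force $n=3$ and identify the model as the round shrinking cylinder (or its $\IZ_2$-quotient); then use a de Rham-type splitting of $\RR_{\leq t_0}$, together with the classification of $2$-dimensional $\kappa$-solutions, to transfer this identification from the asymptotic model back to $\RR_{\leq t_0}$ itself.

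First I would apply Theorem~\ref{Thm_tangent_flow_at_infty_intro} to $\XX$: its hypotheses hold by the uniform bound $\NN_x(\tau) \geq -Y_0$, so we obtain a tangent flow at infinity $(\XX', (\nu_{x'_\infty;t})_{t \leq 0})$ which is a metric soliton with constant Nash entropy $W := \NN_\XX(\infty)$. Since $\XX$ is not the constant flow on $\IR^n$, the rigidity part of the theorem forces $W < -\eps_n$. The property that $\RR_t$ locally splits off a line is scale-invariant and, because the convergence to a tangent flow at infinity is smooth on the regular part (Theorem~\ref{Thm_R_RR_star_intro} applied along the defining blow-down sequence of rescalings of $\XX$), it is inherited by $\RR_{\XX'}$. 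Combined with the Hamilton--Ivey pinching discussed in Subsection~\ref{subsec_dimensions23_intro}, the regular part of $\XX'$ carries a smooth flow with $\sec \geq 0$ that locally splits off a parallel line. In dimension $n=2$ this forces $\XX'$ flat, hence Euclidean and $W=0$, contradicting $W < -\eps_n$; in dimension $n=3$ the classification of $3$-dimensional gradient shrinking solitons with $\Ric \geq 0$ that locally split off a line leaves only the round shrinking cylinder $S^2 \times \IR$ and its $\IZ_2$-quotient. Thus $n=3$ and the asymptotic model is identified.

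Second I turn to $\RR_{\leq t_0}$ itself. By Theorem~\ref{Thm_SS_empty} we have $\SS_{<0} = \emptyset$, so each time-slice $\RR_t$ with $t \leq t_0$ is a complete $3$-manifold with $\sec \geq 0$ that locally splits off a line; the de Rham decomposition theorem then produces a Riemannian covering of degree at most two of the form $(\td N_t \times \IR, \td h_t \oplus ds^2) \to \RR_t$, with $\IR$-direction generated by a globally parallel unit vector field $V_t$. Since $\Ric(V_t, \cdot)=0$, the flow $\partial_t g = -2\Ric$ preserves the parallelism and length of $V_t$, so the splitting is time-coherent and lifts $\RR_{\leq t_0}$ (up to the same at-most-two-fold cover) to $(\td N, (\td h_t)_{t \leq t_0}) \times \IR$, where $(\td N, \td h_t)$ is a complete smooth $2$-dimensional ancient Ricci flow with $R \geq 0$. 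The uniform Nash-entropy bound descends, up to an additive $\log 2$, to $(\td N, \td h_t)$ at all scales, yielding a $\kappa$-non-collapsing condition; its blow-down as $t \to -\infty$ is the $2$-dimensional factor of the blow-down of $\XX$, which by the first stage is the round shrinking $S^2$. The classification of non-collapsed $2$-dimensional ancient solutions with $R \geq 0$ (Hamilton, Daskalopoulos--Hamilton--Sesum) then forces $(\td N, \td h_t)$ to be the round shrinking sphere, and hence $\RR_{\leq t_0}$ to be homothetic to $S^2 \times \IR$ or its $\IZ_2$-quotient.

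The main obstacle I anticipate is the transition from the time-slice-wise local splitting to a genuine time-coherent splitting of $\RR_{\leq t_0}$ (so that the reduction really produces a single $2$-dimensional ancient flow rather than an uncontrolled family), and the verification that the non-collapsing bound on $\XX$ descends quantitatively to this $2$-dimensional quotient. Both issues are ultimately handled by the parallelism of $V_t$ combined with the boundedness of the covering degree forced by the cylindrical asymptotic model from the first stage, but the argument must be carried out carefully so as to invoke the $2$-dimensional $\kappa$-solution classification unambiguously.
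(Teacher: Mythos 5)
Your overall strategy is the same as the paper's: identify the tangent flow at infinity as a round shrinking cylinder (or its $\IZ_2$-quotient) using the local line-splitting and the soliton classification, then transfer this identification back to $\RR_{\leq t_0}$ by reducing to the classification of $2$-dimensional ancient solutions. However, there is a genuine gap in your second stage.

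The gap is that you never establish compactness of the $2$-dimensional factor $\td N_t$ (equivalently, the $M'_t$ in the paper's notation) at \emph{finite} times $t \leq t_0$. You identify the blow-down as $S^2$, which tells you the cross-section is compact only in the rescaled limit as $t \to -\infty$; it does not immediately give compactness of $\td N_t$ for fixed $t$. But compactness of the time-slices is exactly what gives you bounded curvature, which in turn is needed to conclude that the worldlines of $\partial_{\tf}$ are complete on $(-\infty, t_0]$, and hence that $\RR_{\leq t_0}$ is a genuine conventional ancient Ricci flow on a fixed manifold. Your argument silently assumes this when you write ``lifts $\RR_{\leq t_0}$ to $(\td N, (\td h_t)_{t \leq t_0}) \times \IR$, where $(\td N, \td h_t)$ is a complete smooth $2$-dimensional ancient Ricci flow'' --- the spacetime $\RR$ only a priori carries a Ricci flow spacetime structure, and Theorem~\ref{Thm_SS_empty} guarantees worldlines are defined only on maximal intervals of the form $(-T, 0)$, not necessarily all of $(-\infty,0)$. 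The paper fills this gap with a short open-closed argument: compactness of $M'_t$ for $t$ very negative (from the compact cylinder blow-down) propagates to all $t \leq t_0$ by an open-closed argument (openness because compactness persists under the flow, closedness from the uniform Nash entropy bound), which then gives bounded curvature on compact time-slices and hence complete worldlines. You need this step before you can invoke the Daskalopoulos--Hamilton--Sesum or Kleiner--Lott classification, both of which presuppose a complete ancient flow with bounded curvature (and, in the DHS case, compact time-slices).

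Two smaller imprecisions: your claim that de Rham's decomposition produces a cover of degree at most two is not justified at that point --- de Rham gives you the splitting of the \emph{universal} cover (as the paper states), with no a priori bound on the deck group, and the bound on the covering degree is part of the conclusion, not the input. And your assertion that $\Ric(V_t,\cdot)=0$ by itself yields time-coherence of the parallel line is not quite right as stated; what is really needed is Hamilton's strong maximum principle for the curvature operator, which shows that a nontrivial kernel of $\Rm$ at one time is a parallel, time-coherent distribution. Both of these are fixable, but the missing open-closed step is the substantial issue.
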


\begin{proof}
If $\RR_t$ was flat for some $t < 0$, then by the strong maximum principle, the same would be true for all earlier time-slices.
Due to \cite[\HKThmNLC]{Bamler_HK_entropy_estimates} these time-slices must have positive asymptotic volume ratio and therefore be isometric to $\IR^3$.
By Perelman's Pseudolocality Theorem \cite[Sec.~10]{Perelman1} this would imply that all time-slices of $\RR$ were flat, which contradicts the assumptions from the beginning of Subsection~\ref{subsec_dimensions23_intro}.
So no time-slice $\RR_t$ is flat, which also implies that $n =3$.

By assumption, the universal cover of every time-slice $\RR_t$, $t \leq t_0$, is isometric to a cartesian product $M'_t \times \IR$ for some 2-dimensional, complete Riemannian manifold  $(M'_t, g'_t)$.
By Theorem~\ref{Thm_tangent_flow_at_infty_intro} the flow $\XX$ has tangent flow at infinity, which by the discussion in Subsection~\ref{subsec_dimensions23_intro} must be a gradient shrinking soliton that is isometric to the round shrinking cylinder or its $\IZ_2$-quotient.
This implies that $M'_t$ is compact for small $t$.
A simple open-closed argument implies that $M'_t$ must be compact for all $t \leq t_0$.
So $\RR_{\leq t_0}$ has bounded curvature on compact time-slices and thus the trajectories of $\partial_{\tf}$ are complete on $(-\infty,t_0]$.
So $\RR_{\leq t_0}$ is given by a $\kappa$-solution that locally splits off a line.
The lemma now follows from the classification of 2-dimensional $\kappa$-solutions; see for example \cite[Corollary~40.1]{Kleiner_Lott_notes}.
\end{proof}
\bigskip

\begin{proof}[Proof of Theorem~\ref{Thm_lim_dim23_global_NC}.]
Due to Perelman's Pseudolocality Theorem \cite[Sec.~10]{Perelman1} and the Backwards Pseudolocality Theorem~\ref{Thm_bckwds_pseudo}, it suffices to show that every time-slice of $\RR$ has bounded curvature.
So fix some $t < 0$ and assume by contradiction that $\RR_t$ has unbounded curvature.
By the same arguments as in the proof of \cite[Proposition~41.13]{Kleiner_Lott_notes}, we can find a sequence of points $x_j \in \RR_t$ and scales $\lambda_j \to 0$ such that the pointed Riemannian manifolds $(\RR_t, \lambda_j^{-2} g_t, x_j)$ smoothly converge to a non-flat, complete Riemannian manifold $(M'_\infty, g'_\infty,x'_\infty)$ that splits off an $\IR$-factor.
Then for large $j$ the points $x'_j := x_j (t+ \lambda_j^2)$ are defined and due to Perelman's Pseudolocality Theorem and \cite[\HKCenterConstantRmBound]{Bamler_HK_entropy_estimates} we have $d^{\XX_t}_{W_1} (\delta_{x_j}, \nu_{x'_j;t}) \leq C \lambda_j$ for some uniform $C < \infty$.

After passing to a subsequence, the metric flow pairs $(\XX_{< t+\lambda_j^2}, (\nu_{x'_j;t'})_{t'<t+\lambda_j^2})$, time-shifted by $t+\lambda_j^2$ and parabolically rescaled by $\lambda_j^{-1}$, converge to a metric flow pair $(\XX', (\nu'_{x'_\infty;t'})_{t' <0})$, which also occurs as a limit of $(n \leq 3)$-dimensional Ricci flows as assumed in Subsection~\ref{subsec_dimensions23_intro}.
So it has full regular part $\RR' = \XX'$ and by our choice of points $x'_j$ we know that $(\RR'_{-1}, g'_{-1})$ is isometric to $(M'_\infty,g'_\infty)$.
So by the strong maximum principle, $\RR'_{t'}$ locally splits off an $\IR$-factor for all $t' \leq -1$.
It follows from Lemma~\ref{Lem_XX_3d_if_loc_split_off} that $(M'_\infty,g'_\infty)$ is homothetic to the round cylinder.
The fact that $(\RR_t, g_t)$ has bounded curvature now follows as in the proof of \cite[Theorem~46.1]{Kleiner_Lott_notes}.
\end{proof}
\bigskip

\begin{proof}[Proof of Theorem~\ref{Thm_limit_3d_to_time_0_intro}.]
By Theorem~\ref{Thm_lim_dim23_global_NC}, we already know that the flow on $\RR$ is given by a smooth $\kappa$-solution $(M_\infty, (g_{\infty,t})_{t < 0})$.
The condition $\limsup_{i \to \infty} R(x_i, 0) < \infty$ implies that $\int_{M_\infty} R(\cdot,t)\, d\nu_{x_\infty;t} < C < \infty$ for $t < 0$.
So there are points $(x_i, t_i) \in M_\infty \times \IR_- \cong \RR$, with $t_i \nearrow 0$ and $d^{\XX_t}_{W_1} (\delta_{x_i}, \nu_{x_\infty;t}) \leq C \sqrt{-t_i}$, $R(x_i, t_i) < 0$.
By \cite[10.3, 11.7]{Perelman1} and \cite[\HKCenterConstantRmBound]{Bamler_HK_entropy_estimates} this implies that $(M_\infty, (g_{\infty,t})_{t < 0})$ can be extended smoothly to time 0.
\end{proof}
\bigskip

\section{Proof of the Theorem from Subsection~\ref{subsec_dim4_intro}}

\begin{proof}[Proof of Theorem~\ref{Thm_dim4_intro}.]
Assume that $n=4$ and consider the singular metric space $(X,d)$ in Theorem~\ref{Thm_static_limit_main} or \ref{Thm_metric_soliton_limit_main}.
By Assertion~\ref{Thm_static_limit_main_e} of Theorem~\ref{Thm_static_limit_main}, the tangent cones at every singular point of $X$ must be metric cones and by the dimensional bound in Assertion~\ref{Thm_static_limit_main_d}, these metric cones must be smooth and Ricci flat and therefore flat away from the origin.
This implies that all tangent cones are of the form $\IR^4/\Gamma$.
So by the smooth convergence statement in Assertion~\ref{Thm_static_limit_main_e} of Theorem~\ref{Thm_static_limit_main}, we obtain that the singular set $\SS_X \subset X$ consists of isolated points.
The fact that the Riemannian metric on $\RR_X$ can be extended smoothly to an orbifold metric over the singular set follows as in \cite{Haslhofer_Mueller_compactness_soliton, Cao_Sesum_compactness_KSol,X_Zhang_compactness_soliton, Tian_1990,Uhlenbeck_1982}.

The last statement of the theorem is a direct consequence of the last statement of Theorem~\ref{Thm_metric_soliton_limit_main}.
\end{proof}
\bigskip

\bibliography{bibliography}{}

\providecommand{\bysame}{\leavevmode\hbox to3em{\hrulefill}\thinspace}
\providecommand{\MR}{\relax\ifhmode\unskip\space\fi MR }
\providecommand{\MRhref}[2]{%
  \href{http://www.ams.org/mathscinet-getitem?mr=#1}{#2}
}
\providecommand{\href}[2]{#2}
\begin{thebibliography}{{Whi}11}

\bibitem[Alm00]{Almgren_2000}
Frederick~J. Almgren, Jr., \emph{Almgren's big regularity paper}, World
  Scientific Monograph Series in Mathematics, vol.~1, World Scientific
  Publishing Co., Inc., River Edge, NJ, 2000, $Q$-valued functions minimizing
  Dirichlet's integral and the regularity of area-minimizing rectifiable
  currents up to codimension 2, With a preface by Jean E. Taylor and Vladimir
  Scheffer. \MR{1777737}

\bibitem[App19]{Appleton:2019ws}
Alexander Appleton, \emph{{Eguchi-Hanson singularities in U(2)-invariant Ricci
  flow}}, https://arxiv.org/pdf/1903.09936.pdf (2019).

\bibitem[Bam17]{Bamler_struc_theory_sing_spaces}
Richard~H. Bamler, \emph{Structure theory of singular spaces}, J. Funct. Anal.
  \textbf{272} (2017), no.~6, 2504--2627. \MR{3603307}

\bibitem[Bam18a]{Bamler-Annals-2018}
\bysame, \emph{Convergence of {R}icci flows with bounded scalar curvature},
  Ann. of Math. (2) \textbf{188} (2018), no.~3, 753--831. \MR{3866886}

\bibitem[Bam18b]{Bamler_longtime_A}
\bysame, \emph{Long-time behavior of 3-dimensional {R}icci flow: {A}:
  {G}eneralizations of {P}erelman's long-time estimates}, Geom. Topol.
  \textbf{22} (2018), no.~2, 775--844. \MR{3748680}

\bibitem[Bam18c]{Bamler_longtime_B}
\bysame, \emph{Long-time behavior of 3-dimensional {R}icci flow: {B}:
  {E}volution of the minimal area of simplicial complexes under {R}icci flow},
  Geom. Topol. \textbf{22} (2018), no.~2, 845--892. \MR{3748681}

\bibitem[Bam18d]{Bamler_longtime_C}
\bysame, \emph{Long-time behavior of 3-dimensional {R}icci flow: {C}:
  3-manifold topology and combinatorics of simplicial complexes in
  3-manifolds}, Geom. Topol. \textbf{22} (2018), no.~2, 893--948. \MR{3748682}

\bibitem[Bam18e]{Bamler_longtime_D}
\bysame, \emph{Long-time behavior of 3-dimensional {R}icci flow: {D}: {P}roof
  of the main results}, Geom. Topol. \textbf{22} (2018), no.~2, 949--1068.
  \MR{3748683}

\bibitem[Bam18f]{Bamler_longtime_0}
\bysame, \emph{Long-time behavior of 3-dimensional {R}icci flow: introduction},
  Geom. Topol. \textbf{22} (2018), no.~2, 757--774. \MR{3748679}

\bibitem[Bam20a]{Bamler_RF_compactness}
\bysame, \emph{{Compactness theory of the space of super Ricci flows}},
  https://arxiv.org/abs/2008.09298 (2020).

\bibitem[Bam20b]{Bamler_HK_entropy_estimates}
\bysame, \emph{{Entropy and heat kernel bounds on a Ricci flow background}},
  https://arxiv.org/abs/2008.07093 (2020).

\bibitem[BK17]{bamler_kleiner_uniqueness_stability}
Richard~H. Bamler and Bruce Kleiner, \emph{{Uniqueness and stability of Ricci
  flow through singularities}}, http://arxiv.org/abs/1709.04122 (2017).

\bibitem[BZ17]{Bamler-Zhang-1}
Richard~H. Bamler and Qi~S. Zhang, \emph{Heat kernel and curvature bounds in
  {R}icci flows with bounded scalar curvature}, Adv. Math. \textbf{319} (2017),
  396--450. \MR{3695879}

\bibitem[CC96]{Cheeger-Colding-Cone}
Jeff Cheeger and Tobias~H. Colding, \emph{{Lower bounds on Ricci curvature and
  the almost rigidity of warped products}}, Annals of Mathematics. Second
  Series \textbf{144} (1996), no.~1, 189--237.

\bibitem[CC97]{Cheeger_Colding_struc_Ric_below_I}
\bysame, \emph{On the structure of spaces with {R}icci curvature bounded below.
  {I}}, J. Differential Geom. \textbf{46} (1997), no.~3, 406--480. \MR{1484888}

\bibitem[CC00a]{Cheeger-Colding-structure-II}
\bysame, \emph{On the structure of spaces with {R}icci curvature bounded below.
  {II}}, J. Differential Geom. \textbf{54} (2000), no.~1, 13--35. \MR{1815410
  (2003a:53043)}

\bibitem[CC00b]{Cheeger_Colding_struc_Ric_below_III}
\bysame, \emph{On the structure of spaces with {R}icci curvature bounded below.
  {III}}, J. Differential Geom. \textbf{54} (2000), no.~1, 37--74. \MR{1815411}

\bibitem[CCT02]{Cheeger_Colding_Tian_2002}
Jeff Cheeger, Tobias~H. Colding, and Gang Tian, \emph{On the singularities of
  spaces with bounded {R}icci curvature}, Geom. Funct. Anal. \textbf{12}
  (2002), no.~5, 873--914. \MR{1937830}

\bibitem[CHN13]{Cheeger_Haslhofer_Naber_2013}
Jeff Cheeger, Robert Haslhofer, and Aaron Naber, \emph{Quantitative
  stratification and the regularity of mean curvature flow}, Geom. Funct. Anal.
  \textbf{23} (2013), no.~3, 828--847. \MR{3061773}

\bibitem[CHN15]{Cheeger_Haslhofer_Naber_2015}
\bysame, \emph{Quantitative stratification and the regularity of harmonic map
  flow}, Calc. Var. Partial Differential Equations \textbf{53} (2015), no.~1-2,
  365--381. \MR{3336324}

\bibitem[CLN06]{Chow_book_Hamiltons}
Bennett Chow, Peng Lu, and Lei Ni, \emph{Hamilton's {R}icci flow}, Graduate
  Studies in Mathematics, vol.~77, American Mathematical Society, Providence,
  RI; Science Press Beijing, New York, 2006. \MR{2274812}

\bibitem[CMZ21]{Chan_Ma_Zhang_2021}
Pak-Yeung Chan, Zilu Ma, and Yongjia Zhang, \emph{{Ancient Ricci flows with
  asymptotic solitons}}, https://arxiv.org/abs/2106.06904 (2021).

\bibitem[CN13a]{Cheeger-Naber-quantitative}
Jeff Cheeger and Aaron Naber, \emph{Lower bounds on {R}icci curvature and
  quantitative behavior of singular sets}, Invent. Math. \textbf{191} (2013),
  no.~2, 321--339. \MR{3010378}

\bibitem[CN13b]{Cheeger-Naber-13}
\bysame, \emph{Quantitative stratification and the regularity of harmonic maps
  and minimal currents}, Comm. Pure Appl. Math. \textbf{66} (2013), no.~6,
  965--990. \MR{3043387}

\bibitem[CN15]{Cheeger-Naber-Codim4}
\bysame, \emph{{Regularity of Einstein manifolds and the codimension 4
  conjecture}}, Ann. Math. (2015), 1093--1165.

\bibitem[Col97]{Colding-vol-conv}
Tobias~H. Colding, \emph{Ricci curvature and volume convergence}, Ann. of Math.
  (2) \textbf{145} (1997), no.~3, 477--501. \MR{1454700 (98d:53050)}

\bibitem[CS89]{Chen_Struwe_1989}
Yun~Mei Chen and Michael Struwe, \emph{Existence and partial regularity results
  for the heat flow for harmonic maps}, Math. Z. \textbf{201} (1989), no.~1,
  83--103. \MR{990191}

\bibitem[CS07]{Cao_Sesum_compactness_KSol}
Huai-Dong Cao and N.~Sesum, \emph{A compactness result for {K}\"{a}hler {R}icci
  solitons}, Adv. Math. \textbf{211} (2007), no.~2, 794--818. \MR{2323545}

\bibitem[CW14]{Chen-Wang-II}
Xiuxiong Chen and Bing Wang, \emph{{Space of Ricci flows (II)}},
  http://arxiv.org/abs/1405.6797 (2014).

\bibitem[CW17]{Chen_Wang_2017_Part_A}
\bysame, \emph{Space of {R}icci flows ({II})---{P}art {A}: {M}oduli of singular
  {C}alabi-{Y}au spaces}, Forum Math. Sigma \textbf{5} (2017), e32, 103.
  \MR{3739253}

\bibitem[CZ11]{Cao_Zhang_2011}
Xiaodong Cao and Qi~S. Zhang, \emph{The conjugate heat equation and ancient
  solutions of the {R}icci flow}, Adv. Math. \textbf{228} (2011), no.~5,
  2891--2919. \MR{2838064}

\bibitem[EMT11]{Enders_Mueller_Topping}
Joerg Enders, Reto M\"{u}ller, and Peter~M. Topping, \emph{On type-{I}
  singularities in {R}icci flow}, Comm. Anal. Geom. \textbf{19} (2011), no.~5,
  905--922. \MR{2886712}

\bibitem[Fed69]{Federer_book}
Herbert Federer, \emph{Geometric measure theory}, Die Grundlehren der
  mathematischen Wissenschaften, Band 153, Springer-Verlag New York Inc., New
  York, 1969. \MR{0257325}

\bibitem[Gia17]{Gianniotis_Type_I}
Panagiotis Gianniotis, \emph{The size of the singular set of a type {I} {R}icci
  flow}, J. Geom. Anal. \textbf{27} (2017), no.~4, 3099--3119. \MR{3708007}

\bibitem[Gia19]{Gianniotis_2019}
\bysame, \emph{Regularity theory for type {I} {R}icci flows}, Calc. Var.
  Partial Differential Equations \textbf{58} (2019), no.~6, Paper No. 200, 24.
  \MR{4029728}

\bibitem[Ham82]{Hamilton_3_manifolds}
Richard~S. Hamilton, \emph{Three-manifolds with positive {R}icci curvature}, J.
  Differential Geometry \textbf{17} (1982), no.~2, 255--306. \MR{664497}

\bibitem[Ham95a]{Hamilton_RF_compactness}
\bysame, \emph{A compactness property for solutions of the {R}icci flow}, Amer.
  J. Math. \textbf{117} (1995), no.~3, 545--572. \MR{1333936}

\bibitem[Ham95b]{Hamilton-formation-sing}
\bysame, \emph{The formation of singularities in the {R}icci flow}, Surveys in
  differential geometry, {V}ol. {II} ({C}ambridge, {MA}, 1993), Int. Press,
  Cambridge, MA, 1995, pp.~7--136. \MR{1375255}

\bibitem[Ham99a]{Hamilton_1999}
\bysame, \emph{Non-singular solutions of the {R}icci flow on three-manifolds},
  Comm. Anal. Geom. \textbf{7} (1999), no.~4, 695--729. \MR{1714939}

\bibitem[Ham99b]{Hamilton_non_singular}
\bysame, \emph{Non-singular solutions of the {R}icci flow on three-manifolds},
  Comm. Anal. Geom. \textbf{7} (1999), no.~4, 695--729. \MR{1714939}

\bibitem[HM11]{Haslhofer_Mueller_compactness_soliton}
Robert Haslhofer and Reto M\"{u}ller, \emph{A compactness theorem for complete
  {R}icci shrinkers}, Geom. Funct. Anal. \textbf{21} (2011), no.~5, 1091--1116.
  \MR{2846384}

\bibitem[HN14]{Hein-Naber-14}
Hans-Joachim Hein and Aaron Naber, \emph{New logarithmic {S}obolev inequalities
  and an {$\epsilon$}-regularity theorem for the {R}icci flow}, Comm. Pure
  Appl. Math. \textbf{67} (2014), no.~9, 1543--1561. \MR{3245102}

\bibitem[KL08]{Kleiner_Lott_notes}
Bruce Kleiner and John Lott, \emph{Notes on {P}erelman's papers}, Geom. Topol.
  \textbf{12} (2008), no.~5, 2587--2855. \MR{2460872}

\bibitem[KL17]{Kleiner_Lott_singular}
\bysame, \emph{{Singular Ricci flows I}}, Acta Mathematica \textbf{219} (2017),
  no.~1, 65--134.

\bibitem[{Lai}20]{Lai_2020}
Yi~{Lai}, \emph{{Producing 3d Ricci flows with non-negative Ricci curvature via
  singular Ricci flows}}, https://arxiv.org/abs/2004.05291 (2020).

\bibitem[Lin99]{Lin_1999}
Fang-Hua Lin, \emph{Gradient estimates and blow-up analysis for stationary
  harmonic maps}, Ann. of Math. (2) \textbf{149} (1999), no.~3, 785--829.
  \MR{1709303}

\bibitem[LTZ18]{Li_Tian_Zhu_Fano_singular}
Yan Li, Gang Tian, and Xiaohua Zhu, \emph{{Singular limits of K{\"a}hler-Ricci
  flow on Fano $G$-manifolds}}, https://arxiv.org/abs/1807.09167 (2018).

\bibitem[LW99]{Lin_Wang_1999}
FangHua Lin and ChangYou Wang, \emph{Harmonic and quasi-harmonic spheres},
  Comm. Anal. Geom. \textbf{7} (1999), no.~2, 397--429. \MR{1685578}

\bibitem[M{\'{a}}x14]{Maximo_Ric_sing}
Davi M{\'{a}}ximo, \emph{On the blow-up of four-dimensional {R}icci flow
  singularities}, J. Reine Angew. Math. \textbf{692} (2014), 153--171.
  \MR{3274550}

\bibitem[MW17]{Munteanu_Wang_2017}
Ovidiu Munteanu and Jiaping Wang, \emph{Positively curved shrinking {R}icci
  solitons are compact}, J. Differential Geom. \textbf{106} (2017), no.~3,
  499--505. \MR{3680555}

\bibitem[Nab10]{Naber_soliton}
Aaron Naber, \emph{Noncompact shrinking four solitons with nonnegative
  curvature}, J. Reine Angew. Math. \textbf{645} (2010), 125--153. \MR{2673425}

\bibitem[NV15]{Naber_Valtorta_2015}
Aaron {Naber} and Daniele {Valtorta}, \emph{{The Singular Structure and
  Regularity of Stationary and Minimizing Varifolds}},
  https://arxiv.org/abs/1505.03428 (2015).

\bibitem[Per02]{Perelman1}
G.~Perelman, \emph{{The entropy formula for the Ricci flow and its geometric
  applications}}, http://arxiv.org/abs/math/0211159 (2002).

\bibitem[Per03a]{Perelman2}
\bysame, \emph{{Ricci flow with surgery on three-manifolds}},
  http://arxiv.org/abs/math/0303109 (2003).

\bibitem[Per03b]{Perelman3}
Grisha Perelman, \emph{{Finite extinction time for the solutions to the Ricci
  flow on certain three-manifolds}}, http://arxiv.org/abs/math/0307245 (2003).

\bibitem[Ses06]{Sesum_conv_to_soliton}
Natasa Sesum, \emph{Convergence of the {R}icci flow toward a soliton}, Comm.
  Anal. Geom. \textbf{14} (2006), no.~2, 283--343. \MR{2255013}

\bibitem[Sim95]{Simon_1995}
Leon Simon, \emph{Rectifiability of the singular sets of multiplicity {$1$}
  minimal surfaces and energy minimizing maps}, Surveys in differential
  geometry, {V}ol. {II} ({C}ambridge, {MA}, 1993), Int. Press, Cambridge, MA,
  1995, pp.~246--305. \MR{1375258}

\bibitem[Sim96]{Simon_96}
\bysame, \emph{Theorems on regularity and singularity of energy minimizing
  maps}, Lectures in Mathematics ETH Z\"{u}rich, Birkh\"{a}user Verlag, Basel,
  1996, Based on lecture notes by Norbert Hungerb\"{u}hler. \MR{1399562}

\bibitem[Sto19]{Stolarski:2019uk}
Maxwell Stolarski, \emph{{Curvature Blow-up in Doubly-warped Product Metrics
  Evolving by Ricci Flow}}, https://arxiv.org/pdf/1905.00087.pdf (2019).

\bibitem[SU82]{Schoen_Uhlenbeck_82}
Richard Schoen and Karen Uhlenbeck, \emph{A regularity theory for harmonic
  maps}, J. Differential Geometry \textbf{17} (1982), no.~2, 307--335.
  \MR{664498}

\bibitem[Tia90]{Tian_1990}
G.~Tian, \emph{On {C}alabi's conjecture for complex surfaces with positive
  first {C}hern class}, Invent. Math. \textbf{101} (1990), no.~1, 101--172.
  \MR{1055713}

\bibitem[Top06]{Topping-book}
Peter Topping, \emph{Lectures on the {R}icci flow}, London Mathematical Society
  Lecture Note Series, vol. 325, Cambridge University Press, Cambridge, 2006.
  \MR{2265040}

\bibitem[Top12]{Topping_cusp}
\bysame, \emph{Uniqueness and nonuniqueness for {R}icci flow on surfaces:
  reverse cusp singularities}, Int. Math. Res. Not. IMRN (2012), no.~10,
  2356--2376. \MR{2923169}

\bibitem[TZ16]{Tian_Zhang_2016}
Gang Tian and Zhenlei Zhang, \emph{Regularity of {K}\"{a}hler-{R}icci flows on
  {F}ano manifolds}, Acta Math. \textbf{216} (2016), no.~1, 127--176.
  \MR{3508220}

\bibitem[Uhl82]{Uhlenbeck_1982}
Karen~K. Uhlenbeck, \emph{Removable singularities in {Y}ang-{M}ills fields},
  Comm. Math. Phys. \textbf{83} (1982), no.~1, 11--29. \MR{648355}

\bibitem[Vil03]{Villani_topics_in_OT}
C\'{e}dric Villani, \emph{Topics in optimal transportation}, Graduate Studies
  in Mathematics, vol.~58, American Mathematical Society, Providence, RI, 2003.
  \MR{1964483}

\bibitem[Whi94]{White_94}
Brian White, \emph{Partial regularity of mean-convex hypersurfaces flowing by
  mean curvature}, Internat. Math. Res. Notices (1994), no.~4, 186 ff., approx.
  8 pp. \MR{1266114}

\bibitem[Whi97]{White_97}
\bysame, \emph{Stratification of minimal surfaces, mean curvature flows, and
  harmonic maps}, J. Reine Angew. Math. \textbf{488} (1997), 1--35.
  \MR{1465365}

\bibitem[Whi00]{White_2000}
\bysame, \emph{The size of the singular set in mean curvature flow of
  mean-convex sets}, J. Amer. Math. Soc. \textbf{13} (2000), no.~3, 665--695.
  \MR{1758759}

\bibitem[Whi03]{White_2003}
\bysame, \emph{The nature of singularities in mean curvature flow of
  mean-convex sets}, J. Amer. Math. Soc. \textbf{16} (2003), no.~1, 123--138.
  \MR{1937202}

\bibitem[{Whi}11]{White_2011}
Brian {White}, \emph{{Subsequent Singularities in Mean-Convex Mean Curvature
  Flow}}, https://arxiv.org/abs/1103.1469 (2011).

\bibitem[Zha06]{X_Zhang_compactness_soliton}
Xi~Zhang, \emph{Compactness theorems for gradient {R}icci solitons}, J. Geom.
  Phys. \textbf{56} (2006), no.~12, 2481--2499. \MR{2252874}

\bibitem[Zha14]{ZhangZ_2014}
Zhou Zhang, \emph{Ricci lower bound for {K}\"{a}hler-{R}icci flow}, Commun.
  Contemp. Math. \textbf{16} (2014), no.~2, 1350053, 11. \MR{3195156}

\end{thebibliography}
\bibliographystyle{amsalpha}

\end{document}